
%
%

\documentclass[draft]{memo-l}

\DeclareFontFamily{OT1}{pzc}{}
\DeclareFontShape{OT1}{pzc}{m}{it}{<-> s * [1.10] pzcmi7t}{}
\DeclareMathAlphabet{\mathpzc}{OT1}{pzc}{m}{it}

\usepackage{amsmath,amssymb,amscd,latexsym,mathrsfs,epic,wasysym,latexsym,tikz,mathrsfs,cite,hyperref}
\usepackage{stmaryrd}
\usepackage{pb-diagram}
\usepackage[matrix,arrow]{xy}
\usepackage{commath}
\usepackage{tikz-cd}
\usepackage{bbm}
\usepackage{ytableau}
\usepackage{mathdots}
\usepackage{imakeidx} 
\usepackage{mathtools}
\usepackage{sansmath}
\usepackage{upgreek}

\makeatletter \@addtoreset{equation}{chapter}

\makeatother

\numberwithin{section}{chapter}
\numberwithin{equation}{section}

\raggedbottom

\synctex=1

\numberwithin{equation}{section}

\newtheorem{Proposition}[equation]{Proposition}
\newtheorem{Lemma}[equation]{Lemma}
\newtheorem{Theorem}[equation]{Theorem}
\newtheorem{Corollary}[equation]{Corollary}
\newtheorem{Inductive Assumption}[equation]{Inductive Assumption}
\theoremstyle{definition}  

\newtheorem{Remark}[equation]{Remark}

\newtheorem{Example}[equation]{Example}

\makeatletter \@addtoreset{equation}{chapter}

\makeatother

\numberwithin{section}{chapter}
\numberwithin{equation}{section}

\let\<\langle
\let\>\rangle

\newcommand\Comment[2][\relax]{\space\par\medskip\noindent%
   \fbox{\begin{minipage}{\textwidth}\textbf{Comment\ifx\relax#1\else---#1\fi}\newline%
        #2\end{minipage}}\medskip
}


\def\bi{\text{\boldmath$i$}}
\def\bj{\text{\boldmath$j$}}

\def\bk{\text{\boldmath$k$}}
\def\bl{\text{\boldmath$l$}}

\def\bs{\text{\boldmath$s$}}

\def\bt{\text{\boldmath$t$}}

\def\br{\text{\boldmath$r$}}
\def\b1{\text{\boldmath$1$}}

\def\ggw{\text{\boldmath$l$}}
\def\GGW{{\mathsf{GGW}}}

\def\lgath{g}
\def\lgathz{\mathsf{g}}
\def\gath{h}

\def\udl{k}

\def\bu{\text{\boldmath$u$}}

\def\m{\mathfrak{m}}

\def\pmod#1{\text{ }(\text{\rm mod } #1)\,}

\newcommand{\Hom}{\operatorname{Hom}}
\newcommand{\Ext}{\operatorname{Ext}}

\newcommand{\End}{\operatorname{End}}

\newcommand{\im}{\operatorname{im}}
\newcommand{\id}{\operatorname{id}}
\def\sgn{\mathtt{sgn}}

\newcommand{\cha}{\operatorname{char}}

\newcommand{\bideg}{\operatorname{bideg}}

\newcommand{\cus}{{\operatorname{cus}}}

\newcommand{\Z}{\mathbb{Z}}
\newcommand{\N}{\mathbb{N}}
\newcommand{\K}{\mathbb{K}}
\newcommand{\F}{\mathbb{F}}

\newcommand{\0}{{\bar 0}}
\renewcommand{\1}{{\bar 1}}
\def\eps{{\varepsilon}}
\def\phi{{\varphi}}

\newcommand{\zC}{{\mathsf{C}}}
\newcommand{\zF}{{\mathsf{F}}}

\newcommand{\zc}{{\mathsf{c}}}
\newcommand{\zv}{{\mathsf{v}}}
\newcommand{\zz}{{\mathsf{z}}}
\newcommand{\ze}{{\mathsf{e}}}
\newcommand{\zs}{{\mathsf{s}}}
\newcommand{\za}{{\mathsf{a}}}
\newcommand{\zb}{{\mathsf{b}}}
\newcommand{\zu}{{\mathsf{u}}}
\newcommand{\zw}{{\mathsf{w}}}
\newcommand{\zE}{{\mathsf{E}}}
\newcommand{\zB}{{\mathsf{B}}}
\newcommand{\zm}{{\mathsf{m}}}
\newcommand{\zP}{{\mathsf{P}}}

\newcommand{\zV}{{\mathsf{V}}}
\newcommand{\zW}{{\mathsf{W}}}

\newcommand{\zg}{{\mathsf{g}}}
\newcommand{\zh}{{\mathsf{h}}}

\newcommand{\zA}{{\mathsf{A}}}

\newcommand{\funF}{{\mathpzc F}}
\newcommand{\funG}{{\mathpzc G}}

\newcommand{\di}{{\operatorname{div}}}

\newcommand{\super}{\mathop{\tt s}\nolimits}
\newcommand{\swr}{\wr_{\super}}

\newcommand{\GGI}{{\mathrm{GGI}}}
\newcommand{\GGR}{{\mathrm{GGR}}}

\newcommand{\GGIS}{{\mathsf{GGI}}}
\newcommand{\GGRS}{{\mathsf{GGR}}}

\newcommand{\ga}{\gamma}
\newcommand{\Ga}{\Gamma}
\newcommand{\la}{\lambda}
\newcommand{\La}{\Lambda}
\newcommand{\al}{\alpha}
\newcommand{\be}{\beta}

\def\Si{\mathfrak{S}}
\newcommand{\si}{\sigma}
\newcommand{\om}{\omega}
\newcommand{\Om}{\Omega}

\newcommand{\de}{\delta}
\newcommand{\De}{\Delta}
\newcommand{\ka}{\kappa}



\newcommand{\Ker}{\operatorname{Ker}}

\newcommand{\Irr}{{\mathrm {Irr}}}

\def\id{\mathop{\mathrm {id}}\nolimits}
\renewcommand{\Im}{\operatorname{Im}}
\newcommand{\Ind}{{\mathrm{Ind}}}
\newcommand{\Coind}{{\mathrm{Coind}}}

\newcommand{\funh}{{\mathpzc{h}}}
\newcommand{\funf}{{\mathpzc{f}}}
\newcommand{\fung}{{\mathpzc{g}}}
\newcommand{\funq}{{\mathpzc{q}}}
\newcommand{\funQ}{{\mathpzc{Q}}}

\newcommand{\funa}{{\mathpzc{a}}}
\newcommand{\funb}{{\mathpzc{b}}}

\newcommand{\Mor}{{\mathrm {Mor}}}

\def\rank{\mathop{\mathrm{ rank}}\nolimits}

\newcommand{\M}{M}
\newcommand{\G}{G}
\newcommand{\LL}{L}
\newcommand{\Y}{Y}

\newcommand{\zL}{{\mathsf{L}}}
\newcommand{\zN}{{\mathsf{N}}}
\newcommand{\zM}{{\mathsf{M}}}

\newcommand{\Sy}{S}

\newcommand{\Di}{\Ga}
\newcommand{\Diz}{{\mathsf{\Ga}}}

\newcommand{\Res}{{\mathrm{Res}}}
\newcommand{\Ann}{{\mathrm {Ann}}}
\newcommand{\C}{{\mathbb C}}
\newcommand{\Q}{{\mathbb Q}}
\newcommand{\R}{{\mathbb R}}

\newcommand{\IS}{{\bar C}}
\newcommand{\zIS}{{\bar \zC}}

\newcommand{\D}{{\mathscr D}}

\newcommand{\EC}{{\mathcal E}}

\renewcommand{\mod}{\bmod \,}

\def\K{\mathbb K}

\newcommand{\Zig}{{\mathsf{A}}}

\newcommand{\ttB}{{\tt B}}

\newcommand{\ttP}{{\tt P}}
\newcommand{\ttX}{{\tt X}}
\newcommand{\ttY}{{\tt Y}}
\newcommand{\ttZ}{{\tt Z}}

\newcommand{\ttx}{{\mathtt x}}
\newcommand{\tty}{{\mathtt y}}
\newcommand{\ttz}{{\mathtt z}}

\def\ttB{{\mathtt B}}
\def\ttb{{\mathtt b}}
\def\ttT{{\mathtt T}}

\def\col{{\operatorname{col}}}

\def\g{{\mathfrak g}}

\def\Par{{\mathcal P}}

\def\um{{\underline{m}}}
\def\un{{\underline{n}}}

\def\ud{{\underline{d}}}

\def\b{\mathfrak{b}}
\def\k{\Bbbk}

\newcommand{\ggi}{{f}}
\newcommand{\ggis}{\mathsf{f}}

\def\Comp{\Uplambda}
\def\EC{\Uplambda_0}

\def\spa{\operatorname{span}}
\def\height{{\operatorname{ht}}}

\def\wt{{\operatorname{wt}}}

\def\op{{\mathrm{op}}}
\def\sop{{\mathrm{sop}}}
\def\re{{\mathrm{re}}}
\def\im{{\mathrm{im}\,}}
\def\onto{{\twoheadrightarrow}}
\def\into{{\hookrightarrow}}
\def\Mod#1{#1\!\operatorname{-Mod}}
\def\mod#1{#1\!\operatorname{-mod}}

\renewcommand\O{\mathcal O}

\def\iso{\stackrel{\sim}{\longrightarrow}}


\def\ch{\operatorname{ch}}
\def\lan{\langle}
\def\ran{\rangle}

\def\Stand{\operatorname{Stand}}

\def\Schur{{\operatorname{Schur}}}


\def\ttB{{\mathtt B}}
\def\ttb{{\mathtt b}}

\def\tta{{\mathtt a}}
\def\ttl{{\mathtt l}}
\def\ttT{{\mathtt T}}

\def\bla{\text{\boldmath$\lambda$}}
\def\bmu{\text{\boldmath$\mu$}}
\def\bnu{\text{\boldmath$\nu$}}

{\catcode`\|=\active
  \gdef\set#1{\mathinner{\lbrace\,{\mathcode`\|"8000%
  \let|\midvert #1}\,\rbrace}}
}
\def\midvert{\egroup\mid\bgroup}

\colorlet{darkgreen}{green!50!black}
\tikzset{dots/.style={very thick,loosely dotted},
         greendot/.style={fill,circle,color=darkgreen,inner sep=1.5pt,outer sep=0},
         blackdot/.style={fill,circle,color=black,inner sep=1.1pt,outer sep=0},
         graydot/.style={fill,circle,color=gray,inner sep=1.1pt,outer sep=0},
         reddot/.style={fill,circle,color=red,inner sep=1.1pt,outer sep=0},
         bluedot/.style={fill,circle,color=blue,inner sep=1.1pt,outer sep=0}
}
\def\greendot(#1,#2){\node[greendot] at(#1,#2){}}
\def\blackdot(#1,#2){\node[blackdot] at(#1,#2){}}
\def\graydot(#1,#2){\node[graydot] at(#1,#2){}}
\def\reddot(#1,#2){\node[reddot] at(#1,#2){}}
\def\bluedot(#1,#2){\node[bluedot] at(#1,#2){}}

\newenvironment{braid}{
  \begin{tikzpicture}[baseline=6mm,black,line width=.7pt, scale=0.32,
                      draw/.append style={rounded corners},
                      every node/.append style={font=\fontsize{5}{5}\selectfont}]%
  }{\end{tikzpicture}
}

\def\Grid(#1,#2){
  \draw[very thin,gray,step=2mm] (0,0)grid(#1,#2);
  \draw[very thin,darkgreen,step=10mm] (0,0)grid(#1,#2);
}
\newcommand{\braidbox}[6][0.2]{
	\draw[rounded corners, color=black] (#2-#1,#4) -- (#3+#1,#4) -- (#3+#1,#5) -- (#2-#1,#5) -- cycle;
	\draw ({(#2+#3)/2},{(#4+#5)/2}) node{\footnotesize #6};
}
\newcommand{\redbraidbox}[6][0.2]{
	\draw[rounded corners, color=red] (#2-#1,#4) -- (#3+#1,#4) -- (#3+#1,#5) -- (#2-#1,#5) -- cycle;
	\draw ({(#2+#3)/2},{(#4+#5)/2}) node{\footnotesize #6};
}

\newcommand{\darkgreenbraidbox}[6][0.2]{
	\draw[rounded corners, color=darkgreen] (#2-#1,#4) -- (#3+#1,#4) -- (#3+#1,#5) -- (#2-#1,#5) -- cycle;
	\draw ({(#2+#3)/2},{(#4+#5)/2}) node{\footnotesize #6};
}

\newcommand\Tableau[2][\relax]{
  \begin{tikzpicture}[scale=0.5,draw/.append style={thick,black}]
    \ifx\relax#1\relax%
    \else 
      \foreach\box in {#1} { \filldraw[blue!30]\box+(-.5,-.5)rectangle++(.5,.5); }
    \fi
    \newcount\row\newcount\col
    \row=0
    \foreach \Row in {#2} {
       \col=1
       \foreach\k in \Row {
          \draw(\the\col,\the\row)+(-.5,-.5)rectangle++(.5,.5);
          \draw(\the\col,\the\row)node{\k};
          \global\advance\col by 1
       }
       \global\advance\row by -1
    }
  \end{tikzpicture}
}

\newcommand\YoungDiagram[2][\relax]{
  \begin{tikzpicture}[scale=0.5,draw/.append style={thick,black}]
    \ifx\relax#1\relax%
    \else 
    \foreach\box in {#1} {
      \filldraw[blue!30]\box rectangle ++(1,1);
    }
    \fi
    \newcount\row
    \row=0
    \foreach \col in {#2} {
       \draw(1,\the\row)grid ++(\col,1);
       \global\advance\row by -1
    }
  \end{tikzpicture}
}

\newenvironment{Young}{\begingroup
       \def\vr{\vrule height0.89\hoogte width\dikte depth 0.2\hoogte}
       \def\fbox##1{\vbox{\offinterlineskip
                    \hrule height\dikte
                    \hbox to \breedte{\vr\hfill##1\hfill\vr}
                    \hrule height\dikte}}
       \vbox\bgroup \offinterlineskip \tabskip=-\dikte \lineskip=-\dikte
            \halign\bgroup &\fbox{##\unskip}\unskip  \crcr }
       {\egroup\egroup\endgroup}
\def\diagram#1{\relax\ifmmode\vcenter{\,\begin{Young}#1\end{Young}\,}\else%
              $\vcenter{\,\begin{Young}#1\end{Young}\,}$\fi}

\makeindex

\begin{document}

\frontmatter

\title[Imaginary Schur-Weyl for quiver Hecke superalgebras]{{Imaginary Schur-Weyl duality for quiver Hecke superalgebras}}

\author{\sc Alexander Kleshchev}
\address{Department of Mathematics\\ University of Oregon\\ Eugene\\ OR 97403, USA}
\email{klesh@uoregon.edu}

\subjclass[2020]{20C20, 20C25, 20C30, 18N25}


\thanks{The author was supported by the NSF grant DMS-2346684. }

\begin{abstract}
Quiver Hecke {\em super}algebras $R_{\theta}$ were defined by Kang-Kashiwara-Tsuchioka and used by Hill-Wang and Kang-Kashiwara-Oh to categorify quantum Kac-Moody superalgebras and by Kang-Kashiwara-Oh to categorufy highest weight modules over quantum Kac-Moody superalgebras. In affine type $A_{p-1}^{(2)}$, a Morita equivalence 
established by Kang, Kashiwara and Tsuchioka connects quiver Hecke superalgebras to blocks of double covers of symmetric and alternating groups in (odd) characteristic $p$. These results motivate further  study of representation theory of quiver Hecke superalgebras. 

The irreducible (graded super)modules over quiver Hecke superalgebras can be classified in terms of cuspidal modules, as shown by the author and Livesey.  
To every indivisible positive root $\al$ of type $A_{p-1}^{(2)}$ and a non-negative integer $d$, one associates an explicit quotient $\bar R_{d\al}$ of $R_{d\al}$ called the cuspidal algebra. If the root $\al$ is real, the cuspidal algebra $\bar R_{d\al}$ is well-understood and has a unique irreducible module called a (real) irreducible cuspidal module. But if $\al=\de$, the imaginary null-root, the corresponding imaginary cuspidal algebra $\bar R_{d\de}$ is still rather mysterious.  For example, it has only been known that the number of the isomorphism classes of the irreducible $\bar R_{d\de}$-modules equals the number of the $\ell$-multipartitions of $d$, but there has been no way to canonically associate an irreducible $\bar R_{d\de}$-module to such a multipartiton. The imaginary cuspidal algebra is especially important because of its connections to the RoCK blocks of the double covers of symmetric and alternating groups established by the author and Livesey. 

In this paper we undertake a detailed study of the imaginary cuspidal algebra $\bar R_{d\de}$ and its representation theory. We use the so-called Gelfand-Graev idempotents and subtle degree and parity shifts to construct a (graded) Morita (super)equivalent algebra $\zC(n,d)$ (for any $n\geq d$). The advantage of the algebra  $\zC(n,d)$ is that, unlike $\bar R_{d\de}$, it is non-negatively graded. Moreover, the degree zero component $\zC(n,d)^0$ is shown to be isomorphic to the direct sum of tensor products of $\ell$ copies of the classical Schur algebras. Since the irreducible modules over a non-negatively graded algebra are lifts of the irreducible modules over its degree zero component, this immediately gives the classification (and the description of dimensions/characters, etc.) of the irreducible  $\zC(n,d)$-modules, and hence of the irreducible $\bar R_{d\de}$-modules, in terms of the classical Schur algebras. In particular, this allows us to canonically label these by the $\ell$-multipartitions of $d$. 

The results of this paper will be used crucially in our future work on RoCK blocks of the double covers of symmetric and alternating groups. In particular, this will lead to a `local' description, up to Morita equivalence, of arbitrary defect RoCK blocks of these groups in terms of generalized Schur algebras corresponding to an explicit Brauer tree algebra. In view of the recent results on Brou\'e's Conjecture for these groups, this will provide a `local'  description of an arbitrary block of the double covers of symmetric and alternating groups up to derived equivalence.

\end{abstract}

\maketitle

\setcounter{page}{4}

\tableofcontents

\chapter{Introduction}
\section{Motivation and general description of main results}
\subsection{Motivation}
Quiver Hecke {\em super}algebras were defined by Kang, Kashiwara and Tsuchioka \cite{KKT} and used by Hill and Wang \cite{HW} and Kang, Kashiwara and Oh \cite{KKO,KKOII} to categorify quantum Kac-Moody superalgebras, and by Kang, Kashiwara and Oh \cite{KKO,KKOII} to categorify highest weight modules over quantum Kac-Moody superalgebras. 

In affine type $A_{p-1}^{(2)}$, a Morita equivalence 
established by Kang, Kashiwara and Tsuchioka \cite{KKT} connects cyclotomic quotients of quiver Hecke superalgebras to blocks of double covers of symmetric and alternating groups in (odd) characteristic $p$ and related Hecke superalgebras. This is analogous to the connection between cyclotomic quotients of quiver Hecke algebras (Khovanov-Lauda-Rouquier algebras) and blocks of symmetric groups and Hecke algebras established in \cite{BKyoung,Rou}.

Further tight connections between the imaginary cuspidal representations of quiver Hecke superalgebras and RoCK blocks of the double covers of symmetric and alternating groups were established in \cite{KlLi}, leading eventually to the proof of Brou\'e's Abelian Defect Conjecture for these groups \cite{ELV,BKodd}. 

So it is important to further study representation theory of quiver Hecke superalgebras. 

\subsection{Quiver Hecke superalgebras}
We fix a ground field $\F$. In the main body of the paper we often work over more general ground rings $\k$, but in this introduction we concentrate on the case $\k=\F$. 
We will consider the quiver Hecke superalgebras of Lie type $A_{2\ell}^{(2)}$ over $\F$. Let $Q_+$ be the non-negative part of the root lattice in type $A_{2\ell}^{(2)}$ and $\Phi_+$ be the system of positive roots, see \S\ref{SSLTN} for details. Of particular importance is the imaginary null-root $\de\in\Phi_+$.

To every $\theta\in Q_+$, following \cite{KKT}, we associate a graded $\F$-superalgebra $R_\theta$ called the quiver Hecke superalgebra, see \S\ref{SQHSA}. We will be working in the category $\mod{R_\theta}$ of finitely generated graded supermodules over $R_\theta$. (It is known that the irreducible graded $R_\theta$-supermodules are finite-dimensional and irreducible as usual $R_\theta$-modules, so one might expect that the grading and the  superstructure should not be essential when we consider the irreducible modules but this is not the case.)

\subsection{Cuspidal modules and cuspidal algebras}
The irreducible graded supermodules over quiver Hecke superalgebras were classified in terms of cuspidal irreducible modules in \cite{KlLi}. The theory is reviewed in \S\ref{SSCuspSys}. 
To every indivisible positive root $\al\in\Phi_+$ and $d\in\Z_{\geq 0}$, one associates an explicit quotient $\bar R_{d\al}$ of $R_{d\al}$ called the cuspidal algebra. 
The modules over $\bar R_{d\al}$ inflated to $R_{d\al}$  (as $\al$ and $d$ vary) are called the cuspidal modules. Arbitrary irreducible $R_\theta$-modules can be classified in terms of the cuspidal irreducible modules, see Theorem~\ref{THeadIrr}.

If the positive root $\al$ is real, the corresponding {\em real cuspidal algebra}\, $\bar R_{d\al}$ is well-understood and has a unique irreducible module. In this way, as $\al$ varies over the real positive roots and $d$ varies over positive integers, we obtain the (well-understood) {\em real cuspidal irreducible modules} over quiver Hecke superalgebras. 

But if $\al=\de$, the corresponding {\em imaginary cuspidal algebras} $\bar R_{d\de}$ are still rather mysterious. The  modules over quiver Hecke superalgebras arising by inflation from the imaginary cuspidal algebras are referred to as the {\em imaginary cuspidal modules}. As a consequence of the Kang-Kashawara-Oh categorification theorem, it was proved in \cite{KlLi} that $\bar R_{d\de}$ has exactly $|\Par^J(d)|$ irreducible modules (up to isomorphism), where 
$J:=\{0,1,\dots,\ell-1\}$ 
and $\Par^J(d)$ is the set of $J$-multipartitions of $d$.
However, in \cite{KlLi}, we were unable to canonically label the  irreducible $\bar R_{d\de}$-modules by such multipartitons. 

\subsection{Imaginary cuspidal algebras and classical Schur algebras}
In this paper we undertake a detailed study of the imaginary cuspidal algebra $\bar R_{d\de}$ and its representation theory. 
For any $n\geq d$, we use the so-called Gelfand-Graev idempotents and subtle {\em degree and parity shifts} to construct graded superalgebras $\zC(n,d)$ which are (graded) Morita (super)equivalent to $\bar R_{d\de}$. The advantage of the algebras  $\zC(n,d)$ is that, unlike $\bar R_{d\de}$, they are {\em non-negatively graded}. Moreover, for the degree zero component $\zC(n,d)^0$ we obtain the algebra isomorphism 
$$
\zC(n,d)^0\cong \bigoplus_{d_0+\dots+d_{\ell-1}=d}S(n,d_0)\otimes\dots\otimes S(n,d_{\ell-1}),
$$
where $S(n,d)$ denotes the {\em classical Schur algebra} as in \cite{Green}. 
Since the irreducible modules over the non-negatively graded algebra $\zC(n,d)$ are lifts of the irreducible modules over its degree zero component, this immediately gives the classification (and the description of dimensions/characters, etc.) of the irreducible  $\zC(n,d)$-modules, and hence, by Morita equivalence, of the irreducible $\bar R_{d\de}$-modules, in terms of the classical Schur algebras. In particular, this allows us to canonically label these by the set $\Par^J(d)$.

\subsection{Connection to RoCK blocks of double covers of symmetric groups}
The imaginary cuspidal algebras $\bar R_{d\de}$ are especially important because of their connections to the {\em RoCK blocks} of the double covers of symmetric and alternating groups established by the author and Livesey \cite{KlLi}. In fact, the results of this paper will be used crucially in our forthcoming work \cite{KlRoCK} which gives a `local' description, up to Morita equivalence, of {\em arbitrary defect}\, RoCK blocks of the double covers in terms of generalized Schur algebras corresponding to an explicit Brauer tree algebra. In view of the recent results on Brou\'e's Abelian Defect Conjecture for these groups \cite{ELV,BKodd}, this will provide a `local'  description of an arbitrary block of the double covers of symmetric and alternating groups up to derived equivalence. This will also allow us to describe the decomposition numbers of arbitrary defect RoCK blocks in terms of Littlewood-Richardson coefficients, inverse Kostka polynomials and decomposition numbers of the classical Schur algebras, generalizing \cite{FKM} where the abelian defect case was considered. 

\section{Contents of the paper and statements of the main results}
Let $d\in\Z_{\geq 0}$. We work with the imaginary cuspidal algebra  $\bar R_{d\de}$. Note that in the main body of the paper  we have used the notation $\hat C_d:=\bar R_{d\de}$. 

\subsection{Combinatorial sets}
The following combinatorial sets will be used:
\begin{enumerate}
\item[$\bullet$] the set $\Comp(n,d)$ of all {\em compositions of $d$ with $n$ parts}; the elements of $\Comp(n,d)$ are the tuples $\mu=(\mu_1,\dots,\mu_n)$ of non-negative integers such that $|\mu|:=\mu_1+\dots+\mu_n=d$;

\item[$\bullet$] the set $\Comp(d)=\bigcup_{n\geq 1}\Comp(n,d)$ of all {\em compositions of $d$}; note that the union is not disjoint since we identify the compositions $\mu\in\Comp(n,d)$ and $\nu\in\Comp(m,d)$ for $m>n$ if  $\nu=(\mu_1,\dots,\mu_n,0,\dots,0)$; we have the usual {\em dominance order} $\unlhd$ on the set $\Comp(d)$; 

\item[$\bullet$] the set $\Comp^J(n,d)$ of {\em $J$-multicompositions of $d$ with $n$ parts}; the elements of $\Comp^J(d)$ are the tuples $\bmu=(\mu^{(0)},\dots,\mu^{(\ell-1)})$ of compositions with $n$ parts such that $|\mu^{(0)}|+\dots+|\mu^{(\ell-1)}|=d$. 

\item[$\bullet$] the set $\EC(n,d)\subseteq \Comp(n,d)$ of all {\em essential compositions of $d$ with $n$ parts}, i.e. those  
$\mu\in\Comp(n,d)$ with $\mu_1,\dots,\mu_n\neq 0$;

\item[$\bullet$] the set $\EC(d):=\bigsqcup_{n\geq 1}\EC(n,d)=\bigsqcup_{n= 1}^d\EC(n,d)$ of all {\em essential compositions of $d$};
\item[$\bullet$] the set $\Par(d)$ of all {\em partitions of $d$}, i.e. compositions of $d$ with weakly decreasing parts;

\item[$\bullet$] the set $\Par^J(d)$ of {\em $J$-multipartitions of $d$}; the elements of $\Par^J(d)$ are the tuples $\bmu=(\mu^{(0)},\dots,\mu^{(\ell-1)})$ of partitions with $|\mu^{(0)}|+\dots+|\mu^{(\ell-1)}|=d$. 

\item[$\bullet$] the set $\Comp^\col(n,d)$ of all {\em colored compositions of $d$ with $n$ parts}, i.e. all pairs $(\mu,\bj)$ with $\mu\in\Comp(n,d)$ and $\bj=(j_1,\cdots, j_n)\in J^n$ an $n$-tuple of elements of $J$ usually written as $\bj=j_1\cdots j_n$;

\item[$\bullet$] the set $\Comp^\col(d):=\bigcup_{n\geq 1}\Comp^\col(n,d)$ of all {\em colored compositions of $d$};

\item[$\bullet$] the set $\EC^\col(n,d):=\{(\mu,\bj)\in \Comp^\col(n,d)\mid \mu\in\EC(n,d)\}$ of {\em essential colored compositions} of $d$ with $n$ parts;
\item[$\bullet$] the set $\EC^\col(d):=\bigsqcup_{n\geq 1}\EC^\col(n,d)=\bigsqcup_{n= 1}^d\EC^\col(n,d)$ of {\em essential colored compositions of $d$}.

\end{enumerate}

\subsection{Cuspidal parabolic subalgebras}
For $\la=(\la_1,\dots,\la_n)\in\Comp(d)$, we have an idempotent $1_{\la\de}\in \bar R_{d\de}$ and the cuspidal parabolic subalgebra 
$$
\bar R_{\la_1\de}\otimes\cdots\otimes \bar R_{\la_n\de}\cong \bar R_{\la\de}\subseteq 1_{\la\de} \bar R_{d\de}1_{\la\de}, 
$$ 
see (\ref{EHatParabolic}). This yields the functors of parabolic induction and restriction 
\begin{align*}
\Ind_{\la\de}^{d\de}&:\mod{\bar R_{\la\de}}\to\mod{\bar R_{d\de}},\ V\mapsto \bar R_{d\de}1_{\la\de}\otimes_{\bar R_{\la\de}} V,
\\
\Res_{\la\de}^{d\de}&:\mod{\bar R_{d\de}}\to\mod{\bar R_{\la\de}},\ W\mapsto 1_{\la\de}\bar R_{d\de}\otimes_{\bar R_{d\de}} W,
\end{align*}
cf. Lemmas~\ref{LCuspRes}, \ref{LTensImagIsImag}.

\subsection{Gelfand-Graev idempotents}
In this paper we introduce certain remarkable idempotents in  $\bar R_{d\de}$ which we call  {\em Gelfand-Graev idempotents}  (generalizing the class of Gelfand-Graev idempotents used in \cite{KlLi}). 


To every colored composition $(\mu,\bj)\in\Comp^\col(d)$  we associate the explicit Gelfand-Graev idempotent 
$
\ggi^{\mu,\bj}\in \bar R_{d\de},
$ 
see (\ref{EGGIdempotent}). Then the idempotents $\{\ggi^{\mu,\bj}\mid (\mu,\bj)\in \EC^\col(d)\}$ are distinct and orthogonal to each other, hence 
\begin{equation}\label{EIntro_3}
\ggi_d:=\sum_{(\mu,\bj)\in \EC^\col(d)}\ggi^{\mu,\bj}\in \bar R_{d\de}
\end{equation}
is also an idempotent. So we can consider the {\em Gelfand-Graev idempotent truncation}
$$
C_d:=\ggi_d\bar R_{d\de}\ggi_d.
$$

\vspace{2mm}
\noindent
{\bf Theorem A.} 
{\em 
The graded superalgebras $C_d$ and $\bar R_{d\de}$ are graded Morita superequivalent. 
}
\vspace{2mm}

In other words, $\mod{C_d}$ and $\mod{\bar R_{d\de}}$ are equivalent as graded supercategories, see \S\ref{SSMorita}. Theorem A is proved in Corollary~\ref{CEquivFG}.

For a finite dimensional graded $C_d$-supermodule $V$, in view of (\ref{EIntro_3}), we have
$$
V=\bigoplus _{(\mu,\bj)\in \EC^\col(d)}\ggi^{\mu,\bj}V.
$$
We introduce the {\em Gelfand-Graev formal character}\, of a finite dimensional graded $C_d$-supermodule $V$: 
$$
\operatorname{GGCh}(V):=\sum_{(\mu,\bj)\in\EC^\col(d)}(\dim \ggi^{\mu,\bj}V)\cdot (\mu,\bj) \in \Z\cdot\EC^\col(d).
$$
where $\Z\cdot\EC^\col(d)$ is the free $\Z$-module  on the  
basis $\EC^\col(d)$.

\subsection{Parabolic subalgebras and Gelfand-Graev induction}
For a composition $\la=(\la_1,\dots,\la_n)\in\Comp(d)$, we also have the Gelfand-Graev idempotent in the parabolic subalgebra $\bar R_{\la\de}$ 
$$
\ggi_\la=\ggi_{\la_1}\otimes\dots\otimes \ggi_{\la_n}\in 
\bar R_{\la_1\de}\otimes\cdots\otimes \bar R_{\la_n\de}=\bar R_{\la\de}\subseteq 1_{\la\de} \bar R_{d\de}1_{\la\de},
$$
which yields the Gelfand-Graev parabolic subalgebra $$
C_\la=C_{\la_1}\otimes\dots\otimes C_{\la_n}\subseteq \ggi_\la C_d\ggi_\la
$$
and the functors of Gelfand-Graev parabolic induction and restriction 
\begin{align*}
\GGI_{\la}^{d}&:\mod{C_{\la}}\to\mod{C_{d}},\ V\mapsto C_{d}\ggi_{\la}\otimes_{C_{\la}} V,
\\
\GGR_{\la}^{d}&:\mod{C_{d}}\to\mod{C_{\la}},\ W\mapsto \ggi_{\la}C_{d}\otimes_{C_{d}} W.
\end{align*}
Under the Morita equivalence of Theorem A, these correspond to the functors $\Ind_{\la\de}^{d\de}$ and $\Res_{\la\de}^{d\de}$, respectively, see (\ref{EGGIInd}).

For the special composition $\om_d=(1^d)\in\Comp(d)$, the idempotent truncation $\ggi_{\om_d}C_d\ggi_{\om_d}$ was studied in \cite{KlLi}. We have constructed an explicit isomorphism  
\begin{equation}\label{EIntro090924}
\ggi_{\om_d}C_d\ggi_{\om_d}\cong H_d(A_\ell),
\end{equation} 
where $H_d(A_\ell)$ is the {\em affine Brauer tree superalgebra} corresponding to the {\em graded Brauer tree superalgebra $A_\ell$}, see \S\ref{SSAffZig} and Theorem~\ref{TMorIso}. Moreover,  it was even proved that $\ggi_{\om_d}\bar R_{d\de}\ggi_{\om_d}=\ggi_{\om_d}C_{d}\ggi_{\om_d}$ is graded Morita superequivalent to $\bar R_{d\de}$ provided $\cha \F=0$ or $\cha \F>p$. However, in the case $0<\cha \F\leq p$, which is crucial for modular representation theory, the algebras $\ggi_{\om_d}\bar R_{d\de}\ggi_{\om_d}$ and $\bar R_{d\de}$ are {\em not} Morita equivalent. This explains the necessity of using the more general Gelfand-Graev idempotent $\ggi_d$, see Theorem A. 

As a special case of (\ref{EIntro090924}), we get the following explicit description of the Gelfand-Graev truncated imaginary cuspidal algebra in rank $1$:
$$
C_1\cong H_1(A_\ell)\cong A_\ell[z],
$$
where $A_\ell[z]$ is a twisted polynomial superalgebra in the even indeterminate $z$ of degree $4$ with coefficients in $A_\ell$, see \S\ref{SSAffZig}. 
This immediately yields the explicit construction of the irreducible modules over $C_1\cong A_\ell[z]$: 
$$
\Irr(C_1)=\{L_j\mid j\in J\},
$$
where each $L_j$ is $1$-dimensional spanned by a vector $v_j$ and the action of the generators of  $A_\ell[z]$ on $v_j$ is described explicitly, see \S\ref{SSL}.

\subsection{Imaginary tensor spaces}
To understand the irreducible modules over $C_d$ we introduce the {\em colored imaginary tensor spaces} 
$$
\M_{d,j}:=\GGI_{\om_d}^d \LL_j^{\boxtimes d}=C_d\ggi_{\om_d}\otimes_{C_{\om_d}}\LL_j^{\boxtimes d}\qquad(j\in J). 
$$
with generator $v_{d,j}:=\ggi_{\om_d}\otimes v_j^{\otimes d}$.

Fix $j\in J$. For the special essential colored composition $(\mu,\bj)\in\EC^\col(n,d)$ with $j_1=\dots=j_n=j$, we denote 
\begin{equation}\label{E090924_8}
(\mu,j):=(\mu,\bj). 
\end{equation}
We then have
\begin{equation}\label{E090924_6}
M_{d,j}=\bigoplus_{\mu\in\EC(d)}\ggi^{\mu,j}M_{d,j},
\end{equation}
see Lemma~\ref{LMWtSpaces}. 

\vspace{2mm}
\noindent
{\bf Theorem B.} 
{\em 
Let $j\in J$. For every $\mu\in\Par(d)$ there exists a unique up to isomorphism irreducible graded $C_d$-supermodule $L_j(\mu)$ such that $L_j(\mu)$ is a composition factor of $\M_{d,j}$ and 
$$
\operatorname{GGCh}(L_{j}(\mu))=(\mu,j)+\sum_{\nu\in\EC(d),\,\nu\lhd\mu}b_{\nu,j}\cdot (\nu,j)
$$
for some $b_{\nu,j}\in\Z_{\geq 0}$. 
Moreover, $\{L_j(\mu)\mid \mu\in\Par(d)\}$ is a complete and non-redundant set of the composition factors of $M_{d,j}$ up to isomorphism.  
}
\vspace{2mm}

For $\bmu=(\mu^{(0)},\dots,\mu^{(\ell-1)})\in\Par^J(d)$ with $d_j:=|\mu^{(j)}|$ for all $j\in J$, we define
\begin{equation}\label{ELBla}
L(\bmu):=\GGI^{d}_{d_0,\dots,d_{\ell-1}}\big(L_0(\mu^{(0)})\boxtimes\dots\boxtimes L_{\ell-1}(\mu^{(\ell-1)})\big).
\end{equation}

\vspace{2mm}
\noindent
{\bf Theorem C.} 
{\em 
We have that $\{L(\bmu)\mid\bmu\in\Par^J(d)\}$ is a complete and non-redundant set of the irreducible graded $C_d$-supermodules up to isomorphism. 
}
\vspace{2mm}

Theorems B and C are proved in Theorem~\ref{LAmountAmount}.

\subsection{Imaginary Schur-Weyl and Howe dualities}
\label{SSISWHowe}
To prove Theorems B and C and to further study representation theory of $C_d$, we use a Schur-Weyl type duality between $C_d$ and the symmetric group $\Si_d$ on $d$ letters. We then upgrade it to a Howe type duality between $C_d$ and the classical Schur algebra.

Fix $j\in J$. 
The isomorphism (\ref{EIntro090924}) allows us to define the explicit right action of $\Si_d$ on $M_{d,j}$. Let 
$$
\IS_{d,j}:=C_d/\Ann_{C_d}(\M_{d,j}).
$$
The following double centralizer property is proved in  Theorem~\ref{4.5e}(ii) (`$\sop$' stands for the opposite superalgebra, and we always consider $\F\Si_d$ as a graded superalgebra in the trivial way): 

\vspace{2mm}
\noindent
{\bf Theorem D (Imaginary Schur-Weyl Duality).} 
{\em 
Let $j\in J$. Then we have\, $\End_{\IS_{d,j}}(\M_{d,j})^\sop\cong \F\Si_d$  and\, $\End_{\F\Si_d}(\M_{d,j})\cong \IS_{d,j}$. 
}
\vspace{2mm}

We prove in Theorem~\ref{TMdProj} that $M_{d,j}$ is projective as a graded supermodule over $\IS_{d,j}$; however, it is not in general a projective generator. To define a projective generator we use the action of $\Si_d$ on $M_{d,j}$ to define the graded $\IS_{d,j}$-supermodules
$$
\Di_j^\la:=\{m\in \M_{d,j}\mid mg=m \ \text{for all}\ g\in\Si_\la\}\qquad(\la\in\Comp(d))
$$
(the analogues of tensor products of divided powers of the natural module over the general linear group). 
Here $\Si_\la$ denotes the standard parabolic subgroup $\Si_\la:=\Si_{\la_1}\times\dots\Si_{\la_n}\leq \Si_d$ for $\la=(\la_1,\dots,\la_n)\in\Comp(d)$. 
Recall that $S(n,d)$ denotes the classical Schur algebra. 

\vspace{2mm}
\noindent
{\bf Theorem E (Imaginary Howe Duality).} 
{\em 
Let $j\in J$. Then: 
\begin{enumerate}
\item[{\rm (i)}] for any $\la\in\Comp(d)$ we have $\Di_j^\la\cong \IS_{d,j}\ggi^{\la,j}$;
\item[{\rm (ii)}] there is an explicit algebra isomorphism
$$
\textstyle S(n,d)\cong \End_{\IS_{d,j}}\big(\bigoplus_{\la\in\Comp(n,d)}\Di_j^\la\big);
$$
\item[{\rm (iii)}] if $n\geq d$ then $\bigoplus_{\la\in\Comp(n,d)}\Di_j^\la$ is a projective generator for $\IS_{d,j}$. 
\end{enumerate}
}
\vspace{2mm}

Part (i) is proved in Corollary~\ref{CZCIdem}, part (ii) in Theorem~\ref{TSchurEnd3}, and part (iii) in Theorem~\ref{TProjGen}(iii). 

If $n\geq d$, Theorem E yields an  equivalence 
$$
\funb_{n,d,j}:\mod{S(n,d)}\to\mod{\IS_{d,j}}.
$$
It is well-known \cite{Green} that the irreducible modules over $S(n,d)$ are canonically labeled by their highest weights which are elements of $\Par(d)$: 
$$
\{L_{n,d}(\la)\mid \la\in \Par(d)\},
$$ 
Then $L_j(\la)\cong \funb_{n,d,j}(L(\la))$, see Lemma~\ref{LfunbIrr}. 

There is an even more direct connection between $C_d$ and the classical Schur algebra. To explain this, we need to introduce an important regrading of $C_d$.

\subsection{Regrading truncated cuspidal algebras}
For $(\mu,\bj)\in\Comp^\col(n,d)$, set
\begin{align*}
t_{\mu,\bj}&:=d(2\ell+1)+\sum_{s=1}^n\mu_s^2(2j_s-4\ell)
\in\Z,
\\
\eps_{\mu,\bj}&:=\sum_{s=1}^n\mu_sj_s\pmod{2}\in\Z/2\Z,
\end{align*}
and consider the new graded superalgebra 
\begin{equation}\label{EReGradingC}
\zC_d=\bigoplus_{(\la,\bi),(\mu,\bj)\in\EC^\col(d)}
\funQ^{t_{\la,\bi}-t_{\mu,\bj}}\Uppi^{\eps_{\la,\bi}-\eps_{\mu,\bj}} \ggi^{\mu,\bj}C_d\ggi^{\la,\bi}.
\end{equation}
Here $\funQ$ is the degree shift and $\Uppi$ is the parity shift;  thus, $\zC_d$ is the same algebra as $C_d$ but with a different grading ($\Z$-grading) and a different superstructure ($\Z/2$-grading), see  \S\ref{SSRegradingCd} for details.

We denote by $\zc\in\zC_d$ the element corresponding to an  element $c\in C_d$; for example, we have the idempotents $\ggis^{\mu,\bj}$ and $\ggis_\la$ in $\zC_d$ corresponding to the idempotents $\ggi^{\mu,\bj}$ and $\ggi_\la$ in $C_d$, respectively. We also have the parabolic $\zC_\la\subseteq \ggis_\la\zC_d\ggis_\la$. 

The graded superalgebras $C_d$ and $\zC_d$ are graded Morita superequivalent. 
in fact, given a graded $C_d$-supermodule $V$, the $\zC_d$-module corresponding to $V$ under the graded Morita superequivalence of $C_d$ and $\zC_d$ is
$$
\zV:=\bigoplus_{(\la,\bi)\in\EC^\col(d)}^n \funQ^{-t_{\la,\bi}}\Uppi^{-\eps_{\la,\bi}}\ggi^{\la,\bi}V;
$$
thus $\zV=V$ as a vector space but has a different grading and superstructure and $\zc$ acts on $\zV$ in the same way as $c$ acts on $V$ for any $c\in C_d$. The vector of $\zV$ corresponding to $v\in V$ is denoted by $\zv$. In particular, we have the imaginary tensor spaces $\zM_{d,j}$ over $\zC_d$ with generator $\zv_{d,j}$, obtained by regrading of $\M_{d,j}$. The right $\k\Si_d$-module structure is then inherited from that on $\M_{d,j}$, see \S\ref{SSISWHowe}. 
The decomposition (\ref{E090924_6}) becomes
\begin{equation}\label{E090924_7}
\zM_{d,j}=\bigoplus_{\mu\in\EC(d)}\ggis^{\mu,j}\zM_{d,j},
\end{equation}

While the graded superalgebras $C_d$ and $\zC_d$ are graded Morita superequivalent, this subtle `regrading' happens to be very useful. The corresponding regrading is also crucial when dealing with RoCK blocks of double covers of symmetric groups, see \cite{KlRoCK}. The major advantage of $\zC_d$ over $C_d$ is that it is non-negatively graded:

\vspace{2mm}
\noindent
{\bf Theorem F.} 
{\em 
The algebra $\zC_d$ is non-negatively graded. 
}
\vspace{2mm}

Theorem F is proved in Theorem~\ref{TNonNeg}.

\subsection{The elements $\lgathz_{\mu,\bj}$ and $\lgath_{\mu,\bj}$}
A key role in this paper is played by certain explicit elements  $\lgath_{\mu,\bj}\in C_d$ and the corresponding elements $\lgathz_{\mu,\bj}\in\zC_d$ defined in \S\ref{SSUpsilon} for any $(\mu,\bj)\in \Comp^\col(d)$. Recalling the notation (\ref{E090924_8}), in the special case where $j_1=\dots=j_d=j$ we have the elements $\lgath_{\mu,j}\in C_d$ and $\lgathz_{\mu,j}\in\zC_d$. 
The introduction of the elements $\lgathz_{\mu,\bj}$ can be motivated by the following remarkable properties:

\vspace{2mm}
\noindent
{\bf Theorem G.} 
{\em 
Let  $(\mu,\bj)\in \Comp^\col(n,d)$ and $\la\in\Comp(d)$.
\begin{enumerate}
\item[{\rm (i)}] The element $\lgathz_{\mu,\bj}$ is even and of degree $0$. 
\item[{\rm (ii)}] The space  $\ggis^{\mu,\bj}\zC_{\mu}^0\ggis^{\om_d,j_1^{\mu_1}\cdots j_n^{\mu_n}}$ is $1$-dimensional and spanned by $\lgathz_{\mu,\bj}$. 
\item[{\rm (iii)}] 
There is an even degree $0$ isomorphism of right $\k\Si_d$-modules
$$
\ggis^{\la,j} \zM_{d,j}\iso \k_{\Si_\la}\otimes_{\k\Si_\la}\k\Si_d
$$
which maps $\lgathz_{\la,j} \zv_{d,j}$ to  $1\otimes 1_{\Si_d}$, where the right permutation module $\k_{\Si_\la}\otimes_{\k\Si_\la}\k\Si_d$ is purely even and concentrated in degree $0$. 
\end{enumerate}
}
\vspace{2mm}

Note that part (ii) of the theorem, which is proved in Lemma~\ref{LGBasis}, already determines the elements $\ggis^{\mu,\bj}$ uniquely up to a scalar.
Part (i) is proved in Lemma~\ref{LDegreeG}. 
Part (iii) is proved in Proposition~\ref{PzMPerm}.

We now consider the graded superalgebra 
$$
\zC_j(n,d):=\End_{\zC_{d}}\Big(\bigoplus_{\la\in\Comp(n,d)}\zC_{d}\ggis^{\la,j}\Big)^\sop
=\bigoplus_{\la,\mu\in\Comp(n,d)} \ggis^{\mu,j}\zC_{d}\ggis^{\la,j}.
$$
The action of $\zC_d$ on $\zM_{d,j}$ yields the action of $\zC_j(n,d)$ on
$$
\zM_j(n,d):=
\bigoplus_{\la\in\Comp(n,d)} \ggis^{\la,j}\zM_{d,j}.
$$
By Theorem~G, as graded right $\k\Si_d$-supermodules, 
$$
\zM_j(n,d)\simeq\bigoplus_{\la\in\Comp(n,d)}\k_{\Si_\la}\otimes_{\k\Si_\la}\k\Si_d.
$$
So by definition of the classical Schur algebra, 
$
\End_{\k\Si_d}(\zM_j(n,d))= S(n,d).
$
On the other hand, the action of $\zC_j(n,d)$ on $\zM_j(n,d)$ gives the graded superalgebra homomorphism 
$$\phi:\zC_j(n,d)\to \End_{\k\Si_d}(\zM_j(n,d)).$$

\vspace{2mm}
\noindent
{\bf Theorem H.} 
{\em 
Let $j\in J$ and $n\geq d$. Then the restriction of $\phi$ to the degree zero composnent is an algebra isomorphism 
$\zC_j(n,d)^0\iso S(n,d)$. 
}
\vspace{2mm}

Theorem H is proved in Theorem~\ref{T050924_4}.

\subsection{The algebra $\zC(n,d)$} 
For a $J$-multicomposition $\bmu\in\Comp^J(n,d)$ we denote 
$$
\ud(\bmu)=(|\mu^{(0)}|,\dots,|\mu^{(\ell-1)}|);
$$
this is an element of the set $\Comp(J,d)$ of all tuples $\ud=(d_0,\dots,d_{\ell-1})$ of non-negative integers with $d_0+\dots+d_{\ell-1}=d$. 

There is an embedding 
$$\ttb:\Comp^J(n,d)\to \Comp^\col(n\ell,d),\ \bmu\mapsto (\tta(\bmu),\bk^{(n)})
$$
where 
$$\tta(\bmu)=(\mu^{(0)}_1,\dots,\mu^{(0)}_n,
\dots,\mu^{(\ell-1)}_1,\dots, \mu^{(\ell-1)}_n)\quad \text{and}\quad \bk^{(n)}:=0^n1^n\cdots (\ell-1)^n\in J^{n\ell}$$ (informally, read the parts of the compositions $\mu^{(0)},\dots,\mu^{(\ell-1)}$ and record the corresponding colors), see (\ref{EBeta}). 
We will consider the multicompositions in $\La^J(n,d)$ as the colored compositions in $\Comp^\col(n\ell,d)$ via this embedding. In particular, we now have the idempotents $\ggis^\bmu:=\ggis^{\ttb(\bmu)}$ for any $\bmu\in\Comp^J(n,d)$. 

We consider the 
graded superalgebra
$$
\zC(n,d):=\End_{\zC_{d}}\Big(\bigoplus_{\bla\in\Comp^J(n,d)}\zC_{d}\ggis^\bla\Big)^\sop
\simeq\bigoplus_{\bla,\bmu\in\Comp^J(n,d)} \ggis^\bmu\zC_{d}\ggis^\bla.
$$
If $n\geq d$, 
this 
is just a Morita equivalent version of $\zC_{d}$, see Lemma~ \ref{LEXMor}: 

\vspace{2mm}
\noindent
{\bf Theorem I.} 
{\em 
If $n\geq d$, 
then $\zC(n,d)$ and $\zC_{d}$ are graded Morita superequivalent. 
}
\vspace{2mm}

For $\ud\in\Comp(J,d)$, we also have the graded superalgebra 
$$
\zC(n,\ud):=\End_{\zC_{d}}\Big(\bigoplus_{\substack{\bla\in\Comp^J(n,d),\\ \ud(\bla)=\ud}}\zC_{d}\ggis^\bla\Big)^\sop
\simeq\bigoplus_{\substack{\bla,\bmu\in\Comp^J(n,d),\\\ud(\bla)=\ud(\bmu)=\ud}} \ggis^\bmu\zC_{d}\ggis^\bla.
$$

\vspace{2mm}
\noindent
{\bf Theorem J.} 
{\em We have for the degree zero components:
\begin{enumerate}
\item[{\rm (i)}] $\zC(n,d)^0=\bigoplus_{\ud\in\Comp(J,d)}\zC(n,\ud)^0$;
\item[{\rm (ii)}] for each $\ud\in\Comp(J,d)$, we have 
$
\zC(n,\ud)^0\cong \zC_0(n,d_0)^0\otimes\dots\otimes \zC_{\ell-1}(n,d_{\ell-1})^0$;
\end{enumerate}
In particular, taking into account Theorem H, 
$$\zC(n,d)^0\cong \bigoplus_{\ud=(d_0,\dots,d_{\ell-1})\in\Comp(J,d)}S(n,d_0)\otimes\dots\otimes S(n,d_{\ell-1}).$$
}
\vspace{2mm}

Theorem~J is proved in Lemmas~\ref{L050924_3}, \ref{L050924_2} and Corollary~\ref{C100924}

\mainmatter

\chapter{Background and generalities}

\section{Combinatorial and Lie theoretic notation}
\subsection{General conventions}
We will work either over a principal ideal domain $\O$ of characteristic $0$ or over a field $\F$ of arbitrary characteristic. We denote by $\k$ either $\O$ or $\F$. 
We denote $\N:=\Z_{\geq 0}$, $\N_+:=\Z_{> 0}$, and write $\Z/2=\Z/2\Z=\{\0,\1\}$.



Throughout the paper we fix 
$\ell\in\N_+$ and denote 
\begin{align}
\label{EIJ}
p:=2\ell+1,\quad 
I := \{0,1,\dots,\ell\}\quad\text{and}\quad J:=\{0,1,\dots,\ell-1\}.
\end{align}
We identify $I$ with the set of vertices of a certain Dynkin diagram, see \S\ref{SSLTN}. 
For $i\in I$, we define $\|i\|\in\Z/2$ as follows 
\begin{equation}\label{EIParity}
\|i\|:=
\left\{
\begin{array}{ll}
\1 &\hbox{if $i=0$,}\\
\0 &\hbox{otherwise.}
\end{array}
\right.
\end{equation}

Throughout the paper, $q$ is an indeterminate and $\pi$ is an indeterminate satisfying $\pi^2=1$. To be more precise, we will work with the ring 
$
\Z^\pi:=\Z[\pi]/(\pi^2-1),
$
as well as the rings
$\Z^\pi[q,q^{-1}]$ of Laurent polynomials in $q$ and 
$\Z^\pi((q))$ of Laurent series in $q$ with coefficients in $\Z^\pi$. 
We have the $\Z^\pi$-linear involution
\begin{equation}\label{EBarInv}
\bar{}\,:\Z^\pi[q,q^{-1}]\to\Z^\pi[q,q^{-1}],\ q^n\mapsto q^{-n}
\end{equation}
For $n\in \N$ and $k\in\N_+$, we will use the following elements of $\Z[q,q^{-1}]\subseteq\Z^\pi[q,q^{-1}]$:
\begin{equation}\label{EQIntGen}
[n]_{q^k}:=\frac{q^{kn}-q^{-kn}}{q^k-q^{-k}},\quad [n]_{q^k}^!:=[1]_{q^k}[2]_{q^k}\cdots[n]_{q^k}. 
\end{equation}
For $i\in I$, we define 
\begin{equation}\label{EQInt1}
q_i:=
\left\{
\begin{array}{ll}
q &\hbox{if $i=0$,}\\
q^2 &\hbox{if $0<i<\ell$,}\\
q^4 &\hbox{if $i=\ell$.}
\end{array}
\right.
\end{equation}
For $n\in\N$, we define the following elements of $\Z^\pi[q,q^{-1}]$
\begin{equation}\label{EQInt}\begin{split}
[n]_i&:=
\left\{
\begin{array}{ll}
[n]_{q_i} &\hbox{if $i\neq 0$,}\\
\frac{(q\pi)^{n}-q^{-n}}{q\pi-q^{-1}} &\hbox{if $i=0$,}
\end{array}
\right.
\\ 
[n]_i^!&:=[1]_{i}[2]_{i}\cdots[n]_{i}.
\end{split} 
\end{equation}
Note that 
\begin{equation}\label{EBarGood}
\overline{[n]_i}=[n]_i\qquad(\text{for}\ i>0)
\end{equation}
but in general $\overline{[n]_0}\neq[n]_0$. In fact, 
for $n\in\N$, we have 
\begin{equation}\label{E2n!Bar}
\overline{[2n]_0^!}=\pi^n[2n]_0^!.
\end{equation}

For $ d\in \N$, we write elements of $I^d$ as $\bi=(i_1,\dots,i_ d)$ or $\bi=i_1\cdots i_ d$, with $i_1,\dots,i_ d\in I$, and refer to them as {\em words with entries in $I$}. Given words $\bi\in I^ d$ and $\bj\in I^c$, we have the concatenation word $\bi\bj\in I^{d+c}$. Similarly, we have the set of words $J^ d$ with entries in $J$.

For for $a,b\in \Z$, we will consider the {\em integral segments}  
\begin{equation}\label{ESegment}
[a,b]:=\{a,a+1,\dots,b\}.
\end{equation}
Given integral segments $\ttX$ and $\ttX_1,\dots,\ttX_k$, we say that $\ttX$ is an {\em increasing disjoint union} of $\ttX_1,\dots,\ttX_k$ if $\ttX=\ttX_1\sqcup\dots\sqcup \ttX_k$ and $a<b$ whenever $a\in \ttX_r$, $b\in \ttX_s$ and $r<s$.

\subsection{Partitions and compositions}
\label{SSPar}
A {\em composition} is an infinite sequence $\la=(\la_1,\la_2,\dots)$ of non-negative integers which are eventually zero. We write $\varnothing$ for the composition $(0,0,\dots)$. 
When writing compositions, we may collect consecutive equal parts together with a superscript, and omit an infinite tail of $0$'s. 
For example, for $d\in\N_+$, we have the composition 
\begin{equation}\label{EOmd}
\om_d:=(1^d).
\end{equation}

Given compositions $\la=(\la_1,\dots,\la_n)$ and $\mu=(\mu_1,\dots,\mu_m)$, we say that  $\la$ is a {\em refinement} of $\mu$ if there exist 
$0=k_0\leq k_1\leq \dots\leq k_{m-1}\leq k_m=n$ such that 
$\mu_s=\sum_{r=k_{s-1}+1}^{k_s}\la_r$ for all $s=1,\dots,m$. 


Any composition $\la$ has finite sum $|\la|:=\la_1+\la_2+\dots$, and we say that $\la$ is a composition of $|\la|$. For $d\in\N$, we write $\Comp(d)$ for the set of all compositions of $d$. 
The  standard {\em dominance order} on $\Comp(d)$ is denoted $\unlhd$, see \cite[(1.4.6)]{JamesKerber}. For $\la=(\la_1,\la_2,\dots)\in\Comp(d)$ and $k\in\N_+$ we denote 
\begin{equation}\label{EkLa}
k\la:=(k\la_1,k\la_2,\dots)\in\Comp(kd).
\end{equation} 

For $n\in\N_+$ and $d\in \N$, we set 
\begin{align*}
\Comp(n,d)&:=\{\la=(\la_1,\la_2,\dots)\in\Comp(d)\mid \la_k=0
\ \text{for all $k>n$}\}, 
\end{align*}
If $\la\in\Comp(n,d)$ we often write $\la=(\la_1,\dots,\la_n)$. 

For $\la\in\Comp(n,d)$ we have segments
\begin{equation}\label{EPartitionOfComposition}
\ttP^\la_r:=[\la_1+\dots+\la_{r-1}+1,\la_1+\dots+\la_r], 
\qquad(1\leq r\leq n).
\end{equation}
Then $[1,d]$ is an increasing disjoint union of $\ttP^\la_1,\dots, \ttP^\la_n$.

A composition $\la=(\la_1,\la_2,\dots)$ is called {\em essential} if $\la_k=0$ for some $k\in\N_+$ implies that $\la_l=0$ for all $l\geq k$. We denote by $\EC(d)$ the set of all essential compositions of $d$. When we write $\la=(\la_1,\dots,\la_n)$ for $\la\in\EC(d)$, we always suppose that $\la_n>0$, in which case we also write $\la\in\EC(n,d)\subseteq\Comp(n,d)$. 
Note that $\EC(d)=\bigsqcup_{n=1}^d\EC(n,d)$. 
Given $\la\in\EC(n,d)$ and $\mu\in\EC(m,c)$ we have the {\em concatenation} 
$
\la\mu:=(\la_1,\dots,\la_n,\mu_1,\dots,\mu_m)\in\EC(n+m,c+d).
$

Occasionally we will also use the set of compositions
\begin{equation}\label{ELaJ}
\Comp(J,d)=\{\ud=(d_0,\dots,d_{\ell-1})\in\N^\ell\mid d_0+\dots+d_{\ell-1}=d\}.
\end{equation}

A \emph{partition} is a composition whose parts are weakly decreasing. We write 
$\Par(d)$ for the set of partitions of $d$. We set 
\begin{align*}
\Comp_+(n,d)&:=\Comp(n,d)\cap\Par.
\end{align*}
We denote by $\la'$ the {\em transposed partition} of a partition $\la$, see \cite[3.5]{JamesBook}.

Let $\Par^J( d)$ denote the set of all \emph{$J$-multipartitions} of $d$. So the elements of $\Par^J( d)$ are tuples 
$\bla=(\la^{(0)},\dots,\la^{(\ell-1)})$ of partitions satisfying $\sum_{j\in J}|\la^{(j)}|= d$. 
We denote $\Par^J:=\bigsqcup_{ d\in\N}\Par^J( d)$. 
Similarly, we have $J$-multicompositions 
$$
\Comp^J(n,d)=\{(\la^{(0)},\dots,\la^{(\ell-1)})\mid 
\la^{(0)},\dots,\la^{(\ell-1)}\in\Comp(n)\ \,\text{and\, $\textstyle\sum_{j\in J}|\la^{(j)}|=d$}.
\}
$$
We have a bijection
\begin{equation}\label{EBijMultiComp}
\tta:\Comp^J(n,d)
\iso \Comp(n\ell,d),\ 
\bla
\mapsto (\la^{(0)}_1,\dots,\la^{(0)}_n,
\dots,\la^{(\ell-1)}_1,\dots, \la^{(\ell-1)}_n).
\end{equation}

We denote $\Comp^\col(n,d):=\Comp(n,d)\times J^n$. Thus the elements of $\Comp^\col(n,d)$ are pairs $(\la,\bj)$ with $\la\in\Comp(n,d)$ and $\bj\in J^n$. We refer to such pairs as {\em colored compositions (of $d$ of length $n$)}. We set 
$\Comp^\col(d):=\bigsqcup_{n\geq 0}\Comp^\col(n,d).$ 

We also have the sets of  {\em colored essential compositions}: 
\begin{align*} 
\EC^\col(n,d)=\{(\la,\bj)\in\Comp^\col(n,d)\mid \la\in \EC(n,d)\},
\quad
\EC^\col(d)=\bigsqcup_{n=0}^d\EC^\col(n,d).
\end{align*}
For $(\la,\bi)\in \EC^\col(d)$ and $(\mu,\bj)\in \EC^\col(c)$, we have the concatenation
\begin{equation}\label{EConcat}
(\la\mu,\bi\bj)\in\EC^\col(c+d).
\end{equation}

We denote
$$\bk^{(n)}:=0^n1^n\cdots (\ell-1)^n\in J^{n\ell}.$$
We now have the injective map 
\begin{equation}\label{EBeta}
\ttb:\Comp^J(n,d)\to \Comp^\col(n\ell,d),\ \bla\mapsto (\tta(\bla),\bk^{(n)}).
\end{equation}
We sometimes consider multicompositions in $\La^J(n,d)$ as colored compositions in $\Comp^\col(n\ell,d)$ via this injection.

\subsection{Symmetric group}
\label{SSSG}

We denote by $\Si_ d$  the  symmetric group on $d$ letters  
and set
$s_r:=(r,r+1) \in \Si_d$ for $r=1,\dots, d-1$ to be the elementary transpositions in $\Si_d$. 
We denote by $\ttl(w)$ the corresponding {\em length} of $w\in \Si_d$ 
and set $\sgn(w):=(-1)^{\ttl(w)}$. 
We denote by $w_d$ the longest element of $\Si_d$.

We denote by $\leq$ the Bruhat order on the symmetric group $\Si_d$, i.e. for $u,w\in \Si_d$, we have $u\leq w$ if and only if $u=s_{r_{a_1}}\cdots s_{r_{a_b}}$ for some $1\leq a_1<\cdots<a_b\leq l$, where $w=s_{r_1}\cdots s_{r_l}$  is a reduced decomposition for $w$.

For any $\la=(\la_1,\dots,\la_k)\in\Comp( d)$, we have the {\em standard parabolic subgroup} 
$\Si_\la:= \Si_{\la_1}\times\dots\times \Si_{\la_k}\leq \Si_n.$ 
We denote by $w_\la$ the longest element of $\Si_\la$. 
For the composition $\la=(a^b)$ we abbreviate $\Si_{a^b}:=\Si_{(a^b)}\leq \Si_{ab}$.

For $\la,\mu\in\Comp( d)$, we denote by $\D_d^\la$ (resp. ${}^\la\D_d$, resp. ${}^\la\D_d^\mu$) the set of the minimal length coset representatives for $\Si_ d/\Si_\la$ (resp. $\Si_\la\backslash \Si_ d$, resp. $\Si_\la\backslash \Si_ d/\Si_\mu$). It is well-known that ${}^\la\D_d^\mu=\D_d^\mu\cap{}^\la\D_d$. For $\la=(a^b)$ we abbreviate $\D^{a^b}_{ab}:=\D^{(a^b)}_{ab}$, ${}^{a^b}\D_{ab}:={}^{(a^b)}\D_{ab}$, etc. 

Let $\nu$ be yet another composition of $d$. We denote  
$$
\D^\la_\nu:=\D_d^\la\cap \Si_\nu\quad\text{and}\quad {}^\la\D_\nu:={}^\la\D_d\cap \Si_\nu.
$$
These will only be used when $\la$ is a refinement of $\nu$, in which case these are the sets of the shortest coset representatives for $\Si_\nu/\Si_\la$ and $\Si_\la\backslash \Si_\nu$ respectively.

If $\ttX\subseteq [1,d]$ and $w\in\Si_d$, we say that $w$ is {\em increasing on $\ttX$} if $w(a)<w(b)$ for all $a,b\in \ttX$ with $a<b$. In this case, if $\ttY=w(\ttX)$, we say that {\em $w$ maps $\ttX$ increasingly onto $\ttY$} and write 
\begin{equation}\label{ENEArrow}
w:\ttX\nearrow \ttY. 
\end{equation}
Note that for $\la\in\Comp(n,d)$, we have $w\in \D^\la$ if and only if $w$ is increasing on the segments $\ttP^\la_r$ for all $r=1,\dots,n$.

Let $\la,\mu\in\Comp( d)$. If $
 w\Si_\la  w^{-1}=\Si_\mu$ for some $ w\in \Si_ d$, we write  $\mu=: w\la$ and say that $ w$ {\em permutes the parts of $\la$}; in that case we have 
$
 w\Si_\la  w^{-1}=\Si_{ w\mu}. 
$

Let $\la\in\Comp(n, d)$ and $\si\in\Si_n$. Set $\mu:=(\la_{\si^{-1}(1)},\dots,\la_{\si^{-1}(n)})\in\Comp(n,d)$. Let $w_{\la,\si}\in\Si_d$ be the permutation such that
\begin{equation}\label{EWLaSi}
w_{\la,\si}:\ttP^\la_r\nearrow \ttP^\mu_{\si(r)}.
\end{equation} 
Note that 
$w_{\la,\si}\in {}^{\mu}\D_d^\la$ and $w_{\la,\si}\la =\mu$.

Let $\la,\mu\in\Comp( d)$.
If $ w\in {}^\la\D_d^\mu$, then $\Si_\la\cap  w\Si_\mu  w^{-1}$ is a standard parabolic in $\Si_d$. This standard parabolic corresponds to a certain composition of $n$, which we denote $\la\cap  w\mu$. Similarly, $ w^{-1}\Si_\la  w\cap \Si_\mu$ is the  standard parabolic corresponding to a composotion $ w^{-1}\la\cap \mu$. Thus:
\begin{equation}\label{EDJ1}
\Si_\la\cap  w\Si_\mu  w^{-1}=\Si_{\la\cap  w\mu},\quad 
 w^{-1}\Si_\la  w\cap \Si_\mu=\Si_{ w^{-1}\la\cap \mu}\qquad( w\in{}^\la\D_d^\mu).
\end{equation}
Moreover, $ w$ permutes the parts of $ w^{-1}\la\cap \mu$, and $ w( w^{-1}\la\cap \mu)= \la\cap  w\mu$, so
$
 w\Si_{ w^{-1}\la\cap \mu} w^{-1}=\Si_{\la\cap  w\mu}.
$

This can be made more explicit as follows. Let $\la\in\Comp(m,d)$ and $\mu\in\Comp(n,d)$. Let $M(\la,\mu)$ be the set of all matrices $A=(a_{r,s})\in M_{m\times n}(\N)$ such that $\sum_{s=1}^na_{r,s}=\la_r$ for $r=1,\dots,m$ and $\sum_{r=1}^ma_{r,s}=\mu_s$  for $s=1,\dots,n$. Given $A\in M(\la,\mu)$, the segment $[1,d]$ is an increasing disjoint union of the segments 
$$
\ttB^{A}_{1,1},\dots,\ttB^{A}_{m,1},\ttB^{A}_{1,2},\dots,\ttB^{A}_{m,2},\dots,\ttB^{A}_{1,n},\dots,\ttB^{A}_{m,n},
$$
of sizes $|\ttB^{A}_{r,s}|=a_{r,s}$ as well as  
an increasing disjoint union of the segments 
$$
\ttT^{A}_{1,1},\dots,\ttT^{A}_{1,n},\ttT^{A}_{2,1},\dots,\ttT^{A}_{2,n},\dots,\ttT^{A}_{m,1},\dots,\ttT^{A}_{m,n},
$$
of sizes $|\ttT^{A}_{r,s}|=a_{r,s}$. Let $w_A\in\Si_d$ be such that 
$
w_A:\ttB^{A}_{r,s}\nearrow \ttT^{A}_{r,s}$ for all $1\leq r\leq m$ and $1\leq s\leq n$. 
Then $w_A\in {}^\la\D^\mu_d$ and 
\begin{equation}\label{EDLaMuBijMat}
M(\la,\mu)\to {}^\la\D^\mu_d,\ A\mapsto w_A
\end{equation} 
is a bijection. Moreover, 
\begin{align*}
\la\cap w_A\mu=(a_{1,1},\dots,a_{1,n},a_{2,1},\dots,a_{2,n},\dots,a_{m,1},\dots,a_{m,n}),
\\
w_A^{-1}\la\cap \mu=(a_{1,1},\dots,a_{m,1},a_{1,2},\dots,a_{m,2},\dots,a_{1,n},\dots,a_{m,n}).
\end{align*}

Let $d\in\N_+$ and $\la\in\Comp( d)$. 
We define the elements $\ttx_d,\tty_d$ of the group algebra $\k\Si_d$ and the elements $\ttx_\la,\tty_\la$ of the group algebra $\k\Si_\la$ as follows
\begin{equation}\label{Ettx}
\ttx_d=\sum_{g\in \Si_d}g,\quad
\ttx_\la=\sum_{g\in \Si_\la}g,\quad \tty_d=\sum_{g\in \Si_d}\sgn(g)g,\quad \tty_\la=\sum_{g\in \Si_\la}\sgn(g)g. 
\end{equation}

\begin{Lemma} \label{L2.2.1} \cite[Lemma 4.1]{DJHecke}
Let $\la\in\Par(d)$. There exists a unique element $u_\la\in{}^{\la'}\D^\la$ such that $\Si_{\la'}\cap u_\la\Si_\la u_\la^{-1}=\{1\}$. Moreover, $\ttx_{\la'}\k\Si_d\tty_{\la}$ is a free $\k$-module of rank $1$ generated by the element $\ttx_{\la'} u_{\la}\tty_{\la}$. In particular, 
$\ttx_{\la'} u_{\la}\tty_{\la}\neq 0$. 
\end{Lemma}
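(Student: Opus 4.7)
The plan is to (a) identify $u_\la$ via the matrix parametrization of double cosets from (\ref{EDLaMuBijMat}), and (b) prove the rank-one assertion by showing that only the double coset $\Si_{\la'}u_\la\Si_\la$ contributes nontrivially to $\ttx_{\la'}\k\Si_d\tty_\la$. For (a), I would use the bijection $A\mapsto w_A$ in (\ref{EDLaMuBijMat}) together with the formula for $\la'\cap w_A\la$ stated just after it. By (\ref{EDJ1}), the condition $\Si_{\la'}\cap w_A\Si_\la w_A^{-1}=\{1\}$ is equivalent to $\la'\cap w_A\la=(1^d)$, i.e., to every entry of $A$ being $0$ or $1$. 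A direct combinatorial check (using $\la\in\Par(d)$ and, e.g., greedily filling rows from the left) yields that $M(\la',\la)$ contains exactly one such matrix, namely the one with $A_{i,j}=1$ iff $i\le\la_j$, giving existence and uniqueness of $u_\la:=w_A$.

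For (b), I would use the bimodule decomposition $\k\Si_d=\bigoplus_{w\in {}^{\la'}\D_d^\la}\k(\Si_{\la'}w\Si_\la)$ and analyze each summand. Fix $w$ and set $H:=\Si_{\la'}\cap w\Si_\la w^{-1}=\Si_{\la'\cap w\la}$, $H':=w^{-1}Hw\subseteq\Si_\la$. Decomposing $\Si_{\la'}$ into left cosets of $H$ and $\Si_\la$ into right cosets of $H'$ produces factorizations $\ttx_{\la'}=\zeta\cdot\ttx_H$ and $\tty_\la=\tty_{H'}\cdot\eta$ with $\zeta\in\k\Si_{\la'}$ and $\eta\in\k\Si_\la$. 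Combined with $\ttx_H\cdot w=w\cdot\ttx_{H'}$ and with the relations $\ttx_{\la'}a=\ttx_{\la'}$, $b\,\tty_\la=\sgn(b)\tty_\la$ valid for $a\in\Si_{\la'}$, $b\in\Si_\la$, one computes
\[
\ttx_{\la'}\cdot awb\cdot\tty_\la=\sgn(b)\,\zeta\cdot w\cdot\ttx_{H'}\tty_{H'}\cdot\eta.
\]
The change of variable $k=gh$ in the double sum $\ttx_{H'}\tty_{H'}=\sum_{g,h\in H'}\sgn(h)\,gh$ yields the characteristic-free identity
\[
\ttx_{H'}\tty_{H'}=\Bigl(\sum_{g\in H'}\sgn(g)\Bigr)\tty_{H'},
\]
which vanishes whenever $H'$ contains an odd element. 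When $w\ne u_\la$, $H$ is a nontrivial standard parabolic of $\Si_{\la'}$ and so contains a simple transposition, whose conjugate lies in $H'$; hence $\ttx_{H'}\tty_{H'}=0$ and $\ttx_{\la'}\k(\Si_{\la'}w\Si_\la)\tty_\la=0$. When $w=u_\la$, $H=\{1\}$, so $(a,b)\mapsto au_\la b$ is a bijection $\Si_{\la'}\times\Si_\la\iso\Si_{\la'}u_\la\Si_\la$; consequently $\ttx_{\la'}u_\la\tty_\la=\sum_{a,b}\sgn(b)\,au_\la b$ is a $\k$-linear combination of pairwise distinct elements of $\Si_d$ with coefficients $\pm 1$ and is nonzero over any $\k$, while the displayed identity specializes to $\ttx_{\la'}\cdot au_\la b\cdot\tty_\la=\sgn(b)\,\ttx_{\la'}u_\la\tty_\la$, exhibiting the contribution of this double coset as $\k\cdot\ttx_{\la'}u_\la\tty_\la$, free of rank one.

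The main obstacle I anticipate is the characteristic-$2$ case. The naive vanishing argument, using $sw=wt$ for a transposition $s\in H$ with $t=w^{-1}sw\in H'$, only yields $\ttx_{\la'}w\tty_\la=-\ttx_{\la'}w\tty_\la$, i.e., $2\,\ttx_{\la'}w\tty_\la=0$, which is vacuous when $\cha\k=2$. The sign-sum identity for $\ttx_{H'}\tty_{H'}$ above is what makes the argument characteristic-free and is the key technical ingredient of the proof.
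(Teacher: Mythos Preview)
The paper does not prove this lemma; it simply cites \cite[Lemma~4.1]{DJHecke}. Your self-contained argument is correct and is essentially the standard proof: part~(a) correctly identifies $u_\la$ via the unique $0$-$1$ matrix in $M(\la',\la)$ (the uniqueness argument you sketch works, e.g., by noting that columns $j$ with $\la_j=\la_1$ must be all $1$'s and inducting), and part~(b) is the usual double-coset decomposition. Your observation that the identity $\ttx_{H'}\tty_{H'}=(\sum_{g\in H'}\sgn(g))\tty_{H'}$ makes the vanishing characteristic-free is exactly right and is the key point that the naive ``$2\cdot\ttx_{\la'}w\tty_\la=0$'' argument misses.
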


The group $\Si_d$ acts on $I^d$ by place permutations:  $ g\cdot \bi=i_{ g^{-1}(1)}\cdots i_{ g^{-1}(d)}$ for $ g\in \Si_d$ and $\bi=i_1\cdots i_d\in I^d$. 
If $\la=(\la_1,\dots,\la_k)\in\Comp(d)$ and $\bi^{(1)}\in I^{\la_1},\dots,\bi^{(k)}\in I^{\la_k}$, the words of the form $ g\cdot(\bi^{(1)}\cdots \bi^{(k)})$ with $ g\in \D_d^\la$ are called {\em shuffles} of $\bi^{(1)},\dots, \bi^{(k)}$.

\subsection{Lie theory}
\label{SSLTN}
Let $\g$ be the Kac-Moody Lie algebra of type $A_{2\ell}^{(2)}$ (over $\C)$,  
see \cite[Chapter 4]{Kac}. The Dynkin diagram of $\g$ has vertices labeled by $I$:
\vspace{4mm} 
$$
{\begin{picture}(330, 15)%
\put(6,5){\circle{4}}%
\put(101,2.45){$<$}%
\put(12,2.45){$<$}%
\put(236,2.42){$<$}%
\put(25, 5){\circle{4}}%
\put(44, 5){\circle{4}}%
\put(8, 4){\line(1, 0){15.5}}%
\put(8, 6){\line(1, 0){15.5}}%
\put(27, 5){\line(1, 0){15}}%
\put(46, 5){\line(1, 0){1}}%
\put(49, 5){\line(1, 0){1}}%
\put(52, 5){\line(1, 0){1}}%
\put(55, 5){\line(1, 0){1}}%
\put(58, 5){\line(1, 0){1}}%
\put(61, 5){\line(1, 0){1}}%
\put(64, 5){\line(1, 0){1}}%
\put(67, 5){\line(1, 0){1}}%
\put(70, 5){\line(1, 0){1}}%
\put(73, 5){\line(1, 0){1}}%
\put(76, 5){\circle{4}}%
\put(78, 5){\line(1, 0){15}}%
\put(95, 5){\circle{4}}%
\put(114,5){\circle{4}}%
\put(97, 4){\line(1, 0){15.5}}%
\put(97, 6){\line(1, 0){15.5}}%
\put(6, 11){\makebox(0, 0)[b]{$_{0}$}}%
\put(25, 11){\makebox(0, 0)[b]{$_{1}$}}%
\put(44, 11){\makebox(0, 0)[b]{$_{{2}}$}}%
\put(75, 11){\makebox(0, 0)[b]{$_{{\ell-2}}$}}%
\put(96, 11){\makebox(0, 0)[b]{$_{{\ell-1}}$}}%
\put(114, 11){\makebox(0, 0)[b]{$_{{\ell}}$}}%
\put(230,5){\circle{4}}%
\put(249,5){\circle{4}}%
\put(231.3,3.2){\line(1,0){16.6}}%
\put(232,4.4){\line(1,0){15.2}}%
\put(232,5.6){\line(1,0){15.2}}%
\put(231.3,6.8){\line(1,0){16.6}}%
\put(230, 11){\makebox(0, 0)[b]{$_{{0}}$}}%
\put(249, 11){\makebox(0, 0)[b]{$_{{1}}$}}%
\put(290,2){\makebox(0,0)[b]{\quad if $\ell = 1$.}}%
\put(175,2){\makebox(0,0)[b]{if $\ell \geq 2$, \qquad and}}%
\end{picture}}
$$

\vspace{2mm}
\noindent
We denote by $P$ the {\em weight lattice} of $\g$. 
We have the set $\{\alpha_i \:|\:i \in I\}\subset P$ of {\em simple roots} of $\g$ and the set $\{h_i\:|\:i \in I\}\subset P^*$ of the {\em simple coroots} of $\g$. The {\em Cartan matrix} $\left(\langle h_i, \al_j\rangle\right)_{0\leq i,j\leq \ell}$ is:  
\vspace{2mm} 
$$
\left(
\begin{matrix}
2 & -2 & 0 & \cdots & 0 & 0 & 0 \\
-1 & 2 & -1 & \cdots & 0 & 0 & 0 \\
0 & -1 & 2 & \cdots & 0 & 0 & 0 \\
 & & & \ddots & & & \\
0 & 0 & 0 & \dots & 2 & -1& 0 \\
0 & 0 & 0 & \dots & -1 & 2& -2 \\
0 & 0 & 0 & \dots & 0 & -1& 2 \\
\end{matrix}
\right)
\quad
\text{if\ \  $\ell\geq 2$,\ \  and}
\quad
\left(
\begin{matrix}
2 & -4 \\
-1 & 2
\end{matrix}
\right)
\quad \text{if\ \  $\ell=1$.}
$$

\vspace{4mm} 
We denote by $Q$ the sublattice of $P$ generated by the simple roots and set 
$$Q_+:=\big\{\sum_{i\in I}m_i\al_i\mid m_i\in\N\  \text{for all $i\in I$}\big\}\subset Q.
$$
For $\theta=\sum_{i\in I}m_i\al_i\in Q_+$, its {\em height} is  
$
\height(\theta):=\sum_{i\in I}m_i
$ 
and its 
{\em parity} is   
\begin{equation}\label{EThetaParity}
\|\theta\|:=m_0\pmod{2}.
\end{equation} 
We denote 
$$
\delta := \sum_{i=0}^{\ell - 1} 2 \alpha_i + \alpha_\ell\in Q_+.
$$ 
Note that $\height(\de)=2\ell+1=p$.

We denote by $(.|.)$ a normalized invariant form on $P$ whose Gram matrix with respect to the linearly independent set $\al_0,\al_1,\dots,\al_\ell$ is: 
\vspace{3mm} 
$$
\left(
\begin{matrix}
2 & -2 & 0 & \cdots & 0 & 0 & 0 \\
-2 & 4 & -2 & \cdots & 0 & 0 & 0 \\
0 & -2 & 4 & \cdots & 0 & 0 & 0 \\
 & & & \ddots & & & \\
0 & 0 & 0 & \dots & 4 & -2& 0 \\
0 & 0 & 0 & \dots & -2 & 4& -4 \\
0 & 0 & 0 & \dots & 0 & -4& 8 \\
\end{matrix}
\right)
\quad
\text{if $\ell\geq 2$, and}
\quad
\left(
\begin{matrix}
2 & -4 \\
-4 & 8
\end{matrix}
\right)
\quad \text{if $\ell=1$.}
$$

\vspace{3mm} 
\noindent
In particular, 
recalling the notation (\ref{EQInt1}), we have $
q_i=q^{(\al_i|\al_i)/2}$.

For $\bi=i_1\cdots i_n\in I^n$, we denote 
\begin{equation}\label{EWtI}
\wt(\bi):=\al_{i_1}+\dots+\al_{i_n}\in Q_+.
\end{equation}
For $\theta\in Q_+$ of height $n$, we set
$$I^\theta:=\{\bi=i_1\cdots i_n\in I^n\mid \wt(\bi)=\theta\}.$$ 
We define $I^{\theta}_{\di}$ to be the set of all expressions of the form
$i_1^{(m_1)} \cdots i_r^{(m_r)}$ with
$m_1,\dots,m_r\in \N_+$, $i_1,\dots,i_r\in I$
 and $m_1 \al_{i_1} + \cdots + m_r \al_{i_r} = \theta$ (note we do not insists that $i_k\neq i_{k+1}$). We refer to such expressions as {\em divided power words}. We identify $I^\theta$ with the subset of $I^\theta_\di$ which consists of all expressions as above with all $m_k=1$. We use the same notation for concatenation of divided power words as for concatenation of words.
For $\bi=i_1^{(m_1)} \cdots i_r^{(m_r)}\in I^{\theta}_{\di}$, we denote 
\begin{equation}\label{EHatI}
\hat\bi := i_1^{m_1}\cdots i_r^{m_r}\in I^{\theta}.
\end{equation}

As in \cite[\S5]{Kac}, the set $\Phi$ of {\em roots} of $\g$ is a disjoint union of the set $\Phi^\im=\{n\de\mid n\in \Z\}$ of {\em imaginary roots} and the set $\Phi^\re$ of {\em real roots}. Also $\Phi=\Phi_+\sqcup-\Phi_+$, where $\Phi_+$ is the set of {\em positive roots}. We denote $\Phi_+^\im=\Phi^\im\cap\Phi_+$ and $\Phi_+^\re=\Phi^\re\cap\Phi_+$.

Following \cite{BKT}, we define a {\em convex preorder} on $\Phi_+$ as a preorder $\preceq$ on $\Phi_+$ such that the following three conditions hold for all $\be,\ga\in\Phi_+$:
\begin{eqnarray}
\label{EPO1}
&\be\preceq\ga \ \text{or}\ \ga\preceq \be;
\\
\label{EPO2}
&\text{if $\be\preceq \ga$ and $\be+\ga\in\Phi_+$, then $\be\preceq\be+\ga\preceq\ga$};
\\
&\label{EPO3}
\text{$\be\preceq\ga$ and $\ga\preceq\be$ if and only if $\be$ and $\ga$ are proportional}.
\end{eqnarray}
We write $\be\prec\ga$ if $\be\preceq\ga$ but $\ga\not\preceq\be$. 
By (\ref{EPO3}), $\be\preceq\ga$ and $\ga\preceq\be$ happens for $\be\neq\ga$ if and only if both $\be$ and $\ga$ are imaginary. 
In particular, the set $\Psi$ from (\ref{EPsi}) is totally ordered with respect to $\preceq$. 
The set of real roots splits into two disjoint infinite sets
$\Phi^\re_{\succ\de}:=\{\be\in \Phi_+^\re\mid \be\succ\de\}$ and $\Phi^\re_{\prec\de}:=\{\be\in \Phi_+^\re\mid \be\prec\de\}.$ The following is noted in \cite[\S3.3a,\,(Con3)]{KlLi}:

\begin{Lemma} \label{LCon3} 
Let $\be\in\Phi_+^\im$, and $\be=\sum_{a=1}^b \ga_a$ for some positive roots $\ga_a$. If either $\ga_a\preceq \be$ for all $a=1,\dots,b$ or $\ga_a\succeq \be$ for all $a=1,\dots,b$, then all $\ga_a$ are imaginary. 
\end{Lemma}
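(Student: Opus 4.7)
The plan is to deduce the lemma from the convexity axioms (EPO2)--(EPO3), the split $\Phi^\re = \Phi^\re_{\succ\de} \sqcup \Phi^\re_{\prec\de}$, and one preliminary: an \emph{iterated convexity bound} asserting that whenever $\eta = \epsilon_1+\cdots+\epsilon_m$ with $\eta,\epsilon_1,\dots,\epsilon_m\in\Phi_+$, one has $\min_c\epsilon_c \preceq \eta \preceq \max_c\epsilon_c$. I would establish this preliminary by first invoking the standard fact --- folklore in the present twisted affine setting $A_{2\ell}^{(2)}$ but deserving a brief separate justification --- that any positive-root decomposition admits a reordering under which each partial sum is again a positive root, and then applying (EPO2) successively to the resulting telescoping sums.

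With iterated convexity in hand, Case~1 (all $\ga_a \preceq \be$) follows directly. Write $\be = n\de$ and let $S \subseteq \{1,\dots,b\}$ index the real $\ga_a$'s; the imaginary $\ga_a$'s then sum to $k\de$ for some $k \geq 0$, so $\sum_{a \in S}\ga_a = (n-k)\de$. If $S = \emptyset$ the claim is already proved, so suppose for contradiction $S \neq \emptyset$; then $(n-k)\de$ is a positive imaginary root (in particular $n > k$, since a nonempty sum of positive roots cannot vanish in $Q_+$). For each $a \in S$, the real root $\ga_a$ satisfies $\ga_a \preceq \be \equiv \de$ and, since real and imaginary roots are not proportional, (EPO3) forces $\ga_a \not\equiv \de$, hence $\ga_a \prec \de$. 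Iterated convexity applied to $(n-k)\de = \sum_{a \in S}\ga_a$ then yields $(n-k)\de \preceq \max_{a \in S}\ga_a \prec \de$, i.e.\ $(n-k)\de \prec \de$; but $(n-k)\de$ is a positive imaginary root and hence equivalent to $\de$ under $\preceq$, a contradiction. Therefore $S = \emptyset$ and every $\ga_a$ is imaginary.

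Case~2 (all $\ga_a \succeq \be$) is handled by the mirror-image argument: again assume $S \neq \emptyset$, use (EPO3) to upgrade $\ga_a \succeq \de$ to the strict $\ga_a \succ \de$ for each $a \in S$, and invoke the dual iterated convexity lower bound $(n-k)\de \succeq \min_{a \in S}\ga_a \succ \de$ to contradict $(n-k)\de \equiv \de$. The main technical obstacle I anticipate is verifying the partial-sum reordering fact in the twisted affine type $A_{2\ell}^{(2)}$; this is classical for finite root systems and well-known in untwisted affine types, but needs a short separate check here, in particular to handle subsums involving multiples of $\de$, which is precisely the phenomenon the lemma is designed to control.
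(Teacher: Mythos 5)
Your overall reduction is sound: splitting off the imaginary summands, upgrading $\ga_a\preceq\be$ to $\ga_a\prec\de$ for real $\ga_a$ via (\ref{EPO3}), and contradicting the equivalence of $(n-k)\de$ and $\de$ is the right shape, and your Case~2 mirrors it correctly. (The paper itself gives no argument here, citing [KlLi, \S3.3a, (Con3)].) The genuine gap is your ``iterated convexity bound,'' which rests entirely on the claim that every decomposition of a positive root into positive roots can be reordered so that all partial sums are roots. That claim is not ``classical/well-known'' in the one case you actually need: the classical finite-type proof peels off a summand $\ga_i$ with $(\eta\mid\ga_i)>0$, which exists because $(\eta\mid\eta)>0$; but your target is $\eta=(n-k)\de$, an imaginary root, and $(\de\mid\ga)=0$ for every root $\ga$, so that mechanism gives nothing --- and an imaginary target is precisely what this lemma is about. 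You flag the twisted parity conditions as the remaining obstacle, but the failure is already structural, and no proof or reference is supplied; as written, the entire content of the lemma has been displaced onto an unproved combinatorial statement.

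The gap can be closed, and in a way that makes the full reordering fact unnecessary. If the real roots $\ga_a$, $a\in S$, sum to $m\de$ with $|S|\geq 2$, then $0=(m\de\mid m\de)=\sum_a(\ga_a\mid\ga_a)+\sum_{a\neq a'}(\ga_a\mid\ga_{a'})$ and each $(\ga_a\mid\ga_a)>0$, so $(\ga_a\mid\ga_{a'})<0$ for some $a\neq a'$; since $\ga_a\neq-\ga_{a'}$ (both positive), $\ga_a+\ga_{a'}\in\Phi_+$. If this sum is imaginary, (\ref{EPO2}) gives $\ga_a+\ga_{a'}\preceq\max(\ga_a,\ga_{a'})\prec\de$, contradicting (\ref{EPO3}) applied to the two imaginary roots; if it is real, (\ref{EPO2}) shows it is again a positive real root $\prec\de$, so you may replace the pair by its sum and induct on $|S|$, terminating at $|S|=1$, where a single real root would equal $m\de$, which is absurd. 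This uses only the positivity of the form on real roots and the standard fact that two non-opposite roots with negative pairing sum to a root, so it works verbatim in type $A_{2\ell}^{(2)}$ and for an arbitrary convex preorder; it is essentially the argument behind (Con3) in the cited source. If you prefer to keep your route, you must actually prove the partial-sum reordering statement for an imaginary target in twisted affine type, which is substantially more than a ``short separate check.''
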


Let $\Phi'$ be the root system of type $C_\ell$ whose Dynkin diagram is obtained by dropping the simple root $\al_0$ from our type $A_{2\ell}^{(2)}$ Dynkin diagram. Then $\Phi'=\Phi'_{\text{s}}\sqcup \Phi'_{\text{l}}$ where $\Phi'_{\text{s}}=\{\al\in\Phi'\mid (\al|\al)=4\}$ and $\Phi'_{\text{l}}=\{\al\in\Phi'\mid (\al|\al)=8\}$. 
By \cite[\S6]{Kac}, we have $\Phi^\re=\Phi^\re_{\text{s}}\sqcup \Phi^\re_{\text{m}}\sqcup \Phi^\re_{\text{l}}$ for 
\begin{align}
\Phi^\re_{\text{s}}&= \{(\al+(2n-1)\de)/2 \mid \al\in\Phi'_l,\, n\in\Z\},
\label{EPhiS}
\\
\Phi^\re_{\text{m}}&=\{\al+n\de \mid \al\in\Phi'_s,\, n\in\Z\},
\label{EPhiM}
\\ 
\Phi^\re_{\text{l}}&=\{\al+2n\de \mid \al\in\Phi'_l,\, n\in\Z\}.
\label{EPhiL}
\end{align}
The set of positive roots is then 
$\Phi_+=\Phi_+^\im\sqcup\Phi_+^\re,$  
where $\Phi_+^\im=\{n\de\mid n\in\N_+\},$ while $\Phi_+^\re$ consists of the roots in $\Phi'_+$ together with the roots in (\ref{EPhiS})--(\ref{EPhiL}) with $n\in\N_+$, cf. \cite[Proposition 6.3]{Kac}.


\begin{Example} \label{ExConPr} 
{\rm 
Following \cite[Example 3.5]{McN2}, consider $\R$ as a vector space over $\Q$ and let $\chi:\Q\Phi\to \R$ be an injective linear map. Then it is easy to see that the following defines a convex preorder on $\Phi_+$: 
$$\be\preceq\ga \quad \iff\quad \chi(\be)/\height(\be)\leq \chi(\ga)/\height(\ga).
$$ 
Let  $\tilde\al=2\al_1+\dots+2\al_{l-1}+\al_\ell\in\Phi'$ be the highest root in $\Phi'$. Denote by $\Phi'_{\sharp}$ the set of all roots in 
$\Phi'$ which are non-negative linear combinations of $\al_1,\dots,\al_{\ell-1},-\tilde\al$ (this is a non-standard choice of a positive root system in $\Phi'$). The linear function $\chi:\Q\Phi\to \R$ is determined by a choice of $\Q$-linearly independent real numbers $\chi(\al_0),\dots,\chi(\al_{\ell-1}),\chi(\al_\ell)$. These can be chosen so that $\chi(\al_1),\dots,\chi(\al_{\ell-1}),\chi(-\tilde\al)$ are positive and $\chi(\de)$ is very close to zero. 
For the corresponding convex preorder $\preceq$ we have 
\begin{equation}\label{ESharp}
\begin{split}
\Phi^\re_{\succ\de}=\{\be\in\Phi_+\mid\text{$\be$ is of the form $r\al+s\de$ with $r\in\{1/2,1\}$, $\al\in\Phi'_{\sharp}$}\},
\\
\Phi^\re_{\prec\de}=\{\be\in\Phi_+\mid\text{$\be$ is of the form $-r\al+s\de$ with $r\in\{1/2,1\}$, $\al\in\Phi'_{\sharp}$}\}.
\end{split}
\end{equation}
These convex preorders are especially important since they are related to RoCK blocks of spin double covers of symmetric groups, see \cite{KlLi}. Starting with Section~\ref{SGG}, we will always work with such a convex preorder. 
}
\end{Example}

The set of {\em indivisible positive roots} is:
\begin{equation}\label{EPsi}
\Psi:=\Phi_+^\re\cup\{\de\}.
\end{equation}

Let $\theta\in Q_+$. A {\em root partition of $\theta$} is a pair $(\um,\bmu)$, where $\um=(m_\be)_{\be\in\Psi}$ is a tuple of non-negative integers such that $\sum_{\be\in\Psi}m_\be\be=\theta$, and $\bmu\in\Par^J(m_\de)$ is a $J$-multipartition of $m_\de$, see \S\ref{SSPar}. Denote by $\Par(\theta)$ the set of all root partitions of~$\theta$.

\section{Graded superalgebras}\label{SSBasicRep} 

\subsection{Graded superspaces and superalgebras}
By a {\em graded superspace} we understand a $\k$-module with $\k$-module decomposition $V=\bigoplus_{n\in\Z,\,\eps\in\Z/2}V^n_\eps$. 
We refer to the elements of $V^n_\eps$ as (homogeneous) elements of {\em bidegree} $(n,\eps)$. We also refer to $n$ as {\em degree} and $\eps$ as {\em parity} and write for $v\in V^n_\eps$:
$$
\bideg(v)=(n,\eps),\ \deg(v)=n,\ |v|=\eps.
$$
For $n\in\Z$ and $\eps\in\Z/2$, we use the notation $V^n:= V^n_{\0}\oplus V^n_{\1}$, $V_\eps=\bigoplus_{n\in \Z}V^n_\eps$, and 
\begin{equation}\label{EA>0}
\quad V^{>n}:=\bigoplus_{m>n}V^{m}, \quad
 V^{\geq n}:=\bigoplus_{m\geq n}V^{m},\quad
V^{<n}:=\bigoplus_{m<n}V^{m}.
\end{equation}
A {\em subsuperspace} $W\subseteq V$ is a $\k$-submodule such that $W=\bigoplus_{n,\eps}(W\cap V_{n,\eps})$.

If each $V^n_\eps$ is a free $\k$-module of finite rank and $V^n=0$ for $n\ll0$, the {\em graded superdimension of $V$} is defined to be 
$$
\dim_{q,\pi}V:=\sum_{n\in\Z,\,\eps\in\Z/2}(\rank_\k V^n_\eps)q^n \pi^\eps\in\Z^\pi((q)).
$$

The {\em graded dual superspace} is $V^*:= \bigoplus_{n\in\Z,\,\eps\in\Z/2}(V^n_\eps)^*$ where the elements of $(V^n_\eps)^*=\Hom_\k(V^n_\eps,\k)$ are considered as elements of bidegree $(-n,\eps)$. Recalling the bar-involution from (\ref{EBarInv}), we have that if  $V$ is a free $\k$-module of finite rank then
\begin{equation}\label{EDimDual}
\dim_{q,\pi}V^*=\overline{\dim_{q,\pi}V}.
\end{equation}

Let $V,W$ be graded superspaces. The tensor product $V\otimes W$ is considered as a graded superspace via $\bideg(v\otimes w)=(\deg(v)+\deg(w),|v|+|w|)$ for all homogeneous $v\in V$ and $w\in W$.
For $m\in\Z$ and $\de\in\Z/2$, a bidegree $(m,\de)$ (homogeneous) linear map $f:V\to W$ is a $\k$-linear map satisfying $f(V^n_\eps)\subseteq W^{n+m}_{\eps+\de}$ for all $n,\eps$. We denote the $\k$-module of all bidegree $(m,\de)$  linear maps  from $V$ to $W$ by $\Hom_\k(V,W)^m_\de$, and set 
$$
\Hom_\k(V,W):=\bigoplus_{m\in\Z,\,\de\in\Z/2}\Hom_\k(V,W)^m_\de.
$$

Let $V$ be a graded superspace and $d\in\N_+$.
The symmetric group $\Si_d$ acts on $V^{\otimes d}$ via
\begin{equation}\label{EWSignAction}
{}^{w}(v_1\otimes\dots\otimes v_d):=(-1)^{[w;v_1,\dots,v_d]}v_{w^{-1}(1)}\otimes\dots\otimes v_{w^{-1}(d)},
\end{equation}
where
$$
[w;v_1,\dots,v_d]:=\sum_{1\leq a<c\leq d,\, w^{-1}(a)>w^{-1}(c)}|v_a||v_c|.
$$

A graded ($\k$-){\em superalgebra} is a graded superspace $A=\bigoplus_{n\in \Z,\,\eps\in\Z/2}A^n_\eps$ which is a unital algebra such that $A^n_\eps A_m^\de\subseteq A_{n+m}^{\eps+\de}$. The {\em opposite superalgebra}\, $A^\sop$ is the same as $A$ as a $\k$-module but has multiplication $a*b=(-1)^{|a||b|}ba$. 

Let $A,B$ be graded superalgebras. An {\em isomorphism} $f:A \to B$ is an algebra isomorphism which is homogeneous of bidegree $(0,\0)$. We write $A\cong B$ if $A$ and $B$ are isomorphic in this sense. The tensor product $A\otimes B$ is considered as a graded superalgebra via 
$
(a\otimes b)(a'\otimes b')=(-1)^{|b| |a'|}aa'\otimes bb'
$
for all homogeneous $a,a'\in A$ and $b,b'\in B$. 

Let $A$ be a superalgebra and $d\in\N_+$. 
For $1\leq t\leq d$, $1\leq r<s\leq d$, we have maps 
\begin{align*}
\iota^{(d)}_{t}&:A\to A^{\otimes d},\ a\mapsto 1_A^{\otimes (t-1)}\otimes a\otimes 1_A^{\otimes(d-t)},\\
\iota^{(d)}_{r,s}&:A\otimes A\to A^{\otimes d},\ a\otimes b\mapsto 1_A^{\otimes (r-1)}\otimes a\otimes 1_A^{\otimes(s-r-1)}\otimes b\otimes 1_A^{\otimes(d-s)}.
\end{align*}
For $x\in A$ and $y\in A\otimes A$, we sometimes denote 
\begin{equation}\label{EInsertion}
x_{r}:= \iota^{(d)}_r(x)\in A^{\otimes d},\quad y_{r,s}:=\iota^{(d)}_{r,s}(y)\in A^{\otimes d}.
\end{equation}

Given a graded superalgebra $A$, we consider the {\em wreath superproduct} $A\swr \Si_d$, where $A\swr \Si_d=A^{\otimes d}\otimes \F\Si_d$ as graded superspaces, with the group algebra $\F\Si_d$ always considered as a graded superalgebra in the trivial way, i.e. concentrated in bidegree $(0,\0)$. To define the algebra structure, we identify $A^{\otimes d}$ and $\F\Si_d$ with subspaces of $A^{\otimes d}\otimes \F\Si_d$ in the obvious way, and postulate that $A^{\otimes d}$, $\F\Si_d$ are subalgebras of  $A\swr \Si_d$; then recalling (\ref{EWSignAction}),  we also  postulate that 
\begin{align*}
w\,(a_1\otimes\dots\otimes a_d)={}^w(a_1\otimes\dots\otimes a_d)\,w\qquad(w\in\Si_d,\ a_1,\dots, a_d\in A).
\end{align*}

\subsection{Graded supermodules} Let $A$ be a graded superalgebra. 
A {\em graded $A$-supermodule} $V$ is an $A$-module which is also a graded superspace such that $A^n_\eps V^{m}_\de\subseteq V^{n+m}_{\eps+\de}$ for all $n,m,\eps,\de$. 
A graded subsupermodule $W\subseteq V$ is a graded subsuperspace which is also an $A$-submodule.

Let $V,W$ be graded $A$-supermodules. A 
bidegree $(m,\de)$ (homogeneous) graded $A$-supermodule homomorphism from $V$ to $W$ is $\k$-linear map $f:V\to W$ of a bidegree $(m,\de)$ satisfying $f(av)=(-1)^{\de|a|}af(v)$ for all (homogeneous) $a\in A,v\in V$. We denote by $\Hom_A(V,W)^{m}_{\de}$ the $\k$-module of all bidegree $(m,\de)$ graded $A$-supermodule homomorphism from $V$ to $W$, and set 
$$
\Hom_A(V,W):=\bigoplus_{m\in\Z,\,\de\in\Z/2}\Hom_A(V,W)^{m}_{\de}.
$$
We refer to the elements of $\Hom_A(V,W)$ as 
graded $A$-supermodule homomorphisms from $V$ to $W$. 

For a graded superalgebra $A$, we denote by 
$\Mod{A}$ the category of all 
 graded $A$-supermodules and all graded $A$-supermodule homomorphisms. 
An isomorphism in $\Mod{A}$ will be denoted $\cong$. On the other hand, a homogeneous bidegree $(0,\0)$ isomorphism in $\Mod{A}$ will be denoted $\simeq$. 
In this paper, we assume that all graded superalgebras $A$ are noetherian (as graded superalgebras). 
For such $A$, we usually work in the full subcategory 
$\mod{A}$ of $\Mod{A}$ which consists of all 
{\em finitely generated}\, graded $A$-supermodules. 

In fact, $\mod{A}$ is a graded $(\funQ,\Uppi)$-supercategory in the sense of \cite[Definition 6.4]{BE}, with $\Uppi$ the parity change functor and $\funQ$ the degree shift functor. To be more precise, if $V=\bigoplus_{n,\eps} V^n_\eps\in \mod{A}$, we have $(\Uppi V)^n_\eps=V^n_{\eps+\1}$ with the new action $a\cdot v=(-1)^{|a|}av$, and $(\funQ V)^n_\eps=V^{n-1}_\eps$ (with the old action). The following notation will be used. Suppose that $f=\sum_{n\in\Z,\eps\in\Z/2}a_{n,\eps}q^n\pi^\eps\in\Z^\pi[q,q^{-1}]$ be a Laurent polynomial with the coefficients $a_{n,\eps}\in\N$. Given $V\in\mod{A}$, we interpret $V^{\oplus f}$ as follows:
\begin{equation}\label{EFancyDirectSum}
V^{\oplus f}:=\bigoplus_{n\in\Z,\eps\in\Z/2}\funQ^n\Uppi^\eps V^{\oplus a_{n,\eps}}.
\end{equation}
If $g$ is another such Laurent polynomial then
\begin{equation}\label{EFG}
(V^{\oplus f})^{\oplus g}\simeq V^{\oplus fg}.
\end{equation}

A graded $A$-supermodule $V$ is {\em irreducible}\, if it has exactly two graded subsupermodules: $0$ and $V$. In general, an irreducible graded $A$-supermodule $V$ may or may not be irreducible when considered as a usual $A$-module. If it is, $V$ is called {\em absolutely irreducible}. (The irreducible graded supermodules considered in this paper will all be absolutely irreducible, see Lemma~\ref{LTypeM}.)  
We denote by $\Irr(A)$ a complete and non-redundant set of irreducible graded $A$-supermodules, i.e. every irreducible graded $A$-supermodule is isomorphic to some member of $\Irr(A)$ and $V\not\cong W$ for any any two distinct  $V,W\in\Irr(A)$.

Let $A,B$ be graded superalgebras. Given a graded $A$-supermodule $V$ and a graded $B$-supermodule $W$, we have the graded $(A\otimes B)$-supermodule $V\boxtimes W$ with the action 
$$
(a\otimes b)(v\otimes w)=(-1)^{|b||v|}(av\otimes bw)\qquad(a\in A,\, b\in B,\, v\in V,\, w\in W).
$$

A graded {\em $(A,B)$-bisupermodule}  $V$ is an $(A,B)$-bimodule which is also a graded superspace such that $A^n_\eps V^{m}_{\de}\subseteq V^{n+m}_{\eps+\de}$ and $V^{m}_{\de}B^n_\eps\subseteq V^{n+m}_{\eps+\de}$ for all $n,m,\eps,\de$. 
A homomorphism of graded $(A,B)$-bisupermodules is defined similarly to a homomorphism of graded $A$-supermodules. In particular, a bidegree $(m,\de)$ homomorphism of graded $(A,B)$-bisupermodules $f:V\to W$ satisfies $f(avb)=(-1)^{\de|a|}af(v)b$ for all (homogeneous) $a\in A,\,v\in V,\,b\in B$. In fact, we can identify the notions of a graded $(A,B)$-bisupermodule and a graded $(A \otimes B^{\sop})$-supermodule and then a homomorphism of graded $(A,B)$-bisupermodules is the same thing as a homomorphism of graded $(A\otimes B^\sop)$-supermodules, see e.g. \cite[\S2.2b]{KlLi}. 
Just like for graded supermodules, we use the notation $\cong$ to indicate an isomorphism of graded bisupermodules, and  $\simeq$ to indicate a homogeneous bidegree $(0,\0)$ isomorphism of graded bisupermodules.  

\begin{Example} \label{ExEndSopBim}
{\rm 
Given a graded $A$-supermodule $M$, we have the graded superalgebra $\End_A(M)^\sop$, and $M$ is naturally a graded $(A,\End_A(M)^\sop)$-bisupermodule with $f\in \End_A(M)^\sop=\End_A(M)$ acting via
$m\cdot f=(-1)^{|f||m|}f(m)$. 
}
\end{Example}

\subsection{Induction and restriction for graded supermodules}
Let $A$ be a graded $\k$-superalgebra with a bidegree $(0,\0)$ idempotent $e$. Suppose that $B$ is a graded subsuperalgebra of $eAe$. Then $eA$ is a graded $(B,A)$-bisupermodule and $Ae$ is a graded $(A,B)$-bisupermodule, so we have the usual functors of restriction, induction and coinduction
\begin{align}
\label{EResGeneral}
\Res^A_B&: \Mod{A}\to\Mod{B},\ V\mapsto eA\otimes_A V,
\\
\label{EIndGeneral}
\Ind^A_B&: \Mod{B}\to\Mod{A},\ W\mapsto Ae\otimes_B W,
\\
\label{ECoindGeneral}
\Coind^A_B&: \Mod{B}\to\Mod{A},\ W\mapsto \Hom_B(eA,W).
\end{align}
The functor $\Ind_{B}^A$ is left adjoint to $\Res_{B}^A$ and the functor $\Coind_{B}^A
$ is right adjoint to $\Res_{B}^A$. Note that the definition of the structure of a graded $A$-supermodule on $\Coind_{B}^A W
= \Hom_{B}(eA, W)$ involves sign: $(a\cdot f)(ea')=(-1)^{|a|(|f|+|a'|)}(ea'a)$ for $a,a'\in A$ and $f\in \Hom_{B}(eA, W)$. 

We will need a criterion for an isomorphism $\Ind_{B}^A V\cong \Coind_{B}^A V$. 

\begin{Lemma} \label{LIndCoindIsoGen}
Let $e,f$ be bidegree $(0,\0)$ idempotents in a graded $\k$-superalgebra $A$ and $B\subseteq eAe,\, C\subseteq fAf$ be graded subsuperalgebras. Let $V\in\Mod{B}$ and $W\in\Mod{C}$. 
We make the following assumptions:
\begin{enumerate}
\item[{\rm (1)}] The right $B$-module $Ae$ has basis $\{b_1,\dots,b_n\}$ and the left $C$-module $fA$ has basis $\{c_1,\dots,c_n\}$.
\item[{\rm (2)}] There is an isomorphism of graded $\k$-superalgebras $i:B\iso C$ and a degree $(0,\0)$-isomorphism $j:V\to W$ of graded $\k$-supermodules such that $j(bv)=i(b)j(v)$ for all $b\in B$ and $v\in V$. 
\item[{\rm (3)}] The graded $(C,B)$-bisupermodule $fAe$ has a graded subsuperbimodule $Y$ and a homogeneous element $\xi$ such that $\xi b\equiv (-1)^{|b||\xi |}i(b)\xi \pmod{Y}$ and the map $C\to (fAe)/Y,\ c\mapsto c\xi +Y$ is an isomorphism of graded left $C$-supermodules. 
\item[{\rm (4)}] There is a permutation $\si\in\Si_n$ such that for $1\leq r,s\leq n$, we have $c_sb_r\equiv \de_{\si(r),s}\xi \pmod{Y}$. 
\end{enumerate}
Then there is an isomorphism of graded $A$-supermodules $$\Ind_B^AV\simeq \funQ^{\deg(\xi )}\Uppi^{|\xi |}\Coind_C^AW.$$
\end{Lemma}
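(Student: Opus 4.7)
My plan is to construct an explicit $\k$-linear map $F:\Ind_B^A V=Ae\otimes_B V\longrightarrow \Coind_C^A W=\Hom_C(fA,W)$ of bidegree $(-\deg(\xi),|\xi|)$ that is bijective and $A$-equivariant (with the Koszul-twisted convention on $\Coind_C^A W$); this is equivalent to the claimed degree zero isomorphism $\Ind_B^A V\simeq \funQ^{\deg(\xi)}\Uppi^{|\xi|}\Coind_C^A W$. Hypothesis (3) tells us that for each $z\in fAe$ there is a unique $\overline z\in C$ with $z\equiv \overline z\,\xi\pmod Y$, and the map $z\mapsto \overline z$ is left $C$-linear. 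Accordingly, I set $F(a\otimes v)(x):=(-1)^{\epsilon(a,v,x)}\,\overline{xa}\cdot j(v)$ for $a\in Ae$, $v\in V$, $x\in fA$, where $\epsilon(a,v,x)$ is a Koszul sign whose exact form is dictated by the two checks described below. Each $F(a\otimes v)$ is automatically a homomorphism of graded left $C$-supermodules by left $C$-linearity of $z\mapsto\overline z$.

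The two key verifications are: (i) $F$ descends to the balanced tensor product $Ae\otimes_B V$, and (ii) $F$ intertwines the $A$-actions up to the correct bidegree sign. For (i), $F(ab\otimes v)=F(a\otimes bv)$ for $b\in B$ follows by combining $\overline{xab}=(-1)^{|b||\xi|}\overline{xa}\cdot i(b)$ (a consequence of the super-commutation in (3)) with $j(bv)=i(b)j(v)$ from (2): the extra sign $(-1)^{|b||\xi|}$ on the left is cancelled by the corresponding sign produced when $\epsilon$ contains a term $|\xi||v|$. For (ii), one compares $F(a'a\otimes v)(x)$ with $(-1)^{|\xi||a'|}(a'\cdot F(a\otimes v))(x)$, where the latter uses the twisted $A$-action $(a'\cdot \phi)(x)=(-1)^{|a'|(|\phi|+|x|)}\phi(xa')$ on $\Coind_C^A W$; after direct expansion, the remaining sign discrepancy forces $\epsilon$ to additionally contain the term $|a\otimes v|\cdot|x|$, and the identity then reduces to associativity of multiplication in $A$.

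Bijectivity and the bidegree shift then follow easily. The bases $\{b_r\}$ and $\{c_s\}$ from hypothesis (1) identify $\Ind_B^A V\cong V^n$ and $\Coind_C^A W\cong W^n$ as $\k$-modules (via $\phi\leftrightarrow (\phi(c_s))_s$); under these identifications, hypothesis (4) gives $\overline{c_sb_r}=\delta_{\sigma(r),s}\cdot 1_C$, so $F(b_r\otimes v)(c_s)=\pm\delta_{\sigma(r),s}\,j(v)$, exhibiting $F$ as a block-permutation matrix built from the $\k$-linear isomorphism $j$; hence $F$ is a $\k$-linear isomorphism. For the bidegree, the relation $c_{\sigma(r)}b_r\equiv \xi\pmod Y$ forces $\bideg(c_{\sigma(r)})+\bideg(b_r)=\bideg(\xi)$, and combining this with the fact that $F(b_r\otimes v)(c_{\sigma(r)})$ has bidegree $\bideg(v)$ immediately yields $\bideg(F)=-\bideg(\xi)=(-\deg(\xi),|\xi|)$. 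The only real obstacle is the sign bookkeeping in (ii)---choosing $\epsilon$ so that balancing and $A$-equivariance hold simultaneously---but the hypotheses are engineered so that the Koszul signs cancel, and no conceptual difficulties arise.
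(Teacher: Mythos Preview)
Your proposal is correct and follows essentially the same approach as the paper. The only stylistic difference is that the paper packages the construction via the $(\Ind_B^A,\Res_B^A)$-adjunction: it first defines a $B$-module map $\alpha:V\to\Hom_C(fAe,W)$ of bidegree $(-\deg(\xi),|\xi|)$ (your $v\mapsto F(e\otimes v)|_{fAe}$), checks $B$-linearity of $\alpha$, and lets adjunction produce the $A$-map automatically---thereby trading your direct $A$-equivariance check for a single $B$-linearity check; the bijectivity argument via the bases $\{b_r\},\{c_s\}$ and hypothesis~(4) is then identical to yours.
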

\begin{proof}
We have to construct a graded $A$-supermodule isomorphism $\Ind_B^AV\to \Coind_C^AW$ of bidegree $(-\deg(\xi ),|\xi |)$. This will come from a bidegree $(-\deg(\xi ),|\xi |)$ homomorphism of graded $B$-supermodules $V\to \Res^A_B\Coind_C^AW$ using adjointness of $\Ind_B^A$ and $\Res_B^A$. Note that $\Res^A_B\Coind_C^AW
\simeq e\Hom_{C}(fA, W)
\simeq \Hom_{C}(fAe, W).$
By the assumption (3), every element of $fAe$ can written uniquely in the form $c\xi +y$ for $c\in C$ and $y\in Y$. So for every $v\in V$, we have a graded $C$-supermodule homomorphism
$$
F_v:fAe\to W,\ c\xi +y\mapsto (-1)^{|c||v|+|c||\xi|}c\,j(v).
$$
Since $\bideg(F_v)=(\deg(v)-\deg(\xi ),|v|+|\xi |)$, we have a bidegree $(-\deg(\xi ),|\xi |)$ homomorphism of graded $\k$-supermodules 
$$\al:V\to  \Hom_{C}(fAe, W),\ v\mapsto (-1)^{|v||\xi|}F_v.$$ 
We claim that $\al$ is a homomorphism of graded $B$-supermodules. This means $\al(bv)=(-1)^{|b||\xi|}b\al(v)$ for $b\in B$ and $v\in V$, or $F_{bv}=bF_v$. Now 
\begin{align*}
(b F_v)(c\xi +y&)=(-1)^{|b|(|\xi |+|v|+|c|+|\xi |)}F_v((c\xi +y)b)
\\
&=(-1)^{|b|(|v|+|c|)}F_v(c\xi b+yb)
\\
&=(-1)^{|b|(|v|+|c|)+|\xi ||b|}F_v(ci(b)\xi +yb)
\\
&=(-1)^{|b|(|v|+|c|)+|\xi ||b|+(|c|+|b|)|v|+(|c|+|b|)|\xi|}ci(b)j(v)
\\
&=(-1)^{|b||c|+|c||v|+|c||\xi|}ci(b)j(v)
\\
&=(-1)^{|c||bv|+|c||\xi|}cj(bv)
\\
&=F_{bv}(c\xi +y).
\end{align*}

Now, adjointness of $\Ind_B^A$ and $\Res_B^A$ yields a a bidegree $(-\deg(\xi ),|\xi |)$ homomorphism of graded $A$-supermodules
$$
F:Ae\otimes_B V\to \Hom_{C}(fA, W),\ ae\otimes v\mapsto F_{ae,v},
$$
where $F_{ae,v}$ is defined from 
$$
F_{ae,v}(fa')=(-1)^{|a|(|\xi |+|v|+|a'|)}F_v(fa'ae).
$$

By the assumption (1), we have an isomorphism of graded $\k$-supermodules 
\begin{equation}\label{EIndDec}
Ae\otimes_B V\simeq \bigoplus_{r=1}^n(b_r\otimes V)\cong\bigoplus_{r=1}^n V,
\end{equation}
where the second isomorphism comes from the isomorphisms 
$V\to b_r\otimes V,\ v\mapsto b_r\otimes v$. Similarly, we have an isomorphism of graded $\k$-supermodules 
\begin{equation}\label{ECondDec}
\Hom_{C}(fA, W)\simeq \bigoplus_{s=1}^n\Hom_C(Cc_s,W)\cong \bigoplus_{s=1}^n W,
\end{equation}
with the second isomorphism coming from the isomorphisms 
$$\phi_s:W\to \Hom_C(Cc_s,W),\ w\mapsto (cc_s\mapsto (-1)^{|c||c_s|+|c||v|+|c_s||v|}cw).
$$ 

Now, by the assumption (4), we have 
$$
F_{b_r,v}(c_s)=\pm F_v(c_sb_r)=\pm\de_{\si(r),s}j(v).
$$
Thus $F$ maps the vector $b_r\otimes v$ from the $r$th summand of (\ref{EIndDec}) onto $\pm\phi_{\si(r)}(j(v))$ from the $\si(r)$th summand of (\ref{ECondDec}). Hence $F$ is an isomorphism. 
\end{proof}

\begin{Example} \label{ENatural}
{\rm 
Suppose that the graded superalgebra $B$ in Lemma~\ref{LIndCoindIsoGen} is of the form $X\otimes Y$ for two graded superalgebras $X$ and $Y$, and $C=Y\otimes X$. Moreover, suppose that $V$ is of the form $M\boxtimes N$ for a graded $X$-supermodule $M$ and a graded $Y$-supermodule $N$, and $W=N\boxtimes M$.  Then we can take $i:X\otimes Y\to Y\otimes X,\ x\otimes y\mapsto (-1)^{|x||y|}y\otimes x$ and $j:M\otimes N\to N\otimes M,\ m\otimes n\mapsto (-1)^{|m||n|}n\otimes m$. If all the other assumptions of the lemma are satisfied, we will get an isomorphism 
$$\Ind_{X\otimes Y}^A M\boxtimes N\simeq \Coind_{Y\otimes X}^AN\boxtimes M.$$
Checking the explicit construction of this isomorphism, one can see that the isomorphism is natural in $M$ and $N$. 
}
\end{Example}

\subsection{Graded Morita superequivalence}
\label{SSMorita}
For graded $\k$-superalgebras $A$ and $B$, a {\em graded Morita superequivalence} between $A$ and $B$ is a Morita equivalence between $A$ and $B$ induced by a graded $(A,B)$-bisupermodule $M$ and a graded $(B,A)$-bisupermodule $N$ such that $M\otimes_B N\simeq A$ and $N\otimes_A M\simeq B$. (with both bimodule isomorphisms homogeneous of bidegree $(0,\0)$). 
A standard argument shows  that $A$ and $B$ are graded Morita superequivalent if and only if the graded supercategories $\mod{A}$ and $\mod{B}$ are graded superequivalent, i.e. there are functors $\funF: \mod{A}\to\mod{B}$ and $\funG:\mod{B}\to \mod{A}$ such that $\funF\circ\funG$ and $\funG\circ \funF$ are isomorphic to identities via graded supernatural transformations of bidegrees $(0,\0)$, cf.~\cite[Deinition 1.1(iv)]{BE}. 

\begin{Lemma}\label{lem:idmpt_Mor} {\rm \cite[Lemma 2.2.14]{KlLi}}
Let $A$ be a graded $\F$-superalgebra and $e\in A^0_{\0}$ an idempotent such that $eL\neq 0$ for every irreducible graded $A$-supermodule $L$ (equivalently $AeA=A$). Then the functors $eA \otimes_A -$ and $Ae \otimes_{eAe} -$ induce a graded Morita superequivalence between $A$ and $eAe$.
\end{Lemma}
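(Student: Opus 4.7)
The plan is to imitate the classical proof of the Morita equivalence $A\sim eAe$ whenever $AeA=A$, adapted to the graded super setting, relying on the fact that the idempotent $e$ has bidegree $(0,\0)$ so that all bimodule maps we construct are automatically of bidegree $(0,\0)$.

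First I would verify the parenthetical equivalence of the two hypotheses. If $AeA=A$ and $L\in\Irr(A)$ with $eL=0$, then $L=AL=(AeA)L=Ae(eL)=0$, a contradiction. Conversely, if $AeA\neq A$ then the nonzero graded $(A,A)$-bisupermodule $A/AeA$, as a left graded $A$-supermodule, has an irreducible graded super quotient $L$ (using the standing assumption that $A$ is noetherian so that maximal graded subsupermodules of finitely generated graded supermodules exist); then $eL\subseteq eA/AeA=0$.

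Next I would exhibit the two bimodule isomorphisms required by \S\ref{SSMorita}. For the isomorphism $Ae\otimes_{eAe}eA\iso A$, the candidate is the multiplication map $\mu:ae\otimes ea'\mapsto aea'$; this is manifestly a bidegree $(0,\0)$ homomorphism of graded $(A,A)$-bisupermodules. Since $AeA=A$, choose $u_1,\dots,u_k,v_1,\dots,v_k\in A$ (all of bidegree $(0,\0)$ may be arranged by picking homogeneous components) with $\sum_r u_rev_r=1$, and set
\[
f:A\to Ae\otimes_{eAe}eA,\qquad a\mapsto \sum_r au_re\otimes ev_r.
\]
Then $\mu\circ f=\id_A$ by direct computation using $\sum_r u_rev_r=1$. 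For $f\circ\mu=\id$ one computes, for $xe\otimes ey$,
\[
f(\mu(xe\otimes ey))=\sum_r xeyu_re\otimes ev_r=\sum_r xe\otimes(eyu_re)(ev_r)=xe\otimes ey\sum_r u_rev_r=xe\otimes ey,
\]
where the middle equality moves the element $eyu_re\in eAe$ across the tensor product over $eAe$. For the isomorphism $eA\otimes_A Ae\iso eAe$, the multiplication map $ea\otimes a'e\mapsto eaa'e$ is a bidegree $(0,\0)$ homomorphism of graded $(eAe,eAe)$-bisupermodules; its inverse $exe\mapsto e\otimes xe$ is well-defined and two-sided inverse via the standard identification $eA\otimes_A M\simeq eM$ valid for any graded $A$-supermodule $M$.

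There is essentially no hard step here: the only subtlety is the bookkeeping to confirm that $\mu$ and its partner preserve the $\Z\times\Z/2$-grading (this is automatic from $e\in A^0_{\0}$) and that $f$ is well-defined as a map of graded superspaces. Consequently the tensor product functors $eA\otimes_A-$ and $Ae\otimes_{eAe}-$ furnish mutually inverse equivalences of the graded supercategories $\mod{A}$ and $\mod{eAe}$, proving the lemma.
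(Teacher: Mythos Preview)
The paper does not give its own proof of this lemma; it simply cites \cite[Lemma 2.2.14]{KlLi}. Your argument is the standard Morita-theoretic proof and is correct. A couple of minor cosmetic points: in the first paragraph the equality $(AeA)L=Ae(eL)$ is loose notation---what you mean is that $(AeA)L$ is spanned by elements $a(e(a'l))$ with $a'l\in L$, hence lies in $A(eL)=0$; and the parenthetical about arranging the $u_r,v_r$ to have bidegree $(0,\0)$ is unnecessary, since once $\mu$ is a bidegree $(0,\0)$ bijection its set-theoretic inverse $f$ is automatically bidegree $(0,\0)$.
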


\begin{Lemma}\label{cor:idmpt_Mor} {\rm \cite[Corollary 2.2.15]{KlLi}}
Let $A$ be a graded $\F$-superalgebra and $e\in A^0_{\0}$ an idempotent. 
If\, $|\Irr(eAe)|\geq |\Irr(A)|<\infty$ then the functors $eA \otimes_A -$ and $Ae \otimes_{eAe} -$ induce a graded Morita superequivalence between $A$ and $eAe$.
\end{Lemma}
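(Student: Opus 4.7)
The strategy is to reduce the statement to Lemma~\ref{lem:idmpt_Mor} by showing that the cardinality hypothesis $|\Irr(eAe)| \geq |\Irr(A)| < \infty$ forces $eL \neq 0$ for every $L \in \Irr(A)$, which is equivalent to $AeA = A$.

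First I would invoke the standard (graded-super version of the) idempotent-truncation bijection: for $e \in A^0_{\0}$, the assignment $L \mapsto eL$ gives a bijection, up to isomorphism in $\mod{eAe}$, between those $L \in \Irr(A)$ with $eL \neq 0$ and the elements of $\Irr(eAe)$, with inverse sending an irreducible $M \in \Irr(eAe)$ to the graded head of $Ae \otimes_{eAe} M$. This is well known in the ungraded setting, and the argument carries over to the graded-super setting by tracking degree and parity shifts. Consequently
$$|\Irr(eAe)| \;=\; \bigl|\{\,L \in \Irr(A) : eL \neq 0\,\}\bigr| \;\leq\; |\Irr(A)|.$$
Combining this with the hypothesis $|\Irr(eAe)| \geq |\Irr(A)|$ and finiteness forces equality, so $eL \neq 0$ for every $L \in \Irr(A)$. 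To translate this into $AeA = A$, observe that if $AeA$ were a proper two-sided graded-super ideal of $A$, then $A/AeA$ would be a nonzero finitely generated graded $A$-supermodule and would therefore admit an irreducible graded quotient $L$ on which $e$ acts as zero, contradicting what we just established.

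With $AeA = A$ in hand, Lemma~\ref{lem:idmpt_Mor} applies verbatim and gives the desired graded Morita superequivalence implemented by $eA \otimes_A -$ and $Ae \otimes_{eAe} -$. The only genuinely technical point is the graded-super refinement of the idempotent-truncation bijection between irreducibles; I expect this to be the main (modest) obstacle, since one must check that every irreducible graded $eAe$-supermodule arises as $eL$ for a unique $L \in \Irr(A)$ with the correct grading/parity conventions. Once this bookkeeping is in place, the rest of the proof is a short counting argument feeding into Lemma~\ref{lem:idmpt_Mor}.
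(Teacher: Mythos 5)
Your argument is correct and is essentially the standard one: reduce to Lemma~\ref{lem:idmpt_Mor} by combining the idempotent-truncation bijection $|\Irr(eAe)| = |\{L \in \Irr(A) : eL \neq 0\}| \leq |\Irr(A)|$ with the hypothesis and finiteness to force $eL \neq 0$ for all $L \in \Irr(A)$ (equivalently $AeA = A$). The paper simply cites \cite[Corollary 2.2.15]{KlLi}, and that reference proceeds the same way, so there is nothing further to compare.
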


From now on until the end of the subsection, we assume that $A$ is a graded 
$\O$-superalgebra with $A^n_\eps$ finitely generated as an $\O$-module for all $n$ and $\eps$. Suppose that $\F=\O/\m$ for a maximal ideal $\m$ in $\O$. We can extend scalars to get a graded $\F$-superalgebra $A_\F:=\F\otimes_\O A$. Given $a\in A$, we denote $a_\F:=1\otimes a\in A_\F$. If $V$ is a graded $A$-supermodule then $V_\F:=\F\otimes_\O V$ is a graded $A_\F$-supermodule (similarly for graded bisupermodules).

If $e\in A^{0}_{\0}$ is an idempotent, we have the functors
\begin{equation}\label{EFunctors}
\begin{split}
Ae\otimes_{eAe}-:\mod{eAe}\to\mod{A},\\ 
eA\otimes_{A}-:\mod{A}\to \mod{eAe}.
\end{split}
\end{equation}
and
\begin{equation}\label{EFunctorsF}
\begin{split}
A_\F e_\F\otimes_{e_\F A_\F e_\F}-:\mod{e_\F A_\F e_\F}\to\mod{A_\F},\\ e_\F A_\F\otimes_{A_\F}-:\mod{A_\F}\to \mod{e_\F A_\F e_\F }.
\end{split}
\end{equation}
Note that $eAe$ is an $\O$-direct summand of $A$, so we can identify the graded $\F$-superalgebra $(eAe)_\F$ with the graded $\F$-superalgebra $e_\F A_\F e_\F$. Now, we can also identify as graded bisupermodules $(Ae)_\F=A_\F e_\F$ and $(eA)_\F=e_\F A_\F$. Moreover, for $V\in \mod{eAe}$ and $W\in \mod{A}$, we have functorial isomorphisms
\begin{equation}
\label{EFunExtendScal}
\begin{split}
(Ae\otimes_{eAe}V)_\F\simeq A_\F e_\F\otimes_{e_\F A_\F e_\F}V_\F,
\\
(eA\otimes_{A}W)_\F\simeq e_\F A_\F \otimes_{A_\F}W_\F.
\end{split}
\end{equation}

\begin{Lemma} \label{LMorExtScal} 
If for all fields\, $\F\in{\mathscr F}=\{\O/\m\mid \text{$\m$ is a maximal ideal of $\O$}\}
$, the functors (\ref{EFunctorsF}) are mutually quasi-inverse equivalences, then so are the functors (\ref{EFunctors}). \end{Lemma}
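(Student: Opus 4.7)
The plan is to reduce the claim to showing that the multiplication map
\[
\mu: Ae \otimes_{eAe} eA \longrightarrow A
\]
is an isomorphism of graded $(A,A)$-bisupermodules. The functors in (\ref{EFunctors}) are an adjoint pair (with $Ae \otimes_{eAe} -$ left adjoint to $eA \otimes_A -$), and the unit $V \to eA \otimes_A Ae \otimes_{eAe} V$ is tautologically an isomorphism because $eA \otimes_A Ae = eAe$. What remains is to show that the counit $\varepsilon_W: Ae \otimes_{eAe} eA \otimes_A W \to W$ is an isomorphism for every $W \in \mod{A}$. Since both functors are right exact and additive and $A$ is noetherian, picking a finitely generated graded free presentation $F_1 \to F_0 \to W \to 0$ and applying the four-lemma will reduce this to the case that $W$ is a bidegree shift of $A$; additivity then reduces further to $W = A$, where $\varepsilon_A$ is exactly $\mu$.

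Next I will match $\mu \otimes_\O \F$ with the analogous multiplication map over $A_\F$ for each $\F = \O/\m$ in $\mathscr{F}$. This comes from (\ref{EFunExtendScal}) together with the compatibility of tensor products with base change, giving the identification $(Ae \otimes_{eAe} eA) \otimes_\O \F \simeq A_\F e_\F \otimes_{e_\F A_\F e_\F} e_\F A_\F$ in a manner compatible with the multiplication maps. The hypothesis that the functors (\ref{EFunctorsF}) are mutually quasi-inverse equivalences then forces $\mu \otimes_\O \F$ to be an isomorphism for every $\F$ (by the same reduction to the multiplication map applied over the field).

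The final step is a degree-by-degree Nakayama argument. Fix a bidegree $(n,\eps)$; then $\mu^n_\eps$ is a homomorphism of finitely generated $\O$-modules that becomes an isomorphism after base change to every residue field of $\O$. Tensoring the right-exact sequence $Ae \otimes_{eAe} eA \xrightarrow{\mu} A \to \operatorname{coker}(\mu) \to 0$ with $\F = \O/\m$ shows that $\operatorname{coker}(\mu)^n_\eps$ vanishes modulo every maximal ideal, and Nakayama's lemma therefore forces it to be zero; hence $\mu$ is surjective. For the kernel $K := \ker(\mu)$, the long exact $\operatorname{Tor}$ sequence obtained from $0 \to K \to Ae \otimes_{eAe} eA \to A \to 0$ yields $K^n_\eps \otimes_\O \F = 0$ provided $\operatorname{Tor}_1^\O(A^n_\eps, \F) = 0$. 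The hard part will be verifying exactly this Tor vanishing: in the settings of interest for this paper each $A^n_\eps$ is $\O$-free of finite rank (the quiver Hecke superalgebras and their idempotent truncations have this property by construction), so the obstruction disappears and a second application of Nakayama yields $K = 0$, completing the argument.
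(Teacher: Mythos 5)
Your reduction to the multiplication map $\mu: Ae \otimes_{eAe} eA \to A$ is correct and matches the paper's strategy. Your proof of surjectivity of $\mu$ via right-exactness of $-\otimes_\O\F$ and a degree-by-degree Nakayama argument is sound (each $A^n_\eps$ is finitely generated over the PID $\O$, so a cokernel vanishing modulo every maximal ideal is zero); this is essentially the paper's argument in different words.

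The genuine gap is in your treatment of injectivity. You try to run a parallel Nakayama argument on $\ker(\mu)$ via the long exact Tor sequence, and you say the needed $\operatorname{Tor}_1^\O(A^n_\eps,\F)=0$ is fine because "each $A^n_\eps$ is $\O$-free of finite rank... by construction." That premise is false for the algebras this lemma is actually used on: the paper later explicitly warns that while $R_{d\de,\O}$ is $\O$-free, it \emph{cannot} establish $\O$-freeness for the quotient $\hat C_{d,\O}$ or the truncation $C_{d,\O}$, and those are precisely the algebras to which Lemma~\ref{LMorExtScal} is applied (e.g.\ in Corollary~\ref{CMord=1} and Corollary~\ref{CEquivFG}). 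So your injectivity step does not go through as written.

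What you are missing is a purely ring-theoretic shortcut that makes injectivity free once surjectivity is known: for any ring $A$ and idempotent $e$, if $AeA = A$ then the multiplication map $Ae \otimes_{eAe} eA \to A$ is \emph{automatically} an isomorphism. (Write $1 = \sum_i a_i e b_i$; then $a \mapsto \sum_i a a_i e \otimes e b_i$ is a two-sided inverse to $\mu$, using that $e d a_i e \in eAe$ can be slid across the tensor over $eAe$.) The paper invokes exactly this fact, citing \cite[Statement 7]{DR}, and thereby reduces the whole lemma to showing $AeA=A$, i.e.\ surjectivity of $\mu$ — which is then handled by the base-change argument you also found. So replace your Tor/freeness step by this elementary Morita-theory observation and the proof closes.
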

\begin{proof}
We always have an isomorphism of graded $eAe$-superbimodules $eA\otimes_A Ae\simeq eAe$, so it suffices to prove that the multiplication map $\mu:Ae\otimes_{eAe} eA\to A$ is an isomorphism. In view of \cite[Statement 7]{DR}, it suffices to prove that $\mu$ is surjective, i.e. $AeA=A$, given the assumption that $A_\F e_\F A_\F=A_\F$ for all $\F\in{\mathscr F}$. 
 The short exact sequence of $\O$-modules $0\to AeA\stackrel{\iota}{\to} A\to A/AeA\to 0$ yields an exact sequence of $\F$-vector spaces $(AeA)_\F\stackrel{\iota_\F}{\to} A_\F\to (A/AeA)_\F\to 0$. To see that $A/AeA= 0$ it suffices to prove $(A/AeA)_\F=0$ for all $\F\in{\mathscr F}$, or, equivalently, that $\iota_\F$ is surjective for all $\F\in{\mathscr F}$. But it is easy to see that $\Im \iota_\F=A_\F e_\F A_\F$ for all $\F$, and $A_\F e_\F A_\F=A_\F$ by assumption.
\end{proof}

\begin{Example} \label{ExProgenerator} 
{\rm 
Let $A$ be a graded superalgebra and $\{e_1,\dots,e_n\}$ be (not necessarily distinct) bidegree $(0,\0)$ idempotents of $A$. 
We have a graded superalgebra $\bigoplus_{1\leq i,j\leq n}e_iAe_j$ with multiplication inherited from $A$. In fact, setting 
$$
P:=\bigoplus_{i=1}^n Ae_i,
$$
there the isomorphism of graded superalgebras
\begin{equation}\label{EEndAei}
\bigoplus_{1\leq i,j\leq n}e_iAe_j\iso \End_A(P)^\sop,\ a\mapsto\phi_a,
\end{equation}
where for (a homogeneous) $a\in e_iAe_j$, the endomorphism $\phi_a$ sends the summand $Ae_k$ of $P$ to $0$ if $k\neq i$ and the summand $Ae_i$ to the summand $Ae_j$ as follows:
$$
\phi_a(be_i)=(-1)^{|a||b|}be_ia
$$ 
(for a homogeneous $b\in A$). If $P$ is a projective generator for $A$ (equivalently, if $\sum_{i=1}^nAe_iA=A$) then $A$ is graded Morita superequivalent to $\End_A(P)^\sop$ using the graded $(A,\End_A(P)^\sop)$-superbimodule $M$ being $P$ considered as a graded $(A,\End_A(P)^\sop)$-superbimodule as in Example~\ref{ExEndSopBim}. 
}
\end{Example}

\begin{Example} 
\label{SSRegr}
An easy special case of a graded Morita superequivalence comes from a `regrading' of a graded superalgebra in the following sense. Suppose that $A$ is a graded superalgebra and we are given an orthogonal idempotent decomposition $1_A=\sum_{r=1}^n e_r$ with the idempotents $e_r$ of bidegree $(0,\0)$. Then as graded superspaces $A=\bigoplus_{1\leq r,s\leq n}e_sAe_r$. Given {\em grading supershift parameters}
\begin{equation}\label{EShiftParameters}
t_1,\dots,t_n\in \Z\quad\text{and}\quad \eps_1,\dots,\eps_n\in\Z/2, 
\end{equation}
we can consider the new graded superalgebra 
$$\zA:=\bigoplus_{1\leq r,s\leq n}\funQ^{t_r-t_s}\Uppi^{\eps_r-\eps_s} e_sAe_r,$$ 
which equals $A$ as an algebra but has  different grading and parity. To distinguish between the elements of $A$ and $\zA$, we denote by $\za$ the element of $\zA$ corresponding to $a\in A$. In particular, we have idempotents $\ze_1,\dots,\ze_n\in\zA$. 
We have the graded $(\zA,A)$-bisupermodule
$$X:=\bigoplus_{r=1}^n \funQ^{-t_r}\Uppi^{-\eps_r}e_r A=
\bigoplus_{s=1}^n \funQ^{-t_s}\Uppi^{-\eps_s} \zA\ze_s
,$$
and the graded $(A,\zA)$-bisupermodule 
$$
Y:=\bigoplus_{s=1}^n \funQ^{t_s}\Uppi^{\eps_s} Ae_s=\bigoplus_{r=1}^n \funQ^{t_r}\Uppi^{\eps_r} \ze_r \zA,
$$
inducing a graded Morita superequivalence between $A$ and $\zA$. Under this graded Morita superequivalence, a graded $A$-supermodule $V$, corresponds to the graded $\zA$-supermodule 
\begin{equation}\label{EzVMor}
\zV:=X\otimes_A V\simeq \bigoplus_{r=1}^n \funQ^{-t_r}\Uppi^{-\eps_r}e_rV.
\end{equation}

\end{Example}

\subsection{Schur functors}\label{SSSF}
 Here we adapt the material of \cite[Section~ 3.1]{BDK} to the graded superalgebra setting. Let $A$ be a finite dimensional graded $\F$-superalgebra and $P\in\mod{A}$ be a projective graded $A$-supermodule, i.e. a direct summand of a finite direct sum of modules of the form $\funQ^m\Uppi^\de A$. 
 Let $H = \End_A(P)^\sop$, so 
$P$ can be considered as a graded $(A,P)$-bisupermodule with the right action $p\cdot h=(-1)^{|p||h|}h(p)$ for all $p\in P$ and $h\in H$. (In all applications later on we will always have $H$ purely even, so the issue of signs will not arise, but we do not need to make this assupmtion in this subsection.) 
 
Define the functors: 
\begin{align*}
 \funf&:=\Hom_{A}(P,?):\mod{A}\to \mod{H},
 \\
 \fung&:=P\otimes_H?:\mod{H}\to \mod{A}.
 \end{align*}
Then $\funf$ is exact, and $\fung$ is left adjoint to $\funf$, the adjunction isomorphism being 
\begin{align*}
\Hom_A(P\otimes_H W,V)&\iso\Hom_H(W,\Hom_A(P,V)),\\ \phi&\mapsto \big(w\mapsto(p\mapsto(-1)^{|p||w|}\phi(p\otimes w))\big).
\end{align*}

Given an $A$-module $V$, let $O_P(V )$ denote the largest graded subsupermodule $V'$ of $V$  such that $\Hom_A(P,V')=0$. Let $O^P(V)$ denote the graded subsupermodule of $V$ generated by the images of all $A$-homomorphisms from $P$ to $V$. 
Any graded $A$-supermodule homomorphism $V \to W$ sends $O_P(V )$ into $O_P(W )$ and $O^P(V)$ into $O^P(W)$, so we can view $O_P$ and $O^P$ as functors $\mod{A}\to\mod{A}$. Finally, any graded $A$-supermodule homomorphism $V \to W$ induces a well-defined graded $A$-supermodule homomorphism $V/O_P (V ) \to W/O_P (W)$. We thus obtain an exact functor 
$$\funq : \mod{A} \to \mod{A}$$ defined on objects by $V \mapsto V/O_P(V)$.

\begin{Lemma} \label{L3.1a}
The functors $\funf\circ \fung$ and $\funf\circ \funq\circ\fung$ are both isomorphic to the identity.
\end{Lemma}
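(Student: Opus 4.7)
The plan is to exhibit the two isomorphisms via the unit of the adjunction $(\fung,\funf)$ and standard exactness arguments; the graded/super decorations only affect bookkeeping of signs, not the structure of the argument.

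First I would handle $\funf\circ\fung\simeq\id_{\mod{H}}$. The adjunction isomorphism
$\Hom_A(P\otimes_H W,V)\iso \Hom_H(W,\Hom_A(P,V))$ stated before the lemma provides the unit $\eta_W\colon W\to \Hom_A(P,P\otimes_H W)$, which is an even bidegree $(0,\0)$ natural transformation sending $w$ to $(p\mapsto p\otimes w)$ (up to the usual Koszul sign). The key computation is to check that $\eta_H$ is an isomorphism: unwinding the identification $P\otimes_H H\simeq P$ via $p\otimes h\mapsto p\cdot h=(-1)^{|p||h|}h(p)$ shows that $\eta_H$ is exactly the tautological map $H\iso \End_A(P)^{\sop}$, hence is the identity. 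Since both $\funf=\Hom_A(P,-)$ (with $P$ a single module) and $\fung=P\otimes_H-$ commute with finite direct sums and grading/parity shifts, $\eta_W$ is an isomorphism for every finite direct sum of shifted copies of $H$.

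For a general $W\in\mod{H}$, pick a presentation $H^{(m)}\to H^{(n)}\to W\to 0$ (with appropriate $\funQ$- and $\Uppi$-shifts, in the sense of \eqref{EFancyDirectSum}). Apply the right exact functor $\fung$ to obtain the exact sequence $P^{(m)}\to P^{(n)}\to \fung(W)\to 0$, and then apply $\funf$, which is exact because $P$ is projective. Naturality of $\eta$ yields the commutative diagram
\begin{equation*}
\begin{array}{ccccccc}
H^{(m)} & \to & H^{(n)} & \to & W & \to & 0 \\
\downarrow\eta & & \downarrow\eta & & \downarrow\eta_W & & \\
\funf\fung(H^{(m)}) & \to & \funf\fung(H^{(n)}) & \to & \funf\fung(W) & \to & 0
\end{array}
\end{equation*}
with exact rows. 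The first two vertical arrows are isomorphisms by the preceding paragraph, so the five-lemma (in its right-exact form) gives that $\eta_W$ is an isomorphism. Naturality of $\eta$ in $W$ then promotes this to an isomorphism of functors $\id_{\mod{H}}\simeq \funf\circ\fung$.

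Next I would deduce $\funf\circ\funq\circ\fung\simeq\id_{\mod{H}}$. Let $W\in\mod{H}$ and set $V=\fung(W)$. The definition of $O_P$ gives $\Hom_A(P,O_P(V))=0$, i.e.\ $\funf(O_P(V))=0$. Applying the exact functor $\funf$ to the short exact sequence $0\to O_P(V)\to V\to \funq(V)\to 0$ yields an isomorphism $\funf(V)\simeq \funf(\funq(V))$, i.e.\ $\funf\fung(W)\simeq \funf\funq\fung(W)$. Composing with the isomorphism from the first part gives $\funf\circ\funq\circ\fung\simeq\id$, and naturality of the connecting maps ensures this is an isomorphism of functors. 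The only genuine care point is to keep the Koszul signs consistent when writing out $\eta_H$ and the identification $P\otimes_H H\simeq P$; all remaining steps are formal consequences of projectivity of $P$ and right exactness of the tensor product.
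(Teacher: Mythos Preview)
Your proposal is correct and follows essentially the same approach as the paper. The paper simply defers to \cite[3.1a]{BDK}, whose key input is the natural isomorphism $\Hom_A(P,P)\otimes_H W\iso \Hom_A(P,P\otimes_H W)$ from \cite[Proposition~20.10]{AF} (with the sign modification the paper records); this map is exactly the unit $\eta_W$ you write down, and the standard proof that it is an isomorphism for finitely generated projective $P$ proceeds precisely by reducing to free $W$ and using a presentation, as you do, while your second step via the short exact sequence $0\to O_P(V)\to V\to \funq(V)\to 0$ is likewise the intended argument.
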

\begin{proof}
The proof given in \cite[3.1a]{BDK} goes through using the modified version of \cite[Proposition 20.10]{AF}, in which the isomorphism $$\Hom_A(P,P)\otimes_H W\iso \Hom_A(P,P\otimes_H W)$$
involves a sign, namely $h\otimes w\mapsto (p\mapsto (-1)^{|w||p|}h(p)\otimes w)$. 
\end{proof}

\begin{Lemma} \label{L3.1c} 
If\, $V, V' \in \mod{A}$ satisfy\, $O^P(V) = V$ and\, $O_P(V') = 0$, then 
$$
\Hom_{A}(V , V' ) \simeq \Hom_H(\funf( V ) , \funf( V' ) ).
$$
\end{Lemma}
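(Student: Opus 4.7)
The plan is to show that the natural map $\funf\colon \Hom_A(V,V')\to \Hom_H(\funf V,\funf V')$, $f\mapsto \funf(f)$, is a bidegree $(0,\0)$ isomorphism. Let $\varepsilon\colon \fung\funf\to \id$ denote the counit of the adjunction $(\fung,\funf)$. By construction, $\varepsilon_V\colon \fung\funf V=P\otimes_H \Hom_A(P,V)\to V$ sends $p\otimes\phi$ to $\pm\phi(p)$, so its image is the subsupermodule of $V$ generated by images of all $A$-homomorphisms $P\to V$, i.e.\ equals $O^P(V)$. Hence the hypothesis $O^P(V)=V$ says $\varepsilon_V$ is surjective. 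Set $K:=\ker\varepsilon_V$, so that we have a short exact sequence
$$0\longrightarrow K\longrightarrow \fung\funf V\stackrel{\varepsilon_V}{\longrightarrow} V\longrightarrow 0.$$

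First I would identify $f\mapsto \funf(f)$, under the adjunction isomorphism $\Hom_A(\fung\funf V,V')\simeq \Hom_H(\funf V,\funf V')$, with the precomposition map $\Hom_A(V,V')\to \Hom_A(\fung\funf V,V')$, $f\mapsto f\circ\varepsilon_V$; this is standard from the unit/counit description of the adjoint of $\funf(f)$ together with naturality of $\varepsilon$. Injectivity of precomposition by $\varepsilon_V$ is then immediate from surjectivity of $\varepsilon_V$. Applying the left exact functor $\Hom_A(-,V')$ to the displayed short exact sequence identifies the image of precomposition by $\varepsilon_V$ with those $g\colon \fung\funf V\to V'$ which vanish on $K$. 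Therefore surjectivity of $\funf$ reduces to the single vanishing statement
$$\Hom_A(K,V')=0.$$

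I would establish this vanishing in two steps. In the first step I prove $\funf(K)=0$: applying the exact functor $\funf$ to the short exact sequence yields
$$0\to \funf(K)\to \funf\fung\funf V\xrightarrow{\funf(\varepsilon_V)}\funf V\to 0,$$
and by the triangle identity $\funf(\varepsilon_V)\circ\eta_{\funf V}=\id_{\funf V}$ the map $\funf(\varepsilon_V)$ is a split epimorphism; combined with $\funf\fung\funf V\simeq \funf V$ from Lemma~\ref{L3.1a} and finite-dimensionality, it must be an isomorphism, forcing $\funf(K)=0$. In the second step, for any $A$-homomorphism $f\colon K\to V'$, projectivity of $P$ makes $\funf=\Hom_A(P,-)$ exact, so the surjection $K\twoheadrightarrow \Im f$ induces a surjection $\funf(K)\twoheadrightarrow \funf(\Im f)$; hence $\funf(\Im f)=0$, and by the definition of $O_P$ the subsupermodule $\Im f\subseteq V'$ must lie in $O_P(V')=0$, so $f=0$.

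The only delicate point, and the only obstacle worth flagging, is keeping the graded/super signs straight when identifying the map $f\mapsto \funf(f)$ with precomposition by $\varepsilon_V$; this is exactly in the spirit of the sign modifications already handled in the proof of Lemma~\ref{L3.1a}, so it presents no real difficulty.
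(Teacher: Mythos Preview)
Your proposal is correct and takes essentially the same approach as the paper, which simply defers to \cite[3.1c]{BDK} and notes that the key map is the counit $\om\colon\fung\funf(V)\to V,\ p\otimes\phi\mapsto(-1)^{|\phi||p|}\phi(p)$ (your $\varepsilon_V$), with the graded super signs handled as in Lemma~\ref{L3.1a}. Your reconstruction of the BDK argument --- $\varepsilon_V$ surjective since $O^P(V)=V$, $\funf(K)=0$ via the triangle identity and Lemma~\ref{L3.1a}, and hence $\Hom_A(K,V')=0$ using $O_P(V')=0$ --- is exactly what the cited proof does.
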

\begin{proof}
The proof given in  \cite[3.1c]{BDK} goes through using the properties of the modified version of the map 
$$
\om:\fung\circ\funf(V)=P\otimes_H\Hom_A(P,V)\to V, \ p\otimes\phi\mapsto (-1)^{|\phi||p|}\phi(p),
$$
cf. \cite[3.1b]{BDK}.
\end{proof}

\begin{Theorem} \label{T3.1d}
The functors $\funf$ and\, $\funq \circ\fung$ induce mutually quasi-inverse graded superequivalences between $\mod{H}$ and the full graded subsupercategory 
of $\mod{A}$ consisting of all\, $V \in\mod{A}$ such that $O_P(V)=0$ and\, $O^P(V)=V$.
\end{Theorem}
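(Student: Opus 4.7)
The plan is to verify the three standard ingredients of a graded superequivalence: that $\funf$ sends the full subcategory $\catC := \{V\in\mod{A}\mid O_P(V)=0,\ O^P(V)=V\}$ into $\mod H$ (automatic), that $\funq\circ\fung$ sends $\mod H$ into $\catC$, and that both composites are isomorphic to the identity via bidegree $(0,\0)$ natural transformations. The unit side of the equivalence will be immediate from Lemma~\ref{L3.1a}; the counit side will be extracted from Lemma~\ref{L3.1c}.

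I would first check that $\funq\circ\fung$ lands in $\catC$. Given $W\in\mod H$ with a generating set $\{w_\alpha\}$, the $A$-homomorphisms $P\to\fung(W)=P\otimes_H W$, $p\mapsto p\otimes w_\alpha$, have images that jointly generate $\fung(W)$; thus $O^P(\fung(W))=\fung(W)$, and this property passes to the quotient $\funq(\fung(W))$. For $O_P(\funq(\fung(W)))=0$, using the projectivity of $P$ (so $\funf=\Hom_A(P,-)$ is exact) applied to the defining sequence $0\to O_P(M)\to M\to\funq(M)\to 0$: any graded subsupermodule $V'\subseteq\funq(M)$ lifts to $V''\subseteq M$ containing $O_P(M)$, and $\funf(V')=0$ together with $\funf(O_P(M))=0$ forces $\funf(V'')=0$, so $V''\subseteq O_P(M)$ by maximality and $V'=0$.

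One direction of the equivalence is then immediate: Lemma~\ref{L3.1a} yields a bidegree $(0,\0)$ natural isomorphism $\funf\circ\funq\circ\fung\simeq\id_{\mod H}$. For the other direction, I would apply Lemma~\ref{L3.1c} to $V\in\catC$ and $V':=\funq(\fung(\funf(V)))\in\catC$, obtaining
$$
\Hom_A(V,V')\simeq\Hom_H(\funf(V),\funf(V'))\simeq\Hom_H(\funf(V),\funf(V)),
$$
where the last step uses Lemma~\ref{L3.1a}. The preimage of $\id_{\funf(V)}$ is a canonical bidegree $(0,\0)$ morphism $\eta_V:V\to\funq(\fung(\funf(V)))$. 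Swapping the roles of $V$ and $V'$ produces a morphism $\eta'_V:V'\to V$. Both compositions $\eta'_V\circ\eta_V$ and $\eta_V\circ\eta'_V$ lie in Hom spaces where $\funf$ is an isomorphism (Lemma~\ref{L3.1c}) and they map under $\funf$ to $\id_{\funf(V)}$; hence each equals an identity. Naturality of $\eta$ in $V$ follows from the naturality of the Hom-isomorphism in Lemma~\ref{L3.1c}.

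The genuinely content-carrying step is the description of $O_P$ and $O^P$ for modules of the form $\funq(\fung(W))$, which crucially relies on the exactness of $\Hom_A(P,-)$ afforded by the projectivity of $P$. The remaining potential obstacle is keeping track of the bidegree $(0,\0)$ and the sign conventions implicit in the counit $\om_V(p\otimes\phi)=(-1)^{|\phi||p|}\phi(p)$ used in Lemma~\ref{L3.1c}, but since these have already been absorbed into the statements of Lemmas~\ref{L3.1a} and~\ref{L3.1c}, the present argument is essentially a formal packaging of those results.
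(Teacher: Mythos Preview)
Your proposal is correct and supplies the details that the paper omits: the paper's proof consists of a single sentence, ``The proof of \cite[3.1d]{BDK} goes through,'' so you have essentially reconstructed the argument being cited. One minor point worth tightening: your generator maps $p\mapsto p\otimes w_\alpha$ need a sign $(-1)^{|p||w_\alpha|}$ to be genuine $A$-supermodule homomorphisms of the correct bidegree, though the conclusion $O^P(\fung(W))=\fung(W)$ is unaffected (and can alternatively be deduced from the triangle identity for the adjunction together with Lemma~\ref{L3.1a}, which forces the counit $\om_{\fung(W)}$ to be a split epimorphism).
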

\begin{proof}
The proof of \cite[3.1d]{BDK} goes through. 
\end{proof}

As a consequence we have the following relation between the irreducible modules, see \cite[3.1e]{BDK}:

\begin{Lemma} \label{L3.1e} 
Let $\{L_r \mid r \in R\}$ be a complete set of non-isomorphic irreducible graded $A$-supermodules appearing in the head of $P$. For all $r\in R$, set $D_r := \funf(L_r)$. Then, $\{D_r \mid r\in R\}=\Irr(H)$, and $\funq \circ \fung(D_r) \cong L_r$ for all $r$.
\end{Lemma}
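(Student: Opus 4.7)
The plan is to deduce the statement from Theorem~\ref{T3.1d} and Lemma~\ref{L3.1a}, by first checking that each $L_r$ lies in the full subcategory on which $\funf$ is an equivalence, and then showing that every $D\in\Irr(H)$ arises in this way.

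First I would verify that each $L_r$ satisfies the two conditions $O_P(L_r)=0$ and $O^P(L_r)=L_r$. For $O_P(L_r)=0$: since $L_r$ is an irreducible graded $A$-supermodule appearing in the head of $P$, we have $\Hom_A(P,L_r)\neq 0$, so $L_r$ itself is not killed by $\Hom_A(P,-)$; hence the largest such subsupermodule is zero by irreducibility. For $O^P(L_r)=L_r$: any nonzero homomorphism $P\to L_r$ must be surjective, so the subsupermodule generated by the images is all of $L_r$. Consequently, by Theorem~\ref{T3.1d}, the functor $\funf$ is fully faithful on $L_r$, the modules $D_r=\funf(L_r)$ are pairwise non-isomorphic, and $\funq\circ\fung(D_r)\cong L_r$ follows immediately from the quasi-inverse statement of that theorem.

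Next I would check that each $D_r$ is irreducible in $\mod{H}$. By Lemma~\ref{L3.1c} and the conditions verified above, $\End_H(D_r)\simeq \End_A(L_r)$, which is a graded division superalgebra since $L_r$ is irreducible; moreover any proper graded subsupermodule $D'\subsetneq D_r$ would give, via $\funq\circ\fung$, a proper graded subsupermodule of $L_r$, contradicting irreducibility. So $D_r\in\Irr(H)$.

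The main point, and the only mildly delicate step, is surjectivity: every irreducible $D\in\Irr(H)$ is isomorphic to some $D_r$. Given $D\in\Irr(H)$, consider $\funq\circ\fung(D)$. This graded $A$-supermodule is nonzero because $\funf\circ\funq\circ\fung(D)\cong D\neq 0$ by Lemma~\ref{L3.1a}. By finite-dimensionality it admits an irreducible quotient $L$, and $L$ is a quotient of $\fung(D)=P\otimes_H D$, hence a quotient of $P$, so $L\cong L_r$ for some $r\in R$. Applying $\funf$ to the surjection $\funq\circ\fung(D)\onto L_r$ and using exactness of $\funf$ together with $\funf\circ\funq\circ\fung(D)\cong D$, we obtain a nonzero homomorphism $D\to D_r=\funf(L_r)$. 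Since both modules are irreducible, this is an isomorphism, completing the proof.
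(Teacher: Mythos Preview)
Your proof is correct and follows the natural route from Theorem~\ref{T3.1d} and Lemma~\ref{L3.1a}; the paper itself gives no proof, simply citing \cite[3.1e]{BDK}, where the argument proceeds along these same lines. One small point of care: in your irreducibility argument for $D_r$, the functor $\fung$ is only right exact, so $\funq\circ\fung$ need not carry the inclusion $D'\hookrightarrow D_r$ to a monomorphism; what works is that the \emph{image} of the induced map $\funq\circ\fung(D')\to L_r$ is nonzero and proper (since applying the exact functor $\funf$ and the natural isomorphism $\funf\circ\funq\circ\fung\cong\id$ recovers the inclusion $D'\hookrightarrow D_r$), which yields the required contradiction. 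Alternatively, one can argue directly that $D_r=\Hom_A(P,L_r)$ is simple: any nonzero $\phi\in D_r$ is surjective onto $L_r$, and projectivity of $P$ then forces $D_r=H\phi$.
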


Finally, we have the following explicit description of the effect of the composite functor $\funq \circ \fung$ on left ideals of $H$, see \cite[3.1f]{BDK}:

\begin{Lemma} \label{L3.1f}
If every composition factor of the socle of $P$ also appears in its head then for any graded left superideal $J$ of $H$, we have $\funq \circ \fung(J) \simeq PJ$.
\end{Lemma}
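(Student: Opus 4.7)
The plan is to produce the asserted isomorphism $\funq(\fung(J))\simeq PJ$ from the canonical surjection $\pi:\fung(J)=P\otimes_H J\twoheadrightarrow PJ$ and then identify its kernel $K:=\ker\pi$ with $O_P(\fung(J))$. The map $\pi$ arises by applying the right exact functor $\fung$ to the short exact sequence $0\to J\to H\to H/J\to 0$ of graded $H$-supermodules and taking image inside $\fung(H)=P$; it is homogeneous of bidegree $(0,\0)$, so once the equality $K=O_P(\fung(J))$ is established, $\pi$ descends to the required bidegree $(0,\0)$ isomorphism $\funq(\fung(J))=\fung(J)/O_P(\fung(J))\simeq PJ$.

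For the inclusion $K\subseteq O_P(\fung(J))$, I would apply $\funf=\Hom_A(P,-)$ to the short exact sequence $0\to K\to \fung(J)\to PJ\to 0$. Since $P$ is projective, $\funf$ is exact, and under the identification $\funf(\fung(J))\simeq J$ furnished by Lemma~\ref{L3.1a} the composition $\funf(\fung(J))\to \funf(PJ)\hookrightarrow \funf(P)=H$ is precisely the inclusion $J\hookrightarrow H$. In particular $\funf(\fung(J))\to \funf(PJ)$ is injective, so $\funf(K)=0$, i.e.\ $K\subseteq O_P(\fung(J))$. The reverse inclusion reduces to showing $O_P(PJ)=0$: given any graded subsupermodule $V'\subseteq \fung(J)$ with $\funf(V')=0$, exactness of $\funf$ on the surjection $V'\twoheadrightarrow \pi(V')$ forces $\funf(\pi(V'))=0$, whence $\pi(V')\subseteq O_P(PJ)=0$ and therefore $V'\subseteq K$.

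It remains to verify $O_P(PJ)=0$, and this is where the socle/head hypothesis enters. Suppose, for contradiction, that $W\subseteq PJ$ is a nonzero graded subsupermodule with $\funf(W)=0$. As $W$ is finite-dimensional, it contains a nonzero irreducible graded subsupermodule $L$; the chain $L\subseteq W\subseteq PJ\subseteq P$ then exhibits $L$ as a composition factor of $\soc(P)$. By hypothesis $L$ also occurs in the head of $P$, so $\funf(L)=\Hom_A(P,L)\neq 0$, and projectivity of $P$ (equivalently, exactness of $\funf$) turns the inclusion $L\hookrightarrow W$ into an injection $\funf(L)\hookrightarrow \funf(W)$, contradicting $\funf(W)=0$.

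The only genuinely delicate point is the last step, where the hypothesis about socle versus head is actually used; everything else is bookkeeping with the adjunction $(\fung,\funf)$, exactness of $\funf$, and the observation that all maps in sight are homogeneous of bidegree $(0,\0)$. I do not anticipate any significant obstacle, as the argument tracks \cite[3.1f]{BDK} with the graded super decorations carried along without incident.
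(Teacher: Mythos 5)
Your proof is correct and is essentially the argument of \cite[3.1f]{BDK}, which the paper explicitly defers to: identify the kernel $K$ of the evaluation $\pi:\fung(J)\onto PJ$ with $O_P(\fung(J))$ by applying the exact functor $\funf$ and using $\funf\fung(J)\simeq J$ for one inclusion, and by proving $O_P(PJ)=0$ via the socle/head hypothesis for the other. The bookkeeping with the graded super-signs in the identification $\funf(\pi)\colon J\hookrightarrow \funf(PJ)$ checks out, so there is nothing further to add.
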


\chapter{Some important (super)algebras}
\section{Classical Schur algebras}

Algebras and modules appearing in this section will be considered as graded superalgebras and graded supermodules in the trivial way, i.e. concentrated in bidegree $(0,\0)$. 

\subsection{Permutation modules over symmetric groups}
\label{SSPermMod}

The trivial $\k \Si_\la$-module is denoted $\k_{\Si_\la}$, and the sign $\k\Si_\la$-module is denoted $\sgn_{\Si_\la}$. Sometimes we use the {\em right} trivial $\k \Si_\la$-module, which is again denoted $\k_{\Si_\la}$. 
We have the {\em permutation module} and the {\em sign permutation module} 
\begin{equation}\label{EPermSignPerm}
M^\la:=\Ind_{\Si_\la}^{\Si_d}\k_{\Si_\la}\simeq \k\Si_d\ttx_\la
\quad\text{and}\quad 
N^\la:=\Ind_{\Si_\la}^{\Si_d}\sgn_{\Si_\la}\simeq \k\Si_d\tty_\la
\end{equation}
with generators $m^\la:=1_{\Si_d}\otimes 1_\k$ and $n^\la:=1_{\Si_d}\otimes 1_\k$ and bases 
$\{gm^\la \mid g\in \D_d^\la\}$ and $\{gn^\la \mid g\in \D_d^\la\}$, respectively. 

For $\la,\mu\in\Comp( d)$ and $g\in\Si_d$, define the elements 
\begin{equation}\label{EX}
X_{\mu,\la}^g:=\sum_{w\in \Si_\mu g\Si_\la\cap\,{}^\mu\D}w
\qquad\text{and}\qquad
Y_{\mu,\la}^g:=\sum_{w\in \Si_\mu g\Si_\la\cap\,{}^\mu\D}\sgn(w)w
\end{equation}
of $\k\Si_d$. Then
\begin{equation}
\label{ECleverX}
\sum_{w\in\Si_\mu g\Si_\la}w=\ttx_\mu\cdot X_{\mu,\la}^g=\Big(\sum_{w\in \Si_\mu g\Si_\la\cap\D^\la}w\Big)\cdot\ttx_\la,
\end{equation}
\begin{equation}
\label{ECleverY}
\sum_{w\in\Si_\mu g\Si_\la}\sgn(w)w=\tty_\mu Y_{\mu,\la}^g
=\Big(\sum_{w\in \Si_\mu g\Si_\la\cap\D^\la}\sgn(w)w\Big)\cdot\tty_\la.
\end{equation}
So the right multiplication with the element
$
X_{\mu,\la}^g
$
defines a homomorphism
\begin{equation}\label{EXi}
\xi_{\la,\mu}^g:M^\mu\simeq \k\Si_d\ttx_\mu\to \k\Si_d\ttx_\la
\simeq M^\la,\ m^\mu\mapsto \sum_{w\in \Si_\mu g\Si_\la\cap\D^\la}wm^\la,
\end{equation}
and the right multiplication with the element
$
Y_{\mu,\la}^g$
defines a homomorphism
\begin{equation}\label{EEta}
\eta_{\la,\mu}^g:N^\mu\simeq \k\Si_d\tty_\mu\to \k\Si_d\tty_\la
\simeq N^\la,\ n^\mu\mapsto \sum_{w\in \Si_\mu g\Si_\la\cap\D^\la}\sgn(w)wn^\la
\end{equation}

The following is a well-known consequence of the Mackey Theorem:

\begin{Lemma} \label{LHomMM}
We have that 
$\{\xi_{\la,\mu}^g\mid g\in {}^\la\D_d^\mu\}$ is a basis of $\Hom_{\k\Si_d}(M^\mu,M^\la)$ and $\{\eta_{\la,\mu}^g\mid g\in {}^\la\D_d^\mu\}$ is a basis of $\Hom_{\k\Si_d}(N^\mu,N^\la)$.
\end{Lemma}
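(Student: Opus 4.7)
The plan is to prove both claims simultaneously by a standard Frobenius reciprocity/Mackey argument. I would begin by identifying $M^\mu\simeq\Ind_{\Si_\mu}^{\Si_d}\k_{\Si_\mu}$ with generator $m^\mu$ and $N^\mu\simeq\Ind_{\Si_\mu}^{\Si_d}\sgn_{\Si_\mu}$ with generator $n^\mu$, so that evaluation on these generators yields $\k$-linear isomorphisms
\[
\Hom_{\k\Si_d}(M^\mu,M^\la)\simeq (M^\la)^{\Si_\mu},\qquad
\Hom_{\k\Si_d}(N^\mu,N^\la)\simeq\{v\in N^\la\mid uv=\sgn(u)v\text{ for all }u\in\Si_\mu\}.
\]
It is therefore enough to exhibit bases of the right-hand sides whose members coincide with $\xi^g_{\la,\mu}(m^\mu)$ and $\eta^g_{\la,\mu}(n^\mu)$ as $g$ ranges over ${}^\la\D^\mu_d$.

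Next I would analyse the $\Si_\mu$-action on the natural bases $\{wm^\la\mid w\in\D^\la_d\}$ of $M^\la$ and $\{wn^\la\mid w\in\D^\la_d\}$ of $N^\la$. For $u\in\Si_\mu$ and $w\in\D^\la_d$, writing $uw=w'z$ uniquely with $w'\in\D^\la_d$ and $z\in\Si_\la$, the action takes the form $u\cdot(wm^\la)=w'm^\la$ in the trivial case and $u\cdot(wn^\la)=\sgn(z)w'n^\la$ in the sign case. Hence the $\Si_\mu$-orbits on $\D^\la_d$ are precisely the sets $\Si_\mu g\Si_\la\cap\D^\la_d$ as $g$ runs over a set of representatives for $\Si_\mu\backslash\Si_d/\Si_\la$; via the canonical bijection $\Si_\la g\Si_\mu\leftrightarrow\Si_\mu g^{-1}\Si_\la$ between left and right double coset spaces, these orbits are parametrised by ${}^\la\D^\mu_d$. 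For each such orbit, the plain orbit sum $\sum_{w\in\Si_\mu g\Si_\la\cap\D^\la_d}wm^\la$ is visibly $\Si_\mu$-fixed and, by (\ref{ECleverX}) and the definition of $\xi^g_{\la,\mu}$, equals $\xi^g_{\la,\mu}(m^\mu)$. For the sign case a short check shows that the stabilizer $\Si_\mu\cap g\Si_\la g^{-1}$ acts on the basis vector $wn^\la$ through the character $u\mapsto\sgn(g^{-1}ug)=\sgn(u)$, which is the restriction of $\sgn_{\Si_\mu}$; consequently the signed orbit sum $\sum_{w\in\Si_\mu g\Si_\la\cap\D^\la_d}\sgn(w)wn^\la$ is a nonzero $\sgn$-semi-invariant, and (\ref{ECleverY}) identifies it with $\eta^g_{\la,\mu}(n^\mu)$.

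Linear independence is immediate because distinct orbits give elements supported on disjoint subsets of the $\D^\la_d$-basis. For spanning one can argue directly — any $\Si_\mu$-invariant must be constant on each orbit, and by the stabilizer character computation above any $\sgn$-semi-invariant must be a scalar multiple of the signed orbit sum on each orbit — or, more conceptually, invoke the Mackey decomposition
\[
\Res^{\Si_d}_{\Si_\mu}\Ind^{\Si_d}_{\Si_\la}V\simeq\bigoplus_{g\in{}^\mu\D^\la_d}\Ind^{\Si_\mu}_{\Si_\mu\cap g\Si_\la g^{-1}}({}^gV)
\]
followed by a second application of Frobenius reciprocity, which shows that each double coset contributes exactly one dimension of invariants (respectively $\sgn$-semi-invariants). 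The main technical obstacle is the sign bookkeeping in the second claim: one has to verify simultaneously that the stabilizer character is $\sgn|_{\Si_\mu\cap g\Si_\la g^{-1}}$, that the resulting signed orbit sum is nonzero, and that it reproduces the expression for $\eta^g_{\la,\mu}(n^\mu)$ coming from (\ref{ECleverY}). All three verifications ultimately reduce to the fact that $\sgn$ is a one-dimensional character of $\Si_d$, but this is the one place in the argument where care is required.
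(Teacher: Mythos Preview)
Your proposal is correct and is precisely the standard Mackey/Frobenius reciprocity argument that the paper alludes to; the paper does not give a proof, merely stating that the lemma ``is a well-known consequence of the Mackey Theorem.'' Your treatment of the sign case, in particular the verification that the stabilizer $\Si_\mu\cap w\Si_\la w^{-1}$ acts on $wn^\la$ via $u\mapsto\sgn(w^{-1}uw)=\sgn(u)$, is exactly the point that needs checking and you handle it correctly.
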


\subsection{Classical Schur algebras}
\label{SSSchur}
For general information on Schur algebras we refer the reader to \cite{Green}. The facts which are needed here are also collected in a compact form (and properly referenced) in \cite[\S1]{BDK}.  We recall some facts for reader's convenience. 
Let $d,n\in\N_+$. 
The corresponding {\em classical Schur algebra} is 
$$
S(n,d):=\End_{\k\Si_d}\big(\textstyle\bigoplus_{\nu\in\Comp(n,d)}M^\nu\big).
$$
($S(n,0)$ is interpreted as $\k$.)  
Recalling the homomorphism $\xi_{\la,\mu}^g:M^\mu\to M^\la$  from (\ref{EXi}), we denote its trivial extension to an endomorphism of $\bigoplus_{\nu\in\Comp(n,d)}M^\nu$ again by $\xi_{\la,\mu}^g$. Then Lemma~\ref{LHomMM} implies:

\begin{Lemma} \label{LSchurBasis}
The algebra $S(n,d)$ has basis $\{\xi^g_{\la,\mu}\mid \la,\mu\in\Comp(n,d),\, g\in {}^\la\D_d^\mu\}$. 
\end{Lemma}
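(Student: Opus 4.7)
The plan is to deduce this immediately from the preceding Lemma~\ref{LHomMM} together with the standard decomposition of Hom spaces between direct sums.

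First I would observe that for any $\k\Si_d$-module direct sum $M = \bigoplus_{\nu\in\Comp(n,d)} M^\nu$, one has the canonical $\k$-module decomposition
\begin{equation*}
\End_{\k\Si_d}(M) \;=\; \bigoplus_{\la,\mu\in\Comp(n,d)} \Hom_{\k\Si_d}(M^\mu, M^\la),
\end{equation*}
where a homomorphism $M^\mu \to M^\la$ is identified with its trivial extension to an endomorphism of $M$ that is zero on all other summands. This decomposition is just the $\k$-linear statement ``a matrix of maps equals the sum of its entries.''

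Next I would apply Lemma~\ref{LHomMM} to each summand: for fixed $\la,\mu\in\Comp(n,d)$, the $\k$-module $\Hom_{\k\Si_d}(M^\mu, M^\la)$ has basis $\{\xi^g_{\la,\mu} \mid g\in {}^\la\D_d^\mu\}$. Since the extended-by-zero convention already adopted in the paragraph just before the lemma ensures that each $\xi^g_{\la,\mu}$ (viewed inside $S(n,d)$) sits in precisely the $(\la,\mu)$-summand above, taking the union over all pairs $(\la,\mu)$ gives a $\k$-basis of the direct sum, hence of $S(n,d)$.

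There is really no obstacle here — the only thing to be slightly careful about is that the notation $\xi^g_{\la,\mu}$ is now being used both for the map $M^\mu \to M^\la$ of Lemma~\ref{LHomMM} and for its trivial extension to $\End(M)$; once this identification is made explicit, the argument is a one-line concatenation of the two structural facts.
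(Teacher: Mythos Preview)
Your proposal is correct and matches the paper's approach exactly: the paper simply states that Lemma~\ref{LHomMM} implies this result, having just introduced the convention of denoting the trivial extension of $\xi^g_{\la,\mu}$ to all of $\bigoplus_{\nu}M^\nu$ by the same symbol. Your write-up spells out the direct sum decomposition of $\End_{\k\Si_d}(\bigoplus_\nu M^\nu)$ that makes this implication immediate.
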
 


We can also use the right permutation modules $\k_{\Si_\nu}\otimes_{\k\Si_\nu}\k\Si_d\simeq\ttx_\nu(\k\Si_d)$ instead of the left permutation modules $M^\la$. Considering the anti-automorphism 
$i:\k\Si_d\to\k\Si_d,\ \sum_{w\in \Si_d}c_ww\mapsto \sum_{w\in \Si_d}c_ww^{-1},
$
note that $i(\ttx_\nu)=\ttx_\nu$ for all $\nu$. So there is an algebra isomorphism
\begin{equation}\label{E050924}
S(n,d)\iso \End_{\k\Si_d}\Big(\bigoplus_{\nu\in\Comp(n,d)}\ttx_\nu(\k\Si_d)\Big),
\end{equation}
where $\xi^g_{\mu,\nu}\in S(n,d)$ maps onto the endomorphism which is $0$ on the summands $\ttx_\nu(\k\Si_d)$ with $\nu\neq \la$ and the summand $\ttx_\la(\k\Si_d)$ is mapped to the summand $\ttx_\mu(\k\Si_d)$ by the right multiplication with $$i(X^g_{\mu,\la})=\sum_{w\in \Si_\mu g\Si_\la\cap\,{}^\mu\D}w^{-1}=\sum_{w\in \Si_\la g^{-1}\Si_\mu\cap\D^\mu}w,
$$
where we have used (\ref{EX}) and (\ref{ECleverX}).

We can also use the sign permutation modules $N^\nu$ instead of the permutation modules $M^\nu$. Recalling the homomorphism $\eta_{\la,\mu}^g:N^\mu\to N^\la$  from (\ref{EEta}), we denote its trivial extension to an endomorphism of $\bigoplus_{\nu\in\Comp(n,d)}N^\nu$ again by $\eta_{\la,\mu}^g$. 

\begin{Lemma} \label{LSchurSigned}
We have an isomorphism 
$$S(n,d)\simeq \End_{\k\Si_d}\big(\textstyle\bigoplus_{\nu\in\Comp(n,d)}N^\nu\big)
$$
under which each basis element $\xi_{\la,\mu}^g$ of $S(n,d)$ with $g\in {}^\la\D_d^\mu$ corresponds to $\eta_{\la,\mu}^g$. 
\end{Lemma}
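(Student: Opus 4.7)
The plan is to exploit the self-equivalence of $\mod{\k\Si_d}$ given by tensoring with the sign representation $\sgn$. By the projection formula, for each $\nu\in\Comp(n,d)$ we have an isomorphism
$$\Phi^\nu:M^\nu\otimes\sgn\iso N^\nu,\quad gm^\nu\otimes 1\mapsto \sgn(g)\,gn^\nu\qquad(g\in \D_d^\nu).$$
Verifying $\k\Si_d$-equivariance of $\Phi^\nu$ is the one calculation to do by hand: for $w\in\Si_d$ and $g\in\D_d^\nu$, decompose $wg=g'u$ with $g'\in\D_d^\nu$ and $u\in\Si_\nu$, and use multiplicativity of $\sgn$ to check that $\sgn(w)\sgn(g')=\sgn(g)\sgn(u)$, which is precisely what matches the two sides of the equivariance condition.

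Setting $\Phi:=\bigoplus_{\nu}\Phi^\nu$ and composing the sign-twist functor with conjugation by $\Phi$ produces the desired algebra isomorphism
$$\Theta:\End_{\k\Si_d}\bigl(\textstyle\bigoplus_\nu M^\nu\bigr)\iso\End_{\k\Si_d}\bigl(\textstyle\bigoplus_\nu N^\nu\bigr),\qquad \phi\mapsto \Phi\circ(\phi\otimes\id)\circ\Phi^{-1}.$$
It remains to check that $\Theta(\xi^g_{\la,\mu})=\eta^g_{\la,\mu}$ for each $g\in{}^\la\D_d^\mu$. Since both endomorphisms vanish on the summands $N^\nu$ with $\nu\neq\mu$, it suffices to compare their values on the cyclic generator $n^\mu$. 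Using (\ref{EXi}) and the explicit form of $\Phi^\la$, and noting that each $w$ appearing in the sum below already lies in $\D_d^\la$, I obtain
$$\Theta(\xi^g_{\la,\mu})(n^\mu)=\Phi^\la\Bigl(\sum_{w\in\Si_\mu g\Si_\la\cap\D^\la}wm^\la\otimes 1\Bigr)=\sum_{w\in\Si_\mu g\Si_\la\cap\D^\la}\sgn(w)\,wn^\la,$$
which agrees with $\eta^g_{\la,\mu}(n^\mu)$ by (\ref{EEta}).

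The only substantive obstacle is the sign balancing in the definition of $\Phi^\nu$: the prefactor $\sgn(g)$ is needed to absorb the sign $\sgn(u)$ picked up in coset reduction, and this is precisely what makes the sign-twist functor interact correctly with induction from a parabolic. A purely computational alternative would be to invoke Lemma~\ref{LHomMM} to identify bases on both sides, then verify multiplicativity of $\xi^g_{\la,\mu}\mapsto\eta^g_{\la,\mu}$ directly by expanding $\xi^g_{\la,\mu}\circ\xi^h_{\mu,\nu}$ and $\eta^g_{\la,\mu}\circ\eta^h_{\mu,\nu}$ as sums over group elements and matching coefficients via the same sign cancellation.
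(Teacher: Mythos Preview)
Your proof is correct. The paper states this lemma without proof, treating it as a standard fact; your argument via the sign-twist self-equivalence $-\otimes\sgn$ together with the explicit intertwiners $\Phi^\nu$ is exactly the natural justification, and your verification that $\Theta(\xi^g_{\la,\mu})$ agrees with $\eta^g_{\la,\mu}$ on the generator $n^\mu$ is clean and complete.
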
 

The case $q=1$ of \cite[Theorem 1.11]{DJ} yields (see also \cite[\S2.7]{Green}):

\begin{Lemma} \label{LSchurAnti} 
There is an anti-automorphism $\tau$ of $S(n,d)$ which maps each $\xi^g_{\la,\mu}$ to\, $\xi^{g^{-1}}_{\mu,\la}$. 
\end{Lemma}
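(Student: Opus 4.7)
The plan is to realize $\tau$ as the adjoint operation for a natural $\Si_d$-invariant bilinear form on $B:=\bigoplus_{\nu\in\Comp(n,d)}M^\nu$, and then to compute the adjoint of a basis element $\xi^g_{\la,\mu}$ directly. This simultaneously produces an anti-involution of $S(n,d)$ and identifies it as the map described in the statement.

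First I would define the symmetric bilinear form $(\cdot,\cdot)$ on $B$ by declaring the summands $M^\la$ and $M^\mu$ orthogonal for $\la\neq\mu$ and setting
$$(um^\la,vm^\la):=[u\Si_\la=v\Si_\la] \qquad (u,v\in\Si_d).$$
Since the basis $\{gm^\la\mid g\in\D_d^\la\}$ of $M^\la$ is orthonormal under $(\cdot,\cdot)$, the form is nondegenerate, and the $\Si_d$-invariance $(gx,gy)=(x,y)$ is an immediate consequence of the fact that left translation by $\Si_d$ permutes the cosets $u\Si_\la$.

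Because $(\cdot,\cdot)$ is nondegenerate and $\Si_d$-invariant, each $\phi\in S(n,d)=\End_{\k\Si_d}(B)$ admits a unique $\k$-linear adjoint $\phi^\dagger:B\to B$ satisfying $(\phi(x),y)=(x,\phi^\dagger(y))$, and the standard argument shows $\phi^\dagger$ is again $\k\Si_d$-linear, hence lies in $S(n,d)$. The assignment $\tau:\phi\mapsto\phi^\dagger$ is $\k$-linear and anti-multiplicative by construction, and symmetry of the form forces $\tau^2=\id$; hence $\tau$ is an anti-automorphism of $S(n,d)$.

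To identify $\tau$ on the distinguished basis, I compute using (\ref{EXi}) and $\Si_d$-equivariance:
$$(\xi^g_{\la,\mu}(um^\mu),vm^\la)=\sum_{w\in\Si_\mu g\Si_\la\cap\D_d^\la}[uw\Si_\la=v\Si_\la]=[u^{-1}v\in\Si_\mu g\Si_\la],$$
the last step because the indexing set parametrizes precisely the left $\Si_\la$-cosets contained in $\Si_\mu g\Si_\la$. Analogously,
$$(um^\mu,\xi^{g^{-1}}_{\mu,\la}(vm^\la))=[v^{-1}u\in\Si_\la g^{-1}\Si_\mu].$$
Since $(\Si_\mu g\Si_\la)^{-1}=\Si_\la g^{-1}\Si_\mu$, these two indicators agree for all $u,v\in\Si_d$; by nondegeneracy of $(\cdot,\cdot)$ this forces $\tau(\xi^g_{\la,\mu})=\xi^{g^{-1}}_{\mu,\la}$. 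Note that $g\mapsto g^{-1}$ is a bijection ${}^\la\D_d^\mu\to{}^\mu\D_d^\la$ (length is invariant under inversion), so $\xi^{g^{-1}}_{\mu,\la}$ is indeed a basis element of $S(n,d)$ by Lemma~\ref{LSchurBasis}.

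I do not anticipate a genuine obstacle: the entire argument reduces to routine double-coset combinatorics plus the formal adjoint/bilinear form machinery. The one point warranting mild care is the collapse of the sum $\sum_{w\in\Si_\mu g\Si_\la\cap\D_d^\la}[uw\Si_\la=v\Si_\la]$ to a single Iverson bracket, but this follows immediately from the fact that those $w$ parametrize the $\Si_\la$-cosets making up the double coset $\Si_\mu g\Si_\la$.
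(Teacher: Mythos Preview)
Your proof is correct. The paper does not actually prove this lemma but simply cites it as the $q=1$ case of \cite[Theorem 1.11]{DJ} (see also \cite[\S2.7]{Green}); your bilinear-form/adjoint argument is essentially the standard proof found in those references, so you have supplied a self-contained verification of what the paper takes as known.
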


Let $W$ be a (left) $S(n,d)$-module. 
Using the anti-automorphism $\tau$ from the lemma, we define the {\em contravariant dual}\, $W^\tau$ of $W$ to be the (left) $S(n,d)$-module $W^*:=\Hom_\k(W,\k)$ with action defined by $(s\cdot f)(w) = f(\tau(s)w)$ for all $s \in S(n,d),\, w \in W,\, 
f \in W^*$. 
We also define the right $S(n,d)$-module $\tilde W$ to be $W$ as a $\k$-module, with right action defined by $ws = \tau(s)w$ for $w \in M,\, s \in S(n,d)$.

Let $k\in\N_+$ and $(d_1,\dots,d_k)\in\Comp(k,d)$. As in \cite[(1.3.1)]{BDK}, there is an explicit algebra map 
$$S(n,d) \to S(n,d_1)\otimes\dots\otimes S(n,d_k).$$ 
So, given $S(n,d_t)$-modules $W_t$ for $t=1,\dots,k$, we can view the tensor product 
\begin{equation}\label{ESchurTensProd}
W_1\otimes \dots\otimes W_k 
\end{equation}
as an $S(n,d)$-module.

We have the orthogonal idempotents 
\begin{equation}\label{EIdSchur}
\xi_\la:=\xi^1_{\la,\la}\in S(n,d) \qquad(\la\in \Comp(n,d))
\end{equation}
summing to the identity in $S(n,d)$. 
So for any $S(n,d)$-module $W$, 
we have the {\em weight space decomposition} $
W=\bigoplus_{\la\in \Comp(n,d)} \xi_\la W.
$ 
Recall $\om_d$ from (\ref{EOmd}).

\begin{Lemma} \label{LKappa} \cite[(6.1d)]{Green}
If $n\geq d$ then there is an algebra isomorphism 
$$\kappa:\k\Si_d\to \xi_{\om_d}S(n,d)\xi_{\om_d},\ g\mapsto \xi^g_{\om_d,\om_d}.
$$
\end{Lemma}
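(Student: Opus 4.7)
The strategy is to reduce the claim to the standard identification $\End_{\k\Si_d}(\k\Si_d)\cong \k\Si_d$, by recognizing that the composition $\om_d=(1^d)$ gives a trivial standard parabolic. Since $n\geq d$, we have $\om_d\in \Comp(n,d)$ and $\Si_{\om_d}=\Si_1\times\cdots\times\Si_1=\{1\}$, so $\ttx_{\om_d}=1$ and the permutation module $M^{\om_d}=\k\Si_d\ttx_{\om_d}=\k\Si_d$ is simply the regular left $\k\Si_d$-module with generator $m^{\om_d}=1$.

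Next, because $\xi_{\om_d}\in S(n,d)$ is the projection of $\bigoplus_{\nu\in\Comp(n,d)}M^\nu$ onto the summand $M^{\om_d}$, the truncation is canonically identified with
\[
\xi_{\om_d}S(n,d)\xi_{\om_d}\simeq \End_{\k\Si_d}(M^{\om_d})=\End_{\k\Si_d}(\k\Si_d).
\]
The double coset set ${}^{\om_d}\D_d^{\om_d}$ equals all of $\Si_d$ since both flanking parabolics are trivial, so Lemma~\ref{LSchurBasis} provides $\{\xi^g_{\om_d,\om_d}\mid g\in\Si_d\}$ as a $\k$-basis of $\xi_{\om_d}S(n,d)\xi_{\om_d}$. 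This already ensures $\kappa$ is a $\k$-linear isomorphism.

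It remains to verify multiplicativity, for which I will evaluate the basis elements on the generator $m^{\om_d}$. Directly from the defining formula (\ref{EXi}), using $\Si_{\om_d}=\{1\}$ and $\D_d^{\om_d}=\Si_d$, the intersection $\Si_{\om_d}g\Si_{\om_d}\cap\D_d^{\om_d}$ is the singleton $\{g\}$, whence $\xi^g_{\om_d,\om_d}(m^{\om_d})=g\cdot m^{\om_d}$. In other words, $\xi^g_{\om_d,\om_d}$ is the unique $\k\Si_d$-endomorphism of $\k\Si_d$ sending $1$ to $g$. A one-line composition computation exploiting $\k\Si_d$-linearity then shows that the product $\xi^g_{\om_d,\om_d}\xi^h_{\om_d,\om_d}$ applied to $m^{\om_d}$ reproduces $\xi^{gh}_{\om_d,\om_d}(m^{\om_d})$, confirming $\kappa(g)\kappa(h)=\kappa(gh)$ with the convention for the algebra structure on $S(n,d)$ used in the paper.

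No genuine obstacle is expected; the argument is pure book-keeping once $M^{\om_d}$ is identified as the regular module. The only point requiring mild care is the left/right convention in $\End_{\k\Si_d}(\k\Si_d)$, which determines whether the correct isomorphism sends $g$ to $\xi^g_{\om_d,\om_d}$ or to $\xi^{g^{-1}}_{\om_d,\om_d}$; the form given in the statement is the one matching the paper's conventions (and Green's \cite[(6.1d)]{Green}).
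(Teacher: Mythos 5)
The paper offers no proof of this lemma---it is cited directly from Green \cite[(6.1d)]{Green}---so there is no ``paper's own proof'' to compare against; your proposal is a reconstruction. Your key reduction is correct and is essentially Green's: since $\Si_{\om_d}=\{1\}$, $M^{\om_d}$ is the regular module, $\xi_{\om_d}S(n,d)\xi_{\om_d}\cong\End_{\k\Si_d}(\k\Si_d)$, and the basis $\{\xi^g_{\om_d,\om_d}\mid g\in\Si_d\}$ from Lemma~\ref{LSchurBasis} gives the $\k$-linear isomorphism.

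However, the multiplicativity step---the one you label a ``one-line composition computation''---is precisely where a genuine check is needed, and as you have stated it the conclusion is wrong under one natural reading. By (\ref{EXi}), $\xi^g_{\om_d,\om_d}$ is \emph{right} multiplication by $X^g_{\om_d,\om_d}=g$ on $\k\Si_d$. Under ordinary function composition, $(\xi^g_{\om_d,\om_d}\circ\xi^h_{\om_d,\om_d})(x)=xhg$, so $\xi^g_{\om_d,\om_d}\circ\xi^h_{\om_d,\om_d}=\xi^{hg}_{\om_d,\om_d}$, \emph{not} $\xi^{gh}_{\om_d,\om_d}$. That is, $g\mapsto\xi^g_{\om_d,\om_d}$ is an anti-homomorphism with respect to composition. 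And the paper's stated convention for $\End$ (see Example~\ref{ExEndSopBim}, where $\End_A(M)$ is taken with ordinary composition and one passes to $\End_A(M)^{\sop}$ for bimodule actions) is the composition convention, so one cannot silently assume the opposite. What rescues the statement is that in Green's framework the multiplication on $S(n,d)$ is the one for which the $\xi$'s compose in the opposite order (equivalently, $T=\bigoplus_\nu M^\nu$ is a left-$\k\Si_d$, right-$S(n,d)$ bimodule), and under that convention $\xi^g\xi^h=\xi^{gh}$ does hold. You do acknowledge a convention ambiguity in your last paragraph, but you simultaneously assert that the one-line computation produces $\xi^{gh}$---which is not what ordinary composition gives---and you never actually pin down which convention is in force. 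So the plan is sound, but the single non-formal step is not actually verified; to close it, either identify the product on $S(n,d)$ as the opposite of composition (so the right regular module of $\k\Si_d$ is also the regular module for the subalgebra) or, equivalently, observe that your computation gives $\xi^{hg}$ and that composing in the \emph{other} order yields the claimed homomorphism.
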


For $N\geq n$ we define
$
\Comp(N,n;d):=\{\la\in\Comp(N,d)\mid\la_{n+1}=\dots=\la_N=0\}
$
and consider the idempotent
$$
\xi(N,n;d):=\sum_{\la\in\Comp(N,n;d)}\xi_\la\in S(N,d).
$$ 
Then there is a natural isomorphism 
\begin{equation}\label{ENnIso}
S(n,d)\iso \xi(N,n;d)S(N,d)\xi(N,n;d), 
\end{equation}
see \cite[\S6.5]{Green}. 
As a special case of (\ref{EResGeneral}) and (\ref{EIndGeneral}), we get the functors 
\begin{equation}\label{EFunctorsNn}
\Res^{S(N,d)}_{S(n,d)} \quad\text{and}\quad 
\Ind^{S(N,d)}_{S(n,d)}.
\end{equation}

Let $\ud=(d_1,\dots,d_k)\in\Comp(k,d)$ and $\un=(n_1,\dots,n_k)\in\Comp(k,n)$. We denote
$$
S(\un,\ud):=S(n_1,d_1)\otimes\dots\otimes S(n_k,d_k).
$$
For $t=1,\dots,k+1$, letting $m_t:=\sum_{s=1}^{t-1}n_s$, 
denote
\begin{equation}\label{EUNUD}
\Comp(\un,\ud):=\{\la\in\Comp(n,d)\mid \sum_{s=m_t+1}^{m_{t+1}}\la_s=d_t\ \text{for all}\ t=1,\dots,k\}.
\end{equation}
and define the idempotent 
\begin{equation}\label{EIdUnUd}
\xi(\un,\ud):=\sum_{\la\in\Comp(\un,\ud)}\xi_\la\in S(n,d).
\end{equation}
Then we can consider $S(\un,\ud)$ as an explicit (unital) subalgebra of the idempotent truncation $\xi(\un,\ud)S(n,d)\xi(\un,\ud)$, referred to as a {\em Levi subalgebra}, see \cite[(1.3g)]{BDK}:
\begin{equation}\label{ELeviEmb}
S(\un,\ud)\subseteq \xi(\un,\ud)S(n,d)\xi(\un,\ud).
\end{equation}
As a special case of (\ref{EResGeneral}) and (\ref{EIndGeneral}), we get the functors 
\begin{equation}\label{EResIndLevi}
\Res^{S(n,d)}_{S(\un,\ud)} \quad\text{and}\quad 
\Ind^{S(n,d)}_{S(\un,\ud)}. 
\end{equation}

\subsection{Representation theory of Schur algebras} \label{SSSchurRep}
Throughout this subsection we assume that $\k=\F$. 

The irreducible $S(n,d)$-modules are parametrized by the elements of  $\Comp_+(n,d)$. We write $L_{n,d}(\la)$ for the irreducible $S(n,d)$-module corresponding to $\la\in \Comp_+(n,d)$. In particular, $L_{n,d}(\la)$ has highest weight $\la$, i.e. $\xi_\la L_{n,d}(\la)\neq 0$ and $\xi_\mu L_{n,d}(\la)= 0$ for $\mu\in \Comp(n,d)$ unless $\mu\unlhd\la$. We have for the contravariant duals $L_{n,d}(\la)^\tau\simeq L_{n,d}(\la)$ for all $\la\in  \Comp_+(n,d)$. 

It is well-known that $S(n,d)$ is a quasi-hereditary algebra with weight poset $(\Comp_+(n,d),\unlhd)$. 
In particular, associated to $\la\in \Comp_+(n,d)$ we have the standard module $\De_{n,d}(\la)$ (resp. costandard module $\nabla_{n,d}(\la)$) such that $\De_{n,d}(\la)$ (resp. $\nabla_{n,d}(\la)$) has simple head (resp. socle) isomorphic to $L_{n,d}(\la)$, and all other composition factors are of the form $L_{n,d}(\mu)$ with $\mu\lhd\la$. We have  $\De_{n,d}(\la)^\tau\simeq \nabla_{n,d}(\la)$
for all $\la\in  \Comp_+(n,d)$. 

Recalling the notation $\tilde W$ from the previous subsection, we also have the right modules $\tilde L_{n,d}(\la), \tilde\De_{n,d}(\la)$ and $\tilde\nabla_{n,d}(\la)$ for each $\la\in \Comp_+(n,d)$. The following general fact on quasi-hereditary algebras is well-known, cf. \cite[Theorem~7.1(iii)]{GreenComb}:

\begin{Lemma} \label{1.2e} 
As	an $(S(n,d),S(n,d))$-bimodule, 
$S(n,d)$	has	a	filtration	 with factors	isomorphic to $\De_{n,d}(\la) \otimes \tilde\De_{n,d}(\la)$, each appearing once for each $\la\in \Comp_+(n,d)$ and ordered in any way refining the dominance order on partitions so that factors corresponding to more dominant $\la$ appear lower in the filtration.
\end{Lemma}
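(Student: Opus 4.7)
The plan is to construct the filtration one heredity layer at a time, using the quasi-hereditary structure of $S(n,d)$ with respect to the dominance poset $(\Comp_+(n,d), \unlhd)$. Fix any total ordering $\lambda^{(1)}, \ldots, \lambda^{(N)}$ of $\Comp_+(n,d)$ refining $\unlhd$ so that $\lambda^{(N)}$ is maximal. For $0 \le k \le N$ set
$$e_k := \sum_{i>k} \xi_{\lambda^{(i)}}, \qquad J_k := S(n,d)\, e_k\, S(n,d).$$
Since $\{\lambda^{(k+1)}, \ldots, \lambda^{(N)}\}$ is an upper set in $(\Comp_+(n,d),\unlhd)$, the $J_k$ form a decreasing chain $S(n,d) = J_0 \supseteq J_1 \supseteq \cdots \supseteq J_N = 0$ of two-sided ideals; reindexing by $F_k := J_{N-k}$ yields an ascending bimodule filtration $0 = F_0 \subseteq F_1 \subseteq \cdots \subseteq F_N = S(n,d)$.

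Next I would identify the successive quotient $F_{k+1}/F_k = J_{N-k-1}/J_{N-k}$. Write $A = S(n,d)$, $\mu := \lambda^{(N-k)}$, $\bar A := A/J_{N-k}$, and $\bar\xi := \xi_\mu + J_{N-k}$. The quasi-hereditary structure of $A$ implies that $\bar A$ is again quasi-hereditary with poset $\{\lambda^{(1)}, \ldots, \lambda^{(N-k)}\}$, in which $\mu$ is the unique maximum; hence $J_{N-k-1}/J_{N-k} = \bar A \bar\xi \bar A$ is a heredity ideal of $\bar A$. By the defining property of a heredity ideal the multiplication map
$$\bar A \bar\xi \ \otimes_{\bar\xi \bar A \bar\xi}\ \bar\xi \bar A \ \xrightarrow{\ \sim\ }\ \bar A \bar\xi \bar A$$
is an $(A,A)$-bimodule isomorphism. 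It remains to identify the three tensor factors. Since $\mu$ is maximal in the poset of $\bar A$, the projective $\bar A$-module $\bar A \bar\xi$ has head $L_{n,d}(\mu)$ (occurring with multiplicity one, as $\xi_\mu L_{n,d}(\mu)$ is the one-dimensional highest weight space) and all remaining composition factors $L_{n,d}(\nu)$ satisfy $\nu \lhd \mu$; by the characterisation of standard modules recalled in \S\ref{SSSchurRep} this forces $\bar A \bar\xi \simeq \Delta_{n,d}(\mu)$ as a left $A$-module. The same argument on the right gives $\bar\xi \bar A \simeq \tilde\Delta_{n,d}(\mu)$. Finally, since $\xi_\mu L_{n,d}(\nu) = 0$ for every $\nu \lhd \mu$, the $\xi_\mu$-weight space of $\Delta_{n,d}(\mu)$ is one-dimensional, giving $\bar\xi \bar A \bar\xi \simeq \End_A(\Delta_{n,d}(\mu))^{\op} \simeq \F$. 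Consequently
$$F_{k+1}/F_k \ \simeq\ \Delta_{n,d}(\lambda^{(N-k)})\otimes \tilde\Delta_{n,d}(\lambda^{(N-k)}).$$

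As $k$ runs from $0$ to $N-1$ the label $\lambda^{(N-k)}$ ranges from the most dominant $\lambda^{(N)}$ down to the least dominant $\lambda^{(1)}$, so the layer indexed by the most dominant partition appears lowest in the filtration, exactly as stated. The main obstacle is the bimodule identification of the heredity layer, and in particular verifying that $\bar\xi \bar A \bar\xi \simeq \F$; this ultimately reduces to the standard facts that $\xi_\mu L_{n,d}(\mu)$ is one-dimensional and that every other composition factor of $\Delta_{n,d}(\mu)$ has strictly smaller highest weight, both of which are parts of the quasi-hereditary package for $S(n,d)$ recalled above.
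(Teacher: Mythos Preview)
Your argument is correct and is precisely the standard heredity-chain proof of this fact. The paper does not give a proof at all: it simply records the lemma as ``a general fact on quasi-hereditary algebras'' and cites \cite[Theorem~7.1(iii)]{GreenComb}. So you have supplied a proof where the paper only supplies a reference, and your proof is essentially the one underlying that reference.

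One minor point worth making explicit: when you claim $\bar A\bar\xi$ has simple head $L_{n,d}(\mu)$, you are implicitly using that $\Hom_{\bar A}(\bar A\bar\xi_\mu, L_{n,d}(\nu)) \simeq \xi_\mu L_{n,d}(\nu)$ vanishes for every surviving label $\nu\neq\mu$; this holds because $\xi_\mu L_{n,d}(\nu)\neq 0$ forces $\mu\unlhd\nu$, while $\nu\in\{\lambda^{(1)},\dots,\lambda^{(N-k)}\}$ and the compatibility of the total order with $\unlhd$ then force $\nu=\mu$. Once $\bar A\bar\xi_\mu$ is identified with the projective cover $P_{\bar A}(\mu)$, the equality $P_{\bar A}(\mu)=\Delta_{\bar A}(\mu)=\Delta_{n,d}(\mu)$ for the maximal label is the standard quasi-hereditary fact, and the rest follows as you wrote.
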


A finite dimensional $S(n,d)$-module $W$ has a {\em standard} (resp.  {\em costandard}) filtration if $W$  has a filtration $0 = W_0 \subset W_1 \subset \dots\subset W_r=W$ such that each sub-quotient $W_s/W_{s-1}$ is isomorphic to a standard (resp. costandard) module. 
The following is well-known:

\begin{Lemma} \label{1.3c} 
Let $(d_1,\dots,d_k)\in\Comp(d)$, and let $W_t$ be a finite dimensional $S(n,d_t)$-module  for $t=1,\dots,k$. 
If each module $W_t$ has a standard (resp. costandard) filtration 
then so does the $S(n,d)$-module $W_1 \otimes  \dots\otimes W_k$. 
\end{Lemma}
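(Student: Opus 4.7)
The plan is to reduce, by a double induction, to the key case of a tensor product of two standard modules over two Schur algebras, and then invoke a classical theorem of Donkin on tensor products with standard filtrations.

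First I reduce to the case $k=2$ by induction on $k$. The case $k=1$ is trivial, and for $k\geq 3$ the algebra map of (\ref{ESchurTensProd}) is associative in the obvious sense, so as $S(n,d)$-modules one may write $W_1\otimes\dots\otimes W_k \simeq (W_1\otimes\dots\otimes W_{k-1})\otimes W_k$, where the first factor is an $S(n,d_1+\dots+d_{k-1})$-module that has a standard filtration by the inductive hypothesis. With $k=2$, I fix standard filtrations $0=W_i^{(0)}\subset W_i^{(1)}\subset\dots\subset W_i^{(r_i)}=W_i$ for $i=1,2$, with subquotients $\De_{n,d_1}(\la^{(s)})$ and $\De_{n,d_2}(\mu^{(t)})$ respectively. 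Since $-\otimes_\F-$ is exact, the filtration $\{W_1^{(s)}\otimes W_2\}_{s=0}^{r_1}$ of $W_1\otimes W_2$ has subquotients $\De_{n,d_1}(\la^{(s)})\otimes W_2$, and each of these is in turn filtered by $\{\De_{n,d_1}(\la^{(s)})\otimes W_2^{(t)}\}_{t=0}^{r_2}$ with subquotients $\De_{n,d_1}(\la^{(s)})\otimes \De_{n,d_2}(\mu^{(t)})$. It therefore suffices to show that $\De_{n,d_1}(\la)\otimes \De_{n,d_2}(\mu)$ admits a standard filtration as an $S(n,d)$-module for every $\la\in\Comp_+(n,d_1)$ and $\mu\in\Comp_+(n,d_2)$.

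This last statement is the classical theorem of Donkin (and Paradowski--Wang in the quantum case) that a tensor product of two modules with standard filtrations over a Schur algebra again has a standard filtration; equivalently, in the $GL_n$-language, tensor products of modules with Weyl filtrations carry Weyl filtrations. The cleanest proof uses tilting theory: high tensor powers of the natural $n$-dimensional module are tilting over $S(n,e)$, so carry both standard and costandard filtrations, and one leverages embeddings of standards into such tensor powers. The costandard statement is then immediate by applying the contravariant duality $(-)^\tau$ of Lemma \ref{LSchurAnti}, which interchanges $\De_{n,e}(\la)$ and $\nabla_{n,e}(\la)$ and is compatible with the algebra map of (\ref{ESchurTensProd}), so that $(A\otimes B)^\tau\simeq A^\tau\otimes B^\tau$.

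The main obstacle is the Donkin-style statement in the penultimate step; the two preceding reductions are purely formal exactness arguments. The paper itself treats the present Lemma as \textit{well-known}, so this nontrivial theorem on tensor products of standards is the genuinely substantive input.
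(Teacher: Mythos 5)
The paper states this lemma with no proof at all — it is flagged as ``well-known'' — so there is no internal argument to compare against. Your proof is a correct and reasonable expansion of the claim. The two formal reductions (associativity of the construction in (\ref{ESchurTensProd}) to get down to $k=2$, and exactness of $-\otimes_\F-$ to get down to $\De_{n,d_1}(\la)\otimes\De_{n,d_2}(\mu)$) are both sound, and you correctly identify the substantive input as Donkin's theorem on tensor products of modules with Weyl (resp.\ good) filtrations over $GL_n$. One small point worth flagging: the second reduction is not strictly needed, since the Donkin/Mathieu/Paradowski--Wang theorem already applies directly to $W_1\otimes W_2$ once both factors have standard filtrations; filtering down to pairs of Weyl modules first is harmless but adds nothing. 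Also, your appeal to compatibility of the anti-automorphism $\tau$ with the map $S(n,d)\to S(n,d_1)\otimes S(n,d_2)$, so that $(A\otimes B)^\tau\simeq A^\tau\otimes B^\tau$, is correct but deserves a word of justification: in the $GL_n$ picture $\tau$ corresponds to matrix transpose, which commutes with the diagonal embedding, or one can check it directly on the $\xi^g_{\la,\mu}$ basis and the explicit formula for the coproduct-type map in \cite[(1.3.1)]{BDK}. With that caveat noted, the argument is fine.
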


Let $\la\in\Comp(n,d)$. We define the $S(n,d)$-modules 
\begin{align*}
S^\la_{n,d}&:=\nabla_{n,d}((\la_1))\otimes\dots\otimes \nabla_{n,d}((\la_n)),
\\
\Di^\la_{n,d}&:=\De_{n,d}((\la_1))\otimes\dots\otimes \De_{n,d}((\la_n)).
\end{align*}
Assuming $d\leq n$ and recalling the notation (\ref{EOmd}), we also define the $S(n,d)$-module
\begin{equation}\label{EExtDef}
\La^\la_{n,d}:=L_{n,d}(\om_{\la_1})\otimes\dots\otimes L_{n,d}(\om_{\la_n}).
\end{equation}
By Lemma~\ref{1.3c}, the module $S^\la_{n,d}$ has a costandard filtration, and the module $\Di^\la_{n,d}$ has a standard filtration. Moreover, since for any $l\leq n$ we have $L_{n,l}(\om_{l})=\De_{n,l}(\om_{l})=\nabla_{n,l}(\om_{l})$, the module $\La^\la_{n,d}$ has both a standard and a costandard filtration, i.e. is a {\em tilting module}. Also, using the formal characters it is easy to establish the following well-known fact:

\begin{Lemma} \label{LLaCompfactors}
Let $d\leq n$ and $\la\in\Comp_+(n,d)$. Then, interpreting the transposed partition $\la'$ as an element of $\Comp_+(n,d)$, we have that $L_{n,d}(\la')$ appears as a composition factor of $\La^\la_{n,d}$ with multiplicity $1$, and $L_{n,d}(\mu)$ is a composition factor of $\La^\la_{n,d}$ only if $\mu\unlhd \la'$. 
\end{Lemma}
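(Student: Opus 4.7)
The plan is to compute the formal character of $\La^\la_{n,d}$ and use a classical symmetric function identity to read off the multiplicities of costandard modules in a costandard filtration of $\La^\la_{n,d}$, from which the statement about composition factors will follow.

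First, for any $l \leq n$ the simple module $L_{n,l}(\om_l)$ coincides with the standard and costandard modules $\De_{n,l}(\om_l) = \nabla_{n,l}(\om_l)$, so by well-known classical results (see \cite{Green}) its formal character is the $l$th elementary symmetric polynomial $e_l(x_1,\dots,x_n)$ in $n$ variables. Via the explicit algebra map $S(n,d) \to S(n,\la_1) \otimes \cdots \otimes S(n,\la_n)$ of \S\ref{SSSchur}, the formal character of a tensor product is the product of characters, so
\[
\ch\,\La^\la_{n,d} \;=\; e_{\la_1}(x_1,\dots,x_n)\cdots e_{\la_n}(x_1,\dots,x_n) \;=: \;e_\la.
\]

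Next, I invoke the classical identity
\[
e_\la \;=\; \sum_{\mu \in \Comp_+(n,d)} K_{\mu'\la}\, s_\mu,
\]
where $s_\mu$ is the Schur polynomial in $x_1,\dots,x_n$ and $K_{\mu'\la}$ is the Kostka number counting semistandard Young tableaux of shape $\mu'$ and content $\la$ (see e.g.\ \cite[I.6]{Green} or Macdonald). Recall that $K_{\nu\la} \neq 0$ only when $\la \unlhd \nu$, and $K_{\nu\nu}=1$. Applied with $\nu = \mu'$, this gives $K_{\mu'\la}\neq 0$ only if $\la \unlhd \mu'$, equivalently $\mu \unlhd \la'$, and $K_{(\la')'\la}=K_{\la\la}=1$, so the coefficient of $s_{\la'}$ equals $1$.

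Now I apply Lemma~\ref{1.3c} to the costandard tensor factors $L_{n,\la_s}(\om_{\la_s}) = \nabla_{n,\la_s}(\om_{\la_s})$ to conclude that $\La^\la_{n,d}$ carries a costandard filtration. Since costandard modules of $S(n,d)$ have Schur polynomials as formal characters and since the set $\{s_\mu \mid \mu \in \Comp_+(n,d)\}$ is linearly independent, the multiplicity with which $\nabla_{n,d}(\mu)$ appears in any costandard filtration of $\La^\la_{n,d}$ equals the coefficient $K_{\mu'\la}$ of $s_\mu$ in $e_\la$. In particular only costandard modules $\nabla_{n,d}(\mu)$ with $\mu \unlhd \la'$ occur, and $\nabla_{n,d}(\la')$ occurs exactly once.

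Finally, since $\nabla_{n,d}(\mu)$ has simple socle $L_{n,d}(\mu)$ and all other composition factors of the form $L_{n,d}(\nu)$ with $\nu \lhd \mu$ (see \S\ref{SSSchurRep}), every composition factor of $\La^\la_{n,d}$ is of the form $L_{n,d}(\nu)$ with $\nu \unlhd \la'$. Moreover $L_{n,d}(\la')$ can appear only from the unique costandard section $\nabla_{n,d}(\la')$ (the remaining sections $\nabla_{n,d}(\mu)$ have $\mu \lhd \la'$ and hence do not contain $L_{n,d}(\la')$), and inside that section it appears with multiplicity one. This yields the claim. There is no real obstacle beyond correctly invoking the Kostka decomposition of $e_\la$; the only mildly delicate point is the bookkeeping translating dominance on $\mu'$ versus $\mu$, which is handled by transposing partitions.
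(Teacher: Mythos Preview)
Your proof is correct and is precisely the formal-character argument the paper alludes to (the paper merely states that the lemma is ``easy to establish'' using formal characters without spelling out details). Your expansion via the Kostka identity $e_\la=\sum_\mu K_{\mu'\la}s_\mu$ together with the costandard filtration from Lemma~\ref{1.3c} is the standard way to make this precise.
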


Recall the isomorphism $\kappa$ from Lemma~\ref{LKappa}. The following is well-known, see for example {\rm \cite[(1.3b)]{BDK}}.

\begin{Lemma} \label{1.3b} 
Let $\la  \in \Comp(n,d)$. We have isomorphisms of $S(n,d)$-modules: 
\begin{enumerate}
\item[{\rm (i)}] 
  $S(n,d)\xi_\la\simeq \Di^\la_{n,d}$.  
\item[{\rm (ii)}] If $n \geq d$, then  $S(n,d)\kappa(\tty_\la)\simeq \La^\la_{n,d}$.
\end{enumerate}
\end{Lemma}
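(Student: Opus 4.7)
The plan is to prove (i) via Schur--Weyl duality and then to deduce (ii) by specializing (i) at $\om_d=(1^d)$ and combining the result with Lemma~\ref{LKappa} to convert the sign-symmetrizer $\tty_\la$ into an element of $S(n,d)$.

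For part (i), I would first identify $S(n,d)=\End_{\k\Si_d}(V^{\otimes d})$ via Schur--Weyl duality, where $V:=\k^n$ is the natural module and $\Si_d$ acts on $V^{\otimes d}$ by place permutations. The weight decomposition $V^{\otimes d}=\bigoplus_{\nu\in\Comp(n,d)}(V^{\otimes d})_\nu$ identifies each weight space $(V^{\otimes d})_\nu$ with the permutation module $M^\nu$ as a $\k\Si_d$-module, and under this identification $\xi_\la$ becomes the projection onto $(V^{\otimes d})_\la$. Hence, as a left $S(n,d)$-module,
\[
S(n,d)\xi_\la\simeq\Hom_{\k\Si_d}(M^\la,V^{\otimes d}),
\]
with $S(n,d)$ acting by post-composition. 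Frobenius reciprocity together with the factorization $V^{\otimes d}=V^{\otimes\la_1}\otimes\cdots\otimes V^{\otimes\la_n}$ as a $\Si_\la$-module then yield
\[
\Hom_{\k\Si_d}(M^\la,V^{\otimes d})\simeq (V^{\otimes d})^{\Si_\la}\simeq \mathrm{Sym}^{\la_1}V\otimes\cdots\otimes\mathrm{Sym}^{\la_n}V,
\]
and each factor $\mathrm{Sym}^{\la_r}V$ is identified with the standard module $\De_{n,\la_r}((\la_r))$ because $(\la_r)$ is the unique maximal element of the weight poset $\Comp_+(n,\la_r)$. Tensoring the factors recovers $\Di^\la_{n,d}$.

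For part (ii), I would specialize (i) to $\la=\om_d$, obtaining an $S(n,d)$-module isomorphism $S(n,d)\xi_{\om_d}\simeq V^{\otimes d}$. The endomorphism algebra $\End_{S(n,d)}(S(n,d)\xi_{\om_d})^\sop=\xi_{\om_d}S(n,d)\xi_{\om_d}$ is identified with $\k\Si_d$ via $\kappa$ by Lemma~\ref{LKappa}, and the corresponding right $\k\Si_d$-action on $V^{\otimes d}$ is exactly place permutation by standard Schur--Weyl. Hence $S(n,d)\xi_{\om_d}\simeq V^{\otimes d}$ as an $(S(n,d),\k\Si_d)$-bimodule, with $\kappa(g)$ acting as $g\in\Si_d$. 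Right-multiplying through by $\tty_\la$ now gives
\[
S(n,d)\kappa(\tty_\la)\simeq V^{\otimes d}\cdot\tty_\la,
\]
and the right-hand side is the $(\Si_\la,\mathrm{sgn})$-isotypic subspace of $V^{\otimes d}$, which factorizes as $\bigwedge^{\la_1}V\otimes\cdots\otimes\bigwedge^{\la_n}V$. Since $n\geq d\geq\la_r$, the exterior power $\bigwedge^{\la_r}V$ coincides with the irreducible $L_{n,\la_r}(\om_{\la_r})$, so this tensor product is $\La^\la_{n,d}$ by definition.

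The main obstacle is not the module-theoretic reasoning itself but the compatibility of the various $S(n,d)$-actions. In (i) one must check that the action on $(V^{\otimes d})^{\Si_\la}$ inherited by post-composition via Schur--Weyl agrees with the action on $\Di^\la_{n,d}$ defined through the tensor-product algebra map $S(n,d)\to S(n,\la_1)\otimes\cdots\otimes S(n,\la_n)$; in (ii) one must verify that the bimodule structure is transported correctly by $\kappa$. Both reduce to tracking how the Schur--Weyl functor interacts with the tensor factorization $V^{\otimes d}=V^{\otimes\la_1}\otimes\cdots\otimes V^{\otimes\la_n}$---a careful but routine diagram chase.
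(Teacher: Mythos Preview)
The paper does not give its own proof of this lemma; it simply records it as well-known, citing \cite[(1.3b)]{BDK}. Your approach via Schur--Weyl duality is the standard one and is essentially correct.

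One terminological slip worth flagging: the $\Si_m$-invariants $(V^{\otimes m})^{\Si_m}$ are the \emph{divided powers} $D^mV$, not the symmetric powers $\mathrm{Sym}^mV$ (which are the coinvariants $(V^{\otimes m})_{\Si_m}$). Over a general ground ring or in positive characteristic these are genuinely different $S(n,m)$-modules: $D^mV\simeq\De_{n,m}((m))$ while $\mathrm{Sym}^mV\simeq\nabla_{n,m}((m))$. Since you correctly land on $\De_{n,\la_r}((\la_r))$ in the end, the argument is fine, but the justification ``$(\la_r)$ is the unique maximal element of the weight poset'' does not by itself distinguish $\De$ from $\nabla$; you really need the standard identification of divided powers with Weyl modules. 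Your outline for (ii) is correct, and the compatibility checks you flag (that the tensor-product $S(n,d)$-action matches the Schur--Weyl action, and that $\kappa$ transports the bimodule structure correctly) are indeed routine once the identifications are made explicit.
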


\begin{Lemma} \label{1.3d} {\rm \cite[(1.3d)]{BDK}} Let $n\geq d$, $\la\in \Comp_+(n,d)$, and consider the transposed partition $\la'$ as an element of $\Comp_+(n,d)$. Then 
$\dim\Hom_{S(n,d)}(\Di^\la_{n,d}, \La^{\la'}_{n,d})=1,$ 
and the image of any non-zero  homomorphism $\Di^\la_{n,d}\to \La^{\la'}_{n,d}$ is isomorphic to $\De_{n,d}(\la)$.
\end{Lemma}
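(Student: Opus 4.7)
The plan is to combine Frobenius reciprocity with the tilting structure of $\La^{\la'}_{n,d}$. First, using Lemma~\ref{1.3b}(i), $\Di^\la_{n,d}\simeq S(n,d)\xi_\la$, and Frobenius reciprocity identifies $\Hom_{S(n,d)}(\Di^\la_{n,d},\La^{\la'}_{n,d})$ with the $\la$-weight space $\xi_\la\La^{\la'}_{n,d}$. I would then invoke Lemma~\ref{LLaCompfactors}: every composition factor $L_{n,d}(\mu)$ of $\La^{\la'}_{n,d}$ satisfies $\mu\unlhd\la$, and $L_{n,d}(\la)$ appears exactly once. Since the $\la$-weight space of $L_{n,d}(\mu)$ vanishes unless $\la\unlhd\mu$, only the single $L_{n,d}(\la)$ factor contributes (its one-dimensional highest weight space). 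This proves the first assertion.

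For the image, I would first reduce to a map $\De_{n,d}(\la)\to\La^{\la'}_{n,d}$. The module $\La^{\la'}_{n,d}$ is tilting (Lemma~\ref{1.3c} applied to the exterior factors), so $\Ext^1_{S(n,d)}(\De_{n,d}(\mu),\La^{\la'}_{n,d})=0$ for all $\mu$, and a standard Brauer--Humphreys computation gives $\dim\Hom_{S(n,d)}(\De_{n,d}(\la),\La^{\la'}_{n,d})=[\La^{\la'}_{n,d}:\nabla_{n,d}(\la)]=1$, using that $L_{n,d}(\la)$ cannot be a composition factor of $\nabla_{n,d}(\mu)$ for $\mu\lhd\la$. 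Next, Lemma~\ref{1.2e} applied to $S(n,d)\xi_\la$ gives a standard filtration of $\Di^\la_{n,d}$ whose factors are $\De_{n,d}(\mu)$ for $\mu\unrhd\la$, with $\mu=\la$ appearing once and more dominant $\mu$ sitting lower; hence the unique top quotient is $\De_{n,d}(\la)$, fitting into $0\to K\to\Di^\la_{n,d}\to\De_{n,d}(\la)\to 0$ with $K$ admitting a $\De$-filtration by $\De_{n,d}(\mu)$, $\mu\rhd\la$. Applying $\Hom(-,\La^{\la'}_{n,d})$ together with the $\Ext^1$-vanishing yields an injection of the $1$-dimensional $\Hom(\De_{n,d}(\la),\La^{\la'}_{n,d})$ into the $1$-dimensional $\Hom(\Di^\la_{n,d},\La^{\la'}_{n,d})$, necessarily an isomorphism. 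Hence any nonzero $\phi$ factors as $\phi=\tilde\psi\circ\pi$ with $\pi$ the quotient onto $\De_{n,d}(\la)$, and $\Im\phi=\Im\tilde\psi$.

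The final (and main) step is to verify that $\tilde\psi:\De_{n,d}(\la)\to\La^{\la'}_{n,d}$ is injective. For this I would exhibit a specific embedding realising the unique nonzero map in $\Hom(\De_{n,d}(\la),\La^{\la'}_{n,d})$: decompose the tilting module $\La^{\la'}_{n,d}$ into its indecomposable tilting summands and observe that $\la$ is the unique maximal highest weight appearing (with multiplicity one), so exactly one summand is isomorphic to the indecomposable tilting module $T_{n,d}(\la)$, which by definition contains $\De_{n,d}(\la)$ as the bottom piece of its standard filtration. The composition $\De_{n,d}(\la)\hookrightarrow T_{n,d}(\la)\hookrightarrow\La^{\la'}_{n,d}$ is then a nonzero element of a one-dimensional $\Hom$ space, so $\tilde\psi$ is a scalar multiple of this embedding, and in particular injective, yielding $\Im\phi\cong\De_{n,d}(\la)$. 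The main obstacle is this last appeal to Ringel--Donkin tilting theory; one could alternatively argue directly from some well-chosen standard filtration of $\La^{\la'}_{n,d}$ that the bottom factor is $\De_{n,d}(\la)$ and read off the embedding from there.
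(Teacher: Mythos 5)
The paper does not give its own proof of this lemma (it is cited directly from \cite[(1.3d)]{BDK}), so the task is simply to check whether your argument is sound, and it is. The weight-space computation for the first assertion via $\Hom_{S(n,d)}(S(n,d)\xi_\la, \La^{\la'}_{n,d})\cong\xi_\la\La^{\la'}_{n,d}$ combined with Lemma~\ref{LLaCompfactors} is correct. The reduction to $\Hom_{S(n,d)}(\De_{n,d}(\la),\La^{\la'}_{n,d})$ via the $\De$-filtration of $\Di^\la_{n,d}$ and the $\Ext^1$-vanishing against the tilting module $\La^{\la'}_{n,d}$ is also correct (right-multiplying the bimodule filtration of Lemma~\ref{1.2e} by $\xi_\la$ gives a $\De$-filtration of $\Di^\la_{n,d}$ with factors $\De_{n,d}(\nu)^{\oplus\dim\xi_\la\De_{n,d}(\nu)}$ for $\nu\unrhd\la$, $\De_{n,d}(\la)$ at the top). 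One remark: the step you flag as the ``main obstacle'' --- realizing $\De_{n,d}(\la)$ as a submodule of $\La^{\la'}_{n,d}$ via the indecomposable tilting summand $T_{n,d}(\la)$ --- is exactly the content of Lemma~\ref{LDonkinLa} in the paper, so no alternative argument is needed; citing that lemma closes the gap cleanly and your proof is complete as written.
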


\begin{Lemma} \label{LNnEquiv} {\rm \cite[\S6.5]{Green}} 
Let $N\geq n\geq d$. Then the functors (\ref{EFunctorsNn}) yield a Morita equivalence between $S(N,d)$ and $S(n,d)$. Moreover, considering any $\la\in\Comp_+(n,d)$ as an element of $\Comp_+(N,d)$, we have 
$
\Res^{S(N,d)}_{S(n,d)}L_{N,d}(\la)\simeq L_{n,d}(\la), 
$ 
$
\Res^{S(N,d)}_{S(n,d)}\De_{N,d}(\la)\simeq \De_{n,d}(\la) 
$ 
and \,
$
\Res^{S(N,d)}_{S(n,d)}\nabla_{N,d}(\la)\simeq \nabla_{n,d}(\la).
$
\end{Lemma}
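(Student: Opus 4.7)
The plan is to realize both Morita equivalence and the module identifications via the idempotent truncation isomorphism (\ref{ENnIso}), that is, via the idempotent $\xi(N,n;d)\in S(N,d)$. Under the identification $S(n,d)\simeq \xi(N,n;d)S(N,d)\xi(N,n;d)$, the restriction functor $\Res^{S(N,d)}_{S(n,d)}$ becomes the idempotent truncation functor $W\mapsto \xi(N,n;d)W$, while $\Ind^{S(N,d)}_{S(n,d)}$ becomes $S(N,d)\xi(N,n;d)\otimes_{\xi(N,n;d)S(N,d)\xi(N,n;d)}(-)$.

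First I would establish Morita equivalence. Working first over a field $\F$, I want to apply Lemma~\ref{lem:idmpt_Mor} to $A=S(N,d)$ and $e=\xi(N,n;d)$, so I need to check $\xi(N,n;d)L\neq 0$ for every $L\in\Irr(S(N,d))$. Any $L\in\Irr(S(N,d))$ is of the form $L_{N,d}(\la)$ for some $\la\in\Comp_+(N,d)$, i.e. $\la$ is a partition of $d$. Since $\la$ is a partition of $d$ and $n\geq d$, it has at most $d\leq n$ nonzero parts, so $\la\in\Comp(N,n;d)$ when viewed as an element of $\Comp(N,d)$. Because $\la$ is the highest weight of $L_{N,d}(\la)$, we have $\xi_\la L_{N,d}(\la)\neq 0$, hence $\xi(N,n;d) L_{N,d}(\la)\neq 0$, as required. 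Then Lemma~\ref{lem:idmpt_Mor} yields the Morita equivalence in the field case, and Lemma~\ref{LMorExtScal} transfers this to an arbitrary PID $\O$.

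Next I would identify the restrictions of $L_{N,d}(\la)$, $\De_{N,d}(\la)$, $\nabla_{N,d}(\la)$ with their $S(n,d)$-counterparts (assuming $\k=\F$ to use highest weight theory; integral statements follow by standard base change since all these modules are $\O$-free with the correct base-change behaviour). For every $\mu\in\Comp_+(N,d)$ we have $\mu\in\Comp(N,n;d)$ as observed above, so restriction via $\xi(N,n;d)$ preserves {\em all} weight spaces labelled by partitions; in particular no composition factor is killed. Under the bijection $\Comp_+(N,d)\leftrightarrow\Comp_+(n,d)$ (which is actually an equality of sets), the Morita equivalence sends $L_{N,d}(\la)$ to an irreducible $S(n,d)$-module whose $\la$-weight space is nonzero and which has no weights above $\la$ in the dominance order restricted to $\Comp(n,d)$; this forces it to be $L_{n,d}(\la)$. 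The same weight space consideration shows that $\xi(N,n;d)\De_{N,d}(\la)$ has a simple head isomorphic to $L_{n,d}(\la)$ with all other composition factors of the form $L_{n,d}(\mu)$ with $\mu\lhd\la$, and the weight multiplicities agree with those of $\De_{n,d}(\la)$ (both being computed from the same character formula of Weyl type, since the Morita equivalence preserves weight spaces corresponding to $\mu\in\Comp(N,n;d)$ and ignores the rest which are zero in a standard module labelled by a partition). Together with exactness of $\Res$, this forces $\Res^{S(N,d)}_{S(n,d)}\De_{N,d}(\la)\simeq \De_{n,d}(\la)$; the argument for $\nabla$ is dual (equivalently, apply the contravariant duality $\tau$ which commutes with idempotent truncation).

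The only mildly delicate point is the base change: ensuring $\xi(N,n;d)$ is a Morita-equivalence-inducing idempotent over $\O$ and not merely over each residue field. This is handled by Lemma~\ref{LMorExtScal}, which reduces the key identity $S(N,d)\xi(N,n;d)S(N,d)=S(N,d)$ to the corresponding residue-field statements proved above. Everything else is a direct application of highest weight theory inside the quasi-hereditary algebra $S(N,d)$ combined with the trivial observation that a partition of $d$ has at most $d$ parts.
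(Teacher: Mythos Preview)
The paper does not give a proof of this lemma; it simply cites \cite[\S6.5]{Green}. Your argument is a correct self-contained proof using the paper's own machinery (Lemma~\ref{lem:idmpt_Mor} for the field case, Lemma~\ref{LMorExtScal} for the passage to $\O$). Two minor remarks. First, the lemma is stated in \S\ref{SSSchurRep}, which already assumes $\k=\F$, so the base-change discussion is not needed here. Second, your identification of $\Res\De_{N,d}(\la)$ with $\De_{n,d}(\la)$ via ``the same character formula of Weyl type'' is a little informal; a cleaner way is to note that once Morita equivalence via $e=\xi(N,n;d)$ is established, $eAe$ inherits the quasi-hereditary structure with the same weight poset, and standard (resp.\ costandard) modules are characterized by their universal property in the truncated category, so they automatically correspond under the equivalence. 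Either route is fine.
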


\begin{Lemma} \label{1.5d} {\rm \cite[Theorem 2.7]{BK}} 
Let $k\in\N_+$, $\un = (n_1,\dots,n_k) \in\Comp(k,n)$ and $\ud = (d_1,\dots,d_k) \in\Comp(k,d)$, with $d_t\leq n_t$ for all $t$. Then for the modules $W_1\in\mod{S(n_1,d_1)},\dots,W_k\in\mod{S(n_k,d_k)}$, we have a functorial isomorphism
$$
\Ind_{S(\un,\ud)}^{S(n,d)}(W_1 \boxtimes\dots\boxtimes \,W_k)\simeq 
(\Ind_{S(n_1,d_1)}^{S(n,d_1)} W_1)\otimes\dots\otimes (\Ind_{S(n_k,d_k)}^{S(n,d_k)} W_k).
$$
\end{Lemma}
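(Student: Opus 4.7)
The plan is to prove the isomorphism by reducing to a class of projective generators on which both sides can be computed directly. Both sides define $\k$-linear functors
$$\prod_{t=1}^k \mod{S(n_t,d_t)} \to \mod{S(n,d)}$$
that are right exact in each argument and commute with direct sums. It therefore suffices to construct a natural isomorphism when each $W_t$ ranges over a family of projective generators for $\mod{S(n_t,d_t)}$; a convenient choice is $W_t = S(n_t,d_t)\xi_{\la^{(t)}}$ for $\la^{(t)} \in \Comp(n_t,d_t)$.

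For such $W_t$, the $S(\un,\ud)$-module $W_1 \boxtimes \dots \boxtimes W_k$ is isomorphic to $S(\un,\ud)\xi_\bla$, where $\bla \in \Comp(\un,\ud) \subset \Comp(n,d)$ is the concatenation of the $\la^{(t)}$. The left side of the claimed isomorphism then collapses to
$$S(n,d)\xi(\un,\ud) \otimes_{S(\un,\ud)} S(\un,\ud)\xi_\bla \simeq S(n,d)\xi_\bla \simeq \Di^\bla_{n,d}$$
by Lemma~\ref{1.3b}(i). For the right side, embedding each $\la^{(t)}$ into $\Comp(n,d_t)$ by zero-padding, the Morita equivalence (\ref{ENnIso}) together with Lemma~\ref{1.3b}(i) gives $\Ind_{S(n_t,d_t)}^{S(n,d_t)}W_t \simeq S(n,d_t)\xi_{\la^{(t)}} \simeq \Di^{\la^{(t)}}_{n,d_t}$. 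Unfolding each $\Di^{\la^{(t)}}_{n,d_t}$ as the tensor product (\ref{ESchurTensProd}) of divided powers indexed by the parts of $\la^{(t)}$ and invoking associativity of this tensor product, the $k$-fold product equals $\Di^\bla_{n,d}$, matching the left side.

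To upgrade this computation on generators to a functorial isomorphism, I would use Frobenius reciprocity. One constructs a natural $S(\un,\ud)$-linear map
$$\phi_{W_\bullet}: W_1 \boxtimes \dots \boxtimes W_k \to \Res^{S(n,d)}_{S(\un,\ud)} \big((\Ind_{S(n_1,d_1)}^{S(n,d_1)} W_1) \otimes \dots \otimes (\Ind_{S(n_k,d_k)}^{S(n,d_k)} W_k)\big)$$
sending $w_1 \otimes \dots \otimes w_k$ to the tensor product of the canonical generators $\xi(n,n_t;d_t) \otimes w_t$ in each $\Ind_{S(n_t,d_t)}^{S(n,d_t)} W_t$, and lets $\Phi$ be the corresponding adjoint natural transformation from left to right. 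The main obstacle is verifying that $\Phi$ is an isomorphism on the generators $W_t = S(n_t,d_t)\xi_{\la^{(t)}}$; this requires tracking the interaction between the coproduct $S(n,d) \to S(n,d_1) \otimes \dots \otimes S(n,d_k)$ defining the tensor product (\ref{ESchurTensProd}), the Levi embedding (\ref{ELeviEmb}), and the idempotent decomposition $\xi(\un,\ud) = \sum_\bla \xi_\bla$. Once $\Phi$ is shown to carry the canonical generator of $S(n,d)\xi_\bla$ on the left to the tensor product of the corresponding canonical generators on the right, both sides become identified as $\Di^\bla_{n,d}$, and a surjectivity/dimension argument promotes this to the required isomorphism; naturality is then automatic from the construction.
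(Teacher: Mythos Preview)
The paper does not prove this lemma; it is quoted from \cite[Theorem~2.7]{BK} without argument, so there is no proof in the paper to compare against. Your strategy---reduce to projective generators $S(n_t,d_t)\xi_{\la^{(t)}}$ using right exactness, compute both sides as $\Di^\bla_{n,d}$, and glue via a natural transformation built by adjunction---is a sound outline and is one of the standard ways to establish such tensor-induction identities.

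Two points deserve care. First, in constructing $\phi_{W_\bullet}$ you must verify that the Levi embedding $S(\un,\ud)\hookrightarrow S(n,d)$ composed with the comultiplication $S(n,d)\to\bigotimes_t S(n,d_t)$ factors through $\bigotimes_t S(n_t,d_t)\hookrightarrow\bigotimes_t S(n,d_t)$; this is the compatibility that makes $\phi_{W_\bullet}$ genuinely $S(\un,\ud)$-linear, and it comes down to a computation with the basis elements $\xi^g_{\la,\mu}$. Second, your identification of $\Ind_{S(n_t,d_t)}^{S(n,d_t)}W_t$ with $\Di^{\la^{(t)}}_{n,d_t}$ embeds $\la^{(t)}$ into $\Comp(n,d_t)$ by padding with zeros \emph{at the end}, whereas the concatenation $\bla$ places $\la^{(t)}$ in positions $n_1+\dots+n_{t-1}+1,\dots,n_1+\dots+n_t$; the two resulting $\Di$-modules over $S(n,d_t)$ are isomorphic (zero parts contribute trivial tensor factors), but you should say so explicitly when matching the two sides. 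Once these checks are made, a five-lemma argument over projective presentations promotes the isomorphism on generators to all modules.
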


\subsection{Tilting modules and Ringel duality}
\label{SSTilt}
Throughout the subsection we again assume that $\k=\F$. 
We will use the theory of tilting modules and Ringel duality for quasi-hereditary algebras, see for example \cite[\S\,A4]{Donkin} for details. For a quasi-hereditary (graded super)algebra $A$ with weight poset $(X,\leq)$, a finite dimensional graded $A$-supermodule  is called {\em tilting}\, if it has both a standard and a costandard filtrations. For each $\la\in X$, there exists a unique indecomposable tilting module $T(\la)$ such that $[T(\la) : \De(\la)] = 1$ and $[T(\la) : \De(\mu)] = 0$ unless $\mu \leq\la$. Furthermore, every tilting module is isomorphic to a direct sum of  indecomposable tilting modules $T(\la)$. A {\em full tilting}\, module is a tilting module that contains every $T (\la),\ \la\in X,$ as a summand. 

Given a full tilting module $T$, the {\em Ringel dual}\, of $S$ relative to $T$ is the graded superalgebra $S':= \End_S(T)^\sop$. 
Then $S'$ is also a quasi-hereditary graded superalgebra with weight poset $X$, but ordered with the opposite order. The right tilting modules are defined similarly to the left tilting modules. 

\begin{Lemma} \label{4.5a} {\rm \cite[Lemma 4.5a]{BDK}}
Regarded as a right graded $S'$-supermodule, $T$ is a full tilting module for $S'$. Moreover, $\End_{S'}(T)\cong S$.
\end{Lemma}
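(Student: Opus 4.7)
This is the super-graded analogue of Ringel's classical duality theorem, and the plan is to adapt the standard proof in \cite[\S\,A4]{Donkin} while tracking the super-signs arising from the identification of $T$ as a right $S'$-supermodule (cf.\ Example~\ref{ExEndSopBim}). The key tool is the Ringel duality functor $F := \Hom_S(T, -)$, applied to the $\Delta$- and $\nabla$-filtrations of $T$.

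For the first claim (that $T$ is a right full tilting supermodule for $S'$): since $T$ is tilting, it has both a $\Delta$-filtration and a $\nabla$-filtration on the $S$-side, and the standard Ext-vanishing $\Ext^{>0}_S(\Delta(\lambda), \nabla(\mu)) = 0$ makes $F$ exact on $\nabla$-filtered supermodules. Applying $F$ to a $\nabla$-filtration of $T$ shows that $F(T) = \End_S(T) = S'^{\sop}$, viewed as a right $S'$-module, has a filtration by the candidate standard modules $\Delta'(\lambda) := F(\nabla(\lambda))$. An analogous argument using a contravariant version of $F$ applied to a $\Delta$-filtration of $T$ produces a $\nabla'$-filtration. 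Interpreting $T$ as a right $S'$-supermodule via $t \cdot \phi = (-1)^{|t||\phi|}\phi(t)$, both filtrations descend to $T$, so $T$ is a right tilting $S'$-supermodule. Fullness then follows because the Krull--Schmidt decomposition of $T$ on the left transfers across $F$ to an indecomposable right decomposition of $T$ that exhausts the set $\{T'(\mu)\}_{\mu \in X}$ of indecomposable right-tilting $S'$-supermodules.

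For the isomorphism $\End_{S'}(T) \cong S$: the left $S$-action on $T$ commutes with the right $S'$-action and defines a natural homomorphism of graded superalgebras $\iota : S \to \End_{S'}(T)$. Injectivity follows from faithfulness of $T$ as a left $S$-supermodule, which in turn follows from fullness, since every $P(\lambda)$ embeds (via its $\Delta$-filtration) into some $T^{\oplus N}$, so no nonzero element of $S$ annihilates all of $T$. Surjectivity---the double-centralizer property---is the heart of the matter, and I would establish it by a graded super-dimension count: the quasi-hereditary structure provides a $(\Delta, \tilde\Delta)$-bifiltration of $S$ analogous to Lemma~\ref{1.2e}, and applying $F$ to a corresponding bifiltration of $T$ yields a matching bifiltration of $\End_{S'}(T)$ with the same graded multiplicities $(T : \Delta(\lambda))(T : \nabla(\lambda))$. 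I expect the surjectivity step to be the main obstacle: it crucially uses that every $T(\lambda)$ appears as a summand of $T$ (without which $\iota$ would have strictly smaller image), and it requires careful bookkeeping of the Koszul-type super-signs in the Ringel functor's interaction with contravariant duality.
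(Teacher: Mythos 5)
The paper gives no proof of this lemma; it cites \cite[Lemma 4.5a]{BDK}. So there is no argument in the present paper for you to match, and the only question is whether your sketch stands on its own. It does not quite.

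The critical gap is the step ``both filtrations descend to $T$.'' Applying $F=\Hom_S(T,-)$ to a $\nabla$-filtration of $T$ produces a filtration of $F(T)=\End_S(T)$, and applying the contravariant functor $\Hom_S(-,T)$ to a $\Delta$-filtration of $T$ again produces a filtration of $\Hom_S(T,T)=\End_S(T)$. In both cases the filtered object is $\End_S(T)$, i.e.\ the regular $S'$-module — not $T$. These two filtrations of $\End_S(T)$ are exactly what one needs to establish quasi-heredity of $S'$, but they do not ``descend'' to $T$: $T$ and $\End_S(T)$ are genuinely different $S'$-modules and there is no canonical map transporting a filtration from one to the other. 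To filter $T$ as a right $S'$-supermodule one should instead observe $T\simeq\Hom_S(S,T)$ and apply $\Hom_S(-,T)$ to a $\Delta$-filtration of the \emph{regular} module $S$ (the vanishing $\Ext^1_S(\Delta(\lambda),T)=0$ makes this exact), yielding one of the two desired filtrations of $T$ with sections $\Hom_S(\Delta(\lambda),T)$; a companion argument (e.g.\ with the injective cogenerator, or by dualizing) supplies the other. Until the object being filtered is actually $T$, the first claim of the lemma is not established.

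The double-centralizer part is closer to right: faithfulness of $T$ (from fullness and the $\Delta$-filtrations of the projectives) gives injectivity of $\iota\colon S\to\End_{S'}(T)$, and over the field $\F$ (which is the standing assumption in \S\ref{SSTilt}) a graded-super dimension count via the $(\Delta,\tilde\Delta)$-bifiltration as in Lemma~\ref{1.2e} can finish surjectivity. But as written this is a plan rather than a proof — you flag it yourself (``I expect the surjectivity step to be the main obstacle'') — and it depends on the tilting characterisation from the first part, so the gap above propagates. The super-sign convention you quote from Example~\ref{ExEndSopBim} is the right one, and the signs are indeed benign here since $\End_S(T)$-actions commute with the $S$-action and are tracked by the $(-1)^{|t||\phi|}$ rule; that bookkeeping is not where the difficulty lies.
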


Applying the above theory to the Schur algebra $S(n,d)$, we obtain the indecomposable tilting modules $\{T_{n,d}(\la)\mid \la \in \Comp_+(n,d)\}$ over $S(n,d)$. 

\begin{Lemma} \label{LDonkinLa} 
Let $d\leq n$ and $\la \in \Comp_+(n,d)$. Then $\La_{n,d}^{\la'}$ is a tilting module, the indecomposable module 
$T_{n,d}(\la)$ occurs exactly once as a summand of $\La_{n,d}^{\la'}$, and if $T_{n,d}(\mu)$ is a summand of $\La_{n,d}^{\la'}$  then $\mu\unlhd\la$.
\end{Lemma}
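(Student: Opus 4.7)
The plan is to verify the three claims in sequence, using the tilting criterion via standard/costandard filtrations together with the composition factor analysis of Lemma~\ref{LLaCompfactors}.

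First I would observe that $\La_{n,d}^{\la'} = L_{n,d}(\om_{\la'_1}) \otimes \cdots \otimes L_{n,d}(\om_{\la'_n})$ is already noted in the paragraph following \eqref{EExtDef} to be a tilting module: each tensor factor $L_{n,\la'_s}(\om_{\la'_s}) = \De_{n,\la'_s}(\om_{\la'_s}) = \nabla_{n,\la'_s}(\om_{\la'_s})$ possesses both a standard and a costandard filtration (since $\la'_s \leq n$ by the hypothesis $d \leq n$), and Lemma~\ref{1.3c} propagates both properties to the tensor product. So $\La_{n,d}^{\la'}$ decomposes as a direct sum of indecomposable tilting modules $T_{n,d}(\mu)$ for $\mu \in \Comp_+(n,d)$.

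Next I would control which $T_{n,d}(\mu)$ can actually appear. By the general theory recalled in \S\ref{SSTilt}, every composition factor of $T_{n,d}(\mu)$ is of the form $L_{n,d}(\nu)$ with $\nu \unlhd \mu$, and in particular $L_{n,d}(\mu)$ itself is a composition factor. Hence if $T_{n,d}(\mu)$ occurs as a summand of $\La_{n,d}^{\la'}$, then $L_{n,d}(\mu)$ is a composition factor of $\La_{n,d}^{\la'}$. Applying Lemma~\ref{LLaCompfactors} with $\la$ replaced by $\la'$ (so that the roles of $\la$ and $\la'$ are swapped and $(\la')' = \la$), this forces $\mu \unlhd \la$.

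Finally I would count the multiplicity of $T_{n,d}(\la)$. By Lemma~\ref{LLaCompfactors}, $L_{n,d}(\la)$ is a composition factor of $\La_{n,d}^{\la'}$ with multiplicity exactly $1$. On the other hand, in the direct sum decomposition $\La_{n,d}^{\la'} \simeq \bigoplus_\mu T_{n,d}(\mu)^{\oplus m_\mu}$, the module $L_{n,d}(\la)$ can only occur as a composition factor inside a summand $T_{n,d}(\mu)$ with $\mu \gedom \la$; but by the previous paragraph only $\mu \unlhd \la$ contribute, so the only summand that can carry $L_{n,d}(\la)$ is $T_{n,d}(\la)$ itself, and there $L_{n,d}(\la)$ appears with multiplicity $1$. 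Matching multiplicities gives $m_\la = 1$, which completes the proof.

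I do not expect any serious obstacle: the argument is entirely formal, relying on the explicit product structure of $\La_{n,d}^{\la'}$ and the composition factor multiplicity statement in Lemma~\ref{LLaCompfactors}, which is the only non-trivial ingredient and is already recorded. The only point requiring care is to keep track of the transposition $\la \leftrightarrow \la'$ when invoking Lemma~\ref{LLaCompfactors}.
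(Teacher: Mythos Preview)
Your proof is correct. The paper, however, does not give an argument at all: it simply records the result as well known and points to \cite[Section~3.3(1)]{Donkin}. Your approach has the virtue of being self-contained within the paper, deriving everything from Lemma~\ref{LLaCompfactors} and the general tilting formalism of \S\ref{SSTilt}; the only mild subtlety, which you handle correctly, is checking that $[T_{n,d}(\la):L_{n,d}(\la)]=1$ (this follows since the unique standard factor $\De_{n,d}(\la)$ in $T_{n,d}(\la)$ contributes $L_{n,d}(\la)$ once, and the remaining factors $\De_{n,d}(\mu')$ with $\mu'\lhd\la$ cannot). The trade-off is that the cited result in Donkin is a standard structural fact about Schur algebras, so the paper's citation is entirely adequate for its purposes, whereas your argument expends a paragraph to rederive it.
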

\begin{proof}
This is well-known, see for example \cite[Section 3.3(1)]{Donkin}. 
\end{proof}

\begin{Lemma} \label{LTiltSoc}
Let $d\leq n$. Then every composition factor of both the socle and the head of $T_{n,d}(\la)$ belongs to the head of the projective $S(n,d)$-module $S(n,d)\xi_{\om_d}$. 
\end{Lemma}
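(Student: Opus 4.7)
My plan is to show that $T_{n,d}(\la)$ is a quotient of the projective module $S(n,d)\xi_{\om_d}$, which settles the head claim immediately; the socle claim will then follow from the fact that tilting modules are self-contravariant-dual.

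First, by Lemma~\ref{1.3b}(i), $S(n,d)\xi_{\om_d} \simeq \Di^{\om_d}_{n,d}$, and since $\om_d = (1^d)$ this equals the $d$-fold tensor power $\De_{n,d}((1))^{\otimes d}$ of the natural module. By Lemma~\ref{LDonkinLa}, $T_{n,d}(\la)$ occurs as an indecomposable summand of $\La^{\la'}_{n,d} = L_{n,d}(\om_{\la'_1}) \otimes \cdots \otimes L_{n,d}(\om_{\la'_n})$. Each factor $L_{n,d}(\om_k)$ is the minuscule module $\Lambda^k \De_{n,d}((1))$, which is irreducible in any characteristic provided $k \leq n$ (and this holds since $\la'_s \leq d \leq n$). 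The antisymmetrization map $\De_{n,d}((1))^{\otimes k} \twoheadrightarrow \Lambda^k \De_{n,d}((1))$, defined characteristic-freely as the quotient by the $S(n,d)$-submodule generated by tensors having the same vector in two adjacent factors, is a surjective $S(n,d)$-homomorphism. Tensoring over all the parts of $\la'$ yields a surjection $\De_{n,d}((1))^{\otimes d} \twoheadrightarrow \La^{\la'}_{n,d}$, and composing with the projection onto the $T_{n,d}(\la)$-summand produces a surjection $S(n,d)\xi_{\om_d} \twoheadrightarrow T_{n,d}(\la)$. Hence the head of $T_{n,d}(\la)$ is a quotient of the head of $S(n,d)\xi_{\om_d}$, and the head claim follows.

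For the socle, the contravariant duality $\tau$ of Lemma~\ref{LSchurAnti} satisfies $\De_{n,d}(\mu)^\tau \simeq \nabla_{n,d}(\mu)$ and hence interchanges standard and costandard filtrations; it therefore preserves tilting modules and their highest weights, so $T_{n,d}(\la)^\tau \simeq T_{n,d}(\la)$. Since $\tau$ swaps socle and head while fixing every simple module ($L_{n,d}(\mu)^\tau \simeq L_{n,d}(\mu)$), the composition factors of $\soc T_{n,d}(\la)$ coincide with those of its head, reducing the socle claim to the already established head claim. The main subtlety is the identification of $L_{n,d}(\om_k)$ with the exterior power $\Lambda^k \De_{n,d}((1))$ and the characteristic-free surjectivity of antisymmetrization; both are standard but should be verified explicitly before the surjection $V^{\otimes d} \twoheadrightarrow \La^{\la'}_{n,d}$ is invoked.
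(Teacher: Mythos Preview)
Your proof is correct and follows essentially the same strategy as the paper: exhibit $T_{n,d}(\la)$ as a quotient of $S(n,d)\xi_{\om_d}\cong V^{\otimes d}$ and then deduce the socle statement by self-duality. The paper's version is slightly more streamlined: instead of constructing the antisymmetrization surjection by hand, it invokes Lemma~\ref{1.3b}(ii), which identifies $\La^{\la'}_{n,d}\simeq S(n,d)\kappa(\tty_{\la'})$ and thus shows that $\La^{\la'}_{n,d}$ (hence its summand $T_{n,d}(\la)$) is simultaneously a submodule \emph{and} a quotient of $S(n,d)\xi_{\om_d}$; combined with the self-duality of $S(n,d)\xi_{\om_d}$ this handles head and socle in one stroke, whereas you argue via the self-duality of the tilting module itself.
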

\begin{proof}
This is well-known. For example, one can use that $S(n,d)\xi_{\om_d} \cong V_{n,1}^{\otimes d}$ is contravariantly self-dual, and the known fact that $T_{n,d}(\la)$ is both a submodule and a quotient of  $S(n,d)\xi_{\om_d}$, which can be seen using Lemmas~\ref{LDonkinLa} and \ref{1.3b}(ii). 
\end{proof}

\section{Affine Brauer tree algebras}

Recall the notation (\ref{EIJ}). 

\subsection{  Brauer tree superalgebras $A_\ell$ and $\Zig_\ell$}
We will need two Morita eqivalent versions of a graded Brauer tree superalgebra. Let $A_\ell$ be the path algebra of the quiver 
\begin{align*}
\begin{braid}\tikzset{baseline=3mm}
\coordinate (1) at (0,0);
\coordinate (2) at (4,0);
\coordinate (3) at (8,0);
\coordinate (4) at (12,0);
\coordinate (6) at (16,0);
\coordinate (L1) at (20,0);
\coordinate (L) at (24,0);
\draw [thin, black, ->] (-0.3,0.2) arc (15:345:1cm);
\draw [thin, black,->,shorten <= 0.1cm, shorten >= 0.1cm]   (1) to[distance=1.5cm,out=100, in=100] (2);
\draw [thin,black,->,shorten <= 0.25cm, shorten >= 0.1cm]   (2) to[distance=1.5cm,out=-100, in=-80] (1);
\draw [thin,black,->,shorten <= 0.25cm, shorten >= 0.1cm]   (2) to[distance=1.5cm,out=80, in=100] (3);
\draw [thin,black,->,shorten <= 0.25cm, shorten >= 0.1cm]   (3) to[distance=1.5cm,out=-100, in=-80] (2);
\draw [thin,black,->,shorten <= 0.25cm, shorten >= 0.1cm]   (3) to[distance=1.5cm,out=80, in=100] (4);
\draw [thin,black,->,shorten <= 0.25cm, shorten >= 0.1cm]   (4) to[distance=1.5cm,out=-100, in=-80] (3);
\draw [thin,black,->,shorten <= 0.25cm, shorten >= 0.1cm]   (6) to[distance=1.5cm,out=80, in=100] (L1);
\draw [thin,black,->,shorten <= 0.25cm, shorten >= 0.1cm]   (L1) to[distance=1.5cm,out=-100, in=-80] (6);
\draw [thin,black,->,shorten <= 0.25cm, shorten >= 0.1cm]   (L1) to[distance=1.5cm,out=80, in=100] (L);
\draw [thin,black,->,shorten <= 0.1cm, shorten >= 0.1cm]   (L) to[distance=1.5cm,out=-100, in=-100] (L1);
\blackdot(0,0);
\blackdot(4,0);
\blackdot(8,0);
\blackdot(20,0);
\blackdot(24,0);
\draw(0,0) node[left]{$0$};
\draw(4,0) node[left]{$1$};
\draw(8,0) node[left]{$2$};
\draw(14,0) node {$\cdots$};
\draw(20,0) node[right]{$\ell-2$};
\draw(24,0) node[right]{$\ell-1$};
 \draw(-2.6,0) node{$u$};
\draw(2,1.2) node[above]{$a^{[1,0]}$};
\draw(6,1.2) node[above]{$a^{[2,1]}$};
\draw(10,1.2) node[above]{$a^{[3,2]}$};
\draw(18,1.2) node[above]{$a^{[\ell-3,\ell-2]}$};
\draw(22,1.2) node[above]{$a^{[\ell-1,\ell-2]}$};
\draw(2,-1.2) node[below]{$a^{[0,1]}$};
\draw(6,-1.2) node[below]{$a^{[1,2]}$};
\draw(10,-1.2) node[below]{$a^{[2,3]}$};
\draw(18,-1.2) node[below]{$a^{[\ell-3,\ell-2]}$};
\draw(22,-1.2) node[below]{$a^{[\ell-2,\ell-1]}$};
\end{braid}
\end{align*}

\vspace{1mm}
\noindent
generated by the length $0$ paths $\{e^{[j]}\mid j\in J\}$, and the length $1$ paths 
$$\big\{a^{[k,k+1]},a^{[k+1,k]}\mid k\in \{0,1,\dots,\ell-2\}\big\}
\quad\text{and}\quad 
u,
$$
 modulo the following relations:
\begin{align}
&\text{all paths of length three or greater are zero};
\label{EZig1}
\\
&\text{all paths of length two that are not cycles are zero};
\label{EZig2}
\\
&\text{the length-two cycles based at the vertex $i\in\{1,\dots,\ell-1\}$ are equal};
\label{EZig3}
\\
&u^2=a^{[0,1]}a^{[1,0]}. 
\label{EZig4}
\end{align}
Let 
$$
c^{[0]}:=u^2\qquad \text{and}\qquad c^{[i]}:=a^{[i,i-1]}a^{[i-1,i]}\quad \text{for $i=1,\dots,\ell-1$}. 
$$
Then 
\begin{align*}
B_\ell:=\{e^{[j]},c^{[j]}\mid j\in J\}
\cup\big\{a^{[k,k+1]},a^{[k+1,k]}\mid k\in \{0,1,\dots,\ell-2\}\big\}
\cup\{u\}
\end{align*}
is a basis of $A_\ell$. 
We consider $A_\ell$ as a graded superalgebra by setting
\begin{align*}
&\bideg(e^{[j]}):=(0,\0), &\bideg(u):=(2,\1),
\\ 
&\bideg(a^{[k+1,k]}):=(4,\0), &\bideg(a^{[k,k+1]}):=(0,\0).
\end{align*}

We will also need a {\em regrading} $\Zig_\ell$ of $A_\ell$, see Example~\ref{SSRegr}. By definition, 
\begin{equation}\label{EAReGrade}
\Zig_\ell:=\bigoplus_{i,j\in J}(\Uppi \funQ^{2})^{j-i}e^{[i]}A_\ell e^{[j]},
\end{equation}
i.e. $\Zig_\ell$ is obtained from $A_\ell$ by regrading, using the grading supershift parameters 
\begin{equation}\label{EtjEpsj}
t_j:=2j+1-2\ell\quad\text{and}\quad\eps_j:=\bar j:=j\pmod{2}\qquad(j\in J),
\end{equation}
see (\ref{EShiftParameters}). In particular, $A_\ell$ and $\Zig_\ell$ are graded Morita superequivalent, see Example~\ref{SSRegr}. 

The elements of $\Zig_\ell$ corresponding to $e^{[j]},c^{[j]},a^{[i,j]},u\in A_\ell$ will be denoted $\ze^{[j]},\zc^{[j]},\za^{[i,j]},\zu$, respectively. Then  
\begin{align*}
\bideg(\ze^{[j]}):=(0,\0),\ \bideg(\zu)=\bideg(\za^{[k+1,k]})=\bideg(\za^{[k,k+1]}):=(2,\1).
\end{align*}
We have a basis of $\Zig_\ell$ corresponding to the basis $B_\ell$ of $A_\ell$ above: 
\begin{align*}
\zB_\ell:=\{\ze^{[j]},\zc^{[j]}\mid j\in J\}
\cup\big\{\za^{[k,k+1]},\za^{[k+1,k]}\mid k\in \{0,1,\dots,\ell-2\}\big\}
\cup\{\zu\}
\end{align*}
Note that all $c^{[j]}\in A_\ell$ and $\zc^{[j]}\in\Zig_\ell$ have bidegree $(4,\0)$.

Let $j\in J$. We let $\zL_j:=\k\cdot \zv_j$ be the rank $1$ free graded $\k$-supermodule with 
$
\bideg(\zv_j)=(0,\0).
$
 We make $\zL_j$ into a graded $\Zig_\ell$-supermodule via $\ze^{[j]} \zv_j=\zv_j$ and $\zb \zv_j=0$ for all $\zb\in \zB_\ell\setminus\{\ze^{[j]}\}$. 
Taking into account (\ref{EzVMor}), under the graded Morita superequivalence between $\Zig_\ell$ and $A_\ell$, the graded supermodule $\zL_j$ corresponds to the graded $A_\ell$-supermodule $\LL_j=\k\cdot v_j$ with 
\begin{equation}\label{EDegV}
\bideg(v_j)=(2j+1-2\ell,j\pmod{2})
\end{equation}
and the action $e^{[j]} v_j=v_j$ and $b v_j=0$ for all $b\in B_\ell\setminus\{e^{[j]}\}$. Recalling the notation (\ref{EA>0}), note that 
\begin{equation}\label{EzL>0}
\zL_j\simeq \Zig_\ell \ze^{[j]}/(\Zig_\ell \ze^{[j]})^{>0},
\end{equation}  
but it is not true that $
\LL_j\simeq A_\ell e^{[j]}/(A_\ell e^{[j]})^{>0}
$ 
(which is the first indication that the degree shifts we chose to switch from $A_\ell$ to $\Zig_\ell$ are convenient). 

\subsection{  Affine Brauer tree superalgebras $H_d(A_\ell)$ and $H_d(\Zig_\ell)$}
\label{SSAffZig}

Now, fix $d\in\N_+$. We now review the (graded) affine (super)algebra  $H_d(A_\ell)$ and its counterpart $H_d(\Zig_\ell)$, referring the reader to \cite[\S2.2f]{KlLi} for more details. 
For $\bi=i_1\cdots i_d\in  J^d$, denote
\begin{equation}\label{EZEBI}
\begin{split}
e^\bi:=e^{[i_1]}\otimes\dots\otimes e^{[i_d]}\in A_\ell^{\otimes d} 
\quad (\text{resp.}\ \ze^\bi:=\ze^{[i_1]}\otimes\dots\otimes \ze^{[i_d]}\in \Zig_\ell^{\otimes d}).
\end{split}
\end{equation}

Let $z,\zz$ be variables of bidegree $(4,\0)$ and consider the polynomial algebras $\k[z]$ and $\k[\zz]$. We denote by $A_\ell[z]$ (resp. $\Zig_\ell[\zz]$) the free product $\k[z]\star A_\ell$ (resp. $\k[\zz]\star \Zig_\ell$) subject to the relations 
\begin{equation}\label{ETwistedRelations}
uz=-zu\quad \text{and}\quad bz=zb\ \ \text{for all even $b\in B_\ell$}
\end{equation}
\begin{equation}
\label{ETwistedRelations2}
(\text{resp.}\ \ \zb\zz=-\zz\zb\ \ \text{for all odd $\zb\in \zB_\ell$}\quad \text{and}\quad \zb\zz=\zz\zb\ \ \text{for all even $\zb\in \zB_\ell$}).
\end{equation} 
Then  
$A_\ell[z]$ (resp. 
$\Zig_\ell[\zz]$) has basis 
$
\{z^nb\mid n\in\N,\,b\in B_\ell\}
$ 
(resp. 
$
\{\zz^n\zb\mid n\in\N,\,\zb\in\zB_\ell\}
$).

Note that $A_\ell$ (resp. $\Zig_\ell$) can be considered both as a subalgebra and as a quotient algebra of $A_\ell[z]$ (resp. $\Zig_\ell[\zz]$). Considering $A_\ell$ (resp. $\Zig_\ell$) as a quotient of $A_\ell[z]$ (resp. $\Zig_\ell[\zz]$) by the ideal generated by $z$ (resp. $\zz$), we can inflate the graded 
$A_\ell$-supermodule $L_j$ (resp. 
$\Zig_\ell$-supermodule $\zL_j$) to a graded $A_\ell[z]$-supermodule
(resp. $\Zig_\ell[\zz]$-supermodule), which we denote again by $\LL_j$ (resp. $\zL_j$).  Similarly to (\ref{EzL>0}), we have 
\begin{equation}\label{EDeDe+}
\zL_j\simeq \Zig_\ell[\zz] \ze^{[j]}/(\Zig_\ell[\zz] \ze^{[j]})^{>0}.
\end{equation}

Consider the graded superalgebra  
$A_\ell[z]^{\otimes d}$ (resp. $\Zig_\ell[\zz]^{\otimes d}$). Using the notation (\ref{EInsertion}), this algebra has basis
\begin{align*}
\{z_1^{n_1}\cdots z_d^{n_d}b^{(1)}_1\cdots b^{(d)}_d\mid 
n_1,\dots,n_d\in\N,\ b^{(1)},\dots,b^{(d)}\in B_\ell\}
\\
(\text{resp.}\ \{\zz_1^{n_1}\cdots\zz_d^{n_d}\zb^{(1)}_1\cdots \zb^{(d)}_d\mid 
n_1,\dots,n_d\in\N,\ \zb^{(1)},\dots,\zb^{(d)}\in\zB_\ell\}).
\end{align*}

Recall that we always consider the group algebra $\k \Si_d$ as a graded superalgebra concentrated in bidegree $(0,\0)$. We consider the free product $A_\ell[z]^{\otimes d}\star\k \Si_d$\, (resp.  $\Zig_\ell[\zz]^{\otimes d}\star\k \Si_d$). For any $\bi\in J^d$, we have that  $e^\bi\in A_\ell^{\otimes d}\subseteq A_\ell[z]^{\otimes d}\subseteq A_\ell[z]^{\otimes d}\star\k \Si_d$ and similarly we consider $\ze^\bi$ as an element of  $\Zig_\ell[\zz]^{\otimes d}\star\k \Si_d$. 

Recalling the notation (\ref{EWSignAction}), (\ref{EInsertion}), we define the {\em affine Brauer tree (graded super)algebra $H_d(A_\ell)$} to be the free product $A_\ell[z]^{\otimes d}\star\k \Si_d$ subject to the following relations:
\begin{equation}
w\,(a^{(1)}\otimes\dots\otimes a^{(d)})={}^w(a^{(1)}\otimes\dots\otimes a^{(d)})\,w
\label{ERAff1}
\end{equation}
for all $w\in \Si_d,\ a^{(1)},\dots,a^{(d)}\in A_\ell$, and 
\begin{equation}
(s_r z_t - z_{s_r(t)} s_r)e^\bi
=
\begin{cases}
\big((\delta_{r,t}- \delta_{r+1,t})(c_r^{[i_r]} + c_{r+1}^{[i_{r+1}]})
&
\\
\qquad\qquad\qquad+\de_{i_r,0}u_ru_{r+1}\big)e^\bi
&
\text{if}\ i_r = i_{r+1},\\
(\delta_{r,t}- \delta_{r+1,t})a_r^{[i_{r+1},i_r]} a_{r+1}^{[i_r,i_{r+1}]} e^\bi
&
\text{if}\ |i_r-i_{r+1}|=1,\\
0
& 
\textup{otherwise},
\end{cases}
\label{ERAff2}
\end{equation}
for all $1\leq r<d$, $1\leq t\leq d$ and $\bi\in J^d$.

Similarly, the {\em affine Brauer tree (graded super)algebra $H_d(\Zig_\ell)$} is defined to be the free product $A_\ell[z]^{\otimes d}\star\k \Si_d$ subject to the following relations:
\begin{equation}
w\,(\za^{(1)}\otimes\dots\otimes \za^{(d)})={}^w(\za^{(1)}\otimes\dots\otimes \za^{(d)})\,w
\label{ERAff1Zig}
\end{equation}
for all $w\in \Si_d,\ \za^{(1)},\dots,\za^{(d)}\in \Zig_\ell$, and 
\begin{equation}
(s_r \zz_t - \zz_{s_r(t)} s_r)\ze^\bi
=
\begin{cases}
\big((\delta_{r,t}- \delta_{r+1,t})(\zc_r^{[i_r]} + \zc_{r+1}^{[i_{r+1}]})
&
\\
\qquad\qquad\qquad+\de_{i_r,0}\zu_r\zu_{r+1}\big)\ze^\bi
&
\text{if}\ i_r = i_{r+1},\\
(\delta_{r,t}+\delta_{r+1,t})\za_r^{[i_{r+1},i_r]} \za_{r+1}^{[i_r,i_{r+1}]} \ze^\bi
&
\text{if}\ |i_r-i_{r+1}|=1,\\
0
& 
\textup{otherwise}
\end{cases}
\label{ERAff2Zig}
\end{equation}
for all $1\leq r<d$, $1\leq t\leq d$ and $\bi\in J^d$. (A sign change in (\ref{ERAff2Zig}) compared to  (\ref{ERAff2}) is related to the fact that the $\za^{[i,j]}$ are odd, while the $a^{[i,j]}$ are even).

There are natural graded superalgebra homomorphisms (of bidegree  $(0,\0)$): 
\begin{align}\label{EIOTAS}
\iota_1:A_\ell[z]^{\otimes d} \to H_d(A_\ell)
\quad\text{and}
\quad \iota_2:\k \Si_d\to H_d(A_\ell),
\end{align}
and similarly for $H_d(\Zig_\ell)$. 
By the following theorem, these maps are in fact embeddings. 
We will use the same symbols for elements of the domain of these maps as for their images in $H_d(A_\ell)$, and similarly for $H_d(\Zig_\ell)$. 

\begin{Theorem}\label{TAffBasis} \cite[Theorem 3.9]{KlLi}
The map 
$$A_\ell[z]^{\otimes d} \otimes \k \Si_d\to H_d(A_\ell),\ x \otimes y \mapsto \iota_1(x)\iota_2(y)
$$
is an isomorphism of graded superspaces. In particular, 
\begin{align*}
&\{w\,z_1^{ n_1}\cdots z_d^{ n_d} b^{(1)}_1\cdots  b^{(d)}_d\mid 
 n_1,\dots, n_d\in\N,\ b^{(1)},\dots,b^{(d)}\in B_\ell,\,w\in \Si_d\},
 \\
&\{z_1^{ n_1}\cdots z_d^{ n_d} b^{(1)}_1\cdots b^{(d)}_d\,w\mid 
 n_1,\dots, n_d\in\N,\ b^{(1)},\dots,b^{(d)}\in B_\ell,\,w\in \Si_d\}.
\end{align*}
are bases of $H_d(A_\ell)$. The similar statements hold for $H_d(\Zig_\ell)$.
\end{Theorem}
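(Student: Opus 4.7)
The plan is to establish the PBW-type basis theorem for $H_d(A_\ell)$ and note that the argument for $H_d(\Zig_\ell)$ is a straightforward sign-aware variant (the odd-versus-even distinction between $a^{[i,j]}$ and $\za^{[i,j]}$, as foreshadowed by the sign change between (\ref{ERAff2}) and (\ref{ERAff2Zig}), being the only substantive difference). For $H_d(A_\ell)$ there are two parts: spanning and linear independence.

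For spanning, I would use the defining relations to reduce any word in the generators to the normal form $w\,z_1^{n_1}\cdots z_d^{n_d}b^{(1)}_1\cdots b^{(d)}_d$. Relation~(\ref{ERAff1}) lets me swap any tensor product $a^{(1)}\otimes\cdots\otimes a^{(d)}\in A_\ell^{\otimes d}$ past an element $w\in\Si_d$, converting it to ${}^w(a^{(1)}\otimes\cdots\otimes a^{(d)})$ with Koszul signs. Relation~(\ref{ERAff2}) lets me move a generator $z_t$ past a simple transposition $s_r$, producing $z_{s_r(t)}s_r$ plus a correction term lying purely in $A_\ell^{\otimes d}$, hence of strictly shorter $\Si_d$-length. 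The twisted relations~(\ref{ETwistedRelations}) handle the commutation of $z$'s past $A_\ell$-generators in the same tensor position, and the commutations among the $z_t$ (which are obtained from (\ref{ERAff1})-style reasoning by specializing to central elements of $A_\ell^{\otimes d}$) collect all the $z$'s to the left of all the $A_\ell$-elements. An induction on $\Si_d$-length then terminates in the claimed normal form, so the proposed set spans.

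The core of the argument is linear independence, for which I would construct a faithful representation of $H_d(A_\ell)$ on the graded superspace $V:=A_\ell[z]^{\otimes d}$. The subalgebra $A_\ell[z]^{\otimes d}$ acts on $V$ by left multiplication with the usual Koszul signs. The action of each $s_r$ is defined operator-theoretically: $s_r$ sends a basis monomial $m\in V$ to the element obtained by formally placing $s_r$ on the left of $m$ and then pushing $s_r$ to the right through $m$ using~(\ref{ERAff1}) and~(\ref{ERAff2}), with the convention that upon reaching the rightmost position $s_r$ is deleted (equivalently, $s_r$ acts trivially on $1\in V$). The hard part will be verifying well-definedness, namely that the defining relations of $\Si_d$ hold on $V$: $s_r^2=1$, $s_rs_{r'}=s_{r'}s_r$ for $|r-r'|>1$, and the braid relation $s_rs_{r+1}s_r=s_{r+1}s_rs_{r+1}$. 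This reduces to a case-by-case check on the idempotents $e^\bi$ for $\bi\in J^d$, expanding iterated applications of~(\ref{ERAff2}) and simplifying via the multiplication in $A_\ell$ dictated by~(\ref{EZig1})--(\ref{EZig4}). The braid relation is the main obstacle, requiring that the correction terms involving $c^{[i]}$, the length-one paths $a^{[i,j]}$, and the odd generator $u$ conspire correctly across all subcases of $|i_r-i_{r+1}|\in\{0,1,\geq 2\}$ and their various interleavings among positions $r,r+1,r+2$; this is where the specific shape of the right-hand side of~(\ref{ERAff2}) (in particular the $\de_{i_r,0}u_ru_{r+1}$ term, which accounts for the special role of the vertex $0$) becomes essential.

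With the representation on $V$ in hand, I would extend it to an action on $V\otimes\k\Si_d$ by letting the subalgebra $\k\Si_d\subseteq H_d(A_\ell)$ act by right multiplication on the second factor; compatibility with the $A_\ell[z]^{\otimes d}$-action follows from~(\ref{ERAff1}) and (\ref{ERAff2}) by construction. Evaluating each proposed basis element at $1\otimes 1$ yields the corresponding vector $z_1^{n_1}\cdots z_d^{n_d}b^{(1)}_1\cdots b^{(d)}_d\otimes w$, and these vectors are patently a $\k$-basis of $V\otimes\k\Si_d$. Linear independence in $H_d(A_\ell)$ follows, so $A_\ell[z]^{\otimes d}\otimes\k\Si_d\to H_d(A_\ell)$ is an isomorphism of graded superspaces. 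The second displayed basis follows from the first by one further application of~(\ref{ERAff1}), moving each $w$ from the left to the right of $b^{(1)}_1\cdots b^{(d)}_d$.
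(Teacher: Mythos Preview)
The paper does not give its own proof of this theorem; it is quoted verbatim from \cite[Theorem~3.9]{KlLi} and used as a black box. So there is nothing in the present paper to compare your argument against.

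That said, your outline has a genuine gap in the linear-independence step. The spanning argument is fine. But your two-stage construction of a faithful module does not work as written. First, the representation on $V=A_\ell[z]^{\otimes d}$ obtained by ``pushing $s_r$ to the right and deleting it'' is not faithful: for instance all $w\in\Si_d$ act identically on $e^{j^d}$, so the proposed basis elements $w\,e^{j^d}$ cannot be distinguished in $\End(V)$. Second, your proposed extension to $V\otimes\k\Si_d$ --- with $A_\ell[z]^{\otimes d}$ acting on the first factor and $\k\Si_d$ acting ``by right multiplication on the second factor'' --- does not define an $H_d(A_\ell)$-module at all: with that prescription, $s_r z_t\cdot(1\otimes 1)=z_t\otimes s_r$ while $z_{s_r(t)}s_r\cdot(1\otimes 1)=z_{s_r(t)}\otimes s_r$, so relation~(\ref{ERAff2}) fails whenever $t\in\{r,r+1\}$. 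Consequently your claim that evaluation at $1\otimes 1$ sends the proposed basis element $w\,z_1^{n_1}\cdots b^{(d)}_d$ to the basis vector $z_1^{n_1}\cdots b^{(d)}_d\otimes w$ is unsupported.

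The standard fix is to define the $s_r$-action on $V\otimes\k\Si_d$ so that it acts on \emph{both} factors at once: writing $s_r v = v'\,s_r + v''$ with $v',v''\in A_\ell[z]^{\otimes d}$ (obtained by pushing $s_r$ through $v$ via (\ref{ERAff1}) and (\ref{ERAff2})), one sets $s_r\cdot(v\otimes u):=v'\otimes s_r u + v''\otimes u$. The Coxeter relations must then be checked for \emph{this} action, and evaluation at $1\otimes 1$ is only upper-triangular (leading term $\otimes\,w$, corrections in $\otimes\,u$ with $u<w$), which still yields linear independence. Your identification of the braid relation as the crux of the verification is correct; it simply needs to be carried out for the right action.
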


By the theorem and relations, $A_\ell^{\otimes d}\otimes\k \Si_d$ is a graded subsuperalgebra of $H_d(A_\ell)$ isomorphic to the wreath superproduct $A_\ell\swr  \Si_d$. The quotient of $H_d(A_\ell)$ by the graded superideal generated by $z_1,\dots,z_d$ is also isomorphic to $A_\ell\swr  \Si_d$. The similar statements hold for $H_d(\Zig_\ell)$. We identify $A_\ell[z]^{\otimes d}$ (resp. $\Zig_\ell[\zz]^{\otimes d}$) with a graded subsuperalgebra of $H_d(A_\ell)$ (resp. $H_d(\Zig_\ell)$). 
Since $\deg(w)=0$ for all $w\in\Si_d$, we deduce from the theorem:

\begin{Corollary} \label{CHZig>0} We have:
\begin{enumerate}
\item[{\rm (i)}] The algebras $H_d(A_\ell)$ and $H_d(\Zig_\ell)$ are non-negatively graded.
\item[{\rm (ii)}] $H_d(A_\ell)^{>0}=H_d(A_\ell)(A_\ell[z]^{\otimes d})^{>0}$ and $ H_d(\Zig_\ell)^{>0}=H_d(\Zig_\ell)(\Zig_\ell[\zz]^{\otimes d})^{>0}.$
\item[{\rm (iii)}] The degree zero component $H_d(\Zig_\ell)^0$ of $H_d(\Zig_\ell)$ is generated by all $\ze^\bj$ with $\bj\in J^d$ and $w\in\Si_d$.
\end{enumerate}
\end{Corollary}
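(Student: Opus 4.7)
\medskip

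\noindent\emph{Proof plan.} All three parts will follow directly from the basis description of Theorem~\ref{TAffBasis} together with a bookkeeping check on the bidegrees of the generators of $A_\ell[z]$ and $\Zig_\ell[\zz]$. So the plan is: first record the relevant degrees, then read each statement off from the basis.

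For part~(i), I would begin by listing the degrees of the basis elements of $A_\ell[z]$ and $\Zig_\ell[\zz]$ used to form the monomial bases of $A_\ell[z]^{\otimes d}$ and $\Zig_\ell[\zz]^{\otimes d}$. For $A_\ell[z]$: the elements of $B_\ell$ have degrees $0,2,4$ (namely $\deg e^{[j]}=0$, $\deg a^{[k,k+1]}=0$, $\deg u=2$, $\deg c^{[j]}=4$, $\deg a^{[k+1,k]}=4$), and $\deg z=4$. For $\Zig_\ell[\zz]$, the regrading formula \eqref{EAReGrade} with the supershift parameters $t_j=2j+1-2\ell$ from \eqref{EtjEpsj} gives $\deg\ze^{[j]}=0$, $\deg\za^{[k,k+1]}=\deg\za^{[k+1,k]}=2$, $\deg\zu=2$, $\deg\zc^{[j]}=4$, and $\deg\zz=4$. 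All of these are $\geq 0$, so each $A_\ell[z]^{\otimes d}$-basis monomial $z_1^{n_1}\cdots z_d^{n_d}b^{(1)}_1\cdots b^{(d)}_d$ has non-negative degree, and likewise on the $\Zig_\ell$-side. Since $\deg(w)=0$ for every $w\in\Si_d$, the basis from Theorem~\ref{TAffBasis} consists entirely of elements of non-negative degree, proving~(i).

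For part~(ii), the inclusion $H_d(A_\ell)\,(A_\ell[z]^{\otimes d})^{>0}\subseteq H_d(A_\ell)^{>0}$ is immediate from~(i) because any product of a non-negative-degree element with a strictly positive-degree element is strictly positive in degree. For the reverse inclusion, I would observe that by Theorem~\ref{TAffBasis} the space $H_d(A_\ell)^{>0}$ is spanned by those basis vectors $w\,x$ with $w\in\Si_d$ and $x$ a basis monomial in $A_\ell[z]^{\otimes d}$ of positive degree; but each such element lies visibly in $H_d(A_\ell)\cdot (A_\ell[z]^{\otimes d})^{>0}$ by simply reading the product as $w\cdot x$. The argument for $H_d(\Zig_\ell)$ is identical using the $\Zig_\ell$-version of the basis.

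For part~(iii), the key observation (from the degree list computed in the plan for~(i)) is that in the regrading the \emph{only} elements of $\zB_\ell$ of degree $0$ are the idempotents $\ze^{[j]}$. Hence the degree-zero part of $\Zig_\ell[\zz]^{\otimes d}$ is spanned precisely by the monomials $\ze^\bj=\ze^{[j_1]}\otimes\cdots\otimes\ze^{[j_d]}$ with $\bj=j_1\cdots j_d\in J^d$. Applying Theorem~\ref{TAffBasis} and using once more that $\deg(w)=0$, the space $H_d(\Zig_\ell)^0$ is spanned by $\{w\,\ze^\bj\mid w\in\Si_d,\ \bj\in J^d\}$, and in particular it is generated as a subalgebra by the $\ze^\bj$ and the $w\in\Si_d$, as claimed. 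No genuine obstacle arises; the only point requiring care is to double-check the degree shifts in~\eqref{EAReGrade} so that one correctly identifies which $\zB_\ell$-elements live in degree~$0$, which is why part~(iii) is stated for $\Zig_\ell$ and fails for $A_\ell$ (where the even generators $a^{[k,k+1]}$ and all $e^{[j]}$ are also in degree~$0$).
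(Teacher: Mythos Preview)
Your proposal is correct and is precisely the argument the paper has in mind: the paper records no proof beyond the remark that everything follows from Theorem~\ref{TAffBasis} together with $\deg(w)=0$ for $w\in\Si_d$, and you have simply written out the degree bookkeeping that makes this explicit.
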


\subsection{  Comparing $H_d(A_\ell)$ and $H_d(\Zig_\ell)$}
Recall that the graded superalgebra $\Zig_\ell$ is obtained from the graded superalgebra $A_\ell$ by regrading. Already for the graded superalgebras $\Zig_\ell\swr \Si_d$ and $A_\ell\swr \Si_d$
the similar statement is far from clear (in fact, it is even far from obvious that they are isomorphic as algebras). Even less obvious is the same claim on the affine level, namely, that the graded superalgebra $H_d(\Zig_\ell)$ is obtained from the graded superalgebra $H_d(A_\ell)$ by regrading.
Proposition~\ref{PIsoH} below, inspired by \cite[Lemma 5.27]{EK1}, shows that this is the case. 

We note that $\sum_{\bi\in J^n}e^\bi$ is an orthogonal idempotent decomposition of the  identity in $H_d(A_\ell)$, and similarly for $\sum_{\bi\in J^n}\ze^\bj$ in $H_d(\Zig_\ell)$. Recalling (\ref{EtjEpsj}), for $\bi=i_1\cdots i_d\in J^d$, we denote
\begin{equation}\label{ENormBj}\begin{split}
t_\bi&:=t_{i_1}+\dots+t_{i_d}=2(i_1+\dots+i_d)+d(1-2\ell),\\ \eps_{\bi}&:=\eps_{i_1}+\dots+\eps_{i_d} =(i_1+\dots+i_d)\pmod{2}.
\end{split}
\end{equation}
We will also need the following elements: 
\begin{align*}
&\ze':=\sum_{j\in J,\,\, j\,\text{odd}}\ze^{[j]}\in\Zig_\ell, 
&
\ze'':=\sum_{j\in J,\,\, j\,\text{even}}\ze^{[j]}\in\Zig_\ell,
\\ 
&\zh:=\ze'-\ze''\in\Zig_\ell, 
&\zg:=
\ze'\otimes 1+\ze''\otimes\zh
\in \Zig_\ell\otimes\Zig_\ell.
\end{align*}
Recalling the notation (\ref{EInsertion}), we have the elements
$
\zg_{r,r+1}\in \Zig_\ell^{\otimes d}\subseteq H_d(\Zig_\ell) 
$
for $1\leq r<d$. The following properties are easy to check:

\begin{Lemma} 
{\ }

\begin{enumerate}
\item[{\rm (i)}] In $\Zig_\ell$, we have:
\begin{eqnarray}
\label{EZFSquared}
\zh^2&=&1,
\\
\label{EAEE}
\za^{[i,j]}\zh&=&-\zh\za^{[i,j]}\qquad(\text{for all admissible $i,j$}),
\\
\label{EUEE}
\zu\zh&=&-\zu\ \,=\ \,\zh\zu.
\end{eqnarray}
\item[{\rm (ii)}] In $\Zig_\ell[\zz]$, we have 
\begin{equation}
\label{EZEE}
\zz\zh=\zh\zz,
\end{equation}
\item[{\rm (iii)}] In $\Zig_\ell\otimes\Zig_\ell$, we have:
\begin{eqnarray}
\label{EZGFU}
\zg(\zh\otimes\zu)&=&(\zh\otimes\zu)\zg,
\\
\label{EZG1A}
\zg(1\otimes \za^{[i,j]})&=&(\zh\otimes \za^{[i,j]})\zg
\qquad(\text{for all admissible $i,j$}).
\end{eqnarray}
\end{enumerate}
\end{Lemma}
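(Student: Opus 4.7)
All three parts follow from direct computations using the orthogonal idempotent decomposition $1=\sum_{j\in J}\ze^{[j]}$, the incidence properties
\begin{equation*}
\ze^{[i]}\za^{[k,l]}=\delta_{i,k}\za^{[k,l]},\quad \za^{[k,l]}\ze^{[j]}=\delta_{l,j}\za^{[k,l]},\quad \ze^{[0]}\zu=\zu=\zu\ze^{[0]},
\end{equation*}
and the parity assignments (the $\ze^{[j]}$ are even, while $\zu$ and the $\za^{[k,l]}$ are odd). The plan is to work through (i), (ii), (iii) in order; none requires more than a few lines.

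For (i), I will expand $\zh^2=(\ze'-\ze'')^2$ and use orthogonality and $\ze'+\ze''=1$ to get $\zh^2=\ze'+\ze''=1$. For the anticommutation $\za^{[i,j]}\zh=-\zh\za^{[i,j]}$, note that the condition $|i-j|=1$ forces exactly one of $i,j$ to be odd: if $i$ is odd and $j$ is even then $\zh\za^{[i,j]}=\ze'\za^{[i,j]}=\za^{[i,j]}$ while $\za^{[i,j]}\zh=-\za^{[i,j]}\ze''=-\za^{[i,j]}$, and the other parity case is symmetric. For $\zu$, since $0$ is even, $\ze''\zu=\zu=\zu\ze''$ and $\ze'\zu=0=\zu\ze'$, hence $\zh\zu=\zu\zh=-\zu$.

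For (ii), observe that $\zh=\ze'-\ze''$ is a linear combination of even idempotents, so by (\ref{ETwistedRelations2}) it commutes with $\zz$.

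For (iii), the key point is bookkeeping of Koszul signs in $\Zig_\ell\otimes\Zig_\ell$. Using $\ze'\zh=\ze'$, $\ze''\zh=-\ze''$, $\zh\ze'=\ze'$, $\zh\ze''=-\ze''$, and $\zh\zu=-\zu=\zu\zh$, a direct expansion gives
\begin{align*}
\zg(\zh\otimes\zu)&=\ze'\zh\otimes \zu+\ze''\zh\otimes\zh\zu=\ze'\otimes\zu+\ze''\otimes\zu=1\otimes\zu,\\
(\zh\otimes\zu)\zg&=\zh\ze'\otimes\zu+\zh\ze''\otimes\zu\zh=\ze'\otimes\zu+\ze''\otimes\zu=1\otimes\zu,
\end{align*}
which proves (\ref{EZGFU}). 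For (\ref{EZG1A}), using part (i) of the lemma and again tracking signs (noting $|\za^{[i,j]}|=\1$ and $|\ze'|=|\ze''|=\0$), both sides expand to $\ze'\otimes\za^{[i,j]}-\ze''\otimes\za^{[i,j]}\zh$, completing the verification. There is no real obstacle here: the only mildly subtle step is the sign tracking in (iii), but the parities are all determined by the list after (\ref{EAReGrade}), so the computation is entirely mechanical.
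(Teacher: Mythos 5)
Your computation is correct in every detail, including the Koszul-sign bookkeeping in part (iii); the signs all trivialize because $\zh$, $\ze'$, $\ze''$, $1$ are even, so the only issue is tracking the minus signs from $\ze''\zh=-\ze''$, $\zh\zu=-\zu$, and part (i). The paper omits the proof as "easy to check," and your direct verification is exactly the intended argument.
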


We now prove the main result of this subsection:

\begin{Proposition}\label{PIsoH}
There is an isomorphism of graded superalgebras 
$$\bigoplus_{\bi,\bj\in J^n}\funQ^{t_\bi-t_\bj}\Uppi^{\eps_\bi-\eps_\bj}e^\bj H_d(A_\ell)e^\bi \to H_d(\Zig_\ell)$$ which maps (for all admissible indices $r,i,j$):
\begin{align*}
e^{[i]}_r&\mapsto \ze^{[i]}_r
\\
u_r&\mapsto \zh^{\otimes (r-1)}\otimes \zu\otimes \zh^{\otimes (d-r)}
\\
a^{[i,j]}_r&\mapsto\zh^{\otimes(r-1)}\otimes \za^{[i,j]}\otimes 1^{\otimes (d-r)}
\\
s_r&\mapsto s_r\zg_{r,r+1}
\\
z_r&\mapsto (\zz\zh)_r.
\end{align*}
\end{Proposition}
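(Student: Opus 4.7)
\medskip
\noindent\textbf{Proof proposal.}
The plan is to check that the formulas of the proposition extend to a well-defined graded superalgebra homomorphism
$$
\Phi:\bigoplus_{\bi,\bj\in J^n}\funQ^{t_\bi-t_\bj}\Uppi^{\eps_\bi-\eps_\bj}e^\bj H_d(A_\ell)e^\bi \longrightarrow H_d(\Zig_\ell),
$$
and then to show that it is bijective by matching bases produced by Theorem~\ref{TAffBasis}. By the universal property of the free product $A_\ell[z]^{\otimes d}\star\k\Si_d$ it suffices to prescribe $\Phi$ on the generators $e^{[i]}_r, u_r, a^{[i,j]}_r, z_r, s_r$, and to verify (a) that the prescribed values have the right bidegrees in the regraded LHS, and (b) that they satisfy the defining relations of $H_d(A_\ell)$, namely the relations inside each $A_\ell[z]$, the twisting relations (\ref{ETwistedRelations}), the wreath relations (\ref{ERAff1}), and the affine relations (\ref{ERAff2}).

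For the bidegree check, the idempotent $e^{[i]}_r$ preserves the colour word $\bi$ and the $\zh$'s are pure bidegree $(0,\0)$; since the colour words are preserved by $e^{[i]}_r$ and $z_r$, and merely permuted by $s_r$ (so $t_\bi, \eps_\bi$ are unchanged), no shift is introduced and the verification is immediate. For $u_r$ and $a^{[i,j]}_r$ one computes the shift $t_{\bj'}-t_{\bi'}$, $\eps_{\bj'}-\eps_{\bi'}$ caused by changing the $r$-th letter and observes that it matches the discrepancy between the bidegrees in $A_\ell$ and $\Zig_\ell$: e.g.\ $a^{[j+1,j]}$ has bidegree $(4,\0)$ in $A_\ell$ but its image $\za^{[j+1,j]}$ has bidegree $(2,\1)$ in $\Zig_\ell$, and $(t_\bj - t_\bi, \eps_\bj - \eps_\bi) = (-2,\1)$ exactly accounts for this.

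The main obstacle is checking the affine relation (\ref{ERAff2}), where the sign change in (\ref{ERAff2Zig}) (and other sign changes coming from the odd status of $\za^{[i,j]}$ and $\zu$ in $\Zig_\ell$) must be reproduced by the auxiliary factors $\zg$ and $\zh$ in the definition of $\Phi$. The key computational inputs are the commutation identities (\ref{EZFSquared})--(\ref{EZG1A}): using $\zh^2=1$, the anticommutation of $\zh$ with $\za^{[i,j]}$, the commutation $\zz\zh=\zh\zz$, and the exchange relations $\zg(\zh\otimes \zu)=(\zh\otimes \zu)\zg$ and $\zg(1\otimes \za^{[i,j]})=(\zh\otimes \za^{[i,j]})\zg$, one verifies that
$$
\Phi(s_r)\,\Phi(z_t) - \Phi(z_{s_r(t)})\,\Phi(s_r) = s_r\zg_{r,r+1}(\zz\zh)_t - (\zz\zh)_{s_r(t)}s_r\zg_{r,r+1}
$$
reduces (after moving all $\zg$'s and $\zh$'s to one side) to the RHS of (\ref{ERAff2Zig}) when multiplied by $\ze^\bi$. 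The $\de_{i_r,i_{r+1}}$ case is straightforward, and the $|i_r-i_{r+1}|=1$ case is where the sign flip arises naturally: one $\zh$ picks up a sign against $\za^{[i,j]}$ which converts the minus signs of (\ref{ERAff2}) into the plus signs of (\ref{ERAff2Zig}). The wreath relation (\ref{ERAff1}) is verified in the same spirit, using (\ref{EAEE}) and (\ref{EUEE}) to account for the extra $\zh$'s on the left of each factor; the twisting relations (\ref{ETwistedRelations}) follow from (\ref{EZEE}) and $\zh^2=1$; the quadratic relations $s_r^2=1$, braid and commuting relations for the $s_r$'s follow from the fact that $\zg$ is a product of orthogonal idempotents with $\zh$ inserted in a controlled way.

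Finally, to see that $\Phi$ is an isomorphism, we verify surjectivity and compare bases. Surjectivity: $\Phi(e^{[i]}_r)=\ze^{[i]}_r$ so each idempotent is hit; multiplying the image of $u_r$ by $\ze^{[0]}_r$ (and using $\zh\ze^{[j]} = \pm \ze^{[j]}$) gives a scalar multiple of $\zu_r$, and similarly for $\za^{[i,j]}_r$; right-multiplying the image of $s_r$ by $\zg_{r,r+1}^{-1}$ (which exists since $\zg$ is a sum of two orthogonal idempotents) recovers $s_r$; $\zz_r$ is recovered from $(\zz\zh)_r$ by multiplying by $\zh_r$. So the image contains the generators of $H_d(\Zig_\ell)$. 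Injectivity follows by a dimension count: the regraded LHS has the same graded super-dimension as $H_d(A_\ell)$, and by Theorem~\ref{TAffBasis} the bases of $H_d(A_\ell)$ and $H_d(\Zig_\ell)$ are indexed by the same set, so $\Phi$ is a surjection between graded superspaces of equal graded super-dimension in each bidegree, hence an isomorphism.
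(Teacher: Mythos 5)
Your proposal takes essentially the same route as the paper: prescribe the map on the generators, verify the defining relations of $H_d(A_\ell)$ using the $\zh$- and $\zg$-commutation identities (\ref{EZFSquared})--(\ref{EZG1A}), and then conclude it is an isomorphism by matching the PBW-type bases of Theorem~\ref{TAffBasis}. Two small slips in the write-up are worth correcting: $\zg_{r,r+1}$ is invertible because $\zg^2 = (\ze'\otimes 1+\ze''\otimes\zh)^2 = \ze'\otimes 1+\ze''\otimes\zh^2 = 1$, not because it is ``a sum of orthogonal idempotents'' ($\ze''\otimes\zh$ is not an idempotent); and in the final step the regraded left-hand side has the same graded super-dimension as $H_d(\Zig_\ell)$ (not $H_d(A_\ell)$ --- the regrading shifts bidegrees), which is precisely what the bidegree check on generators establishes, since it shows each PBW basis element acquires, after the shift $\funQ^{t_\bi-t_\bj}\Uppi^{\eps_\bi-\eps_\bj}$, the bidegree of the corresponding basis element of $H_d(\Zig_\ell)$.
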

\begin{proof}
We denote by $X:=\{e^{[i]},u,a^{[i,j]}\}$ the set of the standard generators of $A_\ell$, by $Y:=X\sqcup\{z\}$ the set of the standard generators of $A_\ell[z]$ and by $Z:=\{s_r,x_t\mid 1\leq r<d,\,1\leq t\leq d,\,x\in Y\}$ the set of the standard generators of $H_d(A_\ell)$.  
For every $x\in Z$, we denote by $\phi(x)$ the element of  $H_d(\Zig_\ell)$ which corresponds to it under the assignments in the statement of the proposition, so the proposition claims that there exists an isomorphism $\phi:\bigoplus_{\bi,\bj\in J^n}\funQ^{t_\bi-t_\bj}\Uppi^{\eps_\bi-\eps_\bj}e^\bj H_d(A_\ell)e^\bi \to H_d(\Zig_\ell)$ of graded superalgebras with the prescribed $\phi(x)$ on  the generators $x$. In fact, it suffices to prove that there is such a homomorphism since then it is an isomorphism by bases, see Theorem~\ref{TAffBasis}. 
Noting that $\bideg(\phi(x))=\bideg(x)$ for al $x\in Z$, 
 it suffices to check that $\phi(x)$'s satisfy the defining relations of $H_d(A_\ell)$.

\vspace{2mm}
\noindent
{\sf Claim 1.} For each $1\leq r\leq d$, there is a graded superalgebra homomorphism $\phi_r: A_\ell[z]\to H_d(\Zig_\ell)$ with  
$\phi_r(x)=\phi(x_r)$ for all $x\in Y$. 

\vspace{2mm}
\noindent 
We check that the elements $\phi(x_r)$ satisfy the defining relations of $A_\ell[z]$. The 
relations (\ref{EZig1}),\,(\ref{EZig2}),\,(\ref{EZig3}) are clear. To check the relation (\ref{EZig4}), we need to verify that  
$$\phi(a^{[0,1]}_r)\phi_r(a^{[1,0]})=\phi(u_r)^2$$ which is easy to see using (\ref{EZFSquared}). 
Finally, to check (\ref{ETwistedRelations}), it is immediate that  
$$\phi(e^{[i]}_r)\phi_r(z)=\phi(z_r)\phi(e^{[i]}_r).$$ 
Moreover, $$\phi(a^{[i,j]}_r)\phi_r(z)=\phi(z_r)\phi(a^{[i,j]}_r)$$  thanks to (\ref{ETwistedRelations2}) and (\ref{EAEE}), and 
$$\phi(u_r)\phi(z_r)=-\phi(z_r)\phi(u_r)$$ thanks to  (\ref{ETwistedRelations2}) and (\ref{EUEE}).

\vspace{2mm}
\noindent
{\sf Claim 2.} Let $1\leq r<d$ and $x,y\in Y$. Then $\phi_r(x)\phi_{r+1}(y)=\phi_{r+1}(y)\phi_r(x)$ unless $x=y=u$, in which case $\phi_r(u)\phi_{r+1}(u)=-\phi_{r+1}(u)\phi_r(u)$.

\vspace{2mm}
\noindent
Note, using the notation (\ref{EInsertion}), the equalities  (\ref{EZFSquared}),(\ref{EUEE}) and the fact that $\zu$ is odd, 
\begin{align*}
\phi_r(u)\phi_{r+1}(u)=(\zu\zh\otimes \zh\zu)_{r,r+1}
=(\zh\zu\otimes \zu\zh)_{r,r+1}
=-\phi_{r+1}(u)\phi_r(u).
\end{align*}
Moreover, using (\ref{EAEE}) and the fact that $\za^{[i,j]}$ and $\za^{[k,l]}$ are odd, we get 
\begin{align*}
\phi_r(a^{[i,j]})\phi_{r+1}(a^{[k,l]})&=(\za^{[i,j]}\zh\otimes \za^{[k,l]})_{r,r+1}
=-(\zh\za^{[i,j]}\otimes \za^{[k,l]})_{r,r+1}
\\
&=\phi_{r+1}(a^{[k,l]})\phi_r(a^{[i,j]}).
\end{align*}
Similarly we get
\begin{align*}
\phi_r(a^{[i,j]})\phi_{r+1}(u)&=1^{\otimes(r-1)}\otimes \za^{[i,j]}\zh\otimes\zu\otimes\zh^{\otimes(d-r-1)}
\\
&=-1^{\otimes(r-1)}\otimes \zh \za^{[i,j]}\otimes\zu\otimes\zh^{\otimes(d-r-1)}
\\
&=\phi_{r+1}(u)\phi_r(a^{[i,j]})
\end{align*}
and 
$
\phi_r(u)\phi_{r+1}(a^{[i,j]})=\phi_{r+1}(a^{[i,j]})\phi_r(u)$. 
The remaining cases are easy to check.

\vspace{2mm}
\noindent
{\sf Claim 3.} There is a graded superalgebra homomorphism 
$$\phi: A_\ell[z]^{\otimes d}\to H_d(\Zig_\ell),\ x^{(1)}\otimes\dots\otimes x^{(d)}\mapsto \phi_1(x^{(1)})\cdots \phi_d(x^{(d)}).$$ 

\vspace{2mm}
\noindent
This follows from Claim 2 since in the generating set $Y$ 
the element $u$ is odd and all other elements are even. 

\vspace{2mm}
\noindent
{\sf Claim 4.} There is a graded superalgebra homomorphism 
$\phi: \k\Si_d\to H_d(\Zig_\ell),\ s_r\mapsto s_r\zg_{r,r+1}.$

\vspace{2mm}
\noindent
Indeed, the elements $s_r\zg_{r,r+1}$ are checked to satisfy the Coxeter relations, for example, 
$$
(s_r\zg_{r,r+1})^2=s_r\zg_{r,r+1}s_r\zg_{r,r+1}=s_rs_r\zg_{r,r+1}\zg_{r,r+1}=1
$$
and 
\begin{align*}
s_r\zg_{r,r+1}s_{r+1}\zg_{r+1,r+2}s_r\zg_{r,r+1}&=
s_rs_{r+1}s_r\zg_{r+1,r+2}\zg_{r,r+2}\zg_{r,r+1}
\\&=
s_{r+1}s_rs_{r+1}
\zg_{r,r+1}
\zg_{r,r+2}
\zg_{r+1,r+2}
\\
&=
s_{r+1}\zg_{r+1,r+2}s_r\zg_{r,r+1}s_{r+1}\zg_{r+1,r+2}.
\end{align*}

\vspace{2mm}
\noindent
{\sf Claim 5.} For any $1\leq r<d$ and $x\in X$, we have $\phi(s_r)\phi(x_r)=\phi(x_{r+1})\phi(s_r)$.

\vspace{2mm}
\noindent
Indeed, for $x=u$, using $s_r\zg_{r,r+1}=\zg_{r,r+1}s_r$ and (\ref{EZGFU}), we have 
\begin{align*}
\phi(s_r)\phi(u_r)&=(s_r\zg_{r,r+1})(\zh^{\otimes(r-1)}\otimes\zu\otimes \zh^{\otimes(d-r)})
\\
&=s_r(\zh^{\otimes(r-1)}\otimes(\zg(\zu\otimes\zh))\otimes \zh^{\otimes(d-r-1)})
\\
&=(\zh^{\otimes(r-1)}\otimes(\zg(\zh\otimes\zu))\otimes \zh^{\otimes(d-r-1)})s_r
\\
&=(\zh^{\otimes(r-1)}\otimes((\zh\otimes\zu)\zg)\otimes \zh^{\otimes(d-r-1)})s_r
\\
&=(\zh^{\otimes(r-1)}\otimes(\zh\otimes\zu)\otimes \zh^{\otimes(d-r-1)})\zg_{r,r+1}s_r
\\
&=(\zh^{\otimes r}\otimes\zu\otimes \zh^{\otimes(d-r-1)})s_r\zg_{r,r+1}
\\
&=\phi(u_{r+1})\phi(s_r).
\end{align*}
For $x=a^{[i,j]}$, 
we have similarly, using (\ref{EZG1A}), 
\begin{align*}
\phi(s_r)\phi(a^{[i,j]}_r)&=(s_r\zg_{r,r+1})(\zh^{\otimes(r-1)}\otimes\za^{[i,j]}\otimes 1^{\otimes(d-r)})
\\
&=(\zh^{\otimes(r-1)}\otimes(\zg(1\otimes \za^{[i,j]}))\otimes 1^{\otimes(d-r-1)})s_r
\\
&=(\zh^{\otimes(r-1)}\otimes((\zh\otimes \za^{[i,j]})\zg)\otimes 1^{\otimes(d-r-1)})s_r
\\
&=(\zh^{\otimes r}\otimes \za^{[i,j]}\otimes 1^{\otimes(d-r-1)})s_r\zg_{r,r+1}
\\
&=\phi(a^{[i,j]}_{r+1})\phi(s_r).
\end{align*}
For $x=e^{[i]}$ the argument is similar (but easier). 

\vspace{2mm}
From Claims 3,4,5 we have all the required relations except (\ref{ERAff1}). This final relation for the case $t\not \in\{r,r+1\}$ is easy to check, and in the presence of other relations, the case $t=r+1$ follows from the case $t=r$ (by multiplying with $s_r$ on both sides). So we may assume that $t=r$. In this case, we have using (\ref{ERAff2}), 
\begin{align*}
&(\phi(s_r)\phi(z_r)-\phi(z_{r+1})\phi(s_r))\phi(e^\bi)
\\
=\,
&(s_r\zg_{r,r+1}(\zz\zh)_r-(\zz\zh)_{r+1}s_r\zg_{r,r+1})\ze^\bi
\\
=\,&
(s_r\zg_{r,r+1}\zz_r\zh_r-\zz_{r+1}\zh_{r+1}s_r\zg_{r,r+1})\ze^\bi
\\
=\,&(s_r\zz_r-\zz_{r+1}s_r)\ze^\bi \zg_{r,r+1}\zh_r
\\
=\,&\begin{cases}
(\zc_r^{[i_r]} + \zc_{r+1}^{[i_{r+1}]}
+\de_{i_r,0}\zu_r\zu_{r+1})\ze^\bi\zg_{r,r+1}\zh_r
&
\text{if}\ i_r = i_{r+1},\\
\za_r^{[i_{r+1},i_r]} \za_{r+1}^{[i_r,i_{r+1}]} \ze^\bi\zg_{r,r+1}\zh_r
&
\text{if}\ |i_r-i_{r+1}|=1,\\
0
& 
\textup{otherwise}.
\end{cases}
\end{align*}
If $i_r=i_{r+1}=i$ then $\ze^\bi\zg_{r,r+1}\zh_r=\ze^{\bi}$; if, in addition $i=0$, then 
$$\zu_r\zu_{r+1}\ze^\bi
=\zu_r\zu_{r+1}\zh_r\zh_{r+1}\ze^\bi
=\zu_r\zh_r\zh_{r+1}\zu_{r+1}\ze^\bi
=\phi(u_r)\phi(u_{r+1})\ze^\bi.
$$ 
So in this case we are getting
$$
(\zc_r^{[i]} + \zc_{r+1}^{[i]}
+\de_{i,0}\zu_r\zu_{r+1})\ze^\bi
=(\phi(c_r^{[i]}) + \phi(c_{r+1}^{[i]})
+\de_{i,0}\phi(u_r)\phi(u_{r+1}))\phi(e^\bi),
$$
as required. If $|i_r-i_{r+1}|=1$ then, considering two cases when $i_r$ is even or odd, we get  
\begin{align*}
\za_r^{[i_{r+1},i_r]} \za_{r+1}^{[i_r,i_{r+1}]} \ze^\bi\zg_{r,r+1}\zh_r
&=\za_r^{[i_{r+1},i_r]} \za_{r+1}^{[i_r,i_{r+1}]} (\ze'\otimes 1-\ze''\otimes\zh)_{r,r+1}\ze^\bi
\\
&=\za_r^{[i_{r+1},i_r]}\zh_r \za_{r+1}^{[i_r,i_{r+1}]} \ze^\bi
\\
&=\phi(a_r^{[i_{r+1},i_r]})\phi(a_{r+1}^{[i_r,i_{r+1}]})\ze^\bi,
\end{align*}
as required. The remaining case $|i_r-i_{r+1}|>1$ is clear. 
\end{proof}



\section{Quiver Hecke superalgebras}
\label{SQHSA}

We will work with the quiver Hecke superalgebra of type $A_{2\ell}^{(2)}$, as defined in \cite{KKT}.

\subsection{Definition of the quiver Hecke superalgebra}
\label{SSDefQHSA}

For $i,j,k\in I$, we define polynomials $Q_{i,j}(u,v),B_{i,j,k}(u,v)\in\k[u,v]$ as follows:
\begin{eqnarray*}
Q_{i,j}(u,v)&:=&
\left\{
\begin{array}{ll}
0 &\hbox{if $i=j$,}\\
1 &\hbox{if $|i-j|>1$,}\\
(j-i)(u-v) &\hbox{if $|i-j|=1$ and $1\leq i,j<\ell$,}
\\

(j-i)(u^2-v) &\hbox{if $\{i,j\}=\{0,1\}$ or $\{\ell-1,\ell\}$, and $\ell>1$,}\\

(j-i)(u^4-v) &\hbox{if $\{i,j\}=\{0,1\}$ and $\ell=1$,}
\end{array}
\right.
\\
B_{i,j,k}(u,v)&:=&\left\{
\begin{array}{ll}
-1 &\hbox{if $i=k=j+1$,}\\
1 &\hbox{if $i=k= j-1\not\in \{0,\ell-1\}$,}\\
(u+v) &\hbox{if $i=k= j-1=\ell-1>0$,}\\
(v-u) &\hbox{if $i=k=j-1=0$ and $\ell>1$,}\\
(u^2+v^2)(v-u) &\hbox{if $i=k= j-1=0$ and $\ell=1$,}\\
0 &\hbox{otherwise.}
\end{array}
\right.
\end{eqnarray*}

Let $\theta\in Q_+$ be of height $n$. The {\em quiver Hecke superalgebra} $R_\theta$ is the unital graded $\k$-superalgebra generated by the elements 
$$
\{1_\bi\mid\bi\in I^\theta\}\cup \{y_1,\dots,y_n\}\cup \{\psi_1,\dots,\psi_{n-1}\}
$$
and the following defining relations (for all admissible $r,s,\bi$, etc.)
\begin{equation}
\label{R1}
1_\bi 1_\bj=\de_{\bi,\bj}1_\bi,
\end{equation}
\begin{equation}
\sum_{\bi\in I^\theta}1_\bi=1,
\label{R2}
\end{equation}
\begin{equation}
\label{R2.5}
y_r1_\bi=1_\bi y_r,
\end{equation}
\begin{equation}
\label{R2.75}
\psi_r1_\bi =1_{s_r\cdot \bi}\psi_r,
\end{equation}
\begin{equation}
y_ry_s1_\bi =
\left\{
\begin{array}{ll}
-y_sy_r1_\bi  &\hbox{if $r\neq s$ and $\| i_r\| =\| i_s\| =\1$,}\\
 y_sy_r1_\bi &\hbox{otherwise,}
\end{array}
\right.
\label{R3}
\end{equation}
\begin{equation}
\psi_r y_s1_\bi =(-1)^{\| i_r\|\, \| i_{r+1}\|\, \| i_s\| }y_s \psi_r 1_\bi \qquad(s\neq r,r+1),
\label{R4}
\end{equation}
\begin{equation}
\begin{split}
(\psi_ry_{r+1}-(-1)^{\| i_r\|\, \| i_{r+1}\| }y_r \psi_r) 1_\bi &=
(y_{r+1} \psi_r-(-1)^{\| i_r\|\, \| i_{r+1}\| } \psi_ry_r) 1_\bi 
\\&=
\left\{
\begin{array}{ll}
1_\bi  &\hbox{if $i_r=i_{r+1}$,}\\
0 &\hbox{otherwise,}
\end{array}
\right.
\end{split}
\label{R5}
\end{equation}
\begin{equation}\label{R6}
\psi_r^21_\bi =Q_{i_r,r_{r+1}}(y_r,y_{r+1}),
\end{equation}
\begin{equation}\label{R65}
\psi_r\psi_s 1_\bi =(-1)^{\| i_r\|\, \| i_{r+1}\|\, \| i_s\|\, \| i_{s+1}\| }\psi_s\psi_r 1_\bi \qquad(|r-s|>1),
\end{equation}
\begin{equation}
(\psi_{r+1}\psi_r\psi_{r+1}-\psi_{r}\psi_{r+1} \psi_r) 1_\bi =B_{i_r,i_{r+1},i_{r+2}}(y_r,y_{r+2})1_\bi 
\label{R7}
\end{equation}
The structure of a graded superalgebra on $R_\theta$ is defined by setting 
\begin{align*}
\bideg(\bi)&:=(0,\0),
\\ \bideg(y_s1_\bi )&:=((\al_{i_s}|\al_{i_s}),\| i_s\| ),
\\ \bideg(\psi_r 1_\bi )&:=-((\al_{i_r}|\al_{i_{r+1}}),\| i_r\|\, \| i_{r+1}\| ).
\end{align*}
Sometimes we denote the identity in $R_\theta$ by $1_\theta$.

We will use Khovanov-Lauda diagrams \cite{KL1} to represent the elements of $R_\theta$. In particular, if $\bi=i_1\cdots i_n\in I^\theta$, $1\leq r<n$ and $1\leq s\leq n$, we denote 
\vspace{2mm}
\begin{align*}
1_\bi =
\begin{braid}\tikzset{baseline=3mm}
  \draw (0,0)node[below]{$i_1$}--(0,3);
  \draw (1,0)node[below]{$i_2$}--(1,3);
  \draw[dots] (1.5,2.9)--(4.7,2.9);
  \draw (5,0)node[below]{$i_{n}$}--(5,3);
  \draw[dots] (1.5,0)--(4.7,0);
 \end{braid},
 \ \ 
y_s=
 \begin{braid}\tikzset{baseline=3mm}
  \draw (0,0)node[below]{$i_1$}--(0,3);
  \draw (3,0)node[below]{$i_{s-1}$}--(3,3);
  \draw[dots] (0.5,2.9)--(2.7,2.9);
  \draw (4.4,0)node[below]{$i_{s}$}--(4.4,3);
  \draw (5.7,0)node[below]{$i_{s+1}$}--(5.7,3);
  \draw[dots] (0.5,0)--(2.7,0);
  \draw[dots] (6.1,2.9)--(8.3,2.9);
  \draw[dots] (6.1,0)--(8.3,0);
  \draw (8.5,0)node[below]{$i_{n}$}--(8.5,3);
\blackdot (4.4,1.5);
 \end{braid},
\ \ 
\psi_r 1_\bi =
 \begin{braid}\tikzset{baseline=3mm}
  \draw (0,0)node[below]{$i_1$}--(0,3);
  \draw (3,0)node[below]{$i_{r-1}$}--(3,3);
  \draw[dots] (0.5,2.9)--(2.7,2.9);
  \draw (4.6,0)node[below]{$i_{r}$}--(5.8,3);
  \draw (5.8,0)node[below]{$i_{r+1}$}--(4.6,3);
  \draw (7.4,0)node[below]{$i_{r+2}$}--(7.4,3);
  \draw[dots] (0.5,0)--(2.7,0);
  \draw[dots] (7.8,2.9)--(10,2.9);
  \draw[dots] (7.8,0)--(10,0);
  \draw (10.2,0)node[below]{$i_{n}$}--(10.2,3);
\end{braid}.
\end{align*}

\vspace{1mm}

For every $w\in \Si_n$, we choose a reduced decomposition $w=s_{r_1}\dots s_{r_l}$ and define $\psi_{w}:=\psi_{r_1}\cdots\psi_{r_l}$. In general $\psi_w$ depends on the choice of a reduced decomposition.

\begin{Theorem}\label{TBasis}{\cite[Corollary 3.15]{KKT}} 
Let $\theta\in Q_+$ and $n=\height(\theta)$. Then  
\begin{align*}
&\{\psi_w y_1^{k_1}\dots y_n^{k_n}1_\bi \mid w\in \Si_n,\ k_1,\dots,k_n\in\N, \ \bi\in I^\theta\},
\\
&\{ y_1^{k_1}\dots y_n^{k_n}\psi_w1_\bi \mid w\in \Si_n,\ k_1,\dots,k_n\in\N, \ \bi\in I^\theta\}
\end{align*}
are bases of  $R_\theta$. 
\end{Theorem}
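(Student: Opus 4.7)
The plan is to establish the two spanning sets first and then prove linear independence via a faithful polynomial representation, mirroring the strategy used by Khovanov--Lauda and Rouquier in the non-super setting but with signs dictated by the parities $\|i\|$.

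First, I would show that the set $\{\psi_w y_1^{k_1}\cdots y_n^{k_n}1_\bi\}$ spans $R_\theta$. Starting from an arbitrary monomial in the generators, the idempotent relations (\ref{R1}),(\ref{R2}),(\ref{R2.75}) allow me to assume a unique $1_\bi$ at the right. The commutation relation (\ref{R4}) lets me move any $y_s$ past $\psi_r$ when $s\neq r,r+1$ (picking up a sign), and (\ref{R5}) handles the cases $s=r,r+1$, producing shorter $\psi$-monomials plus an idempotent. The quadratic relation (\ref{R6}) rewrites $\psi_r^2$ in terms of the $y$'s, and the braid relation (\ref{R7}) together with (\ref{R65}) allows me to reduce any product $\psi_{r_1}\cdots\psi_{r_l}$ corresponding to a non-reduced word modulo strictly shorter products. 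An induction on $(\ell(w),k_1+\cdots+k_n)$ then yields that every element of $R_\theta$ is a $\k$-linear combination of the claimed $\psi_w$-then-$y$'s monomials. A symmetric argument handles the second basis.

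Next, for linear independence, I would construct a faithful representation on the free graded $\k$-supermodule
$$
\Pol_\theta:=\bigoplus_{\bi\in I^\theta}\k[y_1,\ldots,y_n]\cdot 1_\bi,
$$
with the parities of the $y_s1_\bi$ matching those prescribed by the grading on $R_\theta$. The idempotents $1_\bi$ act by projection; the $y_s$ act by (signed) multiplication, with the super sign rule of (\ref{R3}) forcing $y_sy_r=-y_ry_s$ exactly when $\|i_r\|=\|i_s\|=\1$; and the $\psi_r$ act by a super divided-difference operator
$$
\psi_r\cdot(f\,1_\bi):=\begin{cases}\dfrac{{}^{s_r}\!f-f}{y_{r+1}-y_r}\,1_\bi & \text{if $i_r=i_{r+1}$,}\\[1ex]
Q_{i_r,i_{r+1}}(y_r,y_{r+1})\,\tfrac12\bigl(\,\text{swap}\,\bigr)f\cdot 1_{s_r\bi} & \text{otherwise,}\end{cases}
$$
where the swap and the normalisation are chosen exactly so that (\ref{R2.75}),(\ref{R4}),(\ref{R5}),(\ref{R6}) hold; the supercommutation signs between $\psi$ and $y$ appear naturally because the super action of $s_r$ on $y_s1_\bi$ with $s\neq r,r+1$ carries the Koszul sign $(-1)^{\|i_r\|\|i_{r+1}\|\|i_s\|}$.

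Then I would verify the braid-type relations (\ref{R65}) and (\ref{R7}) on $\Pol_\theta$ by a direct case-by-case calculation in the three-strand situation, reducing to the $\height(\theta)\leq 3$ case and checking each configuration of residues $i_r,i_{r+1},i_{r+2}$ against the polynomial $B_{i_r,i_{r+1},i_{r+2}}(u,v)$. This is the main obstacle: the super signs and the explicit form of $B$ must conspire correctly, and one has to track signs through the swap operation carefully (this is the essential content of \cite[\S3]{KKT}). Once the representation is established, the images $\psi_w y_1^{k_1}\cdots y_n^{k_n}1_\bi\cdot 1_\bi$ in $\Pol_\theta$, computed inductively on $\ell(w)$, are of the form $y_1^{k_1}\cdots y_n^{k_n}\cdot 1_\bi$ plus terms involving strictly fewer $y$-degrees or coming from other idempotents, so the spanning monomials are linearly independent by triangularity with respect to the obvious $\k$-basis of $\Pol_\theta$. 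The second basis follows by applying the anti-automorphism of $R_\theta$ which reverses Khovanov--Lauda diagrams, or by the symmetric spanning/independence argument.
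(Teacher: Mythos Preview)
The paper does not prove this theorem; it simply cites \cite[Corollary~3.15]{KKT}. Your overall strategy---a straightening argument for spanning followed by a faithful polynomial representation for linear independence---is exactly the approach of \cite{KKT}, so at the level of plan you are aligned with the source.

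That said, two concrete steps in your sketch do not work as written. First, the action of $\psi_r$ for $i_r\neq i_{r+1}$ cannot involve a factor $\tfrac12$: the ground ring $\k$ may have characteristic~$2$, and in any case a symmetric formula of this shape would make $\psi_r^2 1_\bi$ a multiple of $Q_{i_r,i_{r+1}}(y_r,y_{r+1})^2$ rather than the required $Q_{i_r,i_{r+1}}(y_r,y_{r+1})$. The action in \cite{KKT} is \emph{asymmetric}: one factorises $Q_{ij}$ and lets $\psi_r$ act by a (super) swap times one factor, so that composing the two opposite swaps recovers $Q$ exactly. Second, your triangularity claim is false: the element $\psi_w y_1^{k_1}\cdots y_n^{k_n}1_\bi$ applied to $1\cdot 1_\bi\in\Pol_\theta$ does \emph{not} have $y_1^{k_1}\cdots y_n^{k_n}\cdot 1_\bi$ as a leading term---for $w\neq 1$ the result either lies in the $1_{w\bi}$-component with $w\bi\neq\bi$, or (when $w\bi=\bi$) involves divided differences that strictly lower the $y$-degree, so this term simply does not appear. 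The actual independence argument filters by $\ttl(w)$ and reduces to the linear independence of the (super) nil-Hecke divided-difference operators on polynomials, which is a separate classical fact rather than a direct triangularity against the monomial basis of $\Pol_\theta$.
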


\begin{Lemma} \label{LBKW} {\rm \cite[Proposition 2.5]{BKW}}
Let $\theta\in Q_+$ with $\height(\theta)=n$ and $\bi\in I^\theta$.
\begin{enumerate}
\item[{\rm (i)}] If $w$ is an element of $\Si_n$ with reduced decompositions   $w=s_{t_1}\dots s_{t_m}=s_{r_1}\dots s_{r_m}$. Then in $R_\theta$ we have 
$$\psi_{t_1}\dots\psi_{t_m}1_\bi =\psi_{r_1}\dots\psi_{r_m}1_\bi +(*),$$
where $(*)$ is a linear combination of elements of the form $\psi_uf(y)1_\bi $ such that $u< w$ and $f(y)$ is a polynomial in $y_1,\dots,y_n$.

\item[{\rm (ii)}] If the decomposition $w=s_{t_1}\dots s_{t_m}$ is not reduced, then the element $\psi_{t_1}\dots\psi_{t_m}1_\bi$ can be written as a linear combination of elements of the form
$\psi_{t_{a_1}}\dots\psi_{t_{a_b}}f(y)1_\bi$ 
such that $1\leq a_1<\dots<a_b\leq m$, $b < m$, $s_{t_{a_1}}\dots s_{t_{a_b}}$ is a reduced word, and $f(y)$ is a polynomial in $y_1,\dots,y_n$.
\end{enumerate}

\end{Lemma}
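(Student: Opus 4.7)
The plan is to handle (i) by Matsumoto's theorem and (ii) by the strong exchange condition combined with (i) and relation (R6); both arguments are in the spirit of the proof of BKW \cite[Proposition 2.5]{BKW} but require additional care in the super setting.

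For (i), by Matsumoto's theorem any two reduced decompositions of $w$ are linked by a sequence of commutations ($s_rs_t=s_ts_r$, $|r-t|>1$) and braid relations ($s_rs_{r\pm 1}s_r=s_{r\pm 1}s_rs_{r\pm 1}$), so it suffices to verify the claim when the two decompositions differ by a single such move. A commutation move is governed by (R65), which gives $\psi_r\psi_t1_\bj=\pm\psi_t\psi_r1_\bj$, so no lower-order term appears; a bookkeeping check shows that the super-signs aggregated over a compatible sequence of commutations agree on the two sides. A braid move is governed by (R7), which yields a single insertion of the form
\[
\psi_{t_1}\cdots\psi_{t_{k-1}}\cdot B_{*,*,*}(y_{*},y_{*})\cdot\psi_{t_{k+3}}\cdots\psi_{t_m}1_\bi
\]
with $B_{*,*,*}(y_*,y_*)$ a polynomial in two of the $y$-variables. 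Pushing this polynomial to the right past $\psi_{t_{k+3}},\dots,\psi_{t_m}$ using (R4) and (R5), we observe that (R4) contributes only a sign while each non-trivial instance of (R5) strips one $\psi_r$ from the word (the ``$+1_\bi$'' summand). After full normalization the correction becomes a $\k$-linear combination of expressions $\psi_vf(y)1_\bi$, where each $v$ admits a reduced expression obtained as a proper subword of the original reduced word $s_{t_1}\cdots s_{t_m}$ (at minimum the three letters involved in the braid move are dropped). The subword characterization of the Bruhat order then yields $v<w$.

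For (ii) I would induct on $m$. Let $j$ be the smallest index such that $s_{t_1}\cdots s_{t_j}$ is not reduced; then $s_{t_1}\cdots s_{t_{j-1}}$ is reduced and the exchange condition provides $i<j$ with
$s_{t_1}\cdots s_{t_j}=s_{t_1}\cdots\hat s_{t_i}\cdots s_{t_{j-1}}$,
so the two length-$(j-i)$ words $s_{t_i}s_{t_{i+1}}\cdots s_{t_{j-1}}$ and $s_{t_{i+1}}\cdots s_{t_{j-1}}s_{t_j}$ are reduced expressions for the same Coxeter element. Applying (i) to rewrite
$\psi_{t_{i+1}}\cdots\psi_{t_{j-1}}\psi_{t_j}1_\bj=\psi_{t_i}\psi_{t_{i+1}}\cdots\psi_{t_{j-1}}1_\bj+(\text{lower-order})$
and substituting into the original product places two adjacent copies of $\psi_{t_i}$ in the main term. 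Relation (R6) replaces $\psi_{t_i}^21_\bj$ by $Q_{i_{t_i},i_{t_i+1}}(y_{t_i},y_{t_i+1})1_\bj$, yielding a $\psi$-word of length $m-2$ times a polynomial, which upon further use of (R4) and (R5) produces only $\k$-linear combinations of $\psi_{t_{a_1}}\cdots\psi_{t_{a_b}}g(y)1_\bi$ with $b\le m-2$ and the $a_\bullet$ chosen from $\{1,\dots,m\}$. The lower-order terms generated by the application of (i) similarly split into products of fewer than $m$ of the $\psi_{t_r}$ after being multiplied by $\psi_{t_1}\cdots\psi_{t_{i-1}}$ on the left and $\psi_{t_{j+1}}\cdots\psi_{t_m}$ on the right. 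Whenever a resulting subsequence $s_{t_{a_1}}\cdots s_{t_{a_b}}$ is not reduced, apply the inductive hypothesis; the induction terminates because every term produced has $\psi$-length strictly less than $m$.

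The main obstacle lies in the sign bookkeeping specific to the super setting: unlike the purely even KLR algebra, commutation moves in (i) introduce non-trivial parity-dependent signs through (R65), and the cumulative sign picked up along a braid-move sequence must be shown to agree on the two reduced expressions so that $(*)$ genuinely consists of lower-order terms only. A secondary technical point is to ensure, at each stage of the rearrangements in (ii), that the produced subwords remain subsequences of the original index sequence $(t_1,\dots,t_m)$ so that the output matches the shape required by the statement, rather than slipping into some unrelated reduced word.
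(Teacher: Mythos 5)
The paper does not prove this lemma: it cites \cite[Proposition 2.5]{BKW}, which is stated and proved there for ordinary (non-super) KLR algebras, where (\ref{R65}) carries no sign. Your outline --- Matsumoto's theorem for (i), then induction with the exchange condition and (\ref{R6}) for (ii) --- is the standard BKW argument.

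But the sign issue you defer to the end is not a bookkeeping nuisance; it is a genuine obstruction, and the assertion in your proof of (i) that ``the super-signs aggregated over a compatible sequence of commutations agree on the two sides'' is not established and is in fact false. Take $\bi = 0^5 \in I^{5\al_0}$ and $w = s_1 s_3$. Both $s_1 s_3$ and $s_3 s_1$ are reduced, and (\ref{R65}) gives $\psi_1\psi_3 1_\bi = (-1)^{\|0\|\,\|0\|\,\|0\|\,\|0\|}\psi_3\psi_1 1_\bi = -\psi_3\psi_1 1_\bi$ with no lower-order correction; by Theorem~\ref{TBasis} these are nonzero and the difference $-2\psi_3\psi_1 1_\bi$ cannot be written as a combination of $\psi_u f(y)1_\bi$ with $u<w$. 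The correct super version of (i) is $\psi_{t_1}\cdots\psi_{t_m}1_\bi = \pm\psi_{r_1}\cdots\psi_{r_m}1_\bi + (*)$ with a parity-determined sign: each braid move contributes $+1$ on the top-degree term via (\ref{R7}), each commutation move contributes the factor from (\ref{R65}), and the cumulative sign can be carried along explicitly. Once (i) is so corrected, your argument for (ii) does go through: the extra $\pm$ multiplies the $\psi_{t_i}^2$ that (\ref{R6}) replaces by a polynomial, so it is absorbed into $f(y)$ harmlessly. As written, however, part (i) does not hold verbatim over a ground ring of characteristic $\neq 2$, so your proof needs to either track the sign explicitly or restate (i) with the $\pm$ inserted.
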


We have a graded superalgebra antiautomorphism which is identity on the generators:
\begin{equation}\label{ETauAntiAuto}
\tau:R_\theta\to R_\theta
\end{equation}
By definition (of superalgebra antiautomorphism), we have $\tau(xy)=\tau(y)\tau(x)$ for all $x,y\in R_\theta$. 

If $V=\bigoplus_{n,\eps} V^n_\eps$ is a 
graded $R_\theta$-supermodule, then the {\em graded dual}
$V^\circledast$ is the graded $R_\theta$-supermodule 
which is the graded dual superspace $V^*$ with the $R_\theta$-action given by $(xf)(v)=f(\tau(x)v)$, for all $f\in V^\circledast, v\in V, x\in
R_\theta$. 
Given an {\em even} homomorphism $\phi:V\to W$ of graded $R_\theta$-supermodules, we have a homomorphism 
\begin{equation}\label{EPhiCircledast}
\phi^\circledast:W^\circledast\to V^{\circledast}
\end{equation}
 of graded $R_\theta$-supermodules so that $\phi^{\circledast}(f)=f\circ \phi$ for all $f\in W^{\circledast}$.

Note that the graded supercomponents $(R_\theta)^n$ are free of finite rank over $\k$ and $(R_\theta)^{n}=0$ for $n\ll0$. 
So, if $\k=\F$ then the same is true for a finitely generated graded $R_\theta$-supermodule $V$ and its {\em word spaces}\, $1_\bi V$ for all $\bi\in I^\theta$. 
At any rate, suppose that $V$ is a graded $R_\theta$-supermodule such that the graded supercomponents $V^n_\eps$ are free of finite rank over $\k$ and $V_{n}=0$ for $n\ll0$. Then we define the {\em formal character of $V$}
$$
\ch_{q,\pi}V:=\sum_{\bi\in I^\theta}(\dim_{q,\pi} 1_\bi  V)\,\bi \in \Z^\pi((q))\cdot I^\theta,
$$
where $\Z^\pi((q))\cdot I^\theta$ denotes the set of all formal $\Z^\pi((q))$-linear combinations of the words in $I^\theta$. 
We say that a graded $R_\theta$-supermodule $V$ {\em has a well-defined formal character}\, if all $V^n_\eps$ are free of finite rank over $\k$ and $V^n=0$ for $n\ll0$. In particular, if $V$ is free of finite rank as a $\k$-module, then it has a well-defined formal character $\ch_{q,\pi} V\in \Z^\pi[q,q^{-1}]\cdot I^\theta$
 and, using (\ref{EDimDual}), we have   
\begin{equation}\label{EChDual}
\ch_{q,\pi} (V^\circledast)=\overline{\ch_{q,\pi} V},
\end{equation}
where we have extended the bar-involution (\ref{EBarInv}) to $\Z^\pi[q,q^{-1}]\cdot I^\theta$ in the obvious way.

In fact, 
$$
\Z^\pi((q))\lan I\ran:=\bigoplus_{\theta\in Q_+}\Z^\pi((q)) \cdot I^\theta
$$ 
is a $\Z^\pi((q))$-algebra with respect to the {\em quantum shuffle product} defined on words as follows:
\begin{equation}\label{EShP}
\bi^1\circ\bi^2:=\sum f_w(\pi,q)i_{w(1)}\cdots i_{w(k+l)},
\end{equation}
where $\bi^1=i_1\cdots i_k$, $\bi^2=i_{k+1}\cdots i_{k+l}$, the sum is over all $w\in \Si_{k+l}$ such that $w^{-1}(1)<\cdots<w^{-1}(k)$, $w^{-1}(k+1)<\cdots<w^{-1}(k+l)$, and 
$$
 f_w(\pi,q):=\sum_{\substack{1\leq r\leq k<s\leq k+l\\ w^{-1}(r)>w^{-1}(s)}}\pi^{\| i_{w(r)}\|\, \| i_{w(s)}\| }q^{-(\al_{i_{w(r)}}|\al_{i_{w(s)}})}
$$
The shuffle product on formal characters corresponds to the parabolic induction on modules, see Corollary~\ref{CShuffle}. 

\subsection{Irreducible modules of quiver Hecke superalgebras}
By \cite[\S5.1, Proposition 6.15]{HW}, \cite[Theorems 8.6,\,8.7(ii)]{KKOII} (and the argument of \cite[Corollary 3.19]{KL1} for absolute irreducibility), we have:

\begin{Lemma} \label{LTypeM} 
Let $\k=\F$. 
Every irreducible graded $R_\theta$-supermodule $L$ is finite dimensional and absolutely irreducible as an $R_\theta$-module. Moreover, there exists $m\in \Z$ such that $(\funQ^mL)^\circledast \simeq\Uppi^\eps \funQ^mL$ for some $\eps\in\Z/2$. 
\end{Lemma}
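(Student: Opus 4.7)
The plan is to dispatch the three claims—finite dimensionality, absolute irreducibility as an ungraded module, and the self-duality $(\funQ^m L)^\circledast \simeq \Uppi^\eps \funQ^m L$—in that order, with the first two reducing largely to results in the literature and the third requiring a small but nontrivial parity check.

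For finite dimensionality I would argue that the positively graded generators $y_1,\dots,y_n$ must act locally nilpotently on the irreducible $L$: otherwise some word space $1_\bi L$ would contain nonzero vectors in arbitrarily high degrees, contradicting finite generation (which follows from irreducibility together with the basis Theorem~\ref{TBasis}). Hence $L$ factors through a cyclotomic quotient $R_\theta^\Lambda$ for $\Lambda$ sufficiently dominant, and these are finite dimensional by Hill--Wang Proposition 6.15 (with Kang--Kashiwara--Oh Theorem 8.6 as backup). For absolute irreducibility I would adapt Khovanov--Lauda Corollary 3.19 to the super setting: the graded super-endomorphism algebra $\END_{R_\theta}(L)$ is a graded super-division algebra over $\F$, and a graded-super Schur's lemma argument together with Kang--Kashiwara--Oh Theorem 8.7(ii) (which rules out the type-$\mathtt{Q}$ case in this Lie type) concentrates it in bidegree $(0,\0)$ as $\F\cdot\id_L$, giving $\End_{R_\theta}^{\mathrm{ungr}}(L)=\F$.

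For the self-duality claim I would observe that $L^\circledast$ is again an irreducible graded $R_\theta$-supermodule (absolute irreducibility of $L$ propagates to $L^\circledast$ via the antiautomorphism $\tau$), so uniqueness of irreducibles up to grading and parity shifts forces $L^\circledast \simeq \funQ^n\Uppi^\eps L$ for some $n\in\Z$ and $\eps\in\Z/2$. A short direct computation then gives $(\funQ^m L)^\circledast \simeq \funQ^{n-2m}\Uppi^\eps(\funQ^m L)$, so the entire claim reduces to showing that $n$ is even, in which case we take $m:=n/2$.

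The main obstacle is precisely this parity of $n$. I would handle it by comparing formal characters. Combining equation~(\ref{EChDual}) with the isomorphism $L^\circledast \simeq \funQ^n\Uppi^\eps L$ yields $\overline{\ch_{q,\pi} L} = q^n\pi^\eps\ch_{q,\pi} L$. Inspection of the Gram matrix in \S\ref{SSLTN} shows that every $(\alpha_i|\alpha_j)$ is an even integer, so every generator of $R_\theta$ has even first-coordinate bidegree, and consequently the $q$-support of $\ch_{q,\pi} L$ sits in a single coset $d_0+2\Z$, where $d_0$ is the minimum degree of $L$. The bar involution sends this support to $-d_0+2\Z$ while multiplication by $q^n$ shifts it to $n+d_0+2\Z$; these cosets of $\Z$ can coincide only if $n+2d_0$ is even, i.e., $n\equiv 0 \pmod 2$, as required.
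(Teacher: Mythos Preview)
Your approach matches the paper's: both defer the three claims to the same references---\cite[\S5.1, Proposition~6.15]{HW} and \cite[Theorems~8.6,\,8.7(ii)]{KKOII} for finite dimensionality and self-duality, and the argument of \cite[Corollary~3.19]{KL1} for absolute irreducibility---and the paper provides no further detail. Your explicit parity argument for the evenness of $n$ (all entries of the Gram matrix in \S\ref{SSLTN} are even, so $R_\theta$ is concentrated in even $\Z$-degrees and $L$ lives in a single coset $d_0+2\Z$) is a clean supplement that the paper simply absorbs into its citations.

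There is one genuine gap in your third paragraph. You write that ``uniqueness of irreducibles up to grading and parity shifts forces $L^\circledast \simeq \funQ^n\Uppi^\eps L$'', but there is no such uniqueness: by Lemma~\ref{LAmount} there are $|\Par(\theta)|$ irreducibles up to shifts, and nothing in your argument rules out $L^\circledast$ being a shift of some other irreducible $L'$. The fact that $L^\circledast$ is specifically a shift of $L$ is exactly what the cited results in \cite{KKOII} establish (via the classification of irreducibles and its compatibility with $\circledast$); it is not a formality. Once that is in hand, your parity computation correctly extracts the integer $m=n/2$.
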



Recall the set $\Par(\theta)$ of the root partitions of $\theta$ from  \S\ref{SSLTN}. 

\begin{Lemma} \label{LAmount} {\rm \cite[Lemma 3.1.18]{KlLi}} 
Let $\k=\F$. Then $|\Irr(R_\theta)|=|\Par(\theta)|$.  
\end{Lemma}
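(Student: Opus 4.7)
The plan is to reduce the count to the classification of $\Irr(R_\theta)$ in terms of cuspidal data, following the Kleshchev--Livesey theory of cuspidal systems for quiver Hecke superalgebras recalled in \S\ref{SSCuspSys}. I will attach to each root partition $(\um,\bmu)\in\Par(\theta)$ a distinct irreducible and show that every irreducible arises this way.

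First I would collect the ingredients. For each real positive root $\be\in\Phi_+^\re$ and each $m\in\N_+$, the real cuspidal algebra $\bar R_\be$ has a unique irreducible module $L(\be)$, and, up to the grading/parity shifts permitted by Lemma~\ref{LTypeM}, there is a unique irreducible cuspidal $\bar R_{m\be}$-module $L(\be^m)$, obtained as the head of $\Ind(L(\be)^{\boxtimes m})$. For the imaginary null-root $\de$, the Kang--Kashiwara--Oh categorification of an irreducible highest weight module over the quantum Kac--Moody superalgebra of type $A_{2\ell}^{(2)}$ yields exactly $|\Par^J(d)|$ isomorphism classes of irreducible $\bar R_{d\de}$-modules, which I label $\{L(\bmu)\mid\bmu\in\Par^J(d)\}$.

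Next, I would fix a convex preorder $\preceq$ on $\Phi_+$ as in Example~\ref{ExConPr}. Given $(\um,\bmu)\in\Par(\theta)$, arrange the real roots supporting $\um$ in decreasing order $\be_1\succ\cdots\succ\be_k\succ\de\succ\ga_1\succ\cdots\succ\ga_l$ and consider the corresponding cuspidal parabolic subalgebra and its standard module
$$
\De(\um,\bmu):=\Ind\bigl(L(\be_1^{m_{\be_1}})\boxtimes\cdots\boxtimes L(\be_k^{m_{\be_k}})\boxtimes L(\bmu)\boxtimes L(\ga_1^{m_{\ga_1}})\boxtimes\cdots\boxtimes L(\ga_l^{m_{\ga_l}})\bigr).
$$
The key technical step, carried out by the standard cuspidal-systems argument of Kleshchev--Ram adapted to the super setting in \cite{KlLi}, is to show that $\De(\um,\bmu)$ has an irreducible head $L(\um,\bmu)$ appearing with multiplicity one; that the remaining composition factors of $\De(\um,\bmu)$ correspond to root partitions strictly smaller in the natural bilexicographic order on $\Par(\theta)$; and that every irreducible $R_\theta$-supermodule arises in this way (up to the shifts of Lemma~\ref{LTypeM}). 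Convexity axioms (\ref{EPO2})--(\ref{EPO3}), applied via Lemma~\ref{LCon3}, are exactly what rule out cross-contamination between the imaginary block $L(\bmu)$ and the real blocks $L(\be_i^{m_{\be_i}})$, $L(\ga_j^{m_{\ga_j}})$ in any parabolic restriction.

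Counting root partitions then yields $|\Irr(R_\theta)|=|\Par(\theta)|$. The principal obstacle is not the combinatorial bookkeeping but the imaginary count $|\Irr(\bar R_{d\de})|=|\Par^J(d)|$, which is a genuinely nontrivial input: it relies on the Kang--Kashiwara--Oh categorification theorem together with a comparison of the imaginary cuspidal subcategory with a Fock-space categorification. In fact, producing a \emph{canonical} labeling of the imaginary cuspidal irreducibles by $J$-multipartitions (which the mere counting argument does not achieve) is precisely the central objective of the present paper.
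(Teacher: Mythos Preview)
The paper does not prove this lemma; it simply records the citation \cite[Lemma~3.1.18]{KlLi}. Your proposal instead outlines the full cuspidal classification---essentially Theorem~\ref{THeadIrr}, also quoted without proof from \cite{KlLi}---and reads off the count from parts (ii) and (vi). That is a correct deduction once Theorem~\ref{THeadIrr} is available, and indeed the paper later uses Theorem~\ref{THeadIrr}(vi) in just this way to obtain Lemma~\ref{LImCuspIrrAmount}.

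The route in \cite{KlLi}, however, runs the other way: Lemma~3.1.18 there \emph{precedes} the cuspidal classification Theorem~3.3.13 and serves as an input to it. The direct argument bypasses cuspidal systems entirely: the Kang--Kashiwara--Oh and Hill--Wang categorifications identify $\bigoplus_\theta K_0(R_\theta)$ with (an integral form of) the positive half of the quantum group, so $|\Irr(R_\theta)|$ equals the rank of the $\theta$-weight space, and by PBW (the imaginary root $n\de$ having multiplicity $\ell$) this is exactly $|\Par(\theta)|$. Your critical input---that $\bar R_{d\de}$ has exactly $|\Par^J(d)|$ irreducibles---is, in \cite{KlLi}, deduced \emph{from} Lemma~\ref{LAmount} together with the cuspidal machinery, not the other way around. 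You rightly flag this step as the principal obstacle, but absent an independent argument for it (and the ``Fock-space comparison'' you gesture at is not developed in the super setting in \cite{KlLi}), your proposal has the dependency arrows reversed and is at risk of circularity.
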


\subsection{Induction and restriction for quiver Hecke superalgebras}
\label{SSIndRes}
Let $\theta_1,\dots,\theta_n\in Q_+$ and $\theta:=\theta_1+\dots+\theta_n$. Denote 
$\underline{\theta}:=(\theta_1,\dots,\theta_n)\in Q_+^n$, 
$$R_{\underline{\theta}}
=R_{\theta_1,\dots,\theta_n}:=R_{\theta_1}\otimes\dots\otimes R_{\theta_n}$$ and 
\begin{equation}\label{ETheta}
1_{\underline{\theta}}
=1_{\theta_1,\dots,\theta_n}:=\sum_{\bi^1\in I^{\theta_1},\dots,\bi^n\in I^{\theta_n}}1_{\bi^1\cdots\bi^n}\in R_{\theta}.
\end{equation}
By \cite[\S4.1]{KKO}, there is a natural graded superalgebra embedding 
\begin{equation}\label{EIota}
\iota_{\underline{\theta}}:R_{\underline{\theta}}\to 1_{\underline{\theta}}R_{\theta}1_{\underline{\theta}},
\end{equation}
and we identify $R_{\underline{\theta}}$ with a subalgebra of $1_{\underline{\theta}}R_{\theta}1_{\underline{\theta}}$ via this embedding. We refer to this subalgebra as a {\em parabolic subalgebra}. Note that $\iota_{\underline{\theta}}(1_{\theta_1}\otimes\dots\otimes 1_{\theta_n})=1_{\underline{\theta}}$.

We have the exact functors:  
\begin{align*}
\Res^\theta_{\theta_1,\dots,\theta_n}= 
\Res_{\underline{\theta}}^\theta 
&:= 1_{\underline{\theta}} R_{\theta}
\otimes_{R_{\theta}} ?:\mod{R_{\theta}}\rightarrow \mod{R_{\underline{\theta}}},
\\
\Ind ^\theta_{\theta_1,\dots,\theta_n} =\Ind_{\underline{\theta}}^\theta 
&:= R_{\theta} 1_{\underline{\theta}}
\otimes_{R_{\underline{\theta}}} ?:\mod{R_{\underline{\theta}}} \rightarrow \mod{R_{\theta}},\\
\Coind^\theta_{\theta_1,\dots,\theta_n}=\Coind_{\underline{\theta}}^\theta 
&:= \Hom_{R_{\underline{\theta}}}(1_{\underline{\theta}}R_\theta
, ?):\mod{R_{\underline{\theta}}} \rightarrow \mod{R_{\theta}}.
\end{align*}
The functor $\Ind_{\underline{\theta}}^\theta
$ is left adjoint to $\Res_{\underline{\theta}}^\theta$, and  $\Coind_{\underline{\theta}}^\theta
$ is right adjoint to $\Res_{\underline{\theta}}^\theta$. 

The following follows immediately from Theorem~\ref{TBasis}.

\begin{Lemma} \label{LIndBasis} 
Let $\underline{\theta}:=(\theta_1,\dots,\theta_n)\in Q_+^n$,
$\la_r:=\height(\theta_r)$ for $r=1,\dots,n$, and 
$V\in\mod{R_{\underline{\theta}}}$. 
Let $\la:=(\la_1,\dots,\la_n)$ be a composition of $d:=\la_1+\dots+\la_n$. Then, as graded $\k$-supermodules, 
$$\Ind_{\underline{\theta}}^\theta V=\bigoplus_{w\in \D_d^\la} \psi_w\otimes V,
$$
and for each $w\in \D_d^\la$, the map $V\to \psi_w\otimes V,\ v\mapsto \psi_w\otimes v$ is an isomorphism. 
\end{Lemma}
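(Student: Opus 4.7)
The plan is to prove the stronger (and almost equivalent) statement that $R_\theta 1_{\underline{\theta}}$ is a free right $R_{\underline{\theta}}$-supermodule with basis $\{\psi_u 1_{\underline{\theta}} \mid u \in \D_d^\la\}$; the claim of the lemma then follows immediately by applying the functor $- \otimes_{R_{\underline{\theta}}} V$, with each map $v \mapsto \psi_u \otimes v$ an isomorphism of graded $\k$-supermodules by freeness.

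First I would establish generation. By Theorem~\ref{TBasis}, $R_\theta$ is spanned by the elements $\psi_w y_1^{k_1} \cdots y_d^{k_d} 1_{\bi}$ with $w \in \Si_d$, $\bk \in \N^d$, $\bi \in I^\theta$. Multiplication on the right by $1_{\underline{\theta}}$ restricts $\bi$ to concatenations $\bi = \bi^1 \cdots \bi^n$ with $\bi^r \in I^{\theta_r}$. For each $w \in \Si_d$ I use the unique factorization $w = u \cdot v$ with $u \in \D_d^\la$, $v \in \Si_\la$, and $\ttl(w) = \ttl(u) + \ttl(v)$, and choose a reduced expression for $w$ obtained by concatenating reduced expressions for $u$ and for $v$, so that $\psi_w = \psi_u \psi_v$. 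Decomposing $v = v_1 \cdots v_n \in \Si_{\la_1} \times \cdots \times \Si_{\la_n}$ and choosing a reduced expression for $v$ compatible with this decomposition, one sees that $\psi_v$ uses only Coxeter generators indexed within the blocks $\ttP^\la_r$ of (\ref{EPartitionOfComposition}), so $\psi_v \, y_1^{k_1} \cdots y_d^{k_d} 1_{\bi}$ lies in the image of $\iota_{\underline{\theta}}$, i.e.\ inside $R_{\underline{\theta}}$. Hence $\psi_w y^{\bk} 1_{\bi} = \psi_u \cdot (\psi_v y^{\bk} 1_{\bi})$ belongs to $\psi_u R_{\underline{\theta}}$, proving $R_\theta 1_{\underline{\theta}} = \sum_{u \in \D_d^\la} \psi_u R_{\underline{\theta}}$.

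Next I would verify that this sum is direct and that each summand is free on $\psi_u 1_{\underline{\theta}}$. Applying Theorem~\ref{TBasis} individually in each factor $R_{\theta_r}$ and using $\iota_{\underline{\theta}}$, the parabolic $R_{\underline{\theta}} \cdot 1_{\underline{\theta}}$ has a $\k$-basis consisting of the elements $\psi_v y^{\bk} 1_{\bi}$ as $v$ ranges over $\Si_\la$, $\bk$ over $\N^d$, and $\bi$ over concatenations in $I^{\theta_1} \times \cdots \times I^{\theta_n}$, with a fixed choice of reduced expression for each $v$ respecting the block decomposition. Multiplying on the left by $\psi_u$ for $u \in \D_d^\la$ then produces the family $\{\psi_u \psi_v y^{\bk} 1_{\bi}\}$, and the preceding paragraph shows these are precisely the same — after a triangular (by Bruhat order) change of basis supplied by Lemma~\ref{LBKW}(i) — as the elements $\psi_w y^{\bk} 1_{\bi}$ with $w = uv$ ranging over all of $\Si_d$. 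Since the latter form a $\k$-basis of $R_\theta 1_{\underline{\theta}}$ by Theorem~\ref{TBasis}, linear independence follows, which gives the desired freeness.

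The main obstacle is the bookkeeping around two points: the choice of reduced expressions (so that $\psi_w = \psi_u \psi_v$ holds on the nose rather than only modulo lower terms, for which Lemma~\ref{LBKW}(i) is the only tool needed), and the sign conventions built into the supertensor embedding $\iota_{\underline{\theta}}$ — since the parity of $\psi_v$ is a sum of parities from disparate blocks, the identification of $\psi_v y^{\bk} 1_{\bi}$ with the image of a pure tensor in $R_{\underline{\theta}}$ must be tracked through the defining relations (\ref{R3})--(\ref{R7}). Both are resolvable with care but constitute the only non-formal content of the argument.
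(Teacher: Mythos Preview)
Your proposal is correct and follows the same route as the paper, which simply says the lemma ``follows immediately from Theorem~\ref{TBasis}''; you have spelled out the details of that deduction. One minor point: the appeal to Lemma~\ref{LBKW}(i) in your second paragraph is superfluous --- having already chosen reduced expressions so that $\psi_w = \psi_u\psi_v$ exactly (as you do in the first paragraph), the families $\{\psi_u\psi_v y^{\bk}1_{\bi}\}$ and $\{\psi_w y^{\bk}1_{\bi}\}$ coincide on the nose, so no triangular change of basis is needed.
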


The functors of (co)induction and restriction have obvious parabolic analogues. Given a family $\underline{\ga}=(\ga_{a,b})_{1\leq a\leq m,\,1\leq b\leq n}$ of elements of $Q_+$ such that  $\sum_{a=1}^m\ga_{a,b}=\theta_b$ for all $1\leq b\leq n$, we have  functors
\begin{align*}
\Ind_{\ga_{1,1},\dots,\ga_{m,1}\,;\,\dots\,;\,\ga_{1,n},\dots,\ga_{m,n}}^{\,\theta_1;\,\dots\,;\,\theta_n}
&:
\mod{R_{\ga_{1,1},\dots,\ga_{m,1}\,,\,\dots\,,\,\ga_{1,n},\dots,\ga_{m,n}}}\to\mod{R_{\,\theta_1,\,\dots\,,\,\theta_n}}
\\
\Res_{\ga_{1,1},\dots,\ga_{m,1}\,;\,\dots\,;\,\ga_{1,n},\dots,\ga_{m,n}}^{\,\theta_1;\,\dots\,;\,\theta_n}
&:\mod{R_{\,\theta_1,\,\dots\,,\,\theta_n}}\to \mod{R_{\ga_{1,1},\dots,\ga_{m,1}\,,\,\dots\,,\,\ga_{1,n},\dots,\ga_{m,n}}}.
\end{align*}

For $ g\in \Si_n$ and $\underline{\theta}=(\theta_1,\dots,\theta_n)\in Q_+^n$, recalling (\ref{EThetaParity}), let 
$$
 g\underline{\theta}:=(\theta_{ g^{-1}(1)},\dots,\theta_{ g^{-1}(n)}), 
$$
\begin{align*}
s( g,\underline{\theta})&:=\sum_{\substack{1\leq m<k\leq n,\\ g(m)> g(k)}}(\theta_m \mid \theta_k)\in\Z
\quad\text{and}\quad
\eps( g,\underline{\theta}):=\sum_{\substack{1\leq m<k\leq n,\\  g(m)> g(k)}}\| \theta_m\| \,\| \theta_k\| \in\Z/2.
\end{align*}
There is an isomorphism of  graded superalgebras
$$
\phi^g:R_{ g\underline{\theta}}\to R_{\underline{\theta}},\ 
x_1\otimes\dots\otimes x_n\mapsto 
(-1)^{\sum_{1\leq a<c\leq n, g(a)>g(c)}|x_a||x_c|}
x_{ g(1)}\otimes\dots\otimes x_{ g(n)}.
$$
Note that this makes sense even when some of $\theta_k's$ are $0$ (with $R_0$ being interpreted as the trivial $\k$-algebra $\k$). 
Composing with the isomorphism $\phi^g$, we get a functor
$$
\mod{R_{\underline{\theta}}}\to \mod{R_{ g\underline{\theta}}},\   V\mapsto  V^{\phi^ g}.
$$
Making in addition degree and parity shifts, we get a functor 
\begin{equation}\label{ETwist}
\mod{R_{\underline{\theta}}}\to \mod{R_{ g\underline{\theta}}},\   V\mapsto {}^ {\{g\}}  V:=\Uppi^{-\eps( g,\underline{\theta})}\funQ^{-s( g,\underline{\theta})} V^{\phi^ g}.
\end{equation}

Let $\underline{\theta}=(\theta_1,\dots,\theta_n)\in Q_+^n$ and $\underline{\eta}=(\eta_1,\dots,\eta_m)\in Q_+^m$ with 
$\theta_1+\dots+\theta_n=\eta_1+\dots+\eta_m=:\al.$ We denote by $\D(\underline{\eta},\underline{\theta})$ the set of all tuples $\underline{\ga}=(\ga_{a,b})_{1\leq a\leq m,\,1\leq b\leq n}$ of elements of $Q_+$ such that  $\sum_{a=1}^m\ga_{a,b}=\theta_b$ for all $1\leq b\leq n$ and $\sum_{b=1}^n\ga_{a,b}=\eta_a$ for all $1\leq a\leq m$. For each $\underline{\ga}\in \D(\underline{\eta},\underline{\theta})$, we define permutations 
$w_{m,n}\in\Si_{mn}
$ (not depending on $\underline{\ga}$) 
and 
$
x(\underline{\ga})\in\Si_{\height(\al)}
$ (depending only on the heights $\height(\ga_{a,b})$) 
as follows. The permutation $w_{m,n}\in\Si_{mn}$ maps the ordered $mn$-tuple of symbols 
$$
(\ga_{1,1},\dots,\ga_{m,1},\ga_{1,2},\dots,\ga_{m,2},\dots,\ga_{1,n},\dots,\ga_{m,n})
$$
onto the ordered $mn$-tuple of symbols
$$
(\ga_{1,1},\dots,\ga_{1,n},\ga_{2,1},\dots,\ga_{2,n},\dots,\ga_{m,1},\dots,\ga_{m,n})
$$
while $x(\underline{\ga})$ is the corresponding permutation of integral segments of sizes $\height(\ga_{a,b})$. Denoting $h_{a,b}:=\height(\ga_{a,b})$ and considering the composition 
$$
\chi=(h_{1,1},\dots,h_{m,1},h_{1,2},\dots,h_{m,2},\dots,h_{1,n},\dots,h_{m,n})\in\Comp(mn,\height(\al)),
$$
we have $x(\underline{\ga})=w_{\chi,w_{m,n}}$, using the notation  introduced in (\ref{EWLaSi}).

\begin{Example} \label{EXMackey1} 
Let $\ell=2$ so we have simple roots $\al_0,\al_1,\al_2$. 
Take 
$\underline{\eta}=(\al_0+\al_1,\al_1+\al_2)\in Q_+^2$ and $\underline{\theta}=(\al_2,\al_0+\al_1,\al_1)\in Q_+^3$. Then $\D(\underline{\eta},\underline{\theta})=\{\underline{\ga},\underline{\beta}\}$, where 
\begin{align*}
\left(
\begin{matrix}
\ga_{1,1} & \ga_{1,2} & \ga_{1,3}  \\
\ga_{2,1} & \ga_{2,2} & \ga_{2,3}
\end{matrix}
\right)
&=
\left(
\begin{matrix}
0 & \al_0+\al_1 & 0  \\
\al_2 & 0 & \al_1
\end{matrix}
\right),
\\
\left(
\begin{matrix}
\be_{1,1} & \be_{1,2} & \be_{1,3}  \\
\be_{2,1} & \be_{2,2} & \be_{2,3}
\end{matrix}
\right)
&=
\left(
\begin{matrix}
0 & \al_0 & \al_1  \\
\al_2 & \al_1 & 0
\end{matrix}
\right).
\end{align*}
Now 
\begin{align*}
w_{2,3}&: 1\mapsto 1,\ 2\mapsto 4,\ 3\mapsto 2,\ 4\mapsto 5,\ 5\mapsto 3,\ 6\mapsto 6,
\\
x(\underline{\ga})&: 1\mapsto 3,\ 2\mapsto 1,\ 3\mapsto 2,\ 4\mapsto 4,
\\
x(\underline{\be})&: 1\mapsto 3,\ 2\mapsto 1,\ 3\mapsto 4,\ 4\mapsto 2.
\end{align*}
\end{Example}

\vspace{3mm}
Let $V\in\mod{R_{\underline{\theta}}}$. Note that 
for any $\underline{\ga}\in \D(\underline{\eta},\underline{\theta})$, we have 
$1_{\underline{\eta}}\psi_{x(\underline{\ga})} 1_{\underline{\theta}}\otimes V
\subseteq \Res_{\underline{\eta}}^\al\Ind_{\underline{\theta}}^\al V.$
Let $\leq$ be a total order refining the Bruhat order on $\Si_{\height(\al)}$. 
For $\underline{\ga}\in\D(\underline{\eta},\underline{\theta})$, we consider the submodules
\begin{align}
\label{EFiltF}
F_{\leq \underline{\ga}}(V)&:=\sum_{\underline{\ka}\in \D(\underline{\eta},\underline{\theta})\,\text{with}\, x(\underline{\ka})\leq x(\underline{\ga})}R_{\underline{\eta}}1_{\underline{\eta}}\psi_{x(\underline{\ka})} 1_{\underline{\theta}}\otimes V\subseteq \Res_{\underline{\eta}}^\al\Ind_{\underline{\theta}}^\al V,
\\
F_{<\underline{\ga}}(V)&:=\sum_{\underline{\ka}\in \D(\underline{\eta},\underline{\theta})\,\text{with}\, x(\underline{\ka})< x(\underline{\ga})}R_{\underline{\eta}}1_{\underline{\eta}}\psi_{x(\underline{\ka})} 1_{\underline{\theta}}\otimes V\subseteq \Res_{\underline{\eta}}^\al\Ind_{\underline{\theta}}^\al V.
\end{align}

The Mackey Theorem below for $m=n=2$ follows from \cite[Proposition 4.5]{KKO}. The general case can be deduced from the case $m=n=2$ by induction. See also \cite[Proposition 3.7]{Evseev} or the proof of \cite[Proposition 2.18]{KL1}.

\begin{Theorem} \label{TMackeyKL} {\rm (Mackey Theorem)}
Let $\underline{\theta}=(\theta_1,\dots,\theta_n)\in Q_+^n$ and $\underline{\eta}=(\eta_1,\dots,\eta_m)\in Q_+^m$ with 
$\al:=\theta_1+\dots+\theta_n=\eta_1+\dots+\eta_m$, and  
 $ V\in\mod{R_{\underline{\theta}}}$. Then we have the filtration 
 $
 (F_{\leq \underline{\ga}}(V))_{\underline{\ga}\in\D(\underline{\eta},\underline{\theta})}
 $
 of 
$\Res_{\underline{\eta}}^\al\,\Ind_{\underline{\theta}}^\al  V$. Moreover, for the sub-quotients $S_\ga(V):=F_{\leq \underline{\ga}}(V)/F_{< \underline{\ga}}(V)$ of the filtration we have 
$$
S_\ga(V)\simeq 
\Ind_{\ga_{1,1},\dots,\ga_{1,n}\,;\,\dots\,;\,\ga_{m,1},\dots,\ga_{m,n}}^{\,\eta_1;\,\dots\,;\,\eta_m}
{}^{\{w_{m,n}\}}\big(\Res_{\ga_{1,1},\dots,\ga_{m,1}\,;\,\dots\,;\,\ga_{1,n},\dots,\ga_{m,n}}^{\,\theta_1;\,\dots\,;\,\theta_n}
\, V \big).
$$
\end{Theorem}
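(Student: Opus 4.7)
The plan is to prove the theorem by induction on $m+n$, with base case $m=n=2$ provided by \cite[Proposition 4.5]{KKO}. The overall structure parallels \cite[Proposition 2.18]{KL1} and \cite[Proposition 3.7]{Evseev}, with additional care required for the super signs and parity shifts specific to the quiver Hecke super setting. First I would verify that the filtration is well-defined: the assignment $\underline{\ga}\mapsto x(\underline{\ga})$ embeds $\D(\underline{\eta},\underline{\theta})$ into $\Si_{\height(\al)}$ as a complete set of shortest $(\Si_{\underline{\eta}},\Si_{\underline{\theta}})$-double coset representatives (this is (\ref{EDLaMuBijMat}) applied to the height compositions). Combined with the Basis Theorem~\ref{TBasis} and the vanishing $1_{\underline{\eta}}\psi_w 1_{\underline{\theta}}=0$ whenever $w$ does not send the $\underline{\theta}$-block structure of words compatibly into the $\underline{\eta}$-block structure, this yields an exhaustive filtration of $\Res^\al_{\underline{\eta}}\Ind^\al_{\underline{\theta}}V$ indexed by any total refinement of the Bruhat order on the $x(\underline{\ga})$'s.

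For the inductive step, assume $m+n\geq 5$ and, without loss of generality, $n\geq 3$. Set $\underline{\theta}':=(\theta_1,\dots,\theta_{n-1})$ and $\theta':=\theta_1+\dots+\theta_{n-1}$. Transitivity of induction gives
$$\Ind^\al_{\underline{\theta}}V \simeq \Ind^\al_{\theta',\theta_n}\bigl(\Ind^{\theta'}_{\underline{\theta}'}\boxtimes\id\bigr)V.$$
Applying $\Res^\al_{\underline{\eta}}$ and invoking the $(m,2)$ case of the theorem on the outer Ind-Res pair (which is smaller since $m+2<m+n$) produces a filtration indexed by $\D(\underline{\eta},(\theta',\theta_n))$ whose subquotients have the form $\Ind^{\eta_1;\dots;\eta_m}_{\be_{1,1},\be_{1,2};\dots;\be_{m,1},\be_{m,2}}\,{}^{\{w_{m,2}\}}\Res^{\theta';\theta_n}_{\be_{1,1},\dots,\be_{m,1};\be_{1,2},\dots,\be_{m,2}}\bigl(\Ind^{\theta'}_{\underline{\theta}'}\boxtimes\id\bigr)V$. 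The inner restriction factors by a further application of transitivity as $\Res^{\theta'}_{\be_{1,1},\dots,\be_{m,1}}\Ind^{\theta'}_{\underline{\theta}'}$ tensored with the identity on the last factor, and applying the inductive hypothesis to this leading factor (a case of size $m+(n-1)<m+n$) refines the filtration further. Collecting matrix data via the block-decomposition bijection $\D(\underline{\eta},(\theta',\theta_n))\times\prod_{i}\D((\be_{i,1},\dots,\be_{i,m}),\underline{\theta}')\iso\D(\underline{\eta},\underline{\theta})$ and re-applying transitivity of induction yields the claimed filtration on the original $\Res^\al_{\underline{\eta}}\Ind^\al_{\underline{\theta}}V$.

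The main obstacle will be verifying the compatibility of the twist functors ${}^{\{g\}}$ under composition. Specifically, one must show (i) that the permutation $x(\underline{\ga})$ decomposes as a product corresponding to the two-stage double coset factorization, and (ii) that the shifts $s(w_{m,n},\underline{\theta})$ and $\eps(w_{m,n},\underline{\theta})$ are additive under this decomposition, matching the sum of shifts appearing at each inductive stage. The degree shift reduces to counting inversions weighted by inner products $(\theta_a|\theta_b)$, while the parity shift reduces to counting inversions weighted by parity products $\|\theta_a\|\,\|\theta_b\|$; both additivity identities follow from the factorization $\ttl(w_{m,n})=\ttl(w_{m,2})+\ttl(w_{m,n-1})$ (suitably embedded) once one checks that every inversion of $w_{m,n}$ is counted at exactly one of the two stages. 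The signs generated by commuting factors past each other in the defining relations (\ref{R4})--(\ref{R65}) must match the expected parity shift precisely. This bookkeeping, while essentially routine, is the chief source of technical friction and the principal place where sign errors could arise.
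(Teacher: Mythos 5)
Your proposal is correct and follows the same inductive strategy the paper indicates: base case $m=n=2$ from \cite[Proposition 4.5]{KKO}, then induction on $m+n$ via transitivity of (co)induction, in parallel with \cite[Proposition 3.7]{Evseev} and \cite[Proposition 2.18]{KL1}. One small imprecision: ``without loss of generality $n\geq 3$'' is not a true symmetry reduction (the statement is not symmetric in $\underline{\eta}$ and $\underline{\theta}$), so when $n=2$ and $m\geq 3$ you need the mirror argument that splits off $\eta_m$ on the restriction side via $\Res^\al_{\underline{\eta}}=(\Res^{\eta'}_{\underline{\eta}'}\boxtimes\id)\circ\Res^\al_{\eta',\eta_m}$ and invokes the $(2,n)$ and $(m-1,n)$ cases, which is entirely analogous.
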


\begin{Example} \label{EXMackey2} Continuing with Example~\ref{EXMackey1}, we can take 
$x(\underline{\ga})<x(\underline{\be})$ and so, according to Theorem~\ref{TMackeyKL}, $\Res_{\al_0+\al_1,\al_1+\al_2}^{\al_0+2\al_1+\al_2}\,\Ind_{\al_2,\al_0+\al_1,\al_1}^{\al_0+2\al_1+\al_2}  V$ has a submodule 
$$
F_{\leq \underline{\ga}}(V):=R_{\al_0+\al_1,\al_1+\al_2}1_{\al_0+\al_1,\al_1+\al_2}\psi_{x(\underline{\ga})} 1_{\al_2,\al_0+\al_1,\al_1}\otimes V
$$
isomorphic to 
$$ \Ind^{\al_0+\al_1;\al_1+\al_2}_{\al_0+\al_1;\al_2,\al_1}\,\,{}^{w_{3,2}}\big(\Res_{\al_2;\al_0+\al_1;\al_1}^{\al_2;\al_0+\al_1;\al_1}
\, V \big)
=\Ind^{\al_0+\al_1;\al_1+\al_2}_{\al_0+\al_1;\al_2,\al_1}\,\,{}^{w_{3,2}}V
$$
and the quotient 
\begin{align*}
&\Res_{\al_0+\al_1,\al_1+\al_2}^{\al_0+2\al_1+\al_2}\,\Ind_{\al_2,\al_0+\al_1,\al_1}^{\al_0+2\al_1+\al_2}  V\,/\, F_{\leq \underline{\ga}}(V)
\\
\simeq\, & \Ind^{\al_0+\al_1;\al_1+\al_2}_{\al_0,\al_1;\al_2,\al_1}\,\,{}^{w_{3,2}}\big(\Res_{\al_2;\al_0,\al_1;\al_1}^{\al_2;\al_0+\al_1;\al_1}
\, V \big).
\end{align*}
\end{Example}

\vspace{3mm}
Recalling the shuffle product in $\Z^\pi((q))$ from (\ref{EShP}), a  special case of the Mackey Theorem where each $\eta_k$ is a simple root yields:

\begin{Corollary} \label{CShuffle}
Let $V_1\in\mod{R_{\theta_1}},\dots,V_n\in\mod{R_{\theta_n}}$ have well-defined formal characters. Then $\Ind_{\underline{\theta}}^\theta(V_1\boxtimes\dots\boxtimes V_n)$ has a well-defined formal character, and 
$$
\ch_{\pi,q}\Ind_{\underline{\theta}}^\theta(V_1\boxtimes\dots\boxtimes V_n)=(\ch_{\pi,q} V_1)\circ\dots\circ(\ch_{\pi,q} V_n).
$$
\end{Corollary}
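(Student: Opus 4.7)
The plan is to prove both assertions (well-definedness of the character of the induced module and the shuffle-product identity) directly from the basis for $\Ind_{\underline{\theta}}^\theta V$ supplied by Lemma~\ref{LIndBasis}. Setting $V := V_1\boxtimes\cdots\boxtimes V_n$, $d := \height(\theta)$, $\la_r := \height(\theta_r)$, and $\la := (\la_1,\ldots,\la_n)\in\Comp(n,d)$, Lemma~\ref{LIndBasis} gives
$$\Ind_{\underline{\theta}}^\theta V = \bigoplus_{w\in\D^\la_d}\psi_w\otimes V$$
as graded superspaces, with the map $v\mapsto\psi_w\otimes v$ from the word space $1_\bi V$ into $\psi_w\otimes V$ an isomorphism of graded superspaces up to the bidegree shift of $\psi_w 1_\bi$. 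Using $\psi_w 1_\bi = 1_{w\cdot\bi}\psi_w$, the $\bj$-word space $1_\bj(\psi_w\otimes V)$ equals $\psi_w\otimes 1_{w^{-1}\bj}V$; splitting $w^{-1}\bj = \bi^1\cdots\bi^n$ along the block structure of $\la$ identifies this (up to the shift) with $1_{\bi^1}V_1\otimes\cdots\otimes 1_{\bi^n}V_n$, which is nonzero precisely when $\wt(\bi^r) = \theta_r$ for every $r$. Since each $V_r$ has a well-defined formal character and $\D^\la_d$ is finite, the same holds for $\Ind_{\underline{\theta}}^\theta V$.

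Next I would establish the bidegree identity
$$\bideg(\psi_w 1_\bi) = \Big(-\sum_{(c,d)\in\operatorname{Inv}(w)}(\alpha_{i_c}|\alpha_{i_d}),\ \sum_{(c,d)\in\operatorname{Inv}(w)}\|i_c\|\,\|i_d\|\Big)$$
for $w\in\D^\la_d$ and $\bi\in I^\theta$, where $\operatorname{Inv}(w):=\{(c,d):c<d,\ w(c)>w(d)\}$. This is a direct induction on $\ttl(w)$: choose a reduced decomposition $w = s_{r_1}\cdots s_{r_l}$, apply the declared bidegree of $\psi_r 1_\bi$ from \S\ref{SSDefQHSA} at each step, and use (\ref{R2.75}) to track which word each $\psi_{r_k}$ acts on. Because $w\in\D^\la_d$ is increasing on each block $\ttP^\la_r$, every inversion of $w$ has its two indices in distinct blocks $\ttP^\la_\rho,\ttP^\la_\sigma$.

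The last step is to match the resulting expression for the coefficient of $\bj$ in $\ch_{\pi,q}\Ind_{\underline{\theta}}^\theta V$ with the coefficient of $\bj$ in the iterated shuffle product $(\ch_{\pi,q} V_1)\circ\cdots\circ(\ch_{\pi,q} V_n)$, obtained by unwinding (\ref{EShP}). That shuffle coefficient is a sum over tuples $(\bi^1,\ldots,\bi^n)\in I^{\theta_1}\times\cdots\times I^{\theta_n}$ and shuffle permutations for the induced block decomposition, weighted by a product of cross-block inversion factors. The substitution $\bi^1\cdots\bi^n = w^{-1}\bj$ with $w\in\D^\la_d$ establishes a bijection between the shuffle indexing data and the induction indexing data from step one, and the weight factors agree with the bidegree shift from step two. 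Alternatively, the same conclusion can be extracted from Theorem~\ref{TMackeyKL} applied with $\underline{\eta} = (\alpha_{j_1},\ldots,\alpha_{j_d})$: then $\Res_{\underline{\eta}}^\theta$ reduces to the $\bj$-word space functor, the Mackey index set $\D(\underline{\eta},\underline{\theta})$ parametrizes the shuffle terms, and the twist $^{\{w_{d,n}\}}$ contributes the required bidegree and parity shift.

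The entire argument is routine combinatorial bookkeeping; the only computation of substance is the length-induction establishing the bidegree formula, which relies only on the generator bidegrees from \S\ref{SSDefQHSA} and (\ref{R2.75}). There is no conceptual obstacle, and the statement is the graded super-analogue of the standard identification of the formal character of a parabolic induction with the shuffle product for Khovanov-Lauda-Rouquier-type algebras.
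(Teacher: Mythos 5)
Your proposal is correct, but your primary route differs from the one the paper intends. The paper gives no explicit proof: it presents the corollary as a direct consequence of the Mackey Theorem~\ref{TMackeyKL} specialized to the case where each $\eta_k$ is a simple root. In that specialization $\Res_{\underline{\eta}}^\theta$ becomes the $\bj$-word-space functor, the index set $\D(\underline{\eta},\underline{\theta})$ recovers the shuffle terms, and the twist ${}^{\{w_{m,n}\}}$ from (\ref{ETwist}) carries precisely the bidegree data $s(g,\underline{\theta})$ and $\eps(g,\underline{\theta})$ that produce the factors $f_w(\pi,q)$ in (\ref{EShP}) -- this is exactly the "alternatively" you sketch in your last paragraph. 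Your main argument instead works directly from Lemma~\ref{LIndBasis}: you decompose $\Ind_{\underline{\theta}}^\theta V = \bigoplus_{w\in\D_d^\la}\psi_w\otimes V$, identify $1_\bj(\psi_w\otimes V)\cong\psi_w\otimes 1_{w^{-1}\cdot\bj}V$, and compute $\bideg(\psi_w 1_\bi)$ by length induction using the generator bidegrees. That computation is sound (each $s_{r_k}$ in a reduced word contributes the inversion $(c,d)$ it transposes, and since $w\in\D_d^\la$ all inversions are cross-block); matching it against the weight $f_w(\pi,q)$ after the substitution $\bi = w^{-1}\cdot\bj$ is the expected bijection, and well-definedness of the character follows since $\D_d^\la$ is finite. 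The trade-off: your direct route is self-contained and makes every degree shift explicit (and does not require verifying that the Mackey filtration is by graded subsupermodules compatible with the word-space decomposition), while the Mackey specialization is shorter once one accepts the theorem and is the route the author intended.

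One small caveat worth keeping in mind: the displayed definition of $f_w(\pi,q)$ in (\ref{EShP}) is written with a $\sum$ where a $\prod$ is clearly meant (otherwise the exponent bookkeeping does not close up); your bidegree computation produces the sum \emph{in the exponent}, i.e.\ the product of the monomials, which is what the shuffle product must be. This is a typo in the source, not a gap in your argument, but you should state which convention you are matching when you write up the final coefficient comparison.
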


\begin{Corollary} \label{CLinInd}
Suppose $\k=\F$, and $V\in\mod{R_{\theta}}$,  $W\in\mod{R_{\eta}}$ be finite dimensional. Then for any $L\in\Irr(R_{\theta+\eta})$, the multiplicities of $L$ in 
$
\Ind_{\theta,\eta}^\theta(V\boxtimes W)
$
and in 
$
\Ind_{\eta,\theta}^\theta(W\boxtimes V)
$
are the same.
\end{Corollary}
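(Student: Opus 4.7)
The plan is to combine a swap-and-shift isomorphism between induction and coinduction with graded duality. By exactness of induction and additivity of composition-factor multiplicities on short exact sequences, I first reduce to the case where both $V$ and $W$ are irreducible.

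Next, I apply Lemma~\ref{LIndCoindIsoGen} in the setting of Example~\ref{ENatural} to the two parabolic embeddings $R_\theta \otimes R_\eta \hookrightarrow R_{\theta+\eta}$ and $R_\eta \otimes R_\theta \hookrightarrow R_{\theta+\eta}$, with intertwiner $\xi = 1_{\eta,\theta}\,\psi_{w_0}\,1_{\theta,\eta}$, where $w_0 \in \Si_{\height(\theta+\eta)}$ is the shortest permutation swapping the initial block of length $\height(\theta)$ with the final block of length $\height(\eta)$. The required bases of $R_{\theta+\eta}1_{\theta,\eta}$ and $1_{\eta,\theta}R_{\theta+\eta}$ are furnished by Theorem~\ref{TBasis} and Lemma~\ref{LIndBasis}, and the matching condition~(4) modulo the bisubsupermodule $Y$ (generated by the Bruhat-lower shuffle terms) follows from Lemma~\ref{LBKW}(i) together with the Mackey filtration of Theorem~\ref{TMackeyKL}. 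This yields a natural isomorphism
\begin{equation*}
\Ind_{\theta,\eta}^{\theta+\eta}(V \boxtimes W) \;\simeq\; \funQ^{-(\theta|\eta)}\Uppi^{\|\theta\|\,\|\eta\|}\,\Coind_{\eta,\theta}^{\theta+\eta}(W \boxtimes V),
\end{equation*}
with the shift read off from the bidegree of $\psi_{w_0}1_{\theta,\eta}$.

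To conclude, the standard compatibility of $(-)^\circledast$ with the $\Hom$--$\otimes$ adjunction (mediated by the antiautomorphism $\tau$ of~\eqref{ETauAntiAuto}) provides a shifted isomorphism $(\Coind_{\eta,\theta}^{\theta+\eta} M)^\circledast \simeq \Ind_{\eta,\theta}^{\theta+\eta}(M^\circledast)$ for finite-dimensional $M$. Dualizing the display above and applying Lemma~\ref{LTypeM} --- which, because $V,W,L$ are irreducible, yields $V^\circledast \simeq \funQ^{m_V}\Uppi^{\epsilon_V}V$, and similarly for $W$ and $L$ --- produces an isomorphism of the form $(\Ind_{\theta,\eta}^{\theta+\eta}(V\boxtimes W))^\circledast \simeq (\mathrm{shift}) \cdot \Ind_{\eta,\theta}^{\theta+\eta}(W \boxtimes V)$. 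Taking $L$-multiplicities of both sides, using $[M^\circledast : L^\circledast] = [M : L]$ and absorbing the overall degree/parity shift via the relabeling $L \leftrightarrow \funQ^{?}\Uppi^{?}L$ (in the sense of total multiplicity of $L$ up to shift), gives the desired equality. The main technical obstacle is the explicit verification of condition~(4) of Lemma~\ref{LIndCoindIsoGen}: exhibiting a permutation matching the shuffle bases of $R_{\theta+\eta}1_{\theta,\eta}$ and $1_{\eta,\theta}R_{\theta+\eta}$ modulo $Y$, with the correct tracking of the supertensor signs. The remainder of the argument is a formal manipulation with adjunctions and graded duality.
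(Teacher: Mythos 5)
Your argument is correct, but it takes a genuinely different route from the paper. The paper's proof is a one-line character argument: by Corollary~\ref{CShuffle}, the formal characters of $\Ind_{\theta,\eta}^{\theta+\eta}(V\boxtimes W)$ and $\Ind_{\eta,\theta}^{\theta+\eta}(W\boxtimes V)$ agree after specializing $q=1,\pi=1$ (the quantum shuffle product is commutative at $q=\pi=1$), and then one invokes linear independence of the ungraded formal characters of irreducibles. Your proof instead reduces to $V,W$ irreducible, invokes the Ind--Coind swap $\Ind_{\theta,\eta}^{\theta+\eta}(V\boxtimes W)\simeq\funQ^{-(\theta|\eta)}\Uppi^{\|\theta\|\|\eta\|}\Coind_{\eta,\theta}^{\theta+\eta}(W\boxtimes V)$, and then uses graded duality $\circledast$ together with Lemma~\ref{LTypeM} (self-duality of irreducibles up to shift) to conclude. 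Note that the swap-plus-dualize step you re-derive from Lemma~\ref{LIndCoindIsoGen} and Example~\ref{ENatural} is already packaged in the paper as Lemma~\ref{LLV}, and its combination with $\circledast$ is already Lemma~\ref{LDualInd}(ii); citing those directly would shorten your argument considerably and sidestep the ``main technical obstacle'' you flag (condition~(4) of Lemma~\ref{LIndCoindIsoGen}), which the paper has already verified in the proof of Lemma~\ref{LLV}. Comparatively: the paper's route avoids the reduction to irreducibles and requires only character linear independence (cited from KKO~II); your route requires the reduction to irreducibles and the self-duality Lemma~\ref{LTypeM}, but it works entirely at the level of modules and shifted isomorphisms rather than characters. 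One small caveat on the final step: ``multiplicity of $L$'' must be interpreted as total multiplicity of the isomorphism class of $L$ up to degree and parity shift (which is exactly what $q=\pi=1$ achieves in the paper's version), and you do correctly flag this when absorbing the overall shift.
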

\begin{proof}
It follows from Corollary~\ref{CShuffle} that at $q=1$ and $\pi=1$, the characters of $
\Ind_{\theta,\eta}^\theta(V\boxtimes W)
$
and 
$
\Ind_{\eta,\theta}^\theta(W\boxtimes V)
$
are the same. Now the result follows from the linear independence of the usual formal characters of the irreducible $R_{\theta+\eta}$ modules, see for example \cite[Corollary 8.16]{KKOII}. 
\end{proof}

For $\underline{\theta}:=(\theta_1,\dots,\theta_n)$ as above, recalling (\ref{EThetaParity}), we denote 
\begin{align*}
s(\underline{\theta}):=-\sum_{1\leq m<k\leq n}(\theta_m \mid \theta_k)\in\Z
\quad \text{and}\quad
\eps(\underline{\theta}):=\sum_{1\leq m<k\leq n}\| \theta_m\| \,\| \theta_k\| \in\Z/2.
\end{align*}
Note that $s(\underline{\theta})=s(w_n,\underline{\theta})$ and 
$\eps(\underline{\theta})=\eps(w_n,\underline{\theta})$. 
The following result is essentially proved in \cite[Theorem 2.2]{LV} for quiver Hecke algebras (under some unnecessary additional assumptions on finite dimensionality):

\begin{Lemma} \label{LLV} 
Let $\underline{\theta}:=(\theta_1,\dots,\theta_n)\in Q_+^n$, and 
$V_k\in\mod{R_{\theta_k}}$ for $k=1,\dots,n$.
Then there is a natural isomorphism 
$$
\Ind_{\theta_1,\dots,\theta_n}^\theta(V_1\boxtimes\dots\boxtimes V_n)\simeq 
\funQ^{s(\underline{\theta})}
\Uppi^{\eps(\underline{\theta})}
\,
\Coind_{\theta_n,\dots,\theta_1}^\theta(V_n\boxtimes\dots\boxtimes V_1).
$$
\end{Lemma}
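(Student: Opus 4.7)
The strategy is to apply Lemma~\ref{LIndCoindIsoGen} (or rather its naturality-enhanced version in Example~\ref{ENatural}) with $A := R_\theta$, $e := 1_{\underline{\theta}}$, $f := 1_{\theta_n,\dots,\theta_1}$, $B := R_{\theta_1}\otimes\dots\otimes R_{\theta_n}$, $C := R_{\theta_n}\otimes\dots\otimes R_{\theta_1}$, $V := V_1\boxtimes\dots\boxtimes V_n$, $W := V_n\boxtimes\dots\boxtimes V_1$, and with $i:B\to C$, $j:V\to W$ the iterated graded super-swaps
$x_1\otimes\dots\otimes x_n \mapsto \pm\,x_n\otimes\dots\otimes x_1$
(signs determined by the supercommutation rule). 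This will reduce the lemma to producing a single distinguished element $\xi\in fR_\theta e$ whose bidegree is exactly $(s(\underline{\theta}),\eps(\underline{\theta}))$ and which intertwines the $B$- and $C$-actions up to lower-order Bruhat terms. This approach is a graded superalgebra adaptation of the argument in \cite[Theorem 2.2]{LV}.

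Setting $h_k:=\height(\theta_k)$, let $w_{\mathrm{rev}}\in\Si_{\height(\theta)}$ be the minimal length permutation carrying the segment $[h_1+\dots+h_{k-1}+1,\,h_1+\dots+h_k]$ increasingly onto $[h_n+\dots+h_{k+1}+1,\,h_n+\dots+h_k]$ for every $k$, and define
$$
\xi \;:=\; \sum_{\bi^1\in I^{\theta_1},\dots,\bi^n\in I^{\theta_n}} \psi_{w_{\mathrm{rev}}}\,1_{\bi^1\cdots\bi^n}\ \in\ fR_\theta e.
$$
Since the length of $w_{\mathrm{rev}}$ is $\sum_{m<k}h_m h_k$, and each elementary transposition $s_r$ in a reduced expression contributes $-(\al_{i_r}|\al_{i_{r+1}})$ to the degree and $\|i_r\|\|i_{r+1}\|$ to the parity of $\psi_r 1_\bi$, summing over all inversions yields $\bideg(\xi)=(s(\underline{\theta}),\eps(\underline{\theta}))$. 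I would then verify the four hypotheses of Lemma~\ref{LIndCoindIsoGen}: (1) follows from Lemma~\ref{LIndBasis} and Theorem~\ref{TBasis}, taking $\{b_r\}=\{\psi_u 1_{\underline{\theta}} \mid u\in \D_{\height(\theta)}^{(h_1,\dots,h_n)}\}$ and similarly for $\{c_s\}$ on the $fA$ side; (2) is the definition of the super-swaps $i$ and $j$; (3) is the intertwining property discussed below, with $Y$ the $\k$-span of all $\psi_u 1_{\bi^1\cdots\bi^n}$ with $u<w_{\mathrm{rev}}$ in the Bruhat order; and (4) follows from a Mackey-type calculation showing that modulo $Y$ only the term corresponding to the unique double coset representative of length $\ell(w_{\mathrm{rev}})$ survives in $c_s b_r$.

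The main obstacle is establishing condition (3): the identity
$$
\xi\cdot(x_1\otimes\dots\otimes x_n)\ \equiv\ (-1)^{|\xi|\cdot|x_1\otimes\cdots\otimes x_n|}\,i(x_1\otimes\dots\otimes x_n)\cdot\xi\ \pmod{Y}
$$
for all homogeneous $x_k\in R_{\theta_k}$. I would reduce to the case where each $x_k$ is one of the generators $1_{\bi^k}$, $y_s$, or $\psi_r$ of $R_{\theta_k}$, and then induct on the total number of generator factors. At each step one must commute a generator past $\psi_{w_{\mathrm{rev}}}$, which by the defining relations (\ref{R4}), (\ref{R5}), (\ref{R65}), (\ref{R7}) produces the desired super-swapped version plus a sum of terms of the form $\psi_u f(y)$ with $u<w_{\mathrm{rev}}$, which lie in $Y$ by Lemma~\ref{LBKW}. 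Tracking signs carefully using $\|i_r\|\|i_{r+1}\|$ factors gives precisely the total parity $\eps(\underline{\theta})$. Once (3) is established, Lemma~\ref{LIndCoindIsoGen} yields the required isomorphism, and Example~\ref{ENatural} upgrades it to a natural transformation in the $V_k$.
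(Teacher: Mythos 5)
Your proposal follows essentially the same route as the paper: apply Lemma~\ref{LIndCoindIsoGen} (via Example~\ref{ENatural}) with the block-reversal permutation supplying the intertwining element $\xi = \psi_{w_{\mathrm{rev}}}1_{\underline{\theta}}$, verify hypothesis (1) from Lemma~\ref{LIndBasis}/Theorem~\ref{TBasis}, and verify (3)--(4) using the Mackey filtration and Lemma~\ref{LBKW}. The only differences are cosmetic: the paper first reduces to $n=2$ by transitivity (your direct general-$n$ argument works equally well), and your $Y$ should be the subbisupermodule $F_{<w_{\mathrm{rev}}}$ coming from the Mackey filtration rather than the mere $\k$-span of $\{\psi_u 1_{\bi}\mid u<w_{\mathrm{rev}}\}$, which omits the polynomial coefficients $f(y)$ and is not closed under the $B$- and $C$-actions.
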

\begin{proof}
It suffices to consider the case $n=2$, i.e. prove that 
$$
\Ind_{\theta,\eta}^{\theta+\eta}(V\boxtimes W)\simeq 
\funQ^{(\theta\mid\eta)}
\Uppi^{\|\theta\|\,\|\eta\|}
\,
\Coind_{\eta,\theta}^{\theta+\eta}(W\boxtimes V).
$$
To prove this, we check the assumptions of Lemma~\ref{LIndCoindIsoGen} (and Example~\ref{ENatural}). 
Let $m=\height(\theta)$ and $n=\height(\eta)$. 
For the assumption (1), the right $R_{\theta,\eta}$-module $R_{\theta+\eta}1_{\theta,\eta}$ has basis $\{\psi_x\mid x\in\D_{m+n}^{(m,n)}\}$ and 
the left $R_{\eta,\theta}$-module $1_{\eta,\theta}R_{\theta+\eta}$ has basis $\{\psi_{y}\mid y\in{}^{(n,m)}\D_{m+n}\}$. For the assumption (2), see Example~\ref{ENatural}. For the assumption (3), the Mackey Theorem yields a filtration of $1_{\eta,\theta}R_{\theta+\eta}1_{\theta,\eta}=\Res_{\eta,\theta}\Ind_{\theta,\eta}R_{\theta,\eta}$ by bimodules with the top quotient 
$1_{\eta,\theta}R_{\theta+\eta}1_{\theta,\eta}/Y$ such that  $R_{\theta,\eta}\to 1_{\eta,\theta}R_{\theta+\eta}1_{\theta,\eta}/Y,\ r\mapsto r\psi_{x_0}$ is an isomorphism of left modules, where $x_0$ is the longest element of ${}^{(n,m)}\D_{m+n}^{(m,n)}$ given by 
$$
\begin{braid}\tikzset{baseline=2mm}
  \draw (0,0)node[below]{$1$}--(6,6)node[above]{$n+1$};
  \draw[dots] (1.3,-0.5)--(2.8,-0.5);
  \draw[dots] (7.3,-0.5)--(8.8,-0.5);
   \draw[dots] (1.3,6.5)--(2.8,6.5);
  \draw[dots] (7.3,6.5)--(8.8,6.5);
  \draw (4,0)node[below]{$m$}--(10,6)node[above]{$n+m$};
  \draw (6,0)node[below]{$m+1$}--(0,6)node[above]{$1$};
  \draw (10,0)node[below]{$m+n$}--(4,6)node[above]{$n$};
 \end{braid}
 $$
 For the remaining part of the condition (3), taking $\xi:=1_{\eta,\theta}\psi_{x_0}1_{\theta,\eta}$, one needs to check that for $r\otimes s\in R_\theta\otimes R_\eta=R_{\theta,\eta}$ we have 
$$
\xi(r\otimes s)\equiv
(-1)^{|s||r|+|s||\xi|+|r||\xi|}(s\otimes r)\xi 
\pmod{Y},
$$ 
which is checked using Lemma~\ref{LBKW}, cf. the proof of \cite[Theorem 2.2]{LV}. Finally to check the condition (4) of Lemma~\ref{LIndCoindIsoGen}, we need to observe that 
$
{}^{(n,m)}\D_{m+n}=\{x_0x^{-1}\mid \D_{m+n}^{(m,n)}\},
$
and use Lemma~\ref{LBKW} to see that 
$$
1_{\eta,\theta}\psi_{y}\psi_x1_{\theta,\eta}\equiv \de_{x_0x^{-1},y}1_{\eta,\theta}\psi_{x_0}1_{\theta,\eta}\pmod{Y},
$$
completing the proof.
\end{proof}

The anti-automorphism $\tau$ from (\ref{ETauAntiAuto})   restricts to an anti-automorphism of the parabolic subalgebra $R_{\underline{\theta}}$. So, just like for $R_\theta$, for a graded $R_{\underline{\theta}}$-supermodule, we can define its {\em graded dual}
$V^\circledast$. By definition, for the graded supermodules $V_1\in\mod{R_{\theta_1}},\dots,V_n\in\mod{R_{\theta_n}}$, we have
\begin{equation}\label{EBoxCircledAst}
(V_1\boxtimes\dots\boxtimes V_n)^\circledast\simeq 
V_1^\circledast\boxtimes\dots\boxtimes V_n^\circledast.
\end{equation}

\begin{Lemma} \label{LDualInd}
Let $\underline{\theta}:=(\theta_1,\dots,\theta_n)\in Q_+^n$.
\begin{enumerate}
\item[{\rm (i)}] The functor $\Res^\theta_{\underline{\theta}}$ commutes with duality $\circledast$. 
\item[{\rm (ii)}] If $\k=\F$, and $V_k\in\mod{R_{\theta_k}}$ for $k=1,\dots,n$ are finite dimensional, then  
$$
\big(\Ind_{\theta_1,\dots,\theta_n}^\theta(V_1\boxtimes\dots\boxtimes V_n)\big)^\circledast\simeq 
\funQ^{s(\underline{\theta})}
\Uppi^{\eps(\underline{\theta})}
\Ind_{\theta_n,\dots,\theta_1}(V_n^\circledast\boxtimes\dots\boxtimes V_1^\circledast).
$$
\end{enumerate}
\end{Lemma}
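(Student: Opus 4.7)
Since the anti-automorphism $\tau$ from (\ref{ETauAntiAuto}) is the identity on all Khovanov--Lauda--Rouquier generators, it fixes every $1_\bi$ and hence also $1_{\underline{\theta}}$. A check on the generators of $R_{\underline{\theta}}$ (viewed inside $R_\theta$ via the parabolic embedding $\iota_{\underline{\theta}}$) shows that $\tau$ restricts on $R_{\underline{\theta}}$ to the anti-automorphism used to define $\circledast$ on $\mod{R_{\underline{\theta}}}$. Given $V\in\mod{R_\theta}$, the subspace $\Res^\theta_{\underline{\theta}}(V^\circledast)=1_{\underline{\theta}}V^\circledast$ consists of those functionals on $V$ that vanish on $(1-1_{\underline{\theta}})V$, which canonically identifies with $(1_{\underline{\theta}}V)^*=(\Res^\theta_{\underline{\theta}}V)^\circledast$ as graded $R_{\underline{\theta}}$-supermodules.

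\textbf{Part (ii): key intermediate isomorphism.} I would first establish a natural isomorphism $(\Ind^\theta_{\underline{\theta}}M)^\circledast\simeq\Coind^\theta_{\underline{\theta}}M^\circledast$ for finite-dimensional $M\in\mod{R_{\underline{\theta}}}$, via Yoneda applied to the following chain of natural isomorphisms, valid for every $N\in\mod{R_\theta}$:
\begin{align*}
\Hom_{R_\theta}(N,(\Ind M)^\circledast)
&\simeq \Hom_{R_\theta}(\Ind M,N^\circledast)
\simeq \Hom_{R_{\underline{\theta}}}(M,\Res N^\circledast)\\
&\simeq \Hom_{R_{\underline{\theta}}}(M,(\Res N)^\circledast)
\simeq \Hom_{R_{\underline{\theta}}}(\Res N,M^\circledast)
\simeq \Hom_{R_\theta}(N,\Coind M^\circledast).
\end{align*}
These steps use, respectively, the finite-dimensional duality swap $\Hom_R(A,B^\circledast)\simeq\Hom_R(B,A^\circledast)$, Frobenius reciprocity $\Ind\dashv\Res$, part~(i), the swap again, and Frobenius reciprocity $\Res\dashv\Coind$.

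\textbf{Finishing (ii) and main obstacle.} Setting $M=V_1\boxtimes\dots\boxtimes V_n$ and using (\ref{EBoxCircledAst}) yields $(\Ind_{\theta_1,\dots,\theta_n}(V_1\boxtimes\dots\boxtimes V_n))^\circledast\simeq\Coind_{\theta_1,\dots,\theta_n}(V_1^\circledast\boxtimes\dots\boxtimes V_n^\circledast)$. Now applying Lemma~\ref{LLV} to the reversed tuple $(\theta_n,\dots,\theta_1)$ with the box tensor $V_n^\circledast\boxtimes\dots\boxtimes V_1^\circledast$ converts this $\Coind$ into $\Ind_{\theta_n,\dots,\theta_1}(V_n^\circledast\boxtimes\dots\boxtimes V_1^\circledast)$ up to exactly the shift $\funQ^{s(\underline{\theta})}\Uppi^{\eps(\underline{\theta})}$ appearing in the statement, since both $s(\underline{\theta})$ and $\eps(\underline{\theta})$ are invariant under reversal of the tuple. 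The main technical obstacle will be the bookkeeping of signs and bidegree shifts in the graded super setting: because the paper's convention is that $\tau$ is an ordinary (not super) anti-automorphism, one must verify carefully that the duality swap and both adjunctions are bidegree-preserving, and that the composite of the adjunction and the subsequent application of Lemma~\ref{LLV} produces precisely the shift $\funQ^{s(\underline{\theta})}\Uppi^{\eps(\underline{\theta})}$ and no spurious additional shift.
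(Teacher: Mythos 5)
Your approach to both parts is essentially the same as the paper's, which disposes of (i) as ``clear'' and of (ii) with ``follows from (i) and Lemma~\ref{LLV} by uniqueness of adjoint functors''; your Yoneda chain is a correct fleshing-out of that uniqueness-of-adjoints argument, and your part~(i) argument is fine. Each step in your chain (the finite-dimensional duality swap, the two Frobenius adjunctions, and the appeal to~(i)) is a bidegree~$(0,\0)$ isomorphism, so the Yoneda conclusion $(\Ind^\theta_{\underline{\theta}}M)^\circledast\simeq\Coind^\theta_{\underline{\theta}}M^\circledast$ holds with no shift, exactly as you want.

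The gap is in the last step, the very one you flag as the ``main obstacle'' but then assert without checking. Applying Lemma~\ref{LLV} to the reversed tuple and using the reversal-invariance you correctly note gives $\Ind_{\theta_n,\dots,\theta_1}(V_n^\circledast\boxtimes\dots\boxtimes V_1^\circledast)\simeq\funQ^{s(\underline{\theta})}\Uppi^{\eps(\underline{\theta})}\Coind_{\theta_1,\dots,\theta_n}(V_1^\circledast\boxtimes\dots\boxtimes V_n^\circledast)$, and solving for $\Coind$ produces $\Coind_{\theta_1,\dots,\theta_n}(V_1^\circledast\boxtimes\dots\boxtimes V_n^\circledast)\simeq\funQ^{-s(\underline{\theta})}\Uppi^{\eps(\underline{\theta})}\Ind_{\theta_n,\dots,\theta_1}(V_n^\circledast\boxtimes\dots\boxtimes V_1^\circledast)$; so your chain yields the shift $\funQ^{-s(\underline{\theta})}$, not $\funQ^{s(\underline{\theta})}$. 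A direct check via formal characters for $n=2$ with one-dimensional $V_k$ of bidegree~$(0,\0)$ confirms $\overline{\ch V_1\circ\ch V_2}=q^{(\theta_1|\theta_2)}\pi^{\|\theta_1\|\|\theta_2\|}\,\ch V_2\circ\ch V_1$, i.e.\ the shift is $\funQ^{-s(\underline{\theta})}=\funQ^{(\theta_1|\theta_2)}$. This mismatch traces back to an internal inconsistency in the paper: it defines $s(\underline{\theta}):=-\sum_{m<k}(\theta_m|\theta_k)$ but immediately asserts $s(\underline{\theta})=s(w_n,\underline{\theta})=+\sum_{m<k}(\theta_m|\theta_k)$; with the latter normalization the statement of Lemma~\ref{LDualInd} and your final claim are both consistent, while with the former they are off by $\funQ^{2s(\underline{\theta})}$. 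You should either adopt the $+\sum$ convention explicitly, or correct your final sentence to read $\funQ^{-s(\underline{\theta})}$, and in either case actually carry out the bookkeeping you identify as needed rather than asserting it.
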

\begin{proof}
(i) is clear. 

(ii) follows from (i) and Lemma~\ref{LLV} by uniqueness of adjoint functors as in \cite[Theorem 3.7.5]{Kbook}
\end{proof}

\subsection{Divided power idempotents}\label{SSDPI}
Let $i\in I$ and $m\in \N_+$. Recall that we denote by $w_m$ the longest element of $\Si_m$. If $i\neq 0$, the algebra $R_{m\al_i}$ is known to be the (affine) {\em nil-Hecke algebra}   
and has an idempotent 
\begin{equation}\label{EDivPowId}
1_{i^{(m)}}:=\psi_{w_m}\prod_{s=1}^m y_s^{s-1},
\end{equation}
cf. \cite{KL1}. If $i=0$, the algebra $R_{m\al_0}$ is known to be the (affine) {\em odd nil-Hecke algebra} and has an idempotent 
$1_{0^{(m)}}$ of the form $\psi_{w_m}\prod_{s=1}^m y_s^{s-1}$, cf. \cite{EKL} (the choice of a reduced decomposition for $w_m$ is important here to get the right sign). 
Diagrammatically we will denote 
$$1_{i^{(m)}}:=
\begin{braid}\tikzset{baseline=0.25em}
	\braidbox{0}{1.3}{0}{1}{$i^m$}
\end{braid}
=
\begin{braid}\tikzset{baseline=0.25em}
	\braidbox{0}{2.2}{0}{1}{$i\cdots i$}
\end{braid}.
$$
For example, for $m=2$, we have for any $i\in I$:
\begin{equation}\label{EDivDiag}
1_{i^{(2)}}:=
\begin{braid}\tikzset{baseline=0.25em}
	\braidbox{0}{1.3}{0}{1}{$i^2$}
\end{braid}
=
\begin{braid}\tikzset{baseline=0.25em}
	\braidbox{0}{1.6}{0}{1}{$i\,\,i$}
\end{braid}
=
\begin{braid}\tikzset{baseline=3mm}
  \draw (0,0)node[below]{$i$}--(1,2);
  \draw (1,0)node[below]{$i$}--(0,2);
\blackdot (0.8,0.45);
 \end{braid}.
\end{equation}
The fact that $1_{i^{(m)}}$ is an idempotent is equivalent to 
\begin{equation}\label{EStrIdemp}
1_{i^m}\psi_{w_m}=\psi_{w_m}.
\end{equation}

\begin{Lemma} \label{LW_0Front}
For any $i\in I$ and $m\in\N_+$, in $R_{m\al_i}$ we have 
$$1_{i^{(m)}}R_{m\al_i}=\psi_{w_m}R_{m\al_i}=\psi_{w_m}\k[y_1,\dots,y_m].$$ 
\end{Lemma}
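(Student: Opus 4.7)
The plan is to prove the two equalities $1_{i^{(m)}}R_{m\al_i}=\psi_{w_m}R_{m\al_i}$ and $\psi_{w_m}R_{m\al_i}=\psi_{w_m}\k[y_1,\ldots,y_m]$ separately. The first is essentially immediate from the defining formula (\ref{EDivPowId}) for $1_{i^{(m)}}$ together with relation (\ref{EStrIdemp}), which should be read as $1_{i^{(m)}}\psi_{w_m}=\psi_{w_m}$ (the property equivalent to $1_{i^{(m)}}$ being idempotent). The second reduces, via the basis of Theorem~\ref{TBasis}, to proving that $\psi_{w_m}\psi_w=0$ in $R_{m\al_i}$ for every $w\in\Si_m\setminus\{1\}$.

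For the first equality: the inclusion $\supseteq$ follows from $1_{i^{(m)}}=\psi_{w_m}\prod_{s=1}^m y_s^{s-1}\in\psi_{w_m}R_{m\al_i}$, and the reverse inclusion from (\ref{EStrIdemp}), which puts $\psi_{w_m}=1_{i^{(m)}}\psi_{w_m}$ into $1_{i^{(m)}}R_{m\al_i}$.

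For the second equality, the inclusion $\supseteq$ is trivial. For the reverse, the key observation is that in $R_{m\al_i}$---where every word equals $i^m$---the defining relations simplify dramatically: $\psi_r^2=Q_{i,i}(y_r,y_{r+1})=0$ by (\ref{R6}); the braid relation becomes $\psi_r\psi_{r+1}\psi_r=\psi_{r+1}\psi_r\psi_{r+1}$ exactly by (\ref{R7}), since $B_{i,i,i}=0$ (inspection of the cases defining $B_{i,j,k}$ shows every nonzero case requires $i\neq j$); and the sign commutation $\psi_r\psi_s=(-1)^{\|i\|}\psi_s\psi_r$ for $|r-s|>1$ by (\ref{R65}). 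Crucially, none of these relations involves the $y$'s, so any two reduced expressions for a given $w\in\Si_m$ yield products $\psi_{r_1}\cdots\psi_{r_k}$ that differ by at most an overall sign. Now fix $w\neq 1$, choose a left descent $r$ of $w$ and write $w=s_r w'$ with $\ttl(w')=\ttl(w)-1$; since $w_m$ is longest, $r$ is also a right descent of $w_m$, giving $w_m=w_m' s_r$ with $\ttl(w_m')=\ttl(w_m)-1$. By the up-to-sign statement, suitable reduced expressions give $\psi_w=\pm\psi_r\psi_{w'}$ and $\psi_{w_m}=\pm\psi_{w_m'}\psi_r$, so
\[
\psi_{w_m}\psi_w=\pm\psi_{w_m'}\psi_r^2\psi_{w'}=0.
\]
Applying this to the basis $\{\psi_w y_1^{k_1}\cdots y_m^{k_m}\}$ from Theorem~\ref{TBasis} collapses $\psi_{w_m}R_{m\al_i}$ onto the $\k[y_1,\ldots,y_m]$-span of $\psi_{w_m}$.

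The main technical subtlety is this up-to-sign well-definedness of $\psi_w$ inside $R_{m\al_i}$. In the nil-Hecke case $i\neq 0$ this is standard ($\psi_w$ is literally reduced-expression-independent); in the odd nil-Hecke case $i=0$ the sign bookkeeping from the sign commutations needs a moment of care, but no $y$-corrections ever appear from purely $\psi$-rearrangements, so the computation above---in which every equality ends with $\psi_r^2=0$---goes through unchanged.
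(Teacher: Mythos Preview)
Your proof is correct and follows exactly the same approach as the paper's: the first equality via (\ref{EDivPowId}) and (\ref{EStrIdemp}), the second via Theorem~\ref{TBasis} together with $\psi_{w_m}\psi_w=0$ for $w\neq 1$. You simply supply the details the paper omits, including the verification that $B_{i,i,i}=0$ and the descent argument producing the $\psi_r^2=0$ factor; your reading of (\ref{EStrIdemp}) as $1_{i^{(m)}}\psi_{w_m}=\psi_{w_m}$ is the intended one.
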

\begin{proof}
The first equality follows from (\ref{EDivPowId}) and (\ref{EStrIdemp}). The second equality follows from Theorem~\ref{TBasis} since in $R_{m\al_i}$ we have $\psi_{w_m}\psi_y=0$ for any non-trivial $y\in \Si_m$.
\end{proof}

Recalling the notation (\ref{EFancyDirectSum}) and (\ref{EBarInv}), we have the following well-known fact (see for example \cite[p. 330]{KL1} for $i\neq 0$ and \cite[Lemma 5.8]{BKodd} for $i=0$):

\begin{Lemma} \label{LNilHeckeProj}
Let $i\in I$. Then, as graded left supermodules,  
$$
R_{m\al_i}\simeq(R_{m\al_i}1_{i^{(m)}})^{\bigoplus q_i^{-m(m-1)/2}[m]^!_i}\simeq 
(1_{i^{(m)}}R_{m\al_i})^{\bigoplus q_i^{m(m-1)/2}\overline{[m]^!_i}}.
$$
\end{Lemma}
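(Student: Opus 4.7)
My plan is to establish both isomorphisms by passing through the standard Morita equivalence between the affine (even or odd) nil-Hecke algebra $R_{m\al_i}$ and its subalgebra of (super)symmetric invariants, which is well documented: \cite{KL1} handles $i\neq 0$ and \cite[Lemma 5.8]{BKodd} handles $i=0$. For brevity I describe the argument in one uniform framework.

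First I would set up the polynomial subalgebra $P\subseteq R_{m\al_i}$ generated by $y_1,\ldots,y_m$ and identify its subalgebra $P^{\Si_m}$ of (super)symmetric invariants. Using the defining relations (\ref{R3})--(\ref{R7}), one checks that $R_{m\al_i}$ acts faithfully on $P$: the $y_s$'s act by left multiplication, while each $\psi_r$ acts as the (divided or odd-divided) difference operator. This realises $R_{m\al_i}\iso \End_{P^{\Si_m}}(P)$ as graded superalgebras (in the even case this is a classical theorem of Kostant-Kumar; the odd case is due to Ellis-Khovanov-Lauda). Under this identification, $1_{i^{(m)}}=\psi_{w_m}\prod_{s=1}^m y_s^{s-1}$ acts on $P$ as a rank-one projector onto $P^{\Si_m}$, so $R_{m\al_i}1_{i^{(m)}}$ corresponds to $P$ as a left $R_{m\al_i}$-module.

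Next I would compute the graded rank of $P$ as a free graded super\-module over $P^{\Si_m}$. The Hilbert-series identity for the coinvariant algebra of $\Si_m$ gives precisely $q_i^{-m(m-1)/2}[m]_i^!$ (using the definition of $[m]_i^!$ in (\ref{EQInt}), which is tailored to include the parity twist by $\pi$ needed in the odd case). Choosing a homogeneous $P^{\Si_m}$-basis $\{f_w\}_{w\in\Si_m}$ of $P$ (e.g.\ Schubert polynomials or their odd analogues), the decomposition $R_{m\al_i}=\bigoplus_{w\in\Si_m}f_w\cdot R_{m\al_i}1_{i^{(m)}}$ with the corresponding bidegree shifts yields the first isomorphism.

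For the second isomorphism I would apply the anti-automorphism $\tau$ of (\ref{ETauAntiAuto}) to the first. This turns left modules into right modules and sends $R_{m\al_i}1_{i^{(m)}}$ to $\tau(1_{i^{(m)}})R_{m\al_i}$; using Lemma~\ref{LBKW}(i) and the idempotency relation (\ref{EStrIdemp}) one verifies that $\tau(1_{i^{(m)}})R_{m\al_i}\simeq 1_{i^{(m)}}R_{m\al_i}$ as graded right supermodules. Because $\tau$ reverses multiplication, the graded multiplicity gets bar-conjugated (cf.\ (\ref{EBarInv})): $\overline{q_i^{-m(m-1)/2}[m]_i^!}=q_i^{m(m-1)/2}\overline{[m]_i^!}$, giving the stated formula.

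The main technical obstacle is the $i=0$ case. Here the $y_s$'s anticommute, so the idempotent $1_{0^{(m)}}$ is genuinely sensitive to the choice of reduced decomposition for $w_m$, and the bar-involution acts non-trivially on $[m]_0^!$ via the parity shifts of (\ref{E2n!Bar}). Tracking the signs through the odd divided-difference calculus and verifying that the parity shifts produced by $\tau$ match the $\pi$-factors on the right-hand side requires the careful bookkeeping carried out in \cite[Lemma 5.8]{BKodd}; I would simply cite that argument rather than reproduce it.
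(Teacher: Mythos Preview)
Your sketch of the first isomorphism is correct and follows the standard argument in the references the paper cites (the paper itself gives no proof beyond pointing to \cite[p.~330]{KL1} and \cite[Lemma 5.8]{BKodd}).

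The $\tau$-argument for the second isomorphism, however, contains two errors that happen to cancel. Applying $\tau$ to a left-module decomposition $R\simeq(Re)^{\oplus f}$ yields $R\simeq(\tau(e)R)^{\oplus f}$ as right modules with the \emph{same} multiplicity $f$, not $\bar f$: the anti-automorphism preserves bidegrees, so each summand $\funQ^{n}\Uppi^{\eps}Re$ is carried to $\funQ^{n}\Uppi^{\eps}\tau(e)R$. Separately, $\tau(e)R$ is \emph{not} isomorphic to $eR$ without shift: writing $e=\psi_{w_m}y^\delta$ with $y^\delta=\prod_s y_s^{s-1}$, one has $\tau(e)=y^\delta\psi_{w_m}$, and Lemma~\ref{LW_0Front} gives $\tau(e)R=y^\delta\cdot eR\simeq \funQ^{\deg y^\delta}\Uppi^{|y^\delta|}\,eR$. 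The actual multiplicity for $eR$ is therefore $q^{\deg y^\delta}\pi^{|y^\delta|}f$, and one must still verify by hand that this equals $q_i^{m(m-1)/2}\overline{[m]_i^!}$. A cleaner route that genuinely produces $\bar f$ in one step is to apply $\Hom_R(-,R)$ to the first isomorphism: since $\Hom_R(Re,R)\simeq eR$ as right modules via $\phi\mapsto\phi(e)$ (a bidegree-$(0,\bar0)$ isomorphism) and $\Hom_R(\funQ^n\Uppi^\eps V,W)\simeq\funQ^{-n}\Uppi^{\eps}\Hom_R(V,W)$, one obtains $R\simeq\Hom_R(R,R)\simeq(eR)^{\oplus\bar f}$ directly, with the parity bookkeeping for $i=0$ absorbed automatically.
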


Let $\theta\in Q_+$. Recalling the notation $I^{\theta}_{\di}$ from \S\ref{SSLTN}, for a divided power word $\bi = (i_1^{(m_1)},\ldots, i_r^{(m_r)}) \in I^{\theta}_{\di}$, we have the corresponding {\em divided power idempotent} 
\begin{equation}\label{EDPId}
1_\bi :=\iota_{m_1\al_{i_1},\dots,m_r\al_{i_r}}(1_{i_1^{(m_1)}}\otimes\dots\otimes 1_{i_r^{(m_r)}}) \in R_{\theta}.
\end{equation}

\begin{Lemma} \label{LIndStdProj}
Let $\bi^{(1)}\in I^{\theta_1}_\di,\dots,\bi^{(k)}\in I^{\theta_k}_\di$ and $\theta=\theta_1+\dots+\theta_k$. Then there is an isomorphism of graded $R_\theta$-supermodules
\begin{align*}
R_{\theta}&1_{\bi^{(1)}\cdots\bi^{(k)}}\iso
\Ind_{\underline{\theta}}^\theta(R_{\theta_1}1_{\bi^{(1)}}\boxtimes\cdots\boxtimes R_{\theta_k}1_{\bi^{(k)}}), \\ 
&1_{\bi^{(1)}\cdots\bi^{(k)}}\mapsto 1_{\underline{\theta}}\otimes 1_{\bi^{(1)}}\otimes\cdots\otimes 1_{\bi^{(k)}}.
\end{align*}
\end{Lemma}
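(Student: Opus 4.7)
My plan is to derive the stated isomorphism from the basic identity $M \otimes_A Ae \simeq Me$ applied to the nested parabolic structure, together with the defining formula~(\ref{EDPId}) for the divided power idempotent.

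First, I will unpack~(\ref{EDPId}) to see that $1_{\bi^{(1)}\cdots\bi^{(k)}} = \iota_{\underline{\theta}}(e)$, where $e := 1_{\bi^{(1)}}\otimes\cdots\otimes 1_{\bi^{(k)}}\in R_{\underline{\theta}}$ is a bidegree $(0,\0)$ idempotent; in particular $1_{\bi^{(1)}\cdots\bi^{(k)}}\in 1_{\underline{\theta}}R_\theta 1_{\underline{\theta}}$, so $R_\theta\cdot 1_{\bi^{(1)}\cdots\bi^{(k)}} = (R_\theta 1_{\underline{\theta}})\cdot\iota_{\underline{\theta}}(e)$. Next I will record the evident identification of graded left $R_{\underline{\theta}}$-supermodules
\begin{equation*}
R_{\theta_1}1_{\bi^{(1)}}\boxtimes\cdots\boxtimes R_{\theta_k}1_{\bi^{(k)}} \simeq R_{\underline{\theta}}\cdot e,
\end{equation*}
where no super signs appear because $e$ is purely even.

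With these preliminaries in place, the definition~(\ref{EIndGeneral}) of parabolic induction together with the standard canonical isomorphism $M \otimes_A Ae \iso Me$, $m\otimes ae \mapsto mae$, valid in the graded super setting for any bidegree $(0,\0)$ idempotent $e$, will give the chain
\begin{align*}
\Ind_{\underline{\theta}}^\theta(R_{\theta_1}1_{\bi^{(1)}}\boxtimes\cdots\boxtimes R_{\theta_k}1_{\bi^{(k)}})
&= R_\theta 1_{\underline{\theta}}\otimes_{R_{\underline{\theta}}} R_{\underline{\theta}}\cdot e \\
&\simeq R_\theta 1_{\underline{\theta}}\cdot \iota_{\underline{\theta}}(e) \\
&= R_\theta\cdot 1_{\bi^{(1)}\cdots\bi^{(k)}}
\end{align*}
of graded $R_\theta$-supermodules. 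Tracing through, the element $1_{\underline{\theta}}\otimes(1_{\bi^{(1)}}\otimes\cdots\otimes 1_{\bi^{(k)}})$ will be sent to $1_{\bi^{(1)}\cdots\bi^{(k)}}$, so inverting will produce the required map $1_{\bi^{(1)}\cdots\bi^{(k)}}\mapsto 1_{\underline{\theta}}\otimes 1_{\bi^{(1)}}\otimes\cdots\otimes 1_{\bi^{(k)}}$.

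There is essentially no substantive obstacle here beyond bookkeeping. The only mildly delicate point is verifying that the standard identity $M\otimes_A Ae\simeq Me$ passes through the graded super-structure without any sign or degree shift; this is immediate since the idempotents $1_{\bi^{(r)}}$ are all bidegree $(0,\0)$ and hence even, so their tensor product lies in a purely even bidegree $(0,\0)$ subalgebra and no Koszul signs intrude.
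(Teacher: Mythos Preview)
Your argument is correct. The approach differs slightly from the paper's: the paper first constructs the forward map $R_\theta 1_{\bi^{(1)}\cdots\bi^{(k)}}\to\Ind_{\underline{\theta}}^\theta(\cdots)$ using the universal property of the projective module $R_\theta 1_{\bi^{(1)}\cdots\bi^{(k)}}$ (checking that the idempotent fixes the target generator), and then builds the inverse separately via the adjunction $\Ind\dashv\Res$. Your route packages the whole thing into the single canonical isomorphism $M\otimes_A Ae\iso Me$ after identifying $1_{\bi^{(1)}\cdots\bi^{(k)}}=\iota_{\underline{\theta}}(e)$ via transitivity of the parabolic embeddings. Both arguments are elementary; yours is arguably more streamlined since it produces the isomorphism and its inverse in one stroke, while the paper's version makes the role of adjointness more visible.
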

\begin{proof}
The homomorphism of graded $R_\theta$-supermodules as in the statement exists because in $\Ind_{\underline{\theta}}^\theta(R_{\theta_1}1_{\bi^{(1)}}\boxtimes\cdots\boxtimes R_{\theta_k}1_{\bi^{(k)}})$ we have
$$1_{\bi^{(1)}\cdots\bi^{(k)}}\cdot (1_{\underline{\theta}}\otimes 1_{\bi^{(1)}}\otimes\cdots\otimes 1_{\bi^{(k)}})=1_{\underline{\theta}}\otimes 1_{\bi^{(1)}}\otimes\cdots\otimes 1_{\bi^{(k)}}.
$$
The inverse homomorphism is constructed using the adjointness of $\Ind$ and $\Res$.  
\end{proof}

Let $\bi=i_1^{(m_1)}\cdots i_r^{(m_r)}\in I^\theta_\di$. 
Recalling the notation (\ref{EHatI}) and (\ref{EQInt}), note that $1_\bi  1_{\hat\bi} = 1_\bi  =1_{\hat \bi}1_\bi $, 
and define 
\begin{equation}\label{EHatIPar}
\bi! := [m_1]_{i_1}^! \cdots [m_r]_{i_r}^!
\qquad\text{and}\qquad
\langle \bi \rangle := \sum_{k=1}^r (\al_{i_k}\mid\al_{i_k})m_k (m_k-1)/4.
\end{equation}


\begin{Lemma}\label{LFact}   
 Let $\bi=i_1^{(m_1)}\cdots i_r^{(m_r)}\in I^{\theta}_{\di}$, and\, $U$ (resp.~$V$) be a left (resp. right) graded $R_{\theta}$-supermodule.  
 Then 
 $$
 1_{\hat\bi} U\simeq  (1_\bi  U)^{\oplus q^{\langle \bi\rangle}\bi! } 
  \quad \text{and} \quad V 1_{\hat\bi}\simeq  (V 1_\bi )^{\oplus q^{-\langle \bi\rangle}\overline{\bi!} }
 $$
 In particular, if $1_{\hat\bi}  U$ is a free $\k$-module of rank $(m_1)! \cdots (m_r)!$, then $1_{\bi} U$ is the lowest degree component of $1_{\hat \bi}  U$ and $1_{\bi} U$ is a free $\k$-module of rank $1$. 
\end{Lemma}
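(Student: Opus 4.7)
The plan is to reduce everything to Lemma~\ref{LNilHeckeProj} via the parabolic subalgebra structure of $R_\theta$. First I would identify how $1_\bi$ and $1_{\hat\bi}$ sit inside the parabolic embedding $\iota_{\underline{\theta}}:R_{\underline{\theta}}=R_{m_1\al_{i_1}}\otimes\cdots\otimes R_{m_r\al_{i_r}}\hookrightarrow R_\theta$. Since each $R_{m_k\al_{i_k}}$ has the unique word $i_k^{m_k}$, the identity of $R_{\underline{\theta}}$ is $1_{i_1^{m_1}}\otimes\cdots\otimes 1_{i_r^{m_r}}$, which is sent by $\iota_{\underline{\theta}}$ to $1_{\hat\bi}$. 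By definition (\ref{EDPId}), the element $1_{i_1^{(m_1)}}\otimes\cdots\otimes 1_{i_r^{(m_r)}}$ is sent to $1_\bi$. Applying Lemma~\ref{LIndStdProj} then yields isomorphisms of graded $R_\theta$-supermodules
\begin{align*}
R_\theta 1_{\hat\bi}&\simeq \Ind_{\underline{\theta}}^\theta\bigl(R_{m_1\al_{i_1}}\boxtimes\cdots\boxtimes R_{m_r\al_{i_r}}\bigr),\\
R_\theta 1_\bi&\simeq \Ind_{\underline{\theta}}^\theta\bigl(R_{m_1\al_{i_1}}1_{i_1^{(m_1)}}\boxtimes\cdots\boxtimes R_{m_r\al_{i_r}}1_{i_r^{(m_r)}}\bigr).
\end{align*}

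Next I would apply Lemma~\ref{LNilHeckeProj} in each tensor factor. The factor-wise decompositions $R_{m_k\al_{i_k}}\simeq (R_{m_k\al_{i_k}}1_{i_k^{(m_k)}})^{\oplus q_{i_k}^{-m_k(m_k-1)/2}[m_k]_{i_k}^!}$ combine under the external tensor product into
$$
R_{m_1\al_{i_1}}\boxtimes\cdots\boxtimes R_{m_r\al_{i_r}}\simeq \bigl(R_{m_1\al_{i_1}}1_{i_1^{(m_1)}}\boxtimes\cdots\boxtimes R_{m_r\al_{i_r}}1_{i_r^{(m_r)}}\bigr)^{\oplus q^{-\langle\bi\rangle}\bi!},
$$
since $\prod_k q_{i_k}^{-m_k(m_k-1)/2}=q^{-\langle\bi\rangle}$ and $\prod_k [m_k]_{i_k}^!=\bi!$ by (\ref{EHatIPar}). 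Pushing this through the exact functor $\Ind_{\underline{\theta}}^\theta$ gives the projective-module decomposition $R_\theta 1_{\hat\bi}\simeq (R_\theta 1_\bi)^{\oplus q^{-\langle\bi\rangle}\bi!}$ of graded left $R_\theta$-supermodules.

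To transfer this to an arbitrary $U\in\mod{R_\theta}$, apply the contravariant functor $\Hom_{R_\theta}(-,U)$, using the natural identification $\Hom_{R_\theta}(R_\theta e,U)\simeq eU$ for an idempotent $e$. Contravariance reverses the $q$-shifts (and preserves the parity shifts), converting $q^{-\langle\bi\rangle}$ into $q^{\langle\bi\rangle}$, and yields the first claimed isomorphism $1_{\hat\bi}U\simeq (1_\bi U)^{\oplus q^{\langle\bi\rangle}\bi!}$. The right-module statement is obtained analogously: apply Lemma~\ref{LNilHeckeProj} in its right-module form to get $1_{\hat\bi} R_\theta\simeq (1_\bi R_\theta)^{\oplus q^{-\langle\bi\rangle}\overline{\bi!}}$ (via the right-module version of Lemma~\ref{LIndStdProj}, or directly by passing through $R_\theta^{\sop}$), and then tensor on the left with $V$ over $R_\theta$ using $V\otimes_{R_\theta}(1_\bi R_\theta)\simeq V1_\bi$.

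For the final "in particular" claim, specialize $1_{\hat\bi}U\simeq (1_\bi U)^{\oplus q^{\langle\bi\rangle}\bi!}$ at $q=\pi=1$: the total rank formula reads $(m_1)!\cdots(m_r)!=\bi!\big|_{q=\pi=1}\cdot\rank_\k(1_\bi U)=(m_1)!\cdots(m_r)!\cdot\rank_\k(1_\bi U)$, forcing $1_\bi U$ to be free of rank $1$. Since the lowest power of $q$ in $[m_k]_{i_k}^!$ is $q_{i_k}^{-m_k(m_k-1)/2}$, the lowest power of $q$ in $q^{\langle\bi\rangle}\bi!$ is $q^0$, so the unique copy of $1_\bi U$ appearing with zero degree shift occupies precisely the bottom graded piece of $1_{\hat\bi}U$. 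The main (essentially bookkeeping) obstacle is careful tracking of the degree- and parity-shift conventions when super-tensoring modules, taking graded Homs, and converting between left- and right-module statements—in particular, managing the nontrivial action of the bar involution on $[m_k]_0^!$ for the odd nil-Hecke factors (cf. (\ref{E2n!Bar})).
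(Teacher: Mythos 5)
Your approach is essentially the paper's: both arguments reduce $1_\bi U$ and $1_{\hat\bi}U$ to $\Hom$-spaces over the parabolic via Lemma~\ref{LIndStdProj}, and then apply Lemma~\ref{LNilHeckeProj} factorwise. The only organizational difference is that you first assemble the isomorphism $R_\theta 1_{\hat\bi}\simeq (R_\theta 1_\bi)^{\oplus q^{-\langle\bi\rangle}\bi!}$ of projective left modules and only then apply $\Hom_{R_\theta}(-,U)$, whereas the paper uses adjointness of $\Ind$/$\Res$ to pass both Homs to the parabolic before invoking Lemma~\ref{LNilHeckeProj}; these are interchangeable.

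However there are two bookkeeping slips that you flag as an ``obstacle'' but do not actually work through, and as written your intermediate steps do not quite deliver the stated conclusion. First, contravariance sends $\funQ^n\Uppi^\eps V$ to $\funQ^{-n}\Uppi^\eps\Hom(V,W)$, and since the bar involution of (\ref{EBarInv}) is $\Z^\pi$-linear (it inverts $q$ and fixes $\pi$), applying $\Hom_{R_\theta}(-,U)$ to $(R_\theta 1_\bi)^{\oplus q^{-\langle\bi\rangle}\bi!}$ produces $(1_\bi U)^{\oplus\,\overline{q^{-\langle\bi\rangle}\bi!}}=(1_\bi U)^{\oplus q^{\langle\bi\rangle}\overline{\bi!}}$, not $(1_\bi U)^{\oplus q^{\langle\bi\rangle}\bi!}$ as you assert. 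In general $\overline{\bi!}\neq\bi!$ whenever some $i_k=0$ (cf.\ (\ref{E2n!Bar})): for example $\overline{[2]_0^!}=\pi[2]_0^!$. So you must explain why the bar on $\bi!$ disappears (or place it correctly) before claiming the formula in the lemma. Second, in the right-module case you write $1_{\hat\bi}R_\theta\simeq(1_\bi R_\theta)^{\oplus q^{-\langle\bi\rangle}\overline{\bi!}}$, but Lemma~\ref{LNilHeckeProj} gives $R_{m\alpha_i}\simeq(1_{i^{(m)}}R_{m\alpha_i})^{\oplus q_i^{+m(m-1)/2}\overline{[m]_i^!}}$, so the exponent of $q$ should be $+\langle\bi\rangle$, not $-\langle\bi\rangle$; once you then apply the shift-preserving functor $V\otimes_{R_\theta}(-)$, the claimed $q^{-\langle\bi\rangle}$ no longer emerges without further argument. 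These super/grading sign issues need to be reconciled rather than deferred.
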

\begin{proof}
By Lemma~\ref{LIndStdProj} and adjointness of induction and restriction, 
\begin{align*}
1_{\bi} U
&\simeq\, \Hom_{R_\theta}(R_\theta 1_{\bi}, U)
\\
&\simeq  \Hom_{R_{m_1\al_{i_1},\dots,m_r\al_{i_r}}}(R_{m_1\al_{i_1}}1_{i_1^{(m_1)}}\boxtimes\dots\boxtimes R_{m_r\al_{i_r}}1_{i_r^{(m_r)}}, \Res^\theta_{m_1\al_{i_1},\dots,m_r\al_{i_r}}U)
\end{align*}
and 
\begin{align*}
1_{\hat\bi} U
&\simeq\, \Hom_{R_\theta}(R_\theta 1_{\hat\bi}, U)
\\
&\simeq  \Hom_{R_{m_1\al_{i_1},\dots,m_r\al_{i_r}}}(R_{m_1\al_{i_1}}\boxtimes\dots\boxtimes R_{m_r\al_{i_r}}, \Res^\theta_{m_1\al_{i_1},\dots,m_r\al_{i_r}}U)
\end{align*}
So the isomorphism $
 1_{\hat\bi} U\simeq  (1_\bi  U)^{\oplus q^{\langle \bi\rangle}\bi! }$ follows from Lemma~\ref{LNilHeckeProj}.
The second isomorphism is proved similarly. The final statement follows easily from the first isomorphism.  
\end{proof}

\begin{Lemma} \label{LDivIdCommutation} 
Let $n\in\N_+$, $\eta\in Q_+$ with $\height(\eta)=m$, $i\in I$, $\bj\in I^\eta$, and $\theta=\eta+n\al_i$. 
\begin{enumerate}
\item[{\rm (i)}] Let $x\in\Si_{m+n}$ be the permutation mapping $k\mapsto k+m$ for $k=1,\dots,n$ and $n+l\mapsto l$ for $l=1,\dots,m$. Then in $R_\theta$ we have $\psi_x 1_{i^{(n)}\bj}=1_{\bj i^{(n)}}\psi_x 1_{i^{(n)}\bj}$ (for any choice of a reduced decomposition for $x$ used to define $\psi_x$). 
\item[{\rm (ii)}] Let $y\in\Si_{m+n}$ be the permutation mapping $k\mapsto k+n$ for $k=1,\dots,m$ and $m+l\mapsto l$ for $l=1,\dots,n$. Then in $R_\theta$ we have $\psi_y1_{\bj i^{(n)}}=1_{i^{(n)}\bj}\psi_y1_{\bj i^{(n)}}$ (for any choice of a reduced decomposition for $y$ used to define $\psi_y$). 
\end{enumerate}
\end{Lemma}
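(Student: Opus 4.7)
The strategy is to prove part~(i) by induction on $m := \height(\eta)$; part~(ii) admits an entirely parallel argument, with the analogous factorisation of $y$. The base case $m = 0$ is immediate: $x = 1$ and the claim reduces to the idempotency of $1_{i^{(n)}}$.

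For the inductive step from $m$ to $m+1$, I would write $\bj = \bj'' j$ with $j \in I$ and $\bj'' \in I^{\eta - \al_j}$ of length $m$, and factor $x \in \Si_{n+m+1}$ as a reduced product $x = z \cdot x''$, where $x' \in \Si_{n+m}$ is the analogous permutation for $\bj''$, $x''$ denotes its trivial extension to $\Si_{n+m+1}$ fixing the last position, and $z := s_{n+m}s_{n+m-1}\cdots s_{m+1}$ pulls position $n+m+1$ down to position $m+1$. The length identity $\ell(x) = \ell(x') + \ell(z) = n(m+1)$ follows from a direct inversion count. Taking $\psi_x = \psi_z \psi_{x''}$, the parabolic embedding $R_{n\al_i + \eta - \al_j}\otimes R_{\al_j}\hookrightarrow R_\theta$ identifies $\psi_{x''}\cdot 1_{i^{(n)}\bj'' j}$ with $\iota(\psi_{x'}\cdot 1_{i^{(n)}\bj''}\otimes 1_j)$, so the inductive hypothesis yields
\[
\psi_{x''}\cdot 1_{i^{(n)}\bj'' j} = 1_{\bj'' i^{(n)} j}\cdot \psi_{x''}\cdot 1_{i^{(n)}\bj'' j}.
\]
Multiplying on the left by $\psi_z$ and using that $1_{\bj''}$ (supported on positions $1,\ldots,m$) commutes freely past $\psi_z$ via (\ref{R2.75}) and (\ref{R4}), the argument reduces to the case $\height(\eta) = 1$, i.e.~to
\[
\psi_{\bar z}\cdot 1_{i^{(n)} j} = 1_{j i^{(n)}}\cdot \psi_{\bar z}\cdot 1_{i^{(n)} j}
\]
in $R_{n\al_i + \al_j}$, where $\bar z \in \Si_{n+1}$ is the cycle $n+1 \mapsto 1$, $k \mapsto k+1$ for $k\leq n$.

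For this remaining base case, I would substitute $1_{i^{(n)}} = \psi_{w_n^{(1,n)}}\prod_{s=1}^n y_s^{s-1}$ and observe that as permutations in $\Si_{n+1}$ we have $\bar z\cdot w_n^{(1,n)} = w_n^{(2,n+1)}\cdot \bar z = w_{n+1}$, with both factorisations reduced. So by Lemma~\ref{LBKW}(i),
\[
\psi_{\bar z}\psi_{w_n^{(1,n)}}\,1_{i^n j} = \psi_{w_n^{(2,n+1)}}\psi_{\bar z}\,1_{i^n j} + \sum_{u < w_{n+1}}\psi_u f_u(y)\,1_{i^n j}.
\]
Multiplying on the right by $\prod_{s=1}^n y_s^{s-1}$ and then iteratively applying (\ref{R4}) and (\ref{R5}) to transfer this polynomial across $\psi_{\bar z}$ converts it into $\prod_{s=2}^{n+1}y_s^{s-2}$ on the opposite side, modulo still-lower-order corrections. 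The leading term then equals precisely $1_{j i^{(n)}}\psi_{\bar z}1_{i^n j}$, as required.

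The main obstacle is the bookkeeping of the lower-order correction terms generated in this process: each $\psi_u f_u(y) \prod y_s^{s-1}\cdot 1_{i^n j}$ must separately be absorbed by left multiplication by $1_{j i^{(n)}}$. The cleanest way to handle this is a secondary induction on $n$, exploiting the factorisation $1_{i^{(n)}} = 1_{i^{(n)}}\cdot(1_{i^{(n-1)}}\otimes 1_i)$ to split the divided power projection at positions $2,\ldots,n+1$ into a projection at positions $3,\ldots,n+1$ combined with a single-letter step; the case $n=1$ is then immediate from (\ref{R2.75}). An alternative, more conceptual route is to invoke Lemma~\ref{LFact} applied to the module $U = R_\theta\, 1_{i^{(n)} j}$: this forces $1_{j i^{(n)}}\cdot\psi_{\bar z}\cdot 1_{i^{(n)} j}$ to occupy a rank-one free component of an appropriate idempotent truncation, and one then identifies the leading term in the above expansion with its unique generator by a degree and weight comparison.
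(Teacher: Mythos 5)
Your proof takes a genuinely different route from the paper's and, as written, has a gap. You reduce by induction on $m$ to the case $\height(\eta)=1$, then invoke Lemma~\ref{LBKW}(i) to pass from $\psi_{\bar z}\psi_{w_n}$ to $\psi_{w_n'}\psi_{\bar z}$ at the cost of lower-order correction terms $\sum_{u<w_{n+1}}\psi_u f_u(y)$, and you correctly flag absorbing those corrections by $1_{ji^{(n)}}$ as ``the main obstacle'' --- but neither of your two proposed ways of doing so (a secondary induction on $n$, or a rank-and-degree comparison via Lemma~\ref{LFact}) is actually carried out, and it is not clear either closes cleanly.

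The observation you are missing --- which makes the lemma a short direct computation, with no induction on $m$ and no appeal to Lemma~\ref{LBKW} at all --- is that the correction terms are identically zero. When one transforms $\psi_x\psi_{w_n}$ into $\psi_{w_n'}\psi_x$ applied to $1_{i^n\bj}$ (both reduced expressions for $xw_n=w_n'x$, where $w_n'$ is the longest element on the last $n$ positions), every braid move encountered has bottom color triple $(i,i,j_k)$ or $(j_k,i,i)$ for some letter $j_k$ of $\bj$, never $(i,j_k,i)$: since the $\bj$-strands do not cross one another and all start to the right of the $i$-block, a $\bj$-strand cannot lie strictly between the two $i$-strands of a braid triangle before either of those crossings has taken place. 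By inspection, $B_{a,b,c}\neq 0$ forces $a=c$ with $|a-b|=1$, so $B_{i,i,j_k}=B_{j_k,i,i}=0$ always. Hence $\psi_x\psi_{w_n}1_{i^n\bj}=\psi_{w_n'}\psi_x 1_{i^n\bj}$ holds exactly, and the lemma follows in a few lines: substitute $1_{i^{(n)}}=\psi_{w_n}\prod_s y_s^{s-1}$, commute, and use the idempotent identity~(\ref{EStrIdemp}) to replace $1_{\bj i^n}\psi_{w_n'}$ by $1_{\bj i^{(n)}}\psi_{w_n'}$. That is precisely the paper's argument. A minor slip in your write-up: the reduced word for $z$ should be $s_{m+1}s_{m+2}\cdots s_{n+m}$, not $s_{n+m}\cdots s_{m+1}$; the latter is $z^{-1}$, and the factorization $x=zx''$ fails with the reversed order.
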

\begin{proof}
We prove (i), the proof of (ii) being similar. Recall that $1_{i^{(n)}}=\psi_{w_n}f(y_1,\dots,y_n)$, where $f(y_1,\dots,y_n)=
\prod_{s=1}^n y_s^{s-1}$. We consider $w_n$ as an element of $S_{m+n}$ via the natural embedding $S_n\to S_n\times S_m\leq S_{m+n}$. When we use the embedding  $S_n\to S_m\times S_n\leq S_{m+n}$ instead, we denote the image of $w_n$ by $w_n'$ (it permutes the last $n$ letters). 

Now, we have, using the relations in $R_\theta$, 
\begin{align*}
\psi_x1_{i^{(n)}\bj}&=1_{\bj i^{n}}\psi_x1_{i^{(n)}\bj}
\\&
=1_{\bj i^{n}}\psi_x\psi_{w_n}f(y_1,\dots,y_n)1_{i^n\bj}
\\&=1_{\bj i^{n}}\psi_{w_n'}\psi_xf(y_1,\dots,y_n)1_{i^n\bj}
\\&=1_{\bj i^{(n)}}\psi_{w_n'}\psi_xf(y_1,\dots,y_n)1_{i^n\bj}
\\&=1_{\bj i^{(n)}}\psi_x\psi_{w_n}f(y_1,\dots,y_n)1_{i^n\bj}
\\&=1_{\bj i^{(n)}}\psi_x1_{i^{(n)}\bj},
\end{align*}
where we have used (\ref{EStrIdemp}) for the fourth equality. 
\end{proof}

\chapter{Cuspidal algebras}

\section{Cuspidal modules and cuspidal algebras}

\subsection{Cuspidal systems}
\label{SSCuspSys}

We describe a classification of the graded irreducible $R_\theta$-supermodules in terms of cuspidal systems. Recall the terminology of \S\ref{SSLTN}. 
For now, fix an {\em arbitrary} convex preorder $\preceq$ on $\Phi_+$.

Let $\be\in\Psi$, $m\in\N_+$ and $ V\in\mod{R_{m\be}}$. Then $ V$ is called {\em cuspidal} if $\Res_{\theta,\eta} V\neq 0$ for $\theta,\eta\in Q_+$ implies that $\theta$ is a sum of positive roots $\preceq \be$ and $\eta$ is a sum of positive roots $\succeq\be$ (this and other definitions below depend on the choice of $\preceq$). 

A word $\bi\in I^{m\be}$ is called {\em cuspidal} if $\bi=\bj\bk$ for words $\bj,\bk$ implies that $\wt(\bj)$ is a sum of positive roots $\preceq \be$ and $\wt(\bk)$ is a sum of positive roots $\succeq\be$. 
We denote by 
$I^{m\be}_\cus$ the set of all cuspidal words in $I^{m\be}$. 

Let $(1_\bi \mid \bi\in I^{m\be}\setminus I^{m\be}_\cus)$ be the two-sided ideal of $R_{m\be}$ generated by all $1_\bi$ with $\bi\in I^{m\be}\setminus I^{m\be}_\cus$. 
Then $ V\in\mod{R_{m\be}}$ is cuspidal if and only if every word of $ V$ is cuspidal if and only if $1_\bi  V=0$ for all $\bi\in I^{m\be}\setminus I^{m\be}_\cus$ if and only if $ V$ 
 factors through a module over the {\em cuspidal algebra} 
\begin{equation}\label{ECuspidalAlgebra}
\bar R_{m\be}:=R_{m\be}/(1_\bi \mid \bi\in I^{m\be}\setminus I^{m\be}_\cus).
\end{equation}
Thus $\mod{\bar R_{m\be}}$ can be identified with the full subcategory of $\mod{R_{m\be}}$ which consists of  the (finitely generated) cuspidal (graded super)modules. 

For an element $u\in R_{m\be}$ we denote its image under the natural projection $R_{m\be}\onto \bar R_{m\be}$ again by $u$. So we have elements $1_{m\be},\, y_1\dots,y_{mh},\,\psi_1\dots,\psi_{mh-1}$, etc. in $\bar R_{m\be}$ where $h=\height(\beta)$.

A {\em cuspidal system} (for the fixed convex preorder $\preceq$) is the following data:
\begin{enumerate}
\item[{\rm (Cus1)}] An irreducible cuspidal $L_\be\in\mod{R_\be}$ for every $\be\in \Phi_+^\re$;
\item[{\rm (Cus2)}] An irreducible cuspidal $L_\bmu\in\mod{R_{n\de}}$ for every $n\in\N_+$ and every multipartition $\bmu\in\Par^J(n)$, such that $L_\bmu\not\cong L_\bnu$ if $\bmu\neq\bnu$. 
\end{enumerate}
We call the (graded super)modules $L_\be$ from (Cus1) {\em real irreducible cuspidal modules}, and 
the modules $L(\bmu)$ from (Cus2) {\em  imaginary irreducible cuspidal modules}.


Let $(\um,\bmu)\in \Par(\theta)$ for some $\theta\in Q+$. As  almost all $m_\be$ are zero, we can choose a finite subset
$
\be_1\succ\dots\succ\be_s\succ\de\succ\be_{-t}\succ\dots\succ\be_{-1}
$
of $\Psi$ such that $m_\be=0$ for $\be$'s outside of this subset. Then, denoting $m_u:=m_{\be_u}$, we can write $(\um,\bmu)$ in the form
\begin{equation}\label{EStandForm}
(\um,\bmu)=(\be_1^{m_1},\dots,\be_s^{m_s},\bmu,\be_{-t}^{m_{-t}},\dots,\be_{-1}^{m_{-1}}).
\end{equation}
Denote also 
$$
\lan\um\ran:=(m_1\be_1,\dots,m_s\be_s,m_\de\de,m_{-t}\be_{-t},\dots,m_{-1}\be_{-1})\in Q_+^{s+t+1},
$$
so we have a parabolic subalgebra $R_{\lan\um\ran}\subseteq 1_{\lan\um\ran}R_\theta 1_{\lan\um\ran}$, cf. \S\ref{SSIndRes}.

We define a partial order on $\Par(\theta)$. 
First, there are left and write lexicographic orders $\leq_l$ and $\leq_r$ on the finitary tuples $\um=(m_{\be})_{\be\in\Psi}$, and for two such tuples $\um$ and $\un$ we set 
$\um\leq\un$ if $\um\leq_l \un$ and $\um\geq_r\un$. 
As in \cite{McN1,McN2}, for $(\um,\bmu),(\un,\bnu)\in\Par(\theta)$, we now set 
\begin{equation}\label{EBilex}
(\um,\bmu)\leq (\un,\bnu)\ \iff \ \um< \un,\ \text{or $\um=\un$ and}\ \bmu=\bnu.
\end{equation}

In this subsection, it will be convenient to use the following notation: for graded supermodules $V_1\in\mod{R_{\theta_1}},\,\dots,\,V_n\in\mod{R_{\theta_n}}$, we denote 
\begin{equation*}\label{ECircProd}
V_1\circ\dots\circ V_n:=\Ind_{\underline{\theta}}^\theta(V_1\boxtimes\dots\boxtimes V_n). 
\end{equation*}

Now suppose that $\{L_\be,L_\bmu\mid\be\in\Phi_+^\re,\bmu\in\Par^J\}$ is a cuspidal system for our fixed convex preorder. For a root partition 
$(\um,\bmu)\in\Uppi(\theta)$ 
written in the form (\ref{EStandForm}), we define  
the  {\em (proper) standard (graded super)module} over $R_\theta$:
\begin{equation}\label{EStand}
\Stand(\um,\bmu):=
L_{\be_1}^{\circ m_1} \circ \dots\circ L_{\be_s}^{\circ m_s}\circ L(\bmu)\circ L_{\be_{-t}}^{\circ m_{-t}}\circ\dots\circ  L_{\be_{-1}}^{\circ m_{-1}}.
\end{equation}

\begin{Theorem} \label{THeadIrr} {\rm \cite[Theorem 3.3.13]{KlLi}} 
Let\, $\k=\F$. 
For a given convex preorder there exists a cuspidal system $\{L_\be,L_\bmu\mid \be\in \Phi_+^\re,\,\bmu\in\Par^J\}$. Moreover: 
\begin{enumerate}
\item[{\rm (i)}] For every root partition $(\um,\bmu)$, the standard module  
$
\Stand(\um,\bmu)
$ has an irreducible head; denote this irreducible module $L(\um,\bmu)$.

\item[{\rm (ii)}] $\{L(\um,\bmu)\mid (\um,\bmu)\in \Par(\theta)\}=\Irr(R_\theta)$. 

\item[{\rm (iii)}] $[\Stand(\um,\bmu):L(\um,\bmu)]=1$, and $[\Stand(\um,\bmu):L(\un,\bnu)]\neq 0$ implies $(\un,\bnu)\leq (\um,\bmu)$. 

\item[{\rm (iv)}] $\Res_{\lan\un\ran}L(\um,\bmu)\neq 0$ implies $\un\leq \um$ and 
$$\Res_{\lan\um\ran}L(\um,\bmu)\cong L_{\be_1}^{\circ m_1} \boxtimes \dots\boxtimes L_{\be_s}^{\circ m_s}\boxtimes L(\bmu)\boxtimes L_{\be_{-t}}^{\circ m_{-t}}\boxtimes\dots\boxtimes  L_{\be_{-1}}^{\circ m_{-1}}.$$ 

\item[{\rm (v)}] Let $\be\in \Phi_+^\re$ and $n\in\N_+$. Then $\Stand(\be^n)=L_\be^{\circ n}$ is irreducible, i.e. $L(\be^n)=L_\be^{\circ n}$. 

\item[{\rm (vi)}] Let $n\in\N_+$. Then $\{L(\bmu)\mid\bmu\in \Par^J(n)\}=\Irr(\bar R_{n\de})$ and $\{L(\be^n)\}=\Irr(\bar R_{n\be})$ for any $\be\in\Phi_+^\re$. 
\end{enumerate}
\end{Theorem}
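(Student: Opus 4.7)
The plan is to follow the general framework developed by McNamara and Kleshchev--Ram for quiver Hecke algebras, adapted to the super setting and made to work for our particular affine type $A_{2\ell}^{(2)}$. I would proceed in three stages.

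First, I would construct the real cuspidal modules and prove (v) simultaneously. For each $\be\in\Phi_+^\re$, the structure of $\bar R_{n\be}$ is dictated by the few cuspidal words in $I^{n\be}_\cus$: using convexity (\ref{EPO1})--(\ref{EPO3}) and the shuffle description of characters (Corollary~\ref{CShuffle}), one shows that any cuspidal word is obtained by concatenating $n$ copies of a single ``extremal'' word, up to known combinatorial modifications depending on whether $\be$ is short, middle or long (see (\ref{EPhiS})--(\ref{EPhiL})). A direct analysis of the corresponding idempotent truncation, analogous to the (odd) nil-Hecke description behind Lemma~\ref{LNilHeckeProj}, shows that $\bar R_{n\be}$ is Morita superequivalent to a polynomial superalgebra, and hence has a unique graded irreducible supermodule, which is forced to be $L_\be^{\circ n}$ by parabolic induction and Corollary~\ref{CShuffle}. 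This gives (Cus1) and (v).

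Second, I would construct the imaginary cuspidals. Lemma~\ref{LAmount} gives $|\Irr(R_{n\de})|=|\Par(n\de)|$; since $\de$ is the unique indivisible imaginary root, every root partition of $n\de$ has $m_\de=n$ and zero real part, so $|\Par(n\de)|=|\Par^J(n)|$. The key point is to show that \emph{every} $L\in\Irr(R_{n\de})$ is cuspidal. Suppose not, so $\Res_{\theta,\eta}L\neq0$ with $n\de=\theta+\eta$ violating cuspidality. Using the Mackey filtration (Theorem~\ref{TMackeyKL}), the parity-sensitive formal character calculus of Corollary~\ref{CShuffle}, and Lemma~\ref{LCon3} to force any decomposition of $n\de$ by $\preceq$-smaller or $\succeq$-smaller positive roots to be purely imaginary, I would set up induction on $n$ to derive a contradiction by producing an irreducible subfactor associated to a smaller $n'\de$ whose existence (by induction) as a cuspidal contradicts the assumed non-cuspidality. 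This yields (Cus2) together with the bijection $\bmu\leftrightarrow L_\bmu$.

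Third, with the cuspidal system in hand, I would run the standard-module argument. For $(\um,\bmu)\in\Par(\theta)$ in the form (\ref{EStandForm}), define $\Stand(\um,\bmu)$ by (\ref{EStand}). Iteratively applying the Mackey theorem to $\Res_{\lan\um\ran}\Stand(\um,\bmu)$ and using the cuspidality of each tensor factor to discard subquotients, I would identify a unique top layer isomorphic to
\begin{equation*}
L_{\be_1}^{\circ m_1}\boxtimes\dots\boxtimes L_{\be_s}^{\circ m_s}\boxtimes L(\bmu)\boxtimes L_{\be_{-t}}^{\circ m_{-t}}\boxtimes\dots\boxtimes L_{\be_{-1}}^{\circ m_{-1}},
\end{equation*}
with all other layers supported on parabolics labelled by strictly smaller $(\un,\bnu)<(\um,\bmu)$ in the bilex order (\ref{EBilex}). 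Frobenius reciprocity then forces $\Stand(\um,\bmu)$ to have a simple head $L(\um,\bmu)$ satisfying (iii) and (iv), and the triangularity of the restriction pattern together with the count $|\Par(\theta)|$ from Lemma~\ref{LAmount} forces $\{L(\um,\bmu)\}$ to be a complete, non-redundant set of irreducibles, giving (i) and (ii). Part (vi) then follows because an irreducible over $\bar R_{n\be}$ (resp.\ $\bar R_{n\de}$) must by definition be cuspidal and hence equal some $L(\um,\bmu)$ whose only nonzero $m_\be$ is $m_\be=n$ (resp.\ whose only nonzero $m_\be$ is $m_\de=n$).

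The main obstacle will be the second stage, where one must rule out non-cuspidal irreducibles in $R_{n\de}$. The super setting introduces extra sign twists and parity shifts (cf.\ (\ref{EWSignAction}) and the odd nil-Hecke structure at $i=0$), so the usual exchange arguments driving the shuffle/Mackey analysis must be executed with careful sign bookkeeping, using Lemma~\ref{LLV} to relate $\Ind$ and $\Coind$ and the parity-graded character ring $\Z^\pi((q))\lan I\ran$ to compare Jordan--H\"older data. Once this bookkeeping is set up, the real-root cuspidal input from Stage~1 bootstraps the induction on $n$ to completion.
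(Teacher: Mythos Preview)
The paper does not prove this theorem at all; it is quoted wholesale from \cite[Theorem~3.3.13]{KlLi}. So there is no ``paper's proof'' to compare to beyond that citation.

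That said, your proposed reconstruction has a genuine error in Stage~2. You claim that every root partition of $n\de$ has $m_\de=n$ and zero real part, hence $|\Par(n\de)|=|\Par^J(n)|$, and that every irreducible $R_{n\de}$-module is cuspidal. Both claims are false. In type $A_{2\ell}^{(2)}$ the null root $\de$ can be written as a sum of two positive \emph{real} roots: for any long root $\al\in\Phi'_l$, the description (\ref{EPhiS}) with $n=1$ gives $(\al+\de)/2,\,(-\al+\de)/2\in\Phi_+^\re$, and these sum to $\de$. So $\Par(n\de)$ contains many root partitions with nontrivial real part, $|\Par(n\de)|>|\Par^J(n)|$, and $R_{n\de}$ has non-cuspidal irreducibles. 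Your proposed contradiction argument therefore cannot succeed, because the statement you are trying to prove is not true.

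The correct logical structure (as carried out in \cite{KlLi}, following \cite{McN2,Kcusp}) intertwines the construction of imaginary cuspidals with the standard-module machinery in a single induction on $\height(\theta)$. One first establishes (i)--(iv) formally, assuming only that cuspidal systems exist with \emph{some} number $c_n$ of imaginary cuspidals at level $n$; triangularity of standard modules then shows that the heads $L(\um,\bmu)$ are pairwise non-isomorphic and that every irreducible arises this way. Comparing the count against Lemma~\ref{LAmount} forces $\sum_{\um}c_{m_\de}=\sum_{\um}|\Par^J(m_\de)|$ for all $\theta$, whence $c_n=|\Par^J(n)|$ by induction. Your Stages~1 and~3 are essentially right in outline, but Stage~2 must be deferred and deduced as a \emph{consequence} of the counting, not established independently beforehand.
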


Let $\k=\F$. It is clear from Theorem~\ref{THeadIrr} that for a real root $\beta$, the irreducible real cuspidal (graded super)module $L_\be$ is 
the only irreducible $R_\beta$-module that is cuspidal, and so the choice of $L_\be$ is canonical. On the other hand, the irreducible imaginary cuspidal $R_{n\de}$-modules $\{L(\bmu)\mid \bmu\in\Par^J(n)\}$ are determined up to permutation of the set $ \Par^J(n)$ of their labels, and so in this sense the definition of $L(\bmu)$ (and of $\Stand(\um,\bmu)$) is {\em not canonical}. For now $L(\bmu)$ is just one of the irreducible imaginary cuspidal modules over $R_{n\de}$. One of the goals of this paper is to label the irreducible imaginary cuspidal modules over each $R_{n\de}$ canonically by the multipartitions $\bmu\in\Par^J(n)$.

\subsection{Induction and restriction for cuspidal modules}
We continue to work with a fixed arbitrary convex preorder $\preceq$. 
Let $\be\in\Psi$, $d,n\in\N_+$ and $\la=(\la_1,\dots,\la_n)\in\Comp(n,d)$. Denote 
$$
\la\be:=(\la_1\be,\dots,\la_n\be)\in Q_+^n.
$$ 
Recalling (\ref{ETheta}), we have the idempotent $1_{\la\be}\in R_{d\be}$ and the parabolic subalgebra 
$R_{\la\be}\subseteq 1_{\la\be}R_{d\be}1_{\la\be}$ identified with $R_{\la_1\be}\otimes\dots\otimes R_{\la_n\be}$ via the embedding $\iota_{\la\be}$ of (\ref{EIota}). So we can consider $\bar R_{\la_1\be}\otimes\dots\otimes \bar R_{\la_n\be}$ as a quotient of $R_{\la\be}$. 
Define the {\em cuspidal parabolic subalgebra} $\bar R_{\la\be}\subseteq 1_{\la\be} \bar R_{d\be}1_{\la\be}$ to be the image of $R_{\la\be}$ under the natural projection $1_{\la\be}R_{d\be}1_{\la\be}\onto 1_{\la\be}\bar R_{d\be}1_{\la\be}$. 

\begin{Lemma}\label{L030216} 
{\rm \cite[Lemma 3.3.21]{KlLi}} 
Let $h=\height(\be)$ and $\la\in\Comp(n,d)$.
\begin{enumerate}
\item[{\rm (i)}] The natural map
$$
R_{\la_1\be}\otimes\dots\otimes R_{\la_n\be}\stackrel{\iota_{\la\be}}{\longrightarrow} 1_{\la\be}R_{d\be}1_{\la\be}\onto 1_{\la\be}\bar R_{d\be}1_{\la\be}
$$
factors through $\bar R_{\la_1\be}\otimes\dots\otimes \bar R_{\la_n\be}$ and induces an isomorphism
$$\bar R_{\la_1\be}\otimes\dots\otimes \bar R_{\la_n\be}\iso \bar R_{\la\be}.$$ 
\item[{\rm (ii)}] $\bar R_{d\be} 1_{\la\be}$ is a free right $\bar R_{\la\be}$-module with basis
$\{ \psi_w1_{\la\be} \mid w\in \D_{hd}^{h\la}\}$.
\item[{\rm (iii)}] $1_{\la\be}\bar R_{d\be}$ is a free left $\bar R_{\la\be}$-module with basis
$\{ 1_{\la\be}\psi_w \mid w\in {}^{h\la}\D_{hd}\}$.
\end{enumerate}
\end{Lemma}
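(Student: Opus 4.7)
The plan is to reduce all three parts to one combinatorial lemma together with the basis Theorem~\ref{TBasis}. The key claim I would prove first is the following: a concatenation $\bi=\bi^{(1)}\cdots\bi^{(n)}$ with $\bi^{(r)}\in I^{\la_r\be}$ is cuspidal in $I^{d\be}$ if and only if each $\bi^{(r)}$ is cuspidal in $I^{\la_r\be}$. The ``if'' direction handles cuts of $\bi$ falling between consecutive factors (where both weights are nonnegative integer multiples of $\be$, so the cuspidality conditions are automatic) and cuts falling inside some $\bi^{(r)}=\bj\bk$ (where one combines the cuspidality of $\bi^{(r)}$ with the convexity axioms (\ref{EPO1})--(\ref{EPO3}), and with Lemma~\ref{LCon3} in the case $\be=\de$). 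The ``only if'' direction is the contrapositive, discussed in the last paragraph.

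Given this lemma, part (i) follows by basis matching. The composite sends $1_{\bi^{(1)}}\otimes\cdots\otimes 1_{\bi^{(n)}}\mapsto 1_{\bi^{(1)}\cdots\bi^{(n)}}$, so by the lemma the kernel contains exactly the ideal defining the tensor product of cuspidal quotients, and the induced map $\bar R_{\la_1\be}\otimes\cdots\otimes\bar R_{\la_n\be}\to\bar R_{\la\be}$ is well-defined. To check it is an isomorphism, I would apply Theorem~\ref{TBasis} on both sides: the domain has basis elements $\psi_{w^{(1)}}y^{\bk^{(1)}}1_{\bi^{(1)}}\otimes\cdots\otimes\psi_{w^{(n)}}y^{\bk^{(n)}}1_{\bi^{(n)}}$ with cuspidal $\bi^{(r)}$, which $\iota_{\la\be}$ embeds into a PBW-type basis of $1_{\la\be}R_{d\be}1_{\la\be}$; the lemma guarantees that these are exactly the elements surviving in $1_{\la\be}\bar R_{d\be}1_{\la\be}$. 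Part (ii) is then immediate: Theorem~\ref{TBasis} already gives $R_{d\be}1_{\la\be}$ as free over $R_{\la\be}$ with basis $\{\psi_w 1_{\la\be}\mid w\in\D^{h\la}_{hd}\}$, and the lemma ensures that right multiplication by a non-cuspidal idempotent in $R_{\la\be}$ produces non-cuspidal words on the left, so the right action descends to $\bar R_{\la\be}$ and the basis remains linearly independent in the quotient $\bar R_{d\be}1_{\la\be}$. Part (iii) follows by applying the antiautomorphism $\tau$ of (\ref{ETauAntiAuto}) to (ii); $\tau$ preserves the cuspidal ideal, converts $R_{d\be}1_{\la\be}$ into $1_{\la\be}R_{d\be}$, and sends $w\mapsto w^{-1}$, converting $\D^{h\la}_{hd}$ into ${}^{h\la}\D_{hd}$.

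The main obstacle is the contrapositive direction of the key lemma: given $\bi^{(r)}=\bj\bk$ with $\wt(\bj)$ not a sum of positive roots $\preceq\be$, I need to show that $(\la_1+\cdots+\la_{r-1})\be+\wt(\bj)$ also fails to be such a sum. Assuming for contradiction a decomposition $(\la_1+\cdots+\la_{r-1})\be+\wt(\bj)=\sum_a\ga_a$ with each $\ga_a\preceq\be$, I would extract $(\la_1+\cdots+\la_{r-1})$ summands equal to $\be$, leaving a valid decomposition of $\wt(\bj)$ that violates the hypothesis. When $\be=\de$, Lemma~\ref{LCon3} is essential: it controls which subsets of the $\ga_a$ can combine to form an imaginary root, forcing those $\ga_a$ to all be imaginary and hence proportional to $\de$. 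When $\be$ is real, (\ref{EPO3}) identifies the only positive root proportional to $\be$ as $\be$ itself, so the extraction is more direct. Making this rearrangement rigorous — effectively, that any decomposition of $m\be+\eta$ into positive roots $\preceq\be$ can be reorganized to separate off $m$ copies of $\be$ — is the technical heart of the argument.
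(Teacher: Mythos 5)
You have correctly identified the right key combinatorial lemma (concatenation of cuspidal words is cuspidal if and only if each factor is), and your deduction of (i)--(iii) from that lemma via Theorem~\ref{TBasis} and the anti-automorphism $\tau$ is sound; note, however, that this lemma is merely cited in the present paper from \cite[Lemma 3.3.21]{KlLi}, so there is no proof in this document to compare against directly.

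The genuine gap is in the ``only if'' direction, which you rightly flag as the technical heart but then dispatch with an ``extraction'' argument that does not work. You propose: given $m\be+\eta = \sum_a\ga_a$ with all $\ga_a\preceq\be$, ``extract $m$ summands equal to $\be$.'' But a decomposition into roots $\preceq\be$ need not contain \emph{any} copy of $\be$: already in type $A_2$ with $\be=\al_1+\al_2$ one has $2\be=\al_1+\al_1+\al_2+\al_2$, a decomposition into roots $\prec\be$ with zero copies of $\be$. You do say ``reorganize to separate off $m$ copies of $\be$,'' which is a different and far more delicate claim (it now involves recombining smaller roots into $\be$ and re-sorting the remainder), and you give no argument for it. In fact the statement you actually need is that $m\be+\eta\in S_{\preceq\be}$ and $\eta\in Q_+$ together force $\eta\in S_{\preceq\be}$ (where $S_{\preceq\be}$ denotes the $\N$-span of roots $\preceq\be$); this is a non-trivial property of convex preorders that does not reduce to termwise bookkeeping. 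The appeals to Lemma~\ref{LCon3} and to (\ref{EPO3}) do not close this gap: Lemma~\ref{LCon3} concerns decompositions of an \emph{imaginary root}, not of an arbitrary element of $Q_+$ of the form $m\de+\eta$, and (\ref{EPO3}) only identifies which roots are proportional to $\be$ but says nothing about whether such a root must \emph{appear} in (or be assemblable from) a given decomposition. This direction is exactly what makes the map in (i) well-defined (it forces non-cuspidal idempotents in a tensor factor to map to the cuspidal ideal of $R_{d\be}$), so the lemma cannot be taken for granted; a correct proof must establish the cancellation property $m\be+\eta\in S_{\preceq\be}\Rightarrow\eta\in S_{\preceq\be}$ by a genuine convexity argument, not by extraction.
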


In view of the lemma we identify 
\begin{equation}\label{ECuspPar}
\bar R_{\la_1\be}\otimes\dots\otimes \bar R_{\la_n\be}=\bar R_{\la\be}. 
\end{equation}
We call a $R_{\la\be}$-module cuspidal if it factors through $\bar R_{\la\be}$. (This agrees with the definition of a cuspidal $R_{d\de}$-modules for the case $\la=(d)$.) The following two lemmas show, among other things, that cuspidality is preserved under parabolic induction and restriction. 

\begin{Lemma} \label{LCuspRes} 
{\rm \cite[Lemma 3.3.18]{KlLi}}
Let $\la\in\Comp(d)$. 
If $ V\in\mod{R_{d\be}}$ is cuspidal then so is\, 
$\Res^{d\be}_{\la\be} V\in\mod{R_{\la\be}}$.
\end{Lemma}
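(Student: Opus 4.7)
My plan is to reduce the statement to Lemma~\ref{L030216}(i), which supplies the crucial algebra isomorphism $\bar R_{\la_1\be}\otimes\cdots\otimes \bar R_{\la_n\be}\iso \bar R_{\la\be}$. Everything else is essentially bookkeeping on definitions.

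First I would unwind what needs to be shown. Saying that the restricted module $\Res^{d\be}_{\la\be}V\in\mod{R_{\la\be}}$ is cuspidal means (under the identification (\ref{ECuspPar})) that the action of each tensor factor $R_{\la_r\be}$ on $\Res^{d\be}_{\la\be}V$ factors through the cuspidal quotient $\bar R_{\la_r\be}$. This is equivalent to asking that the total action of $R_{\la\be}=R_{\la_1\be}\otimes\cdots\otimes R_{\la_n\be}$ on $\Res^{d\be}_{\la\be}V$ factors through $\bar R_{\la_1\be}\otimes\cdots\otimes \bar R_{\la_n\be}$.

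Next I would exploit that $V$ being cuspidal means the $R_{d\be}$-action on $V$ factors through the projection $R_{d\be}\twoheadrightarrow \bar R_{d\be}$. Combined with the definition of the parabolic embedding $\iota_{\la\be}\colon R_{\la\be}\hookrightarrow 1_{\la\be}R_{d\be}1_{\la\be}$ and the fact that $\Res^{d\be}_{\la\be}V=1_{\la\be}V$ as a vector space, this shows that the $R_{\la\be}$-action on $\Res^{d\be}_{\la\be}V$ is induced by the composition
$$
R_{\la\be}\xrightarrow{\iota_{\la\be}} 1_{\la\be}R_{d\be}1_{\la\be}\twoheadrightarrow 1_{\la\be}\bar R_{d\be}1_{\la\be}.
$$
By the very definition of the cuspidal parabolic subalgebra $\bar R_{\la\be}\subseteq 1_{\la\be}\bar R_{d\be}1_{\la\be}$ given just before Lemma~\ref{L030216}, the image of this composition is exactly $\bar R_{\la\be}$, and the composition factors as $R_{\la\be}\twoheadrightarrow \bar R_{\la\be}\hookrightarrow 1_{\la\be}\bar R_{d\be}1_{\la\be}$.

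Finally I would apply Lemma~\ref{L030216}(i) to identify $\bar R_{\la\be}$ with $\bar R_{\la_1\be}\otimes\cdots\otimes \bar R_{\la_n\be}$; the action of $R_{\la\be}$ on $\Res^{d\be}_{\la\be}V$ thus factors through this tensor product of cuspidal quotients, which is exactly what was required. The plan contains no real obstacle: all the non-trivial content has been absorbed into Lemma~\ref{L030216}(i), and our task reduces to chasing definitions through the natural maps.
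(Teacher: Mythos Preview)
The paper does not supply a proof here; it simply cites \cite[Lemma 3.3.18]{KlLi}. Your argument is correct and essentially immediate from the definitions as set up in this paper: once $\bar R_{\la\be}$ is defined as the image of $R_{\la\be}$ in $1_{\la\be}\bar R_{d\be}1_{\la\be}$, the fact that the $R_{d\be}$-action on $V$ factors through $\bar R_{d\be}$ forces the $R_{\la\be}$-action on $1_{\la\be}V$ to factor through $\bar R_{\la\be}$, which is precisely the definition of cuspidality for $R_{\la\be}$-modules adopted just before the lemma. Your appeal to Lemma~\ref{L030216}(i) is only needed to translate to the tensor-product form $\bar R_{\la_1\be}\otimes\cdots\otimes\bar R_{\la_n\be}$; strictly speaking the result is already established at the end of your third paragraph. (In \cite{KlLi} the numbering indicates that the analogue of Lemma~\ref{L030216} is proved \emph{after} this lemma, so the original argument there presumably works directly with non-cuspidal words rather than via the quotient-algebra description.)
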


\begin{Lemma} \label{LTensImagIsImag} 
{\rm \cite[Lemmas 3.3.18,\,3.3.20, Corollary 3.3.22]{KlLi}}
Let $\la\in\Comp(n,d)$. 
\begin{enumerate}
\item[{\rm (i)}] 
If $V\in \mod{R_{\la\be}}$ is cuspidal then so is the $R_{d\be}$-module $\Ind_{\la\be}^{d\be} V$ and there is a natural bidegree $(0,\0)$ isomorphism of graded $R_{d\be}$-supermodules 
\begin{align*}
\Ind_{\la\be}^{d\be} V\iso \bar R_{d\be}1_{\la\be}\otimes_{\bar R_{\la\be}} V, \  1_{\la\be}\otimes v\mapsto
1_{\la\be}\otimes v.
\end{align*}

\item[{\rm (ii)}] If $\bi^{(r)}\in I^{\la_r\be}_\cus$ for $r=1,\dots,n$, then there is a bidegree $(0,\0)$ isomorphism of graded $R_{d\be}$-supermodules 
\begin{align*}
\bar R_{d\be}1_{{\bi^{(1)}\cdots\bi^{(n)}}}&\iso \Ind_{\la\be}^{d\be}\big(\bar R_{\la_1\be}1_{{\bi^{(1)}}}\boxtimes\dots\boxtimes \bar R_{\la_n\be}1_{{\bi^{(n)}}}\big),
\\
1_{{\bi^{(1)}\cdots\bi^{(n)}}}
&\mapsto
1_{\la\be}\otimes 1_{{\bi^{(1)}}}\otimes \dots\otimes 1_{{\bi^{(n)}}}.
\end{align*}
\end{enumerate}
\end{Lemma}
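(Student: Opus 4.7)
The plan is to prove part (i) first by a direct basis comparison, then derive part (ii) from (i) combined with Lemma~\ref{LIndStdProj}. For part (i), I begin by observing that since $V$ is cuspidal as an $R_{\la\be}$-module, its action factors through the quotient $R_{\la\be}\twoheadrightarrow \bar R_{\la_1\be}\otimes\cdots\otimes\bar R_{\la_n\be}=\bar R_{\la\be}$, using the identification from Lemma~\ref{L030216}(i). The natural surjection of right $R_{\la\be}$-modules $R_{d\be}1_{\la\be}\twoheadrightarrow\bar R_{d\be}1_{\la\be}$, where $R_{\la\be}$ acts on the target through its projection to $\bar R_{\la\be}$ (this is well-defined by Lemma~\ref{L030216}(i) again), then yields a surjective bidegree $(0,\0)$ map of graded $R_{d\be}$-supermodules
$$\Ind_{\la\be}^{d\be}V = R_{d\be}1_{\la\be}\otimes_{R_{\la\be}}V \twoheadrightarrow \bar R_{d\be}1_{\la\be}\otimes_{\bar R_{\la\be}}V,\quad 1_{\la\be}\otimes v\mapsto 1_{\la\be}\otimes v.$$

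The key point is that this surjection is in fact an isomorphism, by a basis comparison. On the one hand, Lemma~\ref{LIndBasis} gives a $\k$-supermodule decomposition $\Ind_{\la\be}^{d\be}V=\bigoplus_{w\in\D_{hd}^{h\la}}\psi_w\otimes V$ (with each $v\mapsto\psi_w\otimes v$ a $\k$-isomorphism). On the other hand, Lemma~\ref{L030216}(ii) provides a free right $\bar R_{\la\be}$-module basis $\{\psi_w1_{\la\be}\mid w\in\D_{hd}^{h\la}\}$ for $\bar R_{d\be}1_{\la\be}$, giving $\bar R_{d\be}1_{\la\be}\otimes_{\bar R_{\la\be}}V=\bigoplus_{w\in\D_{hd}^{h\la}}\psi_w1_{\la\be}\otimes V$. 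The surjection matches these bases up term-by-term, so it is a bidegree $(0,\0)$ isomorphism as claimed. Cuspidality of $\Ind_{\la\be}^{d\be}V$ is then automatic, since the target of the isomorphism is manifestly a $\bar R_{d\be}$-module: every $1_\bi$ with $\bi\in I^{d\be}\setminus I^{d\be}_\cus$ annihilates it.

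For part (ii), I apply Lemma~\ref{LIndStdProj} to the divided power words (which are ordinary words here, since each $\bi^{(r)}\in I^{\la_r\be}$) to get the bidegree $(0,\0)$ isomorphism
$$R_{d\be}1_{\bi^{(1)}\cdots\bi^{(n)}}\iso\Ind_{\la\be}^{d\be}\big(R_{\la_1\be}1_{\bi^{(1)}}\boxtimes\cdots\boxtimes R_{\la_n\be}1_{\bi^{(n)}}\big),\quad 1_{\bi^{(1)}\cdots\bi^{(n)}}\mapsto 1_{\la\be}\otimes 1_{\bi^{(1)}}\otimes\cdots\otimes 1_{\bi^{(n)}}.$$
Projecting both sides through $\bar R_{d\be}$ and using part (i) to identify the quotient of the induction with the induction of the quotients $\bar R_{\la_r\be}1_{\bi^{(r)}}$ (each of which is cuspidal as a quotient of $R_{\la_r\be}$), one obtains the desired isomorphism $\bar R_{d\be}1_{\bi^{(1)}\cdots\bi^{(n)}}\iso\Ind_{\la\be}^{d\be}(\bar R_{\la_1\be}1_{\bi^{(1)}}\boxtimes\cdots\boxtimes\bar R_{\la_n\be}1_{\bi^{(n)}})$.

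The only mild obstacle is ensuring that the quotient of $R_{d\be}1_{\bi^{(1)}\cdots\bi^{(n)}}$ by the two-sided cuspidal relations coincides with $\bar R_{d\be}1_{\bi^{(1)}\cdots\bi^{(n)}}$ as a left $\bar R_{d\be}$-module, and that this matches the quotient of the right-hand side term-by-term; but both follow from the compatibility of the quotient map with the parabolic embedding (Lemma~\ref{L030216}(i)) and with the basis decomposition from part~(i). Essentially the entire argument reduces to the observation that the free basis $\{\psi_w1_{\la\be}\}$ survives the passage from $R_{d\be}$ to $\bar R_{d\be}$, which is exactly what Lemma~\ref{L030216} provides.
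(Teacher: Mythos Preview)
The paper does not prove this lemma; it is simply quoted from \cite[Lemmas 3.3.18, 3.3.20, Corollary 3.3.22]{KlLi}. Your reconstruction is correct and uses precisely the tools the paper makes available: Lemma~\ref{L030216} for the free $\bar R_{\la\be}$-basis of $\bar R_{d\be}1_{\la\be}$, Lemma~\ref{LIndBasis} for the matching decomposition of the induced module, and Lemma~\ref{LIndStdProj} for the projective module identification in part~(ii).

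One small point worth making explicit in part~(ii): you write ``using part~(i) to identify the quotient of the induction with the induction of the quotients,'' but part~(i) as stated applies only to cuspidal $V$, whereas $R_{\la_1\be}1_{\bi^{(1)}}\boxtimes\cdots$ is not cuspidal. What is actually happening is this: tensoring $R_{d\be}1_{\la\be}\otimes_{R_{\la\be}}W$ with $\bar R_{d\be}$ over $R_{d\be}$ gives $\bar R_{d\be}1_{\la\be}\otimes_{R_{\la\be}}W$; since the right $R_{\la\be}$-action on $\bar R_{d\be}1_{\la\be}$ factors through $\bar R_{\la\be}$ (Lemma~\ref{L030216}(i)), this equals $\bar R_{d\be}1_{\la\be}\otimes_{\bar R_{\la\be}}(\bar R_{\la\be}\otimes_{R_{\la\be}}W)$, and $\bar R_{\la\be}\otimes_{R_{\la\be}}W\simeq\bar R_{\la_1\be}1_{\bi^{(1)}}\boxtimes\cdots$. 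Only then does part~(i) apply to identify this with the induced module. Alternatively, and perhaps more directly, one can skip Lemma~\ref{LIndStdProj} altogether and observe that $\bar R_{d\be}1_{\la\be}\otimes_{\bar R_{\la\be}}\bar R_{\la\be}1_{\bi}\simeq\bar R_{d\be}1_{\bi}$ (since $1_{\la\be}1_\bi=1_\bi$), then invoke part~(i). Either route closes the argument.
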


From now on, for cuspidal $V\in \mod{R_{\la\be}}$, we identify the induced (graded super)modules 
$\Ind_{\la\be}^{d\be} V=R_{d\be}1_{\la\be}\otimes_{R_{\la\be}} V$ and $\bar R_{d\be}1_{\la\be}\otimes_{\bar R_{\la\be}} V$ 
as in Lemma~\ref{LTensImagIsImag}(i):
\begin{equation*}\label{EIdInd}
\Ind_{\la\be}^{d\be} V=R_{d\be}1_{\la\be}\otimes_{R_{\la\be}} V=\bar R_{d\be}1_{\la\be}\otimes_{\bar R_{\la\be}} V.
\end{equation*}

\subsection{Imaginary cuspidal algebra}
Fix $d\in\N_+$. 
From now on, for the case $\be=\de$, we redenote the cuspidal algebra $\bar R_{d\be}$ of (\ref{ECuspidalAlgebra}) by $\hat C_d$:
\begin{equation}\label{EHatC}
\hat C_d:=\bar R_{d\de}.
\end{equation}
and refer to it as the {\em imaginary cuspidal (graded super)algebra}. 
Similarly, for $\la\in\Comp(n,d)$, we denote
\begin{equation}\label{ECuspParHat}
\hat C_\la:=\bar R_{\la\de},
\end{equation}
so in view of (\ref{ECuspPar}), we have $\hat C_\la\cong \hat C_{\la_1}\otimes \dots\otimes \hat C_{\la_n}$. According to (\ref{ECuspPar}), we always identify:
\begin{equation}\label{EHatParabolic}
\hat C_\la= \hat C_{\la_1}\otimes \dots\otimes \hat C_{\la_n}
\end{equation}

\begin{Lemma} \label{LImCuspIrrAmount} 
Let $\k=\F$. We have $|\Irr(\hat C_d)|=|\Par^J(d)|$. 
\end{Lemma}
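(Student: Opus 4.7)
This lemma is essentially a repackaging of Theorem \ref{THeadIrr}(vi) together with the non-redundancy condition in (Cus2) of the definition of a cuspidal system, so the proof will be quite short: the main task is simply to unwind the notation and verify that no extra input is needed.

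The strategy is as follows. By Theorem \ref{THeadIrr}(vi), for $n = d$ we have
\[
\Irr(\hat C_d)\;=\;\Irr(\bar R_{d\delta})\;=\;\{L(\bmu)\mid\bmu\in\Par^J(d)\},
\]
where for a purely imaginary root partition $(\um,\bmu) \in \Par(d\delta)$ with $m_\beta = 0$ for all $\beta \in \Phi_+^{\re}$ and $m_\delta = d$, the standard module $\Stand(\um,\bmu)$ defined in (\ref{EStand}) degenerates to the irreducible imaginary cuspidal module $L(\bmu) = L_\bmu$ of the cuspidal system. Since $\Irr(\hat C_d)$ was defined in \S\ref{SSBasicRep} as a \emph{complete and non-redundant} set of irreducible graded supermodules, to deduce $|\Irr(\hat C_d)| = |\Par^J(d)|$ it suffices to check that the correspondence $\bmu \mapsto L(\bmu)$ is injective, i.e.\ that $L(\bmu) \not\cong L(\bnu)$ whenever $\bmu\neq\bnu$.

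This injectivity is built into the definition of a cuspidal system: condition (Cus2) in \S\ref{SSCuspSys} explicitly requires that $L_\bmu \not\cong L_\bnu$ whenever $\bmu \neq \bnu$, and the existence of such a system is asserted in the first sentence of Theorem \ref{THeadIrr}. Combining these two facts yields the desired equality $|\Irr(\hat C_d)| = |\Par^J(d)|$.

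There is no real obstacle here; the only thing to be careful about is the distinction between the full set $\Irr(R_{d\delta})$ (which by Lemma \ref{LAmount} has size $|\Par(d\delta)|$, counting all root partitions involving both real and imaginary roots) and the much smaller set $\Irr(\hat C_d)$ of irreducible modules that factor through the cuspidal quotient. Theorem \ref{THeadIrr}(vi) is precisely the statement that the latter consists only of the $L(\bmu)$ attached to purely imaginary root partitions, and this is exactly what reduces the count from $|\Par(d\delta)|$ to $|\Par^J(d)|$.
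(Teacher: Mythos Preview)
Your proof is correct and takes essentially the same approach as the paper, which simply cites Theorem~\ref{THeadIrr}(vi). You are just more explicit about why the set equality there yields a cardinality equality, unpacking that the non-redundancy comes from condition (Cus2) in the definition of a cuspidal system (whose existence is part of Theorem~\ref{THeadIrr}).
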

\begin{proof}
This follows from Theorem~\ref{THeadIrr}(vi). 
\end{proof}

\subsection{Imaginary Mackey Theorem}
\label{SSImagMackey}

Let 
\begin{equation}\label{EIotaSd}
\rho_d:\Si_d\to \Si_{dp},\ s_r\mapsto t_r
\end{equation}
be the group embedding 
where $t_r$ is defined as the product of transpositions 
$$
t_r:=\prod_{k=rp-p+1}^{rp}(k,k+p)
\qquad(1\leq r<n).
$$
In other words, for $w\in \Si_d$, $\rho_d(w)$ permutes $d$ blocks of size $p$ according to $w$. 

Recall that the element $\psi_{\rho_d(w)}$ in general depends on the choice of a reduced decomposition for $\rho_d(w)$. We will always use a reduced decomposition consistent with a reduced decomposition for $w$, i.e. for a reduced decomposition $w=s_{r_1}\dots s_{r_k}$, we take $\psi_{\rho_d(w)}=\psi_{t_{r_1}}\dots \psi_{t_{r_k}}$ (now $\psi_{\rho_d(w)}$ only depends on the choice of a reduced decomposition of $w$).

Recall the notation from Section~\ref{SSSG}. In particular, given two compositions $\la,\mu\in\Comp(d)$ and $ x\in {}^\la\D_d^\mu$ we have compositions $\la\cap  x\mu$ and $ x^{-1}\la\cap\mu$ in $\Comp(d)$. Moreover, the corresponding parabolic subalgebras $\hat C_{\la\cap  x\mu}$ and $\hat C_{x^{-1}\la\cap\mu}$ are naturally isomorphic  via an isomorphism 
$
\hat C_{\la\cap  x\mu}\iso \hat C_{x^{-1}\la\cap\mu}
$ 
which permutes the components. Composing with this isomorphism we get a functor 
\begin{equation}\label{ETwistCusp}
\mod{\hat C_{x^{-1}\la\cap\mu}}\to \mod{\hat C_{\la\cap  x\mu}},\ M\mapsto {}^ xM.
\end{equation}
Note that this functor is a special case of the functor (\ref{ETwist}) 
i.e. ${}^ xM={}^ {\{h(x)\}}M$ for a permutation $h(x)$ of the parts of $x^{-1}\la\cap\mu$ which corresponds to the conjugation by $x$. 
Moreover, we do not need any grading and parity shifts here since $(\de,\de)=0$. 

Let $V\in\mod{\hat C_{\mu}}$. Note that 
for any $x\in {}^\la\D_d^\mu$, we have 
$1_{\la\de}\psi_{\rho_d(x)} 1_{\mu\de}\otimes V
\subseteq \Res_{\la\de}^{d\de} \Ind_{\mu\de}^{d\de} V.$
Let $\leq$ be a total order refining the Bruhat order on $\Si_d$. 
For $x\in {}^\la\D_d^\mu$, we consider the submodules
\begin{align*}
\bar F_{\leq x}(V)&:=\sum_{y\in {}^\la\D_d^\mu\,\text{with}\, y\leq x}R_{\la\de}1_{\la\de}\psi_{\rho_d(y)} 1_{\mu\de}\otimes V 
\subseteq 
\Res_{\la\de}^{d\de} \Ind_{\mu\de}^{d\de} V,
\\
\bar F_{< x}(V)&:=\sum_{y\in {}^\la\D_d^\mu\,\text{with}\, y< x}R_{\la\de}1_{\la\de}\psi_{\rho_d(y)} 1_{\mu\de}\otimes V 
\subseteq 
\Res_{\la\de}^{d\de} \Ind_{\mu\de}^{d\de} V.
\end{align*}


\begin{Theorem} \label{TImMackey} 
{\bf (Imaginary Mackey Theorem)}
Let $\la,\mu\in\Comp(d)$, and $V\in\mod{\hat C_{\mu}}$. Then we have the filtration 
 $
 (\bar F_{\leq x}(V))_{x\in {}^\la\D_d^\mu}
 $
 of  
$\Res_{\la\de}^{d\de} \Ind_{\mu\de}^{d\de} V$.
Moreover, the sub-quotients of the filtration are
$$
\bar F_{\leq x}(V)/\bar F_{< x}(V)\simeq 
\Ind_{(\la\cap  x\mu)\de}^{\la\de}\,{}^{x}(\Res^{\mu\de}_{(x^{-1}\la\cap\mu)\de} V). 
$$
\end{Theorem}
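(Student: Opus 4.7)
The plan is to derive the Imaginary Mackey Theorem as a specialization of the general Mackey Theorem~\ref{TMackeyKL} applied to the parabolic induction/restriction $\Res^\theta_{\la\de}\,\Ind^\theta_{\mu\de}\,V$ (where $\theta = d\de$), and then use the cuspidality of $V$ to collapse the indexing set $\D(\la\de,\mu\de)$ down to matrices whose entries are non-negative integer multiples of $\de$, which are canonically in bijection with ${}^\la\D_d^\mu$. The resulting filtration then descends to the cuspidal quotient $\hat C_d$.

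First I would spell out that $V\in\mod{\hat C_\mu}$ is cuspidal as an $R_{\mu\de}$-module, and write $V=V_1\boxtimes\dots\boxtimes V_n$ if we pretend for exposition that $V$ factors (the argument only uses $\hat C_\mu$-structure). Applying Theorem~\ref{TMackeyKL} to $\underline{\theta}=\mu\de$ and $\underline{\eta}=\la\de$ gives a filtration of $\Res^{d\de}_{\la\de}\,\Ind^{d\de}_{\mu\de}\,V$ indexed by $\D(\la\de,\mu\de)$, namely by matrices $\underline{\gamma}=(\gamma_{a,b})$ with $\sum_a\gamma_{a,b}=\mu_b\de$ and $\sum_b\gamma_{a,b}=\la_a\de$. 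The sub-quotient at $\underline{\gamma}$ is $\Ind(\cdots)\,{}^{\{w_{m,n}\}}\Res(\cdots)V$, with the various parabolics encoded by $\underline{\gamma}$.

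The key step is to show that only matrices of the form $\gamma_{a,b}=c_{a,b}\de$ with $c_{a,b}\in\N$ contribute non-trivially. For this, I would use that the $b$-th tensor factor of $V$ is a cuspidal $R_{\mu_b\de}$-module, so the iterated restriction $\Res^{\mu_b\de}_{\gamma_{1,b},\dots,\gamma_{m,b}}V_b$ is zero unless, for every splitting point $a$, the partial sum $\theta_a=\sum_{c\leq a}\gamma_{c,b}$ is a sum of positive roots $\preceq\de$ and $\eta_a=\mu_b\de-\theta_a$ is a sum of positive roots $\succeq\de$. If some $\gamma_{a,b}$ were not a multiple of $\de$, then in the decomposition of $\gamma_{a,b}$ into positive roots (as a summand of the cuspidal word) some real root $\alpha$ would appear; $\alpha$ would be a summand of $\theta_a$ (forcing $\alpha\preceq\de$) and also of $\eta_{a-1}$ (forcing $\alpha\succeq\de$), so by convexity axiom (\ref{EPO3}) it would be proportional to $\de$, contradicting its reality. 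Hence every $\gamma_{a,b}$ is imaginary, i.e.\ $\gamma_{a,b}=c_{a,b}\de$. Making this word-level argument rigorous via the definition of cuspidal words is the main obstacle.

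Once Step~2 is in place, the surviving matrices $A=(c_{a,b})$ are exactly those in $M(\la,\mu)$, and the bijection (\ref{EDLaMuBijMat}) identifies them with ${}^\la\D_d^\mu$ via $A\mapsto w_A=:x$. Under this identification, the inner parabolic from Theorem~\ref{TMackeyKL} becomes $R_{(x^{-1}\la\cap\mu)\de}=\hat C_{x^{-1}\la\cap\mu}$ (by Lemma~\ref{L030216}), the outer parabolic becomes $\hat C_{\la\cap x\mu}$, and the permutation $w_{m,n}$ conjugating the middle factors is precisely the block permutation $\rho_d$ applied to $x$, so the shift functor ${}^{\{w_{m,n}\}}$ matches ${}^{x}$ of (\ref{ETwistCusp}) (no degree/parity shifts occur since $(\de|\de)=0$ and $\|\de\|=\0$). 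Finally, the Bruhat-order refinement on $\D(\la\de,\mu\de)$ restricted to the surviving matrices, via the identification $A\leftrightarrow w_A=x$, coincides with the Bruhat order on $\Si_d$ lifted via $\rho_d$, which is what is used to define $\bar F_{\leq x}(V)$. This identifies the filtration and its sub-quotients with those in the statement, completing the proof.
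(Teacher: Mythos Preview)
Your overall strategy is the paper's: specialize the general Mackey Theorem~\ref{TMackeyKL} to $\underline\theta=\mu\de$, $\underline\eta=\la\de$, then use cuspidality to kill all Mackey pieces except those indexed by $\underline\gamma$ with every $\gamma_{a,b}\in\N\de$, and finally identify the survivors with ${}^\la\D_d^\mu$ via (\ref{EDLaMuBijMat}) and $\rho_d$. The matching of subquotients and the observation that $(\de|\de)=0$, $\|\de\|=\bar0$ kill all shifts are exactly right.

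The real gap is your Step~2 argument. Working column by column does not force each $\gamma_{a,b}$ to be a multiple of $\de$. Cuspidality of the $b$-th tensor slot only tells you that each partial sum $\theta_a=\gamma_{1,b}+\dots+\gamma_{a,b}$ \emph{admits} a decomposition into positive roots $\preceq\de$ (and similarly for $\eta_a$); it does not say that a chosen real summand $\alpha$ of $\gamma_{a,b}$ must itself satisfy $\alpha\preceq\de$, nor that it appears in any such decomposition. In fact, for a cuspidal $\hat C_1$-module one can perfectly well have $\Res_{\alpha,\de-\alpha}$ nonzero for a real root $\alpha\prec\de$ (cuspidality allows $\alpha\preceq\de$ as the prefix weight and, by convexity, $\de-\alpha\succeq\de$ as the suffix weight). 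So the column argument, even at the word level, cannot conclude.

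The paper instead uses the \emph{row} constraint and Lemma~\ref{LCon3}. Assume $S_{\underline\gamma}(V)\neq0$. Cuspidality gives, for each $b$, that $\gamma_{1,b}$ is a sum of positive roots $\preceq\de$. Summing over $b$, the imaginary root $\la_1\de=\sum_b\gamma_{1,b}$ is then a sum of positive roots all $\preceq\de$, so Lemma~\ref{LCon3} forces every such root to be imaginary; hence each $\gamma_{1,b}$ is a multiple of $\de$. Now Lemma~\ref{LCuspRes} says the resulting restriction is again cuspidal, and one repeats the argument for the second row, and so on. This is the missing ingredient; once you replace your column argument by this row-sum iteration, the rest of your outline goes through unchanged.
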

\begin{proof}
This follows from the Mackey Theorem~\ref{TMackeyKL} and the definition of cuspidal modules, cf. \cite[Theorem 4.5.4]{KMImag}.
Indeed, by Theorem~\ref{TMackeyKL}, we have the explicit filtration 
 $
 (F_{\leq \underline{\ga}}(V))_{\underline{\ga}\in\D(\la\de,\mu\de)}
 $
 of 
$\Res_{\la\de}^{d\de} \Ind_{\mu\de}^{d\de} V$ with $F_{\leq \underline{\ga}}$ given by 
(\ref{EFiltF}) and the sub-quotients 
$$
S_\ga(V)\simeq 
\Ind_{\ga_{1,1},\dots,\ga_{1,n}\,;\,\dots\,;\,\ga_{m,1},\dots,\ga_{m,n}}^{\,\la_1\de;\,\dots\,;\,\la_m\de}
{}^{\{w_{m,n}\}}\big(\Res_{\ga_{1,1},\dots,\ga_{m,1}\,;\,\dots\,;\,\ga_{1,n},\dots,\ga_{m,n}}^{\,\mu_1\de;\,\dots\,;\,\mu_n\de}
\, V \big),
$$
where $\la=(\la_1,\dots,\la_m)$ and $\mu=(\mu_1,\dots,\mu_n)$. 

Suppose $S_\ga(V)\neq 0$. 
By the definition of cuspidal modules, we have for $b=1,\dots,n$ that $\ga_{1,b}$ is a sum of positive roots $\preceq \de$. Since $\la_1\de=\sum_{b=1}^n\ga_{1,b}$, it follows from the definition of a convex preorder that all $\ga_{1,b}$ must be multiples of $\de$. Now, by Lemma~\ref{LCuspRes}, the module 
$$\Res_{\ga_{1,1},\ga_{2,1}+\dots+\ga_{m,1}\,;\,\dots\,;\,\ga_{1,n},\ga_{2,n}+\dots+\ga_{m,n}}^{\,\mu_1\de;\,\dots\,;\,\mu_n\de}
\, V$$ 
is cuspidal, so for $b=1,\dots,n$ we have that $\ga_{2,b}$ is a sum of positive roots $\preceq \de$. Since $\la_2\de=\sum_{b=1}^n\ga_{2,b}$, it follows from the definition of a convex preorder that all $\ga_{2,b}$ must be multiples of $\de$. 
Continuing this way we deduce that all $\ga_{a,b}$ are multiples of $\de$. Thus, $S_\ga(V)\neq 0$ only if 
$$
\underline{\ga}\in\D(\la\de,\mu\de)_\cus:=\{\underline{\ga}\in \D(\la\de,\mu\de)\mid \text{all $\ga_{a,b}$ are multiples of $\de$}\}.
$$

We now get a bijection 
$\zeta:\D(\la\de,\mu\de)_\cus\to{}^\la\D_d^\mu 
$
with $\rho_d(\zeta(\underline{\ga}))=x(\underline{\ga})$, so 
$F_{\leq \underline{\ga}}(V)=\bar F_{\leq \zeta(\underline{\ga})}(V)$, and the result follows. 
\end{proof}

\begin{Example} \label{EXImMackey}
Let $d=3$, $\la=(1,2)$ and $\mu=(2,1)$. 
Then 
$
{}^{\la}\D^{\mu}_3=\{1,s_1s_2\},
$ 
and $\la\cap\mu=(1,1,1)$, $\la\cap s_1s_2\mu=\la$, $s_2s_1\la\cap \mu=\mu$.
Now, according to Theorem~\ref{TImMackey}, $\Res_{\de,2\de}^{3\de} \Ind_{2\de,\de}^{3\de} V$ has a submodule
$$\bar F_{\leq 1}=R_{\de,2\de}1_{\de,2\de}1_{2\de,\de}\otimes V
\simeq \Ind_{\de;\de,\de}^{\de;2\de}\Res_{\de,\de;\de}^{2\de;\de} V
$$ 
and
$$
\Res_{\de,2\de}^{3\de} \Ind_{2\de,\de}^{3\de} V/F_{\leq 1}\simeq
\Ind_{\de,2\de}^{\de,2\de}\,{}^{s_1s_2}(\Res^{2\de,\de}_{2\de,\de} V)={}^{s_1s_2}V.
$$
\end{Example}

\section{Gelfand-Graev truncation of imaginary cuspidal algebra}
\label{SGG}
{\em From now on, we fix a convex preorder $\preceq$ on $\Phi_+$ as in Example~\ref{ExConPr}}, in particular (\ref{ESharp}) holds. When we speak of cuspidality from now on, we mean cuspidality with respect to this specially chosen $\preceq$. 

Recall from (\ref{ECuspidalAlgebra}) and (\ref{EHatC}) that for $d\in \N_+$, the imaginary cuspidal algebra $\hat C_{d}$ is an explicit  quotient of $R_{d\de}$. The algebra $R_{d\de}$ is generated by the elements $\psi_r,y_s,1_\bi $, and we use the same notation for the corresponding elements of $\hat C_d$.

\subsection{Gelfand-Graev words}\label{SSGGW}
For $j\in J$, in \cite[\S4.2a]{KlLi}, we have defined (divided power) words
\begin{equation*}
\begin{split}
\ggw^{j}&:=\ell\, (\ell-1)^{(2)}\,\cdots\, ( j+1)^{(2)}\,  j\, \cdots\, 1\, 0^{(2)}\, 1\, \cdots\,  j\,\in\, I^{\de}_\di. 
\end{split}
\end{equation*}
For $j\in J$ and $m\in \N$, we will consider more general divided power words in $I^{m\de}_\di$:
\begin{equation}\label{EGGW}
\begin{split}
\ggw^{m,j}&:=\ell^{(m)}(\ell-1)^{(2m)}\,\cdots\, ( j+1)^{(2m)} j^{(m)}\cdots\, 1^{(m)} 0^{(2m)}1^{(m)}\cdots\,  j^{(m)}. 
\end{split}
\end{equation}
Using the notation from (\ref{EHatI}), we also have the words 
$$
\hat\ggw^{m,j}:=\widehat{\ggw^{m,j}}=\ell^m\,(\ell-1)^{2m}\,\cdots\, ( j+1)^{2m}\, j^m\,\cdots \,1^m\, 0^{2m}\,1^m\,\cdots\,  j^m\,\in \,I^{m\de}.
$$
Let $d\in \N$, $n\in\N_+$ and  
recall the 
colored compositions $\Comp^\col(n,d)$ from \S\ref{SSPar}. 

\begin{Lemma} \label{LIndecGGW}
Let $\bi^{(1)}\in I^{d_1\de},\dots,\bi^{(k)}\in I^{d_k\de}$ for $d_1,\dots,d_k\in\N_+$. If\, $\hat\ggw^{m,j}=\bi^{(1)}\cdots\bi^{(k)}$ then $k=1$. 
\end{Lemma}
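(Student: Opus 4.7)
The plan is to exploit the fact that in the word $\hat\ggw^{m,j}=\ell^m(\ell-1)^{2m}\cdots(j+1)^{2m}j^m\cdots 1^m 0^{2m} 1^m\cdots j^m$ the letter $\ell$ occurs \emph{only} in the initial block $\ell^m$. This rigidity alone forces any nontrivial prefix whose weight is a positive integer multiple of $\de$ to coincide with the whole word. Concretely, I would pass to the first factor $\bi^{(1)}$ in the hypothetical decomposition and argue that $\bi^{(1)}=\hat\ggw^{m,j}$, whence $k=1$.

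First I would record two elementary numerical facts. (a) Since $\de=\al_\ell+2\sum_{i=0}^{\ell-1}\al_i$, any word of weight $d\de$ contains exactly $d$ copies of $\ell$ and exactly $2d$ copies of each $i\in\{0,1,\dots,\ell-1\}$, so its total length equals $d+2d\ell=dp$. (b) From the explicit expression (\ref{EGGW}) one reads off that the letter $\ell$ appears in $\hat\ggw^{m,j}$ only inside the leftmost block $\ell^m$, and nowhere else; consequently the number of $\ell$'s among the first $N$ letters of $\hat\ggw^{m,j}$ is $\min(N,m)$.

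Apply these two facts to $\bi^{(1)}$, say of length $N$, with $\wt(\bi^{(1)})=d_1\de$ and $d_1\ge 1$. By (a), $N=d_1 p$; by (b), the number of $\ell$'s in $\bi^{(1)}$ is $\min(N,m)$, and this must equal $d_1$. If $N\le m$, then $N=d_1$, so $d_1 p=d_1$ forces $p=1$; but $p=2\ell+1\ge 3$, a contradiction. Hence $N>m$, $d_1=m$, and $N=mp$, which is precisely the total length of $\hat\ggw^{m,j}$. Therefore $\bi^{(1)}=\hat\ggw^{m,j}$ and the decomposition is trivial, i.e.\ $k=1$.

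There is no real obstacle in this proof: the entire argument reduces to the single structural observation about the position of the letter $\ell$ in $\hat\ggw^{m,j}$. In fact, the same reasoning yields a slightly stronger statement, namely that $\hat\ggw^{m,j}$ admits no proper nonempty prefix whose weight lies in $\N_+\de$, which is really why the factorization must be trivial.
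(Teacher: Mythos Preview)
Your proof is correct. The paper does not provide an explicit proof of this lemma, treating it as immediate from the definition; your argument makes precise exactly why it is immediate, via the observation that all occurrences of the letter $\ell$ in $\hat\ggw^{m,j}$ lie in the initial block $\ell^m$, so counting $\ell$'s in the prefix $\bi^{(1)}$ forces $d_1=m$.
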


We refer to the words $\hat\ggw^{m,j}$ as {\em indecomposable Gelfand-Graev words} and to the divided power words $\ggw^{m,j}$ as {\em indecomposable divided power Gelfand-Graev words}. From the definition:

Given $(\mu,\bj)\in\Comp^\col(n,d)$, we define more general {\em Gelfand-Graev words}
\begin{equation}\label{EGHatG}
\ggw^{\mu,\bj}:=\ggw^{\mu_1,j_1}\cdots \ggw^{\mu_n,j_n}\in I^{d\de}_\di
\quad \text{and}\quad \hat\ggw^{\mu,\bj}:=\widehat{\ggw^{\mu,\bj}}=\hat\ggw^{\mu_1,j_1}\cdots \hat\ggw^{\mu_n,j_n}\in I^{d\de}. 
\end{equation}
Recalling the set $\EC^\col(d)$ of colored {\em essential}  compositions  from \S\ref{SSPar}, note that the words $\{\hat \ggw^{\mu,\bj}\mid (\mu,\bj)\in\EC^\col(d)\}$ are distinct and exhaust all non-divided power Gelfand-Graev words in $I^{d\de}$. Similalrly, the divided power words $\{\ggw^{\mu,\bj}\mid (\mu,\bj)\in\EC^\col(d)\}$ are distinct and exhaust all divided power Gelfand-Graev words in $I^{d\de}$. We introduce the sets of {\em rank $d$ Gelfand-Graev words}:
$$
\GGW^{(d)}:=\{\hat \ggw^{\mu,\bj}\mid (\mu,\bj)\in\EC^\col(d)\}\quad\text{and}\quad \GGW^{(d)}_\di:=\{\ggw^{\mu,\bj}\mid (\mu,\bj)\in\EC^\col(d)\}
$$
It is clear from the definitions that $\GGW^{(d)}\subseteq I^{d\de}_\cus$. Moreover, from the definitions:

\begin{Lemma} \label{LGGSplit} 
Let $\mu\in\Comp(n,d)$ and $\bi^{(k)}\in I^{\mu_k\de}$ for all $k=1,\dots,n$. Then 
$\bi^{(k)}\in \GGW^{(\mu_k)}$ for all $k=1,\dots,n$ if and only if $\bi^{(1)}\cdots\bi^{(n)}\in\GGW^{(d)}$. 
\end{Lemma}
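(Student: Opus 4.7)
The plan is to prove both directions separately, with the forward direction following directly from the definitions and the reverse direction relying crucially on the indecomposability statement of Lemma~\ref{LIndecGGW}.

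For the forward direction, I would argue that if $\bi^{(k)}\in \GGW^{(\mu_k)}$ for all $k$, then by definition of $\GGW^{(\mu_k)}$ there exist essential colored compositions $(\nu^{(k)},\bj^{(k)})\in\EC^\col(\mu_k)$ such that $\bi^{(k)}=\hat\ggw^{\nu^{(k)},\bj^{(k)}}$. Concatenating these equalities and unwinding the definition in (\ref{EGHatG}), one gets $\bi^{(1)}\cdots\bi^{(n)} = \hat\ggw^{\nu,\bj}$, where $\nu$ and $\bj$ are the concatenations of $\nu^{(1)},\dots,\nu^{(n)}$ and $\bj^{(1)},\dots,\bj^{(n)}$ respectively. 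Essentiality is preserved under concatenation (all parts of each $\nu^{(k)}$ are positive, hence so are all parts of $\nu$), so $(\nu,\bj)\in\EC^\col(d)$ and $\bi^{(1)}\cdots\bi^{(n)}\in\GGW^{(d)}$.

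For the reverse direction, suppose $\bi^{(1)}\cdots\bi^{(n)}=\hat\ggw^{\la,\bk}$ for some $(\la,\bk)\in\EC^\col(m,d)$, so that this word equals the concatenation $\hat\ggw^{\la_1,k_1}\cdots\hat\ggw^{\la_m,k_m}$. I now have two ways of cutting the same word into consecutive pieces: the $\bi$-cuts producing $n$ pieces of $Q_+$-weights $\mu_1\de,\dots,\mu_n\de$, and the $\hat\ggw$-cuts producing $m$ pieces of $Q_+$-weights $\la_1\de,\dots,\la_m\de$. The key step will be to show that every $\bi$-cut is also a $\hat\ggw$-cut. Indeed, at any $\bi$-cut, the prefix has weight $(\mu_1+\dots+\mu_k)\de\in\N\de$. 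If such a cut fell strictly inside some $\hat\ggw^{\la_s,k_s}$, then $\hat\ggw^{\la_s,k_s}$ would split into two non-empty subwords whose weights both lie in $\N\de$ (subtracting off the $\N\de$-weight of the prefix $\hat\ggw^{\la_1,k_1}\cdots\hat\ggw^{\la_{s-1},k_{s-1}}$), contradicting Lemma~\ref{LIndecGGW}.

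Once this refinement property is established, there exist integers $0=s_0<s_1<\dots<s_n=m$ with $\bi^{(k)}=\hat\ggw^{\la_{s_{k-1}+1},k_{s_{k-1}+1}}\cdots\hat\ggw^{\la_{s_k},k_{s_k}}$ for each $k=1,\dots,n$. Setting $\nu^{(k)}:=(\la_{s_{k-1}+1},\dots,\la_{s_k})$ and $\bj^{(k)}:=k_{s_{k-1}+1}\cdots k_{s_k}$, the essentiality of $(\la,\bk)$ forces each $(\nu^{(k)},\bj^{(k)})$ to be an essential colored composition of $\mu_k$, so $\bi^{(k)}=\hat\ggw^{\nu^{(k)},\bj^{(k)}}\in\GGW^{(\mu_k)}$. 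The only real obstacle is the weight bookkeeping in the refinement argument, which is routine once one notes that $\N\de$-prefixes of $\hat\ggw^{\la_s,k_s}$ are trivial by Lemma~\ref{LIndecGGW}.
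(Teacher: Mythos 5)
Your argument is correct and is the natural fleshing-out of what the paper simply dismisses with ``From the definitions''; in particular, Lemma~\ref{LIndecGGW} is stated immediately before this lemma precisely so that the reverse direction can run on the indecomposability of the pieces $\hat\ggw^{m,j}$ exactly as you use it. Both directions check out, including the bookkeeping when some $\mu_k=0$.
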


Recall the notation (\ref{EIotaSd}) and (\ref{EWLaSi}).

\begin{Lemma} \label{LTrickyGGW}
Let $(\nu,\bj)\in\EC^\col(n,d)$, $\la\in\EC(m,d)$ and $w\in{}^\la\D_d^\nu$. If $\bi^{(r)}\in I^{\la_r\de}$ for $r=1,\dots,m$. Suppose that  
$\rho_d(w)\cdot\hat \ggw^{\nu,\bj}=\bi^{(1)}\cdots\bi^{(m)}$. Then $m=n$, $w=w_{\nu,\si}$ and $\la=\si\mu$ for some $\si\in\Si_n$,  and 
$$\rho_d(w)\cdot\hat \ggw^{\nu,\bj}=\hat\ggw^{\nu_{\si^{-1}(1)},j_{\si^{-1}(1)}}\cdots \hat\ggw^{\nu_{\si^{-1}(n)},j_{\si^{-1}(n)}},
$$
in particular, $\rho_d(w)\cdot\hat \ggw^{\nu,\bj}\in\GGW^{(d)}$. 
\end{Lemma}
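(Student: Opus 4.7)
The strategy is to analyze $\rho_d(w)\cdot\hat\ggw^{\nu,\bj}$ block-by-block, where by ``block'' I mean one of the $d$ consecutive segments of size $p=\height(\de)$. Write $\hat\ggw^{\nu,\bj}=B_1\cdots B_d$; the blocks at positions $\ttP^\nu_k$ form the chunk $\hat\ggw^{\nu_k,j_k}$. From the definition of $\rho_d$, the $s$-th block of $\rho_d(w)\cdot\hat\ggw^{\nu,\bj}$ is $B_{w^{-1}(s)}$, and since $w\in\D_d^\nu$ is increasing on each $\ttP^\nu_k$, the blocks of chunk $k$ retain their internal order and occupy the positions $w(\ttP^\nu_k)$ in the new word.

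The central claim to prove is that $w(\ttP^\nu_k)=\ttP^\la_{\si(k)}$ for some bijection $\si:\{1,\dots,n\}\to\{1,\dots,m\}$ (giving $m=n$). A preliminary convexity observation is that $w^{-1}(\ttP^\la_r)\cap\ttP^\nu_k$ is always a sub-segment of $\ttP^\nu_k$: if $a<c<b$ lie in $\ttP^\nu_k$ with $a,b\in w^{-1}(\ttP^\la_r)$, then $w(a)<w(c)<w(b)$ by monotonicity of $w$ on $\ttP^\nu_k$, so $w(c)\in\ttP^\la_r$ because the segment $\ttP^\la_r$ is convex. Thus within each chunk the blocks contributing to $\bi^{(r)}$ form a contiguous sub-block-sequence of $\hat\ggw^{\nu_k,j_k}$.

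The main work is to show that each chunk $k$ is entirely contained in a single $\bi^{(r)}$. Proceed by induction on $r$: the first block of $\bi^{(1)}$ is $B_{w^{-1}(1)}$, and the position $w^{-1}(1)$ must be the first block of its chunk (say chunk $k_1$), for otherwise an earlier block of $\ttP^\nu_{k_1}$ would have an even smaller $w$-image. Thus $\bi^{(1)}$ begins with a contiguous initial prefix $B^{(k_1)}_1,\dots,B^{(k_1)}_a$ of chunk $k_1$. By Lemma~\ref{LIndecGGW}, a proper nonempty initial prefix of $\hat\ggw^{\nu_{k_1},j_{k_1}}$ never has weight a positive multiple of $\de$. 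Combined with the total weight $\la_1\de$ of $\bi^{(1)}$ and the fact that any subsequent contribution to $\bi^{(1)}$ must again start with the initial segment of some other chunk (by the same minimality argument applied to the portion of $\bi^{(1)}$ read so far), one concludes $a=\nu_{k_1}$ and the entire chunk $k_1$ fits into $\bi^{(1)}$ contiguously. Iterating on the remaining weight $(\la_1-\nu_{k_1})\de$ of $\bi^{(1)}$ and then on $\bi^{(2)},\dots,\bi^{(m)}$ completes the induction.

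With chunk-preservation established, set $\si\in\Si_n$ by the rule $w(\ttP^\nu_k)=\ttP^\la_{\si(k)}$. Then $m=n$, $\la_{\si(k)}=\nu_k$, so $\la=\si\nu$ (I read the ``$\si\mu$'' in the statement as a typo for $\si\nu$), and $w=w_{\nu,\si}$ follows directly from the defining property in (\ref{EWLaSi}). The concatenation formula is then immediate since chunk $k$ is transported intact to slot $\si(k)$ of the new word. The hardest part is the inductive bookkeeping at the chunk level: a chunk's contribution to $\bi^{(r)}$ might a priori consist of a non-initial sub-block-sequence whose weight happens to be a multiple of $\de$---a possibility Lemma~\ref{LIndecGGW} does not rule out directly---so one has to carefully use contiguity together with the monotonicity of $w$ on each $\ttP^\nu_k$ to reduce the problem to the initial-prefix case where Lemma~\ref{LIndecGGW} applies.
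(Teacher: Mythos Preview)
Your proposal has a genuine gap at the key step. You correctly observe that every chunk touched by $\bi^{(1)}$ contributes an \emph{initial} block-prefix (this follows from $w\in{}^\la\D_d^\nu$, and you give the right argument). You then invoke Lemma~\ref{LIndecGGW} to say that a \emph{proper} initial prefix of $\hat\ggw^{\nu_{k_1},j_{k_1}}$ never has weight a positive multiple of $\de$. But from this alone you cannot conclude $a=\nu_{k_1}$: nothing prevents $\bi^{(1)}$ from being a concatenation of proper initial prefixes of several chunks, none of which individually has weight in $\N\de$, yet whose weights sum to $\la_1\de$. Lemma~\ref{LIndecGGW} says nothing about sums of prefixes from \emph{different} indecomposable Gelfand--Graev words. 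Your final paragraph flags a difficulty, but misidentifies it: the issue is not ``non-initial sub-block-sequences'' (indeed, for $\bi^{(1)}$ you have already shown all contributions are initial) but rather that several non-$\de$-multiple weights can still add up to a $\de$-multiple.

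The paper closes this gap with a different mechanism. Using the matrix $A\in M(\la,\nu)$ attached to $w\in{}^\la\D_d^\nu$, one writes $\hat\ggw^{\nu_s,j_s}=\bk^{(1,s)}\cdots\bk^{(m,s)}$ and $\bi^{(r)}=\bk^{(r,1)}\cdots\bk^{(r,n)}$. Since each $\hat\ggw^{\nu_s,j_s}$ is \emph{cuspidal}, the weight $\theta_{1,s}$ of $\bk^{(1,s)}$ is a sum of positive roots $\preceq\de$. Then $\la_1\de=\sum_s\theta_{1,s}$, and Lemma~\ref{LCon3} (a root-system convexity statement) forces each $\theta_{1,s}$ to be a multiple of $\de$. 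Inducting on $r$ gives the same for all $\theta_{r,s}$, and only then does Lemma~\ref{LIndecGGW} apply to force each chunk to be undivided. So cuspidality together with Lemma~\ref{LCon3} is the missing ingredient in your approach.
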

\begin{proof}
We may assume that all parts of $\nu$ and all parts of $\la$ are non-zero. 
We use the notation of \S\ref{SSSG}. In particular, $w=w_A$ for some $A\in M(\la,\nu)$ and $w_A:\ttB^A_{r,s}\nearrow\ttT^A_{r,s}$ for all $1\leq r\leq m$ and $1\leq s\leq n$. Moreover, 
$[1,dp]$ is an increasing disjoint union of the segments 
$
\hat\ttB^{A}_{1,1},\dots,\hat\ttB^{A}_{m,1},\dots,\hat\ttB^{A}_{1,n},\dots,\hat\ttB^{A}_{m,n},
$
of sizes $|\hat\ttB^{A}_{r,s}|=a_{r,s}p$ as well as  
an increasing disjoint union of the segments 
$
\hat\ttT^{A}_{1,1},\dots,\hat\ttT^{A}_{1,n},\dots,\hat\ttT^{A}_{m,1},\dots,\hat\ttT^{A}_{m,n},
$
of sizes $\hat |\ttT^{A}_{r,s}|=a_{r,s}p$,  
and $\rho_d(w_A):\hat\ttB^A_{r,s}\nearrow\hat\ttT^A_{r,s}$ for all $1\leq r\leq m$ and $1\leq s\leq n$. 

For $s=1,\dots, n$, we can write 
$\hat\ggw^{\nu_s,j_s}=\bk^{(1,s)}\cdots\bk^{(m,s)}$ for $\bk^{(r,s)}\in I^{a_{r,s}p}$. Then $\bi^{(r)}=\bk^{(r,1)}\cdots\bk^{(r,n)}$. 
Since $\hat\ggw^{\nu_s,j_s}$ is cuspidal, we have  $\bk^{(1,s)}\in I^{\theta_{1,s}}$ for $\theta_s$ a sum of positive roots $\preceq\de$. 
Since $\bi^{(1)}=\bk^{(1,1)}\cdots\bk^{(1,n)}\in I^{\la_1\de}$, we deduce using by Lemma~\ref{LCon3} that each $\theta_{1,s}$ is a multiple of $\de$. Now, we deduce similarly that each $\theta_{2,s}$ is a multiple of $\de$, \dots, each $\theta_{m,s}$ is a multiple of $\de$.

Using Lemma~\ref{LIndecGGW}, we now deduce that $m=n$ and there is $\si\in\Si_n$ such that 
$\la_r=a_{r,\si^{-1}(r)}=\mu_{\si^{-1}(r)}$ for all $r=1,\dots,n$. So $w=w_{\mu,\si}$, and the rest follows. 
\end{proof}

In some frequently occurring special cases it will be convenient to use a special notation. 
In the special case where  $\bj=j^n$, we denote
\begin{equation}\label{EGGIOneColorNot}
(\mu,j):=(\mu,j^n).
\end{equation}
In particular, we have 
\begin{equation}\label{EGGIOneColorIdNotNew}
\ggw^{\mu,j}:=\ggw^{\mu,j^n}\quad\text{and}\quad   \hat\ggw^{\mu,j}:=\hat\ggw^{\mu,j^n}.
\end{equation}
In the other special case where $\mu=\om_d$ and $\bj=j_1\cdots j_d$, we denote
\begin{equation}\label{EGGWJ^n}
 \ggw^{\bj}:=\ggw^{\om_d,\bj}=\ggw^{j_1}\cdots \ggw^{j_d}.
\end{equation}

\subsection{Gelfand-Graev idempotents}
\label{SSGGIdempotents}
For $(\mu,\bj)\in\Comp^\col(n,d)$, the corresponding {\em Gelfand-Graev idempotent} is 
\begin{equation}\label{EGGIdempotent}
\ggi^{\mu,\bj}:=1_{\ggw^{\mu,\bj}}\in \hat C_d.
\end{equation}
This is a divided power idempotent, cf. (\ref{EDPId}). 
Occasionally, we will also need the non-divided power version:
\begin{equation}\label{EGGIdempotentNonDiv}
\hat\ggi^{\mu,\bj}:=1_{\hat\ggw^{\mu,\bj}}\in \hat C_d.
\end{equation}
Recalling the special cases (\ref{EGGW}), (\ref{EGGIOneColorIdNotNew}) and (\ref{EGGWJ^n}), we also denote 
\begin{equation}\label{EGGIOneColorIdNot'}
\ggi^{d,j}:=\ggi^{(d),j}=1_{\ggw^{d,j}},
\quad
\ggi^{\mu,j}:=1_{\ggw^{\mu,j}}
\quad\text{and}\quad \ggi^{\bj}:=1_{\ggw^\bj}.
\end{equation}


\subsection{  Gelfand-Graev idempotent truncation $C_d$ of $\hat C_d$}
We have observed in \S\ref{SSGGW} 
that the words $\{\hat \ggw^{\mu,\bj}\mid (\mu,\bj)\in\EC^\col(d)\}$ are distinct, so the 
idempotents $\{\hat \ggi^{\mu,\bj}\mid (\mu,\bj)\in\EC^\col(d)\}$ in $\hat C_d$ are orthogonal. Hence the 
idempotents $\{\ggi^{\mu,\bj}\mid (\mu,\bj)\in\EC^\col(d)\}$ in $\hat C_d$ are orthogonal. Define the idempotent 
\begin{equation}\label{EGad}
\ggi_d:=\sum_{(\mu,\bj)\in\EC^\col(d)}\ggi^{\mu,\bj}\in \hat C_d,
\end{equation}
and the {\em Gelfand-Graev truncated cuspidal algebra}
$$
C_d:=\ggi_d\hat C_d\ggi_d.
$$

Let $\la=(\la_1,\dots,\la_n)\in\Comp(n,d)$. 
Recalling (\ref{EHatParabolic}), 
we have the cuspidal parabolic subalgebra 
$
\hat C_{\la_1}\otimes\cdots\otimes \hat C_{\la_n}=\hat C_{\la}\subseteq 1_{\la\de}\hat C_d1_{\la\de}.
$ 
Define the idempotent 
$$
\ggi_\la=\ggi_{\la_1,\dots,\la_n}:=\ggi_{\la_1}\otimes \dots\otimes \ggi_{\la_n}\in\hat C_{\la}.
$$
Truncating with this idempotent we get the parabolic subalgebra 
\begin{eqnarray}\label{ECPar}
C_{\la}\,\,:=\,\,\ggi_{\la}\hat C_{\la}\ggi_{\la}\,\,\subseteq\,\, \ggi_{\la}C_d\ggi_\la.
\end{eqnarray}
The isomorphism $\hat C_{\la}\cong \hat C_{\la_1}\otimes\cdots\otimes \hat C_{\la_n}$ yields an isomorphism 
 $C_\la\cong C_{\la_1}\otimes \cdots\otimes C_{\la_n}$, which we use to identify  
\begin{equation}\label{CParIdentify}
C_{\la}=C_{\la_1}\otimes \cdots\otimes C_{\la_n}.
\end{equation}


\subsection{   Functors relating $\mod{\hat C_d}$ and $\mod{C_d}$} 
As a special case of (\ref{EResGeneral}) and (\ref{EIndGeneral}), we have functors 
\begin{equation}\label{EFGFunctors}
\begin{split}
 \funF_d:=\ggi_d\hat C_d\otimes_{\hat C_d}-:\mod{\hat C_d}\to\mod{C_d}, 
\\
\funG_d:=\hat C_d\ggi_d\otimes_{C_d}-:\mod{C_d}\to\mod{\hat C_d}.
\end{split}
\end{equation}
More generally, for $\la\in\Comp(n,d)$, we have functors 
\begin{equation}\label{EFGFunctorsLa}
\begin{split}
\funG_\la:=\hat C_{\la}\ggi_\la\otimes_{C_\la}-:\mod{C_\la}\to\mod{\hat C_{\la}}, 
\\
 \funF_\la:=\ggi_\la\hat C_{\la}\otimes_{\hat C_{\la}}-:\mod{\hat C_{\la}}\to\mod{C_\la}
\end{split}
\end{equation}
with
\begin{equation}\label{EFGFunctorsLaOuter}
\begin{split}
 \funG_\la(W_1\boxtimes\dots\boxtimes W_n)
 \simeq \funG_{\la_1}(W_1)\boxtimes\dots\boxtimes\funG_{\la_n}(W_n),
\\
\funF_\la(V_1\boxtimes\dots\boxtimes V_n)\simeq \funF_{\la_1}(V_1)\boxtimes\dots\boxtimes\funF_{\la_n}(V_n)
\end{split}
\end{equation}
for $V_k\in\mod{\hat C_{\la_k}}$ and $W_k\in\mod{C_{\la_k}}$, $k=1,\dots,n$. Note that 
\begin{equation}\label{EFAfterG}
\funF_\la\circ \funG_\la\simeq \id_{\mod{C_\la}}.
\end{equation}

We will eventually prove that the functors $\funF_d$ and $\funG_d$ are quasi-inverse equivalences, in particular, $C_d$ is graded Morita superequivalent to $\hat C_d$. If $\k=\F$ and $\cha \F>d$ or $\cha \F=0$, recalling the notation (\ref{EOmd}), this is known to be true even for the smaller idempotent truncation $\ggi_{\om_d}C_d\ggi_{\om_d}$ (denoted $B_d$ in \cite{KlLi}): 

\begin{Theorem} \label{TMord=1} 
{\rm \cite[Theorems 4.2.55, 4.5.9]{KlLi}}
Let $\k=\F$. 
If $\cha \F>d$ or $\cha \F=0$ then the graded superalgebras  $\ggi_{\om_d}C_d\ggi_{\om_d}$ and $C_d$ are graded Morita superequivalent to $\hat C_d$. In particular, the left module $C_d\ggi_{\om_d}$ is a projective generator for $C_d$.
\end{Theorem}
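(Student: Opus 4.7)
The strategy is to apply Corollary~\ref{cor:idmpt_Mor}: both statements will follow once we show that for $e\in\{\ggi_{\om_d},\ggi_d\}$ one has
$$|\Irr(e\hat C_d e)|\geq |\Irr(\hat C_d)|=|\Par^J(d)|,$$
where the equality is Lemma~\ref{LImCuspIrrAmount}. Since $\om_d\in\EC(d)$ with every $\bj\in J^d$ giving an essential colored composition, $\ggi_{\om_d}$ is a summand of the sum (\ref{EGad}) defining $\ggi_d$, so $\ggi_{\om_d}=\ggi_{\om_d}\ggi_d=\ggi_d\ggi_{\om_d}$. Consequently $\ggi_{\om_d}\hat C_d\ggi_{\om_d}=\ggi_{\om_d} C_d\ggi_{\om_d}$, and the bound for $\ggi_{\om_d}$ automatically gives the bound for $\ggi_d$. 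I would therefore focus all the work on the smaller idempotent $\ggi_{\om_d}$. Once the Morita equivalence between $\hat C_d$ and $\ggi_{\om_d}\hat C_d\ggi_{\om_d}$ is established, the projective-generator statement is immediate: under the equivalence, $\ggi_{\om_d}\hat C_d\cong \ggi_{\om_d}C_d$ is simultaneously a progenerator for both algebras, and multiplication on the left exhibits it as a direct summand of $C_d\ggi_{\om_d}$ by an idempotent computation.

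The key input is the isomorphism $\ggi_{\om_d}C_d\ggi_{\om_d}\cong H_d(A_\ell)$ from Theorem~\ref{TMorIso} (i.e.\ (\ref{EIntro090924})). Via this, it suffices to establish
$$|\Irr(H_d(A_\ell))|\geq |\Par^J(d)|.$$
I would prove this by using that $H_d(A_\ell)$ is non-negatively graded (Corollary~\ref{CHZig>0}(i)) with $H_d(A_\ell)^{>0}=H_d(A_\ell)\cdot(A_\ell[z]^{\otimes d})^{>0}$ (Corollary~\ref{CHZig>0}(ii)). For any irreducible graded supermodule $L$, the graded subsupermodule $H_d(A_\ell)^{>0}L$ is a proper submodule (it lives in strictly higher degree than the bottom degree of $L$), hence zero by irreducibility. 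Thus every irreducible graded supermodule factors through the quotient $H_d(A_\ell)/H_d(A_\ell)^{>0}$, which by the relations (\ref{ERAff1})–(\ref{ERAff2}) and Theorem~\ref{TAffBasis} is precisely the wreath superproduct $A_\ell\swr\Si_d$. So it remains to show $|\Irr(A_\ell\swr\Si_d)|\geq|\Par^J(d)|$.

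This last count is where the characteristic hypothesis enters. Under $\cha\F=0$ or $\cha\F>d$, the group algebra $\F\Si_d$ is semisimple, and a standard Clifford/Mackey theory argument for wreath superproducts over a semisimple symmetric group classifies $\Irr(A_\ell\swr\Si_d)$ in terms of multipartitions: given $\bmu=(\mu^{(0)},\dots,\mu^{(\ell-1)})\in\Par^J(d)$ with $d_j=|\mu^{(j)}|$, one takes the outer tensor product of the $\ell$ one-dimensional $A_\ell$-modules $L_j^{\boxtimes d_j}$, inflates to the parabolic $(A_\ell\swr\Si_{d_0})\otimes\cdots\otimes(A_\ell\swr\Si_{d_{\ell-1}})$ tensored with the appropriate Specht module, and induces to $A_\ell\swr\Si_d$. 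Distinctness of these irreducibles is checked by restricting to $A_\ell^{\otimes d}$ and reading off the multiplicity of each type of $L_j$. The only subtlety is bookkeeping of the parity shifts caused by the odd generator $u$ of $A_\ell$, but this does not affect the cardinality.

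The hardest part is genuinely step three: the classification of $\Irr(A_\ell\swr\Si_d)$ in the graded super setting, because one must verify that the super Clifford theory for the wreath superproduct goes through without collapsing any two $\bmu$'s — the parity of $u\in A_\ell$ means that in principle some induced modules could become isomorphic up to parity shift, and one needs to know they do not. Once the count $|\Par^J(d)|$ is confirmed, Corollary~\ref{cor:idmpt_Mor} (applied over $\F$) delivers both Morita equivalences and the progenerator statement simultaneously.
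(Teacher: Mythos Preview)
The paper does not prove this theorem; it is simply cited from \cite[Theorems 4.2.55, 4.5.9]{KlLi}. Your proposal is an independent proof sketch, and the overall strategy (apply Corollary~\ref{cor:idmpt_Mor} by producing enough irreducibles of $\ggi_{\om_d}C_d\ggi_{\om_d}\cong H_d(A_\ell)$, then count via the wreath product using semisimplicity of $\F\Si_d$) is sound and is essentially the approach taken in \cite{KlLi}.

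There is one concrete misstatement. You write that $H_d(A_\ell)/H_d(A_\ell)^{>0}$ ``is precisely the wreath superproduct $A_\ell\swr\Si_d$.'' This is false: $A_\ell$ is \emph{not} concentrated in degree $0$ (the elements $u$ and $a^{[k+1,k]}$ have positive degree), so the degree-zero quotient is the strictly smaller algebra $(A_\ell^0)\swr\Si_d$. You have conflated two different quotients: the quotient by $H_d(A_\ell)^{>0}$, and the quotient by the ideal generated by $z_1,\dots,z_d$ (which \emph{does} give $A_\ell\swr\Si_d$, as noted after Theorem~\ref{TAffBasis}). Either route works---irreducibles factor through both, since both the $z_r$'s and the positive-degree part act trivially on any irreducible graded supermodule---but you should pick one and state it correctly.

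In fact, using the degree-zero quotient makes your ``hardest part'' disappear: $A_\ell^0$ is spanned by the $e^{[j]}$ and the $a^{[k,k+1]}$, all of which are \emph{even}, so $(A_\ell^0)\swr\Si_d$ is a purely even algebra and the super Clifford theory worries you raise about the odd element $u$ never arise. The semisimple quotient of $A_\ell^0$ is just $\F^J$, so you are reduced to counting irreducibles of $\F^J\swr\Si_d\cong\bigoplus_{\ud\in\Comp(J,d)}\F\Si_{d_0}\otimes\dots\otimes\F\Si_{d_{\ell-1}}$ (up to Morita equivalence), which under the characteristic hypothesis gives exactly $|\Par^J(d)|$ with no super subtleties at all.
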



\begin{Corollary} \label{CMord=1} 
The functors $\funF_1$ and $\funG_1$ are quasi-inverse equivalences.  
\end{Corollary}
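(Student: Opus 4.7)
The plan is to deduce this directly from Theorem~\ref{TMord=1} (the case of interest being $d=1$), and then to bootstrap from all residue fields to the case $\k = \O$ via Lemma~\ref{LMorExtScal}. The key observation, which makes the result essentially immediate, is that when $d = 1$ the smaller idempotent $\ggi_{\om_d}$ of Theorem~\ref{TMord=1} actually coincides with $\ggi_d$, since $\om_1 = (1)$ forces $\ggi_{\om_1} = \ggi_1$. Hence the Gelfand--Graev truncation appearing in Theorem~\ref{TMord=1} is literally $C_1$.

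First I would handle the case $\k = \F$. The hypothesis of Theorem~\ref{TMord=1} reads ``$\cha \F > d$ or $\cha \F = 0$'', which for $d = 1$ reduces to ``$\cha \F > 1$ or $\cha \F = 0$''. Since the characteristic of any field is either $0$ or a prime (in particular at least $2$), this condition is automatic. Thus Theorem~\ref{TMord=1} applies, producing a graded Morita superequivalence between $C_1$ and $\hat C_1$. As explained in \S\ref{SSMorita} and Lemma~\ref{lem:idmpt_Mor}, the Morita superequivalence induced by the idempotent $\ggi_1$ is given precisely by the pair of functors $\funF_1 = \ggi_1 \hat C_1 \otimes_{\hat C_1} -$ and $\funG_1 = \hat C_1 \ggi_1 \otimes_{C_1} -$, which are therefore quasi-inverse equivalences.

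Next I would handle the case $\k = \O$. Here I invoke Lemma~\ref{LMorExtScal}: it suffices to show that for every maximal ideal $\m \subset \O$ and $\F := \O/\m$, the extended-scalar functors
\[
\hat C_{1,\F}\ggi_{1,\F}\otimes_{C_{1,\F}} - \quad \text{and}\quad \ggi_{1,\F}\hat C_{1,\F}\otimes_{\hat C_{1,\F}} -
\]
are mutually quasi-inverse equivalences. But $\hat C_{1,\F}$ is the imaginary cuspidal algebra over the field $\F$ and $\ggi_{1,\F}$ is the corresponding Gelfand--Graev idempotent (as $\ggi_1$ is defined by an integral formula preserved under base change), so this is precisely the statement of the first step applied to $\F$. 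Lemma~\ref{LMorExtScal} then gives the desired equivalence over $\O$.

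There is no serious obstacle: the entire content is packaged into Theorem~\ref{TMord=1} in rank one (where no characteristic restriction is in fact operative) combined with the scalar-extension principle of Lemma~\ref{LMorExtScal}. The only point requiring a moment's verification is the identification $\ggi_{\om_1} = \ggi_1$, which is immediate from $\om_1 = (1)$ and the definition $\ggi_{\om_d} = \ggi_1^{\otimes d}$ of the parabolic Gelfand--Graev idempotent.
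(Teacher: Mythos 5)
Your proof is correct and follows essentially the same route as the paper: observe $\ggi_{\om_1}=\ggi_1$, invoke Theorem~\ref{TMord=1} for $\k=\F$ (noting the characteristic restriction is vacuous when $d=1$), and pass to $\k=\O$ via Lemma~\ref{LMorExtScal}. The only small imprecision is that Lemma~\ref{lem:idmpt_Mor} doesn't directly convert abstract Morita equivalence into the assertion that the idempotent-truncation functors realize it — for that one would cite Lemma~\ref{cor:idmpt_Mor} or read Theorem~\ref{TMord=1} as already asserting the functor-level statement (which is how the paper treats it) — but this does not affect the substance of the argument.
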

\begin{proof}
Note that $f_1=f_{\om_1}$.  By Theorem~\ref{TMord=1}, for $\k=\F$, the functors $\funF_1$ and $\funG_1$ are quasi-inverse equivalences. Now, the result for $\k=\O$ follows using Lemma~\ref{LMorExtScal}. 
\end{proof}

In Corollary~\ref{CEquivFG}(ii), we will prove that the functors $\funF_d$ and $\funG_d$ are quasi-inverse equivalences by induction on $d$ using Corollary~\ref{CMord=1} as the induction base. Until then, while working with a fixed $d\in\N_+$, we make the inductive assumption:

\begin{Inductive Assumption}\label{IA}
For all $c<d$, the functors $\funF_c$ and $\funG_c$ are quasi-inverse equivalences.  
\end{Inductive Assumption}

\begin{Lemma} \label{LGLaFLa}
Let $\la\in\Comp(d)$ have more than one non-zero part. In the presence of Inductive Assumption~\ref{IA}, the functors $\funF_\la$ and $\funG_\la$ are quasi-inverse equivalences.   
\end{Lemma}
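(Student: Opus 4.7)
The plan is to reduce to the rank-one case, where Inductive Assumption~\ref{IA} directly applies. After discarding zero parts (which contribute only trivial tensor factors $\hat C_0\cong C_0\cong\k$), I may assume all parts $\la_1,\dots,\la_n$ of $\la$ are non-zero. Since by hypothesis $n\geq 2$ and $\la_1+\dots+\la_n=d$, each $\la_k<d$, so Inductive Assumption~\ref{IA} provides, for every $k$, a quasi-inverse pair of graded superequivalences $(\funF_{\la_k},\funG_{\la_k})$ between $\mod{\hat C_{\la_k}}$ and $\mod{C_{\la_k}}$, implemented by bidegree $(0,\0)$ bisupermodule isomorphisms $(\hat C_{\la_k}\ggi_{\la_k})\otimes_{C_{\la_k}}(\ggi_{\la_k}\hat C_{\la_k})\simeq \hat C_{\la_k}$ and $(\ggi_{\la_k}\hat C_{\la_k})\otimes_{\hat C_{\la_k}}(\hat C_{\la_k}\ggi_{\la_k})\simeq C_{\la_k}$.

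Next I would assemble these factor-wise equivalences via outer tensor product. The identifications (\ref{EHatParabolic}) and (\ref{CParIdentify}), together with the factorization $\ggi_\la=\ggi_{\la_1}\otimes\cdots\otimes\ggi_{\la_n}$, yield bisupermodule factorizations $\hat C_\la\ggi_\la\simeq (\hat C_{\la_1}\ggi_{\la_1})\boxtimes\cdots\boxtimes(\hat C_{\la_n}\ggi_{\la_n})$ and $\ggi_\la\hat C_\la\simeq (\ggi_{\la_1}\hat C_{\la_1})\boxtimes\cdots\boxtimes(\ggi_{\la_n}\hat C_{\la_n})$. Taking outer tensor products of the factor-wise Morita isomorphisms above produces $(\hat C_\la\ggi_\la)\otimes_{C_\la}(\ggi_\la\hat C_\la)\simeq \hat C_\la$, which is precisely the natural isomorphism $\funG_\la\circ\funF_\la\simeq\id_{\mod{\hat C_\la}}$. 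Combined with (\ref{EFAfterG}), which already provides $\funF_\la\circ\funG_\la\simeq\id_{\mod{C_\la}}$, this will establish the claimed quasi-inverse equivalence.

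The only point requiring genuine care is the verification that the outer super tensor product of bisupermodule Morita isomorphisms is compatible with the tensor product over the factor algebras $C_{\la_k}$, i.e.\ that under the identifications (\ref{EHatParabolic}), (\ref{CParIdentify}) the canonical map
\[
\bigl((\hat C_{\la_1}\ggi_{\la_1})\otimes_{C_{\la_1}}(\ggi_{\la_1}\hat C_{\la_1})\bigr)\boxtimes\cdots\boxtimes\bigl((\hat C_{\la_n}\ggi_{\la_n})\otimes_{C_{\la_n}}(\ggi_{\la_n}\hat C_{\la_n})\bigr)\;\iso\;(\hat C_\la\ggi_\la)\otimes_{C_\la}(\ggi_\la\hat C_\la)
\]
is a bidegree $(0,\0)$ isomorphism of graded $(\hat C_\la,\hat C_\la)$-bisupermodules, with the Koszul signs arising from parity changes correctly accounted for. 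Under the graded superalgebra and bisupermodule conventions of \S\ref{SSBasicRep}, this is a routine formal verification rather than a substantive obstacle; no other step in the argument looks delicate.
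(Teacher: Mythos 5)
Your proof is correct and follows the same essential strategy as the paper: reduce to the rank-one factors $\la_k<d$ via Inductive Assumption~\ref{IA} and then assemble the factor-wise equivalences across the tensor product $\hat C_\la=\hat C_{\la_1}\otimes\cdots\otimes\hat C_{\la_n}$, $C_\la=C_{\la_1}\otimes\cdots\otimes C_{\la_n}$. The one place where the paper's argument is slightly more streamlined than yours is the handling of the direction $\funG_\la\funF_\la\simeq\id$. The paper observes that this natural isomorphism holds if and only if $\hat C_\la\ggi_\la\hat C_\la=\hat C_\la$, i.e.\ uses the standard ideal criterion for an idempotent truncation to be a Morita equivalence. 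With that reformulation the verification becomes a one-line computation of subspaces, $\hat C_\la\ggi_\la\hat C_\la=(\hat C_{\la_1}\ggi_{\la_1}\hat C_{\la_1})\otimes\cdots\otimes(\hat C_{\la_n}\ggi_{\la_n}\hat C_{\la_n})=\hat C_{\la_1}\otimes\cdots\otimes\hat C_{\la_n}$, and no compatibility of Koszul signs with outer tensor products needs to be checked. You instead build the bisupermodule isomorphism $(\hat C_\la\ggi_\la)\otimes_{C_\la}(\ggi_\la\hat C_\la)\simeq\hat C_\la$ as an outer tensor product of the factor-wise Morita isomorphisms, and correctly identify that this requires checking compatibility of $\boxtimes$ with $\otimes_{C_\la}$ under the super tensor conventions. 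That check is indeed routine, but the paper's formulation avoids it entirely, which is a modest simplification worth noting.
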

\begin{proof}
Let $\la=(\la_1,\dots,\la_n)$. By the assumption on $\la$, we have $\la_k<d$ and $\funF_{\la_k}$ and $\funG_{\la_k}$ are quasi-inverse equivalences for all $k=1,\dots,n$ by the Inductive Assumption. In particular, we have $\hat C_{\la_k}=\hat C_{\la_k}\ggi_{\la_k}\hat C_{\la_k}$ for all $k$. 

Now, we always have $\funF_\la\funG_\la W\simeq W$ for $W\in\mod{C_\la}$, while $\funG_\la\funF_\la V\simeq V$ for $V\in\mod{\hat C_{\la}}$ is equivalent to $\hat C_{\la}=\hat C_{\la}\ggi_\la\hat C_{\la}$. But 
$$\hat C_{\la}\ggi_\la\hat C_{\la}=\hat C_{\la_1}\ggi_{\la_1}\hat C_{\la_1}\otimes\dots\otimes \hat C_{\la_n}\ggi_{\la_n}\hat C_{\la_n}
=\hat C_{\la_1}\otimes\dots\otimes \hat C_{\la_n}=\hat C_{\la}
$$ 
by the previous paragraph.
\end{proof}

\subsection{Gelfand-Graev induction and restriction}
Let $\la\in\Comp(d)$. 
As a special case of (\ref{EResGeneral}) and (\ref{EIndGeneral}), 
we have the functors of {\em Gelfand-Graev restriction and induction}:
\begin{equation}\label{EGGIR}
\begin{split}
\GGR_\la^d:=\ggi_\la C_d\otimes_{C_d}- :\mod{C_d}\to\mod{C_\la},
\\
\GGI_\la^d:=C_d\ggi_\la\otimes_{C_\la}- :\mod{C_\la}\to\mod{C_d}.
\end{split}
\end{equation}
If $\mu\in\Comp(d)$ and $\la$ is a refinement of $\mu$, we have more  generally 
\begin{align*}
\GGR_\la^\mu&=\ggi_\la C_\mu\otimes_{C_\mu}- :\mod{C_\mu}\to\mod{C_\la},
\\
\GGI_\la^\mu&=C_\mu\ggi_\la\otimes_{C_\la}- :\mod{C_\la}\to\mod{C_\mu}.
\end{align*}
We have transitivity of Gelfand-Graev induction:

\begin{Lemma} 
If $\la,\mu,\nu\in\Comp(d)$ such that $\la$ is a refinement of $\mu$ and $\mu$ is a refinement of $\nu$ then  $\GGI^\nu_\mu\circ\GGI^\mu_\la \simeq \GGI^\nu_\la$. 
\end{Lemma}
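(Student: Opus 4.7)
The plan is to reduce transitivity to the bimodule isomorphism
\[
C_\nu\ggi_\mu \otimes_{C_\mu} C_\mu\ggi_\la \xrightarrow{\ \sim\ } C_\nu\ggi_\la, \qquad x\otimes y \mapsto xy,
\]
after which tensoring on the right with any $V\in\mod{C_\la}$ and invoking associativity of the tensor product yields $\GGI^\nu_\mu\,\GGI^\mu_\la(V) \simeq \GGI^\nu_\la(V)$ naturally in $V$.

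The crucial ingredient is the orthogonality identity
\[
\ggi_\nu\ggi_\mu = \ggi_\mu\ggi_\nu = \ggi_\mu \quad\text{and}\quad \ggi_\mu\ggi_\la = \ggi_\la\ggi_\mu = \ggi_\la
\]
inside $\hat C_d$. Using the identification \eqref{EHatParabolic} together with the concatenation rule $\ggw^{\mu^{(1)}\cdots\mu^{(n)},\,\bj^{(1)}\cdots\bj^{(n)}} = \ggw^{\mu^{(1)},\bj^{(1)}}\cdots\ggw^{\mu^{(n)},\bj^{(n)}}$ coming from \eqref{EGHatG}, I will check that the image of $\ggi_\la$ in $\hat C_d$ is
\[
\ggi_\la \;=\; \sum_{\substack{(\kappa,\bi)\in\EC^\col(d) \\ \kappa\text{ refines }\la}} \ggi^{\kappa,\bi},
\]
and similarly for $\ggi_\mu$ and $\ggi_\nu$. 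Since $\la$ refines $\mu$, transitivity of refinement makes the indexing set of $\ggi_\la$ a subset of that of $\ggi_\mu$, and the identity $\ggi_\mu\ggi_\la = \ggi_\la$ then drops out from the pairwise orthogonality of the distinct elementary Gelfand-Graev idempotents $\ggi^{\kappa,\bi}$; the companion identity for $(\mu,\nu)$ is handled in the same way. In particular one obtains $C_\la \subseteq C_\mu \subseteq C_\nu$ as subalgebras of $\hat C_d$, so the multiplication map above is well-defined, $C_\mu$-balanced, and bimodule-linear.

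To verify it is an isomorphism, I construct the inverse $z \mapsto z\otimes\ggi_\la$. Any $z\in C_\nu\ggi_\la$ satisfies $z\ggi_\mu = z\ggi_\la\ggi_\mu = z\ggi_\la = z$, which places it in $C_\nu\ggi_\mu$, while $\ggi_\la = \ggi_\mu\ggi_\la$ lies in $C_\mu\ggi_\la$, so the assignment is well-defined. One composition is immediate: $z\mapsto z\otimes\ggi_\la\mapsto z\ggi_\la = z$. For the other, writing a simple tensor $x\otimes y$ with $y = y'\ggi_\la$ for some $y'\in C_\mu$ and sliding $y'$ across the tensor product gives $x\otimes y = xy'\otimes\ggi_\la = (xy)\otimes\ggi_\la$ after absorbing one copy of $\ggi_\la$. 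The main (and only genuinely nontrivial) obstacle is the opening orthogonality identity; once the combinatorial book-keeping that an essential colored composition refining $\la$ automatically refines $\mu$ is in place, the rest is entirely formal.
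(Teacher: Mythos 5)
Your proof is correct and takes essentially the same route as the paper's: both reduce transitivity to the bimodule isomorphism $C_\nu\ggi_\mu\otimes_{C_\mu}C_\mu\ggi_\la\simeq C_\nu\ggi_\la$ via the multiplication map, driven by the key identity $\ggi_\mu\ggi_\la=\ggi_\la$. The paper invokes that identity without elaboration and treats the resulting bimodule isomorphism as a standard fact of the form $N\otimes_A Ae\simeq Ne$; you supply the (correct) combinatorial proof of $\ggi_\mu\ggi_\la=\ggi_\la$ from the description $\ggi_\la=\sum_{\kappa\ \text{refines}\ \la}\ggi^{\kappa,\bi}$ together with orthogonality and transitivity of refinement, and you write out the inverse map explicitly — more detail, same argument.
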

\begin{proof}
Using $\ggi_\mu\ggi_\la=\ggi_\la$, we have 
$$
C_\nu\ggi_\mu\otimes_{C_\mu}C_\mu\ggi_\la\otimes_{C_\la}V\simeq 
C_\nu\ggi_\mu\ggi_\la\otimes_{C_\la}V= C_\nu\ggi_\la\otimes_{C_\la}V
$$
for $V\in \mod{C_\la}$. 
\end{proof}

Recalling the functors (\ref{EFGFunctors}) and (\ref{EFGFunctorsLa}), note that 
\begin{equation}\label{EGGIInd}
\GGI_\la^d\simeq \funF_d\circ \Ind_{\la\de}^{d\de}\circ \funG_\la
\quad\text{and}\quad
\GGR_\la^d\simeq \funF_\la\circ \Res_{\la\de}^{d\de}\circ \funG_d.
\end{equation}
Indeed, for the first isomorphism, let $V\in\mod{C_\la}$. Using Lemma~\ref{LTensImagIsImag}(i) and the equality $1_{\la\de}\ggi_\la=\ggi_\la$, we have  
\begin{align*}
\funF_d\big( \Ind_{\la\de}^{d\de}\, \funG_\la (V)\big)
&=\ggi_d\hat C_d\otimes_{\hat C_d}\hat C_d1_{\la\de} \otimes_{\hat C_{\la}}\otimes \hat C_{\la}\ggi_\la\otimes_{C_\la}V
\\
&\simeq \ggi_d\hat C_d\ggi_\la\otimes_{C_\la}V
\\
&= C_d\ggi_\la\otimes_{C_\la}V
\\&= \GGI_\la^dV.
\end{align*}
The second isomorphism is proved similarly, but using Lemma~\ref{LCuspRes} instead of Lemma~\ref{LTensImagIsImag}. 

Even more generally, 
for $\la,\mu\in\Comp(d)$ such that $\la$ is a refinement of $\mu$ we have by a similar argument:
\begin{equation}\label{EGGIIndGen}
\GGI_\la^\mu\simeq \funF_\mu\circ \Ind_{\la\de}^{\mu\de}\circ \funG_\la
\quad\text{and}\quad
\GGR_\la^\mu\simeq \funF_\la\circ \Res_{\la\de}^{\mu\de}\circ \funG_\mu.
\end{equation}

\begin{Lemma} \label{LFFExact}
Let $\la\in\Comp(d)$. 
The functors $\GGI_\la^d$ and $\GGR_\la^d$ are exact. 
\end{Lemma}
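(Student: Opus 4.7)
The plan is to handle the two functors separately, the restriction being essentially formal and the induction being a composition of three exact functors.

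For $\GGR_\la^d = \ggi_\la C_d \otimes_{C_d} -$, I would first verify that $\ggi_\la$ in fact lies in $C_d$, i.e.\ that $\ggi_d \ggi_\la = \ggi_\la$. Expanding
$$\ggi_\la = \ggi_{\la_1}\otimes\dots\otimes\ggi_{\la_n} = \sum \ggi^{\mu^{(1)},\bj^{(1)}}\otimes\dots\otimes\ggi^{\mu^{(n)},\bj^{(n)}},$$
where the sum is over $(\mu^{(k)},\bj^{(k)}) \in \EC^\col(\la_k)$, each summand equals $\ggi^{\mu,\bj}$ for the concatenated colored composition $(\mu,\bj) \in \EC^\col(d)$ (concatenation of essential colored compositions is essential). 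Hence $\ggi_\la$ is a sum of summands of $\ggi_d$, so by orthogonality $\ggi_d\ggi_\la = \ggi_\la$. With this in hand, the standard isomorphism $\ggi_\la C_d \otimes_{C_d} V \simeq \ggi_\la V$ exhibits $\GGR_\la^d$ as idempotent truncation, which is trivially exact.

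For $\GGI_\la^d$, I would first dispose of the trivial case $\la = (d)$: here $\ggi_{(d)} = \ggi_d$ is the identity of $C_d$ and $C_{(d)} = C_d$, so $\GGI_{(d)}^d \simeq \id$. If $\la$ has at least two non-zero parts, I would invoke the factorization (\ref{EGGIInd}):
$$\GGI_\la^d \simeq \funF_d \circ \Ind_{\la\de}^{d\de} \circ \funG_\la.$$
The factor $\funF_d = \ggi_d\hat C_d \otimes_{\hat C_d} -$ is idempotent truncation, hence exact. The parabolic induction $\Ind_{\la\de}^{d\de} = \hat C_d 1_{\la\de}\otimes_{\hat C_\la} -$ is exact because $\hat C_d 1_{\la\de}$ is free as a right $\hat C_\la$-module by Lemma~\ref{L030216}(ii). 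Finally, $\funG_\la$ is an equivalence (in particular exact) by Lemma~\ref{LGLaFLa}: its hypotheses hold since $\la$ has more than one non-zero part, so each $\la_k < d$, and we are operating under Inductive Assumption~\ref{IA}. Composing three exact functors yields exactness.

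The main obstacle, mild as it is, is the bookkeeping of ensuring $\ggi_\la$ is genuinely absorbed by $\ggi_d$, which rests on the combinatorial observation about concatenation of essential colored compositions; the rest of the argument is assembled from results already in place.
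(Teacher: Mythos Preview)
Your proof is correct and follows essentially the same approach as the paper's: the paper also declares $\GGR_\la^d$ clear, handles the one-non-zero-part case trivially, and then uses the factorization (\ref{EGGIInd}) together with Lemma~\ref{LGLaFLa} for the remaining case. You have simply filled in more detail, explicitly verifying $\ggi_d\ggi_\la = \ggi_\la$ and citing Lemma~\ref{L030216}(ii) for the freeness underpinning exactness of $\Ind_{\la\de}^{d\de}$.
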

\begin{proof}
This is clear for $\GGR_\la^d$. For $\GGI_\la^d$, if $\la$ has only one non-zero part, there is nothing to prove. Otherwise, we have 
$\GGI_\la^d\simeq \funF_d\circ \Ind_{\la\de}^{d\de}\circ \funG_\la$ by (\ref{EGGIInd}), and the exactness of $\GGI_\la^d$ follows from that of $\funF_d$, 
$\Ind_{\la\de}^{d\de}$ and  $\funG_\la$, the last one coming from Lemma~\ref{LGLaFLa}. 
\end{proof}

\begin{Lemma} \label{LfunGInd}
For $\la\in\Comp(d)$ and $V\in\mod{C_{\la}}$, we have a functorial isomorphism  $\funG_d\,\GGI^d_\la (V)\simeq \Ind_{\la\de}^{d\de}(\funG_\la\, V)$ of graded $\hat C_d$-supermodules. 
\end{Lemma}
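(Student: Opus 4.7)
The plan is to show that both $\funG_d\,\GGI^d_\la(V)$ and $\Ind_{\la\de}^{d\de}(\funG_\la V)$ are naturally isomorphic to the same graded $\hat C_d$-supermodule, namely $\hat C_d\ggi_\la\otimes_{C_\la} V$, via standard `cancel the idempotent' multiplication maps. The only non-formal ingredient will be the identity $\ggi_d\ggi_\la=\ggi_\la$ inside $\hat C_d$; this is precisely what forces $\ggi_\la$ to lie in $C_d$ and permits the first cancellation.

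First I would verify that $\ggi_d\ggi_\la=\ggi_\la$ in $\hat C_d$. Unwinding the definitions, the image of $\ggi_\la=\ggi_{\la_1}\otimes\cdots\otimes\ggi_{\la_n}$ under the embedding $\iota_{\la\de}:\hat C_\la\hookrightarrow 1_{\la\de}\hat C_d1_{\la\de}$ equals the sum of all idempotents $\ggi^{\mu^{(1)},\bj^{(1)}}\otimes\cdots\otimes\ggi^{\mu^{(n)},\bj^{(n)}}$ with $(\mu^{(k)},\bj^{(k)})\in\EC^\col(\la_k)$. By Lemma~\ref{LGGSplit} each such term, when pushed into $\hat C_d$, becomes $\ggi^{\mu,\bj}$ for the concatenation $(\mu,\bj)\in\EC^\col(d)$, so $\ggi_\la$ is a sub-sum of the orthogonal idempotent decomposition $\ggi_d=\sum_{(\mu,\bj)\in\EC^\col(d)}\ggi^{\mu,\bj}$. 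Orthogonality gives $\ggi_d\ggi_\la=\ggi_\la$, and in particular $\ggi_\la\in C_d$.

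Next I would compute $\funG_d\,\GGI^d_\la(V)$. By definitions,
\[
\funG_d\,\GGI^d_\la(V)=\hat C_d\ggi_d\otimes_{C_d}C_d\ggi_\la\otimes_{C_\la}V.
\]
Since $\ggi_\la\in C_d$, the standard multiplication map
$\hat C_d\ggi_d\otimes_{C_d}C_d\ggi_\la\to \hat C_d\ggi_d\ggi_\la=\hat C_d\ggi_\la$ is an isomorphism of graded $(\hat C_d,C_\la)$-bisupermodules (its inverse sends $z\in\hat C_d\ggi_\la$ to $z\otimes\ggi_\la$, which is well-defined because $z\ggi_d=z$). This yields a natural isomorphism
\[
\funG_d\,\GGI^d_\la(V)\simeq \hat C_d\ggi_\la\otimes_{C_\la}V.
\]

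Then I would treat the right-hand side. Since $\funG_\la V=\hat C_\la\ggi_\la\otimes_{C_\la}V$ is a module over $\hat C_\la=\bar R_{\la\de}$ it is automatically cuspidal, so Lemma~\ref{LTensImagIsImag}(i) identifies $\Ind_{\la\de}^{d\de}(\funG_\la V)$ with $\hat C_d1_{\la\de}\otimes_{\hat C_\la}(\hat C_\la\ggi_\la\otimes_{C_\la}V)$. Multiplication collapses the inner tensor via $\hat C_d1_{\la\de}\otimes_{\hat C_\la}\hat C_\la\ggi_\la\simeq \hat C_d1_{\la\de}\ggi_\la=\hat C_d\ggi_\la$ (using that $\ggi_\la$ factors through $1_{\la\de}$), so
\[
\Ind_{\la\de}^{d\de}(\funG_\la V)\simeq \hat C_d\ggi_\la\otimes_{C_\la}V,
\]
naturally in $V$. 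Composing the two natural isomorphisms finishes the proof.

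The potential obstacle is bookkeeping the grading and parity shifts so that the multiplication maps above are genuinely of bidegree $(0,\0)$; but since both collapses come from honest multiplication by idempotents of bidegree $(0,\0)$ and the embedding $\iota_{\la\de}$ is of bidegree $(0,\0)$, no degree or parity shifts are introduced, so the resulting $\hat C_d$-supermodule isomorphism is of bidegree $(0,\0)$ as required.
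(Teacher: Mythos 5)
Your proof is correct and follows essentially the same route as the paper: both argue through the common intermediate $\hat C_d\ggi_\la\otimes_{C_\la}V$, using the "cancel the idempotent" maps together with the identities $\ggi_d\ggi_\la=\ggi_\la$ and $1_{\la\de}\ggi_\la=\ggi_\la$. Your justification of $\ggi_d\ggi_\la=\ggi_\la$ via Lemma~\ref{LGGSplit} makes explicit a fact the paper uses without comment, but otherwise the two arguments coincide.
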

\begin{proof}
We have 
\begin{align*}
\funG_d\,\GGI^d_\la (V)&\simeq\hat C_d\ggi_d\otimes_{C_d} C_d\ggi_\la\otimes_{C_\la} V
\\
&\simeq\hat C_d\ggi_d\ggi_\la\otimes_{C_\la} V
\\&=\hat C_d\ggi_\la\otimes_{C_\la} V
\\
&=\hat C_d 1_{\la\de}\ggi_\la\otimes_{C_\la} V
\\&\simeq \hat C_d 1_{\la\de}\otimes_{\hat C_{\la}}\hat C_\la\ggi_\la\otimes_{C_\la} V
\\&\simeq \Ind_{\la\de}^{d\de}(\funG_\la V),
\end{align*}
as required.
\end{proof}

We have that $\GGI_\la^d$ is left adjoint to $\GGR_\la^d$. 
The following lemma partially describes the right adjoint:

\begin{Lemma} \label{LAnotherAdj} 
Let $\la=(\la_1,\dots,\la_n)\in\Comp(n,d)$ and define $\la^\op:=(\la_n,\dots,\la_1)\in\Comp(n,d)$. Then for $W\in\mod{C_d}$ and $V_k\in\mod{C_{\la_k}}$ for $k=1,\dots,n$, there is a functorial isomorphism 
$$
\Hom_{C_\la}(\GGR^d_\la W\,,\, V_1\boxtimes\dots\boxtimes V_n)\simeq \Hom_{C_d}(W\,,\,\GGI_{\la^\op}^d (V_n\boxtimes\dots\boxtimes V_1)).
$$
\end{Lemma}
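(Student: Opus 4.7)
The plan is to chain several standard adjunctions and isomorphisms together, bridging between $\mod{C_\la}$, $\mod{\hat C_\la}$, $\mod{\hat C_d}$ and $\mod{C_d}$, with the key ingredient being Lemma~\ref{LLV} used to swap $\Coind$ for $\Ind$ with reversed arguments.

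First dispose of the trivial case where $\la$ has at most one non-zero part: after discarding zero parts we may take $\la=(d)$, and then $\GGR^d_\la$ and $\GGI^d_{\la^\op}$ are both the identity functor. So assume $\la$ has at least two non-zero parts. Then Lemma~\ref{LGLaFLa} together with Inductive Assumption~\ref{IA} ensures that $\funF_\la$ and $\funG_\la$ are mutually quasi-inverse graded superequivalences; in particular $\funG_\la$ is both a left and a right adjoint to $\funF_\la$, yielding a natural isomorphism $\Hom_{C_\la}(\funF_\la X, Y) \simeq \Hom_{\hat C_\la}(X, \funG_\la Y)$.

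Combining (\ref{EGGIInd}), this adjunction, (\ref{EFGFunctorsLaOuter}) applied to $\funG_\la$, the standard $(\Res,\Coind)$-adjunction, Lemma~\ref{LLV}, (\ref{EFGFunctorsLaOuter}) applied to $\funG_{\la^\op}$, Lemma~\ref{LfunGInd}, and finally $\funF_d\funG_d\simeq\id$ from (\ref{EFAfterG}) together with the $(\funG_d,\funF_d)$-adjunction, one obtains the following chain of natural bidegree $(0,\bar 0)$ isomorphisms:
\begin{align*}
&\Hom_{C_\la}(\GGR^d_\la W,\; V_1 \boxtimes \cdots \boxtimes V_n) \\
&\quad\simeq\; \Hom_{C_\la}\big(\funF_\la\,\Res^{d\de}_{\la\de}\,\funG_d\, W,\; V_1 \boxtimes \cdots \boxtimes V_n\big) \\
&\quad\simeq\; \Hom_{\hat C_\la}\big(\Res^{d\de}_{\la\de}\,\funG_d\, W,\; \funG_{\la_1}V_1 \boxtimes \cdots \boxtimes \funG_{\la_n}V_n\big) \\
&\quad\simeq\; \Hom_{\hat C_d}\big(\funG_d\, W,\; \Coind^{d\de}_{\la\de}(\funG_{\la_1}V_1 \boxtimes \cdots \boxtimes \funG_{\la_n}V_n)\big) \\
&\quad\simeq\; \Hom_{\hat C_d}\big(\funG_d\, W,\; \Ind^{d\de}_{\la^\op\de}(\funG_{\la_n}V_n \boxtimes \cdots \boxtimes \funG_{\la_1}V_1)\big) \\
&\quad\simeq\; \Hom_{\hat C_d}\big(\funG_d\, W,\; \funG_d(\GGI^d_{\la^\op}(V_n \boxtimes \cdots \boxtimes V_1))\big) \\
&\quad\simeq\; \Hom_{C_d}\big(W,\; \GGI^d_{\la^\op}(V_n \boxtimes \cdots \boxtimes V_1)\big).
\end{align*}

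The critical step is the application of Lemma~\ref{LLV}: here the shift parameters are forced to vanish, since $(\la_m\de \mid \la_k\de) = 0$ because $\de$ is the imaginary null root, and $\|\la_k\de\| = \bar 0$ because the $\al_0$-coefficient of $\de$ equals $2$; hence both $s(\underline{\la\de})$ and $\eps(\underline{\la\de})$ are zero and no degree or parity twist is introduced. The main obstacle to the whole argument is simply justifying the first displayed isomorphism above (i.e.\ the biadjointness of $\funF_\la$ and $\funG_\la$), which is exactly where the Inductive Assumption on $C_{\la_k}$ for $\la_k < d$ is needed; the remaining steps are routine tracking of standard adjunctions and of the multiplicativity of $\funG$ over parabolic decompositions.
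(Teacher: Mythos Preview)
Your proof is correct and follows essentially the same route as the paper's: both reduce to the case where $\la$ has at least two non-zero parts, invoke the equivalence $\funF_\la\simeq\funG_\la^{-1}$ from Lemma~\ref{LGLaFLa}, and then chain the same sequence of adjunctions through $\hat C_\la$ and $\hat C_d$, with Lemma~\ref{LLV} (and the vanishing of the shifts since $(\de|\de)=0$ and $\|\de\|=\bar 0$) as the pivot. The only cosmetic difference is that you pass from $\Ind^{d\de}_{\la^\op\de}\funG_{\la^\op}(-)$ to $\funG_d\,\GGI^d_{\la^\op}(-)$ via Lemma~\ref{LfunGInd} and then apply the $(\funG_d,\funF_d)$-adjunction together with $\funF_d\funG_d\simeq\id$, whereas the paper applies the adjunction first and then identifies $\funF_d\,\Ind^{d\de}_{\la^\op\de}\,\funG_{\la^\op}$ with $\GGI^d_{\la^\op}$ via (\ref{EGGIInd}); these are equivalent.
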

\begin{proof}
If $\la$ has only one non-zero part, there is nothing to prove. Otherwise, $\funF_\la$ is an equivalence with quasi-inverse $\funG_\la$ by Lemma~\ref{LGLaFLa}. Moreover, $\funG_d$ is always left adjoint to $\funF_d$. So we have 
\begin{align*}
\Hom_{C_\la}(\GGR^d_\la W, V_1\boxtimes\dots\boxtimes V_n)
&\simeq 
\Hom_{C_\la}((\funF_\la\circ \Res^{d\de}_{\la\de}\circ \funG_d) (W), V_1\boxtimes\dots\boxtimes V_n)
\\
&\simeq 
\Hom_{\hat C_{\la}}((\Res^{d\de}_{\la\de}\circ \funG_d) (W), \funG_\la(V_1\boxtimes\dots\boxtimes V_n))
\\
&\simeq 
\Hom_{\hat C_{\la}}((\Res^{d\de}_{\la\de}\circ \funG_d) (W), \funG_{\la_1}V_1\boxtimes\dots\boxtimes \funG_{\la_n}V_n)
\\
&\simeq 
\Hom_{\hat C_d}(\funG_d W, \Coind^{d\de}_{\la\de}(\funG_{\la_1}V_1\boxtimes\dots\boxtimes \funG_{\la_n}V_n))
\\
&\simeq 
\Hom_{\hat C_d}(\funG_d W, \Ind^{d\de}_{\la^\op\de}(\funG_{\la_n}V_n\boxtimes\dots\boxtimes \funG_{\la_1}V_1))
\\
&\simeq 
\Hom_{C_{d}}(W, \funF_d \Ind^{d\de}_{\la^\op\de}(\funG_{\la_n}V_n\boxtimes\dots\boxtimes \funG_{\la_1}V_1))
\\
&\simeq 
\Hom_{C_{d}}(W, \funF_d \Ind^{d\de}_{\la^\op\de}\funG_{\la^\op}(V_n\boxtimes\dots\boxtimes V_1))
\\
&\simeq 
\Hom_{C_d}(W,\GGI_{\la^\op}^d (V_n\boxtimes\dots\boxtimes V_1)),
\end{align*}
where we have used Lemma~\ref{LLV} for the fifth isomorphism.  
\end{proof}

\begin{Lemma} \label{LGGIndProjId}
Let $\la\in\Comp(n,d)$ and 
$\bi^{(1)}\in I^{\la_1\de}_\di,\dots,\bi^{(n)}\in I^{\la_n\de}_\di$ be Gelfand-Graev words. Then there is an isomorphism of graded $C_d$-supermodules
\begin{align*}
C_d1_{\bi^{(1)}\cdots\bi^{(n)}}&\iso\GGI_{\la}^d\big( C_{\la_1}1_{\bi^{(1)}}\boxtimes\,\cdots\,\boxtimes\, C_{\la_n}1_{\bi^{(n)}}\big),
\\ 
1_{\bi^{(1)}\cdots\bi^{(n)}}&\mapsto \ggi_\la\otimes 1_{\bi^{(1)}}\otimes \cdots\,\otimes 1_{\bi^{(n)}}.
\end{align*}
\end{Lemma}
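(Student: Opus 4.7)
The strategy is to deduce the statement from the analogous identity at the level of the full imaginary cuspidal algebra $\hat C_d$ by applying the Gelfand--Graev truncation functor $\funF_d$, then recognising the result via (\ref{EGGIInd}) using Inductive Assumption~\ref{IA}. The case when $\la$ has a single nonzero part is trivial, so assume $\la$ has at least two nonzero parts and in particular each $\la_k<d$.

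First I would establish the $\hat C_d$-level analogue: for Gelfand--Graev words $\bi^{(1)},\dots,\bi^{(n)}$ as in the statement, there is an isomorphism of graded $\hat C_d$-supermodules
\begin{equation*}
\hat C_d 1_{\bi^{(1)}\cdots\bi^{(n)}} \iso \Ind_{\la\de}^{d\de}\bigl(\hat C_{\la_1}1_{\bi^{(1)}}\boxtimes\cdots\boxtimes \hat C_{\la_n}1_{\bi^{(n)}}\bigr),\qquad 1_{\bi^{(1)}\cdots\bi^{(n)}}\mapsto 1_{\la\de}\otimes 1_{\bi^{(1)}}\otimes\cdots\otimes 1_{\bi^{(n)}}.
\end{equation*}
This is a divided-power analogue of Lemma~\ref{LTensImagIsImag}(ii) and is obtained in the same way: start from the isomorphism of Lemma~\ref{LIndStdProj} applied to $R_{d\de}$, and then push it through to the cuspidal quotient using that each $\bi^{(k)}$ is a (divided-power) Gelfand--Graev word, hence by Lemma~\ref{LGGSplit} cuspidally supported, together with Lemma~\ref{LFact} to relate the divided idempotent $1_{\bi^{(1)}\cdots\bi^{(n)}}$ to its non-divided-power counterpart $1_{\hat{\bi}^{(1)}\cdots\hat{\bi}^{(n)}}$, to which the standard cuspidal descent of Lemma~\ref{LTensImagIsImag} applies.

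Next I would apply $\funF_d$ to both sides. For the left side, since the concatenation $\bi^{(1)}\cdots\bi^{(n)}$ is itself a Gelfand--Graev word by Lemma~\ref{LGGSplit}, we have $1_{\bi^{(1)}\cdots\bi^{(n)}}\le \ggi_d$, hence
\begin{equation*}
\funF_d(\hat C_d 1_{\bi^{(1)}\cdots\bi^{(n)}})\simeq \ggi_d \hat C_d 1_{\bi^{(1)}\cdots\bi^{(n)}}=\ggi_d\hat C_d\ggi_d\cdot 1_{\bi^{(1)}\cdots\bi^{(n)}}=C_d 1_{\bi^{(1)}\cdots\bi^{(n)}}.
\end{equation*}
For the right side, Inductive Assumption~\ref{IA} together with Lemma~\ref{LGLaFLa} implies that $\funF_\la,\funG_\la$ are quasi-inverse graded superequivalences, so $\funG_\la\circ\funF_\la\simeq\id$. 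Combined with (\ref{EGGIInd}), this gives the natural isomorphism $\funF_d\circ\Ind_{\la\de}^{d\de}\simeq\GGI_\la^d\circ\funF_\la$. Using (\ref{EFGFunctorsLaOuter}) and the computation $\funF_{\la_k}(\hat C_{\la_k}1_{\bi^{(k)}})\simeq \ggi_{\la_k}\hat C_{\la_k}1_{\bi^{(k)}}=C_{\la_k}1_{\bi^{(k)}}$ (analogous to the one in the preceding display, since $1_{\bi^{(k)}}\le\ggi_{\la_k}$), we obtain
\begin{equation*}
\funF_d\bigl(\Ind_{\la\de}^{d\de}(\hat C_{\la_1}1_{\bi^{(1)}}\boxtimes\cdots\boxtimes \hat C_{\la_n}1_{\bi^{(n)}})\bigr)\simeq \GGI_\la^d\bigl(C_{\la_1}1_{\bi^{(1)}}\boxtimes\cdots\boxtimes C_{\la_n}1_{\bi^{(n)}}\bigr).
\end{equation*}

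Combining these two computations produces the desired isomorphism. Tracking the distinguished element $1_{\la\de}\otimes 1_{\bi^{(1)}}\otimes\cdots\otimes 1_{\bi^{(n)}}$ through the identifications above, and noting that on the right side the idempotent $1_{\la\de}$ gets replaced by $\ggi_\la$ once we tensor over $C_\la$ rather than $\hat C_\la$ (because $\ggi_\la\cdot 1_{\bi^{(1)}\cdots\bi^{(n)}}=1_{\bi^{(1)}\cdots\bi^{(n)}}$), one obtains precisely the map $1_{\bi^{(1)}\cdots\bi^{(n)}}\mapsto \ggi_\la\otimes 1_{\bi^{(1)}}\otimes\cdots\otimes 1_{\bi^{(n)}}$.

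The main obstacle is Step~1, the divided-power variant of Lemma~\ref{LTensImagIsImag}(ii). Although formally parallel to the non-divided case, it requires a careful verification that the non-cuspidal ideal of $R_{d\de}$ annihilates $R_{d\de}1_{\bi^{(1)}\cdots\bi^{(n)}}$ modulo the image in $\hat C_d$, which is where Lemmas~\ref{LFact} and~\ref{LGGSplit} enter; everything else is formal functorial manipulation together with the bookkeeping afforded by Inductive Assumption~\ref{IA}.
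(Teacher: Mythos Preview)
Your approach is correct and essentially matches the paper's: both use the $\hat C_d$-level isomorphism (Lemma~\ref{LTensImagIsImag}(ii), extended to divided-power words) together with (\ref{EGGIInd}) and the fact that the concatenation $\bi^{(1)}\cdots\bi^{(n)}$ is again a Gelfand--Graev word.

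One small point: your detour through Inductive Assumption~\ref{IA} to obtain $\funF_d\circ\Ind_{\la\de}^{d\de}\simeq\GGI_\la^d\circ\funF_\la$ is unnecessary. The paper applies (\ref{EGGIInd}) in the forward direction, computing $\funG_\la$ directly on the specific module: since $1_{\bi^{(k)}}\le\ggi_{\la_k}$, one has $\funG_{\la_k}(C_{\la_k}1_{\bi^{(k)}})=\hat C_{\la_k}\ggi_{\la_k}\otimes_{C_{\la_k}}C_{\la_k}1_{\bi^{(k)}}\simeq\hat C_{\la_k}1_{\bi^{(k)}}$ by the elementary isomorphism $M\otimes_A Ae\simeq Me$, with no appeal to IA. Also, your Step~1 is less of an obstacle than you suggest: once one has Lemma~\ref{LTensImagIsImag}(i), the divided-power case follows immediately from $\Ind_{\la\de}^{d\de}(\hat C_\la 1_{\bi^{(1)}\cdots\bi^{(n)}})\simeq\hat C_d 1_{\la\de}\otimes_{\hat C_\la}\hat C_\la 1_{\bi^{(1)}\cdots\bi^{(n)}}\simeq\hat C_d 1_{\bi^{(1)}\cdots\bi^{(n)}}$, again just using $M\otimes_A Ae\simeq Me$.
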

\begin{proof}
This follows easily from (\ref{EGGIInd}), Lemma~\ref{LTensImagIsImag}(ii), and the fact that the concatenation $\bi^{(1)}\cdots\bi^{(n)}$ is a Gelfand-Graev word.
\end{proof}

\subsection{Gelfand-Graev Mackey}
Here we establish a version of the Mackey Theorem for Gelfand-Graev  induction and restriction. 
First, we need the following lemma. Recall the embedding $\rho_d$ from (\ref{EIotaSd}).

\begin{Lemma} \label{LGGOneSide}
Let $\la,\mu\in \Comp(d)$ and $x\in {}^\la\D_d^\mu$. Then in $\hat C_d$ we have $$1_{\la\de}\psi_{\rho_d(x)}\ggi_\mu=\ggi_{\la}\psi_{\rho_d(x)}\ggi_\mu.$$ 
\end{Lemma}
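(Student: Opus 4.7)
The plan is to expand $\ggi_\mu$ as a sum of divided-power Gelfand-Graev idempotents and reduce the identity to a pointwise statement. Writing $\mu=(\mu_1,\dots,\mu_n)$, we have $\ggi_\mu=\sum \ggi^{\alpha,\bj}$ where $(\alpha,\bj)$ ranges over pairs with $\alpha=\alpha^{(1)}\cdots\alpha^{(n)}$ and $\bj=\bj^{(1)}\cdots\bj^{(n)}$ concatenations and $(\alpha^{(k)},\bj^{(k)})\in\EC^\col(\mu_k)$. Each such $\alpha$ refines $\mu$, so $\Si_\alpha\subseteq\Si_\mu$ and hence $x\in{}^\la\D_d^\mu\subseteq{}^\la\D_d^\alpha$. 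It therefore suffices to prove, for every such summand, that $1_{\la\de}\psi_{\rho_d(x)}\ggi^{\alpha,\bj}=\ggi_\la\psi_{\rho_d(x)}\ggi^{\alpha,\bj}$.

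For the left-hand side I would use $\ggi^{\alpha,\bj}=\hat\ggi^{\alpha,\bj}\ggi^{\alpha,\bj}$ (with $\hat\ggi^{\alpha,\bj}=1_{\hat\ggw^{\alpha,\bj}}$) together with relation (\ref{R2.75}) to get
$$1_{\la\de}\psi_{\rho_d(x)}\ggi^{\alpha,\bj}=1_{\la\de}\,1_{\rho_d(x)\cdot\hat\ggw^{\alpha,\bj}}\,\psi_{\rho_d(x)}\ggi^{\alpha,\bj}.$$
This vanishes unless the word $\rho_d(x)\cdot\hat\ggw^{\alpha,\bj}$ splits according to $\la$, in which case Lemma~\ref{LTrickyGGW} applied with $\nu:=\alpha$ and $w:=x$ forces $\alpha$ and $\la$ to have the same length $n$ and produces $\sigma\in\Si_n$ with $\la_r=\alpha_{\sigma^{-1}(r)}$ such that $\rho_d(x)\cdot\hat\ggw^{\alpha,\bj}=\hat\ggw^{\la,\tilde\bj}$, where $\tilde\bj:=j_{\sigma^{-1}(1)}\cdots j_{\sigma^{-1}(n)}$. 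Thus $1_{\la\de}\psi_{\rho_d(x)}\ggi^{\alpha,\bj}=\hat\ggi^{\la,\tilde\bj}\psi_{\rho_d(x)}\ggi^{\alpha,\bj}$, and the pair $(\la,\tilde\bj)$ corresponds to the trivial-refinement summand (with $\gamma^{(k)}=(\la_k)$ and color $j_{\sigma^{-1}(k)}$ in each tensor factor) of $\ggi_\la=\ggi_{\la_1}\otimes\cdots\otimes\ggi_{\la_n}$. For the right-hand side, I would expand $\ggi_\la=\sum_{(\gamma,\bi)}\ggi^{\gamma,\bi}$ and use $\ggi^{\gamma,\bi}=\ggi^{\gamma,\bi}\hat\ggi^{\gamma,\bi}$ combined with the distinctness of the Gelfand-Graev words $\hat\ggw^{\gamma,\bi}$ to conclude $\ggi^{\gamma,\bi}\hat\ggi^{\la,\tilde\bj}=\delta_{(\gamma,\bi),(\la,\tilde\bj)}\ggi^{\gamma,\bi}$. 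Only the summand $(\la,\tilde\bj)$ survives, yielding $\ggi_\la\psi_{\rho_d(x)}\ggi^{\alpha,\bj}=\ggi^{\la,\tilde\bj}\psi_{\rho_d(x)}\ggi^{\alpha,\bj}$.

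It remains to establish $\hat\ggi^{\la,\tilde\bj}\psi_{\rho_d(x)}\ggi^{\alpha,\bj}=\ggi^{\la,\tilde\bj}\psi_{\rho_d(x)}\ggi^{\alpha,\bj}$, which is the main obstacle: the divided-power refinement of $\hat\ggi^{\la,\tilde\bj}$ must be absorbed for free. The structural reason is that $\rho_d(x)$ permutes the $n$ Gelfand-Graev constituent blocks of $\hat\ggw^{\alpha,\bj}$ (the $r$-th block having size $\alpha_r p$) according to $\sigma^{-1}$, while every basic divided-power factor $1_{i^{(m)}}$ comprising $\ggi^{\alpha,\bj}$ is supported entirely inside one such block. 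I would choose a reduced decomposition of $x$ as a product of simple transpositions, lift it coherently to a reduced decomposition of $\rho_d(x)$, and apply Lemma~\ref{LDivIdCommutation} iteratively — one adjacent block-swap at a time — to transport each constituent divided-power factor of $\ggi^{\alpha,\bj}$ through $\psi_{\rho_d(x)}$ to its matching position inside $\ggi^{\la,\tilde\bj}$ on the left. The remaining work is essentially combinatorial bookkeeping: verifying that successive block swaps line up the divided-power factors correctly and that the $2m$-fold repetitions in the middle of each indecomposable $\hat\ggw^{m,j}$ are handled consistently.
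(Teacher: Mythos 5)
Your argument follows the paper's own proof essentially step for step: expand $\ggi_\mu$ into its Gelfand--Graev summands $\ggi^{\nu,\bj}$ (each $\nu$ refining $\mu$), use $x\in{}^\la\D_d^\nu$ to apply Lemma~\ref{LTrickyGGW}, thereby identifying $\rho_d(x)\cdot\hat\ggw^{\nu,\bj}$ as a Gelfand--Graev word $\hat\ggw^{\si\nu,\si\bj}$, and absorb the divided-power structure on the left via repeated use of Lemma~\ref{LDivIdCommutation}. The only cosmetic differences are that the paper dispatches the degenerate case via Lemma~\ref{LGGSplit} (if $\si\nu$ does not refine $\la$ then $1_{\la\de}\hat\ggi^{\si\nu,\si\bj}=0$, so both sides vanish since $\ggi_\la=\ggi_\la 1_{\la\de}$) where you fold the same observation into a single phrase, and the paper links the two sides by one chain of equalities rather than simplifying each independently --- the ``combinatorial bookkeeping'' you flag at the end is exactly what the paper leaves implicit in its ``repeatedly using Lemma~\ref{LDivIdCommutation}''.
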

\begin{proof}
Since $\ggi_\mu$ is a sum of Gelfand-Graev idempotents of the form $\ggi^{\nu,\bj}$ for a composition $\nu$ being a refinement of a composition $\mu$, it suffices to show that for any refinement $\nu\in\Comp(n,d)$ of $\mu$ we have $1_{\la\de}\psi_{\rho_d(x)}\ggi^{\nu,\bj}=\ggi_{\la}\psi_{\rho_d(x)}\ggi^{\nu,\bj}.$ 
Since $\nu$ is a refinement of $\mu$, the assumption $x\in {}^\la\D_d^\mu$ implies $x\in {}^\la\D_d^\nu$. Now Lemma~\ref{LTrickyGGW} implies that for some $\si\in\Si_n$, we have $x=w_{\nu,\si}$ and  
$$1_{\la\de}\psi_{\rho_d(x)}\ggi^{\nu,\bj}=
1_{\la\de}\hat\ggi^{\si\nu,\si\bj}
\psi_{\rho_d(x)}\ggi^{\nu,\bj}.
$$
By Lemma~\ref{LGGSplit}, if $\si\nu$ is not a refinement of $\la$ then $1_{\la\de}\hat\ggi^{\si\nu,\si\bj}=0$ and so both $1_{\la\de}\psi_{\rho_d(x)}\ggi^{\nu,\bj}$ and $\ggi_{\la}\psi_{\rho_d(x)}\ggi^{\nu,\bj}=\ggi_{\la}1_{\la\de}\psi_{\rho_d(x)}\ggi^{\nu,\bj}$ are zero. 

So we may assume that $\si\nu$ is a refinement of $\la$. Then $1_{\la\de}\hat\ggi^{\si\nu,\si\bj}
=
\hat\ggi^{\si\nu,\si\bj}$, so 
$$1_{\la\de}\psi_{\rho_d(x)}\ggi^{\nu,\bj}=
\hat\ggi^{\si\nu,\si\bj}
\psi_{\rho_d(x)}\ggi^{\nu,\bj}.
$$
Now, repeatedly using Lemma~\ref{LDivIdCommutation}, we deduce that 
$$\hat\ggi^{\si\nu,\si\bj}
\psi_{\rho_d(x)}\ggi^{\nu,\bj}
=\ggi^{\si\nu,\si\bj}
\psi_{\rho_d(x)}\ggi^{\nu,\bj}
=\ggi_\la\ggi^{\si\nu,\si\bj}
\psi_{\rho_d(x)}\ggi^{\nu,\bj}=\ggi_\la\psi_{\rho_d(x)}\ggi^{\nu,\bj},
$$ 
as required.
\end{proof}

Recall the notation of \S\ref{SSSG}. In particular, given $\la,\mu\in\Comp(d)$ and $x\in {}^\la\D_d^\mu$ we have the compositions $\la\cap x\mu$ and $x^{-1}\la\cap\mu$ 
in $\Comp(d)$. Moreover, the corresponding parabolic algebras $C_{\la\cap x\mu,\de}$ and $C_{x^{-1}\la\cap\mu,\de}$ are naturally isomorphic  via an isomorphism 
$C_{\la\cap x\mu,\de}\iso C_{x^{-1}\la\cap\mu,\de}$ 
which permutes the components. Composing with this isomorphism we get a functor 
$$
\mod{C_{x^{-1}\la\cap\mu}}\to \mod{C_{\la\cap x\mu}},\ V\mapsto {}^xV.
$$
Recalling from (\ref{ETwist}) the similar functor $\mod{\hat C_{x^{-1}\la\cap\mu}}\to \mod{\hat C_{\la\cap x\mu}},\ W\mapsto {}^xW$, note that for $V\in\mod{C_{\mu}}$, we have 
\begin{equation}\label{ETwistRC}
\begin{split}
\funF_{\la\cap x\mu}{}^x\big(\Res^{\mu\de}_{(x^{-1}\la\cap\mu)\de} \funG_\mu V\big)
&\simeq 
{}^{x}(\funF_{x^{-1}\la\cap\mu}
\Res^{\mu\de}_{(x^{-1}\la\cap\mu)\de}\funG_\mu V)
\\
&\simeq
{}^{x}(\GGR^\mu_{x^{-1}\la\cap\mu} V),
\end{split}
\end{equation}
where we have used (\ref{EGGIIndGen}) for the second isomorphism. 

For $w\in \Si_d$, we set 
\begin{equation}\label{EBW}
b_w:=\ggi_d\psi_{\rho_d(w)}\ggi_d\in C_d.
\end{equation}

Let $V\in\mod{C_\mu}$. Note that 
for any $x\in {}^\la\D_d^\mu$, we have 
$\ggi_{\la}b_x \ggi_\mu\otimes V
\subseteq \GGR_{\la}^{d} \GGI_{\mu}^{d} V.$
Let $\leq$ be a total order refining the Bruhat order on $\Si_d$. 
For $x\in {}^\la\D_d^\mu$, we consider the submodules
\begin{align*}
\Phi_{\leq x}(V)&:=\sum_{y\in {}^\la\D_d^\mu\,\text{with}\, y\leq x}C_{\la}\ggi_{\la}b_y \ggi_{\mu}\otimes V 
\subseteq 
\GGR_{\la}^{d} \GGI_{\mu}^{d} V,
\\
\Phi_{< x}(V)&:=\sum_{y\in {}^\la\D_d^\mu\,\text{with}\, y< x}C_{\la}\ggi_{\la}b_y \ggi_{\mu}\otimes V 
\subseteq 
\GGR_{\la}^{d} \GGI_{\mu}^{d} V.
\end{align*}

\begin{Theorem} \label{TGGMackey} {\bf (Gelfand-Graev Mackey Theorem)} 
Let $\la,\mu\in\Comp(d)$, and $V\in\mod{C_{\mu}}$. 
Then we have the filtration 
 $
 (\Phi_{\leq x}(V))_{x\in {}^\la\D_d^\mu}
 $
 of  
$\GGR_{\la}^{d} \GGI_{\mu}^{d} V$.
Moreover, for the sub-quotients of the filtration we have 
$$ 
\Phi_{\leq x}(V)/\Phi_{< x}(V)\simeq\GGI_{\la\cap x\mu}^{\la}{}^{x}(\GGR^\mu_{x^{-1}\la\cap\mu} V).
$$
\end{Theorem}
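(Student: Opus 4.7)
The plan is to deduce Theorem~\ref{TGGMackey} from the Imaginary Mackey Theorem~\ref{TImMackey} by applying the exact Gelfand-Graev truncation functor $\funF_\la$ termwise to the filtration furnished there. The key technical bridge is Lemma~\ref{LGGOneSide}, which reconciles the concrete generators $\ggi_\la\psi_{\rho_d(y)}\ggi_\mu$ appearing in $\Phi_{\leq x}(V)$ with the generators $1_{\la\de}\psi_{\rho_d(y)}1_{\mu\de}$ appearing in $\bar F_{\leq x}$; the identification of the sub-quotients will further use (\ref{EGGIIndGen}), (\ref{ETwistRC}), and Lemma~\ref{LGLaFLa} (under Inductive Assumption~\ref{IA}).

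First, I will invoke (\ref{EGGIInd}) together with Lemma~\ref{LfunGInd} to identify
$$\GGR^d_\la\GGI^d_\mu V \simeq \funF_\la\,\Res^{d\de}_{\la\de}\,\Ind^{d\de}_{\mu\de}\,\funG_\mu V.$$
Setting $W := \funG_\mu V$, Theorem~\ref{TImMackey} provides the filtration $(\bar F_{\leq x}(W))_{x\in{}^\la\D_d^\mu}$ of $\Res^{d\de}_{\la\de}\Ind^{d\de}_{\mu\de} W$ with sub-quotients $\Ind^{\la\de}_{(\la\cap x\mu)\de}{}^x(\Res^{\mu\de}_{(x^{-1}\la\cap\mu)\de} W)$. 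Since $\funF_\la$ is the exact idempotent truncation $\ggi_\la\cdot$ from $\mod{\hat C_\la}$ to $\mod{C_\la}$, applying it termwise produces a filtration of $\GGR^d_\la\GGI^d_\mu V$. Unpacking definitions, the $x$-th term is
$$\funF_\la\bar F_{\leq x}(W)=\sum_{y\in{}^\la\D_d^\mu,\,y\leq x}\ggi_\la\hat C_\la\cdot(1_{\la\de}\psi_{\rho_d(y)}\ggi_\mu)\otimes_{C_\mu}V,$$
and by Lemma~\ref{LGGOneSide} we have $1_{\la\de}\psi_{\rho_d(y)}\ggi_\mu=\ggi_\la\psi_{\rho_d(y)}\ggi_\mu$, so each summand collapses to $\ggi_\la\hat C_\la\ggi_\la\cdot\psi_{\rho_d(y)}\ggi_\mu\otimes V=C_\la\ggi_\la b_y\ggi_\mu\otimes V$, invoking $b_y=\ggi_d\psi_{\rho_d(y)}\ggi_d$ and the identities $\ggi_\la\ggi_d=\ggi_\la$, $\ggi_d\ggi_\mu=\ggi_\mu$. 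This matches $\Phi_{\leq x}(V)$ exactly.

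For the sub-quotients, applying $\funF_\la$ to the sub-quotient $\Ind_{(\la\cap x\mu)\de}^{\la\de}{}^x(\Res^{\mu\de}_{(x^{-1}\la\cap\mu)\de} W)$ of the Imaginary Mackey filtration, I will use (\ref{EGGIIndGen}), which rewrites $\GGI_{\la\cap x\mu}^{\la}=\funF_\la\,\Ind_{(\la\cap x\mu)\de}^{\la\de}\,\funG_{\la\cap x\mu}$, together with the compatibility (\ref{ETwistRC}) in the form ${}^x(\GGR^\mu_{x^{-1}\la\cap\mu} V)\simeq \funF_{\la\cap x\mu}{}^x(\Res^{\mu\de}_{(x^{-1}\la\cap\mu)\de} W)$. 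Assembling these, the desired isomorphism reduces to the cancellation $\funG_{\la\cap x\mu}\funF_{\la\cap x\mu}\simeq\id$, which is provided by Lemma~\ref{LGLaFLa} under Inductive Assumption~\ref{IA} whenever $\la\cap x\mu$ has more than one non-zero part. The only excluded case, when $\la\cap x\mu$ has a single non-zero part, forces $\la=\mu=(d)$ and $x=1$, reducing both sides to $V$ trivially. The main obstacle is the Step~identifying $\funF_\la\bar F_{\leq x}(W)$ with $\Phi_{\leq x}(V)$: the two filtered modules coincide only after passing through $\funF_\la$, and it is precisely Lemma~\ref{LGGOneSide} that makes the two families of generators compatible.
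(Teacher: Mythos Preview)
Your proposal is correct and follows essentially the same approach as the paper's proof: identify $\GGR^d_\la\GGI^d_\mu V$ with $\ggi_\la\,\Res^{d\de}_{\la\de}\Ind^{d\de}_{\mu\de}\,\funG_\mu V$, apply the exact truncation $\ggi_\la\cdot$ to the Imaginary Mackey filtration, use Lemma~\ref{LGGOneSide} to match the generators with $\Phi_{\leq x}(V)$, and then identify the sub-quotients via (\ref{EGGIIndGen}), (\ref{ETwistRC}) and Lemma~\ref{LGLaFLa}. The only cosmetic difference is that you obtain the initial identification by combining (\ref{EGGIInd}) with Lemma~\ref{LfunGInd}, whereas the paper unpacks the definitions of $\GGR^d_\la$ and $\GGI^d_\mu$ directly; and your treatment of the degenerate case (tracing $\la\cap x\mu$ having one non-zero part back to $\la=\mu=(d)$, $x=1$) is equivalent to the paper's observation that one may assume $\la$ has more than one non-zero part.
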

\begin{proof}
We identify
\begin{align*}
\GGR_{\la}^{d} \GGI_{\mu}^{d} V
&=\ggi_\la C_d\ggi_\mu\otimes_{C_\mu} V
=
\ggi_\la \hat C_d1_{\mu\de}\ggi_\mu V
\\
&=
\ggi_\la 1_{\la\de}\hat C_d 1_{\mu\de} \otimes_{\hat C_{\mu}} \hat C_{\mu}\ggi_\mu\otimes_{C_\mu} V
\\
&=
\ggi_\la \Res^{d\de}_{\la\de}\Ind^{d\de}_{\mu\de} \, \funG_\mu V.
\end{align*}
By Theorem~\ref{TImMackey}, this has a filtration 
$
(\ggi_\la\bar F_{\leq x}(\funG_\mu V))_{x\in {}^\la\D_d^\mu}
$ 
where 
\begin{align*}
\ggi_\la\bar F_{\leq x}(\funG_\mu V)
&=
\sum_{y\in {}^\la\D_d^\mu\,\text{with}\, y\leq x}
\ggi_\la
\hat C_{\la}1_{\la\de}b_y1_{\mu\de}\otimes_{\hat C_{\mu}}
(\funG_\mu V)
\\
&=
\sum_{y\in {}^\la\D_d^\mu\,\text{with}\, y\leq x}
\ggi_\la
\hat C_{\la}1_{\la\de}b_y 1_{\mu\de}\otimes_{\hat C_{\mu}}
\hat C_{\mu}\ggi_\mu\otimes_{C_\mu} V
\\
&=
\sum_{y\in {}^\la\D_d^\mu\,\text{with}\, y\leq x}
\ggi_\la
\hat C_{\la}1_{\la\de}b_y 1_{\mu\de}\ggi_\mu\otimes_{C_\mu} V
\\
&=
\sum_{y\in {}^\la\D_d^\mu\,\text{with}\, y\leq x}
\ggi_\la
\hat C_{\la}1_{\la\de}b_y \ggi_\mu\otimes_{C_\mu} V
 \\
&=
\sum_{y\in {}^\la\D_d^\mu\,\text{with}\, y\leq x}
\ggi_\la
\hat C_{\la}\ggi_{\la}b_y \ggi_\mu\otimes_{C_\mu} V
\\
&=
\sum_{y\in {}^\la\D_d^\mu\,\text{with}\, y\leq x}
C_{\la}\ggi_{\la}b_y \ggi_\mu\otimes_{C_\mu} V
\\
&=\Phi_{\leq x}(V),
\end{align*}
where we have used Lemma~\ref{LGGOneSide} for the fifth equality. 
Now, using Theorem~\ref{TImMackey}, we also have 
\begin{align*}
\Phi_{\leq x}(V)/\Phi_{< x}(V)
&=
\ggi_\la\bar F_{\leq x}(\funG_\mu V)\,/\,\ggi_\la\bar F_{< x}(\funG_\mu V)
\\
&\simeq\ggi_\la\big(\bar F_{\leq x}(\funG_\mu V)/\bar F_{< x}(\funG_\mu V)\big)
\\
&\simeq
\ggi_\la\,\Ind_{(\la\cap  x\mu)\de}^{\la\de}\,{}^{x}(\Res^{\mu\de}_{(x^{-1}\la\cap\mu)\de}\, \funG_\mu V)
\end{align*}
We may assume that $\la$ has more than one non-zero part since otherwise there is nothing to prove. It follows that  the composition $\la\cap  x\mu$ also has more than one non-zero part. 
So 
\begin{align*}
&\ggi_\la\,\Ind_{(\la\cap  x\mu)\de}^{\la\de}\,{}^{x}(\Res^{\mu\de}_{(x^{-1}\la\cap\mu)\de} \funG_\mu V)
\\
\simeq\,&
\funF_\la\,\Ind_{(\la\cap  x\mu)\de}^{\la\de}\,\funG_{\la\cap  x\mu}\funF_{\la\cap  x\mu}{}^{x}(\Res^{\mu\de}_{(x^{-1}\la\cap\mu)\de} \funG_\mu V)
\\
\simeq\,& 
\GGI_{\la\cap  x\mu}^\la\,{}^{x}(\GGR^\mu_{x^{-1}\la\cap\mu} V),
\end{align*}
where we have used Lemma~\ref{LGLaFLa} for the first isomorphism and (\ref{ETwistRC}),\,(\ref{EGGIIndGen}) for the second isomorphism. 
\end{proof}

\begin{Corollary} \label{CLa1^d}
Let $\la\in\Comp(d)$. Then $f_\la C_df_{\om_d}=C_\la f_{\om_d}C_df_{\om_d}$.
\end{Corollary}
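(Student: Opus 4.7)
The inclusion $\supseteq$ is easy: since $\om_d=(1^d)$ is a refinement of $\la$ in the sense that every Gelfand-Graev summand $\ggi^{\om_d,\bk}$ of $\ggi_{\om_d}$ appears as a summand of $\ggi_\la$, we have $\ggi_\la\ggi_{\om_d}=\ggi_{\om_d}$, so $C_\la\ggi_{\om_d}\subseteq\ggi_\la\hat C_d\ggi_{\om_d}=\ggi_\la C_d\ggi_{\om_d}$, whence $C_\la\ggi_{\om_d}C_d\ggi_{\om_d}\subseteq\ggi_\la C_d\ggi_{\om_d}$. The substance of the corollary lies in the reverse inclusion.

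I plan to apply the Gelfand-Graev Mackey Theorem~\ref{TGGMackey} to $\mu=\om_d$ and $V=C_{\om_d}$ (the left regular $C_{\om_d}$-module). Since $\Si_{\om_d}=\{1\}$, every $x\in{}^\la\D_d^{\om_d}={}^\la\D_d$ satisfies $\la\cap x\om_d=\om_d=x^{-1}\la\cap\om_d$, so the theorem furnishes a filtration of the $C_\la$-module $\ggi_\la C_d\ggi_{\om_d}\simeq\GGR^d_\la\GGI^d_{\om_d}C_{\om_d}$ whose sub-quotients $\GGI^\la_{\om_d}\,{}^xC_{\om_d}\simeq C_\la\ggi_{\om_d}$ are cyclic, each generated by the image of $\ggi_\la b_x\ggi_{\om_d}=\ggi_\la\psi_{\rho_d(x)}\ggi_{\om_d}$. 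Consequently $\ggi_\la C_d\ggi_{\om_d}$ is generated as a left $C_\la$-module by $\{\ggi_\la\psi_{\rho_d(x)}\ggi_{\om_d}\mid x\in{}^\la\D_d\}$, and it suffices to check that each such generator already lies in $\ggi_{\om_d}C_d\ggi_{\om_d}$.

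This reduces to the commutation identity $\psi_{\rho_d(x)}\ggi_{\om_d}=\ggi_{\om_d}\psi_{\rho_d(x)}$ in $\hat C_d$, valid for every $x\in\Si_d$. Granted this, we compute $\ggi_\la\psi_{\rho_d(x)}\ggi_{\om_d}=\ggi_\la\ggi_{\om_d}\psi_{\rho_d(x)}=\ggi_{\om_d}\psi_{\rho_d(x)}=\ggi_{\om_d}\psi_{\rho_d(x)}\ggi_{\om_d}\in\ggi_{\om_d}\hat C_d\ggi_{\om_d}=\ggi_{\om_d}C_d\ggi_{\om_d}$, where the last equality uses $\ggi_d\ggi_{\om_d}=\ggi_{\om_d}$. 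After summing over $\bj\in J^d$, the commutation identity reduces to its block-wise form $\psi_{\rho_d(x)}\ggi^{\om_d,\bj}=\ggi^{\om_d,x\cdot\bj}\psi_{\rho_d(x)}$, where $x\cdot\bj$ is the $\Si_d$ place-permutation. Writing $\ggi^{\om_d,\bj}=1_{\ggw^{j_1}}\otimes\cdots\otimes 1_{\ggw^{j_d}}$ under the parabolic embedding $R_\de^{\otimes d}\hookrightarrow R_{d\de}$, and noting that $\rho_d(x)$ is by construction the lift of $x$ permuting $d$ consecutive blocks of length $p=\height(\de)$, this identity is obtained by induction on $\ttl(x)$; the case $x=s_k$ is handled by repeatedly applying Lemma~\ref{LDivIdCommutation} to slide the divided-power constituents of $\ggw^{j_k}$ across the block $\ggw^{j_{k+1}}$. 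The main obstacle is the careful bookkeeping of the reduced decomposition fixed for $\rho_d(x)$ throughout \S\ref{SSImagMackey} together with the verification that no polynomial $y$-corrections accumulate in this sliding process; this is guaranteed by the absorbing identity (\ref{EStrIdemp}).
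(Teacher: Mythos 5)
Your proof follows the paper's route: apply the Gelfand--Graev Mackey Theorem~\ref{TGGMackey} with $\mu=\om_d$ and $V=C_{\om_d}$ to see that $\ggi_\la C_d\ggi_{\om_d}=\sum_{x\in{}^\la\D_d}C_\la\,\ggi_\la b_x\ggi_{\om_d}$, and then show each generator $\ggi_\la b_x\ggi_{\om_d}$ lies in $\ggi_{\om_d}C_d\ggi_{\om_d}$. The divergence is in the last step. The paper simply invokes Lemma~\ref{LGGOneSide} with $\la=\mu=\om_d$: since for $x\in\Si_d$ one has $\psi_{\rho_d(x)}\ggi_{\om_d}=1_{\om_d\de}\psi_{\rho_d(x)}\ggi_{\om_d}$, that lemma gives $\psi_{\rho_d(x)}\ggi_{\om_d}=\ggi_{\om_d}\psi_{\rho_d(x)}\ggi_{\om_d}$, and then $\ggi_\la\psi_{\rho_d(x)}\ggi_{\om_d}=\ggi_\la\ggi_{\om_d}\psi_{\rho_d(x)}\ggi_{\om_d}=\ggi_{\om_d}\psi_{\rho_d(x)}\ggi_{\om_d}\in\ggi_{\om_d}C_d\ggi_{\om_d}$, using $\ggi_\la\ggi_{\om_d}=\ggi_{\om_d}$. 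You instead set out to establish the two-sided commutation $\psi_{\rho_d(x)}\ggi_{\om_d}=\ggi_{\om_d}\psi_{\rho_d(x)}$ from Lemma~\ref{LDivIdCommutation}.

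Two observations. First, you only need the one-sided statement that the left idempotent $1_{\om_d\de}$ can be upgraded to $\ggi_{\om_d}$, which is precisely what Lemma~\ref{LGGOneSide} (itself proved via Lemmas~\ref{LTrickyGGW},~\ref{LGGSplit},~\ref{LDivIdCommutation}) gives; reproving it is unnecessary and leaves your argument with a genuine gap, since you explicitly defer the ``bookkeeping'' verification. Second, the two-sided identity you state is stronger than what \ref{LDivIdCommutation} directly delivers: both parts of that lemma are one-sided (the right idempotent is retained on both sides, and only a left idempotent is added), so the ``repeated sliding'' strategy as written yields $\psi_{\rho_d(x)}\ggi^{\om_d,\bj}=\ggi^{\om_d,x\cdot\bj}\psi_{\rho_d(x)}\ggi^{\om_d,\bj}$, not the clean commutation. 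That weaker form suffices --- and is exactly the content of Lemma~\ref{LGGOneSide} --- so the fix is simply to cite that lemma rather than attempt the stronger claim.
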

\begin{proof}
We have $C_\la f_{\om_d}C_df_{\om_d}=f_\la C_\la f_{\om_d}C_df_{\om_d}\subseteq f_\la C_df_{\om_d}$. On the other hand, by the Gelfand-Graev Mackey, we have 
$$
f_\la C_df_{\om_d}\simeq \GGR^d_\la\GGI_{\om_d}^d C_{\om_d}=
\sum_{x\in {}^\la\D_d}C_{\la}\ggi_{\la}b_x \ggi_{\om_d}\otimes C_{\om_d}
$$
So 
$$
f_\la C_df_{\om_d}
=\sum_{x\in {}^\la\D_d}C_{\la}\ggi_{\la}b_x\ggi_{\om_d}
=\sum_{x\in {}^\la\D_d}C_{\la}b_x \ggi_{\om_d}
$$
and it remains to note, using Lemma~\ref{LGGOneSide} with $\la=\mu=\om_d$, that $b_x \ggi_{\om_d}\in f_{\om_d}C_df_{\om_d}$ for any $x\in \Si_d$. 
\end{proof}

\subsection{  The isomorphism $\ggi_{\om_d}C_d\ggi_{\om_d}\cong H_d(A_\ell)$}
\label{SSIso}
Following \cite[\S4.2c]{KlLi}, we have the following elements of $C_1$:
\begin{align}
\label{EUFormula}
\dot u&:=\ggi^0y_p\ggi^0,
\\
\label{EZFormula}
\dot z&:=\textstyle\sum_{j\in J}\ggi^jy_{p-j-1}y_{p-j}\ggi^j, 
\\
\label{EAII-1Formula}
\dot a^{[j,j-1]}&:=\ggi^j\psi_{p-1}\cdots\psi_{p-2j}\ggi^{j-1}\qquad (j=1,\dots,\ell-1),
\\
\label{EAI-1IFormula}
\dot a^{[j-1,j]}&:=(-1)^j\ggi^{j-1}\psi_{p-2j-1}\cdots\psi_{p-1}\ggi^{j}\qquad (j=1,\dots,\ell-1).
\end{align}

We have the parabolic subalgebra 
$
C_1^{\otimes d}=C_{\om_d} \subseteq \ggi_{\om_d}C_d\ggi_{\om_d}.
$ 
Given an element $x\in C_1$ and $1\leq r\leq d$ and recalling the notation (\ref{EInsertion}), we have the element
$$
x_r:=\ggi_1^{\otimes (r-1)}\otimes x\otimes \ggi_1^{\otimes(d-r)}\in C_1^{\otimes d}\subseteq \ggi_{\om_d}C_d\ggi_{\om_d}.
$$
So we now also have the elements 
\begin{equation}\label{EElementsCd}
\dot u_r,\,\dot z_r,\,
\dot a^{[j,j-1]}_r,\, \dot a^{[j-1,j]}_r\in \ggi_{\om_d}C_d\ggi_{\om_d}\qquad(1\leq r\leq d).
\end{equation}

Further, following \cite[(4.3.28)]{KlLi}, we denote $\psi(r):=\psi_r\psi_{r+1}\cdots\psi_{r+p-1}$ for $r=1,\dots,p$, and 
set  
\begin{equation}\label{EUpSigma}
\upsigma:=\ggi_{\om_2}\psi(p)\psi(p-1)\cdots\psi(1)\ggi_{\om_2}\in \ggi_{\om_2}C_2\ggi_{\om_2}
\end{equation}
Now, following \cite[(4.4.11)]{KlLi}, we define $\dot s\in \ggi_{\om_2}C_2\ggi_{\om_2}$ via: 
\begin{equation}\label{ETau}
\dot s\ggi^{ij}=(\upsigma+(-1)^{i}\de_{i,j})\ggi^{ij}\qquad(i,j\in J).
\end{equation}
For $1\leq r<d$, we define 
\begin{eqnarray*}\label{EIotaTauR}
\dot s_{r}&:=&\ggi_1^{\otimes (r-1)}\otimes \dot s\otimes \ggi_1^{\otimes (d-r-1)}\in C_1^{\otimes (r-1)}\otimes \ggi_{\om_2}C_2\ggi_{\om_2}\otimes C_1^{\otimes (d-r-1)}\subseteq  \ggi_{\om_d}C_d\ggi_{\om_d},
\\
\upsigma_{r}&:=&\ggi_1^{\otimes (r-1)}\otimes \upsigma\otimes \ggi_1^{\otimes (d-r-1)}\in C_1^{\otimes (r-1)}\otimes \ggi_{\om_2}C_2\ggi_{\om_2}\otimes C_1^{\otimes (d-r-1)}\subseteq  \ggi_{\om_d}C_d\ggi_{\om_d}.
\end{eqnarray*}
Recalling (\ref{EIotaSd}) and (\ref{EBW}), we observe that
\begin{equation}\label{ESiRho}
\upsigma_r=\psi_{\rho_d(s_r)}\ggi_{\om_d}=b_{s_r}\ggi_{\om_d}.
\end{equation}

For $w\in \Si_d$ with an arbitrarily chosen reduced decomposition $w=s_{t_1}\cdots s_{t_l}$, we define
\begin{equation}\label{ETauSigma}
\dot w=\dot s_{t_1}\cdots\dot s_{t_l}\quad\text{and}\quad
\upsigma_w=\upsigma_{t_1}\cdots\upsigma_{t_l}. 
\end{equation}
We point out that in general $\upsigma_w$ depends on the choice of a reduced decomposition of $w$, while $\dot w$ does not, as  Theorem~\ref{TMorIso} below shows. We note that the element $\dot w$ was denoted $\uptau_w$ in \cite{KlLi}. 
In view of (\ref{ESiRho}) and (\ref{EBW}), we may assume that
\begin{equation}\label{ESiRhoW}
\upsigma_w=\psi_{\rho_d(w)}\ggi_{\om_d}=b_w\ggi_{\om_d}\qquad(w\in \Si_d).
\end{equation}

Now \cite[Lemma 4.4.29]{KlLi} gives:

\begin{Lemma} \label{LSiTau} 
Let $w\in \Si_d$. Then $\dot w=\upsigma_w+\sum_{u<w}c_u\upsigma_u$ for some coefficients $c_u\in\k$. 
\end{Lemma}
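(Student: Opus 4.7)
The plan is to argue by induction on $\ttl(w)$. The base case $\ttl(w)=0$ is trivial: $w=1$ and $\dot 1 = \upsigma_1 = \ggi_{\om_d}$. For $\ttl(w)\geq 1$, fix a reduced decomposition $w=s_{t_1}\cdots s_{t_l}$, set $w' := s_{t_1}\cdots s_{t_{l-1}}$ so that $w = w's_{t_l}$ and $\ttl(w') = \ttl(w)-1$, and write $\dot w = \dot{w'}\,\dot s_{t_l}$. By (\ref{ETau}) we have $\dot s_{t_l} = \upsigma_{t_l} + \eps_{t_l}$, where $\eps_{t_l} := \sum_{j \in J}(-1)^j \ggi^{jj}_{t_l, t_l+1} \in C_{\om_d}$ is a diagonal idempotent correction. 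Substituting the inductive hypothesis $\dot{w'} = \upsigma_{w'} + \sum_{v<w'} c_v \upsigma_v$ and expanding yields
\begin{equation*}
\dot w \;=\; \upsigma_{w'}\upsigma_{t_l} \;+\; \upsigma_{w'}\eps_{t_l} \;+\; \sum_{v<w'} c_v \upsigma_v \upsigma_{t_l} \;+\; \sum_{v<w'} c_v \upsigma_v \eps_{t_l}.
\end{equation*}

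The first summand equals $\upsigma_w$ by (\ref{ESiRhoW}) with respect to the reduced decomposition of $w$ obtained by concatenating the chosen one for $w'$ with $s_{t_l}$. For each product $\upsigma_v \upsigma_{t_l}$ with $v<w'$ there are two cases. If $\ttl(vs_{t_l}) = \ttl(v)+1$, then $\upsigma_v \upsigma_{t_l} = \upsigma_{vs_{t_l}}$; since $v<w'$ implies $vs_{t_l} < w's_{t_l} = w$ (and $\ttl(vs_{t_l})\leq \ttl(w)-1$ forces strictness), this already has the required form. If $\ttl(vs_{t_l}) = \ttl(v)-1$, one rewrites the product $\psi_{\rho_d(v)}\psi_{\rho_d(s_{t_l})}\ggi_{\om_d}$ via Lemma~\ref{LBKW} as a $\k$-linear combination of $\psi_u \ggi_{\om_d}$ for $u$ strictly Bruhat-below $\rho_d(v)\rho_d(s_{t_l})$ in $\Si_{dp}$, and each resulting term matches some $\upsigma_{u'}$ with $u' < w$ in $\Si_d$. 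Thus the third summand contributes only terms of the required form.

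The main obstacle is handling $\upsigma_{w'}\eps_{t_l}$ and each $\upsigma_v \eps_{t_l}$: since $\eps_{t_l}$ is a non-scalar diagonal element, it is not a priori clear that these lie in the $\k$-span of the $\upsigma_u$'s at all. The strategy is to transport the computation to the affine Brauer tree algebra $H_d(A_\ell)$ via the isomorphism $\ggi_{\om_d}C_d\ggi_{\om_d} \cong H_d(A_\ell)$ of (\ref{EIntro090924}), under which $\eps_{t_l}$ corresponds to the diagonal idempotent $\sum_{j\in J}(-1)^j e^{[j]}_{t_l}e^{[j]}_{t_l+1} \in A_\ell^{\otimes d}$. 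Using the presentation (\ref{ERAff1})--(\ref{ERAff2}) and the PBW-type basis of Theorem~\ref{TAffBasis}, one commutes $\eps_{t_l}$ leftward through $\upsigma_{w'} = \psi_{\rho_d(w')}\ggi_{\om_d}$: each meeting of $\ggi^{jj}_{t_l, t_l+1}$ with a $\psi$-factor of $\psi_{\rho_d(w')}$ either commutes freely or, via the $i_r = i_{r+1}$ case of (\ref{ERAff2}), contributes scalar corrections replacing a $\psi$-crossing by a strict subword of the reduced expression for $\rho_d(w')$, yielding a term of the form $\upsigma_{w''}$ with $\ttl(w'') < \ttl(w')$. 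Iterating this straightening expresses $\upsigma_{w'}\eps_{t_l}$ as a $\k$-linear combination of $\upsigma_u$'s with $u < w$, and similarly for each $\upsigma_v \eps_{t_l}$. The genuinely technical part of the argument is the bookkeeping needed to verify that the scalars produced by the straightening indeed lie in $\k$ rather than in the larger $A_\ell^{\otimes d}$; this will follow from the fact that, after straightening, any residual $A_\ell$-valued decoration must act as a scalar on the specific cuspidal words supporting the $\upsigma_u$'s in question. Combining all contributions completes the induction.
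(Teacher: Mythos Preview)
The paper does not supply a proof; the lemma is quoted from \cite[Lemma~4.4.29]{KlLi}. Evaluating your argument on its own terms, there is a genuine gap precisely at the step you flag as ``genuinely technical'', the claim that after straightening the coefficients land in~$\k$. They cannot, already in the smallest case. Take $d=2$, $\ell\geq 2$, and $w=s_1$: by~(\ref{ETau}) one has $\dot s_1 = \upsigma_{s_1} + \sum_{i\in J}(-1)^i\ggi^{ii}$, while the unique $u<s_1$ is the identity, for which $\upsigma_u=\ggi_{\om_2}=\sum_{i,k\in J}\ggi^{ik}$. The idempotents $\ggi^{ik}$ are linearly independent in $C_2$ (under Theorem~\ref{TMorIso} they correspond to the basis elements $e^{ik}$ of $H_2(A_\ell)$, cf.\ Theorem~\ref{TAffBasis}), so there is no $c\in\k$ with $\sum_i(-1)^i\ggi^{ii}=c\,\ggi_{\om_2}$ once $|J|>1$. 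Thus your assertion that ``any residual $A_\ell$-valued decoration must act as a scalar'' is exactly what fails. What \emph{does} hold---and what suffices for both applications in this paper (Lemmas~\ref{LResMdOne} and~\ref{LResLaTensSpace}, where one immediately multiplies by $\ggi^{j^d}$)---is the idempotent-wise statement: for each fixed $\bj\in J^d$ there exist $c_u^{(\bj)}\in\k$ with $\dot w\,\ggi^\bj=\upsigma_w\,\ggi^\bj+\sum_{u<w}c_u^{(\bj)}\upsigma_u\,\ggi^\bj$; equivalently, the $c_u$ lie in $\bigoplus_\bj\k\,\ggi^\bj$ rather than in~$\k$.

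Even for that corrected target, two further steps in your outline do not work as written. First, invoking Lemma~\ref{LBKW}(ii) for the length-drop case $\ttl(vs_{t_l})=\ttl(v)-1$ produces terms $\psi_u f(y)\,1_\bi$ with genuine polynomial factors $f(y)$, which are not in the $\k$-span of the $\upsigma_u$'s. Second, when you ``transport to $H_d(A_\ell)$'' and commute $\eps_{t_l}$ through ``$\psi$-factors of $\psi_{\rho_d(w')}$'' using (\ref{ERAff1})--(\ref{ERAff2}), you are mixing the two sides of $F_d$: those relations live in $H_d(A_\ell)$, whereas $\psi_{\rho_d(w')}$ lives in~$R_{d\de}$, and under Theorem~\ref{TMorIso} it is $\dot w'$, not $\upsigma_{w'}$, that corresponds to $w'\in H_d(A_\ell)$. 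A correct route for the $\bj=j^d$ case is: apply $F_d^{-1}$, so that $\dot w\mapsto w$ and $\upsigma_r\,\ggi^{j^d}\mapsto(s_r-(-1)^j)\,e^{j^d}$; inside the copy of $\k\Si_d$ spanned by $\{w\,e^{j^d}:w\in\Si_d\}$, expanding $\prod_k(s_{t_k}-(-1)^j)$ yields $w+\sum_{u<w}d_u\,u$ because any proper subword of a reduced expression multiplies out in the group to an element of length $<\ttl(w)$, hence $<w$ in Bruhat order; inverting this unitriangular relation gives the desired formula.
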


Recalling now the notation (\ref{EZEBI}), we have

\begin{Theorem} \label{TMorIso} There exists an isomorphism of graded superalgebras 
\begin{align*}
F_d:H_d(A_\ell)&\iso  \ggi_{\om_d}C_d\ggi_{\om_d},\\ 
e^\bj&\mapsto \ggi^\bj&(\bj\in J^d),\\ 
u_r&\mapsto \dot u_r,\ z_r\mapsto \dot z_r,\ 
a^{[j,j-1]}_r\mapsto \dot a^{[j,j-1]}_r,\ 
a^{[j-1,j]}_r\mapsto \dot a^{[j-1,j]}_r
&(1\leq r\leq d),\\ 
w&\mapsto\dot w&(w\in\Si_d).
\end{align*}
\end{Theorem}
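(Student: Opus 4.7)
The strategy is to first define $F_d$ on the specified generators and check that the images satisfy the defining relations (\ref{EZig1})--(\ref{EZig4}), (\ref{ETwistedRelations}), (\ref{ERAff1}), (\ref{ERAff2}) of $H_d(A_\ell)$, then establish surjectivity, and finally deduce injectivity by a basis comparison via Theorem~\ref{TAffBasis}. To check relations, I would split the verification into three blocks. The \emph{local} relations (\ref{EZig1})--(\ref{EZig4}) and (\ref{ETwistedRelations}), which involve only $a^{[i,j]}_r, u_r, z_r$ and $e^{[i]}_r$ for a single position $r$, reduce immediately to the case $d=1$: i.e.\ they amount to showing that the assignment $e^{[j]}\mapsto \ggi^j$, $u\mapsto \dot u$, $z\mapsto \dot z$, $a^{[i,j]}\mapsto \dot a^{[i,j]}$ defines a graded superalgebra map $A_\ell[z]\to C_1$. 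Bidegrees are checked from the definitions (\ref{EUFormula})--(\ref{EAI-1IFormula}) together with the degree formulas for $\psi_r,y_s$, and the required polynomial identities reduce to explicit short KLR computations in $R_\de$ using the relations (\ref{R3})--(\ref{R7}) together with the definition of the cuspidal quotient and of the divided-power idempotents. Once this is done, the tensor product relations at distinct positions (commutation of $\dot a^{[i,j]}_r,\dot u_r,\dot z_r$ across positions, with the appropriate sign from the superstructure) follow automatically from the identification $C_1^{\otimes d}=C_{\om_d}\subseteq\ggi_{\om_d}C_d\ggi_{\om_d}$.

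The remaining two blocks concern the interaction between $\dot w$ and the $C_1^{\otimes d}$-generators. For the wreath relation (\ref{ERAff1}), I would use Lemma~\ref{LSiTau} together with (\ref{ESiRhoW}) to reduce from $\dot w$ to $\upsigma_w=\psi_{\rho_d(w)}\ggi_{\om_d}$, and then invoke the place-permutation behavior of $\psi_r$ across the embedding $\rho_d:\Si_d\to\Si_{dp}$, which is captured by relations (\ref{R2.75}), (\ref{R4}), (\ref{R65}) in $R_{d\de}$. The main obstacle is the cross relation (\ref{ERAff2}), which requires computing
\[
(\dot s_r \dot z_t - \dot z_{s_r(t)} \dot s_r)\ggi^\bi
\]
explicitly in $\ggi_{\om_d}C_d\ggi_{\om_d}$ and showing it matches the three-case right-hand side of (\ref{ERAff2}). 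I would again use Lemma~\ref{LSiTau} to replace $\dot s_r$ by $\upsigma_r$ (modulo terms that can be controlled inductively), reduce to the case $d=2$, and then carry out a direct computation in $\ggi_{\om_2}C_2\ggi_{\om_2}$ using the expression (\ref{EUpSigma}) of $\upsigma$ as an explicit product of $\psi_t$'s together with relation (\ref{R5}) to commute $\psi_t$'s past the $y_s$'s appearing in $\dot z$. This is the most delicate part, because cancellations among the numerous error terms must produce precisely the Brauer-tree combinatorics $c^{[i]}_r + c^{[i]}_{r+1} + \delta_{i,0}u_r u_{r+1}$ when $i_r=i_{r+1}$ and $a^{[i_{r+1},i_r]}_r a^{[i_r,i_{r+1}]}_{r+1}$ when $|i_r - i_{r+1}|=1$.

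Once $F_d$ is a well-defined graded superalgebra homomorphism, I would prove surjectivity by observing that the image contains $C_{\om_d}=C_1^{\otimes d}$ and all elements $b_w\ggi_{\om_d}=\upsigma_w$ (by Lemma~\ref{LSiTau} and (\ref{ESiRhoW})), hence, via Corollary~\ref{CLa1^d} applied with $\la=\om_d$, generates $\ggi_{\om_d}C_d\ggi_{\om_d}$. Note that Corollary~\ref{CLa1^d} gives $\ggi_{\om_d}C_d\ggi_{\om_d}=C_{\om_d}\ggi_{\om_d}C_d\ggi_{\om_d}$, and the latter is spanned by products of the form $c\cdot b_w\ggi_{\om_d}\cdot c'$ with $c,c'\in C_{\om_d}$.

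For injectivity, I would use the basis of $H_d(A_\ell)$ from Theorem~\ref{TAffBasis}, namely $\{w\, z_1^{n_1}\cdots z_d^{n_d}\, b^{(1)}_1\cdots b^{(d)}_d\}$, and show its image is $\k$-linearly independent in $\ggi_{\om_d}C_d\ggi_{\om_d}$. The cleanest route is to filter $\ggi_{\om_d}C_d\ggi_{\om_d}$ by the Bruhat order on the underlying permutation and exploit Lemma~\ref{LSiTau}, which says $\dot w \equiv \upsigma_w$ modulo strictly lower terms. Then the leading terms of the images are $\upsigma_w\cdot (\text{polynomial in }\dot z_t)\cdot(\text{monomial in }\dot b^{(r)}_r)$, and their linear independence in $\ggi_{\om_d}C_d\ggi_{\om_d}$ can be read off from the standard basis of $R_{d\de}$ in Theorem~\ref{TBasis} together with the definitions of the Gelfand--Graev idempotents and of the elements $\dot z_r$, $\dot u_r$, $\dot a^{[i,j]}_r$. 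Combining surjectivity with this linear independence yields that $F_d$ is bijective, completing the proof.
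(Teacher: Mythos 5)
The paper's own proof is a one-line citation: Theorem~\ref{TMorIso} is \cite[Theorem 4.5.8]{KlLi} over any field, followed by a base-change remark for $\k=\O$. Your proposal to re-derive the isomorphism directly by verifying the defining relations of $H_d(A_\ell)$ in $\ggi_{\om_d}C_d\ggi_{\om_d}$ and then arguing bijectivity is therefore a genuinely different (more self-contained) route, and the overall plan — local relations at $d=1$, tensor relations across positions, wreath relation, cross relation, then surjectivity plus injectivity via Theorem~\ref{TAffBasis} — is the right shape. However, two of your steps have real problems.

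First, your treatment of the wreath relation (\ref{ERAff1}) is flawed. You propose to use Lemma~\ref{LSiTau} to "reduce from $\dot w$ to $\upsigma_w$" and then appeal to the KLR place-permutation relations. This cannot work: the elements $\upsigma_r$ do \emph{not} satisfy the Coxeter relations (in general $\upsigma_r^2\neq\ggi_{\om_d}$), and the entire reason for the correction term $(-1)^{i}\de_{i,j}$ in the definition (\ref{ETau}) of $\dot s$ is to make $\dot s_r^2=\ggi_{\om_d}$ and the wreath relation hold. Lemma~\ref{LSiTau} is only a unitriangular expansion of $\dot w$ in terms of the $\upsigma_u$, and there is no mechanism for transporting (\ref{ERAff1}) from the $\upsigma$-picture to the $\dot s$-picture or vice versa. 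You need to verify (\ref{ERAff1}) directly for the $\dot s_r$, and the correction term genuinely participates: a concrete test case is $a=a^{[1,0]}$, $b=a^{[1,2]}$ in positions $r,r+1$ on $\ggi^{\dots 02\dots}$, where the left side acquires a $(-1)^1\de_{1,1}=-1$ correction after the colors become $(1,1)$ but the right side has no such correction — the $\upsigma$-level identity must fail by exactly that amount for (\ref{ERAff1}) to hold.

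Second, you acknowledge the cross relation (\ref{ERAff2}) as "the most delicate part" and then assert that "cancellations among the numerous error terms must produce precisely the Brauer-tree combinatorics." That is a description of the desired conclusion, not a proof, and it is exactly where the substance of the theorem lies. A minor third issue: Corollary~\ref{CLa1^d} with $\la=\om_d$ is a tautology as stated; what you actually need is the span statement $\ggi_{\om_d}C_d\ggi_{\om_d}=\sum_{w\in\Si_d}C_{\om_d}\upsigma_w$ from its proof, which you do invoke, but then surjectivity requires knowing that $C_{\om_d}=C_1^{\otimes d}$ is already hit by $A_\ell[z]^{\otimes d}$ — i.e.\ the $d=1$ case of the theorem — so the base case must be treated separately rather than folded into the same argument.
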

\begin{proof}
In {\rm \cite[Theorems 4.5.8]{KlLi}} this is proved over any field. The result for $\k=\O$ follows.
\end{proof}

From now on, we identify the graded $\k$-superalgebras $ \ggi_{\om_d}C_d\ggi_{\om_d}$ and $H_d(A_\ell)$ via the isomorphism of Theorem~\ref{TMorIso}. 

\begin{Corollary} \label{CParCuspBasis}
As a left/right graded $C_{\om_d}$-supermodule, $\ggi_{\om_d}C_d\ggi_{\om_d}$ is free with basis $\{\dot w\mid w\in \Si_d\}$. 
\end{Corollary}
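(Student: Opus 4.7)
The plan is to transfer the claim across the isomorphism $F_d:H_d(A_\ell)\iso \ggi_{\om_d}C_d\ggi_{\om_d}$ of Theorem~\ref{TMorIso} and invoke the basis statement in Theorem~\ref{TAffBasis} for the affine Brauer tree superalgebra $H_d(A_\ell)$. So the main content is to match, under $F_d$, the subalgebra $A_\ell[z]^{\otimes d}\subseteq H_d(A_\ell)$ with the parabolic subalgebra $C_{\om_d}=C_1^{\otimes d}\subseteq \ggi_{\om_d}C_d\ggi_{\om_d}$, and the permutation elements $w\in\Si_d\subseteq H_d(A_\ell)$ with the elements $\dot w$.

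First I would establish the subalgebra identification. For $d=1$ the map $F_1:A_\ell[z]=H_1(A_\ell)\iso C_1$ is exactly the isomorphism defined on the generators $e^{[j]},u,z,a^{[j,j-1]},a^{[j-1,j]}$ by the rules of Theorem~\ref{TMorIso}. For general $d$, the subalgebra $A_\ell[z]^{\otimes d}$ is generated by the elements $x_r=\iota^{(d)}_r(x)$ for $x\in A_\ell[z]$ and $1\le r\le d$, and by Theorem~\ref{TMorIso} we have $F_d(x_r)=\dot x_r$; these generate the tensor-factor subalgebra $C_1^{\otimes d}=C_{\om_d}$ as in (\ref{CParIdentify}). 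Next, for any $w\in\Si_d\subseteq H_d(A_\ell)$ and any reduced decomposition $w=s_{t_1}\cdots s_{t_l}$, the prescription in Theorem~\ref{TMorIso} gives $F_d(w)=F_d(s_{t_1})\cdots F_d(s_{t_l})=\dot s_{t_1}\cdots \dot s_{t_l}=\dot w$, using the definition (\ref{ETauSigma}).

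Finally I would apply Theorem~\ref{TAffBasis}. The first basis listed there says that every element of $H_d(A_\ell)$ is a unique $\k$-linear combination of $w\,z_1^{n_1}\cdots z_d^{n_d}b^{(1)}_1\cdots b^{(d)}_d$, which is to say that $H_d(A_\ell)=\bigoplus_{w\in\Si_d}w\cdot A_\ell[z]^{\otimes d}$ is a free right $A_\ell[z]^{\otimes d}$-supermodule on the basis $\{w\mid w\in\Si_d\}$. Transporting via $F_d$ yields that $\ggi_{\om_d}C_d\ggi_{\om_d}$ is a free right $C_{\om_d}$-supermodule on $\{\dot w\mid w\in\Si_d\}$. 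Analogously, the second basis of Theorem~\ref{TAffBasis} gives $H_d(A_\ell)=\bigoplus_{w\in\Si_d}A_\ell[z]^{\otimes d}\cdot w$, which transports to the free left $C_{\om_d}$-supermodule structure on $\ggi_{\om_d}C_d\ggi_{\om_d}$ with the same basis. I do not foresee a genuine obstacle; the only point worth checking carefully is the compatibility of the tensor-factor embedding $C_{\om_d}\hookrightarrow \ggi_{\om_d}C_d\ggi_{\om_d}$ from (\ref{CParIdentify}) with the embedding $A_\ell[z]^{\otimes d}\hookrightarrow H_d(A_\ell)$ coming from (\ref{EIOTAS}), but this is essentially forced by the definition of $F_d$ on the generators.
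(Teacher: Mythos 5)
Your proof is correct and follows essentially the same route as the paper: both transport the basis statement of Theorem~\ref{TAffBasis} across the isomorphism $F_d$ of Theorem~\ref{TMorIso}. The paper's proof is terser but implicitly relies on the same matching of $A_\ell[z]^{\otimes d}$ with $C_{\om_d}$ and of $w$ with $\dot w$, which you spell out.
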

\begin{proof}
In view of Theorem~\ref{TAffBasis}, as a left/right 
graded $A_\ell[z]^{\otimes d}$-supermodule, $H_d(A_\ell)$ is free with basis given by the elements $w\in \Si_d$. Now the claim follows from Theorem~\ref{TMorIso}. 
\end{proof}

\subsection{  The graded supermodules $\LL_j$ and $\hat \LL_j$}
\label{SSL}
For $j\in J$, we recall from \S\ref{SSAffZig} the graded supermodule  $\LL_j=\k\cdot v_j$ over $H_1(A_\ell)$. But we have identified $H_1(A_\ell)=\ggi_1C_1\ggi_1=C_1$. So we will also consider $L_j$ as graded $C_1$-supermodule. Note that 
\begin{equation}\label{E100824_3}
\ggi^{i} v_j=\de_{i,j}\hat v_j\qquad(i,j\in J).
\end{equation}

Define the graded $\hat C_1$-supermodules 
\begin{equation}\label{EHatL}
\hat \LL_j:=\funG_1(\LL_j)=\hat C_1\ggi_1\otimes_{C_1} \LL_j\qquad(j\in J). 
\end{equation}
Also set
\begin{equation}\label{EHatv}
\hat v_j:=\ggi_1\otimes v_j\in \hat \LL_j.
\end{equation}
Note that 
\begin{equation}\label{E100824_2}
\ggi^{i} \hat v_j=\de_{i,j}\hat v_j\qquad(i,j\in J).
\end{equation}
Recalling (\ref{EDegV}), we have 
\begin{equation}\label{EBidegvjhatvj}
\bideg(v_{j})=\bideg(\hat v_{j})=(1+2j-2\ell,\,j\pmod{2}).
\end{equation}

Using the Morita equivalence of Corollary~\ref{CMord=1}, if $\k=\F$, then $\hat \LL_j$ is irreducible. Moreover, suppose the ground field $\F$ is an $\O$-module. 
Writing $\hat \LL_{j,\k}$ when we want to specify the ground ring $\k$, and applying (\ref{EFunExtendScal}), we get
\begin{equation}\label{ELJExtScal}
\F\otimes_\O\hat \LL_{j,\O}\simeq \hat \LL_{j,\F}.
\end{equation}

\begin{Remark} \label{RHatLFree} 
It is easy to see that as a $\k$-module, $\hat \LL_j$ is free of finite rank. 
The claim follows from (\ref{ELJExtScal}) if we can show that $\dim \hat \LL_{j,\F}$ does not depend on the characteristic of $\F$. This can be seen using for example the argument of \cite[Remark 5.5]{Kcusp}. 
\end{Remark}


\chapter{Regrading}
Let us identify the idempotent truncation $\ggi_{\om_d}C_d\ggi_{\om_d}$ with the affine zigzag algebra $H_d(A_\ell)$ as graded superalgebras via the isomorphism $F_d$ of Theorem~\ref{THCIsoZ}.
Recall from Proposition~\ref{PIsoH} that $H_d(A_\ell)$ has an important regrading $H_d(\Zig_\ell)$, so we can also regrade $\ggi_{\om_d}C_d\ggi_{\om_d}$ accordingly. In this chapter we will upgrade this regrading to a regrading $\zC_d$ of the whole algebra $C_d$. This regrading is important in particular because $\zC_d$ is non-negatively graded, see Theorem~\ref{TNonNeg} below. We will also describe a basis 
$\{\lgathz_{\la,\bi}\}$ the degree zero component $(\ggis^{\la,\bi}\zC_{\la}\ggis^{i_1^{\la_1}\cdots i_n^{\la_n}})^0$, see Lemma~\ref{LGBasis}. The special elements $\lgathz_{\la,\bi}\in\zC_d$ and their counterparts $\lgath_{\la,\bi}\in C_d$ introduced in \S\ref{SSUpsilon} below, will play an important role later on. 

\section{Regrading the truncated cuspidal algebra and modules over it.} 
\subsection{  Regrading $\zC_d$ of $C_d$.}
\label{SSRegradingCd}
Let $(\mu,\bj)\in\Comp^\col(m,d)$. 
Recalling (\ref{EIJ}), we set
\begin{equation}\label{EShiftsCuspidal}
t_{\mu,\bj}:=dp+\sum_{s=1}^m\mu_s^2(2j_s-4\ell)
\quad\text{and}\quad
\eps_{\mu,\bj}:=\sum_{s=1}^m\mu_sj_s\pmod{2}.
\end{equation}
As usual, in some special cases we use a simplified notation:
if $\mu=(d)$ and $\bj=j$, we write $t_{d,j}:=t_{(d),j}$ and $\eps_{d,j}:=\eps_{(d),j}$; 
if $j_s=j$ for all $s=1,\dots,m$, we write $t_{\mu,\bj}:=t_{\mu,j}$ and $\eps_{\mu,\bj}:=\eps_{\mu,j}$; 
if $\mu=\om_d$, we write $t_{\mu,\bj}:=t_{\bj}$ and $\eps_{\mu,\bj}:=\eps_{\bj}$ which agrees with (\ref{ENormBj}).

Recalling the general setting of (\ref{EShiftParameters}), the parameters (\ref{EShiftsCuspidal}) will be taken as grading supershift parameters which correspond to the orthogonal decomposition (\ref{EGad}) in $C_d$. This yields the new graded superalgebra 
\begin{equation}\label{EReGradingC}
\zC_d=\bigoplus_{(\la,\bi),(\mu,\bj)\in\EC^\col(d)}
\funQ^{t_{\la,\bi}-t_{\mu,\bj}}\Uppi^{\eps_{\la,\bi}-\eps_{\mu,\bj}} \ggi^{\mu,\bj}C_d\ggi^{\la,\bi}.
\end{equation}
We denote by $\zc\in\zC_d$ the element corresponding to an  element $c\in C_d$; for example, we have the idempotents $\ggis^{\la,\bi}$ and $\ggis_\nu$ in $\zC_d$ corresponding to the idempotents $\ggi^{\la,\bi}$ and $\ggi_\nu$ in $C_d$, respectively. 

For $j\in J$, recalling (\ref{EQInt}), we define
$$
m_{d,j}:=[d]^!_\ell[2d]^!_0\bigg(\prod_{i=j+1}^{\ell-1}[2d]^!_i\bigg)\bigg(
\prod_{i=1}^j([d]^!_i)^2\bigg)\in\Z^\pi[q,q^{-1}],
$$
and then for $(\la,\bi)\in\Comp^\col(n,d)$, we define 
$$
m_{\la,\bi}:=m_{\la_1,i_1}\cdots m_{\la_n,i_n}\in\Z^\pi[q,q^{-1}].
$$

\begin{Lemma} \label{LMBar}
For any $(\la,\bi)\in\Comp^\col(n,d)$, we have $\overline{m_{\la,\bi}}=\pi^d m_{\la,\bi}$. 
\end{Lemma}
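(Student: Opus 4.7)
The plan is to reduce to the single-part case $m_{d,j}$ and exploit that the bar-involution is multiplicative and only acts non-trivially on quantum integer factorials at the vertex $i=0$.

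First I would analyze $m_{d,j}$ factor by factor. The factors $[d]_\ell^!$, $[2d]_i^!$ for $i\in\{j+1,\dots,\ell-1\}$, and $([d]_i^!)^2$ for $i\in\{1,\dots,j\}$ all involve only the quantum integers $[n]_i$ with $i\neq 0$, hence are bar-invariant by (\ref{EBarGood}). The only factor not automatically bar-invariant is $[2d]_0^!$, and by (\ref{E2n!Bar}) we have $\overline{[2d]_0^!}=\pi^d\,[2d]_0^!$. Since the bar-involution is a ring homomorphism on $\Z^\pi[q,q^{-1}]$, multiplicativity yields
\begin{equation*}
\overline{m_{d,j}}=\pi^d\, m_{d,j}.
\end{equation*}

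Next I would pass from the single-part formula to the general $(\la,\bi)\in\Comp^\col(n,d)$ by expanding
\begin{equation*}
\overline{m_{\la,\bi}}=\overline{m_{\la_1,i_1}}\cdots\overline{m_{\la_n,i_n}}=\pi^{\la_1}m_{\la_1,i_1}\cdots \pi^{\la_n}m_{\la_n,i_n}=\pi^{\la_1+\cdots+\la_n}\,m_{\la,\bi}.
\end{equation*}
Since $(\la,\bi)\in\Comp^\col(n,d)$ forces $|\la|=\la_1+\cdots+\la_n=d$, and since $\pi^2=1$ in $\Z^\pi$ makes only the parity of the exponent matter, this equals $\pi^d\,m_{\la,\bi}$, which is the claim.

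This proof is essentially a routine unpacking; there is no genuine obstacle. The only point that requires any care is checking that every factor except $[2d]_0^!$ is a product of $[k]_i$'s with $i>0$, so that (\ref{EBarGood}) applies and no extra $\pi$'s appear. In particular the factor $([d]_i^!)^2$ for $i=0$ does \emph{not} occur in $m_{d,j}$ (the range is $i=1,\dots,j$), which is what keeps the exponent of $\pi$ equal to $d$ rather than something larger.
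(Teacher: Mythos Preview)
Your proof is correct and follows essentially the same approach as the paper: reduce to the single-part case using multiplicativity of the bar-involution, then observe via (\ref{EBarGood}) and (\ref{E2n!Bar}) that the only factor of $m_{d,j}$ not fixed by the bar is $[2d]_0^!$, which picks up exactly $\pi^d$. The paper's version is simply more compressed, folding the single-part analysis directly into the product.
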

\begin{proof}
Using (\ref{EBarGood}) and (\ref{E2n!Bar}), we get 
$$
\overline{m_{\la,\bi}}=\overline{m_{\la_1,i_1}}\cdots \overline{m_{\la_n,i_n}}
=(\pi^{\la_1}m_{\la_1,i_1})\cdots (\pi^{\la_n}m_{\la_n,i_n})=\pi^d m_{\la,\bi},
$$
as required.
\end{proof}

Recall the notation (\ref{EHatIPar}). 

\begin{Lemma} \label{LTComp} 
Let $(\la,\bi)\in\Comp^\col(n,d)$. 
Then 
$\ggw^{\la,\bi}!=m_{\la,\bi}$ and $\langle \ggw^{\la,\bi}\rangle=-t_{\la,\bi}$. 
\end{Lemma}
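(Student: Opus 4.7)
The plan is to reduce the statement to the single-part case $(\la,\bi)=((m),(j))$ and then verify the two equalities by direct computation from the explicit form of $\ggw^{m,j}$.

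First I would observe that both sides of each claimed equality are multiplicative under concatenation. Indeed, $\ggw^{\la,\bi}=\ggw^{\la_1,i_1}\cdots \ggw^{\la_n,i_n}$ by the definition (\ref{EGHatG}), while both $\bi!$ and $\langle\bi\rangle$ defined in (\ref{EHatIPar}) are additive/multiplicative under concatenation of divided power words. On the other hand, $m_{\la,\bi}=m_{\la_1,i_1}\cdots m_{\la_n,i_n}$ by definition, and
\[
t_{\la,\bi}=dp+\sum_{s=1}^{n}\la_s^2(2i_s-4\ell)=\sum_{s=1}^{n}\bigl(\la_s p+\la_s^2(2i_s-4\ell)\bigr)=\sum_{s=1}^{n}t_{\la_s,i_s},
\]
where we used $d=\la_1+\cdots+\la_n$ and $p=2\ell+1$. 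So it suffices to prove $\ggw^{m,j}!=m_{m,j}$ and $\langle\ggw^{m,j}\rangle=-t_{m,j}$ for each indivisible divided power Gelfand--Graev word $\ggw^{m,j}$.

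Next I would read off the divided power exponents from the explicit formula
\[
\ggw^{m,j}=\ell^{(m)}(\ell-1)^{(2m)}\cdots(j+1)^{(2m)}j^{(m)}\cdots 1^{(m)}\,0^{(2m)}\,1^{(m)}\cdots j^{(m)}.
\]
The factor $\bi!$ is then the product of the corresponding $[m_k]^!_{i_k}$, giving exactly
\[
\ggw^{m,j}!=[m]^!_\ell\Bigl(\prod_{i=j+1}^{\ell-1}[2m]^!_i\Bigr)\Bigl(\prod_{i=1}^{j}[m]^!_i\Bigr)\,[2m]^!_0\,\Bigl(\prod_{i=1}^{j}[m]^!_i\Bigr)=m_{m,j},
\]
matching the definition of $m_{m,j}$.

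For the degree statement, I would use the values $(\al_i|\al_i)$ read off from the Gram matrix in \S\ref{SSLTN}: $(\al_0|\al_0)=2$, $(\al_i|\al_i)=4$ for $1\leq i\leq \ell-1$, $(\al_\ell|\al_\ell)=8$. Summing the contributions $(\al_{i_k}|\al_{i_k})m_k(m_k-1)/4$ over all letters of $\ggw^{m,j}$ gives
\[
\langle\ggw^{m,j}\rangle=2m(m-1)+2(\ell{-}1{-}j)m(2m-1)+2j\cdot m(m-1)+m(2m-1),
\]
where the four summands come respectively from $\ell^{(m)}$, the $(\ell{-}1{-}j)$ letters $i^{(2m)}$ with $j<i<\ell$, the $2j$ letters $i^{(m)}$ with $1\le i\le j$ (counted twice), and $0^{(2m)}$. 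A short expansion collects this to $(4\ell-2j)m^2-(2\ell+1)m$. On the other hand, $t_{m,j}=mp+m^2(2j-4\ell)=(2j-4\ell)m^2+(2\ell+1)m$, and the two expressions differ by a sign, i.e.\ $\langle\ggw^{m,j}\rangle=-t_{m,j}$.

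There is no real obstacle here: the only thing to watch for is bookkeeping when two blocks $1^{(m)}\cdots j^{(m)}$ appear (one on each side of $0^{(2m)}$), which is why certain factors come doubled in both $m_{m,j}$ and the $2j\cdot m(m-1)$ contribution above. Once the single-part case is verified, multiplicativity closes the proof.
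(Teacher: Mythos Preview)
Your proof is correct and follows essentially the same approach as the paper: reduce to the single-part case $\ggw^{m,j}$ by multiplicativity/additivity, then verify both identities by direct computation using the explicit form of $\ggw^{m,j}$ and the values $(\al_i|\al_i)$. The paper is terser (it simply asserts the reduction and the first equality), but your added justification of why $t_{\la,\bi}=\sum_s t_{\la_s,i_s}$ and your breakdown of the contributions are exactly the computations the paper performs.
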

\begin{proof}
We may assume that $n=1$, i.e. $\ggw^{\la,\bi}=\ggw^{d,j}$. That $\ggw^{d,j}!=m_{d,j}$ follows immediately from the definitions. Moreover, taking into account that $(\al_\ell|\al_\ell)=8$, $(\al_0|\al_0)=2$, and $(\al_i|\al_i)=4$ for all other $i$, we get 
\begin{align*}
\langle \ggw^{d,j}\rangle
&=8\frac{d(d-1)}{4}+\sum_{i=j+1}^{\ell-1}4\frac{2d(2d-1)}{4}+2\sum_{i=1}^j4\frac{d(d-1)}{4}+2\frac{2d(2d-1)}{4}
\\
&=-dp-d^2(2j-4\ell),
\end{align*}
which is $-t_{d,j}$ as required. 
\end{proof}

Recall the notation (\ref{EFancyDirectSum}).

\begin{Lemma} \label{L140923}
Let $(\la,\bi)\in\Comp^\col(d)$, and $V$ (resp. $W$) be a left (resp. right) graded $\hat C_d$-supermodule. 
Then 
 $$
 \hat \ggi^{\la,\bi} V\simeq  (\ggi^{\la,\bi}  V)^{\oplus q^{-t_{\la,\bi}}m_{\la,\bi} } 
  \quad \text{and} \quad W \hat \ggi^{\la,\bi}\simeq  (W  \ggi^{\la,\bi})^{\oplus q^{t_{\la,\bi}}\overline{m_{\la,\bi}} }.
 $$
\end{Lemma}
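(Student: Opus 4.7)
The plan is to deduce this statement directly from Lemma~\ref{LFact} combined with Lemma~\ref{LTComp}. The idempotents $\ggi^{\la,\bi} = 1_{\ggw^{\la,\bi}}$ and $\hat\ggi^{\la,\bi} = 1_{\hat\ggw^{\la,\bi}}$ in $\hat C_d$ are the images under the natural surjection $R_{d\de} \onto \hat C_d$ of the divided power and ordinary word idempotents in $R_{d\de}$ associated to the (divided power) Gelfand--Graev word $\ggw^{\la,\bi} \in I^{d\de}_\di$. Any graded $\hat C_d$-supermodule $V$ (resp.\ $W$) inflates to a graded $R_{d\de}$-supermodule, and the actions of $\ggi^{\la,\bi}$ and $\hat\ggi^{\la,\bi}$ on $V$ (resp.\ $W$) agree with those of the corresponding idempotents in $R_{d\de}$.

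Hence I would simply apply Lemma~\ref{LFact} with $\theta = d\de$ and the divided power word $\ggw^{\la,\bi}$, which gives
\[
\hat\ggi^{\la,\bi} V \simeq (\ggi^{\la,\bi} V)^{\oplus q^{\langle \ggw^{\la,\bi}\rangle}\,\ggw^{\la,\bi}!}
\quad\text{and}\quad
W\hat\ggi^{\la,\bi} \simeq (W\ggi^{\la,\bi})^{\oplus q^{-\langle \ggw^{\la,\bi}\rangle}\,\overline{\ggw^{\la,\bi}!}}.
\]
The final step is to substitute the identifications of Lemma~\ref{LTComp}, namely $\ggw^{\la,\bi}! = m_{\la,\bi}$ and $\langle \ggw^{\la,\bi}\rangle = -t_{\la,\bi}$, into both isomorphisms, yielding the two displayed formulas in the statement.

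There is no real obstacle here: the whole argument is an immediate translation of a general statement about divided power idempotents in quiver Hecke superalgebras into the Gelfand--Graev notation, once one has the explicit combinatorial identifications collected in Lemma~\ref{LTComp}.
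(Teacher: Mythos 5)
Your proposal matches the paper's proof exactly: the paper also applies Lemma~\ref{LFact} to the divided power Gelfand--Graev word $\ggw^{\la,\bi}$ (implicitly via inflation to $R_{d\de}$) and then substitutes the identifications $\ggw^{\la,\bi}! = m_{\la,\bi}$ and $\langle \ggw^{\la,\bi}\rangle = -t_{\la,\bi}$ from Lemma~\ref{LTComp}. Nothing to add.
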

\begin{proof}
Let 
 $\bk=\bl^{\la,\bi}$. Then by definition we have $\ggi^{\la,\bi}=1_{\bk}$ and $\hat\ggi^{\la,\bi}=1_{\hat\bk}$.  Now, by Lemmas~\ref{LFact} and~\ref{LTComp}, we have  
 $
 \hat \ggi^{\la,\bi} V=1_{\hat\bk} V\simeq  (1_\bk  V)^{\oplus q^{\langle \bk\rangle}\bk! }
= (\ggi^{\la,\bi}  V)^{\oplus q^{-t_{\la,\bi}}m_{\la,\bi} }.
 $
The second isomorphism is proved similarly. 
\end{proof}

\begin{Lemma} \label{LSymmetry}
Let $(\la,\bi),(\mu,\bj)\in\Comp^\col(d)$. We have isomorphisms of graded superspaces:
\begin{enumerate}
\item[{\rm (i)}] 
$
\hat\ggi^{\mu,\bj}\hat C_d\hat\ggi^{\la,\bi}\simeq(
\ggis^{\mu,\bj}\zC_d\ggis^{\la,\bi})^{\oplus \pi^{d+\eps_{\mu,\bj}-\eps_{\la,\bi}} m_{\la,\bi}m_{\mu,\bj}};
$ 
\item[{\rm (ii)}] $\ggis^{\mu,\bj}\zC_d\ggis^{\la,\bi}\simeq \ggis^{\la,\bi}\zC_d\ggis^{\mu,\bj}$.
\end{enumerate}
\end{Lemma}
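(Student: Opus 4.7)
The plan is to prove (i) by a two-step application of Lemma~\ref{L140923}, and then to obtain (ii) from (i) via the anti-automorphism $\tau$ of (\ref{ETauAntiAuto}) together with a cancellation argument.

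\emph{For (i),} I would first apply Lemma~\ref{L140923} to the left graded $\hat C_d$-supermodule $V:=\hat C_d\hat\ggi^{\la,\bi}$. Using Lemma~\ref{LTComp} to identify $\ggw^{\mu,\bj}!=m_{\mu,\bj}$ and $\langle\ggw^{\mu,\bj}\rangle=-t_{\mu,\bj}$, this yields
\[
\hat\ggi^{\mu,\bj}\hat C_d\hat\ggi^{\la,\bi}\simeq\bigl(\ggi^{\mu,\bj}\hat C_d\hat\ggi^{\la,\bi}\bigr)^{\oplus q^{-t_{\mu,\bj}}m_{\mu,\bj}}.
\]
Next I would apply the right-module version of Lemma~\ref{L140923} to the right graded $\hat C_d$-supermodule $W:=\ggi^{\mu,\bj}\hat C_d$, and replace $\overline{m_{\la,\bi}}$ by $\pi^d m_{\la,\bi}$ using Lemma~\ref{LMBar}. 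Combining the two shift polynomials via (\ref{EFG}) produces
\[
\hat\ggi^{\mu,\bj}\hat C_d\hat\ggi^{\la,\bi}\simeq\bigl(\ggi^{\mu,\bj}\hat C_d\ggi^{\la,\bi}\bigr)^{\oplus \pi^d q^{t_{\la,\bi}-t_{\mu,\bj}}m_{\la,\bi}m_{\mu,\bj}}.
\]
Since $\ggi^{\mu,\bj}\hat C_d\ggi^{\la,\bi}=\ggi^{\mu,\bj}C_d\ggi^{\la,\bi}$, the regrading definition (\ref{EReGradingC}) rewrites this as $\funQ^{t_{\mu,\bj}-t_{\la,\bi}}\Uppi^{\eps_{\mu,\bj}-\eps_{\la,\bi}}\ggis^{\mu,\bj}\zC_d\ggis^{\la,\bi}$, and after a final application of (\ref{EFG}) to absorb the $q$-shift, part (i) falls out with multiplicity $\pi^{d+\eps_{\mu,\bj}-\eps_{\la,\bi}}m_{\la,\bi}m_{\mu,\bj}$.

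\emph{For (ii),} I would use that the antiautomorphism $\tau$ of (\ref{ETauAntiAuto}) fixes every idempotent $1_\bi$, hence fixes $\hat\ggi^{\mu,\bj}$ and $\hat\ggi^{\la,\bi}$; since the defining ideal $(1_\bi\mid\bi\notin I^{d\de}_\cus)$ is generated by $\tau$-fixed elements, $\tau$ descends to $\hat C_d$ and induces a bidegree-$(0,\0)$ isomorphism of graded superspaces
\[
\hat\ggi^{\mu,\bj}\hat C_d\hat\ggi^{\la,\bi}\;\simeq\;\hat\ggi^{\la,\bi}\hat C_d\hat\ggi^{\mu,\bj}.
\]
Applying (i) to each side, and observing that $\pi^{d+\eps_{\mu,\bj}-\eps_{\la,\bi}}=\pi^{d+\eps_{\la,\bi}-\eps_{\mu,\bj}}$ because $\pi^2=1$, gives
\[
\bigl(\ggis^{\mu,\bj}\zC_d\ggis^{\la,\bi}\bigr)^{\oplus f}\;\simeq\;\bigl(\ggis^{\la,\bi}\zC_d\ggis^{\mu,\bj}\bigr)^{\oplus f}
\]
for the common multiplicity $f=\pi^{d+\eps_{\mu,\bj}-\eps_{\la,\bi}}m_{\la,\bi}m_{\mu,\bj}$; the conclusion (ii) then requires cancelling $f$.

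\emph{The main obstacle} is this final cancellation, since $\Z^\pi$ has zero divisors ($(\pi-1)(\pi+1)=0$) and so in general $V^{\oplus f}\simeq W^{\oplus f}$ does not force $V\simeq W$. The resolution is that our $f$ has the special shape $\pi^\epsilon p(q)$ with $\epsilon\in\{0,1\}$ and $p(q)=m_{\la,\bi}m_{\mu,\bj}$ a nonzero polynomial in $\Z[q,q^{-1}]$ (being a product of quantum factorials $[n]_{q^k}^!$). Since $\pi$ is a unit in $\Z^\pi$ it suffices to cancel $p(q)$; comparing graded dimensions of the even and odd components separately reduces this to cancellation in the integral domain $\Z((q))$, which is available because $p(q)\neq 0$. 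This gives the desired graded superspace isomorphism in (ii).
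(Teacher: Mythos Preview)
Your argument for (i) is correct and matches the paper's approach exactly: two applications of Lemma~\ref{L140923}, Lemma~\ref{LMBar} to rewrite $\overline{m_{\la,\bi}}$, and the regrading identity (\ref{EReGradingC}) together with (\ref{EFG}).

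For (ii), your overall strategy is also the paper's---use $\tau$ to flip the non-divided-power idempotents, then cancel the common multiplicity $f$---but your execution of the cancellation contains a factual error. You claim that $m_{\la,\bi}m_{\mu,\bj}\in\Z[q,q^{-1}]$ ``being a product of quantum factorials $[n]_{q^k}^!$''. This is false: $m_{d,j}$ contains the factor $[2d]_0^!$, and $[n]_0=\frac{(q\pi)^n-q^{-n}}{q\pi-q^{-1}}$ genuinely involves $\pi$ (e.g.\ $[2]_0=q\pi+q^{-1}$). Hence $f$ is \emph{not} of the form $\pi^\epsilon p(q)$ with $p(q)\in\Z[q,q^{-1}]$, and your ``compare even and odd parts separately'' argument does not apply directly. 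A correct cancellation (over a field) writes $f=f_\0(q)+\pi f_\1(q)$ and uses that the specializations $f|_{\pi=1}=f_\0+f_\1$ and $f|_{\pi=-1}=f_\0-f_\1$ are both nonzero Laurent polynomials in $q$; the two resulting equations in the integral domain $\Z((q))$ then force $\dim A^n_\0=\dim B^n_\0$ and $\dim A^n_\1=\dim B^n_\1$ for all $n$.

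There is a second gap: your argument is purely dimension-based and so only works when $\k$ is a field. Since $\k$ is allowed to be the PID $\O$, equal graded dimensions do not suffice (the components could have different torsion). The paper handles this by invoking the uniqueness theorem for finitely generated modules over a PID applied to each graded component: for any additive invariant $\phi$ that, together with rank, determines a finitely generated $\O$-module up to isomorphism, one runs the same generating-function cancellation with $\phi(A^n_\eps)$ in place of $\dim A^n_\eps$.
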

\begin{proof}
(i) By Lemmas~\ref{L140923},\,\ref{LMBar} and (\ref{EFG}), we have
\begin{align*}
\hat\ggi^{\mu,\bj}\hat C_d\hat\ggi^{\la,\bi}
\simeq
(\ggi^{\mu,\bj} C_d\ggi^{\la,\bi})^{\oplus q^{t_{\la,\bi}-t_{\mu,\bj}}\overline{m_{\la,\bi}}m_{\mu,\bj}}
\simeq(
\ggis^{\mu,\bj} \zC_d\ggis^{\la,\bi})^{\oplus \pi^{d+\eps_{\mu,\bj}-\eps_{\la,\bi}}m_{\la,\bi}m_{\mu,\bj}}.
\end{align*}

(ii) Note applying $\tau$ from (\ref{ETauAntiAuto}) that $\hat\ggi^{\mu,\bj}\hat C_d\hat\ggi^{\la,\bi}\simeq \hat\ggi^{\la,\bi}\hat C_d\hat\ggi^{\mu,\bj}$ as superspaces. So (ii) 
follows form (i) and the uniqueness theorem for finitely generated modules over a PID applied to each graded component.  
\end{proof}

\subsection{Functors for regraded algebras}
\label{SSFunctorsRegr}
As in (\ref{EzVMor}), we have mutually quasi-inverse equivalence functors 
\begin{align}
\label{EFunH}
\funh_d&:\mod{C_d}\to\mod{\zC_d},\ V\mapsto \bigoplus_{(\la,\bi)\in\EC^\col(d)} 
\funQ^{-t_{\la,\bi}}\Uppi^{-\eps_{\la,\bi}}\ggi^{\la,\bi}V,
\\
\fung_d&:\mod{\zC_d}\to\mod{C_d},\ \zV\mapsto \bigoplus_{(\la,\bi)\in\EC^\col(d)} 
\funQ^{t_{\la,\bi}}\Uppi^{\eps_{\la,\bi}}
\ggis^{\la,\bi} \zV
\end{align}

Since $\sum_{(\la,\bi)\in\EC^\col(d)} 
\ggi^{\la,\bi}$ is the identity in $C_d$, we can identify $V$ and $\zV:=\funh_d V$ as $\k$-modules; in other words $\funh_d V$ is $V$ with a different grading. So for $v\in V$ we can speak of {\em the element of $\funh_d V$ corresponding to $v$}; we usually denote this element $\zv$. Then, recalling the notation of \S\ref{SSRegradingCd}, we have for any $c\in C_d$ and $v\in V$:
\begin{equation}\label{ECVCorresponds}
\text{$\zc\zv$ is the element of $\funh_d V$ corresponding to $cv$.}
\end{equation}

Let $\nu\in\Comp(n,d)$ and recall from 
(\ref{ECPar}),\,(\ref{CParIdentify})
the parabolic subalgebra 
$$
C_{\nu_1}\otimes \cdots\otimes C_{\nu_n}= C_{\nu}=\ggi_{\nu}\hat C_{\nu}\ggi_{\nu}\subseteq\ggi_{\nu}C_d\ggi_\nu.
$$ 
We denote 
$$
\EC^\col(\nu):=\{(\la,\bi)\in\EC^\col(d)\mid \text{$\la$ is a refinement of $\nu$}\}. 
$$
Then 
$
\ggi_{\nu}=\sum_{(\la,\bi)\in\EC^\col(\nu)}\ggi^{\la,\bi},
$
and we have a parabolic subalgebra 
\begin{equation}\label{ERegrParabolic}
\zC_\nu:=\bigoplus_{(\la,\bi),(\mu,\bj)\in\EC^\col(\nu)}
\funQ^{t_{\la,\bi}-t_{\mu,\bj}}\Uppi^{\eps_{\la,\bi}-\eps_{\mu,\bj}} \ggi^{\mu,\bj}C_\nu\ggi^{\la,\bi}\subseteq \ggis_{\nu}\zC_d\ggis_\nu
\end{equation}
which can again be identified with $\zC_{\nu_1}\otimes \cdots\otimes \zC_{\nu_n}$ via the same isomorphism as for $C_\nu$ (one needs to make the obvious observation that the grading shifts match):
\begin{equation}\label{EzCParIdentify}
\zC_\nu=\zC_{\nu_1}\otimes \cdots\otimes \zC_{\nu_n}.
\end{equation}
 Again, we have mutually inverse equivalence functors 
\begin{align*}
\funh_\nu&:\mod{C_\nu}\to\mod{\zC_\nu},\ V\mapsto \bigoplus_{(\la,\bi)\in\EC^\col(\nu)} 
\funQ^{-t_{\la,\bi}}\Uppi^{-\eps_{\la,\bi}}\ggi^{\la,\bi}V,
\\
\fung_\nu&:\mod{\zC_\nu}\to\mod{C_\nu},\ \zV\mapsto \bigoplus_{(\la,\bi)\in\EC^\col(\nu)} 
\funQ^{t_{\la,\bi}}\Uppi^{\eps_{\la,\bi}}
\ggis^{\la,\bi} \zV
\end{align*}

\subsection{Regraded Gelfand-Graev induction and restriction}
We now also have the {\em shifted Gelfand-Graev induction and restriction} functors
\begin{equation}\label{EGGIRS}
\begin{split}
\GGIS_\nu^d:=\zC_d\ggis_\nu\otimes_{\zC_\nu}- :\mod{\zC_\nu}\to\mod{\zC_d},
\\
\GGRS_\nu^d:=\ggis_\nu \zC_d\otimes_{\zC_d}- :\mod{\zC_d}\to\mod{\zC_\nu}.
\end{split}
\end{equation}

\begin{Lemma} \label{LGGISEq}
Let $\nu\in\Comp(d)$. For\, $\zV\in \mod{\zC_\nu}$ and\, $\zW\in\mod{\zC_d}$, we have functorial isomorphisms
\begin{align*}
\GGIS_\nu^d\,\zV\simeq \funh_d\,\GGI_\nu^d\,\fung_\nu\,\zV
\quad\text{and}\quad
\GGRS_\nu^d\,\zV\simeq \funh_\nu\,\GGI_\nu^d\,\fung_d\,\zW.
\end{align*}
\end{Lemma}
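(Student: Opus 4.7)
The plan is to observe that both isomorphisms are instances of the general fact that the graded Morita superequivalences $\funh$ and $\fung$ (which are just ``regrading'' functors in the sense of Example~\ref{SSRegr}) commute with tensor-product functors, provided the degree and parity shifts match up correctly. So the entire content of the lemma is a bookkeeping verification about shift parameters; no deep algebra is involved beyond the definitions already set up in \S\ref{SSFunctorsRegr} and \eqref{EGGIRS}.

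First I would unwind the definitions. For $\zV\in\mod{\zC_\nu}$, set $V:=\fung_\nu\,\zV$, so that $V=\bigoplus_{(\la,\bi)\in\EC^\col(\nu)}\funQ^{t_{\la,\bi}}\Uppi^{\eps_{\la,\bi}}\ggis^{\la,\bi}\zV$ as a graded superspace, and as a $\k$-module $V=\zV$ with the action of $C_\nu$ on $V$ corresponding to the action of $\zC_\nu$ on $\zV$ via the identification $c\leftrightarrow\zc$ of \S\ref{SSRegradingCd}. Then $\GGI_\nu^d V=C_d\ggi_\nu\otimes_{C_\nu}V$ decomposes as a $\k$-module as
\[
\GGI_\nu^d V \;=\; \bigoplus_{(\mu,\bj)\in\EC^\col(d)}\ggi^{\mu,\bj}(C_d\ggi_\nu\otimes_{C_\nu}V),
\]
and applying $\funh_d$ shifts the $(\mu,\bj)$-summand by $\funQ^{-t_{\mu,\bj}}\Uppi^{-\eps_{\mu,\bj}}$. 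On the other side, $\GGIS_\nu^d\,\zV=\zC_d\ggis_\nu\otimes_{\zC_\nu}\zV$ is the same $\k$-module as $C_d\ggi_\nu\otimes_{C_\nu}V$ under $c\otimes v\leftrightarrow\zc\otimes\zv$, and the natural candidate for the isomorphism is precisely this identification.

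Next I would check that the gradings and parities agree. The key computation is that for a homogeneous $c\in\ggi^{\mu,\bj}C_d\ggi^{\la,\bi}$ (with $(\la,\bi)\in\EC^\col(\nu)$, $(\mu,\bj)\in\EC^\col(d)$) and a homogeneous $v\in\ggi^{\la,\bi}V$, the bidegree of $\zc\otimes\zv$ in $\zC_d\ggis_\nu\otimes_{\zC_\nu}\zV$ is
\[
\bigl(\deg(c)+t_{\la,\bi}-t_{\mu,\bj}\bigr)+\bigl(\deg(v)-t_{\la,\bi}\bigr)\;=\;\deg(c)+\deg(v)-t_{\mu,\bj},
\]
and analogously $|c|+|v|-\eps_{\mu,\bj}$ for the parity, which is exactly the bidegree of $c\otimes v$ in $\funh_d\,\GGI_\nu^d V$. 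The well-definedness of the map across the balanced tensor product over $\zC_\nu$ versus $C_\nu$ follows from \eqref{ECVCorresponds} (the correspondence $\zc\zv\leftrightarrow cv$), so the identity map of underlying $\k$-modules yields the desired bidegree $(0,\0)$ isomorphism of graded $\zC_d$-supermodules, functorially in $\zV$.

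For the second isomorphism (which, reading \eqref{EGGIRS}, should assert $\GGRS_\nu^d\,\zW\simeq\funh_\nu\,\GGR_\nu^d\,\fung_d\,\zW$), the argument is entirely parallel: writing $W:=\fung_d\,\zW$, both sides are the same $\k$-module $\ggi_\nu C_d\otimes_{C_d} W=\ggi_\nu W$, and for a homogeneous $w\in\ggi^{\la,\bi}\ggi_\nu W$ with $(\la,\bi)\in\EC^\col(\nu)$ one checks that the total shift on both sides equals $-t_{\la,\bi}$ in degree and $-\eps_{\la,\bi}$ in parity; the identity map then furnishes the required isomorphism. The only ``obstacle'' is keeping the two shift systems (the one for $\zC_d$, indexed over $\EC^\col(d)$, versus the one for $\zC_\nu$, indexed over $\EC^\col(\nu)$) straight, and this is handled by noting that the shift parameters $(t_{\la,\bi},\eps_{\la,\bi})$ attached to a colored essential composition depend only on $(\la,\bi)$, not on whether we view it in $\EC^\col(\nu)$ or $\EC^\col(d)$.
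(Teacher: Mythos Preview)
Your proposal is correct and takes essentially the same approach as the paper: both unwind the definitions of $\funh_d$, $\GGI_\nu^d$, $\fung_\nu$ as direct-sum/shift manipulations and verify that the underlying $\k$-modules coincide with matching bidegrees, with the paper presenting this as a chain of equalities on direct-sum decompositions while you phrase it element-by-element. You also correctly spotted that the second displayed isomorphism in the statement should read $\GGRS_\nu^d\,\zW\simeq\funh_\nu\,\GGR_\nu^d\,\fung_d\,\zW$; the paper's proof simply says ``the second isomorphism is proved similarly,'' confirming this is the intended meaning.
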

\begin{proof}
For the first isomorphism we have 
\begin{align*}
&\,\funh_d\,\GGI_\nu^d\,\fung_\nu\,\zV
\\=\,&
\bigoplus_{(\mu,\bj)\in\EC^\col(d)} 
\funQ^{-t_{\mu,\bj}}\Uppi^{-\eps_{\mu,\bj}}\ggi^{\mu,\bj} C_d\ggi_\nu\otimes_{C_\nu}\Big(\bigoplus_{(\la,\bi)\in\EC^\col(\nu)} 
\funQ^{t_{\la,\bi}}\Uppi^{\eps_{\la,\bi}}
\ggis^{\la,\bi} \zV\Big)
\\
=\,&
\bigoplus_{(\mu,\bj)\in\EC^\col(d)} \bigoplus_{(\la,\bi)\in\EC^\col(\nu)} 
\funQ^{t_{\la,\bi}-t_{\mu,\bj}}\Uppi^{\eps_{\la,\bi}-\eps_{\mu,\bj}}\ggi^{\mu,\bj} C_d\ggi_\nu\ggi^{\la,\bi}\otimes_{\zC_\nu}\,\zV
\\
=\,&
\bigoplus_{(\mu,\bj)\in\EC^\col(d)} \bigoplus_{(\la,\bi)\in\EC^\col(d)} 
\funQ^{t_{\la,\bi}-t_{\mu,\bj}}\Uppi^{\eps_{\la,\bi}-\eps_{\mu,\bj}}\ggi^{\mu,\bj} C_d\ggi^{\la,\bi}\ggi_\nu\otimes_{\zC_\nu}\,\zV
\\
=\,&
\zC_d\ggis_\nu\otimes_{\zC_\nu}\,\zV
\\
=
\,&
\GGIS_\nu^d\,\zV.
\end{align*}
The second isomorphism is proved similarly.
\end{proof}

\begin{Corollary} \label{CGGsIngId}
Let $\la\in\Comp(n,d)$ and 
$\bi^{(1)}\in I^{\la_1\de}_\di,\dots,\bi^{(n)}\in I^{\la_n\de}_\di$ be Gelfand-Graev words. Then 
$
\zC_d1_{\bi^{(1)}\cdots\bi^{(n)}}\simeq\GGIS_{\la}^d\big( \zC_{\la_1}1_{\bi^{(1)}}\boxtimes\,\cdots\,\boxtimes\, \zC_{\la_n}1_{\bi^{(n)}}\big)
$
\end{Corollary}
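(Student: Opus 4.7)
The plan is to deduce this from the ungraded analogue Lemma~\ref{LGGIndProjId} by transferring it through the regrading equivalences $\funh_{(-)}$ and $\fung_{(-)}$ of \S\ref{SSFunctorsRegr}, using the compatibility of $\GGIS$ with $\GGI$ supplied by Lemma~\ref{LGGISEq}. The only nontrivial point will be bookkeeping of the degree and parity shifts, which turn out to cancel.

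First, since each $\bi^{(k)}\in I^{\la_k\de}_\di$ is a (divided power) Gelfand-Graev word, we may write $\bi^{(k)}=\ggw^{\mu^{(k)},\bj^{(k)}}$ for a unique $(\mu^{(k)},\bj^{(k)})\in\EC^\col(\la_k)$, so that $1_{\bi^{(k)}}=\ggi^{\mu^{(k)},\bj^{(k)}}$. Denoting by $(\mu,\bj)\in\EC^\col(d)$ the concatenation from (\ref{EConcat}), we have $1_{\bi^{(1)}\cdots\bi^{(n)}}=\ggi^{\mu,\bj}$ and similarly $\ggis^{\mu,\bj}=1_{\bi^{(1)}\cdots\bi^{(n)}}$ in $\zC_d$. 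By the additivity of the shifts in (\ref{EShiftsCuspidal}) under concatenation we have $t_{\mu,\bj}=\sum_k t_{\mu^{(k)},\bj^{(k)}}$ and $\eps_{\mu,\bj}=\sum_k\eps_{\mu^{(k)},\bj^{(k)}}$.

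Next, a direct inspection of the definition of $\funh_\nu$ and of the regrading (\ref{EReGradingC}),(\ref{ERegrParabolic}) shows that for every $(\nu',\bi')\in\EC^\col(\nu)$ there is a natural bidegree $(0,\0)$ isomorphism
\[
\funh_\nu(C_\nu\ggi^{\nu',\bi'})\simeq \funQ^{-t_{\nu',\bi'}}\Uppi^{-\eps_{\nu',\bi'}}\,\zC_\nu\ggis^{\nu',\bi'},
\]
and, applying $\fung_\nu$ (which is quasi-inverse to $\funh_\nu$), the dual identity
\[
\fung_\nu(\zC_\nu\ggis^{\nu',\bi'})\simeq \funQ^{t_{\nu',\bi'}}\Uppi^{\eps_{\nu',\bi'}}\,C_\nu\ggi^{\nu',\bi'}.
\]
Applying this to each tensor factor and using (\ref{EFGFunctorsLaOuter}) together with the additivity of the shifts recorded above, we obtain
\[
\fung_\la\bigl(\zC_{\la_1}1_{\bi^{(1)}}\boxtimes\cdots\boxtimes \zC_{\la_n}1_{\bi^{(n)}}\bigr)\simeq \funQ^{t_{\mu,\bj}}\Uppi^{\eps_{\mu,\bj}}\bigl(C_{\la_1}1_{\bi^{(1)}}\boxtimes\cdots\boxtimes C_{\la_n}1_{\bi^{(n)}}\bigr).
\]

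Finally, combining these pieces: by Lemma~\ref{LGGISEq},
\[
\GGIS_\la^d\bigl(\zC_{\la_1}1_{\bi^{(1)}}\boxtimes\cdots\boxtimes \zC_{\la_n}1_{\bi^{(n)}}\bigr)\simeq \funh_d\,\GGI_\la^d\,\fung_\la\bigl(\zC_{\la_1}1_{\bi^{(1)}}\boxtimes\cdots\boxtimes \zC_{\la_n}1_{\bi^{(n)}}\bigr),
\]
so by the previous display and the exactness/additivity of $\GGI_\la^d$ in the grading,
\[
\GGIS_\la^d\bigl(\zC_{\la_1}1_{\bi^{(1)}}\boxtimes\cdots\boxtimes \zC_{\la_n}1_{\bi^{(n)}}\bigr)\simeq \funQ^{t_{\mu,\bj}}\Uppi^{\eps_{\mu,\bj}}\,\funh_d\,\GGI_\la^d\bigl(C_{\la_1}1_{\bi^{(1)}}\boxtimes\cdots\boxtimes C_{\la_n}1_{\bi^{(n)}}\bigr).
\]
Applying Lemma~\ref{LGGIndProjId} to the argument of $\funh_d$ yields $C_d\,1_{\bi^{(1)}\cdots\bi^{(n)}}=C_d\ggi^{\mu,\bj}$, and then the same regrading identity as above (now with $\nu=(d)$) gives
\[
\funh_d(C_d\ggi^{\mu,\bj})\simeq \funQ^{-t_{\mu,\bj}}\Uppi^{-\eps_{\mu,\bj}}\,\zC_d\ggis^{\mu,\bj}.
\]
The shifts $\funQ^{\pm t_{\mu,\bj}}\Uppi^{\pm\eps_{\mu,\bj}}$ cancel and we obtain $\GGIS_\la^d(\zC_{\la_1}1_{\bi^{(1)}}\boxtimes\cdots\boxtimes \zC_{\la_n}1_{\bi^{(n)}})\simeq\zC_d\ggis^{\mu,\bj}=\zC_d\,1_{\bi^{(1)}\cdots\bi^{(n)}}$, as required. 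The only potentially delicate step is the bookkeeping of grading shifts in $\funh_\nu$ versus $\fung_\nu$; the key observation making everything match is precisely the additivity $t_{\mu,\bj}=\sum_k t_{\mu^{(k)},\bj^{(k)}}$ and $\eps_{\mu,\bj}=\sum_k\eps_{\mu^{(k)},\bj^{(k)}}$ under concatenation.
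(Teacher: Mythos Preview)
Your proof is correct and follows the same route as the paper's: invoke Lemma~\ref{LGGISEq} to write $\GGIS_\la^d\simeq\funh_d\circ\GGI_\la^d\circ\fung_\la$, then apply Lemma~\ref{LGGIndProjId}. The paper's proof is a one-liner citing exactly those two lemmas; you have spelled out the grading-shift bookkeeping that makes it work, in particular the additivity of $t_{\mu,\bj}$ and $\eps_{\mu,\bj}$ under concatenation. One minor point: the reference (\ref{EFGFunctorsLaOuter}) concerns the functors $\funF_\la,\funG_\la$ between $C_\la$ and $\hat C_\la$, not $\funh_\la,\fung_\la$; the box-product compatibility you need for $\fung_\la$ is not stated there but follows directly from the definition of $\fung_\la$ together with (\ref{EzCParIdentify}) and the additivity of the shifts you already recorded.
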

\begin{proof}
This follows from Lemma~\ref{LGGIndProjId}
using Lemma~\ref{LGGISEq}.
\end{proof}

Let $\zV\in\mod{\zC_\mu}$. Recall (\ref{EBW}). Note that 
for any $x\in {}^\la\D_d^\mu$, we have 
$\ggis_{\la}\zb_x \ggis_\mu\otimes \zV
\subseteq \GGRS_{\la}^{d} \GGIS_{\mu}^{d} \zV.$
Let $\leq$ be a total order refining the Bruhat order on $\Si_d$. 
For $x\in {}^\la\D_d^\mu$, we consider the submodules
\begin{align*}
\Phi_{\leq x}(\zV)&:=\sum_{y\in {}^\la\D_d^\mu\,\text{with}\, y\leq x}\zC_{\la}\ggis_{\la}\zb_y \ggis_{\mu}\otimes \zV 
\subseteq 
\GGRS_{\la}^{d} \GGIS_{\mu}^{d} \zV,
\\
\Phi_{< x}(\zV)&:=\sum_{y\in {}^\la\D_d^\mu\,\text{with}\, y< x}\zC_{\la}\ggis_{\la}\zb_y \ggis_{\mu}\otimes \zV 
\subseteq 
\GGRS_{\la}^{d} \GGIS_{\mu}^{d} V.
\end{align*}

\begin{Theorem} \label{CGGMackeyS} 
 {\bf (Shifted Gelfand-Graev Mackey Theorem)} 
Let $\la,\mu\in\Comp(d)$, and $\zV\in\mod{\zC_{\mu}}$. 
Then we have a filtration 
 $
 (\Phi_{\leq x}(\zV))_{x\in {}^\la\D_d^\mu}
 $
 of  
$\GGRS_{\la}^{d}\, \GGIS_{\mu}^{d}\, \zV$ 
with sub-quotients 
$$ 
\Phi_{\leq x}(\zV)/\Phi_{< x}(\zV)\simeq\GGIS_{\la\cap x\mu}^{\la}{}^{x}(\GGRS^\mu_{x^{-1}\la\cap\mu} \zV).
$$
\end{Theorem}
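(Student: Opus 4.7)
The plan is to deduce the shifted Mackey filtration from its unshifted counterpart (Theorem~\ref{TGGMackey}) by transporting through the graded Morita superequivalence between $\mod{C_d}$ and $\mod{\zC_d}$ given by the functors $\funh_d,\fung_d$ (and their parabolic analogues). Set $V:=\fung_\mu\,\zV\in\mod{C_\mu}$. Applying Lemma~\ref{LGGISEq} twice and using $\fung_d\circ\funh_d\simeq\id$, we first obtain a natural isomorphism
$$
\GGRS_\la^d\,\GGIS_\mu^d\,\zV\;\simeq\;\funh_\la\,\GGR_\la^d\,\fung_d\,\funh_d\,\GGI_\mu^d\,V\;\simeq\;\funh_\la\,\GGR_\la^d\,\GGI_\mu^d\,V.
$$

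Next, Theorem~\ref{TGGMackey} gives an explicit filtration $(\Phi_{\le x}(V))_{x\in{}^\la\D_d^\mu}$ of $\GGR_\la^d\,\GGI_\mu^d\,V$ with subquotients $\GGI^\la_{\la\cap x\mu}\,{}^x(\GGR^\mu_{x^{-1}\la\cap\mu}V)$. Since $\funh_\la$ is an exact equivalence, applying it yields a filtration of $\GGRS_\la^d\,\GGIS_\mu^d\,\zV$ with subquotients $\funh_\la\bigl(\GGI^\la_{\la\cap x\mu}\,{}^x(\GGR^\mu_{x^{-1}\la\cap\mu}V)\bigr)$. To identify each subquotient with $\GGIS^\la_{\la\cap x\mu}\,{}^x(\GGRS^\mu_{x^{-1}\la\cap\mu}\zV)$, I would define the twist functor ${}^x:\mod{\zC_{x^{-1}\la\cap\mu}}\to\mod{\zC_{\la\cap x\mu}}$ by the same recipe as in the unshifted case (composing the permutation-of-components isomorphism of parabolic subalgebras with $\funh,\fung$), and then check the commutation
$$
\funh_{\la\cap x\mu}\circ{}^x\;\simeq\;{}^x\circ\funh_{x^{-1}\la\cap\mu}.
$$
This reduces to the observation that the grading and parity shift parameters $t_{\nu,\bi}$ and $\eps_{\nu,\bi}$ from~(\ref{EShiftsCuspidal}) depend only on the multiset of colored parts, and hence are preserved when $x$ permutes the parts. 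Combining this commutation with Lemma~\ref{LGGISEq} (for both $\GGI^\la_{\la\cap x\mu}$ and $\GGR^\mu_{x^{-1}\la\cap\mu}$) produces the desired identification of subquotients.

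Finally I would match the explicit filtration terms. Under the isomorphism of the first paragraph, an element $\ggi_\la b_y\ggi_\mu\otimes v\in\Phi_{\le y}(V)\subseteq\GGR_\la^d\GGI_\mu^d V$ corresponds to $\ggis_\la\zb_y\ggis_\mu\otimes\zv$, and the direct-sum degree/parity shifts defining $\funh_\la$ are absorbed into the bimodule structure. Consequently the image of $\Phi_{\le x}(V)$ under $\funh_\la$ is precisely $\Phi_{\le x}(\zV)$, giving the theorem.

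The main obstacle is the compatibility of the twist with the regrading (the commutation $\funh\circ{}^x\simeq{}^x\circ\funh$). Although conceptually transparent — the shift parameters $t_{\nu,\bi},\eps_{\nu,\bi}$ are manifestly symmetric in the parts — writing it down cleanly requires first giving the twist functor in the shifted setting a precise definition, matching colored compositions along the bijection $M(\la,\mu)\to{}^\la\D_d^\mu$ of~(\ref{EDLaMuBijMat}), and verifying that it intertwines with the regradings on the parabolics $\zC_{\la\cap x\mu}$ and $\zC_{x^{-1}\la\cap\mu}$ of~(\ref{ERegrParabolic}). Once this bookkeeping is in place, everything else is formal.
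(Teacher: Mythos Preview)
Your proposal is correct and takes essentially the same approach as the paper: transport through the regrading equivalences $\funh,\fung$ via Lemma~\ref{LGGISEq}, apply the unshifted Gelfand--Graev Mackey Theorem~\ref{TGGMackey}, and use Lemma~\ref{LGGISEq} again to identify the subquotients. The paper's proof is a terse three lines that leaves the twist compatibility and the matching of explicit filtration terms implicit; your version spells these out (in particular the observation that the shift parameters $t_{\nu,\bi},\eps_{\nu,\bi}$ depend only on the multiset of colored parts), which is exactly the bookkeeping the paper omits.
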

\begin{proof}
By Lemma~\ref{LGGISEq}, we have 
$\GGRS_{\la}^{d}\, \GGIS_{\mu}^{d}\, \zV\simeq \funh_\la\,\GGR_{\la}^{d}\, \GGI_{\mu}^{d}\, \fung_\mu\zV$. The result follows by applying Theorem~\ref{TGGMackey} to $\GGR_{\la}^{d}\, \GGI_{\mu}^{d} (\, \fung_\mu\zV)$ and Lemma~\ref{LGGISEq} again. 
\end{proof}

\subsection{  The idempotent truncation $\ggis_{\om_d}\zC_d\ggis_{\om_d}$}

We recall from (\ref{EElementsCd}) and (\ref{ETauSigma}) the explicit elements $\dot u_r,\dot z_r,
\dot a^{[j,j-1]}_r, \dot a^{[j-1,j]}_r, \dot w\in \ggi_{\om_d}C_d\ggi_{\om_d}$
According to our general convention the corresponding elements of the regraded algebra $\ggis_{\om_d}\zC_d\ggis_{\om_d}$ are denoted $
\dot \zu_r,\dot \zz_r,
\dot \za^{[j,j-1]}_r,
\dot \za^{[j-1,j]}_r,
\dot \zw$. 
By Theorem~\ref{TMorIso}, there is an explicit isomorphism $F_d:H_d(A_\ell)\iso \ggi_{\om_d} C_d\ggi_{\om_d}$. Combining with  Proposition~\ref{PIsoH}, we obtain:

\begin{Theorem} \label{THCIsoZ}
There is an isomorphism of graded superalgebras
\begin{align*}
\zF_d: H_d(\Zig_\ell)&\iso  \ggis_{\om_d}\zC_d\ggis_{\om_d},\\ 
\ze^\bj&\mapsto \ggis^\bj,\\ 
\zu_r&\mapsto \dot \zu_r\sum_{\bj\in J^d}(-1)^{d+j_1+\dots+j_d}\ggis^\bj,\\ 
 \zz_r&\mapsto \dot \zz_r\sum_{\bj\in J^d}(-1)^{j_r+1}\ggis^\bj,\\ 
\za^{[i,j]}_r&\mapsto \dot \za^{[i,j]}_r\sum_{\bj\in J^d}(-1)^{j_1+\dots+j_{r-1}+r-1}\ggis^\bj,\\ 
s_r&\mapsto \dot \zs_r\sum_{\bj\in J^d}(-1)^{(j_r+1)(j_{r+1}+1)}\ggis^\bj.
\end{align*}
\end{Theorem}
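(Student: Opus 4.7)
The plan is to exhibit $\zF_d$ as a composition of two isomorphisms already established in the paper, namely the isomorphism $F_d$ of Theorem~\ref{TMorIso} and the regrading isomorphism $\phi$ of Proposition~\ref{PIsoH}, and then read off its action on generators.

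By Theorem~\ref{TMorIso}, the map $F_d\colon H_d(A_\ell)\iso \ggi_{\om_d}C_d\ggi_{\om_d}$ is a graded superalgebra isomorphism which identifies the orthogonal idempotent decompositions $\{e^\bj\mid \bj\in J^d\}$ and $\{\ggi^\bj\mid \bj\in J^d\}$ componentwise. Applying the regrading procedure of Example~\ref{SSRegr} to both sides simultaneously, with the common grading supershift parameters $(t_\bj,\eps_\bj)_{\bj\in J^d}$ of (\ref{ENormBj}) attached to these idempotent decompositions, yields a graded superalgebra isomorphism
$$\tilde F_d\colon \bigoplus_{\bi,\bj\in J^d}\funQ^{t_\bi-t_\bj}\Uppi^{\eps_\bi-\eps_\bj}\,e^\bj H_d(A_\ell)e^\bi \,\iso\, \ggis_{\om_d}\zC_d\ggis_{\om_d},$$
which equals $F_d$ as a map of underlying (ungraded) algebras. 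The source of $\tilde F_d$ is precisely the source of the isomorphism $\phi$ of Proposition~\ref{PIsoH}, so I would set $\zF_d := \tilde F_d\circ\phi^{-1}\colon H_d(\Zig_\ell)\iso \ggis_{\om_d}\zC_d\ggis_{\om_d}$. As a composition of graded superalgebra isomorphisms, $\zF_d$ is automatically a graded superalgebra isomorphism; in particular no verification of the defining relations of $H_d(\Zig_\ell)$ is required.

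It remains to check that the composition $\zF_d$ carries the standard generators of $H_d(\Zig_\ell)$ to the elements displayed in the statement. For each generator $x$, one computes $\phi^{-1}(x)$ using the formulas of Proposition~\ref{PIsoH} and then applies $F_d$, which simply replaces $u_r,z_r,a^{[i,j]}_r,s_r,e^\bj$ by $\dot\zu_r,\dot\zz_r,\dot\za^{[i,j]}_r,\dot\zs_r,\ggis^\bj$ respectively. The sign factors all arise from the involution $\zh=\sum_{j\in J}(-1)^{j+1}\ze^{[j]}$: since $\zh^2=1$, the preimages under $\phi$ of $\zu_r$ and $\za^{[i,j]}_r$ acquire a product of the elements $h_k:=\sum_{j}(-1)^{j+1}e^{[j]}_k$ over the appropriate strands, and evaluating these on the idempotent $e^\bj$ produces the claimed signs upon summing over $\bj\in J^d$. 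Similarly $\phi^{-1}(\zz_r)=z_rh_r$, yielding the prefactor $(-1)^{j_r+1}$. For the transposition $s_r$, inverting $\phi(s_r)=s_r\zg_{r,r+1}$ and using $\zg_{r,r+1}^2=1$ reduces the problem to computing $\zg_{r,r+1}\ze^\bj$; a short case check on the parities of $(j_r,j_{r+1})$, using the definition $\zg=\ze'\otimes 1+\ze''\otimes\zh$, shows this equals $(-1)^{(j_r+1)(j_{r+1}+1)}\ze^\bj$, giving the stated sign. The main (and really only) obstacle is the careful bookkeeping of these sign factors, since the existence and bijectivity of the graded superalgebra isomorphism are automatic from the construction.
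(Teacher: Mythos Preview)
Your proposal is correct and follows precisely the paper's approach: the paper's entire proof is the single sentence ``By Theorem~\ref{TMorIso}, there is an explicit isomorphism $F_d:H_d(A_\ell)\iso \ggi_{\om_d} C_d\ggi_{\om_d}$. Combining with Proposition~\ref{PIsoH}, we obtain:'', which is exactly the composition $\tilde F_d\circ\phi^{-1}$ you describe. You have in fact supplied more detail than the paper itself, including the mechanism by which the sign factors arise from the involution $\zh$.
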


From Corollary~\ref{CParCuspBasis}, we get: 

\begin{Corollary} \label{CParCuspBasisZ}
As a left/right $\zC_{\om_d}$-module, $\ggis_{\om_d}\zC_d\ggis_{\om_d}$ is free with basis $\{\dot \zw\mid w\in \Si_d\}$. 
\end{Corollary}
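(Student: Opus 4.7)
The plan is to transport the basis statement from Corollary~\ref{CParCuspBasis} through the regrading procedure of \S\ref{SSRegradingCd}. Recall that $\zC_d$ and $C_d$ coincide as $\k$-modules, and the regrading is simply a redistribution of the bidegrees among the idempotent components $\ggi^{\mu,\bj}C_d\ggi^{\la,\bi}$ governed by the shift parameters $(t_{\mu,\bj},\eps_{\mu,\bj})$ of (\ref{EShiftsCuspidal}). In particular, the multiplication map is unchanged, and under the bijection $c\leftrightarrow \zc$ the truncated subalgebra $\ggi_{\om_d}C_d\ggi_{\om_d}$ corresponds to $\ggis_{\om_d}\zC_d\ggis_{\om_d}$, while the parabolic subalgebra $C_{\om_d}\subseteq \ggi_{\om_d}C_d\ggi_{\om_d}$ corresponds to $\zC_{\om_d}\subseteq \ggis_{\om_d}\zC_d\ggis_{\om_d}$ by (\ref{ERegrParabolic}) and (\ref{EzCParIdentify}).

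First, I would observe that since the regrading only modifies the $\Z\times \Z/2$-grading, any $\k$-basis of $\ggi_{\om_d}C_d\ggi_{\om_d}$ as a free left (resp. right) $C_{\om_d}$-module automatically yields a $\k$-basis of $\ggis_{\om_d}\zC_d\ggis_{\om_d}$ as a free left (resp. right) $\zC_{\om_d}$-module. By Corollary~\ref{CParCuspBasis}, we may take this basis to be $\{\dot w\mid w\in\Si_d\}$. Under the correspondence $c\leftrightarrow \zc$, this basis maps precisely to $\{\dot\zw\mid w\in\Si_d\}$, giving the desired result.

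The only thing worth making explicit is that the individual elements $\dot\zw$ are well-defined homogeneous elements of $\zC_d$. This follows from the construction in (\ref{ETauSigma}): $\dot w$ is a product of the elements $\dot s_r\in \ggi_{\om_2}C_2\ggi_{\om_2}\subseteq C_1^{\otimes(r-1)}\otimes \ggi_{\om_2}C_2\ggi_{\om_2}\otimes C_1^{\otimes(d-r-1)}$, each of which has a well-defined bidegree in the regraded algebra $\zC_d$, so the corresponding product $\dot\zw=\dot\zs_{t_1}\cdots\dot\zs_{t_l}$ is again homogeneous. There is no nontrivial obstacle here: the statement is essentially a tautology once one recognizes that ``being a basis as a module'' is a property that survives regrading unchanged, since it depends only on the underlying $\k$-module structure and the multiplication map, both of which are preserved.
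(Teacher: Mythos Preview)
Your proposal is correct and matches the paper's approach: the paper simply states that the corollary follows from Corollary~\ref{CParCuspBasis}, and you have spelled out precisely why the regrading (which leaves the underlying $\k$-module and multiplication unchanged) transports the basis statement verbatim.
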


Note that under the isomorphism of Theorem~\ref{THCIsoZ}, 
the subalgebra $\Zig_\ell[\zz]^{\otimes d}\subseteq H_d(\Zig_\ell)$ 
corresponds to the parabolic subalgebra
$
\zC_{\om_d}\subseteq \ggis_{\om_d}\zC_d\ggis_{\om_d}.
$
In particular, $\zC_{\om_d}$ is generated by the  
$\ggis^\bj,\dot \zu_r,\dot \zz_r,\dot \za^{[i,j]}_r$. 


\begin{Lemma} \label{L7.1} 
The algebra $\ggis_{\om_d}\zC_d\ggis_{\om_d}$ is non-negatively graded with the degree zero component $(\ggis_{\om_d}\zC_d\ggis_{\om_d})^0=\ggis_{\om_d}\zC_d^0\ggis_{\om_d}$ generated by all\, $\ggis^\bj$ and\, $\dot \zw$. Moreover, 
$$
(\ggis_{\om_d}\zC_d\ggis_{\om_d})^{>0}=\ggis_{\om_d}\zC_d^{>0}\ggis_{\om_d}=\ggis_{\om_d}\zC_d\zC_{\om_d}^{>0}=\ggis_{\om_d}\zC_d\ggis_{\om_d}\zC_{\om_d}^{>0}.
$$
\end{Lemma}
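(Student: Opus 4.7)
The plan is to transfer everything via the isomorphism $\zF_d$ from Theorem~\ref{THCIsoZ} and apply Corollary~\ref{CHZig>0} on the non-negative grading of $H_d(\Zig_\ell)$.

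First, I would deduce non-negative grading of $\ggis_{\om_d}\zC_d\ggis_{\om_d}$ immediately from Corollary~\ref{CHZig>0}(i) via the graded superalgebra isomorphism $\zF_d$. The equality $(\ggis_{\om_d}\zC_d\ggis_{\om_d})^0 = \ggis_{\om_d}\zC_d^0\ggis_{\om_d}$ is then formal, since $\ggis_{\om_d}$ lies in bidegree $(0,\0)$, so the grading on the idempotent truncation is obtained by truncating $\zC_d$'s grading component-by-component. This same observation also yields the first equality $(\ggis_{\om_d}\zC_d\ggis_{\om_d})^{>0}=\ggis_{\om_d}\zC_d^{>0}\ggis_{\om_d}$ in the displayed chain.

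Next, for the description of the degree zero generators: by Corollary~\ref{CHZig>0}(iii), $H_d(\Zig_\ell)^0$ is generated as an algebra by the idempotents $\ze^\bj$ ($\bj\in J^d$) and the permutations $w\in\Si_d$. Under $\zF_d$ the element $\ze^\bj$ is sent to $\ggis^\bj$, and the generator $s_r$ is sent to $\dot\zs_r$ times a $(0,\0)$-degree sign idempotent $\sum_{\bj}(-1)^{(j_r+1)(j_{r+1}+1)}\ggis^\bj$. Hence a reduced expression $w=s_{r_1}\cdots s_{r_k}$ is mapped (up to a central sign supported on the $\ggis^\bj$) to $\dot\zs_{r_1}\cdots\dot\zs_{r_k}=\dot\zw$, by the definition (\ref{ETauSigma}). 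Since these sign factors lie in the subalgebra generated by the $\ggis^\bj$, we conclude that $\ggis_{\om_d}\zC_d^0\ggis_{\om_d}$ is generated by the $\ggis^\bj$ and the $\dot\zw$.

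For the remaining equalities of the positive-degree ideal, I would again transfer to $H_d(\Zig_\ell)$: Corollary~\ref{CHZig>0}(ii) states $H_d(\Zig_\ell)^{>0}=H_d(\Zig_\ell)(\Zig_\ell[\zz]^{\otimes d})^{>0}$. Since $\zF_d$ identifies the subalgebra $\Zig_\ell[\zz]^{\otimes d}\subseteq H_d(\Zig_\ell)$ with the parabolic $\zC_{\om_d}\subseteq \ggis_{\om_d}\zC_d\ggis_{\om_d}$, this translates to
\[
(\ggis_{\om_d}\zC_d\ggis_{\om_d})^{>0}=\ggis_{\om_d}\zC_d\ggis_{\om_d}\cdot\zC_{\om_d}^{>0}.
\]
The final simplification $\ggis_{\om_d}\zC_d\ggis_{\om_d}\cdot\zC_{\om_d}^{>0}=\ggis_{\om_d}\zC_d\zC_{\om_d}^{>0}$ holds because $\ggis_{\om_d}$ is the identity of $\zC_{\om_d}$, so $\ggis_{\om_d}\zC_{\om_d}^{>0}=\zC_{\om_d}^{>0}$ and the second $\ggis_{\om_d}$ may be absorbed. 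I do not expect any serious obstacle here; the only subtlety is merely tracking the sign scalars in Theorem~\ref{THCIsoZ}, which are all degree-zero central factors on each idempotent component and hence irrelevant for the grading and generation statements.
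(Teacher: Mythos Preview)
Your proposal is correct and follows essentially the same route as the paper: transfer everything through the isomorphism $\zF_d$ of Theorem~\ref{THCIsoZ} and invoke the three parts of Corollary~\ref{CHZig>0}, together with the trivial observation that $\ggis_{\om_d}$ is a bidegree $(0,\0)$ idempotent and the identity of $\zC_{\om_d}$. Your treatment of the degree-zero generation claim is in fact slightly more explicit than the paper's, which bundles it into ``the first claim follows from Theorem~\ref{THCIsoZ} and Corollary~\ref{CHZig>0}(i)'' without separately tracking the sign factors you mention.
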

\begin{proof}
The equalities $(\ggis_{\om_d}\zC_d\ggis_{\om_d})^0=\ggis_{\om_d}\zC_d^0\ggis_{\om_d}$ and $(\ggis_{\om_d}\zC_d\ggis_{\om_d})^{>0}=\ggis_{\om_d}\zC_d^{>0}\ggis_{\om_d}$ are obvious, and 
the first claim follows from Theorem~\ref{THCIsoZ} and Corollary~\ref{CHZig>0}(i). The equality $\ggis_{\om_d}\zC_d\zC_{\om_d}^{>0}=\ggis_{\om_d}\zC_d\ggis_{\om_d}\zC_{\om_d}^{>0}$ follows from $\zC_{\om_d}^{>0}=\ggis_{\om_d}\zC_{\om_d}^{>0}$. 

Recalling the isomorphism $\zF_d$ of Theorem~\ref{THCIsoZ}, we have $(\ggis_{\om_d}\zC_d\ggis_{\om_d})^{>0}=\zF_d(H_d(\Zig_\ell)^{>0})$. By Corollary~\ref{CHZig>0}(ii), we have $H_d(\Zig_\ell)^{>0}=H_d(\Zig_\ell)(\Zig_\ell[z]^{\otimes d})^{>0}$, and $\zF_d(H_d(\Zig_\ell)(\Zig_\ell[z]^{\otimes d})^{>0})=\ggis_{\om_d}\zC_d\ggis_{\om_d}\zC_{\om_d}^{>0}$. 
So $(\ggis_{\om_d}\zC_d\ggis_{\om_d})^{>0}=\ggis_{\om_d}\zC_d\ggis_{\om_d}\zC_{\om_d}^{>0}$.
\end{proof}

\begin{Lemma} \label{LPositiveTau} 
In $\zC_d$, for any $w\in \Si_d$, we have $\zC_{\om_d}^{>0}\dot\zw=\dot\zw\,\zC_{\om_d}^{>0}$. 
\end{Lemma}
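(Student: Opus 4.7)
The approach is induction on $\ell(w)$, reducing to the base case of a simple transposition $w=s_r$, and then transporting the problem to $H_d(\Zig_\ell)$ via the isomorphism $\zF_d$ of Theorem~\ref{THCIsoZ}. Both sides $\zC_{\om_d}^{>0}\dot\zw$ and $\dot\zw\,\zC_{\om_d}^{>0}$ lie inside the truncation $\ggis_{\om_d}\zC_d\ggis_{\om_d}$ (since $\dot\zw$ does and $\zC_{\om_d}^{>0}\subseteq\ggis_{\om_d}\zC_d\ggis_{\om_d}$), so I may and do work entirely in this truncated algebra throughout. The inductive setup is as follows: the base $w=1$ is trivial since $\dot\zw=\ggis_{\om_d}$ then acts as the identity; for the inductive step, write $w=us_r$ with $\ell(w)=\ell(u)+1$ and apply the inductive hypothesis first for $u$ and then for $s_r$:
\[
\zC_{\om_d}^{>0}\dot\zw \;=\; (\zC_{\om_d}^{>0}\dot\zu)\dot\zs_r \;=\; \dot\zu(\zC_{\om_d}^{>0}\dot\zs_r) \;=\; \dot\zu(\dot\zs_r\,\zC_{\om_d}^{>0}) \;=\; \dot\zw\,\zC_{\om_d}^{>0}.
\]

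The core of the argument is the remaining case $w=s_r$. Via $\zF_d$, the parabolic $\zC_{\om_d}$ corresponds to $\Zig_\ell[\zz]^{\otimes d}=:A$ (the sign twists by the idempotents $\ggis^\bj$ in Theorem~\ref{THCIsoZ} are degree-$(0,\0)$ invertible elements that commute with the grading and therefore preserve the $>0$-subspaces), while $\dot\zs_r$ corresponds to $s_r\in\Si_d\subset H_d(\Zig_\ell)$ up to analogous twists. The task is thus to prove $A^{>0}\cdot s_r\subseteq s_r\cdot A^{>0}$ in $H_d(\Zig_\ell)$. For generators $a\in(\Zig_\ell^{\otimes d})^{>0}$, relation (\ref{ERAff1Zig}) gives $a\cdot s_r = s_r\cdot{}^{s_r^{-1}}(a)\in s_r\cdot A^{>0}$ cleanly and with manageable signs. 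The genuinely non-trivial case is $a=\zz_t$: by (\ref{ERAff2Zig}), $\zz_t\cdot s_r = s_r\cdot \zz_{s_r(t)} - \xi$ for a correction $\xi\in(\Zig_\ell^{\otimes d})^{>0}$ of the form $\zc_r^{[i_r]}+\zc_{r+1}^{[i_{r+1}]}$ or $\za_r^{[i_{r+1},i_r]}\za_{r+1}^{[i_r,i_{r+1}]}$ plus the term $\de_{i_r,0}\zu_r\zu_{r+1}$. I plan to absorb this correction into $s_r\cdot A^{>0}$ by leveraging the explicit sign-twisted definition $\dot\zs_r=\upsigma_{s_r}+\sum_i(-1)^i\ggi^{ii}$ from (\ref{ETau}): the lower-order idempotent adjustment, together with the commutations $\zu_r\cdot s_r=s_r\cdot\zu_{r+1}$ and $\za_r^{[i,j]}\cdot s_r=\pm s_r\cdot\za_{r+1}^{[i,j]}$ provided by (\ref{ERAff1Zig}), yields compensating terms that combine with $\xi$ to produce a net expression of the desired form.

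The hard part is precisely this absorption step: a priori the $\zu_r\zu_{r+1}$-type contributions to $\xi$ lie purely in the identity component of the Weyl-group decomposition $\ggis_{\om_d}\zC_d\ggis_{\om_d}=\bigoplus_{u\in\Si_d}\zC_{\om_d}\cdot\dot\zu$ from Corollary~\ref{CParCuspBasisZ}, and it is not immediately visible that they can be rewritten as $s_r\cdot(\text{positive-degree element})$. Handling this will require a careful bookkeeping that tracks all contributions arising from the idempotent twist in $\dot\zs_r$ simultaneously rather than one generator at a time, so that the identity-component pieces cancel globally. The reverse inclusion $\dot\zs_r\,\zC_{\om_d}^{>0}\subseteq\zC_{\om_d}^{>0}\dot\zs_r$ I plan to obtain by applying the anti-automorphism $\tau$ of (\ref{ETauAntiAuto}) to $\zC_d$: $\tau$ fixes the generators of $\zC_{\om_d}$ and hence preserves $\zC_{\om_d}^{>0}$, and by Lemma~\ref{LSiTau} sends $\dot\zw$ into $\dot\zw+\sum_{u<w}\zC_{\om_d}\cdot\dot\zu$ modulo lower-order Bruhat terms. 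A secondary induction on the Bruhat order then converts the one-sided inclusion into an equality, completing the proof.
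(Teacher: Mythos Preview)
Your overall strategy---reduce to $w=s_r$, transport to $H_d(\Zig_\ell)$ via $\zF_d$, and argue with the defining relations---is precisely what the paper does. The trouble is that the absorption you flag as ``the hard part'' is in fact impossible: the equality $(\Zig_\ell[\zz]^{\otimes d})^{>0}s_r=s_r(\Zig_\ell[\zz]^{\otimes d})^{>0}$ does not hold. Writing $A:=\Zig_\ell[\zz]^{\otimes d}$, relation~(\ref{ERAff2Zig}) gives $s_r\zz_r=\zz_{r+1}s_r+\xi$ with $0\neq\xi\in(\Zig_\ell^{\otimes d})^{>0}$. If $s_r\zz_r$ lay in $A^{>0}s_r$ we would have $s_r\zz_r=as_r$ for some $a\in A^{>0}$, so $(a-\zz_{r+1})s_r=\xi$; but by Theorem~\ref{TAffBasis} the summands $A\cdot 1$ and $A\cdot s_r$ of $H_d(\Zig_\ell)=\bigoplus_{u\in\Si_d}A\cdot u$ meet only in $0$, forcing $\xi=0$, a contradiction. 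Thus $s_rA^{>0}\not\subseteq A^{>0}s_r$ (and symmetrically $A^{>0}s_r\not\subseteq s_rA^{>0}$), so no amount of bookkeeping with idempotent twists can make the identity-component terms cancel. Your appeal to~(\ref{ETau}) is also misplaced: that formula lives in $C_2$ and expresses $\dot s$ in terms of $\upsigma$, not a cancellation mechanism inside $H_d(\Zig_\ell)$. And your use of $\tau$ misreads Lemma~\ref{LSiTau}, which compares $\dot w$ with $\upsigma_w$ and says nothing about $\tau(\dot\zw)$; in any case $\tau$ cannot rescue a false identity.

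The paper's own one-line proof has the same gap---it simply asserts $A^{>0}s_r=s_rA^{>0}$ is ``easy to see''. What is actually true, and what the two applications of the lemma in \S\ref{SSMBJ} and \S\ref{SSRegrMJ} genuinely require, is the weaker claim that right multiplication by $\dot\zw$ stabilises the left ideal $\zN_\bj=\zC_d\zC_{\om_d}^{>0}\ggis^\bj$. This follows at once from Lemma~\ref{L7.1}: inside $\ggis_{\om_d}\zC_d\ggis_{\om_d}$ one has $\ggis_{\om_d}\zC_d\ggis_{\om_d}\zC_{\om_d}^{>0}=(\ggis_{\om_d}\zC_d\ggis_{\om_d})^{>0}$, the positive-degree part of a non-negatively graded algebra, hence a two-sided ideal and therefore stable under right multiplication by the degree-$0$ unit $\dot\zw$.
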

\begin{proof}
We have $\zC_{\om_d}=\zF_d(\Zig_\ell[\zz]^{\otimes d})$.  
It is easy to see using the relations in $H_d(\Zig_\ell)$ and the fact that $s_r$ has degree $0$ that $(\Zig_\ell[\zz]^{\otimes d})^{>0}s_r=s_r(\Zig_\ell[\zz]^{\otimes d})^{>0}$ for all $1\leq r<d$. So  
$(\Zig_\ell[\zz]^{\otimes d})^{>0}w=w(\Zig_\ell[\zz]^{\otimes d})^{>0}$ for all $w\in\Si_d$. 
The result follows on application of $\zF_d$. 
\end{proof}

\begin{Lemma} \label{L7.7}
For any $\bj\in J^d$ we have $\ggis^\bj\zC_{\om_d}^0\ggis^\bj=\k \cdot \ggis^\bj$. 
\end{Lemma}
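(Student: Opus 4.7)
The plan is to transport the problem across the isomorphism $\zF_d: H_d(\Zig_\ell) \iso \ggis_{\om_d}\zC_d\ggis_{\om_d}$ of Theorem~\ref{THCIsoZ} and reduce it to a direct degree count in the tensor power $\Zig_\ell[\zz]^{\otimes d}$. Under $\zF_d$, the subalgebra $\Zig_\ell[\zz]^{\otimes d} \subseteq H_d(\Zig_\ell)$ corresponds precisely to the parabolic $\zC_{\om_d}$ (as noted right after Theorem~\ref{THCIsoZ}), and the idempotent $\ze^\bj$ maps to $\ggis^\bj$. So it suffices to prove the parallel assertion
$$\ze^\bj (\Zig_\ell[\zz]^{\otimes d})^0 \ze^\bj = \k \cdot \ze^\bj$$
in $H_d(\Zig_\ell)$, from which the lemma follows by applying $\zF_d$.

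The key observation is that $\Zig_\ell[\zz]$ is itself non-negatively graded with one-dimensional degree zero piece at each idempotent: recalling the bidegrees listed in \S\ref{SSAffZig}, the basis elements $\zu$ and $\za^{[i,j]}$ of $\Zig_\ell$ have degree $2$, the basis elements $\zc^{[j]}$ have degree $4$, and $\zz$ has degree $4$. So among the standard basis $\{\zz^n \zb \mid n \in \N,\ \zb \in \zB_\ell\}$ of $\Zig_\ell[\zz]$ provided after (\ref{ETwistedRelations2}), the only elements of degree $0$ are the idempotents $\ze^{[j]}$ for $j \in J$. Hence $\Zig_\ell[\zz]^0 = \bigoplus_{j \in J} \k \cdot \ze^{[j]}$.

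Taking tensor products preserves the grading, so
$$(\Zig_\ell[\zz]^{\otimes d})^0 = (\Zig_\ell[\zz]^0)^{\otimes d} = \bigoplus_{\bj \in J^d} \k \cdot \ze^\bj,$$
using the notation $\ze^\bj = \ze^{[j_1]} \otimes \cdots \otimes \ze^{[j_d]}$ from (\ref{EZEBI}). Since the $\ze^\bj$ are pairwise orthogonal idempotents, multiplying on both sides by $\ze^\bj$ projects onto the single summand $\k \cdot \ze^\bj$, giving $\ze^\bj (\Zig_\ell[\zz]^{\otimes d})^0 \ze^\bj = \k \cdot \ze^\bj$ as required. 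Applying the graded superalgebra isomorphism $\zF_d$ finishes the proof.

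There is essentially no obstacle here: the result is a clean consequence of Theorem~\ref{THCIsoZ} together with the bidegree conventions for the generators of $\Zig_\ell[\zz]$. The only thing worth double-checking is that $\zF_d$ restricts to an isomorphism between $(\Zig_\ell[\zz]^{\otimes d})^0$ and $\zC_{\om_d}^0$, which is immediate because $\zF_d$ is a bidegree $(0, \0)$ isomorphism of graded superalgebras.
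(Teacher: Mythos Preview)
The proposal is correct and takes essentially the same approach as the paper: transport across $\zF_d$ to $\Zig_\ell[\zz]^{\otimes d}$ and read off that the only degree~$0$ basis elements are the $\ze^\bj$. You simply spell out in detail the degree computation that the paper compresses into the single line $\ggis^\bj\zC_{\om_d}^0\ggis^\bj=\zF_d(\ze^\bj(\Zig_\ell[\zz]^{\otimes d})^0\ze^\bj)=\zF_d(\k\cdot\ze^\bj)=\k \cdot \ggis^\bj$.
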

\begin{proof}
We have $\ggis^\bj\zC_{\om_d}^0\ggis^\bj=\zF_d(\ze^\bj(\Zig_\ell[\zz]^{\otimes d})^0\ze^\bj)=\zF_d(\k\cdot\ze^\bj)=\k \cdot \ggis^\bj$. 
\end{proof}

\section{Non-negativity of grading and the elements $\lgath_{\la,\bi}$}

\subsection{Special segments}
\label{SSNot}
Let $j\in J$ and $d\in\N_+$. Define the composition
\begin{equation}\label{ENu(i)}
\nu^{d,j}:=(d,(2d)^{\ell-1-j},d^j,2d,d^j)\in\Comp(dp).
\end{equation}
Corresponding to the composition $\nu^{d,j}$ we have the decomposition of the segment $[1,dp]$ as an increasing disjoint union of the segments (this is a special notation replacing the segments  $\ttP^{\nu^{d,j}}_r$ from (\ref{EPartitionOfComposition})):
\begin{equation}\label{ESegDec}
[1,dp]=\ttX_\ell^{d,j}\sqcup \ttX_{\ell-1}^{d,j}\sqcup\dots \sqcup \ttX_{j+1}^{d,j}\sqcup \ttY^{d,j}_j\sqcup\dots\sqcup \ttY^{d,j}_1\sqcup
\ttX^{d,j}_0\sqcup \ttZ^{d,j}_1\sqcup\dots \sqcup\ttZ^{d,j}_j,
\end{equation}
such that 
\begin{align*}
\ttX^{d,j}_i&=\{r\in [1,dp]\mid \hat\ggw^{d,j}_r=i\}\qquad(\text{for}\ j<i\leq \ell\ \text{or}\ i=0),
\\
\ttY^{d,j}_i\sqcup \ttZ^{d,j}_i&=\{r\in [1,dp]\mid \hat\ggw^{d,j}_r=i\}\qquad(\text{for}\ 1\leq i\leq j). 
\end{align*}
For $1\leq i\leq j$, define 
$
\ttX^{d,j}_i:=\ttY^{d,j}_i\sqcup \ttZ^{d,j}_i
$
 so that now {\em for all} $i\in I$ we have 
 \begin{equation}\label{ESegDec1}
 \ttX^{d,j}_i=\{r\in [1,dp]\mid \hat\ggw^{d,j}_r=i\}.
\end{equation}

More generally, let $(\la,\bi)\in\Comp^\col(n,d)$. Recalling (\ref{EPartitionOfComposition}), (\ref{EkLa}), 
we have the decomposition of the segment $[1,dp]$ as an increasing disjoint union of the segments 
$$
[1,dp]=\ttP^{p\la}_1\sqcup\dots\sqcup \ttP^{p\la}_n.
$$
For each $1\leq r\leq n$, we have the decomposition of the segment $\ttP^{p\la}_r$ as an increasing disjoint union of the segments:
\begin{equation}\label{ESegDecU}
\ttP_r^{p\la}=\ttX_{r,\ell}^{\la,\bi}\sqcup \ttX_{r,\ell-1}^{\la,\bi}\sqcup\dots \sqcup \ttX_{r,i_r+1}^{\la,\bi}\sqcup \ttY^{\la,\bi}_{r,i_r}\sqcup\dots\sqcup \ttY^{\la,\bi}_{1,i_r}\sqcup
\ttX^{\la,\bi}_{r,0}\sqcup \ttZ^{\la,\bi}_{r,1}\sqcup\dots \sqcup\ttZ^{\la,\bi}_{r,i_r},
\end{equation}
where 
\begin{align*}
\ttX^{\la,\bi}_{r,i}&=\{r\in \ttP^{p\la}_r\mid \hat\ggw^{\la,\bi}_r=i\}\qquad(\text{for}\ i_r<i\leq \ell\ \text{or}\ i=0),
\\
\ttY^{\la,\bi}_{r,i}\sqcup \ttZ^{\la,\bi}_{r,i}&=\{r\in \ttP^{p\la}_r\mid \hat\ggw^{\la,\bi}_r=i\}\qquad(\text{for}\ 1\leq i\leq i_r). 
\end{align*}
For $1\leq i\leq i_r$ we define 
$
\ttX^{\la,\bi}_{r,i}:=\ttY^{\la,\bi}_{r,i}\sqcup \ttZ^{\la,\bi}_{r,i}
$
 so that now 
$
 \ttX^{\la,\bi}_{r,i}=\{r\in \ttP_r^{p\la}\mid \hat\ggw^{\la,\bi}_r=i\} 
$
for all $i\in I$.

\begin{Example} 
{\rm 
(i) Let $\ell=3$, $j=1$, and $d=2$. Then 
$$\hat\ggw^{d,j}=3\,3\,2\,2\,2\,2\,1\,1\,0\,0\,0\,0\,1\,1$$ and the decomposition (\ref{ESegDec}) is illustrated by the following picture:
$$
\overbrace{3\,3}^{\ttX^{d,j}_3}\,\overbrace{2\,2\,2\,2}^{\ttX^{d,j}_2}\,\overbrace{1\,1}^{\ttY^{d,j}_1}\,\overbrace{0\,0\,0\,0}^{\ttX^{d,j}_0}\,\overbrace{1\,1}^{\ttZ^{d,j}_1}.
$$
In other words, the decomposition (\ref{ESegDec}) is $[1,14]=[1,2]\sqcup[3,6]\sqcup[7,8]\sqcup[9,12]\sqcup[13,14]$. 
}

(ii) Let $\ell=3$, $d=3$, $\la=(2,1)$ and $\bi=11$. Then 
$$\hat\ggw^{\la,\bi}=3\,\,3\,\,2\,\,2\,\,2\,\,2\,\,1\,\,1\,\,0\,\,0\,\,0\,\,0\,\,1\,\,1\,\,
3\,\,2\,\,2\,\,1\,\,0\,\,0\,\,1
$$ and the decomposition (\ref{ESegDecU}) is illustrated by the following picture:
$$
\underbrace{\overbrace{3\,\,3}^{\ttX^{\la,\bi}_{1,3}}\,\,\overbrace{2\,\,2\,\,2\,\,2}^{\ttX^{\la,\bi}_{1,2}}\,\,\overbrace{1\,\,1}^{\ttY^{\la,\bi}_{1,1}}\,\,\overbrace{0\,\,0\,\,0\,\,0}^{\ttX^{\la,\bi}_{1,0}}\,\,\overbrace{1\,\,1}^{\ttZ^{\la,\bi}_{1,1}}}_{\ttP^{p\la}_1}\,\,\underbrace{
\overbrace{3}^{\ttX^{\la,\bi}_{2,3}}\,\,\overbrace{2\,\,2}^{\ttX^{\la,\bi}_{2,2}}\,\,\overbrace{1}^{\ttY^{\la,\bi}_{2,1}}\,\,\overbrace{0\,\,0}^{\ttX^{\la,\bi}_{2,0}}\,\,\overbrace{1}^{\ttZ^{\la,\bi}_{2,1}}}_{\ttP^{p\la}_2}.
$$
\end{Example}

\subsection{   The elements $\lgath_{d,j}$ and $\gath_{d,j}$}
\label{SSUpsilon}

Let $d\in\N_+$ and $j\in J$. 
Note that the set 
$$
\{x\in \D^{p^d}_{dp}\mid x\cdot(\hat\ggw^j)^d=\hat\ggw^{d,j}\}
$$
has a unique longest element which we denote $\ga_{d,j}$ and the unique shortest element which we denote $\chi_{d,j}$. 
We denote 
\begin{equation}\label{EUpsilon}
\lgath_{d,j}:=\psi_{\ga_{d,j}}\ggi^{j^d}\in \hat C_d
\quad\text{and}\quad
\gath_{d,j}:=\psi_{\chi_{d,j}}\ggi^{j^d}\in \hat C_d.
\end{equation}
For $(\la,\bi)\in\Comp^\col(n,d)$, we denote
\begin{equation}\label{EUpsilonLa}
\lgath_{\la,\bi}:=\lgath_{\la_1,i_1}\otimes\dots\otimes \lgath_{\la_n,i_n}\in \hat C_{\la_1}\otimes\dots\otimes \hat C_{\la_n}=\hat C_\la. 
\end{equation}
An easy computation shows that 
\begin{equation}\label{EUDeg}
\begin{split}
\bideg(\lgath_{d,j})&=(d(d-1)(2j-4\ell),\0),\\ \bideg(\lgath_{\la,\bi})&=\big(\sum_{r=1}^n\la_r(\la_r-1)(2i_r-4\ell)\,,\ \0\big). 
\end{split}
\end{equation}

\begin{Example} 
{\rm 
Suppose $\ell=3$, $j=1$ and $d=2$. Then $\lgath_{d,j}$ is represented by the following Khovanov-Lauda diagram:
$$
\begin{braid}\tikzset{baseline=-.3em}
\darkgreenbraidbox{1.9}{4.1}{-6.1}{-5.1}{};
\redbraidbox{7.9}{10.1}{-6.1}{-5.1}{};
\redbraidbox{21.9}{24.1}{-6.1}{-5.1}{};
\darkgreenbraidbox{15.9}{18.1}{-6.1}{-5.1}{};
	
\draw[blue](0,-5)node[below]{$3$}--(2,5)node[above]{$3$};
\draw[darkgreen](2,-5)node[below]{$2$}--(8,5)node[above]{$2$};
\draw[darkgreen](4,-5)node[below]{$2$}--(10,5)node[above]{$2$};
\draw(6,-5)node[below]{$1$}--(14,5)node[above]{$1$};
\draw[red](8,-5)node[below]{$0$}--(20,5)node[above]{$0$};
\draw[red](10,-5)node[below]{$0$}--(22,5)node[above]{$0$};
\draw(12,-5)node[below]{$1$}--(26,5)node[above]{$1$};
\draw[blue](14,-5)node[below]{$3$}--(0,5)node[above]{$3$};
\draw[darkgreen](16,-5)node[below]{$2$}--(4,5)node[above]{$2$};
\draw[darkgreen](18,-5)node[below]{$2$}--(6,5)node[above]{$2$};
\draw(20,-5)node[below]{$1$}--(12,5)node[above]{$1$};
\draw[red](22,-5)node[below]{$0$}--(16,5)node[above]{$0$};
\draw[red](24,-5)node[below]{$0$}--(18,5)node[above]{$0$};
\draw(26,-5)node[below]{$1$}--(24,5)node[above]{$1$};
\end{braid}
$$
and $\gath_{d,j}$ is represented by the following Khovanov-Lauda diagram:
$$
\begin{braid}\tikzset{baseline=-.3em}
\darkgreenbraidbox{1.9}{4.1}{-6.1}{-5.1}{};
\redbraidbox{7.9}{10.1}{-6.1}{-5.1}{};
\redbraidbox{21.9}{24.1}{-6.1}{-5.1}{};
\darkgreenbraidbox{15.9}{18.1}{-6.1}{-5.1}{};
	
\draw[blue](0,-5)node[below]{$3$}--(0,5)node[above]{$3$};
\draw[darkgreen](2,-5)node[below]{$2$}--(4,5)node[above]{$2$};
\draw[darkgreen](4,-5)node[below]{$2$}--(6,5)node[above]{$2$};
\draw(6,-5)node[below]{$1$}--(12,5)node[above]{$1$};
\draw[red](8,-5)node[below]{$0$}--(16,5)node[above]{$0$};
\draw[red](10,-5)node[below]{$0$}--(18,5)node[above]{$0$};
\draw(12,-5)node[below]{$1$}--(24,5)node[above]{$1$};
\draw[blue](14,-5)node[below]{$3$}--(2,5)node[above]{$3$};
\draw[darkgreen](16,-5)node[below]{$2$}--(8,5)node[above]{$2$};
\draw[darkgreen](18,-5)node[below]{$2$}--(10,5)node[above]{$2$};
\draw(20,-5)node[below]{$1$}--(14,5)node[above]{$1$};
\draw[red](22,-5)node[below]{$0$}--(20,5)node[above]{$0$};
\draw[red](24,-5)node[below]{$0$}--(22,5)node[above]{$0$};
\draw(26,-5)node[below]{$1$}--(26,5)node[above]{$1$};
\end{braid}.
$$
}
\end{Example}

It is easy to see that 
\begin{equation}\label{EKappa}
\ga_{d,j}=\kappa_{d,j}\chi_{d,j}
\end{equation} 
for some $\kappa_{d,j}\in\Si_{\nu^{d,j}}$ such that 
 $\ttl(\ga_{d,j})=\ttl(\kappa_{d,j})+\ttl(\chi_{d,j})$.
Letting $\udl_{d,j}:=\psi_{\kappa_{d,j}}$, we may assume that 
\begin{equation}\label{EKappaH}
\lgath_{d,j}=\udl_{d,j}\gath_{d,j}.
\end{equation}

\begin{Lemma} \label{LG1}
Let $(\la,\bi)\in \Comp^\col(n,d)$. Then $\lgath_{\la,\bi}=
\ggi^{\la,\bi}\lgath_{d,j}\ggi^{i_1^{\la_1}\cdots\,i_n^{\la_n}}\in C_\la$. 
In particular, $\lgath_{d,j}=
\ggi^{d,j}\lgath_{d,j}\ggi^{j^d}\in C_d$. 
\end{Lemma}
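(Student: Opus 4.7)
\bigskip
\noindent\textbf{Proof plan.} The plan is to reduce first to the case $n=1$, $\la=(d)$, $\bi=j$ (i.e.\ the ``In particular'' clause), and then to prove this case by a careful analysis of the factorization of $\ga_{d,j}$.

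\smallskip
\emph{Reduction to $n=1$.} By (\ref{EUpsilonLa}) we have $\lgath_{\la,\bi}=\lgath_{\la_1,i_1}\otimes\cdots\otimes\lgath_{\la_n,i_n}$ inside $\hat C_\la=\hat C_{\la_1}\otimes\cdots\otimes\hat C_{\la_n}$. Under the same identification (\ref{EHatParabolic}), the divided power Gelfand--Graev idempotent $\ggi^{\la,\bi}=1_{\ggw^{\la,\bi}}$ factors as $\ggi^{\la_1,i_1}\otimes\cdots\otimes\ggi^{\la_n,i_n}$ (since $\ggw^{\la,\bi}=\ggw^{\la_1,i_1}\cdots\ggw^{\la_n,i_n}$ by (\ref{EGHatG})), and $\ggi^{i_1^{\la_1}\cdots i_n^{\la_n}}=\ggi^{(i_1)^{\la_1}}\otimes\cdots\otimes\ggi^{(i_n)^{\la_n}}$ by the same argument applied to $\om_{\la_k}$-compositions. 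So the general identity follows factorwise from the identity $\lgath_{d,j}=\ggi^{d,j}\lgath_{d,j}\ggi^{j^d}$ applied in each $\hat C_{\la_k}$, and the product of idempotents $\ggi^{\la_k,i_k}$ is precisely $\ggi_\la$, placing the answer inside $\ggi_\la\hat C_\la\ggi_\la=C_\la$.

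\smallskip
\emph{The case $n=1$.} By definition, $\lgath_{d,j}=\psi_{\ga_{d,j}}\ggi^{j^d}$, and since $\ggi^{j^d}$ is an idempotent the identity $\lgath_{d,j}\ggi^{j^d}=\lgath_{d,j}$ is immediate. It remains to verify that $\ggi^{d,j}\lgath_{d,j}=\lgath_{d,j}$. The ingredients are:

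\smallskip
(i) The factorization (\ref{EKappa})--(\ref{EKappaH}): $\ga_{d,j}=\kappa_{d,j}\chi_{d,j}$ with additive length and $\kappa_{d,j}\in\Si_{\nu^{d,j}}$, where $\nu^{d,j}$ is the composition (\ref{ENu(i)}) listing the sizes of the blocks of equal letters in $\hat\ggw^{d,j}$. Because $\ga_{d,j}$ is the \emph{longest} element in $\{x\in\D^{p^d}_{dp}\mid x\cdot\hat\ggw^{j^d}=\hat\ggw^{d,j}\}$ and $\chi_{d,j}$ is the shortest, the length-maximality forces $\kappa_{d,j}$ to be the longest element of the Young subgroup $\Si_{\nu^{d,j}}$. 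Choosing reduced expressions compatibly we may write
\[
\psi_{\ga_{d,j}}=\psi_{\kappa_{d,j}}\psi_{\chi_{d,j}}=\psi_{w_0(\nu^{d,j})}\psi_{\chi_{d,j}}.
\]

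\smallskip
(ii) The divided-power idempotent $\ggi^{d,j}$ factors (via the parabolic embedding corresponding to $\nu^{d,j}$) as a tensor product of the block idempotents $1_{\ell^{(d)}},1_{(\ell-1)^{(2d)}},\ldots,1_{j^{(d)}}$; by (\ref{EDivPowId}) each is of the form $\psi_{w_m}\cdot(\text{polynomial in }y)\cdot 1_{i^m}$. A direct nilHecke/odd nilHecke computation (generalizing the identity $\psi_{w_m}f_m(y)\psi_{w_m}=\psi_{w_m}$ that underlies $(1_{i^{(m)}})^2=1_{i^{(m)}}$) shows that $1_{i^{(m)}}\psi_{w_m}=\psi_{w_m}$. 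Applying this in each block of $\nu^{d,j}$ gives the key absorption identity
\[
\ggi^{d,j}\psi_{w_0(\nu^{d,j})}=\psi_{w_0(\nu^{d,j})}\cdot\ggi^{j^d\text{-partial}},
\]
where the right-hand factor commutes past the already-present $1_{\hat\ggw^{d,j}}$.

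\smallskip
Combining (i) and (ii),
\[
\ggi^{d,j}\lgath_{d,j}=\ggi^{d,j}\psi_{w_0(\nu^{d,j})}\psi_{\chi_{d,j}}\ggi^{j^d}=\psi_{w_0(\nu^{d,j})}\psi_{\chi_{d,j}}\ggi^{j^d}=\lgath_{d,j}.
\]

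\smallskip
\emph{Main obstacle.} The delicate point is the identification $\kappa_{d,j}=w_0(\nu^{d,j})$: we must check that multiplying $\chi_{d,j}$ on the left by \emph{any} element of $\Si_{\nu^{d,j}}$ keeps the result in $\D^{p^d}_{dp}$, so that length-additivity lets the Young-subgroup longest element be fully absorbed into $\ga_{d,j}$. This is a combinatorial verification about how the segments $\ttX^{d,j}_i,\ttY^{d,j}_i,\ttZ^{d,j}_i$ of (\ref{ESegDec}) interact with the $p$-block structure of $\hat\ggw^{j^d}$, and is where the explicit shape (\ref{EGGW}) of the Gelfand--Graev word is used. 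Once this is established, together with the absorption identity $\ggi^{d,j}\psi_{w_0(\nu^{d,j})}\equiv\psi_{w_0(\nu^{d,j})}$ modulo relations on the right that disappear against $\ggi^{j^d}$, the lemma follows.
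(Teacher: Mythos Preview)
Your reduction to $n=1$ and the verification $\lgath_{d,j}\ggi^{j^d}=\lgath_{d,j}$ match the paper. However, your argument for $\ggi^{d,j}\lgath_{d,j}=\lgath_{d,j}$ has a genuine error at precisely the point you flag as the ``main obstacle'': the claim $\kappa_{d,j}=w_0(\nu^{d,j})$ is \emph{false}. Length-maximality of $\ga_{d,j}$ within $\{x\in\D^{p^d}_{dp}\mid x\cdot\hat\ggw^{j^d}=\hat\ggw^{d,j}\}$ does not force $\kappa_{d,j}$ to be the full longest element of $\Si_{\nu^{d,j}}$, because left multiplication by an arbitrary element of $\Si_{\nu^{d,j}}$ need not keep the product in $\D^{p^d}_{dp}$. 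Concretely, in the paper's example ($\ell=3$, $j=1$, $d=2$), reading off $\ga_{2,1}$ and $\chi_{2,1}$ from the two diagrams and computing $\kappa_{2,1}=\ga_{2,1}\chi_{2,1}^{-1}$, one finds that $\kappa_{2,1}$ restricted to the $\nu^{2,1}$-block $\{3,4,5,6\}$ is $(3\,5)(4\,6)$, of length $4$, not the longest element of $\Si_4$ (length $6$). So the length-additive group factorization $\ga_{d,j}=w_0(\nu^{d,j})\cdot(\text{something})$ that you need does not exist.

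The paper's route is different in kind: rather than seeking a group-level factorization, it works with the KLR elements themselves. Using the braid relations (\ref{R7}) \emph{with vanishing error term} for the color patterns that arise, one rewrites $\lgath_{d,j}=\psi_{\ga_{d,j}}\ggi^{j^d}$ as $\psi_{w_0}\psi_x\ggi^{j^d}$ for some $x$; here $w_0 x$ need not equal $\ga_{d,j}$ in $\Si_{dp}$, and the factorization need not be length-additive. Once $\psi_{w_0}$ is extracted on the left, the blockwise absorption $\ggi^{d,j}\psi_{w_0}=\psi_{w_0}$ (this is (\ref{EStrIdemp})) finishes the proof. Your step (ii) is essentially this last absorption; what fails is your step (i), producing $\psi_{w_0}$ on the left purely from a group factorization.
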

\begin{proof}
It suffices to establish the special case $\lgath_{d,j}=
\ggi^{d,j}\lgath_{d,j}\ggi^{j^d}\in C_d$. 
As $\ggi^{j^d}$ is an idempotent, we have $\lgath_{d,j}=\lgath_{d,j}\ggi^{j^d}$ by definition. To prove that  $\lgath_{d,j}=\ggi^{d,j}\lgath_{d,j}$, recall the notation (\ref{ENu(i)}). Observe, using the braid relations (\ref{R7}) with no error term, that we can write $\lgath_{d,j}=\psi_{w_0}\psi_x\ggi^{j^d}$, where $w_{0}$ is the longest element in the parabolic subgroup $\Si_{\nu^{d,j}}$. Now we can apply (\ref{EStrIdemp}). 
\end{proof}

\begin{Lemma} \label{LUpsilonPart} 
Let $(\la,\bi)\in \Comp^\col(n,d)$. Then $\lgath_{\la,\bi}\neq 0$. In particular, in $\hat C_d$, we have that $\lgath_{d,j}\neq 0$ and hence $\gath_{d,j}\neq 0$. 
\end{Lemma}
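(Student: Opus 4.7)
The strategy is to test $\lgath_{d,j}$ against a carefully chosen cuspidal module on which it acts nontrivially. Once $\lgath_{d,j}\neq 0$ in $\hat C_d$ is established, the remaining claims are immediate: $\gath_{d,j}\neq 0$ follows from the factorization $\lgath_{d,j}=\udl_{d,j}\gath_{d,j}$ in (\ref{EKappaH}); and for general $(\la,\bi)\in\Comp^\col(n,d)$, the identity (\ref{EUpsilonLa}) together with the identification (\ref{ECuspPar}) expresses $\lgath_{\la,\bi}$ as a pure tensor $\lgath_{\la_1,i_1}\otimes\cdots\otimes\lgath_{\la_n,i_n}$ in $\hat C_{\la_1}\otimes\cdots\otimes\hat C_{\la_n}$, which is nonzero as a tensor product of nonzero elements.

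The test module will be the induced cuspidal module $M:=\Ind_{\de,\ldots,\de}^{d\de}(\hat\LL_j^{\boxtimes d})$ with distinguished generator $v_0:=1_{\de^d}\otimes\hat v_j^{\otimes d}$. By Lemma~\ref{LTensImagIsImag}(i), $M$ is indeed a (graded super) module over $\hat C_d$, and Lemma~\ref{L030216}(ii) combined with Lemma~\ref{LTensImagIsImag}(i) gives the decomposition of graded $\k$-supermodules
$$
M\;=\;\bigoplus_{w\in \D_{dp}^{p^d}}\psi_w 1_{\de^d}\otimes \hat\LL_j^{\boxtimes d},
$$
with each summand isomorphic to $\hat\LL_j^{\boxtimes d}$ via $v\mapsto \psi_w1_{\de^d}\otimes v$.

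I will first check that $\ggi^{j^d}v_0=v_0$: by (\ref{EDPId}), the element $\ggi^{j^d}$ is the image under $\iota_{\de,\ldots,\de}$ of (\ref{EIota}) of the tensor $\ggi^j\otimes\cdots\otimes\ggi^j$ lying in the cuspidal parabolic $\hat C_{\de,\ldots,\de}$, so it acts on $v_0$ componentwise, yielding $(\ggi^j\hat v_j)^{\otimes d}=\hat v_j^{\otimes d}$ by (\ref{E100824_2}). Consequently
$$
\lgath_{d,j}v_0\;=\;\psi_{\ga_{d,j}}\ggi^{j^d}v_0\;=\;\psi_{\ga_{d,j}}1_{\de^d}\otimes \hat v_j^{\otimes d}.
$$
Since $\ga_{d,j}\in\D_{dp}^{p^d}$ by its very definition, the right-hand side is the image of $\hat v_j^{\otimes d}$ in the $\ga_{d,j}$-summand of the above decomposition. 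To conclude non-vanishing, it suffices to observe that $\hat v_j\neq 0$ in $\hat\LL_j$: by Remark~\ref{RHatLFree} and (\ref{ELJExtScal}), $\hat\LL_j$ is a free $\k$-module of positive rank generated by $\hat v_j$, so $\hat v_j\neq 0$ and hence $\hat v_j^{\otimes d}\neq 0$ in the tensor product $\hat\LL_j^{\boxtimes d}$. Therefore $\lgath_{d,j}v_0\neq 0$ and in particular $\lgath_{d,j}\neq 0$ in $\hat C_d$.

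The only subtle point is the identification of the action of $\ggi^{j^d}$ and $\psi_{\ga_{d,j}}$ on $v_0$ with the expected basis element of the induced-module decomposition; this is bookkeeping via (\ref{EDPId}), (\ref{EIota}), Lemma~\ref{L030216}(ii), and Lemma~\ref{LTensImagIsImag}(i), and should present no essential obstacle.
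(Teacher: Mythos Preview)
Your proposal is correct and follows essentially the same approach as the paper: act by $\lgath_{d,j}$ on the generator of the induced cuspidal module $\Ind_{\de^d}^{d\de}\hat\LL_j^{\boxtimes d}$ and use the $\D_{dp}^{p^d}$-indexed decomposition (the paper cites Lemma~\ref{LIndBasis} rather than Lemma~\ref{L030216}(ii), but these are the same content here) to see that $\psi_{\ga_{d,j}}1_{\de^d}\otimes\hat v_j^{\otimes d}$ lands in a single summand and is nonzero. Your reductions for $\gath_{d,j}$ and for general $(\la,\bi)$ via (\ref{EKappaH}) and (\ref{EUpsilonLa}) are exactly right.
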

\begin{proof}
It suffices to prove that $\lgath_{d,j}\neq 0$. 
Recalling the $\hat C_1$-module $\hat \LL_j$ from (\ref{EHatL}), 
the module $\Ind_{\de^d}^{d\de}\hat \LL_j^{\boxtimes d}\neq 0$ is cuspidal by Lemma~\ref{LTensImagIsImag}(i), and it suffices to prove that $\lgath_{d,j} \Ind_{\de^d}^{d\de}\hat \LL_j^{\boxtimes d}\neq 0$. In fact we show that 
$$
\lgath_{d,j} \otimes \hat \LL_j^{\boxtimes d}=
\lgath_{d,j} 1_{\de^d}\otimes \hat \LL_j^{\boxtimes d}\subseteq \Ind_{\de^d}^{d\de}\hat \LL_j^{\boxtimes d}
$$ 
is non-zero. 
By definition, we have $\ggi^{j}\LL_j\neq 0$, so $\ggi^{j^d}\hat \LL_j^{\boxtimes d}\neq 0$. Now, since $\ga_{d,j}\in\D_{pd}^{p^d}$, we have by Lemma~\ref{LIndBasis} that $\lgath_{d,j} \otimes (\hat \LL_j)^{\boxtimes d}=\psi_{\ga_{d,j}}1_{\de^d}\otimes \ggi^{j^d}(\hat \LL_j)^{\boxtimes d}\neq 0$. 
\end{proof}

\subsection{  The idempotent truncation $\ggi^{d,j} C_d\ggi_{\om_d}$} In this subsection we establish the equality  $\ggi^{d,j} C_d\ggi_{\om_d}= \lgath_{d,j} C_d\ggi_{\om_d}$. 

\begin{Lemma} \label{LConsecutive} 
Let $\bi^{(1)},\dots,\bi^{(d)}\in I^\de_\cus$, $\bi=\bi^{(1)}\cdots\bi^{(d)}$, and $w\in{}^{\nu^{d,j}}\D^{p^d}_{dp}$ satisy $w\cdot \bi=\hat\ggw^{d,j}$.  Then
$\bi^{(1)}=\dots=\bi^{(d)}=\hat\ggw^j$ and $w=\chi_{d,j}$. 
\end{Lemma}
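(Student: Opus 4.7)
The plan is to analyze $w$ combinatorially using the two increasing properties it inherits as a minimal double coset representative, and to exploit cuspidality of the $\bi^{(k)}$ to pin down the remaining freedom. Identify $[1,dp]$ with positions of $\bi$ on the source side and of $\hat\ggw^{d,j}$ on the target side; decompose the source as $I_1\sqcup\cdots\sqcup I_d$ with $I_k:=[(k-1)p+1,kp]$ corresponding to $\bi^{(k)}$, and the target into the color blocks of \S\ref{SSNot}. The condition $w\in\D^{p^d}_{dp}$ means $w$ is increasing on each $I_k$, and $w\in{}^{\nu^{d,j}}\D_{dp}$ means $w^{-1}$ is increasing on each $\ttX^{d,j}_i$ (for $i\in\{\ell,0\}\cup\{j{+}1,\dots,\ell{-}1\}$) and, separately, on each $\ttY^{d,j}_i$ and $\ttZ^{d,j}_i$ (for $1\le i\le j$). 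Since $w\cdot\bi=\hat\ggw^{d,j}$ and each $\bi^{(k)}$ has weight $\de$, $w$ bijects color-$i$ positions of $\bi$ with those of $\hat\ggw^{d,j}$, and $\bi^{(k)}$ has exactly the same color multi-set as $\hat\ggw^j$.

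For every color $i\in\{\ell,0\}\cup\{j{+}1,\dots,\ell{-}1\}$, the two increasing properties combine to force the $c_i$ color-$i$ positions in $\bi^{(k)}$ (with $c_\ell=1$, else $c_i=2$) to map in order onto the $k$-th consecutive group of $c_i$ positions in $\ttX^{d,j}_i$. Together with the natural left-to-right ordering of blocks $\ttX^{d,j}_\ell<\ttX^{d,j}_{\ell-1}<\cdots<\ttX^{d,j}_{j+1}<\ttY^{d,j}_j<\cdots<\ttY^{d,j}_1<\ttX^{d,j}_0<\ttZ^{d,j}_1<\cdots<\ttZ^{d,j}_j$ in $[1,dp]$, this determines the relative order of these colors within $\bi^{(k)}$ to agree with $\hat\ggw^j$, up to the placement of colors $1\le i\le j$.

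For $1\le i\le j$, let $a_k$ (resp.\ $b_k=2-a_k$) be the number of $i$'s in $\bi^{(k)}$ mapped to $\ttY^{d,j}_i$ (resp.\ $\ttZ^{d,j}_i$); then $\sum_k a_k=\sum_k b_k=d$. I claim $a_k=b_k=1$ for all $k$ and all such $i$. If some $a_k=2$, then by the previous paragraph and the fact that $\ttY^{d,j}_i$ lies strictly to the left of $\ttX^{d,j}_0$, both occurrences of $i$ in $\bi^{(k)}$ precede both of its $0$'s. Symmetrically, some $\bi^{(k')}$ then has both $i$'s after its $0$'s. Computing the weight of the prefix of $\bi^{(k)}$ ending at its second $i$, one uses the explicit descriptions (\ref{EPhiS})--(\ref{EPhiL}) together with Example~\ref{ExConPr} (in which $\al_\ell$ is the unique simple root $\prec\de$ while $\al_0$ and $\al_1,\dots,\al_{\ell-1}$ are all $\succ\de$), and Lemma~\ref{LCon3}, to show that this prefix weight cannot be written as a sum of positive roots $\preceq\de$, contradicting cuspidality of $\bi^{(k)}$.

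Once $a_k=b_k=1$ is established, monotonicity of $w$ on $I_k$ forces the earlier $i$ in $\bi^{(k)}$ to map to $\ttY^{d,j}_i$ and the later to $\ttZ^{d,j}_i$. Reading off the induced colors at each position of $\bi^{(k)}$ then matches the structure of $\hat\ggw^j$ exactly, so $\bi^{(k)}=\hat\ggw^j$ for all $k$; consequently $w$ coincides with the unique element of ${}^{\nu^{d,j}}\D^{p^d}_{dp}$ sending $(\hat\ggw^j)^d$ to $\hat\ggw^{d,j}$, namely $\chi_{d,j}$. The main obstacle is the root-theoretic step in the third paragraph: verifying that an unbalanced prefix weight (which differs from the ``nominal'' weight by $\pm\al_i$ for some $1\le i\le j$) fails to lie in the $\N$-span of positive roots $\preceq\de$ requires a careful case analysis invoking the classification of real roots of $A^{(2)}_{2\ell}$.
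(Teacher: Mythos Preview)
The gap is in your third paragraph. Your claim is that if $a_k=2$ for some color $i$, then the prefix of $\bi^{(k)}$ ending at its second $i$ has weight not expressible as a sum of positive roots $\preceq\de$. This is false. The structure you have already established forces $\bi^{(k)}=\ell(\ell-1)^2\cdots(j+1)^2\, j^{a_k^{(j)}}\cdots 1^{a_k^{(1)}}\, 0^2\, 1^{b_k^{(1)}}\cdots j^{b_k^{(j)}}$, so the prefix in question has weight $2\al_i+a_k^{(i+1)}\al_{i+1}+\cdots+a_k^{(j)}\al_j+2\al_{j+1}+\cdots+2\al_{\ell-1}+\al_\ell$. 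When $i=j$, or more generally when $a_k^{(i')}=2$ for every $i'$ with $i\le i'\le j$, this weight equals the long root $2\al_i+\cdots+2\al_{\ell-1}+\al_\ell$ of $\Phi'$, which has positive $\al_\ell$-coefficient and hence lies in $\Phi^{\re}_{\prec\de}$ by (\ref{ESharp}). So no cuspidality violation arises on $\bi^{(k)}$ itself; your acknowledged ``main obstacle'' is not merely a hard case analysis but is actually false as stated.

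The paper's argument avoids this by combining a \emph{local} cuspidality bound with the \emph{global} count. It shows that in every $\bi^{(r)}$ the letter immediately after $(j+1)^2$ must be $j$: if it were some $i''<j$ or $0$, the resulting prefix weight $(2\al_{j+1}+\cdots+\al_\ell)+\al_{i''}$ genuinely fails to be a sum of roots $\preceq\de$ (this is the root computation that does work). Thus $a_r^{(j)}\ge 1$ for all $r$, and then the constraint $\sum_r a_r^{(j)}=|\ttY^{d,j}_j|=d$ forces $a_r^{(j)}=1$; one then iterates for $j-1,\dots,1$. The essential point you miss is that the cuspidality violation lives in the companion word $\bi^{(k')}$ with $a_{k'}^{(j)}=0$ (whose existence you correctly note), not in $\bi^{(k)}$, and to exploit it you must work inductively downward from color $j$ rather than attack an arbitrary $i$ directly.
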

\begin{proof}
Write $\bi=i_1\cdots i_{dp}$. 
We use the notation of \S\ref{SSNot}, in particular the sets $\ttX^{d,j}_i, \ttY^{d,j}_i, \ttZ^{d,j}_i$. 
We simplify the notation
$\ttP_r:=\ttP^{p\om_d}_r:=[p(r-1)+1,pr]$ for $1\leq r\leq d$, and also 
denote $\ttX_i^{(r)}:=\{t\in\ttP_r\mid i_t=i\}$ for $i\in I$. 
The assumption $w\in{}^{\nu^{d,j}}\D^{p^d}_{dp}$ means that $w$ is increasing on each $\ttP_r$, and $w^{-1}$ is increasing on each $\ttX^{d,j}_i$ for $j<i\leq \ell$ or $i=0$ and on each $\ttY^{d,j}_i$, $\ttZ^{d,j}_i$ for $1\leq i\leq j$. The assumption that $w\cdot \bi=\hat\ggw^{d,j}$ means that $w$ maps $\bigsqcup_{r=1}^d\ttX_i^{(r)}$ onto $\ttX^{d,j}_i$ for all $i\in I$. Thus it suffices to prove that $\bi^{(r)}=\ggw^j$ for all $r=1,\dots,d$, and then it automatically follows that $w=\chi_{d,j}$.

We now claim that for $r=1,\dots,d$ the word $\bi^{(r)}$ begins with $\ell (\ell-1)^2\cdots (j+1)^2$ (i.e. every $\bi^{(r)}$ agrees with $\ggw^j$ in the first $2(\ell-j)-1$ positions). We apply inverse induction on $i=\ell,\ell-1,\dots,j$ to prove that  $\bi^{(r)}$ begins with $\ell (\ell-1)^2\cdots (i+1)^2$ 
for all $r$.
As each $\bi^{(r)}$ is cuspidal, we have that $i^{(r)}$ begins with $\ell$ for all $r$. This gives the induction base. The inductive step follows using the fact that each $\bi^{(r)}$ is cupsidal, contains only two letters equal to $i+1$, and the properties of $w$ listed in the previous paragraph. 

Now, we certainly have that $\bi^{(r)}$ begins with $\ell (\ell-1)^2\cdots (j+1)^2j$ for all $r$ since $\bi^{(r)}\in I^\de$ are cuspidal. On the other hand we now cannot have $\bi^{(r)}$ beginning with $\ell (\ell-1)^2\cdots (j+1)^2j^2$ for any $r$ since $|\ttY^{d,j}_j|=d$. So it follows that each $\bi^{(r)}$ begins with $\ell (\ell-1)^2\cdots (j+1)^2j(j-1)$. Continuing like this, we deduce that each $\bi^{(r)}$ begins with $\ell (\ell-1)^2\cdots (j+1)^2j(j-1)\cdots1$ and then even with $\ell (\ell-1)^2\cdots (j+1)^2j(j-1)\cdots1 0^2$. Now it is easy to see that in fact 
$$
\bi^{(r)}=\ell (\ell-1)^2\cdots (j+1)^2j(j-1)\cdots1 0^21\cdots j=\ggw^j
$$ 
as required.
\end{proof}

\begin{Lemma} \label{LNonZeroU}
Let $\bj\in J^d$, $j\in J$, and $u\in\Si_{dp}$ satisfy $u\cdot \hat\ggw^{\bj}=\hat\ggw^{d,j}$. If $\psi_u\hat\ggi^{\bj}\neq 0$ in $\hat C_d$ then 
$u=x\chi_{d,j}y$ for some $x\in\Si_{\nu^{d,j}}$ and $y\in\Si_{p^d}$ such that $y\cdot  \hat\ggw^{\bj}=\hat\ggw^{j^d}$. 
\end{Lemma}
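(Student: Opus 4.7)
The strategy will be to decompose $u$ via the double cosets $\Si_{\nu^{d,j}} \backslash \Si_{dp} / \Si_{p^d}$ and then to invoke Lemma~\ref{LConsecutive} on the shortest representative.

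First, I will factor $u = xu'y$ with $u' \in {}^{\nu^{d,j}}\D^{p^d}_{dp}$, $x \in \Si_{\nu^{d,j}}$ and $y \in \Si_{p^d}$, satisfying $\ttl(u) = \ttl(x) + \ttl(u') + \ttl(y)$. By concatenating reduced decompositions I will fix a reduced expression for $u$ with $\psi_u = \psi_x \psi_{u'} \psi_y$. Setting $\bi := y \cdot \hat\ggw^{\bj}$, since $y \in \Si_{p^d}$ acts within the $d$ consecutive blocks of length $p$ in $\hat\ggw^{\bj}$, we have $\bi = \bi^{(1)} \cdots \bi^{(d)}$ with each $\bi^{(r)}$ a permutation of $\hat\ggw^{j_r}$, so in particular $\bi^{(r)} \in I^\de$. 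Since $\Si_{\nu^{d,j}}$ permutes only positions of equal letters in $\hat\ggw^{d,j}$ and therefore fixes it, we obtain $u' \cdot \bi = x^{-1} u \cdot \hat\ggw^{\bj} = \hat\ggw^{d,j}$.

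Next, I will rewrite
\begin{equation*}
\psi_u \hat\ggi^{\bj} \;=\; \psi_x \psi_{u'} \psi_y \cdot 1_{\hat\ggw^{\bj}} \;=\; \psi_x \psi_{u'} \cdot 1_\bi \cdot \psi_y.
\end{equation*}
Non-vanishing in $\hat C_d$ then forces $1_\bi \neq 0$ in $\hat C_d$, so $\bi$ is a cuspidal word. The key intermediate step will be to promote cuspidality of the concatenation $\bi$ to cuspidality of each block $\bi^{(r)} \in I^\de$. The cleanest route is via the linear functional $v: Q \to \R$ defined by $v(\al) := \chi(\al) - (\chi(\de)/\height(\de)) \cdot \height(\al)$, which vanishes on $\de$ and satisfies $v(\ga) \leq 0$ iff $\ga \preceq \de$ for positive roots $\ga$; additivity of $v$ combined with the specific combinatorics of our preorder (in which $\al_\ell$ is the unique simple root with $\al_\ell \preceq \de$) yields the cuspidality of each $\bi^{(r)}$.

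Once each $\bi^{(r)}$ is shown to be cuspidal, Lemma~\ref{LConsecutive} applies directly to the words $\bi^{(1)}, \ldots, \bi^{(d)}$ together with the shortest double-coset representative $u'$: it forces $\bi^{(r)} = \hat\ggw^j$ for every $r$ and $u' = \chi_{d,j}$. Hence $\bi = \hat\ggw^{j^d}$, that is $y \cdot \hat\ggw^{\bj} = \hat\ggw^{j^d}$, and $u = x \chi_{d,j} y$ with the required properties. The hard part will be the promotion of cuspidality of $\bi$ to cuspidality of each $\bi^{(r)}$; although this should follow intuitively from the fact that "sums of roots $\preceq \de$" are preserved under stripping copies of $\de$, a rigorous proof requires either a careful inductive argument on prefixes of $\bi^{(r)}$ using the $v$-function above, or a dedicated analysis of the positive root structure of type $A_{2\ell}^{(2)}$. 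A secondary subtlety concerns the choice of reduced decomposition for $u$, but this is handled cleanly by taking one compatible with the factorization $u = xu'y$, whose existence is guaranteed by the length additivity.
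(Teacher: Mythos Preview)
Your overall architecture matches the paper's proof exactly: factor $u=xu'y$ through the double coset $\Si_{\nu^{d,j}}\backslash\Si_{dp}/\Si_{p^d}$, establish that each block $\bi^{(r)}\in I^\de$ is cuspidal, and then invoke Lemma~\ref{LConsecutive}. The only divergence is in how you obtain cuspidality of the individual $\bi^{(r)}$, and here you are making life much harder than necessary.

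You deduce from $\psi_u\hat\ggi^{\bj}\neq 0$ that $1_\bi\neq 0$ in $\hat C_d$, i.e.\ the concatenation $\bi$ is cuspidal in $I^{d\de}$, and then propose to ``promote'' this to cuspidality of each $\bi^{(r)}$ via a root-theoretic analysis using the linear functional $v$. You yourself flag this as the hard part and leave it unfinished. But no root-system analysis is needed. The paper instead observes that $\psi_y\hat\ggi^{\bj}\neq 0$ in $\hat C_d$, and since $y\in\Si_{p^d}$ this element lies in the image of the parabolic $R_{\de^d}$; by Lemma~\ref{L030216}(i) that image factors through $\bar R_{\de^d}\cong\hat C_1^{\otimes d}$, so nonvanishing forces each tensor factor $\psi_{y_r}1_{\hat\ggw^{j_r}}$ to be nonzero in $\hat C_1$, i.e.\ each $\bi^{(r)}\in I^\de_\cus$. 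This is one line. Your own route can be salvaged by the same trick: $1_\bi=1_{\bi^{(1)}}\otimes\cdots\otimes 1_{\bi^{(d)}}$ also lies in the parabolic $\hat C_1^{\otimes d}\hookrightarrow\hat C_d$, and nonvanishing of the tensor forces nonvanishing of each $1_{\bi^{(r)}}$ in $\hat C_1$. Either way, the parabolic embedding of Lemma~\ref{L030216} replaces your proposed $v$-function argument entirely.
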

\begin{proof}
We can write $u=xwy$ for $x\in\Si_{\nu^{d,j}}$, $w\in{}^{\nu^{d,j}}\D^{p^d}_{dp}$ and $y\in\Si_{p^d}$ with $\ttl(u)=\ttl(x)+\ttl(w)+\ttl(y)$. We may assume that $\psi_{u}=\psi_{x}\psi_{w}\psi_{y}$. Now $\psi_u\hat\ggi^{\bj}\neq 0$ implies $\psi_y\hat\ggi^{\bj}\neq 0$, so $y\cdot \hat\ggw^{\bj}=\bi^{(1)}\cdots\bi^{(d)}$ for some 
$\bi^{(1)},\dots,\bi^{(d)}\in I^\de_\cus$. Now, since $x \cdot \hat\ggw^{d,j}=\hat\ggw^{d,j}$, we can apply Lemma~\ref{LConsecutive} to deduce $w=\chi_{d,j}$ and $y\cdot  \hat\ggw^{\bj}=\ggw^{j^d}$. 
\end{proof}

\begin{Lemma} \label{LYY'} 
Let $Y\in\k[y_1,\dots,y_{dp}]$. Then there exists $Y'\in\k[y_1,\dots,y_{dp}]$ such that $Y\psi_{\chi_{d,j}}\hat\ggi^{j^d}=\psi_{\chi_{d,j}}Y'\hat\ggi^{j^d}$ in $\hat C_d$. 
\end{Lemma}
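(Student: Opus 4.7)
The plan is to first reduce to the case $Y = y_s$ by induction on the degree of $Y$: once we have $y_s\psi_{\chi_{d,j}}\hat\ggi^{j^d} = \psi_{\chi_{d,j}} P_s(y)\hat\ggi^{j^d}$ for each single generator, the statement for a monomial $y_{s_1}\cdots y_{s_m}$ follows by iteration, using that any polynomial in the $y$'s commutes with $\hat\ggi^{j^d} = 1_{\hat\ggw^{j^d}}$ via (R2.5). For the single-generator case, fix a reduced expression $\chi_{d,j} = s_{r_1}\cdots s_{r_l}$ and move $y_s$ rightwards past $\psi_{r_1}, \psi_{r_2}, \dots, \psi_{r_l}$ in turn using (R4) and (R5). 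The calculation yields
\[
y_s\psi_{\chi_{d,j}}1_{\hat\ggw^{j^d}}
= (-1)^{\sharp}\,\psi_{\chi_{d,j}} y_{\chi_{d,j}^{-1}(s)} 1_{\hat\ggw^{j^d}}
+ \sum_k \varepsilon_k \,\psi_{r_1}\cdots\widehat{\psi_{r_k}}\cdots\psi_{r_l}\,1_{\hat\ggw^{j^d}},
\]
with $\varepsilon_k\in\{0,\pm1\}$; the main term already has the shape we want, so it remains to annihilate every error summand in $\hat C_d$.

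A non-zero $\varepsilon_k$ arises only at a step where the $1_\bi$-term of (R5) is triggered, i.e.\ where the letters in positions $r_k, r_k+1$ of the intermediate word (reached by applying $s_{r_{k+1}}\cdots s_{r_l}$ to $\hat\ggw^{j^d}$) coincide. Set $u_k := s_{r_1}\cdots\hat s_{r_k}\cdots s_{r_l} \in \Si_{dp}$ as a group element. Because the omitted $s_{r_k}$ only transposed two equal letters at that step, $u_k\cdot\hat\ggw^{j^d}= \chi_{d,j}\cdot\hat\ggw^{j^d}=\hat\ggw^{d,j}$, hence
\[
\psi_{r_1}\cdots\widehat{\psi_{r_k}}\cdots\psi_{r_l}\,1_{\hat\ggw^{j^d}} \;=\; 1_{\hat\ggw^{d,j}}\cdot\psi_{r_1}\cdots\widehat{\psi_{r_k}}\cdots\psi_{r_l}\,1_{\hat\ggw^{j^d}}.
\]

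Now apply Lemma \ref{LBKW}(i) (if the shortened expression happens to be reduced) or (ii) (if it is not) to rewrite the error as a $\k$-linear combination $\sum_a c_a \psi_{v_a} g_a(y) 1_{\hat\ggw^{j^d}}$ in which each $v_a \in \Si_{dp}$ is represented by a genuinely reduced subword of $s_{r_1}\cdots\hat s_{r_k}\cdots s_{r_l}$; in particular $\ttl(v_a) \leq l - 1 < l = \ttl(\chi_{d,j})$. After left-multiplying by $1_{\hat\ggw^{d,j}}$, only the summands with $v_a\cdot\hat\ggw^{j^d} = \hat\ggw^{d,j}$ can survive; but for any such $v_a$, Lemma \ref{LNonZeroU} together with the bound $\ttl(v_a)<\ttl(\chi_{d,j})$ forces the reduced factorisation $v_a = x\chi_{d,j}y$ to be impossible on length grounds, so $\psi_{v_a}\hat\ggi^{j^d}=0$ in $\hat C_d$ and each surviving summand vanishes. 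The main obstacle, addressed precisely by this two-step use of Lemmas \ref{LBKW} and \ref{LNonZeroU}, is that the shortened expression $s_{r_1}\cdots\hat s_{r_k}\cdots s_{r_l}$ need not be reduced, so one cannot directly identify the error with a single $\psi_u$; rewriting into genuinely reduced factors first, and only then invoking the minimality of $\chi_{d,j}$, is what forces the collapse.
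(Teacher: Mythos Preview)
Your proof is correct and follows essentially the same strategy as the paper's: move dots past the crossings of $\psi_{\chi_{d,j}}$, identify the error terms, and kill them using Lemma~\ref{LNonZeroU}. The paper's three-line proof simply asserts that the error terms are of the form $Y''\psi_u Y'''\hat\ggi^{j^d}$ with $\ttl(u)<\ttl(\chi_{d,j})$ and $u\cdot\hat\ggw^{j^d}=\hat\ggw^{d,j}$; you make explicit the intermediate straightening via Lemma~\ref{LBKW} that is needed to pass from the raw shortened product $\psi_{r_1}\cdots\widehat{\psi_{r_k}}\cdots\psi_{r_l}$ (which need not be reduced) to a sum of genuinely reduced $\psi_{v_a}$'s before Lemma~\ref{LNonZeroU} can be invoked.
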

\begin{proof}
We consecutively move the dots corresponding to the elements $y_t$ appearing in $Y$ past the crossings of the Khovanov-Lauda diagram for  $\psi_{\chi_{d,j}}\hat\ggi^{j^d}$ using the relations (\ref{R5}). The error terms appearing on each step are of the form $Y''\psi_uY'''\hat\ggi^{j^d}$ with $\ttl(u)<\ttl(\chi_{d,j})$ and $u\cdot \hat\ggw^{j^d}=\hat\ggw^{d,j}$. By Lemma~\ref{LNonZeroU}, such error terms are zero. 
\end{proof}

\begin{Proposition} \label{LDifficult}
We have $\ggi^{d,j} C_d\ggi_{\om_d}= \lgath_{d,j} C_d\ggi_{\om_d}$. 
\end{Proposition}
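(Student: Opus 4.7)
The inclusion $\lgath_{d,j}C_d\ggi_{\om_d}\subseteq \ggi^{d,j}C_d\ggi_{\om_d}$ is immediate from Lemma~\ref{LG1}, which gives $\lgath_{d,j}=\ggi^{d,j}\lgath_{d,j}\ggi^{j^d}$: any $\lgath_{d,j}c\ggi_{\om_d}$ lies in $\ggi^{d,j}\hat C_d\ggi^{j^d}\ggi_{\om_d}\subseteq \ggi^{d,j}C_d\ggi_{\om_d}$. The substance of the proposition is the reverse inclusion. Using $\ggi_{\om_d}=\sum_{\bj\in J^d}\ggi^\bj$ and $C_d=\ggi_d\hat C_d\ggi_d$, this reduces to showing $\ggi^{d,j}\hat C_d\ggi^\bj\subseteq \lgath_{d,j}\hat C_d\ggi^\bj$ for each $\bj\in J^d$.

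By Theorem~\ref{TBasis}, elements of $\ggi^{d,j}\hat C_d\ggi^\bj$ are $\k$-linear combinations of $\ggi^{d,j}\psi_u Y\ggi^\bj$ with $u\in\Si_{dp}$ and $Y\in\k[y_1,\dots,y_{dp}]$. The surrounding idempotents force $u\cdot\hat\ggw^\bj=\hat\ggw^{d,j}$; by Lemma~\ref{LNonZeroU}, the non-vanishing such $u$ take the form $u=x\chi_{d,j}y$ with $x\in\Si_{\nu^{d,j}}$, $y\cdot\hat\ggw^\bj=\hat\ggw^{j^d}$, and $\ttl(u)=\ttl(x)+\ttl(\chi_{d,j})+\ttl(y)$, in which case we may take $\psi_u=\psi_x\psi_{\chi_{d,j}}\psi_y$; any lower-length correction terms from Lemma~\ref{LBKW} vanish by another application of Lemma~\ref{LNonZeroU} applied recursively.

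Using that $\psi_y\ggi^\bj=\hat\ggi^{j^d}\psi_y\ggi^\bj$ (since $y\cdot\hat\ggw^\bj=\hat\ggw^{j^d}$), the commutation relations~(\ref{R4}),\,(\ref{R5}) to push $\psi_y$ past the polynomial $Y$, and Lemma~\ref{LYY'} to push polynomials past $\psi_{\chi_{d,j}}$ (all error terms vanishing as above), one reduces the problem to showing that for every $x\in\Si_{\nu^{d,j}}$
$$\ggi^{d,j}\psi_x\psi_{\chi_{d,j}}\ggi^{j^d}\in \lgath_{d,j}\hat C_d\ggi^{j^d}.$$

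For this last step, maximality of $\ga_{d,j}$ combined with the factorization (\ref{EKappa}) forces $\kappa_{d,j}$ to be the longest element $w_0^{d,j}$ of $\Si_{\nu^{d,j}}$, so $\lgath_{d,j}=\psi_{w_0^{d,j}}\psi_{\chi_{d,j}}\ggi^{j^d}$. Iterating Lemma~\ref{LW_0Front} across the nil-Hecke and odd nil-Hecke tensor factors of the parabolic $R_{\nu^{d,j}}$ (whose divided power idempotent is $\ggi^{d,j}$) gives $\ggi^{d,j}R_{\nu^{d,j}}=\psi_{w_0^{d,j}}\k[y_1,\dots,y_{dp}]\cdot 1_{\hat\ggw^{d,j}}$, hence $\ggi^{d,j}\psi_x=\psi_{w_0^{d,j}}f_x(y)\cdot 1_{\hat\ggw^{d,j}}$ for some polynomial $f_x$. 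A final application of Lemma~\ref{LYY'} slides $f_x$ past $\psi_{\chi_{d,j}}$, yielding
$$\ggi^{d,j}\psi_x\psi_{\chi_{d,j}}\ggi^{j^d}=\psi_{w_0^{d,j}}\psi_{\chi_{d,j}}\tilde f_x(y)\ggi^{j^d}=\lgath_{d,j}\tilde f_x(y)\ggi^{j^d}\in \lgath_{d,j}\hat C_d\ggi^{j^d},$$
modulo error terms that again vanish by Lemma~\ref{LNonZeroU}. The hard part will be the bookkeeping here: the odd nil-Hecke factor at $i=0$ has sign conventions differing from those at $i\neq 0$ (cf.~(\ref{R3})--(\ref{R5})), so tracking the polynomials $f_x$ and $\tilde f_x$ and verifying that every commutation error term has strictly lower Bruhat length than the range required for non-vanishing under the surrounding idempotents requires care.
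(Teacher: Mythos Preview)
Your strategy matches the paper's proof closely: reduce via Theorem~\ref{TBasis} and Lemma~\ref{LNonZeroU} to elements of the form $\ggi^{d,j}\psi_x\psi_{\chi_{d,j}}\psi_y Y\ggi^\bj$, use Lemma~\ref{LW_0Front} to write $\ggi^{d,j}\psi_x=\psi_{w_0^{d,j}}f_x(y)$ with $w_0^{d,j}=w_{\nu^{d,j}}$, and slide the polynomial past $\psi_{\chi_{d,j}}$ via Lemma~\ref{LYY'}. However, the displayed equality $\psi_{w_0^{d,j}}\psi_{\chi_{d,j}}\tilde f_x(y)\ggi^{j^d}=\lgath_{d,j}\tilde f_x(y)\ggi^{j^d}$ fails, because your claim that $\kappa_{d,j}$ equals the longest element of $\Si_{\nu^{d,j}}$ is false. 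Maximality of $\ga_{d,j}$ is taken \emph{inside} $\D^{p^d}_{dp}$, and in general $w_0^{d,j}\chi_{d,j}\notin\D^{p^d}_{dp}$, so $\kappa_{d,j}\neq w_0^{d,j}$. Concretely, for $\ell=1$, $j=0$, $d=2$ one has $\nu^{2,0}=(2,4)$, $\chi_{2,0}=1\,3\,4\,2\,5\,6$, $\ga_{2,0}=2\,5\,6\,1\,3\,4$, hence $\kappa_{2,0}=2\,1\,5\,6\,3\,4$ has length $5$, not $7=\ttl(w_0^{2,0})$; indeed the two sides of your equality then have different degrees ($-12$ versus $\deg(\lgath_{2,0})=-8$).

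The paper closes exactly this gap. It writes $w_0^{d,j}=\kappa_{d,j}z$ with lengths additive and observes that $z\chi_{d,j}=\chi_{d,j}z'$ where $z'=(w_{1,j},\dots,w_{1,j})\in\Si_{p^d}$ is the $d$-fold diagonal of the longest element of $\Si_{\nu^{1,j}}$. Then $\psi_{w_0^{d,j}}\psi_{\chi_{d,j}}Y''\hat\ggi^{j^d}=\psi_{\kappa_{d,j}}\psi_{\chi_{d,j}}\psi_{z'}\hat\ggi^{j^d}Y''$, and the crucial point is that $\psi_{z'}\hat\ggi^{j^d}\hat C_d=\ggi^{j^d}\hat C_d$ by Lemma~\ref{LW_0Front} applied in each $\hat C_1$ factor; this is what converts $\hat\ggi^{j^d}$ into the divided-power idempotent $\ggi^{j^d}$ needed to recognize $\lgath_{d,j}$. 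Relatedly, your reduction step silently replaces $\hat\ggi^{j^d}$ by $\ggi^{j^d}$ before this conversion has been justified; the paper avoids this by keeping $\psi_yY\ggi^\bj$ intact on the right throughout and letting $\psi_{z'}$ perform the conversion at the end.
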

\begin{proof}
By Lemma~\ref{LG1} we have  $\ggi^{d,j}\lgath_{d,i}=\lgath_{d,i}$ so $\ggi^{d,j}C_d\ggi_{\om_d}\supseteq  \lgath_{d,i} C_d\ggi_{\om_d}$. 

For the opposite inclusion, let $0\neq c\in \ggi^{d,j} C_d\ggi_{\om_d}$. 
As $\ggi_{\om_d}=\sum_{\bj\in J^d}\ggi^{\bj}$, by Theorem~\ref{TBasis}, we may assume that $c=\ggi^{d,j}\psi_u Y \ggi^{\bj}=\ggi^{d,j}\psi_u \hat\ggi^{\bj}Y \ggi^{\bj}$ for some $\bj\in J^d$, some $Y\in\k[y_1,\dots,y_{dp}]$, and some $u\in\Si_{dp}$ satisfying $u\cdot (\hat\ggw^{\bj})=\hat\ggw^{d,j}$. 
By Lemma~\ref{LNonZeroU}, we may assume that  
$\psi_u=\psi_x\psi_{\chi_{d,j}}\psi_y$ for some $x\in\Si_{\nu^{d,j}}$ and $y\in\Si_{p^d}$ with $y\cdot  \hat\ggw^{\bj}=\hat\ggw^{j^d}$. 

Let $w_{d,j}:=w_{\nu^{d,j}}$ be the longest element of the parabolic subgroup $\Si_{\nu^{d,j}}$. By Lemma~\ref{LW_0Front}, we have 
$
\ggi^{d,j}\psi_x=\psi_{w_{d,j}}Y'
$
for some $Y'\in\k[y_1,\dots,y_{dp}]$. 
So 
\begin{align*}
c&=\ggi^{d,j}\psi_x\psi_{\chi_{d,j}}\psi_y Y \ggi^{\bj}
\\&=\ggi^{d,j}\ggi^{d,j}\psi_x\psi_{\chi_{d,j}}\hat\ggi^{j^d}\psi_y Y \ggi^{\bj}
\\&=\ggi^{d,j}\psi_{w_{d,j}}Y'\psi_{\chi_{d,j}}\hat\ggi^{j^d}\psi_y Y \ggi^{\bj}
\\&=\ggi^{d,j}\psi_{w_{d,j}}\psi_{\chi_{d,j}}Y''\hat\ggi^{j^d}\psi_y Y \ggi^{\bj}
\end{align*}
for some $Y''\in\k[y_1,\dots,y_{dp}]$, where we have used Lemma~\ref{LYY'} for the last equality.  
Recalling the notation (\ref{EKappa}), we note 
$
w_{d,j}
=\kappa_{d,j}z
$
for some element $z\in\Si_d$ such that $\ttl(w_{d,j})=\ttl(\kappa_{d,j})+\ttl(z)$. Moreover, $z\chi_{d,j}=\chi_{d,j}z'$ where $z'=(w_{1,j},\dots,w_{1,j})\in\Si_p\times\dots\times \Si_p=\Si_{p^d}\leq \Si_{dp}$ and $\ttl(\chi_{d,j}z')=\ttl(\chi_{d,j})+\ttl(z')$. Note by Lemma~\ref{LW_0Front} that 
$\psi_{z'}\hat \ggi^{j^d}\hat C_d=\ggi^{j^d}\hat C_d$. So, using (\ref{EKappaH}) and Lemma~\ref{LG1}, we get 
\begin{align*}
c&=\ggi^{d,j}\psi_{w_{d,j}}\psi_{\chi_{d,j}}Y''\hat\ggi^{j^d}\psi_y Y\ggi^{\bj}
\\&
=\ggi^{d,j}\psi_{\kappa_{d,j}}\psi_{\chi_{d,j}}\psi_{z'}\hat\ggi^{j^d}Y''\hat\ggi^{j^d}\psi_y Y\ggi^{\bj}
\\&
\subseteq \ggi^{d,j}k_{d,j}\psi_{\chi_{d,j}}\ggi^{j^d}\hat C_d\ggi^{\bj}
\\&
=\ggi^{d,j}k_{d,j}\gath_{d,j}C_d\ggi^{\bj}
\\&
=\ggi^{d,j}\lgath_{d,j}C_d\ggi^{\bj}
\\&
=\lgath_{d,j}C_d\ggi^{\bj},
\end{align*}
which completes the proof since  $C_d\ggi^{\bj}\subseteq C_d\ggi_{\om_d}$. 
\end{proof}

\begin{Lemma} 
Let $(\la,\bi)\in\Comp^\col(n,d)$. Then
$$
\ggi^{\la,\bi} C_d\ggi_{\om_d}=\lgath_{\la,\bi} C_d\ggi_{\om_d}
=\lgath_{\la,\bi}\ggi^{i_1^{\la_1}\cdots i_n^{\la_n}} C_d\ggi_{\om_d}
=\lgath_{\la,\bi}\ggi_{\om_d} C_d\ggi_{\om_d}.
$$
\end{Lemma}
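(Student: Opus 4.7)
The plan is to reduce the assertion to the rank-one case already treated in Proposition~\ref{LDifficult} by working inside the parabolic subalgebra $C_\la$, and then to absorb the outer factors using Corollary~\ref{CLa1^d}. The remaining two equalities collapse via the identity $\lgath_{\la,\bi}\ggi^{i_1^{\la_1}\cdots i_n^{\la_n}}=\lgath_{\la,\bi}$ obtained from the tensor-factor form of Lemma~\ref{LG1}.

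First I would work inside the parabolic $C_\la=C_{\la_1}\otimes\cdots\otimes C_{\la_n}$. Both idempotents $\ggi^{\la,\bi}=\ggi^{\la_1,i_1}\otimes\cdots\otimes \ggi^{\la_n,i_n}$ and $\ggi_{\om_d}=\ggi_{\om_{\la_1}}\otimes\cdots\otimes \ggi_{\om_{\la_n}}$ are tensor products of their $C_{\la_r}$-parts, and by definition $\lgath_{\la,\bi}=\lgath_{\la_1,i_1}\otimes\cdots\otimes \lgath_{\la_n,i_n}$. Applying Proposition~\ref{LDifficult} in each tensor factor yields
\[
\ggi^{\la,\bi} C_\la\ggi_{\om_d}
=\bigotimes_{r=1}^n \ggi^{\la_r,i_r}C_{\la_r}\ggi_{\om_{\la_r}}
=\bigotimes_{r=1}^n \lgath_{\la_r,i_r}C_{\la_r}\ggi_{\om_{\la_r}}
=\lgath_{\la,\bi} C_\la \ggi_{\om_d}.
\]

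Next, I would pass from $C_\la$ to $C_d$ using Corollary~\ref{CLa1^d}, which gives $\ggi_\la C_d\ggi_{\om_d}=C_\la\ggi_{\om_d}C_d\ggi_{\om_d}$. Since $\ggi^{\la,\bi}\ggi_\la=\ggi^{\la,\bi}$, left-multiplying by $\ggi^{\la,\bi}$ and using the parabolic identity just established gives
\[
\ggi^{\la,\bi}C_d\ggi_{\om_d}
=\ggi^{\la,\bi}C_\la\ggi_{\om_d}C_d\ggi_{\om_d}
=\lgath_{\la,\bi}C_\la\ggi_{\om_d}C_d\ggi_{\om_d}
\subseteq \lgath_{\la,\bi}C_d\ggi_{\om_d}.
\]
The reverse inclusion $\lgath_{\la,\bi}C_d\ggi_{\om_d}\subseteq\ggi^{\la,\bi}C_d\ggi_{\om_d}$ is immediate from $\ggi^{\la,\bi}\lgath_{\la,\bi}=\lgath_{\la,\bi}$, which follows from Lemma~\ref{LG1} applied in each tensor factor. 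This proves the first equality.

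For the last two equalities, I would note that the tensor-factor version of Lemma~\ref{LG1} also yields $\lgath_{\la,\bi}\ggi^{i_1^{\la_1}\cdots i_n^{\la_n}}=\lgath_{\la,\bi}$. Since $\ggi^{i_1^{\la_1}\cdots i_n^{\la_n}}$ is one of the pairwise orthogonal summands in the decomposition $\ggi_{\om_d}=\sum_{\bj\in J^d}\ggi^\bj$, orthogonality forces $\lgath_{\la,\bi}\ggi_{\om_d}=\lgath_{\la,\bi}\ggi^{i_1^{\la_1}\cdots i_n^{\la_n}}=\lgath_{\la,\bi}$, and substituting this into the relevant products collapses both expressions back to $\lgath_{\la,\bi}C_d\ggi_{\om_d}$.

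The only real obstacle is transferring from the parabolic identity $\ggi^{\la,\bi}C_\la\ggi_{\om_d}=\lgath_{\la,\bi}C_\la\ggi_{\om_d}$ to the full algebra $C_d$, and this is handled cleanly by Corollary~\ref{CLa1^d}; everything else is bookkeeping with divided-power idempotents.
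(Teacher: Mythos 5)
Your proof is correct and takes essentially the same route as the paper's: reduce everything to the inclusion $\ggi^{\la,\bi} C_d\ggi_{\om_d}\subseteq\lgath_{\la,\bi} C_d\ggi_{\om_d}$ via Lemma~\ref{LG1}, pass from $C_d$ to $C_\la$ using Corollary~\ref{CLa1^d}, and apply Proposition~\ref{LDifficult} factor-by-factor inside $C_\la=C_{\la_1}\otimes\cdots\otimes C_{\la_n}$. The only difference is the order of presentation (you establish the parabolic identity first, the paper invokes Corollary~\ref{CLa1^d} first), which is immaterial.
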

\begin{proof}
Since $
\lgath_{\la,\bi}=\ggi^{\la,\bi}\lgath_{\la,\bi}\ggi^{i_1^{\la_1}\cdots i_n^{\la_n}}=\ggi^{\la,\bi}\lgath_{\la,\bi}\ggi_{\om_d}$ it suffices to prove the inclusion $\ggi^{\la,\bi} C_d\ggi_{\om_d}\subseteq\lgath_{\la,\bi} C_d\ggi_{\om_d}$. 
By Corollary~\ref{CLa1^d}, we have 
$$\ggi^{\la,\bi} C_d\ggi_{\om_d}=\ggi^{\la,\bi} C_\la\ggi_{\om_d} C_d \ggi_{\om_d}.$$ 
Now, the required  inclusion 
follows from 
\begin{align*}
\ggi^{\la,\bi} C_\la\ggi_{\om_d}&=\ggi^{\la_1,i_1} C_{\la_1}\ggi_{\om_{\la_1}}
\otimes \dots\otimes 
\ggi^{\la_n,i_n} C_{\la_n}\ggi_{\om_{\la_n}}
\\&=\lgath_{\la_1,i_1} C_{\la_1}\ggi_{\om_{\la_1}}
\otimes \dots\otimes 
\lgath_{\la_n,i_n} C_{\la_n}\ggi_{\om_{\la_n}}
\\&=\lgath_{\la,\bi} C_{\la}\ggi_{\om_{d}},
\end{align*}
where we have used Proposition~\ref{LDifficult} for the second equality. 
\end{proof}


\subsection{   The algebra $\zC_d$ is non-negatively graded}
In this subsection we prove that $\zC_d$ is non-negatively graded, i.e. $\zC_d^{<0}=0$, see the notation (\ref{EA>0}).

\begin{Lemma} \label{LNonCusp}
Let $d\in\N_+$. 
The words in $I^{d\de}$ beginning in the following patterns are not cuspidal:
\begin{align*}
&\ell^d(\ell-1)^{2d}\cdots (i+1)^{2d}i^d\cdots (k+1)^dk^{d-1}(k-1)^d\cdots
 &(\text{for $1<k\leq i<\ell$}),
 \\
&\ell^d(\ell-1)^{2d}\cdots (i+1)^{2d}i^d\cdots 2^d1^{d-1}0^{2d}\cdots 
&(\text{for $1\leq i<\ell$}),
\\
&\ell^d(\ell-1)^{2d}\cdots (j+1)^{2d}j^{d}(j-1)^{2d}\cdots 
&(\text{for $2\leq j<\ell$}),
\\
&\ell^d(\ell-1)^{2d}\cdots (j+1)^{2d}j^{a_j}\cdots 1^{a_1}0^{2d}\cdots\cdots 
&(\text{for $1\leq j<\ell$ and $a_j<d$}).
\end{align*}
\end{Lemma}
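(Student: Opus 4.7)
The plan is, for each of the four patterns, to exhibit a prefix $\bj$ of the pattern such that $\wt(\bj)\in Q_+$ admits no expression as a non-negative integer combination of positive roots $\preceq\delta$. Under the convex preorder fixed at the start of Section~\ref{SGG} (see Example~\ref{ExConPr}), the positive roots $\preceq\delta$ are precisely the imaginary roots $n\delta$ for $n\ge 1$ together with the real roots of the form $-r\alpha+s\delta$ with $r\in\{1/2,1\}$ and $\alpha\in\Phi'_\sharp$, see (\ref{ESharp}). I would take $\bj$ to consist of every letter displayed, truncating after $(k-1)^d$ in Pattern~(1), after $0^{2d}$ in Pattern~(2), after $(j-1)^{2d}$ in Pattern~(3), and after $0^{2d}$ in Pattern~(4).

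First I would treat Patterns~(1) and~(3), where $\wt(\bj)$ has $\alpha_0$-coefficient zero. Suppose $\wt(\bj)=m\delta+\sum_b\beta_b$ with $m\in\N$ and each $\beta_b\in\Phi^\re_{\prec\delta}$ written via (\ref{ESharp}) as $\beta_b=-r_b\alpha_b+s_b\delta$. The $\alpha_0$-coefficients of $\delta$ and $\beta_b$ are $2$ and $2s_b$, both non-negative, so the vanishing of the $\alpha_0$-coefficient of $\wt(\bj)$ forces $m=0$ and $s_b=0$ for each $b$. Then $\beta_b=-r_b\alpha_b$ with $-\alpha_b\in\Phi'_+$, and integrality of $\beta_b$ in the root lattice rules out $r_b=1/2$; hence $\beta_b\in\Phi'_+\setminus\Phi'_\sharp$, which is precisely the set of positive $C_\ell$-roots with $\alpha_\ell$-coefficient equal to $1$, and these can be parameterized by a pair $(a_b,j_b)$ with $1\le a_b\le j_b\le\ell$ so that the $\alpha_s$-coefficient of $\beta_b$ is $0$ for $s<a_b$, equal to $1$ for $a_b\le s<j_b$ or $s=\ell$, and $2$ for $j_b\le s\le\ell-1$. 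In particular the total number of summands equals $d$ (from the $\alpha_\ell$-coefficient). For Pattern~(3), the $\alpha_s$-coefficient $2d$ for $s\in[j+1,\ell-1]$ forces all $j_b\le j+1$, while the $\alpha_{j-1}$-coefficient $2d$ forces all $j_b\le j-1$; but then each summand contributes $2$ to the $\alpha_j$-coefficient, whose total is $2d\neq d$, a contradiction. For Pattern~(1) the argument is parallel but more delicate: all $j_b\le i+1$, and the counts $N_0:=\#\{b:a_b>k\}$, $N_2:=\#\{b:j_b\le k\}$ (arising from the $\alpha_k$-coefficient $d-1$) together with $M_0:=\#\{b:a_b>k-1\}$, $M_2:=\#\{b:j_b\le k-1\}$ (arising from the $\alpha_{k-1}$-coefficient $d$) must satisfy $N_0=N_2+1$ and $M_0=M_2$, which is impossible since $M_0\ge N_0$ and $M_2\le N_2$.

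For Patterns~(2) and~(4), $\wt(\bj)$ has $\alpha_0$-coefficient $2d>0$. The $\alpha_0$-equation yields $m+\sum_b s_b=d$, while the combination ($\alpha_\ell$-coef of $\wt(\bj)$)\,$-\tfrac12$($\alpha_0$-coef of $\wt(\bj)$) vanishes; since each $\alpha_b\in\Phi'_\sharp$ is a non-negative integer combination of $\alpha_1,\ldots,\alpha_{\ell-1},-\tilde\alpha$, and only the coefficient of $-\tilde\alpha$ contributes (via $r_b$ times that coefficient) to this invariant, we deduce that no $\alpha_b$ uses $-\tilde\alpha$, so $\alpha_b$ lies in the $A_{\ell-1}$-positive root system. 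Integrality then forces $r_b=1$ and positivity of $\beta_b$ forces $s_b\ge 1$, so $\beta_b=-(\alpha_{a_b}+\cdots+\alpha_{c_b})+s_b\delta$ for some $1\le a_b\le c_b\le\ell-1$ with $s_b\ge 1$. In particular the number $B$ of real summands satisfies $B\le\sum_b s_b\le d$. The $\alpha_s$-coefficient of $\wt(\bj)$ for $1\le s\le\ell-1$ equals $2d-T_s$ with $T_s:=\#\{b:a_b\le s\le c_b\}$. For Pattern~(2), this gives $T_1=d+1>B$, a contradiction. For Pattern~(4), $T_s=0$ for $s>j$ forces all $c_b\le j$, whence $T_j=2d-a_j>d\ge B$, again a contradiction.

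The main obstacle is not any single calculation but rather the case split driven by the $\alpha_0$-coefficient of $\wt(\bj)$, together with the correct parameterization of positive roots $\prec\delta$ via the non-standard positive system $\Phi'_\sharp$. Once the key invariant ($\alpha_\ell$-coef)\,$-\tfrac12$($\alpha_0$-coef) is identified and the correct prefix truncations are chosen, each pattern reduces to a short bookkeeping argument with $\alpha_s$-coefficients.
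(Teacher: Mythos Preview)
Your proposal is correct and is precisely the ``explicit check using the definition of cuspidality and (\ref{ESharp})'' that the paper leaves to the reader. The organizing device you introduce---the invariant $(\al_\ell\text{-coef})-\tfrac12(\al_0\text{-coef})$, which vanishes on $\delta$ and is non-negative on every $\beta\in\Phi^\re_{\prec\delta}$---is a clean way to force the $(-\tilde\al)$-coefficient to zero in Patterns~(2) and~(4), and the subsequent coefficient bookkeeping is accurate in all four cases, including the edge cases $k=i$ in Pattern~(1) and $j=\ell-1$ in Pattern~(4).
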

\begin{proof}
Checked explicitly using the definition of cuspidality and (\ref{ESharp}). 
\end{proof}

\begin{Lemma} \label{Lb=1}
Let $i,j\in J$. We have:
\begin{enumerate}
\item[{\rm (i)}]  
$\ggis^{d,i}\zC_d\ggis^{d,i}=(\ggis^{d,i}\zC_d\ggis^{d,i})^{\geq 0}$ and $(\ggis^{d,i}\zC_d\ggis^{d,i})^0$ is spanned by $\ggis^{d,i}$.

\item[{\rm (ii)}] If $|i-j|=1$ then $\ggis^{d,j}\zC_d\ggis^{d,i}=(\ggis^{d,j}\zC_d\ggis^{d,i})^{>0}$.
 
\item[{\rm (iii)}] If $|i-j|>1$ then $\hat\ggi^{d,j}C_d\ggi^{d,i}=0$; in particular, 
$\ggis^{d,j}\zC_d\ggis^{d,i}=0$. 
\end{enumerate}
\end{Lemma}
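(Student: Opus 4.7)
The plan is to analyze the ungraded (divided-power) version $1_{\hat\ggw^{d,j}}\hat C_d 1_{\hat\ggw^{d,i}}$ directly, then transfer the conclusions to $\ggi^{d,j}C_d\ggi^{d,i}$ via Lemma~\ref{L140923} and to the regraded algebra $\zC_d$ via the shift formulas \eqref{EShiftsCuspidal}. By Theorem~\ref{TBasis} and the commutation rule \eqref{R2.75}, the space $1_{\hat\ggw^{d,j}}\hat C_d1_{\hat\ggw^{d,i}}$ is spanned by the images of elements of the form $\psi_w f(y)1_{\hat\ggw^{d,i}}$ with $f\in\k[y_1,\dots,y_{dp}]$ and $w\in \Si_{dp}$ satisfying $w\cdot\hat\ggw^{d,i}=\hat\ggw^{d,j}$. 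Given such $w$, fix a reduced decomposition $w=s_{r_1}\cdots s_{r_m}$ and for $0\leq k\leq m$ set $\bi_k:=s_{r_{m-k+1}}\cdots s_{r_m}\cdot\hat\ggw^{d,i}$. Inserting the idempotent $1_{\bi_k}$ at the intermediate stage gives, in $R_{d\de}$,
\[
\psi_w 1_{\hat\ggw^{d,i}}=\psi_{r_1}\cdots \psi_{r_{m-k}}\,1_{\bi_k}\,\psi_{r_{m-k+1}}\cdots \psi_{r_m}\,1_{\hat\ggw^{d,i}},
\]
so if any $\bi_k$ is non-cuspidal, this element lies in the cuspidal ideal and therefore vanishes in $\hat C_d$.

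\textbf{Part (iii).} The goal here is to rule out the existence of non-zero elements entirely. The combinatorial claim to establish is: if $|i-j|>1$, then for every $w$ with $w\cdot\hat\ggw^{d,i}=\hat\ggw^{d,j}$ and every reduced decomposition of $w$, some intermediate word $\bi_k$ matches one of the four non-cuspidal patterns of Lemma~\ref{LNonCusp}. Taking wlog $j\geq i+2$, the words $\hat\ggw^{d,i}$ and $\hat\ggw^{d,j}$ differ structurally on the central letters: in $\hat\ggw^{d,i}$ the letter $j-1$ (with $j-1>i$) occurs as a single consecutive block $(j-1)^{2d}$ immediately after $j^{2d}$, whereas in $\hat\ggw^{d,j}$ it is split as $j^d(j-1)^d$ near the front and $(j-1)^d j^d$ at the tail. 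The first simple transposition in a reduced word that modifies either the $(j-1)^{2d}$-block in $\hat\ggw^{d,i}$ or the $j^{2d}$-block before it must swap a $(j-1)$ with a $(j-2)$ or a $j$ with a $(j-1)$; in the former case, one obtains a configuration matching pattern (3) of Lemma~\ref{LNonCusp}, and in the latter case one obtains a configuration with fewer than $d$ copies of the intervening letter, matching pattern (1) or (4). A parallel analysis handles the symmetric case $i\geq j+2$.

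\textbf{Parts (i) and (ii).} Here one tracks the degree of the surviving elements under the regrading. Using $\bideg(y_s1_\bi)=((\al_{i_s}|\al_{i_s}),\|i_s\|)$ and $\bideg(\psi_r1_\bi)=-((\al_{i_r}|\al_{i_{r+1}}),\|i_r\|\,\|i_{r+1}\|)$, each factor $\psi_r$ with $i_r=i_{r+1}$ contributes non-positive degree (specifically $-2$, $-4$, or $-8$), each $\psi_r$ with $|i_r-i_{r+1}|=1$ contributes degree $+2$ or $+4$, and each dot contributes positive degree. After the shift $t_{d,i}-t_{d,j}=2d^2(i-j)$ dictated by \eqref{EReGradingC}, for (i) ($j=i$) one counts that only $w=1$ with $f=1$ produces a degree-0 element, which is $\ggi^{d,i}$ itself, while every other surviving element receives at least one net positive contribution (the bookkeeping being most transparent after applying Lemma~\ref{LYY'} and the reduction in Lemma~\ref{LNonZeroU} to bring $w$ to a canonical form modulo $\Si_{\nu^{d,i}}$). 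For (ii) ($|i-j|=1$), a similar accounting shows that every $w$ with $w\cdot\hat\ggw^{d,i}=\hat\ggw^{d,j}$ that survives the cuspidality constraint via (A) must include sufficiently many $\psi_r$'s with $|i_r-i_{r+1}|=1$ to strictly exceed the shift $|t_{d,j}-t_{d,i}|=2d^2$.

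\textbf{Main obstacle.} The crux of the proof is the combinatorial claim underlying part (iii): that no cuspidal path of simple transpositions connects $\hat\ggw^{d,i}$ and $\hat\ggw^{d,j}$ when $|i-j|>1$. The non-trivial content is that one cannot avoid all four non-cuspidal patterns of Lemma~\ref{LNonCusp} simultaneously, regardless of the chosen reduced decomposition, and the proof will require careful case analysis on which block of letters is modified first.
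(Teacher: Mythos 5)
Your overall strategy -- span $1_{\hat\ggw^{d,j}}\hat C_d1_{\ggw^{d,i}}$ by elements $\psi_w f(y)1_{\ggw^{d,i}}$, kill terms via intermediate non-cuspidal words and Lemma~\ref{LNonCusp}, then regrade -- is the same route the paper takes. However, there are several substantive gaps.

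First, you cannot wave off the case $i>j$ with ``a parallel analysis handles the symmetric case.'' The four non-cuspidal patterns of Lemma~\ref{LNonCusp} are all oriented in one direction (they all look like $\hat\ggw^{d,m}$ at the start and then degrade), so the analysis for $i>j$ is \emph{not} a mirror image of the analysis for $i<j$. The paper resolves this cleanly by invoking Lemma~\ref{LSymmetry}(ii), which gives $\ggis^{\mu,\bj}\zC_d\ggis^{\la,\bi}\simeq\ggis^{\la,\bi}\zC_d\ggis^{\mu,\bj}$ as graded superspaces and hence lets one assume $i\leq j$ throughout. You should do the same rather than asserting an unverified symmetry of the combinatorics.

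Second, and more seriously, your degree-counting argument for parts (i) and (ii) does not close. You correctly note that each crossing $\psi_r$ with $i_r=i_{r+1}$ contributes a \emph{negative} degree ($-2,-4,-8$), which means a naive sum over the factors of $\psi_w$ has no a priori lower bound. The claim that ``every other surviving element receives at least one net positive contribution'' is precisely what needs to be proved, and you give no argument for it. What the paper actually does is an inductive structural analysis: after reducing to $w\in\D^{\nu^{d,i}}_{dp}$ (which follows from $\psi_x\ggi^{d,i}=0$ for $x\in\Si_{\nu^{d,i}}$, not from Lemmas~\ref{LYY'} and~\ref{LNonZeroU}), one shows by induction on $k$ that any nonzero $1_{\ggw^{d,j}}\psi_w1_{\ggw^{d,i}}$ forces $w:\ttY_k^{(i)}\nearrow\ttY_k^{(j)}$ and $w:\ttZ_k^{(i)}\nearrow\ttZ_k^{(j)}$ for all $k$, the point being that otherwise one can peel off a right factor $\psi_x$ with $x\cdot\hat\ggw^{d,i}$ matching pattern (1) or (2) of Lemma~\ref{LNonCusp}. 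For $j=i$ this forces $w=1$; for $j=i+1$ it pins down $w$ to a single permutation of degree exactly $4d^2$. Only once $w$ is determined does degree bookkeeping become trivial. Your outline skips this crucial step and thus leaves the whole degree bound unproven.

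Third, your citations of Lemmas~\ref{LYY'} and~\ref{LNonZeroU} are misplaced: both concern permutations $u$ with $u\cdot\hat\ggw^{\bj}=\hat\ggw^{d,j}$ where $\bj\in J^d$, i.e.\ going from the $\om_d$-level word to the $(d)$-level word, not from $\hat\ggw^{d,i}$ to $\hat\ggw^{d,j}$. The reduction to $w\in\D^{\nu^{d,i}}_{dp}$ in the present context has a different and more elementary justification.

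Finally, for part (iii) the argument about ``the first simple transposition that modifies either the $(j-1)^{2d}$-block or the $j^{2d}$-block'' is imprecise: reduced decompositions are not unique, ``first'' is not well-defined, and you do not consider the transposition swapping a $(j+1)$ with a $j$. The paper's version is much shorter: for $j>i+1$, exhibit a length-additive factorization $w=yx$ with $x\cdot\hat\ggw^{d,i}=\ell^d(\ell-1)^{2d}\cdots(j+1)^{2d}j^d(j-1)^{2d}\cdots$, which is pattern (3) of Lemma~\ref{LNonCusp} directly.
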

\begin{proof}
Throughout the proof we will use the notation (\ref{ESegDec}),\,(\ref{ESegDec1}),\,(\ref{ENEArrow}). 

By Lemma~\ref{LSymmetry}(ii), we may assume that $i\leq j$. 
Recall that $\ggi^{d,i}=1_{\ggw^{d,i}}$ and $\ggi^{d,j}=1_{\ggw^{d,j}}$. 
As $t_{d,i}-t_{d,j}=2d^2(i-j)$,  it suffices to prove the following claims:

(a) $(1_{\ggw^{d,i}}C_d1_{\ggw^{d,i}})^{< 0}=0$ and $(1_{\ggw^{d,i}}C_d1_{\ggw^{d,i}})^0$ is spanned by $1_{\ggw^{d,i}}$, 

(c) $(1_{\ggw^{d,i+1}}C_d1_{\ggw^{d,i}})^{\leq 2d^2}=0$, 

(d) $1_{\hat\ggw^{d,j}}C_d1_{\ggw^{d,i}}=0$ if $j-i>1$. 

By Theorem~\ref{TBasis}, we have that  
$1_{\hat\ggw^{d,j}}C_d1_{\ggw^{d,i}}$ is spanned by the elements of the form $1_{\hat\ggw^{d,j}}F(y_1,\dots,y_{dp})\psi_w1_{\ggw^{d,i}}$, where $F(y_1,\dots,y_{dp})$ is a polynomial and $w\in \Si_{dp}$ satisfies $w\cdot \hat\ggw^{d,i}=\hat\ggw^{d,j}$, which is equivalent to $w(\ttX^{d,i}_k)=\ttX^{d,j}_k$ for all $k\in I$. 
Moreover, since $\psi_x1_{\ggw^{d,i}}=0$ for any $x$ in the parabolic subgroup $\Si_{\nu^{d,i}}$, we may assume that $w\in\D^{\nu^{d,i}}_{dp}$, i.e. $w$ is increasing on all $\ttX^{(i)}_k$ with $i<k\leq \ell$ or $k=0$ and on all $\ttY^{(i)}_k,\ttZ^{(i)}_k$ with $1\leq k\leq i$. Since $i\leq j$, we have 
that $w:\ttX^{(i)}_k\nearrow \ttX^{(j)}_k$ for $j<k\leq \ell$ and $k=0$, and $w:\ttX^{(i)}_k\nearrow \ttY^{(j)}_k\sqcup  \ttZ_k^{(j)}$ for $i< k\leq j$. 
Since all $y_r$'s have positive degrees it now suffices to prove the following claims:

(a$'$) $1_{\ggw^{d,i}}\psi_w1_{\ggw^{d,i}}\in C_d$ is non-zero only if $w=1$, 

(b$'$) a non-zero $1_{\ggw^{d,i+1}}\psi_w1_{\ggw^{d,i}}\in C_d$ has degree $>2d^2$ 

(c$'$) $1_{\hat\ggw^{d,j}}\psi_w1_{\ggw^{d,i}}=0$ in $C_d$ for $|i-j|>1$.

To prove (a$'$), suppose $1_{\ggw^{d,i}}\psi_w1_{\ggw^{d,i}}\in C_d$ is non-zero. To prove that $w=1$, it suffices to prove that $w:\ttY_k^{(i)}\nearrow \ttY_k^{(j)}$ and  $w:\ttZ_k^{(i)}\nearrow \ttZ_k^{(j)}$ for all $k=1,\dots,i$. We apply induction on $k$. Suppose $k\geq 1$ and we already know that $w:\ttY_l^{(i)}\nearrow \ttY_l^{(j)}$ and  $w:\ttZ_l^{(i)}\nearrow \ttZ_l^{(j)}$ for all $1\leq l<k$. Let $t=\max(\ttY_k^{(i)})$. To prove that $w:\ttY_k^{(i)}\nearrow \ttY_k^{(j)}$ and  $w:\ttZ_k^{(i)}\nearrow \ttZ_k^{(j)}$, it suffices to show that $w(t)\in \ttY_k^{(j)}$. If $w(t)\in \ttZ_k^{(j)}$, we can write $w=yx$ with $\ttl(w)=\ttl(y)+\ttl(x)$,   $x(r)=r$ for all $r\in \ttY_k^{(i)}\setminus\{t\}$ and $x(r)=r-1$ for all $r\in \ttY_{k-1}^{(i)}$ (if $k=1$ this should be interpreted as $x(r)=r-1$ for all $r\in \ttX^{(i)}_0$). So we may assume that $\psi_w=\psi_y\psi_x$, and  then $\psi_x1_{\ggw^{d,i}}=0$ since 
$$x\cdot \hat \ggw^{d,i}=
\left\{
\begin{array}{ll}
\ell^d(\ell-1)^{2d}\cdots (i+1)^{2d}i^d\cdots (k+1)^dk^{d-1}(k-1)^d &\hbox{if $k>1$,}\\
\ell^d(\ell-1)^{2d}\cdots (i+1)^{2d}i^d\cdots 2^d1^{d-1}0^{2d} &\hbox{if $k=1$,}
\end{array}
\right.
$$
which is not cuspidal by Lemma~\ref{LNonCusp}. 

To prove (b$'$), as in the case $j=i$ we prove by induction on $k$ that $1_{\ggw^{d,i+1}}\psi_w1_{\ggw^{d,i}}$ is non-zero only if 
$w:\ttY_k^{(i)}\nearrow \ttY_k^{(i+1)}$ and  $w:\ttZ_k^{(i)}\nearrow \ttZ_k^{(i+1)}$ for all $k=1,\dots,i$. Therefore 
$
\deg(1_{\ggw^{d,i+1}}\psi_w1_{\ggw^{d,i}})=4d^2
>2d^2$ as required. 

To prove (c$'$), let $j>i+1$.  We can write $w=yx$ with $\ttl(w)=\ttl(y)+\ttl(x)$ and 
$$
x\cdot \hat \ggw^{d,i}=\ell^d(\ell-1)^{2d}\cdots (j+1)^{2d}j^d(j-1)^{2d}\cdots
$$
which is not cuspidal by Lemma~\ref{LNonCusp}. So $1_{\hat \ggw^{d,j}}C_d1_{\ggw^{d,i}}=0$. 
\end{proof}

Recall the notation of \S\ref{SSNot}. 
For $\mu\in\EC(n,d)$ and $i\in I$, we define the permutation 
$q_{\mu,i}\in\Si_{dp}$ by the following conditions:

(1) $q_{\mu,i}:\ttX^{d,i}_k\nearrow \bigsqcup_{s=1}^n\ttX_{s,k}^{\mu,i}$ for $i<k\leq \ell$ or $k=0$;

(2) $q_{\mu,i}:\ttY^{d,i}_k\nearrow \bigsqcup_{s=1}^n\ttY_{s,k}^{\mu,i}$ 
and $q_{\mu,i}:\ttZ^{d,i}_k\nearrow \bigsqcup_{s=1}^n\ttZ_{s,k}^{\mu,i}$
for $1\leq k\leq i$;

\noindent
Note that $q_{\mu,i}\cdot\hat\ggw^{d,i}=\hat\ggw^{\mu,i}$ and $w_{(d),i}=1$. Define $a_{\mu,i}:=\ggi^{\mu,i}\psi_{q_{\mu,i}}\ggi^{d,i}\in \ggi^{\mu,i}C_d\ggi^{d,i}$. We have the corresponding element $\za_{\mu,i}
\in \ggis^{\mu,i}\zC_d\ggis^{d,i}$.

\begin{Lemma} \label{LMinAB=1} 
Let $i\in J$ and $(\mu,\bj)\in\EC^\col(n,d)$. Then we have 
$\ggis^{\mu,\bj}\zC_d\ggis^{d,i}=(\ggis^{\mu,\bj}\zC_d\ggis^{d,i})^{\geq 0}$. Moreover:
\begin{enumerate}
\item[{\rm (i)}] $(\ggis^{\mu,i}\zC_d\ggis^{d,i})^{0}$ is spanned by $\za_{\mu,i}$;
\item[{\rm (ii)}] $\ggis^{\mu,\bj}\zC_d\ggis^{d,i}=(\ggis^{\mu,\bj}\zC_d\ggis^{d,i})^{>0}$ unless $j_s=i$ for all $s=1,\dots,n$;
\item[{\rm (iii)}] $\ggis^{\mu,\bj}\zC_d\ggis^{d,i}=0$ unless $|j_s-i|\leq 1$ for all $s=1,\dots,n$.
\end{enumerate} 
\end{Lemma}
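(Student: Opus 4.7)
I would extend the proof of Lemma~\ref{Lb=1} (which handles the single-segment case $n=1$) to general $(\mu,\bj)\in\EC^\col(n,d)$, using the same non-cuspidality obstructions from Lemma~\ref{LNonCusp}. The overall strategy is: bound the permutations $w$ that can appear in basis elements $\ggi^{\mu,\bj}F(y)\psi_w\ggi^{d,i}$ via Theorem~\ref{TBasis}, then either kill such elements outright or produce a matching lower bound on the degree.

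First I would set up the reduction exactly as in the opening paragraph of the proof of Lemma~\ref{Lb=1}. By Theorem~\ref{TBasis}, $\ggi^{\mu,\bj} C_d \ggi^{d,i}$ is spanned by elements $\ggi^{\mu,\bj}F(y_1,\dots,y_{dp})\psi_w\ggi^{d,i}$ with $F$ a polynomial and $w\in\Si_{dp}$ satisfying $w\cdot\hat\ggw^{d,i}=\hat\ggw^{\mu,\bj}$; absorbing the parabolic action of $\Si_{\nu^{d,i}}$ into $\ggi^{d,i}$ as in \emph{loc.~cit.}, I may take $w\in\D^{\nu^{d,i}}_{dp}$, so that $w$ is increasing on each $\ttX^{d,i}_k$ (for $i<k\leq\ell$ or $k=0$) and on each $\ttY^{d,i}_k$, $\ttZ^{d,i}_k$ (for $1\leq k\leq i$). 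The constraint on $w$ then forces it to distribute each such segment among the positions of letter $k$ inside the concatenation $\hat\ggw^{\mu_1,j_1}\cdots\hat\ggw^{\mu_n,j_n}$.

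For parts (ii) and (iii) I would mimic claims (b$'$) and (c$'$) of the proof of Lemma~\ref{Lb=1} block by block. If $|j_s-i|>1$ for some $s$, then any admissible $w$ factors as $w=yx$ with $\ttl(w)=\ttl(y)+\ttl(x)$ so that $x\cdot\hat\ggw^{d,i}$ has a prefix matching one of the forbidden forms of Lemma~\ref{LNonCusp}, whence $\psi_x\ggi^{d,i}=0$ and part (iii) follows. If $|j_s-i|\leq 1$ for all $s$ but some $j_s\neq i$, a similar factorization shows that any valid $w$ must pay at least one additional positive-degree crossing beyond the minimum achieved in the ``clean'' case $\bj=i^n$; combined with the fact that polynomial factors $F(y)$ only increase the total degree, this gives $\deg_{C_d}(F\psi_w)>t_{\mu,\bj}-t_{d,i}$, which is equivalent to part (ii) and also establishes the non-negativity claim in this range of $\bj$. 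For part (i), the same obstructions (used positively) pin down $w$ uniquely as $q_{\mu,i}$, namely $w$ must map $\ttX^{d,i}_k$ (resp.\ $\ttY^{d,i}_k$, $\ttZ^{d,i}_k$) increasingly into $\bigsqcup_s\ttX^{\mu,i}_{s,k}$ (resp.\ $\bigsqcup_s\ttY^{\mu,i}_{s,k}$, $\bigsqcup_s\ttZ^{\mu,i}_{s,k}$); a weighted inversion count then yields $\deg_{C_d}(\psi_{q_{\mu,i}})=t_{\mu,i}-t_{d,i}$, so $\za_{\mu,i}$ is of degree $0$ in $\zC_d$, is non-zero by Theorem~\ref{TBasis}, and spans the degree-zero component since any $F(y)$ of positive $y$-degree strictly raises the grade.

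The main obstacle is the weighted inversion count in the last step. The subtlety is that $q_{\mu,i}$ has inversions not only between strands of distinct labels $k\neq k'$ (contributing $-(\al_k|\al_{k'})\geq 0$) but also between $\ttY$-blocks and $\ttZ$-blocks of the same label $k$ (contributing $-(\al_k|\al_k)<0$); these negative contributions must cancel precisely against the positive contributions from crossings of adjacent Dynkin labels to produce the closed-form value $t_{\mu,i}-t_{d,i}=(8\ell-4i)\sum_{s<t}\mu_s\mu_t$. Verifying this cancellation requires a careful case analysis of how the letters of $\hat\ggw^{d,i}$ are redistributed among the $n$ blocks of $\mu$; everything else is a straightforward block-indexed repetition of the $n=1$ argument already established in Lemma~\ref{Lb=1}.
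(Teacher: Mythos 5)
Your plan shares the key ingredients with the paper's proof (the Theorem~\ref{TBasis} spanning set, the Lemma~\ref{LNonCusp} non-cuspidality obstructions, the element $q_{\mu,i}$), but organizes the passage from $n=1$ to general $n$ differently. The paper proves Lemma~\ref{LMinAB=1} by induction on $n$: it factorizes $w=xyu$ with $\ttl(w)=\ttl(x)+\ttl(y)+\ttl(u)$, where $u$ extracts the strands destined for the first block of $\mu$, so that $(x,y)\in\Si_{\mu_1p}\times\Si_{(d-\mu_1)p}$. Lemma~\ref{LNonCusp} then forces $u\cdot\hat\ggw^{d,i}=\hat\ggw^{\mu_1,i}\hat\ggw^{d-\mu_1,i}$, the peel-off degree is a clean one-block computation $\deg(\psi_u\ggi^{d,i})=4\mu_1(2\ell-i)(d-\mu_1)$, the constraints on $x$ and $y$ are exactly the $n=1$ case and the shorter case $(\bar\mu,\bar\bj)$ handled by the inductive hypothesis, and a short identity glues the three degree estimates into $t_{\mu,\bj}-t_{d,i}$. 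This completely sidesteps the global weighted inversion count for $q_{\mu,i}$ that your direct approach would require.

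Your own diagnosis of the ``main obstacle'' is correct and is precisely where your plan is incomplete: verifying $\deg_{C_d}(\psi_{q_{\mu,i}}\ggi^{d,i})=t_{\mu,i}-t_{d,i}=(8\ell-4i)\sum_{s<t}\mu_s\mu_t$ directly does require the cancellation you describe between $\ttY$-vs-$\ttZ$ same-label inversions (negative contributions $-(\al_k|\al_k)$) and adjacent-label inversions, and you leave this unverified. Less visibly, your treatment of part (ii) --- ``any valid $w$ must pay at least one additional positive-degree crossing beyond the minimum achieved in the clean case $\bj=i^n$'' --- glosses over the fact that the target shift $t_{\mu,\bj}-t_{d,i}$ itself varies with the colors $j_s$; a correct direct argument must track color-dependent degree corrections block by block, not merely exhibit one extra crossing. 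Both issues dissolve automatically under the paper's one-block-at-a-time recursion, which makes the bookkeeping compositional. I would recommend adopting the $w=xyu$ factorization; without it, the computational core of your proposal is genuinely harder and, as written, not carried out.
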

\begin{proof}
We apply induction on $n$, the base case $n=1$ being Lemma~\ref{Lb=1}. Let $n>1$. It suffices to prove the following claims:

(a) $\ggi^{\mu,\bj}C_d\ggi^{d,i}=(\ggi^{\mu,\bj}C_d\ggi^{d,i})^{\geq t_{d,i}- t_{\mu,\bj}}$;

(b)  $\ggi^{\mu,\bj}C_d\ggi^{d,i}=(\ggi^{\mu,\bj}C_d\ggi^{d,i})^{>t_{d,i}-t_{\mu,\bj}}$ unless $j_s=i$ for all $s=1,\dots,n$, in which case we have that $(\ggi^{\mu,i}C_d\ggi^{d,i})^{t_{\mu,i}-t_{d,i}}$ is spanned by $a_{\mu,i}$;

(c) $\hat \ggi^{\mu,\bj}C_d\ggi^{d,i}=0$ if $|i-j_s|>1$ for some $s\in\{1,\dots,n\}$.

We have increasing disjoint unions of the segments 
\begin{align*}
&[1,dp]=\ttX_\ell^{d,i}\sqcup \ttX_{\ell-1}^{d,i}\sqcup\dots \sqcup \ttX_{i+1}^{d,i}\sqcup \ttY^{d,i}_i\sqcup\dots\sqcup \ttY^{d,i}_1\sqcup
\ttX^{d,i}_0\sqcup \ttZ^{d,i}_1\sqcup\dots \ttZ^{d,i}_i,
\\
&[1,p\mu_1]=\ttX_{1,\ell}^{\mu,j_1}\sqcup \ttX_{1,\ell-1}^{\mu,j_1}\sqcup\dots \sqcup \ttX_{1,{j_1}+1}^{\mu,j_1}\sqcup \ttY^{\mu,j_1}_{1,j_1}\sqcup\dots\sqcup \ttY^{\mu,j_1}_{1,1}\sqcup
\ttX^{\mu,j_1}_{1,0}\sqcup \ttZ^{\mu,j_1}_{1,1}\sqcup\dots \ttZ^{\mu,j_1}_{1,j_1}.
\end{align*}


By Theorem~\ref{TBasis}, we have that 
$\hat\ggi^{\mu,\bj}C_d\ggi^{d,i}$ is spanned by the elements of the form $\hat\ggi^{\mu,\bj}F(y_1,\dots,y_{dp})\psi_w\ggi^{d,i}$, where $F(y_1,\dots,y_{dp})$ is a polynomial and $w\in \Si_{dp}$ satisfies $w\cdot \hat\ggw^{d,i}=\hat\ggw^{\mu,\bj}$.  
Moreover, since $\psi_x\ggi^{d,i}=0$ for any $x$ in the parabolic subgroup $\Si_{\nu^{d,i}}$, we may assume that $w\in\D^{\nu^{d,i}}_{dp}$, i.e. $w$ is increasing on all $\ttX^{d,i}_k$ with $i<k\leq \ell$ or $k=0$ and on all $\ttY^{d,i}_k,\ttZ^{d,i}_k$ with $1\leq k\leq i$. 
Since all $y_r$'s have positive degrees it suffices to prove that for 
a non-zero $\hat \ggi^{\mu,\bj}\psi_w\ggi^{d,i}\in C_d$ we have:

(1) $\deg(\ggi^{\mu,\bj}\psi_w\ggi^{d,i})\geq t_{d,i}-t_{\mu,\bj}$,

(2)  $\deg(\ggi^{\mu,\bj}\psi_w\ggi^{d,i})> t_{d,i}-t_{\mu,\bj}$, unless $w=q_{\mu,i}$ and $j_s=i$ for all $s=1,\dots,n$,

(3) $|j_s-i|\leq 1$ for all $s=1,\dots,n$.

For segments $\ttX$ and $\ttY$ we will use the notation $\ttX<_l\ttY$ if we can write $\ttY$ as an increasing disjoint union of the segments $\ttY=\ttX\sqcup\ttZ$.

Since $w$ is increasing on all $\ttX^{d,i}_k$ with $i<k\leq \ell$ or $k=0$, we have for such $k$ that $\tilde \ttX_{k}:=w^{-1}(\ttX_{1,k}^{\mu,j_1}) <_l \ttX_k^{d,i}$.  Since $w$ is increasing on $\ttY^{d,i}_k,\ttZ^{d,i}_k$ with $1\leq k\leq i$, we have for such $k$ that $\tilde \ttX_{k}:=w^{-1}(\ttX_{1,k}^{\mu,j_1})=
\tilde\ttY_{k}\sqcup \tilde\ttZ_{k}$ with $
\tilde\ttY_{k} <_l \ttY_k^{d,i}$ and $
\tilde\ttZ_{k} <_l \ttZ_k^{d,i}$.
Set
$$
\tilde \ttX:=\tilde\ttX_{\ell}\sqcup \tilde\ttX_{\ell-1}\sqcup\dots \sqcup \tilde\ttX_{i+1}\sqcup \tilde\ttY_{i}\sqcup\dots\sqcup \tilde\ttY_{1}\sqcup
\tilde\ttX_{0}\sqcup \tilde\ttZ_{1}\sqcup\dots \tilde\ttZ_{i}\subseteq [1,dp].
$$
Denote $\ttx_k:=|\tilde\ttX_k|$, $\tty_k:=|\tilde\ttY_{k}|$ and $\ttz_k:=|\tilde\ttZ_{k}|$ for all admissible $k$.

Let $u\in\Si_{dp}$ be the permutation which maps $\tilde X$ increasingly onto $[1,\mu_1p]$ and $[1,dp]\setminus\tilde X$ increasingly onto $[\mu_1p+1,dp]$. Then we can write $w=xyu$ with $\ttl(w)=\ttl(x)+\ttl(y)+\ttl(u)$ and $(x,y)\in\Si_{\mu_1p}\times\Si_{dp-\mu_1p}<\Si_{dp}$. Note that $
u\cdot\hat\ggw^{d,i}=\bi^{(1)}\bi^{(2)}$, where
\begin{align*}
\bi^{(1)}&:=\ell^{\,\ttx_\ell}(\ell-1)^{\,\ttx_{\ell-1}}\cdots (i+1)^{\,\ttx_{i+1}}i^{\,\tty_{i}}\cdots 1^{\,\tty_{1}}0^{\,\ttx_{0}}1^{\,\ttz_{1}}\cdots i^{\,\ttz_{i}},
\\
\bi^{(2)}&:=\ell^{d\ell-\ttx_\ell}(\ell-1)^{2d\ell-\ttx_{\ell-1}}\cdots (i+1)^{2d\ell-\ttx_{i+1}}i^{d\ell-\tty_{i}}\cdots 1^{d\ell-\tty_{1}}0^{2d\ell\ttx_{0}}1^{d\ell-\ttz_{1}}\cdots i^{d\ell-\ttz_{i}}.
\end{align*}
So, by Lemma~\ref{LNonCusp}, $u\cdot\hat\ggw^{d,i}$ is non-cuspidal and $\psi_u\ggi^{d,i}$ is zero in $C_d$, unless $\bi^{(1)}=\hat\ggw^{\mu_1,i}$ and $\bi^{(2)}=\ggw^{d-\mu_1,i}$. So we may assume that $\bi^{(1)}=\hat\ggw^{\mu_1,i}$ and $\bi^{(2)}=\ggw^{d-\mu_1,i}$. Then  we also have $x\cdot\hat\ggw^{\mu_1,i}=\hat\ggw^{\mu_1,j_1}$ and $y\cdot\hat\ggw^{d-\mu_1,i}=\hat\ggw^{\bar\mu,\bar\bj}$, where we have set
$\bar\mu:=(\mu_2,\dots,\mu_n)$ and $\bar\bj:=j_2\cdots j_n$. 

An elementary computation shows that 
$
\deg(\psi_u\ggi^{d,i})=4\mu_1(2\ell-i)(d-\mu_1).
$ 
Now, by the inductive assumption, we have that:

(1$'$) $\deg(\ggi^{\mu_1,j_1}\psi_x\ggi^{\mu_1,i})\geq t_{(\mu_1),j_1}-t_{(\mu_1),i}$ and $\deg(\ggi^{\bar\mu,\bar\bj}\psi_y\ggi^{d-\mu_1,i})\geq t_{\bar\mu,\bar\bj}-t_{(d-\mu_1),i}$;

(2$'$)  $\deg(\ggi^{\mu_1,j_1}\psi_x\ggi^{\mu_1,i})>t_{(\mu_1),j_1}-t_{(\mu_1),i}$ unless $j_1=i$ and $x=1$; moreover, 
 $\deg(\ggi^{\bar\mu,\bar\bj}\psi_y\ggi^{d-\mu_1,i})> t_{\bar\mu,\bar\bj}-t_{(d-\mu_1),i}$ unless $j_s=i$ for all $s=2,\dots,n$  and $y=q_{\bar\mu,i}$. 

(3$'$) $\hat\ggi^{\mu_1,j_1}\psi_x\ggi^{\mu_1,i}=0$ unless $|j_1-i|\leq 1$ and $\hat\ggi^{\bar\mu,\bar\bj}\psi_y\ggi^{d-\mu_1,i}=0$ unless $|j_s-i|\leq 1$ for all $s=2,\dots,n$. 

Another elementary computation shows that 
$$
4\mu_1(2\ell-i)(d-\mu_1)+t_{(\mu_1),j_1}-t_{(\mu_1),i}+t_{\bar\mu,\bar\bj}-t_{(d-\mu_1),i}=t_{\mu,\bj}-t_{d,i}
$$
and $q_{\mu,i}=q_{\bar\mu,i}u$, 
so (1),(2),(3) above follow from (1$'$),(2$'$),(3$'$), respectively. 
\end{proof}

For $(\la,\bi)\in\Comp^\col(d)$ and $j\in J$, we define 
$$
d_j(\la,\bi):=\sum_{r\,\,\text{with}\,\, i_r=j}\la_r,
$$ 
and, recalling the notation (\ref{ELaJ}), set 
\begin{equation}\label{EUdLaBi}
\ud(\la,\bi):=(d_0(\la,\bi),\dots,d_{\ell-1}(\la,\bi))\in \Comp(J,d).
\end{equation}

\begin{Theorem} \label{TNonNeg}
The algebra $\zC_d$ is non-negatively graded. Also, $\ggis^{\mu,\bj}\zC_d^0\ggis^{\la,\bi}=0$ unless $ \ud(\la,\bi)= \ud(\mu,\bj)$. 
\end{Theorem}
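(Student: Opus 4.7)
The plan is to prove both assertions simultaneously by induction on $d$, with the base case $d=1$ following immediately from Lemma~\ref{Lb=1}. For the inductive step, since $\ggis^{\mu,\bj}\zC_d\ggis^{\la,\bi}$ is a direct summand of $\ggis_\mu\zC_d\ggis_\la$ as a graded superspace, it is enough to control $\ggis_\mu\zC_d\ggis_\la$ for every pair $\la,\mu\in\EC(d)$. When $\la=(d)$ or $\mu=(d)$, both claims follow directly from Lemma~\ref{LMinAB=1} (applied in combination with Lemma~\ref{LSymmetry}(ii) to handle the right-hand case): part~(i) identifies the degree-zero component precisely when the colorings of the two sides agree, which in this situation is equivalent to $\ud(\mu,\bj)=\ud((d),i)=d\cdot e_i$; parts~(ii) and (iii) force vanishing or strict positivity otherwise.

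When both $\la$ and $\mu$ have at least two parts, every $\la_r$ and every $\mu_s$ is strictly less than $d$, so by the inductive hypothesis each $\zC_{\la_r}$ and each $\zC_{\mu_s}$ is non-negatively graded, and hence so are the parabolic subalgebras $\zC_\la\simeq\zC_{\la_1}\otimes\cdots\otimes\zC_{\la_n}$ and $\zC_\mu$. Identifying $\zC_d\ggis_\la\simeq\GGIS^d_\la\zC_\la$, the shifted Gelfand-Graev Mackey Theorem~\ref{CGGMackeyS} yields a filtration of $\ggis_\mu\zC_d\ggis_\la\simeq\GGRS^d_\mu\GGIS^d_\la\zC_\la$ by graded $\zC_\mu$-subsupermodules with sub-quotients
$$
\GGIS^\mu_{\mu\cap x\la}\bigl({}^x(\ggis_{x^{-1}\mu\cap\la}\zC_\la)\bigr),\qquad x\in{}^\mu\D^\la_d.
$$
Each such sub-quotient is non-negatively graded: $\ggis_{x^{-1}\mu\cap\la}\zC_\la$ is a direct summand of the non-negatively graded $\zC_\la$, the twist ${}^x$ preserves degrees (as $(\de,\de)=0$), and the Gelfand-Graev induction $\GGIS^\mu_{\mu\cap x\la}=\zC_\mu\ggis_{\mu\cap x\la}\otimes_{\zC_{\mu\cap x\la}}-$ preserves non-negativity since $\zC_\mu$ is non-negatively graded. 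This establishes the first assertion.

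For the degree-zero assertion in the mixed case, we refine the above by tracking the color content $\ud$ through the filtration. The inductive hypothesis applied tensor-factor by tensor-factor forces any degree-zero element of $\ggis^{x^{-1}\mu\cap\la,\bk}\zC_\la\ggis^{\la,\bi}$ to satisfy $\ud(x^{-1}\mu\cap\la,\bk)=\ud(\la,\bi)$; the twist ${}^x$ permutes parts together with their colors and so preserves $\ud$; and the inductive hypothesis applied to $\zC_\mu$ then transports this constraint through $\GGIS^\mu_{\mu\cap x\la}$ to force $\ud(\mu,\bj)=\ud(\la,\bi)$ on any non-zero degree-zero contribution to $\ggis^{\mu,\bj}\zC_d\ggis^{\la,\bi}$. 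The main technical obstacle is this last bookkeeping step: one must verify carefully that the $\ud$-invariant is preserved by the Gelfand-Graev induction in the presence of the degree-shifts built into the regrading of $\zC_\mu$. This should ultimately reduce, via the tensor structure $\zC_\mu=\zC_{\mu_1}\otimes\cdots\otimes\zC_{\mu_m}$ and the inductive hypothesis on each factor, to the single-block cases already handled by Lemma~\ref{LMinAB=1}.
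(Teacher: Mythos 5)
Your proof is correct and follows the paper's strategy — induction on $d$, with the single-part cases reduced to Lemmas~\ref{LMinAB=1} and \ref{LSymmetry}(ii), the shifted Gelfand-Graev Mackey Theorem~\ref{CGGMackeyS} to control the general case, and inductive tracking of the weight $\ud$ through the filtration. The one substantive divergence is in \emph{which} Mackey decomposition you invoke: you apply Mackey directly to the full compositions $\la$ and $\mu$, indexing the filtration by ${}^\mu\D^\la_d$; the paper first coarsens both sides to two-part compositions $(\la_1,d-\la_1)$ and $(\mu_1,d-\mu_1)$ via Corollary~\ref{CGGsIngId} and then applies Mackey with $m=n=2$, so that each sub-quotient has a fixed four-block shape. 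Your version has more Mackey terms but each is controlled the same way, and it arguably gives a cleaner route to the non-negativity claim (the paper's assertion that inductive non-negativity of $\zC_{\la_1}$ and $\zC_{d-\la_1}$ "implies" non-negativity of $\zC_d\ggis^{\la,\bi}$ is left terse, whereas your Mackey filtration argument spells this out explicitly). The hedge in your final paragraph about the $\ud$-bookkeeping is unnecessary: since every sub-quotient in the Mackey filtration is non-negatively graded, the degree-zero part of the tensor product $\ggis^{\mu,\bj}\zC^0_\mu\ggis_{\mu\cap x\la}\otimes_{\zC^0_{\mu\cap x\la}}{}^x\bigl(\ggis_{x^{-1}\mu\cap\la}\zC^0_\la\ggis^{\la,\bi}\bigr)$ is non-zero only when the idempotent in the middle matches on both sides, at which point the inductive hypothesis applied to $\zC_\la$ forces the middle weight to equal $\ud(\la,\bi)$, the twist preserves $\ud$, and the inductive hypothesis applied to $\zC_\mu$ forces $\ud(\mu,\bj)$ to equal the middle weight — exactly as you sketch. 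This is essentially identical to the accounting in the paper's proof.
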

\begin{proof}
Let $\la=(\la_1,\dots,\la_m)$ and $\mu=(\mu_1,\dots,\mu_n)$. 
If $\min(m,n)=1$ then by Lemma~\ref{LSymmetry}(ii), we may assume that $m=1$, in which case the theorem follows from Lemma~\ref{LMinAB=1}. So we may assume that $m,n\geq 2$. 

Let $\bar\la=(\la_2,\dots,\la_m)$, $\bar\bi=i_2\cdots i_m$, and $\bar\mu=(\mu_2,\dots,\mu_n)$, $\bar\bj=j_2\cdots j_n$. Note that 
$ \ud(\la,\bi)= \ud((\la_1),i_1)+ \ud(\bar\la,\bar\bi)$ and $ \ud(\mu,\bj)= \ud((\mu_1),j_1)+ \ud(\bar\mu,\bar\bj)$. 

By Corollary~\ref{CGGsIngId}, we have 
$$\zC_d\ggis^{\la,\bi}\simeq \GGIS_{\la_1,d-\la_1}^d(\zC_{\la_1}\ggis^{\la_1,i_1}\boxtimes \zC_{d-\la_1}\ggis^{\bar\la,\bar\bi}).
$$ 
By the inductive assumption, $\zC_{\la_1}$ and $\zC_{d-\la_1}$ are non-negatively graded, which now implies that so is $\zC_d\ggis^{\la,\bi}$. This implies the first claim. 

To prove the second claim, from the Mackey Theorem~\ref{CGGMackeyS}, we have that 
$$
\ggis_{\mu_1,d-\mu_1}\zC_d\ggis^{\la,\bi}\simeq \GGRS^d_{\mu_1,d-\mu_1}\GGIS_{\la_1,d-\la_1}^d(\zC_{\la_1}\ggis^{\la_1,i_1}\boxtimes \zC_{d-\la_1}\ggis^{\bar\la,\bar\bi})
$$
has a filtration with subquotients of the form 
$$
\GGIS^{\mu_1,d-\mu_1}_{r,s;t,u}\,\,{}^x\big((\GGRS^{\la_1}_{r,t}\zC_{\la_1}\ggis^{\la_1,i_1})\boxtimes(\GGRS^{d-\la_1}_{s,u}\zC_{d-\la_1}\ggis^{\bar\la,\bar\bi})\big).
$$
It follows that $\ggis^{\mu,\bj}\zC_d\ggis^{\la,\bi}$ has a filtration with subquotients of the form 
$$
\big((\ggis^{\mu_1,j_1}\zC_{\mu_1}\ggis_{r,s})\boxtimes(\ggis^{\bar\mu,\bar\bj}\zC_{d-\mu_1}\ggis_{t,u})\big)\otimes_{\zC_{r,s;t,u}}{}^x\big((\ggis_{r,t}\zC_{\la_1}\ggis^{\la_1,i_1})\boxtimes(\ggis_{s,u}\zC_{d-\la_1}\ggis^{\bar\la,\bar\bi})\big).
$$

Recalling the concatenation notation (\ref{EConcat}), we have 
\begin{align*}
\ggis^{\mu_1,j_1}\zC_{\mu_1}\ggis_{r,s}&=\bigoplus_{(\rho,\br)\in\EC^\col(r),\, (\si,\bs)\in\EC^\col(s)} \ggis^{\mu_1,j_1}\zC_{\mu_1}\ggis^{\rho\si,\br\bs},
\\
\ggis^{\bar\mu,\bar\bj}\zC_{d-\mu_1}\ggis_{t,u}&=\bigoplus_{(\tau,\bt)\in\EC^\col(t),\, (\upsilon,\bu)\in\EC^\col(u)} \ggis^{\bar\mu,\bar\bj}\zC_{d-\mu_1}\ggis^{\tau\upsilon,\bt\bu},
\\
\ggis_{r,t}\zC_{\la_1}\ggis^{\la_1,i_1}&=\bigoplus_{(\rho',\br')\in\EC^\col(r),\, (\tau',\bt')\in\EC^\col(t)} \ggis^{\rho'\tau',\br'\bt'}\zC_{\la_1}\ggis^{\la_1,i_1},
\\
\ggis_{s,u}\zC_{d-\la_1}\ggis^{\bar\la,\bar\bi}&=\bigoplus_{(\si',\bs')\in\EC^\col(s),\, (\upsilon',\bu')\in\EC^\col(u)} \ggis^{\si'\upsilon',\bs'\bu'}\zC_{d-\la_1}\ggis^{\bar\la,\bar\bi}. 
\end{align*}

By the inductive assumption, we have for all $(\rho,\br)\in\EC^\col(r)$, $(\si,\bs)\in\EC^\col(s)$, $(\tau,\bt)\in\EC^\col(t)$ and $(\upsilon,\bu)\in\EC^\col(u)$: 
\begin{enumerate}
\item[$\bullet$] $(\ggis^{\mu_1,j_1}\zC_{\mu_1}\ggis^{\rho\si,\br\bs})^0=0$ unless $ \ud(\rho,\br)+ \ud(\si,\bs)= \ud((\mu_1),j_1)$;
\item[$\bullet$] $(\ggis^{\bar\mu,\bar\bj}\zC_{d-\mu_1}\ggis^{\tau\upsilon,\bt\bu})^0=0$ unless $ \ud(\tau,\bt)+ \ud(\upsilon,\bu)= \ud(\bar\mu,\bar\bj)$;
\item[$\bullet$] $(\ggis^{\rho'\tau',\br'\bt'}\zC_{\la_1}\ggis^{\la_1,i_1})^0=0$ unless $ \ud(\rho',\br')+ \ud(\tau',\bt')= \ud((\la_1),i_1)$;
\item[$\bullet$] $(\ggis^{\si'\upsilon',\bs'\bu'}\zC_{d-\la_1}\ggis^{\bar\la,\bar\bi})^0=0$ unless $ \ud(\si',\bs')+ \ud(\upsilon',\bu')= \ud(\bar\la,\bar\bi)$.
\end{enumerate}
Finally, note that 
\begin{align*}
&\big((\ggis^{\mu_1,j_1}\zC_{\mu_1}\ggis^{\rho\si,\br\bs})\boxtimes(\ggis^{\bar\mu,\bar\bj}\zC_{d-\mu_1}\ggis^{\tau\upsilon,\bt\bu})\big)
\\
&\otimes_{\zC_{r,s;t,u}}\hspace{-1mm}{}^x\big((\ggis^{\rho'\tau',\br'\bt'}\zC_{\la_1}\ggis^{\la_1,i_1})\boxtimes(\ggis^{\si'\upsilon',\bs'\bu'}\zC_{d-\la_1}\ggis^{\bar\la,\bar\bi})\big)=0
\end{align*}
unless $(\rho,\br)=(\rho',\br')$, $(\si,\bs)=(\si',\bs')$, $(\tau,\bt)=(\tau',\bt')$ and $(\upsilon,\bu)=(\upsilon',\bu')$. 
So $(\ggis^{\mu,\bj}\zC_d\ggis^{\la,\bi})^0=0$ unless 
\begin{align*}
 \ud(\mu,\bj)&= \ud((\mu_1),j_1)+ \ud(\bar\mu,\bar\bj)\\&= \ud(\rho,\br)+ \ud(\si,\bs)+ \ud(\tau,\bt)+ \ud(\upsilon,\bu)
\\&= \ud(\rho',\br')+ \ud(\si',\bs')+ \ud(\tau',\bt')+ \ud(\upsilon',\bu')
\\&= \ud((\la_1),i_1)+ \ud(\bar\la,\bar\bi)
\\&= \ud(\la,\bi)
\end{align*}
as required. 
\end{proof}

For $j\in J$, we have the idempotent  
\begin{equation}\label{EFDJ}
\ggis_{d,j}:=\sum_{\la\in \EC(d)}\ggis^{\la,j}\in\zC_d
\end{equation}
(this is not to be confused with the idempotents $\ggi^{d,j}\in C_d$ or $\ggis^{d,j}\in\zC_d$, cf. (\ref{EGGIOneColorIdNot'})). 

\begin{Corollary} \label{C030924}
Let $\la\in\Comp(d)$ and $j\in J$. Then 
$\zC_d^0\ggis^{\la,j}=\ggis_{d,j}\zC_d^0\ggis^{\la,j}$ and $\ggis^{\la,j}\zC_d^0=\ggis^{\la,j}\zC_d^0\ggis_{d,j}$
\end{Corollary}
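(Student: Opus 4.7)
My plan is to deduce this directly from Theorem~\ref{TNonNeg}. The key observation is that the vanishing criterion $\ggis^{\mu,\bi}\zC_d^0\ggis^{\la,\bi'}=0$ unless $\ud(\mu,\bi)=\ud(\la,\bi')$ becomes very restrictive when the right-hand idempotent $\ggis^{\la,j}$ is monochromatic, since the invariant $\ud(\la,j)=\ud(\la,j^n)$ is supported on the single coordinate $j\in J$. Without loss of generality we may assume $\la\in\EC(d)$, for if $\la\in\Comp(n,d)$ has zero parts then $\ggw^{\la,j}$ coincides with $\ggw^{\la',j}$ for the essential composition $\la'$ obtained by deleting the zero parts, and hence $\ggis^{\la,j}=\ggis^{\la',j}$.

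First, I would use the orthogonal idempotent decomposition $1_{\zC_d}=\sum_{(\mu,\bi)\in\EC^\col(d)}\ggis^{\mu,\bi}$ to write
$$
\zC_d^0\ggis^{\la,j}=\bigoplus_{(\mu,\bi)\in\EC^\col(d)}\ggis^{\mu,\bi}\zC_d^0\ggis^{\la,j}.
$$
Next, I would compute $\ud(\la,j)$. Since every color in $(\la,j^n)$ equals $j$, we have $d_j(\la,j)=|\la|=d$ and $d_k(\la,j)=0$ for $k\neq j$; write $\ud_j$ for this tuple.

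By Theorem~\ref{TNonNeg}, the summand $\ggis^{\mu,\bi}\zC_d^0\ggis^{\la,j}$ can only be nonzero when $\ud(\mu,\bi)=\ud_j$. The condition $d_k(\mu,\bi)=0$ for every $k\neq j$, combined with the fact that $\mu\in\EC(m,d)$ has \emph{all} parts strictly positive, forces every entry of $\bi$ to equal $j$. In other words, only colored compositions of the form $(\mu,\bi)=(\mu,j^m)=(\mu,j)$ with $\mu\in\EC(d)$ can contribute.

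Finally, using the definition (\ref{EFDJ}) of $\ggis_{d,j}$, I conclude
$$
\zC_d^0\ggis^{\la,j}=\bigoplus_{\mu\in\EC(d)}\ggis^{\mu,j}\zC_d^0\ggis^{\la,j}=\ggis_{d,j}\zC_d^0\ggis^{\la,j}.
$$
The second equality $\ggis^{\la,j}\zC_d^0=\ggis^{\la,j}\zC_d^0\ggis_{d,j}$ follows by the symmetric argument, decomposing on the right. There is no serious obstacle here: the corollary is essentially a bookkeeping consequence of the color-preserving nature of $\zC_d^0$ established in Theorem~\ref{TNonNeg}.
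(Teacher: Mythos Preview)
Your proof is correct and takes essentially the same approach as the paper's: decompose via the idempotent decomposition $1_{\zC_d}=\sum_{(\mu,\bi)\in\EC^\col(d)}\ggis^{\mu,\bi}$ and apply the second statement of Theorem~\ref{TNonNeg}. You spell out in more detail why $\ud(\mu,\bi)=\ud(\la,j)$ forces $\bi=j^m$ (using that essential compositions have all parts positive), which the paper leaves implicit.
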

\begin{proof}
By (\ref{EGad}), we have 
$$\zC_d\ggis^{\la,j}=\sum_{(\mu,\bj)\in\EC^\col(d)}\ggis^{\mu,\bj}\zC_d\ggis^{\la,j}.
$$
By Theorem~\ref{TNonNeg}, we have $\ggis^{\mu,\bj}\zC_d^0\ggis^{\la,j}=0$ unless $\bj$ is of the form $j^k$, which implies the first equality. The second equality is proved similarly. 
\end{proof}

\subsection{On weight spaces $\ggis^\bla\zC_d^0\ggis^\bmu$}
\label{SSWtSpaces}
Recall from (\ref{EBeta}) that we consider the multicompositions from $\Comp^J(n,d)$ as elements of $\Comp^\col(n\ell,d)$ via the embedding $\ttb:\Comp^J(n,d)\to \Comp^\col(n\ell,d)$. So, for  $\bla\in \Comp^J(n,d)$, we use the notation 
\begin{equation}\label{EBlaNot}
\ggis^\bla:=\ggis^{\ttb(\bla)}=\ggis^{\tta(\bla),0^n\cdots (\ell-1)^n},\ 
\ud(\bla):=\ud(\ttb(\bla)),\ \text{etc.}
\end{equation}
Note that $
\ud(\bla)=(|\la^{(0)}|,\dots, |\la^{(\ell-1)}|).
$
The second statement of Theorem~\ref{TNonNeg} yields:

\begin{Lemma} \label{LBiWeightSpaceTriv}
If $\bla,\bmu\in\Comp^J(n,d)$ and $\ud(\bla)\neq \ud(\bmu)$ then $\ggis^\bla \zC_d^0\ggis^\bmu=0$.
\end{Lemma}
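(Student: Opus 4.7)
The plan is to reduce this to the second statement of Theorem~\ref{TNonNeg} by unpacking the definitions of the notation introduced in (\ref{EBlaNot}).

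First, I would verify the key compatibility: for $\bla\in\Comp^J(n,d)$ the quantity $\ud(\bla):=\ud(\ttb(\bla))$ as defined via the embedding $\ttb$ from (\ref{EBeta}) agrees with the obvious tuple $(|\la^{(0)}|,\dots,|\la^{(\ell-1)}|)$. Writing $\ttb(\bla)=(\tta(\bla),\bk^{(n)})$ with $\bk^{(n)}=0^n 1^n\cdots (\ell-1)^n$, and recalling from (\ref{EUdLaBi}) that
\[
d_j(\la,\bi)=\sum_{r:\, i_r=j}\la_r,
\]
I read off that the positions $r$ of $\bk^{(n)}$ equal to $j$ are precisely those indexing the $n$ parts $\la^{(j)}_1,\dots,\la^{(j)}_n$ of $\tta(\bla)$. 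Hence $d_j(\ttb(\bla))=\la^{(j)}_1+\cdots+\la^{(j)}_n=|\la^{(j)}|$, so
\[
\ud(\bla)=(|\la^{(0)}|,\dots,|\la^{(\ell-1)}|),
\]
as asserted in the text following (\ref{EBlaNot}).

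Next, suppose $\ud(\bla)\neq\ud(\bmu)$. By the above, this means $\ud(\ttb(\bla))\neq\ud(\ttb(\bmu))$. Applying the second statement of Theorem~\ref{TNonNeg} to the colored compositions $(\la,\bi):=\ttb(\bla)$ and $(\mu,\bj):=\ttb(\bmu)$ gives $\ggis^{\ttb(\bmu)}\zC_d^0\ggis^{\ttb(\bla)}=0$, i.e.\ $\ggis^\bmu\zC_d^0\ggis^\bla=0$. To obtain the statement in the required orientation $\ggis^\bla\zC_d^0\ggis^\bmu=0$, I would invoke the isomorphism of graded superspaces from Lemma~\ref{LSymmetry}(ii), which yields $\ggis^\bmu\zC_d\ggis^\bla\simeq \ggis^\bla\zC_d\ggis^\bmu$ and hence the analogous equality for the degree zero components.

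There is no genuine obstacle here; the entire content of the lemma is the translation between the colored composition language of Theorem~\ref{TNonNeg} and the multicomposition language via the embedding $\ttb$. The only point requiring any care is confirming the direction $\ggis^\bla\zC_d^0\ggis^\bmu$ versus $\ggis^\bmu\zC_d^0\ggis^\bla$, which is handled by Lemma~\ref{LSymmetry}(ii) (and is in any case symmetric in $\bla$ and $\bmu$ under the hypothesis $\ud(\bla)\neq\ud(\bmu)$).
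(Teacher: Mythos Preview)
Your proof is correct and takes the same approach as the paper, which simply notes that the lemma is immediate from the second statement of Theorem~\ref{TNonNeg}. Your invocation of Lemma~\ref{LSymmetry}(ii) is unnecessary: since the hypothesis $\ud(\bla)\neq\ud(\bmu)$ is symmetric, you can apply Theorem~\ref{TNonNeg} directly with the roles of $\bla$ and $\bmu$ swapped to obtain $\ggis^\bla\zC_d^0\ggis^\bmu=0$ without any extra step (as you yourself observe parenthetically at the end).
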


So we concentrate on the weight spaces $\ggis^\bla \zC_d^0\ggis^\bmu=0$ for $\ud(\bla)= \ud(\bmu)$. The next lemma gives an upper bound on the size of these weight spaces when both $\bla$ and $\bmu$ concentrated in one color $j$. We will later prove that this upper bound is sharp, i.e. $\ggis^{\la,j}\zC_d^0\ggis^{\mu,j}$ is free of rank $|{}^\la\D_d^\mu|$.

\begin{Lemma} \label{LDoubleCosets}
Let $\la,\mu\in\Comp(d)$ and $j\in J$. Then the graded $\k$-superpsace $\ggis^{\la,j}\zC_d^0\ggis^{\mu,j}$ is spanned by $|{}^\la\D_d^\mu|$ elements. 
\end{Lemma}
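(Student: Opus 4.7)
The strategy is to apply the Shifted Gelfand-Graev Mackey Theorem~\ref{CGGMackeyS} to produce a filtration of $\ggis^{\la,j}\zC_d\ggis^{\mu,j}$ indexed by ${}^\la\D_d^\mu$, and then to bound the degree-$0$ component of each sub-quotient by one dimension. By Corollary~\ref{CGGsIngId}, $\zC_d\ggis^{\mu,j}\simeq\GGIS^d_\mu(\zC_\mu\ggis^{\mu,j})$, so $\GGRS^d_\la\GGIS^d_\mu(\zC_\mu\ggis^{\mu,j})\simeq\ggis_\la\zC_d\ggis^{\mu,j}$. Theorem~\ref{CGGMackeyS} supplies a filtration of this with sub-quotients
\[
S_x\simeq\GGIS^\la_{\la\cap x\mu}\,{}^x(\GGRS^\mu_{x^{-1}\la\cap\mu}\zC_\mu\ggis^{\mu,j}),\qquad x\in{}^\la\D_d^\mu,
\]
and left-truncating by $\ggis^{\la,j}$ yields a filtration of $\ggis^{\la,j}\zC_d\ggis^{\mu,j}$ with sub-quotients $\ggis^{\la,j}S_x$. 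It therefore suffices to show $\dim_\k(\ggis^{\la,j}S_x)^0\leq 1$ for every $x$.

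Fix $x\in{}^\la\D_d^\mu$ with associated matrix $A=(a_{r,s})\in M(\la,\mu)$. Using the parabolic tensor decomposition (\ref{EzCParIdentify}), I will write
\[
\ggis^{\la,j}S_x\simeq(\ggis^{\la,j}\zC_\la\ggis_{\la\cap x\mu})\otimes_{\zC_{\la\cap x\mu}}{}^x\Bigl(\bigotimes_{s=1}^n\ggis_{(a_{1,s},\ldots,a_{m,s})}\zC_{\mu_s}\ggis^{\mu_s,j}\Bigr).
\]
Theorem~\ref{TNonNeg} together with Lemma~\ref{LBiWeightSpaceTriv} forces any non-zero degree-$0$ contribution to live in the uniform-color-$j$ sub-sum of both outer and inner modules. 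Lemma~\ref{Lb=1}(i) identifies the degree-$0$ part of the base algebra $(\ggis^{\la\cap x\mu,j^N}\zC_{\la\cap x\mu}\ggis^{\la\cap x\mu,j^N})^0$ as the semisimple algebra $\bigoplus_\eta\k$, with $\eta$ ranging over essential refinements of $\la\cap x\mu$, while Lemma~\ref{LMinAB=1} (applied symmetrically via Lemma~\ref{LSymmetry}(ii)) identifies each refinement-component of the outer and inner factors with a one-dimensional span (e.g.\ spanned by elements of the form $\za_{\eta|_{\la_r},j}$ and $\za_{\eta|_{\mu_s},j}$ up to Mackey twist). The twist ${}^x$ acts as the natural bijection between $\la$-refinements and $\mu$-refinements of the non-zero entries of $A$, so the tensor product over the semisimple base algebra collapses to a single one-dimensional contribution per $x$. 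Summing over $x$ gives the claimed bound $|{}^\la\D_d^\mu|$.

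The main obstacle is the final collapse. Each of the outer factor $\ggis^{\la,j}\zC_\la\ggis_{\la\cap x\mu}$ and the inner factor, taken in degree $0$, is a direct sum of one-dimensional pieces indexed by refinements of the relevant sub-compositions, so a priori the degree-$0$ count per $x$ could be much larger than one. The key point is that the base algebra $(\ggis^{\la\cap x\mu,j^N}\zC_{\la\cap x\mu}\ggis^{\la\cap x\mu,j^N})^0$ is semisimple and diagonal with respect to these refinement decompositions (by Lemma~\ref{Lb=1}(i) and Theorem~\ref{TNonNeg}); tensoring over it therefore enforces the matching $\eta^{\rm out}=\eta^{\rm in}$, and since ${}^x$ provides a unique such matching determined by $A$, exactly one refinement survives per Mackey sub-quotient. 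Bookkeeping the zero entries of $A$ and keeping the grading shift parameters $t_{\mu,\bj}$, $\eps_{\mu,\bj}$ consistent across the Mackey twist is the main technical hurdle, but no essentially new idea is required beyond the lemmas already cited.
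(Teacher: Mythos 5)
Your proof attempt follows the same architecture as the paper's: apply the Shifted Gelfand-Graev Mackey Theorem~\ref{CGGMackeyS} to get a filtration indexed by ${}^\la\D_d^\mu$, use the non-negativity from Theorem~\ref{TNonNeg} to pass to degree zero, and then bound each sub-quotient's degree-zero piece by one dimension. You correctly identify the final collapse as the crux. However, the justification you give for the collapse is not correct.

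First, you misquote Lemma~\ref{Lb=1}(i). That lemma says $(\ggis^{d,i}\zC_d\ggis^{d,i})^{0}$ is spanned by the single element $\ggis^{d,i}$; applied componentwise, $(\ggis^{\la\cap x\mu,j^N}\zC_{\la\cap x\mu}\ggis^{\la\cap x\mu,j^N})^0$ is therefore a \emph{single} copy of $\k$, not the direct sum $\bigoplus_\eta\k$ over refinements $\eta$ that you describe. Second, and more importantly, the base ring of the Mackey tensor product is the full parabolic $\zC^0_{\la\cap x\mu}$, not the idempotent truncation $\ggis^{\la\cap x\mu,j^N}\zC^0_{\la\cap x\mu}\ggis^{\la\cap x\mu,j^N}$; the full $\zC^0_{\la\cap x\mu}$ contains many idempotents $\ggis^{\nu,\bi}$ and nontrivial morphisms between them (e.g.\ $\ggis^{\nu,j}\zC^0_{\la\cap x\mu}\ggis^{\nu',j}$ is generically nonzero by Lemma~\ref{LMinAB=1}), so your assertion that it is "semisimple and diagonal with respect to the refinement decomposition" is not what the cited lemmas say and is the exact point where the one-dimensionality of each Mackey layer has to be earned. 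The paper instead invokes the second claim of Theorem~\ref{TNonNeg} to restrict the outer factor to $\ggis^{\la,j}\zC^0_\la\ggis^{\la\cap x\mu,j}$ and the inner factor to $\ggis^{x^{-1}\la\cap\mu,j}\zC^0_\mu\ggis^{\mu,j}$, and then uses Lemma~\ref{LMinAB=1} directly (together with Lemma~\ref{LSymmetry}(ii)) to identify each of these two pieces as one-dimensional. To make your argument complete you need to replace your "semisimple/diagonal base" step with an argument of this type; as written, the collapse is asserted rather than proven.
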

\begin{proof}
By the Mackey Theorem~\ref{CGGMackeyS}, 
$$
\ggis^{\la,j}\zC_d\ggis^{\mu,j}=\ggis^{\la,j}\GGRS^d_\la\GGIS_\mu^d\zC_\mu\ggis^{\mu,j}
$$
has filtration with sub-quotients 
$$
\ggis^{\la,j}\GGIS^\la_{\la\cap x\mu}{}^{x}(\GGRS^\mu_{x^{-1}\la\cap\mu}\zC_\mu\ggis^{\mu,j})
=\ggis^{\la,j}\zC_\la\ggis_{\la\cap x\mu}\otimes_{\zC_{\la\cap x\mu}}
{}^{x}(\ggis_{x^{-1}\la\cap\mu}\zC_\mu\ggis^{\mu,j}).
$$
Since all algebras are non-negatively graded by Theorem~\ref{TNonNeg}, we deduce that the graded $\k$-supermodule $\ggis^{\la,j}\zC_d^0\ggis^{\mu,j}$ has filtration with sub-quotients 
$$
\ggis^{\la,j}\zC^0_\la\ggis_{\la\cap x\mu}\otimes_{\zC^0_{\la\cap x\mu}}
{}^{x}(\ggis_{x^{-1}\la\cap\mu}\zC^0_\mu\ggis^{\mu,j})\qquad(x\in{}^\la\D_d^\mu).
$$
By the second claim of Theorem~\ref{TNonNeg}, for each $x$ we have 
\begin{align*}
\ggis^{\la,j}\zC^0_\la\ggis_{\la\cap x\mu}
=\ggis^{\la,j}\zC^0_\la\ggis^{\la\cap x\mu,j}
\quad \text{and}\quad
\ggis_{x^{-1}\la\cap\mu}\zC_\mu^0\ggis^{\mu,j}=\ggis^{x^{-1}\la\cap\mu,j}\zC^0_\mu\ggis^{\mu,j}.
\end{align*}
By Lemma~\ref{LMinAB=1}, $\ggis^{x^{-1}\la\cap\mu,j}\zC^0_\mu\ggis^{\mu,j}$ is spanned by one element. The same is true for $\ggis^{\la,j}\zC^0_\la\ggis^{\la\cap x\mu,j}$ by Lemmas~\ref{LMinAB=1} and \ref{LSymmetry}(ii). 
The result follows. 
\end{proof}

Our next goal is to prove Proposition~\ref{PDoubleCosetsGen} generalizing Lemma~\ref{LDoubleCosets}. Recall from (\ref{ERegrParabolic}) and (\ref{EzCParIdentify}) that for a composition $\ud=(d_0,\dots,d_{\ell-1})\in\Comp(J,d)$, we have the parabolic subalgebra $\zC_\ud=\zC_{d_0}\otimes\dots\otimes\zC_{d_{\ell-1}}\subseteq \ggis_\ud C_d\ggis_\ud$. 

\begin{Proposition}\label{PDoubleCosetsGen} Let $\bla,\bmu\in\Comp^J(n,d)$ with 
$$
\ud(\bla)= \ud(\bmu)=:\ud=(d_0,\dots,d_{\ell-1}).
$$
Then 
\begin{equation}\label{E030924}
\ggis^\bla \zC_d^0\ggis^\bmu=\ggis^\bla \zC_\ud^0\ggis^\bmu\simeq (\ggis^{\la^{(0)},0} \zC_{d_0}^0\ggis^{\mu^{(0)},0}) \otimes\dots\otimes (\ggis^{\la^{(\ell-1)},\ell-1} \zC_{d_{\ell-1}}^0\ggis^{\mu^{(\ell-1)},\ell-1}).
\end{equation}
In particular, the graded $\k$-superpsace $\ggis^{\bla}\zC_d^0\ggis^{\bmu}$ is spanned by 
$\prod_{j\in J}|{}^{\la^{(j)}}\D_{d_j}^{\,\mu^{(j)}}|=|{}^{\tta(\bla)}\D_\ud^{\,\tta(\bmu)}|
$ 
elements. 
\end{Proposition}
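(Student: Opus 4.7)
The plan is to prove the two displayed equalities separately, and then derive the spanning estimate. The tensor product decomposition is the easier direction: under the identification $\zC_\ud = \zC_{d_0} \otimes \cdots \otimes \zC_{d_{\ell-1}}$ from (\ref{EzCParIdentify}), the embedding $\ttb$ of (\ref{EBeta}) gives the idempotent factorizations
$$\ggis^\bla = \ggis^{\la^{(0)}, 0} \otimes \cdots \otimes \ggis^{\la^{(\ell-1)}, \ell-1}, \qquad \ggis^\bmu = \ggis^{\mu^{(0)}, 0} \otimes \cdots \otimes \ggis^{\mu^{(\ell-1)}, \ell-1}$$
directly from the definitions, yielding $\ggis^\bla \zC_\ud \ggis^\bmu \simeq \bigotimes_{j} \ggis^{\la^{(j)}, j} \zC_{d_j} \ggis^{\mu^{(j)}, j}$. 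Restricting to degree zero is compatible with the tensor product since each $\zC_{d_j}$ is non-negatively graded by Theorem~\ref{TNonNeg}.

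For the first equality, the inclusion $\supseteq$ is immediate. For $\subseteq$, I would apply the Shifted Gelfand-Graev Mackey Theorem (Theorem~\ref{CGGMackeyS}) with $\la = \mu = \ud$ and $V = \zC_\ud$, obtaining a filtration of $\ggis_\ud \zC_d \ggis_\ud$ whose sub-quotients, indexed by $x \in {}^\ud\D_d^\ud$, are $\GGIS_{\ud \cap x\ud}^\ud \, {}^x(\ggis_{x^{-1}\ud \cap \ud} \zC_\ud)$. Left-right truncation by $\ggis^\bla$ and $\ggis^\bmu$ then produces a filtration of $\ggis^\bla \zC_d \ggis^\bmu$, and by non-negativity the degree zero part inherits a filtration whose $x$-th sub-quotient is
$$\ggis^\bla \zC_\ud^0 \ggis_{\ud \cap x\ud} \otimes_{\zC_{\ud \cap x\ud}^0} {}^x(\ggis_{x^{-1}\ud \cap \ud} \zC_\ud^0 \ggis^\bmu).$$
The $x = 1$ piece is precisely $\ggis^\bla \zC_\ud^0 \ggis^\bmu$, so it suffices to show that the pieces for $x \neq 1$ vanish.

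This vanishing is the main obstacle, and it is resolved by combinatorics on the matrix encoding of $x$. Expanding $\ggis_{\ud \cap x\ud}$ as a sum of $\ggis^{\rho, \br}$ over essential colored refinements of $\ud \cap x\ud$ and invoking the second claim of Theorem~\ref{TNonNeg}, the summand $\ggis^\bla \zC_\ud^0 \ggis^{\rho, \br}$ is zero unless $\ud(\rho, \br) = \ud$, which forces every part of $\rho$ lying in the $j$-th block of $\ud$ to carry color $j$. Writing $x = w_A$ for $A = (a_{j,k}) \in M(\ud, \ud)$ via (\ref{EDLaMuBijMat}), an analogous analysis on the right factor (with rows and columns of $A$ swapped under ${}^x$) forces the part $a_{j,k}$ of $\rho'$ to carry color $k$. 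The twist ${}^x$ identifies matching parts across the tensor product, so the two coloring requirements can be reconciled only when $a_{j,k} = 0$ for $j \neq k$; combined with $\sum_k a_{j,k} = \sum_k a_{k, j} = d_j$, this forces $A$ to be diagonal, hence $x \in \Si_\ud$, so $x = 1$ since $x \in {}^\ud\D_d^\ud$. Finally, the spanning estimate follows by applying Lemma~\ref{LDoubleCosets} to each tensor factor, together with the bijection ${}^{\tta(\bla)}\D_\ud^{\tta(\bmu)} \simeq \prod_{j \in J} {}^{\la^{(j)}}\D_{d_j}^{\mu^{(j)}}$ arising from the factorization $\Si_\ud = \Si_{d_0} \times \cdots \times \Si_{d_{\ell-1}}$.
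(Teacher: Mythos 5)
Your proposal is correct and follows essentially the same route as the paper: the tensor factorization of $\zC_\ud$ for the isomorphism, the Shifted Gelfand-Graev Mackey filtration of $\ggis^\bla\GGRS^d_\ud\GGIS^d_\ud\zC_\ud\ggis^\bmu$, the identification of the $x=1$ layer with $\ggis^\bla\zC_\ud^0\ggis^\bmu$, and a color-conflict argument (via the second part of Theorem~\ref{TNonNeg}/Corollary~\ref{C030924}) to kill the $x\neq 1$ layers. Your bookkeeping of the vanishing — any off-diagonal $a_{j,k}\neq 0$ forces conflicting color requirements $j$ versus $k$ on the same part — is, if anything, a slightly cleaner and more direct statement of the paper's argument via the auxiliary idempotents $\ggis(\ud\cap x\ud)$ and $\ggis'(\ud\cap x\ud)$.
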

\begin{proof}
The isomorphism in (\ref{E030924}) is clear from the identification $\zC_\ud=\zC_{d_0}\otimes\dots\otimes\zC_{d_{\ell-1}}$ and the fact that all algebras are positively graded by Theorem~\ref{TNonNeg}. So, 
once the equality $\ggis^\bla \zC_d^0\ggis^\bmu=\ggis^\bla \zC_\ud^0\ggis^\bmu$ is proved, everything else follows from Lemma~\ref{LDoubleCosets}. To prove the equality, we apply the Mackey Theorem~\ref{CGGMackeyS} to
$$
\ggis^\bla \zC_d\ggis^\bmu=\ggis^\bla \GGRS^d_{\ud}\GGIS_\ud^d\zC_\ud\ggis^\bmu
$$
Theorem~\ref{CGGMackeyS} yields a filtration with sub-quotients isomorphic to 
$$
\ggis^\bla\GGIS^\ud_{\ud\cap x\ud}
{}^{x}(\GGRS^\ud_{x^{-1}\ud\cap\ud}\zC_\ud\ggis^{\bmu})
\simeq
\ggis^\bla\GGIS^\ud_{\ud\cap x\ud}
{}^{x}(\ggis_{x^{-1}\ud\cap\ud}\zC_\ud\ggis^{\bmu})\qquad(x\in{}^\ud\D_d^\ud).
$$
Since all algebras are non-negatively graded by Theorem~\ref{TNonNeg}, we deduce that the graded $\k$-supermodule $\ggis^{\bla}\zC_d^0\ggis^{\bmu}$ has filtration with sub-quotients 
$$
\ggis^\bla\zC_\ud^0\ggis_{\ud\cap x\ud}\otimes_{\zC_{\ud\cap x\ud}^0}
{}^{x}(\ggis_{x^{-1}\ud\cap\ud}\zC_\ud^0\ggis^{\bmu})\qquad(x\in{}^\ud\D_d^\ud).
$$
By Theorem~\ref{CGGMackeyS}, the case $x=1$ corresponds to the bottom level of the filtration and contributes the $\k$-subspace 
$$
\ggis^\bla\zC_\ud^0\ggis_{\ud}\otimes_{\zC_{\ud}^0}
\ggis_{\ud}\zC_\ud^0\ggis^{\bmu}=
\ggis^\bla\zC_\ud^0\otimes_{\zC_{\ud}^0}
\zC_\ud^0\ggis^{\bmu}\simeq \ggis^\bla \zC_\ud^0\ggis^\bmu.
$$
It remains to prove that for $1\neq x\in{}^\ud\D_d^\ud$, we have 
$$
\ggis^\bla\zC_\ud^0\ggis_{\ud\cap x\ud}\otimes_{\zC_{\ud\cap x\ud}^0}
{}^{x}(\ggis_{x^{-1}\ud\cap\ud}\zC_\ud^0\ggis^{\bmu})=0.
$$
We denote 
$$
\ud\cap x\ud=(d_{0,0},\dots,d_{0,\ell-1};\dots;d_{\ell-1,0},\dots,d_{\ell-1,\ell-1})
$$
so that 
$$
x^{-1}\ud\cap \ud=(d_{0,0},\dots,d_{\ell-1,0};\dots;d_{0,\ell-1},\dots,d_{\ell-1,\ell-1})
$$
and $\sum_{i\in J}d_{i,j}=d_j=\sum_{i\in J}d_{j,i}$. 
Define the idempotents 
\begin{align*}
\ggis(\ud\cap x\ud)&:=\ggis_{d_{0,0},0}\otimes\dots\otimes\ggis_{d_{0,\ell-1},0}\otimes\dots\otimes\ggis_{d_{\ell-1,0},\ell-1}\otimes\dots\otimes\ggis_{d_{\ell-1,\ell-1},\ell-1},
\\
\ggis'(\ud\cap x\ud)&:=\ggis_{d_{0,0},0}\otimes\dots\otimes\ggis_{d_{0,\ell-1},\ell-1}\otimes\dots\otimes\ggis_{d_{\ell-1,0},0}\otimes\dots\otimes\ggis_{d_{\ell-1,\ell-1},\ell-1}
\end{align*}
in the parabolic subalgebra $\zC_{\ud\cap x\ud}$,
and
$$
\ggis(x^{-1}\ud\cap \ud):=\ggis_{d_{0,0},0}\otimes\dots\otimes\ggis_{d_{\ell-1,0},0}\otimes\dots\otimes\ggis_{d_{0,\ell-1},\ell-1}\otimes\dots\otimes\ggis_{d_{\ell-1,\ell-1},\ell-1}
$$
in the parabolic subalgebra $\zC_{x^{-1}\ud\cap \ud}$.

By Corollary~\ref{C030924}, we have 
$
\ggis^\bla\zC_\ud^0\ggis_{\ud\cap x\ud}
=\ggis^\bla\zC_\ud^0\ggis(\ud\cap x\ud)
$ and $
\ggis_{x^{-1}\ud\cap\ud}\zC_\ud^0\ggis^{\bmu}=
\ggis(x^{-1}\ud\cap\ud)\zC_\ud^0\ggis^{\bmu},
$
and so 
$$
{}^{x}(\ggis_{x^{-1}\ud\cap\ud}\zC_\ud^0\ggis^{\bmu})=
{}^{x}(\ggis(x^{-1}\ud\cap\ud)\zC_\ud^0\ggis^{\bmu})
=
\ggis'(\ud\cap x\ud)\,{}^{x}(\ggis(x^{-1}\ud\cap\ud)\zC_\ud^0\ggis^{\bmu})
$$

For $x\neq 1$, we must have that for some $j\in J$ and distinct $i_1,i_2\in J$, we have $d_{j,i_1}\neq 0\neq d_{j,i_2}$, and so $\ggis(\ud\cap x\ud)\ggis'(\ud\cap x\ud)=0$. So in this case, we have  that 
\begin{align*}
&\ggis^\bla\zC_\ud^0\ggis_{\ud\cap x\ud}\otimes_{\zC_{\ud\cap x\ud}^0}
{}^{x}(\ggis_{x^{-1}\ud\cap\ud}\zC_\ud^0\ggis^{\bmu})
\\
=\,\,
&\ggis^\bla\zC_\ud^0\ggis(\ud\cap x\ud)\otimes_{\zC_{\ud\cap x\ud}^0}
\ggis'(\ud\cap x\ud)\,
{}^{x}(\ggis(x^{-1}\ud\cap\ud)\zC_\ud^0\ggis^{\bmu})
\\
=\,\,
&\ggis^\bla\zC_\ud^0\ggis(\ud\cap x\ud)\ggis'(\ud\cap x\ud)\otimes_{\zC_{\ud\cap x\ud}^0}
\ggis'(\ud\cap x\ud)\,
{}^{x}(\ggis(x^{-1}\ud\cap\ud)\zC_\ud^0\ggis^{\bmu})
\end{align*}
equals $0$, as desired.
\end{proof}

\subsection{  More on the elements $\lgath_{d,j}$}
\label{SSUpsilonz}
Let $j\in J$, $d\in\N_+$ and $(\la,\bi)\in\Comp^\col(n,d)$.  Recalling the elements $\lgath_{d,j}\in C_d$ and $\lgath_{\la,\bi}\in C_\la$ from (\ref{EUpsilon}) and (\ref{EUpsilonLa}), we have the corresponding elements  
\begin{equation}\label{EGathZ}
\lgathz_{d,j}\in\zC_d\quad \text{and}\quad  
\lgathz_{\la,\bi}:=\lgathz_{\la_1,i_1}\otimes\dots \lgathz_{\la_n,i_n}\in \zC_\la\subseteq \ggis_\la\zC_d\ggis_\la. 
\end{equation}
By Lemmas~\ref{LG1} and \ref{LUpsilonPart}, we have 
\begin{equation}\label{E290923}
0\neq \lgathz_{d,j}=\ggis^{d,j}\lgathz_{d,j}\ggis^{j^d}\quad\text{and}\quad  
0\neq \lgathz_{\la,\bi}=\ggis^{\la,\bi}\lgathz_{\la,\bi}\ggis^{i_1^{\la_1}\cdots i_n^{\la_n}}.
\end{equation}

\begin{Lemma} \label{LDegreeG}
We have $\bideg(\lgathz_{\la,\bi})=(0,\0)$. 
\end{Lemma}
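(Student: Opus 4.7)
The plan is to reduce the general statement to the indecomposable case $(\la,\bi) = ((d),j)$ and then compute the shift from $C_d$ to $\zC_d$ explicitly.

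First I would reduce to the indecomposable case. By definition (\ref{EGathZ}), $\lgathz_{\la,\bi} = \lgathz_{\la_1,i_1}\otimes\cdots\otimes\lgathz_{\la_n,i_n}$ lies in the tensor product parabolic $\zC_{\la_1}\otimes\cdots\otimes\zC_{\la_n} = \zC_\la$ (see (\ref{EzCParIdentify})). Since bidegree is additive on tensor products of homogeneous elements, it will suffice to prove $\bideg(\lgathz_{d,j}) = (0,\0)$ for all $d\in\N_+$ and $j\in J$.

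Next I compute the bidegree of $\lgath_{d,j}$ in $C_d$. This is given directly by (\ref{EUDeg}): $\bideg_{C_d}(\lgath_{d,j}) = (d(d-1)(2j-4\ell),\,\0)$. Then I translate this to the regrading $\zC_d$. By Lemma~\ref{LG1}, $\lgath_{d,j}\in\ggi^{d,j}C_d\ggi^{j^d}$, and the defining formula (\ref{EReGradingC}) of $\zC_d$ shows that the corresponding element $\lgathz_{d,j}$ satisfies
\[
\bideg_{\zC_d}(\lgathz_{d,j}) = \bideg_{C_d}(\lgath_{d,j}) + \bigl(t_{\om_d,j^d}-t_{(d),j},\ \eps_{\om_d,j^d}-\eps_{(d),j}\bigr).
\]

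The remaining step is a direct calculation of the shift from the definitions in (\ref{EShiftsCuspidal}). For the indecomposable Gelfand--Graev colored composition $((d),j)$ we have $t_{(d),j} = dp + d^2(2j-4\ell)$ and $\eps_{(d),j} = dj \pmod 2$. For the all-1 composition $\om_d$ with color word $j^d$, each summand in (\ref{EShiftsCuspidal}) contributes $1^2\cdot(2j-4\ell)$, giving $t_{\om_d,j^d} = dp + d(2j-4\ell)$ and $\eps_{\om_d,j^d} = dj\pmod 2$. The parity shift is thus $0$, and the degree shift is
\[
t_{\om_d,j^d}-t_{(d),j} = d(2j-4\ell) - d^2(2j-4\ell) = -d(d-1)(2j-4\ell),
\]
which exactly cancels $\bideg_{C_d}(\lgath_{d,j})$. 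Hence $\bideg_{\zC_d}(\lgathz_{d,j}) = (0,\0)$, completing the proof. There is really no obstacle here: the whole point of the shifts $t_{\mu,\bj}, \eps_{\mu,\bj}$ chosen in (\ref{EShiftsCuspidal}) is precisely to make the elements $\lgathz_{\mu,\bj}$ live in bidegree $(0,\0)$, so the argument is essentially a verification that these shift parameters were normalized correctly.
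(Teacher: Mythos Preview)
Your proof is correct and follows essentially the same approach as the paper: both compute the bidegree shift coming from the regrading formula (\ref{EReGradingC}) using the explicit shift parameters (\ref{EShiftsCuspidal}) and the already-computed bidegree (\ref{EUDeg}) of $\lgath_{\la,\bi}$. The only cosmetic difference is that you reduce to the indecomposable case $(\la,\bi)=((d),j)$ via the tensor decomposition (\ref{EGathZ}) and additivity of bidegrees, whereas the paper just writes out the general sum $\sum_{r=1}^n\la_r(\la_r-1)(2i_r-4\ell)$ directly and cancels it against $t_{\om_d,i_1^{\la_1}\cdots i_n^{\la_n}}-t_{\la,\bi}$ in one line; the arithmetic is identical.
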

\begin{proof}
Using (\ref{EReGradingC}), (\ref{EShiftsCuspidal}) and (\ref{EUDeg}), we get 
\begin{align*}
\bideg(\lgathz_{\la,\bi})
=&\,\bideg(\lgath_{\la,\bi})+(t_{\om_d,i_1^{\la_1}\cdots i_n^{\la_n}},\eps_{\om_d,i_1^{\la_1}\cdots i_n^{\la_n}})-(t_{\la,\bi},\eps_{\la,\bi})
\\=&\,\big(\textstyle\sum_{r=1}^n\la_r(\la_r-1)(2i_r-4\ell)\,,\,\0\big)
\\&+
(dp+\textstyle\sum_{r=1}^n\la_r(2i_r-4\ell)\,,\,\sum_{r=1}^n\la_ri_r\pmod{2})
\\&
-\big(dp
+\textstyle\sum_{r=1}^n\la_r^2(2i_r-4\ell)\,,\,\sum_{r=1}^n\la_ri_r\pmod{2}\big)
\\=&\,(0,\0),
\end{align*}
as desired.
\end{proof}

\begin{Lemma} \label{LGBasis}
We have that $\{\lgathz_{\la,\bi}\}$ is a $\k$-basis of  $(\ggis^{\la,\bi}\zC_{\la}\ggis^{i_1^{\la_1}\cdots i_n^{\la_n}})^0$. 
\end{Lemma}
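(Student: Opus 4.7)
\medskip
\noindent\textbf{Proof plan for Lemma~\ref{LGBasis}.}

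The plan is to reduce to the rank-one case $n=1$ and then combine three previously established facts: a degree computation, a non-vanishing statement, and an upper bound on the size of the weight space.

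First I would exploit the tensor product structure. Under the identification $\zC_\la = \zC_{\la_1}\otimes\cdots\otimes\zC_{\la_n}$ from (\ref{EzCParIdentify}), the two idempotents factor as $\ggis^{\la,\bi}=\ggis^{\la_1,i_1}\otimes\cdots\otimes\ggis^{\la_n,i_n}$ and $\ggis^{i_1^{\la_1}\cdots i_n^{\la_n}}=\ggis^{i_1^{\la_1}}\otimes\cdots\otimes\ggis^{i_n^{\la_n}}$, and the element $\lgathz_{\la,\bi}=\lgathz_{\la_1,i_1}\otimes\cdots\otimes\lgathz_{\la_n,i_n}$ by the definition (\ref{EGathZ}). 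Since each $\zC_{\la_r}$ is non-negatively graded by Theorem~\ref{TNonNeg}, the degree-zero component of the tensor product is the tensor product of degree-zero components. It therefore suffices to show that for every $d\in\N_+$ and $j\in J$ the space $(\ggis^{d,j}\zC_d\ggis^{j^d})^0$ is free of rank $1$ over $\k$ with basis $\lgathz_{d,j}$.

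Next I would assemble the three ingredients for the rank-one case. By Lemma~\ref{LDegreeG}, $\lgathz_{d,j}$ is of bidegree $(0,\0)$, and by (\ref{E290923}) (which records the non-vanishing derived from Lemma~\ref{LUpsilonPart}) it is a non-zero element of $\ggis^{d,j}\zC_d\ggis^{j^d}$. To bound the space from above, I apply Lemma~\ref{LDoubleCosets} with $\la=(d)$ and $\mu=\om_d$: since $\Si_{(d)}=\Si_d$, the double coset set ${}^{(d)}\D_d^{\om_d}$ is a singleton, so $(\ggis^{d,j}\zC_d^0\ggis^{\om_d,j^d})$ is spanned by a single element as a $\k$-module. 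Combined with the non-zero element $\lgathz_{d,j}$, this immediately proves the lemma over any field $\k=\F$: the space is at most one-dimensional and contains a non-zero element, hence is one-dimensional with basis $\lgathz_{d,j}$.

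Finally, to handle the general ground ring $\k=\O$, I would carry out a base-change argument. Each graded component of $\zC_d$ is a finitely generated $\O$-module (inherited from the basis theorem for $R_{d\de}$), so $M:=(\ggis^{d,j}\zC_d\ggis^{j^d})^0$ is a cyclic finitely generated $\O$-module, say $M\cong \O/I$. The non-vanishing argument in Lemma~\ref{LUpsilonPart}, which constructs a cuspidal module on which $\lgathz_{d,j}$ acts non-trivially, is insensitive to the ground ring and in particular applies to the fraction field $\K$ of $\O$. So $\lgathz_{d,j,\K}\neq 0$ inside the at-most-one-dimensional $\K$-space $M\otimes_\O\K$, forcing $M\otimes_\O\K$ to be one-dimensional. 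For a PID this excludes $I\neq 0$, so $M\cong\O$ is free of rank $1$. The same non-vanishing argument applied to each residue field $\F=\O/\mathfrak m$ shows $\lgathz_{d,j}\notin \mathfrak m M$ for every maximal ideal $\mathfrak m$, so $\lgathz_{d,j}$ is a unit times the generator of $M\cong\O$, hence itself a basis.

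The main technical obstacle is purely bookkeeping: once the tensor reduction and the cyclicity bound of Lemma~\ref{LDoubleCosets} are in hand, everything follows, and the only subtlety is checking freeness rather than mere surjection of the map $\O\to M$. That is precisely what the base change to $\K$ secures.
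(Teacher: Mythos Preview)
Your proposal is correct and follows essentially the same route as the paper. The only cosmetic differences are that the paper obtains the ``spanned by one element'' bound by citing Lemma~\ref{LSymmetry}(ii) and Lemma~\ref{LMinAB=1}(i) directly (rather than via their consequence Lemma~\ref{LDoubleCosets}), and the paper compresses your base-change argument for $\k=\O$ into the single sentence that $\lgathz_{d,j}$ is non-zero over $\O$ and over every field, hence must be a basis of the cyclic $\k$-module.
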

\begin{proof}
By Theorem~\ref{TNonNeg}, each $\zC_{\la_r}$ 
is non-negatively graded, so it suffices to prove the special case that $\{\lgathz_{d,j}\}$ is the basis of  $(\ggis^{d,j}\zC_d\ggis^{j^d})^0$. 
By Lemmas~\ref{LSymmetry}(ii) and \ref{LMinAB=1}(i), the $\k$-module $(\ggis^{d,j}\zC_{d}\ggis^{j^{d}})^0$ is generated by one element. 
Moreover, by Lemmas~\ref{LUpsilonPart} and \ref{LDegreeG}, we have that $\lgathz_{d,j}\in (\ggis^{d,j}\zC_{d}\ggis^{j^{d}})^0$ is non-zero when working over $\O$ and over any field, so $\{\lgathz_{\la,\bi}\}$ must be the basis of this $\k$-module. 
\end{proof}

\begin{Corollary} \label{CLowestDegree} 
We have that $\{\lgath_{\la,\bi}\}$ is a $\k$-basis of the lowest degree component of  $\ggi^{\la,\bi}C_{\la}\ggi^{i_1^{\la_1}\cdots i_n^{\la_n}}$. 
\end{Corollary}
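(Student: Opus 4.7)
The plan is to transfer the statement of Lemma~\ref{LGBasis} (which is about the regraded algebra $\zC_\la$) back to $C_\la$ by observing that the regrading is only a uniform grading/parity shift on each of the relevant weight spaces.

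First I would recall that, by definition (\ref{EReGradingC}) of the regrading and by (\ref{ERegrParabolic}), (\ref{EzCParIdentify}), as $\k$-modules we may identify $\ggi^{\la,\bi}C_\la\ggi^{i_1^{\la_1}\cdots i_n^{\la_n}}$ with $\ggis^{\la,\bi}\zC_\la\ggis^{i_1^{\la_1}\cdots i_n^{\la_n}}$, with the degree $k$ part of the former corresponding to the degree $k-t_{\la,\bi}+t_{\om_d,i_1^{\la_1}\cdots i_n^{\la_n}}$ part of the latter (there is only a global degree shift by a constant depending on the two idempotents). In particular, $\lgath_{\la,\bi}\in C_\la$ corresponds to $\lgathz_{\la,\bi}\in\zC_\la$ under this identification.

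Next I would use Theorem~\ref{TNonNeg}, which tells us that $\zC_d$ is non-negatively graded; since $\zC_\la\subseteq \ggis_\la\zC_d\ggis_\la$ inherits this property, the lowest degree component of $\ggis^{\la,\bi}\zC_\la\ggis^{i_1^{\la_1}\cdots i_n^{\la_n}}$ is exactly $(\ggis^{\la,\bi}\zC_\la\ggis^{i_1^{\la_1}\cdots i_n^{\la_n}})^0$. By Lemma~\ref{LGBasis}, this degree zero component is a free $\k$-module with basis $\{\lgathz_{\la,\bi}\}$.

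Translating back through the identification, the lowest degree component of $\ggi^{\la,\bi}C_\la\ggi^{i_1^{\la_1}\cdots i_n^{\la_n}}$ is a free $\k$-module of rank one, with basis $\{\lgath_{\la,\bi}\}$, as required. There is really no obstacle here; the work has already been done in Lemma~\ref{LGBasis} and Theorem~\ref{TNonNeg}, and the corollary is essentially a restatement of these facts in the unshifted grading of $C_\la$.
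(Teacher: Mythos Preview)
Your proof is correct and follows essentially the same approach as the paper: both use Theorem~\ref{TNonNeg} to see that $\zC_\la$ is non-negatively graded, invoke Lemma~\ref{LGBasis} to identify the degree zero component, and then observe that $C_\la$ differs from $\zC_\la$ only by a uniform degree shift on each weight space.
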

\begin{proof}
By Theorem~\ref{TNonNeg}, $\zC_\la$ is non-negatively graded. So Lemma~\ref{LGBasis} implies that $\{\lgathz_{\la,\bi}\}$ is a $\k$-basis of the lowest degree component of $\ggis^{\la,\bi}\zC_{\la}\ggis^{i_1^{\la_1}\cdots i_n^{\la_n}}$. The result follows since $C_\la$ is obtained from $\zC_\la$ be regrading.
\end{proof}

Recall from (\ref{ETauSigma}) the elements $\dot w\in \ggi_{\om_d}C_d\ggi_{\om_d}$.

\begin{Proposition} \label{PUpsilon} 
Let $j\in J$ and $d\in\N_+$. Then in $C_d$ we have: 
\begin{enumerate}
\item[{\rm (i)}]  $\lgath_{d,j}\dot w=(-1)^{\ttl(w)(j+1)}\lgath_{d,j}$ for all $w\in \Si_d$;
\item[{\rm (ii)}] for any $\la\in\Comp(d)$ we have that $\lgath_{\la,j}\dot w=(-1)^{\ttl(w)(j+1)}\lgath_{\la,j}$ for all $w\in \Si_\la$. 
\end{enumerate}
\end{Proposition}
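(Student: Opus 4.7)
Part (ii) reduces to (i): for $w=(w_1,\dots,w_n)\in\Si_\la$ a reduced expression is obtained by concatenating reduced expressions of the $w_k$, so $\dot w=\dot w_1\otimes\cdots\otimes\dot w_n$ in $C_\la=C_{\la_1}\otimes\cdots\otimes C_{\la_n}$ (using that $F_d$ of Theorem~\ref{TMorIso} is an algebra homomorphism). Combined with the tensor factorisation $\lgath_{\la,j}=\lgath_{\la_1,j}\otimes\cdots\otimes\lgath_{\la_n,j}$ from (\ref{EUpsilonLa}), (i) applied factorwise together with $\sum_k\ttl(w_k)=\ttl(w)$ yields (ii). By the same multiplicativity of $F_d$, (i) itself reduces to the case $w=s_r$ for some $1\le r<d$.

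For $w=s_r$, I show $\lgath_{d,j}\dot s_r=c_r\lgath_{d,j}$ for a unique $c_r\in\k$ by a degree argument. Under $F_d$ the element $\dot s_r$ has bidegree $(0,\bar 0)$, and since $s_r$ fixes the word $j^d$ the relation $e^{j^d}s_r=s_re^{j^d}$ in $H_d(A_\ell)$ translates to $\ggi^{j^d}\dot s_r=\dot s_r\ggi^{j^d}$. Together with Lemma~\ref{LG1} this places $\lgath_{d,j}\dot s_r$ in $\ggi^{d,j}C_d\ggi^{j^d}$ at the same bidegree as $\lgath_{d,j}$; by Corollary~\ref{CLowestDegree} the lowest bidegree component of $\ggi^{d,j}C_d\ggi^{j^d}$ is free of rank one with basis $\{\lgath_{d,j}\}$, yielding $c_r$. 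Right-multiplying $\lgath_{d,j}\dot s_r=c_r\lgath_{d,j}$ by $\dot s_r$ and using $\dot s_r^2=\ggi_{\om_d}$ (the image of $s_r^2=1$) with $\lgath_{d,j}\ggi_{\om_d}=\lgath_{d,j}$ gives $c_r^2=1$, so $c_r=\pm 1$.

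To pin down the sign, I use the formula (\ref{ETau}): $\dot s_r\ggi^{j^d}=\upsigma_r\ggi^{j^d}+(-1)^j\ggi^{j^d}$, whence $\lgath_{d,j}\dot s_r=\lgath_{d,j}\upsigma_r+(-1)^j\lgath_{d,j}$. The same degree argument applied to $\upsigma_r$ (which also has bidegree $(0,\bar 0)$) gives $\lgath_{d,j}\upsigma_r=b_r\lgath_{d,j}$ with $c_r=b_r+(-1)^j$, and expanding $\dot s_r^2=\ggi_{\om_d}$ in terms of $\upsigma_r$ and the diagonal idempotents $\sum_i(-1)^i\ggi^{ii}$ yields the quadratic relation $(\upsigma_r\ggi^{j^d})^2=-2(-1)^j\upsigma_r\ggi^{j^d}$, forcing $b_r\in\{0,-2(-1)^j\}$ and thus $c_r\in\{(-1)^j,(-1)^{j+1}\}$. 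In characteristic $2$ both options coincide with $(-1)^{j+1}$ and we are done. The main obstacle is to rule out $b_r=0$ (equivalently $\lgath_{d,j}\upsigma_r\ne 0$) in other characteristics; I plan to do so by reducing to the case $d=2,\ r=1$ via Lemma~\ref{LDivIdCommutation}, which lets one commute the divided-power pieces of $\ggi^{d,j}$ corresponding to blocks $\ne r,r+1$ past $\upsigma_r$, thereby localising the computation inside the tensor factor $C_2$. The non-vanishing $\lgath_{2,j}\upsigma\ne 0$ will then be verified by an explicit KLR-diagram computation using the definition $\upsigma=\ggi_{\om_2}\psi(p)\cdots\psi(1)\ggi_{\om_2}$ of (\ref{EUpSigma}), or equivalently by evaluating on the generator $1_{\de,\de}\otimes(\hat v_j\otimes\hat v_j)$ of the cuspidal induction $\Ind_{\de,\de}^{2\de}(\hat\LL_j\boxtimes\hat\LL_j)$ from (\ref{EHatL}), along the lines of the argument already used in the proof of Lemma~\ref{LUpsilonPart}.
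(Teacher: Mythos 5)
Your proposal correctly mirrors the paper's structure up to and including the derivation of the eigenvalue constraint: the reduction of (ii) to (i), the reduction of (i) to a single $\dot s_r$, the degree argument via Corollary~\ref{CLowestDegree} and Theorem~\ref{TMorIso} giving $\lgath_{d,j}\dot s_r = c_r\lgath_{d,j}$ with $c_r^2=1$, and then (via (\ref{ETau}) and the identity $\dot s_r^2=\ggi_{\om_d}$) the quadratic $(\upsigma_r\ggi^{j^d})^2=-2(-1)^j\upsigma_r\ggi^{j^d}$ and hence $c_r\in\{(-1)^j,(-1)^{j+1}\}$. All of that is sound.

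The gap is the final step, and it is a genuine one. You propose to pin down the sign by showing $\lgath_{d,j}\upsigma_r\ne 0$, but this is left only as a plan. Worse, note that (\ref{ETau}) together with $\lgath_{d,j}\dot s_r=c_r\lgath_{d,j}$ gives at once $\lgath_{d,j}\upsigma_r=(c_r-(-1)^j)\lgath_{d,j}$, so that $\lgath_{d,j}\upsigma_r\ne 0$ is \emph{exactly equivalent} (away from characteristic $2$) to the conclusion $c_r=(-1)^{j+1}$ you are trying to prove. Your plan is therefore not a reduction of the difficulty but a reformulation of it. The proposed direct computation on $1_{\de,\de}\otimes(\hat v_j\otimes\hat v_j)$ is also nontrivial: evaluating gives $\psi_{\ga_{2,j}}\psi_{\rho_2(s_1)}\otimes(\hat v_j\otimes\hat v_j)$, and since both $\ga_{2,j}$ and $\rho_2(s_1)$ lie in $\D^{(p,p)}_{2p}$ with $\rho_2(s_1)$ the \emph{longest} element, the word $\ga_{2,j}\rho_2(s_1)$ is not reduced; one would have to expand via Lemma~\ref{LBKW}(ii) and control potential cancellations, which is not ``along the lines of Lemma~\ref{LUpsilonPart}'' (where $\ga_{d,j}\in\D^{p^d}_{dp}$ directly gives a basis element).

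The paper avoids this entirely by working with $\upsigma\ggi^{j,j}$ instead of $\lgath_{2,j}\upsigma$. It observes $\upsigma\ggi^{j,j}=z\lgath_{2,j}$ for some $z\in C_2$ (coming from a length-additive factorisation $\rho_2(s_1)=z'\ga_{2,j}$, which holds because $\rho_2(s_1)$ is the longest element of $\D^{(p,p)}_{2p}$), multiplies $\lgath_{2,j}\dot s=c\lgath_{2,j}$ on the left by $z$ to get $\upsigma\dot s\ggi^{j,j}=c\,\upsigma\ggi^{j,j}$, and independently computes $\upsigma\dot s\ggi^{j,j}=(-1)^{j+1}\upsigma\ggi^{j,j}$ from (\ref{ETau}) and $\dot s^2\ggi^{j,j}=\ggi^{j,j}$. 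The only non-vanishing then required is $\upsigma\ggi^{j,j}\ne 0$, which is immediate from Theorem~\ref{TAffBasis} since $\upsigma\ggi^{j,j}=\dot s\ggi^{j,j}-(-1)^j\ggi^{j,j}$ corresponds to $s_1e^{jj}-(-1)^je^{jj}$ in $H_2(A_\ell)$. That is the missing idea you would need to close your argument.
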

\begin{proof}
Note that (ii) follows from (i) since 
$
\lgath_{\la,j}:=\lgath_{\la_1,j}\otimes\dots\otimes \lgath_{\la_n,j}\in C_{\la_1}\otimes\dots\otimes C_{\la_n}=C_\la. 
$ 
To prove (i), it suffices to show that $\lgath_{d,j}\dot  s_r=(-1)^{j+1}\lgath_{d,j}$ for all $1\leq r<d$. 
By Corollary~\ref{CLowestDegree}, $\{\lgath_{d,j}\}$ is a basis of  the lowest degree component of $\ggi^{d,j}C_d\ggi_{\om_d}$. By Theorem~\ref{TMorIso}, $\dot  s_r$ is a degree zero element of order $2$. So 
$\lgath_{d,j}\dot s_r=c\lgath_{d,j}$ for $c=\pm 1$. To see that actually $c=(-1)^{j+1}$, 
recall that 
$$
\dot s_{r}=\ggi_1^{\otimes (r-1)}\otimes \dot s\otimes \ggi_1^{\otimes (d-r-1)}\in C_1^{\otimes (r-1)}\otimes \ggi_{\om_2}C_2\ggi_{\om_2}\otimes C_1^{\otimes (d-r-1)}\subseteq  \ggi_{\om_d}C_d\ggi_{\om_d},
$$
and note that 
we can write 
$\lgath_{d,j}=yx$ for some $y\in C_d$ and 
$$
x=\ggi_1^{\otimes (r-1)}\otimes \lgath_{2,j}\otimes \ggi_1^{\otimes (d-r-1)}\in C_1^{\otimes (r-1)}\otimes \ggi^{2,j}C_2\ggi_{\om_2}\otimes C_1^{\otimes (d-r-1)}\subseteq  \ggi_{\om_d}C_d\ggi_{\om_d},
$$
so the computation reduced to the case $d=2$, in which case we need to prove $\lgath_{2,j}\dot s=(-1)^{j+1}\lgath_{2,j}$, knowing that $\lgath_{2,j}\dot s=c\lgath_{2,j}$ for $c=\pm 1$. 

Recalling the element $\upsigma$ from (\ref{EUpSigma}), observe that $\upsigma\ggi^{j,j}=z\lgath_{2,j}$ for 
some $z\in C_2$.  Hence we have $z\lgath_{2,j}\dot s\ggi^{j,j}=cz\lgath_{2,j}\ggi^{j,j}$ or  
\begin{equation}\label{E100824}
\upsigma\dot s\ggi^{j,j}=c\upsigma\ggi^{j,j}. 
\end{equation}
On the other hand, by (\ref{ETau}), we have $\dot s\ggi^{j,j}=(\upsigma+(-1)^j)\ggi^{j,j}$, and by Theorem~\ref{TMorIso}, we have $\dot s^2\ggi^{j,j}=\ggi^{j,j}$, so 
$
(\upsigma+(-1)^j)\dot s\ggi^{j,j}=\ggi^{j,j},
$
whence we have
$$
\upsigma\dot s\ggi^{j,j}=(-1)^{j+1}\dot s\ggi^{j,j}+\ggi^{j,j}
=(-1)^{j+1}(\upsigma+(-1)^j)\ggi^{j,j}+\ggi^{j,j}=(-1)^{j+1}\upsigma\ggi^{j,j}.
$$
Combining with (\ref{E100824}), we deduce that $c=(-1)^{j+1}$ as required. 
\end{proof}

\chapter{Imaginary Schur-Weyl duality}\label{SSTensSpace}
Fix $j\in J$. Recall from \S\ref{SSL} the graded $C_1$-supermodule $\LL_j=\k\cdot v_j$ and the graded $\hat C_1$-supermodule $\hat \LL_j$ generated by the vector $\hat v_j$ such that $\ggi^{i} v_j=\de_{i,j}v_j$ and $\ggi^{i} \hat v_j=\de_{i,j}\hat v_j$ 
for all $i\in J$. Recall from (\ref{EBidegvjhatvj}) that  
$$\bideg(v_{j})=\bideg(\hat v_{j})=(1+2j-2\ell,\,j\pmod{2}).$$ 

\section{Colored imaginary tensor space}

\subsection{  The imaginary tensor space $\M_{d,j}$}
For $d\in \N$, we define two versions of {\em ($d$th) imaginary tensor superspace (of color $j$)} which will correspond to each other under the functors $\funF_d$ and $\funG_d$ from (\ref{EFGFunctors}) (at the moment we do not yet know that $\funF_d$ and $\funG_d$ are equivalences---this will be established in \S\ref{SMorita}):
\begin{eqnarray}
\label{EM}
\M_{d,j}&:=&\GGI_{\om_d}^d \LL_j^{\boxtimes d}=C_d\ggi_{\om_d}\otimes_{C_{\om_d}}\LL_j^{\boxtimes d},
\\
\label{EHatM}
\hat \M_{d,j}&:=&\Ind_{\de^d}^{d\de}\,\hat \LL_j^{\boxtimes d}=\hat C_d1_{\de^d}\otimes_{\hat C_{\om_d}}\hat \LL_j^{\boxtimes d}.
\end{eqnarray}
(see Lemma~\ref{LTensImagIsImag}(i) for the last equality). Let 
$$
v_{d,j}:=\ggi_{\om_d}\otimes v_j^{\otimes d}\in \M_{d,j}\quad\text{and}\quad
\hat v_{d,j}:=1_{\de^d}\otimes \hat v_j^{\otimes d}\in \hat \M_{d,j},
$$
so the $C_d$-module $\M_{d,j}$ is generated by 
$
v_{d,j}
$
and 
the $\hat C_d$-module 
$\hat \M_{d,j}$ is generated by 
$
\hat v_{d,j}.
$
Note that 
\begin{equation}\label{EvdWeight}
\ggi^{j^d}v_{d,j}=v_{d,j}\quad \text{and}\quad \ggi^{j^d}\hat v_{d,j}=\hat v_{d,j}.
\end{equation}

\begin{Lemma}\label{LHatMG}
We have:
\begin{enumerate}
\item[{\rm (i)}] $\M_{d,j}\simeq\funF_d(\hat \M_{d,j})$.
\item[{\rm (ii)}] $\hat \M_{d,j}\simeq\funG_d(\M_{d,j})= \hat C_d\ggi_d\otimes_{C_d}\M_{d,j}$ and the map $\M_{d,j}\to  \hat \M_{d,j},\ m\mapsto \ggi_d\otimes m$ is injectve. 
\end{enumerate}  
\end{Lemma}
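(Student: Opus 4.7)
The plan is to deduce both parts from the general isomorphisms (\ref{EGGIInd}) and Lemma~\ref{LfunGInd} applied to $\la=\om_d$ and $V=\LL_j^{\boxtimes d}\in\mod{C_{\om_d}}$. The starting observation, used in both parts, is that (\ref{EFGFunctorsLaOuter}) together with the definition (\ref{EHatL}) of $\hat\LL_j$ gives
\[
\funG_{\om_d}(\LL_j^{\boxtimes d})\;\simeq\;\funG_1(\LL_j)^{\boxtimes d}\;=\;\hat\LL_j^{\boxtimes d}.
\]

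For (i), I would simply apply the first isomorphism of (\ref{EGGIInd}) to rewrite
\[
\M_{d,j}=\GGI_{\om_d}^d\LL_j^{\boxtimes d}\simeq\funF_d\,\Ind_{\om_d\de}^{d\de}\,\funG_{\om_d}(\LL_j^{\boxtimes d})\simeq\funF_d\,\Ind_{\om_d\de}^{d\de}\,\hat\LL_j^{\boxtimes d}=\funF_d(\hat\M_{d,j}).
\]
This is entirely formal and contains no real difficulty.

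For (ii), the first isomorphism is obtained immediately from Lemma~\ref{LfunGInd} applied with $\la=\om_d$:
\[
\funG_d(\M_{d,j})=\funG_d\bigl(\GGI_{\om_d}^d\LL_j^{\boxtimes d}\bigr)\simeq\Ind_{\om_d\de}^{d\de}\bigl(\funG_{\om_d}\LL_j^{\boxtimes d}\bigr)\simeq\Ind_{\om_d\de}^{d\de}\hat\LL_j^{\boxtimes d}=\hat\M_{d,j}.
\]
For the injectivity claim, I would first use (i) to identify $\M_{d,j}$ with the $C_d$-submodule $\ggi_d\hat\M_{d,j}\subseteq\hat\M_{d,j}$; chasing the explicit chain of isomorphisms in (\ref{EGGIInd}) on the generator shows that under this identification $v_{d,j}\mapsto \hat v_{d,j}$ (which uses crucially that $\ggi^{j^d}\hat v_{d,j}=\hat v_{d,j}$ by (\ref{EvdWeight}), so that $\hat v_{d,j}\in\ggi_d\hat\M_{d,j}$). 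Next, one traces the isomorphism of Lemma~\ref{LfunGInd} on the generator $\ggi_d\otimes v_{d,j}$ of $\hat C_d\ggi_d\otimes_{C_d}\M_{d,j}$: following the explicit composition used in its proof ($\ggi_d\otimes(\ggi_{\om_d}\otimes v_j^{\otimes d})\mapsto \ggi_{\om_d}\otimes v_j^{\otimes d}\mapsto 1_{\om_d\de}\otimes(\ggi_{\om_d}\otimes v_j^{\otimes d})\mapsto 1_{\om_d\de}\otimes\hat v_j^{\otimes d}=\hat v_{d,j}$) one sees that the composite map $\M_{d,j}\to \hat C_d\ggi_d\otimes_{C_d}\M_{d,j}\simeq\hat\M_{d,j}$, $m\mapsto \ggi_d\otimes m$, sends $v_{d,j}$ to $\hat v_{d,j}$. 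Since $\M_{d,j}$ is $C_d$-generated by $v_{d,j}$ and the inclusion $\ggi_d\hat\M_{d,j}\hookrightarrow\hat\M_{d,j}$ from~(i) has the same value on $v_{d,j}$, the two $C_d$-equivariant maps agree, and hence $m\mapsto\ggi_d\otimes m$ coincides with that inclusion, proving injectivity.

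The only mildly technical point is the bookkeeping needed to verify that $v_{d,j}\mapsto\hat v_{d,j}$ under the isomorphism of (i); everything else is a direct application of the already established induction/restriction identities.
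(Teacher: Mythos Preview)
Your proof is correct and essentially the same as the paper's. The only organizational difference is that the paper proves (ii) first by writing out the tensor-product chain explicitly and then deduces (i) from (ii) via $\funF_d\circ\funG_d\simeq\id$ (equation~(\ref{EFAfterG})), whereas you cite (\ref{EGGIInd}) and Lemma~\ref{LfunGInd} directly for (i) and (ii) respectively---but those references encode exactly the same computation. For the injectivity, the paper states in one line that ``the map in (ii) is just the isomorphism between $\M_{d,j}$ and $\ggi_d\hat\M_{d,j}$,'' which is precisely what your generator-chasing verifies; your version is more explicit but not different in substance.
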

\begin{proof}
As $\M_{d,j}\simeq\funF_d(\funG_d(\M_{d,j}))$  
by (\ref{EFAfterG}), it suffices to prove (ii). 
We have 
\begin{align*}
\funG_d(\M_{d,j})&=\hat C_d\ggi_{d}\otimes_{C_d}\M_{d,j}
\\
&=\hat C_d\ggi_{d}\otimes_{C_d}C_d\ggi_{\om_d}\otimes_{C_{\om_d}}\LL_j^{\boxtimes d}
\\
&\simeq \hat C_d\ggi_{\om_d}\otimes_{C_{\om_d}}\LL_j^{\boxtimes d}
\\
&\simeq 
\hat C_d1_{\de^d}\otimes_{\hat C_{\om_d}} \hat C_{\om_d}\ggi_{\om_d}\otimes_{C_{\om_d}} \LL_j^{\boxtimes d}
\\
&\simeq
\hat C_d1_{\de^d}\otimes_{\hat C_{\om_d}}\hat \LL_j^{\boxtimes d}
\\&=\hat \M_{d,j}.
\end{align*}
Finally, the map in (ii) is just the isomorphism between $\M_{d,j}$ and $\ggi_d\hat \M_{d,j}$. 
\end{proof}

More generally, let $\la\in\Comp(n,d)$. Recalling  the parabolic subalgebra $C_\la=C_{\la_1}\otimes\dots\otimes C_{\la_n}\subseteq \ggi_\la C_d\ggi_\la$, we have the graded $C_\la$-supermodule 
$$
\M_{\la,j}:=
\M_{\la_1,j}\boxtimes\dots\boxtimes \M_{\la_n,j}
\simeq \GGI_{\om_d}^\la \LL_j^{\boxtimes d} 
$$
with generator $v_{\la,j}:=v_{\la_1,j}\otimes\dots\otimes v_{\la_n,j}$. 
There are obvious analogous modules $\hat \M_{\la,j}$ over $\hat C_\la$. 
Note using (\ref{EBidegvjhatvj}) that 
\begin{equation}\label{EBiDegVj}
\bideg(v_{\la,j})=\bideg(\hat v_{\la,j})=(d(1+2j-2\ell),dj\pmod{2}). 
\end{equation}

\subsection{  The space $\ggi_{\om_d}M_{d,j}$}

Recall from (\ref{ETauSigma}) the elements $\dot w\in \ggi_{\om_d}C_d\ggi_{\om_d}$. 

\begin{Lemma} \label{LResMdOne} 
We have:
\begin{enumerate}
\item[{\rm (i)}] $\ggi_{\om_d}\M_{d,j}=\ggi^{j^d}\M_{d,j}$ has $\k$-basis $\{\dot wv_{d,j}\mid w\in \Si_d\}$. 
\item[{\rm (ii)}] $\ggi_{\om_d}\hat \M_{d,j}=\ggi^{j^d}\hat \M_{d,j}$ has $\k$-basis $\{\dot w\hat v_{d,j}\mid w\in \Si_d\}$. 
\end{enumerate}
\end{Lemma}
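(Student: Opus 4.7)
The plan is to derive (i) from the two key structural facts already established: Corollary~\ref{CParCuspBasis} (the freeness of $\ggi_{\om_d}C_d\ggi_{\om_d}$ over $C_{\om_d}$ with basis $\{\dot w\}_{w\in\Si_d}$) and Theorem~\ref{TMorIso} (the identification with $H_d(A_\ell)$). Since $M_{d,j}=C_d\ggi_{\om_d}\otimes_{C_{\om_d}}L_j^{\boxtimes d}$, we immediately get
\[
\ggi_{\om_d}M_{d,j}\;=\;\ggi_{\om_d}C_d\ggi_{\om_d}\otimes_{C_{\om_d}}L_j^{\boxtimes d}
\;=\;\bigoplus_{w\in\Si_d}\dot w\otimes L_j^{\boxtimes d}.
\]
As $L_j^{\boxtimes d}=\k\cdot v_j^{\otimes d}$ is free of rank one, each summand is $\k\cdot(\dot w\otimes v_j^{\otimes d})=\k\cdot\dot w v_{d,j}$, which gives the claimed basis.

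To upgrade $\ggi_{\om_d}M_{d,j}=\ggi^{j^d}M_{d,j}$, the trivial inclusion is $\ggi^{j^d}M_{d,j}\subseteq\ggi_{\om_d}M_{d,j}$ since $\ggi^{j^d}$ is a summand of $\ggi_{\om_d}$. For the reverse, it suffices to check each basis vector $\dot w v_{d,j}$ lies in $\ggi^{j^d}M_{d,j}$. Under $F_d$ the element $\dot w$ corresponds to $w\in\Si_d\subseteq H_d(A_\ell)$ and $\ggi^{j^d}$ corresponds to $e^{j^d}$; since all entries of $j^d$ are equal, the defining relation $s_r\,e^\bi={}^{s_r}e^\bi\,s_r=e^{s_r\cdot\bi}s_r$ gives $w\,e^{j^d}=e^{j^d}w$, and hence $\dot w\ggi^{j^d}=\ggi^{j^d}\dot w$. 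Therefore $\dot w v_{d,j}=\dot w\ggi^{j^d}v_{d,j}=\ggi^{j^d}(\dot w v_{d,j})\in\ggi^{j^d}M_{d,j}$, as required.

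For (ii) I would deduce everything from (i) using Lemma~\ref{LHatMG}(ii). Since $\ggi_{\om_d}\ggi_d=\ggi_{\om_d}$ and $\ggi_d\hat C_d\ggi_d=C_d$, we have
\[
\ggi_{\om_d}\hat C_d\ggi_d\;=\;\ggi_{\om_d}\ggi_d\hat C_d\ggi_d\;=\;\ggi_{\om_d}C_d,
\]
so applying $\ggi_{\om_d}$ to the isomorphism $\hat M_{d,j}\simeq \hat C_d\ggi_d\otimes_{C_d}M_{d,j}$ yields a natural isomorphism $\ggi_{\om_d}\hat M_{d,j}\simeq \ggi_{\om_d}C_d\otimes_{C_d}M_{d,j}=\ggi_{\om_d}M_{d,j}$. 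Tracing through the isomorphism of Lemma~\ref{LHatMG}(ii) (which sends $\ggi_d\otimes v_{d,j}$ to $\hat v_{d,j}$) and using that $\dot w\in\ggi_{\om_d}\hat C_d\ggi_{\om_d}\subseteq\hat C_d$ acts consistently on both sides, the basis $\{\dot w v_{d,j}\}_w$ of $\ggi_{\om_d}M_{d,j}$ corresponds to $\{\dot w\hat v_{d,j}\}_w$ in $\ggi_{\om_d}\hat M_{d,j}$. The identification $\ggi_{\om_d}\hat M_{d,j}=\ggi^{j^d}\hat M_{d,j}$ follows verbatim from the argument in (i), using the same commutation $\dot w\ggi^{j^d}=\ggi^{j^d}\dot w$.

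The only subtlety that requires any care is the compatibility step in (ii): confirming that the map $M_{d,j}\hookrightarrow\hat M_{d,j}$ of Lemma~\ref{LHatMG}(ii) intertwines the $\dot w$-action, which amounts to the identity $\dot w\cdot(\ggi_d\otimes v_{d,j})=\ggi_d\otimes\dot w v_{d,j}$ in $\hat C_d\ggi_d\otimes_{C_d}M_{d,j}$ — this holds because $\dot w\ggi_d=\dot w\in C_d$, allowing $\dot w$ to be pulled across the tensor. Once this is in place the rest of (ii) is formal, so neither part presents a genuine obstacle beyond bookkeeping.
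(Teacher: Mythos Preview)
Your proof is correct, and for part (i) it takes a genuinely different route from the paper. The paper invokes the Gelfand--Graev Mackey Theorem (Theorem~\ref{TGGMackey}) to obtain a filtration of $\GGR_{\om_d}^d\GGI_{\om_d}^d\LL_j^{\boxtimes d}$ whose subquotients are all isomorphic to $\LL_j^{\boxtimes d}$, deduces that $\{\upsigma_w v_{d,j}\}_{w\in\Si_d}$ is a basis, and then converts to $\{\dot w v_{d,j}\}$ via the unitriangular relation of Lemma~\ref{LSiTau}. Your argument bypasses both the filtration and Lemma~\ref{LSiTau} entirely: you tensor the right $C_{\om_d}$-module freeness of $\ggi_{\om_d}C_d\ggi_{\om_d}$ (Corollary~\ref{CParCuspBasis}) directly against the rank-one module $\LL_j^{\boxtimes d}$, which immediately delivers the $\dot w$-basis. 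This is more economical for the lemma at hand; the paper's route, on the other hand, exhibits the Mackey filtration explicitly, which is the template reused in later results such as Lemma~\ref{LResLaTensSpace}. Your treatment of the equality $\ggi_{\om_d}\M_{d,j}=\ggi^{j^d}\M_{d,j}$ via the commutation $\dot w\,\ggi^{j^d}=\ggi^{j^d}\dot w$ (pulled back from $w\,e^{j^d}=e^{j^d}w$ in $H_d(A_\ell)$) is also a clean alternative to the paper's filtration-based argument. For part (ii) both proofs coincide: reduce to (i) via Lemma~\ref{LHatMG}.
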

\begin{proof}
Using Lemma~\ref{LHatMG}, we get $\ggi_{\om_d}\hat \M_{d,j}\simeq \ggi_{\om_d}\ggi_d\hat \M_{d,j}\simeq \ggi_{\om_d}\M_{d,j}$, so (ii) follows from (i). To prove (i), 
note that 
$$\ggi_{\om_d}\M_{d,j}=\GGR_{\om_d}^d\M_{d,j}=\GGR_{\om_d}^d\GGI_{\om_d}^d\LL_j^{\boxtimes d}.
$$ 
By Theorem~\ref{TGGMackey}, taking into account (\ref{ESiRhoW}), this has a filtration 
$
(\Phi_{\leq w})_{w\in \Si_d},
$
where 
$$
\Phi_{\leq w}=\sum_{x\in \Si_d\,\text{with}\, x\leq w}C_{\om_d}\ggi_{\om_d}\upsigma_{x} \ggi_{\om_d}\otimes \LL_j^{\boxtimes d} 
$$
and 
$$ 
\Phi_{\leq x}(V)/\Phi_{< x}(V)\simeq \,{}^{x}(\LL_j^{\boxtimes d})\simeq \LL_j^{\boxtimes d}.
$$
Note that under the isomorphism above, we have that the coset $\upsigma_{x} v_{d,j}+\Phi_{< x}(V)$ corresponds to $v_j^{\otimes d}$, and 
$\ggi_{\om_d}\upsigma_{x} v_{d,j}=\ggi^{j^d}\upsigma_{x} v_{d,j}=\upsigma_{x} v_{d,j}$, which implies that $\ggi_{\om_d}\M_{d,j}=\ggi^{j^d}\M_{d,j}$ has $\k$-basis $\{\upsigma_wv_{d,j}\mid w\in \Si_d\}$. Now we can apply Lemma~\ref{LSiTau}. 
\end{proof}

\begin{Lemma} \label{LResMdTwo} 
We have:
\begin{enumerate}
\item[{\rm (i)}] For each $w\in \Si_d$, the vector $\dot wv_{d,j}$ spans a graded $C_{\om_d}$-subsupermodule of $\ggi_{\om_d}\M_{d,j}$ isomorphic to $\LL_j^{\boxtimes d}$. 
\item[{\rm (ii)}] $\GGR^d_{\om_d}\M_{d,j}\simeq (\LL_j^{\boxtimes d})^{\oplus d!}$. 
\end{enumerate} 
\end{Lemma}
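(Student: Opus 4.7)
Part (ii) will follow from part (i) combined with Lemma \ref{LResMdOne}(i). Indeed, that lemma provides a $\k$-basis $\{\dot w v_{d,j} \mid w \in \Si_d\}$ of $\ggi_{\om_d} \M_{d,j} = \GGR^d_{\om_d} \M_{d,j}$. Once (i) tells us that each $\dot w v_{d,j}$ generates a $C_{\om_d}$-submodule isomorphic to $\LL_j^{\boxtimes d}$, that submodule is necessarily $1$-dimensional (since $\LL_j^{\boxtimes d}$ is), so the basis vectors exhibit $\ggi_{\om_d} \M_{d,j}$ as a sum of $d!$ rank-one submodules, each isomorphic to $\LL_j^{\boxtimes d}$. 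Since $\bideg(\dot w) = (0, \bar 0)$ under the isomorphism $F_d$ of Theorem~\ref{TMorIso}, all these summands live in the same bidegree, giving the stated isomorphism.

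For (i), my plan is to use the identification $\ggi_{\om_d} C_d \ggi_{\om_d} = H_d(A_\ell)$ from Theorem~\ref{TMorIso}, under which $C_{\om_d}$ corresponds to the subalgebra $A_\ell[z]^{\otimes d} \subseteq H_d(A_\ell)$ and each $\dot w$ corresponds to $w \in \Si_d$. By Theorem~\ref{TAffBasis}, this yields an identification $\ggi_{\om_d} \M_{d,j} \simeq H_d(A_\ell) \otimes_{A_\ell[z]^{\otimes d}} \LL_j^{\boxtimes d}$, with $\dot w v_{d,j}$ corresponding to $w \otimes v_j^{\otimes d}$. The module $\LL_j^{\boxtimes d}$ is $1$-dimensional, with $A_\ell[z]^{\otimes d}$ acting via the character sending $e^{j^d}$ to $1$ and every other monomial of the standard basis (including each $z_r$) to $0$. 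Thus it suffices to check $a \cdot (w \otimes v_j^{\otimes d}) = 0$ for every standard basis element $a$ of $A_\ell^{\otimes d}$ with $a \ne e^{j^d}$, and $z_r \cdot (w \otimes v_j^{\otimes d}) = 0$ for each $r$.

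The $A_\ell^{\otimes d}$ case is immediate from the wreath relation (\ref{ERAff1}), which gives $a w = w \cdot {}^{w^{-1}} a$ in $H_d(A_\ell)$ with \emph{no} error terms. Hence $a \cdot (w \otimes v_j^{\otimes d}) = w \otimes ({}^{w^{-1}} a) v_j^{\otimes d}$, and since ${}^{w^{-1}}$ merely permutes (up to sign) the identical tensor factors of $v_j^{\otimes d}$, the action of ${}^{w^{-1}} a$ equals $\pm$ that of $a$; in particular it is zero whenever $a$ is not a scalar multiple of $e^{j^d}$.

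The main obstacle is the $z_r$ case, because (\ref{ERAff2}) produces correction terms when commuting $z_t$ past $s_r$. My plan is to argue by induction on $\ttl(w)$: writing $w = s_r w'$ and applying (\ref{ERAff2}) to move $z_r$ through $s_r$, the error (on the idempotent $e^{j^d}$) equals $\bigl((c_r^{[j]} + c_{r+1}^{[j]}) + \delta_{j,0} u_r u_{r+1}\bigr) e^{j^d}$; each summand lies in the strictly positive-degree part of $A_\ell^{\otimes d}$, and since $\LL_j$ is concentrated in a single bidegree and $c^{[j]}$, $u^2$ both have bidegree $(4, \bar 0)$, these elements annihilate $v_j^{\otimes d}$. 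The main surviving term becomes $s_r z_{s_r(r)} (w' \otimes v_j^{\otimes d})$, which by iterating the commutation and the inductive hypothesis reduces to $w \otimes z_{w^{-1}(r)} v_j^{\otimes d} = 0$, since $z$ acts as $0$ on $\LL_j$ by construction. This completes (i), and hence (ii).
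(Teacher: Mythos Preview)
Your proof is correct and follows the same route as the paper: reduce to a statement about $H_d(A_\ell)$ via Theorem~\ref{TMorIso} and verify it using the defining relations (\ref{ERAff1}) and (\ref{ERAff2}), with (ii) following from (i) by Lemma~\ref{LResMdOne}. One cosmetic point: in the inductive step you reuse the letter $r$ both for the index of $z_r$ and for the first simple reflection in $w=s_rw'$; the argument works for an arbitrary first letter $s_a$ of a reduced word for $w$ (the error term is zero when $r\notin\{a,a+1\}$ and lies in $(A_\ell^{\otimes d})^{>0}$ otherwise), so you should separate these two indices.
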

\begin{proof}
By Lemma~\ref{LResMdOne}, (ii) follows from (i). To prove (i), note that
$$
\ggi_{\om_d}\M_{d,j}\simeq \GGR^d_{\om_d}\M_{d,j}=\GGR^d_{\om_d}\GGI^d_{\om_d}\LL_j^{\boxtimes d}
=\ggi_{\om_d}C_d\ggi_{\om_d}\otimes_{C_1^{\otimes d}}\LL_j^{\boxtimes d}.
$$ 
Now using Theorem~\ref{TMorIso}, the statement (i) becomes a statement about $H_d(A_\ell)$ which is easily checked using the defining relations for $H_d(A_\ell)$.
\end{proof}

\subsection{  Symmetric group action on $M_{d,j}$}
\label{SSSymAct}
By adjointness of $\GGI^d_{\om_d}$ and $\GGR^d_{\om_d}$, 
we have an explicit isomorphism
$$
\End_{C_d}(\M_{d,j})=\Hom_{C_d}(\GGI^d_{\om_d}\LL_j^{\boxtimes d},\GGI^d_{\om_d}\LL_j^{\boxtimes d})
\iso \Hom_{C_{\om_d}}(\LL_j^{\boxtimes d},\GGR^d_{\om_d}\M_{d,j}).
$$
It now follows from Lemma~\ref{LResMdTwo} that for every $w\in \Si_d$, there exists a unique $C_d$-endomorphism $\phi_w$ of $\M_{d,j}$ which is characterized by the property 
\begin{equation}\label{EPhiW}
\phi_w(v_{d,j})=(-1)^{\ttl(w)(j+1)}\dot w v_{d,j}. 
\end{equation}
The sign $(-1)^{\ttl(w)(j+1)}$ is chosen for technical reasons---it makes some of the notions of \S\ref{SSSymDivExt}, such as imaginary symmetric, exterior and divided powers, more natural. In fact, the convenience of the sign is already clear from 
Proposition~\ref{PUpsilon}. 

Recall that we always consider $\k \Si_d$ as concentrated in bidegree $(0,\0)$. 
Applying Lemma~\ref{LResMdTwo} one more time, we get:

\begin{Theorem} \label{TEndMd} 
There is an isomorphism of graded superalgebras 
$$
\k \Si_d \iso \End_{C_d}(\M_{d,j})^\sop
$$
which maps $w\in \Si_d$ to the unique endomorphism $\phi_w$ such that  $$\phi_w(v_{d,j})=(-1)^{\ttl(w)(j+1)}\dot w v_{d,j}.$$ 
\end{Theorem}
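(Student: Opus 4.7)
The existence and uniqueness of $\phi_w$ is already built into the discussion preceding the theorem: the adjunction $(\GGI^d_{\om_d}, \GGR^d_{\om_d})$ yields a natural isomorphism
$$
\End_{C_d}(\M_{d,j}) \;\simeq\; \Hom_{C_{\om_d}}\!\big(\LL_j^{\boxtimes d},\,\GGR^d_{\om_d} \M_{d,j}\big),
$$
and by Lemma~\ref{LResMdTwo}(ii) the target is free of rank $d!$, with a basis of direct-summand embeddings sending $v_j^{\otimes d}$ to the vectors $\{\dot w v_{d,j}\mid w\in\Si_d\}$ provided by Lemma~\ref{LResMdTwo}(i). Scaling by the unit $(-1)^{\ttl(w)(j+1)}$ defines $\phi_w$ uniquely. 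Each $\phi_w$ is homogeneous of bidegree $(0,\0)$ because $\dot w\in\ggi_{\om_d}C_d\ggi_{\om_d}$ has bidegree $(0,\0)$ under the isomorphism of Theorem~\ref{TMorIso}. Hence $\Phi\colon w\mapsto \phi_w$ is a bidegree $(0,\0)$, $\k$-linear bijection $\k\Si_d\to\End_{C_d}(\M_{d,j})^\sop$; the $\phi_w$'s are linearly independent because the images $\phi_w(v_{d,j})$ are linearly independent by Lemma~\ref{LResMdOne}(i).

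The only thing left to check is that $\Phi$ is multiplicative with respect to the opposite multiplication, i.e.\ that
$$
\phi_u\circ\phi_w \;=\; \phi_{wu} \qquad (w,u\in\Si_d).
$$
Since $\phi_w,\phi_u$ are bidegree $(0,\0)$, the opposite-algebra product reduces to composition in the reversed order, so this is exactly the required identity. I will compute both sides on the cyclic generator $v_{d,j}$. Using that $\phi_u$ is $C_d$-linear and $\dot w v_{d,j}=(-1)^{\ttl(w)(j+1)}\phi_w(v_{d,j})$ already commits us to the scalar $(-1)^{(\ttl(w)+\ttl(u))(j+1)}$ on the left hand side:
$$
(\phi_u\circ\phi_w)(v_{d,j}) \;=\; (-1)^{\ttl(w)(j+1)}\,\dot w\,\phi_u(v_{d,j}) \;=\; (-1)^{(\ttl(w)+\ttl(u))(j+1)}\,\dot w\,\dot u\, v_{d,j}.
$$

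The key algebraic input is that $\dot w\,\dot u=\dot{wu}$ inside $\ggi_{\om_d}C_d\ggi_{\om_d}$. This is not automatic from the definition~(\ref{ETauSigma}), but it follows at once from Theorem~\ref{TMorIso}: the isomorphism $F_d\colon H_d(A_\ell)\iso\ggi_{\om_d}C_d\ggi_{\om_d}$ sends each $w\in\Si_d\subset H_d(A_\ell)$ to $\dot w$, so multiplicativity of $F_d$ on the subalgebra $\k\Si_d\subset H_d(A_\ell)$ gives $\dot w\,\dot u=F_d(wu)=\dot{wu}$. Combining this with the well-known parity identity $\ttl(wu)\equiv \ttl(w)+\ttl(u)\pmod 2$ (from $\sgn(wu)=\sgn(w)\sgn(u)$), we obtain
$$
(\phi_u\circ\phi_w)(v_{d,j}) \;=\; (-1)^{\ttl(wu)(j+1)}\,\dot{wu}\,v_{d,j} \;=\; \phi_{wu}(v_{d,j}),
$$
as required. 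Since $v_{d,j}$ generates $\M_{d,j}$ as a $C_d$-module, this equality of endomorphisms holds on all of $\M_{d,j}$.

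There is no real obstacle here; the theorem reduces to the already-established structural facts. The only step that requires care is pinning down that $\dot w\dot u=\dot{wu}$ (as opposed to the weaker Lemma~\ref{LSiTau} statement), and this is what Theorem~\ref{TMorIso} supplies for free. The sign $(-1)^{\ttl(w)(j+1)}$ in the definition of $\phi_w$ is harmless for the proof of multiplicativity because the parity of $\ttl(w)$ is a group character; it is inserted for later convenience in setting up symmetric/exterior/divided-power constructions, as mentioned in~\S\ref{SSSymAct}.
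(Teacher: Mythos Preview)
Your proof is correct and follows the same approach as the paper: adjunction plus Lemma~\ref{LResMdTwo} to get the $\k$-linear bijection, then checking multiplicativity on the generator $v_{d,j}$. The paper's proof is essentially the one-line remark ``Applying Lemma~\ref{LResMdTwo} one more time'', leaving the multiplicativity check (including the use of Theorem~\ref{TMorIso} to get $\dot w\,\dot u=\dot{wu}$ and the parity-of-length identity for the sign) implicit; you have filled in precisely those details.
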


By Theorem~\ref{TEndMd}, we can consider $\M_{d,j}$ as a graded $(C_d,\k \Si_d)$-bisupermodule with $mw=\phi_w(m)$ for any $m\in \M_{d,j}$ and $w\in\Si_d$; in particular, 
\begin{equation}\label{EActions_r}
(cv_{d,j}) w=(-1)^{\ttl(w)(j+1)}c\dot w v_{d,j}\qquad(c\in C_d,\ w\in\Si_d).
\end{equation}


\begin{Corollary} \label{CEndTens}
Let $\la\in\Comp(d)$. There  is an isomorphism of graded superalgebras 
$$
\k \Si_\la \iso \End_{C_\la}(\M_{\la,j})^\sop
$$
which maps $w\in \Si_\la$ to the unique endomorphism $\phi_w$ with  
$$\phi_w(v_{\la,j})=(-1)^{\ttl(w)(j+1)}\dot w v_{\la,j}.$$ 
\end{Corollary}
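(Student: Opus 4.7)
The plan is to reduce Corollary~\ref{CEndTens} to Theorem~\ref{TEndMd} applied componentwise via the parabolic decomposition. Write $\la=(\la_1,\dots,\la_n)$. By definition $\M_{\la,j}=\M_{\la_1,j}\boxtimes\cdots\boxtimes\M_{\la_n,j}$, by \eqref{CParIdentify} we have $C_\la=C_{\la_1}\otimes\cdots\otimes C_{\la_n}$, and as graded superalgebras $\k\Si_\la=\k\Si_{\la_1}\otimes\cdots\otimes\k\Si_{\la_n}$, since $\Si_\la=\Si_{\la_1}\times\cdots\times\Si_{\la_n}$.

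First, I would apply Theorem~\ref{TEndMd} to each $\la_r$ separately to obtain graded superalgebra isomorphisms $\k\Si_{\la_r}\iso\End_{C_{\la_r}}(\M_{\la_r,j})^{\sop}$, $w_r\mapsto\phi_{w_r}$, characterized by $\phi_{w_r}(v_{\la_r,j})=(-1)^{\ttl(w_r)(j+1)}\dot w_rv_{\la_r,j}$. Since each $\k\Si_{\la_r}$ is concentrated in bidegree $(0,\0)$, all the $\phi_{w_r}$ are even, so their tensor product induces a graded superalgebra homomorphism
$$\Psi:\k\Si_\la\to\End_{C_\la}(\M_{\la,j})^{\sop},\qquad w_1\otimes\cdots\otimes w_n\mapsto \phi_{w_1}\otimes\cdots\otimes\phi_{w_n}$$
with no Koszul signs to worry about. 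Injectivity of $\Psi$ is inherited factor-by-factor from Theorem~\ref{TEndMd}.

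Next, I would establish surjectivity by a rank count. Using $\M_{\la,j}\simeq\GGI^\la_{\om_d}\LL_j^{\boxtimes d}$, adjointness gives $\End_{C_\la}(\M_{\la,j})\simeq\Hom_{C_{\om_d}}(\LL_j^{\boxtimes d},\GGR^\la_{\om_d}\M_{\la,j})$. Under the tensor product identifications above, the restriction $\GGR^\la_{\om_d}\M_{\la,j}=\ggi_{\om_d}\M_{\la,j}$ factors as the exterior tensor product of the $\GGR^{\la_r}_{\om_{\la_r}}\M_{\la_r,j}$, and Lemma~\ref{LResMdTwo}(ii) applied in each factor gives $\GGR^{\la_r}_{\om_{\la_r}}\M_{\la_r,j}\simeq(\LL_j^{\boxtimes\la_r})^{\oplus\la_r!}$. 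Therefore $\GGR^\la_{\om_d}\M_{\la,j}\simeq(\LL_j^{\boxtimes d})^{\oplus|\Si_\la|}$, and since $\End_{C_1}(\LL_j)=\k$ (and hence $\End_{C_{\om_d}}(\LL_j^{\boxtimes d})=\k$) we conclude that $\End_{C_\la}(\M_{\la,j})$ is free of rank $|\Si_\la|=\dim_\k\k\Si_\la$ over $\k$. The injective $\Psi$ between free $\k$-modules of the same finite rank is then an isomorphism.

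Finally I would identify the formula. For $w=w_1\cdots w_n\in\Si_\la$ with each $w_r\in\Si_{\la_r}$, one chooses a reduced decomposition in which each simple reflection lies in a single $\Si_{\la_k}$, so $\ttl(w)=\sum_r\ttl(w_r)$. The main point to watch carefully will be the claim that under the tensor decomposition $C_\la=C_{\la_1}\otimes\cdots\otimes C_{\la_n}$ the element $\dot w\in\ggi_{\om_d}C_d\ggi_{\om_d}$ factors as $\dot w_1\otimes\cdots\otimes\dot w_n$; this follows from the explicit formula $\dot s_r=\ggi_1^{\otimes(r-1)}\otimes\dot s\otimes\ggi_1^{\otimes(d-r-1)}$ in \S\ref{SSIso}, which places $\dot s_r$ in the $k$-th tensor factor whenever $r$ belongs to the $k$-th block of $\la$, and the fact that $\dot s$ has bidegree $(0,\0)$ by Theorem~\ref{TMorIso} so there are no sign issues. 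Granting this, $\Psi(w)(v_{\la,j})=\phi_{w_1}(v_{\la_1,j})\otimes\cdots\otimes\phi_{w_n}(v_{\la_n,j})=(-1)^{\ttl(w)(j+1)}\dot wv_{\la,j}$, completing the proof.
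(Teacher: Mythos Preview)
Your proof is correct and follows the natural componentwise approach: since $\M_{\la,j}\simeq \M_{\la_1,j}\boxtimes\cdots\boxtimes \M_{\la_n,j}$ and $C_\la=C_{\la_1}\otimes\cdots\otimes C_{\la_n}$, Theorem~\ref{TEndMd} applied to each factor tensors to give the claimed isomorphism, and your verification of the explicit formula via the factorization $\dot w=\dot w_1\otimes\cdots\otimes\dot w_n$ for $w\in\Si_\la$ is exactly right. The paper states this corollary without proof, presumably regarding this factorwise reduction as immediate; your write-up is a correct and careful way to fill in the details, though the separate rank count for surjectivity is a bit more than needed once one knows that the endomorphism algebra of the external tensor product is the tensor product of the endomorphism algebras (which your rank computation in effect proves).
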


By Corollary~\ref{CEndTens}, we can consider $\M_{\la,j}$ as a graded $(C_\la,\k \Si_\la)$-bisupermodule with 
\begin{equation}\label{ERAction}
mw=\phi_w(m) \qquad (m\in \M_{\la,j},\, w\in \Si_\la).
\end{equation}
In particular, 
\begin{equation}\label{EVWLa}
(cv_{\la,j}) w=(-1)^{\ttl(w)(j+1)}c\dot w v_{\la,j} \qquad(c\in C_d,\ w\in\Si_\la).
\end{equation}

\subsection{  Gelfand-Graev induction and restriction on imaginary tensor spaces}
Let $\la\in\Comp(d)$. By transitivity of the Gelfand-Graev induction, we have $$\M_{d,j}\simeq \GGI_\la^d \M_{\la,j},$$ and so we have a homomorphism 
of graded $C_\la$-supermodules
$$
i_\la:\M_{\la,j}\to \GGR_\la^d \M_{d,j},\ v_{\la,j}\mapsto \ggi_\la v_{d,j}
$$

\begin{Lemma} \label{LResLaTensSpace}
For each $\la\in \Comp(d)$, the homomorphism $i_\la$ injective. Identifying $\M_{\la,j}$ with $i_\la(\M_{\la,j})$, we have 
$$
\GGR_\la^d \M_{d,j}=\bigoplus_{x\in{}^\la\D_d} \M_{\la,j} x.
$$
\end{Lemma}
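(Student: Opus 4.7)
\medskip

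The plan is to deduce both assertions from the Gelfand--Graev Mackey Theorem~\ref{TGGMackey} applied to $\GGR_\la^d\GGI_{\om_d}^d\LL_j^{\boxtimes d}$. With $\mu=\om_d$ and $V=\LL_j^{\boxtimes d}$, one has $\la\cap x\om_d=\om_d=x^{-1}\la\cap\om_d$ for every $x$, and since all $d$ tensor factors of $\LL_j^{\boxtimes d}$ coincide the twist ${}^{x}\LL_j^{\boxtimes d}$ is isomorphic to $\LL_j^{\boxtimes d}$. The theorem therefore produces a filtration $(\Phi_{\leq x})_{x\in{}^\la\D_d}$ of $\GGR_\la^d\M_{d,j}$ by $C_\la$-subsupermodules with
$$
\Phi_{\leq x}/\Phi_{<x}\simeq \GGI^\la_{\om_d}\LL_j^{\boxtimes d}\simeq \M_{\la,j},
$$
and by construction the coset generator of $\Phi_{\leq x}/\Phi_{<x}$ corresponding to $v_{\la,j}$ is the class of $\ggi_\la b_x\ggi_{\om_d}\otimes v_j^{\otimes d}$ in $\GGR_\la^d\M_{d,j}=\ggi_\la C_d\ggi_{\om_d}\otimes_{C_{\om_d}}\LL_j^{\boxtimes d}$.

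Next I would identify this generator with an explicit element of $\ggi_\la \M_{d,j}$ coming from the right $\k\Si_d$-action. By (\ref{ESiRhoW}) we have $b_x\ggi_{\om_d}=\upsigma_x\ggi_{\om_d}=\psi_{\rho_d(x)}\ggi_{\om_d}$, so under the identification $\ggi_\la C_d\ggi_{\om_d}\otimes_{C_{\om_d}}\LL_j^{\boxtimes d}=\ggi_\la\M_{d,j}$ the generator becomes $\ggi_\la\upsigma_x v_{d,j}$. On the other hand, Lemma~\ref{LSiTau} gives $\dot x=\upsigma_x+\sum_{u<x}c_u\upsigma_u$, and combining with (\ref{EActions_r}) yields
$$
i_\la(v_{\la,j})\cdot x \;=\;\ggi_\la(v_{d,j}\cdot x)\;=\;(-1)^{\ttl(x)(j+1)}\ggi_\la\dot xv_{d,j}\;\equiv\;(-1)^{\ttl(x)(j+1)}\ggi_\la\upsigma_x v_{d,j}\pmod{\Phi_{<x}},
$$
since the terms with $u<x$ belong to $\Phi_{<x}$ (note $u\in\Si_d$ can be rewritten in terms of ${}^\la\D_d$-representatives of length $<\ttl(x)$ using the filtration by Bruhat order, as done in Lemma~\ref{LGGOneSide}). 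Thus modulo $\Phi_{<x}$, the class of $i_\la(v_{\la,j})\cdot x$ maps to $\pm v_{\la,j}$ under the isomorphism $\Phi_{\leq x}/\Phi_{<x}\simeq\M_{\la,j}$, and in particular it generates this subquotient as a $C_\la$-module.

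The proof then proceeds in three short steps. First, for $x=1$ the subquotient $\Phi_{\leq 1}\simeq\M_{\la,j}$ identifies with $i_\la(\M_{\la,j})$, and because the composite $\M_{\la,j}\stackrel{i_\la}{\to}\Phi_{\leq 1}\stackrel{\sim}{\to}\M_{\la,j}$ is an isomorphism, the map $i_\la$ is injective. Second, an easy induction on the Bruhat order, using that $\M_{\la,j}=C_\la v_{\la,j}$ and the fact established above, shows $\Phi_{\leq x}=\sum_{y\leq x,\,y\in{}^\la\D_d}\M_{\la,j}\cdot y$; taking $x$ to be the longest element of $\Si_d$ gives $\GGR_\la^d\M_{d,j}=\sum_{x\in{}^\la\D_d}\M_{\la,j}\cdot x$. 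Third, since the associated graded of the filtration is $\bigoplus_{x\in{}^\la\D_d}\M_{\la,j}$ and each $\M_{\la,j}\cdot x$ surjects onto its corresponding subquotient, a standard filtration argument forces the sum to be direct.

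The main obstacle is the comparison in Step~2: one must carefully check that the ``lower Bruhat'' correction terms $\sum_{u<x}c_u\upsigma_u v_{d,j}$ produced by Lemma~\ref{LSiTau} truly land in $\Phi_{<x}$ after left multiplication by $\ggi_\la$. This is handled by combining Lemma~\ref{LGGOneSide} (which rewrites $\ggi_\la\upsigma_u$ as an element of $\ggi_\la C_\la\ggi_{\om_d}\cdot \upsigma_{u'}$ for some $u'\in{}^\la\D_d$ with $u'\leq u<x$) with the definition of the filtration $\Phi_{\leq x}$. Everything else then reduces to bookkeeping with the right $\k\Si_d$-action and the adjunction structure.
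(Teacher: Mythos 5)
Your proof is correct and takes essentially the same route as the paper's: apply the Gelfand--Graev Mackey Theorem to $\GGR_\la^d\GGI_{\om_d}^d\LL_j^{\boxtimes d}$, identify the generator $\upsigma_x v_{d,j}$ of each Mackey subquotient with $v_{\la,j}$, and then use Lemma~\ref{LSiTau} together with (\ref{EActions_r}) to replace the $\upsigma_x$-filtration by the filtration built from the right $\Si_d$-translates $\M_{\la,j}\cdot x=C_\la\ggi_\la\dot x v_{d,j}$. One small inaccuracy: Lemma~\ref{LGGOneSide} does not itself rewrite $\ggi_\la\upsigma_u$ as a $C_\la$-multiple of $\upsigma_{u'}$ with $u'\in{}^\la\D_d$; what actually disposes of the lower Bruhat terms (and is left implicit in the paper as well) is the parabolic factorization $u=u_1u_2$ with $u_1\in\Si_\la$, $u_2\in{}^\la\D_d$, $u_2\leq u<x$, together with $\ggi_\la\upsigma_{u_1}\ggi_\la\in C_\la$.
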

\begin{proof}
By (\ref{ESiRhoW}), we have $\psi_{\rho_d(x)} 
v_{d,j}=\upsigma_{x}  v_{d,j}$ for all $x\in\Si_d$, so 
by Theorem~\ref{TGGMackey}, 
$$\GGR^d_\la \M_{d,j}=\GGR^d_\la\GGR^d_{\om_d} \LL_j^{\boxtimes d}$$ has a filtration 
 $
 (\Phi_{\leq x})_{x\in {}^\la\D_d},
 $
where 
$$
\Phi_{\leq x}=\sum_{y\in {}^\la\D_d\,\text{with}\, y\leq x}C_{\la}\ggi_{\la}\upsigma_{y} \ggi_{\om_d}\otimes \LL_j^{\boxtimes d} 
$$
and 
$$ 
\Phi_{\leq x}(V)/\Phi_{< x}(V)\simeq\GGI_{\om_d}^{\la}\,{}^{x}(\LL_j^{\boxtimes d})\simeq \M_{\la,j}.
$$
Note that under the isomorphism above, we have that the coset $\upsigma_{x} v_{d,j}+\Phi_{< x}(V)$ corresponds to $v_{\la,j}$. 

When $x=1$, we deduce that $i_\la$ is injective. Moreover, using Lemma~\ref{LSiTau}, we have the filtration $
 (\Phi'_{\leq x})_{x\in {}^\la\D_d}$ of $\GGR^d_\la \M_{d,j}$, 
where 
\begin{align*}
\Phi'_{\leq x}&=\sum_{y\in {}^\la\D_d\,\text{with}\, y\leq x}C_{\la}\ggi_{\la}\dot y \ggi_{\om_d}\otimes \LL_j^{\boxtimes d} 
\\&=\sum_{y\in {}^\la\D_d\,\text{with}\, y\leq x}C_{\la}\ggi_{\la}v_{d,j}y
\\&=\bigoplus_{y\in {}^\la\D_d\,\text{with}\, y\leq x}\M_{\la,j} y 
\end{align*}
which implies the lemma.
\end{proof}

By functoriality of $\GGI_\la^d$, we have that $\GGI_\la^d \M_{\la,j}$ becomes a graded $(C_d,\k \Si_\la)$-bisupermodule. Similarly, $\GGR_\la^d \M_{d,j}$ becomes a graded $(C_\la,\k \Si_d)$-bisupermodule.

\begin{Lemma} \label{LggIndResSymGroup} 
Let $\la\in\Comp(d)$. Then:
\begin{enumerate}
\item[{\rm (i)}] $\GGI_\la^d \M_{\la,j}\simeq \M_{d,j}$ as graded $(C_d,\k \Si_\la)$-bisupermodules, where the right action of $\Si_\la$ on $\M_{d,j}$ is via restriction from $\Si_d$.
\item[{\rm (ii)}] $\GGR_\la^d \M_{d,j}\simeq \M_{\la,j}\otimes_{\k \Si_\la}\k \Si_d$ as graded $(C_\la,\k \Si_d)$-bisupermodules. 
\end{enumerate}
\end{Lemma}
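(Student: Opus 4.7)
The central compatibility fact I will establish first is: for any $\la=(\la_1,\ldots,\la_n)\in\Comp(d)$ and any $w\in\Si_\la\le\Si_d$, the element $\dot w\in\ggi_{\om_d}C_d\ggi_{\om_d}$ defined by (\ref{ETauSigma}) lies in the parabolic subalgebra
$$\ggi_{\om_\la}C_\la\ggi_{\om_\la}=\bigotimes_{k=1}^n \ggi_{\om_{\la_k}}C_{\la_k}\ggi_{\om_{\la_k}}\subseteq\ggi_{\om_d}C_d\ggi_{\om_d},$$
and agrees there with the element $\dot w$ defined intrinsically inside the parabolic. This is immediate from the construction of $\dot w$: each generator $\dot s_r$ with $s_r\in\Si_\la$ is built in (\ref{EElementsCd}) by tensoring the element $\dot s\in\ggi_{\om_2}C_2\ggi_{\om_2}$ with identity idempotents, a recipe which is the same whether one views $\dot s_r$ inside the parabolic or inside the whole algebra. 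As a consequence, for $w\in\Si_\la$ the right $\Si_\la$-action (\ref{EVWLa}) on $v_{\la,j}$ is intertwined with the restriction of the right $\Si_d$-action (\ref{EActions_r}) to $\Si_\la$ under the identification $v_{\la,j}\leftrightarrow \ggi_\la v_{d,j}=v_{d,j}$ (where we used $\ggi_\la v_{d,j}=v_{d,j}$, which holds because $\ggi^{j^d}=\ggi^{\om_{\la_1},j^{\la_1}}\otimes\cdots\otimes\ggi^{\om_{\la_n},j^{\la_n}}$ is a summand of $\ggi_\la$ and $\ggi^{j^d}v_{d,j}=v_{d,j}$ by (\ref{EvdWeight})).

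For (i), transitivity of Gelfand-Graev induction together with the identification $\M_{\la,j}\simeq\GGI_{\om_d}^\la L_j^{\boxtimes d}$ yields an isomorphism of graded $C_d$-supermodules $\Theta:\GGI_\la^d \M_{\la,j}\iso \M_{d,j}$ sending $\ggi_\la\otimes v_{\la,j}\mapsto v_{d,j}$. The functorial right $\k\Si_\la$-action on the domain is $c\otimes m\mapsto c\otimes mw$. To see $\Theta$ is $\Si_\la$-equivariant, it suffices to check on the generator: applying $w\in\Si_\la$ to $\ggi_\la\otimes v_{\la,j}$ yields $(-1)^{\ttl(w)(j+1)}\ggi_\la\otimes\dot w v_{\la,j}$, which $\Theta$ sends to $(-1)^{\ttl(w)(j+1)}\dot w v_{d,j}=v_{d,j}w$ by the compatibility above and (\ref{EPhiW}).

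For (ii), define
$$\phi:\M_{\la,j}\otimes_{\k\Si_\la}\k\Si_d\to\GGR_\la^d \M_{d,j},\qquad m\otimes g\mapsto i_\la(m)\cdot g,$$
using the right $\k\Si_d$-action on $\M_{d,j}$, which preserves $\ggi_\la \M_{d,j}\simeq \GGR_\la^d \M_{d,j}$ since it commutes with the left $C_d$-action. Well-definedness over $\k\Si_\la$ requires $i_\la(mw)=i_\la(m)w$ for $w\in\Si_\la$, which again reduces, on the generator $v_{\la,j}$, to the compatibility of the first paragraph. The map $\phi$ is evidently $C_\la$-linear on the left and $\k\Si_d$-linear on the right. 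Since $\Si_d=\bigsqcup_{x\in{}^\la\D_d}\Si_\la x$, we have $\k\Si_d=\bigoplus_{x\in{}^\la\D_d}\k\Si_\la\cdot x$ as a left $\k\Si_\la$-module, so
$$\M_{\la,j}\otimes_{\k\Si_\la}\k\Si_d\simeq\bigoplus_{x\in{}^\la\D_d}\M_{\la,j}\otimes x,$$
and $\phi$ sends each summand bijectively onto $i_\la(\M_{\la,j})x$. By Lemma~\ref{LResLaTensSpace}, these images exhaust $\GGR_\la^d \M_{d,j}$, so $\phi$ is an isomorphism.

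The only genuine obstacle is verifying the compatibility of $\Si_\la$-actions asserted in the first paragraph. It is intuitively clear from the inductive definition of $\dot w$, but a fully rigorous justification requires tracing the definition (\ref{ETau}) of $\dot s\in\ggi_{\om_2}C_2\ggi_{\om_2}$ through the parabolic embedding and checking that the auxiliary element $\upsigma$ of (\ref{EUpSigma}) is computed intrinsically; once this is confirmed, both (i) and (ii) reduce to the bookkeeping outlined above.
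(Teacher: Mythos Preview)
Your proposal is correct and follows essentially the same approach as the paper: transitivity of Gelfand-Graev induction for (i), and Lemma~\ref{LResLaTensSpace} for (ii). The compatibility of $\dot w$ elements you flag as the ``only genuine obstacle'' is exactly what the paper sweeps into the phrase ``by (\ref{ERAction})''; your more careful unpacking of this point is warranted, and the verification is indeed immediate from the definition of $\dot s_r$ as an insertion of the rank-$2$ element $\dot s$ via (\ref{EElementsCd}).
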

\begin{proof}
(i) By transitivity of Gelfand-Graev induction, $\GGI_\la^d \M_{\la,j}\simeq \M_{d,j}$ as graded $C_d$-supermodules. By (\ref{ERAction}), the isomorphism is compatible with the $\Si_\la$-action. 

(ii) follows easily from Lemma~\ref{LResLaTensSpace}.
\end{proof}

\begin{Corollary} \label{CFunPerm}
Let $\la\in\Comp(d)$. 
The following pairs of functors are isomorphic:
\begin{enumerate}
\item[{\rm (i)}] $\GGI_\la^d\circ (\M_{\la,j}\otimes_{\k \Si_\la}\, -)$ and $(\M_{d,j}\otimes_{\k \Si_d}\, -)\circ \Ind_{\Si_\la}^{\Si_d}\,:\, \mod{\k \Si_\la}\to \mod{C_{d}}$. 
\item[{\rm (ii)}] $\GGR_\la^d\circ (\M_{d,j}\otimes_{\k \Si_d}\, -)$ and $(\M_{\la,j}\otimes_{\k \Si_\la}\, -)\circ \Res_{\Si_\la}^{\Si_d}\,:\, \mod{\k  \Si_d}\to \mod{C_{\la}}$. 
\end{enumerate}
\end{Corollary}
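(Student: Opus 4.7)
The plan is to derive both isomorphisms as formal consequences of Lemma~\ref{LggIndResSymGroup}, which already provides the required bimodule isomorphisms, combined with associativity of the tensor product. The key observation is that the functors in question are defined via tensoring with certain bimodules, and once we know these bimodules agree, naturality in the module argument is automatic. No calculation with explicit elements should be needed.

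For part (i), given $V\in\mod{\k\Si_\la}$, I would write a chain of natural isomorphisms of graded $C_d$-supermodules:
\begin{align*}
\GGI_\la^d(\M_{\la,j}\otimes_{\k\Si_\la} V)
&= C_d\ggi_\la \otimes_{C_\la} \M_{\la,j} \otimes_{\k\Si_\la} V \\
&\simeq (C_d\ggi_\la \otimes_{C_\la} \M_{\la,j})\otimes_{\k\Si_\la} V \\
&\simeq \M_{d,j}\otimes_{\k\Si_\la} V \\
&\simeq \M_{d,j}\otimes_{\k\Si_d}(\k\Si_d\otimes_{\k\Si_\la} V)
= (\M_{d,j}\otimes_{\k\Si_d}-)\circ\Ind_{\Si_\la}^{\Si_d}(V).
\end{align*}
Here associativity of tensor product uses that $\M_{\la,j}$ is a graded $(C_\la,\k\Si_\la)$-bisupermodule (from Corollary~\ref{CEndTens}), so the left action of $C_\la$ and the right action of $\k\Si_\la$ commute. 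The third isomorphism is Lemma~\ref{LggIndResSymGroup}(i), which gives exactly the required identification as $(C_d,\k\Si_\la)$-bisupermodules, so the isomorphism is compatible with the $V$-tensor factor. The fourth isomorphism is the standard identity $N\otimes_A A = N$ applied to $N = \M_{d,j}\otimes_{\k\Si_d}\k\Si_d$, viewed through the definition of induction $\Ind_{\Si_\la}^{\Si_d}V = \k\Si_d\otimes_{\k\Si_\la}V$.

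For part (ii), a parallel argument works. Given $W\in\mod{\k\Si_d}$:
\begin{align*}
\GGR_\la^d(\M_{d,j}\otimes_{\k\Si_d} W)
&= \ggi_\la C_d \otimes_{C_d} \M_{d,j} \otimes_{\k\Si_d} W \\
&\simeq (\ggi_\la C_d \otimes_{C_d} \M_{d,j})\otimes_{\k\Si_d} W \\
&\simeq (\M_{\la,j}\otimes_{\k\Si_\la}\k\Si_d)\otimes_{\k\Si_d} W \\
&\simeq \M_{\la,j}\otimes_{\k\Si_\la}(\k\Si_d\otimes_{\k\Si_d} W)
\simeq \M_{\la,j}\otimes_{\k\Si_\la}\Res_{\Si_\la}^{\Si_d}W,
\end{align*}
where the third isomorphism is Lemma~\ref{LggIndResSymGroup}(ii) applied as $(C_\la,\k\Si_d)$-bisupermodules, and in the final step $\Res_{\Si_\la}^{\Si_d}W$ is simply $W$ regarded as a $\k\Si_\la$-module.

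There is no real obstacle here; the work has already been done in Lemma~\ref{LggIndResSymGroup}. The only subtlety to check carefully is that all the isomorphisms are natural in the $\k\Si$-module argument, which follows because the bimodule isomorphisms of Lemma~\ref{LggIndResSymGroup} are homomorphisms of bisupermodules, and tensoring such a bimodule isomorphism with a module yields a natural transformation of functors. Since neither $V$ nor $W$ carries nontrivial grading or superstructure (both $\k\Si_\la$ and $\k\Si_d$ are concentrated in bidegree $(0,\0)$), no sign or degree shifts enter the computation.
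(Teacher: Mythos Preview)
Your proof is correct and essentially identical to the paper's: both arguments reduce to Lemma~\ref{LggIndResSymGroup} combined with associativity of tensor products, with your chain of isomorphisms simply running in the opposite direction and unpacking the definition of $\GGI_\la^d$ a bit more explicitly.
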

\begin{proof}
(i) Take $V\in \mod{\k \Si_\la}$. Using Lemma~\ref{LggIndResSymGroup}(i), we have natural isomorphisms
\begin{align*}
\M_{d,j}\otimes_{\k \Si_d} \Ind_{\Si_\la}^{\Si_d}V&=\M_{d,j}\otimes_{\k \Si_d} \k \Si_d\otimes_{\k \Si_\la}V
\\&\simeq \M_{d,j}\otimes_{\k \Si_\la}V
\\&\simeq (\GGI^n_\la \M_{\la,j})\otimes_{\k \Si_\la}V
\\
&
\simeq \GGI^n_\la (\M_{\la,j}\otimes_{\k \Si_\la}V),
\end{align*}
as required.

(ii) Using Lemma~\ref{LggIndResSymGroup}(ii), for an $V\in\mod{\k \Si_d}$, we have natural isomorphisms
\begin{align*}
\GGR_\la^n (\M_{d,j}\otimes_{\k \Si_d} V)&\simeq(\GGR_\la^n \M_{d,j})\otimes_{\k \Si_d} V
\\&\simeq (\M_{\la,j}\otimes_{\k \Si_\la}\k \Si_d)\otimes_{\k \Si_d} V
\\
&\simeq \M_{\la,j}\otimes_{\k \Si_\la}  \Res_{\Si_\la}^{\Si_d} V,
\end{align*}
as required.
\end{proof}

\subsection{  Weight basis of  $\M_{d,j}$}
Recall the elements $\lgath_{d,j}\in \ggi^{d,j}C_d\ggi^{j^d}$ and 
$\lgath_{\la,j}=\lgath_{\la_1,j}\otimes \dots\otimes \lgath_{\la_n,j}\in \ggi^{\la,j}C_\la\ggi^{j^d}$ for $\la\in\Comp(n,d)$ from \S\ref{SSUpsilon}.

\begin{Lemma} \label{LTrivU}
For any $w\in\Si_d$, we have $\lgath_{d,j}v_{d,j}w=\lgath_{d,j}v_{d,j}$. 
More generally, for $\la\in\Comp(d)$ and $w\in\Si_\la$, we have $\lgath_{\la,j}v_{d,j}w=\lgath_{\la,j}v_{d,j}$. 
\end{Lemma}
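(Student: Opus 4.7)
The plan is very short: this should be an immediate combination of the right-action formula \eqref{EActions_r} with Proposition~\ref{PUpsilon}, the two signs cancelling.

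First I would treat the special case $\la=(d)$. By \eqref{EActions_r}, for any $w\in\Si_d$ we have
\[
\lgath_{d,j}v_{d,j}\cdot w \;=\; (-1)^{\ttl(w)(j+1)}\,\lgath_{d,j}\,\dot w\, v_{d,j}.
\]
On the other hand, Proposition~\ref{PUpsilon}(i) gives $\lgath_{d,j}\dot w=(-1)^{\ttl(w)(j+1)}\lgath_{d,j}$. Substituting, the two factors of $(-1)^{\ttl(w)(j+1)}$ cancel and we obtain $\lgath_{d,j}v_{d,j}\cdot w=\lgath_{d,j}v_{d,j}$, as required.

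For the general statement, let $\la=(\la_1,\dots,\la_n)\in\Comp(d)$ and $w\in\Si_\la$. The right action of $\Si_\la$ on $M_{d,j}$ referred to here is, by the convention preceding \eqref{ERAction} (and Lemma~\ref{LggIndResSymGroup}(i)), the restriction of the $\Si_d$-action from Theorem~\ref{TEndMd}; so \eqref{EActions_r} still applies and yields
\[
\lgath_{\la,j}v_{d,j}\cdot w \;=\; (-1)^{\ttl(w)(j+1)}\,\lgath_{\la,j}\,\dot w\, v_{d,j}.
\]
Proposition~\ref{PUpsilon}(ii) furnishes $\lgath_{\la,j}\dot w=(-1)^{\ttl(w)(j+1)}\lgath_{\la,j}$ for every $w\in\Si_\la$, and again the signs cancel, giving $\lgath_{\la,j}v_{d,j}\cdot w=\lgath_{\la,j}v_{d,j}$.

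There is no real obstacle: the only subtlety is that one must be sure the sign convention $(-1)^{\ttl(w)(j+1)}$ built into the definition of $\phi_w$ in \eqref{EPhiW} is exactly what is needed to cancel the sign in Proposition~\ref{PUpsilon}, which is the whole point of that sign choice (as already foreshadowed in Section~\ref{SSSymAct}). Once this observation is in place, the proof is a one-line computation in each case.
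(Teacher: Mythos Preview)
Your proof is correct and follows exactly the same approach as the paper: apply the action formula \eqref{EActions_r} and cancel the resulting sign using Proposition~\ref{PUpsilon}. The paper's proof is even more terse, handling only the general case (which subsumes the special one) in a single displayed line.
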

\begin{proof}
For $w\in\Si_\la$, we have 
$$
\lgath_{\la,j}v_{d,j}w=(-1)^{\ttl(w)(j+1)}\lgath_{\la,j}\dot wv_{d,j}
=\lgath_{\la,j}v_{d,j},
$$
thanks to Proposition~\ref{PUpsilon}(ii). 
\end{proof}

\begin{Lemma} \label{LMWtSpacesNew} 
Let $\la\in\Comp(d)$. Then $\ggi^{\la,j}\M_{d,j}$ has $\k$-basis $\{\lgath_{\la,j}  v_{d,j}x\mid x\in{}^\la\D_d\}$. 
\end{Lemma}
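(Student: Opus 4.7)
My strategy is to reduce to the one-part case $\la=(d)$, handle that case via Propositions~\ref{LDifficult} and~\ref{PUpsilon}, and finally establish nonvanishing via the injection $\M_{d,j}\hookrightarrow \hat \M_{d,j}$ from Lemma~\ref{LHatMG}(ii).

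For the reduction, I would start from Lemma~\ref{LResLaTensSpace}: under the identification $v_{\la,j}\leftrightarrow \ggi_\la v_{d,j}$, one has $\ggi_\la\M_{d,j}=\bigoplus_{x\in {}^\la\D_d}\M_{\la,j}\cdot x$. Unwinding $\ggi^{\la,j}=\ggi^{\la_1,j}\otimes\cdots\otimes\ggi^{\la_n,j}$ and $\ggi_\la=\ggi_{\la_1}\otimes\cdots\otimes\ggi_{\la_n}$ exhibits $\ggi^{\la,j}$ as one of the pairwise orthogonal summands of $\ggi_\la$, so $\ggi^{\la,j}\ggi_\la=\ggi^{\la,j}$. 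Because the left $C_d$-action commutes with the right $\Si_d$-action (Theorem~\ref{TEndMd}), this yields
\[
\ggi^{\la,j}\M_{d,j}=\bigoplus_{x\in {}^\la\D_d}(\ggi^{\la,j}\M_{\la,j})\cdot x,
\]
and the outer-tensor factorization $\ggi^{\la,j}\M_{\la,j}=(\ggi^{\la_1,j}\M_{\la_1,j})\boxtimes\cdots\boxtimes(\ggi^{\la_n,j}\M_{\la_n,j})$ reduces the problem to showing $\ggi^{d,j}\M_{d,j}=\k\cdot\lgath_{d,j}v_{d,j}$. Note that under $i_\la$, the vector $\lgath_{\la,j}v_{\la,j}$ corresponds to $\lgath_{\la,j}\ggi_\la v_{d,j}=\lgath_{\la,j}v_{d,j}$ (since $\lgath_{\la,j}\ggi_\la=\lgath_{\la,j}$), so the basis described in the statement is the one we will produce.

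For the one-part case, Proposition~\ref{LDifficult} gives $\ggi^{d,j}C_d\ggi_{\om_d}=\lgath_{d,j}C_d\ggi_{\om_d}$; tensoring on the right with $\LL_j^{\boxtimes d}$ over $C_{\om_d}$ yields
\[
\ggi^{d,j}\M_{d,j}=\lgath_{d,j}\M_{d,j}=\lgath_{d,j}\ggi^{j^d}\M_{d,j},
\]
where the last equality uses $\lgath_{d,j}=\lgath_{d,j}\ggi^{j^d}$ from Lemma~\ref{LG1}. Next, Lemma~\ref{LResMdOne}(i) shows $\ggi^{j^d}\M_{d,j}$ is spanned by $\{\dot w v_{d,j}\mid w\in \Si_d\}$, and then Proposition~\ref{PUpsilon}(i) collapses $\lgath_{d,j}\dot w v_{d,j}$ to $\pm\lgath_{d,j}v_{d,j}$ for every $w$, so $\ggi^{d,j}\M_{d,j}=\k\cdot\lgath_{d,j}v_{d,j}$.

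The remaining hurdle, and the one essential obstacle in the argument, is showing $\lgath_{d,j}v_{d,j}\neq 0$. I would handle this using the injection $\M_{d,j}\hookrightarrow \hat \M_{d,j}$, $m\mapsto \ggi_d\otimes m$, from Lemma~\ref{LHatMG}(ii). Tracing the explicit isomorphism chain in the proof of that lemma identifies $\ggi_d\otimes v_{d,j}$ with $\hat v_{d,j}=1_{\de^d}\otimes\hat v_j^{\otimes d}\in\hat \M_{d,j}=\Ind_{\de^d}^{d\de}\hat \LL_j^{\boxtimes d}$. Consequently $\lgath_{d,j}v_{d,j}$ maps to $\lgath_{d,j}\hat v_{d,j}=\psi_{\ga_{d,j}}1_{\de^d}\otimes\hat v_j^{\otimes d}$, which is nonzero by Lemma~\ref{LIndBasis} since $\ga_{d,j}\in\D_{dp}^{p^d}$ and $\hat v_j^{\otimes d}\neq 0$. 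This is the one step where one must leave the world of $C_d$-modules and pass to the induced $\hat C_d$-module.
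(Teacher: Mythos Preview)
Your proof is correct and follows the same approach as the paper: reduce via parabolic restriction to the one-part case, span via Proposition~\ref{LDifficult} and Proposition~\ref{PUpsilon}, then pass to $\hat\M_{d,j}$ for nonvanishing (your argument here is exactly the proof of Lemma~\ref{LUpsilonPart}, which you could simply cite). One small point: over $\k=\O$, ``spanned by one nonzero element'' does not immediately give a basis---the paper handles this via Lemma~\ref{LFact} (showing $\ggi^{d,j}\M_{d,j}$ is a summand of a free $\k$-module), whereas your injection into the free $\O$-module $\hat\M_{d,j}$ yields torsion-freeness, which is enough.
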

\begin{proof}
Using Lemma~\ref{LggIndResSymGroup}(ii), we get
$$\ggi^{\la,j}\M_{d,j}=\ggi^{\la,j}f_\la\M_{d,j}=\ggi^{\la,j}\GGR^d_\la\M_{d,j}
\simeq\ggi^{\la,j}\M_{\la,j}\otimes_{\k\Si_\la}\k\Si_d,
$$ 
with $\ggi^{\la,j}v_{d,j}\in \ggi^{\la,j}\M_{d,j}$ corresponding to 
$\ggi^{\la,j}v_{\la,j}\otimes1$. 
So, it suffices to note that $\ggi^{\la,j}\M_{\la,j}$ has basis $\lgath_{\la,j}  v_{\la,j}$. If $\la=(\la_1,\dots,\la_n)$ then $\ggi^{\la,j}\M_{\la,j}\simeq \ggi^{\la_1,j}\M_{\la_1,j}\otimes\dots\otimes \ggi^{\la_n,j}\M_{\la_n,j}$, so the required claim reduces to the case where $\la$ has only one part, i.e. to the claim that 
$\ggi^{d,j}\M_{d,j}$ has basis $\lgath_{d,j}  v_{d,j}$. 

By Proposition~\ref{LDifficult}, we have 
\begin{align*}
\ggi^{d,j}\M_{d,j}&=\ggi^{d,j}C_d\ggi_{\om_d} \otimes_{C_{\om_d}}L_j^{\boxtimes d}
\\&=\lgath_{d,j} C_d\ggi_{\om_d}\otimes_{C_{\om_d}}L_j^{\boxtimes d}
\\&=\lgath_{d,j}\ggi_{\om_d} C_d\ggi_{\om_d}\otimes_{C_{\om_d}}L_j^{\boxtimes d}
\\&=\lgath_{d,j}\ggi_{\om_d}M_{d,j},
\end{align*}
where we have used $\lgath_{d,j}=\lgath_{d,j}\ggi_{\om_d} $ for the second equality. By Lemma~\ref{LResMdOne}(i), $\ggi_{\om_d}M_{d,j}$ 
has 
$\k$-basis $\{\dot wv_{d,j}\mid w\in\Si_d\}=\{\pm v_{j, d}w\mid w\in\Si_d\}$. Since $\lgath_{d,j}v_{j, d}w= \lgath_{d,j}v_{j, d}$ by Lemma~\ref{LTrivU}, we deduce that $\lgath_{d,j}v_{j, d}$ spans $\ggi^{d,j}\M_{d,j}$. So it suffices to prove that 
$\ggi^{d,j}\M_{d,j}=1_{\ggw^{d,j}}\M_{d,j}$ is non-zero and free as a $\k$-module. 
By Lemma~\ref{LHatMG}, we have 
$$
1_{\ggw^{d,j}}\M_{d,j}\simeq 1_{\ggw^{d,j}}\ggi_d\Ind_{\de^d}^{d\de}\, \hat \LL_j^{\boxtimes d}
= 1_{\ggw^{d,j}}\,\Ind_{\de^d}^{d\de}\,\hat \LL_j^{\boxtimes d}.
$$
By Lemma~\ref{LIndBasis}, the $\k$-module $1_{\hat \ggw^{d,j}}\,\Ind_{\de^d}^{d\de}\,\hat \LL_j^{\boxtimes d}$ is free and non-zero, so $1_{\ggw^{d,j}}\M_{d,j}$ is also free and non-zero by Lemma~\ref{LFact}. 
\end{proof}

\begin{Lemma} \label{LMWtSpaces} 
We have 
$\M_{d,j}=\bigoplus_{\la\in\EC(d)}\ggi^{\la,j} \M_{d,j}$. 
\end{Lemma}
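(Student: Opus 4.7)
The plan is to verify that in the orthogonal idempotent decomposition $\M_{d,j}=\bigoplus_{(\mu,\bj)\in\EC^\col(d)}\ggi^{\mu,\bj}\M_{d,j}$ (which holds because $\ggi_d$ is the identity of $C_d$), every summand with $\bj\ne j^n$ (where $n$ is the length of $\mu$) vanishes. I would prove this in two stages: first a clean reduction to one-part compositions, and then a vanishing argument in the one-part case built out of Proposition~\ref{LDifficult} and Lemma~\ref{LG1}.

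For the first stage, I would apply Lemma~\ref{LggIndResSymGroup}(ii): since $\ggi^{\mu,\bj}\in C_\mu\subseteq \ggi_\mu C_d\ggi_\mu$,
$$\ggi^{\mu,\bj}\M_{d,j}=\ggi^{\mu,\bj}\GGR^d_\mu\M_{d,j}\simeq(\ggi^{\mu,\bj}\M_{\mu,j})\otimes_{\k\Si_\mu}\k\Si_d,$$
and under the identification $\M_{\mu,j}=\M_{\mu_1,j}\boxtimes\cdots\boxtimes\M_{\mu_n,j}$ we have $\ggi^{\mu,\bj}\M_{\mu,j}=\boxtimes_r\ggi^{(\mu_r),(j_r)}\M_{\mu_r,j}$. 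Since $\bj\ne j^n$ forces some $j_r\ne j$, the lemma reduces to the claim that $\ggi^{(m),(j')}\M_{m,j}=0$ for every $m\ge 1$ and every $j'\in J$ with $j'\ne j$.

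For $m=1$ this is immediate, as $\ggi^{(1),(j')}\M_{1,j}=\ggi^{j'}\LL_j=0$. For $m\ge 2$, recall $v_{m,j}=\ggi^{\om_m,j^m}\otimes v_j^{\otimes m}$, so
$$\ggi^{(m),(j')}\M_{m,j}=\ggi^{(m),(j')}C_m\ggi^{\om_m,j^m}\cdot v_{m,j}.$$
Since $\ggi^{\om_m,j^m}$ is a summand of $\ggi_{\om_m}$, Proposition~\ref{LDifficult} gives
$$\ggi^{(m),(j')}C_m\ggi^{\om_m,j^m}=\ggi^{(m),(j')}C_m\ggi_{\om_m}\cdot\ggi^{\om_m,j^m}=\lgath_{m,j'}C_m\ggi^{\om_m,j^m},$$
so $\ggi^{(m),(j')}\M_{m,j}=\lgath_{m,j'}C_m v_{m,j}=\lgath_{m,j'}\M_{m,j}$. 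Now Lemma~\ref{LG1} applied to $(\la,\bi)=((m),(j'))$ gives $\lgath_{m,j'}=\ggi^{(m),(j')}\lgath_{m,j'}\ggi^{\om_m,(j')^m}$, so for every $m'\in\M_{m,j}$ we have $\lgath_{m,j'}m'=\ggi^{(m),(j')}\lgath_{m,j'}\cdot(\ggi^{\om_m,(j')^m}m')$. Hence the whole question collapses to showing $\ggi^{\om_m,(j')^m}\M_{m,j}=0$.

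This last identity is exactly the special case $\mu=\om_m$, $\bj=(j')^m$ of the reduction in the first stage, where every tensor factor is $\ggi^{(1),(j')}\M_{1,j}=\ggi^{j'}\LL_j=0$. Thus
$$\ggi^{\om_m,(j')^m}\M_{m,j}\simeq\big(\ggi^{j'}\LL_j\big)^{\boxtimes m}\otimes_{\k\Si_{\om_m}}\k\Si_m=0,$$
completing the proof. The main obstacle is the one-part case: a direct attempt to vanish $\ggi^{(d),j'}C_d\ggi^{\om_d,j^d}$ inside $C_d$ fails because Lemma~\ref{LMinAB=1}(iii) only kills the distance ${>}1$ case; the trick is to pass through $\lgath_{d,j'}$ (via Proposition~\ref{LDifficult} and Lemma~\ref{LG1}) so that the color mismatch appears as $\ggi^{\om_d,(j')^d}\cdot\ggi^{\om_d,j^d}=0$ in the orthogonal decomposition of $\ggi_{\om_d}$, letting the trivial $m=1$ case take over.
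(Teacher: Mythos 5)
Your proposal is correct and follows essentially the same route as the paper: the paper likewise reduces to the one-part case via restriction to a parabolic (citing Lemma~\ref{LResLaTensSpace} where you cite the derived Lemma~\ref{LggIndResSymGroup}(ii)), and then handles the one-part case by passing through $\lgath_{m,j'}$ via Proposition~\ref{LDifficult} and the factorization $\lgath_{m,j'}=\lgath_{m,j'}\ggi^{(j')^m}$, concluding $\ggi^{(j')^m}\M_{m,j}=0$ from the structure of $\GGR^m_{\om_m}\M_{m,j}$ (the paper cites Lemma~\ref{LResMdTwo}(ii), you reuse Lemma~\ref{LggIndResSymGroup}(ii)).
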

\begin{proof}
We just need to prove that the orthogonal idempotents appearing in the sum (\ref{EGad}) act on $\M_{d,j}$ as zero unless they are of the form $\ggi^{\la,j}$. Using Lemma~\ref{LResLaTensSpace} and induction on $d$, this reduces to the case where $\la$ has just one non-zero part, i.e. we need to prove that $\ggi^{d,i}\M_{d,j}=0$ for $i\neq j$. 

By definition, $\lgath_{d,i}=\lgath_{d,i}\ggi^{i^d}=\lgath_{d,i}\ggi^{i^d}f_{\om_d}$. So, using Proposition~\ref{LDifficult}, we get 
\begin{align*}
\ggi^{d,i}\M_{d,j}&=\ggi^{d,i}C_d\ggi_{\om_d} \otimes_{C_{\om_d}}L_j^{\boxtimes d}
\\&=\lgath_{d,i} C_d\ggi_{\om_d}\otimes_{C_{\om_d}}L_j^{\boxtimes d}
\\&=\lgath_{d,i}\ggi^{i^d}f_{\om_d} C_d\ggi_{\om_d}\otimes_{C_{\om_d}}L_j^{\boxtimes d}.
\end{align*}
But for $i\neq j$, we have 
$$\ggi^{i^d}f_{\om_d} C_d\ggi_{\om_d}\otimes_{C_{\om_d}}L_j^{\boxtimes d}=
\ggi^{i^d}f_{\om_d}M_{d,j}=
0,$$ 
thanks to Lemma~\ref{LResMdTwo}(ii). 
\end{proof}


\begin{Corollary} \label{CBasisIndGG}
We have that $\{\lgath_{\la,j} v_{d,j}x\mid \la\in\EC(d),\,x\in{}^\la\D_d\}$ is a $\k$-basis of $\M_{d,j}$.
\end{Corollary}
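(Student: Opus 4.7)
The plan is to combine the two immediately preceding lemmas. First I would invoke Lemma~\ref{LMWtSpaces}, which provides the weight-space decomposition
\[
\M_{d,j}=\bigoplus_{\la\in\EC(d)}\ggi^{\la,j} \M_{d,j}.
\]
This reduces the problem of producing a $\k$-basis of $\M_{d,j}$ to producing a $\k$-basis of each summand $\ggi^{\la,j}\M_{d,j}$ and taking the disjoint union.

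Next I would apply Lemma~\ref{LMWtSpacesNew} to each $\la \in \EC(d)$: this gives that $\ggi^{\la,j}\M_{d,j}$ has $\k$-basis $\{\lgath_{\la,j} v_{d,j} x \mid x \in {}^\la\D_d\}$. Since each $\lgath_{\la,j}v_{d,j}x$ lies in $\ggi^{\la,j}\M_{d,j}$ (because $\lgath_{\la,j}=\ggi^{\la,j}\lgath_{\la,j}$ from Lemma~\ref{LG1}, and the right $\k\Si_d$-action commutes with the left idempotent action), the disjoint union of these bases across $\la \in \EC(d)$ is a basis of the direct sum $\M_{d,j}$.

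There is no real obstacle here: both ingredients have been established, and the matching of indices $\la \in \EC(d)$, $x \in {}^\la\D_d$ between the two lemmas ensures no overlap or missing element. The only bookkeeping point to verify is that the vectors indexed by distinct $\la$ lie in distinct summands of the weight-space decomposition, which is immediate from the orthogonality of the idempotents $\ggi^{\la,j}$ for $\la\in\EC(d)$ noted in \S\ref{SSGGIdempotents}.
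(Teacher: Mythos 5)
Your proof is correct and follows the paper's argument exactly: the paper proves this corollary by citing precisely Lemmas~\ref{LMWtSpaces} and \ref{LMWtSpacesNew}, combining the weight-space decomposition with the bases of each weight space. The extra remarks you add about orthogonality and $\lgath_{\la,j}=\ggi^{\la,j}\lgath_{\la,j}$ are accurate bookkeeping that the paper leaves implicit.
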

\begin{proof}
This follows from Lemmas~\ref{LMWtSpaces} and \ref{LMWtSpacesNew}. 
\end{proof}

Let $\la\in\Comp(d)$. By (\ref{EUDeg}) and (\ref{EBiDegVj}), we can write
\begin{equation}\label{EA}
\begin{split}
\bideg(\lgath_{d,j} v_{d,j})&=(a_{d,j},\,dj\pmod{2}), 
\\
\bideg(\lgath_{\la,j} v_{\la,j})&=(a_{\la,j},\,dj\pmod{2})
\end{split}
\end{equation}
for 
\begin{equation}\label{EAEquals}
\begin{split}
a_{d,j}
&:=d(1+2\ell)+d^2(2j-4\ell),
\\ 
a_{\la,j}
&\textstyle:=d(1+2\ell)+(2j-4\ell)\sum_{r=1}^n\la_r^2.
\end{split}
\end{equation}

\begin{Lemma} \label{LBasisIndGGNew} 
For each $\la\in \Comp(d)$, 
we have a bidegree $(0,\0)$ isomorphism of right $\k\Si_d$-modules
$$
\funQ^{-a_{\la,j}}\Uppi ^{dj}\ggi^{\la,j} \M_{d,j}\iso \k_{\Si_\la}\otimes_{\k\Si_\la}\k\Si_d
$$
which maps $\lgath_{\la,j} v_{d,j}$ to  $1\otimes 1_{\Si_d}$. 
\end{Lemma}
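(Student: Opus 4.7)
The plan is to build the map via Frobenius reciprocity for symmetric groups, and then check the bijection at the level of bases provided by the preceding lemmas.

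First I would check the bidegree normalization. The vector $\lgath_{\la,j} v_{d,j}$ lies in bidegree $(a_{\la,j}, dj \!\!\pmod 2)$ in $\ggi^{\la,j} \M_{d,j}$ by \eqref{EA}, so in the shifted supermodule $\funQ^{-a_{\la,j}}\Uppi^{dj}\ggi^{\la,j} \M_{d,j}$ it has bidegree $(0,\0)$. The permutation module $\k_{\Si_\la}\otimes_{\k\Si_\la}\k\Si_d$ is (by convention in \S\ref{SSPermMod}, since $\k\Si_d$ is viewed as concentrated in bidegree $(0,\0)$) purely even and concentrated in degree $0$, so the element $1\otimes 1_{\Si_d}$ lives in bidegree $(0,\0)$. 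Hence any map sending $\lgath_{\la,j} v_{d,j}$ to $1\otimes 1_{\Si_d}$ automatically has the correct bidegree $(0,\0)$.

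Next, I would construct the map in the direction $\k_{\Si_\la}\otimes_{\k\Si_\la}\k\Si_d \to \funQ^{-a_{\la,j}}\Uppi^{dj}\ggi^{\la,j}\M_{d,j}$. Lemma~\ref{LTrivU} says that $\lgath_{\la,j} v_{d,j}\cdot w = \lgath_{\la,j} v_{d,j}$ for every $w\in\Si_\la$, which is precisely the statement that the right $\k\Si_\la$-submodule of $\ggi^{\la,j}\M_{d,j}$ generated by $\lgath_{\la,j} v_{d,j}$ is a quotient of the trivial module $\k_{\Si_\la}$. By the universal property of induction (Frobenius reciprocity), there is a unique homomorphism of right $\k\Si_d$-modules
\[
\Phi\colon \k_{\Si_\la}\otimes_{\k\Si_\la}\k\Si_d \longrightarrow \funQ^{-a_{\la,j}}\Uppi^{dj}\ggi^{\la,j}\M_{d,j},\qquad 1\otimes 1_{\Si_d}\mapsto \lgath_{\la,j} v_{d,j}.
\]
Explicitly, $\Phi(1\otimes x)=\lgath_{\la,j} v_{d,j}\cdot x$ for each $x\in\Si_d$.

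Finally, I would verify that $\Phi$ is an isomorphism by comparing bases. The target has $\k$-basis $\{\lgath_{\la,j} v_{d,j}\cdot x \mid x\in {}^\la\D_d\}$ by Lemma~\ref{LMWtSpacesNew}, while the source has the standard permutation basis $\{1\otimes x \mid x\in {}^\la\D_d\}$ (cf. \S\ref{SSPermMod}). Since $\Phi$ sends one basis bijectively onto the other, it is an isomorphism of right $\k\Si_d$-modules. No step here is really an obstacle: both of the inputs (the $\Si_\la$-invariance of $\lgath_{\la,j}v_{d,j}$ from Lemma~\ref{LTrivU}, and the explicit basis from Lemma~\ref{LMWtSpacesNew}) have already been established, and the degree bookkeeping reduces to the computation recorded in \eqref{EAEquals}.
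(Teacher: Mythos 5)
Your proof is correct and follows essentially the same route as the paper's own (very terse) proof: the paper also reduces to the bidegree computation \eqref{EA}/\eqref{EAEquals} together with Lemmas~\ref{LMWtSpacesNew} and~\ref{LTrivU}, and you have simply made explicit the Frobenius‑reciprocity construction of the map and the comparison of bases.
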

\begin{proof}
We have 
$$\bideg(\lgath_{\la,j} v_{d,j})=\bideg(\lgath_{\la,j} v_{\la,j})=(a_{\la,j},dj\pmod{2}).$$ Now the result follows from 
Lemmas~\ref{LMWtSpacesNew} and \ref{LTrivU}. 
\end{proof}

\section{Morita equivalence of $\hat C_d$ and $C_d$}
\label{SMorita}

We now pause to prove that the functors $\funF_d$ and $\funG_d$ from (\ref{EFGFunctors}) are quasi-inverse equivalences, thus resolving Inductive Assumption~\ref{IA}. 

\subsection{Counting the irreducibles}
Let $\k=\F$. 
By Lemma~\ref{LImCuspIrrAmount}, we have $|\Irr(\hat C_d)|=|\Par^J(d)|$. 
Since $C_d=\ggi_d\hat C_d\ggi_d$ is an idempotent truncation of $\hat C_d$, we have 
\begin{equation}\label{IttCCount}
\Irr(C_d)\leq |\Par^J(d)|.
\end{equation}
Moreover, 
in view of Lemmas~\ref{cor:idmpt_Mor} and \ref{LMorExtScal}, 
to prove that the functors $\funF_d$ and $\funG_d$ are quasi-inverse equivalences, 
it suffices to prove that 
$
|\Irr(C_d)|= |\Par^J(d)|.
$

Recall from (\ref{EGad}) that $\ggi_d$, which is the identity in $C_d$, decomposes as the sum of orthogonal idempotents $\ggi_d=\sum_{(\mu,\bj)\in\EC^\col(d)}\ggi^{\mu,\bj}\in \hat C_d$. 
In order to bound $|\Irr(C_d)|$ below, we introduce the {\em Gelfand-Graev formal character}\, of a finite dimensional graded $C_d$-supermodule $V$: 
$$
\operatorname{GGCh}(V):=\sum_{(\mu,\bj)\in\EC^\col(d)}(\dim \ggi^{\mu,\bj}V)\cdot (\mu,\bj) \in \Z\cdot\EC^\col(d).
$$
where $\Z\cdot\EC^\col(d)$ is the free $\Z$-module  on the  
basis $\EC^\col(d)$.

Recalling the notation (\ref{Ettx}) and (\ref{EGGIOneColorNot}) and the notation $\la'$ for transposed partition, a  key observation is:

\begin{Lemma} \label{LGGChMy}
Let\, $\k=\F$, $j\in J$ and\, $\la\in\Par(d)$. 
Then 
$$
\operatorname{GGCh}(\M_{d,j}\tty_\la)=(\la',j)+\sum_{\mu\lhd\la'}c_{\mu,j}\cdot (\mu,j)
$$
for some $c_{\mu,j}\in\N$. 
\end{Lemma}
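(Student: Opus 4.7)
The plan is to reduce $\operatorname{GGCh}(\M_{d,j}\tty_\la)$ to a classical dimension count for right $\k\Si_d$-modules of the form $\ttx_\mu\k\Si_d\tty_\la$, and then invoke a standard combinatorial fact about $0/1$-matrices with prescribed margins.

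First I would use Lemma~\ref{LMWtSpaces} to observe that $\M_{d,j}=\bigoplus_{\mu\in\EC(d)}\ggi^{\mu,j}\M_{d,j}$, so $\ggi^{\mu,\bj}\M_{d,j}=0$ whenever the color tuple $\bj$ is not the constant tuple $(j,\ldots,j)$. Since left multiplication by any $\ggi^{\mu,\bj}$ commutes with the right $\k\Si_d$-action on $\M_{d,j}$, this vanishing persists after right-multiplication by $\tty_\la$, so
\[
\operatorname{GGCh}(\M_{d,j}\tty_\la)=\sum_{\mu\in\EC(d)}(\dim_\F \ggi^{\mu,j}\M_{d,j}\tty_\la)\cdot(\mu,j).
\]
Next, Lemma~\ref{LBasisIndGGNew} provides (after discarding the grading and parity shifts, which do not affect dimensions) an isomorphism of right $\k\Si_d$-modules $\ggi^{\mu,j}\M_{d,j}\simeq \k_{\Si_\mu}\otimes_{\k\Si_\mu}\k\Si_d$. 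Identifying the right permutation module with $\ttx_\mu\k\Si_d$ via $1\otimes w\mapsto \ttx_\mu w$ (well-defined because $\ttx_\mu g=\ttx_\mu$ for $g\in\Si_\mu$), we are reduced to computing $\dim_\F \ttx_\mu\k\Si_d\tty_\la$ for each $\mu\in\EC(d)$.

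It then suffices to establish, for $\mu\in\EC(d)$ and $\la\in\Par(d)$, that $\dim_\F \ttx_\mu\k\Si_d\tty_\la=0$ unless $\mu\unlhd\la'$, and that $\dim_\F \ttx_{\la'}\k\Si_d\tty_\la=1$. The latter is exactly Lemma~\ref{L2.2.1}. For the former I would work with the double-coset spanning set $\{\ttx_\mu w_A\tty_\la\mid A\in M(\mu,\la)\}$ arising from the bijection (\ref{EDLaMuBijMat}). A standard calculation (applying the relations $\ttx_\mu s=\ttx_\mu$ for $s\in\Si_\mu$ and $s\tty_\la=-\tty_\la$ for $s\in\Si_\la$) shows that $\ttx_\mu w_A\tty_\la=0$ whenever $\Si_\mu\cap w_A\Si_\la w_A^{-1}=\Si_{\mu\cap w_A\la}$ contains a simple transposition; by (\ref{EDJ1}), this is equivalent to some entry $a_{r,s}$ of $A$ exceeding $1$. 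Hence only matrices $A\in M(\mu,\la)$ with all entries in $\{0,1\}$ contribute a nonzero (and linearly independent) basis vector, and the Gale--Ryser theorem shows that such a $0/1$-matrix exists only when $\mu\unlhd\la'$. Setting $c_{\mu,j}$ equal to the number of such matrices gives the desired nonnegativity.

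The main obstacle is the last step: cleanly extracting the linear independence of the surviving $\{\ttx_\mu w_A\tty_\la\}$ (for $A$ a $0/1$-matrix) and correctly identifying the Gale--Ryser inequality $\mu\unlhd\la'$ from the $0/1$-margin condition. Both are classical and appear in standard references (e.g.\ James--Kerber), but should be verified carefully because they are the combinatorial input that bridges the algebraic weight space $\ggi^{\mu,j}\M_{d,j}\tty_\la$ and the purely combinatorial dominance bound in the statement of the lemma.
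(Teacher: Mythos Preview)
Your proposal is correct and follows the same approach as the paper: both reduce to identifying $\ggi^{\mu,j}\M_{d,j}$ with the right permutation module via Lemma~\ref{LBasisIndGGNew} (with Lemma~\ref{LMWtSpaces} handling the other colors), and then use the classical fact that $\ttx_\mu\k\Si_d\tty_\la=0$ unless $\mu\unlhd\la'$ and is one-dimensional for $\mu=\la'$. The paper simply cites \cite[Lemma~4.6]{JamesBook} for this last step rather than rederiving it via $0/1$-matrices and Gale--Ryser, and note that your worry about linear independence of the surviving vectors is unnecessary: the $c_{\mu,j}$ are dimensions hence automatically nonnegative, so you only need the vanishing for $\mu\not\unlhd\la'$ (no $0/1$-matrix exists) and the one-dimensionality for $\mu=\la'$ (Lemma~\ref{L2.2.1}).
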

\begin{proof}
Let $\mu\in \EC(d)$. 
By Lemma~\ref{LBasisIndGGNew}, we have an isomorphism of vector spaces 
$$\ggi^{\mu,j} \M_{d,j}\tty_\la\cong \k_{\Si_\mu}\otimes_{\k\Si_\mu}\k\Si_d\tty_\la.$$ 
But by \cite[Lemma 4.6]{JamesBook}, we have $\k_{\Si_\mu}\otimes_{\k\Si_\mu}\k\Si_d\tty_\la=0$ unless $\mu\unlhd\la'$, and $\k_{\Si_{\la'}}\otimes_{\k\Si_{\la'}}\k\Si_d\tty_{\la}$ is $1$-dimensional.
\end{proof}

\begin{Corollary} \label{CCFMjAmountNew}
Let $\k=\F$ and $j\in J$. For every $\la\in\Par(d)$ there exists a unique up to isomorphism irreducible graded $C_d$-supermodule $L_j(\la)$ such that $L_j(\la)$ is a composition factor of $\M_{d,j}\tty_{\la'}$ and 
\begin{equation}\label{EFChL}
\operatorname{GGCh}(L_{j}(\la))=(\la,j)+\sum_{\mu\lhd\la}b_{\mu,j}\cdot (\mu,j)
\end{equation}
for some $b_{\mu,j}\in\N$. 
In particular, the number of non-isomorphic composition factors of $M_{d,j}$ is at least $|\Par(d)|$. 
\end{Corollary}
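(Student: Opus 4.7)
The plan is to deduce the corollary from Lemma~\ref{LGGChMy} via a highest-weight style argument with respect to the Gelfand-Graev character. Replacing $\la$ by $\la'$ in Lemma~\ref{LGGChMy} and using $(\la')'=\la$, I would first record that
$$\operatorname{GGCh}(\M_{d,j}\tty_{\la'}) = (\la,j) + \sum_{\mu\lhd\la} c'_{\mu,j}\cdot (\mu,j)$$
for some $c'_{\mu,j} \in \N$. The critical observation is that $\operatorname{GGCh}$ is additive on short exact sequences, since $\dim\ggi^{\mu,\bj}V$ is visibly additive. Hence for every composition factor $L'$ of $\M_{d,j}\tty_{\la'}$, the coefficient of each $(\mu,\bj)$ in $\operatorname{GGCh}(L')$ is bounded above by the corresponding coefficient in $\operatorname{GGCh}(\M_{d,j}\tty_{\la'})$.

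The first main step is existence and uniqueness of $L_j(\la)$. Since the coefficient of $(\la,j)$ in $\operatorname{GGCh}(\M_{d,j}\tty_{\la'})$ equals $1$, there is exactly one composition factor $L$ of $\M_{d,j}\tty_{\la'}$ with $\ggi^{\la,j}L \neq 0$; define $L_j(\la):=L$. The formula (\ref{EFChL}) then follows at once: the coefficient of $(\la,j)$ in $\operatorname{GGCh}(L_j(\la))$ is exactly $1$, and any other $(\mu,\bj)$ contributing to $\operatorname{GGCh}(L_j(\la))$ must already contribute to $\operatorname{GGCh}(\M_{d,j}\tty_{\la'})$. This forces $\bj=j^n$ (where $n$ is the length of $\mu$) and $\mu\lhd\la$.

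For the final assertion, I would observe that for distinct $\la,\nu \in \Par(d)$ the modules $L_j(\la)$ and $L_j(\nu)$ are non-isomorphic, since their Gelfand-Graev characters have distinct $\unlhd$-maximal terms $(\la,j)$ and $(\nu,j)$ respectively. Consequently $\{L_j(\la)\mid \la\in\Par(d)\}$ consists of $|\Par(d)|$ pairwise non-isomorphic composition factors of $\M_{d,j}$, giving the lower bound.

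I do not anticipate any real obstacle here: once Lemma~\ref{LGGChMy} is available, the corollary is essentially the standard dominance-order unitriangularity argument, and everything reduces to additivity of $\operatorname{GGCh}$ together with the fact that $(\la,j)$ occurs in $\operatorname{GGCh}(\M_{d,j}\tty_{\la'})$ with coefficient precisely $1$.
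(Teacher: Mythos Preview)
Your proof is correct and follows exactly the approach the paper takes: it invokes Lemma~\ref{LGGChMy} together with additivity of the Gelfand-Graev character on composition series, and then runs the standard unitriangularity/highest-weight argument. The paper compresses all of this into a single sentence, but the content is the same.
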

\begin{proof}
This follows from Lemma~\ref{LGGChMy} using the obvious fact that 
the Gelfand-Graev formal character of a (finite dimensional) graded supermodule is the sum of the Gelfand-Graev formal characters of its composition factors. 
\end{proof}

\begin{Corollary} \label{CCompFactorsMy} 
Let $\k=\F$, $j\in J$, $d\in\N$ and $\la\in\Par(d)$. Then $L_j(\la')$ appears as a composition factor of $M_{d,j}\tty_\la$ with multiplicity $1$, and $L_j(\mu)$ is a composition factor of $M_{d,j}\tty_\la$ only if $\mu\unlhd \la'$. 
\end{Corollary}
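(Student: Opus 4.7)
The plan is to deduce both statements as a short triangularity consequence of the additivity of the Gelfand--Graev formal character on short exact sequences, combined with the character formula of Lemma~\ref{LGGChMy} and the existence/uniqueness statement in Corollary~\ref{CCFMjAmountNew}. Writing the composition factors of $M_{d,j}\tty_\la$ with their multiplicities, we have
$$\operatorname{GGCh}(M_{d,j}\tty_\la)=\sum_{L\in\Irr(C_d)}[M_{d,j}\tty_\la:L]\cdot\operatorname{GGCh}(L),$$
and by Lemma~\ref{LGGChMy} the left-hand side equals $(\la',j)+\sum_{\mu\lhd\la'}c_{\mu,j}(\mu,j)$; in particular every colored composition occurring on the left has color $j$ and essential-composition part $\unlhd\la'$.

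For the claim that $L_j(\mu)$ can appear as a composition factor of $M_{d,j}\tty_\la$ only when $\mu\unlhd\la'$, recall from (\ref{EFChL}) that $\operatorname{GGCh}(L_j(\mu))$ contains $(\mu,j)$ with coefficient $1$. Hence if $[M_{d,j}\tty_\la:L_j(\mu)]>0$ then $(\mu,j)$ appears with positive coefficient in $\operatorname{GGCh}(M_{d,j}\tty_\la)$, and Lemma~\ref{LGGChMy} forces $\mu\unlhd\la'$.

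For the multiplicity, apply Corollary~\ref{CCFMjAmountNew} with the partition $\la'$ in place of $\la$: it gives that $L_j(\la')$ is a composition factor of $M_{d,j}\tty_{(\la')'}=M_{d,j}\tty_\la$, so $m:=[M_{d,j}\tty_\la:L_j(\la')]\geq 1$. Reading off the coefficient of $(\la',j)$ on both sides of the displayed identity, the left-hand side contributes $1$ (by Lemma~\ref{LGGChMy}), while on the right-hand side $L_j(\la')$ contributes exactly $m$ (since its character has $(\la',j)$ with coefficient $1$), and every other irreducible composition factor $L$ contributes the non-negative integer $[M_{d,j}\tty_\la:L]\cdot\dim\ggi^{\la',j}L$. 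Non-negativity of all terms together with the fact that they sum to $1$ forces $m=1$ (and, as a bonus, no other composition factor has $(\la',j)$ in its character), completing the proof. There is no serious obstacle here: once Lemma~\ref{LGGChMy} and Corollary~\ref{CCFMjAmountNew} are available, the result is an immediate character-triangularity computation.
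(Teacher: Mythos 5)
Your argument is correct and follows essentially the same route as the paper: the (suppressed) proof in the source likewise records that the corollary follows from Lemma~\ref{LGGChMy}, Corollary~\ref{CCFMjAmountNew}, and additivity of the Gelfand--Graev formal character over composition factors. Your explicit coefficient-of-$(\la',j)$ bookkeeping to pin down the multiplicity $m=1$ is exactly the intended triangularity computation.
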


For $\bla=(\la^{(0)},\dots,\la^{(\ell-1)})\in\Par^J(d)$ with $d_j:=|\la^{(j)}|$ for all $j\in J$, we define
\begin{equation}\label{ELBla}
L(\bla):=\GGI^{d}_{d_0,\dots,d_{\ell-1}}\big(L_0(\la^{(0)})\boxtimes\dots\boxtimes L_{\ell-1}(\la^{(\ell-1)})\big).
\end{equation}

We can now canonically parametrize all irreducible graded $C_d$-supermodules as well as the composition factors of each $M_{d,j}$. 

\begin{Theorem} \label{LAmountAmount} 
Let $\k=\F$. 
 Then: 
\begin{enumerate}
\item[{\rm (i)}] 
$\Irr(C_d)=\{L(\bla)\mid\bla\in\Par^J(d)\}$; 

\item[{\rm (ii)}] 
$\{L_{j}(\la)\mid \la\in\Par(d)\}$ is a complete and non-redundant set of composition factors of $\M_{d,j}$ up to isomorphism.
\end{enumerate}
\end{Theorem}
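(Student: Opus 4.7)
The plan is to prove (i) first and derive (ii) from it; both rely on Gelfand-Graev character bookkeeping and the upper bound $|\Irr(C_d)|\leq |\Par^J(d)|$ from (\ref{IttCCount}).

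\textbf{Step 1: the building blocks $L_j(\la)$ are distinct.} Corollary \ref{CCFMjAmountNew} already produces, for each $j \in J$ and each $\la\in\Par(d)$, an irreducible $L_j(\la)\in\Irr(C_d)$ whose Gelfand-Graev character has leading term $(\la,j)$ and all other terms $(\mu,j)$ with $\mu\lhd\la$. The leading term depends injectively on $\la$, so for fixed $j$ the $L_j(\la)$ are pairwise non-isomorphic.

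\textbf{Step 2: irreducibility of $L(\bla)$.} For $\bla=(\la^{(0)},\dots,\la^{(\ell-1)})\in\Par^J(d)$ with $d_j:=|\la^{(j)}|$ and $\ud:=(d_0,\dots,d_{\ell-1})$, I would analyze $L(\bla)=\GGI^d_\ud\bigl(L_0(\la^{(0)})\boxtimes\cdots\boxtimes L_{\ell-1}(\la^{(\ell-1)})\bigr)$ by means of the Gelfand-Graev Mackey theorem (Theorem \ref{TGGMackey}) applied to $\GGR^d_\ud\GGI^d_\ud$. The crucial observation is that each factor $L_j(\la^{(j)})$ has Gelfand-Graev character supported only on color-$j$ compositions, while a non-identity double coset representative $x\in{}^\ud\D^\ud_d$ would force ${}^x\GGR^\ud_{x^{-1}\ud\cap\ud}$ to land on a mixed-color parabolic which annihilates the monochromatic factors. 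Thus only the $x=1$ coset contributes and $\GGR^d_\ud L(\bla)\simeq L_0(\la^{(0)})\boxtimes\cdots\boxtimes L_{\ell-1}(\la^{(\ell-1)})$. From this and the adjunction in Lemma~\ref{LAnotherAdj}, any nonzero submodule of $L(\bla)$ has the full $L_0\boxtimes\cdots\boxtimes L_{\ell-1}$ in its restriction to $C_\ud$, forcing it to be the whole $L(\bla)$; hence $L(\bla)$ is irreducible.

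\textbf{Step 3: distinctness of $L(\bla)$ and proof of (i).} Still from the Mackey analysis together with (\ref{EFChL}) applied to each $L_j(\la^{(j)})$, transitivity of $\GGI$ yields that $\GGCh(L(\bla))$ has leading term $\ttb(\bla)\in\EC^\col(d)$ (the concatenation of the monochromatic leading terms $(\la^{(j)},j)$) and all other terms $(\mu,\bj)$ satisfy a strict dominance relation refining dominance inside each color block. Since $\ttb$ is injective on $\Par^J(d)$, distinct $\bla$'s give non-isomorphic $L(\bla)$'s. This exhibits $|\Par^J(d)|$ pairwise non-isomorphic irreducibles of $C_d$; combined with the upper bound (\ref{IttCCount}) this gives equality and proves (i).

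\textbf{Step 4: deduction of (ii).} Any composition factor $L$ of $M_{d,j}$ satisfies $\ggi^{\mu,\bj}L=0$ whenever $\bj$ is not monochromatic of color $j$, by Lemma~\ref{LMWtSpaces}. By (i), $L\cong L(\bla)$ for some $\bla$, but then the leading term $\ttb(\bla)$ of $\GGCh(L)$ must also be monochromatic in color $j$, forcing $\la^{(i)}=\varnothing$ for $i\neq j$ and reducing $L(\bla)$ to the module labelled $L_j(\la^{(j)})$ from Corollary~\ref{CCFMjAmountNew}. Combined with the existence of every such $L_j(\la)$ as composition factor of $M_{d,j}$ (again Corollary~\ref{CCFMjAmountNew}), this gives a complete, non-redundant list.

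\textbf{Main obstacle.} The delicate point is Step 2: one must show carefully that the Gelfand-Graev Mackey filtration of $\GGR^d_\ud\GGI^d_\ud(L_0(\la^{(0)})\boxtimes\cdots\boxtimes L_{\ell-1}(\la^{(\ell-1)}))$ collapses to a single nonzero layer corresponding to $x=1$. This requires the monochromatic support property of each $L_j(\la^{(j)})$ together with the combinatorics of the twisted parabolic composition $x^{-1}\ud\cap\ud$, ensuring that for every $x\neq 1$ some color-$j$ factor is restricted to a parabolic containing a color $j'\neq j$ part, forcing vanishing. All other steps are essentially bookkeeping with Gelfand-Graev characters once this irreducibility is in hand.
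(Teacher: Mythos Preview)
Your Step 2 contains a genuine error: the claim that the Gelfand-Graev Mackey filtration of $\GGR^d_\ud\GGI^d_\ud\bigl(L_0(\la^{(0)})\boxtimes\cdots\boxtimes L_{\ell-1}(\la^{(\ell-1)})\bigr)$ collapses to the $x=1$ layer is false. Take $\ell=2$, $d=2$, $\ud=(1,1)$, $\bla=((1),(1))$: then ${}^\ud\D^\ud_d=\Si_2$, and the $x=s_1$ layer is ${}^{s_1}(L_0\boxtimes L_1)\simeq L_1\boxtimes L_0\neq 0$. The parabolic subalgebras $C_\mu$ carry no color information, so restricting a monochromatic module to them does not annihilate it; the twist by $x\neq 1$ simply permutes the factors into the wrong color slots, yielding a nonzero module that happens not to be of the form $L_0(\cdot)\boxtimes\cdots\boxtimes L_{\ell-1}(\cdot)$. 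Since your irreducibility argument rests on this collapse, and your Step 3 explicitly builds on the same Mackey analysis, the proof as written does not go through.

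The paper proceeds by induction on $d$, and this is not avoidable. Rather than claiming the $x\neq 1$ layers vanish, Claim 1 shows only that the \emph{specific} composition factor $L_0(\la^{(0)})\boxtimes\cdots\boxtimes L_{\ell-1}(\la^{(\ell-1)})$ occurs in $\GGR^d_\ud L(\bmu)$ with multiplicity one and only when $\bla=\bmu$. Proving this uses the inductive hypothesis twice: part (ii) at smaller ranks ensures that restricting $L_i(\mu^{(i)})$ to a parabolic gives only color-$i$ composition factors, and part (i) at the smaller ranks $d_j<d$ ensures each resulting $X_j=\GGI^{d_j}_{n_{0,j},\dots,n_{\ell-1,j}}\bigl(L_0(\mu^{(0,j)})\boxtimes\cdots\bigr)$ is already irreducible; comparing Gelfand-Graev characters then forces $n_{i,j}=0$ for $i\neq j$. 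From Claim 1 one deduces that each $L(\bla)$ has \emph{simple head} $L^\bla$ and that these heads are pairwise non-isomorphic; the bound (\ref{IttCCount}) gives $\Irr(C_d)=\{L^\bla\}$, and only afterward does a short socle argument yield $L(\bla)=L^\bla$. Your Step 4 is fine once (i) is established this way.
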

\begin{proof}
We prove the theorem by induction on $d$. 
Let $d\in\N$ and suppose the theorem is true for all $d'<d$. 

\vspace{2mm}
\noindent
{\sf Claim 1.} Let $\bla,\bmu\in\Par^J(d)$ and set $d_j:=|\la^{(j)}|$ for all $j\in J$. If the irreducible graded $C_{d_0,\dots,d_{\ell-1}}$-supermodule $L_0(\la^{(0)})\boxtimes \dots\boxtimes L_{\ell-1}(\la^{(\ell-1)})$ appears as a composition factor of $\GGR_{d_0,\dots,d_{\ell-1}}^dL(\bmu)$ then $\bla=\bmu$ and the multiplicity of this composition factor is~$1$. 

\vspace{1 mm}
To prove Claim 1, we set $m_j:=|\mu^{(j)}|$ for all $j\in J$. 
We may assume that at least two of the $m_j$ are non-zero or at least two of the $d_j$ are non-zero, since otherwise the claim boils down to the proof that $L_j(\la)\cong L_i(\mu)$ only if $j=i$ and $\la=\mu$, which is clear by considering the Gelfand-Graev formal characters of $L_j(\la)$ and $L_i(\mu)$, see (\ref{EFChL}).  

By Theorem~\ref{TGGMackey}, 
$$
\GGR_{d_0,\dots,d_{\ell-1}}^dL(\bmu)=\GGR_{d_0,\dots,d_{\ell-1}}^d\GGI_{m_0,\dots,m_{\ell-1}}^d\big(L_0(\mu^{(0)})\boxtimes\dots\boxtimes L_{\ell-1}(\mu^{(\ell-1)})\big)
$$
has filtration with subfactors of the form
\begin{equation}\label{E5.4}
\GGI^{d_0\,;\,\dots\,;\,d_{\ell-1}}_{n_{0,0},\dots,n_{\ell-1,0}\,;\,\dots\,;\,n_{0,\ell-1},\dots,n_{\ell-1,\ell-1}}\,V
\end{equation}
where: 

(a) $\sum_{i\in J}n_{i,j}=d_j$ for all $j\in J$, 

(b) $\sum_{j\in J}n_{i,j}=m_i$ for all $i\in J$, and 

(c) $V$ is obtained by an appropriate twisting of the module 
$$
\big(\GGR^{m_0}_{n_{0,0},\dots,n_{0,\ell-1}}L_0(\mu^{(0)})\big)\boxtimes\dots\boxtimes
\big(\GGR^{m_{\ell-1}}_{n_{\ell-1,0},\dots,n_{\ell-1,\ell-1}}L_{\ell-1}(\mu^{(\ell-1)})\big).
$$
Note by (a),(b) and the previous paragraph that all $n_{i,j}<d$. 

Assume that the module in (\ref{E5.4}) is non-zero. By Lemma~\ref{LggIndResSymGroup}(ii), if $L_1\boxtimes\dots\boxtimes L_{\ell-1}$ is a composition factor of $\GGR^{m_i}_{n_{i,0},\dots,n_{i,\ell-1}}L_i(\mu^{(i)})$ then each $L_j$ is a composition factors of $M_{n_{i,j},i}$, so by the inductive assumption $L_j$ is of the form $L_i(\mu^{(i,j)})$ for some $\mu^{(i,j)}\in\Par(n_{i,j})$. Hence $
\GGR_{d_0,\dots,d_{\ell-1}}^dL(\bmu)$ has a filtration with factors of the form 
$
X_0\boxtimes\dots\boxtimes X_{\ell-1}
$
where for each $j\in J$ we have set 
$$
X_j:=\GGI^{d_j}_{n_{0,j},\dots,n_{\ell-1,j}}\big(
L_0(\mu^{(0,j)})\boxtimes\dots\boxtimes L_{\ell-1}(\mu^{(\ell-1,j)})
\big).
$$
By the inductive hypothesis, each $X_j$ is irreducible. By considering Gelfand-Graev formal characters we deduce that $X_j\cong L_j(\la^{(j)})$ if and only if $\mu^{(j,j)}=\la^{(j)}$ and $\mu^{(i,i)}=\varnothing$ for all $i\neq j$. Thus, $n_{j,j}=d_j$ and $n_{i,j}=0$ for all $i\neq j$. We conclude that $m_j=d_j$ and $\mu^{(j)}=\la^{(j)}$ for all $j$. This completes the proof of Claim 1.

\vspace{2mm}
\noindent
{\sf Claim 2.} Let $\bla\in\Par^J(d)$. The $C_d$-module $L(\bla)$ has simple head; denote it by $L^\bla$. Then the multiplicity of $L^\bla$ in $L(\bla)$ is $1$. 

\vspace{1 mm}
To prove Claim 2, suppose that an irreducible $C_d$-module $L$ appears in the head of $L(\bla)$. Then, setting $d_j:=|\la^{(j)}|$ for all $j\in J$, 
 by adjointness of Gelfand-Graev induction and restriction, we have that the space
\begin{align*}
\Hom_{C_d}(L(\bla),L)
=\Hom_{C_{d_0,\dots,d_{\ell-1}}}(L_0(\la^{(0)})\boxtimes\dots\boxtimes L_{\ell-1}(\la^{(\ell-1)}),\GGR^{d}_{d_0,\dots,d_{\ell-1}}\,L)
\end{align*}
is non-zero. 
So each irreducible $L$ in the head of $L(\bla)$ contributes a composition factor $L_0(\la^{(0)})\boxtimes\dots\boxtimes L_{\ell-1}(\la^{(\ell-1)})$ into $\GGR^{d}_{d_0,\dots,d_{\ell-1}} L(\bla)$. But by Claim 1, the multiplicity of 
$L_0(\la^{(0)})\boxtimes\dots\boxtimes L_{\ell-1}(\la^{(\ell-1)})$ in $\GGR^{d}_{d_0,\dots,d_{\ell-1}} L(\bla)$ is $1$, so Claim 2 follows. 

\vspace{2mm}
\noindent
{\sf Claim 3.} If $\bla,\bmu\in\Par^J(d)$ are distinct then  $L^\bla\not\cong L^\bmu$.  

\vspace{1 mm}
Indeed, if $L^\bla\cong L^\bmu$ then $L^\bmu$ is a quotient of $L(\bla)$. By the adjointness of Gelfand-Graev induction and restriction, we have that $L_0(\la^{(0)})\boxtimes\dots\boxtimes L_{\ell-1}(\la^{(\ell-1)})$ is a submodule of $\GGR^{d}_{d_0,\dots,d_{\ell-1}} L^\bmu$, hence $L_0(\la^{(0)})\boxtimes\dots\boxtimes L_{\ell-1}(\la^{(\ell-1)})$ is a composition factor of $\GGR^{d}_{d_0,\dots,d_{\ell-1}} L(\bmu)$. Now $\bla=\bmu$ by Claim 1. 

\vspace{2mm}
Now we can finish the proof of the theorem. By (\ref{IttCCount}) and Claim 3, we have $\Irr(C_d)=\{L^\bla\mid\bla\in\Par^J(d)\}$. So to finish the proof of (i), it suffices to show that $L(\bmu)=L^\bmu$ for all $\bmu\in\Par^J(d)$. If some $L(\bmu)$ is not irreducible, then by Claim 2, it has a composition factor $L^\bla$ in its socle with $\bla\neq\bmu$. This yields a non-zero homomorphism $L(\bla)\to L(\bmu)$, so by adjointness of Gelfand-Graev induction and restriction, we have that $L_0(\la^{(0)})\boxtimes\dots\boxtimes L_{\ell-1}(\la^{(\ell-1)})$ is a submodule of $\GGR^{d}_{d_0,\dots,d_{\ell-1}} L(\bmu)$. By Claim 1, we have $\bla=\bmu$ which is a contradiction. We have proved (i)

(ii) follows from (i) since $L(\bla)$ has  
a Gelfand-Graev formal character compatible with being a composition factor of $M_{d,j}$ only if $\la^{(i)}=\varnothing$ for all $i\neq j$. 
\end{proof}

\begin{Corollary} \label{CEquivFG}
We have: 
\begin{enumerate}
\item[{\rm (i)}] If $\k=\F$ then  $|\Irr(C_d)|= |\Par^J(d)|$.
\item[{\rm (ii)}] The functors $\funF_d$ and $\funG_d$ from (\ref{EFGFunctors}) are quasi-inverse equivalences. Hence the functors $\funF_\la$ and $\funG_\la$ from (\ref{EFGFunctors}) are quasi-inverse equivalences for any $\la\in\Comp(d)$. 
\item[{\rm (iii)}] If $\k=\F$ then the functors $\funF_d$ and $\funG_d$ from (\ref{EFGFunctors}) restrict to quasi-inverse equivalences of the categories of {\em finite dimensional} graded supermodules over $\hat C_d$ and~$C_d$. 
\end{enumerate} 
\end{Corollary}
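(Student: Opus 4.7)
The proof proceeds by induction on $d$. The base case $d=1$ is Corollary~\ref{CMord=1}. For the inductive step, assume the corollary holds for all $c<d$. Then Inductive Assumption~\ref{IA} is in force, so all results in the paper that depend on it apply; in particular Theorem~\ref{LAmountAmount} is available for $d$.

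Part~(i) is immediate from Theorem~\ref{LAmountAmount}(i): $\Irr(C_d)=\{L(\bla)\mid\bla\in\Par^J(d)\}$, so $|\Irr(C_d)|=|\Par^J(d)|$.

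For part~(ii) first take $\k=\F$. Combining (i) with Lemma~\ref{LImCuspIrrAmount} gives
$$|\Irr(C_d)|=|\Par^J(d)|=|\Irr(\hat C_d)|<\infty.$$
Since $C_d=\ggi_d\hat C_d\ggi_d$, Lemma~\ref{cor:idmpt_Mor} applies and yields that $\funF_d$ and $\funG_d$ are quasi-inverse graded Morita superequivalences. For $\k=\O$, the equivalence is obtained by applying Lemma~\ref{LMorExtScal} to the family of residue fields $\F=\O/\m$, for each of which the $\F$-case just established provides the required equivalence (\ref{EFunctorsF}). For the ``hence'' assertion, let $\la\in\Comp(d)$. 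If $\la$ has a single non-zero part, that part equals $d$ and $\funF_\la=\funF_d$, $\funG_\la=\funG_d$, so the claim follows from what was just proved. Otherwise $\la$ has more than one non-zero part, each strictly less than $d$; since the inductive hypothesis (plus the case of $d$ just proved) supplies $\funF_c\dashv\funG_c$ as quasi-inverse equivalences for all parts $c$ of $\la$, Lemma~\ref{LGLaFLa} applies.

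For part~(iii), restrict the equivalence $\funF_d\colon\mod{\hat C_d}\leftrightarrows\mod{C_d}\colon\funG_d$ obtained in~(ii) to the full subcategories of finite-dimensional graded supermodules. On the one hand, $\funF_d(V)=\ggi_dV\subseteq V$ is visibly finite-dimensional when $V$ is. On the other hand, every irreducible $\hat C_d$-supermodule is finite-dimensional by Lemma~\ref{LTypeM}, and each irreducible $C_d$-supermodule is of the form $\ggi_dL$ for some $L\in\Irr(\hat C_d)$ and hence also finite-dimensional. Thus on either side a finitely generated graded supermodule is finite-dimensional if and only if it has finite length. Since the Morita equivalence of (ii) preserves length, $\funG_d$ sends finite-length $C_d$-supermodules to finite-length (and hence finite-dimensional) $\hat C_d$-supermodules, completing the proof.

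The only real obstacle in executing this plan is the careful threading of the induction: each step in the preceding chapters that invokes Inductive Assumption~\ref{IA} must be valid for the current $d$ at the point it is applied, and Theorem~\ref{LAmountAmount} is itself proved by an internal induction. Once this inductive scaffolding is granted, the three parts of the corollary follow essentially by direct citation of Lemmas~\ref{cor:idmpt_Mor},~\ref{LMorExtScal},~\ref{LGLaFLa} and Theorem~\ref{LAmountAmount}.
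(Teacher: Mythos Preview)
Your proposal is correct and follows essentially the same approach as the paper: part~(i) from Theorem~\ref{LAmountAmount}(i), part~(ii) from Lemmas~\ref{cor:idmpt_Mor} and~\ref{LMorExtScal}, and part~(iii) from the finite-dimensionality of irreducibles (Lemma~\ref{LTypeM}). You spell out the inductive scaffolding, the ``hence'' clause via Lemma~\ref{LGLaFLa}, and the finite-length argument for~(iii) more explicitly than the paper does, but the substance is identical.
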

\begin{proof}
(i) follows from Theorem~\ref{LAmountAmount}(i). 

(ii) follows from (i) in view of Lemmas~\ref{cor:idmpt_Mor} and \ref{LMorExtScal}. 

(iii) We just need to make sure that the functors preserve the finite dimensionality. This follows from the fact that the irreducible modules over $\hat C_d$ are finite dimensional, see Lemma~\ref{LTypeM}. 
\end{proof}

\begin{Corollary} \label{CFGProj}
Let $\la\in\Comp(d)$. Then 
the left graded $C_\la$-supermodule
$\ggi_\la C_d$ is finitely generated projective and 
the right graded $C_\la$-supermodule
$C_d\ggi_\la$ is finitely generated projective. 
\end{Corollary}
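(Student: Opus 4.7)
The plan is to use the Morita superequivalence between $C_\la$ and $\hat C_\la$ provided by Corollary~\ref{CEquivFG}(ii) in order to transport the projectivity problem from $C_\la$ to $\hat C_\la$, where the freeness results of Lemma~\ref{L030216} apply directly.

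For the left $C_\la$-supermodule $\ggi_\la C_d = \ggi_\la\hat C_d\ggi_d$, I will first establish an isomorphism $\funG_\la(\ggi_\la C_d) \simeq 1_{\la\de}\hat C_d\ggi_d$ of left $\hat C_\la$-supermodules, realized by the multiplication map
$$\mu \colon \hat C_\la\ggi_\la \otimes_{C_\la} \ggi_\la\hat C_d\ggi_d \to 1_{\la\de}\hat C_d\ggi_d.$$
Surjectivity of $\mu$ reduces to the identity $\hat C_\la\ggi_\la\hat C_d = 1_{\la\de}\hat C_d$, which follows from the Morita equality $\hat C_\la\ggi_\la\hat C_\la = \hat C_\la$ combined with the parabolic containment $\hat C_\la\subseteq 1_{\la\de}\hat C_d 1_{\la\de}$. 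Injectivity of $\mu$ I will extract by applying the quasi-inverse $\funF_\la$, under which $\mu$ is sent to the identity map on $\ggi_\la\hat C_d\ggi_d = \ggi_\la C_d$. Next, Lemma~\ref{L030216}(iii) gives that $1_{\la\de}\hat C_d$ is free of finite rank as a left $\hat C_\la$-supermodule, with basis indexed by the finite set ${}^{p\la}\D_{pd}$. Since right multiplication by $\ggi_d$ commutes with the left $\hat C_\la$-action, the submodule $1_{\la\de}\hat C_d\ggi_d$ is the image of a left-$\hat C_\la$-linear idempotent endomorphism of this finitely generated free supermodule, and is therefore itself finitely generated projective. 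Applying $\funF_\la$ and using that equivalences preserve and reflect both finite generation and projectivity, we conclude that $\ggi_\la C_d$ is finitely generated projective as a left $C_\la$-supermodule.

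The argument for $C_d\ggi_\la$ as a right $C_\la$-supermodule is mirror-symmetric. I will use the right-module version of the Morita equivalence (automatic from the algebra-level equivalence) to obtain an isomorphism $(C_d\ggi_\la)\otimes_{C_\la}\ggi_\la\hat C_\la \simeq \ggi_d\hat C_d 1_{\la\de}$ of right $\hat C_\la$-supermodules, using the analogous identity $\hat C_d\ggi_\la\hat C_\la = \hat C_d 1_{\la\de}$. Lemma~\ref{L030216}(ii) then provides freeness of finite rank for $\hat C_d 1_{\la\de}$ as a right $\hat C_\la$-supermodule, and $\ggi_d\hat C_d 1_{\la\de}$ is the image of a right-$\hat C_\la$-linear idempotent endomorphism given by left multiplication by $\ggi_d$, so it is finitely generated projective.

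The main technical step will be the identification $\funG_\la(\ggi_\la C_d)\simeq 1_{\la\de}\hat C_d\ggi_d$ via the multiplication map $\mu$: surjectivity is a straightforward consequence of the Morita equality for $\hat C_\la$, but injectivity I expect to need to deduce indirectly via the quasi-inverse functor, since a direct calculational check would require unwinding the explicit structure of $\hat C_d\ggi_d$ inside $\hat C_d$. All remaining steps — the idempotent-endomorphism criterion for projectivity and transport of finite generation along an equivalence — are routine.
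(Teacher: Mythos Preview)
Your proposal is correct and uses essentially the same approach as the paper: transport along the Morita equivalence $\funF_\la,\funG_\la$ of Corollary~\ref{CEquivFG}(ii), then invoke the freeness of $1_{\la\de}\hat C_d$ from Lemma~\ref{L030216}, with an idempotent-summand step for $\ggi_d$. The paper's version is slightly more economical: it first observes that $\ggi_\la C_d=\ggi_\la\hat C_d\ggi_d$ is already a direct summand of $\ggi_\la\hat C_d\simeq\funF_\la(1_{\la\de}\hat C_d)$, so no multiplication-map argument for $\funG_\la$ is needed.
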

\begin{proof}
Note that $\ggi_\la C_d=\ggi_\la\hat C_d\ggi_d$ is a direct summand of the left graded $C_\la$-supermodule $\ggi_\la\hat C_d$, so it suffices to prove that $\ggi_\la\hat C_d$ is finitely generated projective. But $\ggi_\la\hat C_d=\ggi_\la1_{\la\de}\hat C_d\simeq \funF_\la(1_{\la\de}\hat C_d)$, and since $\funF_\la$ is an equivalence by Corollary~\ref{CEquivFG}(ii), it suffices to prove  that the left graded $\hat C_{\la\de}$-supermodule $1_{\la\de}\hat C_d$ is finitely generated projective. But this supermodule is in fact finitely generated free by Lemma~\ref{L030216}(iii). This completes the proof for left modules. For right modules the argument is similar. 
\end{proof}

\begin{Corollary} \label{CGGRProjtoProj} 
Suppose that $P$ is a finitely generated projective graded $C_d$-supermodule. Then for any $\la\in\Par(d)$, we have that $\GGR_\la^d P$ is a finitely generated projective graded $C_\la$-supermodule. 
\end{Corollary}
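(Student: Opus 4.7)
The plan is straightforward given the preceding results in the section. The key observation is that the functor $\GGR_\la^d = \ggi_\la C_d \otimes_{C_d} -$ is, by construction, tensoring with the $(C_\la, C_d)$-bisupermodule $\ggi_\la C_d$. In particular $\GGR_\la^d$ is additive and commutes with direct sums and with degree/parity shifts, and it sends $C_d$ to $\ggi_\la C_d$.

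First I would reduce to the case $P = C_d$. Since $P$ is a finitely generated projective graded $C_d$-supermodule, there exists a finite Laurent polynomial $f \in \N[q,q^{-1},\pi]/(\pi^2-1)$ and a graded $C_d$-supermodule $Q$ such that $P \oplus Q \simeq C_d^{\oplus f}$ in the sense of (\ref{EFancyDirectSum}). Applying the additive functor $\GGR_\la^d$ yields
$$
\GGR_\la^d P \;\oplus\; \GGR_\la^d Q \;\simeq\; \GGR_\la^d (C_d^{\oplus f}) \;\simeq\; (\GGR_\la^d C_d)^{\oplus f} \;\simeq\; (\ggi_\la C_d)^{\oplus f},
$$
where the last isomorphism uses $\ggi_\la C_d \otimes_{C_d} C_d \simeq \ggi_\la C_d$ as graded $(C_\la,C_d)$-bisupermodules.

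Second, I would invoke Corollary \ref{CFGProj}, which asserts precisely that $\ggi_\la C_d$ is a finitely generated projective graded left $C_\la$-supermodule. Therefore $(\ggi_\la C_d)^{\oplus f}$ is also finitely generated projective over $C_\la$, and hence so is its direct summand $\GGR_\la^d P$. This concludes the proof.

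There is no substantive obstacle here; the only subtlety is that we need the projectivity statement in Corollary \ref{CFGProj} to hold on the correct side (left $C_\la$-module structure on $\ggi_\la C_d$), which it does. The reduction to $P=C_d$ is the standard trick for transferring projectivity across an additive functor, and the upshot is that $\GGR_\la^d$ sends finitely generated projective $C_d$-supermodules to finitely generated projective $C_\la$-supermodules, as required.
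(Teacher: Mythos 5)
Your proof is correct and follows essentially the same route as the paper: both reduce to $P = C_d$ using additivity of $\GGR_\la^d$ (plus compatibility with shifts and direct summands), and both conclude via Corollary~\ref{CFGProj}, which gives projectivity of $\ggi_\la C_d$ as a left graded $C_\la$-supermodule. Nothing further is needed.
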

\begin{proof}
By Corollary~\ref{CFGProj}, the functor $\GGR^d_\la$ sends the regular graded $C_d$-supermodule $C_d$ to a finitely generated projective graded $C_\la$-supermodule $f_\la C_d$. This implies the corollary since  $\GGR^d_\la$ commutes with taking direct sums and direct summands, as well as with grading and parity shifts. 
\end{proof}

\begin{Corollary} \label{CTensorcd}
Let $\k=\F$, $j\in J$ and $c,d\in\N$. Then $L_j((c+d))$ is a composition factor of $\GGI_{c,d}^{c+d}\  L_j((c))\boxtimes L_j((d))$. 
\end{Corollary}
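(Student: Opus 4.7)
The plan is to combine the transitivity of Gelfand-Graev induction with a dimension-counting argument on the Gelfand-Graev character, exploiting the fact that $((c+d),j)$ is maximal in dominance order among colored compositions contributing to the relevant multiplicity.

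First, by transitivity of Gelfand-Graev induction applied to the definition $M_{d,j} = \GGI_{\om_d}^d L_j^{\boxtimes d}$, we obtain $M_{c+d,j} \simeq \GGI_{c,d}^{c+d}(M_{c,j} \boxtimes M_{d,j})$. Lemma~\ref{LMWtSpacesNew} applied with $\la=(c+d)$ yields $\dim \ggi^{(c+d),j} M_{c+d,j} = |{}^{(c+d)}\D_{c+d}| = 1$. Combined with Theorem~\ref{LAmountAmount}(ii) and the GGCh formula (\ref{EFChL}), only $L_j((c+d))$ among $\{L_j(\nu) : \nu \in \Par(c+d)\}$ has $\ggi^{(c+d),j} L_j(\nu) \neq 0$, since $(c+d)$ is maximal in dominance on $\Par(c+d)$. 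Hence $[M_{c+d,j} : L_j((c+d))] = 1$, and the same reasoning yields $[M_{c,j} : L_j((c))] = [M_{d,j} : L_j((d))] = 1$.

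Next, I would establish that $L_j((c))$ is a quotient of $M_{c,j}$ (and similarly for $L_j((d))$). By Frobenius reciprocity,
$$\Hom_{C_c}(M_{c,j}, L_j((c))) \simeq \Hom_{C_{\om_c}}(L_j^{\boxtimes c}, \GGR_{\om_c}^c L_j((c))),$$
and a $\ud$-weight argument analogous to Claim~1 in the proof of Theorem~\ref{LAmountAmount} forces every composition factor of $\GGR_{\om_c}^c L_j((c))$ to be $L_j^{\boxtimes c}$, so the Hom space is non-zero. By exactness of $\GGI_{c,d}^{c+d}$ (Lemma~\ref{LFFExact}), we obtain a surjection
$$\Phi : M_{c+d,j} \simeq \GGI_{c,d}^{c+d}(M_{c,j} \boxtimes M_{d,j}) \twoheadrightarrow \GGI_{c,d}^{c+d}(L_j((c)) \boxtimes L_j((d)))$$
whose kernel is $\GGI_{c,d}^{c+d}(K)$, where $K=\ker(\pi_c\boxtimes\pi_d)$ has composition factors $L_j(\la)\boxtimes L_j(\mu)$ for $(\la,\mu)\neq((c),(d))$.

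To conclude it suffices to show that $L_j((c+d))$ is not a composition factor of $\ker\Phi$; by additivity in the short exact sequence $0\to\ker\Phi\to M_{c+d,j}\to\GGI_{c,d}^{c+d}(L_j((c))\boxtimes L_j((d)))\to 0$, this forces $[\GGI_{c,d}^{c+d}(L_j((c))\boxtimes L_j((d))):L_j((c+d))] = [M_{c+d,j}:L_j((c+d))] = 1$. The issue thereby reduces to showing $\dim \ggi^{(c+d),j} \GGI_{c,d}^{c+d}(L_j(\la)\boxtimes L_j(\mu))=0$ for all $(\la,\mu)\neq((c),(d))$. The main obstacle is this vanishing: I expect the argument to proceed via the Gelfand-Graev Mackey filtration (Theorem~\ref{TGGMackey}) applied together with the dominance triangularity of the GGCh---namely, any contribution to the $((c+d),j)$-weight of $\GGI_{c,d}^{c+d}(L_j(\la)\boxtimes L_j(\mu))$ must come from a Mackey subfactor whose twisted restriction carries a non-zero $((\la,\mu),(j,j))$ bi-weight with $(c+d)\unlhd(\la,\mu)$ as compositions of $c+d$, and the latter forces $\la=(c)$ and $\mu=(d)$. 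Carrying out this dominance analysis precisely is the technical core of the proof.
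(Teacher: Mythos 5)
There is a genuine gap in the step establishing that $L_j((c))$ is a quotient of $\M_{c,j}$. You correctly reduce via Frobenius reciprocity to the claim that $\Hom_{C_{\om_c}}(\LL_j^{\boxtimes c}, \GGR_{\om_c}^c L_j((c)))\neq 0$, and you correctly observe that any composition factor of $\GGR_{\om_c}^c L_j((c))=\ggi_{\om_c}L_j((c))$ must be $\LL_j^{\boxtimes c}$. But this does not imply the Hom-space is non-zero: it is entirely possible that $\ggi_{\om_c}L_j((c))=0$, in which case the Hom-space is zero and $L_j((c))$ is \emph{not} in the head of $\M_{c,j}$. This does happen. Under the Morita equivalence $\funb_{n,c,j}$ of \S\ref{SSMorISS}, the weight space $\ggi^{\om_c,j^c}L_j((c))$ corresponds (up to a bidegree shift) to $\xi_{\om_c}L_{n,c}((c))$ (cf.\ Corollary~\ref{CZCIdem} and Lemma~\ref{LfunbIrr}); taking $c=2$ and $\cha\F=2$, the module $L_{n,2}((2))$ is the Frobenius twist of the natural module, whose $\xi_{(1,1)}$-weight space vanishes, so $\GGR^2_{\om_2}L_j((2))=0$. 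Thus the surjection $\Phi:\M_{c+d,j}\onto\GGI_{c,d}^{c+d}(L_j((c))\boxtimes L_j((d)))$ on which your argument is built does not exist in general. (This is precisely why $\ggi_{\om_d}C_d\ggi_{\om_d}$ can fail to be Morita equivalent to $C_d$ in small characteristic, and why the larger idempotent $\ggi_d$ is needed.)

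Moreover, even setting this aside, your reduction terminates at the vanishing claim $\ggi^{(c+d),j}\GGI_{c,d}^{c+d}(L_j(\la)\boxtimes L_j(\mu))=0$ for $(\la,\mu)\neq((c),(d))$, which you leave as the technical core. It is correct that this claim alone (applied to the filtration of $\M_{c+d,j}\simeq\GGI_{c,d}^{c+d}(\M_{c,j}\boxtimes\M_{d,j})$ by the $\GGI_{c,d}^{c+d}(L_j(\la)\boxtimes L_j(\mu))$, combined with $[\M_{c+d,j}:L_j((c+d))]=1$) would suffice, rendering the quotient step unnecessary---but the Mackey-plus-dominance sketch you give is not carried out, and the cleanest way I know to establish it, namely the identification $\ggi^{(c+d),j}\GGI_{c,d}^{c+d}(V\boxtimes W)\simeq \ggi^{(c),j}V\otimes\ggi^{(d),j}W$ via Lemma~\ref{LAnotherAdj} and Proposition~\ref{PResGGModule}, is circular here because Proposition~\ref{PResGGModule} itself quotes Corollary~\ref{CTensorcd}. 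The paper sidesteps all of this by passing to $\hat C_{c+d}$ via $\funG$ and $\funF$, noting that $\ggi^{c,j}L_j((c))\neq 0$ forces $1_{\hat\ggw^{c,j}}\funG_c L_j((c))\neq 0$ (Lemma~\ref{LFact}), and then invoking the shuffle formula (Corollary~\ref{CShuffle}) for \emph{ordinary} formal characters together with the combinatorial fact that $\hat\ggw^{c+d,j}$ is a shuffle of $\hat\ggw^{c,j}$ and $\hat\ggw^{d,j}$; since formal characters and shuffle coefficients are positive, no cancellation occurs, and $\ggi^{c+d,j}$ acts non-trivially on the induced module. That route requires neither a quotient map $\M_{c,j}\onto L_j((c))$ nor any Mackey analysis.
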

\begin{proof}
Since $(c+d)$ is the most dominant partition of $c+d$, it suffices to prove that $(c+d,j)$ appears in the Gelfand-Graev formal character of $\GGI_{c,d}^{c+d}\  L_j((c))\boxtimes L_j((d))$. 
But by (\ref{EFGFunctorsLaOuter}) and (\ref{EGGIInd}), we have 
$$\GGI_{c,d}^{c+d}(L_j(c)\boxtimes L_j(d))\simeq \funF_{c+d}\,\Ind_{c\de,d\de}^{(c+d)\de}\,\funG_c\big(L_j((c))\big)\boxtimes \funG_d\big(L_j((d))\big).
$$
Since $\funF_c\funG_c\big(L_j((c))\big)\simeq L_j((c))$, we have that the inequality $\ggi^{c,j}L_j((c))\neq 0$ implies that 
$\ggi^{c,j}\funG_c\big(L_j((c))\big)\neq 0$. So $1_{\hat\ggw^{c,j}}\funG_c\big(L_j((c))\big)\neq 0$. Similarly $1_{\hat\ggw^{d,j}}\funG_d\big(L_j((d))\big)\neq 0$. Since the word $\hat g^{c+d,j}$ is a shuffle of the words $\hat\ggw^{c,j}$ and $\hat\ggw^{d,j}$, it follows from Corollary~\ref{CShuffle} that $$1_{\hat g^{c+d,j}}\Ind_{c\de,d\de}^{(c+d)\de}\,\funG_c\big(L_j((c))\big)\boxtimes \funG_d\big(L_j((d))\big)\neq 0.$$
Byg Lemma~\ref{LFact} this implies 
$$\ggi^{c+d,j}\Ind_{c\de,d\de}^{(c+d)\de}\,\funG_c\big(L_j((c))\big)\boxtimes \funG_d\big(L_j((d))\big)\neq 0.$$
Therefore
$$\ggi^{c+d,j}\funF_{c+d}\Ind_{c\de,d\de}^{(c+d)\de}\,\funG_c\big(L_j((c))\big)\boxtimes \funG_d\big(L_j((d))\big)\neq 0,$$
as required.
\end{proof}

\subsection{Gelfand-Graev duality}
Throughout the subsection we assume that $\k=\F$ and work with finite dimensional (graded super)modules. 
Recall the duality $\circledast$  from \S\S\ref{SSDefQHSA},\ref{SSIndRes}. We will use $\circledast$ to define a duality on finite dimensional graded $C_d$-supermodules which we denote by $\sharp$. Namely, given $V\in\mod{C_d}$, we define
\begin{equation}\label{ECircledAstGG}
V^\sharp :=\funF_d\big((\funG_d V)^\circledast\big).
\end{equation}
More generally, 
for $\la\in\Comp(d)$ and $V\in\mod{C_\la}$, we define
\begin{equation}\label{ECircledAstGGPar}
V^\sharp :=\funF_\la\big((\funG_\la V)^\circledast\big).
\end{equation}
Note, using (\ref{EFGFunctorsLaOuter}) and (\ref{EBoxCircledAst}), that for $\la\in\Comp(n,d)$ 
and $V_1\in\mod{C_{\la_1}}, \dots , V_n\in\mod{C_{\la_n}}$ 
we have 
\begin{equation}\label{ESharpBoxTimes}
(V_1\boxtimes\dots\boxtimes V_n)^\sharp\simeq 
V_1^\sharp\boxtimes\dots\boxtimes V_n^\sharp.
\end{equation}

\begin{Lemma} \label{LSharpHom} 
We have that $\sharp$ defines a contravariant functor on the 
finite dimensional graded $C_d$-supermodules. Moreover, $\sharp\circ \sharp$ is equivalent to the identity functor and we have a functorial isomorphism 
$$\Hom_{C_d}(V,W)\cong \Hom_{C_d}(W^\sharp,V^\sharp)
$$
\end{Lemma}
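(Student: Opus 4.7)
The plan is to deduce the lemma essentially formally from the definition $V^\sharp = \funF_d((\funG_d V)^\circledast)$ together with the fact that $\funF_d$ and $\funG_d$ are quasi-inverse equivalences. The point is that $\sharp$ is the composition $\funF_d \circ \circledast \circ \funG_d$, so all three statements reduce to the corresponding statements for $\circledast$ on $\hat C_d$, modulo ``transport along the equivalence''.

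First I would verify that $\sharp$ is well-defined on finite dimensional graded $C_d$-supermodules: by Corollary~\ref{CEquivFG}(iii) the functor $\funG_d$ preserves finite-dimensionality, and the duality $\circledast$ is a well-defined contravariant functor on finite dimensional graded $\hat C_d$-supermodules (this uses that $\hat C_d$ is noetherian and $\tau$ from (\ref{ETauAntiAuto}) is an anti-automorphism). Covariance of $\funF_d$ and $\funG_d$ then gives that $\sharp$ is contravariant.

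Next, using that $\funF_d \circ \funG_d \simeq \id_{\mod{C_d}}$ via a bidegree $(0,\0)$ natural isomorphism (Corollary~\ref{CEquivFG}(ii)) and that $\circledast \circ \circledast \simeq \id$ on finite dimensional graded $\hat C_d$-supermodules (standard: the double dual of a finite dimensional graded superspace is canonically itself, and the action compatibility uses $\tau^2 = \id$), I would compute
\[
V^{\sharp\sharp} = \funF_d\big((\funG_d \funF_d ((\funG_d V)^\circledast))^\circledast\big) \simeq \funF_d\big(((\funG_d V)^\circledast)^\circledast\big) \simeq \funF_d(\funG_d V) \simeq V,
\]
all isomorphisms being natural of bidegree $(0,\0)$. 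This gives the second claim.

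For the Hom isomorphism, since $\funG_d$ is an equivalence it induces a functorial bidegree $(0,\0)$ isomorphism $\Hom_{C_d}(V,W) \simeq \Hom_{\hat C_d}(\funG_d V,\funG_d W)$. The contravariant duality $\circledast$ gives a functorial isomorphism $\Hom_{\hat C_d}(\funG_d V,\funG_d W) \simeq \Hom_{\hat C_d}((\funG_d W)^\circledast, (\funG_d V)^\circledast)$ via $\phi \mapsto \phi^\circledast$ (see (\ref{EPhiCircledast})). Applying the equivalence $\funF_d$, which is fully faithful, yields
\[
\Hom_{\hat C_d}((\funG_d W)^\circledast, (\funG_d V)^\circledast) \simeq \Hom_{C_d}\big(\funF_d((\funG_d W)^\circledast),\funF_d((\funG_d V)^\circledast)\big) = \Hom_{C_d}(W^\sharp,V^\sharp).
\]
Composing these three functorial isomorphisms gives the desired one.

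The only mildly delicate point is checking that the isomorphism $\circledast\circ\circledast \simeq \id$ on $\hat C_d$-supermodules is a bidegree $(0,\0)$ natural isomorphism with the correct sign behavior for odd elements; this is standard (it is the graded super version of the canonical double-dual isomorphism for finite dimensional vector spaces), and once it is in hand the rest of the proof is pure category theory. No serious obstacle is expected beyond keeping track of grading and parity shifts, all of which are $(0,\0)$ here since $\funF_d$, $\funG_d$, and $\circledast$ are all of bidegree $(0,\0)$.
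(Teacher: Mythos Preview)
Your proposal is correct and takes essentially the same approach as the paper, which simply says the lemma follows from the corresponding properties of $\circledast$ together with the fact that $\funF_d$ and $\funG_d$ are quasi-inverse equivalences (Corollary~\ref{CEquivFG}(iii)). You have spelled out in detail exactly what the paper leaves implicit.
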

\begin{proof}
This follows from the corresponding properties of $\circledast$ and the fact that the functors $\funF_d$ and $\funG_d$ are quasi-inverse equivalences, see Corollary~\ref{CEquivFG}(iii). 
\end{proof}

\begin{Lemma} \label{LSharpResComm}
For $\la\in\Comp(d)$ and $V\in\mod{C_d}$, we have a functorial isomorphism $\GGR^d_\la(V^\sharp)\simeq (\GGR^d_\la\,V)^\sharp$. 
\end{Lemma}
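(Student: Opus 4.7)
The plan is to unfold both sides using the identification of Gelfand-Graev restriction in terms of the classical restriction established in (\ref{EGGIInd}), together with the fact that $\funG_d,\funF_d$ (respectively $\funG_\la,\funF_\la$) are quasi-inverse equivalences by Corollary~\ref{CEquivFG}(ii), and then invoke Lemma~\ref{LDualInd}(i) to move $\circledast$ past $\Res^{d\de}_{\la\de}$. There should be no genuine obstacle; the argument is a diagram chase.

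More concretely, I would first rewrite the left-hand side as
\[
\GGR^d_\la(V^\sharp) \;=\; \GGR^d_\la\big(\funF_d((\funG_d V)^\circledast)\big)
\;\simeq\; \funF_\la\circ\Res^{d\de}_{\la\de}\circ\funG_d\circ\funF_d\big((\funG_d V)^\circledast\big),
\]
using (\ref{EGGIInd}). Since $\funG_d\circ\funF_d\simeq\id$ by Corollary~\ref{CEquivFG}(ii), this becomes
\[
\funF_\la\big(\Res^{d\de}_{\la\de}\,((\funG_d V)^\circledast)\big)
\;\simeq\; \funF_\la\big((\Res^{d\de}_{\la\de}\,\funG_d V)^\circledast\big),
\]
where the last isomorphism is Lemma~\ref{LDualInd}(i), which asserts that restriction commutes with $\circledast$.

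For the right-hand side, I would similarly compute
\[
(\GGR^d_\la V)^\sharp \;=\; \funF_\la\big((\funG_\la\,\GGR^d_\la V)^\circledast\big),
\]
and then use (\ref{EGGIInd}) again to write $\GGR^d_\la V \simeq \funF_\la\circ\Res^{d\de}_{\la\de}\circ\funG_d V$, so that
\[
\funG_\la\,\GGR^d_\la V \;\simeq\; \funG_\la\circ\funF_\la\circ\Res^{d\de}_{\la\de}\,\funG_d V \;\simeq\; \Res^{d\de}_{\la\de}\,\funG_d V,
\]
again by Corollary~\ref{CEquivFG}(ii) (this time for $\funG_\la\circ\funF_\la\simeq\id$). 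Plugging back in yields $(\GGR^d_\la V)^\sharp \simeq \funF_\la\big((\Res^{d\de}_{\la\de}\,\funG_d V)^\circledast\big)$, which matches the expression obtained for the left-hand side, proving the lemma. All isomorphisms are functorial in $V$ because each of the functors and adjunctions invoked is.
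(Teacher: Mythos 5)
Your proof is correct and uses the same ingredients as the paper's: unfold $\sharp$ via $\funF,\funG$, use (\ref{EGGIInd}) to translate $\GGR$ into $\Res$, invoke Corollary~\ref{CEquivFG}(ii) to cancel the equivalences, and apply Lemma~\ref{LDualInd}(i). The only cosmetic difference is that you simplify both sides to a common expression rather than running a single chain of isomorphisms from left to right, but the argument is the same.
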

\begin{proof}
We have 
\begin{align*}
\GGR^d_\la(V^\sharp)\stackrel{(\ref{ECircledAstGG})}{=}
\ \ \ \ \ \ 
&\GGR^d_\la\,\funF_d((\funG_dV)^\circledast)
\\
\stackrel{(\ref{EGGIInd})}{\simeq} 
\ \ \ \ \ 
&\funF_\la\,\Res^{d\de}_{\la\de}\,\funG_d\funF_d((\funG_dV)^\circledast)
\\
\stackrel{\text{Corollary~\ref{CEquivFG}(ii)}}{\simeq} 
&\funF_\la\,\Res^{d\de}_{\la\de}\,(\funG_dV)^\circledast
\\
\stackrel{\text{Lemma~\ref{LDualInd}(i)}}{\simeq} 
\ 
&\funF_\la\Big(\big(\Res^{d\de}_{\la\de}\,(\funG_dV)\big)^\circledast\Big)
\\
\stackrel{\text{Corollary~\ref{CEquivFG}(ii)}}{\simeq} &\funF_\la\Big(\big(\funG_\la\funF_\la\,\Res^{d\de}_{\la\de}\,(\funG_dV)\big)^\circledast\Big)
\\
\stackrel{(\ref{EGGIInd})}{\simeq} 
\ \ \ \ \ \,
& \funF_\la(\funG_\la \,\GGR^{d}_{\la}\,V)^\circledast
\\
\stackrel{(\ref{ECircledAstGGPar})}{=} 
\ \ \ \ \ \ 
&(\GGR^{d}_{\la}\,V)^\sharp
\end{align*}
as required.
\end{proof}

\begin{Lemma} \label{LSharpIndComm}
For $\la\in\Comp(n,d)$, $V_1\in\mod{C_{\la_1}},\dots,V_n\in\mod{C_{\la_n}}$, we have functorial isomorphisms  
$$\GGI^d_\la\big(V_1^\sharp\boxtimes\dots\boxtimes V_n^\sharp\big)\simeq \GGI^d_\la\big((V_1\boxtimes\dots\boxtimes V_n)^\sharp\big)\simeq \big(\GGI^d_{\la^\op}(V_n\boxtimes\dots\boxtimes V_1)\big)^\sharp.
$$ 
\end{Lemma}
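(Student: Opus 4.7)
\medskip

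\noindent\textbf{Proof proposal.} The first isomorphism is immediate: by (\ref{ESharpBoxTimes}) we have $(V_1\boxtimes\dots\boxtimes V_n)^\sharp\simeq V_1^\sharp\boxtimes\dots\boxtimes V_n^\sharp$, and applying the functor $\GGI^d_\la$ to both sides yields the result.

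The second isomorphism follows the same template as Lemma~\ref{LSharpResComm}, trading $\Res$ for $\Ind$ and using Lemma~\ref{LDualInd}(ii) in place of Lemma~\ref{LDualInd}(i). The key computation is to unwind the definition (\ref{ECircledAstGG}) on the right-hand side:
\begin{align*}
\big(\GGI^d_{\la^\op}(V_n\boxtimes\dots\boxtimes V_1)\big)^\sharp
&=\funF_d\Big(\funG_d\,\GGI^d_{\la^\op}(V_n\boxtimes\dots\boxtimes V_1)\Big)^\circledast \\
&\simeq \funF_d\Big(\Ind^{d\de}_{\la^\op\de}\big(\funG_{\la_n}V_n\boxtimes\dots\boxtimes \funG_{\la_1}V_1\big)\Big)^\circledast,
\end{align*}
where I would apply Lemma~\ref{LfunGInd} and (\ref{EFGFunctorsLaOuter}). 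Now the heart of the argument is Lemma~\ref{LDualInd}(ii), which for $\underline{\theta}=(\la_n\de,\dots,\la_1\de)$ introduces grading/parity shifts by $s(\underline{\theta})$ and $\eps(\underline{\theta})$. The crucial observation is that both shifts vanish here: since $(\de|\de)=0$, we have $(\theta_m|\theta_k)=\la_{n-m+1}\la_{n-k+1}(\de|\de)=0$ for all $m<k$, so $s(\underline{\theta})=0$; and since $\de=2\al_0+2\al_1+\dots+2\al_{\ell-1}+\al_\ell$ has $\|\de\|=\0$ by (\ref{EThetaParity}), we get $\|\theta_k\|=\0$ for all $k$, hence $\eps(\underline{\theta})=\0$. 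So Lemma~\ref{LDualInd}(ii) gives a plain (unshifted) isomorphism
\[
\Big(\Ind^{d\de}_{\la^\op\de}\big(\funG_{\la_n}V_n\boxtimes\dots\boxtimes \funG_{\la_1}V_1\big)\Big)^\circledast
\simeq \Ind^{d\de}_{\la\de}\big((\funG_{\la_1}V_1)^\circledast\boxtimes\dots\boxtimes(\funG_{\la_n}V_n)^\circledast\big).
\]

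To conclude, I would replace each $(\funG_{\la_k}V_k)^\circledast$ by $\funG_{\la_k}(V_k^\sharp)$: indeed, Corollary~\ref{CEquivFG}(ii) gives $(\funG_{\la_k}V_k)^\circledast\simeq \funG_{\la_k}\funF_{\la_k}(\funG_{\la_k}V_k)^\circledast$, and the right-hand side equals $\funG_{\la_k}(V_k^\sharp)$ by definition (\ref{ECircledAstGGPar}). Using (\ref{EFGFunctorsLaOuter}), this rearranges the tensor product to $\funG_\la(V_1^\sharp\boxtimes\dots\boxtimes V_n^\sharp)$. Finally, applying $\funF_d$ and invoking (\ref{EGGIInd}), which identifies $\funF_d\circ\Ind^{d\de}_{\la\de}\circ\funG_\la$ with $\GGI^d_\la$, yields the desired $\GGI^d_\la(V_1^\sharp\boxtimes\dots\boxtimes V_n^\sharp)$.

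The only place that requires care is the vanishing of the shift parameters in Lemma~\ref{LDualInd}(ii); everything else is bookkeeping using functorial identities already established. Once that vanishing is pointed out, the proof is a short formal chain of isomorphisms and inherits its naturality from the individual constituents.
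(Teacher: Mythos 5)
Your proposal is correct and takes essentially the same route as the paper's proof, merely traversing the chain of isomorphisms from right to left rather than left to right (and using Lemma~\ref{LfunGInd} where the paper uses Corollary~\ref{CEquivFG}(ii) together with (\ref{EGGIInd}), which are interchangeable). A nice touch is your explicit verification that the degree and parity shifts in Lemma~\ref{LDualInd}(ii) vanish because $(\de\mid\de)=0$ and $\|\de\|=\0$; the paper's proof applies that lemma silently at the corresponding step, and your observation makes the argument more self-contained.
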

\begin{proof}
The first isomorphism comes from (\ref{ESharpBoxTimes}). 
Now, 
\begin{align*}
\GGI^d_\la\big((V_1\boxtimes\dots\boxtimes V_n)^\sharp\big)\stackrel{(\ref{EGGIInd}), (\ref{ECircledAstGGPar})}{=}\ \ \  &\funF_d\,\Ind^{d\de}_{\la\de}\,\funG_\la\funF_\la\Big(\big(\funG_\la(V_1\boxtimes\dots\boxtimes V_n)\big)^\circledast\Big)
\\
\stackrel{\text{Corollary~\ref{CEquivFG}(ii)}}{\simeq}
\ 
&\funF_d\,\Ind^{d\de}_{\la\de}\,\Big(\big(\funG_\la(V_1\boxtimes\dots\boxtimes V_n)\big)^\circledast\Big)
\\
\stackrel{(\ref{EFGFunctorsLaOuter})}{\simeq} \ \ \ \,\,\ \ \,
&\funF_d\,\Ind^{d\de}_{\la\de}\,\Big(\big(\funG_{\la_1}(V_1)\boxtimes\dots\boxtimes \funG_{\la_n}(V_n)\big)^\circledast\Big)
\\
\stackrel{(\ref{EBoxCircledAst})}{\simeq}  \ \ \ \ \ \,\, \,& \funF_d\,\Ind^{d\de}_{\la\de}\,\big(\funG_{\la_1}(V_1)^\circledast\boxtimes\dots\boxtimes \funG_{\la_n}(V_n)^\circledast\big)
\\
\stackrel{\text{Lemma~\ref{LDualInd}(ii)}}{\simeq}\ \, &\funF_d\,\Big(\big(\Ind^{d\de}_{\la^\op\de}\,\funG_{\la_n}(V_n)\boxtimes\dots\boxtimes \funG_{\la_1}(V_1)\big)^\circledast\Big)
\\
{\simeq}\ \ \ \ \ \  \ \ 
&\funF_d\,\Big(\big(\Ind^{d\de}_{\la^\op\de}\,\funG_{\la^\op}(V_n\boxtimes\dots\boxtimes V_1)\big)^\circledast\Big)
\\
\stackrel{\text{Corollary~\ref{CEquivFG}(ii)}}{\simeq}\  &\funF_d\,\Big(\big(\funG_d\funF_d\Ind^{d\de}_{\la^\op\de}\,\funG_{\la^\op}(V_n\boxtimes\dots\boxtimes V_1)\big)^\circledast\Big)
\\
\stackrel{(\ref{EGGIInd})}{\simeq}\  \ \ \ \ \ \,&\funF_d\,\Big(\big(\funG_d(\GGI^{d}_{\la^\op}\,V_n\boxtimes\dots\boxtimes V_1)\big)^\circledast\Big)
\\
\stackrel{(\ref{ECircledAstGG})}{=} \ \ \ \ \ \,\,& (\GGI^{d}_{\la^\op}\,V_n\boxtimes\dots\boxtimes V_1)^\sharp,
\end{align*}
as required. 
\end{proof}

\begin{Lemma} \label{LGGIrrSelfD} 
Each isomorphism class of irreducible graded $C_d$-supermodules contains an irreducible graded supermodule $L$ such that $L^\sharp\simeq \Uppi^\eps L$ for some $\eps\in\Z/2$ (depending on $L$). In particular, for every irreducible graded $C_d$-supermodule $V$, we have $V^\sharp\cong V$. 
\end{Lemma}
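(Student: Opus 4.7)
The plan is to transfer the question along the Morita superequivalence of Corollary~\ref{CEquivFG}(ii) and quote Lemma~\ref{LTypeM}. Fix an irreducible graded $C_d$-supermodule $V$. Since $\funG_d$ is an equivalence (Corollary~\ref{CEquivFG}(ii),(iii)), the graded $\hat C_d$-supermodule $\funG_d V$ is irreducible and finite dimensional, so Lemma~\ref{LTypeM} produces an integer $m\in\Z$ and $\eps\in\Z/2$ for which
\[
\bigl(\funQ^m\,\funG_d V\bigr)^\circledast \simeq \Uppi^\eps\,\funQ^m\,\funG_d V.
\]

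Next I would set $L := \funQ^m V\in\mod{C_d}$; this is an irreducible graded $C_d$-supermodule lying in the same isomorphism class as $V$. Since the functor $\funG_d = \hat C_d\ggi_d\otimes_{C_d}-$ obviously commutes (up to bidegree $(0,\0)$ natural isomorphism) with the shift functors $\funQ$ and $\Uppi$, we have $\funG_d L\simeq \funQ^m\funG_d V$, and similarly $\funF_d$ commutes with $\funQ$ and $\Uppi$. Therefore, from the definition $L^\sharp := \funF_d\bigl((\funG_d L)^\circledast\bigr)$, we obtain
\[
L^\sharp \simeq \funF_d\bigl((\funQ^m\,\funG_d V)^\circledast\bigr)
\simeq \funF_d\bigl(\Uppi^\eps\funQ^m\,\funG_d V\bigr)
\simeq \Uppi^\eps\funQ^m\,\funF_d\funG_d V
\simeq \Uppi^\eps\funQ^m V
= \Uppi^\eps L,
\]
where in the penultimate step I use that $\funF_d\funG_d\simeq \id$ (Corollary~\ref{CEquivFG}(ii)). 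This proves the first assertion.

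For the ``in particular'' statement, note that duality $\circledast$ reverses degree shifts, i.e.\ $(\funQ^n X)^\circledast\simeq\funQ^{-n} X^\circledast$, so the same is true of $\sharp$: for any $V$, $(\funQ^n V)^\sharp\simeq \funQ^{-n}V^\sharp$. Applying this to $L=\funQ^m V$ and combining with $L^\sharp\simeq \Uppi^\eps L$ just established gives $V^\sharp\simeq \Uppi^\eps\funQ^{2m} V$, which is an isomorphism of graded $C_d$-supermodules of some bidegree; hence $V^\sharp\cong V$ in the sense of the paper's convention.

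No serious obstacle is expected: the argument is a direct transport along the Morita superequivalence, and the only non-formal input is Lemma~\ref{LTypeM}. The one point to be careful about is keeping track of how $\circledast$ (and thus $\sharp$) interacts with the $\funQ$-shift, but this is the same computation that underlies the ``in particular'' part.
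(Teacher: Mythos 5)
Your proposal is correct and follows the same route as the paper: transport along the quasi-inverse equivalences $\funF_d,\funG_d$, invoke Lemma~\ref{LTypeM} for $\funG_dV$, absorb the shift $\funQ^m$ into a replacement $L:=\funQ^mV$, and apply $\funF_d\funG_d\simeq\id$. The only difference is that you spell out explicitly the compatibility of $\sharp$ with $\funQ$-shifts to deduce the ``in particular'' clause, which the paper leaves implicit.
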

\begin{proof}
Let $L$ be an irreducible graded $C_d$-supermodule. 
By Corollary~\ref{CEquivFG}, $\funG_d L$ is an irreducible graded $\hat C_d$-supermodule. By Lemma~\ref{LTypeM}, we may assume that $(\funG_d L)^\circledast\simeq \Uppi^\eps\funG_d L$ for some $\eps\in\Z/2$. So
$$
L^\sharp=\funF_d\big((\funG_d L)^\circledast\big)\simeq \funF_d(\Uppi^\eps\funG_d(L))
\simeq \Uppi^\eps \funF_d\funG_d(L)
\simeq \Uppi^\eps L,
$$
as required.
\end{proof}

\begin{Lemma} \label{LLJSelfD}
We have that 
$(\hat \LL_j)^\circledast\simeq  \Uppi \hat \LL_j$ and $(\LL_j)^\sharp \simeq  \Uppi\LL_j$. 
\end{Lemma}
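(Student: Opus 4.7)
The second isomorphism is a formal consequence of the first. By Corollary~\ref{CEquivFG}(ii), $\funF_1$ and $\funG_1$ are quasi-inverse graded superequivalences, and $\funF_1$ commutes with the parity shift $\Uppi$. Applying $\funF_1$ to $(\hat \LL_j)^\circledast \simeq \Uppi \hat \LL_j$ and unwinding the definition $L_j^\sharp = \funF_1((\funG_1 L_j)^\circledast) = \funF_1((\hat L_j)^\circledast)$, one obtains
\[
L_j^\sharp \simeq \funF_1(\Uppi \hat L_j) \simeq \Uppi \funF_1(\hat L_j) \simeq \Uppi L_j.
\]
So I concentrate on the first isomorphism.

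For the first isomorphism I would first reduce to the case $\k = \F$. By Remark~\ref{RHatLFree}, $\hat L_j$ is a finitely generated free $\k$-module, and (\ref{ELJExtScal}) shows that base change commutes with the construction; so it suffices to prove $(\hat L_j)^\circledast \simeq \Uppi \hat L_j$ over every field quotient $\F = \O/\m$, from which the statement over $\O$ follows by a uniqueness argument. Working over $\F$, Lemma~\ref{LTypeM} gives the absolute irreducibility of $\hat L_j$ and yields unique $m \in \Z,\, \eps \in \Z/2$ with $(\funQ^m \hat L_j)^\circledast \simeq \Uppi^\eps \funQ^m \hat L_j$, equivalently $(\hat L_j)^\circledast \simeq \Uppi^\eps \funQ^{2m} \hat L_j$. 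The lemma is the assertion that this unique pair equals $(\bar 1, 0)$.

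The bulk of the work is therefore identifying $\eps$ and $m$. I would do this character-theoretically: by (\ref{EChDual}), the claim is equivalent to $\overline{\ch_{q,\pi}(\hat L_j)} = \pi \cdot \ch_{q,\pi}(\hat L_j)$, i.e.\ bar-invariance of the graded super-character up to the scalar $\pi$. To access $\ch_{q,\pi}(\hat L_j)$, I would exploit the realisation $\hat L_j = \funG_1(L_j) = \hat C_1 \ggi_1 \otimes_{C_1} L_j$. The starting datum is the one-dimensional space $\ggi^j \hat L_j \simeq \funF_1(\hat L_j) \simeq L_j$, with $\ggi^i \hat L_j = 0$ for $i \neq j$, which is immediate from Corollary~\ref{CEquivFG}(ii). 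Propagating this computation to the other word spaces proceeds via Lemma~\ref{LFact}, which relates each non-divided-power word space $1_{\hat \ggw^j} \hat L_j$ to the divided-power space $1_{\ggw^j} \hat L_j$ via the $q$-factorial $\ggw^j!$.

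The main obstacle is the combinatorial bookkeeping: showing that the contributions of the various cuspidal words in $I^\delta$ assemble into a sum whose bar-conjugate equals $\pi$ times itself. For this one should invoke the bar-identity $\overline{m_{\la,\bi}} = \pi^d m_{\la,\bi}$ from Lemma~\ref{LMBar} (specialised at $d=1$) together with Lemma~\ref{LTComp}, which together encode precisely the parity discrepancy of the $0^{(2)}$ odd-nilHecke factor that is responsible for the single $\pi$ appearing in the claimed symmetry. Once this character identity is in place, matching it against the ansatz $(\hat L_j)^\circledast \simeq \Uppi^\eps \funQ^{2m} \hat L_j$ forces $\eps = \bar 1$ and $2m = 0$, finishing the proof.
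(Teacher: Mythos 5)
Your proposal is essentially correct and takes the same route as the paper: compute the graded superdimension of the word space $1_{\hat\ggw^j}\hat\LL_j$ via Lemma~\ref{LFact}, observe that it is bar-symmetric up to a single factor of $\pi$, and conclude via the ansatz from Lemma~\ref{LTypeM}. Two remarks on efficiency. First, the reduction to $\k=\F$ via Remark~\ref{RHatLFree} and (\ref{ELJExtScal}) is unnecessary: Lemma~\ref{LLJSelfD} sits in the subsection on Gelfand-Graev duality, which carries the blanket assumption $\k=\F$, so there is no $\O$-case to handle. Second, there is no genuine combinatorial bookkeeping to worry about: because Lemma~\ref{LTypeM} already gives you that $(\hat\LL_j)^\circledast\simeq\Uppi^\eps\funQ^{2m}\hat\LL_j$ for some fixed $m,\eps$, a \emph{single} nonzero word space determines $m$ and $\eps$; you do not need to sum over all cuspidal words or compute the full character $\ch_{q,\pi}\hat\LL_j$. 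The paper does exactly this: from $\dim_{q,\pi}\ggi^j\hat\LL_j=q^{2j+1-2\ell}\pi^j$ (which is (\ref{EDegV})) and Lemma~\ref{LFact} one gets
\[
\dim_{q,\pi}1_{\hat\ggw^j}\hat\LL_j=\pi^{j}(q^2+q^{-2})^{\ell-1-j}(\pi q+q^{-1}),
\]
and since $\overline{\pi q+q^{-1}}=\pi q^{-1}+q=\pi(\pi q+q^{-1})$ this expression is bar-symmetric up to exactly one $\pi$, forcing $m=0$ and $\eps=\1$. Your citations of Lemma~\ref{LMBar} and Lemma~\ref{LTComp} are correct in spirit (they package the same $q$-factorial computation), but the paper simply writes out the product. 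Your reduction of the second isomorphism to the first via $\funF_1$ matches the paper's closing remark.
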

\begin{proof}
We have $\LL_j=\funF_1(\hat \LL_j)=\ggi_1 \hat \LL_j$. In particular, recalling (\ref{EDegV}), we have that $\dim_{q,\pi}\ggi^j\hat \LL_j=q^{2j+1-2\ell}\pi^j$. Since $\ggi^j=1_{\ggw^j}$, it now follows from Lemma~\ref{LFact} that 
$$\dim_{q,\pi}1_{\hat\ggw^j}\hat \LL_j=\pi^{j}(q^2+q^{-2})^{\ell-1-j}(\pi q+q^{-1}).$$ We now deduce from Lemma~\ref{LTypeM} that $(\hat \LL_j)^\circledast\simeq \Uppi\hat \LL_j$, which also implies that $(\LL_j)^\sharp \simeq \Uppi\LL_j$. 
\end{proof}

\begin{Lemma} \label{LSocleHead}
We have that $\hat \M_{d,j}^\circledast\simeq \Uppi^d\hat \M_{d,j}$ and $\M_{d,j}^\sharp\simeq \Uppi\M_{d,j}$. In particular, the socle and the head of $\hat \M_{d,j}$ (resp. $\M_{d,j}$) are (not necessarily evenly) isomorphic to each other.\end{Lemma}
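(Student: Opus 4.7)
The plan is to derive the first isomorphism directly from Lemma~\ref{LDualInd}(ii) applied to the definition $\hat{\M}_{d,j} = \Ind_{\de^d}^{d\de}\,\hat{\LL}_j^{\boxtimes d}$, and then obtain the second isomorphism by applying $\funF_d$. The key simplification is that for $\underline{\theta} = (\de,\dots,\de) \in Q_+^d$ the grading and parity shifts appearing in Lemma~\ref{LDualInd}(ii) both vanish: we have $(\de|\de)=0$ since $\de$ is the imaginary null-root, so $s(\underline{\theta}) = -\binom{d}{2}(\de|\de) = 0$; and $\|\de\| = m_0 \pmod 2 = 2 \pmod 2 = \bar 0$ by (\ref{EThetaParity}) since $\de = 2\al_0 + 2\al_1 + \dots + 2\al_{\ell-1} + \al_\ell$, so $\eps(\underline{\theta}) = \binom{d}{2}\,\|\de\|\,\|\de\| = \bar 0$.

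With those shifts out of the way, Lemma~\ref{LDualInd}(ii) gives
\[
\hat{\M}_{d,j}^{\circledast} = \big(\Ind_{\de^d}^{d\de}\,\hat{\LL}_j^{\boxtimes d}\big)^{\circledast} \simeq \Ind_{\de^d}^{d\de}\,\big(\hat{\LL}_j^{\boxtimes d}\big)^{\circledast},
\]
where I have used that reversing the order of identical tensor factors is harmless. Next, (\ref{EBoxCircledAst}) combined with Lemma~\ref{LLJSelfD} yields $\big(\hat{\LL}_j^{\boxtimes d}\big)^{\circledast} \simeq (\hat{\LL}_j^{\circledast})^{\boxtimes d} \simeq (\Uppi\hat{\LL}_j)^{\boxtimes d} \simeq \Uppi^d\,\hat{\LL}_j^{\boxtimes d}$, where the parity shifts pass freely across $\boxtimes$ since $\Uppi$ is the parity change functor. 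Since parity shifts also commute with parabolic induction (as $\Ind$ is tensoring with a fixed bisupermodule), we conclude $\hat{\M}_{d,j}^{\circledast} \simeq \Uppi^d\,\hat{\M}_{d,j}$, establishing the first claim.

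For the second claim, recall from Lemma~\ref{LHatMG} that $\hat{\M}_{d,j} \simeq \funG_d(\M_{d,j})$ and $\M_{d,j} \simeq \funF_d(\hat{\M}_{d,j})$, with $\funF_d$ and $\funG_d$ mutually quasi-inverse by Corollary~\ref{CEquivFG}(ii). Applying $\funF_d$ to the first isomorphism and using the definition of $\sharp$ from (\ref{ECircledAstGG}),
\[
\M_{d,j}^{\sharp} = \funF_d\big((\funG_d \M_{d,j})^{\circledast}\big) \simeq \funF_d\big(\hat{\M}_{d,j}^{\circledast}\big) \simeq \funF_d\big(\Uppi^d\,\hat{\M}_{d,j}\big) \simeq \Uppi^d\,\funF_d(\hat{\M}_{d,j}) \simeq \Uppi^d\,\M_{d,j},
\]
using that $\funF_d$ commutes with parity shifts. (The stated $\Uppi \M_{d,j}$ should read $\Uppi^d \M_{d,j}$; in either case the parity of the isomorphism is irrelevant for the final ``in particular'' assertion.) Finally, the socle and head claim is immediate: duality interchanges the socle and head of any finite-dimensional graded supermodule, while the parity shift $\Uppi^d$ preserves composition factors up to parity. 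Since $\hat{\M}_{d,j}^{\circledast} \simeq \Uppi^d\,\hat{\M}_{d,j}$ (respectively the analogous statement for $\M_{d,j}$), the socle and head of $\hat{\M}_{d,j}$ (respectively $\M_{d,j}$) are identified with each other up to a possible parity shift, as claimed. No serious obstacle is anticipated; the main point is the vanishing $(\de|\de) = 0 = \|\de\|$, which makes the induction-duality formula collapse to the clean form above.
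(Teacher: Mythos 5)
Your argument matches the paper's proof: apply Lemma~\ref{LDualInd}(ii) with the shifts vanishing since $(\de\mid\de)=0$ and $\|\de\|=\0$, combine with Lemma~\ref{LLJSelfD} and (\ref{EBoxCircledAst}), then push through $\funF_d$ using Lemma~\ref{LHatMG} and (\ref{ECircledAstGG}). You are also right that the stated $\Uppi\M_{d,j}$ should be $\Uppi^d\M_{d,j}$, which is what both your computation and the paper's own proof produce.

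One small imprecision at the end: from ``duality interchanges socle and head'' and the isomorphism $\M_{d,j}^\sharp\simeq\Uppi^d\M_{d,j}$ you only get $(\head\,\M_{d,j})^\sharp\simeq\Uppi^d\,\soc\,\M_{d,j}$; to conclude that the socle and head themselves are isomorphic you still need to know that $\sharp$ (resp.\ $\circledast$) fixes isomorphism classes of irreducible graded supermodules. That is exactly Lemma~\ref{LGGIrrSelfD} (resp.\ Lemma~\ref{LTypeM} for $\hat C_d$-modules), which the paper cites at this point. The omission is easily repaired, so this does not affect the correctness of the overall approach.
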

\begin{proof}
By Lemma~\ref{LLJSelfD}, we have $(\hat \LL_j)^\circledast\simeq \Uppi\hat \LL_j$ . So, using Lemma~\ref{LDualInd}(ii), we get  
$$
\hat \M_{d,j}^\circledast=
(\Ind_{\de^d}^{d\de}\hat \LL_j^{\boxtimes d})^\circledast \simeq  \Ind_{\de^d}^{d\de}(\Uppi\hat \LL_j)^{\boxtimes d}
\simeq  \Ind_{\de^d}^{d\de}\Uppi^d\hat \LL_j^{\boxtimes d}
\simeq  \Uppi^d\Ind_{\de^d}^{d\de}\hat \LL_j^{\boxtimes d}
=\Uppi^d\hat \M_{d,j}.$$ 
Now, by (\ref{ECircledAstGG}) and Lemma~\ref{LHatMG}, we have 
$$
\M_{d,j}^\sharp= \funF_d\big((\funG_d \, \M_{d,j})^\circledast\big)
\simeq \funF_d\big((\hat \M_{d,j})^\circledast\big)
\simeq \funF_d(\Uppi^d\hat \M_{d,j})\simeq 
\Uppi^d\funF_d(\hat \M_{d,j})=
\Uppi^d\M_{d,j}.
$$
The final statement now follows using Lemma~\ref{LGGIrrSelfD}. 
\end{proof}

\begin{Corollary} \label{CSocleHead}
Let $\la\in\Comp(d)$. Then $\hat \M_{\la,j}^\circledast\cong \hat \M_{\la,j}$ and $\M_{\la,j}^\sharp\cong \M_{\la,j}$. In particular, the socle and the head of $\hat \M_{\la,j}$ (resp. $ \M_{\la,j}$) are (not necessarily evenly) isomorphic to each other.
\end{Corollary}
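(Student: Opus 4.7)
\medskip

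The plan is to reduce the statement to the rank-one versions established in Lemma~\ref{LSocleHead} and then bootstrap via the box product decomposition. Recall that by definition
\[
\hat \M_{\la,j}=\hat \M_{\la_1,j}\boxtimes\cdots\boxtimes\hat \M_{\la_n,j}\qquad\text{and}\qquad \M_{\la,j}=\M_{\la_1,j}\boxtimes\cdots\boxtimes\M_{\la_n,j}
\]
as graded supermodules over $\hat C_\la=\hat C_{\la_1}\otimes\cdots\otimes\hat C_{\la_n}$ and $C_\la=C_{\la_1}\otimes\cdots\otimes C_{\la_n}$ respectively.

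First I would handle $\hat \M_{\la,j}$. By (\ref{EBoxCircledAst}), $\circledast$ commutes with box products, so
\[
\hat \M_{\la,j}^\circledast\simeq \hat \M_{\la_1,j}^\circledast\boxtimes\cdots\boxtimes \hat \M_{\la_n,j}^\circledast.
\]
By Lemma~\ref{LSocleHead} applied to each factor, $\hat \M_{\la_r,j}^\circledast\simeq \Uppi^{\la_r}\hat \M_{\la_r,j}$. Pulling the parity shifts out of the box product and summing exponents yields $\hat \M_{\la,j}^\circledast\simeq \Uppi^{|\la|}\hat \M_{\la,j}=\Uppi^d\hat \M_{\la,j}$, and in particular $\hat \M_{\la,j}^\circledast\cong\hat \M_{\la,j}$.

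Next I would handle $\M_{\la,j}$ in the same way, but using the Gelfand-Graev duality $\sharp$. By (\ref{ESharpBoxTimes}),
\[
\M_{\la,j}^\sharp\simeq \M_{\la_1,j}^\sharp\boxtimes\cdots\boxtimes \M_{\la_n,j}^\sharp,
\]
and Lemma~\ref{LSocleHead} gives $\M_{\la_r,j}^\sharp\simeq \Uppi^{\la_r}\M_{\la_r,j}$ for each $r$ (the same parity exponent as for $\hat \M_{\la_r,j}^\circledast$, since $\sharp$ is obtained from $\circledast$ by conjugation with the quasi-inverse equivalences $\funF_{\la_r},\funG_{\la_r}$ of Corollary~\ref{CEquivFG}(ii), which preserve parity and grading). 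Collecting the parity shifts gives $\M_{\la,j}^\sharp\simeq \Uppi^d\M_{\la,j}$, hence $\M_{\la,j}^\sharp\cong \M_{\la,j}$.

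Finally, for the statement on heads and socles, observe that $\circledast$ is exact and interchanges submodules and quotients, so $\soc(V^\circledast)\cong (\head V)^\circledast$ and $\head(V^\circledast)\cong (\soc V)^\circledast$ for any finite dimensional graded $\hat C_\la$-supermodule $V$; in particular any irreducible composition factor $L$ satisfies $L^\circledast\cong L$ up to parity shift by Lemma~\ref{LTypeM}. Combined with $\hat \M_{\la,j}^\circledast\cong \hat \M_{\la,j}$, we conclude that $\soc \hat \M_{\la,j}\cong \head \hat \M_{\la,j}$ as (ungraded, non-super) modules. The same reasoning, applied to $\sharp$ in place of $\circledast$ and using Lemma~\ref{LSharpHom} together with $\M_{\la,j}^\sharp\cong\M_{\la,j}$, gives $\soc \M_{\la,j}\cong \head \M_{\la,j}$. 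Since the reduction to Lemma~\ref{LSocleHead} is purely formal, I do not expect any genuine obstacle; the only bookkeeping point is keeping track of parity shifts, which are irrelevant for the final conclusion thanks to the use of $\cong$ (rather than $\simeq$) in the statement.
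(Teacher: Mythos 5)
Your proposal is correct and takes the same route the paper silently relies on: since $\hat\M_{\la,j}$ and $\M_{\la,j}$ are by definition outer box products of the rank-one tensor spaces, and both $\circledast$ (by (\ref{EBoxCircledAst})) and $\sharp$ (by (\ref{ESharpBoxTimes})) commute with $\boxtimes$, the assertion reduces factor-by-factor to Lemma~\ref{LSocleHead}, with the parity shifts $\Uppi^{\la_r}$ from each factor multiplying to $\Uppi^d$; since the corollary asserts only $\cong$, that shift is harmless. One small remark worth noting: the displayed statement of Lemma~\ref{LSocleHead} reads $\M_{d,j}^\sharp\simeq\Uppi\M_{d,j}$, but its proof actually establishes $\M_{d,j}^\sharp\simeq\Uppi^d\M_{d,j}$ (an apparent typo in the statement); you correctly used the $\Uppi^d$ version, which is the one that feeds into the exponent bookkeeping.
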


\chapter{Imaginary Howe duality}
As in Section~\ref{SSTensSpace}, we again fix $j\in J$. We can consider the imaginary tensor space $M_{d,j}$ as a (faithful) graded supermodule over {\em imaginary graded Schur superalgebra} $\IS_{d,j}:=C_d/\Ann_{C_d}(\M_{d,j})$. In this chapter we will prove that this graded supermodule is projective and its endomorphism algebra is $\k\Si_d$, see Theorem~\ref{TMdProj}. But this `imaginary Schur-Weyl duality' is not sufficient to describe $\IS_{d,j}$ up to Morita equivalence since in general $\M_{d,j}$ is not a projective generator for $\IS_{d,j}$. We will eventually construct the desired projective generator as a direct sum  $\Ga_j(n,d):=\bigoplus_{\la\in \Comp(n,d)} \Di_j^\la$ of `imaginary divided powers' modules (for $n\geq d$), and the endomorphism algebra of $\Ga_j(n,d)$ will turn out to be the classical Schur algebra $S(n,d)$. This will lead to a Morita equivalence between the imaginary and the classical Schur algebras.

\section{Imaginary Schur superalgebra} 
\subsection{  Imaginary Schur superalgebra $\IS_{d,j}$ and its parabolic analogues $\IS_{\la,j}$}
\label{SSIS}
We define the {\em imaginary graded Schur superalgebra}
\begin{equation}\label{DIS}
\IS_{d,j}:=C_d/\Ann_{C_d}(\M_{d,j}).
\end{equation}
In particular, $\M_{d,j}$ can be considered as a faithful graded supermodule over $\IS_{d,j}$. 
(Occasionally, we will also use the graded superalgebra 
$
\hat C_d/\Ann_{\hat C_d}(\hat \M_{d,j})
$ 
to which we do not assign any special name.) 

More generally, for $\la\in\Comp(n,d)$, we set
$$
\IS_{\la,j}:=C_\la/\Ann_{C_\la}(\M_{\la,j})\cong \IS_{\la_1,j}\otimes\dots\otimes \IS_{\la_n,j}.
$$
In particular, $\M_{\la,j}$ can be considered as a faithful graded supermodule over $\IS_{\la,j}$. Identifying 
$
\IS_{\la,j}=\IS_{\la_1,j}\otimes\dots\otimes \IS_{\la_n,j}
$, we have 
$\M_{\la,j}\cong \M_{\la_1,j}\boxtimes\dots\boxtimes \M_{\la_n,j}$ as graded $\IS_{\la,j}$-supermodules. 

\subsection{  Imaginary tensor space $\M_{d,j}$ as a module over $\IS_{d,j}$}
As in \S\ref{SSSymAct}, we can now consider 
$\M_{d,j}$ as a graded $(\IS_{d,j},\k \Si_d)$-bisupermodule and,  more generally, 
$\M_{\la,j}$ as a graded $(\IS_{\la,j},\k \Si_\la)$-bisupermodule. 
By functoriality of $\GGI_\la^d$, we have that $\GGI_\la^d \M_{\la,j}$ becomes a graded $(\IS_{d,j},\k \Si_\la)$-bisupermodule. Similarly, taking into account Lemma~\ref{LggIndResSymGroup}(ii), $\GGR_\la^d \M_{d,j}$ becomes a graded $(\IS_{\la,j},\k \Si_d)$-bisupermodule.

\begin{Lemma} \label{LggIndResSymGroupImSch} 
Let $\la\in\Comp(d)$. Then:
\begin{enumerate}
\item[{\rm (i)}] $\GGI_\la^d \M_{\la,j}\simeq \M_{d,j}$ as graded $(\IS_{d,j},\k \Si_\la)$-bisupermodules, where the right action of $\Si_\la$ on $\M_{d,j}$ is via restriction from $\Si_d$.
\item[{\rm (ii)}] $\GGR_\la^d \M_{d,j}\simeq \M_{\la,j}\otimes_{\k \Si_\la}\k \Si_d$ as graded $(\IS_{\la,j},\k \Si_d)$-bisupermodules. 
\end{enumerate}
\end{Lemma}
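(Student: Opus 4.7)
The plan is to bootstrap directly from Lemma~\ref{LggIndResSymGroup}, which already provides the desired bisupermodule isomorphisms at the level of the larger algebras $C_d$ and $C_\la$. The key observation is that in each case, the relevant annihilator automatically acts by zero on both sides, so the left action descends to the corresponding imaginary Schur (super)algebra quotient, giving a bisupermodule isomorphism of bidegree $(0,\0)$ because the quotient maps $C_d \onto \IS_{d,j}$ and $C_\la \onto \IS_{\la,j}$ are themselves homogeneous of bidegree $(0,\0)$.

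For part (i), Lemma~\ref{LggIndResSymGroup}(i) supplies a $(C_d, \k\Si_\la)$-bisupermodule isomorphism $\GGI_\la^d \M_{\la,j} \simeq \M_{d,j}$, with the right $\k\Si_\la$-action on $\M_{d,j}$ obtained by restricting the $\k\Si_d$-action via the inclusion $\Si_\la \leq \Si_d$. Since the right-hand side is by definition an $\IS_{d,j}$-module, any two $C_d$-modules isomorphic to it share the same annihilator $\Ann_{C_d}(\M_{d,j})$; hence the $C_d$-action on $\GGI_\la^d \M_{\la,j}$ factors through $\IS_{d,j}$, upgrading the isomorphism to one of $(\IS_{d,j}, \k\Si_\la)$-bisupermodules.

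For part (ii), Lemma~\ref{LggIndResSymGroup}(ii) gives a $(C_\la, \k\Si_d)$-bisupermodule isomorphism $\GGR_\la^d \M_{d,j}\simeq \M_{\la,j}\otimes_{\k\Si_\la}\k\Si_d$. Writing $\M_{\la,j}\otimes_{\k\Si_\la}\k\Si_d = \bigoplus_{x\in{}^\la\D_d}\M_{\la,j}\otimes x$ exhibits the right-hand side, as a $C_\la$-module, as a finite direct sum of copies of $\M_{\la,j}$ (one per shortest double coset representative in ${}^\la\D_d$). Consequently $\Ann_{C_\la}(\M_{\la,j})$ annihilates it, so the $C_\la$-structure factors through $\IS_{\la,j}$; since the right $\k\Si_d$-action already commutes with the left $C_\la$-action by functoriality of $\GGR_\la^d$, one obtains the required $(\IS_{\la,j}, \k\Si_d)$-bisupermodule isomorphism.

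There is no real obstacle here: once Lemma~\ref{LggIndResSymGroup} is in hand, both parts reduce to the formal observation that isomorphic $C_d$-modules (respectively $C_\la$-modules) have equal annihilators, so the quotient action is well-defined and the isomorphisms pass to the quotients without any further computation.
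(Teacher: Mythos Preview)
Your proof is correct and follows exactly the approach the paper intends: the paper's own proof is simply ``Follows immediately from Lemma~\ref{LggIndResSymGroup},'' and your argument spells out precisely the annihilator reasoning that makes this immediate. One minor terminological slip: the elements of ${}^\la\D_d$ are shortest \emph{left} coset representatives for $\Si_\la\backslash\Si_d$, not double coset representatives, but this does not affect the argument.
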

\begin{proof}
Follows immediately from Lemma~\ref{LggIndResSymGroup}.
\end{proof}

\begin{Remark}
Lemma~\ref{LggIndResSymGroupImSch} shows that the $C_\la$-action on $\GGR_\la^d \M_{d,j}$ factors through the quotient $\IS_{\la,j}$ and the $C_d$-action on $\GGI_\la^d \M_{\la,j}$ factors through the quotient $\IS_{d,j}$. 
In Corollary~\ref{CHCIC} we will prove a stronger result that 
the functor $\GGR_\la^d$ sends $\IS_{d,j}$-modules to $\IS_{\la,j}$-modules and the functor 
$\GGI_\la^d$ sends $\IS_{\la,j}$-modules to $\IS_{d,j}$-modules.
\end{Remark}

Recall the graded superalgebra embedding $C_{\la}\to \ggi_\la C_d\ggi_\la$ 
from (\ref{ECPar}).

\begin{Lemma} \label{LBarParabolic} 
Let $\la\in\Comp(d)$. Denote $\bar\ggi_\la:=\ggi_\la+\Ann_{C_d}(\M_{d,j})\in\IS_{d,j}$. The embedding $C_{\la}\to \ggi_\la C_d\ggi_\la$ induces a graded superalgebra embedding  $\IS_{\la,j}\to \bar\ggi_\la \IS_{d,j}\bar\ggi_\la$. 
\end{Lemma}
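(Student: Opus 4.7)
The composition of the parabolic embedding $C_\la\hookrightarrow\ggi_\la C_d\ggi_\la$ with the quotient map $C_d\onto\IS_{d,j}$ is a bidegree $(0,\0)$ homomorphism of graded superalgebras $\varphi:C_\la\to\bar\ggi_\la\IS_{d,j}\bar\ggi_\la$ which sends the identity of $C_\la$ (namely $\ggi_\la$) to $\bar\ggi_\la$, the identity of $\bar\ggi_\la\IS_{d,j}\bar\ggi_\la$. Thus the only content of the lemma is the identification $\Ker\varphi=\Ann_{C_\la}(\M_{\la,j})$, which will give both that $\varphi$ factors through $\IS_{\la,j}=C_\la/\Ann_{C_\la}(\M_{\la,j})$ and that the induced map $\IS_{\la,j}\to\bar\ggi_\la\IS_{d,j}\bar\ggi_\la$ is injective.

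For the inclusion $\Ann_{C_\la}(\M_{\la,j})\subseteq\Ker\varphi$, let $c\in C_\la$ with $c\M_{\la,j}=0$. By Lemma~\ref{LResLaTensSpace}, the restriction $\GGR^d_\la\M_{d,j}$ decomposes as a direct sum of $C_\la$-subsupermodules $\bigoplus_{x\in{}^\la\D_d}\M_{\la,j}\,x$, and for each such $x$ the map $\M_{\la,j}\to\M_{\la,j}\,x,\ m\mapsto mx$, is an isomorphism of left graded $C_\la$-supermodules (since the right $\k\Si_d$-action commutes with the left $C_d$-action on $\M_{d,j}$). Therefore $c$ annihilates every summand $\M_{\la,j}\,x$ and hence acts as zero on $\M_{d,j}$, i.e. the image of $c$ in $\IS_{d,j}$ is zero.

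Conversely, to show $\Ker\varphi\subseteq\Ann_{C_\la}(\M_{\la,j})$, suppose $c\in C_\la$ with image zero in $\IS_{d,j}$, so $c\M_{d,j}=0$ viewing $c$ in $C_d$ via the parabolic embedding. By Lemma~\ref{LResLaTensSpace}, the map $i_\la:\M_{\la,j}\to\GGR^d_\la\M_{d,j},\ v_{\la,j}\mapsto\ggi_\la v_{d,j}$, is an injective homomorphism of graded $C_\la$-supermodules; composed with the natural inclusion $\GGR^d_\la\M_{d,j}=\ggi_\la\M_{d,j}\hookrightarrow\M_{d,j}$, it embeds $\M_{\la,j}$ into $\M_{d,j}$ as a $C_\la$-subsupermodule. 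Since $c$ acts as zero on $\M_{d,j}$, it acts as zero on this copy of $\M_{\la,j}$, so $c\in\Ann_{C_\la}(\M_{\la,j})$, as required. Neither step presents serious difficulty—the only point to be careful with is that one uses the same parabolic embedding $C_\la\hookrightarrow C_d$ on both sides (for the action on $\M_{d,j}$ and for the identification of $i_\la(\M_{\la,j})$ as a $C_\la$-submodule), which is built into Lemma~\ref{LResLaTensSpace}.
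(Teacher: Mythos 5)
Your proof is correct and follows essentially the same route as the paper: the paper identifies the kernel of the composition $C_\la\to\bar\ggi_\la\IS_{d,j}\bar\ggi_\la$ with $\Ann_{C_\la}(\M_{\la,j})$ by invoking Lemma~\ref{LggIndResSymGroup}(ii) (that $\ggi_\la\M_{d,j}$ is a direct sum of copies of $\M_{\la,j}$ as a $C_\la$-module), which is exactly the decomposition you extract directly from Lemma~\ref{LResLaTensSpace}. You merely split the kernel identification into two explicit inclusions where the paper states it in one line.
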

\begin{proof}
The surjection $C_d\to \IS_{d,j}$ with kernel $\Ann_{C_d}(\M_{d,j})$ induces a surjection $\ggi_\la C_d\ggi_\la\stackrel{\pi}{\to} \bar\ggi_\la\IS_{d,j}\bar\ggi_\la$ with kernel $\ggi_\la \Ann_{C_d}(\M_{d,j})\ggi_\la$, 
so the composition $C_{\la}\to \ggi_\la C_d\ggi_\la\stackrel{\pi}{\to} \bar\ggi_\la\IS_{d,j}\bar\ggi_\la$ has kernel $C_{\la}\cap \ggi_\la \Ann_{C_d}(\M_{d,j})\ggi_\la$. But by Lemma~\ref{LggIndResSymGroup}(ii), the graded $C_{\la}$-supermodule $\ggi_\la\M_{d,j}$ is a direct sum of copies of $\M_{\la,j}$, so  $C_{\la}\cap \ggi_\la \Ann_{C_d}(\M_{d,j})\ggi_\la=\Ann_{C_\la}(\M_{\la,j})$. 
\end{proof}

We make use of the following {\em Schubert's Criterion} (see \cite[\S3.2]{BDK} for attributions):

\begin{Lemma} \label{LSchub}  
Let $A$ be a graded $\k$-superalgebra and $P\in\mod{A}$ be projective. Let $Z\subseteq P$ be a graded subsupermodule such that every homomorphism in $\Hom_A(P,P/Z)$ annihilates $Z$. Then $P/Z$ is a  projective graded  $(A/\Ann_A(P/Z))$-supermodule.
\end{Lemma}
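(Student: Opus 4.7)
The plan is to reduce the claim to the identity $Z = \Ann_A(P/Z)\cdot P$, where $\Ann_A(P/Z)\cdot P$ denotes the graded subsuperspace of $P$ spanned by products $ap$ with $a \in \Ann_A(P/Z)$ and $p \in P$. Once this identity is known, setting $\bar A := A/\Ann_A(P/Z)$ we get
$$P/Z \;=\; P/(\Ann_A(P/Z)\cdot P) \;\simeq\; \bar A \otimes_A P$$
as graded $\bar A$-supermodules, and base change of a projective $A$-supermodule along $A \twoheadrightarrow \bar A$ produces a projective $\bar A$-supermodule, so the lemma follows.

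The inclusion $\Ann_A(P/Z)\cdot P \subseteq Z$ is trivial: for $a \in \Ann_A(P/Z)$ and $p \in P$ the coset $ap + Z = a(p+Z)$ vanishes in $P/Z$. The substantive direction is $Z \subseteq \Ann_A(P/Z)\cdot P$, and this is where the hypothesis enters. To establish it I apply the graded super version of the dual basis lemma to the finitely generated projective supermodule $P$: there exist bidegree-homogeneous elements $p_1,\dots,p_n \in P$ and homomorphisms $p_1^*,\dots,p_n^* \in \Hom_A(P,A)$ such that $p = \sum_{i=1}^n p_i^*(p)\,p_i$ for every $p \in P$. It then suffices to show the sub-claim that $p_i^*(Z) \subseteq \Ann_A(P/Z)$ for each $i$, since then every $z \in Z$ satisfies $z = \sum_i p_i^*(z)\,p_i \in \Ann_A(P/Z)\cdot P$.

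For the sub-claim, fix any $\xi \in \Hom_A(P,A)$ (ultimately $\xi = p_i^*$) and any homogeneous $\bar p \in P/Z$, and consider the $\k$-linear map
$$h_{\xi,\bar p}: P \to P/Z, \qquad p \mapsto (-1)^{|\bar p||p|}\,\xi(p)\cdot\bar p.$$
A direct sign check, using the convention $f(ap)=(-1)^{|f||a|}af(p)$ from \S\ref{SSBasicRep}, shows that $h_{\xi,\bar p}$ is a homogeneous graded $A$-supermodule homomorphism of bidegree $(\deg\xi+\deg\bar p,\,|\xi|+|\bar p|)$. By the standing hypothesis, $h_{\xi,\bar p}$ annihilates $Z$, so $\xi(z)\,\bar p = 0$ in $P/Z$ for every $z \in Z$ and every homogeneous $\bar p \in P/Z$; since $\bar p$ is arbitrary this forces $\xi(z)\cdot (P/Z)=0$, i.e.\ $\xi(z) \in \Ann_A(P/Z)$, as required. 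The main obstacle is really just the sign bookkeeping, namely choosing the sign $(-1)^{|\bar p||p|}$ so that $h_{\xi,\bar p}$ genuinely qualifies as a graded super $A$-homomorphism under the paper's conventions, because otherwise the hypothesis on $\Hom_A(P,P/Z)$ does not directly apply; beyond this the argument is formal.
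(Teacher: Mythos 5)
Your argument is correct, and it is essentially the proof of \cite[Lemma~3.2a]{BDK} (the reference the paper invokes) carried out explicitly in the graded super setting: one reduces to the identity $Z = \Ann_A(P/Z)\cdot P$, with the nontrivial inclusion obtained by sending $Z$ through elements of $\Hom_A(P,A)$ and observing that the hypothesis on $\Hom_A(P,P/Z)$ forces the images into $\Ann_A(P/Z)$. Your sign $(-1)^{|\bar p||p|}$ is precisely the one that makes $h_{\xi,\bar p}$ a homogeneous graded $A$-supermodule homomorphism under the conventions of \S\ref{SSBasicRep}, which is exactly the point the paper asserts ``goes through'' in the super setting.
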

\begin{proof}
The proof in \cite[Lemma 3.2a]{BDK} goes through in the  setting of finitely generated graded supermodules. 
\end{proof}

\begin{Theorem} \label{TMdProj}
We have that $\M_{d,j}$ is a projective graded $\IS_{d,j}$-supermodule and there is an isomorphism (of graded superalgebras)\,  $\End_{\IS_{d,j}}(\M_{d,j})\cong\k \Si_d$.
\end{Theorem}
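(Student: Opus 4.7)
The plan is to apply Schubert's Criterion (Lemma~\ref{LSchub}) to the canonical surjection $\phi\colon P\twoheadrightarrow \M_{d,j}$, where I take $P:=C_d\ggi^{j^d}$. Since $\ggi^{j^d}$ is one of the orthogonal idempotents summing to $\ggi_d=1_{C_d}$, the module $P$ is projective as a direct summand of the regular $C_d$-supermodule. Because $L_j^{\boxtimes d}$ has rank one over $\k$ (spanned by $v_j^{\otimes d}$), the module $\M_{d,j}=C_d\ggi_{\om_d}\otimes_{C_{\om_d}}L_j^{\boxtimes d}$ is $C_d$-cyclic with generator $v_{d,j}$, and since $\ggi^{j^d}v_{d,j}=v_{d,j}$ by (\ref{EvdWeight}), the assignment $\phi\colon c\ggi^{j^d}\mapsto cv_{d,j}$ is a well-defined surjection. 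Setting $Z:=\ker\phi$, the main task is to show that every $\psi\in\Hom_{C_d}(P,\M_{d,j})$ vanishes on $Z$.

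The first step is to pin down this hom space. As usual for an idempotent-cut projective, $\Hom_{C_d}(C_d\ggi^{j^d},\M_{d,j})\simeq \ggi^{j^d}\M_{d,j}$ as graded superspaces, and Lemma~\ref{LResMdOne}(i) identifies the latter as the free $\k$-supermodule with basis $\{\dot w\,v_{d,j}\mid w\in\Si_d\}$. Hence every $\psi$ is of the form $c\ggi^{j^d}\mapsto \sum_{w\in\Si_d}c_w\,(c\dot w\,v_{d,j})$ for some scalars $c_w\in\k$. The decisive input is Theorem~\ref{TEndMd}: each $\dot w\,v_{d,j}$ equals, up to the unit sign $(-1)^{\ttl(w)(j+1)}$, the image $\phi_w(v_{d,j})$ under the $C_d$-linear endomorphism $\phi_w$ implementing the right $\Si_d$-action. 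Since $\phi_w$ is $C_d$-linear, one gets
\[
c\dot w\,v_{d,j}=(-1)^{\ttl(w)(j+1)}\phi_w(c\,v_{d,j}),
\]
so whenever $c\ggi^{j^d}\in Z$, i.e.\ $cv_{d,j}=0$, every term $c\dot w\,v_{d,j}$ vanishes and $\psi(c\ggi^{j^d})=0$. Lemma~\ref{LSchub} then concludes that $\M_{d,j}=P/Z$ is a projective graded supermodule over $C_d/\Ann_{C_d}(\M_{d,j})=\IS_{d,j}$.

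For the endomorphism algebra statement, the inclusion $\Ann_{C_d}(\M_{d,j})\subseteq \Ann_{C_d}(\M_{d,j})$ is tautological, so any $C_d$-linear endomorphism of $\M_{d,j}$ factors uniquely through $\IS_{d,j}$, giving $\End_{\IS_{d,j}}(\M_{d,j})=\End_{C_d}(\M_{d,j})$. Theorem~\ref{TEndMd} identifies this with $(\k\Si_d)^{\sop}$, and the group-inversion antiautomorphism $w\mapsto w^{-1}$ furnishes a graded superalgebra isomorphism $\k\Si_d\iso(\k\Si_d)^{\sop}$ (both sides being concentrated in bidegree $(0,\0)$), yielding the desired $\End_{\IS_{d,j}}(\M_{d,j})\cong \k\Si_d$.

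I do not expect a genuine obstacle: the core of the argument is just the observation that $\End_{C_d}(\M_{d,j})$ acts transitively on the $\k$-basis of $\ggi^{j^d}\M_{d,j}$ coming from Lemma~\ref{LResMdOne}(i), so every hom $P\to\M_{d,j}$ factors through the canonical quotient $\phi$. The only mild care needed is keeping track of the sign $(-1)^{\ttl(w)(j+1)}$ in (\ref{EPhiW}) so that $\dot w\,v_{d,j}$ is recognized as a unit scalar multiple of $\phi_w(v_{d,j})$; once that is in hand, both the projectivity and the endomorphism identification fall out directly.
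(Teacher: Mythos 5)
Your proposal is correct and follows the paper's own argument essentially step for step: both apply Schubert's Criterion to the surjection $C_d\ggi^{j^d}\twoheadrightarrow\M_{d,j}$, both use Lemma~\ref{LResMdOne}(i) to identify $\Hom_{C_d}(C_d\ggi^{j^d},\M_{d,j})\simeq\ggi^{j^d}\M_{d,j}$ with $\k$-basis $\{\dot w\,v_{d,j}\}$, and both invoke Theorem~\ref{TEndMd} to lift each such hom to a $C_d$-endomorphism of $\M_{d,j}$ precomposed with the canonical quotient. The only cosmetic difference is that you verify Schubert's hypothesis in the form "every hom annihilates $Z$" while the paper states it as "every hom factors through $\pi$", and you spell out the $\k\Si_d\cong(\k\Si_d)^{\sop}$ identification via inversion, which the paper leaves implicit — this extra detail is welcome since Theorem~\ref{TEndMd} gives $\End_{C_d}(\M_{d,j})^\sop$, not $\End_{C_d}(\M_{d,j})$.
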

\begin{proof}
The second statement comes from Theorem~\ref{TEndMd}. 

For the first statement we will apply Lemma~\ref{LSchub}. 
Let $P:=C_d\ggi^{j^d}$. By (\ref{EvdWeight}), we have $\ggi^{j^d}v_{d,j}=v_{d,j}$, so, as $v_{d,j}$ generates $M_{d,j}$, there is a 
surjective homomorphism 
$
\pi: P\onto \M_{d,j}, \ \ggi^{j^d}\mapsto v_{d,j}.
$
Let $Z=\Ker \pi$. To verify the assumptions of Lemma~\ref{LSchub}, it suffices to prove that any $F\in\Hom_{C_d}(P,\M_{d,j})$ can be written as $F=\phi\circ \pi$ for some $\phi\in\End_{C_d}(\M_{d,j})$. Since $P$ is generated by $\ggi^{j^d}$, it suffices to show that $F(\ggi^{j^d})=\phi(\pi(\ggi^{j^d}))=\phi(v_{d,j})$ for some $\phi\in\End_{C_d}(\M_{d,j})$. We have that $F(\ggi^{j^d})\in \ggi^{j^d}\M_{d,j}$, so by Lemma~\ref{LResMdOne}(i), we can write 
$$
F(\ggi^{j^d})=\textstyle\sum_{w\in \Si_d}c_w\dot w v_{d,j} \qquad(c_w\in\k). 
$$
In view of 
(\ref{EPhiW}), 
we can now take $\phi:=\sum_{w\in \Si_d}(-1)^{\ttl(w)(j+1)}c_w\phi_w$. 
\end{proof}

\begin{Corollary} \label{CMLaProj} 
Let $\la\in\Comp(d)$. Then $\M_{\la,j}$ is a projective graded $\IS_{\la,j}$-supermodule and $\End_{\IS_{\la,j}}(\M_{d,j})\cong\k \Si_\la$
\end{Corollary}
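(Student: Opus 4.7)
The plan is to reduce the claim to the rank-one case already established in Theorem~\ref{TMdProj} by exploiting the tensor product decomposition of the parabolic Schur superalgebra. Write $\la=(\la_1,\dots,\la_n)$. From the definitions in \S\ref{SSIS} we have the identifications
\[
\IS_{\la,j}\;\cong\;\IS_{\la_1,j}\otimes\cdots\otimes \IS_{\la_n,j},
\qquad
\M_{\la,j}\;\cong\;\M_{\la_1,j}\boxtimes\cdots\boxtimes \M_{\la_n,j},
\]
so $\M_{\la,j}$ is literally an outer tensor product of the graded $\IS_{\la_r,j}$-supermodules $\M_{\la_r,j}$ for $r=1,\dots,n$.

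First, I would invoke Theorem~\ref{TMdProj} applied in each factor to conclude that each $\M_{\la_r,j}$ is a projective graded $\IS_{\la_r,j}$-supermodule. A standard fact about tensor products of algebras says that an outer tensor product $V_1\boxtimes\cdots\boxtimes V_n$ of projective modules over the factors is projective over the tensor product algebra (this is immediate from the projectivity criterion via direct summands of free modules, since $(A_1\otimes\cdots\otimes A_n)^{\oplus r_1\cdots r_n}\cong A_1^{\oplus r_1}\boxtimes\cdots\boxtimes A_n^{\oplus r_n}$, and taking summands is preserved). Applying this yields that $\M_{\la,j}$ is projective as a graded $\IS_{\la,j}$-supermodule.

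For the endomorphism algebra, the same tensor product decomposition and the standard K\"unneth-type isomorphism for $\Hom$ in the setting of graded supermodules gives
\[
\End_{\IS_{\la,j}}(\M_{\la,j})\;\cong\;\End_{\IS_{\la_1,j}}(\M_{\la_1,j})\otimes\cdots\otimes\End_{\IS_{\la_n,j}}(\M_{\la_n,j}).
\]
Again by Theorem~\ref{TMdProj}, each factor on the right is isomorphic to $\k\Si_{\la_r}$ as a graded superalgebra (with trivial grading and parity). Hence
\[
\End_{\IS_{\la,j}}(\M_{\la,j})\;\cong\;\k\Si_{\la_1}\otimes\cdots\otimes\k\Si_{\la_n}\;\cong\;\k\Si_\la,
\]
as required.

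There is no real obstacle here; the only point that needs a moment of care is checking that the isomorphism $\End_{\IS_{\la,j}}(\M_{\la,j})\cong\k\Si_\la$ constructed factor-by-factor agrees with the restriction of the $\Si_\la$-action on $\M_{d,j}$ described in Corollary~\ref{CEndTens}. This is a compatibility statement: by Corollary~\ref{CEndTens}, the right action of $w\in\Si_\la=\Si_{\la_1}\times\cdots\times\Si_{\la_n}$ on $\M_{\la,j}$ is determined by $v_{\la,j}\mapsto(-1)^{\ttl(w)(j+1)}\dot w\, v_{\la,j}$, and this factors through the tensor decomposition of $\dot w$ and $v_{\la,j}$, matching the tensor product of the factor-wise $\Si_{\la_r}$-actions coming from Theorem~\ref{TEndMd}. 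Once this identification is noted, both claims follow.
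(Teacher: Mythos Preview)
Your proposal is correct and is exactly the intended argument: the paper states this corollary without proof immediately after Theorem~\ref{TMdProj}, relying on the tensor decomposition $\IS_{\la,j}\cong\IS_{\la_1,j}\otimes\cdots\otimes\IS_{\la_n,j}$ and $\M_{\la,j}\cong\M_{\la_1,j}\boxtimes\cdots\boxtimes\M_{\la_n,j}$ from \S\ref{SSIS}, which is precisely what you carry out. Note also that you correctly read the statement as being about $\End_{\IS_{\la,j}}(\M_{\la,j})$; the $\M_{d,j}$ in the displayed corollary is a typo.
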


Recall that by  Lemma~\ref{LHatMG}, we have $\hat \M_{d,j}\simeq \funG_d(M_{d,j})=\hat C_d\ggi_d\otimes_{C_d}\M_{d,j}$, where $\M_{d,j}$ is a graded $(C_d,\k\Si_d)$-bisupermodule. So we have a right $\k\Si_d$-module structure on $\hat \M_{d,j}$ or a homomorphism of algebras 
\begin{equation}\label{EHatMSd}
\k\Si_d\to \End_{\hat C_d}(\hat \M_{d,j})^\sop=\End_{\hat C_d/\Ann_{\hat C_d}(\hat \M_{d,j})}(\hat \M_{d,j})^\sop
\end{equation}
which maps $w\in \Si_\la$ to the unique endomorphism $\al_w$ with  
$$
\al_w(\ggi_{d}\otimes v_{d,j})=(-1)^{\ttl(w)(j+1)}\ggi_{d}\otimes \dot w v_{d,j}.
$$

\begin{Corollary} \label{CEndHatM} 
The homomorphism (\ref{EHatMSd}) is an isomorphism. 
\end{Corollary}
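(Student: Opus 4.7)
The plan is to obtain this corollary as a direct transport of Theorem~\ref{TEndMd} across the Morita equivalence $(\funF_d,\funG_d)$ of Corollary~\ref{CEquivFG}(ii), followed by a short verification that the abstract algebra isomorphism coincides with the explicit map described by $w\mapsto \al_w$.

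First I would invoke Corollary~\ref{CEquivFG}(ii): the functor $\funG_d:\mod{C_d}\to\mod{\hat C_d}$ is an equivalence of categories (with quasi-inverse $\funF_d$), so for any $V\in\mod{C_d}$ it induces a $\k$-linear bijection $\End_{C_d}(V)\iso \End_{\hat C_d}(\funG_dV)$ which is compatible with composition. Passing to opposite algebras, and taking $V=\M_{d,j}$, we obtain a graded superalgebra isomorphism
\[
\funG_d:\End_{C_d}(\M_{d,j})^\sop \iso \End_{\hat C_d}(\funG_d \M_{d,j})^\sop.
\]
By Lemma~\ref{LHatMG}(ii), $\funG_d \M_{d,j}\simeq \hat \M_{d,j}$ in $\mod{\hat C_d}$, so the right-hand side is identified with $\End_{\hat C_d}(\hat \M_{d,j})^\sop$.

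Next I would compose with the isomorphism $\k\Si_d \iso \End_{C_d}(\M_{d,j})^\sop$ of Theorem~\ref{TEndMd}, which sends $w\in \Si_d$ to the endomorphism $\phi_w$ characterized by $\phi_w(v_{d,j})=(-1)^{\ttl(w)(j+1)}\dot w\,v_{d,j}$. This yields a graded superalgebra isomorphism $\k\Si_d\iso \End_{\hat C_d}(\hat\M_{d,j})^\sop$. It remains to identify this composite with the homomorphism~(\ref{EHatMSd}). By definition of $\funG_d=\hat C_d\ggi_d\otimes_{C_d}-$, the endomorphism $\funG_d(\phi_w)$ acts by the formula $x\otimes m\mapsto x\otimes \phi_w(m)$; evaluating on the generator $\ggi_d\otimes v_{d,j}$ of $\hat\M_{d,j}$ gives
\[
\funG_d(\phi_w)(\ggi_d\otimes v_{d,j})=\ggi_d\otimes \phi_w(v_{d,j}) = (-1)^{\ttl(w)(j+1)}\ggi_d\otimes \dot w\,v_{d,j},
\]
which is exactly the defining condition for $\al_w$. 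Hence the composite sends $w\mapsto \al_w$, i.e.\ coincides with (\ref{EHatMSd}), which is therefore an isomorphism.

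There is no real obstacle here: each individual ingredient (the equivalence $\funG_d\simeq \funF_d^{-1}$, the identification $\funG_d\M_{d,j}\simeq \hat\M_{d,j}$, and the explicit shape of $\phi_w$) is already available, and the only piece of genuine content is matching the formula for $\funG_d(\phi_w)$ on the distinguished generator with that for $\al_w$. The mildest point worth spelling out carefully is that the functorial identification $\End_{C_d}(\M_{d,j})\iso \End_{\hat C_d}(\hat\M_{d,j})$ really is an algebra (not anti-algebra) map, so the passage to $\sop$ on both sides is consistent; this is immediate from the fact that $\funG_d$, being a functor, preserves composition.
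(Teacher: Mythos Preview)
Your proof is correct and takes essentially the same approach as the paper, which simply states that the result follows from Theorem~\ref{TEndMd} and the fact that $\funG_d$ is an equivalence by Corollary~\ref{CEquivFG}(ii). You have spelled out the details of this transport, including the explicit check that $\funG_d(\phi_w)=\al_w$ on the generator $\ggi_d\otimes v_{d,j}$, which the paper leaves implicit.
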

\begin{proof}
This follows from 
Theorem~\ref{TEndMd} and 
the fact that $\funG_d$ is an equivalence by  Corollary~\ref{CEquivFG}(ii). 
\end{proof}

\begin{Theorem} \label{TMdProjHat}
We have that $\hat \M_{d,j}$ is a projective graded $\hat C_d/\Ann_{\hat C_d}(\hat \M_{d,j})$-supermodule and $\End_{\hat C_d/\Ann_{\hat C_d}(\hat \M_{d,j})}(\hat \M_{d,j})\cong\k \Si_d$.
\end{Theorem}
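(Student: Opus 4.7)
The plan is to mirror the proof of Theorem~\ref{TMdProj} essentially verbatim, now working inside $\hat C_d$ in place of $C_d$. The second assertion, that the faithful quotient has endomorphism algebra $\k\Si_d$, is already Corollary~\ref{CEndHatM}, so the only real content is the projectivity of $\hat M_{d,j}$ over $\hat C_d/\Ann_{\hat C_d}(\hat M_{d,j})$. For that I would apply Schubert's Criterion (Lemma~\ref{LSchub}).

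First I would set $\hat P := \hat C_d \ggi^{j^d}$, which is a projective graded $\hat C_d$-supermodule. By (\ref{EvdWeight}) we have $\ggi^{j^d}\hat v_{d,j} = \hat v_{d,j}$, and since $\hat v_{d,j}$ generates $\hat M_{d,j}$ there is a surjection $\hat\pi : \hat P \onto \hat M_{d,j}$ sending $\ggi^{j^d}$ to $\hat v_{d,j}$. Letting $Z := \Ker\hat\pi$, Schubert's Criterion reduces the projectivity claim to showing that every $F \in \Hom_{\hat C_d}(\hat P,\hat M_{d,j})$ factors as $F = \phi \circ \hat\pi$ for some $\phi \in \End_{\hat C_d}(\hat M_{d,j})$. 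Since $\hat P$ is generated by $\ggi^{j^d}$, this amounts to writing the element $F(\ggi^{j^d}) \in \ggi^{j^d}\hat M_{d,j}$ in the form $\phi(\hat v_{d,j})$ for a suitable endomorphism $\phi$.

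At this point the two remaining ingredients take over. Lemma~\ref{LResMdOne}(ii) provides the $\k$-basis $\{\dot w \hat v_{d,j} \mid w \in \Si_d\}$ of $\ggi^{j^d}\hat M_{d,j}$, and Corollary~\ref{CEndHatM} identifies $\End_{\hat C_d}(\hat M_{d,j})^{\sop}$ with $\k\Si_d$ via endomorphisms $\alpha_w$ satisfying $\alpha_w(\hat v_{d,j}) = (-1)^{\ttl(w)(j+1)}\dot w \hat v_{d,j}$. Writing $F(\ggi^{j^d}) = \sum_{w \in \Si_d} c_w \dot w \hat v_{d,j}$ with $c_w \in \k$, the required $\phi$ is then $\phi := \sum_{w \in \Si_d} (-1)^{\ttl(w)(j+1)} c_w \alpha_w$. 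There is no genuine obstacle in this proof; the work was all done in advance, precisely because the $\hat C_d$-side setup (the $\Si_d$-action on $\hat M_{d,j}$, the weight space basis, and the computation of the endomorphism ring) was developed in parallel with the $C_d$-side so that this transfer is mechanical. The only point requiring a touch of care is the sign normalisation when identifying $\dot w \hat v_{d,j}$ with $\pm\alpha_w(\hat v_{d,j})$, which is fixed to match the convention used in Theorem~\ref{TEndMd}.
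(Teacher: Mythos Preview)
Your proposal is correct and follows exactly the approach indicated in the paper: the paper's proof simply says ``similar to that of Theorem~\ref{TMdProj}, using Corollary~\ref{CEndHatM} instead of Theorem~\ref{TEndMd}, and Lemma~\ref{LResMdOne}(ii) instead of Lemma~\ref{LResMdOne}(i),'' and you have spelled out precisely those substitutions into the Schubert's Criterion argument.
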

\begin{proof}
The proof is similar to that of Theorem~\ref{TMdProj}, using Corollary~\ref{CEndHatM} instead of Theorem~\ref{TEndMd}, and 
Lemma~\ref{LResMdOne}(ii) instead of Lemma~\ref{LResMdOne}(i). 
\end{proof}

\section{Imaginary symmetric and divided powers}\label{SSSymDivExt}

\subsection{  The modules $\Sy_{d,j}$ and $\Di_{d,j}$}
Recall from (\ref{Ettx}) the elements $\ttx_d$ and $\tty_d$ of $\k \Si_d$. Define the {\em imaginary symmetric} and {\em imaginary divided powers} as the following graded $C_d$-supermodules:
\begin{align}
\Sy_{d,j}&:=\M_{d,j}/\spa_\k\{mg-m\mid g\in\Si_d,\, m\in \M_{d,j}\},
\\
\label{EGaDJ}
\Di_{d,j}&:=\{m\in \M_{d,j}\mid mg=m \ \text{for all}\ g\in\Si_d\}.
\end{align}

An important role will also be played by the graded $C_d$-supermodules 
$$\M_{d,j}\tty_d\quad \text{and}\quad \M_{d,j}\ttx_d,$$ 
the first of which referred to as the {\em imaginary exterior power}. Note that $\M_{d,j}\tty_d\neq 0\neq \M_{d,j}\ttx_d$ for example by Theorem~\ref{TEndMd}, and $\M_{d,j}\ttx_d$ is a graded  subsupermodule of $\Di_{d,j}$. 

The graded $C_d$-supermodules $\Sy_{d,j},\,\Di_{d,j},\,\M_{d,j}\tty_d,\, \M_{d,j}\ttx_d$ factor through the quotient $\IS_{d,j}$ of $C_d$ to yield graded $\IS_{d,j}$-supermodules. 

To prove Lemma~\ref{LDualSharp} below, we will also need the following $\hat C_d$-versions of $\Sy_{d,j},\Di_{d,j}$ 
\begin{align*}
\hat\Sy_{d,j}&:=\hat \M_{d,j}/\spa_\k\{mg-m\mid g\in\Si_d,\, m\in \hat \M_{d,j}\},
\\
\hat\Di_{d,j}&:=\{m\in \hat \M_{d,j}\mid mg=m \ \text{for all}\ g\in\Si_d\},
\end{align*}
as well as the $\hat C_d$-modules $\hat \M_{d,j}\tty_d$ and $\hat \M_{d,j}\ttx_d$.

\begin{Lemma} \label{L5.2.2} 
We have $\Sy_{d,j}\simeq \M_{d,j}\otimes_{\k\Si_d}\k_{\Si_d}$
as graded left $C_d$-supermodules and
$\hat\Sy_{d,j}\simeq \hat\M_{d,j}\otimes_{\k\Si_d}\k_{\Si_d}$
as graded left $\hat C_d$-supermodules.  
\end{Lemma}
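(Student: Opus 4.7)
The proof will essentially amount to unravelling the definition of the tensor product with the (left) trivial module $\k_{\Si_d}$, which is standard: for any right $\k G$-module $M$, the tensor product $M\otimes_{\k G}\k_G$ is naturally isomorphic to the coinvariants $M/\spa_\k\{mg-m\mid g\in G,\,m\in M\}$. The heart of the argument is therefore to check that this general identification respects the left $C_d$-supermodule (resp.\ $\hat C_d$-supermodule) structure and the grading/parity.

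First, I recall that by Theorem~\ref{TEndMd}, $\M_{d,j}$ is naturally a graded $(C_d,\k\Si_d)$-bisupermodule via the right action $mw=\phi_w(m)$, where $\k\Si_d$ is viewed as a graded superalgebra concentrated in bidegree $(0,\0)$. Interpreting $\k_{\Si_d}$ on the right-hand side of the claimed isomorphism as the \emph{left} trivial $\k\Si_d$-module (so that the tensor product $\M_{d,j}\otimes_{\k\Si_d}\k_{\Si_d}$ makes sense), I would consider the $\k$-linear map
$$
\varphi:\M_{d,j}\to \M_{d,j}\otimes_{\k\Si_d}\k_{\Si_d},\quad m\mapsto m\otimes 1.
$$
This map is surjective, because every element of the tensor product has the form $m\otimes c\cdot 1=cm\otimes 1$ for some $m\in\M_{d,j}$ and $c\in\k$. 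For any $g\in\Si_d$ and $m\in\M_{d,j}$ one has $(mg)\otimes 1=m\otimes (g\cdot 1)=m\otimes 1$, so $\varphi(mg-m)=0$, i.e.\ $\varphi$ descends to a well-defined surjection
$$
\bar\varphi:\Sy_{d,j}\twoheadrightarrow \M_{d,j}\otimes_{\k\Si_d}\k_{\Si_d}.
$$
To show this is an isomorphism, I would construct an inverse via the universal property of the tensor product: the balanced bilinear map $\M_{d,j}\times\k_{\Si_d}\to \Sy_{d,j}$, $(m,c)\mapsto c\,\bar m$ (where $\bar m$ denotes the image of $m$ in $\Sy_{d,j}$) is well-defined since $mg\mapsto \overline{mg}=\bar m$ agrees with the action on $\k_{\Si_d}$.

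Next, the left $C_d$-action on $\M_{d,j}\otimes_{\k\Si_d}\k_{\Si_d}$ is defined by $c\cdot(m\otimes v)=cm\otimes v$; this is well-defined precisely because the left $C_d$-action and right $\k\Si_d$-action on $\M_{d,j}$ commute (in the bisupermodule sense). The left $C_d$-action on $\Sy_{d,j}$ is inherited from that on $\M_{d,j}$, so $\bar\varphi$ is manifestly a homomorphism of graded left $C_d$-supermodules. Both sides inherit the bidegree from $\M_{d,j}$ (since $\k_{\Si_d}$ sits in bidegree $(0,\0)$), so $\bar\varphi$ is of bidegree $(0,\0)$, giving an isomorphism in the required sense.

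For the second statement, the argument is identical, using that $\hat\M_{d,j}$ is a graded $(\hat C_d,\k\Si_d)$-bisupermodule via the right action described just before Corollary~\ref{CEndHatM} (see (\ref{EHatMSd})): one forms the analogous map $\hat\M_{d,j}\to \hat\M_{d,j}\otimes_{\k\Si_d}\k_{\Si_d}$, $\hat m\mapsto \hat m\otimes 1$, which descends to a bidegree $(0,\0)$ isomorphism of graded left $\hat C_d$-supermodules $\hat\Sy_{d,j}\simeq \hat\M_{d,j}\otimes_{\k\Si_d}\k_{\Si_d}$. There is no real obstacle here — the whole statement is a tautological reinterpretation of the coinvariants construction, and the only points to verify carefully are the compatibility with the bimodule structure and the bidegree, both of which are immediate.
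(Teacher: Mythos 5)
Your argument is correct and is essentially the same as the paper's: both identify $\M_{d,j}\otimes_{\k\Si_d}\k_{\Si_d}$ with the quotient of $\M_{d,j}$ by the span of the elements $mg-m$, using the bisupermodule structure to see this is a morphism of graded $C_d$-supermodules. The paper just states this identification in one line; your version spells out the universal property and the bidegree check, but the content is identical.
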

\begin{proof}
By definition, 
$$\M_{d,j}\otimes_{\k\Si_d}\k_{\Si_d}\simeq (\M_{d,j}\otimes_{\k}\k_{\Si_d})/\spa_\k\{mg\otimes 1-m\otimes 1\mid g\in\Si_d,\, m\in \M_{d,j}\}.$$ 
Identifying $\M_{d,j}\otimes_{\k}\k_{\Si_d}=\M_{d,j}$, this immediately gives the first claim. The second claim is proved similarly. 
\end{proof}

\begin{Lemma} 
We have that $\{v_{d,j}\tty_d\}$ is a basis of the $\k$-module  $\ggi^{j^d}\M_{d,j}\tty_d$, and $\{v_{d,j}\ttx_d\}$ is a basis of the $\k$-module $\ggi^{j^d}\M_{d,j}\ttx_d$. 
\end{Lemma}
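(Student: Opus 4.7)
My plan is to derive both claims at once from Lemma~\ref{LBasisIndGGNew} applied to the special composition $\la = \om_d = (1^d)$. Since $\Si_{\om_d}$ is the trivial group, that lemma produces a bidegree $(0,\0)$ isomorphism of right $\k\Si_d$-modules
\[
\funQ^{-a_{\om_d,j}}\Uppi^{dj}\,\ggi^{\om_d,j}\M_{d,j} \iso \k_{\Si_{\om_d}}\otimes_{\k\Si_{\om_d}}\k\Si_d \simeq \k\Si_d,
\]
where the right-hand side is the regular right $\k\Si_d$-module and $\lgath_{\om_d,j}v_{d,j}$ is sent to $1_{\Si_d}$. Using this isomorphism the two assertions become the obvious statements that $\tty_d$ and $\ttx_d$ each span a free $\k$-submodule of rank one inside $\k\Si_d$, which is clear from the fact that $\k\Si_d$ is free on the basis $\Si_d$ and every group element appears with coefficient $\pm 1$ in $\tty_d$ and with coefficient $1$ in $\ttx_d$.

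To make the identification between $v_{d,j}\tty_d$ and $\tty_d$ explicit, I first check that $\lgath_{\om_d,j}=\ggi^{j^d}$, so that $\lgath_{\om_d,j}v_{d,j}=v_{d,j}$ by (\ref{EvdWeight}) and therefore the isomorphism above sends $v_{d,j}$ directly to $1_{\Si_d}$. Indeed, by (\ref{EUpsilonLa}) we have $\lgath_{\om_d,j}=\lgath_{1,j}^{\otimes d}$ inside $C_{\om_d}=C_1^{\otimes d}$, and $\lgath_{1,j}=\psi_{\ga_{1,j}}\ggi^{j}$ where $\ga_{1,j}$ is forced to be the identity since $\D^{p}_{p}=\{1\}$. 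Hence $\lgath_{1,j}=\ggi^{j}$ and $\lgath_{\om_d,j}=(\ggi^{j})^{\otimes d}$, which under the identification $C_{\om_d}\subseteq \ggi_{\om_d}C_d\ggi_{\om_d}$ becomes the idempotent $\ggi^{j^d}\in C_d$.

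With this identification in hand, the isomorphism is a right $\k\Si_d$-module map, so it sends $v_{d,j}\tty_d$ to $1_{\Si_d}\cdot\tty_d=\tty_d$ and $v_{d,j}\ttx_d$ to $1_{\Si_d}\cdot \ttx_d=\ttx_d$. Since $\ggi^{j^d}\M_{d,j}\tty_d=\ggi^{j^d}(\M_{d,j}\tty_d)$ (left and right actions commute) equals the image under the isomorphism of $\k\Si_d\cdot\tty_d=\k\tty_d$, and similarly $\ggi^{j^d}\M_{d,j}\ttx_d$ corresponds to $\k\Si_d\cdot\ttx_d=\k\ttx_d$, both claims follow.

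The only genuine verification is the computation $\lgath_{\om_d,j}=\ggi^{j^d}$, which is really just unwinding the definitions of $\lgath_{d,j}$ in the rank-one case and of $\lgath$ for a general composition as a tensor product of its components; everything else is transport of structure along Lemma~\ref{LBasisIndGGNew}.
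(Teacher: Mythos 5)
Your argument is correct, and it rests on the same underlying fact as the paper's proof: that $\ggi^{j^d}\M_{d,j}$ is a free right $\k\Si_d$-module of rank one with basis $v_{d,j}$. The paper obtains this directly from Lemma~\ref{LResMdOne}(i) together with the formula (\ref{EActions_r}), whereas you route through Lemma~\ref{LBasisIndGGNew} specialized to $\la=\om_d$ (together with the small verification $\lgath_{\om_d,j}=\ggi^{j^d}$, which is a correct unwinding of the $d=1$ case of $\lgath$ and of the tensor-product convention); since Lemma~\ref{LBasisIndGGNew} is itself proved from Lemma~\ref{LResMdOne}(i), the two proofs are essentially the same and there is no circularity.
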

\begin{proof}
By (\ref{EActions_r}) and Lemma~\ref{LResMdOne}(i), we have that $\ggi^{j^d}\M_{d,j}$ is a free right $\k\Si_d$-module with basis $v_{d,j}$. This implies the lemma.   
\end{proof}

\begin{Lemma} \label{Lttx} 
Let $\k=\F$. 
\begin{enumerate}
\item[{\rm (i)}] $\M_{d,j}\tty_d$ and $\M_{d,j}\ttx_d$ are irreducible $C_d$-modules, not isomorphic to each other if $\cha \F\neq 2$. 
\item[{\rm (ii)}] $\hat\M_{d,j}\tty_d$ and $\hat \M_{d,j}\ttx_d$ are irreducible $\hat C_d$-modules, not isomorphic to each other if $\cha \F\neq 2$. 
\end{enumerate} 
\end{Lemma}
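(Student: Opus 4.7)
The plan is to apply the Schur functor machinery of Section~\ref{SSSF}. For part~(i) I take $A = \IS_{d,j}$ and $P = M_{d,j}$; for part~(ii) I take $A = \hat C_d / \Ann_{\hat C_d}(\hat M_{d,j})$ and $P = \hat M_{d,j}$. In both cases $A$ is a finite-dimensional graded $\F$-superalgebra (embedding into $\End_\F(P)$ for the finite-dimensional $P$), $P$ is projective over $A$ by Theorem~\ref{TMdProj} and Theorem~\ref{TMdProjHat}, and $H := \End_A(P)^\sop \cong \F\Si_d$ by Theorem~\ref{TEndMd} together with Corollary~\ref{CEndHatM}. So $\funf$, $\fung$, $\funq$ are defined in each setting.

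To invoke Lemma~\ref{L3.1f}, I verify its hypothesis that every composition factor of the socle of $P$ also appears in its head. For $P = M_{d,j}$, the isomorphism $M_{d,j}^\sharp \simeq \Uppi M_{d,j}$ from Lemma~\ref{LSocleHead}, combined with self-duality of irreducibles (up to parity) given by Lemma~\ref{LGGIrrSelfD}, shows that the iso classes of composition factors of the socle and head of $M_{d,j}$ coincide. For $P = \hat M_{d,j}$ one uses $\hat M_{d,j}^\circledast \simeq \Uppi^d \hat M_{d,j}$ from the same lemma together with Lemma~\ref{LTypeM}.

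Now I consider the left ideals $J_\ttx := \F\Si_d\cdot\ttx_d = \F\ttx_d$ and $J_\tty := \F\Si_d\cdot\tty_d = \F\tty_d$ of $H = \F\Si_d$. Each is one-dimensional: since $g\ttx_d = \ttx_d$ for all $g \in \Si_d$, $J_\ttx$ is isomorphic to the trivial module $\F_{\Si_d}$, and since $g\tty_d = \sgn(g)\tty_d$, $J_\tty$ is isomorphic to the sign module $\sgn_{\Si_d}$. Both are irreducible $\F\Si_d$-modules. Applying Lemma~\ref{L3.1f} gives $\funq \circ \fung(J_\ttx) \simeq P\cdot J_\ttx = M_{d,j}\ttx_d$ and $\funq \circ \fung(J_\tty) \simeq M_{d,j}\tty_d$, and Lemma~\ref{L3.1e} tells us that since $J_\ttx$ and $J_\tty$ are irreducible $H$-modules, both $\funq\circ\fung(J_\ttx)$ and $\funq\circ\fung(J_\tty)$ are irreducible $\IS_{d,j}$-modules and hence irreducible $C_d$-modules. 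This proves the irreducibility claim in~(i); part~(ii) is verbatim the same argument with $\hat M_{d,j}$ in place of $M_{d,j}$.

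For the non-isomorphism statement, Lemma~\ref{L3.1e} gives an injection from $\Irr(\F\Si_d)$ into the iso classes of irreducibles in the head of $P$, so non-isomorphic irreducibles of $H$ are sent to non-isomorphic irreducibles of $A$. When $d \geq 2$ and $\cha\F \neq 2$ we have $\F_{\Si_d} \not\simeq \sgn_{\Si_d}$, whence $M_{d,j}\ttx_d \not\cong M_{d,j}\tty_d$ and similarly for the hatted versions. (The case $d = 1$ is vacuous since $\ttx_1 = \tty_1 = 1$.) I expect no substantive obstacle: the only step requiring any care is the socle/head hypothesis of Lemma~\ref{L3.1f}, which requires transporting the iso-class equality across the parity shift present in $M_{d,j}^\sharp \simeq \Uppi M_{d,j}$ and $\hat M_{d,j}^\circledast \simeq \Uppi^d \hat M_{d,j}$, and this is immediate from the self-duality results cited above.
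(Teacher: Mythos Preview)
Your proof is correct and follows essentially the same approach as the paper: apply the Schur functor machinery of \S\ref{SSSF} with $P=M_{d,j}$ (resp.\ $\hat M_{d,j}$), use Lemma~\ref{LSocleHead} to verify the socle/head hypothesis of Lemma~\ref{L3.1f}, and then invoke Lemmas~\ref{L3.1e} and~\ref{L3.1f} on the irreducible left ideals $\F\Si_d\ttx_d$ and $\F\Si_d\tty_d$. Your write-up simply spells out a few steps (e.g.\ the role of Lemma~\ref{LGGIrrSelfD} and Lemma~\ref{LTypeM} in transporting across the parity shift) that the paper leaves implicit.
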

\begin{proof}
(i) By Theorem~\ref{TMdProj}, 
$\M_{d,j}$ is a projective graded $\IS_{d,j}$-supermodule with $\k \Si_d=\End_{\IS_{d,j}}(\M_{d,j})^\sop$. Moreover, by Lemma~\ref{LSocleHead}, every composition factor of the socle of $\M_{d,j}$ appears in its head.  Also, left ideals $\F \Si_d\ttx_d$ and  $\F \Si_d\tty_d$ are irreducible $\F \Si_d$-modules, not isomorphic to each other if $\cha \F\neq 2$. 
Now the claim follows from Lemmas~\ref{L3.1e},\,\ref{L3.1f}. 

(ii) The proof is similar to (i) using Theorem~\ref{TMdProjHat} instead of Theorem~\ref{TMdProj}. 
\end{proof}

\begin{Lemma} \label{LSymHead}
Let $\k=\F$. 
\begin{enumerate}
\item[{\rm (i)}] $\Sy_{d,j}$ has simple head isomorphic to $\M_{d,j}\ttx_d$, and no other composition factor of $\Sy_{d,j}$ appears in the head of $\M_{d,j}$.
\item[{\rm (ii)}] $\hat \Sy_{d,j}$ has simple head isomorphic to $\hat \M_{d,j}\ttx_d$, and no other composition factor of $\hat \Sy_{d,j}$ appears in the head of $\hat \M_{d,j}$.
\end{enumerate} 
\end{Lemma}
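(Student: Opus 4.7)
The plan is to apply the Schur functor machinery of Section~\ref{SSSF} with the projective supermodule $P := \M_{d,j}$ over $A := \IS_{d,j}$, whose endomorphism superalgebra $H := \End_A(P)^{\sop}$ is identified with $\k\Si_d$ via Theorem~\ref{TMdProj}. Under this identification, the trivial left $\k\Si_d$-module $\k_{\Si_d}$ is realized as the one-dimensional left ideal $\k\Si_d\,\ttx_d$ (on which $\Si_d$ acts trivially since $g\ttx_d=\ttx_d$), and Lemma~\ref{L5.2.2} gives $\Sy_{d,j}\simeq \fung(\k_{\Si_d})=P\otimes_{\k\Si_d}\k\Si_d\,\ttx_d$. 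Since every element of $\Sy_{d,j}$ is a $\k$-linear combination of elements of the form $p\otimes w$, which are images under the $A$-homomorphisms $p'\mapsto p'\otimes w$, we have $O^P(\Sy_{d,j})=\Sy_{d,j}$; in particular every simple quotient of $\Sy_{d,j}$ already lies in the head of $\M_{d,j}$.

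Next I would invoke Lemma~\ref{L3.1f}, whose hypothesis on socle versus head of $P$ is supplied by Lemma~\ref{LSocleHead}, to obtain $\funq\circ\fung(\k\Si_d\ttx_d)\simeq P\cdot \k\Si_d\ttx_d=\M_{d,j}\ttx_d$, which is irreducible by Lemma~\ref{Lttx}(i). In particular $\Sy_{d,j}/O_P(\Sy_{d,j})\simeq \M_{d,j}\ttx_d$, and Lemma~\ref{L3.1a} yields $\funf(\Sy_{d,j})\simeq \k_{\Si_d}$. Now I apply Lemma~\ref{L3.1c} with $V=\Sy_{d,j}$ (which satisfies $O^P(V)=V$) and with $V'=L$ any irreducible graded $A$-supermodule in the head of $\M_{d,j}$ (which satisfies $O_P(L)=0$, as $\Hom_A(P,L)\neq 0$). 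This produces
\[
\Hom_A(\Sy_{d,j},L)\simeq \Hom_{\k\Si_d}(\k_{\Si_d},\funf(L)),
\]
which, since $\funf(L)$ is an irreducible $\k\Si_d$-module, is one-dimensional when $\funf(L)\simeq \k_{\Si_d}$ and vanishes otherwise. Combined with the preceding paragraph, this pins down the head of $\Sy_{d,j}$ as a single copy of $\M_{d,j}\ttx_d$.

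For the statement about composition factors, exactness of $\funf$ together with the fact (from Lemma~\ref{L3.1e} and the general Schur-functor formalism) that $\funf$ kills precisely those composition factors not in the head of $P$ while preserving multiplicities of the remaining ones, reduces the count of head-composition-factors of $\Sy_{d,j}$ to composition factors of $\funf(\Sy_{d,j})\simeq \k_{\Si_d}$; the latter has only one composition factor, itself, corresponding to $\M_{d,j}\ttx_d$. This establishes (i), and (ii) follows by an identical argument using Theorem~\ref{TMdProjHat} in place of Theorem~\ref{TMdProj}, the second isomorphism of Lemma~\ref{L5.2.2}, the socle-head property of $\hat\M_{d,j}$ from Lemma~\ref{LSocleHead}, and the irreducibility of $\hat\M_{d,j}\ttx_d$ from Lemma~\ref{Lttx}(ii). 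The only point requiring care is ensuring that the one-dimensional trivial $\k\Si_d$-module appearing in Lemma~\ref{L5.2.2} is viewed as a \emph{left} module, and is realized as the left ideal $\k\Si_d\,\ttx_d$ so that Lemma~\ref{L3.1f} applies on the correct side; once this identification is in place the argument is routine Schur-functor bookkeeping.
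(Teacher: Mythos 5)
Your proof is correct and follows essentially the same route as the paper's: both identify $\Sy_{d,j}\simeq\fung(\k_{\Si_d})$ via Lemma~\ref{L5.2.2}, apply Lemma~\ref{L3.1f} (with the socle--head hypothesis from Lemma~\ref{LSocleHead}) to obtain $\funq(\Sy_{d,j})\simeq\M_{d,j}\ttx_d$, and invoke Lemma~\ref{Lttx} for irreducibility. The extra appeal to Lemma~\ref{L3.1c} in your write-up is a harmless elaboration; the paper instead reads the simple-head conclusion directly off the structure of $\funq(\Sy_{d,j})$ together with the observation that $\Sy_{d,j}$ is a quotient of $\M_{d,j}$.
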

\begin{proof}
(i) Let $\funf$, $\fung$ and  $\funq$ be the functors defined in \S\ref{SSSF}, taking the projective module $P$ to be the $\IS_{d,j}$-module $M_{d,j}$, so that $H=\F\Si_d$, see Theorem~\ref{TMdProj}. By Lemma~\ref{L5.2.2}, we have $\Sy_{d,j}\simeq\fung(\k_{\Si_d})$. On the other hand, by Lemma~\ref{LSocleHead},  every composition factor of the socle of $\M_{d,j}$ appears in its head, so by Lemma~\ref{L3.1f}, we have $$\M_{d,j}\ttx_d\simeq \funq\circ\fung(\k\Si_d\ttx_d)\simeq \funq\circ\fung(\k_{\Si_d})\simeq \funq(\Sy_{d,j}).
$$
By Lemma~\ref{Lttx}(i), the graded supermodule $\M_{d,j}\ttx_d$ is irreducible. We deduce that $\M_{d,j}\ttx_d$ appears in the head of $\Sy_{d,j}$ and no other composition factor of $\Sy_{d,j}$ appear in the head of $\M_{d,j}$. Since $\Sy_{d,j}$ is a quotient of $\M_{d,j}$, this means that $\Sy_{d,j}$ has simple head. 

(ii) is proved similarly using Theorem~\ref{TMdProjHat} instead of Theorem~\ref{TMdProj}, and Lemma~\ref{Lttx}(ii) instead of Lemma~\ref{Lttx}(i). 
\end{proof}

\begin{Lemma} \label{LHatSHatZ}
We have $\funG_d\Sy_{d,j}\simeq \hat\Sy_{d,j}$ and $\funF_d\hat\Di_{d,j}\simeq \Di_{d,j}$.
\end{Lemma}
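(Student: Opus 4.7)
The plan is to deduce both isomorphisms from the fact that $\funF_d$ and $\funG_d$ are quasi-inverse equivalences (Corollary~\ref{CEquivFG}(ii)), together with the observation that these equivalences are compatible with the right $\k\Si_d$-structures on $\M_{d,j}$ and $\hat\M_{d,j}$. The compatibility is essentially built in: recall from Lemma~\ref{LHatMG}(ii) and the discussion preceding Corollary~\ref{CEndHatM} that the right $\k\Si_d$-action on $\hat\M_{d,j}$ is defined via the isomorphism $\hat\M_{d,j}\simeq\funG_d(\M_{d,j}) = \hat C_d\ggi_d\otimes_{C_d}\M_{d,j}$, with $\Si_d$ acting through the second tensor factor. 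Thus $\funG_d(\M_{d,j})\simeq\hat\M_{d,j}$ is an isomorphism of $(\hat C_d,\k\Si_d)$-bisupermodules, and applying $\funF_d$ shows $\funF_d(\hat\M_{d,j})\simeq\M_{d,j}$ is an isomorphism of $(C_d,\k\Si_d)$-bisupermodules.

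For the first claim I would invoke Lemma~\ref{L5.2.2} to rewrite $\Sy_{d,j}\simeq \M_{d,j}\otimes_{\k\Si_d}\k_{\Si_d}$ and $\hat\Sy_{d,j}\simeq \hat\M_{d,j}\otimes_{\k\Si_d}\k_{\Si_d}$, and then use the associativity of tensor products together with the bimodule isomorphism $\funG_d(\M_{d,j})\simeq\hat\M_{d,j}$ to obtain
\begin{equation*}
\funG_d\Sy_{d,j} \simeq (\hat C_d\ggi_d\otimes_{C_d}\M_{d,j})\otimes_{\k\Si_d}\k_{\Si_d} \simeq \hat\M_{d,j}\otimes_{\k\Si_d}\k_{\Si_d}\simeq\hat\Sy_{d,j}.
\end{equation*}

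For the second claim I would express the $\Si_d$-invariants as a finite limit in $\mod{C_d}$: picking generators $w_1,\dots,w_k$ of $\Si_d$ (e.g.\ the simple transpositions), and observing that right multiplication $r_w:\M_{d,j}\to\M_{d,j}$ by $w\in\Si_d$ is a morphism of left $C_d$-supermodules (since the two actions commute), one has $\Di_{d,j} = \bigcap_{i=1}^k\Ker(r_{w_i}-\id)$ and similarly for $\hat\Di_{d,j}$ inside $\hat\M_{d,j}$. Since equivalences of abelian categories preserve finite limits — in particular kernels and intersections — and since under $\funF_d(\hat\M_{d,j})\simeq\M_{d,j}$ the endomorphism $r_{w_i}^{\hat\M}$ corresponds to $r_{w_i}^{\M}$ by the bimodule compatibility recorded above, it follows that
\begin{equation*}
\funF_d\hat\Di_{d,j} \simeq \bigcap_{i=1}^k\Ker\bigl(r_{w_i}^{\M}-\id\bigr) = \Di_{d,j}.
\end{equation*}

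The only delicate point underlying both parts is the bimodule compatibility — verifying that $\funG_d$ intertwines the right $\Si_d$-actions on $\M_{d,j}$ and $\hat\M_{d,j}$. This is where I expect a reader to want the most care, but no new argument is actually required: the right $\Si_d$-structure on $\hat\M_{d,j}$ was defined precisely so that this holds, and the claim follows tautologically from the construction recalled before Corollary~\ref{CEndHatM}.
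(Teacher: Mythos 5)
Your proof is correct and takes a somewhat different route than the paper. For the first isomorphism, the paper applies the exact functor $\funG_d$ to the short exact sequence $0\to K\to\M_{d,j}\to\Sy_{d,j}\to 0$ and then verifies by a direct element chase that the image of $\funG_d K$ in $\funG_d\M_{d,j}=\hat\M_{d,j}$ is exactly $\spa_\k\{mg-m\}$. Your version, going through Lemma~\ref{L5.2.2} and tensor associativity, packages the same identification more slickly and in fact does not even need exactness of $\funG_d$ for this direction (the paper's argument does). For the second isomorphism, the paper again does an explicit element chase inside $\ggi_d\hat\M_{d,j}$, while you express $\Di_{d,j}$ and $\hat\Di_{d,j}$ as an intersection of kernels and use that $\funF_d$, being exact, preserves kernels (and hence finite intersections of subobjects). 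These are equivalent computations in disguise. You correctly identify the one genuine pivot point — that the right $\k\Si_d$-structure on $\hat\M_{d,j}$ was \emph{defined} via $\funG_d$ (Lemma~\ref{LHatMG} and the discussion before Corollary~\ref{CEndHatM}), so the bimodule compatibility is built in; the paper's element chase is in effect a concrete verification of that compatibility rather than a separate ingredient. One small stylistic remark: invoking ``equivalences preserve finite limits'' is more than you need, since exactness of $\funF_d$ (which holds simply because $\funF_d(V)\simeq\ggi_d V$) already gives kernel preservation, whereas the full equivalence (Corollary~\ref{CEquivFG}(ii)) is only really needed to justify exactness of $\funG_d$ — which, as noted, your argument for the first part manages to sidestep.
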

\begin{proof}
By Lemma~\ref{LHatMG}, we identify $\funG_d \M_{d,j}=\hat \M_{d,j}= \hat C_d\ggi_d\otimes_{C_d}\M_{d,j}$. 

For the first isomorphism, since $\Sy_{d,j}$ is a quotient of $\M_{d,j}$ there is a 
short exact sequence $0\to K\to \M_{d,j}\to \Sy_{d,j}\to 0.$ 
Applying the exact functor $\funG_d$ we get the exact sequence 
$$
0\to \funG_d K\stackrel{\phi}{\to} \funG_d \M_{d,j}\to \funG_d\Sy_{d,j}\to 0.
$$
We need to prove that $\Im \phi=\hat K:=\spa_\k\{mg-m\mid g\in\Si_d,\, m\in \hat \M_{d,j}\}$. 
Note that $\Im \phi$ consists of all elements of the form $\sum_{k} r_{k}\ggi_d\otimes(m_{k}-m_{k}g_{k})$ with $r_{k}\in\hat C_d$, $m_{k}\in \M_{d,j}$ and $g_{k}\in \Si_d$. But 
\begin{align*}
\textstyle\sum_{k} r_{k}\ggi_d\otimes(m_{k}-m_{k}g_{k})
=\sum_{k} \big(r_{k}\ggi_d\otimes m_{k} 
- (r_{k}\ggi_d\otimes m_{k})g_{k}\big)
\subseteq \hat K.
\end{align*}
On the other hand, $\hat K$ is a span of the elements of the form $$\textstyle\sum_{k} r_{k}\ggi_d\otimes m_{k}
-\Big(\sum_{k} r_{k}\ggi_d\otimes m_{k}\Big)g=\sum_{k} r_{k}\ggi_d\otimes (m_{k}-m_kg)\subseteq \Im \phi.
$$

For the second isomorphism, applying $\funF_d$ to the embedding $\hat \Di_d\to \hat \M_{d,j}$ we get the embedding $\funF_d\hat \Ga_d\to \funF_d\hat \M_{d,j}=\M_{d,j}$, whose image consists of all elements of the form $\sum_k\ggi_dr_k\ggi_dm_k$ such that $\sum_kr_k\ggi_d\otimes m_kg=\sum_kr_k\ggi_d\otimes m_k$ for all $g\in\Si_d$, which is easily seen to be $\Di_{d,j}$.
\end{proof}

\subsection{  Duality between $\Di_{d,j}$ and $\Sy_{d,j}$}
Our next goal is to prove that $\Di_{d,j}\cong \Sy_{d,j}^\sharp$ when $\k=\F$. We first establish the $\hat C_d$-version of this:

\begin{Lemma} \label{LDualHatZ}
If $\k=\F$ then $\hat \Di_{d,j}\cong \hat \Sy_{d,j}^\circledast$. 
\end{Lemma}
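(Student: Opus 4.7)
The plan is to identify $\hat\Sy_{d,j}^\circledast$ with the $\Si_d$-invariants of $\hat\M_{d,j}^\circledast$, and then match those invariants with $\hat\Di_{d,j}$ via the self-duality of $\hat\M_{d,j}$ provided by Lemma~\ref{LSocleHead}. First, dualizing the quotient map $\hat\M_{d,j} \twoheadrightarrow \hat\Sy_{d,j}$ identifies
$$\hat\Sy_{d,j}^\circledast \simeq (\hat\M_{d,j}^\circledast)^{\Si_d} := \{f \in \hat\M_{d,j}^\circledast \mid f(mg) = f(m) \text{ for all } m \in \hat\M_{d,j},\, g \in \Si_d\}$$
as graded $\hat C_d$-supermodules (the right-hand side is a $\hat C_d$-submodule because the right $\Si_d$-action commutes with the left $\hat C_d$-action, and because $\k\Si_d$ is concentrated in bidegree $(0,\0)$ so no sign issues arise).

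Next I would upgrade Lemma~\ref{LSocleHead} to a $(\hat C_d, \k\Si_d)$-bisupermodule isomorphism $\hat\M_{d,j}^\circledast \cong \Uppi^d\hat\M_{d,j}$. The isomorphism from that lemma is obtained by combining the self-duality $(\hat\LL_j)^\circledast \simeq \Uppi\hat\LL_j$ of Lemma~\ref{LLJSelfD} with Lemma~\ref{LDualInd}(ii) applied to $\hat\M_{d,j} = \Ind_{\de^d}^{d\de}\hat\LL_j^{\boxtimes d}$, and both ingredients are natural in their inputs (the relevant naturality pattern is recorded in Example~\ref{ENatural}). By Corollary~\ref{CEndHatM}, the right $\Si_d$-action on $\hat\M_{d,j}$ is determined by explicit endomorphisms sending $\ggi_d \otimes v_{d,j}$ to $(-1)^{\ttl(w)(j+1)}\ggi_d \otimes \dot w\, v_{d,j}$, which ultimately permute the tensor factors of $\hat\LL_j^{\boxtimes d}$ (with signs). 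Naturality of Lemma~\ref{LDualInd}(ii), together with these explicit formulas, forces the self-duality isomorphism to intertwine the two $\Si_d$-actions, yielding the desired bisupermodule isomorphism. Passing to $\Si_d$-invariants then gives
$$\hat\Sy_{d,j}^\circledast \simeq (\hat\M_{d,j}^\circledast)^{\Si_d} \cong \Uppi^d(\hat\M_{d,j})^{\Si_d} = \Uppi^d\hat\Di_{d,j},$$
i.e.\ $\hat\Di_{d,j} \cong \hat\Sy_{d,j}^\circledast$ (the notation $\cong$ absorbs the parity shift).

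The main obstacle is the $\Si_d$-equivariance of the self-duality, requiring a careful sign-by-sign comparison of how the explicit endomorphisms from Corollary~\ref{CEndHatM} (involving $\dot w$ and the sign $(-1)^{\ttl(w)(j+1)}$) dualize under $\phi \mapsto \phi^\circledast$ and interact with the Mackey-style isomorphism of Lemma~\ref{LDualInd}(ii). Should this direct check prove intractable, a backup route is via Theorem~\ref{TMdProjHat}: since $\hat\M_{d,j}$ is projective over $A := \hat C_d/\Ann_{\hat C_d}(\hat\M_{d,j})$ with $\End_A(\hat\M_{d,j})^\sop \cong \k\Si_d$, one can apply the Schur functor $\Hom_A(\hat\M_{d,j}, -)$ to both $\hat\Sy_{d,j}^\circledast$ and $\hat\Di_{d,j}$ to reduce the claim to an identity of right $\k\Si_d$-modules, exploiting $\hat\Sy_{d,j} \simeq \hat\M_{d,j} \otimes_{\k\Si_d} \k_{\Si_d}$ (Lemma~\ref{L5.2.2}) together with Lemma~\ref{L3.1a}.
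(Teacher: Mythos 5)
Your first reduction is fine: dualizing the projection $\hat\M_{d,j}\twoheadrightarrow\hat\Sy_{d,j}$ does identify $\hat\Sy_{d,j}^\circledast$ with the annihilator of the kernel, i.e.\ with the set of $f\in\hat\M_{d,j}^\circledast$ satisfying $f(mg)=f(m)$ for all $g\in\Si_d$. The gap is in the next step. You assert that the self-duality isomorphism $\hat\M_{d,j}\simeq\Uppi^d\hat\M_{d,j}^\circledast$ from Lemma~\ref{LSocleHead} is ``forced'' by naturality to be a $(\hat C_d,\k\Si_d)$-bisupermodule isomorphism. This is not automatic, and in fact it is exactly the crux. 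Any $\hat C_d$-linear isomorphism $\phi:\hat\M_{d,j}\iso\Uppi^d\hat\M_{d,j}^\circledast$ gives rise, by transport of structure, to an algebra isomorphism $\si:\End_{\hat C_d}(\hat\M_{d,j})\iso\End_{\hat C_d}(\hat\M_{d,j})^\sop$, and the associated bilinear form $(v,w):=\phi(v)(w)$ satisfies $(v\cdot\si(x),w)=(v,w\cdot x)$. Identifying $\End_{\hat C_d}(\hat\M_{d,j})^\sop\cong\F\Si_d$ via Corollary~\ref{CEndHatM}, the composite $\operatorname{Aug}\circ\si^{-1}$ is an algebra homomorphism $\F\Si_d^\op\to\F$, hence is either the trivial character or the sign character. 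If it is the sign character, then the adjoint $\Si_d$-action is twisted by $\sgn$, and one would instead get $\hat\Sy_{d,j}^\circledast\cong\{m\in\hat\M_{d,j}\mid mg=\sgn(g)m\}$ rather than $\hat\Di_{d,j}$ --- a genuine alternative that your naturality heuristic does not exclude. Example~\ref{ENatural} records naturality in $M$ and $N$ for fixed induction data; it says nothing about compatibility with the permutation of the induction factors, which is precisely what the $\Si_d$-action involves.

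The paper closes this gap by contradiction: if the sign twist occurred, $\hat\Sy_{d,j}^\circledast$ would contain $\hat\M_{d,j}\tty_d$, so the irreducible $\hat\M_{d,j}\tty_d$ would appear in the head of $\hat\Sy_{d,j}$; but by Lemma~\ref{LSymHead}(ii) that head is the simple module $\hat\M_{d,j}\ttx_d$, which is not isomorphic to $\hat\M_{d,j}\tty_d$ by Lemma~\ref{Lttx}(ii) (when $\cha\F\neq 2$; the characteristic $2$ case is vacuous since $\ttx_d=\tty_d$ there). This is a representation-theoretic argument, not a sign-chase, and is not recoverable from the naturality considerations you invoke. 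Your proposed backup via the Schur functor would face a parallel issue: $\funf$ is only a full embedding on the subcategory with $O_P=0$ and $O^P=\text{everything}$, and establishing that $\hat\Sy_{d,j}^\circledast$ lies in that subcategory (equivalently, that no extra composition factors of $\hat\Sy_{d,j}$ appear in the head of $\hat\M_{d,j}$) is again a statement that needs Lemma~\ref{LSymHead}, so the backup would not be a shortcut.
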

\begin{proof}
By Lemma~\ref{LSocleHead}, there exists a 
$\hat C_d$-isomorphism 
$\phi:\hat \M_{d,j}\iso \Uppi^d \hat \M_{d,j}^\circledast.$  
For $\theta\in\End_{\hat C_d}(\hat \M_{d,j})$, we define $\ka(\theta):=\phi\circ\theta\circ\phi^{-1} \in \End_{\hat C_d}(\hat \M_{d,j}^\circledast)
$. 
This defines an algebra isomorphism
$$
\kappa:\End_{\hat C_d}(\hat \M_{d,j})^\sop\to \End_{\hat C_d}(\hat \M_{d,j}^\circledast)^\sop.
$$
Note by Corollary~\ref{CEndHatM} that $\End_{\hat C_d}(\hat \M_{d,j})^\sop\cong \F \Si_d$ is concentrated in parity $\0$, so, recalling (\ref{EPhiCircledast}), we also have an algebra isomorphism
$$
\circledast:\End_{\hat C_d}(\hat \M_{d,j})\iso \End_{\hat C_d}(\hat \M_{d,j}^\circledast)^\sop,\ \theta\mapsto \theta^\circledast. 
$$
Using the standard right $\End_{\hat C_d}(\hat \M_{d,j})^\sop$-module structure on $\hat \M_{d,j}$ and the standard right $\End_{\hat C_d}(\hat \M_{d,j}^\circledast)^\sop$-module structure on $\hat \M_{d,j}^\circledast $, we can write:
$$
v\cdot\ka^{-1}(\theta^\circledast)=\phi^{-1}(\phi(v)\cdot \theta^\circledast)\qquad(v\in \hat \M_{d,j},\,\theta\in \End_{\hat C_d}(\hat \M_{d,j})^\sop).
$$
We have an algebra isomorphism 
$$
\si:=\kappa^{-1}\circ \circledast: \End_{\hat C_d}(\hat \M_{d,j})\iso \End_{\hat C_d}(\hat \M_{d,j})^\sop.
$$

We consider a non-degenerate bilinear form $(\cdot,\cdot)$ on $\hat \M_{d,j}$ defined from
$(v,w):=\phi(v)(w)$. For any $x\in\F\Si_d$ and $v,w\in\hat \M_{d,j}$, we have
\begin{align*}
(v\cdot\si(x),w)&=(v\cdot\ka^{-1}(x^\circledast),w)
\\
&=(\phi^{-1}(\phi(v)\cdot x^\circledast),w)
\\
&=(\phi(v)\cdot x^\circledast)(w)
\\
&=x^\circledast(\phi(v))(w)
\\
&=\phi(v)(x(w))
\\
&=\phi(v)(w\cdot x)
\\
&=(v,w\cdot x).
\end{align*}
Let 
$$\textstyle\operatorname{Aug}:\F\Si_d\to\F,\ \sum_{g\in\Si_d}c_gg\mapsto \sum_{g\in\Si_d}c_g$$
be the augmentation homomorphism. 
By definition, $\hat\Sy_{d,j}=\hat\M_{d,j}/K$ where $K:= \spa_\k\{w(x-\operatorname{Aug}(x))\mid w\in \hat\M_{d,j},\,x\in\F\Si_d\}
$. Now, 
\begin{align*}
\hat\Sy_{d,j}^\circledast
&\cong 
\{v\in \hat \M_{d,j}\mid (v,k)=0\ \text{for all $k\in K$}\}
\\
&\cong 
\{v\in \hat \M_{d,j}\mid \big(v,w(x-\operatorname{Aug}(x))\big)=0\ \text{for all $w\in \hat \M_{d,j}$ and $x\in\F\Si_d$}\}
\\
&=
\{v\in \hat \M_{d,j}\mid (v(\si(x)-\operatorname{Aug}(x)),w)=0\ \text{for all $w\in \hat \M_{d,j}$ and $x\in\F\Si_d$}\}
\\
&=
\{v\in \hat \M_{d,j}\mid v(\si(x)-\operatorname{Aug}(x))=0\ \text{for all $x\in\F\Si_d$}\}
\\
&=
\{v\in \hat \M_{d,j}\mid vx=\operatorname{Aug}(\si^{-1}(x))v\ \text{for all $x\in\F\Si_d$}\}.
\end{align*}
We have an algebra homomorphism $\operatorname{Aug}\circ \si^{-1}:\F\Si_d^\op\to \F$, so either $\operatorname{Aug}\circ \si^{-1}$ is trivial or a sign homomorphism. We need to make sure it is actually the trivial homomorphism. In particular, we may assume that $p\neq 2$. 

If $\operatorname{Aug}\circ \si^{-1}$ is a sign homomorphism then we have proved that $\hat\Sy_{d,j}^\circledast
\cong \{v\in \hat \M_{d,j}\mid wg=\sgn(g)w\supseteq \hat \M_{d,j}\tty_d$. Then by Lemmas~\ref{Lttx} and \ref{LTypeM}, the irreducible module $(\hat\M_{d,j}\tty_d)^\circledast\cong \hat\M_{d,j}\tty_d$ appears in the head of $\hat \Sy_{d,j}$. But by Lemma~\ref{LSymHead}(ii), the head of $\hat \Sy_{d,j}$ is isomorphic to $\hat \M_{d,j}\ttx_d$. It remains to apply   Lemma~\ref{Lttx}(ii).
\end{proof}

\begin{Lemma} \label{LDualSharp} 
If $\k=\F$ then $\Di_{d,j}\cong \Sy_{d,j}^\sharp$. 
\end{Lemma}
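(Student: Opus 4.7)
The plan is to chain together the definition of $\sharp$ with the two preparatory lemmas just proved. Concretely, I would argue:
\begin{align*}
\Sy_{d,j}^\sharp
\;\stackrel{(\ref{ECircledAstGG})}{=}\; \funF_d\bigl((\funG_d\Sy_{d,j})^\circledast\bigr)
\;\stackrel{\text{Lem.}~\ref{LHatSHatZ}}{\simeq}\; \funF_d\bigl(\hat\Sy_{d,j}^{\circledast}\bigr)
\;\stackrel{\text{Lem.}~\ref{LDualHatZ}}{\cong}\; \funF_d(\hat\Di_{d,j})
\;\stackrel{\text{Lem.}~\ref{LHatSHatZ}}{\simeq}\; \Di_{d,j}.
\end{align*}
The first equality is the definition of the $\sharp$-duality on finite dimensional graded $C_d$-supermodules. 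The second step uses the first isomorphism of Lemma~\ref{LHatSHatZ}, which identifies $\funG_d\Sy_{d,j}$ with $\hat\Sy_{d,j}$ and hence the graded dual of the former with that of the latter (the functor $\circledast$ commutes with any isomorphism). The third step is exactly the content of Lemma~\ref{LDualHatZ}, which was just established under the hypothesis $\k=\F$ (so this hypothesis is used here). The fourth step is the second isomorphism of Lemma~\ref{LHatSHatZ}.

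There is essentially no obstacle: all the work has already been done in Lemmas~\ref{LHatSHatZ} and \ref{LDualHatZ}, and the statement simply lifts the $\hat C_d$-level duality between $\hat\Sy_{d,j}$ and $\hat\Di_{d,j}$ through the graded Morita superequivalence $(\funF_d,\funG_d)$ of Corollary~\ref{CEquivFG}. The only minor point to be careful about is that the isomorphism $\hat\Di_{d,j}\cong\hat\Sy_{d,j}^\circledast$ from Lemma~\ref{LDualHatZ} need not be homogeneous of bidegree $(0,\0)$, which is why one records the conclusion as $\Di_{d,j}\cong\Sy_{d,j}^\sharp$ (i.e.\ up to a possible degree/parity shift) rather than as $\simeq$. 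No further computation is required.
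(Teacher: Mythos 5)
Your proof is correct and follows exactly the same chain of isomorphisms as the paper's proof: unwind the definition of $\sharp$, use Lemma~\ref{LHatSHatZ} to pass to $\hat\Sy_{d,j}^\circledast$, apply Lemma~\ref{LDualHatZ}, and use Lemma~\ref{LHatSHatZ} again. Your closing remark about $\cong$ versus $\simeq$ is also an accurate reading of why the statement is phrased with $\cong$.
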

\begin{proof}
We have
$$
\Sy_{d,j}^\sharp=\funF_d((\funG_d\Sy_{d,j})^\circledast)\simeq
\funF_d(\hat \Sy_{d,j}^\circledast)
\cong
\funF_d(\hat \Di_{d,j})
\simeq
\Di_{d,j},
$$
where we have used (\ref{ECircledAstGG}) and Lemmas~\ref{LHatSHatZ}, \ref{LDualHatZ}. 
\end{proof}

\begin{Corollary} \label{CZSocle}
If $\k=\F$ then $\M_{d,j}\ttx_d\subseteq \Di_{d,j}$ is the simple socle of $\Di_{d,j}$ and no composition factor of $\Di_{d,j}/\M_{d,j}\ttx_d$ is isomorphic to a  submodule of $\M_{d,j}$.
\end{Corollary}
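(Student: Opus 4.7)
My plan is to deduce both claims by applying the duality $\sharp$ of Lemma~\ref{LSharpHom} to the isomorphism $\Di_{d,j}\cong \Sy_{d,j}^\sharp$ of Lemma~\ref{LDualSharp}. Since $\sharp$ is a contravariant equivalence satisfying $\sharp\circ\sharp\simeq \id$, it exchanges heads and socles and preserves composition factors up to parity shift, so the statement about $\Di_{d,j}$ should follow by transporting what is already known about $\Sy_{d,j}$.

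First I would identify the socle of $\Di_{d,j}$. Under $\sharp$, the simple head of $\Sy_{d,j}$ furnished by Lemma~\ref{LSymHead}(i) transports to a simple socle of $\Sy_{d,j}^\sharp\cong \Di_{d,j}$, abstractly isomorphic to $(\M_{d,j}\ttx_d)^\sharp\cong \M_{d,j}\ttx_d$, where the second isomorphism comes from the $\sharp$-self-duality of irreducibles (Lemma~\ref{LGGIrrSelfD}). To upgrade this abstract isomorphism to the statement that the literal submodule $\M_{d,j}\ttx_d\subseteq \Di_{d,j}$ equals the socle, I would invoke Lemma~\ref{Lttx}(i), which guarantees that $\M_{d,j}\ttx_d$ is irreducible; being a nonzero subobject of a simple socle, it must coincide with it.

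For the second claim I plan to apply $\sharp$ to the short exact sequence
$$0\to \M_{d,j}\ttx_d\to \Di_{d,j}\to \Di_{d,j}/\M_{d,j}\ttx_d\to 0,$$
which, using $\Di_{d,j}^\sharp\cong \Sy_{d,j}$, yields
$$0\to (\Di_{d,j}/\M_{d,j}\ttx_d)^\sharp\to \Sy_{d,j}\to (\M_{d,j}\ttx_d)^\sharp\to 0,$$
so that $(\Di_{d,j}/\M_{d,j}\ttx_d)^\sharp$ is precisely the radical of $\Sy_{d,j}$. Hence the composition factors of $\Di_{d,j}/\M_{d,j}\ttx_d$ correspond via $\sharp$ to those of $\rad \Sy_{d,j}$, which by Lemma~\ref{LSymHead}(i) are never composition factors of the head of $\M_{d,j}$. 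Combining this with Lemma~\ref{LGGIrrSelfD}, no such $L$ itself is isomorphic to a composition factor of the head of $\M_{d,j}$. But any $L$ embedding into $\M_{d,j}$ lies in $\soc \M_{d,j}$, and Lemma~\ref{LSocleHead} identifies $\soc \M_{d,j}$ with the head of $\M_{d,j}$ (up to a parity shift), forcing $L$ to appear in the head of $\M_{d,j}$—a contradiction.

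The main obstacle will be purely bookkeeping with parity shifts: because $\sharp$ and $\circledast$ generally introduce a factor of $\Uppi^\eps$, all isomorphisms between irreducibles must be taken in the weaker sense $\cong$ of Lemma~\ref{LGGIrrSelfD}, and the socle-head identification in Lemma~\ref{LSocleHead} is also only up to a parity shift. Once one consistently works modulo these shifts, the argument reduces to a direct duality transport of Lemma~\ref{LSymHead}(i) with no further computation required.
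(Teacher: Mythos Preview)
Your proposal is correct and follows essentially the same route as the paper, which simply cites Lemmas~\ref{LSymHead}, \ref{LDualSharp}, and \ref{LGGIrrSelfD}. Your version is more explicit: you correctly spell out the duality transport and rightly invoke Lemma~\ref{LSocleHead} to pass from ``not in the head of $\M_{d,j}$'' to ``not a submodule of $\M_{d,j}$,'' a step the paper leaves implicit. One small wording fix: rather than ``being a nonzero subobject of a simple socle,'' the point is that $\M_{d,j}\ttx_d$ is an irreducible submodule of a module with simple socle, hence must equal that socle.
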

\begin{proof}
This follows from Lemmas~\ref{LSymHead}, \ref{LDualSharp} and \ref{LGGIrrSelfD}. 
\end{proof}

\subsection{  Parabolic versions $\Sy_{\la,j}$ and $\Di_{\la,j}$}
Let $\la\in\Comp(n,d)$
and recall the notation (\ref{Ettx}). 
We consider the following graded $C_\la$-supermodules:
\begin{align*}
\Sy_{\la,j}&:=\M_{\la,j}/\spa_\k\{mg-m\mid g\in\Si_\la,\, m\in \M_{\la,j}\}\simeq \Sy_{\la_1,j}\boxtimes\dots\boxtimes\Sy_{\la_n,j},
\\
\Di_{\la,j}&:=\{m\in \M_{\la,j}\mid mg=m \ \text{for all}\ g\in\Si_\la\}\simeq \Di_{\la_1,j}\boxtimes\dots\boxtimes\Di_{\la_n,j}.
\end{align*}
We will also use the graded $C_\la$-supermodules 
$$\M_{\la,j}\tty_\la\simeq \M_{\la_1,j}\tty_{\la_1}\boxtimes\dots\boxtimes\M_{\la_n,j}\tty_{\la_n}\quad \text{and}\quad 
\M_{\la,j}\ttx_\la\simeq \M_{\la_1,j}\ttx_{\la_1}\boxtimes\dots\boxtimes\M_{\la_n,j}\ttx_{\la_n}.
$$
Note that $\M_{\la,j}\ttx_\la\subseteq \Di_{\la,j}$. 

The graded $C_d$-supermodules $\Sy_{\la,j},\,\Di_{\la,j},\,\M_{\la,j}\tty_\la,\, \M_{\la,j}\ttx_\la$ factor through the quotient $\IS_{\la,j}$ of $C_\la$ to yield graded $\IS_{\la,j}$-supermodules.

Since $\M_{\la,j}\simeq \M_{\la_1,j}\boxtimes\cdots\boxtimes \M_{\la_n,j}$, $\Sy_{\la,j}\simeq \Sy_{\la_1,j}\boxtimes\cdots\boxtimes\Sy_{\la_n,j}$ and 
$\Di_{\la,j}\simeq \Di_{\la_1,j}\boxtimes\cdots\boxtimes\Di_{\la_n,j}$, the following three lemma follow from Lemmas~\ref{CZSocle},~\ref{L5.2.2} and \ref{LDualSharp}, respectively. 

\begin{Lemma} \label{LZSocleLa}
If $\k=\F$ and $\la\in\Comp(d)$. Then $\M_{\la,j}\ttx_\la\subseteq \Di_{\la,j}$ is the simple socle of $\Di_{\la,j}$ and no composition factor of $\Di_{\la,j}/\M_{\la,j}\ttx_\la$ is isomorphic to a  submodule of $\M_{\la,j}$.
\end{Lemma}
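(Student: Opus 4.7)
The plan is to reduce to the case $\la=(d)$, which is exactly Corollary~\ref{CZSocle}, using the tensor factorizations
\[
\M_{\la,j}\simeq \M_{\la_1,j}\boxtimes\cdots\boxtimes\M_{\la_n,j},\quad
\Di_{\la,j}\simeq \Di_{\la_1,j}\boxtimes\cdots\boxtimes\Di_{\la_n,j},
\]
\[
\M_{\la,j}\ttx_\la\simeq \M_{\la_1,j}\ttx_{\la_1}\boxtimes\cdots\boxtimes\M_{\la_n,j}\ttx_{\la_n}
\]
as modules over $\IS_{\la,j}=\IS_{\la_1,j}\otimes\cdots\otimes\IS_{\la_n,j}$. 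Throughout I will use that the irreducible $C_{\la_i}$-supermodules are absolutely irreducible (by Lemma~\ref{LTypeM} together with Corollary~\ref{CEquivFG}(ii)), so that irreducible $\IS_{\la,j}$-supermodules are precisely outer tensor products $L_1\boxtimes\cdots\boxtimes L_n$ of irreducible $\IS_{\la_i,j}$-supermodules, and composition multiplicities factor as products.

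For the socle statement, by Corollary~\ref{CZSocle} each $\operatorname{soc}(\Di_{\la_i,j})=\M_{\la_i,j}\ttx_{\la_i}$ is simple. The standard fact that socles commute with outer tensor products of finite-dimensional modules over tensor products of $\F$-algebras (which relies precisely on absolute irreducibility of the constituents) then gives
\[
\operatorname{soc}(\Di_{\la,j})\simeq \M_{\la_1,j}\ttx_{\la_1}\boxtimes\cdots\boxtimes\M_{\la_n,j}\ttx_{\la_n}\simeq \M_{\la,j}\ttx_\la,
\]
which is simple as an outer tensor product of absolutely irreducibles.

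For the non-embedding statement, suppose $L=L_1\boxtimes\cdots\boxtimes L_n$ is a composition factor of $\Di_{\la,j}/\M_{\la,j}\ttx_\la$ that is isomorphic to a submodule of $\M_{\la,j}$. Being a submodule of $\M_{\la,j}$, $L$ lies in $\operatorname{soc}(\M_{\la,j})=\operatorname{soc}(\M_{\la_1,j})\boxtimes\cdots\boxtimes\operatorname{soc}(\M_{\la_n,j})$, so each $L_i$ embeds into $\M_{\la_i,j}$. On the other hand, each $L_i$ is a composition factor of $\Di_{\la_i,j}$, so the rank-one case of Corollary~\ref{CZSocle} forces $L_i\simeq \M_{\la_i,j}\ttx_{\la_i}$ for every $i$, whence $L\simeq \M_{\la,j}\ttx_\la$.

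To derive a contradiction, I will use that Corollary~\ref{CZSocle} also implies $[\Di_{\la_i,j}:\M_{\la_i,j}\ttx_{\la_i}]=1$ (since $\M_{\la_i,j}\ttx_{\la_i}$ is a submodule of $\M_{\la_i,j}$ and so cannot appear in $\Di_{\la_i,j}/\M_{\la_i,j}\ttx_{\la_i}$). Multiplicativity of composition factors across the outer tensor product then yields $[\Di_{\la,j}:\M_{\la,j}\ttx_\la]=1$. Since the unique copy is already the socle, $\M_{\la,j}\ttx_\la$ cannot occur as a composition factor of $\Di_{\la,j}/\M_{\la,j}\ttx_\la$, contradicting $L\simeq\M_{\la,j}\ttx_\la$. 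The only genuinely non-formal input needed beyond Corollary~\ref{CZSocle} is the absolute irreducibility of simple $\IS_{\la_i,j}$-supermodules and the resulting tensor-product behaviour of socles and composition multiplicities; this is the only point I would flag as requiring care, although it is a standard consequence of the results cited above.
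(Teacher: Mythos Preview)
Your proof is correct and takes essentially the same approach as the paper: the paper simply remarks that the lemma follows from Corollary~\ref{CZSocle} via the tensor factorizations $\M_{\la,j}\simeq \M_{\la_1,j}\boxtimes\cdots\boxtimes\M_{\la_n,j}$ and $\Di_{\la,j}\simeq \Di_{\la_1,j}\boxtimes\cdots\boxtimes\Di_{\la_n,j}$, leaving the standard tensor-product facts about socles and composition multiplicities implicit. You have made these explicit, and your care about absolute irreducibility (via Lemma~\ref{LTypeM} and Corollary~\ref{CEquivFG}) is exactly the point that justifies them.
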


\begin{Lemma} \label{L5.2.2La} 
Let $\la\in\Comp(d)$. Then $\Sy_{\la,j}\simeq \M_{\la,j}\otimes_{\k\Si_\la}\k_{\Si_\la}$
as graded left $C_\la$-supermodules.  
\end{Lemma}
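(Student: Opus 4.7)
The plan is to derive the parabolic version by combining the single-block case established in Lemma~\ref{L5.2.2} with the tensor product decompositions of the parabolic objects. First I would invoke the identifications $C_\la=C_{\la_1}\otimes\cdots\otimes C_{\la_n}$, $\M_{\la,j}\simeq \M_{\la_1,j}\boxtimes\cdots\boxtimes \M_{\la_n,j}$, and $\Sy_{\la,j}\simeq \Sy_{\la_1,j}\boxtimes\cdots\boxtimes \Sy_{\la_n,j}$ recorded at the start of the subsection. Then Lemma~\ref{L5.2.2} applied in each tensor factor gives an isomorphism of graded $C_{\la_r}$-supermodules $\Sy_{\la_r,j}\simeq \M_{\la_r,j}\otimes_{\k\Si_{\la_r}}\k_{\Si_{\la_r}}$ for every $r=1,\dots,n$.

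The key observation that glues these factors together is the pair of completely standard identifications $\k\Si_\la=\k\Si_{\la_1}\otimes\cdots\otimes\k\Si_{\la_n}$ (as graded superalgebras concentrated in bidegree $(0,\0)$) and $\k_{\Si_\la}\simeq \k_{\Si_{\la_1}}\boxtimes\cdots\boxtimes\k_{\Si_{\la_n}}$ (as $\k\Si_\la$-modules). Combining these with the outer tensor factorizations above yields the chain of graded $C_\la$-supermodule isomorphisms
\[
\Sy_{\la,j}\simeq \bigboxtimes_{r=1}^n\bigl(\M_{\la_r,j}\otimes_{\k\Si_{\la_r}}\k_{\Si_{\la_r}}\bigr)\simeq \M_{\la,j}\otimes_{\k\Si_\la}\k_{\Si_\la},
\]
where in the last step one uses that the outer tensor product $\boxtimes$ commutes with quotients by the relations $mg\otimes 1-m\otimes 1$ because the relations in distinct factors are imposed over commuting subalgebras $\k\Si_{\la_r}$ of $\k\Si_\la$.

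An alternative route, arguably even cleaner, is to repeat the one-line definitional argument of Lemma~\ref{L5.2.2} verbatim with $\Si_d$ replaced by $\Si_\la$: by definition
\[
\M_{\la,j}\otimes_{\k\Si_\la}\k_{\Si_\la}\simeq (\M_{\la,j}\otimes_\k\k_{\Si_\la})/\spa_\k\{mg\otimes 1-m\otimes 1\mid g\in\Si_\la,\,m\in\M_{\la,j}\},
\]
and identifying $\M_{\la,j}\otimes_\k\k_{\Si_\la}=\M_{\la,j}$ immediately gives the definition of $\Sy_{\la,j}$. Nothing in that short argument used that $d$ was the size of the whole symmetric group rather than the total size of a standard parabolic subgroup.

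I do not anticipate any real obstacle: the only point requiring minor care is compatibility of the $C_\la$-action on both sides, and that is automatic because the action on $\M_{\la,j}\otimes_{\k\Si_\la}\k_{\Si_\la}$ is defined through the left factor $\M_{\la,j}$. There are no parity or degree sign subtleties since the right-hand factor $\k_{\Si_\la}$ is purely even and concentrated in degree~$0$, so the grading supershift bookkeeping used elsewhere in the chapter is not needed here.
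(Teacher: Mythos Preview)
Your proposal is correct and follows the same approach as the paper: the paper simply notes that the lemma follows from Lemma~\ref{L5.2.2} using the tensor product decompositions $\M_{\la,j}\simeq \M_{\la_1,j}\boxtimes\cdots\boxtimes \M_{\la_n,j}$ and $\Sy_{\la,j}\simeq \Sy_{\la_1,j}\boxtimes\cdots\boxtimes\Sy_{\la_n,j}$, which is exactly your first route, and your alternative direct argument is also valid.
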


\begin{Lemma} \label{LDualSharpPar} 
If $\k=\F$ then $\Di_{\la,j}\simeq \Sy_{\la,j}^\sharp$ for any $\la\in\Comp(d)$. 
\end{Lemma}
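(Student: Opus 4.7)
The plan is to reduce the parabolic statement to the rank-one case already handled in Lemma~\ref{LDualSharp}, using the tensor-product compatibility of the duality $\sharp$.

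First I would record the three outer-tensor-product decompositions already noted just above the lemma:
\[
\Sy_{\la,j}\simeq \Sy_{\la_1,j}\boxtimes\dots\boxtimes \Sy_{\la_n,j},\qquad \Di_{\la,j}\simeq \Di_{\la_1,j}\boxtimes\dots\boxtimes \Di_{\la_n,j},
\]
which are instances of the identification $C_{\la}=C_{\la_1}\otimes\dots\otimes C_{\la_n}$ and the fact that the definitions of $\Sy$ and $\Di$ in terms of $\ttx_\la,\Si_\la$ split along the tensor factors. Then the lemma will follow by applying $\sharp$ componentwise.

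The key computation is then just:
\[
\Sy_{\la,j}^\sharp \simeq (\Sy_{\la_1,j}\boxtimes\dots\boxtimes \Sy_{\la_n,j})^\sharp \simeq \Sy_{\la_1,j}^\sharp\boxtimes\dots\boxtimes \Sy_{\la_n,j}^\sharp \simeq \Di_{\la_1,j}\boxtimes\dots\boxtimes \Di_{\la_n,j}\simeq \Di_{\la,j},
\]
where the second isomorphism is (\ref{ESharpBoxTimes}) and the third is $n$ applications of Lemma~\ref{LDualSharp} in rank one.

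There is really no substantial obstacle here: the rank-one duality $\Di_{d,j}\cong \Sy_{d,j}^\sharp$ has already been proved (via the $\hat C_d$-version in Lemma~\ref{LDualHatZ} and the functor $\funF_d$), and the multiplicativity (\ref{ESharpBoxTimes}) of $\sharp$ under $\boxtimes$ is also already available. The only mildly delicate point to double-check is that the tensor decompositions of $\Sy_{\la,j}$ and $\Di_{\la,j}$ are genuinely isomorphisms of graded $C_\la$-supermodules (i.e.\ that the grading and parity shifts match), but this is immediate from the corresponding decomposition $\M_{\la,j}\simeq \M_{\la_1,j}\boxtimes\dots\boxtimes \M_{\la_n,j}$ together with the fact that $\Si_\la=\Si_{\la_1}\times\dots\times\Si_{\la_n}$ and the idempotents $\ttx_\la,\tty_\la$ factor accordingly.
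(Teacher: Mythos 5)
Your proof is correct and follows exactly the same reduction the paper uses: the remark preceding Lemmas~\ref{LZSocleLa}--\ref{LDualSharpPar} states that $\Sy_{\la,j}$ and $\Di_{\la,j}$ decompose as outer tensor products and that the three lemmas then follow from their rank-one counterparts, which for this lemma means combining (\ref{ESharpBoxTimes}) with Lemma~\ref{LDualSharp} componentwise, just as you do.
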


\begin{Lemma} \label{L5.3.1} 
Let $\la\in\Comp(d)$. We have:
\begin{enumerate}
\item[{\rm (i)}] $\GGR^d_\la\Sy_{d,j}\simeq \Sy_{\la,j}$.
\item[{\rm (ii)}] If $\k=\F$ then $\GGR^d_\la\Di_{d,j}\simeq \Di_{\la,j}$. 
\end{enumerate} 
\end{Lemma}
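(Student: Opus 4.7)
The plan is to derive (i) from the description of $\Sy_{d,j}$ as a tensor product of $\M_{d,j}$ with the trivial module over $\k\Si_d$ (Lemma~\ref{L5.2.2}), and then deduce (ii) formally from (i) by duality.

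For (i), I would start from the isomorphism $\Sy_{d,j}\simeq \M_{d,j}\otimes_{\k\Si_d}\k_{\Si_d}$ of graded left $C_d$-supermodules provided by Lemma~\ref{L5.2.2}. Applying Corollary~\ref{CFunPerm}(ii), which gives a natural isomorphism of functors $\GGR^d_\la\circ(\M_{d,j}\otimes_{\k\Si_d}-)\simeq (\M_{\la,j}\otimes_{\k\Si_\la}-)\circ\Res^{\Si_d}_{\Si_\la}$, and noting that $\Res^{\Si_d}_{\Si_\la}\k_{\Si_d}=\k_{\Si_\la}$, I obtain
$$
\GGR^d_\la\Sy_{d,j}\simeq \M_{\la,j}\otimes_{\k\Si_\la}\k_{\Si_\la}.
$$
The right-hand side is isomorphic to $\Sy_{\la,j}$ by Lemma~\ref{L5.2.2La}, which finishes (i). This step is completely formal once one has Corollary~\ref{CFunPerm}(ii), so I do not expect obstacles here.

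For (ii), I would pass through Gelfand-Graev duality $\sharp$. By Lemma~\ref{LDualSharp} and its parabolic analogue Lemma~\ref{LDualSharpPar}, we have $\Di_{d,j}\simeq \Sy_{d,j}^\sharp$ and $\Di_{\la,j}\simeq \Sy_{\la,j}^\sharp$ (both requiring $\k=\F$, which is why the hypothesis appears in (ii)). By Lemma~\ref{LSharpResComm}, the functor $\GGR^d_\la$ commutes with $\sharp$, so combining with (i) we get
$$
\GGR^d_\la\Di_{d,j}\simeq \GGR^d_\la(\Sy_{d,j}^\sharp)\simeq (\GGR^d_\la\Sy_{d,j})^\sharp\simeq \Sy_{\la,j}^\sharp\simeq \Di_{\la,j},
$$
as required.

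In summary, (i) is a direct computation using adjunction/tensor-product compatibility from Corollary~\ref{CFunPerm}, and (ii) is a duality argument built on (i). The only subtlety is that (ii) requires working over a field in order to invoke Lemmas~\ref{LDualSharp} and~\ref{LDualSharpPar}; over $\O$ the statement is not proved this way, which explains the hypothesis $\k=\F$ in the statement. I do not anticipate a serious obstacle: the potentially delicate step is verifying that Lemma~\ref{LSharpResComm} applies literally to $\Sy_{d,j}$ (which needs $\Sy_{d,j}\in\mod{C_d}$ finite-dimensional, a mild check since $\M_{d,j}$ is finitely generated and $\Sy_{d,j}$ is its quotient), but this is routine.
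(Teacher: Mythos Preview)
Your proposal is correct and follows essentially the same approach as the paper: part (i) via Lemma~\ref{L5.2.2}, Corollary~\ref{CFunPerm}(ii), and Lemma~\ref{L5.2.2La}, and part (ii) by dualizing with Lemmas~\ref{LDualSharp}, \ref{LDualSharpPar}, and \ref{LSharpResComm}.
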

\begin{proof}
(i) By Lemmas~\ref{L5.2.2} and \ref{L5.2.2La}, we have $\Sy_{d,j}\simeq \M_{d,j}\otimes_{\k\Si_d}\k_{\Si_d}$ and $\Sy_{\la,j}\simeq \M_{\la,j}\otimes_{\k\Si_\la}\k_{\Si_\la}$. Using Corollary~\ref{CFunPerm}(ii), we now get
\begin{align*}
\GGR^d_\la\Sy_{d,j}&\simeq 
\GGR^d_\la(\M_{d,j}\otimes_{\k\Si_d}\k_{\Si_d})
\\
&\simeq \M_{\la,j}\otimes_{\k\Si_\la}\Res^{\k\Si_d}_{\k\Si_\la}\k_{\Si_d}
\\
&\simeq \M_{\la,j}\otimes_{\k\Si_\la}\k_{\Si_\la}
\\
&\simeq \Sy_{\la,j}.
\end{align*}

(ii) follows from (i) on dualizing using Lemmas~\ref{LDualSharp}, \ref{LDualSharpPar} and \ref{LSharpResComm}. 
\end{proof}

\subsection{  Parabolic versions $\Sy_j^\la$ and $\Di_j^\la$}
We consider the following graded $C_d$-supermodules:
\begin{align*}
\Sy_j^\la&:=\M_{d,j}/\spa_\k\{mg-m\mid g\in\Si_\la,\, m\in \M_{d,j}\},
\\
\Di_j^\la&:=\{m\in \M_{d,j}\mid mg=m \ \text{for all}\ g\in\Si_\la\}.
\end{align*}
We will also use the graded $C_d$-supermodules 
$$
\M_{d,j}\tty_\la\quad \text{and}\quad \M_{d,j}\ttx_\la.
$$ 
Note that 
\begin{equation}\label{EMGa}
\M_{d,j}\ttx_\la\subseteq \Di^\la_j. 
\end{equation}

The graded $C_d$-supermodules $\Sy_j^\la,\,\Di_j^\la,\,\M_{d,j}\tty_\la,\, \M_{d,j}\ttx_\la$ factor through the quotient $\IS_{d,j}$ of $C_d$ to yield graded $\IS_{d,j}$-supermodules. 

\begin{Lemma} \label{LUpperInd}
For $\la\in\Comp(d)$, we have: 
\begin{enumerate}
\item[{\rm (i)}] $\Sy_j^\la\simeq \GGI_\la^d\, \Sy_{\la,j}$;
\item[{\rm (ii)}] $\Di_j^\la\simeq \GGI_\la^d\, \Di_{\la,j}$;
\item[{\rm (iii)}] $\M_{d,j}\tty_\la\simeq \GGI_\la^d (\M_{\la,j}\tty_\la)$;
\item[{\rm (iv)}] $\M_{d,j}\ttx_\la\simeq \GGI_\la^d (\M_{\la,j}\ttx_\la)$.
\end{enumerate}
\end{Lemma}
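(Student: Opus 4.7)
The plan is to derive all four statements from the single identification $\M_{d,j}\simeq \GGI_\la^d\M_{\la,j}$ as $(C_d,\k\Si_\la)$-bisupermodules provided by Lemma~\ref{LggIndResSymGroupImSch}(i), combined with exactness of $\GGI_\la^d$ (Lemma~\ref{LFFExact}). The key observation is that for any $a\in\k\Si_\la$, right multiplication $\rho_a$ on $\M_{d,j}$ (resp.\ on $\M_{\la,j}$) commutes with the left $C_d$-action (resp.\ left $C_\la$-action) and hence defines a $C_d$-endomorphism (resp.\ a $C_\la$-endomorphism). By naturality of the bisupermodule iso of Lemma~\ref{LggIndResSymGroupImSch}(i), the endomorphism $\rho_a$ on $\M_{d,j}$ corresponds to $\GGI_\la^d(\rho_a)$ on $\GGI_\la^d\M_{\la,j}$.

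For (iii) and (iv), I would simply apply this to $a=\tty_\la$ and $a=\ttx_\la$ respectively: exactness of $\GGI_\la^d$ means it preserves images, so
\[
\M_{d,j}\cdot a\;\simeq\;\GGI_\la^d(\M_{\la,j}\cdot a),
\]
which delivers (iii) and (iv). For (i), I would first observe that the same argument as in Lemma~\ref{L5.2.2La} gives $\Sy_j^\la\simeq \M_{d,j}\otimes_{\k\Si_\la}\k_{\Si_\la}$, and then use associativity of tensor product together with the bisupermodule iso:
\[
\Sy_j^\la\simeq \M_{d,j}\otimes_{\k\Si_\la}\k_{\Si_\la}\simeq (C_d\ggi_\la\otimes_{C_\la}\M_{\la,j})\otimes_{\k\Si_\la}\k_{\Si_\la}\simeq C_d\ggi_\la\otimes_{C_\la}\Sy_{\la,j}=\GGI_\la^d\Sy_{\la,j}.
\]

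The main obstacle is (ii): taking $\Si_\la$-invariants, unlike forming the augmentation quotient, is a limit rather than a colimit construction, so it does not automatically commute with the tensor-product functor $\GGI_\la^d$. To overcome this, I would rewrite both modules as finite intersections of kernels of endomorphisms:
\[
\Di_{\la,j}\;=\;\bigcap_{g\in\Si_\la}\ker\bigl(\rho_{1-g}\colon\M_{\la,j}\to\M_{\la,j}\bigr),\qquad \Di_j^\la\;=\;\bigcap_{g\in\Si_\la}\ker\bigl(\rho_{1-g}\colon\M_{d,j}\to\M_{d,j}\bigr).
\]
Since $\GGI_\la^d$ is an exact functor between abelian module categories, it preserves kernels and finite direct sums, hence preserves all finite limits, in particular finite intersections of submodules. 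Applying this to the first display and then using the bisupermodule iso to translate each $\GGI_\la^d(\rho_{1-g})$ back into right multiplication by $1-g$ on $\M_{d,j}$, I obtain
\[
\GGI_\la^d\Di_{\la,j}\;=\;\bigcap_{g\in\Si_\la}\ker\bigl(\GGI_\la^d(\rho_{1-g})\bigr)\;\simeq\;\bigcap_{g\in\Si_\la}\ker\bigl(\rho_{1-g}\colon\M_{d,j}\to\M_{d,j}\bigr)\;=\;\Di_j^\la,
\]
establishing (ii).
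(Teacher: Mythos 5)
Your proof is correct; parts (i), (iii), (iv) take essentially the same route as the paper (exploit the bisupermodule isomorphism of Lemma~\ref{LggIndResSymGroup}(i) plus exactness of $\GGI_\la^d$ to move quotients and images of right-multiplication maps through the identification), so I will focus on (ii).

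For (ii), you take a genuinely different route from the paper. The paper avoids the question of whether $\GGI_\la^d=C_d\ggi_\la\otimes_{C_\la}-$ commutes with taking $\Si_\la$-invariants by lifting everything to the $\hat C_d$-level: it proves intermediate isomorphisms $\funF_d\hat\Di_j^\la\simeq\Di_j^\la$ and $\hat\Di_j^\la\simeq\Ind_{\la\de}^{d\de}\hat\Di_{\la,j}$, the latter using the explicit free-module decomposition $\hat\M_{d,j}=\bigoplus_{w\in\D_{pd}^{p\la}}\psi_w1_{\la\de}\otimes\hat\M_{\la,j}$ coming from Lemma~\ref{L030216}(ii), inside which the $\Si_\la$-invariants can be read off summand-by-summand, and then stitches the pieces together via Corollary~\ref{CEquivFG}(ii), Lemma~\ref{LHatSHatZ} and (\ref{EGGIInd}). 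Your proof replaces this concrete descent with a single categorical observation: $\GGI_\la^d$ is exact (Lemma~\ref{LFFExact}, or equivalently $C_d\ggi_\la$ is finitely generated projective over $C_\la$ by Corollary~\ref{CFGProj}, hence flat), and an exact additive functor between abelian categories preserves kernels and finite biproducts, hence all finite limits, hence finite intersections of subobjects; writing $\Di_{\la,j}=\bigcap_{g\in\Si_\la}\ker(\rho_{1-g})$ and pushing through the bisupermodule identification then yields (ii) directly. Both arguments are sound. Your version is shorter and treats all four parts on the same footing---exactness lets you transport images, kernels, cokernels, and invariants alike---at the mild cost of invoking the (standard but not entirely trivial) fact that an exact functor preserves arbitrary finite limits, whereas the paper's version is more self-contained, staying within the explicit module-theoretic computations already at hand. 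Incidentally, the paper itself uses a strategy very close to yours in the regraded setting, see the proof of Lemma~\ref{LZMixed}(i), where projectivity of $\zC_d\ggis_\la$ is used to embed $\GGIS_\la^d$ into a free functor in which invariants are visibly preserved.
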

\begin{proof}
(i),\,(iii),\,(iv) We identify $\M_{d,j}$ and $\GGI_\la^d\, \M_{\la,j}$ as graded $(C_d,\k\Si_\la)$-bisupermodules 
via the isomorphism of Lemma~\ref{LggIndResSymGroup}(i). Then the quotient $\Sy_j^\la$ of $\M_{d,j}$ is identified with the quotient $\GGI_\la^d\, \Sy_{\la,j}$ of $\GGI_\la^d\, \M_{\la,j}$ proving (i), the submodule $\M_{d,j}\tty_\la\subseteq M_{d,j}$ is identified with the submodule $\GGI_\la^d(\M_{\la,j}\tty_\la)\subseteq \GGI_\la^d\, \M_{\la,j}$ proving (iii), and the submodule $\M_{d,j}\ttx_\la\subseteq \M_{d,j}$ is identified with the submodule $\GGI_\la^d (\M_{\la,j}\ttx_\la)\subseteq\GGI_\la^d\, \M_{\la,j}$, proving (iv).

(ii) 
Let 
\begin{align*}
\hat \Di_{\la,j}&:=\{m\in \hat \M_{\la,j}\mid mg=m \ \text{for all}\ g\in\Si_\la\}
\\
\hat \Di_j^\la&:=\{m\in \hat \M_{d,j}\mid mg=m \ \text{for all}\ g\in\Si_\la\}.
\end{align*}

Note  that 
\begin{equation}\label{E1}
\funF_d\hat\Di_j^\la\simeq \Di_j^\la.
\end{equation}
Indeed, use Lemma~\ref{LHatMG} to identify $\hat \M_{d,j}=\hat C_df_d\otimes_{C_d}\M_{d,j}$ and $\funF_d\hat \M_{d,j}=\M_{d,j}$. Then applying $\funF_d$ to the embedding $\hat \Di^\la\to \hat \M_{d,j}$, we get the embedding $\al:\funF_d\hat \Di^\la\to \funF_d\hat \M_{d,j}=\M_{d,j}$, whose image consists of all elements of the form $\sum_k\ggi_dr_k\ggi_dm_k$, with $r_{k}\in\hat C_d$, $m_{k}\in \M_{d,j}$, such that in $\hat\M_{d,j}$ we have 
$\sum_kr_k\ggi_d\otimes m_kg=\sum_kr_k\ggi_d\otimes m_k$ for all $g\in\Si_\la$. So $\Im\al=\Di_j^\la$.

Note next that 
\begin{equation}\label{E2}
\hat\Di_j^\la\simeq  \Ind_{\la\de}^{d\de} \hat \Di_{\la,j}.
\end{equation}
Indeed, use Lemma~\ref{LTensImagIsImag} to identify $\hat \M_{d,j}=\Ind_{\la\de}^{d\de} \hat \M_{\la,j}$. 
By Lemma~\ref{L030216}(ii), we then have $\hat \M_{d,j}=\bigoplus_{w\in \D_{pd}^{p\la}}\psi_w1_{\la\de}\otimes \hat \M_{\la,j}$ with all the natural maps $\M_{\la,j}\to \psi_w1_{\la\de}\otimes \hat \M_{\la,j},\ m\mapsto \psi_w1_{\la\de}\otimes m$ being isomorphisms. Then the submodule $\hat \Di_j^\la$ of $\hat \M_{d,j}$ is identified with the submodule 
$$
\bigoplus_{w\in \D_{pd}^{p\la}}\psi_w1_{\la\de}\otimes\hat \Di_{\la,j}\simeq 
\Ind_{\la\de}^{d\de} \hat \Di_{\la,j}\subseteq \Ind_{\la\de}^{d\de} \hat \M_{\la,j}.
$$ 

Now, 
$$
\Di_j^\la\simeq\funF_d\hat\Di_j^\la\simeq \funF_d\, \Ind_{\la\de}^{d\de}\, \hat \Di_{\la,j}
\simeq \funF_d\, \Ind_{\la\de}^{d\de}\,\funG_\la\funF_\la \hat \Di_{\la,j}
\simeq \funF_d\, \Ind_{\la\de}^{d\de}\,\funG_\la \Di_{\la,j}
\simeq \GGI_{\la}^{d}\,\Di_{\la,j},
$$
where we have used (\ref{E1}) for the first isomorphism, (\ref{E2}) for the second isomorphism, Corollary~\ref{CEquivFG}(ii) for the third isomorphism, Lemma~\ref{LHatSHatZ} for the fourth isomorphism, and (\ref{EGGIInd}) for the last isomorphism. 
\end{proof}

\begin{Lemma} \label{LBetaInd}
For $\la\in\Comp(d)$, we have 
$\Sy_j^\la\simeq \M_{d,j}\otimes_{\k\Si_d}\otimes \Ind_{\Si_\la}^{\Si_d}\k_{\Si_\la}$.
\end{Lemma}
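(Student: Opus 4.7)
The plan is to chain together three already-established results, so the argument is essentially a one-line composition of isomorphisms. First I would invoke Lemma~\ref{LUpperInd}(i) to rewrite the left-hand side as $\Sy_j^\la \simeq \GGI_\la^d\,\Sy_{\la,j}$, thereby reducing everything inside the Gelfand-Graev induction functor from the parabolic subalgebra $C_\la$.

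Next I would apply Lemma~\ref{L5.2.2La} to replace the parabolic symmetric power $\Sy_{\la,j}$ by the tensor-product expression $\M_{\la,j}\otimes_{\k\Si_\la}\k_{\Si_\la}$, yielding
\[
\Sy_j^\la \;\simeq\; \GGI_\la^d\bigl(\M_{\la,j}\otimes_{\k\Si_\la}\k_{\Si_\la}\bigr).
\]
At this point I would apply Corollary~\ref{CFunPerm}(i), which gives an isomorphism of functors $\GGI_\la^d\circ(\M_{\la,j}\otimes_{\k\Si_\la}-)\simeq (\M_{d,j}\otimes_{\k\Si_d}-)\circ\Ind_{\Si_\la}^{\Si_d}$, evaluated on $V=\k_{\Si_\la}$. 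This directly produces
\[
\Sy_j^\la \;\simeq\; \M_{d,j}\otimes_{\k\Si_d}\Ind_{\Si_\la}^{\Si_d}\k_{\Si_\la},
\]
which is the desired statement.

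There is essentially no obstacle: all three ingredients (Lemma~\ref{LUpperInd}(i), Lemma~\ref{L5.2.2La}, and Corollary~\ref{CFunPerm}(i)) are already available, and the isomorphisms involved are natural and of bidegree $(0,\0)$, so they compose without issue. The only thing worth double-checking is that the bimodule structures match so that the compositions are genuine isomorphisms of graded $C_d$-supermodules, but this is built into the statements of the cited results, the left action of $C_d$ being respected by Corollary~\ref{CFunPerm}(i) and the right $\k\Si_\la$-action on $\M_{d,j}$ used to form the tensor $\M_{d,j}\otimes_{\k\Si_d}\Ind_{\Si_\la}^{\Si_d}\k_{\Si_\la}$ being the one of \S\ref{SSSymAct}.
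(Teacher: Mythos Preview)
Your proposal is correct and is essentially identical to the paper's proof: the paper chains the same three results (Corollary~\ref{CFunPerm}(i), Lemma~\ref{L5.2.2La}, and Lemma~\ref{LUpperInd}(i)) in the reverse order, starting from the right-hand side and working towards $\Sy_j^\la$, but the content is the same.
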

\begin{proof}
We have 
$$
\M_{d,j}\otimes_{\k\Si_d}\otimes \Ind_{\Si_\la}^{\Si_d}\k_{\Si_\la}
\simeq \GGI_\la^d(\M_{\la,j}\otimes_{\k\Si_\la}\k_{\Si_\la})
\simeq \GGI_\la^d\Sy_{\la,j}\simeq \Sy_j^\la.
$$
where we have used Corollary~\ref{CFunPerm}(i) for the first isomorphism, Lemma~\ref{L5.2.2La} for the second isomorphism, and Lemma~\ref{LUpperInd} for the third isomorphism. 
\end{proof}

\begin{Lemma} \label{LS^LaDual}
Let $\k=\F$ and $\la\in\Comp(d)$. Then $(\Sy_j^\la)^\sharp\simeq \Di_j^{\la^\op}$. 
\end{Lemma}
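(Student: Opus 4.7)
The plan is to chain together the parabolic description of $\Sy_j^\la$ and $\Di_j^{\la^\op}$ from Lemma~\ref{LUpperInd} with the commutation formula for $\sharp$ and Gelfand-Graev induction from Lemma~\ref{LSharpIndComm}, the compatibility of $\sharp$ with outer tensor products (\ref{ESharpBoxTimes}), and the one-part case of the duality, $\Sy_{c,j}^\sharp\simeq \Di_{c,j}$, supplied by Lemma~\ref{LDualSharp}.

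First, write $\la=(\la_1,\dots,\la_n)$ and use Lemma~\ref{LUpperInd}(i) together with the outer-tensor factorization $\Sy_{\la,j}\simeq \Sy_{\la_1,j}\boxtimes\dots\boxtimes\Sy_{\la_n,j}$ to obtain
$$
\Sy_j^\la\simeq \GGI_\la^d\bigl(\Sy_{\la_1,j}\boxtimes\dots\boxtimes\Sy_{\la_n,j}\bigr).
$$
Then apply Lemma~\ref{LSharpIndComm} with $V_k=\Sy_{\la_k,j}$ to swap $\sharp$ past $\GGI_\la^d$ at the cost of reversing the order of the tensor factors and replacing $\la$ by $\la^\op$, obtaining
$$
(\Sy_j^\la)^\sharp \simeq \GGI_{\la^\op}^d\bigl((\Sy_{\la_n,j}\boxtimes\dots\boxtimes\Sy_{\la_1,j})^\sharp\bigr).
$$

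Next, distribute $\sharp$ across the outer tensor product using (\ref{ESharpBoxTimes}) and apply Lemma~\ref{LDualSharp} componentwise to identify $\Sy_{\la_k,j}^\sharp\simeq \Di_{\la_k,j}$. This gives
$$
(\Sy_{\la_n,j}\boxtimes\dots\boxtimes\Sy_{\la_1,j})^\sharp
\simeq \Di_{\la_n,j}\boxtimes\dots\boxtimes\Di_{\la_1,j}\simeq \Di_{\la^\op,j},
$$
where the last isomorphism is the outer-tensor factorization of $\Di_{\la^\op,j}$. Finally, apply Lemma~\ref{LUpperInd}(ii) with $\la$ replaced by $\la^\op$ to obtain
$$
\GGI_{\la^\op}^d\Di_{\la^\op,j}\simeq \Di_j^{\la^\op},
$$
which combined with the previous displays yields $(\Sy_j^\la)^\sharp\simeq \Di_j^{\la^\op}$.

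All the ingredients are already in place, so there is no real obstacle; the only point that requires care is tracking the order-reversal. It is not an accident: Lemma~\ref{LSharpIndComm} encodes precisely the fact that dualizing a parabolically induced module reverses the order of the parabolic factors, and this is what forces $\la^\op$ (rather than $\la$) to appear on the right-hand side of the statement.
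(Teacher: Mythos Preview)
Your proof is correct and follows essentially the same route as the paper's: apply Lemma~\ref{LUpperInd}(i), then Lemma~\ref{LSharpIndComm}, then the duality $\Sy^\sharp\simeq\Di$, and finally Lemma~\ref{LUpperInd}(ii). The only cosmetic difference is that you apply Lemma~\ref{LDualSharp} componentwise together with (\ref{ESharpBoxTimes}), whereas the paper cites the packaged parabolic version Lemma~\ref{LDualSharpPar}; since the latter is itself deduced from the former, the arguments are the same.
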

\begin{proof}
We have 
\begin{align*}
(\Sy_j^\la)^\sharp\simeq (\GGI_\la^d \Sy_{\la,j})^\sharp
\simeq (\GGI_{\la^\op}^d \Sy_{\la^\op}^\sharp)
\simeq (\GGI_{\la^\op}^d \Di_{\la^\op,j})
\simeq \Di_j^{\la^\op},
\end{align*}
where we have used Lemma~\ref{LUpperInd}(i), Lemma~\ref{LSharpIndComm} for the second isomorphism,  Lemma~\ref{LDualSharpPar} for the third isomorphism and Lemma~\ref{LUpperInd}(ii) for the fourth isomorphism. 
\end{proof}

\begin{Lemma} \label{LZModMx}
Let $\k=\F$ and $\la\in\Comp(d)$. Then no composition factor of\, $\Di_j^\la/\M_{d,j}\ttx_\la$ is isomorphic to a submodule of $\M_{d,j}$.
\end{Lemma}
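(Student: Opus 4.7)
The plan is to apply the Schur functor formalism of \S\ref{SSSF} to the projective graded $\IS_{d,j}$-supermodule $\M_{d,j}$. To use this formalism I first observe that $\IS_{d,j}$ is finite-dimensional: by Lemma~\ref{LHatMG}, $\hat\M_{d,j}\simeq\Ind_{\de^d}^{d\de}\hat L_j^{\boxtimes d}$ is finite-dimensional (since $\hat L_j$ is finite-dimensional by Lemma~\ref{LTypeM} and the induction has a finite basis by Lemma~\ref{L030216}(ii)), hence $\M_{d,j}\simeq\ggi_d\hat\M_{d,j}$ is finite-dimensional, so $\Ann_{C_d}(\M_{d,j})$ has finite codimension and $\IS_{d,j}$ is finite-dimensional. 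Theorem~\ref{TMdProj} then makes $\M_{d,j}$ a finite-dimensional projective $\IS_{d,j}$-supermodule with $\End_{\IS_{d,j}}(\M_{d,j})\cong\F\Si_d$, so the Schur functor $\funf:=\Hom_{\IS_{d,j}}(\M_{d,j},-)$ of \S\ref{SSSF} is exact and Lemma~\ref{L3.1e} applies with $H=\F\Si_d$.

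The central computation is $\funf(\Di_j^\la)=\funf(\M_{d,j}\ttx_\la)$ as submodules of $\funf(\M_{d,j})=\F\Si_d$. Under the identification $\funf(\M_{d,j})=\End_{\IS_{d,j}}(\M_{d,j})=\F\Si_d$, where $x\in\F\Si_d$ corresponds to the right-multiplication map $m\mapsto mx$, the faithfulness of the right $\F\Si_d$-action on $\M_{d,j}$ (from Theorem~\ref{TMdProj}) identifies
\[
\funf(\Di_j^\la)=\{x\in\F\Si_d\mid xg=x\ \text{for all}\ g\in\Si_\la\},
\]
while Lemma~\ref{LResMdOne}(i) (together with the same faithfulness, applied to the generator $v_{d,j}\in\ggi^{j^d}\M_{d,j}$) gives $\funf(\M_{d,j}\ttx_\la)=\F\Si_d\cdot\ttx_\la$. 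Decomposing $\F\Si_d=\bigoplus_{y\in\Si_d/\Si_\la}y\F\Si_\la$ as a right $\F\Si_\la$-module and using that $(\F\Si_\la)^{\Si_\la}=\F\cdot\ttx_\la$, one checks directly that the right $\Si_\la$-invariants of $\F\Si_d$ coincide with $\F\Si_d\cdot\ttx_\la$. Hence the two Hom-spaces above are equal, and by exactness of $\funf$ applied to the short exact sequence $0\to\M_{d,j}\ttx_\la\to\Di_j^\la\to\Di_j^\la/\M_{d,j}\ttx_\la\to 0$ we obtain $\funf(\Di_j^\la/\M_{d,j}\ttx_\la)=0$.

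To finish, suppose for contradiction that $L$ is a composition factor of $\Di_j^\la/\M_{d,j}\ttx_\la$ that is isomorphic to a submodule of $\M_{d,j}$. Then $L$ appears in the socle of $\M_{d,j}$, so by Lemma~\ref{LSocleHead} (combined with Lemma~\ref{LGGIrrSelfD}) it also appears in the head of $\M_{d,j}$; Lemma~\ref{L3.1e} then forces $\funf(L)\neq 0$. But exactness of $\funf$ makes $\funf(L)$ a composition factor of $\funf(\Di_j^\la/\M_{d,j}\ttx_\la)=0$, the desired contradiction.

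The main technical point is the identification of $\funf(\M_{d,j}\ttx_\la)$ with $\F\Si_d\cdot\ttx_\la$, for which one combines the faithfulness of the right $\F\Si_d$-action on $\M_{d,j}$ with the weight-space basis of $\ggi^{j^d}\M_{d,j}$ from Lemma~\ref{LResMdOne}(i); the remainder is standard Schur-functor bookkeeping together with the socle-head symmetry of Lemma~\ref{LSocleHead}.
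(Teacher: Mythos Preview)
Your proof is correct and takes a genuinely more direct route than the paper's.

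The paper works on the dual side: it takes $P=\M_{d,j}$ in the Schur functor setup and uses $\funq\circ\fung$ together with Lemma~\ref{L3.1f} and Lemma~\ref{LBetaInd} to produce a short exact sequence $0\to V\to\Sy_j^{\la^\op}\to\M_{d,j}\ttx_{\la^\op}\to 0$ with $V$ having no composition factors in the head of $\M_{d,j}$. It then applies the duality $\sharp$ (via Lemmas~\ref{LS^LaDual}, \ref{LSharpIndComm}, \ref{LGGIrrSelfD} and \ref{LUpperInd}) to turn this into a short exact sequence with $\Di_j^\la$ in the middle, and concludes by Jordan--H\"older. Your argument stays entirely on the $\Di_j^\la$ side: you compute $\funf(\Di_j^\la)$ and $\funf(\M_{d,j}\ttx_\la)$ directly as subsets of $\F\Si_d$, using faithfulness of the right $\Si_d$-action and the rank-one freeness of $\ggi^{j^d}\M_{d,j}$ from Lemma~\ref{LResMdOne}(i), observe that both equal the right $\Si_\la$-invariants $\F\Si_d\ttx_\la$, and finish by exactness of $\funf$ and the socle--head symmetry of Lemma~\ref{LSocleHead}. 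This avoids the entire dualization chain and the passage through $\Sy_j^{\la^\op}$; the price is the small explicit computation $(\F\Si_d)^{\Si_\la}=\F\Si_d\ttx_\la$, which is elementary. The paper's route fits more naturally into the $\funq\circ\fung$ formalism developed around Theorem~\ref{T3.1d}, but yours is shorter.
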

\begin{proof}
Let $\funf,\fung$ and $\funq$ be the functors defined in \S\ref{SSSF} corresponding to $P$ being the projective $\IS_{\la^\op,j}$-module $\M_{\la^\op,j}$, see Corollary~\ref{CMLaProj}. Note that the $\F\Si_d$-module $\Ind_{\F\Si_{\la^\op}}^{\F\Si_d}\k_{\Si_{\la^\op}}$ is isomorphic to the left ideal $\F\Si_d\ttx_{\la^\op}$ of $\F\Si_d$. By Corollary~\ref{CSocleHead} and Lemma~\ref{L3.1f}, we have $\funq\circ \fung(\Ind_{\F\Si_{\la^\op}}^{\F\Si_d}\k_{\Si_{\la^\op}})\simeq \M_{d,j}\ttx_{\la^\op}$. But by Lemma~\ref{LBetaInd}, we have 
$$
\fung(\Ind_{\F\Si_{\la^\op}}^{\F\Si_d}\k_{\Si_{\la^\op}})=\M_{d,j}\otimes_{\F\Si_d}\Ind_{\F\Si_{\la^\op}}^{\F\Si_d}\k_{\Si_{\la^\op}}\simeq \Sy_j^{\la^\op}.
$$
So there is a short exact sequence of the form $0\to V\to \Sy_j^{\la^\op}\to W\to 0$ where $W\simeq \M_{d,j}\ttx_{\la^\op}$ and $V$ has no composition actors in common with the head of $\M_{d,j}$. Dualizing and using Lemma~\ref{LS^LaDual}, we have a short exact sequence $0\to W^\sharp\to \Di_j^{\la}\to V^\sharp\to 0$. Moreover, by Lemma~\ref{LGGIrrSelfD}, $V^\sharp$ has the same composition factors as $V$, so $V^\sharp$ has no composition factors in common with the head of $\M_{d,j}$. Finally, 
\begin{align*}
W^\sharp\ \simeq\ \ \ \ \ \ \ \ \ \ \  &(\M_{d,j}\ttx_{\la^\op})^\sharp
\\ 
\stackrel{\text{Lemma}~\ref{LUpperInd}\text{(iv)}}{\simeq}\   \ \ &\big(\GGI_{\la^\op}^d (\M_{\la^\op,j}\ttx_{\la^\op})\big)^\sharp
\\
\simeq\ \ \ \ \ \  \ \ \ \ \ \,  &\big(\GGI_{\la^\op}^d (\M_{\la_n,j}\ttx_{\la_n}\boxtimes\dots\boxtimes \M_{\la_1,j}\ttx_{\la_1})\big)^\sharp
\\
\stackrel{\text{Lemma}~\ref{LSharpIndComm}}{\simeq}\    \ \  \ \ \,&\GGI_{\la}^d \big((\M_{\la_1,j}\ttx_{\la_1})^\sharp\boxtimes\dots\boxtimes (\M_{\la_n,j}\ttx_{\la_n})^\sharp\big)
\\
\stackrel{\text{Lemmas~\ref{Lttx}(i),\ref{LGGIrrSelfD}}}{\cong}\   &\GGI_{\la}^d \big(\M_{\la_1,j}\ttx_{\la_1}\boxtimes\dots\boxtimes \M_{\la_n,j}\ttx_{\la_n}\big)
\\
\stackrel{\text{Lemma}~\ref{LUpperInd}\text{(iv)}}{\simeq}\  \ \ \,&\M_{d,j}\ttx_\la,
\end{align*}
which implies the result using the Jordan-H\"older theorem. 
\end{proof}

\section{Classical Schur algebras as endomorphism algebras}
Recall the classical Schur algebra $S(n,d)=\End_{\k\Si_d}\big(\textstyle\bigoplus_{\la\in\Comp(n,d)}M^\la\big)$ from \S\ref{SSSchur} 
with the basis $\{\xi^g_{\la,\mu}\mid \la,\mu\in\Comp(n,d),\, g\in {}^\la\D_d^\mu\}$ from Lemma~\ref{LSchurBasis}. The basis elements $\xi^g_{\la,\mu}$ are closely related to the elements $X_{\mu,\la}^g\in\k\Si_d$  and $Y_{\mu,\la}^g\in\k\Si_d$ defined in (\ref{EX}). 

\subsection{  The endomorphism algebras of the modules $\bigoplus_{\la\in\Comp(n,d)}\M_{d,j}\ttx_\la$ and $\bigoplus_{\la\in\Comp(n,d)}\M_{d,j}\tty_\la$}

It follows from (\ref{ECleverX}) that for any $g\in\Si_d$ and $\la,\mu\in\Comp(d)$, we have
$$
\M_{d,j}\ttx_\mu X_{\mu,\la}^g
=\M_{d,j} \Big(\sum_{w\in \Si_\mu g\Si_\la\cap\D^\la}w\Big)\,\ttx_\la
\subseteq \M_{d,j}\ttx_\la,
$$
so the right multiplication by $X_{\mu,\la}^g$ defines a homomorphism from $\M_{d,j}\ttx_\mu$ to $\M_{d,j}\ttx_\la$. 
Denote by $\upxi^g_{\la,\mu}$ the endomorphism of $\bigoplus_{\la\in\Comp(n,d)}\M_{d,j}\ttx_\la$ which is zero on the summands $\M_{d,j}\ttx_\nu$ with $\nu\neq \mu$ and sends $\M_{d,j}\ttx_\mu$ to $\M_{d,j}\ttx_\la$ via the homomorphism given  by the right multiplication by $X^g_{\mu,\la}$.

\begin{Theorem} \label{TSchurEnd1}
Let $\k=\F$. Then there is a graded superalgebra isomorphism
$$\textstyle S(n,d)\iso \End_{\IS_{d,j}}\big(\bigoplus_{\la\in\Comp(n,d)}\M_{d,j}\ttx_\la\big), \ \xi^g_{\la,\mu}\mapsto \upxi^g_{\la,\mu}.
$$
\end{Theorem}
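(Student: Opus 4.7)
The plan is to show that $\Phi\colon\xi^g_{\la,\mu}\mapsto\upxi^g_{\la,\mu}$ is a well-defined bijective graded superalgebra homomorphism via an explicit identification of Hom-spaces. The key step is to prove that evaluation at $v_{d,j}\ttx_\mu$ gives a $\k$-module isomorphism
$$\Hom_{\IS_{d,j}}(\M_{d,j}\ttx_\mu,\M_{d,j}\ttx_\la)\iso \ttx_\mu\k\Si_d\cap\k\Si_d\ttx_\la,\quad \phi\mapsto a,$$
where $a$ is uniquely determined by $\phi(v_{d,j}\ttx_\mu)=v_{d,j}a$. Granting this, summing over $\la,\mu\in\Comp(n,d)$ identifies $\End_{\IS_{d,j}}\big(\bigoplus_\la\M_{d,j}\ttx_\la\big)$ with $\bigoplus_{\la,\mu}\big(\ttx_\mu\k\Si_d\cap\k\Si_d\ttx_\la\big)$, which has the natural basis of double-coset sums $\{\sum_{w\in\Si_\mu g\Si_\la}w\mid g\in{}^\mu\D_d^\la\}$; this matches the basis $\{\xi^g_{\la,\mu}\}$ of $S(n,d)$ from Lemma~\ref{LSchurBasis} via (\ref{EXi}) and (\ref{ECleverX}), under which $\xi^g_{\la,\mu}$ corresponds to right multiplication by $X^g_{\mu,\la}$ and $\ttx_\mu X^g_{\mu,\la}$ equals the double-coset sum indexed by $g$.

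To establish the Hom-space isomorphism, observe that $\M_{d,j}\ttx_\mu$ is generated as an $\IS_{d,j}$-module by $v_{d,j}\ttx_\mu$ (since $v_{d,j}$ generates $\M_{d,j}$), so $\phi$ is determined by $\phi(v_{d,j}\ttx_\mu)$. Since $v_{d,j}\ttx_\mu=\ggi^{j^d}v_{d,j}\ttx_\mu$, this image lies in $\ggi^{j^d}\M_{d,j}\ttx_\la=v_{d,j}\k\Si_d\ttx_\la$, and by Lemma~\ref{LResMdOne}(i) --- the freeness of $\ggi^{j^d}\M_{d,j}$ as a right $\k\Si_d$-module with basis $v_{d,j}$ --- we may write $\phi(v_{d,j}\ttx_\mu)=v_{d,j}a$ for a unique $a\in\k\Si_d\ttx_\la$. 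The existence of an $\IS_{d,j}$-linear $\phi$ realizing this assignment is equivalent to the factorization of the right-multiplication map $m\mapsto ma$ through $\M_{d,j}\twoheadrightarrow\M_{d,j}\ttx_\mu$, i.e.\ to the condition that $m\ttx_\mu=0$ implies $ma=0$ for all $m\in\M_{d,j}$. Testing this on $m=v_{d,j}(1-g)$ with $g\in\Si_\mu$ (so that $m\ttx_\mu=0$ because $(1-g)\ttx_\mu=0$ in $\k\Si_d$) yields $ga=a$ by the same freeness, placing $a$ in the left $\Si_\mu$-invariants $(\k\Si_d)^{\Si_\mu,L}=\ttx_\mu\k\Si_d$; conversely, if $a=\ttx_\mu b$ for some $b$, then $ma=m\ttx_\mu b$ vanishes whenever $m\ttx_\mu=0$, so a valid $\phi$ exists for every $a\in\ttx_\mu\k\Si_d\cap\k\Si_d\ttx_\la$.

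Multiplicativity of $\Phi$ reduces to the observation that the composition $\upxi^g_{\la,\mu}\circ\upxi^h_{\mu,\nu}$ acts on $\M_{d,j}\ttx_\nu$ by right multiplication by $X^h_{\nu,\mu}X^g_{\mu,\la}$, matching the analogous composition $\xi^g_{\la,\mu}\circ\xi^h_{\mu,\nu}$ in $S(n,d)$ (by the same computation using (\ref{ECleverX}) twice). The grading and parity are automatic: $\k\Si_d$ sits in bidegree $(0,\0)$ and acts on $\M_{d,j}$ on the right by bidegree $(0,\0)$ endomorphisms (Theorem~\ref{TEndMd}), so both $S(n,d)$ and $\End_{\IS_{d,j}}\big(\bigoplus_\la\M_{d,j}\ttx_\la\big)$ are concentrated in bidegree $(0,\0)$ and $\Phi$ is automatically a graded superalgebra isomorphism. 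The main technical point requiring care is the characterization of the image of the evaluation map as $\ttx_\mu\k\Si_d\cap\k\Si_d\ttx_\la$, which hinges on the freeness of $\ggi^{j^d}\M_{d,j}$ over $\k\Si_d$ from Lemma~\ref{LResMdOne}(i); once this is in place, the remaining matching of bases under $\xi^g_{\la,\mu}\mapsto\ttx_\mu X^g_{\mu,\la}$ follows directly from (\ref{ECleverX}).
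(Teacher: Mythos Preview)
Your proof is correct and takes a genuinely different, more elementary route than the paper. The paper invokes the Schur-functor formalism of \S\ref{SSSF}: taking $P=\M_{d,j}$ and $H=\F\Si_d$, it uses Lemma~\ref{L3.1f} (whose hypothesis requires the self-duality statement Lemma~\ref{LSocleHead}) to identify $\M_{d,j}\ttx_\la\simeq\funq\circ\fung(\F\Si_d\ttx_\la)\simeq\funq\circ\fung(M^\la)$, and then Theorem~\ref{T3.1d} transports the endomorphism algebra across the equivalence $\funq\circ\fung$. Your direct identification of each $\Hom$-space with $\ttx_\mu\k\Si_d\cap\k\Si_d\ttx_\la$ via evaluation at $v_{d,j}\ttx_\mu$ bypasses this machinery and, notably, does not need Lemma~\ref{LSocleHead}; it relies only on Lemma~\ref{LResMdOne}(i) and Theorem~\ref{TEndMd}. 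The paper's approach has the advantage of being an instance of a general categorical pattern reused elsewhere (e.g.\ for Theorem~\ref{TSchurEnd2} and in Lemma~\ref{LZModMx}); yours is more self-contained.

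One step deserves to be made explicit. When you assert that the existence of an $\IS_{d,j}$-linear $\phi$ with $\phi(v_{d,j}\ttx_\mu)=v_{d,j}a$ is \emph{equivalent} to the factorization of right multiplication by $a$ through $\M_{d,j}\twoheadrightarrow\M_{d,j}\ttx_\mu$, the forward implication is not automatic: a priori an arbitrary $\phi$ need not be given by right multiplication, so you cannot yet ``test on $m=v_{d,j}(1-g)$'' to deduce $ma=0$. The fix is to precompose with the surjection $m\mapsto m\ttx_\mu$ and view $\tilde\phi:=\phi\circ(\,\cdot\,\ttx_\mu):\M_{d,j}\to\M_{d,j}\ttx_\la\hookrightarrow\M_{d,j}$ as an $\IS_{d,j}$-endomorphism of $\M_{d,j}$; then Theorem~\ref{TEndMd} forces $\tilde\phi$ to be right multiplication by some $a'\in\k\Si_d$, and evaluating at $v_{d,j}$ together with freeness gives $a'=a$. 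Once this is in place your test on $m=v_{d,j}(1-g)$ legitimately yields $a\in\ttx_\mu\k\Si_d$, and the remainder of your argument goes through as written.
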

\begin{proof}
Let $\funf$, $\fung$ and  $\funq$ be the functors defined in \S\ref{SSSF}, taking the projective module $P$ to be the $\IS_{d,j}$-module $M_{d,j}$, so that $H=\F\Si_d$, see Theorem~\ref{TMdProj}. We have an equivalence $\funq\circ\fung$ of graded supercategories coming from Theorem~\ref{T3.1d}.
By Lemma~\ref{LSocleHead},  every composition factor of the socle of $\M_{d,j}$ appears in its head, so by Lemma~\ref{L3.1f}, for every $\la\in\Comp(n,d)$, we have 
$$
\M_{d,j}\ttx_\la\simeq \funq\circ\fung(\k\Si_d\ttx_\la)\simeq \funq\circ\fung(M^\la). 
$$ 
So  
$$
\textstyle \bigoplus_{\la\in\Comp(n,d)}\M_{d,j}\ttx_\la\simeq \funq\circ\fung\big(\bigoplus_{\la\in\Comp(n,d)}M^\la\big),
$$
and the endomorphism algebras of $\bigoplus_{\la\in\Comp(n,d)}\M_{d,j}\ttx_\la$ and $\bigoplus_{\la\in\Comp(n,d)}M^\la$ are isomorphic by Theorem~\ref{T3.1d}. The latter is $S(n,d)$ by definition. 
It remains to use Lemma~\ref{L3.1f} one more time to 
check that the image of $\xi^g_{\la,\mu}$ under the functor $\funq\circ\fung$ is precisely the endomorphism $\upxi^g_{\la,\mu}$. 
\end{proof}

It follows from (\ref{ECleverY}) that for any $g\in\Si_d$, we have
$$
\M_{d,j}\tty_\mu Y_{\mu,\la}^g
=\M_{d,j} \Big(\sum_{w\in \Si_\mu g\Si_\la\cap\D^\la}\sgn(w)w\Big)\,\tty_\la
\subseteq \M_{d,j}\tty_\la,
$$
so the right multiplication by $Y_{\mu,\la}^g$ defines a homomorphism from $\M_{d,j}\tty_\mu$ to $\M_{d,j}\tty_\la$. 
Denote by $\upeta^g_{\la,\mu}$ the endomorphism of $\bigoplus_{\la\in\Comp(n,d)}\M_{d,j}\tty_\la$ which is zero on the summands $\M_{d,j}\tty_\nu$ with $\nu\neq \mu$ and sends $\M_{d,j}\tty_\mu$ to $\M_{d,j}\tty_\la$ via the homomorphism induced by the right multiplication by $Y^g_{\mu,\la}$. The same argument as in the proof of Theorem~\ref{TSchurEnd1} yields:

\begin{Theorem} \label{TSchurEnd2}
Let $\k=\F$. Then there is a graded superalgebra isomorphism
$$S(n,d)\iso \End_{\IS_{d,j}}\big(\textstyle\bigoplus_{\la\in\Comp(n,d)}\M_{d,j}\tty_\la\big), \ \xi^g_{\la,\mu} \mapsto \upeta^g_{\la,\mu}.
$$
\end{Theorem}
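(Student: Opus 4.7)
\medskip

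The plan is to mirror the proof of Theorem~\ref{TSchurEnd1} verbatim, with the signed permutation modules $N^\la = \k\Si_d\tty_\la$ playing the role of the permutation modules $M^\la = \k\Si_d\ttx_\la$. First I would set up the Schur functors of \S\ref{SSSF} with $A = \IS_{d,j}$ and $P = \M_{d,j}$, so that $H = \End_{\IS_{d,j}}(\M_{d,j})^{\sop} \cong \k\Si_d$ by Theorem~\ref{TEndMd}. Theorem~\ref{TMdProj} guarantees that $P$ is projective, and Lemma~\ref{LSocleHead} says that every composition factor of the socle of $\M_{d,j}$ also appears in its head, so the hypothesis of Lemma~\ref{L3.1f} is satisfied. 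Applying Lemma~\ref{L3.1f} to the graded left superideal $J = \k\Si_d\tty_\la$ of $H = \k\Si_d$ yields a bidegree-$(0,\0)$ isomorphism
\[
\funq\circ\fung(N^\la)\;\simeq\;\M_{d,j}\tty_\la,
\]
functorial in $\la\in\Comp(n,d)$.

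Next I would take the direct sum over $\la\in\Comp(n,d)$ to obtain
\[
\textstyle\bigoplus_{\la}\M_{d,j}\tty_\la\;\simeq\;\funq\circ\fung\bigl(\bigoplus_{\la}N^\la\bigr).
\]
Then Theorem~\ref{T3.1d} says $\funq\circ\fung$ is a graded superequivalence between $\mod{H}$ and the full subcategory of $\mod{A}$ on modules $V$ with $O_P(V)=0$ and $O^P(V)=V$, so it induces an isomorphism of endomorphism graded superalgebras
\[
\End_{\k\Si_d}\bigl(\textstyle\bigoplus_{\la\in\Comp(n,d)}N^\la\bigr)\;\iso\;\End_{\IS_{d,j}}\bigl(\textstyle\bigoplus_{\la\in\Comp(n,d)}\M_{d,j}\tty_\la\bigr).
\]
By Lemma~\ref{LSchurSigned} the left-hand endomorphism algebra is isomorphic to $S(n,d)$ via $\xi^g_{\la,\mu}\mapsto \eta^g_{\la,\mu}$, so composition gives the required isomorphism $S(n,d)\iso\End_{\IS_{d,j}}\bigl(\bigoplus_{\la}\M_{d,j}\tty_\la\bigr)$.

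The only remaining point, and the one place where a little care is needed, is to verify that under the composite isomorphism the basis element $\xi^g_{\la,\mu}$ lands on $\upeta^g_{\la,\mu}$. For this I would trace through Lemma~\ref{L3.1f}: the homomorphism $N^\mu\to N^\la$ given by right multiplication by $Y^g_{\mu,\la}$ (which is $\eta^g_{\la,\mu}$) corresponds, under $\funq\circ\fung$, to the homomorphism $\M_{d,j}\tty_\mu\to \M_{d,j}\tty_\la$ given by right multiplication by the same element $Y^g_{\mu,\la}$ on $\M_{d,j}$, in view of the natural isomorphism $\fung(N^\mu)=\M_{d,j}\otimes_{\k\Si_d}\k\Si_d\tty_\mu\simeq \M_{d,j}\tty_\mu$ under which $1\otimes h\mapsto \M_{d,j}\cdot h$. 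This is precisely $\upeta^g_{\la,\mu}$ by the definition preceding the theorem, so the displayed map $\xi^g_{\la,\mu}\mapsto \upeta^g_{\la,\mu}$ is correct. The bookkeeping of this final identification is the only step requiring genuine attention; everything else is a direct transcription of the proof of Theorem~\ref{TSchurEnd1}.
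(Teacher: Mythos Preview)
Your proposal is correct and is exactly the approach the paper takes: it simply states that ``the same argument as in the proof of Theorem~\ref{TSchurEnd1} yields'' the result, which amounts to replacing $M^\la$, $\ttx_\la$, $\upxi^g_{\la,\mu}$ by $N^\la$, $\tty_\la$, $\upeta^g_{\la,\mu}$ and invoking Lemma~\ref{LSchurSigned} in place of the definition of $S(n,d)$. Your careful tracing of $\xi^g_{\la,\mu}\mapsto\upeta^g_{\la,\mu}$ through Lemma~\ref{L3.1f} is precisely the final check the paper alludes to.
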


\subsection{  The endomorphism algebra of $\bigoplus_{\la\in\Comp(n,d)}\Di_j^\la$} Our real object of interest is the endomorphism algebra of $\bigoplus_{\la\in\Comp(n,d)}\Di_j^\la$. In this subsection we will use Theorem~\ref{TSchurEnd1} to show that this is also the classical Schur algebra.

\begin{Lemma} 
Let $\k=\F$, $\la,\mu\in\Comp(d)$ and $g\in\Si_d$. Then 
$\Di_j^\mu X^g_{\mu,\la}\subseteq \Di_j^\la$, so the right multiplication by $X^g_{\mu,\la}$ induces a homomorphism $\Di_j^\mu \to \Di_j^\la$ of graded $\IS_{d,j}$-supermodules.
\end{Lemma}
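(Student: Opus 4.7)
The plan is to reduce the assertion to a statement inside the group algebra $\k\Si_d$, and then to the familiar permutation-module fact that the sum over a single $(\Si_\mu,\Si_\la)$-double coset is right-$\Si_\la$-invariant modulo the appropriate left ideal. Concretely, set $I_\mu:=\sum_{x\in\Si_\mu}(x-1)\k\Si_d$. The defining property $m x=m$ for all $x\in\Si_\mu$ says exactly that $m\cdot (x-1)=0$, and since the left $\IS_{d,j}$-action and the right $\k\Si_d$-action on $\M_{d,j}$ commute (see Theorem~\ref{TEndMd}), this extends to $m\cdot r=0$ for every $r\in I_\mu$. The assertion $m X^g_{\mu,\la}\in\Di_j^\la$ therefore amounts to showing that $X^g_{\mu,\la}(y-1)\in I_\mu$ for all $y\in\Si_\la$, i.e.\ that the image of $X^g_{\mu,\la}$ in $\k\Si_d/I_\mu$ is right-$\Si_\la$-invariant.

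Next, I would identify $\k\Si_d/I_\mu$, as a right $\k\Si_d$-module, with the right permutation module $\k_{\Si_\mu}\otimes_{\k\Si_\mu}\k\Si_d$ via $r+I_\mu\mapsto 1\otimes r$; under this identification the set $\{1\otimes w\mid w\in{}^\mu\D\}$ is a $\k$-basis, and two elements $u,w\in\Si_d$ have the same image iff they lie in the same left $\Si_\mu$-coset. Since the sum defining $X^g_{\mu,\la}=\sum_{w\in\Si_\mu g\Si_\la\cap{}^\mu\D}w$ runs over one representative for each left $\Si_\mu$-coset contained in the double coset $\Si_\mu g\Si_\la$, its image in $\k\Si_d/I_\mu$ is simply the sum, over all left $\Si_\mu$-cosets inside $\Si_\mu g\Si_\la$, of the corresponding basis vector.

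Now, for $y\in\Si_\la$ right multiplication preserves the double coset $\Si_\mu g\Si_\la$, and since right multiplication commutes with the left $\Si_\mu$-action, it permutes the set of left $\Si_\mu$-cosets contained in $\Si_\mu g\Si_\la$ (with inverse given by right multiplication by $y^{-1}$). Hence the above sum is fixed by right multiplication by $y$, which is precisely the $\Si_\la$-invariance we need. This yields $X^g_{\mu,\la}(y-1)\in I_\mu$, so $m X^g_{\mu,\la}(y-1)=0$ for every $m\in\Di_j^\mu$, proving $\Di_j^\mu X^g_{\mu,\la}\subseteq\Di_j^\la$.

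Finally, the induced map $\Di_j^\mu\to\Di_j^\la$, $m\mapsto m X^g_{\mu,\la}$, is automatically a homomorphism of graded $\IS_{d,j}$-supermodules: it is given by the right action of the element $X^g_{\mu,\la}\in\k\Si_d$, which has bidegree $(0,\0)$ and commutes with the left $\IS_{d,j}$-action by Theorem~\ref{TEndMd}. I do not anticipate a serious obstacle in this argument; it is essentially a direct translation of the classical double-coset calculation underlying Lemma~\ref{LHomMM} into the bimodule setting provided by the imaginary Schur--Weyl duality, and the key equalities (\ref{EX})--(\ref{ECleverX}) are already available.
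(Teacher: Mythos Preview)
Your argument is correct, and it takes a genuinely different and more elementary route than the paper. The paper first verifies $\M_{d,j}\ttx_\mu X^g_{\mu,\la}(u-1)=0$ via (\ref{ECleverX}) and then invokes Lemma~\ref{LZModMx}---the structural result that no composition factor of $\Di_j^\mu/\M_{d,j}\ttx_\mu$ embeds into $\M_{d,j}$---to conclude that the induced homomorphism $\Di_j^\mu\to\M_{d,j}$ vanishes. You bypass this entirely by a pure group-algebra computation: since $\Di_j^\mu$ annihilates the right ideal $I_\mu=\sum_{x\in\Si_\mu}(x-1)\k\Si_d$, it suffices to show $X^g_{\mu,\la}(y-1)\in I_\mu$, which is exactly the $\Si_\la$-invariance of the image of $X^g_{\mu,\la}$ in the right permutation module $\k\Si_d/I_\mu\simeq\k_{\Si_\mu}\otimes_{\k\Si_\mu}\k\Si_d$; this in turn is the standard double-coset observation you give. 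Your approach has the advantage of not using the hypothesis $\k=\F$ at all, whereas the paper's argument genuinely needs Lemma~\ref{LZModMx} (hence the field assumption). On the other hand, the paper's route is consistent with its systematic use of the Schur-functor formalism developed in \S\ref{SSSF}, which is what ultimately drives the endomorphism-algebra computation in Theorem~\ref{TSchurEnd3}.
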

\begin{proof}
It suffices to check that $\Di_j^\mu X^g_{\mu,\la}(u-1)=0$ for all $u\in\Si_\la$. 
But for any such $u$, we have using (\ref{ECleverX}) that 
$$\M_{d,j}\ttx_\mu X^g_{\mu,\la}(u-1)
=\M_{d,j}\Big(\sum_{w\in \Si_\mu g\Si_\la\cap\D^\la}w\Big)\ttx_\la (u-1)
=
0.$$
As the right multiplication by $X^g_{\mu,\la}(u-1)$ yields a graded $\IS_{d,j}$-supermodule homomorphism $\Di_j^\mu\to \M_{d,j}$, we deduce from Lemma~\ref{LZModMx} that $\Di_j^\mu X^g_{\mu,\la}(u-1)=0$.  
\end{proof}

In view of the lemma, we denote by $\upzeta^g_{\la,\mu}$ the endomorphism of  $\bigoplus_{\la\in\Comp(n,d)}\Di_j^\la$ which is zero on the summands $\Di_j^\nu$ with $\nu\neq \mu$ and sends $\Di_j^\mu$ to $\Di_j^\la$ via the homomorphism given by the right multiplication by $X^g_{\mu,\la}$. 

\begin{Theorem} \label{TSchurEnd3}
Let $\k=\F$. Then there is a graded superalgebra isomorphism
$$\textstyle S(n,d)\iso \End_{\IS_{d,j}}\big(\bigoplus_{\la\in\Comp(n,d)}\Di_j^\la\big), 
\ \xi^g_{\la,\mu} \mapsto 
\upzeta^g_{\la,\mu}.
$$ 
\end{Theorem}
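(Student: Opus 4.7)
The plan is to define the natural map $\Psi: S(n,d) \to \End_{\IS_{d,j}}\bigl(\bigoplus_{\la\in\Comp(n,d)}\Di_j^\la\bigr)$ by $\xi^g_{\la,\mu}\mapsto \upzeta^g_{\la,\mu}$, verify that it is a graded superalgebra homomorphism (well-defined and multiplicative), establish injectivity by restricting to the sub-supermodules $\bigoplus_\la \M_{d,j}\ttx_\la$, and conclude surjectivity via a dimension count using $\sharp$-duality and the fully faithful Schur functor $\fung$. Well-definedness of each $\upzeta^g_{\la,\mu}$ is the content of the lemma immediately preceding the theorem. For multiplicativity, suppose $\xi^g_{\la,\mu}\xi^h_{\mu,\nu} = \sum_k c_k\,\xi^{g_k}_{\la,\nu}$ in $S(n,d)$; then $Y := X^h_{\nu,\mu}X^g_{\mu,\la} - \sum_k c_k X^{g_k}_{\nu,\la}\in \F\Si_d$ satisfies $\ttx_\nu Y = 0$, which on writing $Y = \sum_{w}y_w w$ means $\sum_{g\in\Si_\nu}y_{gw} = 0$ for every right-coset representative $w$. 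For any $d \in \Di_j^\nu$ the $\Si_\nu$-invariance $dg = d$ shows that $dw$ depends only on the right coset $\Si_\nu w$, so grouping by cosets yields $dY = \sum_{\Si_\nu w}(dw)\bigl(\sum_{g\in\Si_\nu}y_{gw}\bigr) = 0$; hence $\upzeta^g_{\la,\mu}\upzeta^h_{\mu,\nu} = \sum_k c_k \upzeta^{g_k}_{\la,\nu}$ on each $\Di_j^\nu$, and $\Psi$ is an algebra homomorphism.

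For injectivity, formula (\ref{ECleverX}) shows that $\M_{d,j}\ttx_\mu \cdot X^g_{\mu,\la}\subseteq \M_{d,j}\ttx_\la$, so each $\upzeta^g_{\la,\mu}$ restricts on the submodule $\bigoplus_\la \M_{d,j}\ttx_\la$ precisely to the endomorphism $\upxi^g_{\la,\mu}$ of Theorem~\ref{TSchurEnd1}. Composing $\Psi$ with this restriction and the isomorphism $\End_{\IS_{d,j}}(\bigoplus_\la \M_{d,j}\ttx_\la)\iso S(n,d)$ of Theorem~\ref{TSchurEnd1} yields the identity on $S(n,d)$, so $\Psi$ is injective.

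For surjectivity it suffices to compare $\F$-dimensions. Since $\la\mapsto \la^\op$ is a bijection on $\Comp(n,d)$, Lemma~\ref{LS^LaDual} gives $\bigoplus_\la \Di_j^\la \simeq \bigoplus_\la (\Sy_j^\la)^\sharp$, so by the contravariant equivalence $\sharp$ of Lemma~\ref{LSharpHom},
$$\End_{\IS_{d,j}}\bigl(\textstyle\bigoplus_\la \Di_j^\la\bigr) \simeq \End_{\IS_{d,j}}\bigl(\bigoplus_\la \Sy_j^\la\bigr)^{\sop}.$$
By Lemma~\ref{LBetaInd}, $\Sy_j^\la \simeq \fung(M^\la)$ with $\fung := \M_{d,j}\otimes_{\F\Si_d}-$; Theorem~\ref{TMdProj} makes $\fung$ left adjoint to $\funf := \Hom_{\IS_{d,j}}(\M_{d,j},-)$, and $\funf\circ\fung \simeq \id$ by Lemma~\ref{L3.1a}, so adjointness forces $\fung$ to be fully faithful. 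Hence $\End_{\IS_{d,j}}(\bigoplus_\la \Sy_j^\la) \simeq \End_{\F\Si_d}(\bigoplus_\la M^\la) = S(n,d)$, whose opposite has the same $\F$-dimension, so the injective bidegree-preserving algebra map $\Psi$ is a bijection, hence a graded superalgebra isomorphism. The main obstacle is the characteristic-free verification of multiplicativity in paragraph one: because $|\Si_\nu|$ may be zero in $\F$, one cannot recover $d$ from $d\ttx_\nu$, so the naive dimension-matching argument does not automatically upgrade to multiplicativity; the right-coset grouping argument using $\Si_\nu$-invariance sidesteps this precisely. The remaining steps are formal, provided one keeps track of the reindexing $\la\leftrightarrow \la^\op$ in Lemma~\ref{LS^LaDual} and verifies that all of the functorial isomorphisms are carried out in the graded super setting.
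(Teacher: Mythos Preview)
Your proof is correct and follows the same overall strategy as the paper: restrict to $A:=\bigoplus_\la \M_{d,j}\ttx_\la$ to compare with Theorem~\ref{TSchurEnd1}, then count dimensions via $\sharp$-duality, Lemma~\ref{LBetaInd}, adjointness, and Lemma~\ref{L3.1a}. There is one organizational difference worth noting. You verify multiplicativity of $\Psi$ directly by the coset argument, then get injectivity by observing that restriction to $A$ followed by the inverse of the isomorphism in Theorem~\ref{TSchurEnd1} recovers the identity on $S(n,d)$. The paper instead introduces the subalgebra $S\subseteq\End_{\IS_{d,j}}(B)$ of endomorphisms preserving $A$, and uses Lemma~\ref{LZModMx} to show that restriction $S\to\End_{\IS_{d,j}}(A)$ is injective; since restriction is automatically an algebra homomorphism and the $\upzeta^g_{\la,\mu}$ restrict to the $\upxi^g_{\la,\mu}$, multiplicativity comes for free. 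Your route trades the second appeal to Lemma~\ref{LZModMx} for the elementary coset computation, which is arguably cleaner. One small caution: the claim ``$\funf\circ\fung\simeq\id$ forces $\fung$ fully faithful'' is a shortcut---what you actually need (and what the paper does) is the direct chain $\Hom_{C_d}(\fung M^\la,\fung M^\mu)\simeq\Hom_{\F\Si_d}(M^\la,\funf\fung M^\mu)\simeq\Hom_{\F\Si_d}(M^\la,M^\mu)$ from adjointness plus Lemma~\ref{L3.1a}, which gives the dimension equality without requiring that the isomorphism in Lemma~\ref{L3.1a} be the adjunction unit.
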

\begin{proof}
We have 
$$A:=\bigoplus_{\la\in\Comp(n,d)}\M_{d,j}\ttx_\la\subseteq B:=\bigoplus_{\la\in\Comp(n,d)}\Di_j^\la.
$$ 
Let $S$ be the graded subsuperalgebra of $\End_{\IS_{d,j}}(B)$ consisting of the endomorphisms which preserve $A$. Restriction yields a graded superalgebra homomorphism $S\to\End_{\IS_{d,j}}(A)$. This homomorphism is injective by Lemma~\ref{LZModMx}. Moreover, this homomorphism is surjective since by Theorem~\ref{TSchurEnd1}, the elements $\upxi^g_{\la,\mu}$ span $S$, and by definition $\upxi^g_{\la,\mu}$ is the restriction of $\upzeta^g_{\la,\mu}$. In particular, $\{\upzeta^g_{\la,\mu}\mid \la,\mu\in\Comp(n,d),\, g\in {}^\la\D_d^\mu\}$ is a basis of $S$. It remains to prove using dimensions that $S=\End_{\IS_{d,j}}(B)$. On expanding the direct sums this will follow by dimensions if we can show that
$$
\dim\Hom_{C_d}(\Di_j^\mu,\Di_j^\la)=\dim\Hom_{\F\Si_d}(M^\la,M^\mu)
$$
for all $\la,\mu\in\Comp(n,d)$. But
\begin{align*}
\dim\Hom_{C_d}(\Di_j^\mu,\Di_j^\la)
&=\dim\Hom_{C_d}((\Di_j^\la)^\sharp,(\Di_j^\mu)^\sharp)
\\
&=\dim\Hom_{C_d}(\Sy_j^{\la^\op},\Sy_j^{\mu^\op})
\\
&=\dim\Hom_{C_d}(\fung M^{\la^\op},\fung M^{\mu^\op})
\\
&=\dim\Hom_{C_d}( M^{\la^\op},\funf\fung M^{\mu^\op})
\\
&=\dim\Hom_{C_d}( M^{\la^\op},M^{\mu^\op})
\\
&=\dim\Hom_{C_d}( M^{\la},M^{\mu})
\end{align*}
using consecutively 
Lemma~\ref{LSharpHom}, Lemma~\ref{LS^LaDual}, Lemma~\ref{LBetaInd}, the fact that the functor $\fung$ is left adjoint to the functor $\funf$ (see \S\ref{SSSF}), Lemma~\ref{L3.1a}, and the isomorphism $M^{\nu^\op}\cong M^\nu$ coming from the fact that the subgroups $\Si_{\nu^\op}$ and $\Si_\nu$ of $\Si_d$ are conjugate. 
\end{proof}

\chapter{Morita equivalence of imaginary and classical Schur algebras}
We continue to work with a fixed $j\in J$. 
In Theorem~\ref{TSchurEnd3}, we have established the isomorphism $\textstyle S(n,d)\cong \End_{\IS_{d,j}}\big(\bigoplus_{\la\in\Comp(n,d)}\Di_j^\la\big).
$ The main result of this chapter is that, under the assumption $n\geq d$, the graded supermodule $\bigoplus_{\la\in\Comp(n,d)}\Di_j^\la$ is a projective generator for the imaginary graded Schur superalgebra $\IS_{d,j}$, thus $\IS_{d,j}$ is graded Morita superequivalent to the classical Schur algebra $S(n,d)$. 

To establish projectivity of the graded $\IS_{d,j}$-supermodules $\Di_j^\la$, we will compare them to certain projective graded $C_d$-supermodules which we call (imaginary) Gelfand-Grave modules.

\section{Imaginary Gelfand-Graev modules}
\label{SSGGM}

\subsection{  The modules $\G_{d,j}$ and $\G_{\la,j}$}

Recall from \S\S\ref{SSGGW},\,\ref{SSGGIdempotents} the (divided power) Gelfand-Graev word 
$\ggw^{d,j}$ and the corresponding idempotent $\ggi^{d,j}=1_{\ggw^{d,j}}$. 
We define the {\em Gelfand-Graev module} to be  
the projective graded $C_{d}$-supermodule
\begin{align*}
\G_{d,j}:=
C_{d}\ggi^{d,j}.
\end{align*}

We identify $\k\Si_d=\End_{C_{d}}(\M_{d,j})^\sop$ 
via the isomorphism of 
Theorem~\ref{TEndMd}. Then $
\Hom_{C_{d}}(\G_{d,j},\M_{d,j})
$ is naturally a right $\k\Si_d$-module.

\begin{Lemma} \label{LHomGM} 
We have an isomorphism of right $\k\Si_d
$-modules
$$
\Hom_{C_{d}}(\G_{d,j},\M_{d,j})\cong \k_{\Si_d}.
$$
\end{Lemma}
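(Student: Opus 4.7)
The plan is to use the projectivity of $\G_{d,j}=C_d\ggi^{d,j}$ to convert the $\Hom$-space into a weight space of $\M_{d,j}$, and then invoke the explicit basis results already established.

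First I would use the standard idempotent-projective identification
$$\Hom_{C_d}(C_d\ggi^{d,j},\,\M_{d,j})\;\iso\;\ggi^{d,j}\M_{d,j},\qquad \phi\mapsto\phi(\ggi^{d,j}).$$
Under the $\k\Si_d$-action coming from $\k\Si_d=\End_{C_d}(\M_{d,j})^\sop$ (Theorem~\ref{TEndMd}), the right action on the Hom-space is $(\phi\cdot w)(\ggi^{d,j})=\phi(\ggi^{d,j})\cdot w$. Hence the isomorphism above is an isomorphism of right $\k\Si_d$-modules, where on the right-hand side $\Si_d$ acts by restriction of its action on $\M_{d,j}$.

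Next I would specialize Lemma~\ref{LMWtSpacesNew} to the composition $\la=(d)$. In that case ${}^{(d)}\D_d=\{1\}$ and $\lgath_{(d),j}=\lgath_{d,j}$, so the lemma gives that $\ggi^{d,j}\M_{d,j}$ is a free $\k$-module of rank $1$, spanned by the single vector $\lgath_{d,j}v_{d,j}$.

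Finally, Lemma~\ref{LTrivU} says that $\lgath_{d,j}v_{d,j}\cdot w=\lgath_{d,j}v_{d,j}$ for every $w\in\Si_d$, so $\Si_d$ acts trivially on this rank-one $\k$-module. Combining the three steps yields
$$\Hom_{C_d}(\G_{d,j},\M_{d,j})\;\cong\;\ggi^{d,j}\M_{d,j}\;=\;\k\cdot \lgath_{d,j}v_{d,j}\;\cong\;\k_{\Si_d}$$
as right $\k\Si_d$-modules. There is no real obstacle here: all the nontrivial work has already been done in establishing Lemma~\ref{LMWtSpacesNew} (which in turn rested on Proposition~\ref{LDifficult} and Lemma~\ref{LFact}) and in Proposition~\ref{PUpsilon}/Lemma~\ref{LTrivU}. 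The only thing to keep track of is the sign convention $(-1)^{\ttl(w)(j+1)}$ in~(\ref{EPhiW}) defining the $\Si_d$-action, but this was precisely engineered (see the proof of Proposition~\ref{PUpsilon}) so that the element $\lgath_{d,j}v_{d,j}$ is genuinely $\Si_d$-invariant, rather than signed.
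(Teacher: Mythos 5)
Your proposal is correct and follows essentially the same route as the paper's own proof: both pass through the idempotent identification $\Hom_{C_d}(C_d\ggi^{d,j},\M_{d,j})\cong \ggi^{d,j}\M_{d,j}$, invoke Lemma~\ref{LMWtSpacesNew} to see this is free of rank one on $\lgath_{d,j}v_{d,j}$, and then apply Lemma~\ref{LTrivU} to conclude the $\Si_d$-action is trivial. Your extra remark about the sign convention in~(\ref{EPhiW}) is a helpful sanity check but is not a new ingredient.
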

\begin{proof}
By Lemma~\ref{LMWtSpacesNew}, 
$\ggi^{d,j}\M_{d,j}$ has $\k$-basis $\{\lgath_{d,j}v_{d,j}\}$. 
So, as $\k$-modules, 
$$
\Hom_{C_{d}}(\G_{d,j},\M_{d,j})\cong 
\ggi^{d,j}\M_{d,j}\cong \k.
$$   
Now, as $\k\Si_d$-modules, $\Hom_{C_{d}}(\G_{d,j},\M_{d,j})\cong \k_{\Si_d}
$ thanks to Lemma~\ref{LTrivU}.
\end{proof}

Recall from Theorem~\ref{LAmountAmount}(ii) the composition factors $\{L_j(\la)\mid\la\in\Par(d)\}$ of $M_{d,j}$. We denote by $P_j(\la)$ the projective cover of $L_j(\la)$ as a graded $C_d$-supermodule. 

\begin{Lemma} \label{LProjCover} 
Let $\k=\F$. Then $\G_{d,j}\cong P_j\big((d)\big)$. 
\end{Lemma}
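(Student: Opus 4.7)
The plan is to use projectivity of $\G_{d,j}$ and compute, for each $\bmu \in \Par^J(d)$, the multiplicity $n_\bmu$ of the projective cover $P(\bmu)$ as a summand by identifying it with the graded dimension of $\Hom_{C_d}(\G_{d,j}, L(\bmu)) \simeq \ggi^{d,j} L(\bmu)$.

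First I would observe that $\G_{d,j} = C_d \ggi^{d,j}$ is a direct summand of the regular module $C_d$, hence projective, and so
$$\G_{d,j} \simeq \bigoplus_{\bmu \in \Par^J(d)} \funQ^{m_\bmu}\Uppi^{\epsilon_\bmu} P(\bmu)^{\oplus n_\bmu}$$
by Krull-Schmidt (in the graded super category), with $n_\bmu$ determined by the above $\Hom$-computation.

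Second, to handle $\bmu$ not concentrated at color $j$, the key observation is that by (\ref{ELBla}) and Theorem~\ref{LAmountAmount}, $L(\bmu)$ is the head of $\GGI^d_{\ud(\bmu)}\bigl(L_0(\mu^{(0)}) \boxtimes \cdots \boxtimes L_{\ell-1}(\mu^{(\ell-1)})\bigr)$, so $\ggi^{d,j} L(\bmu)$ is a quotient of $\ggi^{d,j} \GGI^d_{\ud(\bmu)} V$ where $V = L_0(\mu^{(0)}) \boxtimes \cdots \boxtimes L_{\ell-1}(\mu^{(\ell-1)})$. By Corollary~\ref{CCFMjAmountNew}, each $\operatorname{GGCh}(L_i(\mu^{(i)}))$ is supported on colored compositions with every color equal to $i$, so $\operatorname{GGCh}(V)$ has color profile $\ud(\bmu)$. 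A Gelfand-Graev shuffle analog of Corollary~\ref{CShuffle} (extractable from the Mackey Theorem~\ref{TGGMackey} together with Lemma~\ref{LGGIndProjId} and the divided-power description in Lemma~\ref{LFact}) then gives that $\operatorname{GGCh}(\GGI^d_{\ud(\bmu)} V)$ is supported only on colored compositions $(\la,\bi)$ with $\ud(\la,\bi) = \ud(\bmu)$. Since $((d),j)$ has color profile $\ud_j := (0,\ldots,d,\ldots,0)$ with $d$ in position $j$, the space $\ggi^{d,j} \GGI^d_{\ud(\bmu)} V$ vanishes whenever $\ud(\bmu) \neq \ud_j$, and therefore $n_\bmu = 0$ for any such $\bmu$.

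Third, for $\bmu = (\emptyset,\ldots,\la,\ldots,\emptyset)$ (with $\la \in \Par(d)$ in position $j$), $L(\bmu) = L_j(\la)$, and Corollary~\ref{CCFMjAmountNew} gives
$$\operatorname{GGCh}(L_j(\la)) = (\la,j) + \sum_{\mu \lhd \la} b_{\mu,j}\,(\mu,j).$$
The coefficient of $((d),j)$ equals $\delta_{\la,(d)}$: the leading term contributes iff $\la = (d)$, and no $\mu$ in the sum can equal $(d)$ because $(d)$ is the unique maximum of $(\Comp(d),\unlhd)$, making $(d) \lhd \la$ impossible. Thus $n_{L_j((d))} = 1$ and all other $n_\bmu$ vanish, so $\G_{d,j} \cong P_j((d))$ as graded $C_d$-supermodules up to the bidegree shift, which is pinned down by Lemma~\ref{LHomGM} via the nonzero map $\G_{d,j} \to C_d \lgath_{d,j} v_{d,j} \subseteq \M_{d,j}$.

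The main obstacle is the rigorous verification of the Gelfand-Graev shuffle step: one needs that induction $\GGI^d_\ud$ preserves color support of GG characters, which requires a Mackey-type argument using the parabolic basis Lemma~\ref{LGGIndProjId} together with divided-power bookkeeping from Lemma~\ref{LFact}. Once this color-conservation is secured, the remaining combinatorics reduce to the maximality of $(d)$ in dominance order and the character formula from Corollary~\ref{CCFMjAmountNew}.
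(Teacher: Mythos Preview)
Your strategy matches the paper's: both compute $\Hom_{C_d}(\G_{d,j},L)\cong\ggi^{d,j}L$ for each irreducible $L$ and use (\ref{EFChL}) to handle $L=L_j(\la)$. The paper's proof is extremely terse, citing only Theorem~\ref{LAmountAmount}(ii) and (\ref{EFChL}), so your elaboration is reasonable.

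However, your second step has a real problem. You propose showing $\ggi^{d,j}L(\bmu)=0$ for $\bmu$ not concentrated at $j$ via a Gelfand--Graev Mackey argument. But $\ggi^{d,j}=\ggi^{(d),j}$ lives over the composition $\la=(d)$ with a single part, so $\GGR^d_{(d)}$ is the identity functor and ${}^{(d)}\D^{\ud}_d=\{1\}$; the Mackey filtration is trivial and yields no information whatsoever about $\ggi^{(d),j}\GGI^d_{\ud}V$. Your ``color conservation'' claim for $\operatorname{GGCh}(\GGI^d_\ud V)$ may well be true, but it is not extractable from Theorem~\ref{TGGMackey} in the way you suggest. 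One could instead pass to the $\hat C_d$ level via Lemma~\ref{LfunGInd}, use the word-level shuffle (Corollary~\ref{CShuffle}), and argue combinatorially that $\hat\ggw^{d,j}$ cannot arise as a shuffle of cuspidal $I^\de$-words unless they are all words of $\hat L_j$---but this needs something stronger than Lemma~\ref{LConsecutive}, which assumes $w\in{}^{\nu^{d,j}}\D^{p^d}_{dp}$.

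A much cleaner route bypasses this case entirely: first prove that $\G_{d,j}$ is \emph{indecomposable}. Indeed, $\End_{C_d}(\G_{d,j})\cong\ggi^{d,j}C_d\ggi^{d,j}$, and since the regrading shift between $\ggi^{d,j}$ and itself is zero, this equals $\ggis^{d,j}\zC_d\ggis^{d,j}$ as a graded algebra. By Lemma~\ref{Lb=1}(i), this is non-negatively graded with degree-zero part $\F\,\ggis^{d,j}$, hence local. Thus $\G_{d,j}$ is an indecomposable projective, so $\G_{d,j}\cong P(L)$ for a unique irreducible $L$, and (\ref{EFChL}) immediately gives $L\cong L_j((d))$ since $\dim\ggi^{d,j}L_j((d))=1$.
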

\begin{proof}
For any $V\in\mod{C_d}$, we have 
$$
\Hom_{C_d}(\G_{d,j},V)=\Hom_{C_d}(
C_d\ggi^{d,j},V)\cong 
\ggi^{d,j}V.
$$
So the lemma follows from Theorem~\ref{LAmountAmount}(ii) and (\ref{EFChL}).\end{proof}

More generally, for $\la\in\Comp(n,d)$, we consider 
the projective graded supermodule over the parabolic $C_{\la}=C_{\la_1}\otimes \dots\otimes C_{\la_n}$ 
$$
\G_{\la,j}:=
\G_{\la_1,j}\boxtimes\dots\boxtimes \G_{\la_n,j}.
$$ 
and the projective graded $C_d$-supermodule 
\begin{equation}\label{EGUpperLa}
\G_j^\la:=\GGI_\la^d\G_{\la,j}\simeq 
C_n\ggi^{\la,j},
\end{equation}
see Lemma~\ref{LGGIndProjId} for the last isomorphism.

We identify $\k\Si_\la=\End_{C_{d}}(\M_{\la,j})^\sop$ 
via the isomorphism of 
Corollary~\ref{CEndTens}, so $
\Hom_{C_{\la}}(\G_{\la,j},\M_{\la,j})
$ is naturally a right $\k\Si_\la$-module. 
From Lemma~\ref{LHomGM}, we get:

\begin{Lemma} \label{LHomGMLa}
We have an isomorphism of right $\k\Si_\la$-modules
$$
\Hom_{C_{\la}}(\G_{\la,j},\M_{\la,j})\cong \k_{\Si_\la}.
$$
\end{Lemma}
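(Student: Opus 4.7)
The plan is to reduce the statement to Lemma~\ref{LHomGM} by exploiting the outer tensor product structure. Recall that by definition
\[
C_\la = C_{\la_1}\otimes\dots\otimes C_{\la_n},\quad \G_{\la,j} = \G_{\la_1,j}\boxtimes\dots\boxtimes \G_{\la_n,j},\quad \M_{\la,j} = \M_{\la_1,j}\boxtimes\dots\boxtimes \M_{\la_n,j},
\]
and $\Si_\la = \Si_{\la_1}\times\dots\times\Si_{\la_n}$. I will apply the standard $\Hom$–$\boxtimes$ adjunction for outer tensor products of graded supermodules: for graded superalgebras $A_1,\dots,A_n$ and finitely generated projective graded $A_r$-supermodules $P_r$ (together with arbitrary graded supermodules $V_r$) there is a bidegree $(0,\0)$ isomorphism
\[
\Hom_{A_1\otimes\dots\otimes A_n}(P_1\boxtimes\dots\boxtimes P_n,\, V_1\boxtimes\dots\boxtimes V_n) \;\simeq\; \Hom_{A_1}(P_1,V_1)\otimes\dots\otimes\Hom_{A_n}(P_n,V_n).
\]
Each $\G_{\la_r,j} = C_{\la_r}\ggi^{\la_r,j}$ is projective, so this applies and yields
\[
\Hom_{C_\la}(\G_{\la,j},\M_{\la,j}) \;\cong\; \bigotimes_{r=1}^{n}\Hom_{C_{\la_r}}(\G_{\la_r,j},\M_{\la_r,j}).
\]

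Next I apply Lemma~\ref{LHomGM} to each tensor factor, obtaining $\Hom_{C_{\la_r}}(\G_{\la_r,j},\M_{\la_r,j})\cong\k_{\Si_{\la_r}}$ as right $\k\Si_{\la_r}$-modules. Taking the tensor product over $r$ gives an isomorphism with $\k_{\Si_{\la_1}}\otimes\dots\otimes\k_{\Si_{\la_n}}\cong \k_{\Si_\la}$ as a $\k$-module.

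The only step requiring genuine checking is compatibility of the right $\k\Si_\la$-actions. Under the identification $\k\Si_\la \cong \End_{C_\la}(\M_{\la,j})^{\sop}$ of Corollary~\ref{CEndTens}, an element $w=(w_1,\dots,w_n)\in\Si_\la$ acts on $\Hom_{C_\la}(\G_{\la,j},\M_{\la,j})$ by postcomposition with $\phi_w$. By the explicit description in Corollary~\ref{CEndTens}, $\phi_w(v_{\la,j})=(-1)^{\ttl(w)(j+1)}\dot w v_{\la,j}$, and since $\dot w = \dot w_1\otimes\dots\otimes \dot w_n$ and $\ttl(w)=\sum_r\ttl(w_r)$, the endomorphism $\phi_w$ decomposes as the outer tensor product of the endomorphisms $\phi_{w_r}$ on the individual factors $\M_{\la_r,j}$. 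Thus the displayed tensor-product isomorphism above is $\Si_\la$-equivariant, where $\Si_\la$ acts on the right-hand side diagonally through its decomposition $\Si_{\la_1}\times\dots\times\Si_{\la_n}$. Combining with Lemma~\ref{LHomGM} applied componentwise, we obtain the desired isomorphism of right $\k\Si_\la$-modules $\Hom_{C_\la}(\G_{\la,j},\M_{\la,j})\cong\k_{\Si_\la}$. No genuine obstacle is expected here; the argument is essentially bookkeeping once Lemma~\ref{LHomGM} is in hand.
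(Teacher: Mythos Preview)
Your proof is correct and takes essentially the same approach as the paper, which simply states that the lemma follows from Lemma~\ref{LHomGM} without further elaboration; you have supplied the natural details of the tensor-product reduction and the compatibility of the $\Si_\la$-actions that the paper leaves implicit.
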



From Lemma~\ref{LProjCover}, we get:

\begin{Lemma} \label{LProjCoverLa} 
Let $\k=\F$ and $\la\in\Comp(n,d)$. Then $$\G_{\la,j}\cong P_j\big((\la_1)\big)\boxtimes\dots\boxtimes P_j\big((\la_n)\big).$$ 
\end{Lemma}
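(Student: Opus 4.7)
The proof will be essentially immediate from Lemma~\ref{LProjCover} applied componentwise, so I would structure it as follows.

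By definition we have $\G_{\la,j} = \G_{\la_1,j}\boxtimes\dots\boxtimes \G_{\la_n,j}$ as a graded supermodule over the parabolic $C_{\la}=C_{\la_1}\otimes \dots\otimes C_{\la_n}$. The key input is Lemma~\ref{LProjCover}, which identifies $\G_{c,j}\cong P_j((c))$ for each $c\in\N_+$ as a graded $C_c$-supermodule. Applying this to each factor $\la_r$ (with $c=\la_r$), I obtain $\G_{\la_r,j}\cong P_j((\la_r))$ as a graded $C_{\la_r}$-supermodule, and taking the external tensor product over $r=1,\dots,n$ yields the isomorphism stated in the lemma.

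The only point that merits brief justification is that the external tensor product of projective covers over the tensor product algebra is again a projective cover. This is standard over a field: each $P_j((\la_r))$ is a projective indecomposable graded $C_{\la_r}$-supermodule with irreducible head $L_j((\la_r))$, and over $\F$ the external tensor product $P_j((\la_1))\boxtimes\dots\boxtimes P_j((\la_n))$ is a projective indecomposable graded $C_{\la}$-supermodule whose head is $L_j((\la_1))\boxtimes\dots\boxtimes L_j((\la_n))$, so it is the projective cover of the latter.

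I do not foresee any real obstacle here; the lemma is a direct corollary of Lemma~\ref{LProjCover} together with the compatibility of projective covers with external tensor products over a field.
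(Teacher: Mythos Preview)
Your proposal is correct and matches the paper's approach: the paper simply states that the lemma follows from Lemma~\ref{LProjCover}, and your argument spells out exactly this componentwise application using the definition $\G_{\la,j} = \G_{\la_1,j}\boxtimes\dots\boxtimes \G_{\la_n,j}$. Your additional paragraph about tensor products of projective covers is not actually needed for the statement as written (the lemma only asserts the isomorphism, not that the right-hand side is a projective cover of something), but it does no harm.
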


\begin{Lemma} 
Let $\k=\F$ and $\la\in\Par(d)$. Then $\dim \Hom_{C_d}(\G_j^\la,L_j(\la))=1$. 
\end{Lemma}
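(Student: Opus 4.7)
The plan is to reduce the assertion to a direct computation of a single weight space of $L_j(\la)$ via the adjunction/projective identification, and then read off the dimension from the Gelfand-Graev formal character formula already established.

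First, I would use the isomorphism $\G_j^\la \simeq C_d \ggi^{\la,j}$ from (\ref{EGUpperLa}). Since $\ggi^{\la,j}$ is a bidegree $(0,\0)$ idempotent in $C_d$, the standard identification for Homs out of cyclic projectives gives
\[
\Hom_{C_d}(\G_j^\la, L_j(\la)) \;\cong\; \Hom_{C_d}(C_d \ggi^{\la,j}, L_j(\la)) \;\cong\; \ggi^{\la,j} L_j(\la).
\]
So it suffices to show that $\dim \ggi^{\la,j} L_j(\la) = 1$.

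Second, I would invoke Corollary~\ref{CCFMjAmountNew}, which produces $L_j(\la)$ as the unique composition factor of $M_{d,j} \tty_{\la'}$ satisfying
\[
\operatorname{GGCh}(L_j(\la)) \;=\; (\la, j) + \sum_{\mu \lhd \la} b_{\mu,j} \cdot (\mu, j),
\]
with $b_{\mu,j} \in \N$. By the definition of $\operatorname{GGCh}$, the coefficient of $(\la, j)$ on the left equals $\dim \ggi^{\la,j} L_j(\la)$, and on the right it is $1$. Combining this with the first step concludes the proof.

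There is no real obstacle here: once one recognizes $\G_j^\la$ as the cyclic projective $C_d \ggi^{\la,j}$, the statement is exactly the multiplicity-one leading term already recorded in the Gelfand-Graev formal character of $L_j(\la)$. The only point requiring a tiny bit of care is that the identification of Hom with the weight space is a bidegree $(0,\0)$ isomorphism of graded superspaces, so that counting dimensions on both sides is legitimate; this follows from the fact that $\ggi^{\la,j}$ lies in $C_d^0{}_{\0}$.
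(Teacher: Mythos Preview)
Your proof is correct and follows essentially the same approach as the paper: both identify $\G_j^\la$ with $C_d\ggi^{\la,j}$ via (\ref{EGUpperLa}), reduce to computing $\dim \ggi^{\la,j}L_j(\la)$ via the standard Hom-weight space identification, and read off the answer from the Gelfand-Graev character formula (\ref{EFChL}) in Corollary~\ref{CCFMjAmountNew}.
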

\begin{proof}
We have 
$$
\dim\Hom_{C_d}(\G_j^\la,L_j(\la))=\dim\Hom_{C_d}(C_n\ggi^{\la,j},L_j(\la))=\dim \ggi^{\la,j}L_j(\la)=1
$$
by (\ref{EGUpperLa}) 
and (\ref{EFChL}). 
\end{proof}

\begin{Proposition}\label{PResGGModule}
Let $\k=\F$ and $\la\in\Comp(n,d)$. Then $\GGR^d_\la \G_{d,j}\cong \G_{\la,j}$. 
\end{Proposition}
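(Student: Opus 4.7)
The plan is to transfer the statement to the cuspidal algebra $\hat C_\la$ using the Morita equivalence of Corollary~\ref{CEquivFG}(ii), and then exhibit the required isomorphism using the parabolic structure of $\hat C_d$ together with an explicit element built from a permutation relating the two Gelfand--Graev words.

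First, I apply the equivalence $\funG_\la$ (quasi-inverse to $\funF_\la$) to both sides of the claimed isomorphism. Using the identity $\GGR^d_\la \simeq \funF_\la\circ\Res^{d\de}_{\la\de}\circ\funG_d$ from (\ref{EGGIInd}), together with $\funG_\la\funF_\la \simeq \id$, and the identifications $\funG_d\G_{d,j} = \hat C_d\ggi_d\otimes_{C_d}C_d\ggi^{d,j} \simeq \hat C_d\ggi^{d,j}$ and $\funG_\la\G_{\la,j}\simeq \hat C_\la\ggi^{\la,j}$ coming directly from the definitions and (\ref{EFGFunctorsLaOuter}), the statement reduces to proving
\[
1_{\la\de}\,\hat C_d\,\ggi^{d,j}\;\cong\;\hat C_\la\,\ggi^{\la,j}
\]
as graded $\hat C_\la$-supermodules.

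Next, I construct an explicit $\hat C_\la$-module homomorphism from right to left by right-multiplying by a distinguished homogeneous element $\eta\in\ggi^{\la,j}\hat C_d\ggi^{d,j}$. Such an $\eta$ is built from $\psi_w\,1_{\hat\ggw^{d,j}}$ for a specific permutation $w\in\Si_{dp}$ that sends the non-divided-power word $\hat\ggw^{d,j}$ to $\hat\ggw^{\la,j}$ (both words have the same underlying multiset of letters by inspection of \S\ref{SSGGW}), then symmetrized by the divided-power idempotents on each side via Lemma~\ref{LFact}. That this map is an isomorphism is then verified by a graded dimension/character count: using Lemma~\ref{L030216}(iii) and Theorem~\ref{TBasis}, the graded dimensions of both sides can be expressed as sums over suitable coset representatives, and Lemma~\ref{LFact} accounts for the divided-power normalizations (which produce the same combinatorial factor on both sides since $\hat\ggw^{d,j}$ and $\hat\ggw^{\la,j}$ share the same letters).

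The main obstacle is the verification step. Although $\hat\ggw^{d,j}$ and $\hat\ggw^{\la,j}$ contain the same letters, their arrangements are quite different, so $\hat C_d\ggi^{d,j}$ and $\hat C_d\ggi^{\la,j}$ are genuinely non-isomorphic projective $\hat C_d$-modules: the latter is a parabolic induction by Lemma~\ref{LTensImagIsImag}(ii), whereas the former is not, by Lemma~\ref{LIndecGGW}. The content of the proposition is the non-obvious fact that, after restriction to $\hat C_\la$, they become isomorphic. Proving the surjectivity of the constructed map requires showing that basis elements of $1_{\la\de}\hat C_d\ggi^{d,j}$ arising from permutations $u\in{}^{p\la}\D_{pd}$ not adapted to the parabolic structure can nevertheless be rewritten in terms of the cyclic generator $\eta$, modulo the cuspidal relations. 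This is a delicate manipulation within the defining relations of the quiver Hecke superalgebra, in the same spirit as (and likely invoking) the computations of Lemmas~\ref{LConsecutive} and \ref{LNonZeroU} and of Proposition~\ref{LDifficult}, where it was shown that only permutations factoring through $\chi_{d,j}$ contribute non-trivially.
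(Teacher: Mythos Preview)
Your reduction to showing $1_{\la\de}\hat C_d\ggi^{d,j}\cong\hat C_\la\ggi^{\la,j}$ via the Morita equivalence is correct, but the verification step you outline does not go through with the tools you cite. Theorem~\ref{TBasis} gives a basis of $R_{d\de}$, not of its quotient $\hat C_d$, and Lemma~\ref{L030216}(iii) only tells you that $1_{\la\de}\hat C_d$ is free over $\hat C_\la$ with basis $\{1_{\la\de}\psi_w\mid w\in{}^{p\la}\D_{pd}\}$; it says nothing about how right multiplication by $\ggi^{d,j}$ interacts with this basis. So you cannot read off the graded dimension of $1_{\la\de}\hat C_d\ggi^{d,j}$ from these results. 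Moreover, even if the graded dimensions matched, that alone would not give an isomorphism of $\hat C_\la$-modules: you would still need either an explicit bijective map, or an argument that both sides are indecomposable projectives with the same head. Your proposed element $\eta$ gives a map in one direction, but you yourself identify that surjectivity is the ``main obstacle'' and only gesture at Proposition~\ref{LDifficult}-style computations without carrying them out; those lemmas control $\ggi^{d,j}C_d\ggi_{\om_d}$, not the restriction you need here.

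The paper avoids all of this by arguing representation-theoretically. It first observes that $\GGR^d_\la\G_{d,j}$ is projective over $C_\la$ (Corollary~\ref{CGGRProjtoProj}), so it suffices to determine its head. Using the adjunction of Lemma~\ref{LAnotherAdj} together with Lemma~\ref{LHomGM}, one gets $\dim\Hom_{C_\la}(\GGR^d_\la\G_{d,j},\M_{\la,j})=1$, so exactly one composition factor of $\M_{\la,j}$ appears in the head. The heart of the argument is then to show, via the classification of irreducibles in Theorem~\ref{LAmountAmount} and Corollary~\ref{CTensorcd}, that the only irreducible $L$ with $\Hom_{C_\la}(\GGR^d_\la\G_{d,j},L)\neq 0$ is $L_j((\la_1))\boxtimes\dots\boxtimes L_j((\la_n))$. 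Since $\G_{\la,j}$ is the projective cover of precisely this simple (Lemma~\ref{LProjCoverLa}), the result follows. This bypasses any need for explicit basis or dimension computations in $\hat C_d$.
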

\begin{proof}
By Corollary~\ref{CGGRProjtoProj}, $\GGR^d_\la \G_{d,j}$ is a projective graded supermodule over $C_\la$. Moreover, by Lemmas~\ref{LAnotherAdj} and \ref{LHomGM}, we have 
\begin{align*}
\dim\Hom_{C_\la}(\GGR^d_\la \G_{d,j},M_{\la,j})
&=
\dim\Hom_{C_d}(\G_{d,j},\GGI^d_{\la^\op}M_{\la^\op,j})
\\&=\dim\Hom_{C_d}(\G_{d,j},M_{d,j})=1,
\end{align*}
hence exactly one composition factor of $M_{\la,j}$ appears in the head of $\GGR^d_\la \G_{d,j}$. 
By Lemma~\ref{LProjCoverLa}, $\G_{\la,j}$ is the projective cover of $L_j\big((\la_1)\big)\boxtimes\dots\boxtimes L_j\big((\la_n)\big)$. So to complete the proof of the proposition it remains to show that for an irreducible graded $C_\la$-supermodule $L$, we have 
$
\Hom_{C_\la}(\GGR^d_\la \G_{d,j},L)=0
$
unless $L\cong L_j((\la_1))\boxtimes\dots L_j((\la_n))$. 

Recalling the classification of irreducible modules from Theorem~\ref{LAmountAmount}, an arbitrary irreducible graded supermodule over $C_\la$ has form $L(\bmu_{(1)})\boxtimes\dots\boxtimes L(\bmu_{(n)})$ where $\bmu_{(r)}=(\mu_{(r)}^{(0)},\dots,\mu_{(r)}^{(\ell-1)})\in\Par^J(\la_r)$ for $r=1,\dots,n$. 
Now, by Lemma~\ref{LAnotherAdj}, we have 
\begin{align*}
&\Hom_{C_\la}\big(\GGR^d_\la \G_{d,j}, L(\bmu_{(1)})\boxtimes\dots\boxtimes L(\bmu_{(n)})\big)
\\
\simeq\, &\Hom_{C_d}\big(\G_{d,j}, \GGI_{\la^\op}^d \big(L(\bmu_{(n)})\boxtimes\dots\boxtimes L(\bmu_{(1)})\big)\big).
\end{align*}

For $r=1,\dots,n$, 
by definition of $L(\bmu_{(r)})$, we have 
$$L(\bmu_{(r)})=\GGI^{\la_r}_{|\mu_{(r)}^{(0)}|,\dots,|\mu_{(r)}^{(\ell-1)}|}\big(L(\mu_{(r)}^{(0)})\boxtimes\dots\boxtimes L(\mu_{(r)}^{(\ell-1)})\big).$$ 
Note that for any $i\in J$, we have that all composition factors of 
$$\GGI_{|\mu_{(1)}^{(i)}|,\dots,|\mu_{(n)}^{(i)}|}^{|\mu_{(1)}^{(i)}|+\dots+|\mu_{(n)}^{(i)}|}\big((L(\mu_{(1)}^{(i)})\boxtimes\dots\boxtimes L(\mu_{(n)}^{(i)})\big)
$$
are composition factors of $M_{|\mu_{(1)}^{(i)}|+\dots+|\mu_{(n)}^{(i)}|,i}$, i.e. have form $L_i(\nu^{(i)})$ for a partition $\nu^{(i)}\in \Par(|\mu_{(1)}^{(i)}|+\dots+|\mu_{(n)}^{(i)}|)$. 
So, using Corollary~\ref{CLinInd}, we deduce that all composition factors of $\GGI_{\la^\op}^d \big(L(\bmu_{(n)})\boxtimes\dots\boxtimes L(\bmu_{(1)})\big)$ are of the form $L(\bnu)$ for $\bnu=(\nu^{(0)},\dots,\nu^{(\ell-1)})$ with each $\nu^{(i)}\in \Par(|\mu_{(1)}^{(i)}|+\dots+|\mu_{(n)}^{(i)}|)$. 
By Lemma~\ref{LProjCover}, we deduce that for an irreducible $C_d$-module $L$, we have 
$
\Hom_{C_\la}(\GGR^d_\la \G_{d,j},L)=0
$
unless $L\cong L_j(\mu^{(1)})\boxtimes\cdots\boxtimes L_j(\mu^{(n)})$, where $\mu^{(r)}\in\Par(\la_r)$ for $r=1,\dots, n$. 

Moreover, again 
\begin{align*}
&\Hom_{C_\la}(\GGR^d_\la \G_{d,j},L_j(\mu^{(1)})\boxtimes\cdots \boxtimes L_j(\mu^{(n)}))
\\
\simeq\, &\Hom_{C_d}(\G_{d,j},\GGI^d_{\la^\op} \big(L_j(\mu^{(n)})\boxtimes\cdots\boxtimes  L_j(\mu^{(1)})\big)).
\end{align*}
Since $M_{d,j}\cong \GGI^d_{\la^\op} (M_{\la_n,j}\boxtimes \dots\boxtimes M_{\la_1,j})$, the multiset of the composition factors of $M_{d,j}$ is the disjoint union of the multisets of the composition factors of the modules  $\GGI^d_{\la^\op} \big(L_j(\mu^{(n)})\boxtimes\cdots\boxtimes  L_j(\mu^{(1)})\big)$ as $L_j(\mu^{(r)})$ runs over the multiset of the composition factors of $M_{\la_r,j}$ for $r=1,\dots,n$. 
Lemmas~\ref{LProjCover} and \ref{LHomGM} now imply that for exactly one tuple $(\mu^{(1)},\dots,\mu^{(n)})$ we have that the above Hom-space is $1$-dimensional and for all other tuples, it is trivial. But by Corollary~\ref{CTensorcd}, we have that $L_j((d))$ is a composition factor of $\GGI^d_{\la^\op} L_j\big((\la_n)\big)\boxtimes\dots L_j\big((\la_1)\big)$, so the only tuple $(\mu^{(1)},\dots,\mu^{(n)})$ for which the above Hom-space is non-trivial must be of the form $\big((\la_1),\dots,(\la_n)\big)$. 
\end{proof}

\subsection{  The modules $\Y_j^\la$}
To compare $\Di_j^\la$ with $\G_j^\la$ we introduce the intermediate modules $\Y_j^\la$. 
First, we consider the graded $C_d$-subsupermodule of $M_{d,j}$
\begin{equation}\label{EYdj}
\Y_{d,j}:=C_d\ggi^{d,j}\M_{d,j}.
\end{equation} 
More generally, for $\la\in\Comp(n,d)$, we consider the graded 
$C_\la$-subsupermodule of $M_{\la,j}$
\begin{equation}\label{EYLa}
\Y_{\la,j}:=C_\la\ggi^{\la,j}\M_{\la,j}\simeq \Y_{\la_1,j} \boxtimes\dots\boxtimes \Y_{\la_n,j}
\end{equation}
and the graded $C_d$-supermodule
\begin{equation}\label{EUpper}
\Y_j^\la:=\GGI_\la^d\Y_{\la,j}.
\end{equation}

By Lemma~\ref{LMWtSpacesNew}, $\ggi^{d,j}\M_{d,j}$ has basis  $\{\lgath_{d,j} v_{d,j}\}$. Since $\lgath_{d,j} v_{d,j}\in\Di_{d,j}$ by Lemma~\ref{LTrivU}, we deduce that 
\begin{equation}\label{EYZ}
\Y_{d,j}\subseteq \Di_{d,j}. 
\end{equation}
So, more generally,  
\begin{equation}\label{EYZLa}
\Y_{\la,j}\subseteq \Di_{\la,j}. 
\end{equation}
We will eventually prove that $\Y_{d,j}= \Di_{d,j}$ and $\Y_{\la,j}= \Di_{\la,j}$.

Recall from Lemma~\ref{LHomGMLa} that $\Hom_{C_\la}(\G_{\la,j},\M_{\la,j})\cong\k$, so we can always pick a  homomorphism 
$$\varphi_\la\in \Hom_{C_\la}(\G_{\la,j},\M_{\la,j})$$  which spans $\Hom_{C_\la}(\G_{\la,j},\M_{\la,j})$ as a $\k$-module.

\begin{Lemma} \label{LTraceLa}
Let $\la\in\Comp(d)$. Then $\Y_{\la,j}$ is the image of\, $\varphi_\la$, where $\varphi_\la$ is a homomorphism which spans 
$\Hom_{C_\la}(\G_{\la,j},\M_{\la,j})$. 
\end{Lemma}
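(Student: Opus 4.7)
The plan is straightforward and amounts to unpacking definitions together with the tensor structure.

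First I would exploit the tensor product description $\G_{\la,j}=\G_{\la_1,j}\boxtimes\cdots\boxtimes \G_{\la_n,j}$, and recall that $\G_{\la_r,j}=C_{\la_r}\ggi^{\la_r,j}$ by definition, so under the identification $C_\la=C_{\la_1}\otimes\cdots\otimes C_{\la_n}$ we have
\[
\G_{\la,j}\;\simeq\;C_\la\,\ggi^{\la,j},\qquad \ggi^{\la,j}=\ggi^{\la_1,j}\otimes\cdots\otimes\ggi^{\la_n,j}.
\]
Consequently, by the usual universal property of cyclic projectives, any graded $C_\la$-superhomomorphism $\varphi:\G_{\la,j}\to \M_{\la,j}$ is determined by $\varphi(\ggi^{\la,j})\in\ggi^{\la,j}\M_{\la,j}$, and every element of this weight space arises this way.

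Next I would identify $\ggi^{\la,j}\M_{\la,j}$ explicitly. Since $\M_{\la,j}\simeq \M_{\la_1,j}\boxtimes\cdots\boxtimes\M_{\la_n,j}$, we have
\[
\ggi^{\la,j}\M_{\la,j}\;\simeq\;\ggi^{\la_1,j}\M_{\la_1,j}\;\otimes\;\cdots\;\otimes\;\ggi^{\la_n,j}\M_{\la_n,j}.
\]
Applying Lemma~\ref{LMWtSpacesNew} to each factor (for the trivial composition $(\la_r)\in\Comp(\la_r)$ we have ${}^{(\la_r)}\D_{\la_r}=\{1\}$), each $\ggi^{\la_r,j}\M_{\la_r,j}$ is a free $\k$-module of rank one spanned by $\lgath_{\la_r,j}v_{\la_r,j}$. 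Hence $\ggi^{\la,j}\M_{\la,j}$ is free of rank one, spanned by $\lgath_{\la,j}v_{\la,j}$, which recovers Lemma~\ref{LHomGMLa} and shows that (up to scalar) $\varphi_\la$ is the unique homomorphism with $\varphi_\la(\ggi^{\la,j})=\lgath_{\la,j}v_{\la,j}$.

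Finally, the image of this $\varphi_\la$ is
\[
\Im(\varphi_\la)\;=\;C_\la\cdot\varphi_\la(\ggi^{\la,j})\;=\;C_\la\cdot\lgath_{\la,j}v_{\la,j}\;=\;C_\la\,\ggi^{\la,j}\M_{\la,j},
\]
where the last equality uses that $\lgath_{\la,j}v_{\la,j}$ spans $\ggi^{\la,j}\M_{\la,j}$. This coincides with $\Y_{\la,j}$ by the definition (\ref{EYLa}), completing the proof. There is no real obstacle here—the only thing to double-check is the basis statement for $\ggi^{\la,j}\M_{\la,j}$ in the parabolic setting, but this reduces immediately to the rank-one cases already handled by Lemma~\ref{LMWtSpacesNew}.
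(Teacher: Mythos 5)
Your proposal is correct and takes essentially the same route as the paper: identify $\G_{\la,j}$ with $C_\la\ggi^{\la,j}$, use evaluation at the idempotent to reduce to the weight space $\ggi^{\la,j}\M_{\la,j}$, and conclude from the definition $\Y_{\la,j}=C_\la\ggi^{\la,j}\M_{\la,j}$. You merely spell out the rank-one fact for $\ggi^{\la,j}\M_{\la,j}$ (via Lemma~\ref{LMWtSpacesNew} applied factorwise), which the paper's two-line proof leaves implicit in the phrase ``which implies the result.''
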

\begin {proof}
We have  
$$\Hom_{C_\la}(\G_{\la,j},\M_{\la,j})\cong \Hom_{C_\la}(C_\la\ggi^{\la,j},\M_{\la,j})\cong \ggi^{\la,j}\M_{\la,j}
$$
which implies the result.
\end{proof}

\begin{Lemma} \label{LGGRY}
Let $\k=\F$ and $\la\in\Comp(d)$. Then $\GGR^d_\la\,\Y_{d,j}\simeq \Y_{\la,j}$. 
\end{Lemma}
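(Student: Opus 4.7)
The plan is to identify $\GGR^d_\la \Y_{d,j}$ inside $\Di_{\la,j}$ via Lemma~\ref{L5.3.1}(ii) and then to check that this submodule coincides with $\Y_{\la,j}$. Since $\Y_{d,j}\subseteq\Di_{d,j}$ by (\ref{EYZ}) and $\GGR^d_\la$ is exact, one obtains an inclusion $\GGR^d_\la\Y_{d,j}\hookrightarrow\GGR^d_\la\Di_{d,j}\simeq\Di_{\la,j}$. Under the identification $\GGR^d_\la \M_{d,j}\simeq\M_{\la,j}\otimes_{\k\Si_\la}\k\Si_d$ of Lemma~\ref{LggIndResSymGroup}(ii), the $\Si_d$-invariant subspace corresponds to $\M_{\la,j}^{\Si_\la}=\Di_{\la,j}$ via the diagonal embedding $v\mapsto\sum_{x\in{}^\la\D_d}v\otimes x$; the submodule $\GGR^d_\la\Y_{d,j}$ lies here because $\Y_{d,j}$ has trivial right $\Si_d$-action, which is an immediate consequence of Lemma~\ref{LTrivU} applied to its $C_d$-module generator $\lgath_{d,j}v_{d,j}$.

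By Lemma~\ref{LTraceLa} there is a surjection $\varphi_d:\G_{d,j}\twoheadrightarrow\Y_{d,j}$ with $\varphi_d(\ggi^{d,j})=\lgath_{d,j}v_{d,j}$, and this spans the one-dimensional space $\Hom_{C_d}(\G_{d,j},\M_{d,j})$ (Lemma~\ref{LHomGM}). Applying the exact functor $\GGR^d_\la$ and using Proposition~\ref{PResGGModule} to identify $\GGR^d_\la\G_{d,j}\simeq\G_{\la,j}$, one gets a $C_\la$-surjection $\G_{\la,j}\twoheadrightarrow\GGR^d_\la\Y_{d,j}\subseteq\Di_{\la,j}$ which sends the generator $\ggi^{\la,j}\in\G_{\la,j}$ into $\ggi^{\la,j}\Di_{\la,j}$. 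By Lemma~\ref{LMWtSpacesNew} the space $\ggi^{\la,j}\M_{\la,j}$ is spanned by the vector $\lgath_{\la,j}v_{\la,j}$, and by Lemma~\ref{LTrivU} this vector is already $\Si_\la$-invariant, so $\ggi^{\la,j}\Di_{\la,j}=\k\cdot\lgath_{\la,j}v_{\la,j}$ is one-dimensional. Hence the image of $\ggi^{\la,j}$ equals $c\cdot\lgath_{\la,j}v_{\la,j}$ for some $c\in\k$, and the $C_\la$-submodule generated is $\GGR^d_\la\Y_{d,j}=c\cdot C_\la\lgath_{\la,j}v_{\la,j}=c\cdot\Y_{\la,j}$, which is either $\Y_{\la,j}$ if $c\neq 0$ or zero if $c=0$.

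All that remains is to verify $c\neq 0$. To do this, one chains Proposition~\ref{PResGGModule}, Lemma~\ref{LAnotherAdj} (taking $V_1\boxtimes\dots\boxtimes V_n=\M_{\la,j}$), and the isomorphism $\GGI^d_{\la^\op}\M_{\la^\op,j}\simeq\M_{d,j}$ of Lemma~\ref{LggIndResSymGroup}(i) to produce a composite $\k$-linear isomorphism
\[
\k\cdot\varphi_\la=\Hom_{C_\la}(\G_{\la,j},\M_{\la,j})\iso\Hom_{C_d}(\G_{d,j},\M_{d,j})=\k\cdot\varphi_d
\]
of one-dimensional Hom-spaces. Unwinding the adjunction shows that this chain sends $\varphi_\la$ to a unit multiple of $\varphi_d$, and conversely identifies $\varphi_d$ with (a unit times) the projection of $\GGR^d_\la\varphi_d$ onto the $x=1$ summand of $\M_{\la,j}\otimes_{\k\Si_\la}\k\Si_d$; since $\varphi_d$ is non-zero, so is this projection, whence $c\neq 0$. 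The main technical obstacle is precisely this last non-vanishing: one must pin down, via Proposition~\ref{PResGGModule}, the element of $\ggi_\la C_d\ggi^{d,j}$ corresponding to $\ggi^{\la,j}\in\G_{\la,j}$, then compute its image under right multiplication by $\lgath_{d,j}v_{d,j}$ and match it (by Lemma~\ref{LMWtSpacesNew}) with a non-zero scalar multiple of the $\Si_d$-invariant $\sum_{x\in{}^\la\D_d}\lgath_{\la,j}v_{d,j}\cdot x\in\ggi^{\la,j}\Di_{d,j}$.
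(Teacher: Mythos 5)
Your overall skeleton matches the paper's: apply $\GGR^d_\la$ to a non-zero $\varphi\in\Hom_{C_d}(\G_{d,j},\M_{d,j})$ (whose image is $\Y_{d,j}$ by Lemma~\ref{LTraceLa}), identify $\GGR^d_\la\G_{d,j}\simeq\G_{\la,j}$ via Proposition~\ref{PResGGModule} and $\GGR^d_\la\Di_{d,j}\simeq\Di_{\la,j}$ via Lemma~\ref{L5.3.1}(ii), and then argue that the induced map $\bar\varphi:\G_{\la,j}\to\Di_{\la,j}$ is non-zero so its image must be $\Y_{\la,j}$ by Lemma~\ref{LTraceLa} again. Where you diverge — and where your argument is not yet a proof — is in the non-vanishing step. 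You reduce everything to a scalar $c$ with $\bar\varphi(\ggi^{\la,j})=c\cdot\lgath_{\la,j}v_{\la,j}$ and then try to compute $c$ by "unwinding" the isomorphism chain $\Hom_{C_\la}(\G_{\la,j},\M_{\la,j})\iso\Hom_{C_d}(\G_{d,j},\M_{d,j})$. But the identification $\GGR^d_\la\G_{d,j}\simeq\G_{\la,j}$ coming from Proposition~\ref{PResGGModule} is abstract (it is deduced from projectivity, uniqueness of indecomposable summands, and head computations, not from an explicit formula), so there is no a priori control over which element of $\ggi_\la C_d\ggi^{d,j}$ corresponds to $\ggi^{\la,j}$. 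You flag this yourself as "the main technical obstacle," and as written the step "since $\varphi_d$ is non-zero, so is this projection, whence $c\neq 0$" is exactly what needs to be proved, not a consequence of the preceding sentences.

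The paper sidesteps the scalar entirely. The crucial observation is that $\Y_{d,j}\subseteq\M_{d,j}$ already furnishes a non-zero element of $\Hom_{C_d}(\Y_{d,j},\M_{d,j})$ (the inclusion). Lemma~\ref{LAnotherAdj} with $W=\Y_{d,j}$ and $V_1\boxtimes\dots\boxtimes V_n=\M_{\la,j}$, together with $\GGI^d_{\la^\op}\M_{\la^\op,j}\simeq\M_{d,j}$ from Lemma~\ref{LggIndResSymGroup}(i), gives
$$0\neq\Hom_{C_d}(\Y_{d,j},\M_{d,j})\simeq\Hom_{C_\la}(\GGR^d_\la\Y_{d,j},\M_{\la,j}),$$
so $\GGR^d_\la\Y_{d,j}\neq 0$ directly. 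Then, since $\GGR^d_\la$ is exact, $\Im\bar\varphi=\GGR^d_\la\Y_{d,j}\neq 0$, and the one-dimensionality of $\Hom_{C_\la}(\G_{\la,j},\M_{\la,j})$ plus Lemma~\ref{LTraceLa} forces $\Im\bar\varphi=\Y_{\la,j}$. No scalar needs to be tracked. If you replace your last paragraph with this adjunction argument (applied to $\Y_{d,j}$ rather than to $\G_{d,j}$), the proof closes cleanly.
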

\begin{proof}
By definition, $\Y_{d,j}\subseteq\M_{d,j}$, so using Lemmas~\ref{LggIndResSymGroup}(i) and \ref{LAnotherAdj},
\begin{align*}
0&\neq\Hom_{C_d}(\Y_{d,j},M_{d,j})
\\&\simeq \Hom_{C_d}(\Y_{d,j},\GGI_{\la^\op}^d\,\M_{\la^\op,j})\\&\simeq \Hom_{C_d}(\GGR^d_\la\,\Y_{d,j},\M_{\la,j}).
\end{align*}
In particular, $\GGR^d_\la\,\Y_{d,j}\neq 0$. 

By Lemma~\ref{LTraceLa}, if $0\neq \varphi\in\Hom_{C_d}(\G_{d,j},\M_{d,j})$, then $\Im\varphi=\Y_{d,j}$. Since 
$\Y_{d,j}\subseteq\Di_{d,j}$ by (\ref{EYZ}), we consider $\varphi$ as a homomorphism $\G_{d,j}\to \Di_{d,j}$. Applying the functor $\GGR^d_\la$ to $\varphi$, we get a homomorphism 
$\bar \varphi:\GGR^d_\la\,\G_{d,j}\to\GGR^d_\la\,\Di_{d,j}$, 
or, in view of Proposition~\ref{PResGGModule} and Lemma~\ref{L5.3.1}(ii), a homomorphism $\bar \varphi:
\G_{\la,j}\to\Di_{\la,j}$. As the functor $\GGR^d_\la$ is exact,  $\Im\bar \varphi=\GGR^d_\la\Y_{d,j}$, which is non-zero by the previous paragraph. Now by Lemma~\ref{LTraceLa}, we have $\Im\bar \varphi=\Y_{\la,j}$, completing the proof.
\end{proof}

\subsection{  Projectivity of $\Di_j^\la$}
Let $\la\in\Comp(d)$. By Lemma~\ref{LTraceLa}, we have $\Y_{\la,j}=\Im \varphi_\la$ for a homomorphism $\varphi_\la$ which spans 
$\Hom_{C_\la}(\G_{\la,j},\M_{\la,j})\cong\k$. So we can consider $\varphi_\la$ as a surjective homomorphism $\G_{\la,j}\to\Y_{\la,j}$. Applying the exact functor $\GGI_\la^d$, we get the surjective homomorphism of graded $C_d$-supermodules 
\begin{equation}\label{EfUpperLa}
\phi^\la:\G^\la_j=\GGI_\la^d\,\G_{\la,j}\onto\GGI_\la^d\,\Y_{\la,j}=\Y^\la_j.
\end{equation}

\begin{Lemma} \label{L5.5.2} 
Let $\la\in\Comp(d)$. For any $g\in\Hom_{C_d}(\G^\la_j, \M_{d,j})$ there exists a unique $\bar g\in\Hom_{C_d}(\Y^\la_j, \M_{d,j})$ such that $g=\bar g\circ \phi^\la$. In particular, for any submodule $V\subseteq \M_{d,j}$, we have $\Hom_{C_d}(\G^\la_j, V)\cong \Hom_{C_d}(\Y^\la_j, V)$. 
\end{Lemma}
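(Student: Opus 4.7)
The plan is to exploit the projectivity of $\G^\la_j$ together with the explicit description of $\ggi^{\la,j}\M_{d,j}$ from Lemma \ref{LMWtSpacesNew} and the commutation of the $C_d$- and $\k\Si_d$-actions on $\M_{d,j}$.

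First, I will note that uniqueness of $\bar g$ is immediate from the surjectivity of $\phi^\la$ (established in the discussion around \eqref{EfUpperLa}), so the entire task reduces to producing $\bar g$, i.e.\ to showing that $\ker\phi^\la\subseteq\ker g$ for every $g\in\Hom_{C_d}(\G^\la_j,\M_{d,j})$. To set this up, I will identify $\G^\la_j\simeq C_d\ggi^{\la,j}$ via Lemma \ref{LGGIndProjId}. Tracing $\phi^\la=\GGI^d_\la\varphi_\la$ through the identification $\M_{d,j}\simeq\GGI^d_\la\M_{\la,j}$ of Lemma \ref{LggIndResSymGroup}(i) and using that $\varphi_\la(\ggi^{\la,j})$ spans $\ggi^{\la,j}\M_{\la,j}=\k\cdot\lgath_{\la,j}v_{\la,j}$ (cf.\ Lemma \ref{LMWtSpacesNew} applied to $\M_{\la,j}\simeq\M_{\la_1,j}\boxtimes\dots\boxtimes\M_{\la_n,j}$), I will conclude that, after normalizing $\varphi_\la$, the map $\phi^\la$ is the surjection $C_d\ggi^{\la,j}\twoheadrightarrow C_d\cdot\lgath_{\la,j}v_{d,j}=\Y^\la_j$ sending $c\ggi^{\la,j}\mapsto c\lgath_{\la,j}v_{d,j}$. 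Consequently
$$\ker\phi^\la=\{c\ggi^{\la,j}\mid c\in C_d,\ c\cdot\lgath_{\la,j}v_{d,j}=0\}.$$

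Next, since $\G^\la_j$ is projective and $\G^\la_j\simeq C_d\ggi^{\la,j}$, evaluation at the generator yields a bijection $\Hom_{C_d}(\G^\la_j,\M_{d,j})\iso\ggi^{\la,j}\M_{d,j}$. By Lemma \ref{LMWtSpacesNew}, the target has $\k$-basis $\{\lgath_{\la,j}v_{d,j}\cdot x\mid x\in{}^\la\D_d\}$. Hence it suffices to check, for every basis element $w=\lgath_{\la,j}v_{d,j}\cdot x$, that if $c\cdot\lgath_{\la,j}v_{d,j}=0$ then $c\cdot w=0$. But by Theorem \ref{TEndMd}, the right $\k\Si_d$-action on $\M_{d,j}$ commutes with the left $C_d$-action, so
$$c\cdot(\lgath_{\la,j}v_{d,j}\cdot x)=(c\cdot\lgath_{\la,j}v_{d,j})\cdot x=0\cdot x=0,$$
which establishes $\ker\phi^\la\subseteq\ker g$ for every basis element, and hence for every $g$ by linearity. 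This produces $\bar g\in\Hom_{C_d}(\Y^\la_j,\M_{d,j})$ with $g=\bar g\circ\phi^\la$.

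Finally, for the "in particular" clause about a submodule $V\subseteq\M_{d,j}$: the factorization above, applied to $g$ viewed as an element of $\Hom_{C_d}(\G^\la_j,\M_{d,j})$, yields a unique $\bar g\in\Hom_{C_d}(\Y^\la_j,\M_{d,j})$; since $\bar g\circ\phi^\la=g$ takes values in $V$ and $\phi^\la$ is surjective, $\bar g$ itself takes values in $V$, giving the desired bijection $\Hom_{C_d}(\G^\la_j,V)\cong\Hom_{C_d}(\Y^\la_j,V)$.

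The only step with any subtlety is the identification of $\phi^\la$ with the explicit surjection $C_d\ggi^{\la,j}\twoheadrightarrow C_d\cdot\lgath_{\la,j}v_{d,j}$; once that is in place, the substantive input—commutation of the $C_d$- and $\k\Si_d$-actions—makes the kernel containment transparent. I expect this bookkeeping to be the main (though still routine) obstacle.
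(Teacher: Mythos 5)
Your argument is correct, and it takes a genuinely different route from the paper's. The paper's proof goes through the $\GGI^d_\la\dashv\GGR^d_\la$ adjunction: it transfers $g$ to an element $g_\la\in\Hom_{C_\la}(\G_{\la,j},\GGR^d_\la\M_{d,j})$, uses $\GGR^d_\la\M_{d,j}\simeq\M_{\la,j}\otimes_{\k\Si_\la}\k\Si_d$ (Lemma~\ref{LggIndResSymGroupImSch}(ii)) together with $\dim\Hom_{C_\la}(\G_{\la,j},\M_{\la,j})=1$ (Lemma~\ref{LTraceLa}) to factor $g_\la$ through $\varphi_\la$, then applies $\GGI^d_\la$ and composes with the counit. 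You instead stay at the level of $C_d$: identifying $\G^\la_j\simeq C_d\ggi^{\la,j}$ and $\phi^\la$ with $c\ggi^{\la,j}\mapsto c\lgath_{\la,j}v_{d,j}$, you observe that every $g$ is of the form $c\ggi^{\la,j}\mapsto c\cdot(\lgath_{\la,j}v_{d,j}\cdot y)$ for some $y\in\k\Si_d$ (by Lemma~\ref{LMWtSpacesNew}), and then the commutation of the left $C_d$- and right $\k\Si_d$-actions (Theorem~\ref{TEndMd}) immediately gives $\ker\phi^\la\subseteq\ker g$. This bypasses the adjunction/counit bookkeeping, at the mild cost of needing the slightly finer information of the explicit basis of $\ggi^{\la,j}\M_{d,j}$ rather than just the $C_\la$-module structure of $\GGR^d_\la\M_{d,j}$. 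Both proofs ultimately rest on the same structural input (the permutation-module description of the $\la$-weight space / restriction of $\M_{d,j}$), but your version is more elementary and arguably shorter.
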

\begin{proof}
As $\phi^\la$ is surjective, the uniqueness of $\bar g$ is clear. 

Since $\GGI_\la^d$ is left adjoint to $\GGR_\la^d$, we have the natural (counit of adjunction) homomorphism
$$\eps:\GGI_\la^d\,\GGR_\la^d\,\M_{d,j}\to\M_{d,j}$$ 
which, for any graded $C_\la$-supermodule $W$, yields the adjunction isomorphism
$$
\al_W:\Hom_{C_\la}(W,\GGR^d_\la\,\M_{d,j})\iso \Hom_{C_d}(\GGI^d_\la\,W,\M_{d,j}), \ h\mapsto \eps\circ\GGI^d_\la(h).
$$
Taking $W=\G_{\la,j}$, let 
$$g_\la:=\al_{\G_{\la,j}}^{-1}(g)\in \Hom_{C_\la}(\G_{\la,j},\GGR^d_\la\,\M_{d,j}).
$$ 
By Lemma~\ref{LggIndResSymGroupImSch}(ii), we have $\GGR^d_\la\,\M_{d,j}\simeq \M_{\la,j}\otimes_{\k\Si_\la}\k\Si_d$, so using Lemma~\ref{LTraceLa}, there  exists a homomorphism $\bar g_\la:\Y_{\la,j}\to  \GGR^d_\la\,\M_{d,j}$ such that $g_\la=\bar g_\la\circ \varphi_\la$. Hence 
$$\GGI^d_\la(g_\la)=\GGI^d_\la(\bar g_\la)\circ \GGI^d_\la(\varphi_\la)=\GGI^d_\la(\bar g_\la)\circ \phi^\la.
$$
Therefore 
$$
g=\eps\circ \GGI^d_\la(g_\la)=\eps\circ\GGI^d_\la(\bar g_\la)\circ \phi^\la,
$$ 
and we can take $\bar g=\eps\circ\GGI^d_\la(\bar g_\la)$.
\end{proof}

\begin{Lemma} \label{L5.5.3}
Let $\k=\F$. For any $\la,\mu\in\Comp(d)$, we have 
\begin{align}
&\dim\Hom_{C_d}(\Y^\la_j,\Y^\mu_j)=|{}^\la\D^\mu|,
\label{5.5.2}
\\
&\dim\Hom_{C_d}(\G^\la_j,\Y^\mu_j)=|{}^\la\D^\mu|,
\label{5.5.3}
\\
&\dim\Hom_{C_d}(\G^\la_j,\Di^\mu_j)=|{}^\la\D^\mu|.
\label{5.5.4}
\end{align}
\end{Lemma}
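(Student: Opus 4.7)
The plan is to prove each of the three equalities separately. The first comes directly from the Mackey decomposition of permutation modules, the second via Lemma~\ref{L5.5.2}, and the third (the main content) via an analysis of the Gelfand--Graev Mackey filtration.

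First, I would prove (\ref{5.5.4}). Since $\G^\la_j=C_d\ggi^{\la,j}$, the natural identification gives
$$
\Hom_{C_d}(\G^\la_j,\Di^\mu_j)\cong\ggi^{\la,j}\Di^\mu_j=(\ggi^{\la,j}\M_{d,j})^{\Si_\mu},
$$
where $\Si_\mu$ acts via the right $\k\Si_d$-action from Theorem~\ref{TEndMd}. Lemma~\ref{LBasisIndGGNew} identifies $\ggi^{\la,j}\M_{d,j}$, up to degree and parity shifts, with the right permutation module $\k_{\Si_\la}\otimes_{\k\Si_\la}\k\Si_d$, whose $\Si_\mu$-invariants have dimension $|{}^\la\D^\mu_d|$ by the standard double-coset/Mackey argument. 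Next, the inclusions $\Y^\mu_j\subseteq\Di^\mu_j\subseteq\M_{d,j}$ make $\Y^\mu_j$ a $C_d$-submodule of $\M_{d,j}$, so Lemma~\ref{L5.5.2} applied with $V=\Y^\mu_j$ gives (\ref{5.5.2})~$=$~(\ref{5.5.3}), and the inclusion yields (\ref{5.5.3})~$\le|{}^\la\D^\mu_d|$.

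It remains to prove the reverse inequality (\ref{5.5.3})~$\ge|{}^\la\D^\mu_d|$. Since $\ggi^{\la,j}$ is an orthogonal summand of $\ggi_\la$ we have $\ggi^{\la,j}\ggi_\la=\ggi^{\la,j}$, so
$$
\ggi^{\la,j}\Y^\mu_j=\ggi^{\la,j}\GGR^d_\la\GGI^d_\mu\Y_{\mu,j}.
$$
Theorem~\ref{TGGMackey} provides a filtration of $\GGR^d_\la\GGI^d_\mu\Y_{\mu,j}$ indexed by $x\in{}^\la\D^\mu_d$ with sub-quotients $\GGI^\la_{\la\cap x\mu}\,{}^x\Y_{x^{-1}\la\cap\mu,j}$, using Lemma~\ref{LGGRY} to identify $\GGR^\mu_{x^{-1}\la\cap\mu}\Y_{\mu,j}\simeq\Y_{x^{-1}\la\cap\mu,j}$. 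Since multiplication by the idempotent $\ggi^{\la,j}$ is exact, it suffices to show that each sub-quotient has one-dimensional $\ggi^{\la,j}$-weight space. Using the tensor factorisations $C_\la=\bigotimes_rC_{\la_r}$ and $\ggi^{\la,j}=\bigotimes_r\ggi^{\la_r,j}$, and the fact that the twist $^x$ permutes tensor factors so that $x^{-1}\la\cap\mu$ corresponds via $x$ to a refinement $\nu$ of $\la$, each sub-quotient factors as $\bigotimes_r\GGI^{\la_r}_{\nu^{(r)}}\Y_{\nu^{(r)},j}$. The claim reduces to the single-rank assertion
$$
\dim\ggi^{d,j}\,\GGI^d_\sigma\Y_{\sigma,j}=1\qquad(\sigma\in\Comp(d)).
$$

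For this: since $\GGI^d_\sigma\Y_{\sigma,j}\subseteq\M_{d,j}$ and $\ggi^{d,j}\M_{d,j}=\k\cdot\lgath_{d,j}v_{d,j}$ is one-dimensional by Lemma~\ref{LMWtSpacesNew}, the dimension is at most~$1$. For nonvanishing, Lemmas~\ref{LTraceLa} and~\ref{LProjCoverLa} show that each $\Y_{\sigma_r,j}$ is a nonzero quotient of the projective cover $\G_{\sigma_r,j}$ of $L_j((\sigma_r))$, hence surjects onto $L_j((\sigma_r))$. By exactness of $\GGI^d_\sigma$, the module $\Y^\sigma_j=\GGI^d_\sigma\Y_{\sigma,j}$ surjects onto $\GGI^d_\sigma\bigl(L_j((\sigma_1))\boxtimes\cdots\boxtimes L_j((\sigma_k))\bigr)$, and iterating Corollary~\ref{CTensorcd} shows that $L_j((d))$ is a composition factor of the latter, hence of $\Y^\sigma_j$. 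Since Theorem~\ref{LAmountAmount}(ii) and~(\ref{EFChL}) imply that $L_j((d))$ is, up to isomorphism, the unique composition factor of $\M_{d,j}$ on which $\ggi^{d,j}$ acts nontrivially, this forces $\ggi^{d,j}\Y^\sigma_j\ne 0$, completing the argument. The principal obstacle is the tensor-factor reduction: one has to verify carefully that the twist $^x$ in the Mackey sub-quotient really splits as an outer tensor product matching the decomposition of $\ggi^{\la,j}$, and that the refinement $x^{-1}\la\cap\mu$ of $\mu$ corresponds, via $x$, to a refinement of $\la$. Once this bookkeeping is in place, the reduction to single rank and the concluding composition-factor argument are routine.
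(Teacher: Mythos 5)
Your proof is correct, and it reaches the same Gelfand--Graev Mackey filtration that the paper uses for the central estimate, but you organize the three equalities differently and handle the single-rank step by a more elementary composition-factor argument. For~(\ref{5.5.4}) you compute $\Hom_{C_d}(\G^\la_j,\Di^\mu_j)\cong\ggi^{\la,j}\Di^\mu_j=(\ggi^{\la,j}\M_{d,j})^{\Si_\mu}$ directly from the permutation-module structure of Lemma~\ref{LBasisIndGGNew}; this is a genuinely shorter route than the paper's, which treats~(\ref{5.5.4}) by running the Mackey argument a second time with Lemma~\ref{L5.3.1}(ii) in place of Lemma~\ref{LGGRY}. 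You then use this upper bound together with $\Y^\mu_j\subseteq\Di^\mu_j$ and Lemma~\ref{L5.5.2} to reduce all three counts to a lower bound, whereas the paper proves~(\ref{5.5.3}) exactly in one pass. At the terminal step --- the one-dimensionality of each $\ggi^{\la,j}$-weight space of a Mackey sub-quotient --- the paper invokes adjointness (Lemma~\ref{LAnotherAdj}), Proposition~\ref{PResGGModule} ($\GGR^d_\la\G_{d,j}\cong\G_{\la,j}$) and Lemma~\ref{LTraceLa}, while you give a self-contained argument: the upper bound $\le 1$ from $\GGI^d_\sigma\Y_{\sigma,j}\subseteq\M_{d,j}$ and Lemma~\ref{LMWtSpacesNew}, the lower bound from iterating Corollary~\ref{CTensorcd} to place $L_j\big((d)\big)$ as a composition factor. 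Since Proposition~\ref{PResGGModule} itself rests on Corollary~\ref{CTensorcd}, the underlying content is the same; the trade-off is that your route avoids quoting Proposition~\ref{PResGGModule} at the cost of re-deriving its key consequence inline. One small point to tighten: Lemma~\ref{LGGRY} is stated for $\GGR^d_\la\Y_{d,j}$ only; the parabolic case $\GGR^\mu_{x^{-1}\la\cap\mu}\Y_{\mu,j}\simeq\Y_{x^{-1}\la\cap\mu,j}$ follows factor-by-factor from the outer-tensor decomposition of $\Y_{\mu,j}$, but this should be said explicitly (the paper's proof has the same implicit step).
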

\begin{proof}
We first prove (\ref{5.5.3}). By the adjointness of $\GGI_\la^d $ and $\GGR_\la^d $, 
\begin{align*}
\Hom_{C_d}(\G^\la_j,\Y^\mu_j)&=\Hom_{C_d}(\GGI_\la^d\, \G_{\la,j},\GGI_\mu^d\, \Y_{\mu,j})
\\
&\simeq \Hom_{C_\la}( \G_{\la,j},\GGR_\la^d\,\GGI_\mu^d\, \Y_{\mu,j}).
\end{align*}
Since $\G_{\la,j}$ is projective,  Theorem~\ref{TGGMackey} and Lemma~\ref{LGGRY} give
\begin{align*}
&\dim\Hom_{C_\la}( \G_{\la,j},\GGR_\la^d\,\GGI_\mu^d\, \Y_{\mu,j})
\\
=\,&\sum_{x\in{}^\la\D^\mu}\dim\Hom_{C_\la}(\G_{\la,j},\GGI^\la_{\la\cap x\mu}\,{}^x(\GGR^\mu_{x^{-1}\la\cap \mu}\,\Y_{\mu,j})
\\=\,&\sum_{x\in{}^\la\D^\mu}\dim\Hom_{C_\la}(\G_{\la,j},\GGI^\la_{\la\cap x\mu}\,{}^x(\Y_{x^{-1}\la\cap \mu,j})
\\=\,&\sum_{x\in{}^\la\D^\mu}\dim\Hom_{C_\la}(\G_{\la,j},\GGI^\la_{\la\cap x\mu}\,\Y_{\la\cap x\mu,j})
\end{align*}
To complete the proof of (\ref{5.5.3}), we show that each dimension in the sum above equals $1$. Note that the composition $\la\cap x\mu$ is a refinement of $\la$. Denote by $\nu$ the composition obtained by from $\la\cap x\mu$ by taking the parts of this refinement within each part $\la_r$ of $\la$ in the opposite order. Using Lemma~\ref{LAnotherAdj}, Proposition~\ref{PResGGModule} and Lemma~\ref{LTraceLa},
we get
\begin{align*}
\dim\Hom_{C_\la}(\G_{\la,j},\GGI^\la_{\la\cap x\mu}\,\Y_{\la\cap x\mu,j})
&=
\dim\Hom_{C_\nu}(\GGR^\la_{\nu}\,\G_{\la,j},\Y_{\nu,j})
\\&=
\dim \Hom_{C_\la}(\G_{\nu,j},\Y_{\nu,j})=1.
\end{align*}

The proof for (\ref{5.5.4}) is similar to that for (\ref{5.5.3}), using Lemma~\ref{L5.3.1}(ii) instead of Lemma~\ref{LGGRY}. 

For  (\ref{5.5.2}), as $\Y_{\mu,j}\subseteq M_{\mu,j}$ and $\GGI_\mu^d$ is exact, $Y^\mu_j=\GGI_\mu^d \Y_{\mu,j}$ is isomorphic to a submodule of $\M_{d,j}\simeq \GGI_\mu^d M_{\mu,j}$, so (\ref{5.5.2}) follows from (\ref{5.5.3}) by Lemma~\ref{L5.5.2}. 
\end{proof}

\begin{Theorem} \label{TProjGen} 
Let $\k=\F$. 
\begin{enumerate}
\item[{\rm (i)}] We have $\Di_{d,j}=\Y_{d,j}$, so $\Di_{d,j}$ can be characterized as the image of any non-zero homomorphism from $\G_{d,j}$ to $\M_{d,j}$.
\item[{\rm (ii)}] For each $\la\in\Comp(d)$, we have that\, $\Di_j^\la$ is a projective graded supermodule over $\IS_{d,j}$. \item[{\rm (iii)}] If $n\geq d$ then $\bigoplus_{\la\in\Comp(n,d)}\Di_j^\la$ is a projective generator for $\IS_{d,j}$. 
\end{enumerate} 
\end{Theorem}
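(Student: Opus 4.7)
The plan is to deduce (i) from a Hom-space comparison, (ii) and the projectivity half of (iii) from Schubert's criterion (Lemma~\ref{LSchub}) applied to an appropriate direct sum, and the generation half of (iii) from a counting argument via Theorem~\ref{TSchurEnd3}. The technical engine throughout is Lemma~\ref{L5.5.2}, which asserts that every homomorphism $\G^\la_j\to V$ for a graded subsupermodule $V\subseteq\M_{d,j}$ factors through the surjection $\phi^\la:\G^\la_j\onto\Y^\la_j$ of (\ref{EfUpperLa}).

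For (i), set $Q:=\Di_{d,j}/\Y_{d,j}$, which is well-defined by (\ref{EYZ}). I apply the exact functor $\Hom_{C_d}(\G^\la_j,-)$ for each $\la\in\EC(d)$ to the short exact sequence $0\to\Y_{d,j}\to\Di_{d,j}\to Q\to 0$. Lemma~\ref{L5.5.3} with $\mu=(d)$ forces $\dim\Hom_{C_d}(\G^\la_j,\Y_{d,j})=\dim\Hom_{C_d}(\G^\la_j,\Di_{d,j})=|{}^\la\D^{(d)}|=1$, so $\Hom_{C_d}(\G^\la_j,Q)=0$. Since $\G^\la_j\simeq C_d\ggi^{\la,j}$, this gives $\ggi^{\la,j}Q=0$. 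Restricting the weight decomposition of Lemma~\ref{LMWtSpaces} to $\Di_{d,j}\subseteq\M_{d,j}$ and descending to $Q$ yields $Q=\bigoplus_{\la\in\EC(d)}\ggi^{\la,j}Q=0$, so $\Di_{d,j}=\Y_{d,j}$. The second assertion in (i) is then immediate from Lemma~\ref{LHomGM} (one-dimensional Hom-space) and Lemma~\ref{LTraceLa} (image of $\varphi_{(d)}$ is $\Y_{d,j}$).

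For (ii) and projectivity in (iii), fix $n\geq d$ (for arbitrary $\la\in\Comp(d)$ in (ii), enlarge $n$ so that $\la\in\Comp(n,d)$). Applying (i) to each tensor factor of $\Di_{\mu,j}\simeq\Di_{\mu_1,j}\boxtimes\cdots\boxtimes\Di_{\mu_k,j}$ and $\Y_{\mu,j}\simeq\Y_{\mu_1,j}\boxtimes\cdots\boxtimes\Y_{\mu_k,j}$ gives $\Di_{\mu,j}=\Y_{\mu,j}$ for every $\mu\in\Comp(n,d)$, and hence $\Di^\mu_j=\Y^\mu_j=\Im\phi^\mu$ by Lemma~\ref{LUpperInd}(ii) together with (\ref{EUpper}). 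I then apply Schubert's criterion to the projective $C_d$-module $P:=\bigoplus_{\mu\in\Comp(n,d)}\G^\mu_j$ with $Z:=\bigoplus_\mu\ker\phi^\mu$, so that $P/Z=\bigoplus_\mu\Di^\mu_j$; Schubert's hypothesis reduces component-by-component to the statement that every homomorphism $\G^\nu_j\to\Di^\mu_j$ factors through $\phi^\nu$, which is Lemma~\ref{L5.5.2} applied to $V=\Di^\mu_j\subseteq\M_{d,j}$. Schubert thus gives projectivity of $P/Z$ over $C_d/\Ann_{C_d}(P/Z)$. Since $\om_d\in\Comp(n,d)$ and $\Di^{\om_d}_j=\M_{d,j}$ (as $\Si_{\om_d}$ is trivial), $\M_{d,j}$ is a summand of $P/Z$, forcing $\Ann_{C_d}(P/Z)=\Ann_{C_d}(\M_{d,j})$; hence $P/Z$ is projective over $\IS_{d,j}$, and so is each summand $\Di^\mu_j$.

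For generation in (iii), combining Theorem~\ref{TSchurEnd3} with the anti-automorphism of Lemma~\ref{LSchurAnti} gives $\End_{\IS_{d,j}}\big(\bigoplus_{\la\in\Comp(n,d)}\Di^\la_j\big)^\sop\cong S(n,d)$, which has exactly $|\Par(d)|$ pairwise non-isomorphic irreducibles for $n\geq d$. Since for a finite-dimensional algebra the number of isomorphism classes of indecomposable summands of a module equals the number of irreducibles of its opposite endomorphism algebra, $\bigoplus_\la\Di^\la_j$ decomposes into exactly $|\Par(d)|$ pairwise non-isomorphic indecomposable summands, each of which is an indecomposable projective $\IS_{d,j}$-module with simple head and with distinct summands having distinct simple heads (by uniqueness of projective covers). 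Theorem~\ref{LAmountAmount}(ii) gives $|\Irr(\IS_{d,j})|=|\Par(d)|$, so every simple of $\IS_{d,j}$ appears in the head of $\bigoplus_\la\Di^\la_j$, proving generation. The main obstacle is that Schubert's criterion produces projectivity over $C_d/\Ann_{C_d}(P/Z)$, and projectivity does not in general descend along a further quotient; this forces the application of Schubert to the full direct sum $\bigoplus_{\mu\in\Comp(n,d)}\Di^\mu_j$ rather than to an individual $\Di^\la_j$, so that the summand $\Di^{\om_d}_j=\M_{d,j}$ collapses the annihilator down to $\Ann_{C_d}(\M_{d,j})$ --- which is exactly where the hypothesis $n\geq d$ enters.
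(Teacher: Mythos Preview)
Your proof is correct and follows the same overall strategy as the paper: establish $\Di_{d,j}=\Y_{d,j}$ via Lemma~\ref{L5.5.3}, apply Schubert's criterion to the direct sum $\bigoplus_\mu\G^\mu_j\onto\bigoplus_\mu\Di^\mu_j$, and count irreducibles via Theorem~\ref{TSchurEnd3}. Your execution differs in two small tactical choices. For (i), you kill $Q=\Di_{d,j}/\Y_{d,j}$ directly via the weight decomposition $Q=\bigoplus_{\la\in\EC(d)}\ggi^{\la,j}Q$ of Lemma~\ref{LMWtSpaces}, whereas the paper instead kills the larger quotient $\Di/\Y$ (for the full direct sum over $\Comp(n,d)$) by first arguing that every composition factor of $\M_{d,j}$ lies in the head of $\G$; your route is a touch more direct and avoids that head argument. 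For the Schubert hypothesis, you invoke Lemma~\ref{L5.5.2} component-by-component (using that each $\Di^\mu_j$ embeds in $\M_{d,j}$), whereas the paper uses the dimension equality $\dim\Hom_{C_d}(\Di,\Di)=\dim\Hom_{C_d}(\G,\Di)$ extracted from Lemma~\ref{L5.5.3}. Both variants are sound; the paper's version yields $\Di^\mu_j=\Y^\mu_j$ for all $\mu$ simultaneously, whereas you obtain it from the $\mu=(d)$ case applied tensor-factor-wise.
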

\begin{proof}
Fix $n\geq d$ and set 
\begin{align*}
\textstyle
\Di:=
\bigoplus_{\la\in\Comp(n,d)} \Di^\la_j,
\quad 
\Y:=\bigoplus_{\la\in\Comp(n,d)}\Y^\la_j,
\quad 
\G:=\bigoplus_{\la\in\Comp(n,d)}\G^\la_j.
\end{align*}

Let $\la\in\Par(d)$. By (\ref{EGUpperLa}), we have $C_n\ggi^{\la,j}\simeq \G^\la_j$. As $n\geq d$, we can consider $\la$ as an element of $\Comp(n,d)$, and so $\G^\la_j$ is a direct summand of $\G$. By (\ref{EFChL}), we have $\Hom_{C_n}(C_n\ggi^{\la,j},L_j(\la))\simeq \ggi^{\la,j}L_j(\la)\neq 0$, and so, by  Theorem~\ref{LAmountAmount}(ii), every composition factor of $M_{d,j}$ appears in the head of $\G$. Hence $\Hom_{C_d}(\G,X)\neq 0$ for any $X\in\mod{C_d}$ all of whose composition factors are composition factors of $M_{d,j}$. 

On the other hand, for each $\la\in\Comp(n,d)$, recalling (\ref{EYZLa}) and 
applying the exact functor $\GGI_\la^d$ to the inclusions $\Y_{\la,j}\subseteq\Di_{\la,j}\subseteq \M_{\la,j}$, we get 
$$
\Y^\la_j\subseteq\Di^\la_j\subseteq 
\GGI_\la^d \,\M_{\la,j}\simeq
\M_{d,j}.
$$ 
In particular, $\Y\subseteq \Ga$ and all composition factors of $\Ga/Y$ are composition factors of $M_{d,j}$. 
By Lemma~\ref{L5.5.3}, we have 
$
\dim\Hom_{C_d}(\G,\Y)=\dim\Hom_{C_d}(\G,\Di). 
$ 
As $\G$ is projective this implies $\Hom_{C_d}(\G,\Di/\Y)=0$, so $\Di/\Y=0$ by the previous paragraph. 
This proves (i).

Now, $\Y=\Di$ is a quotient of $\G$, see (\ref{EfUpperLa}), and  so  by Lemma~\ref{L5.5.3}, 
$$\dim\Hom_{C_d}(\Di,\Di)=\dim\Hom_{C_d}(\G,\Di).$$  This verifies the conditions of Lemma~\ref{LSchub}, so $\Di$ is
projective when considered as a graded $C_d/\Ann_{C_d}(\Di)$-supermodule. Hence so is every summand $\Di^\la_j$ of $\Di$. 
Since all summands of $\Di$ are isomorhic to submodules of $\M_{d,j}$, and one of the summands 
$\Di^{\om_d}_j \simeq \M_{d,j}$, we have $\Ann_{C_d}(\Di)=\Ann_{C_d}(\M_{d,j})$ and $C_d/\Ann_{C_d}(\Di)=C_d/\Ann_{C_d}(\M_{d,j})=\IS_{d,j}$, proving (ii). 


By Theorem~\ref{TSchurEnd3}, we have 
$\End_{\IS_{d,j}}(\Di)\cong S(n,d)$. For $n\geq d$, the number of irreducible representations of $S(n,d)$ is equal to $|\Par(d)|$. So by Fitting's Lemma~\cite[1.4]{Landrock}, the graded $\IS_{d,j}$-supermodule $\Di$ has the same amount of non-isomorphic irreducible modules appearing in its head. Taking into account Theorem~\ref{LAmountAmount}(ii), every composition factor of $\M_{d,j}$ appears in the head of $\Di$. 
Since $\M_{d,j}$ is a faithful $\IS_{d,j}$-module, every irreducible $\IS_{d,j}$-module appears as some composition factor of $\M_{d,j}$. Thus every irreducible $\IS_{d,j}$-module appears in the head of $\Di$, so $\Di$ is a projective generator, proving (iii). 
\end{proof}

Recall Lemma~\ref{LBarParabolic}. In particular, we have a subalgebra $\IS_{\la,j}\subseteq \bar\ggi_\la\IS_{d,j}\bar\ggi_\la$. 

\begin{Corollary} \label{CHCIC} 
Let $\k=\F$ and $\la\in\Comp(d)$. 
\begin{enumerate}
\item[{\rm (i)}] The functor $\GGI_\la^d$ 
restricts to a functor $\mod{\IS_{\la,j}}\to\mod{\IS_{d,j}}$. Moreover, for $V\in\mod{\IS_{\la,j}}$, we have $\GGI_\la^dV\simeq \IS_{d,j}\bar\ggi_\la\otimes_{\IS_{\la,j}}V$.  
\item[{\rm (ii)}] The functor $\GGR_\la^d$ restricts to a functor $\mod{\IS_{d,j}}\to\mod{\IS_{\la,j}}$. 
\end{enumerate}
\end{Corollary}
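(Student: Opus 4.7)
Let $I := \Ann_{C_d}(\M_{d,j})$ and $J := \Ann_{C_\la}(\M_{\la,j})$, and identify $C_\la$ with its image in $C_d$ under the parabolic embedding $C_\la\hookrightarrow \ggi_\la C_d\ggi_\la$. The crux of the argument is the containment $J\subseteq I$ inside $C_d$. To prove this, note that Lemma~\ref{LggIndResSymGroupImSch}(ii) identifies $\GGR_\la^d \M_{d,j} = \ggi_\la \M_{d,j}$ with $\M_{\la,j}\otimes_{\k\Si_\la}\k\Si_d$ as a graded $C_\la$-supermodule; since $\k\Si_d$ is free as a right $\k\Si_\la$-module, this is a direct sum of copies of $\M_{\la,j}$ and hence is annihilated by $J$. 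For any $x\in J$ and $m\in \M_{d,j}$, using $x = \ggi_\la x \ggi_\la$ we compute $xm = \ggi_\la\bigl(x\cdot(\ggi_\la m)\bigr) = 0$, so $x\in I$. Given this, (ii) is immediate: for $W\in\mod{\IS_{d,j}}$ one has $IW=0$, so $J\subseteq I$ also annihilates $W$, and hence kills the $C_\la$-submodule $\GGR_\la^d W = \ggi_\la W\subseteq W$.

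For (i), first observe that $\IS_{\la,j}$ acts faithfully on the finite-dimensional module $\M_{\la,j}$, so $\IS_{\la,j}$ is itself finite-dimensional; therefore $V\in\mod{\IS_{\la,j}}$ is finite-dimensional, and so is $\GGI_\la^d V = C_d\ggi_\la\otimes_{C_\la} V$ because $C_d\ggi_\la$ is a finitely generated right $C_\la$-supermodule by Corollary~\ref{CFGProj}. By the classification of irreducibles over $\IS_{\la,j}\cong \IS_{\la_1,j}\otimes\cdots\otimes \IS_{\la_n,j}$, every composition factor of $V$ has the form $L_j(\mu^{(1)})\boxtimes\cdots\boxtimes L_j(\mu^{(n)})$ with $\mu^{(r)}\in\Par(\la_r)$ and is therefore a subquotient of $\M_{\la,j}\simeq \M_{\la_1,j}\boxtimes\cdots\boxtimes \M_{\la_n,j}$. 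Applying the exact functor $\GGI_\la^d$ to any such composition factor yields a subquotient of $\GGI_\la^d \M_{\la,j}\simeq \M_{d,j}$, whose composition factors are $\{L_j(\nu)\mid\nu\in\Par(d)\}$ by Theorem~\ref{LAmountAmount}(ii); hence every composition factor of $\GGI_\la^d V$ lies in $\mod{\IS_{d,j}}$, and since $\GGI_\la^d V$ is finite-dimensional, $I$ annihilates it. For the displayed isomorphism, the quotient $C_d\twoheadrightarrow \IS_{d,j}$ yields a surjection of $(C_d,C_\la)$-bisupermodules $C_d\ggi_\la\twoheadrightarrow \IS_{d,j}\bar\ggi_\la$ with kernel $I\ggi_\la$; by the key containment the right $C_\la$-action on $\IS_{d,j}\bar\ggi_\la$ descends to an $\IS_{\la,j}$-action, so tensoring with $V$ produces a surjection $\GGI_\la^d V\twoheadrightarrow \IS_{d,j}\bar\ggi_\la\otimes_{\IS_{\la,j}} V$ whose kernel is the image of $I\ggi_\la\otimes_{C_\la} V$. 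The latter vanishes in $\GGI_\la^d V$ because $i\ggi_\la\otimes v = i\cdot(\ggi_\la\otimes v) = 0$ for $i\in I$ and $v\in V$, so the surjection is an isomorphism.

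The only non-trivial ingredient is the key containment $J\subseteq I$; once that is in hand, both (i) and (ii) are essentially formal. The main subtlety worth monitoring is that $\GGI_\la^d V$ must be known to be finite-dimensional before one can pass from annihilation on composition factors to annihilation of the whole module by $I$, and this is precisely ensured by the finite generation of $C_d\ggi_\la$ as a right $C_\la$-supermodule from Corollary~\ref{CFGProj}.
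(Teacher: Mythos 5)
Your key containment $\Ann_{C_\la}(\M_{\la,j})\subseteq \Ann_{C_d}(\M_{d,j})$ (under the parabolic embedding $C_\la\into\ggi_\la C_d\ggi_\la$) is correct, and it immediately delivers part (ii); this is a more direct route to (ii) than the paper takes. However, the argument for part (i) has a genuine gap. From the fact that every composition factor of $\GGI_\la^d V$ is annihilated by $I:=\Ann_{C_d}(\M_{d,j})$ you cannot conclude that $I$ annihilates $\GGI_\la^d V$ itself; the correct deduction is only that $I^k$ annihilates it, where $k$ is the composition length. (Compare $C_d/I^2$: all of its composition factors are $C_d/I$-modules, yet $I$ need not act by zero.) Your subsequent verification of the displayed isomorphism then rests on the equality $i\cdot(\ggi_\la\otimes v)=0$ for $i\in I$, $v\in V$, which is exactly the unproved claim $I\cdot\GGI_\la^dV=0$, so the argument as written is circular.

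The repair, and what the paper does, is to argue via projectives rather than via composition factors. Since $\GGI_\la^d$ is exact, it suffices to check the statement on projective graded $\IS_{\la,j}$-supermodules. By Theorem~\ref{TProjGen}, every indecomposable projective graded $\IS_{\la,j}$-supermodule is a submodule of $\M_{\la,j}$, so applying $\GGI_\la^d$ gives a submodule of $\GGI_\la^d\M_{\la,j}\simeq\M_{d,j}$, which is annihilated by $I$. An arbitrary $V\in\mod{\IS_{\la,j}}$ is a quotient of a finite direct sum of grading shifts of such projectives, and by exactness $\GGI_\la^d V$ is a quotient of an $I$-annihilated module, hence is itself annihilated by $I$. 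Once this is in hand, your kernel computation for the isomorphism $\GGI_\la^d V\simeq \IS_{d,j}\bar\ggi_\la\otimes_{\IS_{\la,j}}V$ is valid; the paper instead deduces this isomorphism from uniqueness of left adjoints, both functors being left adjoint to $\GGR_\la^d$ restricted to $\mod{\IS_{d,j}}$, and your explicit route is a reasonable alternative.
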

\begin{proof}
We prove (i), the proof of (ii) being similar. As the functor $\GGI_\la^d$ is exact, it suffices to check the first statement on projective graded $\IS_{\la,j}$-supermodules. By Theorem~\ref{TProjGen}, every indecomposable projective graded  $\IS_{\la,j}$-supermodule is a submodule of $\M_{\la,j}$,  so we just need to check that $\GGI_\la^d\M_{\la,j}$ is a  $\IS_{d,j}$-module. But  $\GGI_\la^d\M_{\la,j}\simeq\M_{d,j}$. The second statement follows from the first using adjointness of induction and restriction. 
\end{proof}

\begin{Corollary}\label{CJantzen}
Let $\la\in\Comp(d)$, $W \in\mod{\IS_{\la,j}}$ and $V \in\mod{\IS_{d,j}}$. Then  
$$
\Ext^1_{\IS_{\la,j}}(W,\GGR_\la^d V)\cong \Ext^1_{\IS_{d,j}}(\GGI_\la^d W, V).
$$
\end{Corollary}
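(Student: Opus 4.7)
\textbf{Proof plan for Corollary~\ref{CJantzen}.}
The plan is to produce the Ext isomorphism by resolving $W$ projectively over $\IS_{\la,j}$, transferring the resolution to $\IS_{d,j}$ via $\GGI_\la^d$, and then applying the Hom-adjunction termwise. The key point to establish along the way is that the pair $(\GGI_\la^d,\GGR_\la^d)$ restricts to an adjoint pair between $\mod{\IS_{\la,j}}$ and $\mod{\IS_{d,j}}$, and that both of these functors are exact on the restricted categories.

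First, I would record the Hom adjunction at the level of Schur modules. By Corollary~\ref{CHCIC}(i) we have $\GGI_\la^d W\simeq \IS_{d,j}\bar\ggi_\la\otimes_{\IS_{\la,j}}W$, and by Corollary~\ref{CHCIC}(ii) the restriction functor $\GGR_\la^d$ on $\mod{\IS_{d,j}}$ is (up to isomorphism) given by $\bar\ggi_\la(-)\cong \bar\ggi_\la\IS_{d,j}\otimes_{\IS_{d,j}}(-)$. The standard tensor-Hom adjunction then yields a natural isomorphism
\[
\Hom_{\IS_{d,j}}(\GGI_\la^d W,V)\ \cong\ \Hom_{\IS_{\la,j}}(W,\GGR_\la^d V),
\]
i.e. $(\GGI_\la^d,\GGR_\la^d)$ forms an adjoint pair between $\mod{\IS_{\la,j}}$ and $\mod{\IS_{d,j}}$.

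Next I would verify the two crucial exactness properties. The functor $\GGR_\la^d:\mod{\IS_{d,j}}\to\mod{\IS_{\la,j}}$ is exact: by Corollary~\ref{CHCIC}(ii) it is the restriction of the exact functor $\GGR_\la^d:\mod{C_d}\to\mod{C_\la}$ from Lemma~\ref{LFFExact}, and exactness is inherited on full subcategories closed under kernels and cokernels (which $\mod{\IS_{d,j}}\subseteq\mod{C_d}$ is, being the category of modules annihilated by a fixed two-sided ideal). Similarly $\GGI_\la^d$ is exact on $\mod{\IS_{\la,j}}$. A standard formal consequence of the adjunction from the previous paragraph together with exactness of the right adjoint $\GGR_\la^d$ is that the left adjoint $\GGI_\la^d$ preserves projectives: if $P\in\mod{\IS_{\la,j}}$ is projective, then $\Hom_{\IS_{d,j}}(\GGI_\la^d P,-)\cong\Hom_{\IS_{\la,j}}(P,\GGR_\la^d(-))$ is exact, so $\GGI_\la^d P$ is projective.

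Finally I would conclude by a resolution argument. Choose a projective resolution $P_\bullet\twoheadrightarrow W$ in $\mod{\IS_{\la,j}}$. Applying the exact functor $\GGI_\la^d$ and using the preceding paragraph, $\GGI_\la^d P_\bullet\twoheadrightarrow \GGI_\la^d W$ is a projective resolution in $\mod{\IS_{d,j}}$. The natural Hom isomorphism of the first paragraph, applied termwise, yields an isomorphism of complexes
\[
\Hom_{\IS_{\la,j}}(P_\bullet,\GGR_\la^d V)\ \cong\ \Hom_{\IS_{d,j}}(\GGI_\la^d P_\bullet,V),
\]
and taking $H^1$ of both sides gives
\[
\Ext^1_{\IS_{\la,j}}(W,\GGR_\la^d V)\ \cong\ \Ext^1_{\IS_{d,j}}(\GGI_\la^d W,V),
\]
as required. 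The only real obstacle is the bookkeeping for the first paragraph: one must check that the Hom adjunction descends cleanly from $(C_\la,C_d)$ to the Schur quotients $(\IS_{\la,j},\IS_{d,j})$, which is exactly what Corollary~\ref{CHCIC} was set up to provide; beyond that the argument is formal.
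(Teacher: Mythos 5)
Your proof is correct, but it takes the dual route to the paper's. The paper invokes \cite[I.4.1(3)]{Jantzen}: since $\GGR_\la^d$ is exact and, being right adjoint to the exact $\GGI_\la^d$, sends injectives to injectives, the derived functors of the composite $\Hom_{\IS_{\la,j}}(W,-)\circ\GGR_\la^d$ can be computed as $\Ext^n_{\IS_{\la,j}}(W,\GGR_\la^d(-))$; combined with the adjunction isomorphism of functors $\Hom_{\IS_{\la,j}}(W,-)\circ\GGR_\la^d\cong\Hom_{\IS_{d,j}}(\GGI_\la^d W,-)$, this gives the claim in one line. You instead work on the other side of the adjunction: since $\GGR_\la^d$ is exact, its left adjoint $\GGI_\la^d$ preserves projectives, so you can apply $\GGI_\la^d$ to a projective resolution of $W$ and compare Hom-complexes termwise. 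Both arguments are sound, rely on the same inputs (the adjunction in $\mod{\IS_{\bullet,j}}$ from Corollary~\ref{CHCIC}, exactness of both functors from Lemma~\ref{LFFExact} transported along that corollary), and prove the full degree-$n$ statement rather than just $n=1$. The paper's version is shorter because it outsources the homological bookkeeping to Jantzen's lemma; yours is self-contained and perhaps clearer about exactly which properties are being used where. One minor point of style: you went to some trouble to observe that $\mod{\IS_{d,j}}$ is a Serre subcategory of $\mod{C_d}$ to justify exactness of the restricted functors; this is fine, but the paper implicitly treats the functors as exact functors between the Schur module categories from the outset once Corollary~\ref{CHCIC} is established, which is slightly cleaner.
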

\begin{proof}
To prove the claim, the adjoint functor property gives us an isomorphism
of functors $\Hom_{\IS_{\la,j}} (W, -) \circ \GGR_\la^d	\cong \Hom_{\IS_{d,j}}	(\GGI_\la^d W, -).$ 
Since $\GGR_\la^d$ is exact and sends injectives to injectives (being adjoint to the exact functor $\GGI_\la^d$), an application of \cite[I.4.1(3)]{Jantzen} completes the proof of the claim. 
\end{proof}

\begin{Corollary} \label{LDualityIS} 
Let $\k=\F$, $\la\in\Comp(d)$ and $V\in \mod{\IS_{\la,j}}$. Considering $V$ as a graded $C_\la$-supermodule via inflation, we have the graded $C_\la$-supermodule $V^\sharp$. Then $V^\sharp$ factors through to a  $\IS_{\la,j}$-module. 
\end{Corollary}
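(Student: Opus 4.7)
The plan is to transfer the problem from $C_\la$ to $\hat C_\la$ via the graded Morita superequivalence given by $\funF_\la,\funG_\la$, and on the $\hat C_\la$-side to reduce matters to the self-duality of $\hat M_{\la,j}$ under $\circledast$.

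First I would set $I:=\Ann_{\hat C_\la}(\hat M_{\la,j})\subseteq \hat C_\la$ and prove the following key claim: if $W\in\mod{\hat C_\la}$ factors through $\hat C_\la/I$, then so does $W^\circledast$. The main step is to show that the antiautomorphism $\tau$ of $\hat C_\la$ of (\ref{ETauAntiAuto}) preserves the ideal~$I$. For this, observe that by definition of the $\circledast$-action, $u\in \Ann_{\hat C_\la}(\hat M_{\la,j})$ iff $\tau(u)\in\Ann_{\hat C_\la}(\hat M_{\la,j}^{\circledast})$. Since $\hat M_{\la,j}^{\circledast}\cong \hat M_{\la,j}$ as $\hat C_\la$-modules by Corollary~\ref{CSocleHead} (ignoring parity shift, which is irrelevant for the annihilator), we have $\Ann_{\hat C_\la}(\hat M_{\la,j}^{\circledast})=I$, hence $\tau(I)=I$. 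Now for $u\in I$, $f\in W^\circledast$ and $w\in W$, we compute $(u\cdot f)(w) = \pm f(\tau(u)w)=0$ since $\tau(u)\in I$ annihilates $W$. Thus $u\cdot f=0$, as required.

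Next I would use the parabolic version of Lemma~\ref{LHatMG}(ii), which follows from (\ref{EFGFunctorsLaOuter}) applied factor-by-factor: $\funG_\la M_{\la,j}\cong\funG_{\la_1}M_{\la_1,j}\boxtimes\cdots\boxtimes\funG_{\la_n}M_{\la_n,j}\cong \hat M_{\la,j}$. By Corollary~\ref{CEquivFG}(ii) and (\ref{EFGFunctorsLaOuter}), the functors $\funF_\la,\funG_\la$ give a graded Morita superequivalence between $C_\la$ and $\hat C_\la$. Standard Morita theory then guarantees that this equivalence restricts to an equivalence between $\mod{\IS_{\la,j}}=\mod{C_\la/\Ann_{C_\la}(M_{\la,j})}$ and $\mod{\hat C_\la/I}$, since $\funG_\la$ sends $M_{\la,j}$ to $\hat M_{\la,j}$ and Morita equivalences preserve annihilators of modules.

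Putting everything together: given $V\in\mod{\IS_{\la,j}}$, we have $\funG_\la V\in\mod{\hat C_\la/I}$ by the previous paragraph, hence $(\funG_\la V)^{\circledast}\in\mod{\hat C_\la/I}$ by the key claim, and applying $\funF_\la$ back gives $V^\sharp = \funF_\la\bigl((\funG_\la V)^{\circledast}\bigr)\in\mod{\IS_{\la,j}}$, as desired. There is no real obstacle here once the self-duality statement $\hat M_{\la,j}^{\circledast}\cong \hat M_{\la,j}$ and the Morita transfer are in place; the mildly subtle point—easily verified above—is just that the annihilator of $\hat M_{\la,j}$ is stable under $\tau$.
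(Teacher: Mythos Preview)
Your proof is correct and takes a genuinely different route from the paper's. The paper argues entirely on the $C_\la$-side: by Theorem~\ref{TProjGen}(iii) (or more simply, faithfulness of $M_{\la,j}$ over the finite-dimensional algebra $\IS_{\la,j}$), any $V\in\mod{\IS_{\la,j}}$ is a subquotient of a direct sum of copies of $M_{\la,j}$; since $\sharp$ is exact contravariant and $M_{\la,j}^\sharp\cong M_{\la,j}$ (Corollary~\ref{CSocleHead}), $V^\sharp$ is again such a subquotient, hence annihilated by $\Ann_{C_\la}(M_{\la,j})$. Your approach instead transfers to $\hat C_\la$, where there is the explicit antiautomorphism $\tau$, and shows directly that $\tau$ stabilizes $I=\Ann_{\hat C_\la}(\hat M_{\la,j})$ using $\hat M_{\la,j}^{\circledast}\cong\hat M_{\la,j}$; this immediately gives $I\cdot W^{\circledast}=0$ whenever $I\cdot W=0$, and the Morita equivalence $\funF_\la,\funG_\la$ carries this back to the desired statement. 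Both arguments ultimately rest on self-duality of the tensor space, but yours avoids the subquotient characterization and the appeal to the projective-generator result, at the cost of the Morita transfer; the paper's version is shorter once Theorem~\ref{TProjGen} is available.
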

\begin{proof}
By Theorem~\ref{TProjGen}(iii), $V$ is a direct sum of subquotients of $\M_{\la,j}$. Since $\M_{\la,j}^\sharp\simeq \M_{\la,j}$ by Lemma~\ref{LSocleHead} and (\ref{ESharpBoxTimes}), $V^\sharp$ is also a direct sum of subquotients of $\M_{\la,j}$. Hence $V^\sharp$ factors through to a module over $\IS_{\la,j}$. 
\end{proof}

Recall from (\ref{EA}), (\ref{EAEquals}) that $\bideg(\lgath_{\la,j} v_{\la,j})=(a_{\la,j},dj\pmod{2})$. 

\begin{Corollary} \label{CZCIdem}
Let $\k=\F$, $\la\in\Comp(d)$ and set 
$$\bar\ggi^{\la,j}:=\ggi^{\la,j}+\Ann_{C_d}(\M_{d,j})\in C_d/\Ann_{C_d}(\M_{d,j})=\IS_{d,j}.
$$ 
Then $\funQ^{a_{\la,j}}\,\Uppi^{dj}\,\IS_{d,j}\bar \ggi^{\la,j}\simeq \Di_j^\la$ as  
graded $\IS_{d,j}$-supermodules.  
\end{Corollary}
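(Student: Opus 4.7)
The plan is to construct a bidegree $(0,\0)$ surjection
$$\rho:\funQ^{a_{\la,j}}\Uppi^{dj}\,\IS_{d,j}\bar\ggi^{\la,j}\onto\Di_j^\la$$
and then show that its kernel vanishes by means of a $\Hom$-dimension comparison. For the construction, observe that by (\ref{EA}) the element $\lgath_{\la,j}v_{\la,j}\in\ggi^{\la,j}\M_{\la,j}$ has bidegree $(a_{\la,j},dj\pmod{2})$, so the assignment $\ggi^{\la,j}\mapsto\lgath_{\la,j}v_{\la,j}$ defines a bidegree $(0,\0)$ homomorphism of graded $C_\la$-supermodules $\varphi_\la:\funQ^{a_{\la,j}}\Uppi^{dj}\G_{\la,j}\to\M_{\la,j}$ whose image is $\Y_{\la,j}$ by Lemma \ref{LTraceLa}. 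The proof of Theorem \ref{TProjGen}(i) actually establishes $\Y_j^\nu=\Di_j^\nu$ for every $\nu\in\Comp(d)$; applying this factorwise to (\ref{EYLa}) gives $\Y_{\la,j}=\Di_{\la,j}$, so $\varphi_\la$ surjects onto $\Di_{\la,j}$. Applying the exact functor $\GGI_\la^d$ together with Lemma \ref{LUpperInd}(ii) then yields a bidegree $(0,\0)$ surjection $\funQ^{a_{\la,j}}\Uppi^{dj}\G_j^\la\onto\Di_j^\la$. Because $\G_j^\la=C_d\ggi^{\la,j}$ by (\ref{EGUpperLa}) and the $C_d$-action on $\Di_j^\la$ factors through $\IS_{d,j}=C_d/\Ann_{C_d}(\M_{d,j})$, this map descends to the desired $\rho$.

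I will then show $\rho$ is an isomorphism. Both its source and target are projective graded $\IS_{d,j}$-supermodules---the source as a summand of $\IS_{d,j}$ cut out by the idempotent $\bar\ggi^{\la,j}$, the target by Theorem \ref{TProjGen}(ii)---so $\rho$ splits in the graded supercategory and provides a decomposition
$$\funQ^{a_{\la,j}}\Uppi^{dj}\,\IS_{d,j}\bar\ggi^{\la,j}\simeq\Di_j^\la\oplus K$$
for some graded projective $\IS_{d,j}$-supermodule $K$. Fix any $n\ge d$ and put $\Di:=\bigoplus_{\mu\in\Comp(n,d)}\Di_j^\mu$, which is a projective generator of $\mod{\IS_{d,j}}$ by Theorem \ref{TProjGen}(iii). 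For each $\mu\in\Comp(n,d)$ one has
$$\Hom_{\IS_{d,j}}(\IS_{d,j}\bar\ggi^{\la,j},\Di_j^\mu)\simeq\bar\ggi^{\la,j}\Di_j^\mu=\ggi^{\la,j}\Di_j^\mu\simeq\Hom_{C_d}(\G_j^\la,\Di_j^\mu),$$
of dimension $|{}^\la\D^\mu|$ by (\ref{5.5.4}), whereas $\dim\Hom_{\IS_{d,j}}(\Di_j^\la,\Di_j^\mu)=|{}^\la\D^\mu|$ by (\ref{5.5.2}) combined with Theorem \ref{TProjGen}(i). The splitting therefore forces $\Hom_{\IS_{d,j}}(K,\Di_j^\mu)=0$ for every $\mu\in\Comp(n,d)$, and hence $\Hom_{\IS_{d,j}}(K,\Di)=0$.

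Since $K$ is a finitely generated projective $\IS_{d,j}$-supermodule it decomposes into indecomposable projectives, and every such indecomposable occurs as a summand of the projective generator $\Di$; consequently any nonzero summand of $K$ would contribute a nonzero map into $\Di$, forcing $K=0$ and making $\rho$ an isomorphism. The main delicate points are the bookkeeping of the bidegree shifts and the verification of the parabolic equality $\Y_{\la,j}=\Di_{\la,j}$; after these, everything reduces to assembling Lemma \ref{LTraceLa}, Lemma \ref{L5.5.3}, and Theorem \ref{TProjGen}(i)--(iii).
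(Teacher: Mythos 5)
Your proposal is correct, and it takes a genuinely different route from the paper. The paper first reduces to the case $n=1$ via Corollary~\ref{CHCIC}(i) together with the isomorphism $\Di_j^\la\simeq\GGI_\la^d(\Di_{\la_1,j}\boxtimes\dots\boxtimes\Di_{\la_n,j})$ from Lemma~\ref{LUpperInd}(ii), and then, in the rank-one case, shows the surjection $\funQ^{a_{d,j}}\Uppi^{dj}\IS_{d,j}\bar\ggi_d\onto\Y_{d,j}=\Di_{d,j}$ splits by projectivity of the target and must be an isomorphism because the source is a quotient of the \emph{indecomposable} $\G_{d,j}$ (Lemma~\ref{LProjCover}). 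You instead build the surjection for general $\la$ directly, use projectivity of both sides to split it with complement $K$, and kill $K$ by matching Hom-dimensions (via Lemma~\ref{L5.5.3}) against the projective generator $\bigoplus_\mu\Di_j^\mu$. The paper's route buys a cleaner vanishing argument once the reduction is in place (indecomposability replaces a dimension count), while yours avoids the reduction step entirely at the cost of invoking the generator property from Theorem~\ref{TProjGen}(iii) and the full strength of Lemma~\ref{L5.5.3}; both arguments rest on the same foundational facts (Lemma~\ref{LTraceLa}, $\Y_{\la,j}=\Di_{\la,j}$, Theorem~\ref{TProjGen}) and are of comparable length. One small remark: your claim that the proof of Theorem~\ref{TProjGen}(i) actually yields $\Y_j^\nu=\Di_j^\nu$ for all $\nu\in\Comp(d)$ is correct (the last paragraph of that proof establishes $\Di/\Y=0$ for the direct sum over all $\nu\in\Comp(n,d)$), but for the factorwise equality $\Y_{\la,j}=\Di_{\la,j}$ it is cleanest to just apply the stated conclusion $\Y_{c,j}=\Di_{c,j}$ to each tensor factor via (\ref{EYLa}) and the analogous decomposition of $\Di_{\la,j}$, as you in fact do.
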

\begin{proof}
Let $\la=(\la_1,\dots,\la_n)$. 
Using Corollary~\ref{CHCIC}(i), we get 
\begin{align*}
&\GGI^d_\la\big(\funQ^{a_{\la_1,j}}\,\Uppi^{\la_1j}\,\IS_{\la_1,j}\bar\ggi^{\la_1,j}\,\boxtimes\,\dots\,\boxtimes\, \funQ^{a_{\la_n,j}}\,\Uppi^{\la_1j}\,\IS_{\la_n,j}\bar\ggi^{\la_n,j}\big)
\\
\simeq\ & \funQ^{a_{\la_1,j}+\dots+a_{\la_n,j}}\,\Uppi^{\la_1j+\dots+\la_nj}\,\GGI^d_\la(
\IS_{\la,j}\bar\ggi^{\la,j})
\\
\simeq\ & \funQ^{a_{\la,j}}\,\Uppi^{dj}\,\IS_{d,j}\bar\ggi_\la\otimes_{\IS_{\la,j}}
\IS_{\la,j}\bar \ggi^{\la,j}
\\
\simeq\ & \funQ^{a_{\la,j}}\,\Uppi^{dj}\,\IS_{d,j}\bar \ggi^{\la,j}.
\end{align*}
Since by Lemma~\ref{LUpperInd}, we have $\Di_j^\la\simeq \GGI_\la^d\big(\Di_{\la_1,j}\boxtimes\dots\boxtimes\Di_{\la_n,j}\big)$, this reduces the proof of the lemma to the $n=1$ case, i.e. to proving that $\funQ^{a_{d,j}}\,\Uppi^{dj}\,\IS_{d,j}\bar \ggi^{d,j}\simeq \Di_{d,j}$. 

By Lemma~\ref{LMWtSpacesNew}, $\ggi^{d,j}\M_{d,j}$ has basis  $\{\lgath_{d,j} v_{d,j}\}$, so, by (\ref{EYdj}), there is a bidegree $(0,\0)$ surjective homomorphism 
$
\funQ^{a_{d,j}}\,\Uppi^{dj}\,\IS_{d,j}\bar\ggi_d\to \Y_{d,j},\ \bar\ggi_d\mapsto \lgath_{d,j} v_{d,j}$. But $\Y_{d,j}=\Di_{d,j}$ is projective by 
Theorem~\ref{TProjGen}. Hence $\Di_{d,j}$ is a direct summand of $\funQ^{a_{\la,j}}\IS_{d,j}\bar\ggi_d$. But $\IS_{d,j}\bar\ggi_d$  is a quotient of $\G_{d,j}$ so it is indecomposable by Lemma~\ref{LProjCover}. 
\end{proof}

\section{Morita equivalence functors}

\subsection{  The functors $\funa_{n,d,j}$ and $\funb_{n,d,j}$}
\label{SSMorISS}

In view of Theorem~\ref{TSchurEnd3} we can consider 
\begin{equation}\label{EGaJND}
\Di_j(n,d):=\bigoplus_{\la\in\Comp(n,d)}\Di_j^\la
\end{equation}
as a graded $(\IS_{d,j},S(n,d)^\op)$-bisupermodule. 
Identifying $S(n,d)^\op$ with $S(n,d)$ using the anti-automorphism $\tau$ of Lemma~\ref{LSchurAnti}, we consider $\Di_j(n,d)$ as a graded $(\IS_{d,j},S(n,d))$-bisupermodule. 
This yields the functors
\begin{align*}
\funa_{n,d,j}&:\mod{\IS_{d,j}}\to\mod{S(n,d)},\ V\mapsto \Hom_{\IS_{d,j}}(\Di_j(n,d),V),
\\
\funb_{n,d,j}&:\mod{S(n,d)}\to\mod{\IS_{d,j}},\ W\mapsto \Di_j(n,d)\otimes_{S(n,d)}W. 
\end{align*}
If $n\geq d$, {\em which we assume from now on}, and $\k=\F$, then by Theorem~\ref{TProjGen}(iii), these functors are mutually quasi-inverse (graded super)equivalences. 

Recall the $S(n,d)$-modules $\Di^\la_{n,d}$ and $\La^\la_{n,d}$ from \S\ref{SSSchurRep}. 

\begin{Lemma} \label{L6.3.1}
Let $\k=\F$ and $\la\in\Comp(n,d)$. Then 
$$
\Di_j^\la\simeq\funb_{n,d,j}(\Di^\la_{n,d})\quad{and}\quad \M_{d,j}\tty_\la\simeq\funb_{n,d,j}(\La^\la_{n,d}).
$$
\end{Lemma}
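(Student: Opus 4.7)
The plan is to use Lemma~\ref{1.3b}, which gives explicit isomorphisms of left $S(n,d)$-modules $S(n,d)\xi_\la\simeq \Di^\la_{n,d}$ and (using $n\geq d$) $S(n,d)\kappa(\tty_\la)\simeq\La^\la_{n,d}$. Since $\funb_{n,d,j}(S(n,d)e)\simeq\Di_j(n,d)\cdot e$ for any element $e\in S(n,d)$, this reduces the lemma to identifying the two submodules $\Di_j(n,d)\cdot\xi_\la$ and $\Di_j(n,d)\cdot\kappa(\tty_\la)$ of $\Di_j(n,d)$. The right $S(n,d)$-action here is the one obtained from the natural right $S(n,d)^{\op}$-action of \S\ref{SSMorISS} via the anti-automorphism $\tau$ of Lemma~\ref{LSchurAnti}; so for $a\in S(n,d)$, right multiplication by $a$ in the bimodule structure coincides with the original right action of $\tau(a)$ on $\Di_j(n,d)$, i.e.\ with the endomorphism attached to $\tau(a)$ by Theorem~\ref{TSchurEnd3}.

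For the first isomorphism I would check that $\tau$ fixes $\xi_\la=\xi^1_{\la,\la}$, so the right action of $\xi_\la$ in the bimodule structure corresponds to the endomorphism $\upzeta^1_{\la,\la}$. Since $\Si_\la\cap{}^\la\D_d=\{1\}$, the element $X^1_{\la,\la}$ defined in~(\ref{EX}) equals $1$, and therefore $\upzeta^1_{\la,\la}$ is the identity on $\Di_j^\la$ and zero on each $\Di_j^\mu$ with $\mu\neq\la$. Hence $\Di_j(n,d)\cdot\xi_\la=\Di_j^\la$, giving $\funb_{n,d,j}(\Di^\la_{n,d})\simeq\Di_j^\la$.

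For the second isomorphism I would first verify that $\tau$ also fixes $\kappa(\tty_\la)=\sum_{g\in\Si_\la}\sgn(g)\xi^g_{\om_d,\om_d}$: indeed $\tau(\xi^g_{\om_d,\om_d})=\xi^{g^{-1}}_{\om_d,\om_d}$, and reindexing $g\mapsto g^{-1}$ (which preserves the sign) recovers $\kappa(\tty_\la)$. So the right action of $\kappa(\tty_\la)$ in the bimodule structure corresponds to $\sum_{g\in\Si_\la}\sgn(g)\upzeta^g_{\om_d,\om_d}$. Because $\om_d=(1^d)$ makes $\Si_{\om_d}$ trivial, every $\upzeta^g_{\om_d,\om_d}$ vanishes on each $\Di_j^\mu$ with $\mu\neq\om_d$, while on $\Di_j^{\om_d}=\M_{d,j}$ it acts as right multiplication by $X^g_{\om_d,\om_d}=g\in\Si_d$. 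Summing with signs and using~(\ref{Ettx}), we get $\Di_j(n,d)\cdot\kappa(\tty_\la)=\M_{d,j}\tty_\la$, which yields $\funb_{n,d,j}(\La^\la_{n,d})\simeq\M_{d,j}\tty_\la$.

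The only delicate point is keeping straight the $\tau$-twist in the bimodule structure on $\Di_j(n,d)$; once that is tracked, both computations reduce to evaluating the elementary double-coset sums $X^g_{\la,\mu}$ at the idempotent $\om_d$ (where they collapse to single group elements) and at the trivial double coset (where they collapse to $1$). I don't anticipate any real obstacle beyond this bookkeeping, since the module isomorphisms of Lemma~\ref{1.3b} do all the heavy lifting and the endomorphism description of Theorem~\ref{TSchurEnd3} immediately translates the Schur-algebra action into an explicit action on the summands $\Di_j^\mu$.
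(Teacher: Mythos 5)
Your proof is correct and follows essentially the same route as the paper: reduce via Lemma~\ref{1.3b} to computing $\Di_j(n,d)\cdot\xi_\la$ and $\Di_j(n,d)\cdot\kappa(\tty_\la)$, track the $\tau$-twist in the bimodule structure, and evaluate the resulting $\upzeta$-endomorphisms using $X^1_{\la,\la}=1$ and $X^g_{\om_d,\om_d}=g$. The only cosmetic difference is that you spell out $\tau(\xi_\la)=\xi_\la$ explicitly, which the paper uses silently.
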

\begin{proof}
For the first claim, 
we have $\Di^\la_{n,d}\simeq S(n,d)\xi_\la$ by Lemma~\ref{1.3b}(i), so 
$$
\funb_{n,d,j}(\Di^\la_{n,d})\simeq \Di_j(n,d)\otimes_{S(n,d)}S(n,d)\xi_\la\simeq  \Di_j(n,d)\cdot\xi_\la=\tau(\xi_\la)(\Di_j(n,d))=\Di_j^\la,
$$
where the last equality comes from the definition of the action of $S(n,d)$ on $\Di_j(n,d)$ from Theorem~\ref{TSchurEnd3}.

For the second claim, we have $\La^\la_{n,d}\simeq S(n,d)\kappa(\tty_\la)$ by Lemma~\ref{1.3b}(ii), so
\begin{align*}
\funb_{n,d,j}(\La^\la_{n,d})&\simeq \Di_j(n,d)\otimes_{S(n,d)}S(n,d)\kappa(\tty_\la)
\\
&\simeq  \Di_j(n,d)\cdot\kappa(\tty_\la)
\\
&=\tau(\kappa(\tty_\la))(\Di_j(n,d)).
\end{align*}
Note using Lemma~\ref{LKappa} that 
$$
\tau(\kappa(\tty_\la))
=\tau\big(\sum_{g\in \Si_\la}\sgn(g)\xi^g_{\om_d,\om_d}\big)=\sum_{g\in \Si_\la}\sgn(g)\xi^{g^{-1}}_{\om_d,\om_d}
=\sum_{g\in \Si_\la}\sgn(g)\xi^g_{\om_d,\om_d}.
$$
Note that $\Di^{\om_d}_j=\M_{d,j}$. So by the definition of the action of $S(n,d)$ on $\Di_j(n,d)$ from Theorem~\ref{TSchurEnd3}, we now have
$$
\tau(\kappa(\tty_\la))(\Di_j(n,d))=\sum_{g\in \Si_\la}\sgn(g)\xi^g_{\om_d,\om_d}(\Di_j(n,d))\simeq\M_{d,j}\tty_\la,
$$
as required.
\end{proof}

Recall the irreducible graded $C_{d}$-supermodules $\{L_j(\la)\mid\la\in\Par(d)\}$ from Theorem~\ref{LAmountAmount}(ii). 
We consider these as irreducible graded $\IS_{d,j}$-supermodules. 
We have the irreducible (graded) $S(n,d)$-(super)modues $\{L_{n,d}(\la)\mid\la\in \Comp_+(n,d)\}$ from \S\ref{SSSchurRep}. 
Since $n\geq d$, we can identify the sets $\Par(d)$ and $\Comp_+(n,d)$. Then:

\begin{Lemma} \label{LfunbIrr}
Let $\k=\F$. For $\la\in \Comp_+(n,d)$, we have $\funb_{n,d,j}(L_{n,d}(\la))\simeq  L_j(\la)$. 
\end{Lemma}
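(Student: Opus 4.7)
The strategy is to verify that $L := \funb_{n,d,j}(L_{n,d}(\la))$ satisfies the two defining properties of $L_j(\la)$ recorded in Corollary~\ref{CCFMjAmountNew}, namely (a) $L$ is a composition factor of $\M_{d,j}\tty_{\la'}$, and (b) $\operatorname{GGCh}(L) = (\la,j) + \sum_{\mu\lhd\la} b_{\mu,j}(\mu,j)$ for some $b_{\mu,j}\in\N$. Once both are established, the uniqueness asserted by Corollary~\ref{CCFMjAmountNew} forces $L\cong L_j(\la)$, and the statement $L\simeq L_j(\la)$ follows by choosing the appropriate representative of the isomorphism class.

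To verify (a), I will use Lemma~\ref{L6.3.1} to identify $\M_{d,j}\tty_{\la'}\simeq \funb_{n,d,j}(\La^{\la'}_{n,d})$, and Lemma~\ref{LLaCompfactors} (with the roles of $\la$ and $\la'$ interchanged) to see that $L_{n,d}(\la)$ appears as a composition factor of $\La^{\la'}_{n,d}$. Since $\funb_{n,d,j}$ is exact, being one half of a graded Morita superequivalence by Theorem~\ref{TProjGen}(iii) and the definition in \S\ref{SSMorISS} (using $n\geq d$), applying it to a composition series of $\La^{\la'}_{n,d}$ produces one for $\M_{d,j}\tty_{\la'}$ in which $L$ appears.

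The heart of the argument is (b), which rests on the natural chain of isomorphisms
$$\bar\ggi^{\mu,j}\funb_{n,d,j}(W) \simeq \Hom_{\IS_{d,j}}\bigl(\IS_{d,j}\bar\ggi^{\mu,j},\,\funb_{n,d,j}(W)\bigr) \simeq \funQ^{a_{\mu,j}}\Uppi^{dj}\,\xi_\mu W$$
for any $\mu\in\EC(d)$ (viewed inside $\Comp(n,d)$ by padding with zeros) and any $W\in\mod{S(n,d)}$. This is obtained by combining Corollary~\ref{CZCIdem} (which presents $\Di_j^\mu$ as a shift of $\IS_{d,j}\bar\ggi^{\mu,j}$), Lemma~\ref{L6.3.1} together with Lemma~\ref{1.3b}(i) (identifying $\Di_j^\mu\simeq \funb_{n,d,j}(S(n,d)\xi_\mu)$), and the equivalence properties of $\funb_{n,d,j}$. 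Specializing to $W = L_{n,d}(\la)$ yields $\dim\ggi^{\mu,j}L = \dim\xi_\mu L_{n,d}(\la)$; the highest-weight description of $L_{n,d}(\la)$ recalled in \S\ref{SSSchurRep} then gives $\dim\xi_\la L_{n,d}(\la) = 1$ and $\xi_\mu L_{n,d}(\la) = 0$ for $\mu\not\unlhd\la$. Finally, since $L$ is a subquotient of $\M_{d,j}$, the decomposition $\M_{d,j} = \bigoplus_{\mu\in\EC(d)}\ggi^{\mu,j}\M_{d,j}$ of Lemma~\ref{LMWtSpaces} forces $\ggi^{\mu,\bj}L = 0$ whenever $\bj$ is not a constant string in $j$, so only the $(\mu,j)$ terms survive in $\operatorname{GGCh}(L)$, yielding precisely the form required in (b).

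The main bookkeeping difficulty lies in tracking the precise grading and parity shifts across the chain of natural isomorphisms so as to obtain a bidegree $(0,\0)$ isomorphism onto a specific graded representative of $L_j(\la)$ rather than a shifted one; this is handled by the explicit shift $\funQ^{a_{\mu,j}}\Uppi^{dj}$ supplied by Corollary~\ref{CZCIdem}, together with the compatibility of $\funb_{n,d,j}$ with the shift functors.
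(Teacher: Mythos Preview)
Your proof is correct and takes a genuinely different route from the paper's. The paper argues by induction on the dominance order: it observes that $\funb_{n,d,j}$ must induce some bijection $f$ on $\Par(d)$ via $\funb_{n,d,j}(L_{n,d}(\la))\cong L_j(f(\la))$, establishes the base case $f(\om_d)=\om_d$ directly from $L_{n,d}(\om_d)=\La^{(d)}_{n,d}$, and for the inductive step compares the composition factors of $\M_{d,j}\tty_{\la'}\simeq\funb_{n,d,j}(\La^{\la'}_{n,d})$ as computed two ways (via Lemma~\ref{LLaCompfactors} and via Corollary~\ref{CCompFactorsMy}) to force $f(\la)=\la$. Your approach instead computes $\operatorname{GGCh}(L)$ directly, using Corollary~\ref{CZCIdem} together with Lemma~\ref{L6.3.1} to identify $\dim\ggi^{\mu,j}L$ with the Schur-algebra weight multiplicity $\dim\xi_\mu L_{n,d}(\la)$, and then invokes the highest-weight description of $L_{n,d}(\la)$. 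This avoids induction entirely and gives more: it identifies every Gelfand--Graev weight multiplicity of $L_j(\la)$ with the corresponding classical weight multiplicity. The trade-off is that you are leaning on Corollary~\ref{CZCIdem}, which in turn rests on Proposition~\ref{PResGGModule}; the paper's inductive argument is more self-contained at this point in the exposition. Note also that your step~(a) is not strictly needed once (b) is in hand: since $L$ is a composition factor of $\M_{d,j}$, Theorem~\ref{LAmountAmount}(ii) gives $L\cong L_j(\nu)$ for some $\nu$, and comparing the $\unlhd$-maximal term of the two expressions for $\operatorname{GGCh}$ forces $\nu=\la$. One minor point: you use that $\dim\xi_\la L_{n,d}(\la)=1$, which is standard but not explicitly stated in \S\ref{SSSchurRep} (where only $\xi_\la L_{n,d}(\la)\neq 0$ is recorded); it follows at once from $L_{n,d}(\la)$ being a quotient of $\De_{n,d}(\la)$.
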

\begin{proof}
Since $\funb_{n,d,j}$ is an equivalence, $\funb_{n,d,j}(L_{n,d}(\la))$ is irreducible, and by definition, it is a composition factor of $\Di_j(n,d)$, hence of $M_{d,j}$. By Theorem~\ref{LAmountAmount}(ii), we must have $\funb_{n,d,j}(L_{n,d}(\la))\simeq  L_j(f(\la))$ for some bijection $f$ on $\Par(d)$. We prove that $f(\la)=\la$ by induction on the dominance order on $\Par(d)$. For the smallest partition $\om_d$, we have 
$$
\funb_{n,d,j}(L_{n,d}(\om_d))\stackrel{(\ref{EExtDef})}{\simeq}\funb_{n,d,j}(\La^{(d)}_{n,d})
\stackrel{\text{Lemma \ref{L6.3.1}}}{\simeq}\M_{d,j}\tty_{d}
\stackrel{\text{Corollary \ref{CCompFactorsMy}}}{\simeq}L_j(\om_d).
$$
For an arbitrary $\la$, suppose we already know that $\funb_{n,d,j}(L_{n,d}(\mu))\simeq  L_j(\mu)$ 
for all $\mu\lhd \la$. 
By Lemma~\ref{LLaCompfactors}, the composition factors of $\funb_{n,d,j}(\La_{n,d}^{\la'})$ are of the form $L_j(\mu)$ with $\mu\lhd \la$, as well as $L_j(f(\la))$ (appearing once). On the other hand, by Lemma~\ref{L6.3.1}, we have $\funb_{n,d,j}(\La_{n,d}^{\la'})\simeq \M_{d,j}\tty_{\la'}$, whose composition factors, by Corollary~\ref{CCompFactorsMy}, are of the form $L_j(\mu)$ with $\mu\lhd \la$, as well as $L_j(\la)$ (appearing once). It follows that $f(\la)=\la$. 
\end{proof}

Recall from \S\ref{SSSchurRep} that $S(n,d)$ is a quasi-hereditary algebra with weight poset $\Comp_+(n,d)$ partially ordered by the dominance order $\unlhd$. As $n \geq d$, we can identify $\Comp_+(n,d)$ with the set $\Par(d)$ of partitions of $d$. So for $\la\in\Par(d)$, the algebra $S(n,d)$ has the standard module $\De_{n,d}(\la)$ and the costandard module $\nabla_{n,d}(\la)$. For all $\la\in\Par(d)$, define the (graded) $\IS_{d,j}$-supermodules
\begin{align*}
\De_j(\la):=\funb_{n,d,j}(\De_{n,d}(\la))\quad\text{and}\quad  
\nabla_j(\la):=\funb_{n,d,j}(\nabla_{n,d}(\la)).
\end{align*}

We need to explain why these definitions are independent of the choice of $n\geq d$. Take $N \geq n\geq d$ and identify $S(n,d)$  with the idempotent truncation $\xi(N,n;d)S(N,d)\xi(N,n;d)$  using (\ref{ENnIso}). Recalling the Morita equivalence from Lemma~\ref{LNnEquiv}, it suffices to prove the isomorphism of functors 
\begin{equation}\label{EIsoFunctors}
\funb_{N,d,j}\circ\Ind_{S(n,d)}^{S(N,d)}\simeq \funb_{n,d,j}. 
\end{equation}
To see this, note that $\Di_j(n,d)\simeq \Di_j(N,d)\cdot \xi(N,n;d)$. Now, for $W\in\mod{S(n,d)}$, we have functorial isomorphisms
\begin{align*}
\funb_{N,d,j}\circ\Ind_{S(n,d)}^{S(N,d)}(W)
&=\Di_j(N,d)\otimes_{S(N,d)}\otimes S(N,d)\xi(N,n;d)\otimes_{S(n,d)}(W)
\\
&\simeq \Di_j(N,d)\xi(N,n;d)\otimes_{S(n,d)}(W)
\\&\simeq \Di_j(n,d)\otimes_{S(n,d)}(W)
\\&=\funb_{n,d,j}(W).
\end{align*}

Finally, note using Lemmas~\ref{LSharpHom},\,\ref{LGGIrrSelfD} and general theory of quasi-hereditary algebras (see for example \cite[(1.2)]{CPS}) that 
\begin{equation}\label{EDeNaDuals}
\De_j(\la)^\sharp\simeq \nabla_j(\la)\qquad(\la\in\Par(d)). 
\end{equation}

\begin{Lemma} \label{L6.3.4}
Let $\k=\F$. We have that $\dim\IS_{d,j}=\sum_{\la\in\Par(d)}(\dim \De_j(\la))^2$. 
\end{Lemma}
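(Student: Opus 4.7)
The plan is to deduce the formula from the general structure theory of quasi-hereditary algebras applied to $\IS_{d,j}$, where the quasi-hereditary structure is transported from $S(n,d)$ via the Morita equivalence. The strategy reduces to two ingredients: the standard dimension formula $\dim A = \sum_\la \dim\De(\la)\,\dim\nabla(\la)$ for a quasi-hereditary algebra $A$, and the self-duality $\dim\De_j(\la) = \dim\nabla_j(\la)$ supplied by $\sharp$.

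First I would observe that for $n \geq d$, Theorem~\ref{TProjGen}(iii) together with Theorem~\ref{TSchurEnd3} yields the Morita (super)equivalence $\funb_{n,d,j}:\mod{S(n,d)} \iso \mod{\IS_{d,j}}$. Since Morita equivalence preserves quasi-heredity and the heredity chain transports through the equivalence, $\IS_{d,j}$ inherits the structure of a quasi-hereditary algebra with weight poset $(\Par(d),\unlhd)$, standard modules $\De_j(\la)$, costandard modules $\nabla_j(\la)$, and irreducibles $L_j(\la)$, the latter thanks to Lemma~\ref{LfunbIrr}. The irreducibles are absolutely irreducible because under $\funG_d$ they pull back to irreducible $\hat C_d$-supermodules (Corollary~\ref{CEquivFG}), which are absolutely irreducible by Lemma~\ref{LTypeM}.

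Next I would apply the standard dimension count. Writing $P_j(\la)$ for the projective cover of $L_j(\la)$, the decomposition of the regular $\IS_{d,j}$-module gives
\[
\dim \IS_{d,j} \;=\; \sum_{\la\in\Par(d)} \dim P_j(\la)\cdot \dim L_j(\la).
\]
The projective $P_j(\la)$ has a standard filtration, so $\dim P_j(\la) = \sum_\mu [P_j(\la):\De_j(\mu)]\,\dim \De_j(\mu)$, and BGG reciprocity $[P_j(\la):\De_j(\mu)] = [\nabla_j(\mu):L_j(\la)]$ (a general fact for any quasi-hereditary algebra) then yields, after interchanging summations,
\[
\dim \IS_{d,j} \;=\; \sum_{\mu\in\Par(d)} \dim \De_j(\mu) \cdot \dim \nabla_j(\mu).
\]
Finally, by (\ref{EDeNaDuals}) we have $\nabla_j(\la) \simeq \De_j(\la)^\sharp$, and since $\sharp$ is defined via the composition of the equivalence $\funF_d$ with $\circledast$ (applied after $\funG_d$), each of which preserves $\F$-dimension, we conclude $\dim \De_j(\la) = \dim \nabla_j(\la)$, giving the claimed identity.

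The only real obstacle is bookkeeping: confirming that the Morita equivalence of Theorem~\ref{TProjGen}(iii) genuinely transports the quasi-hereditary data (weight poset, order, and the three families $\De_j,\nabla_j,L_j$) in a compatible way, and that the dimension calculation---which is inherently not Morita invariant---is nevertheless carried through correctly via the intrinsic formula for $\IS_{d,j}$. Once these formal checks are made, no computation beyond the standard quasi-hereditary toolkit is required.
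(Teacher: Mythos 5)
Your argument is correct, but takes a different route from the paper's. The paper's proof invokes directly the bimodule filtration of a quasi-hereditary algebra with subquotients $\De_j(\la)\otimes\De_j^\op(\la)$ (citing~\cite[(2.20), Proposition 3.5]{KMuth}, in the spirit of Lemma~\ref{1.2e}), which immediately yields $\dim\IS_{d,j}=\sum_\la\dim\De_j(\la)\cdot\dim\De_j^\op(\la)$, and then uses the duality to identify $\dim\De_j^\op(\la)=\dim\De_j(\la)$. You instead decompose the regular module into projective covers, pass through standard filtrations of $P_j(\la)$, and apply BGG reciprocity $[P_j(\la):\De_j(\mu)]=[\nabla_j(\mu):L_j(\la)]$ to arrive at $\dim\IS_{d,j}=\sum_\mu\dim\De_j(\mu)\cdot\dim\nabla_j(\mu)$, which is the same intermediate identity. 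Both are standard facts for quasi-hereditary algebras and are essentially equivalent; the paper's route is more economical (one citation, no reciprocity), while yours is more self-contained and makes explicit the role of absolute irreducibility (needed for $\dim\IS_{d,j}=\sum\dim P_j(\la)\dim L_j(\la)$) and of the Morita transport of the heredity chain from $S(n,d)$ to $\IS_{d,j}$. Either route then closes with~(\ref{EDeNaDuals}) and the fact that $\sharp$ preserves $\F$-dimension.
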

\begin{proof}
This is a general fact on finite-dimensional quasi-hereditary algebras with duality. To give more details, let $\De_j^\op(\la)=\nabla_j(\la)^*$ be the right $\IS_{d,j}$-module with the action given by $(f\cdot s)(v)=f(sv)$ for $f\in \nabla_j(\la)^*,\, s\in\IS_{d,j},\,v\in \nabla_j(\la)$. 
Note by (\ref{EDeNaDuals}) that $\dim\De_j^\op(\la)=\dim\De_j(\la)$. 
Finally, as an $(\IS_{d,j},\IS_{d,j})$-bimodule, $\IS_{d,j}$ has a filtration with subquotients $\{\De_j(\la)\otimes\De_j^\op(\la)\mid\la\in\Par(d)\}$, see e.g. \cite[(2.20) and Proposition 3.5]{KMuth}.  
\end{proof}

Recall the right $S(n,d)$-modules $\tilde\De_{n,d}(\la)$ from \S\ref{SSSchurRep} and Lemma~\ref{1.2e}.

\begin{Lemma} \label{LZFiltration} 
Let $\k=F$. 
As a graded $(\IS_{d,j},S(n,d))$-bisuprmodule, $\Di_j(n,d)$ has a filtration with subquotients $\De_j(\la)\otimes\tilde\De_{n,d}(\la)$ appearing once for each $\la\in\Par(d)$ and ordered in any way refining the dominance order so that factors corresponding to more dominant $\la$ appear lower in the filtration. 
\end{Lemma}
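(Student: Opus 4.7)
The natural plan is to transport the known $(S(n,d),S(n,d))$-bimodule filtration of $S(n,d)$ provided by Lemma~\ref{1.2e} through the Morita equivalence $\funb_{n,d,j}$, yielding the desired filtration of $\Di_j(n,d)$. Concretely, by Lemma~\ref{1.2e} the left regular bimodule $S(n,d)$ admits a filtration $0 = F_0 \subset F_1 \subset \dots \subset F_r = S(n,d)$ by $(S(n,d),S(n,d))$-sub-bimodules such that, for a fixed linear order on $\Par(d)$ refining the dominance order with more dominant partitions smaller, the successive quotients satisfy $F_k/F_{k-1}\cong \De_{n,d}(\la_k)\otimes \tilde\De_{n,d}(\la_k)$, each $\la\in\Par(d)$ appearing exactly once. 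I would apply the functor $\Di_j(n,d)\otimes_{S(n,d)}-$ to this filtration.

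The first thing to check is that this tensor product is exact, so that applying it to $0\to F_{k-1}\to F_k \to F_k/F_{k-1}\to 0$ yields short exact sequences. By Theorem~\ref{TProjGen}(iii), $\Di_j(n,d)$ is a projective generator for $\IS_{d,j}$, and by Theorem~\ref{TSchurEnd3} (combined with the identification via $\tau$ from Lemma~\ref{LSchurAnti} as in \S\ref{SSMorISS}) we have $\End_{\IS_{d,j}}(\Di_j(n,d))^{\op}\cong S(n,d)$; hence standard Morita theory shows that $\Di_j(n,d)$ is projective (in particular flat) as a right $S(n,d)$-module, so tensoring is exact. This gives a filtration $0=G_0\subset G_1\subset\dots\subset G_r = \Di_j(n,d)$ by graded $(\IS_{d,j},S(n,d))$-subsupermodules, where $G_k := \Di_j(n,d)\otimes_{S(n,d)}F_k$ and the top of the filtration is $\Di_j(n,d)\otimes_{S(n,d)}S(n,d)\simeq \Di_j(n,d)$.

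The second step is to identify the successive quotients. Since the tensor factor $\tilde\De_{n,d}(\la_k)$ is only carrying a right $S(n,d)$-action (and is, from the point of view of the left $S(n,d)$-action on $\De_{n,d}(\la_k)\otimes\tilde\De_{n,d}(\la_k)$, just a trivial outer $\k$-module), we get a natural identification
\[
G_k/G_{k-1}\;\simeq\;\Di_j(n,d)\otimes_{S(n,d)}\bigl(\De_{n,d}(\la_k)\otimes \tilde\De_{n,d}(\la_k)\bigr)\;\simeq\;\bigl(\Di_j(n,d)\otimes_{S(n,d)}\De_{n,d}(\la_k)\bigr)\otimes \tilde\De_{n,d}(\la_k)
\]
as graded $(\IS_{d,j},S(n,d))$-bisupermodules. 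By the definition of $\De_j(\la_k)$ from \S\ref{SSMorISS}, the first factor is $\funb_{n,d,j}(\De_{n,d}(\la_k))=\De_j(\la_k)$, which delivers exactly the subquotient $\De_j(\la_k)\otimes \tilde\De_{n,d}(\la_k)$ with the correct ordering inherited from Lemma~\ref{1.2e}.

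I do not anticipate any serious obstacles here; the only technical point worth verifying carefully is the compatibility of the bimodule structures (in particular the right $S(n,d)$-action, which on $G_k$ comes from the right $S(n,d)$-action on the second $S(n,d)$-factor of each $F_k/F_{k-1}$, not from the one used to form the tensor product), and the exactness of $\Di_j(n,d)\otimes_{S(n,d)}-$ addressed in the previous paragraph. Once these are in place, the filtration $(G_k)$ is the desired one.
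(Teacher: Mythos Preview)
Your proposal is correct and follows essentially the same approach as the paper: apply the exact functor $\Di_j(n,d)\otimes_{S(n,d)}-$ to the bimodule filtration of $S(n,d)$ from Lemma~\ref{1.2e}, and identify each subquotient via $\Di_j(n,d)\otimes_{S(n,d)}(\De_{n,d}(\la)\otimes\tilde\De_{n,d}(\la))\simeq \De_j(\la)\otimes\tilde\De_{n,d}(\la)$. The paper's proof is terser but makes exactly these two moves.
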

\begin{proof}
The functor $\Di_j(n,d)\otimes_{S(n,d)}-$ can be viewed as an exact functor from the category of $(S(n,d),S(n,d))$-bimodules to the category of $(\IS_{d,j},S(n,d))$-bimodules. Then
\begin{align*}
\Di_j(n,d)\otimes_{S(n,d)}(\De_{n,d}(\la)\otimes\tilde\De_{n,d}(\la))&\simeq
(\Di_j(n,d)\otimes_{S(n,d)}\De_{n,d}(\la))\otimes\tilde\De_{n,d}(\la)
\\&\simeq \De_j(\la)\otimes\tilde\De_{n,d}(\la).
\end{align*}
So applying $\Di_j(n,d)\otimes_{S(n,d)}-$ to the filtration of Lemma~\ref{1.2e} yields the result. 
\end{proof}

We conclude this subsection with alternative descriptions of the standard modules $\De_j(\la)$ for $\IS_{d,j}$ without reference to 
the Morita equivalence $\IS_{d,j}\sim_\Mor S(n,d)$. This will be needed in \S\ref{SSInt}. 
Recall the element $u_\la$ from Lemma~\ref{L2.2.1}.

\begin{Theorem} \label{TDeAlternative} 
Let $\k=\F$ and $\la\in\Par(d)$. 
\begin{enumerate}
\item[{\rm (i)}] We have that $\Hom_{\IS_{d,j}}(\Di_j^\la,\M_{d,j}\tty_{\la'})\cong \F$, and the image of any non-zero homomorphism from $\Hom_{\IS_{d,j}}(\Di_j^\la,\M_{d,j}\tty_{\la'})$ is isomorphic to $\De_j(\la)$.
\item[{\rm (ii)}] We have that $\De_j(\la)\simeq \Di_j^\la u_{\la'}\ttx_{\la'}$. 
\end{enumerate}
\end{Theorem}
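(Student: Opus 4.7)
The plan for (i) is direct: apply the graded Morita superequivalence $\funb_{n,d,j}\colon\mod{S(n,d)}\iso\mod{\IS_{d,j}}$ from \S\ref{SSMorISS} (valid since $n\geq d$) to Lemma~\ref{1.3d}. By Lemma~\ref{L6.3.1}, $\funb_{n,d,j}$ sends $\Di^\la_{n,d}$ to $\Di_j^\la$ and $\La^{\la'}_{n,d}$ to $\M_{d,j}\tty_{\la'}$, and by definition sends $\De_{n,d}(\la)$ to $\De_j(\la)$. Since an equivalence of graded supercategories preserves Hom-spaces and images of morphisms, (i) follows at once.

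For (ii), since the right $\k\Si_d$-action and the left $\IS_{d,j}$-action on $\M_{d,j}$ commute (Theorem~\ref{TMdProj}), right multiplication by $u_{\la'}\ttx_{\la'}$ defines an $\IS_{d,j}$-homomorphism $\psi\colon\Di_j^\la\to\M_{d,j}$ with image $\Di_j^\la u_{\la'}\ttx_{\la'}$. To verify $\psi\neq 0$, take $w:=\lgath_{\la,j}v_{d,j}\in \ggi^{\la,j}\M_{d,j}$; by Lemma~\ref{LTrivU} one has $w\in \Di_j^\la$, and Lemma~\ref{LBasisIndGGNew} identifies $w\cdot\k\Si_d$ with the permutation module $\k_{\Si_\la}\otimes_{\k\Si_\la}\k\Si_d\simeq M^\la$ up to a degree-parity shift. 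Under this identification $\psi(w)=0$ becomes $\ttx_\la u_{\la'}\ttx_{\la'}=0$ in $\k\Si_d$, which is ruled out by Lemma~\ref{L2.2.1} applied with $\la\leftrightarrow\la'$ swapped: the hypothesis $\Si_\la\cap u_{\la'}\Si_{\la'}u_{\la'}^{-1}=\{1\}$ forces the coefficient of $u_{\la'}$ in $\ttx_\la u_{\la'}\ttx_{\la'}$ to be $1$.

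Now let $\phi\colon \Di_j^\la\to\M_{d,j}\tty_{\la'}$ be the nonzero homomorphism from (i), with kernel $K$ and image $\De_j(\la)$. It remains to prove $\Ker\psi=K$, which will yield $\Di_j^\la u_{\la'}\ttx_{\la'}\simeq \Di_j^\la/K\simeq\De_j(\la)$. I would transfer the problem to $S(n,d)$ via the Morita equivalence. Under $\funa_{n,d,j}$, the inclusion $\Di_j^\la\hookrightarrow\M_{d,j}$ corresponds to an explicit element $y\in \xi_\la S(n,d)\xi_{\om_d}\simeq\Hom_{S(n,d)}(S(n,d)\xi_\la,S(n,d)\xi_{\om_d})$, and by Lemma~\ref{LKappa} right multiplication by $u_{\la'}\ttx_{\la'}$ on $\M_{d,j}$ corresponds to right multiplication by $\kappa(u_{\la'}\ttx_{\la'})$ on $S(n,d)\xi_{\om_d}$. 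Thus $\Im\psi$ is identified with $S(n,d)\xi_\la\cdot y\kappa(u_{\la'}\ttx_{\la'})$, while via Lemma~\ref{1.3b}(ii), $\Im\phi\simeq \De_{n,d}(\la)$ is identified with the image of the unique nonzero map $S(n,d)\xi_\la\to S(n,d)\kappa(\tty_{\la'})$.

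The main obstacle is the final comparison of these two images inside $S(n,d)\xi_{\om_d}$. My approach is to exploit Lemma~\ref{L2.2.1} to pin down explicit generators of the one-dimensional spaces $\ttx_\la\k\Si_d\tty_{\la'}$ and $\ttx_{\la'}\k\Si_d\tty_\la$ (both involving $u_{\la'}$), combined with the universal characterization of $\De_{n,d}(\la)$ as the maximal quotient of the indecomposable projective $S(n,d)\xi_\la$ with simple head $L_{n,d}(\la)$ and other composition factors $L_{n,d}(\mu)$ with $\mu\lhd\la$; showing that $\Im\psi$ enjoys the same property then forces $\Ker\psi=K$. If this head-matching proves delicate, a fallback is a dimension count: since $\dim\De_j(\la)=\dim\De_{n,d}(\la)$ is characteristic-independent and equals the number of semistandard $\la$-tableaux with entries in $\{1,\dots,n\}$, showing that $\dim \Di_j^\la u_{\la'}\ttx_{\la'}$ equals this number via an explicit spanning set in $S(n,d)$ would suffice.
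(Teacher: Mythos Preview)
Your proof of (i) is correct and matches the paper's argument exactly.

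For (ii), you have been misled by a typo in the statement: it should read $\Di_j^\la u_{\la'}\tty_{\la'}$, not $\Di_j^\la u_{\la'}\ttx_{\la'}$. Indeed, the version with $\ttx_{\la'}$ is false: take $\la=(1^d)$, so that $\la'=(d)$, $u_{\la'}=1$, and $\Di_j^\la=\M_{d,j}$; then $\Di_j^\la u_{\la'}\ttx_{\la'}=\M_{d,j}\ttx_d$, which (when $\cha\F\nmid d!$) has $\ggi^{d,j}$-weight space nonzero and hence is $L_j((d))$, whereas $\De_j((1^d))=L_j((1^d))$. This is why your transfer to $S(n,d)$ ran into an obstacle you could not resolve.

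With the correct element $\tty_{\la'}$, the paper's proof of (ii) is a one-liner from (i), and you have missed the key observation that makes it so. Right multiplication by $u_{\la'}\tty_{\la'}$ sends all of $\M_{d,j}$ into $\M_{d,j}\tty_{\la'}$, so it defines an $\IS_{d,j}$-homomorphism $\Di_j^\la\to\M_{d,j}\tty_{\la'}$ (not merely into $\M_{d,j}$). It is nonzero because $\M_{d,j}\ttx_\la\subseteq\Di_j^\la$, the right $\F\Si_d$-action on $\M_{d,j}$ is faithful by Theorem~\ref{TEndMd}, and $\ttx_\la u_{\la'}\tty_{\la'}\neq 0$ by Lemma~\ref{L2.2.1} (applied with $\la$ replaced by $\la'$). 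Now (i) says that any nonzero homomorphism $\Di_j^\la\to\M_{d,j}\tty_{\la'}$ has image $\De_j(\la)$, and we are done. There is no need to compare kernels, transfer through the Morita equivalence, match heads, or count dimensions.
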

\begin{proof}
By Lemma~\ref{L6.3.1}, we have $\Di_j^\la\simeq \funb_{n,d,j}(\Di^\la_{n,d})$ and $\M_{d,j}\tty_{\la'}\simeq \funb_{n,d,j}(\La^{\la'}_{n,d})$. Since $\De_j(\la)=\funb_{n,d,j}(\De_{n,d}(\la)$ and $\funb_{n,d,j}$ is an equivalence, (i) now follows from Lemma~\ref{1.3d}. 

We have $\Di_j^\la\supseteq \M_{d,j}\ttx_\la$ by (\ref{EMGa}). 
Moreover, $\M_{d,j}$ is a faithful right $\F\Si_d$-module by Theorem~\ref{TEndMd}. Since $\ttx_\la u_{\la'}\tty_{\la'}\neq 0$ in $\F\Si_d$ by Lemma~\ref{L2.2.1} (with $\la=\la'$), we conclude that $\M_{d,j}\ttx_\la u_{\la'}\tty_{\la'}\neq 0$. Hence $\Di_j^\la u_{\la'}\tty_{\la'}\neq 0$. Finally, observe that $\Di_j^\la u_{\la'}\tty_{\la'}$ is both a homomorphic image of $\Di_j^\la$ and a submodule of $\M_{d,j}\tty_{\la'}$. So (ii) follows from (i). 
\end{proof}

\subsection{Induction and Morita equivalence}
Fix $n\geq d$. We now prove that the Morita equivalence 
$\IS_{d,j}\sim_\Mor S(n,d)$ of \S\ref{SSMorISS} 
intertwines Gelfand-Graev induction and the tensor product  (\ref{ESchurTensProd}) for the classical Schur algebras.

\begin{Theorem} \label{TMoritaGGI}
Let $\k=\F$, $m\in\N_+$, $\ud=(d_1,\dots,d_m)\in\Comp(m,d)$, and\, $W_t\in\mod{S(n,d_t)}$ for $t=1,\dots,m$. Then there is a functorial isomorphism 
$$
\GGI_{d_1,\dots,d_m}^d\big((\funb_{n,d_1,j}W_1)\boxtimes\dots\boxtimes (\funb_{n,d_m,j}W_m)\big)\simeq 
\funb_{n,d,j}(W_1\otimes\dots\otimes W_m).
$$
\end{Theorem}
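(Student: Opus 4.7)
The plan is to reduce to the case $m=2$ by induction, then check the natural isomorphism on projective generators of $\mod{S(n,d_1)}\times\mod{S(n,d_2)}$. Both sides of the proposed isomorphism are additive bifunctors from $\mod{S(n,d_1)}\times\cdots\times\mod{S(n,d_m)}$ to $\mod{\IS_{d,j}}$. Each $\funb_{n,d_t,j}$ is an equivalence by Theorems~\ref{TSchurEnd3} and~\ref{TProjGen}(iii), hence exact; $\GGI^d_{d_1,\dots,d_m}$ is exact by Lemma~\ref{LFFExact}; and the Schur algebra tensor product (\ref{ESchurTensProd}) is right exact in each argument. So both sides are right-exact and preserve direct sums in every $W_t$, meaning it is enough to establish a natural transformation on projective generators. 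Combining transitivity of Gelfand-Graev induction, $\GGI^d_{d_1,\dots,d_m}\simeq \GGI^d_{d_1,d-d_1}\circ(\id\boxtimes \GGI^{d-d_1}_{d_2,\dots,d_m})$, with coassociativity of the coproduct $S(n,d)\to S(n,d_1)\otimes\cdots\otimes S(n,d_m)$, the claim for general $m$ follows from the $m=2$ case by induction.

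For $m=2$, I would evaluate both sides on $U=S(n,d_1)\xi_{\la_1}$ and $V=S(n,d_2)\xi_{\la_2}$ with $\la_t\in\Comp(n,d_t)$. By Lemma~\ref{1.3b}(i), $S(n,d_t)\xi_{\la_t}\simeq \Di^{\la_t}_{n,d_t}$, and by Lemma~\ref{L6.3.1}, $\funb_{n,d_t,j}(W_t)\simeq \Di_j^{\la_t}\simeq \GGI^{d_t}_{\la_t}\Di_{\la_t,j}$ using Lemma~\ref{LUpperInd}(ii). Writing $\la:=\la_1\la_2$ for the concatenation, transitivity of $\GGI$ yields
$$\GGI^d_{d_1,d_2}(\Di_j^{\la_1}\boxtimes \Di_j^{\la_2})\simeq \GGI^d_{\la}(\Di_{\la_1,j}\boxtimes \Di_{\la_2,j})\simeq \GGI^d_\la \Di_{\la,j}\simeq \Di_j^\la.$$
On the Schur algebra side, coassociativity of the coproduct identifies the $S(n,d)$-module $U\otimes V$ with $\Di^{\la}_{n,d}$, viewed as a tensor product of $2n$ standard modules. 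Since $\la$ has at most $d\leq n$ non-zero parts, trivial factors $\De_{n,0}((0))=\k$ can be suppressed to give $\Di^{\la}_{n,d}\simeq \Di^{\la^+}_{n,d}$ with $\la^+\in\Comp(n,d)$, and Lemma~\ref{L6.3.1} then delivers
$$\funb_{n,d,j}(U\otimes V)\simeq \funb_{n,d,j}(\Di^{\la^+}_{n,d})\simeq \Di_j^{\la^+}=\Di_j^\la,$$
where the last equality holds because $\Si_\la=\Si_{\la^+}$ as subgroups of $\Si_d$, so the invariant definitions of $\Di_j^\la$ and $\Di_j^{\la^+}$ agree.

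The main obstacle is naturality: to promote the isomorphisms on projective generators to a natural isomorphism of bifunctors, one must verify compatibility with morphisms $\xi^g_{\la'_t,\la_t}$ between the $S(n,d_t)\xi_{\la_t}$. This reduces to checking that the action of $S(n,d)$ on $\Di_j(n,d)$ constructed in Theorem~\ref{TSchurEnd3} is compatible with the transitivity isomorphism for $\GGI$ under the coassociative identification $\xi_{\la_1}\otimes\xi_{\la_2}\mapsto \xi_{\la_1\la_2^+}$. The cleanest way to package this is to exhibit a single $(\IS_{d,j},\bigotimes_tS(n,d_t))$-bimodule isomorphism whose underlying left $\IS_{d,j}$-module structure recovers both sides after tensoring over $\bigotimes_tS(n,d_t)$ with $W_1\boxtimes\cdots\boxtimes W_m$; the sign-free, purely degree-zero nature of the relevant $X^g_{\mu,\la}$ makes the necessary verifications routine but tedious.
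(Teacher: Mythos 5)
Your proposal identifies the right high-level structure but stops exactly where the real work begins. You correctly observe that both sides are right-exact, additive functors and that it suffices to compare them on projective generators $S(n,d_t)\xi_{\la_t}$, and your computation of both sides on objects is essentially correct (modulo the bookkeeping with concatenated compositions of length $2n$, which you handle). But the step you flag at the end---promoting the object-level isomorphisms to a natural transformation of bifunctors, i.e.\ checking compatibility with the morphisms $\xi^g_{\la'_t,\la_t}$---is not ``routine but tedious''; it is precisely the content of the theorem. Without a candidate natural transformation defined on all of $\mod{S(n,d_1)}\times\cdots\times\mod{S(n,d_m)}$, agreement on objects does not imply agreement as functors, and your sketch gives no such candidate.

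The paper's proof avoids this entirely by working at the bimodule level from the start: it chooses $\un=(n_1,\dots,n_m)$ with $n_t\geq d_t$, exhibits a single isomorphism of $(\IS_{d,j},S(\un,\ud))$-bisupermodules $\Di_j(\un,\ud)\simeq \Di_j(n,d)\xi(\un,\ud)$, and combines this with Lemma~\ref{1.5d} (which relates $\Ind^{S(n,d)}_{S(\un,\ud)}$ of an outer tensor product to the coproduct tensor product of separately induced modules). Tensoring this bimodule over $S(\un,\ud)$ with $U_1\boxtimes\cdots\boxtimes U_m$ then produces both sides simultaneously, with naturality built in. This is exactly the ``cleanest way'' you gesture at in your closing sentence---but you do not execute it, and executing it requires the Levi-subalgebra apparatus and Lemma~\ref{1.5d}, neither of which appears in your argument. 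Your separate choice of using $n$ rather than $\un$ also introduces the length-$2n$ concatenation problem, which the variable choice $\un$ is precisely designed to sidestep. So the gap is genuine: the naturality verification (or equivalently, the construction of the bimodule isomorphism) is missing, and that is the theorem.
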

\begin{proof}
Choose $\un=(n_1,\dots,n_m)\in\Comp(m,n)$ with $n_t\geq d_t$ for  $t=1,\dots,m$, and recall the idempotents $\xi(\un,\ud)\in S(n,d)$ from (\ref{EIdUnUd}) and the functors
$\Res^{S(n,d)}_{S(\un,\ud)}$ and $\Ind^{S(n,d)}_{S(\un,\ud)}$ 
from (\ref{EResIndLevi}). 
For $t=1,\dots,m$, let $U_t:=\Res^{S(n,d_t)}_{S(n_t,d_t)}W_t$. By Lemma~\ref{LNnEquiv}, we have $W_t\simeq \Ind^{S(n,d_t)}_{S(n_t,d_t)}U_t$.

Recall the notation (\ref{EUNUD}). So, for $\la\in\Comp(\un,\ud)$ we have that 
\begin{align*}
\la^1&:=(\la_1,\dots,\la_{n_1}) \in\Comp(n_1,d_1),\\ 
 \la^2&:=(\la_{n_1+1},\dots,\la_{n_1+n_2})\in\Comp(n_2,d_2),\\ &\vdots\\ 
  \la^m&:=(\la_{n_1+\dots+n_{m-1}+1},\dots,\la_{n})\in\Comp(n_m,d_m).
\end{align*}
Note that any composition in $\Comp(\un,\ud)$ is a refinement of the composition $\ud$, and consider the set of triples 
$$
\Om:=\{(\la,\mu,g)\mid \la,\mu\in \Comp(\un,\ud),\,g\in{}^\la\D^\mu_\ud\}. 
$$
For $(\la,\mu,g)\in\Om$, we have $\la^t,\mu^t\in\Comp(n_t,d_t)$ for $t=1,\dots,m$, and $g=(g_1,\dots,g_m)\in\Si_\ud=\Si_{d_1}\times\dots\times\Si_{d_m}$, with each $g_t\in{}^{\la^t}\D^{\mu^t}_{d_t}$.  So we have a basis
$$
\{\phi^g_{\la,\mu}:=\phi^{g_1}_{\la^1,\mu^1}\otimes\dots\otimes \phi^{g_1}_{\la^1,\mu^1}\mid (\la,\mu,g)\in\Om\}
$$
of $S(\un,\ud)=S(n_1,d_1)\otimes\dots\otimes S(n_m,d_m)$. 
Recalling the Levi subalgebra embedding 
$
S(\un,\ud)\subseteq \xi(\un,\ud)S(n,d)\xi(\un,\ud)$
from (\ref{ELeviEmb}), note that under this embedding the basis element $\phi^g_{\la,\mu}\in S(\un,\ud)$ maps to the element of $S(n,d)$ with the same name (which makes sense since $\Si_\ud\leq \Si_d$ and $\Comp(\un,\ud)\subseteq \Comp(n,d)$).

Define the graded $(\IS_{\ud,j},S(\un,\ud))$-bisupermodule 
$$
\textstyle\Di_{\un,\ud,j}:=\bigoplus_{\la\in \Comp(\un,\ud)}\GGI_\la^\ud\,\Di_{\la,j}\simeq 
\Di(n_1,d_1)\boxtimes\dots\boxtimes  \Di(n_m,d_m).
$$
Then $\Di_j(\un,\ud):=\GGI_\ud^d\, \Di_{\un,\ud,j}$ is a graded $(\IS_{d,j},S(\un,\ud))$-bisupermodule in a natural way, and by the transitivity of Gelfand-Graev induction we have 
$\Di_j(\un,\ud)\simeq \bigoplus_{\la\in \Comp(\un,\ud)}\Di_j^\la.$
So $\Di_j(\un,\ud)$ can be identified with the direct summand $\Di_j(n,d)\xi(\un,\ud)$ of $\Di_j(n,d)$. Identifying $\xi(\un,\ud)S(n,d)\xi(\un,\ud)$ with $\End_{\IS_{d,j}}(\Di_j(n,d)\xi(\un,\ud))^\sop$, we have a Levi subalgebra 
$$
S(\un,\ud)\subseteq \xi(\un,\ud)S(n,d)\xi(\un,\ud)=\End_{\IS_{d,j}}(\Di_j(n,d)\xi(\un,\ud))^\sop.
$$
In this way, we consider $\Di_j(n,d)\xi(\un,\ud)$ as a graded $(\IS_{d,j},S(\un,\ud))$-bisupermodule. Moreover, it is clear that we have an isomorphism 
$\Di_j(\un,\ud)\simeq \Di_j(n,d)\xi(\un,\ud)$ of graded $(\IS_{d,j},S(\un,\ud))$-bisupermodules. 

We now have 
\begin{align*}
&\funb_{n,d,j}(W_1\otimes\dots\otimes W_m)
\\
\simeq\ \ \ \ \ \  &\funb_{n,d,j}\big((\Ind^{S(n,d_1)}_{S(n_1,d_1)}U_1)\otimes\dots\otimes \Ind^{S(n,d_m)}_{S(n_m,d_m)}U_m\big)
\\
\stackrel{\text{Lemma~\ref{1.5d}}}{\simeq}
\ &\funb_{n,d,j}\big(\Ind^{S(n,d)}_{S(\un,\ud)}(U_1\boxtimes\dots\boxtimes U_m)\big)
\\
\simeq
\ \ \ \ \ \ &\Di_j(n,d)\otimes_{S(n,d)}S(n,d)\xi(\un,\ud)\otimes_{S(\un,\ud)}(U_1\boxtimes\dots\boxtimes U_m)
\\
\simeq 
\ \ \ \ \ \ &\Di_j(n,d)\xi(\un,\ud)\otimes_{S(\un,\ud)}(U_1\boxtimes\dots\boxtimes U_m)
\\
\simeq 
\ \ \ \ \ \ &\Di_j(\un,\ud)\otimes_{S(\un,\ud)}(U_1\boxtimes\dots\boxtimes U_m)
\\
\simeq 
\ \ \ \ \ \ &(\GGI_\ud^d\,\Di_{\un,\ud,j})\otimes_{S(\un,\ud)}(U_1\boxtimes\dots\boxtimes U_m)
\\
\simeq 
\ \ \ \ \ \ &\GGI_\ud^d\big(\Di_{\un,\ud,j}\otimes_{S(\un,\ud)}(U_1\boxtimes\dots\boxtimes U_m)\big)
\\
\simeq 
\ \ \ \ \ \ &\GGI_\ud^d\big(\funb_{n_1,d_1,j}(U_1)\boxtimes\dots\boxtimes \funb_{n_m,d_m,j}(U_m)\big)
\\
\stackrel{(\ref{EIsoFunctors})}{\simeq} 
\ \ \ \,&\GGI_\ud^d\big(\funb_{n,d_1,j}(\Ind^{S(n,d_1)}_{S(n_1,d_1)}U_1)\boxtimes\dots\boxtimes \funb_{n_m,d_m,j}(\Ind^{S(n,d_m)}_{S(n_m,d_m)}U_m)\big)
\\
\simeq 
\ \ \ \ \ \ &\GGI_\ud^d\big(\funb_{n,d_1,j}(W_1)\boxtimes\dots\boxtimes \funb_{n_m,d_m,j}(W_m)\big),
\end{align*}
as required. 
\end{proof}

As a first application we obtain `commutativity' of the Gelfand-Graev induction in the following sense:

\begin{Corollary} 
Let $\k=\F$, $c,d\in\N$,\, $U\in\mod{\IS_{c,j}}$ and\, $W\in\mod{\IS_{d,j}}$. Then $$\GGI_{c,d}^{c+d}(U\boxtimes W)\simeq   \GGI_{d,c}^{c+d}(W\boxtimes U).$$ 
\end{Corollary}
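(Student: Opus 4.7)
The plan is to reduce the statement to a commutativity property of the tensor product (\ref{ESchurTensProd}) for modules over classical Schur algebras, via the Morita equivalence $\IS_{d,j}\sim_\Mor S(n,d)$ established in \S\ref{SSMorISS} and the compatibility with Gelfand--Graev induction proved in Theorem~\ref{TMoritaGGI}.

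First I would fix $n\geq c+d$, so that both Morita equivalences $\funb_{n,c,j}:\mod{S(n,c)}\iso\mod{\IS_{c,j}}$ and $\funb_{n,d,j}:\mod{S(n,d)}\iso\mod{\IS_{d,j}}$ are available (and similarly for $c+d$). Using that these functors are equivalences, write $U\simeq \funb_{n,c,j}(U')$ for some $U'\in\mod{S(n,c)}$ and $W\simeq \funb_{n,d,j}(W')$ for some $W'\in\mod{S(n,d)}$. Applying Theorem~\ref{TMoritaGGI} with $m=2$ to the compositions $(c,d)$ and $(d,c)$ respectively gives natural isomorphisms
\begin{align*}
\GGI_{c,d}^{c+d}(U\boxtimes W)&\simeq \funb_{n,c+d,j}(U'\otimes W'),\\
\GGI_{d,c}^{c+d}(W\boxtimes U)&\simeq \funb_{n,c+d,j}(W'\otimes U'),
\end{align*}
where the tensor products on the right are taken as $S(n,c+d)$-modules via the coproduct-type algebra map $S(n,c+d)\to S(n,c)\otimes S(n,d)$ of \cite[(1.3.1)]{BDK}.

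So the problem reduces to exhibiting a natural isomorphism of $S(n,c+d)$-modules $U'\otimes W'\simeq W'\otimes U'$. This is a well-known property of the tensor product for polynomial representations of $GL_n$ (or, equivalently, modules over the classical Schur algebra): the coproduct $S(n,c+d)\to S(n,c)\otimes S(n,d)$ is cocommutative in the appropriate sense, so the standard flip map $\tau:U'\otimes W'\to W'\otimes U',\ u\otimes w\mapsto w\otimes u$ is $S(n,c+d)$-linear. One verifies this on the basis $\xi^g_{\la,\mu}$ of Lemma~\ref{LSchurBasis} by unwinding the definition of the coproduct of \cite[(1.3.1)]{BDK}, which is symmetric in the two factors.

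The main obstacle is the last step: namely, verifying the cocommutativity of the coproduct $S(n,c+d)\to S(n,c)\otimes S(n,d)$. This is a purely combinatorial check at the level of classical Schur algebras with no signs or parities appearing (since $S(n,d)$ is concentrated in bidegree $(0,\0)$), so it is routine once the relevant definitions are unpacked; the whole point is that all of the delicate signs, parity shifts, and Gelfand--Graev combinatorics on the imaginary side have already been absorbed into the Morita equivalence, leaving only a statement about polynomial representations of the general linear group.
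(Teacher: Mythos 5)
Your proposal is correct and takes essentially the same approach as the paper's proof: both fix $n\geq c+d$, apply Theorem~\ref{TMoritaGGI} with $m=2$ to transfer the problem to classical Schur algebras, and conclude by the commutativity of the tensor product of polynomial representations. The paper's proof simply asserts the isomorphism $\funa_{n,c,j}(U)\otimes\funa_{n,d,j}(W)\simeq\funa_{n,d,j}(W)\otimes\funa_{n,c,j}(U)$ without comment, while you spell out the (standard) cocommutativity justification — a harmless expositional addition.
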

\begin{proof}
For $n\geq c+d$, we have using the theorem, 
\begin{align*}
\GGI_{c,d}^{c+d}(U\boxtimes W)&\simeq \GGI_{c,d}^{c+d}\big(\funb_{n,c,j}(\funa_{n,c,j}(U))\boxtimes \funb_{n,d,j}(\funa_{n,d,j}(W))\big)
\\
&\simeq
\funb_{n,c+d,j}\big(\funa_{n,c,j}(U)\otimes \funa_{n,d,j}(W)\big)
\\
&\simeq
\funb_{n,c+d,j}\big(\funa_{n,d,j}(W)\otimes \funa_{n,c,j}(U)\big)
\\
&\simeq
\GGI_{d,c}^{c+d}\big(\funb_{n,d,j}(\funa_{n,d,j}(W))\boxtimes (\funb_{n,c,j}(\funa_{n,c,j}(U))\big)
\\
&\simeq
\GGI_{d,c}^{c+d}(W\boxtimes U),
\end{align*}
as desired.
\end{proof}

As a second application, we show 
that Gelfand-Graev induction and restriction preserve the property of possessing (co)standard filtrations:

\begin{Theorem} \label{4.2f} 
Let $\k=\F$ and $\ud=(d_1,\dots,d_m)\in\Comp(d)$. Then: 
\begin{enumerate}
\item[{\rm (i)}] The functor $\GGI_\ud^d$	sends $\IS_{\ud,j}$-modules with standard (resp. costandard) filtrations to $\IS_{d,j}$-modules with standard (resp. costandard) filtrations.
\item[{\rm (ii)}] The functor $\GGR_\ud^d$	sends $\IS_{d,j}$-modules with standard (resp. costandard) filtrations to $\IS_{\ud,j}$-modules with standard (resp. costandard) filtrations. 
\end{enumerate}
\end{Theorem}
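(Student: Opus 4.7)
The plan is to treat (i) first by direct computation with subquotients, and then bootstrap to (ii) using the Ext-criterion for (co)standard filtrations, together with the duality $\sharp$.

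For (i), fix $n\geq d$ and note that under the Morita equivalence $\IS_{\ud,j}\simeq \IS_{d_1,j}\otimes\cdots\otimes\IS_{d_m,j}\sim_{\Mor} S(n,d_1)\otimes\cdots\otimes S(n,d_m)$, induced by the factor-wise Morita equivalences $\funb_{n,d_t,j}$, the standard (resp.\ costandard) modules of $\IS_{\ud,j}$ are exactly the outer tensor products $\De_j(\bla):=\De_j(\la^{(1)})\boxtimes\cdots\boxtimes\De_j(\la^{(m)})$ (resp.\ $\nabla_j(\bla)$) indexed by tuples $\bla=(\la^{(1)},\dots,\la^{(m)})$ with $\la^{(t)}\in\Par(d_t)$. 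Since $\GGI_\ud^d$ is exact (Lemma~\ref{LFFExact}) and restricts to a functor $\mod{\IS_{\ud,j}}\to\mod{\IS_{d,j}}$ (Corollary~\ref{CHCIC}), any standard filtration of $V\in\mod{\IS_{\ud,j}}$ is sent to a filtration whose subquotients are $\GGI_\ud^d\De_j(\bla)$. By Theorem~\ref{TMoritaGGI}, this subquotient is isomorphic to $\funb_{n,d,j}(\De_{n,d_1}(\la^{(1)})\otimes\cdots\otimes\De_{n,d_m}(\la^{(m)}))$, and Lemma~\ref{1.3c} states that the tensor product on the right has a standard filtration over $S(n,d)$. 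Since $\funb_{n,d,j}$ is an equivalence that sends $\De_{n,d}(\la)$ to $\De_j(\la)$ by definition, it sends standard filtrations to standard filtrations, finishing the standard case of (i). The costandard case is identical with $\nabla$ in place of $\De$, using the same lemmas.

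For the costandard case of (ii), suppose $V\in\mod{\IS_{d,j}}$ has a costandard filtration. Since the weight poset of $\IS_{\ud,j}$ is the product poset, a module $M\in\mod{\IS_{\ud,j}}$ has a costandard filtration iff $\Ext^1_{\IS_{\ud,j}}(\De_j(\bla),M)=0$ for all tuples $\bla$. Applying Corollary~\ref{CJantzen} gives
$$\Ext^1_{\IS_{\ud,j}}(\De_j(\bla),\GGR_\ud^d V)\cong \Ext^1_{\IS_{d,j}}(\GGI_\ud^d\De_j(\bla),V).$$
By part (i), $\GGI_\ud^d\De_j(\bla)$ has a standard filtration, and since $V$ has a costandard filtration the right-hand group vanishes. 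Hence $\GGR_\ud^d V$ has a costandard filtration.

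For the standard case of (ii), if $V$ has a standard filtration then, by (\ref{EDeNaDuals}) together with (\ref{ESharpBoxTimes}) and Corollary~\ref{LDualityIS}, $V^\sharp$ has a costandard filtration. The costandard case just proved shows $\GGR_\ud^d(V^\sharp)$ has a costandard filtration. By Lemma~\ref{LSharpResComm}, $\GGR_\ud^d(V^\sharp)\simeq (\GGR_\ud^d V)^\sharp$, so $(\GGR_\ud^d V)^\sharp$ has a costandard filtration; applying $\sharp$ once more (using $\sharp\circ\sharp\simeq\id$ from Lemma~\ref{LSharpHom} and the fact that $\sharp$ sends costandard filtrations to standard filtrations factor by factor) yields a standard filtration on $\GGR_\ud^d V$. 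The main technical obstacle is to justify cleanly the identification of the standard modules of $\IS_{\ud,j}$ with outer tensor products and the Ext-vanishing criterion in the graded super setting; both follow from general quasi-hereditary theory once one observes that the Morita equivalence $\IS_{\ud,j}\sim_{\Mor}S(n,d_1)\otimes\cdots\otimes S(n,d_m)$ is a graded super equivalence preserving the highest-weight structure.
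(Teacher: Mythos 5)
Your proof is correct and follows essentially the same route as the paper: part (i) reduces to subquotients $\GGI_\ud^d(\De_j(\la^{(1)})\boxtimes\cdots\boxtimes\De_j(\la^{(m)}))$, applies Theorem~\ref{TMoritaGGI} and Lemma~\ref{1.3c}, and then transfers through the equivalence $\funb_{n,d,j}$; part (ii) uses the cohomological criterion for (co)standard filtrations together with the Ext-adjunction of Corollary~\ref{CJantzen}, and the remaining case is obtained via the duality $\sharp$ and Lemma~\ref{LSharpResComm}. The only cosmetic difference is that you handle the costandard case of (i) directly with $\nabla$ in place of $\De$ rather than dualizing, which the paper explicitly allows as an alternative.
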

\begin{proof}
(i) For standard filtrations, it suffices to check that for arbitrary $\la^1\in\Par(d_1),\dots, \la^m\in\Par(d_m)$ the graded supermodule   
$\GGI_\ud^d(\De_j(\la^1)\boxtimes\dots\boxtimes\De_j(\la^m))$ 
has a  standard filtration. 
Let $n\geq d$. 
By  Theorem~\ref{TMoritaGGI}, we have 
$$
\GGI_\ud^d(\De_j(\la^1)\boxtimes\dots\boxtimes\De_j(\la^m))\cong \funb_{n,d,j}(\De_{n,d_1}(\la^1)\otimes\dots\otimes\De_{n,d_m}(\la^m)). 
$$
So the result on standard filtrations follows since $\De_{n,d_1}(\la^1)\otimes\dots\otimes\De_{n,d_m}(\la^m)$ has a standard filtration as an $S(n,d)$-module by Lemma~\ref{1.3c}.

The argument for costandard 
filtrations is similar or can be deduced from the result on standard filtrations using (\ref{EDeNaDuals}) and Lemma~\ref{LSharpIndComm}

(ii) We prove the result for costandard filtrations; the analogous result for standard 
filtrations follows using (\ref{EDeNaDuals}) and Lemma~\ref{LSharpResComm}. 
Take $V \in\mod{\IS_{d,j}}$ with a costandard filtration. Using the cohomological criterion for costandard filtrations \cite[A2.2(iii)]{Donkin}, we need to show that $\Ext^1_{\IS_{\ud,j}}(W,\GGR_\ud^d V)=0$ for all $W \in\mod{\IS_{\ud,j}}$ with a standard filtration. For such $W$, by (i) and the cohomological criterion for costandard filtrations, we have $\Ext^1_{\IS_{d,j}}(\GGI_\ud^d W, V)=0$. So the result follows from Corollary~\ref{CJantzen}. 
\end{proof}

\subsection{Double centralizer properties}
Throughout the subsection we assume that $\k=\F$ and $n\geq d$. In particular, we identify $\Par(d)=\Comp_+(n,d)$.
Recall the theory of tilting modules and Ringel duality from \S\ref{SSTilt}, in particular the tilting modules $T_{n,d}(\la)$ for the classical Schur algebra $S(n,d)$. 
For  $\la\in \Par(d)$, define 
\begin{equation}\label{ETilt}
T_j(\la) := \funb_{n,d,j}(T_{n,d}(\la)).
\end{equation}
Since $\be_{n,d}$ is an equivalence, $\{T_j(\la) \mid \la\in\Par(d)\}$ are the indecomposable tilting modules for $\IS_{d,j}$ (independent of the choice of $n\geq d$ for example thanks to  (\ref{EIsoFunctors})).

\begin{Lemma} \label{4.5b} 
Let $\k=\F$. The indecomposable tilting modules $\{T_j(\la) \mid \la\in\Par(d)\}$  are precisely the indecomposable summands of\, $\bigoplus_{\nu\in\Comp(d)}\M_{d,j}\tty_\nu$. Moreover, for $\la\in\Par(d)$, the module\, $T_j(\la)$ occurs exactly once as a summand of $\M_{d,j}\tty_{\la'}$, and if\, $T_j(\mu)$ is a summand of $\M_{d,j}\tty_{\la'}$ for some $\mu\in\Par(d)$, then $\mu\unlhd\la$.
\end{Lemma}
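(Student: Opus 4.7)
The plan is to transport the corresponding statement for the classical Schur algebra (Lemma~\ref{LDonkinLa}) across the Morita equivalence $\funb_{n,d,j}:\mod{S(n,d)}\iso\mod{\IS_{d,j}}$ for $n\geq d$. Since $\funb_{n,d,j}$ is a (graded super)equivalence, it preserves indecomposability, multiplicities of indecomposable summands, and—by definition (\ref{ETilt})—sends $T_{n,d}(\la)$ to $T_j(\la)$.

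First I would fix $n\geq d$ and observe that $\Comp(d)=\Comp(n,d)$ in this range, since every composition of $d$ has at most $d$ nonzero parts. Then by Lemma~\ref{L6.3.1}, $\M_{d,j}\tty_\nu\simeq \funb_{n,d,j}(\La_{n,d}^\nu)$ for every $\nu\in\Comp(n,d)$, so
\[
\bigoplus_{\nu\in\Comp(d)}\M_{d,j}\tty_\nu\;\simeq\;\funb_{n,d,j}\Bigl(\bigoplus_{\nu\in\Comp(n,d)}\La_{n,d}^\nu\Bigr).
\]
Applying Lemma~\ref{LDonkinLa} to each summand $\La_{n,d}^{\la'}$ (as $\la$ runs over $\Par(d)=\Comp_+(n,d)$), every indecomposable tilting module $T_{n,d}(\la)$ appears as a summand of some $\La_{n,d}^{\la'}$, and no other indecomposable summands occur. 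Because $\funb_{n,d,j}$ is an additive equivalence, it induces a bijection between isomorphism classes of indecomposable summands, yielding the first assertion that the indecomposable summands of $\bigoplus_{\nu\in\Comp(d)}\M_{d,j}\tty_\nu$ are exactly $\{T_j(\la)\mid\la\in\Par(d)\}$.

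For the refined statement, I would fix $\la\in\Par(d)$ and apply $\funb_{n,d,j}$ to the Krull–Schmidt decomposition of $\La_{n,d}^{\la'}$. By Lemma~\ref{LDonkinLa}, this decomposition contains $T_{n,d}(\la)$ with multiplicity one, and any other summand $T_{n,d}(\mu)$ satisfies $\mu\unlhd\la$. Transporting across the equivalence and using (\ref{ETilt}), the decomposition of $\M_{d,j}\tty_{\la'}\simeq\funb_{n,d,j}(\La_{n,d}^{\la'})$ contains $T_j(\la)$ with multiplicity one, and any other summand $T_j(\mu)$ satisfies $\mu\unlhd\la$. Finally, one should note that this conclusion is independent of the auxiliary choice of $n\geq d$, which follows from the compatibility (\ref{EIsoFunctors}) of the functors $\funb_{n,d,j}$ for varying $n$.

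There is no real obstacle: the proof is essentially the translation of Lemma~\ref{LDonkinLa} through the Morita equivalence functor, together with the identifications provided by Lemma~\ref{L6.3.1} and the definition (\ref{ETilt}). The only mild bookkeeping point is ensuring that replacing $\Comp(d)$ by $\Comp(n,d)$ loses no summands, which is immediate once $n\geq d$.
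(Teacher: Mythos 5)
Your proposal is correct and follows essentially the same route as the paper: transport Lemma~\ref{LDonkinLa} through the Morita equivalence $\funb_{n,d,j}$ using Lemma~\ref{L6.3.1} and the definition~(\ref{ETilt}). One small imprecision worth noting is that $\Comp(d)\neq\Comp(n,d)$ even for $n\geq d$ (compositions in $\Comp(d)$ may have intermediate zero parts), but since $\tty_\nu$ depends only on the nonzero parts of $\nu$, the collection of modules $\M_{d,j}\tty_\nu$ and hence of indecomposable summands is the same, so this does not affect the argument.
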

\begin{proof}
By Lemma~\ref{LDonkinLa}, the indecomposable module 
$T_{n,d}(\la)$ occurs exactly once as a summand of $\La_{n,d}^{\la'}$, and if $T_{n,d}(\mu)$ is a summand of $\La_{n,d}^{\la'}$  then $\mu\unlhd\la$.
Now the result follows on applying the functor $\funb_{n,d,j}$ and using Lemma~\ref{L6.3.1}.
\end{proof}

\begin{Corollary} \label{4.5c} 
Let $\k=\F$. For $\la\in\Par(d)$, the graded supermodule $T_j(\la)$ is the unique
indecomposable summand of $\M_{d,j}\tty_{\la'}$ containing a graded subsupermodule isomorphic to $\De_j(\la)$.
\end{Corollary}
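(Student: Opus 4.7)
The plan is to combine the decomposition of $\M_{d,j}\tty_{\la'}$ from Lemma~\ref{4.5b} with the fact that $\De_j(\la)$ has simple head $L_j(\la)$ to pin down the unique summand hosting $\De_j(\la)$.

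First I would obtain the existence of an embedding $\De_j(\la)\hookrightarrow \M_{d,j}\tty_{\la'}$. This is immediate from Theorem~\ref{TDeAlternative}(i): any non-zero element of $\Hom_{\IS_{d,j}}(\Di_j^\la,\M_{d,j}\tty_{\la'})$ has image isomorphic to $\De_j(\la)$, which therefore appears as a graded subsupermodule of $\M_{d,j}\tty_{\la'}$. By Lemma~\ref{4.5b}, we may write
\[
\M_{d,j}\tty_{\la'} \;\simeq\; T_j(\la) \;\oplus\; \bigoplus_{\mu\lhd\la} T_j(\mu)^{\oplus m_\mu}
\]
for certain multiplicities $m_\mu\in\N$, with $T_j(\la)$ occurring exactly once.

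Next I would show $\Hom_{\IS_{d,j}}(\De_j(\la),T_j(\mu))=0$ for every $\mu\lhd\la$. Since $\De_j(\la)=\funb_{n,d,j}(\De_{n,d}(\la))$ and $\De_{n,d}(\la)$ has simple head $L_{n,d}(\la)$, the module $\De_j(\la)$ has simple head $L_j(\la)$, so any non-zero quotient of $\De_j(\la)$ has $L_j(\la)$ in its head. On the other hand, the standard theory of tilting modules over the quasi-hereditary algebra $S(n,d)$ (via the $\De$-filtration of $T_{n,d}(\mu)$) shows that $[T_{n,d}(\mu):L_{n,d}(\nu)]\neq 0$ only if $\nu\unlhd\mu$; applying the equivalence $\funb_{n,d,j}$ gives $[T_j(\mu):L_j(\nu)]\neq 0\Rightarrow \nu\unlhd\mu$. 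For $\mu\lhd\la$, the composition factor $L_j(\la)$ is absent from $T_j(\mu)$, so any homomorphism $\De_j(\la)\to T_j(\mu)$ must be zero.

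Finally I would deduce both existence and uniqueness. Composing the inclusion $\De_j(\la)\hookrightarrow \M_{d,j}\tty_{\la'}$ with the projections onto the summands $T_j(\mu)^{\oplus m_\mu}$ for $\mu\lhd\la$ yields zero by the previous step; hence the inclusion factors through the unique copy of $T_j(\la)$, proving existence. Conversely, if an indecomposable summand $T_j(\mu)$ of $\M_{d,j}\tty_{\la'}$ contains a subsupermodule isomorphic to $\De_j(\la)$, then $\Hom_{\IS_{d,j}}(\De_j(\la),T_j(\mu))\neq 0$, forcing $\mu=\la$; since this summand appears with multiplicity one, it is the asserted unique summand. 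I do not anticipate a serious obstacle here: the whole argument reduces to the head/composition-factor analysis of $\De_j(\la)$ versus $T_j(\mu)$, and both ingredients are immediate translations of standard classical Schur algebra facts via $\funb_{n,d,j}$.
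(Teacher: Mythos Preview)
Your proof is correct and follows essentially the same approach as the paper: both use Lemma~\ref{4.5b} for the decomposition and show $\Hom_{\IS_{d,j}}(\De_j(\la),T_j(\mu))=0$ for $\mu\lhd\la$, with your composition-factor argument making explicit what the paper leaves implicit. The paper additionally notes, via the one-dimensionality in Theorem~\ref{TDeAlternative}(i), that the $\De_j(\la)$-subsupermodule of $\M_{d,j}\tty_{\la'}$ is itself unique, but this is not needed for the statement as phrased.
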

\begin{proof}
By Theorem~\ref{TDeAlternative}(i), $\M_{d,j}\tty_{\la'}$ has a unique graded subsupermodule isomorphic to $\De_j(\la)$. By Lemma~\ref{4.5b}, $\M_{d,j}\tty_{\la'}$ has a unique summand isomorphic to $T_j(\la)$, and $\Hom_{\IS_{d,j}}(\De_j(\la),V) = 0$ 
for any other summand $V$ of $\M_{d,j}\tty_{\la'}$. 
\end{proof}

\begin{Theorem} \label{4.5d} 
Let $\k=\F$ and $n\geq d$. The graded $\IS_{d,j}$-supermodule 
\begin{equation}\label{ETJND}
T_j(n,d) :=\bigoplus_{\nu\in \Comp(n,d)} \M_{d,j}\tty_\nu
\end{equation}
 is a full tilting module. 
Moreover, the Ringel dual $\IS_{d,j}'$ of $\IS_{d,j}$ relative to $T_j(n,d)$ is precisely the algebra $S(n,d)^\op$ where $S(n,d)$ acts on $T_j(n,d)$ as in Theorem~\ref{TSchurEnd2}.
\end{Theorem}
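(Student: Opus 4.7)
The plan is to deduce both statements essentially from Lemma~\ref{4.5b} together with Theorem~\ref{TSchurEnd2}; no new computation should be necessary, since all the real work has already been done in setting up the Morita equivalence $\IS_{d,j}\sim_{\mathrm{Mor}} S(n,d)$ and in identifying the tilting modules $T_j(\la)$.

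First I would verify that $T_j(n,d)$ is a tilting module. Since $\Comp(n,d)\subseteq\Comp(d)$, every indecomposable summand of $T_j(n,d)=\bigoplus_{\nu\in\Comp(n,d)}\M_{d,j}\tty_\nu$ is among the indecomposable summands of $\bigoplus_{\nu\in\Comp(d)}\M_{d,j}\tty_\nu$, hence is an indecomposable tilting $\IS_{d,j}$-supermodule of the form $T_j(\mu)$ by Lemma~\ref{4.5b}. Next I would verify fullness: given any $\la\in\Par(d)$, the hypothesis $n\geq d$ guarantees $\la'\in\Par(d)\subseteq\Comp(n,d)$, so the summand $\M_{d,j}\tty_{\la'}$ appears in $T_j(n,d)$, and by the second assertion of Lemma~\ref{4.5b} (or more precisely its refinement in Corollary~\ref{4.5c}) the indecomposable tilting $T_j(\la)$ occurs exactly once as a summand of $\M_{d,j}\tty_{\la'}$. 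Thus every $T_j(\la)$ occurs as a summand of $T_j(n,d)$, proving that $T_j(n,d)$ is a full tilting module.

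For the statement about the Ringel dual I would simply invoke Theorem~\ref{TSchurEnd2}. By definition the Ringel dual of $\IS_{d,j}$ with respect to the full tilting module $T_j(n,d)$ is $\IS_{d,j}'=\End_{\IS_{d,j}}(T_j(n,d))^{\sop}$. Theorem~\ref{TSchurEnd2} provides an explicit isomorphism of graded superalgebras
$$S(n,d)\ \iso\ \End_{\IS_{d,j}}\Big(\bigoplus_{\la\in\Comp(n,d)}\M_{d,j}\tty_\la\Big),\qquad \xi^g_{\la,\mu}\mapsto \upeta^g_{\la,\mu},$$
and taking opposites yields $\IS_{d,j}'\cong S(n,d)^{\sop}$, with the $S(n,d)$-action on $T_j(n,d)$ being precisely the one defined in Theorem~\ref{TSchurEnd2}.

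The only point where care is needed is the fullness step, and that is only a matter of observing that transposition $\la\mapsto\la'$ preserves $\Par(d)\subseteq\Comp(n,d)$ once $n\geq d$; after that Lemma~\ref{4.5b} does all the work. The identification of the Ringel dual is then purely formal, requiring nothing beyond the statement of Theorem~\ref{TSchurEnd2}. Thus I expect no substantive obstacle: the theorem is really a combination of two earlier results, and the proof should be only a few lines.
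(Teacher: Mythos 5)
Your proposal is correct and follows the same route as the paper, whose proof is just the two sentences: first statement follows from Lemma~\ref{4.5b}, second statement is a restatement of Theorem~\ref{TSchurEnd2}. Your expansion of the fullness step—noting that $n\geq d$ forces $\Par(d)\subseteq\Comp(n,d)$ so every $\M_{d,j}\tty_{\la'}$ appears as a summand of $T_j(n,d)$, and then invoking the part of Lemma~\ref{4.5b} (or Corollary~\ref{4.5c}) that puts $T_j(\la)$ into $\M_{d,j}\tty_{\la'}$—is precisely what the paper leaves implicit.
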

\begin{proof}
By Lemma~\ref{4.5b}, $T_j(n,d)$ is a full tilting module. The second statement is a restatement of Theorem~\ref{TSchurEnd2}.
\end{proof}


In the following theorem, $T_j(n,d)$ is as defined in (\ref{ETJND}) and the $S(n,d)$-action on $T_j(n,d)$ is as in Theorem~\ref{TSchurEnd2}, while the $\F\Si_d$-action on $\M_{d,j}$ is as in Theorem~\ref{TEndMd}.

\begin{Theorem} \label{4.5e} 
{\bf (Double Centralizer Properties)} 
Let $\k=\F$.   
\begin{enumerate}
\item[{\rm (i)}] If $n\geq d$ then 
$$\End_{\IS_{d,j}}\big(T_j(n,d)\big)\cong S(n,d)\quad {and}\quad \End_{S(n,d)}\big(T_j(n,d))\cong \IS_{d,j}.
$$
\item[{\rm (ii)}] $\End_{\IS_{d,j}}(\M_{d,j})^\sop\cong \F\Si_d$  and $\End_{\F\Si_d}(\M_{d,j})\cong \IS_{d,j}$. 
\end{enumerate}
\end{Theorem}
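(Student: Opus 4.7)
The plan for part (i) is to recognize both isomorphisms as consequences of Ringel duality applied to the full tilting module $T_j(n,d)$. The first isomorphism $\End_{\IS_{d,j}}(T_j(n,d))\cong S(n,d)$ is just a restatement of Theorem~\ref{TSchurEnd2} (and is recorded in Theorem~\ref{4.5d}). For the second, I would invoke Lemma~\ref{4.5a} with $S=\IS_{d,j}$ and $T=T_j(n,d)$: by Theorem~\ref{4.5d} the Ringel dual of $\IS_{d,j}$ relative to $T_j(n,d)$ is $S(n,d)^{\sop}$, and Lemma~\ref{4.5a} then asserts $\End_{S(n,d)^{\sop}}(T_j(n,d))\cong\IS_{d,j}$, which is the claim.

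For part (ii), the first isomorphism $\End_{\IS_{d,j}}(M_{d,j})^{\sop}\cong\F\Si_d$ is exactly Theorem~\ref{TMdProj} (equivalently Theorem~\ref{TEndMd}). For the second isomorphism, my strategy is to deduce it from part (i) by restricting to the $\om_d$-summand of $T_j(n,d)$. Since $\tty_{\om_d}=1$, we have $M_{d,j}=M_{d,j}\tty_{\om_d}=T_j(n,d)\xi_{\om_d}$, and Lemma~\ref{LKappa} provides an algebra isomorphism $\kappa\colon\F\Si_d\iso\xi_{\om_d}S(n,d)\xi_{\om_d}$, $g\mapsto\xi^g_{\om_d,\om_d}$. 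A direct computation from \eqref{EX} yields $Y^g_{\om_d,\om_d}=\sgn(g)\,g$, so the right action of $\xi_{\om_d}S(n,d)\xi_{\om_d}$ on $T_j(n,d)\xi_{\om_d}=M_{d,j}$ agrees with the right $\F\Si_d$-action from Theorem~\ref{TEndMd} up to the sign character $g\mapsto\sgn(g)$. Twisting by a character does not change the commutant, so
$$\End_{\F\Si_d}(M_{d,j})=\End_{\xi_{\om_d}S(n,d)\xi_{\om_d}}(T_j(n,d)\xi_{\om_d})$$
inside $\End_{\F}(M_{d,j})$. It therefore suffices to show that the natural restriction homomorphism
$$\rho\colon\End_{S(n,d)}(T_j(n,d))\longrightarrow\End_{\xi_{\om_d}S(n,d)\xi_{\om_d}}(T_j(n,d)\xi_{\om_d})$$
is an isomorphism; combining with part (i) then gives $\End_{\F\Si_d}(M_{d,j})\cong\IS_{d,j}$.

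Injectivity of $\rho$ will follow from the claim that $M_{d,j}$ generates $T_j(n,d)$ as a right $S(n,d)$-module, which I would verify via the identity $Y^1_{\om_d,\nu}=\tty_\nu$ from \eqref{EX}: the formula for $\upeta^1_{\nu,\om_d}$ in Theorem~\ref{TSchurEnd2} then shows $M_{d,j}\cdot\xi^1_{\nu,\om_d}=M_{d,j}\tty_\nu$ for every $\nu\in\Comp(n,d)$, so $M_{d,j}\cdot S(n,d)\supseteq\bigoplus_\nu M_{d,j}\tty_\nu=T_j(n,d)$. For surjectivity, given $\psi\in\End_{\F\Si_d}(M_{d,j})$, I would define its extension $\phi\colon T_j(n,d)\to T_j(n,d)$ by declaring $\phi|_{M_{d,j}\tty_\nu}$ to be the restriction of $\psi$ to the subspace $M_{d,j}\tty_\nu\subseteq M_{d,j}$. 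Well-definedness is immediate from the $\F\Si_d$-linearity of $\psi$ (so that $m\tty_\nu=0$ implies $\psi(m)\tty_\nu=0$), $\IS_{d,j}$-linearity of $\phi$ is inherited from that of $\psi$, and the compatibility of $\phi$ with the right $S(n,d)$-action reduces, via the explicit formula $(m\tty_\mu)\cdot\xi^g_{\la,\mu}=m\tty_\mu Y^g_{\mu,\la}$, to another application of $\F\Si_d$-linearity of $\psi$.

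I expect the main technical obstacle to be bookkeeping around the various $\sop$ and sign conventions, particularly checking that the two right $\F\Si_d$-actions on $M_{d,j}$ (from Theorem~\ref{TEndMd} versus restriction via $\kappa$) really do have the same commutant; once this normalization issue is settled, the remainder of the argument is essentially formal, and the rest of the verification that $\rho$ is bijective is routine.
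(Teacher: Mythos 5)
Your part (i) is identical to the paper's (both cite Theorem~\ref{4.5d} and Lemma~\ref{4.5a}), and the first isomorphism in part (ii) is just Theorem~\ref{TEndMd}, as in the paper. For the second isomorphism in part (ii), however, your route is genuinely different. The paper sets $e=\xi_{\om_d}$ and deduces $\End_{S(n,d)}(T_j)\cong\End_{eS(n,d)e}(T_je)$ from the abstract double-centralizer Lemma~\ref{L3.1c}, whose hypotheses ($O^P(T_j)=T_j$, $O_P(T_j)=0$ for $P=S(n,d)e$) are supplied by the structural fact (Lemma~\ref{LTiltSoc}) that all socle and head composition factors of the $T_{n,d}(\la)$ lie in the head of $S(n,d)e$; it then transports this along the bimodule identification $(T_je,\,eS(n,d)e)\simeq(\M_{d,j},\,\F\Si_d)$. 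You instead prove directly that the restriction $\rho\colon\End_{S(n,d)}(T_j)\to\End_{eS(n,d)e}(T_je)$ is a bijection, using the very concrete fact that every summand $\M_{d,j}\tty_\nu$ of $T_j$ sits as a $\k$-submodule inside $T_je=\M_{d,j}$ that is stable under any $\F\Si_d$-linear $\psi$. This avoids Lemma~\ref{L3.1c} and Lemma~\ref{LTiltSoc} entirely, trading general machinery for explicit checks (generation of $T_j$ by $T_je$, and the extension formula for $\phi$). Both approaches need part (i), and both need the sign/$\sop$ bookkeeping via $\kappa$ and $\tau$, which you correctly flag as the fiddly point. The only slip I see is the clause ``$\IS_{d,j}$-linearity of $\phi$ is inherited from that of $\psi$'' — $\psi$ is not a priori $\IS_{d,j}$-linear and no $\IS_{d,j}$-linearity is needed; what must be checked is $S(n,d)$-linearity of $\phi$, which you do address correctly in the following clause, so this is a harmless misstatement rather than a gap.
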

\begin{proof}
(i) follows from Theorem~\ref{4.5d} and Lemma~\ref{4.5a}.

(ii) By Theorem~\ref{TEndMd}, we already have $\End_{\IS_{d,j}}(\M_{d,j})^\sop\cong F\Si_d$. 

Let $e:=\xi_{\om_d}\in S(n,d)$, see (\ref{EIdSchur}). 
By Lemma~\ref{LTiltSoc}, the composition factors of the socle and the head of $T_{n,d}(\la)$ belong to the head of the projective $S(n,d)$-module $S(n,d)e$.  

Now $T_j:=T_j(n,d)$ is a full tilting module for $S(n,d)=\End_{\IS_{d,j}}(T_j)$ by Lemma~\ref{4.5a} and Theorem~\ref{4.5d}. So, by the previous paragraph, every composition factor of the socle and the head of $T_j$ belongs to the head of $S(n,d)e$. By Lemma~\ref{L3.1c}, we deduce that $\End_{S(n,d)}(T_j) \cong \End_{eS(n,d)e}(e T_j)$. Switching to right actions via anti-automorphism $\tau$ of Lemma~\ref{LSchurAnti}, and using (i), we have now shown that
$\IS_{d,j} \cong \End_{eS(n,d)e}(T_je).$
So, to prove (ii), it suffices to show that $\End_{\F\Si_d}(\M_{d,j}) \cong \End_{eS(n,d)e}(T_je)$. 

As left $\IS_{d,j}$-modules, $\M_{d,j} \cong T_je$. Recall the isomorphism $\kappa$ from  Lemma~\ref{LKappa} and  the explicit action of $S(n,d)$ on $T_j$ from Theorem~\ref{TSchurEnd2}. Now it is easy to see that the $(\IS_{d,j} , eS(n,d) e)$-bimodule $T _je$ is isomorphic to the $(\IS_{d,j} , \F\Si_d)$-bimodule $\M_{d,j}$, if we identify $\F\Si_d$ with $eS(n,d)e$ via the isomoprhism $g\mapsto \sgn(g)\kappa(g)$ for all $g\in \Si_d$. In view of this, we have 
$\End_{\F\Si_d}(\M_{d,j}) \cong \End_{eS(n,d)e}(T_je)$.  
\end{proof}

\section{Base change}\label{SSInt}
In this section, we assume that the ground field $\F$ is an $\O$-module. In fact, it will suffice to consider only the following two cases: 

$\bullet$\, $\F=\K$, where $\K$ is the (characteristic $0$) field of fractions $\K$ of $\O$; 

$\bullet$\, $\F=\O/\m$ for a maximal ideal $\m$ of $\O$.

The algebras $S(n,d)$, $\IS_{d,j}$, etc. as well as the modules $\M_{d,j},\Di_{d,j}$, etc. were all defined for $\k=\O$ or $\F$. 
To specify the ground ring we will now use the notation like $S(n,d)_\F$, $S(n,d)_\O$, $\M_{d,j,\F}$, $\M_{d,j,\O}$, $\M_{d,j,\K}$, etc. 
It is often possible to see that the base change $\F\otimes_\O X_\O$ for the  object $X_\O$ over $\O$ is isomorphic to the corresponding object $X_\F$ over $\F$. For example, we clearly have $\F\otimes_\O R_{d\de,\O}\cong R_{d\de,\F}$ and $\F\otimes_\O C_{d,\O}\cong C_{d,\F}$. 
What is often more difficult to see is that $X_\O$ is $\O$-free, and so a priori the (graded) dimension of $X_\F$ might depend on the characteristic of $\F$. For example, even though $R_{d\de,\O}$ is $\O$-free, we will not be able to prove the same for its quotient $\hat C_{d,\O}$ or the idempotent truncation $C_{d,\O}$ of $\hat C_{d,\O}$. However, we will establish the freeness for the quotient $\IS_{d,j,\O}$ of $C_{d,\O}$. Our main result in this direction is Theorem~\ref{T6.4.4} below. 

In what follows, we identify $\F\otimes_\O C_{d,\O}= C_{d,\F}$, so given a graded $C_{d,\O}$-supermodule $V_\O$, the base change $\F\otimes_\O V_\O$ is a grade $C_{d,\F}$-supermodule.

\subsection{Base change for modules}

\begin{Lemma} \label{LMO}
As a graded $\O$-supermodule, $\M_{d,j,\O}$ is finite rank free, and\, $\F\otimes_\O\M_{d,j,\O}\simeq \M_{d,j,\F}$ as graded $C_{d,\F}$-supermodules. 
\end{Lemma}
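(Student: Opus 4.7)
The plan is to extract what we need from results already established for general ground ring $\k$, rather than to do any new computation.

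First I would establish $\O$-freeness of $\M_{d,j,\O}$ by invoking Corollary~\ref{CBasisIndGG}, which gives the $\k$-basis $\{\lgath_{\la,j}v_{d,j}x \mid \la\in\EC(d),\, x\in{}^\la\D_d\}$ of $\M_{d,j}$. The key point is that this corollary and its input lemmas (Lemma~\ref{LMWtSpaces} and Lemma~\ref{LMWtSpacesNew}) are stated and proved for an arbitrary ground ring $\k$, using only the Gelfand-Graev Mackey theorem, Proposition~\ref{LDifficult}, Lemma~\ref{LFact}, and Lemma~\ref{LIndBasis}---none of which require $\k$ to be a field. The ultimate $\k$-freeness traces back to Remark~\ref{RHatLFree}, which asserts that $\hat\LL_{j,\O}$ is $\O$-free of finite rank; this makes $\Ind_{\de^d}^{d\de}\hat\LL_{j,\O}^{\boxtimes d}$ an $\O$-free module of finite rank by Lemma~\ref{LIndBasis}. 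Lemma~\ref{LFact} then transfers freeness to $\ggi^{d,j}\M_{d,j,\O}=1_{\ggw^{d,j}}\M_{d,j,\O}$, and Lemma~\ref{LMWtSpaces} assembles the pieces. Since the indicated basis is finite and homogeneous, $\M_{d,j,\O}$ is finite rank free as a graded $\O$-supermodule.

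For the base change statement, I would use that tensor products of modules commute with extension of scalars. Using the definition $\M_{d,j} = C_d\ggi_{\om_d}\otimes_{C_{\om_d}}\LL_j^{\boxtimes d}$ together with the identifications $\F\otimes_\O C_{d,\O}=C_{d,\F}$, $\F\otimes_\O C_{\om_d,\O}=C_{\om_d,\F}$ (these are immediate from Theorem~\ref{TBasis} applied to the cuspidal quotients, and the fact that $\ggi_{\om_d}$ is a polynomial in the generators with integer coefficients), and the trivial identification $\F\otimes_\O \LL_{j,\O}^{\boxtimes d}=\LL_{j,\F}^{\boxtimes d}$ (since each $\LL_{j,\O}$ is free of rank one, spanned by $v_j$, and the action of the standard generators of $C_1$ is defined over $\Z$), the standard base-change isomorphism for tensor products yields
\[
\F\otimes_\O\M_{d,j,\O}\;\simeq\; C_{d,\F}\ggi_{\om_d}\otimes_{C_{\om_d,\F}}\LL_{j,\F}^{\boxtimes d}\;=\;\M_{d,j,\F},
\]
and this isomorphism evidently preserves the $C_{d,\F}$-action together with the grading and superstructure.

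I do not anticipate a serious obstacle. The one item to verify carefully is that Remark~\ref{RHatLFree} indeed gives $\O$-freeness (not just $\F$-freeness for each residue field $\F$ of $\O$); but since $\hat\LL_{j,\O}$ is finitely generated as a graded $\O$-supermodule and each graded piece has rank independent of the residue characteristic, the structure theorem for finitely generated modules over a PID forces $\O$-freeness of each graded piece. Alternatively, one could just give the explicit $\O$-basis of $\hat\LL_{j,\O}$ (equivalently of $\ggi_1\hat\LL_{j,\O}=\LL_{j,\O}$ after expansion via Lemma~\ref{LFact}), bypassing Remark~\ref{RHatLFree} altogether.
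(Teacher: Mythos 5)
Your proposal is correct and takes essentially the same route as the paper: the proof in the paper is a one-liner citing Corollary~\ref{CBasisIndGG}, which is exactly the key input you identify, and the $\O$-freeness claim does trace back through Lemmas~\ref{LMWtSpaces}, \ref{LMWtSpacesNew}, \ref{LFact}, \ref{LIndBasis} and Remark~\ref{RHatLFree} just as you outline. Your base-change step is argued slightly differently---you invoke the universal fact that the induction tensor product commutes with extension of scalars (in the spirit of (\ref{EFunExtendScal})), whereas the paper implicitly reads the base-change isomorphism off the explicit $\k$-basis of Corollary~\ref{CBasisIndGG}; both are standard and rely on the same underlying inputs, so this is a cosmetic difference rather than a genuinely different proof.
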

\begin{proof}
This comes from Lemma~\ref{CBasisIndGG}.
\end{proof}

We can identify $\O$ with a subring of its field of fractions $\K$. 
Given a finite dimensional graded $\K$-superspace $V_\K$, a {\em full rank lattice} in $V_\K$ is a free graded $\O$-subsupermodule $V_\O$ of $V_\K$ such that $\rank_\O V_\O=\dim_\K V_\K$. In this case  $V_\K\simeq\K\otimes_\O V_\O$. Conversely, if $V_\O$ is a finite rank free graded $\O$-supermodule then $V_\O$ can be identified with a full rank lattice in $\K\otimes_\O V_\O$.  The following result is standard.

\begin{Lemma} \label{LCA} 
Suppose $V_\O$ is a  full rank lattice in a finite dimensional graded $\K$-superspace $V_\K$, $W_\K$ is a graded $\K$-subsuperspace of $V_\K$ and $W_\O:=V_\O\cap W_\K$. Then:
\begin{enumerate}
\item[{\rm (i)}] $W_\O$ is a  full rank lattice in $W_\K$;
\item[{\rm (ii)}] $W_\O$ is a direct summand of the graded $\O$-supermodule $V_\O$. In particular, the natural map\, $\F\otimes_\O W_{\O}\to \F\otimes_\O V_{\O}$ is injective. 
\end{enumerate}  
\end{Lemma}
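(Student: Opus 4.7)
\medskip

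The plan is to exploit the hypothesis from the very start of the paper that $\O$ is a PID of characteristic $0$, together with the fact that both (i) and (ii) reduce to purely commutative-algebraic statements about finitely generated modules over a PID. Since $V_\O$ and $W_\K$ are both graded (and super) and $W_\O = V_\O \cap W_\K$ is an intersection of graded subsuperspaces, $W_\O$ is automatically a graded subsupermodule of $V_\O$, so the whole argument can be run bidegree by bidegree or, equivalently, at the level of homogeneous bases; I will not belabor this.

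First I would prove (ii). The module $V_\O$ is finitely generated free over the PID $\O$, hence so is any submodule, in particular $W_\O$. The key observation is that the quotient $V_\O/W_\O$ is torsion-free: if $a \in \O \setminus \{0\}$ and $v \in V_\O$ satisfy $av \in W_\O$, then $av \in W_\K$, and since $a$ is invertible in $\K$ and $W_\K$ is a $\K$-subspace of $V_\K$, we get $v \in W_\K$, whence $v \in V_\O \cap W_\K = W_\O$. A finitely generated torsion-free module over a PID is free, so $V_\O/W_\O$ is free and the short exact sequence
\[
0 \longrightarrow W_\O \longrightarrow V_\O \longrightarrow V_\O/W_\O \longrightarrow 0
\]
splits, realizing $W_\O$ as a direct summand of $V_\O$. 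The ``in particular'' statement is then immediate because tensoring a split exact sequence with $\F$ preserves injectivity of the first map.

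For (i), applying $\K \otimes_\O -$ to the same split exact sequence shows that $\K \otimes_\O W_\O$ embeds as a $\K$-subspace of $\K \otimes_\O V_\O = V_\K$. Its image clearly lies in $W_\K$, and conversely any $w \in W_\K \subseteq V_\K = \K \otimes_\O V_\O$ can be written as $a^{-1} v$ for some $v \in V_\O$ and $a \in \O \setminus \{0\}$ (clearing denominators); then $av = w \cdot a \in V_\O \cap W_\K = W_\O$, so $w = a^{-1}\otimes av$ lies in the image of $\K \otimes_\O W_\O$. Hence $\K \otimes_\O W_\O = W_\K$, which means $\rank_\O W_\O = \dim_\K W_\K$, establishing the full-rank-lattice property.

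The argument above is entirely standard and presents no real obstacle; the only point requiring minor care is to make sure the splitting in (ii) can be chosen compatibly with the $\Z \times (\Z/2)$-grading, but this follows at once because $W_\O$ and $V_\O/W_\O$ are graded free $\O$-supermodules (freeness over a PID respects any grading by picking homogeneous bases in each bidegree), so the splitting can be constructed by lifting a homogeneous basis of $V_\O/W_\O$ to homogeneous elements of $V_\O$.
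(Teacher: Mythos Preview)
Your proof is correct. The paper does not actually supply a proof of this lemma; it simply declares ``The following result is standard'' and states it without argument. Your write-up is precisely the standard PID argument the paper is alluding to, and the care you take with torsion-freeness of $V_\O/W_\O$ and with the graded splitting is appropriate.
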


\begin{Lemma} \label{LZO} 
As a graded $\O$-supermodule, $\Di_{d,j,\O}$ is finite rank free,  and\, $\F\otimes_\O\Di_{d,j,\O}\simeq \Di_{d,j,\F}$ as graded $C_{d,\F}$-supermodules. Moreover, 
$$\Di_{d,j,\O}=\Y_{d,j,\O}:=C_{d,\O}\ggi^{d,j}\M_{d,j,\O}.$$ 
\end{Lemma}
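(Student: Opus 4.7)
The proof should reduce, via Lemma~\ref{LCA}, to base-change arguments using Lemma~\ref{LMO} and Theorem~\ref{TProjGen}(i) applied over every residue field $\F$ of $\O$. I would handle the three assertions in the order stated.

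For the freeness, the key observation is that $\Di_{d,j,\O} = \M_{d,j,\O} \cap \Di_{d,j,\K}$ inside $\M_{d,j,\K}$, since the defining condition $mg = m$ for all $g \in \Si_d$ depends only on equality in the ambient graded $\K$-superspace. By Lemma~\ref{LMO} the lattice $\M_{d,j,\O}$ is full rank in $\M_{d,j,\K}$, so Lemma~\ref{LCA}(i) identifies $\Di_{d,j,\O}$ with a full rank lattice in $\Di_{d,j,\K}$, in particular a finite rank free graded $\O$-supermodule. Simultaneously, Lemma~\ref{LCA}(ii) exhibits $\Di_{d,j,\O}$ as a graded $\O$-direct summand of $\M_{d,j,\O}$, so for any residue field $\F$ of $\O$ the canonical map $\F \otimes_\O \Di_{d,j,\O} \to \F \otimes_\O \M_{d,j,\O} \simeq \M_{d,j,\F}$ is injective with image contained in $\Di_{d,j,\F}$.

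For the base-change isomorphism, this image contains the image of $\F \otimes_\O \Y_{d,j,\O}$, which by right exactness of $\F \otimes_\O -$ equals $C_{d,\F} \ggi^{d,j} \M_{d,j,\F} = \Y_{d,j,\F}$. Theorem~\ref{TProjGen}(i), applied with ground field $\F$, gives $\Y_{d,j,\F} = \Di_{d,j,\F}$, so $\F \otimes_\O \Di_{d,j,\O} \iso \Di_{d,j,\F}$.

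For the final equality $\Di_{d,j,\O} = \Y_{d,j,\O}$, set $Q := \Di_{d,j,\O}/\Y_{d,j,\O}$, which is finitely generated over $\O$ by the first assertion. Tensoring $0 \to \Y_{d,j,\O} \to \Di_{d,j,\O} \to Q \to 0$ with any residue field $\F$ of $\O$ and combining the image computation just made with $\Y_{d,j,\F} = \Di_{d,j,\F}$ shows $\F \otimes_\O Q = 0$ for $\F = \K$ and for every $\F = \O/\m$. Since $\O$ is a PID of Krull dimension at most one, these exhaust all residue fields, so Nakayama's lemma applied to each localization $Q_\p$ forces $Q_\p = 0$ for every prime $\p$, hence $Q = 0$. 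I do not expect a genuine obstacle here: the argument is a routine lattice/Nakayama package, and the main thing to keep straight is that Theorem~\ref{TProjGen}(i) gets invoked over every residue field of $\O$ (which is legitimate, as all such rings are fields).
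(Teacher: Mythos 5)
Your proposal is correct and follows essentially the same route as the paper: Lemma~\ref{LCA} together with Lemma~\ref{LMO} for the lattice statement, the containment $\Y_{d,j}\subseteq\Di_{d,j}$ plus Theorem~\ref{TProjGen}(i) over each residue field to compute the base change, and a structure-theorem/Nakayama argument for the final equality (the paper phrases this last step as a contradiction, but it is the same content).
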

\begin{proof}
By Lemma~\ref{LMO}, we can identify $\M_{d,j,\O}$ with a full rank lattice in $\M_{d,j,\K}$.  Then by definition, $\Di_{d,j,\O}=\Di_{d,j,\K}\cap\M_{d,j,\O}$.  So by Lemma~\ref{LCA}, $\Di_{d,j,\O}$ is a full rank lattice in $\Di_{d,j,\K}$ and the natural map $\iota:\F\otimes_\O\Di_{d,j,\O}\to \F\otimes_\O\M_{d,j,\O}=\M_{d,j,\F}$ is injective. Since the action of $\Si_d$ on $\M_{d,j}$ is compatible with base change, we have $\Im\iota\subseteq \Di_{d,j,\F}$. 
On the other hand, the natural homomorphism $\F\otimes_\O\Y_{d,j,\O}\to \M_{d,j,\F}$ has image $\Y_{d,j,\F}$. By (\ref{EYZ}), we have $\Y_{d,j,\O}\subseteq \Di_{d,j,\O}$, so $\Y_{d,j,\F}\subseteq \Im\iota$. But $\Y_{d,j,\F}=\Di_{d,j,\F}$ by Theorem~\ref{TProjGen}(i), so $\Im\iota=\Di_{d,j,\F}$. 

Finally, the embedding $\Y_{d,j,\O}\to \Di_{d,j,\O}$ has to be an isomorphism since otherwise for some $\F$, the induced map $\F\otimes_\O\Y_{d,j,\O}\to \F\otimes_\O\Di_{d,j,\O}$ is not surjective, and so the composition $\F\otimes_\O\Y_{d,j,\O}\to \F\otimes_\O\Di_{d,j,\O}\iso\Di_{d,j,\F}=\Y_{d,j,\F}$ is not surjective, giving a contradiction. 
\end{proof}

Since induction commutes with base change, we deduce:

\begin{Corollary} \label{C6.4.2} 
Let $\la\in\Comp(d)$. Then\, $\Di^\la_{j,\O}$ is finite rank free as a graded $\O$-supermodule, and $\F\otimes_\O\Di^\la_{j,\O}\simeq\Di^\la_{j,\F}$ as graded $C_{d,\F}$-supermodules. 
\end{Corollary}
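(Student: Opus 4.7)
The proof will mirror the strategy of Lemma~\ref{LZO}, combining a lattice argument inside $\M_{d,j}$ with the identification $\Di^\la_j\simeq\GGI^d_\la\Di_{\la,j}$ of Lemma~\ref{LUpperInd}(ii). First, by Lemma~\ref{LMO}, $\M_{d,j,\O}$ is a full rank $\O$-lattice inside $\M_{d,j,\K}$. The submodule
$$
\Di^\la_{j,\O}=\{m\in\M_{d,j,\O}\mid mg=m\ \text{for all}\ g\in\Si_\la\}=\M_{d,j,\O}\cap\Di^\la_{j,\K}
$$
is therefore, by Lemma~\ref{LCA}, a full rank $\O$-lattice in $\Di^\la_{j,\K}$ and an $\O$-direct summand of $\M_{d,j,\O}$; in particular it is finite rank free as a graded $\O$-supermodule. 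Moreover, the induced map $\iota:\F\otimes_\O\Di^\la_{j,\O}\to\M_{d,j,\F}$ is injective, and its image is visibly $\Si_\la$-invariant, so lands in $\Di^\la_{j,\F}$.

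For the second part I would show that $\iota$ surjects onto $\Di^\la_{j,\F}$, which by a rank count reduces to proving that the graded $\F$-dimension of $\Di^\la_{j,\F}$ coincides with the graded $\K$-dimension of $\Di^\la_{j,\K}$. For this I invoke the isomorphism $\Di^\la_j\simeq\GGI^d_\la\Di_{\la,j}$ and the box product decomposition $\Di_{\la,j}\simeq\Di_{\la_1,j}\boxtimes\cdots\boxtimes\Di_{\la_n,j}$; by Lemma~\ref{LZO} applied to each factor, the graded dimension of $\Di_{\la,j,\F}$ is independent of $\F$ and agrees with that of $\Di_{\la,j,\K}$. It then remains to see that the passage through Gelfand-Graev induction preserves this characteristic-independence. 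Using (\ref{EGGIInd}), $\GGI^d_\la\simeq\funF_d\circ\Ind^{d\de}_{\la\de}\circ\funG_\la$, where $\funG_\la(-)=\hat C_\la\ggi_\la\otimes_{C_\la}(-)$ and $\funF_d(-)=\ggi_d\hat C_d\otimes_{\hat C_d}(-)$ are idempotent truncations by bidegree $(0,\0)$ idempotents defined over $\O$, and $\Ind^{d\de}_{\la\de}$ is the usual parabolic induction for quiver Hecke superalgebras, which has a free basis by Lemma~\ref{L030216}(ii) independent of the base ring. Chasing through these three operations, the resulting graded dimension of $\Di^\la_{j,\F}$ is a universal polynomial in the graded dimensions of $\Di_{\la_r,j,\F}$ and hence independent of $\F$.

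Combining the two steps, $\rank_\O\Di^\la_{j,\O}=\dim_\K\Di^\la_{j,\K}=\dim_\F\Di^\la_{j,\F}$, and the injection $\iota$ is forced to be an isomorphism of graded $C_{d,\F}$-supermodules, giving $\F\otimes_\O\Di^\la_{j,\O}\simeq\Di^\la_{j,\F}$ as desired.

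The main obstacle is the rank-count step: one has to verify carefully that $\Di^\la_{j,\F}$ has the expected graded dimension independent of $\F$. This is essentially a bookkeeping exercise once the decomposition $\GGI^d_\la\simeq\funF_d\circ\Ind\circ\funG_\la$ is in hand, but it requires the interplay between the $C$- and $\hat C$-versions of the relevant modules to cooperate with base change, which is routine given that the idempotents $\ggi_d,\ggi_\la$ and the basis in Lemma~\ref{L030216}(ii) all lift to $\O$.
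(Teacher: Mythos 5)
Your first paragraph is fine and genuinely different from the paper's route: the paper's proof of this corollary is one line ("since induction commutes with base change") applied to the identification $\Di^\la_j\simeq\GGI^d_\la\Di_{\la,j}\simeq\GGI^d_\la(\Di_{\la_1,j}\boxtimes\cdots\boxtimes\Di_{\la_n,j})$ together with Lemma~\ref{LZO} for each factor, whereas you obtain $\O$-freeness directly by the lattice argument $\Di^\la_{j,\O}=\M_{d,j,\O}\cap\Di^\la_{j,\K}$ and Lemma~\ref{LCA}, which is a perfectly legitimate alternative. (The paper instead gets $\O$-freeness from the projectivity of $C_{d,\O}\ggi_\la$ over $C_{\la,\O}$ in Corollary~\ref{CFGProj} and the fact that $\O$ is a PID.)

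The problem is in the rank-count step. The claim that the graded dimension of $\GGI^d_\la V$ is a ``universal polynomial in the graded dimensions of the factors'' is not correct: in the decomposition $\GGI^d_\la\simeq\funF_d\circ\Ind^{d\de}_{\la\de}\circ\funG_\la$, only the middle functor has a free basis independent of $\k$; the idempotent truncation $\funF_d(W)=\ggi_d W$ and the functor $\funG_\la(V)=\hat C_\la\ggi_\la\otimes_{C_\la}V$ do not transform dimensions by any universal formula, since $\dim\ggi_d W$ depends on the word-space decomposition of $W$ and not merely on $\dim W$, and $\hat C_\la\ggi_\la$ is not free over $C_\la$. So as stated, this justification does not go through. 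What actually does the work is not a dimension formula but the functorial statement that $\GGI^d_\la(-)=C_d\ggi_\la\otimes_{C_\la}(-)$ commutes with base change $\F\otimes_\O(-)$, being a right-exact tensor functor given by a bimodule $C_{d,\O}\ggi_\la$ whose base change is $C_{d,\F}\ggi_\la$. Combined with $\F\otimes_\O\Di_{\la,j,\O}\simeq\Di_{\la,j,\F}$ from Lemma~\ref{LZO} and the box product, this gives $\F\otimes_\O\GGI^d_\la\Di_{\la,j,\O}\simeq\GGI^d_\la\Di_{\la,j,\F}\simeq\Di^\la_{j,\F}$ directly. Once you replace the ``universal polynomial'' reasoning with this base-change argument, your proof closes correctly; and in fact with that observation in hand the lattice argument in the first paragraph becomes optional, since the base-change isomorphism plus the projectivity of $C_{d,\O}\ggi_\la$ already yields the $\O$-freeness.
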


Let $\la\in\Par(d)$. By Theorem~\ref{TDeAlternative}(ii), we have 
$
\De_j(\la)_\F:=\Di_{j,\F}^\la u_{\la'}\ttx_{\la'}.
$
So we define the corresponding `standard module over $\O$' as follows:
$$
\De_j(\la)_\O:=\Di_{j,\O}^\la u_{\la'}\ttx_{\la'}.
$$

\begin{Theorem} \label{T6.4.3} 
Let $\la\in\Comp(d)$. Then $\De_j(\la)_\O$ is finite rank free as a graded $\O$-supermodule, and $\F\otimes_\O\De_j(\la)_\O\simeq\De_j(\la)_\F$ as graded $C_{d,\F}$-supermodules. 
\end{Theorem}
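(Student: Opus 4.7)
The plan is to identify $\De_j(\la)_\O$ as the image of a natural $\O$-module map, reduce the theorem to a dimension count, and then settle that count via the Morita equivalence with the classical Schur algebra.

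Setup and reduction. Let $\zeta := u_{\la'}\ttx_{\la'} \in \O\Si_d$ and consider the graded $\O$-supermodule homomorphism $\phi_\O : \Di^\la_{j,\O} \to \M_{d,j,\O}$ given by $v \mapsto v\zeta$; by definition $\De_j(\la)_\O = \Im \phi_\O$. Since $\M_{d,j,\O}$ is $\O$-free of finite rank by Lemma~\ref{LMO}, the image $\De_j(\la)_\O$ is a finitely generated torsion-free module over the PID $\O$, hence $\O$-free; localising at $\K$ gives $\rank_\O \De_j(\la)_\O = \dim_\K \De_j(\la)_\K$. Because $\De_j(\la)_\O$ is thus $\O$-projective, the short exact sequence $0 \to K_\O \to \Di^\la_{j,\O} \to \De_j(\la)_\O \to 0$ (with $K_\O := \Ker \phi_\O$) splits over $\O$, and base-changing together with Corollary~\ref{C6.4.2} yields a short exact sequence $0 \to \F\otimes_\O K_\O \to \Di^\la_{j,\F} \to \F\otimes_\O \De_j(\la)_\O \to 0$. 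Comparing with the short exact sequence produced by $\phi_\F$, the natural inclusion $\F\otimes_\O K_\O \subseteq \Ker \phi_\F$ furnishes a surjection $\F\otimes_\O \De_j(\la)_\O \twoheadrightarrow \De_j(\la)_\F$ of graded $C_{d,\F}$-supermodules, and this surjection is an isomorphism precisely when $\dim_\F \De_j(\la)_\F = \dim_\K \De_j(\la)_\K$. The theorem is thereby reduced to the claim that $\dim_\F \De_j(\la)_\F$ is independent of the field $\F$.

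To prove this independence, I would invoke the Morita equivalence of \S\ref{SSMorISS}. By Lemma~\ref{L6.3.1}, $\Di^\la_{j,\F} \simeq \funb_{n,d,j,\F}(\Di^\la_{n,d,\F})$ for any $n \geq d$, and the classical $GL_n$-module $\Di^\la_{n,d,\F} = \De_{n,d}((\la_1))_\F \otimes \cdots \otimes \De_{n,d}((\la_n))_\F$ admits a standard filtration by Lemma~\ref{1.3c} whose multiplicities are the Kostka numbers $K_{\mu,\la}$ (encoded classically by $h_\la = \sum_\mu K_{\mu,\la}\, s_\mu$) and are therefore independent of $\F$. Since $\funb_{n,d,j,\F}$ is an exact equivalence, applying it produces a standard filtration of $\Di^\la_{j,\F}$ with the same multiplicities, so
\[
\dim_\F \Di^\la_{j,\F} = \sum_{\mu \in \Par(d)} K_{\mu,\la}\, \dim_\F \De_j(\mu)_\F.
\]
The left-hand side is $\F$-independent by Corollary~\ref{C6.4.2}, and the Kostka matrix $(K_{\mu,\la})_{\mu,\la \in \Par(d)}$ is unitriangular with respect to the dominance order (with $K_{\la,\la} = 1$ and $K_{\mu,\la} = 0$ unless $\mu \unrhd \la$), hence invertible over $\Z$. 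Inversion expresses each $\dim_\F \De_j(\mu)_\F$ as a $\Z$-linear combination of values $\dim_\F \Di^\nu_{j,\F}$, each independent of $\F$, giving the required dimension equality.

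The main obstacle is isolated in the second paragraph; the first paragraph is essentially formal $\O$-module bookkeeping. The decisive classical input is that the standard-filtration multiplicities $[\Di^\la_{n,d,\F} : \De_{n,d}(\mu)_\F] = K_{\mu,\la}$ form a characteristic-free combinatorial invariant, which is then propagated through the Morita equivalence of Theorem~\ref{TSchurEnd3} to control $\dim_\F \De_j(\la)_\F$.
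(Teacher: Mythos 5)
Your proof is correct, and the opening $\O$-module reduction (freeness of $\De_j(\la)_\O$, the split short exact sequence, the induced surjection $\F\otimes_\O\De_j(\la)_\O\twoheadrightarrow\De_j(\la)_\F$, and the equivalence with the dimension count $\dim_\F\De_j(\la)_\F=\dim_\K\De_j(\la)_\K$) is essentially what the paper does, stated perhaps a touch more explicitly on the freeness point. The dimension count itself, however, is handled by a genuinely different mechanism. The paper does \emph{not} use the standard filtration of individual $\Di^\la_{j,\F}$ and the Kostka matrix; instead it invokes the $(\IS_{d,j},S(n,d))$-\emph{bimodule} filtration of $\Di_j(n,d)$ from Lemma~\ref{LZFiltration}, producing the single identity $\sum_{\mu}\dim_\F\Di^\mu_{j,\F}=\sum_{\nu}(\dim_\F\De_j(\nu)_\F)(\dim_\F\De_{n,d}(\nu)_\F)$ over both $\F$ and $\K$. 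Combined with the \emph{termwise inequality} $\dim_\K\De_j(\nu)_\K\ge\dim_\F\De_j(\nu)_\F$ (from the surjection) and positivity of the classical Weyl dimensions, this one equation forces termwise equality. Your argument instead uses the module-side standard filtration of each $\Di^\la_{j,\F}$ (which requires Lemma~\ref{1.3c} plus the Morita equivalence, or alternatively Theorem~\ref{4.2f}), yielding the system $\dim_\F\Di^\la_{j,\F}=\sum_\mu K_{\mu\la}\dim_\F\De_j(\mu)_\F$ for $\la\in\Par(d)$, and inverts the unitriangular Kostka matrix over $\Z$. Both routes are valid. Yours has the virtue of making the inequality/positivity step unnecessary and of delivering an explicit characteristic-free formula $\dim\De_j(\mu)=\sum_\la (K^{-1})_{\mu\la}\dim\Di^\la_j$; the paper's route avoids needing the identification of the standard-filtration multiplicities of $\Di^\la_{n,d}$ with Kostka numbers and relies only on the one global bimodule filtration and a positivity argument. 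One small point worth making explicit in your write-up: the restriction to $\la\in\Par(d)$ in the inversion step is essential (the theorem's ``$\la\in\Comp(d)$'' is for $\Di^\la_j$, but $\De_j(\la)$ is only defined for partitions), and when you form the square Kostka matrix you are already implicitly using that $\Di^\nu_j$ for $\nu\in\Comp(n,d)$ depends only on the underlying partition of $\nu$, which is visible from characters but deserves a word.
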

\begin{proof}
By Corollary~\ref{C6.4.2},  we have that 
$\Di_{j,\O}^\la$ is finite rank free as a graded $\O$-supermodule, and  the embedding $\De_j(\la)_\O\to\Di^\la_{j,\O}$ induces a homomorphism of graded $C_{d,\F}$-supermodules $\iota: \F\otimes_\O\De_j(\la)_\O\to\F\otimes_\O\Di^\la_{j,\O}=\Di^\la_{j,\F}$ whose image is $\De_j(\la)_\F$. In particular, $\dim_\F (\F\otimes_\O\De_j(\la)_\O)\geq \dim_\F\De_j(\la)_\F$, and to establish that $\iota$ is an isomorphism, it suffices to prove that $\dim_\F (\F\otimes_\O\De_j(\la)_\O)= \dim_\F\De_j(\la)_\F$. 

Let $n\geq d$. By (\ref{EGaJND}) and Lemma~\ref{LZFiltration}, we have
\begin{equation}\label{EDimZDeDeF}
\begin{split}
\sum_{\mu\in\Comp(n,d)}\dim_\F\Di^\mu_{j,\F}
&=\dim_\F \Di_j(n,d)_{\F}
\\
&=\sum_{\nu\in\Par(d)}(\dim_\F\De_j(\nu)_\F)(\dim_\F\De_{n,d}(\nu)_\F),
\end{split}
\end{equation}
and
\begin{equation}
\label{EDimZDeDeK}
\begin{split}
\sum_{\mu\in\Comp(n,d)}\dim_\F\Di^\mu_{j,\K}
&=\dim_\K \Di_j(n,d)_{\K}
\\
&=\sum_{\nu\in\Par(d)}(\dim_\K\De_j(\nu)_\K)(\dim_\K\De_{n,d}(\nu)_\K).
\end{split}
\end{equation}
By Corollary~\ref{C6.4.2}, the left hand sides in (\ref{EDimZDeDeF}) and (\ref{EDimZDeDeK}) are equal to each other. Moreover, it is well known that $\dim_\K\De_{n,d}(\nu)_\K=\dim_\F\De_{n,d}(\nu)_\F$ for all $\nu\in\Par(d)$, see for example \cite[5.4f]{Green}. So it suffices to prove that $\dim_\K\De_j(\nu)_\K=\dim_\F (\F\otimes_\O\De_j(\nu)_\O)$ for all $\nu\in\Par(d)$. 
But the natural map $\K\otimes_\O  \De_j(\nu)_\O\to \K\otimes_\O\Di^\nu_{j,\O}=\Di^\nu_{j,\K}$ is injective and has image $\Di_{j,\K}^\nu u_{\nu'}\ttx_{\nu'}=\De_j(\nu)_\K$, so $\De_j(\nu)_\O$ is a full rank lattice in $\De_j(\nu)_\K$, which implies $\dim_\K\De_j(\nu)_\K=\dim_\F (\F\otimes_\O\De_j(\nu)_\O)$. 
\end{proof}

\subsection{Base change for endomorphism algebras}

\begin{Theorem} \label{T6.4.4} Let 
$\Di_j(n,d)_{\O}:=\bigoplus_{\la\in\Comp(n,d)}\Di_{j,\O}^\la$. 
\begin{enumerate}
\item[{\rm (i)}] As a graded $\O$-supermodule, $\IS_{d,j,\O}$ is finite rank free, and we have an isomorphism of graded superalgebras $\F\otimes_\O\IS_{d,j,\O}\cong \IS_{d,j,\F}$. 
\item[{\rm (ii)}] For any $\la\in\Comp(d)$, the graded $\IS_{d,j,\O}$-supermodule $\Di_{j,\O}^\la$ is projective. 
\item[{\rm (iii)}] We have an isomorphism of graded superalgebras $$\End_{\IS_{d,j,\O}}(\Di_j(n,d)_{\O})\cong S(n,d)_\O.
$$ 
\item[{\rm (iv)}] If $n\geq d$ then $\Di_j(n,d)_{\O}$ is a projective generator for $\IS_{d,j,\O}$, and $\IS_{d,j,\O}$ is graded Morita superequivalent to $S(n,d)_\O$. 
\end{enumerate}

\end{Theorem}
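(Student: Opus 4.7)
The four parts will be established in order using a base change philosophy: construct a natural map of graded $\O$-supermodules between objects that are (or can be shown to be) $\O$-free, verify that its base changes to each residue field $\F = \O/\m$ and to $\K$ are already known to be isomorphisms by results over fields, and then conclude isomorphism over $\O$ via Nakayama's lemma applied to cokernels (and torsion-freeness applied to kernels). A recurring fact is that a surjection of free $\O$-modules of the same finite rank in each graded piece is automatically an isomorphism, which I will invoke freely via Smith normal form.

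For part (i), the definition $\IS_{d,j,\O} := C_{d,\O}/\Ann_{C_{d,\O}}(\M_{d,j,\O})$ gives a canonical embedding $\IS_{d,j,\O} \hookrightarrow \End_\O(\M_{d,j,\O})$. Since $\M_{d,j,\O}$ is finite rank free by Lemma~\ref{LMO}, so is $\End_\O(\M_{d,j,\O})$, and submodules of free modules over the PID $\O$ are themselves free; hence $\IS_{d,j,\O}$ is finite rank free. To match its rank with $\dim_\F \IS_{d,j,\F}$, I would apply Lemma~\ref{L6.3.4} over both $\F$ and $\K$ together with Theorem~\ref{T6.4.3}, which tells us $\dim_\F \De_j(\la)_\F = \rank_\O \De_j(\la)_\O = \dim_\K \De_j(\la)_\K$ for every $\la \in \Par(d)$. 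Thus $\rank_\O \IS_{d,j,\O} = \dim_\K \IS_{d,j,\K} = \dim_\F \IS_{d,j,\F}$, and the natural surjection $\F \otimes_\O \IS_{d,j,\O} \twoheadrightarrow \IS_{d,j,\F}$ becomes an isomorphism of graded $\F$-superalgebras by dimension count.

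For part (ii), fix $\la \in \Comp(d)$ and consider the map of graded $\IS_{d,j,\O}$-supermodules
$$\varphi_\O: \funQ^{a_{\la,j}}\,\Uppi^{dj}\,\IS_{d,j,\O}\,\bar\ggi^{\la,j} \longrightarrow \Di_{j,\O}^\la, \qquad r\,\bar\ggi^{\la,j} \mapsto r\,\lgath_{\la,j}\,v_{d,j}.$$
This is well-defined because $\ggi^{\la,j}\lgath_{\la,j} = \lgath_{\la,j}$ by Lemma~\ref{LG1}, so any element of $\Ann_{C_{d,\O}}(\M_{d,j,\O})\,\ggi^{\la,j}$ annihilates $\lgath_{\la,j}\,v_{d,j}$; and the image lies in $\Di_{j,\O}^\la$ by Lemma~\ref{LTrivU}. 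Both source and target are finite rank free $\O$-supermodules (by part (i) and Corollary~\ref{C6.4.2}), and after base change to any $\F$ the map $\varphi_\F$ is precisely the isomorphism of Corollary~\ref{CZCIdem}. Consequently the cokernel of $\varphi_\O$ is a finitely generated $\O$-module with vanishing base change to every residue field, hence is zero by Nakayama, so $\varphi_\O$ is a surjection of free $\O$-supermodules of equal finite graded rank, thus an isomorphism. This exhibits $\Di_{j,\O}^\la$ as a direct summand of the regular graded $\IS_{d,j,\O}$-supermodule, proving projectivity.

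For parts (iii) and (iv), set $\bar e := \sum_{\la \in \Comp(n,d)} \bar\ggi^{\la,j} \in \IS_{d,j,\O}$; then part (ii) gives a graded isomorphism $\Di_j(n,d)_\O \simeq \IS_{d,j,\O}\,\bar e$ (after absorbing grading and parity shifts), so $\End_{\IS_{d,j,\O}}(\Di_j(n,d)_\O)^\sop \cong \bar e\,\IS_{d,j,\O}\,\bar e$ is a direct summand of the free $\O$-module $\IS_{d,j,\O}$. The Schur algebra map $S(n,d)_\O \to \bar e\,\IS_{d,j,\O}\,\bar e$ defined by $\xi^g_{\la,\mu} \mapsto \upzeta^g_{\la,\mu}$ as in Theorem~\ref{TSchurEnd3} is a map of $\O$-free modules whose base change to each $\F$ is (after the identifications above) the isomorphism of Theorem~\ref{TSchurEnd3} over $\F$; by the rank-plus-Nakayama argument this makes it an isomorphism over $\O$, giving (iii). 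For (iv), assuming $n \geq d$, consider the two-sided ideal $J := \IS_{d,j,\O}\,\bar e\,\IS_{d,j,\O}$. For each residue field $\F$, Theorem~\ref{TProjGen}(iii) gives $\IS_{d,j,\F}\,\bar e\,\IS_{d,j,\F} = \IS_{d,j,\F}$, so $\F \otimes_\O (\IS_{d,j,\O}/J) = 0$; Nakayama forces $J = \IS_{d,j,\O}$, and the bimodule $\Di_j(n,d)_\O \simeq \IS_{d,j,\O}\,\bar e$ thereby induces the desired graded Morita superequivalence between $\IS_{d,j,\O}$ and $S(n,d)_\O$. The main technical obstacle is in (ii): ensuring the grading and parity shifts on $\varphi_\O$ line up exactly with those in Corollary~\ref{CZCIdem} after base change, and verifying genuine well-definedness of $\varphi_\O$ over $\O$ before one can even assert its base-changed image.
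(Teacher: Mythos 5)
Your overall base-change-plus-Nakayama framework matches the paper's philosophy, and parts (i), (iii), (iv) follow essentially the same lines as the paper's proof (for (i) the paper also embeds $\IS_{d,j,\O}$ in $\End_\O(\M_{d,j,\O})$ and counts dimensions via Lemma~\ref{L6.3.4} and Theorem~\ref{T6.4.3}; for (iii) the paper also base-changes an explicit Schur algebra inside $\End_{\IS_{d,j,\O}}(\Di_j(n,d)_\O)$, citing a lemma of Landrock for the natural embedding $\F\otimes_\O E_\O\hookrightarrow E_\F$; for (iv) the paper likewise reduces to each residue field).

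Where you diverge is in part (ii). The paper gives a two-line argument: $\F\otimes_\O\Di^\la_{j,\O}\cong\Di^\la_{j,\F}$ is projective over $\IS_{d,j,\F}$ for every residue field $\F$, and then a Universal Coefficients Theorem argument immediately yields projectivity of $\Di^\la_{j,\O}$ over $\IS_{d,j,\O}$. You instead construct the explicit map $\varphi_\O:\funQ^{a_{\la,j}}\Uppi^{dj}\,\IS_{d,j,\O}\bar\ggi^{\la,j}\to\Di^\la_{j,\O}$ sending $\bar\ggi^{\la,j}\mapsto\lgath_{\la,j}v_{d,j}$ and prove it is an isomorphism via base change and Nakayama. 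This is not what the paper does here, and in fact is precisely the content of the paper's later Corollary~\ref{CZCIdemInt} — which the paper derives \emph{from} Theorem~\ref{T6.4.4} by mimicking the proof of Corollary~\ref{CZCIdem}. Your route bundles that corollary into the proof of (ii); it is not circular, since the only inputs you use are (i), Corollary~\ref{C6.4.2}, and the field-level isomorphism of Corollary~\ref{CZCIdem}, but it does make (ii) heavier. The one place where more detail is genuinely needed is the sentence ``after base change to any $\F$ the map $\varphi_\F$ is precisely the isomorphism of Corollary~\ref{CZCIdem}.'' The statement of Corollary~\ref{CZCIdem} only asserts an abstract isomorphism; to know that your specific $\varphi_\F$ is an isomorphism you must verify that $\Di^\la_{j,\F}=C_{d,\F}\lgath_{\la,j}v_{d,j}$, i.e.\ that $\lgath_{\la,j}v_{d,j}$ generates $\Di^\la_{j,\F}$. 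This does hold — it follows by chasing the identifications $\Y_j^\la\simeq\GGI_\la^d\,\Y_{\la,j}$, $\Y_{\la,j}=C_\la\lgath_{\la,j}v_{\la,j}$ (from Lemma~\ref{LMWtSpacesNew}), and $\Y_j^\la=\Di_j^\la$ (from Theorem~\ref{TProjGen}(i)) — but it is not automatic from the cited corollary and deserves a line or two. With that addition your proof of (ii) is correct, just more explicit and slightly longer than the paper's.
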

\begin{proof}
(i) By definition, $\IS_{d,j,\O}$ is the $\O$-submodule of $\End_\O(\M_{d,j,\O})$ consisting of the endomorphisms coming from 
$C_{d,\O}$ acting on $\M_{d,j,\O}$. 
Taking into account Lemma~\ref{LMO}, $\IS_{d,j,\O}$ is finite rank free as a graded $\O$-supermodule. Also, since $\K\otimes_\O C_{d,\O}\cong C_{d,\K}$, we have that $\IS_{d,j,\O}$ is a full rank lattice in $\IS_{d,j,\K}$. In particular, $\dim_\F (\F\otimes_\O\IS_{d,j,\O})=\rank_\O\IS_{d,j,\O}= \dim_\K \IS_{d,j,\K}$.

Moreover, 
the natural inclusion
$\IS_{d,j,\O}\into \End_\O(\M_{d,j,\O})$ yields a map $$\F\otimes_\O\IS_{d,j,\O} \to \F\otimes_\O\End_\O(\M_{d,j,\O})\simeq\End_\F(\M_{d,j,\F})$$ whose image is $\IS_{d,j,\F}$. 
Thus we have a surjective homomorphism $\F\otimes_\O\IS_{d,j,\O}\to \IS_{d,j,\F}$ of graded superalgebras. 
This map must be an isomorphism since the two algebras have the same dimension---indeed, $\dim_\F (\F\otimes_\O\IS_{d,j,\O})=\dim_\K \IS_{d,j,\K}$ by the previous paragraph, and $\dim_\K \IS_{d,j,\K}= \dim \IS_{d,j,\F}$ by  Lemma~\ref{L6.3.4} and Theorem~\ref{T6.4.3}.
 
 (ii) By Corollary~\ref{C6.4.2} and Theorem~\ref{TProjGen}(ii), 
 for any field $\F$ which is an $\O$-module, 
 we have that $\F\otimes_\O\Di^\la_{j,\O}\cong \Di^\la_{j,\F}$ is a projective graded $\IS_{d,j,\F}$-supermodule. Therefore, in view of the Universal Coefficients Theorem, $\Di^\nu_{j,\O}$ is a projective graded $\IS_{d,j,\O}$-supermodule. 
 
(iii) For $\k\in\{\O,\F\}$, denote $E_\k:= \End_{\IS_{d,j,\k}}(\Di_j(n,d)_{\k}).$ 
By Theorem~\ref{TSchurEnd3}, we have $E_\F \cong S(n,d)_{\F}$. 
By \cite[Lemma~14.5]{Landrock}, we have a natural embedding 
$\F\otimes_\O E_\O \into E_\F$ which is an isomorphism for $\F=\K$. 
Moreover, $E_\O$ is a $\O$-free, so  
\begin{align*}
\dim_\F(\F\otimes_\O E_\O)&=\dim_\K(\K\otimes_\O E_\O)=\dim_\K E_\K\\&=\dim_\K S(n,d)_{\K}=\dim_\F S(n,d)_{\F}=\dim_\F E_\F.
\end{align*}
By dimensions, the natural embedding $\F\otimes_\O E_\O \to E_\F$ must be an isomorphism. 

By Theorem~\ref{TSchurEnd3}, the basis element $\xi_{\la,\mu}^g$ of $E_\K= S(n,d)_\K$ acts as zero on all summands except $\Di^\mu_{j,\K}$, on which the action is induced by the right multiplication by $X_{\mu,\la}^g$. By definition,  $\Di^\nu_{j,\O}= \Di^\nu_{j,\K}\cap \M_{d,j,\O}$ for all $\nu$. Also, $X_{\mu,\la}^g\in \O\Si_d$, therefore the right multiplication by $X_{\mu,\la}^g$ stabilizes $\M_{d,j,\O}$. Hence $\Di^\mu_{j,\O} X_{\mu,\la}^g\subseteq \Di^\la_{j,\O}$, so each $\xi^g_{\la,\mu} \in E_\K$ restricts to give a well-defined element of $E_\O$. We have constructed an isomorphic copy $S_\O$ of $S(n,d)_{\O}$ in $E_\O$, namely, the $\O$-span of the standard basis elements $\xi^g_{\la,\mu}\in  S(n,d)_{\K}$. 

It remains to show that $S_\O=E_\O$. We have a short exact sequence of $\O$-modules: 
$
0\to S_\O\to E_\O\to Q_\O\to 0
$, and we need to prove that $Q_\O=0$, for which it suffices to prove that $\F\otimes_\O Q_\O=0$ for any $\F$. Tensoring with $\F$, we have an exact sequence
$$
\F\otimes_\O S_\O \stackrel{\iota}{\to} E_\F\to\F\otimes_\O Q_\O\to 0.
$$
The map $\iota$ sends $1\otimes \xi^g_{\la,\mu}$ to the corresponding endomorphism $\upzeta^g_{\la,\mu}$ defined as in Theorem~\ref{TSchurEnd3}. Hence, $\iota$ is injective, so $\iota$ is an isomorphism by dimensions, and $\F\otimes_\O Q_\O= 0$, as required. 

(iv) By (ii), $\Di_j(n,d)_\O$ is a projective graded $\IS_{d,j,\O}$-supermodule. For $n\geq d$, it is a projective generator, because this is so on tensoring with $\F$, using (i) and Theorem~\ref{TProjGen}(iii). 
\end{proof}


\begin{Corollary} \label{CHCICInt} 
Let $\la\in\Comp(d)$. 
\begin{enumerate}
\item[{\rm (i)}] The functor $\GGI_\la^d$ restricts to a functor $\mod{\IS_{\la,j,\O}}\to\mod{\IS_{d,j,\O}}$. Furthermore, for $V\in\mod{\IS_{\la,j,\O}}$, we have $\GGI_\la^dV\simeq \IS_{d,j,\O}\bar\ggi_\la\otimes_{\IS_{\la,j,\O}}V$.  
\item[{\rm (ii)}] The functor $\GGR_\la^d$ restricts to a functor $\mod{\IS_{d,j,\O}}\to\mod{\IS_{\la,j,\O}}$. 
\end{enumerate}
\end{Corollary}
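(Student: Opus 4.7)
The plan is to adapt the proof of Corollary~\ref{CHCIC} to the integral setting, substituting the base-change-friendly Theorem~\ref{T6.4.4} for the field-based Theorem~\ref{TProjGen}. Two preliminary ingredients are needed: an $\O$-version of Lemma~\ref{LBarParabolic}, giving an embedding of graded superalgebras $\IS_{\la,j,\O}\hookrightarrow \bar\ggi_\la \IS_{d,j,\O}\bar\ggi_\la$, and the exactness of $\GGI_\la^d$, which now holds over $\O$ by Lemma~\ref{LFFExact} because the Inductive Assumption has been discharged via Corollary~\ref{CEquivFG}(ii).

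First I would verify the integral version of Lemma~\ref{LBarParabolic}. Its proof rests on the observation that $\ggi_\la \M_{d,j}$ decomposes, as a $C_\la$-module, as a direct sum of copies of $\M_{\la,j}$ (Lemma~\ref{LggIndResSymGroup}(ii)), a basis-level statement flowing from Lemma~\ref{L030216}(iii) that is valid over $\O$. The kernel of the composition $C_{\la,\O}\to \ggi_\la C_{d,\O}\ggi_\la\to \bar\ggi_\la\IS_{d,j,\O}\bar\ggi_\la$ is therefore exactly $\Ann_{C_{\la,\O}}(\M_{\la,j,\O})$, yielding the desired injection. Part (ii) then follows immediately: for $W\in\mod{\IS_{d,j,\O}}$, the module $\GGR_\la^d W=\ggi_\la W$ carries a natural action of $\bar\ggi_\la\IS_{d,j,\O}\bar\ggi_\la$ by truncation, and restriction along the embedding just established equips $\GGR_\la^d W$ with an $\IS_{\la,j,\O}$-action that agrees with the original $C_{\la,\O}$-action via $C_{\la,\O}\ggi_\la\subseteq \ggi_\la C_{d,\O}\ggi_\la$.

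For (i), I would exploit exactness of $\GGI_\la^d$ together with a projective presentation over $\IS_{\la,j,\O}$. Fixing $n\geq d$ and writing $\la=(\la_1,\dots,\la_n)$, Theorem~\ref{T6.4.4}(iv) furnishes a projective generator $\bigoplus_{\mu^{(k)}\in\Comp(n,\la_k)}\Di_{j,\O}^{\mu^{(k)}}$ for $\mod{\IS_{\la_k,j,\O}}$, so $\bigoplus \Di_{j,\O}^{\mu^{(1)}}\boxtimes\dots\boxtimes\Di_{j,\O}^{\mu^{(n)}}$ is a projective generator of $\mod{\IS_{\la,j,\O}}$. By transitivity of Gelfand-Graev induction and the definition $\Di_{j,\O}^\nu=\GGI_\nu^d\Di_{\nu,j,\O}$, applying $\GGI_\la^d$ yields $\bigoplus \Di_{j,\O}^{\mu^{(1)}\cdots\mu^{(n)}}$, which lies in $\mod{\IS_{d,j,\O}}$ by Theorem~\ref{T6.4.4}(ii). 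For arbitrary $V\in\mod{\IS_{\la,j,\O}}$, choosing a presentation $P_1\to P_0\to V\to 0$ by sums of this generator, exactness of $\GGI_\la^d$ as a $C_d$-linear functor realises $\GGI_\la^d V$ as a $C_{d,\O}$-cokernel of an $\IS_{d,j,\O}$-module map, forcing the $C_{d,\O}$-action on $\GGI_\la^d V$ to factor through $\IS_{d,j,\O}$. The explicit formula $\GGI_\la^d V\simeq \IS_{d,j,\O}\bar\ggi_\la\otimes_{\IS_{\la,j,\O}}V$ then drops out from uniqueness of left adjoints: both functors are left adjoint to $\GGR_\la^d:\mod{\IS_{d,j,\O}}\to\mod{\IS_{\la,j,\O}}$, the first by (ii) combined with the existing adjunction $\GGI_\la^d\dashv\GGR_\la^d$ on $C$-modules, the second by the standard tensor-restriction adjunction along the embedding in (ii).

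The main technical point requiring care is the cokernel argument: one must check that the $\IS_{d,j,\O}$-module structure on $\GGI_\la^d V$ inherited from the projective presentation coincides with the one given by tensor product. Over a field one can cross-check everything with dimension counts and Fitting's lemma as in Corollary~\ref{CHCIC}, but over $\O$ these shortcuts are unavailable; the cleanest way through is to phrase the argument entirely in terms of the universal property and quote Theorem~\ref{T6.4.4}(i) (saying $\F\otimes_\O\IS_{d,j,\O}\cong \IS_{d,j,\F}$) only as a consistency check rather than a part of the proof.
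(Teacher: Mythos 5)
Your proposal is correct and follows essentially the same approach as the paper, which simply cites the argument of Corollary~\ref{CHCIC} together with Theorem~\ref{T6.4.4}: exactness of $\GGI_\la^d$ reduces to projectives, which are summands of the $\Di^{\mu}_{j,\O}$'s by Theorem~\ref{T6.4.4} and hence embed in $\M_{\la,j,\O}$, and the explicit formula then comes from adjointness. Your reorganization (proving the $\O$-version of Lemma~\ref{LBarParabolic} up front, treating (ii) directly from the embedding, and working with the explicit projective generator $\bigoplus\Di_{j,\O}^{\mu^{(1)}}\boxtimes\dots\boxtimes\Di_{j,\O}^{\mu^{(n)}}$ rather than the abstract statement) merely fills in the details the paper leaves implicit.
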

\begin{proof}
The proof of Corollary~\ref{CHCIC} now generalizes using the fact that by Theorem~\ref{T6.4.4}, every indecomposable projective graded  $\IS_{\la,j,d}$-supermodule is a submodule of $\M_{\la,j,\O}$. 
\end{proof}

\begin{Lemma} \label{LProjCoverO}
The graded $C_{d,\O}$-module $\G_{d,j}$ is indecomposable.
\end{Lemma}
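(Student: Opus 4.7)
The plan is to compute the bidegree $(0,\0)$ part of $\End_{C_{d,\O}}(\G_{d,j})$ and show it is isomorphic to $\O$. Since $\G_{d,j}=C_{d,\O}\ggi^{d,j}$, the standard anti-isomorphism identifies $\End_{C_{d,\O}}(\G_{d,j})$ with $(\ggi^{d,j}C_{d,\O}\ggi^{d,j})^\sop$ as graded superalgebras, and graded direct-sum decompositions of $\G_{d,j}$ correspond to orthogonal bidegree $(0,\0)$ idempotents in $(\ggi^{d,j}C_{d,\O}\ggi^{d,j})^0_{\0}$. So the task reduces to showing this corner is $\cong\O$: its only idempotents will then be $0$ and $\ggi^{d,j}$, giving only the two trivial decompositions of $\G_{d,j}$.

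To compute this corner I would pass to the regrading $\zC_{d,\O}$ of~\S\ref{SSRegradingCd}. For the diagonal pair $(\la,\bi)=(\mu,\bj)=((d),j)$ the shifts $t_{\la,\bi}-t_{\mu,\bj}$ and $\eps_{\la,\bi}-\eps_{\mu,\bj}$ in~(\ref{EReGradingC}) both vanish, so $(\ggi^{d,j}C_{d,\O}\ggi^{d,j})^0=(\ggis^{d,j}\zC_{d,\O}\ggis^{d,j})^0$ as $\O$-modules, with the idempotents $\ggi^{d,j}$ and $\ggis^{d,j}$ matching. Theorem~\ref{TNonNeg} gives that $\zC_{d,\O}$ is non-negatively graded, and Lemma~\ref{LMinAB=1}(i) applied with $\mu=(d)$ and $i=j$ identifies this degree-zero piece as the $\O$-span of $\za_{(d),j}$. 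The permutation $q_{(d),j}\in\Si_{dp}$ introduced just before that lemma is the identity (since for $n=1$ each block $\ttX^{d,j}_k$ already equals its target $\ttX^{(d),j}_{1,k}$), so $\za_{(d),j}=\ggis^{d,j}$, and hence $(\ggis^{d,j}\zC_{d,\O}\ggis^{d,j})^0$ is $\O$-cyclic on the idempotent~$\ggis^{d,j}$.

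To upgrade this spanning statement to freeness over~$\O$ I would base change to the field of fractions $\K$ of~$\O$: the $\K$-version of Lemma~\ref{LMinAB=1}(i) gives $(\ggis^{d,j}\zC_{d,\K}\ggis^{d,j})^0=\K\cdot\ggis^{d,j}$, which is one-dimensional with $\ggis^{d,j}\neq 0$. Since $\O$ is a domain, this forces the $\O$-annihilator of $\ggis^{d,j}$ inside $(\ggis^{d,j}\zC_{d,\O}\ggis^{d,j})^0$ to vanish, so the corner is isomorphic to~$\O$; because $\ggis^{d,j}$ has bidegree $(0,\0)$ it coincides with its own even part. The only idempotents of $\O$ are $0$ and $1$, yielding indecomposability of $\G_{d,j}$. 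The substantive ingredients are Theorem~\ref{TNonNeg} (non-negativity of the grading on $\zC_{d,\O}$) and Lemma~\ref{LMinAB=1}, both valid verbatim over the ground ring~$\O$; I therefore anticipate no real obstacle beyond unwinding these two inputs.
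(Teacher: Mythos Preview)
Your proof is correct but takes a genuinely different route from the paper. The paper's argument is one line: it observes that $\G_{d,j,\F}=\F\otimes_\O\G_{d,j,\O}$ is indecomposable by Lemma~\ref{LProjCover} (where, over a field, $\G_{d,j}$ is identified with the projective cover $P_j((d))$), and a nontrivial decomposition over $\O$ would survive base change. Your argument instead computes the bidegree $(0,\0)$ endomorphism corner directly, using the regrading $\zC_d$ and Lemma~\ref{Lb=1}(i) (or equivalently Lemma~\ref{LMinAB=1}(i) with $\mu=(d)$) to see that $(\ggis^{d,j}\zC_{d,\O}\ggis^{d,j})^0=\O\cdot\ggis^{d,j}$. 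Your route avoids invoking the classification of irreducibles and the projective-cover identification, at the cost of relying on the regrading machinery and the non-negativity Theorem~\ref{TNonNeg}; the paper's route is shorter but leans on the earlier representation-theoretic input of Lemma~\ref{LProjCover}. Both are valid; note that Lemma~\ref{Lb=1}(i) already gives the spanning element as $\ggis^{d,i}$ directly, so you could bypass the computation of $q_{(d),j}$.
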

\begin{proof}
We have $\G_{d,j,\F}=\F\otimes\G_{d,j,\O}$ is indecomposable by Lemma~\ref{LProjCover}, so $\G_{d,j,\O}$ is also indecomposable.
\end{proof}

\begin{Corollary} \label{CZCIdemInt}
Let $\la\in\Comp(d)$ and set 
$$\bar\ggi^{\la,j}:=\ggi^{\la,j}+\Ann_{C_{d,\O}}(\M_{d,j,\O})\in C_{d,\O}/\Ann_{C_{d,\O}}(\M_{d,j,\O})=\IS_{d,j,\O}.
$$ 
Then $\funQ^{a_{\la,j}}\,\Uppi^{dj}\,\IS_{d,j,\O}\bar \ggi^{\la,j}\simeq \Di^\la_{j,\O}$ as graded $\IS_{d,j,\O}$-supermodules.  
\end{Corollary}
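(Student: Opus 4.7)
The plan is to lift the proof of Corollary~\ref{CZCIdem} to the integral setting, exploiting the freeness and base-change results of Theorem~\ref{T6.4.4} together with the integral Gelfand-Graev induction of Corollary~\ref{CHCICInt}. First I would reduce to the case $n=1$, $\la=(d)$. Inspection of the proofs shows that Lemma~\ref{LUpperInd}(ii) holds over $\O$ as well, since it uses only Lemma~\ref{LHatMG}, Lemma~\ref{L030216}, Corollary~\ref{CEquivFG}(ii) and Lemma~\ref{LHatSHatZ}, each of which is valid over $\O$. Thus
$$\Di^\la_{j,\O}\simeq \GGI_\la^d\big(\Di_{\la_1,j,\O}\boxtimes\cdots\boxtimes \Di_{\la_n,j,\O}\big),$$
and assuming the $n=1$ case holds, each factor is isomorphic to $\funQ^{a_{\la_r,j}}\Uppi^{\la_r j}\IS_{\la_r,j,\O}\bar\ggi^{\la_r,j}$. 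Applying Corollary~\ref{CHCICInt}(i) to identify $\GGI_\la^d(-)$ with $\IS_{d,j,\O}\bar\ggi_\la\otimes_{\IS_{\la,j,\O}}(-)$, the outer tensor collapses to $\funQ^{a_{\la,j}}\Uppi^{dj}\IS_{d,j,\O}\bar\ggi^{\la,j}$, since the shifts $a_{\la_r,j}$ and $\la_r j$ add up to $a_{\la,j}$ and $dj$.

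Next I would handle the case $n=1$. Lemma~\ref{LMWtSpacesNew} (whose proof rests on Lemma~\ref{LFact}, which is characteristic-free) shows that $\ggi^{d,j}\M_{d,j,\O}$ is a free rank-one $\O$-module generated by $\lgath_{d,j}v_{d,j}$, of bidegree $(a_{d,j},\,dj\pmod{2})$ by (\ref{EA}). Mapping $\ggi^{d,j}\mapsto\lgath_{d,j}v_{d,j}$ then gives a surjective bidegree-$(0,\0)$ homomorphism of graded $C_{d,\O}$-supermodules
$$\funQ^{a_{d,j}}\Uppi^{dj}\G_{d,j,\O}=\funQ^{a_{d,j}}\Uppi^{dj}C_{d,\O}\ggi^{d,j}\twoheadrightarrow \Y_{d,j,\O}=\Di_{d,j,\O},$$
where the last equality is Lemma~\ref{LZO}. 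Since the target is killed by $\Ann_{C_{d,\O}}(\M_{d,j,\O})$, this factors through a surjection
$$\phi:\funQ^{a_{d,j}}\Uppi^{dj}\IS_{d,j,\O}\bar\ggi^{d,j}\twoheadrightarrow\Di_{d,j,\O}.$$

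Finally, to show $\phi$ is an isomorphism, I would argue by rank comparison rather than by indecomposability. Both source and target are finite-rank free as graded $\O$-supermodules: the source because $\IS_{d,j,\O}$ is finite-rank free by Theorem~\ref{T6.4.4}(i) and $\bar\ggi^{d,j}$ is an idempotent of bidegree $(0,\0)$, making $\IS_{d,j,\O}\bar\ggi^{d,j}$ a graded $\O$-summand of $\IS_{d,j,\O}$; the target by Lemma~\ref{LZO}. Base changing $\phi$ to any residue field $\F$ of $\O$ yields a surjection $\IS_{d,j,\F}\bar\ggi^{d,j}\twoheadrightarrow\Di_{d,j,\F}$, which is an isomorphism by Corollary~\ref{CZCIdem}. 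Hence the graded $\F$-dimensions, and therefore the graded $\O$-ranks, of source and target of $\phi$ coincide in each bidegree. Because a surjective homomorphism between free $\O$-modules of equal finite rank in each bidegree is automatically an isomorphism, $\phi$ itself is an isomorphism. The only obstacle worth flagging is the verification, via the proofs cited above, that each field-case auxiliary result (notably Lemma~\ref{LUpperInd}(ii) and Lemma~\ref{LMWtSpacesNew}) is in fact valid over $\O$; this is routine but essential to the reduction.
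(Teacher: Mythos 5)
Your proof is correct and follows the same skeleton as the paper's: reduce to the $n=1$ case via Lemma~\ref{LUpperInd}(ii) and Corollary~\ref{CHCICInt}(i) (your check that Lemma~\ref{LUpperInd}(ii) really does hold over $\O$, since its ingredients are all stated without the $\k=\F$ hypothesis, is exactly the implicit justification the paper's proof of Corollary~\ref{CZCIdemInt} relies on), then build the surjection from the fact that $\ggi^{d,j}\M_{d,j,\O}$ is generated by $\lgath_{d,j}v_{d,j}$ (Lemma~\ref{LMWtSpacesNew}) and that $\Y_{d,j,\O}=\Di_{d,j,\O}$ (Lemma~\ref{LZO}). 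The only genuine divergence is the closing step. The paper shows the surjection is an isomorphism by noting that $\Di_{d,j,\O}$ is a projective $\IS_{d,j,\O}$-supermodule (Theorem~\ref{T6.4.4}(ii)), hence a direct summand of the source, and that the source is indecomposable because it is a quotient of $\G_{d,j,\O}$, which is indecomposable by Lemma~\ref{LProjCoverO}. You instead compare $\O$-ranks: both sides are finite rank free over $\O$ (source via Theorem~\ref{T6.4.4}(i), target via Lemma~\ref{LZO}), and upon base change to a residue field the surjection is forced to be an isomorphism by Corollary~\ref{CZCIdem}, giving equality of ranks and hence an isomorphism over $\O$. Your route is marginally more elementary, since it sidesteps Theorem~\ref{T6.4.4}(ii) and the indecomposability of $\G_{d,j,\O}$; what it buys is a cleaner ``verify over fields, lift by freeness'' template, at the price of leaning slightly more heavily on the freeness results of \S\ref{SSInt}. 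Both arguments are correct.
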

\begin{proof}
The same proof as for Corollary~\ref{CZCIdem} works, using Corollary~\ref{CHCICInt}(i) instead of Corollary~\ref{CHCIC}(i) to  reduce to the special case $\la=(d)$, Theorem~\ref{T6.4.4} instead of Theorem~\ref{TProjGen}, and Lemma~\ref{LProjCoverO} instead of Lemma~\ref{LProjCover}. 
\end{proof} 

Corollary~\ref{CZCIdemInt} and Theorem~\ref{T6.4.4}(iii) now imply:

\begin{Corollary} 
We have an isomorphism of graded superalgebras $$\textstyle\End_{\IS_{d,j,\k}}\Big(\bigoplus_{\la\in\Comp(n,d)} \funQ^{a_{\la,j}}\,\Uppi^{dj}\,\IS_{d,j,\k}\bar \ggi^{\la,j}\Big)\cong S(n,d)_\k.
$$ 
\end{Corollary}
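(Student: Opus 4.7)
The plan is simply to combine the two referenced results. From Corollary~\ref{CZCIdemInt} (and its analogue Corollary~\ref{CZCIdem} over $\F$, which is the special case with $\k = \F$ proved using the Morita equivalence of Theorem~\ref{TProjGen}), we have, for each $\la \in \Comp(n,d)$, an isomorphism of graded $\IS_{d,j,\k}$-supermodules
$$
\funQ^{a_{\la,j}}\,\Uppi^{dj}\,\IS_{d,j,\k}\bar \ggi^{\la,j} \simeq \Di^\la_{j,\k}.
$$
Taking the direct sum over all $\la \in \Comp(n,d)$ and noting that the direct sum of isomorphisms is an isomorphism, we obtain
$$
\bigoplus_{\la\in\Comp(n,d)} \funQ^{a_{\la,j}}\,\Uppi^{dj}\,\IS_{d,j,\k}\bar \ggi^{\la,j} \simeq \bigoplus_{\la\in\Comp(n,d)} \Di^\la_{j,\k} = \Di_j(n,d)_\k,
$$
by the definition of $\Di_j(n,d)_\k$ in (\ref{EGaJND}).

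Applying the functor $\End_{\IS_{d,j,\k}}(-)$ to both sides yields an isomorphism of graded superalgebras
$$
\textstyle\End_{\IS_{d,j,\k}}\Big(\bigoplus_{\la\in\Comp(n,d)} \funQ^{a_{\la,j}}\,\Uppi^{dj}\,\IS_{d,j,\k}\bar \ggi^{\la,j}\Big) \cong \End_{\IS_{d,j,\k}}(\Di_j(n,d)_\k).
$$
The right-hand side is identified with $S(n,d)_\k$ by Theorem~\ref{T6.4.4}(iii) (for $\k = \O$) or by Theorem~\ref{TSchurEnd3} (for $\k = \F$). Composing these identifications gives the desired isomorphism, completing the proof. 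There is no genuine obstacle here — the degree and parity shifts $\funQ^{a_{\la,j}}\,\Uppi^{dj}$ are exactly what makes the isomorphism of Corollary~\ref{CZCIdemInt} bidegree $(0,\0)$, so the resulting endomorphism algebra isomorphism preserves the graded superalgebra structure on the nose.
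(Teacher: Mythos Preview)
Your proof is correct and follows exactly the approach of the paper, which simply states that the corollary follows from Corollary~\ref{CZCIdemInt} and Theorem~\ref{T6.4.4}(iii). You have spelled out the details and also correctly noted that for $\k=\F$ one uses Corollary~\ref{CZCIdem} and Theorem~\ref{TSchurEnd3} in place of their $\O$-analogues.
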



\chapter{Regrading imaginary Schur-Weyl duality}


Recall from \S\ref{SSRegradingCd} the regrading $\zC_d$ of $C_d$. By Theorem~\ref{TNonNeg}, $\zC_d$ is non-negatively graded. For $\nu\in\Comp(n,d)$, recalling (\ref{EzCParIdentify}), we have a parabolic subalgebra 
$$\zC_\nu=\zC_{\nu_1}\otimes\dots\otimes\zC_{\nu_n}\subseteq \ggis_\nu\zC_d\ggis_\nu$$ and the functors 
$$\GGIS_\nu^d:\mod{\zC_\nu}\to\mod{\zC_d}\quad\text{and}\quad 
\GGRS_\nu^d:\mod{\zC_d}\to\mod{\zC_\nu}$$
defined in (\ref{EGGIRS}). 

For $j\in J$, identifying $\zC_1$ with $H_1(\Zig_\ell)=\Zig_\ell[\zz]$ via the isomorphism of Theorem~\ref{THCIsoZ} and recalling the graded $\Zig_\ell[\zz]$-supermodules $\zL_j=\k \zv_j$ from \S\ref{SSAffZig}, we obtain the graded $\zC_1$-supermodule $\zL_j$. Recalling the $C_1$-module $L_j$ from \S\ref{SSL} and the functors $\funh_d$ from 
\S\ref{SSFunctorsRegr}, note that $\zL_j\simeq \funh_1 \LL_j$. 
Moreover, by (\ref{EDeDe+}), we have
\begin{equation}\label{ELC+}
\zL_j\simeq \zC_1\ggis^j/(\zC_1\ggis^j)^{>0}.
\end{equation}

\section{Regraded imaginary tensor spaces}

\subsection{  Mixed imaginary tensor space $\zM_\bj$}
\label{SSMBJ}
Let $\bj\in J^d$. The corresponding (mixed) imaginary tensor space is 
$$
\zM_\bj:=\GGIS_{\om_d}^d ( \zL_{j_1}\boxtimes\dots\boxtimes \zL_{j_d})=\zC_d\ggis_{\om_d}\otimes_{\zC_{\om_d}}( \zL_{j_1}\boxtimes\dots\boxtimes \zL_{j_d})
$$
with generator 
$$
\zv_{\bj}:=\ggis_{\om_d}\otimes \zv_{j_1}\otimes\dots\otimes \zv_{j_n}.
$$

We consider the graded left $\zC_d$-subsupermodule of  $\zC_d\ggis^\bj$ generated by $
(\zC_{\om_d}\ggis^\bj)^{>0}$: 
\begin{equation}\label{ENBJ}
\zN_\bj:=\zC_d(\zC_{\om_d}\ggis^\bj)^{>0}. 
\end{equation}

\begin{Lemma} \label{LTensSpaceAsQuotient} 
For $\bj=j_1\cdots j_d\in J^d$ we have an isomorphism of graded $\zC_d$-supermodules 
$$
\zC_d\ggis^\bj/\zN_\bj\iso \zM_\bj=\zC_d\ggis_{\om_d}\otimes_{\zC_{\om_d}}( \zL_{j_1}\boxtimes\dots\boxtimes \zL_{j_d}),\ \ggis^\bj+\zN_\bj\mapsto \ggis_{\om_d}\otimes \zv_{j_1}\otimes\dots\otimes \zv_{j_d}. 
$$
\end{Lemma}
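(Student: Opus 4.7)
The plan is to realize $\zM_\bj$ as a pushforward of a natural quotient description of the outer tensor product $\zL_{j_1}\boxtimes\dots\boxtimes \zL_{j_d}$ along the shifted Gelfand-Graev induction functor $\GGIS^d_{\om_d}=\zC_d\ggis_{\om_d}\otimes_{\zC_{\om_d}}-$, and then to identify the resulting surjection with a quotient of $\zC_d\ggis^\bj$.

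First, I would observe that under the identification $\zC_{\om_d}=\zC_1^{\otimes d}$ (see (\ref{EzCParIdentify})) we have $\ggis^\bj=\ggis^{j_1}\otimes\cdots\otimes\ggis^{j_d}$ and $\zC_{\om_d}\ggis^\bj\simeq \zC_1\ggis^{j_1}\otimes\cdots\otimes\zC_1\ggis^{j_d}$. By Theorem~\ref{TNonNeg}, each factor $\zC_1\ggis^{j_r}$ is non-negatively graded, so the degree-zero component of the tensor product coincides with the tensor product of the degree-zero components, and hence
$$
\zC_{\om_d}\ggis^\bj/(\zC_{\om_d}\ggis^\bj)^{>0}\iso \zC_1\ggis^{j_1}/(\zC_1\ggis^{j_1})^{>0}\otimes\cdots\otimes \zC_1\ggis^{j_d}/(\zC_1\ggis^{j_d})^{>0}.
$$
Combined with (\ref{ELC+}), this produces an isomorphism of graded $\zC_{\om_d}$-supermodules
$
\zC_{\om_d}\ggis^\bj/(\zC_{\om_d}\ggis^\bj)^{>0}\iso \zL_{j_1}\boxtimes\cdots\boxtimes\zL_{j_d}
$
sending the class of $\ggis^\bj$ to $\zv_{j_1}\otimes\cdots\otimes\zv_{j_d}$.

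Next, I would apply the right exact tensor functor $\zC_d\ggis_{\om_d}\otimes_{\zC_{\om_d}}-$ to the short exact sequence
$$
0\to (\zC_{\om_d}\ggis^\bj)^{>0}\to \zC_{\om_d}\ggis^\bj\to \zL_{j_1}\boxtimes\cdots\boxtimes\zL_{j_d}\to 0
$$
to obtain a surjection $\zC_d\ggis_{\om_d}\otimes_{\zC_{\om_d}}\zC_{\om_d}\ggis^\bj\twoheadrightarrow \zM_\bj$ whose kernel is the image of $\zC_d\ggis_{\om_d}\otimes_{\zC_{\om_d}}(\zC_{\om_d}\ggis^\bj)^{>0}$. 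To finish, I would invoke the standard multiplication isomorphism $\zC_d\ggis_{\om_d}\otimes_{\zC_{\om_d}}\zC_{\om_d}\ggis^\bj\iso \zC_d\ggis^\bj$, $c\otimes b\mapsto cb$, which is valid since $\ggis_{\om_d}\ggis^\bj=\ggis^\bj$. Under this identification, the image of $\zC_d\ggis_{\om_d}\otimes_{\zC_{\om_d}}(\zC_{\om_d}\ggis^\bj)^{>0}$ becomes $\zC_d\ggis_{\om_d}(\zC_{\om_d}\ggis^\bj)^{>0}=\zC_d(\zC_{\om_d}\ggis^\bj)^{>0}=\zN_\bj$, where I use that $(\zC_{\om_d}\ggis^\bj)^{>0}\subseteq \ggis_{\om_d}\zC_d$. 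This yields the desired graded $\zC_d$-supermodule isomorphism $\zC_d\ggis^\bj/\zN_\bj\iso \zM_\bj$, with the explicit assignment on the generator recorded in the statement of the lemma.

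The main obstacle is simply keeping the bookkeeping of the outer tensor product quotient clean in the first paragraph---once non-negativity of grading for each $\zC_1\ggis^{j_r}$ is granted (which is exactly the content of Theorem~\ref{TNonNeg}), the rest is formal manipulation with idempotents and right-exact tensor functors.
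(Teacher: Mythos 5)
Your proof is correct and follows essentially the same route as the paper's: you decompose $\zC_{\om_d}\ggis^\bj$ using non-negativity of $\zC_1$, identify the degree-zero quotient with $\zL_{j_1}\boxtimes\cdots\boxtimes\zL_{j_d}$ via (\ref{ELC+}), apply $\zC_d\ggis_{\om_d}\otimes_{\zC_{\om_d}}-$, and recognize the kernel as $\zN_\bj$ through the multiplication isomorphism. The only small cosmetic difference is that you invoke right-exactness of the induction functor, whereas the paper freely uses its full exactness (already established); for identifying the image of the submodule under induction both suffice.
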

\begin{proof}
Since the algebra $\zC_1$ is non-negatively graded, we have for $\zC_{\om_d}=\zC_1^{\otimes d}$:
$$
\zC_{\om_d}^{>0}=\sum_{r=1}^d\zC_1^{\otimes (r-1)}\otimes \zC_1^{>0}\otimes \zC_1^{\otimes (d-r)}.
$$
So 
$$
(\zC_{\om_d}\ggis^{\bj})^{>0}=\sum_{r=1}^d\zC_1\ggis^{j_1}\otimes\dots\otimes \zC_1\ggis^{j_{r-1}}\otimes (\zC_1\ggis^{j_r})^{>0}\otimes \zC_1\ggis^{j_{r+1}}\otimes\dots\otimes \zC_1\ggis^{j_{d}},
$$
and
$$
\zC_{\om_d}\ggis^{\bj}/(\zC_{\om_d}\ggis^{\bj})^{>0}\simeq 
(\zC_1\ggis^{j_1}/(\zC_1\ggis^{j_1})^{>0})\boxtimes\dots\boxtimes (\zC_1\ggis^{j_d}/(\zC_1\ggis^{j_d})^{>0}).
$$
Now, using (\ref{ELC+}) and exactness of Gelfand-Graev induction, we obtain 
\begin{align*}
\GGIS_{\om_d}^d (\zL_{j_1}\boxtimes\dots\boxtimes \zL_{j_d})
&\simeq
\GGIS_{\om_d}^d \big((\zC_1\ggis^{j_1}/(\zC_1\ggis^{j_1})^{>0})\boxtimes\dots\boxtimes (\zC_1\ggis^{j_d}/(\zC_1\ggis^{j_d})^{>0})\big)
\\&\simeq
\GGIS_{\om_d}^d \big(\zC_{\om_d}\ggis^{\bj}/(\zC_{\om_d}\ggis^{\bj})^{>0}\big)
\\&\simeq
\big(\GGIS_{\om_d}^d \zC_{\om_d}\ggis^{\bj}\big)/\big(\GGIS_{\om_d}^d(\zC_{\om_d}\ggis^{\bj})^{>0}\big)
\\&\simeq
\big(\zC_d\ggis_{\om_d}\otimes_{\zC_{\om_d}} \zC_{\om_d}\ggis^{\bj}\big)/\big(\zC_d\ggis_{\om_d}\otimes_{\zC_{\om_d}}(\zC_{\om_d}\ggis^{\bj})^{>0}\big)
\\&\simeq
(\zC_d\ggis^{\bj})/(\zC_d (\zC_{\om_d}\ggis^{\bj})^{>0})
\\&=\zC_d\ggis^\bj/\zN_\bj,
\end{align*}
which implies the claim. 
\end{proof}

\subsection{  Regraded imaginary tensor space $\zM_{d,j}$}
\label{SSRegrMJ}
Let $j\in J$. As a special case of the construction of \S\ref{SSMBJ}, 
we consider the graded $\zC_d$-supermodule 
$$
\zM_{d,j}:=\zM_{j^d}=\GGIS_{\om_d}^d\,\zL_j^{\boxtimes d}
=\zC_d\ggis_{\om_d}\otimes_{\zC_{\om_d}}\zL_j^{\boxtimes d}.
$$
with generator $\zv_{d,j}:=\zv_{j^d}=\ggis_{\om_d}\otimes \zv_j^{\otimes d}$.

By Lemma~\ref{LTensSpaceAsQuotient}, 
\begin{equation}\label{EMN}
\zM_{d,j}\simeq \zC_d\ggis^{j^d}/\zN_{j^d}.
\end{equation}
We identify $\zM_{d,j}= \zC_d\ggis^{j^d}/\zN_{j^d}$ using this isomorphism.

There is a natural right action of the graded superalgebra $\ggis^{j^d}\zC_d\ggis^{j^d}$ on  $\zC_d\ggis^{j^d}$ via right multiplication. Identifying $\ggis^{j^d}\zC_d\ggis^{j^d}$ with $\ze^{j^d}H_d(\Zig_\ell)\ze^{j^d}$ via the isomorphism of Theorem~\ref{THCIsoZ}, we get the right action of $\ze^{j^d}H_d(\Zig_\ell)\ze^{j^d}$ on $\zC_d\ggis^{j^d}$. 
Recalling Theorem~\ref{TAffBasis}, we have an embedding 
$$\k\Si_d\into \ze^{j^d}H_d(\Zig_\ell)\ze^{j^d},\ w\mapsto \ze^{j^d}w\ze^{j^d}=w\ze^{j^d}.$$
This gives the right action of $\k\Si_d$ on $\zC_d\ggis^{j^d}$. Taking into account the formula for the image of $s_r$ under the isomorphism of Theorem~\ref{THCIsoZ}, we have for this action:
$$
(\zc\ggis^{j^d})\cdot s_r=(-1)^{j+1}\zc\ggis^{j^d}\dot \zs_r=(-1)^{j+1}\zc\dot \zs_r\ggis^{j^d}\qquad(\zc\in\zC_d,\ 1\leq r<d).
$$
By Lemma~\ref{LPositiveTau}, recalling (\ref{EMN}), this action factors through to the right action of $\k\Si_d$ on the module $\zM_{d,j}= \zC_d\ggis^{j^d}/\zN_{j^d}$ such that 
\begin{equation}\label{EActions_rRegrOneColor}
(\zc\zv_{d,j})\cdot s_r=(-1)^{j+1}\zc\dot \zs_r\zv_{d,j}
\qquad(\zc\in\zC_d,\ 1\leq r<d),
\end{equation}
hence
\begin{equation}\label{EActionWRegrOneColor}
(\zc\zv_{d,j})\cdot w=(-1)^{\ttl(w)(j+1)}\zc\dot \zw\zv_{d,j}
\qquad(\zc\in\zC_d,\ w\in\Si_d),
\end{equation}
making $\zM_{d,j}$ into a graded $(\zC_d,\k\Si_d)$-bisupermodule.

Recalling the graded $C_d$-supermodule $\M_{d,j}$ from (\ref{EM}) and using 
 $\zL_j\simeq \funh_1 \LL_j$, we deduce using Lemma~\ref{LGGISEq} that 
 \begin{equation}\label{EzMM}
 \zM_{d,j}\simeq \funh_d \M_{d,j}.
 \end{equation} 
We identify
$\zM_{d,j}$ with $\funh_d \M_{d,j}$ via the isomorphism of (\ref{EzMM}). 
Recalling the terminology of \S\ref{SSFunctorsRegr}, 
we can now speak of the element of $\zM_{d,j}$ corresponding to an element of $\M_{d,j}$ under the functor $\funh_d$. 
Then the functor $\funh_d$ intertwines the $\Si_d$-action on $\zM_{d,j}$ just defined and the $\Si_d$-action on $\M_{d,j}$ from \S\ref{SSSymAct} in the following sense:

\begin{Lemma} \label{LActionCorresponds}
Let $m\in M_{d,j}$, $w\in\Si_d$ and $\zm$ be the elements of $\zM_{d,j}$ corresponding to $m$ under the functor $\funh_d$. Then $\zm\cdot w$ corresponds to  $m\cdot w$ under the functor $\funh_d$.
\end{Lemma}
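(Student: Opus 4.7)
The plan is to verify the compatibility of the two right $\Si_d$-actions by reducing to the action on the canonical generator and then unravelling the definitions. The key observation is that both actions commute with the respective left module actions: the action on $\M_{d,j}$ is given by the $C_d$-linear endomorphisms $\phi_w$ of Theorem~\ref{TEndMd}, while the action on $\zM_{d,j}$ is specified on all of $\zM_{d,j}$ by formula (\ref{EActionWRegrOneColor}), which prescribes how $w$ acts on an arbitrary vector of the form $\zc\zv_{d,j}$. Since $M_{d,j}$ is cyclic with generator $v_{d,j}$, any $m\in \M_{d,j}$ can be written as $m=cv_{d,j}$ for some $c\in C_d$, and under the identification $\zM_{d,j}=\funh_d\M_{d,j}$ the corresponding vector is $\zm=\zc\zv_{d,j}$ by (\ref{ECVCorresponds}). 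So the lemma reduces to checking that the element of $\zM_{d,j}$ corresponding to $(cv_{d,j})\cdot w$ equals $(\zc\zv_{d,j})\cdot w$.

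To carry out this check, I would compute, using (\ref{EPhiW}) and $C_d$-linearity of $\phi_w$:
$$
(cv_{d,j})\cdot w \;=\; \phi_w(cv_{d,j})\;=\; c\,\phi_w(v_{d,j})\;=\;(-1)^{\ttl(w)(j+1)}\,(c\dot w)\, v_{d,j}.
$$
Applying (\ref{ECVCorresponds}) with the element $c\dot w\in C_d$ acting on $v_{d,j}$, the corresponding vector in $\zM_{d,j}$ is $(-1)^{\ttl(w)(j+1)}\,\widetilde{(c\dot w)}\,\zv_{d,j}$. Since the regrading (\ref{EReGradingC}) leaves the algebra underlying $C_d$ and the element $\dot w$ untouched, preserving products, we have $\widetilde{(c\dot w)}=\zc\,\dot{\zw}$, and the expression becomes $(-1)^{\ttl(w)(j+1)}\,\zc\,\dot{\zw}\,\zv_{d,j}$, which is precisely $(\zc\zv_{d,j})\cdot w$ by (\ref{EActionWRegrOneColor}).

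There is no substantive obstacle here; the content of the lemma is that the signs $(-1)^{\ttl(w)(j+1)}$ and the elements $\dot w\leftrightarrow\dot{\zw}$ appearing in the two action formulas are matched by design, so the verification is pure bookkeeping against (\ref{ECVCorresponds}). The only point that might warrant a sentence of care is the multiplicativity $\widetilde{(c\dot w)}=\zc\dot{\zw}$ in the regraded algebra $\zC_d$, which follows immediately from the definition in \S\ref{SSRegradingCd}: the regrading alters degree and parity shifts attached to the idempotent components $\ggi^{\mu,\bj}C_d\ggi^{\la,\bi}$ but is the identity on the underlying product structure.
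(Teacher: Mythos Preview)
Your argument is correct and is exactly the approach of the paper, which simply cites (\ref{EActionWRegrOneColor}), (\ref{EActions_r}) and (\ref{ECVCorresponds}); you have merely unpacked these references in detail. The only minor remark is that your use of (\ref{EPhiW}) plus $C_d$-linearity of $\phi_w$ is equivalent to invoking (\ref{EActions_r}) directly.
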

\begin{proof}
This follows from 
(\ref{EActionWRegrOneColor}), (\ref{EActions_r}) and 
(\ref{ECVCorresponds}). 
\end{proof}

\begin{Proposition}
The right action of $\k\Si_d$ on $\zM_{d,j}$ is faithful and induces the graded superalgebra isomorphism $\k\Si_d\cong \End_{\zC_d}(\zM_{d,j})^\sop$. 
\end{Proposition}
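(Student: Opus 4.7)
The plan is to reduce this statement to Theorem~\ref{TEndMd}, which asserts the analogous statement $\k\Si_d\cong \End_{C_d}(\M_{d,j})^\sop$ for the unregraded module, by transporting everything through the Morita equivalence between $C_d$ and $\zC_d$ given by the regrading functors from \S\ref{SSFunctorsRegr}.

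First, I would recall that the functors $\funh_d:\mod{C_d}\to\mod{\zC_d}$ and $\fung_d:\mod{\zC_d}\to\mod{C_d}$ defined in \S\ref{SSFunctorsRegr} are mutually quasi-inverse equivalences of graded supercategories. Since $\zM_{d,j}\simeq\funh_d\M_{d,j}$ by (\ref{EzMM}), any category equivalence induces a graded superalgebra isomorphism on endomorphism algebras, so we obtain a graded superalgebra isomorphism
\[
\Phi:\End_{C_d}(\M_{d,j})\iso \End_{\zC_d}(\zM_{d,j}),
\]
and hence also $\Phi^{\mathrm{sop}}:\End_{C_d}(\M_{d,j})^\sop\iso\End_{\zC_d}(\zM_{d,j})^\sop$. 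Concretely, for any $\theta\in\End_{C_d}(\M_{d,j})$, the map $\Phi(\theta)$ sends an element $\zm\in\zM_{d,j}$ corresponding to $m\in \M_{d,j}$ to the element of $\zM_{d,j}$ corresponding to $\theta(m)$.

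Next, I would combine this with the isomorphism $\k\Si_d\iso\End_{C_d}(\M_{d,j})^\sop,\ w\mapsto\phi_w$ provided by Theorem~\ref{TEndMd}, where $\phi_w$ is characterized by $\phi_w(v_{d,j})=(-1)^{\ttl(w)(j+1)}\dot w v_{d,j}$. Composing with $\Phi^{\mathrm{sop}}$ yields a graded superalgebra isomorphism $\k\Si_d\iso\End_{\zC_d}(\zM_{d,j})^\sop$. The key compatibility to check is that this composite isomorphism coincides with the right action of $\k\Si_d$ on $\zM_{d,j}$ defined by (\ref{EActionWRegrOneColor}); this is precisely the content of Lemma~\ref{LActionCorresponds}, which asserts that under the identification $\zM_{d,j}=\funh_d\M_{d,j}$ the element $\zm\cdot w$ corresponds to $m\cdot w$ for $m\in\M_{d,j}$ and $w\in\Si_d$. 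In particular, $\Phi^{\mathrm{sop}}(\phi_w)$ is exactly the endomorphism $\zm\mapsto \zm\cdot w$.

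Finally, faithfulness of the right $\k\Si_d$-action on $\zM_{d,j}$ is automatic from the fact that the composite map $\k\Si_d\to \End_{\zC_d}(\zM_{d,j})^\sop$ is an isomorphism, hence in particular injective. There is essentially no obstacle here beyond carefully verifying the compatibility of conventions—the signs $(-1)^{\ttl(w)(j+1)}$ match between Theorem~\ref{TEndMd} and (\ref{EActionWRegrOneColor}) by construction, and Lemma~\ref{LActionCorresponds} takes care of the transfer of the action under $\funh_d$—so the proof is essentially a one-paragraph deduction from existing results.
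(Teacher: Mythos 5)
Your proof is correct and follows exactly the same route as the paper, which simply cites Lemma~\ref{LActionCorresponds} and Theorem~\ref{TEndMd}; you have merely spelled out the standard transport-of-endomorphism-algebras argument through the equivalence $\funh_d$ that the paper leaves implicit. The compatibility check via Lemma~\ref{LActionCorresponds} is the right and only nontrivial point, and you handle it correctly.
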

\begin{proof}
This follows from Lemma~\ref{LActionCorresponds} and Theorem~\ref{TEndMd}. 
\end{proof}

\begin{Proposition}\label{PzMPerm}
We have: 

\begin{enumerate}
\item[{\rm (i)}] 
For each $\la\in \Comp(d)$, 
there is a bidegree $(0,\0)$ isomorphism of right $\k\Si_d$-modules
$$
\ggis^{\la,j} \zM_{d,j}\iso \k_{\Si_\la}\otimes_{\k\Si_\la}\k\Si_d
$$
which maps $\lgathz_{\la,j} \zv_{d,j}$ to  $1\otimes 1_{\Si_d}$. 

\item[{\rm (ii)}] $\zM_{d,j}=\bigoplus_{\la\in\EC(d)}\ggis^{\la,j} \zM_{d,j}$. 
\end{enumerate}
\end{Proposition}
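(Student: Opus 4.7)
The plan is to deduce both parts from their non-regraded counterparts via the equivalence $\funh_d\colon\mod{C_d}\to\mod{\zC_d}$, using the identification $\zM_{d,j}\simeq \funh_d\M_{d,j}$ recorded in (\ref{EzMM}). Under this identification, for each $(\la,\bi)\in\EC^\col(d)$, the summand $\ggis^{\la,\bi}\zM_{d,j}$ coincides with $\ggi^{\la,\bi}\M_{d,j}$ as a $\k$-module, but regraded by $\funQ^{-t_{\la,\bi}}\Uppi^{-\eps_{\la,\bi}}$; moreover, by Lemma~\ref{LActionCorresponds}, this identification intertwines the right $\k\Si_d$-actions on the two tensor spaces.

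Part (ii) is then essentially formal. Applying the exact equivalence $\funh_d$ to the direct sum decomposition $\M_{d,j}=\bigoplus_{\la\in\EC(d)}\ggi^{\la,j}\M_{d,j}$ of Lemma~\ref{LMWtSpaces}, and noting that the orthogonal idempotent $\ggi^{\la,j}\in C_d$ corresponds to $\ggis^{\la,j}\in\zC_d$, yields the desired decomposition $\zM_{d,j}=\bigoplus_{\la\in\EC(d)}\ggis^{\la,j}\zM_{d,j}$.

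For part (i), the starting point is Lemma~\ref{LBasisIndGGNew}, which furnishes, for each $\la\in\Comp(d)$, a bidegree $(0,\0)$ isomorphism of right $\k\Si_d$-modules
$$\funQ^{-a_{\la,j}}\Uppi^{dj}\ggi^{\la,j}\M_{d,j}\iso \k_{\Si_\la}\otimes_{\k\Si_\la}\k\Si_d$$
sending $\lgath_{\la,j}v_{d,j}$ to $1\otimes 1_{\Si_d}$. Passing through the regrading $\funh_d$ and applying Lemma~\ref{LActionCorresponds} to preserve the $\k\Si_d$-equivariance, we obtain a bidegree $(a_{\la,j}-t_{\la,j},\,dj-\eps_{\la,j})$ isomorphism of right $\k\Si_d$-modules
$$\ggis^{\la,j}\zM_{d,j}\iso \funQ^{a_{\la,j}-t_{\la,j}}\Uppi^{dj-\eps_{\la,j}}\big(\k_{\Si_\la}\otimes_{\k\Si_\la}\k\Si_d\big)$$
sending $\lgathz_{\la,j}\zv_{d,j}$ to $1\otimes 1_{\Si_d}$ (where we use that $\lgathz_{\la,j}\zv_{d,j}$ is precisely the element of $\zM_{d,j}$ corresponding to $\lgath_{\la,j}v_{d,j}\in\M_{d,j}$).

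The remaining task is a direct check that the shifts are trivial, which is where the definitions conspire nicely. Writing $\la=(\la_1,\dots,\la_n)$, from (\ref{EAEquals}) we have $a_{\la,j}=d(1+2\ell)+(2j-4\ell)\sum_r\la_r^2$, while from (\ref{EShiftsCuspidal}) we have $t_{\la,j}=dp+\sum_r\la_r^2(2j-4\ell)=d(2\ell+1)+(2j-4\ell)\sum_r\la_r^2$, so $a_{\la,j}-t_{\la,j}=0$. Similarly, $\eps_{\la,j}=\sum_r\la_rj\equiv dj\pmod{2}$, so $dj-\eps_{\la,j}\equiv 0\pmod{2}$. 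Thus both the degree shift and the parity shift vanish, producing the claimed bidegree $(0,\0)$ isomorphism. There is no substantive obstacle here: the content of the proposition is exactly that the regrading parameters $(t_{\la,j},\eps_{\la,j})$ were engineered in \S\ref{SSRegradingCd} so as to align precisely with the intrinsic bidegrees of the weight vectors $\lgath_{\la,j}v_{d,j}$ computed in (\ref{EA}), turning the permutation-module isomorphism of Lemma~\ref{LBasisIndGGNew} into one of bidegree $(0,\0)$ at the regraded level.
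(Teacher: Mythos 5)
Your argument is correct and matches the paper's (very terse) proof, which simply cites Lemmas~\ref{LActionCorresponds}, \ref{LBasisIndGGNew}, (\ref{EzMM}), and \ref{LMWtSpaces}. You have expanded the shift-cancellation check $a_{\la,j}=t_{\la,j}$ and $dj\equiv\eps_{\la,j}\pmod 2$, which the paper leaves implicit (it is also already captured by Lemma~\ref{LDegreeG}, which gives $\bideg(\lgathz_{\la,\bi})=(0,\0)$); the route is the same.
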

\begin{proof}
(i) follows from Lemmas~\ref{LActionCorresponds} and \ref{LBasisIndGGNew}. 

(ii) follows from (\ref{EzMM}) and  Lemma~\ref{LMWtSpaces}.
\end{proof}

Recalling the general set-up of Example~\ref{ExProgenerator}, we consider the graded superalgebra 
\begin{equation}\label{E030924_5}
\zC_j(n,d):=\End_{\zC_{d}}\Big(\bigoplus_{\la\in\Comp(n,d)}\zC_{d}\ggis^{\la,j}\Big)^\sop
\simeq\bigoplus_{\la,\mu\in\Comp(n,d)} \ggis^{\mu,j}\zC_{d}\ggis^{\la,j}.
\end{equation}
We identify $\zC_j(n,d)$ with $\bigoplus_{\la,\mu\in\Comp(n,d)} \ggis^{\mu,j}\zC_{d}\ggis^{\la,j}$ via the isomorphism above. 
Then the action of $\zC_d$ on $\zM_{d,j}$ yields the action of the graded superalgebra $\zC_j(n,d)=\bigoplus_{\la,\mu\in\Comp(n,d)} \ggis^{\mu,j}\zC_{d}\ggis^{\la,j}$ on
$$
\zM_j(n,d):=
\bigoplus_{\la\in\Comp(n,d)} \ggis^{\la,j}\zM_{d,j}.
$$
In view of Proposition~\ref{PzMPerm}, we have a graded $(\zC_j(n,d),\k\Si_d)$-bisupermodule structure on $\zM_j(n,d)$, and as, a graded right $\k\Si_d$-supermodule, 
$$
\zM_j(n,d)\simeq\bigoplus_{\la\in\Comp(n,d)}\k_{\Si_\la}\otimes_{\k\Si_\la}\k\Si_d.
$$
So 
$$
\End_{\k\Si_d}(\zM_j(n,d))\cong S(n,d),
$$
the classical Schur algebra, see (\ref{E050924}). 
The natural graded superalgebra homomorphism $\phi:\zC_j(n,d)\to \End_{\k\Si_d}(\zM_j(n,d))$ restricts to the graded superalgebra homomorphism
$$
\bar\phi:\zC_j(n,d)^0\to \End_{\k\Si_d}(\zM_j(n,d)).
$$

\begin{Theorem} \label{T050924_4}
Let $j\in J$. Then $\bar\phi$ is an isomorphism. In particular, $\zC_j(n,d)^0\cong S(n,d)$. 
\end{Theorem}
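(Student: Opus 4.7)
The plan is to combine a dimension upper bound from Lemma~\ref{LDoubleCosets} with a surjectivity argument that factors $\bar\phi$ through the regraded imaginary Schur superalgebra.

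First I would unpack the map blockwise. Since the right $\k\Si_d$-action on $\zM_j(n,d)$ comes from that on $\zM_{d,j}$ (Lemma~\ref{LActionCorresponds}) and commutes with the left $\zC_d$-action, $\bar\phi$ decomposes as a direct sum over $\la,\mu\in\Comp(n,d)$ of maps
$$\bar\phi_{\mu,\la}\colon (\ggis^{\mu,j}\zC_d\ggis^{\la,j})^0\to \Hom_{\k\Si_d}(\ggis^{\la,j}\zM_{d,j},\ggis^{\mu,j}\zM_{d,j}).$$
By Proposition~\ref{PzMPerm}, the target is identified in bidegree $(0,\0)$ with $\Hom_{\k\Si_d}(M^\la,M^\mu)$, which has rank $|{}^\mu\D_d^\la|$ by Lemma~\ref{LHomMM}; in particular it is concentrated in degree $0$. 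By Lemma~\ref{LDoubleCosets}, the source is spanned by at most $|{}^\mu\D_d^\la|$ elements. Summing over $(\la,\mu)$ and using Lemma~\ref{LSchurBasis} yields
$$\dim \zC_j(n,d)^0 \leq \sum_{\la,\mu\in\Comp(n,d)}|{}^\mu\D_d^\la| = \dim S(n,d).$$

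Next I would prove surjectivity of $\bar\phi$ by factoring through the regraded imaginary Schur algebra $\zIS_{d,j}:=\zC_d/\Ann_{\zC_d}(\zM_{d,j})$, which is the regrading of $\IS_{d,j}$ with the same shift parameters $(t_{\la,j},\eps_{\la,j})$. The $\zC_d$-action on $\zM_{d,j}$ factors through $\zIS_{d,j}$, so $\bar\phi$ factors as
$$\zC_j(n,d)^0 \twoheadrightarrow \bigoplus_{\la,\mu\in\Comp(n,d)}(\ggis^{\mu,j}\zIS_{d,j}\ggis^{\la,j})^0\xrightarrow{\bar\phi'}S(n,d).$$
By the regraded version of Theorem~\ref{4.5e}(ii) (which passes from $\k=\F$ to $\k=\O$ by base change using Lemma~\ref{LMO} and Theorem~\ref{T6.4.4}(i)), we have $\zIS_{d,j}=\End_{\k\Si_d}(\zM_{d,j})$ as graded superalgebras. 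Idempotent truncation on both sides and restriction to degree zero therefore gives
$$(\ggis^{\mu,j}\zIS_{d,j}\ggis^{\la,j})^0\cong \Hom_{\k\Si_d}(\ggis^{\la,j}\zM_{d,j},\ggis^{\mu,j}\zM_{d,j})^0 \cong \Hom_{\k\Si_d}(M^\la,M^\mu),$$
so $\bar\phi'$ is an isomorphism. Consequently $\bar\phi$ is surjective, and combined with the upper bound on $\dim \zC_j(n,d)^0$ it must be an isomorphism.

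The main obstacle is the matching of graded data in the surjectivity step: one needs the identifications in Proposition~\ref{PzMPerm} to put the Hom-spaces cleanly in bidegree $(0,\0)$, which rests on the numerical coincidence $t_{\la,j}=a_{\la,j}$ between the regrading shift from \eqref{EShiftsCuspidal} and the natural bidegree of $\lgath_{\la,j}v_{d,j}$ recorded in \eqref{EAEquals}; this is exactly what was built in when the shifts were chosen, and is what makes $\zC_d$ non-negatively graded with $S(n,d)$ appearing in the degree zero component rather than some nonzero degree. A secondary point is to confirm the isomorphism $\zIS_{d,j}=\End_{\k\Si_d}(\zM_{d,j})$ over $\k=\O$; since $\IS_{d,j,\O}$ is $\O$-free and embeds tautologically in $\End_{\O\Si_d}(M_{d,j,\O})$, this reduces via base change to the field case handled by Theorem~\ref{4.5e}(ii).
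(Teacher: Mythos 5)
Your proof is correct and rests on the same two pillars as the paper's: the dimension bound $\dim \zC_j(n,d)^0 \leq \dim S(n,d)$ from Lemmas~\ref{LSchurBasis} and \ref{LDoubleCosets}, and surjectivity coming from the double centralizer property of Theorem~\ref{4.5e}(ii) combined with the fact that $S(n,d)$ is concentrated in degree zero. The only notable difference is the handling of $\k=\O$: you push the base change into an $\O$-version of the double centralizer isomorphism $\zIS_{d,j}\cong\End_{\k\Si_d}(\zM_{d,j})$ (which the paper never states over $\O$ and which requires a small additional argument identifying the $\O$-lattices), whereas the paper simply deduces the $\O$-result from the $\O/\m$-cases at the very end, using only that $S(n,d)_\O$ is torsion-free; the latter is slightly cleaner, but your route works with the lattice argument filled in.
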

\begin{proof}
If $\k=\F$, then by Theorem~\ref{4.5e}(ii), the natural homomorphism $\zC_d\to\End_{\k\Si_d}(\zM_{d,j})$ is surjective, which implies that $\phi$ is surjective. But $S(n,d)$ is concentrated in degree $0$, so we deduce that $\bar\phi$ must be surjective. On the other hand, by Lemmas~\ref{LSchurBasis} and \ref{LDoubleCosets}, we have $\dim\zC_j(n,d)^0\leq \dim S(n,d)$, so $\phi$ is an isomorphism by dimensions. 

The result over $\k=\O$ follows from that over the quotient fields $\O/\m$ upon extension of scalars, since $S(n,d)$ is torsion-free as an $\O$-module. 
\end{proof}

\subsection{The algebra $\zC(n,d)$}
Recall from \S\ref{SSWtSpaces} and (\ref{EBeta}) how we consider the multicompositions from $\Comp^J(n,d)$ as colored compositions from $\Comp^\col(n\ell,d)$ via the embedding $\ttb:\Comp^J(n,d)\to \Comp^\col(n\ell,d)$. For $\bla\in \Comp^J(n,d)$, we have the idempotents  
$\ggis^\bla\in\zC_d$ and the composition $\ud(\bla)\in\Comp(J,d)$, see (\ref{EBlaNot}).  

We consider the 
graded superalgebra
$$
\zC(n,d):=\End_{\zC_{d}}\Big(\bigoplus_{\bla\in\Comp^J(n,d)}\zC_{d}\ggis^\bla\Big)^\sop
\simeq\bigoplus_{\bla,\bmu\in\Comp^J(n,d)} \ggis^\bmu\zC_{d}\ggis^\bla,
$$
where we have used the isomorphism (\ref{EEndAei}).

If $n\geq d$, 
this 
is just a Morita equivalent version of $\zC_{d}$ (which is sometimes a little bit more convenient to work with):

\begin{Lemma} \label{LEXMor} 
If $n\geq d$ then the graded superalgebras $\zC(n,d)$ and $\zC_{d}$ are graded Morita superequivalent.  
\end{Lemma}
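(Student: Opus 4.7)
The plan is to realize $\zC(n,d)$ as $\End_{\zC_d}(P)^\sop$ where $P:=\bigoplus_{\bla\in\Comp^J(n,d)}\zC_d\ggis^\bla$ (see Example~\ref{ExProgenerator}), and then invoke the criterion there: to get a graded Morita superequivalence between $\zC_d$ and $\zC(n,d)$ it suffices to prove that $P$ is a projective generator for $\zC_d$, equivalently that $\sum_{\bla\in\Comp^J(n,d)}\zC_d\ggis^\bla\zC_d=\zC_d$. First, using Lemma~\ref{LMorExtScal} (the obvious variant for sums of idempotents), this reduces to the case $\k=\F$, where by Lemma~\ref{lem:idmpt_Mor} it is enough to verify that for every irreducible graded $\zC_d$-supermodule $\zL$ there exists $\bla\in\Comp^J(n,d)$ with $\ggis^\bla\zL\neq 0$.

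Next I would transfer this question to $C_d$ via the regrading equivalence $\funh_d\colon\mod{C_d}\iso\mod{\zC_d}$ of \S\ref{SSFunctorsRegr}: since $\ggis^\bla\zL$ agrees with $\ggi^\bla L$ as a $\k$-module (up to a grading and parity shift) when $\zL=\funh_d L$, the problem becomes: for every $L\in\Irr(C_d)$ find $\bla\in\Comp^J(n,d)$ with $\ggi^\bla L\neq 0$. By Theorem~\ref{LAmountAmount}(i), the irreducibles of $C_d$ are exactly $\{L(\bmu)\mid \bmu\in\Par^J(d)\}$, so it suffices to exhibit, for each such $\bmu=(\mu^{(0)},\dots,\mu^{(\ell-1)})$ with $d_j:=|\mu^{(j)}|$, a suitable $\bla$. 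Because $n\geq d\geq d_j$ for every $j\in J$, the partition $\mu^{(j)}$ already lies in $\Comp(n,d_j)$, so $\bmu$ itself belongs to $\Comp^J(n,d)$; the natural choice will be $\bla=\bmu$.

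For the computation of $\ggi^\bmu L(\bmu)$, recall
\[
L(\bmu)=\GGI_{d_0,\dots,d_{\ell-1}}^d\big(L_0(\mu^{(0)})\boxtimes\cdots\boxtimes L_{\ell-1}(\mu^{(\ell-1)})\big)=C_d\ggi_{d_0,\dots,d_{\ell-1}}\otimes_{C_{d_0,\dots,d_{\ell-1}}}N,
\]
where $N:=L_0(\mu^{(0)})\boxtimes\cdots\boxtimes L_{\ell-1}(\mu^{(\ell-1)})$. Under the identification (\ref{EBeta}) the idempotent $\ggi^\bmu$ equals $\ggi^{\mu^{(0)},0}\otimes\cdots\otimes \ggi^{\mu^{(\ell-1)},\ell-1}$, which is one of the pairwise orthogonal summands of $\ggi_{d_0,\dots,d_{\ell-1}}$; hence $\ggi^\bmu\,\ggi_{d_0,\dots,d_{\ell-1}}=\ggi^\bmu$. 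Therefore, for any $v\in\ggi^\bmu N$ we have
\[
\ggi^\bmu\cdot(\ggi_{d_0,\dots,d_{\ell-1}}\otimes v)=\ggi^\bmu\otimes v,
\]
and this element is nonzero because the natural map $N\to\GGI_{d_0,\dots,d_{\ell-1}}^d N$, $v\mapsto \ggi_{d_0,\dots,d_{\ell-1}}\otimes v$, is injective (as $\ggi_{d_0,\dots,d_{\ell-1}}$ is one of the generators of the induced module under Lemma~\ref{LGGIndProjId}). So the required nonvanishing reduces to $\ggi^\bmu N\neq 0$, i.e.\ $\ggi^{\mu^{(j)},j}L_j(\mu^{(j)})\neq 0$ for every $j$, which is exactly the content of (\ref{EFChL}) (the leading term in the Gelfand--Graev character of $L_j(\mu^{(j)})$ is $(\mu^{(j)},j)$ with coefficient $1$).

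The main obstacle is bookkeeping: one has to match the coloring convention used in the definition of $\ttb$ and $\ggis^\bla$ with the parabolic factorization of Gelfand--Graev idempotents coming from (\ref{EHatParabolic}) and (\ref{EGHatG}), and check that $\ggi^\bmu$ really is the idempotent $\ggi^{\mu^{(0)},0}\otimes\cdots\otimes\ggi^{\mu^{(\ell-1)},\ell-1}$ inside the parabolic $C_{d_0,\dots,d_{\ell-1}}$. Once this identification is in place, the orthogonality argument above closes the proof.
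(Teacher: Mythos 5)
Your argument takes essentially the same route as the paper's proof: reduce to $\k=\F$, classify irreducibles via Theorem~\ref{LAmountAmount}(i), and then verify $\ggis^\bla\zL(\bla)\neq 0$ for each $\bla\in\Par^J(d)\subseteq\Comp^J(n,d)$. The paper disposes of the last step tersely by citing Corollary~\ref{CCFMjAmountNew} and (\ref{ELBla}); you instead unwind the tensor-product definition of $L(\bmu)$ to produce an explicit nonzero vector in $\ggi^\bmu L(\bmu)$. One small point worth cleaning up: the citation to Lemma~\ref{LGGIndProjId} does not by itself give injectivity of the unit $N\to\GGI^d_{d_0,\dots,d_{\ell-1}}N$. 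A cleaner justification is available: since $N$ is irreducible, a nonzero $C_{d_0,\dots,d_{\ell-1}}$-homomorphism out of $N$ is automatically injective, and the unit is nonzero because it corresponds under adjunction to $\id_{\GGI^d N}\in\Hom_{C_d}(\GGI^d N,\GGI^d N)$ (with $\GGI^d N=L(\bmu)\neq 0$). Thus $N$ embeds into $\GGR^d L(\bmu)=\ggi_{d_0,\dots,d_{\ell-1}}L(\bmu)$, and since $\ggi^\bmu N\neq 0$ by (\ref{EFChL}) applied componentwise, $\ggi^\bmu L(\bmu)\supseteq \ggi^\bmu N\neq 0$. Alternatively, one can pass through $\funG_d$ and appeal to Lemmas~\ref{LfunGInd} and \ref{LIndBasis}. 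With either repair the proof is complete and matches the paper's strategy exactly.
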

\begin{proof}
We have to show that $\zP:=\bigoplus_{\bla\in\Comp^J(n,d)}\zC_{d}\ggis^\bla$ is a projective generator for $\zC_d$. If $\k=\O$, it suffices to prove this on tensoring with $\F$. So we may assume that $\k=\F$. Then it suffices to see that $\Hom_{\zC_d}(\zP,\zL)\neq 0$ for any $\zL\in\Irr(\zC_d)$. Recalling (\ref{EFunH}), by Theorem~\ref{LAmountAmount}(i), we may assume that $\zL=\zL(\bla):=\funh_d(L(\bla))$  for some $\bla\in\Par_J(d)$. Since $n\geq d$, we may assume that $\bla\in\Comp^J(n,d)$, and then 
$\Hom_{\zC_d}(\zC_d\ggis^\bla,\zL(\bla))\simeq \ggis^\bla\zL(\bla)\neq 0$ by Corollary~\ref{CCFMjAmountNew} and (\ref{ELBla}).
\end{proof}

For a composition $\ud\in\Comp(J,d)$, we also have the graded superalgebra 
$$
\zC(n,\ud):=\End_{\zC_{d}}\Big(\bigoplus_{\substack{\bla\in\Comp^J(n,d),\\ \ud(\bla)=\ud}}\zC_{d}\ggis^\bla\Big)^\sop
\simeq\bigoplus_{\substack{\bla,\bmu\in\Comp^J(n,d),\\\ud(\bla)=\ud(\bmu)=\ud}} \ggis^\bmu\zC_{d}\ggis^\bla.
$$

For degree $0$ components, from Lemma~\ref{LBiWeightSpaceTriv}, we immediately have:

\begin{Lemma} \label{L050924_3} 
We have the decomposition of $\zC(n,d)^0$ as a direct sum of graded superalgebras: 
$$\zC(n,d)^0=\bigoplus_{\ud\in\Comp(J,d)}\zC(n,\ud)^0.$$
\end{Lemma}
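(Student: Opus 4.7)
The plan is to read the degree zero component of $\zC(n,d)$ off of its defining description
$$\zC(n,d)^0 \;\simeq\; \bigoplus_{\bla,\bmu\in\Comp^J(n,d)} \ggis^\bmu\zC_{d}^0\ggis^\bla,$$
and then use Lemma~\ref{LBiWeightSpaceTriv} to throw away the cross-terms. Concretely, first I would partition the indexing set $\Comp^J(n,d)\times\Comp^J(n,d)$ according to the possible pairs $(\ud(\bla),\ud(\bmu))\in\Comp(J,d)\times\Comp(J,d)$, so that
$$\zC(n,d)^0 \;=\; \bigoplus_{\ud,\ud'\in\Comp(J,d)}\ \bigoplus_{\substack{\bla,\bmu\in\Comp^J(n,d)\\ \ud(\bla)=\ud,\ \ud(\bmu)=\ud'}} \ggis^\bmu\zC_d^0\ggis^\bla.$$
By Lemma~\ref{LBiWeightSpaceTriv}, every summand with $\ud\neq \ud'$ vanishes, so only the diagonal $\ud=\ud'$ contributes. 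By the defining description of $\zC(n,\ud)$, the $\ud$th diagonal piece is precisely $\zC(n,\ud)^0$, giving the claimed decomposition as graded superspaces.

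Next I would upgrade this to a decomposition as graded superalgebras. For each $\ud\in\Comp(J,d)$, set
$$\ggis_\ud \;:=\; \sum_{\substack{\bla\in\Comp^J(n,d)\\ \ud(\bla)=\ud}} \ggis^\bla \;\in\; \zC(n,d)^0.$$
These are orthogonal idempotents summing to $1$. To check that each $\ggis_\ud$ is central in $\zC(n,d)^0$, I take any homogeneous $a\in\ggis^\bmu\zC_d^0\ggis^\bla\subseteq \zC(n,d)^0$, and compute $\ggis_\ud a$ and $a\ggis_\ud$ via the orthogonality of the $\ggis^\bla$'s: both expressions reduce to $a$ if $\ud(\bla)=\ud(\bmu)=\ud$, and both reduce to $0$ otherwise (using Lemma~\ref{LBiWeightSpaceTriv} to see that a nonzero $a$ forces $\ud(\bla)=\ud(\bmu)$). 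Hence $\ggis_\ud a=a\ggis_\ud$ for all such $a$, so each $\ggis_\ud$ is central, and the idempotent decomposition $1=\sum_\ud \ggis_\ud$ yields a direct sum decomposition as graded superalgebras, with the summand $\ggis_\ud \zC(n,d)^0 \ggis_\ud = \zC(n,\ud)^0$.

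No significant obstacle is anticipated: everything reduces to the vanishing statement of Lemma~\ref{LBiWeightSpaceTriv} together with bookkeeping on the decomposition $1=\sum_{\bla}\ggis^\bla$. The only mild subtlety is verifying that the off-diagonal pieces in $\zC(n,d)^0\cdot \zC(n,d)^0$ really do vanish rather than merely the ``bilinear'' pieces, but this follows because a product $a b$ with $a\in\ggis^\bmu\zC_d^0\ggis^\bla$ and $b\in\ggis^{\bla'}\zC_d^0\ggis^{\bnu}$ is zero unless $\bla=\bla'$ (by orthogonality of the $\ggis^\bla$), and nonzero factors force $\ud(\bmu)=\ud(\bla)=\ud(\bla')=\ud(\bnu)$ by Lemma~\ref{LBiWeightSpaceTriv}, keeping the product inside a single $\zC(n,\ud)^0$.
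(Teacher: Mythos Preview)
Your proposal is correct and takes exactly the same approach as the paper: the paper's proof is a one-liner stating that the result follows immediately from Lemma~\ref{LBiWeightSpaceTriv}. You have simply spelled out the bookkeeping that the paper leaves implicit, in particular the verification that the decomposition is one of graded superalgebras (via centrality of the idempotents $\ggis_\ud$), which is a welcome addition.
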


For $j\in J$, recall the graded superalgebra $\zC_j(n,d)$ from (\ref{E030924_5}).

\begin{Lemma} \label{L050924_2} 
Let $\ud\in\Comp(J,d)$. Then 
\begin{align*}
\zC(n,\ud)^0=\bigoplus_{\substack{\bla,\bmu\in\Comp^J(n,d),\\\ud(\bla)=\ud(\bmu)=\ud}} \ggis^\bmu\zC_{\ud}^0\ggis^\bla
\,\cong\, \zC_0(n,d_0)^0\otimes\dots\otimes \zC_{\ell-1}(n,d_{\ell-1})^0.
\end{align*}
\end{Lemma}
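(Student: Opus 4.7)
The first equality of the lemma is immediate from Proposition~\ref{PDoubleCosetsGen}: for any $\bla,\bmu\in\Comp^J(n,d)$ with $\ud(\bla)=\ud(\bmu)=\ud$, we have $\ggis^\bmu\zC_d^0\ggis^\bla=\ggis^\bmu\zC_\ud^0\ggis^\bla$, and by Lemma~\ref{LBiWeightSpaceTriv} these weight spaces vanish in degree zero when $\ud(\bla)\ne\ud(\bmu)$. Summing over all admissible $\bla,\bmu$ thus yields $\zC(n,\ud)^0=\bigoplus_{\bla,\bmu}\ggis^\bmu\zC_\ud^0\ggis^\bla$, as required.

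For the tensor-product isomorphism, I would first invoke the identification $\zC_\ud=\zC_{d_0}\otimes\cdots\otimes\zC_{d_{\ell-1}}$ from (\ref{EzCParIdentify}); since each $\zC_{d_j}$ is non-negatively graded by Theorem~\ref{TNonNeg}, this gives $\zC_\ud^0=\zC_{d_0}^0\otimes\cdots\otimes\zC_{d_{\ell-1}}^0$. Under this identification, for $\bla\in\Comp^J(n,d)$ with $\ud(\bla)=\ud$, the idempotent $\ggis^\bla$ factors as $\ggis^{\la^{(0)},0}\otimes\cdots\otimes\ggis^{\la^{(\ell-1)},\ell-1}$, because the embedding $\ttb$ of (\ref{EBeta}) is precisely designed so that $\hat\ggw^\bla$ is a concatenation of Gelfand-Graev words $\hat\ggw^{\la^{(j)},j}$ grouped by color $j\in J$. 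Hence
$$\ggis^\bmu\zC_\ud^0\ggis^\bla\simeq\bigotimes_{j\in J}\bigl(\ggis^{\mu^{(j)},j}\zC_{d_j}^0\ggis^{\la^{(j)},j}\bigr),$$
and summing over all $\bla,\bmu$ with $\ud(\bla)=\ud(\bmu)=\ud$ is the same as independently summing over $\la^{(j)},\mu^{(j)}\in\Comp(n,d_j)$ for each $j\in J$. Using the definition of $\zC_j(n,d_j)$ from (\ref{E030924_5}), this produces
$$\zC(n,\ud)^0\simeq\bigotimes_{j\in J}\Bigl(\bigoplus_{\la^{(j)},\mu^{(j)}\in\Comp(n,d_j)}\ggis^{\mu^{(j)},j}\zC_{d_j}^0\ggis^{\la^{(j)},j}\Bigr)=\bigotimes_{j\in J}\zC_j(n,d_j)^0.$$

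The main point to verify -- and the only place one must be careful -- is that this graded superspace isomorphism is also an isomorphism of graded superalgebras. The multiplication in $\zC(n,\ud)^0$ is inherited from $\zC_d$; via the parabolic embedding into $\zC_\ud=\bigotimes_j\zC_{d_j}$, two composable elements multiply componentwise, which matches the tensor-product multiplication on the right-hand side up to signs from the super tensor product. These signs vanish because, by Proposition~\ref{PDoubleCosetsGen} combined with Lemmas~\ref{LGBasis} and \ref{LDoubleCosets}, each space $\ggis^{\mu^{(j)},j}\zC_{d_j}^0\ggis^{\la^{(j)},j}$ is spanned by bidegree $(0,\bar 0)$ elements (arising from products of $\lgathz$-type elements through the Gelfand-Graev Mackey filtration). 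Hence every factor is purely even in degree zero, the super signs trivialize, and the isomorphism is one of graded superalgebras. I expect this sign-tracking to be the main (mild) obstacle; everything else is a bookkeeping consequence of Proposition~\ref{PDoubleCosetsGen} and the factorization of the idempotents $\ggis^\bla$.
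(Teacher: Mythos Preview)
Your proposal is correct and follows the same approach as the paper, which simply writes ``This follows immediately from Proposition~\ref{PDoubleCosetsGen}.'' You have essentially unpacked that citation: the first equality is the equality $\ggis^\bmu\zC_d^0\ggis^\bla=\ggis^\bmu\zC_\ud^0\ggis^\bla$ from Proposition~\ref{PDoubleCosetsGen} summed over $\bla,\bmu$, and the tensor factorization is the isomorphism in (\ref{E030924}) summed the same way.

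Two minor comments. First, your invocation of Lemma~\ref{LBiWeightSpaceTriv} for the first equality is unnecessary: the definition of $\zC(n,\ud)$ already restricts to $\ud(\bla)=\ud(\bmu)=\ud$, so there is nothing to exclude. Second, your concern about super signs, while not wrong to raise, is handled more cheaply than you suggest: the identification $\zC_\ud=\zC_{d_0}\otimes\cdots\otimes\zC_{d_{\ell-1}}$ from (\ref{EzCParIdentify}) is already an isomorphism of graded \emph{super}algebras, and the $\otimes$ on the right-hand side of the lemma is the super tensor product used throughout the paper. So the algebra isomorphism is automatic once you identify the idempotents $\ggis^\bla$ with the tensor products $\ggis^{\la^{(0)},0}\otimes\cdots\otimes\ggis^{\la^{(\ell-1)},\ell-1}$; you do not need to argue separately that the factors are purely even.
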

\begin{proof}
This follows immediately from Proposition~\ref{PDoubleCosetsGen}.
\end{proof}

\begin{Corollary} \label{C100924} 
We have 
$$\zC(n,d)^0\cong \bigoplus_{\ud=(d_0,\dots,d_{\ell-1})\in\Comp(J,d)}S(n,d_0)\otimes\dots\otimes S(n,d_{\ell-1}).$$
\end{Corollary}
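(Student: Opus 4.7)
The plan is to assemble Corollary~\ref{C100924} directly from the three results immediately preceding it; no new input appears to be required. First I would invoke Lemma~\ref{L050924_3} to decompose the degree zero component as an algebra direct sum
\[
\zC(n,d)^0 = \bigoplus_{\ud\in\Comp(J,d)}\zC(n,\ud)^0
\]
indexed by the compositions $\ud=(d_0,\dots,d_{\ell-1})\in\Comp(J,d)$. This reduces the problem to describing each block $\zC(n,\ud)^0$ individually.

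Next, for a fixed $\ud=(d_0,\dots,d_{\ell-1})$, I would apply Lemma~\ref{L050924_2} to obtain the tensor product decomposition
\[
\zC(n,\ud)^0 \;\cong\; \zC_0(n,d_0)^0 \otimes \cdots \otimes \zC_{\ell-1}(n,d_{\ell-1})^0.
\]
Finally, Theorem~\ref{T050924_4} identifies each factor $\zC_j(n,d_j)^0$ with the classical Schur algebra $S(n,d_j)$ via the homomorphism $\bar\phi$ coming from the action on the regraded imaginary tensor space $\zM_j(n,d_j)$. Substituting these isomorphisms into the above decomposition yields
\[
\zC(n,\ud)^0 \;\cong\; S(n,d_0)\otimes \cdots \otimes S(n,d_{\ell-1}),
\]
and summing over all $\ud\in\Comp(J,d)$ gives the desired isomorphism.

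There is no real obstacle here: all the heavy lifting was done in establishing Theorem~\ref{T050924_4} (the identification with the classical Schur algebra, which rests on Proposition~\ref{PzMPerm} and the dimension bound from Lemma~\ref{LDoubleCosets}) and in Proposition~\ref{PDoubleCosetsGen} (which forces the weight-space factorization used in Lemma~\ref{L050924_2}). The only minor care needed is to confirm that the algebra isomorphisms at each stage are compatible—specifically, that the tensor product decomposition from Lemma~\ref{L050924_2} is realized by an algebra isomorphism (not just a graded $\k$-module isomorphism), which is immediate from the identification $\zC_\ud = \zC_{d_0}\otimes\cdots\otimes \zC_{d_{\ell-1}}$ of (\ref{EzCParIdentify}) together with the fact that $\ggis^\bla$ for $\bla\in\Comp^J(n,d)$ with $\ud(\bla)=\ud$ factors as a tensor product of the idempotents $\ggis^{\la^{(j)},j}$ along the colors. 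Thus the corollary follows by simply chaining the three preceding results.
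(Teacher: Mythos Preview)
Your proposal is correct and follows exactly the paper's approach: the paper's proof is the single sentence ``This follows from Lemmas~\ref{L050924_3}, \ref{L050924_2} and Theorem~\ref{T050924_4},'' and you have simply unpacked that chain with some added commentary on why the isomorphisms are algebra maps.
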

\begin{proof}
This follows from Lemmas~\ref{L050924_3}, \ref{L050924_2} and Theorem~\ref{T050924_4}.  
\end{proof}

\section{Regrading divided powers}

\subsection{Mixed imaginary tensor spaces $\zM^{\la,\bi}$}
Given a colored composition $(\la,\bi)\in\Comp^\col(n,d)$ as in \S\ref{SSPar}, we have the corresponding (mixed) imaginary tensor space 
$$
\zM^{\la,\bi}:=\GGIS_\la^d ( \zM_{\la_1,i_1}\boxtimes\dots\boxtimes \zM_{\la_n,i_n})=\zC_d\ggis_\la\otimes_{\zC_\la}( \zM_{\la_1,i_1}\boxtimes\dots\boxtimes \zM_{\la_n,i_n})
$$
with generator 
$
\zv_{\la,\bi}:=\ggis_\la\otimes \zv_{\la_1,i_1}\otimes\dots\otimes \zv_{\la_n,i_n}.
$

By transitivity of the Gelfand-Graev induction, we have an isomorphism of graded $\zC_d$-supermodules
$
\zM^{\la,\bi}\simeq \zM_{i_1^{\la_1}\cdots i_n^{\la_n}}
$
under which $\zv_{\la,\bi}$ corresponds to $\zv_{i_1^{\la_1}\cdots i_n^{\la_n}}$. So the mixed tensor spaces $
\zM^{\la,\bi}$ are no different from the mixed tensor spaces $\zM_\bj$ defined in \S\ref{SSMBJ}, but the notation $
\zM^{\la,\bi}$ is convenient for dealing with the right action of $\Si_\la$ on $\zM^{\la,\bi}$. 

By the $n=1$ case considered in \S\ref{SSRegrMJ}, for $t=1,\dots,n$, we have the right action of 
$\Si_{\la_t}$ on $\zM_{\la_t,i_t}$ which commutes with the left $\zC_{\la_t}$-action. Hence we have the action of 
$\Si_\la=\Si_{\la_1}\times\dots\times \Si_{\la_n}$ on $\zM_{\la_1,i_1}\boxtimes\dots\boxtimes \zM_{\la_n,i_n}$ which commutes with the left $\zC_\la$-action. This induces the right  action of $\Si_\la$ on $\zM^{\la,\bi}$ which commutes with the left $\zC_d$-action. 

\subsection{Mixed divided powers}
Define
$$
\Diz^{\la,\bi}:=\{\zm\in\zM^{\la,\bi}\mid \zm\cdot w=w\ \,\text{for all}\ \,w\in\Si_\la\}. 
$$

As a special case, we define the graded $\zC_d$-supermodules 
\begin{align*}
\Diz_{d,j}:=\{\zm\in \zM_{d,j}\mid \zm\cdot w=\zm \ \text{for all}\ w\in\Si_d\}.
\end{align*}
Comparing with (\ref{EGaDJ}), it follows immediately from Lemma~\ref{LActionCorresponds} that  $\Diz_{d,j}\simeq \funh_d\Di_{d,j}$.

\begin{Lemma} \label{L7.15}  
We have $\Diz_{d,j}=\zC_d\ggis^{d,j} \zM_{d,j}$. 
\end{Lemma}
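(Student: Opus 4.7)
The plan is to deduce this from the corresponding non-regraded identity together with the fact that the regrading functor $\funh_d$ of \S\ref{SSFunctorsRegr} preserves all the relevant structure.

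First I would recall that by (\ref{EzMM}) we have the identification $\zM_{d,j}\simeq \funh_d M_{d,j}$. Under this identification, $\zM_{d,j}$ coincides with $M_{d,j}$ as a $\k$-module; only the grading and parity differ. By Lemma~\ref{LActionCorresponds}, the right $\Si_d$-action on $\zM_{d,j}$ (defined in \S\ref{SSRegrMJ}) matches the right $\Si_d$-action on $M_{d,j}$ (defined in \S\ref{SSSymAct}) under this identification. Consequently the set of $\Si_d$-invariants is the same on both sides, so $\Diz_{d,j}$ corresponds exactly to the subset $\Di_{d,j}\subseteq M_{d,j}$ defined in \S\ref{SSSymDivExt}.

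Next, I would check that the right-hand side $\zC_d\ggis^{d,j}\zM_{d,j}$ corresponds to $C_d\ggi^{d,j}M_{d,j}=\Y_{d,j}$ under $\funh_d$. By (\ref{ECVCorresponds}), for $c\in C_d$ and $m\in M_{d,j}$ the element $\zc\,\zm\in\zM_{d,j}$ corresponds to $cm\in M_{d,j}$. Since $\ggis^{d,j}$ is the element of $\zC_d$ corresponding to $\ggi^{d,j}\in C_d$, the set $\zC_d\ggis^{d,j}\zM_{d,j}$ is precisely the $\funh_d$-image of $C_d\ggi^{d,j}M_{d,j}=\Y_{d,j}$.

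Finally I would invoke the results already established for the non-regraded situation: over a field $\k=\F$ this is Theorem~\ref{TProjGen}(i), which gives $\Di_{d,j}=\Y_{d,j}=C_d\ggi^{d,j}M_{d,j}$; over $\k=\O$ this is Lemma~\ref{LZO}. Applying $\funh_d$ to this equality yields $\Diz_{d,j}=\zC_d\ggis^{d,j}\zM_{d,j}$, as required. I do not foresee a genuine obstacle here: the content of the lemma is entirely contained in the non-regraded version, and the work is just the bookkeeping of transporting the statement through the equivalence $\funh_d$, with care that both the $\zC_d$-action and the $\k\Si_d$-action correspond correctly under the identification $\zM_{d,j}=\funh_d M_{d,j}$.
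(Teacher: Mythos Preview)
Your proposal is correct and follows essentially the same approach as the paper's proof: reduce to the non-regraded identity $\Di_{d,j}=C_d\ggi^{d,j}\M_{d,j}$ via $\funh_d$, then cite Theorem~\ref{TProjGen}(i) for $\k=\F$ and Lemma~\ref{LZO} for $\k=\O$. The paper's version is more terse (it takes $\Diz_{d,j}\simeq\funh_d\Di_{d,j}$ as already observed just before the lemma), while you spell out the compatibility of both sides with $\funh_d$ more carefully, but the content is the same.
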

\begin{proof}
Since $\Diz_{d,j}\simeq \funh_d\Di_{d,j}$, it suffices to prove that 
$\Di_{d,j}=C_d\ggi^{d,j} \M_{d,j}$. 
When $\k=\F$, this follows from (\ref{EYdj}) and  Theorem~\ref{TProjGen}(i). When $\k=\O$ this follows from  Lemma~\ref{LZO}.  
\end{proof}

\begin{Lemma} \label{LZMixed}
Let $(\la,\bi)\in\Comp^\col(n,d)$. Then:
\begin{enumerate}
\item[{\rm (i)}] $\Diz^{\la,\bi}\simeq \GGIS_\la^d\big(\Diz_{\la_1,i_1}\boxtimes\dots\boxtimes \Diz_{\la_n,i_n}\big)$; 
\item[{\rm (ii)}] $\Diz^{\la,\bi}=\zC_d\big(\ggis^{\la,\bi}\otimes \zM_{\la_1,i_1}\boxtimes\dots\boxtimes \zM_{\la_n,i_n}\big)$. 
\end{enumerate} 
\end{Lemma}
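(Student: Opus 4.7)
The plan is to prove (i) first and then derive (ii) from (i) together with Lemma~\ref{L7.15}. The key input is flatness: by Corollary~\ref{CFGProj}, $C_d\ggi_\la$ is a finitely generated projective right graded $C_\la$-supermodule; regrading preserves projectivity (only the grading and parity change), so $\zC_d\ggis_\la$ is a projective, hence flat, right $\zC_\la$-supermodule.

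For (i), I would begin by observing that the right $\Si_\la$-action on the induced module $\zM^{\la,\bi} = \zC_d\ggis_\la\otimes_{\zC_\la}(\zM_{\la_1,i_1}\boxtimes\dots\boxtimes\zM_{\la_n,i_n})$ lives entirely on the second tensor factor, since $\Si_\la$ acts on the inner product $\zM_{\la_1,i_1}\boxtimes\dots\boxtimes\zM_{\la_n,i_n}$ and commutes with the left $\zC_\la$-action (as the $\Si_{\la_r}$-actions on $\zM_{\la_r,i_r}$ commute with the $\zC_{\la_r}$-actions by \S\ref{SSRegrMJ}). Writing $V := \zM_{\la_1,i_1}\boxtimes\dots\boxtimes\zM_{\la_n,i_n}$, the invariants $V^{\Si_\la}$ are the kernel of the map $\bigoplus_r (s_r - 1): V\to V^{\oplus N}$ where $s_r$ ranges over the Coxeter generators of $\Si_\la$. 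Applying the exact functor $\zC_d\ggis_\la\otimes_{\zC_\la}(-)$ (exact by flatness) commutes with taking this kernel, and since the $\Si_\la$-action on the left tensor factor is trivial, this kernel is precisely $(\zC_d\ggis_\la\otimes_{\zC_\la} V)^{\Si_\la}$. Thus $(\GGIS_\la^d V)^{\Si_\la} \simeq \GGIS_\la^d(V^{\Si_\la})$, and since $V^{\Si_\la} = \Diz_{\la_1,i_1}\boxtimes\dots\boxtimes\Diz_{\la_n,i_n}$, part (i) follows.

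For (ii), I would apply Lemma~\ref{L7.15} componentwise to get $\Diz_{\la_r,i_r} = \zC_{\la_r}\ggis^{\la_r,i_r}\zM_{\la_r,i_r}$, so that under the identification $\zC_\la = \zC_{\la_1}\otimes\dots\otimes\zC_{\la_n}$ and $\ggis^{\la,\bi} = \ggis^{\la_1,i_1}\otimes\dots\otimes\ggis^{\la_n,i_n}$, the outer tensor product becomes
\[
\Diz_{\la_1,i_1}\boxtimes\dots\boxtimes\Diz_{\la_n,i_n} = \zC_\la\,\ggis^{\la,\bi}\bigl(\zM_{\la_1,i_1}\boxtimes\dots\boxtimes\zM_{\la_n,i_n}\bigr).
\]
Substituting into (i) and using $\ggis_\la\ggis^{\la,\bi} = \ggis^{\la,\bi}$ together with the tensor identity $\ggis^{\la,\bi}\otimes m = 1\otimes \ggis^{\la,\bi}m$ in $\zC_d\ggis_\la\otimes_{\zC_\la}(-)$, the induced module collapses to $\zC_d\bigl(\ggis^{\la,\bi}\otimes \zM_{\la_1,i_1}\boxtimes\dots\boxtimes\zM_{\la_n,i_n}\bigr)$, which is exactly the right-hand side of (ii).

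The main obstacle I anticipate is the justification that taking $\Si_\la$-invariants commutes with the Gelfand-Graev induction functor; this is really the content of the argument. In positive characteristic one cannot average, so one genuinely needs the flatness of $\zC_d\ggis_\la$ (combined with the fact that the $\Si_\la$-action is concentrated on the second tensor factor) to push invariants through. Once this commutation is established, the rest is bookkeeping with the weight-space decompositions of the imaginary tensor spaces.
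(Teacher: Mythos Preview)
Your proposal is correct and follows essentially the same approach as the paper: both parts rely on Corollary~\ref{CFGProj} for projectivity of $\zC_d\ggis_\la$ as a right $\zC_\la$-module, use this to commute $\Si_\la$-invariants with $\GGIS_\la^d$, and then derive (ii) from (i) via Lemma~\ref{L7.15}. The only difference is cosmetic: you argue via flatness (exact functor commutes with the kernel $\bigoplus_r(s_r-1)$), whereas the paper chooses a complement $\zW$ making $\zC_d\ggis_\la\oplus\zW$ free, checks the invariants claim on the free module where it is obvious, and then splits off the summand.
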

\begin{proof}
(i) By Corollary~\ref{CFGProj}, $\zC_d\ggis_\la$ is a finitely generated projective right graded $\zC_\la$-supermodule, so 
there is a right graded $\zC_\la$-supermodule $\zW$ such that 
$\zC_d\ggis_\la\oplus\zW$ is a free right graded $\zC_\la$-supermodule. 
It is easy to see using the freeness that then the $\Si_\la$-invariants of $(\zC_d\ggis_\la\oplus\zW)\otimes_{\zC_\la}(\zM_{\la_1,i_1}\boxtimes\dots\boxtimes \zM_{\la_n,i_n})$
are exactly 
\begin{align*}
&(\zC_d\ggis_\la\oplus\zW)\otimes_{\zC_\la}(\Diz_{\la_1,i_1}\boxtimes\dots\boxtimes \Diz_{\la_n,i_n})
\\
\simeq\, &\big(\GGIS_\la^d (\Diz_{\la_1,i_1}\boxtimes\dots\boxtimes \Diz_{\la_n,i_n})\big)
\,\oplus\, \big(\zW\otimes _{\zC_\la}(\Diz_{\la_1,i_1}\boxtimes\dots\boxtimes \Diz_{\la_n,i_n})\big).
\end{align*} 
But  
\begin{align*}
&(\zC_d\ggis_\la\oplus\zW)\otimes_{\zC_\la}(\zM_{\la_1,i_1}\boxtimes\dots\boxtimes \zM_{\la_n,i_n})
\\
\simeq\, &\big(\GGIS_\la^d (\zM_{\la_1,i_1}\boxtimes\dots\boxtimes \zM_{\la_n,i_n})\big)\,\oplus\, \big(\zW\otimes _{\zC_\la}(\zM_{\la_1,i_1}\boxtimes\dots\boxtimes \zM_{\la_n,i_n})\big),
\end{align*}
and it now follows that the $\Si_\la$-invariants of $\GGIS_\la^d(\zM_{\la_1,i_1}\boxtimes\dots\boxtimes \zM_{\la_n,i_n})$
are exactly $\GGIS_\la^d (\Diz_{\la_1,i_1}\boxtimes\dots\boxtimes \Diz_{\la_n,i_n})$. 

(ii) By (i) and Lemma~\ref{L7.15}, we have 
\begin{align*}
\Diz^{\la,\bi}&\simeq
\GGIS_\la^d\big(\Diz_{\la_1,i_1}\boxtimes\dots\boxtimes \Diz_{\la_n,i_n}\big)
\\
&=\zC_d\ggis_\la\otimes_{\zC_\la}\big((\zC_{\la_1}\ggis^{\la_1,i_1}\zM_{\la_1,i_1})\boxtimes\dots\boxtimes (\zC_{\la_n}\ggis^{\la_n,i_n}\zM_{\la_n,i_n})\big)
\\
&= 
\zC_d\big(\ggis^{\la,\bi}\otimes \zM_{\la_1,i_1}\boxtimes\dots\boxtimes \zM_{\la_n,i_n}\big),
\end{align*}
as required. 
\end{proof}

For $(\la,\bi)\in\Comp^\col(n,d)$, we have the word $i_1^{\la_1}\cdots i_n^{\la_n}\in J^d$. 
By Lemma~\ref{LTensSpaceAsQuotient}, we have an isomorphism of 
graded $\zC_d$-supermodules 
\begin{equation}\label{ETildeMMIso}
\zM^{\la,\bi}\iso \zC_d\ggis^{i_1^{\la_1}\cdots i_n^{\la_n}}/\zN_{i_1^{\la_1}\cdots i_n^{\la_n}},\ 
\zv_{\la,\bi}\mapsto \ggis^{i_1^{\la_1}\cdots i_n^{\la_n}}+\zN_{i_1^{\la_1}\cdots i_n^{\la_n}}.
\end{equation}
The graded superalgebra $\ggis^{i_1^{\la_1}\cdots i_n^{\la_n}}\zC_d\ggis^{i_1^{\la_1}\cdots i_n^{\la_n}}$ acts on $\zC_d\ggis^{i_1^{\la_1}\cdots i_n^{\la_n}}$ on the right by right multiplication. Identifying $\ggis^{i_1^{\la_1}\cdots i_n^{\la_n}}\zC_d\ggis^{i_1^{\la_1}\cdots i_n^{\la_n}}$ with $\ze^{i_1^{\la_1}\cdots i_n^{\la_n}}H_d(\Zig_\ell)\ze^{i_1^{\la_1}\cdots i_n^{\la_n}}$ via the isomorphism of Theorem~\ref{THCIsoZ}, we get the right action of $\ze^{i_1^{\la_1}\cdots i_n^{\la_n}}H_d(\Zig_\ell)\ze^{i_1^{\la_1}\cdots i_n^{\la_n}}$ on $\zC_d\ggis^{i_1^{\la_1}\cdots i_n^{\la_n}}$. 
Recalling Theorem~\ref{TAffBasis} and the defining relations of $H_d(\Zig_\ell)$, we have an embedding 
$$\k\Si_\la\into \ze^{i_1^{\la_1}\cdots i_n^{\la_n}}H_d(\Zig_\ell)\ze^{i_1^{\la_1}\cdots i_n^{\la_n}},\ w\mapsto \ze^{i_1^{\la_1}\cdots i_n^{\la_n}}w\,\ze^{i_1^{\la_1}\cdots i_n^{\la_n}}=w\,\ze^{i_1^{\la_1}\cdots i_n^{\la_n}}.$$
This gives the right action of $\k\Si_\la$ on $\zC_d\ggis^{i_1^{\la_1}\cdots i_n^{\la_n}}$. Taking into account the formula for the image of $s_r$ under the isomorphism of Theorem~\ref{THCIsoZ}, we have for this action:
$$
(\zc\ggis^{i_1^{\la_1}\cdots i_n^{\la_n}})\cdot s_r=(-1)^{i_t+1}\zc\ggis^{i_1^{\la_1}\cdots i_n^{\la_n}}\dot \zs_r=(-1)^{i_t+1}\zc\dot \zs_r\ggis^{i_1^{\la_1}\cdots i_n^{\la_n}}\qquad(\zc\in\zC_d)
$$
if $s_r$ belongs to the $t$th component $\Si_{\la_t}$ of $\Si_{\la_1}\times\dots\times\Si_{\la_n}=\Si_\la$.
By Lemma~\ref{LPositiveTau}, this action factors through to the right action of $\k\Si_\la$ on the module $\zM_{\la,\bi}\simeq \zC_d\ggis^{i_1^{\la_1}\cdots i_n^{\la_n}}/\zN_{i_1^{\la_1}\cdots i_n^{\la_n}}$ such that 
\begin{equation}\label{EActions_rRegr}
(\zc\zv_{\la,\bi})\cdot s_r=(-1)^{j_t+1}\zc\dot \zs_r\zv_{\la,\bi}
\qquad(\zc\in\zC_d)
\end{equation}
if $s_r$ belongs to the $t$th component $\Si_{\la_t}$ of $\Si_{\la_1}\times\dots\times\Si_{\la_n}=\Si_\la$.

Define the left graded $\zC_d$-supermodule
$$
\hat\Diz^{\la,\bi}:=\{\zm\in \zC_d\ggis^{i_1^{\la_1}\cdots i_n^{\la_n}}\mid \zm\cdot w=\zm\ \text{for all}\ w\in\Si_\la\}.
$$

\begin{Lemma} \label{LGathhatGa}
Let $(\la,\bi)\in\Comp^\col(n,d)$. Then $\lgathz_{\la,\bi}\in\ggis^{\la,\bi}\hat\Diz^{\la,\bi}$. 
\end{Lemma}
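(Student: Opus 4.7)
The plan is to verify the two defining conditions: first that $\lgathz_{\la,\bi}$ lives in $\ggis^{\la,\bi}\bigl(\zC_d\ggis^{i_1^{\la_1}\cdots i_n^{\la_n}}\bigr)$, and second that it is fixed by the right $\Si_\la$-action. The first part is immediate from (\ref{E290923}), which gives $\lgathz_{\la,\bi}=\ggis^{\la,\bi}\lgathz_{\la,\bi}\ggis^{i_1^{\la_1}\cdots i_n^{\la_n}}$, so the only real content is $\Si_\la$-invariance.

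To handle the invariance, I would first unpack the right action described just above the statement. For $w=w_1\cdots w_n\in \Si_\la=\Si_{\la_1}\times\cdots\times\Si_{\la_n}$ with $w_t\in\Si_{\la_t}$, iteration of formula (\ref{EActions_rRegr}) gives
\[
\lgathz_{\la,\bi}\cdot w \;=\; (-1)^{\sum_t \ttl(w_t)(i_t+1)}\,\lgathz_{\la,\bi}\,\dot{\mathsf w},
\]
where $\dot{\mathsf w}$ is the element of $\ggis_{\om_d}\zC_d\ggis_{\om_d}$ corresponding to $\dot w\in \ggi_{\om_d}C_d\ggi_{\om_d}$ under the regrading (note $\dot w$ has bidegree $(0,\0)$ by Theorem~\ref{TMorIso}, so the regrading does not introduce any new shifts). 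It therefore suffices to prove the identity
\[
\lgathz_{\la,\bi}\,\dot{\mathsf w} \;=\; (-1)^{\sum_t \ttl(w_t)(i_t+1)}\,\lgathz_{\la,\bi},
\]
after which the two signs cancel and $\lgathz_{\la,\bi}\cdot w=\lgathz_{\la,\bi}$.

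To establish this identity, I pass from $\zC_d$ back to $C_d$ via the regrading correspondence from \S\ref{SSRegradingCd}; it then reduces to the corresponding claim $\lgath_{\la,\bi}\,\dot w=(-1)^{\sum_t \ttl(w_t)(i_t+1)}\lgath_{\la,\bi}$ in $C_d$. Using the tensor-product factorizations $\lgath_{\la,\bi}=\lgath_{\la_1,i_1}\otimes\cdots\otimes \lgath_{\la_n,i_n}\in C_{\la_1}\otimes\cdots\otimes C_{\la_n}=C_\la$ and the corresponding factorization of $\dot w$ inside $C_\la$, the claim reduces on each tensor factor to the one-color statement $\lgath_{\la_t,i_t}\,\dot{w_t}=(-1)^{\ttl(w_t)(i_t+1)}\lgath_{\la_t,i_t}$, which is exactly Proposition~\ref{PUpsilon}(ii). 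The resulting signs multiply to $(-1)^{\sum_t \ttl(w_t)(i_t+1)}$ as needed.

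The main obstacle is bookkeeping: one must confirm that $\dot w\in\ggi_\la C_d\ggi_\la$ for $w\in\Si_\la$ agrees with the tensor product $\dot{w_1}\otimes\cdots\otimes\dot{w_n}$ under the parabolic identification $C_\la=C_{\la_1}\otimes\cdots\otimes C_{\la_n}$ (and likewise after regrading), and that no stray signs appear when translating the right action formula (\ref{EActions_rRegr}) to a chain of elementary transpositions. This is straightforward from the construction of $\dot s_r$ just before (\ref{ETauSigma}), which is manifestly parabolic in $r$, together with the observation that $\Si_{\la_t}$ acts only on the $t$th tensor factor; no further sign arises because the elements $\dot w$ are all even of degree $0$.
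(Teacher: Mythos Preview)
Your proof is correct and follows the same approach as the paper's: use (\ref{E290923}) for membership in $\ggis^{\la,\bi}\zC_d\ggis^{i_1^{\la_1}\cdots i_n^{\la_n}}$, then verify $\Si_\la$-invariance by combining the action formula (\ref{EActions_rRegr}) with Proposition~\ref{PUpsilon}. The paper compresses this into two sentences, while you have spelled out the tensor-factorization bookkeeping and sign cancellation in detail; your extra care in reducing the multi-color case to the single-color Proposition~\ref{PUpsilon} componentwise is exactly what underlies the paper's terse citation.
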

\begin{proof}
By (\ref{E290923}), we have $\lgathz_{\la,\bi}\in\ggis^{\la,\bi}\zC_d\ggis^{i_1^{\la_1}\cdots i_n^{\la_n}}$. Moreover, $\lgathz_{\la,\bi}\cdot w=\lgathz_{\la,\bi}$ for all $w\in\Si_\la$ by Proposition~\ref{PUpsilon}(ii) and (\ref{EActions_rRegr}). 
\end{proof}

Define the left graded $\zC_d$-supermodule
$$
\tilde\Diz^{\la,\bi}:=\{\zm\in \zC_d\ggis^{i_1^{\la_1}\cdots i_n^{\la_n}}\mid \zm\cdot w-\zm\in\zN_{i_1^{\la_1}\cdots i_n^{\la_n}}\ \text{for all}\ w\in\Si_\la\}.
$$

\begin{Lemma} \label{L7.24}
Let $(\la,\bi)\in\Comp^\col(n,d)$. Then 
$$
\tilde\Diz^{\la,\bi}=\zC_d\ggis^{\la,\bi}\zC_\la\ggis^{i_1^{\la_1}\cdots i_n^{\la_n}}+\zN_{i_1^{\la_1}\cdots i_n^{\la_n}}.
$$ 
\end{Lemma}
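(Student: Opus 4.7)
Set $\bj:=i_1^{\la_1}\cdots i_n^{\la_n}$ and let $\pi:\zC_d\ggis^{\bj}\twoheadrightarrow\zC_d\ggis^{\bj}/\zN_{\bj}$ be the natural projection. By Lemma~\ref{LTensSpaceAsQuotient}, we identify $\zC_d\ggis^{\bj}/\zN_{\bj}\iso\zM^{\la,\bi}$ with $\ggis^{\bj}+\zN_\bj$ corresponding to $\zv_{\la,\bi}$. The plan is to show that $\tilde\Diz^{\la,\bi}=\pi^{-1}(\Diz^{\la,\bi})$, then compute $\pi^{-1}(\Diz^{\la,\bi})$ explicitly using Lemma~\ref{LZMixed}(ii).

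For the first step, I need to verify that the right $\Si_\la$-action on $\zC_d\ggis^{\bj}$ defined just before Lemma~\ref{L7.24} (and factoring through $\zM^{\la,\bi}$ by Lemma~\ref{LPositiveTau}) matches, after passing to the quotient, the right $\Si_\la$-action on $\zM^{\la,\bi}$ inherited from the mixed tensor structure $\zM^{\la,\bi}\simeq\GGIS_\la^d(\zM_{\la_1,i_1}\boxtimes\cdots\boxtimes \zM_{\la_n,i_n})$. For $s_r\in\Si_{\la_t}\le\Si_\la$, the positions $r$ and $r+1$ of $\bj$ both carry the color $i_t$, so under the isomorphism $\zF_d$ of Theorem~\ref{THCIsoZ} the sign appearing in the image of $s_r$ on the $\ggis^{\bj}$ component is $(-1)^{(i_t+1)(i_t+1)}=(-1)^{i_t+1}$. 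Hence the quotient action satisfies the formula of (\ref{EActions_rRegr}), and the same sign $(-1)^{i_t+1}$ governs the action of $s_r$ on the $t$-th tensor factor $\zM_{\la_t,i_t}$ by (\ref{EActionWRegrOneColor}) with $j=i_t$. Both actions therefore send the class of $\ggis^{\bj}$ (equivalently $\zv_{\la,\bi}$) to $(-1)^{i_t+1}\dot\zs_r\zv_{\la,\bi}$, which determines the action entirely by $\zC_d$-linearity. Consequently $\tilde\Diz^{\la,\bi}=\pi^{-1}(\Diz^{\la,\bi})$.

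For the second step, by Lemma~\ref{LZMixed}(ii) we have
\[\Diz^{\la,\bi}=\zC_d\bigl(\ggis^{\la,\bi}\otimes\zM_{\la_1,i_1}\boxtimes\cdots\boxtimes\zM_{\la_n,i_n}\bigr)\subseteq \zC_d\ggis_\la\otimes_{\zC_\la}(\zM_{\la_1,i_1}\boxtimes\cdots\boxtimes\zM_{\la_n,i_n}).\]
Since $\zM_{\la_t,i_t}=\zC_{\la_t}\zv_{\la_t,i_t}$ and $\zC_\la=\zC_{\la_1}\otimes\cdots\otimes\zC_{\la_n}$, absorbing the scalars through the $\zC_\la$-tensor balance yields $\Diz^{\la,\bi}=\zC_d\ggis^{\la,\bi}\zC_\la\zv_{\la,\bi}$. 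Translating back via $\pi$ gives
\[\tilde\Diz^{\la,\bi}=\pi^{-1}(\Diz^{\la,\bi})=\zC_d\ggis^{\la,\bi}\zC_\la\ggis^{\bj}+\zN_{\bj},\]
which is the claim.

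The main obstacle is the compatibility check in the second paragraph; once the two $\Si_\la$-actions on $\zM^{\la,\bi}$ are reconciled (via the sign calculation using Theorem~\ref{THCIsoZ}), the rest of the argument is a direct unwinding of the identifications in Lemmas~\ref{LTensSpaceAsQuotient} and~\ref{LZMixed}.
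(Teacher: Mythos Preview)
Your proof is correct and follows essentially the same route as the paper: identify $\tilde\Diz^{\la,\bi}$ as the preimage of $\Diz^{\la,\bi}$ under the projection $\zC_d\ggis^{\bj}\twoheadrightarrow\zM^{\la,\bi}$, then invoke Lemma~\ref{LZMixed}(ii) to describe $\Diz^{\la,\bi}$ and pull back. The paper treats the compatibility of the two $\Si_\la$-actions on $\zM^{\la,\bi}$ as implicit in the setup (both are determined by the formula~(\ref{EActions_rRegr}) on the generator), whereas you make the sign verification via Theorem~\ref{THCIsoZ} explicit; this extra care is harmless and arguably clarifying.
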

\begin{proof}
Throughout the proof we identify $\zM^{\la,\bi}$ with $\zC_d\ggis^{i_1^{\la_1}\cdots i_n^{\la_n}}/\zN_{i_1^{\la_1}\cdots i_n^{\la_n}}$ via the isomorphism (\ref{ETildeMMIso}) so 
$$
\ggis_\la\otimes \zM_{\la_1,i_1}\boxtimes\dots\boxtimes \zM_{\la_n,i_n}=(\zC_\la\ggis^{i_1^{\la_1}\cdots i_n^{\la_n}}+\zN_{i_1^{\la_1}\cdots i_n^{\la_n}})/\zN_{i_1^{\la_1}\cdots i_n^{\la_n}}.
$$
Hence by Lemma~\ref{LZMixed}(ii), under this identification, we have
$$
\Diz^{\la,\bi}=(\zC_d\ggis^{\la,\bi}\zC_\la\ggis^{i_1^{\la_1}\cdots i_n^{\la_n}}+\zN_{i_1^{\la_1}\cdots i_n^{\la_n}})/\zN_{i_1^{\la_1}\cdots i_n^{\la_n}}.
$$
By definition, $\tilde\Diz^{\la,\bi}$ is the preimage of $\Diz^{\la,\bi}$ under the canonical surjection $$\zC_d\ggis^{i_1^{\la_1}\cdots i_n^{\la_n}}\onto \zC_d\ggis^{i_1^{\la_1}\cdots i_n^{\la_n}}/\zN_{i_1^{\la_1}\cdots i_n^{\la_n}}=  \zM^{\la,\bi}.$$ 
So the result follows. 
\end{proof}

\subsection{  On the idempotent truncation $\ggis^{\la,\bi}\zC_d^0\ggis_{\om_d}$}
Let $(\la,\bi)\in\Comp^\col(n,d)$. 
By Lemmas~\ref{LDegreeG} and \ref{LGathhatGa}, 
$$
\lgath_{\la,\bi}\in \ggis^{\la,\bi}(\hat\Diz^{\la,\bi})^0\subseteq \ggis^{\la,\bi}(\tilde\Diz^{\la,\bi})^0.
$$

\begin{Lemma} \label{L7.35Ver}
Let $(\la,\bi),(\mu,\bj)\in\Comp^\col(n,d)$ and\, $\zv\in \ggis^{\mu,\bj}(\tilde\Diz^{\la,\bi})^0$. Then $\zv=\zc\lgathz_{\la,\bi}$ for some $\zc\in\ggis^{\mu,\bj}\zC_d^0\ggis^{\la,\bi}$. 
\end{Lemma}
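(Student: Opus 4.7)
The plan is to derive this directly from the explicit description of $\tilde\Diz^{\la,\bi}$ given in Lemma~\ref{L7.24} together with the non-negativity of the grading established in Theorem~\ref{TNonNeg} and the one-dimensionality of the relevant degree zero weight space established in Lemma~\ref{LGBasis}. Concretely, applying Lemma~\ref{L7.24} and left-multiplying by $\ggis^{\mu,\bj}$, I would write
\begin{equation*}
\ggis^{\mu,\bj}\tilde\Diz^{\la,\bi}
\,=\,
\ggis^{\mu,\bj}\zC_d\ggis^{\la,\bi}\zC_\la\ggis^{i_1^{\la_1}\cdots i_n^{\la_n}}
\,+\,\ggis^{\mu,\bj}\zN_{i_1^{\la_1}\cdots i_n^{\la_n}},
\end{equation*}
and then pass to degree zero components using the general fact that $(X+Y)^0 = X^0+Y^0$ for graded subspaces.

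First I would show that the second summand contributes nothing in degree zero. By definition, $\zN_{i_1^{\la_1}\cdots i_n^{\la_n}}=\zC_d(\zC_{\om_d}\ggis^{i_1^{\la_1}\cdots i_n^{\la_n}})^{>0}$. By Theorem~\ref{TNonNeg}, $\zC_d$ is non-negatively graded, hence so is its parabolic subalgebra $\zC_{\om_d}$. So every element of the form $\zc\zc'$ with $\zc\in\zC_d$ and $\zc'\in(\zC_{\om_d}\ggis^{i_1^{\la_1}\cdots i_n^{\la_n}})^{>0}$ has strictly positive degree, and therefore $\zN_{i_1^{\la_1}\cdots i_n^{\la_n}}$ lives entirely in strictly positive degrees; in particular $(\ggis^{\mu,\bj}\zN_{i_1^{\la_1}\cdots i_n^{\la_n}})^0=0$.

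Next I would identify the degree zero part of the first summand. Again by Theorem~\ref{TNonNeg}, both $\zC_d$ and $\zC_\la$ are non-negatively graded, so a product $(\ggis^{\mu,\bj}\zc\ggis^{\la,\bi})(\ggis^{\la,\bi}\zd\ggis^{i_1^{\la_1}\cdots i_n^{\la_n}})$ with $\zc\in\zC_d$ of degree $e\geq 0$ and $\zd\in\zC_\la$ of degree $f\geq 0$ has degree zero if and only if $e=f=0$. Hence
\begin{equation*}
(\ggis^{\mu,\bj}\zC_d\ggis^{\la,\bi}\zC_\la\ggis^{i_1^{\la_1}\cdots i_n^{\la_n}})^0
\,=\,(\ggis^{\mu,\bj}\zC_d^0\ggis^{\la,\bi})\cdot(\ggis^{\la,\bi}\zC_\la^0\ggis^{i_1^{\la_1}\cdots i_n^{\la_n}}).
\end{equation*}
By Lemma~\ref{LGBasis}, the second factor on the right is exactly $\k\cdot\lgathz_{\la,\bi}$. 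Combining these two steps gives
\begin{equation*}
(\ggis^{\mu,\bj}\tilde\Diz^{\la,\bi})^0
\,=\,\ggis^{\mu,\bj}\zC_d^0\ggis^{\la,\bi}\cdot \lgathz_{\la,\bi},
\end{equation*}
which is precisely the content of the lemma.

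There is no real obstacle here: once Lemma~\ref{L7.24}, Theorem~\ref{TNonNeg} and Lemma~\ref{LGBasis} are in place, this is a formal degree bookkeeping argument. The only point that deserves a moment of attention is the elementary fact that taking the degree zero part commutes with sums of graded submodules, which one verifies by decomposing any given degree zero element into homogeneous components within each summand.
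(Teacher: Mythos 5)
Your proposal is correct and takes essentially the same route as the paper: apply Lemma~\ref{L7.24}, observe that $\zN_{i_1^{\la_1}\cdots i_n^{\la_n}}$ has no degree zero component because $\zC_d$ is non-negatively graded (Theorem~\ref{TNonNeg}), and then use Lemma~\ref{LGBasis} to identify $(\ggis^{\la,\bi}\zC_\la\ggis^{i_1^{\la_1}\cdots i_n^{\la_n}})^0 = \k\cdot\lgathz_{\la,\bi}$. Your write-up is just slightly more explicit about the bookkeeping (degree zero commuting with sums and with products of non-negatively graded subspaces), which the paper leaves implicit.
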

\begin{proof}
By Lemma~\ref{L7.24}, we have 
$$
\zv\in \ggis^{\mu,\bj}\zC_d\ggis^{\la,\bi}\zC_\la\ggis^{i_1^{\la_1}\cdots i_n^{\la_n}}+\ggis^{\mu,\bj}\zN_{i_1^{\la_1}\cdots i_n^{\la_n}}.
$$ 
Since all algebras are non-negatively graded, it follows from the definition (\ref{ENBJ}) that $\zN_{i_1^{\la_1}\cdots i_n^{\la_n}}^0=0$, and taking into account the assumption $\deg(\zv)=0$, we actually have 
$$
\zv\in (\ggis^{\mu,\bj}\zC_d\ggis^{\la,\bi})^0(\ggis^{\la,\bi}\zC_\la\ggis^{i_1^{\la_1}\cdots i_n^{\la_n}})^0.
$$ 
But 
$(\ggis^{\la,\bi}\zC_\la\ggis^{i_1^{\la_1}\cdots i_n^{\la_n}})^0
$ 
is spanned by $\lgathz_{\la,\bi}$ thanks to Lemma~\ref{LGBasis}. 
\end{proof}

\begin{Corollary}\label{L7.35}
Let $(\la,\bi),(\mu,\bj)\in\Comp^\col(n,d)$. If $\zv\in \ggis^{\mu,\bj}(\hat\Diz^{\la,\bi})^0$ then $\zv=\zc\lgathz_{\la,\bi}$ for some $\zc\in\ggis^{\mu,\bj}\zC_d^0\ggis^{\la,\bi}$.
\end{Corollary}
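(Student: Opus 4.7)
The proof I would give is essentially immediate from the preceding lemma. Comparing the definitions of $\hat\Diz^{\la,\bi}$ and $\tilde\Diz^{\la,\bi}$, if $\zm \in \zC_d\ggis^{i_1^{\la_1}\cdots i_n^{\la_n}}$ satisfies $\zm\cdot w = \zm$ for all $w \in \Si_\la$, then certainly $\zm\cdot w - \zm = 0 \in \zN_{i_1^{\la_1}\cdots i_n^{\la_n}}$. So $\hat\Diz^{\la,\bi} \subseteq \tilde\Diz^{\la,\bi}$, and consequently $\ggis^{\mu,\bj}(\hat\Diz^{\la,\bi})^0 \subseteq \ggis^{\mu,\bj}(\tilde\Diz^{\la,\bi})^0$. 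Thus any $\zv$ satisfying the hypothesis of the corollary satisfies the hypothesis of Lemma~\ref{L7.35Ver}, and applying that lemma gives the desired decomposition $\zv = \zc\lgathz_{\la,\bi}$ with $\zc \in \ggis^{\mu,\bj}\zC_d^0\ggis^{\la,\bi}$.

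Since all the real work has already been done in Lemma~\ref{L7.35Ver} (which in turn uses Lemma~\ref{L7.24} to rewrite $\tilde\Diz^{\la,\bi}$ in terms of products of idempotent truncations, the non-negativity of the grading established in Theorem~\ref{TNonNeg} to force $\zN_{i_1^{\la_1}\cdots i_n^{\la_n}}^0 = 0$ and to split $\zv$ into a product of degree zero pieces, and then Lemma~\ref{LGBasis} to identify $\lgathz_{\la,\bi}$ as the basis vector of the parabolic weight space), there is no obstacle here at all. The entire proof is a single containment of sets. I would write it as a two-sentence argument pointing at Lemma~\ref{L7.35Ver}.
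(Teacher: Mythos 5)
Your proof is correct and matches the paper's own argument exactly: the paper derives the corollary from Lemma~\ref{L7.35Ver} via the same observation that $\hat\Diz^{\la,\bi}\subseteq \tilde\Diz^{\la,\bi}$, since the zero element lies in $\zN_{i_1^{\la_1}\cdots i_n^{\la_n}}$.
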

\begin{proof}
Follows from Lemma~\ref{L7.35Ver} using $\hat\Diz^{\la,\bi}\subseteq \tilde\Diz^{\la,\bi}$.
\end{proof}


\begin{thebibliography}{AB}

\bibitem
{AF}
F.W. Anderson and K.R. Fuller, {\em Rings and Categories of Modules}, Second edition, Graduate Texts in Mathematics 13,  Springer-Verlag, New York, 1992.


\bibitem
{BKT}
P. Baumann, J. Kamnitzer, and P. Tingley, Affine Mirkovi\'c-Vilonen polytopes, {\em Publ. Math. Inst. Hautes \'Etudes Sci.} {\bf 120}  (2014), 113--205.


\bibitem
{BE}
J. Brundan and A.P. Ellis, Monoidal supercategories, {\em Commun. Math. Phys.} {\bf 351} (2017), 1045--1089.

\bibitem
{BDK}
J. Brundan, R. Dipper and A. Kleshchev, Quantum linear groups and representations of $GL_n({\mathbb F}_q)$, {\em Memoirs of the Amer. Math. Soc.} {\bf 149} (2001), No 706, American Mathematical Society, Providence, Rhode Island, 2001.



\bibitem
{BK}
J. Brundan and A. Kleshchev,
Modular Littlewood-Richardson coefficients, {\em Math. Z.} {\bf 232} (1999), 287--320. 

\bibitem
{BKyoung}
J. Brundan and A. Kleshchev, Blocks of cyclotomic Hecke algebras and Khovanov-Lauda algebras, {\em Invent. Math.} {\bf 178}  (2009), 451--484.


\bibitem
{BKodd}
J. Brundan and A. Kleshchev, Odd Grassmannian bimodules and derived equivalences for spin symmetric groups, 
{\tt arXiv:2203.14149}.

\bibitem
{BKW}
J. Brundan, A. Kleshchev and W. Wang, Graded Specht modules,  {\em J. Reine Angew. Math.} {\bf 655} (2011), 61--87.



\bibitem
{CPS}
E. Cline, B. Parshall, and L. Scott,  Duality in highest weight categories, pp. 7--22 in {\em Classical groups and related topics (Beijing, 1987)},  Contemp. Math., 82, Amer. Math. Soc., Providence, RI, 1989.

\bibitem
{DJHecke}
R. Dipper and G. James, Representations of Hecke algebras of general linear groups, {\em Proc. London Math. Soc. (3)} {\bf 52} (1986), 20--52.

\bibitem
{DJ}
R. Dipper and G.D. James, $q$-tensor space and  $q$-Weyl modules, {\em Trans. Amer. Math. Soc.} {\bf 327} (1991), 251--282.

\bibitem
{DR}
V. Dlab and C.M. Ringel, Quasi-hereditary algebras, 
{\em Illinois J. Math.} {\bf 33}(1989), 280--291.

\bibitem
{Donkin}
S. Donkin, 
{\em The $q$-Schur algebra}, 
London Math. Soc. Lecture Note Ser. 253, 
Cambridge University Press, Cambridge, 1998.

\bibitem
{ELV}
M. Ebert, A.D. Lauda and L. Vera, Derived superequivalences for spin symmetric groups and odd sl(2)-categorifications, {\tt arXiv:2203.14153}


\bibitem
{EKL}
A.P. Ellis, M. Khovanov and A.D. Lauda, The odd nilHecke algebra and its diagrammatics, {\em Int. Math. Res. Not. IMRN} 2014, no. 4, 991--1062.

\bibitem
{Evseev}
A.\ Evseev, Rock blocks, wreath products and KLR algebras, 
{\em Math. Ann.} {\bf 369} (2017), 1383--1433.

\bibitem{EK1}
A.\ Evseev and A.\ Kleshchev, Turner doubles and generalized Schur algebras, {\em Adv. Math.} {\bf 317} (2017), 665--717.



\bibitem
{FKM}
M. Fayers, A. Kleshchev and L. Morotti, Decomposition numbers for abelian defect RoCK blocks of double covers of symmetric groups, {\em J. Lond. Math. Soc. (2)} {\bf 109} (2024), no. 2, Paper No. e12852, 49 pp.

\bibitem
{Green}
J.A.Green, Polynomial Representations of $GL_n$, 2nd ed., Springer, Berlin, 2007.

\bibitem
{GreenComb}
J.A.Green, Combinatorics and the Schur algebra, {\em J. Pure Appl. Algebra} {\bf 88} (1993), 89--106.

\bibitem
{HW}
D. Hill and W. Wang, Categorification of quantum Kac-Moody superalgebras, {\em Trans. Amer. Math. Soc.} {\bf 367} (2015), 1183--1216.

\bibitem
{JamesBook}
G. D. James, {\em The Representation Theory of the Symmetric Groups}, Springer-Verlag, Berlin-Heidelberg-New York, 1978.

\bibitem
{JamesKerber}
G. James and A. Kerber, {\em The Representation Theory of the Symmetric Groups}, Addison-Wesley, London, 1980.

\bibitem
{Jantzen}
J.C. Jantzen, 
{\em Representations of Algebraic Groups.} 
Second edition, Math. Surveys Monogr. 107, 
American Mathematical Society, Providence, RI, 2003.

\bibitem
{Kac}
V. Kac,
{\em Infinite Dimensional Lie Algebras}, Third edition,
Cambridge University Press, 1995.


\bibitem
{KKT}
S.-J. Kang, M. Kashiwara and S. Tsuchioka, Quiver Hecke superalgebras, {\em J. Reine Angew. Math.} {\bf 711} (2016), 1--54.

\bibitem
{KKO}
S.-J. Kang, M. Kashiwara and S-j. Oh, Supercategorification of quantum Kac-Moody algebras, {\em Adv. Math.} {\bf 242} (2013), 116--162. 

\bibitem
{KKOII}
S.-J. Kang, M. Kashiwara and S-j. Oh, Supercategorification of quantum Kac-Moody algebras II, {\em Adv. Math.} {\bf 265} (2014), 169--240. 


\bibitem
{KL1}
M.\ Khovanov and A.\ Lauda,
A diagrammatic approach to categorification of quantum
groups I, {\em Represent.\ Theory} {\bf 13} (2009), 309--347. 


\bibitem
{Kbook}
 A. Kleshchev, {\em Linear and Projective Representations of Symmetric Groups}, Cambridge University Press, 2005. 

\bibitem
{Kcusp}
A. Kleshchev, Cuspidal systems for affine Khovanov-Lauda-Rouquier algebras, {\em Math. Z.} {\bf 276} (2014), 691-726.

\bibitem{KlRoCK}
A. Kleshchev, RoCK blocks of double covers of symmetric groups and generalized Schur algebras, {\tt arXiv:2411.03653}. 

\bibitem
{KlLi} 
A.\ Kleshchev and M.\ Livesey, RoCK blocks for double covers of symmetric groups and quiver Hecke superalgebras, \textit{Mem.\ Amer.\ Math.\ Soc.}, to appear; \texttt{arXiv:2201.06870}.



\bibitem
{KMImag}
A. Kleshchev and R. Muth,  Imaginary Schur-Weyl duality, {\em  Mem. Amer. Math. Soc.} {\bf 245} (2017), no. 1157. 

\bibitem
{KMuth}
A. Kleshchev and R. Muth, Based quasi-hereditary algebras, {\em J. Algebra} {\bf 558} (2020), 504--522


\bibitem
{Landrock}
P. Landrock, {\em Finite Group Algebras and their Modules}, London Mathematical Society Lecture
Note Series, vol. 84, Cambridge University Press, Cambridge, 1983.

\bibitem
{LV}
A. Lauda and M. Vazirani, Crystals from categorified quantum groups, {\em Adv. Math.} {\bf 228} (2011), 803--861.



\bibitem
{McN1}
P.J. McNamara, Finite dimensional representations of Khovanov-Lauda-Rouquier algebras I: Finite type, {\em J. Reine Angew. Math.} {\bf 707} (2015), 103--124. 

\bibitem
{McN2}
P.J. McNamara, Representations of Khovanov-Lauda-Rouquier algebras III: symmetric affine type, {\em Math. Z.} {\bf 287} (2017),  243--286. 





\bibitem{Rou}
R. Rouquier, 2-Kac-Moody algebras, 2008, {\tt arXiv 0812.5023.}


\end{thebibliography}
\end{document}